\documentclass[11pt]{article}
\usepackage{amsthm,amssymb,amsmath}
\usepackage{graphicx,algorithmic,algorithm}%
\usepackage{enumerate}
\usepackage{tikz,tikz-cd}
\usepackage{tabularx}
\usepackage{multirow}
\usepackage{makecell}
\usepackage{subcaption}
\tikzcdset{scale cd/.style={every label/.append style={scale=#1},
    cells={nodes={scale=#1}}}}
\usetikzlibrary{shapes}
\usepackage[letterpaper, margin=1in]{geometry}
\usepackage{hyperref}
\usepackage[nameinlink,capitalize]{cleveref}
\usepackage{todonotes}
\usepackage[normalem]{ulem}
\hypersetup{
	pdftitle=   {Chi-bounded of P7C4C6-free graphs},
	pdfauthor=  {Yeonsu Chang, Shenwei Huang, O-joung Kwon, Myounghwan Lee, and Changqing Xi}
}
\usepackage{authblk}
\usepackage{mathabx}
\usepackage{caption}
\usepackage{subcaption}
\usepackage{commath}

\usepackage{comment}

\newtheorem{theorem}{Theorem}[section]
\newtheorem{lemma}[theorem]{Lemma}
\newtheorem{proposition}[theorem]{Proposition}

\newtheorem{conjecture}[theorem]{Conjecture}

\newtheorem{definition}[theorem]{Definition}
\newtheorem*{thm:main1}{Theorem~\ref{thm:main1}}
\newtheorem*{thm:main2}{Theorem~\ref{thm:main2}}

\newcommand\extrafootertext[1]{%
    \bgroup
    \renewcommand\thefootnote{\fnsymbol{footnote}}%
    \renewcommand\thempfootnote{\fnsymbol{mpfootnote}}%
    \footnotetext[0]{#1}%
    \egroup
}

\newcommand{\supp}{\operatorname{supp}}

\newcommand\ceil[1]{\left\lceil #1 \right\rceil}

\title{The optimal chromatic bound for even-hole-free graphs without induced seven-vertex paths}
	\author[1]{Shenwei Huang}
    \author[2]{Yidong Zhou}
	\author[3]{Yeonsu Chang}

	\affil[1] {School of Mathematical Sciences and LPMC, Nankai University, Tianjin 300071, P.R.~China. Email: {\tt shenweihuang@nankai.edu.cn}.}
    \affil[2] {College of Computer Science, Nankai University, Tianjin 300350, China. Email: {\tt 1120240329@mail.nankai.edu.cn}.}
	\affil[3]{Department of Mathematics, Hanyang University, Seoul, South Korea. Email: {\tt yeonsu@hanyang.ac.kr}.}

	\date\today

\begin{document}

\maketitle

\begin{abstract}
The class of even-hole-free graphs has been extensively studied on its own and on its relation to perfect graphs.
In this paper, we study the $\chi$-boundedness of even-hole-free graphs which itself is an important topic in graph theory.
In particular, we prove that every even-hole-free graph $G$ without induced 7-vertex paths satisfies $\chi(G)\le \ceil{\frac{5}{4}\omega(G)}$, where $\chi(G)$ and $\omega(G)$ denote the chromatic number and clique number of $G$, respectively. This bound is optimal. 
Our result strictly extends the result of Karthick and Maffary \cite{KM19} on even-hole-free graphs without induced 6-vertex paths, and implies that even-hole-free graphs without induced 7-vertex paths satisfy Reed's Conjecture. 
Our proof relies on a heavy structural analysis on a maximal substructure called a nice blowup of a five-cycle and can be viewed for graphs in which all holes are of length five (graphs with all holes having the same length gain increasing interest in recent years \cite{COOK202496}). Our result gives a partial answer to a conjecture of Wang and Wu \cite{WW25} on graphs in which all holes are of length 5.
One of the key technical ingredients is a technical lemma proved via clique cutset argument combined with the idea of Infinite Descent Method (often used in number theory).  
\end{abstract}

\tableofcontents

\section{Introduction}

For a list of graphs $(H_1,\ldots,H_k)$, we say that a graph $G$ is {\em $(H_1,\ldots,H_k)$-free} if $G$ has no graph $H_i$ as an induced subgraph for each $1\le i\le k$.
Let $\omega(G)$ denote the clique number of a graph $G$. 
For a positive integer $k$, a \textit{proper $k$-coloring} of a graph $G$ is a function $f:V(G)\rightarrow [k]$ such that $f(u)\neq f(v)$ for every edge $uv\in E(G)$. In such a case, we say that $G$ is \textit{$k$-colorable}.
The \textit{chromatic number} of $G$, denoted by $\chi(G)$, is the minimum positive integer $k$ such that $G$ is $k$-colorable.
For a graph class $\mathcal{G}$, we say that $\mathcal{G}$ is {\em $\chi$-bounded} if there exists a function $f:\mathbb{N}\rightarrow \mathbb{N}$ such that every graph $G\in \mathcal{G}$ has $\chi(G)\le f(\omega(G))$. The function $f$ is called a {\em $\chi$-bounding function} for $\mathcal{G}$.

The study of $\chi$-bounding functions for graph classes is an important area in graph theory in the past two decades, see for example the survey of Scott and Seymour \cite{SS20} and Scott's survey \cite{Scott23} in Proceedings of International Congress Mathematics 2022. The study can be roughly divided into two lines: graphs with a single forbidden induced subgraph and graphs with forbidden holes. The research on $\chi$-boundedness of graphs with a single forbidden induced subgraph is largely surrounded by Gy\'arf\'as-Sumner Conjecture. The conjecture is notoriously hard despite some partial progress (see \cite{SS20XIII} for instance).

In this paper, we focus on graphs with forbidden holes. 
A \textit{hole} in $G$ is an induced cycle of order at least four.
Scott and Seymour \cite{SS16} proved that the class of {\em odd-hole-free} graphs, graphs with no holes of odd length, is $\chi$-bounded with $f=2^{2^n}$ being a $\chi$-bounding function. It is still open whether this double-exponential bound can be improved to a single-exponential bound. In sharp contrast, the class of {\em even-hole-free} graphs, graphs with no holes of even length, admits a linear $\chi$-bounding function $f(n)=2n-1$. This follows from a stronger result by Chudnovsky and Seymour \cite{CS23} that every even-hole-free graph has a vertex whose neighborhood can be covered by two cliques. It is not known whether this bound is optimal.

\medskip
In fact, several known results on subclasses of even-hole-free graphs suggest that this bound may not be optimal.

\begin{itemize}
    \item[(Pan)] A {\em pan} is a hole with a pendent edge. It is shown in \cite{CCH18} that every (pan, even-hole)-free graph $G$ has $\chi(G)\le \ceil{\frac{3}{2}\omega(G)}$.
    \item[(Cap)] A {\em cap} is a hole with an additional vertex adjacent to an edge on the hole. It is shown in \cite{cap} that every (cap, even-hole)-free graph $G$ has $\chi(G)\le \ceil{\frac{5}{4}\omega(G)}$.
    \item[($P_6$)] The path on $t$ vertices is denoted by $P_t$. It is shown in \cite{KM19} that every ($P_6$, even-hole)-free graph $G$ has $\chi(G)\le \ceil{\frac{5}{4}\omega(G)}$. 
    \item[(Diamond)] A {\em diamond} is obtained from the complete graph on four vertices by removing an edge. It is shown in \cite{KLOKS2009733} that every (diamond, even-hole)-free graph $G$ has $\chi(G)\le \omega(G)+1$.

\end{itemize}

For more background and results on even-hole-free graphs, we refer the readers to a comprehensive survey by Vu\v{s}kovi\'c \cite{Vus10}.
The main result of this paper is the following optimal $\chi$-bounding function for the class of even-hole-free graphs without an induced 7-vertex path.

\begin{theorem}\label{thm:main}
    For every ($P_7$, even-hole)-free graph $G$, $\chi(G)\le \ceil{\frac{5}{4}\omega(G)}$.
\end{theorem}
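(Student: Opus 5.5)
We argue by minimal counterexample. Let $G$ be a ($P_7$, even-hole)-free graph with $\chi(G)>\ceil{\frac{5}{4}\omega(G)}$ and $|V(G)|$ minimum. The standard reductions apply first. $G$ is connected. $G$ has no clique cutset, since colourings of the pieces agree up to permutation on a clique. $G$ has no universal vertex, since removing one lowers both $\chi$ and $\omega$ by one and $\ceil{\frac54(\omega-1)}+1\le\ceil{\frac54\omega}$. $G$ has no pair of comparable vertices $N(u)\subseteq N[v]$, since $u$ can be given the colour of $v$ in a colouring of $G-u$ when $u\not\sim v$; when $u\sim v$ a slightly different argument is needed. Finally, every vertex has degree at least $\ceil{\frac54\omega}$.

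Next we bring in the hole structure. In a $P_7$-free graph every hole has length at most $7$, so the holes of $G$ have length $5$ or $7$. If $G$ contains a $C_7$, we analyse the vertices attached to it. By even-hole-freeness each outside vertex sees a consecutive set of the cycle, and by $P_7$-freeness every vertex of the connected graph is close to it. We then expect either a clique cutset or a contradiction, so that $G$ is $C_7$-free. If $G$ has no hole at all, then $G$ is chordal and hence perfect, which is a contradiction. So $G$ contains a $C_5$, and all holes of $G$ have length $5$.

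The main structure is then a maximal \emph{nice blowup of a five-cycle}: disjoint cliques $Q_1,\dots,Q_5$ with $Q_i$ complete to $Q_{i\pm1}$ and anticomplete to $Q_{i\pm2}$, possibly together with some universal-type cliques attached. We choose it maximal, say with the largest vertex count. Using only even-hole-freeness and $P_7$-freeness, we classify every vertex outside it by its neighbourhood on the blowup. The relevant types are vertices that are anticomplete to it, vertices adjacent to a single $Q_i$ or to two consecutive ones, and vertices complete to three consecutive cliques. Maximality kills the types that would extend the blowup. The aim is to show that the remaining outside vertices attach only through cliques, which would give a clique cutset unless $G$ equals the blowup itself, possibly plus a small number of controlled extra vertices.

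For a blowup of $C_5$ with clique sizes $q_i$ we have $\omega=\max_i(q_i+q_{i+1})$, while colouring requires $\max\bigl(\omega,\ceil{\sum_i q_i/2}\bigr)$ colours. A fractional argument gives $\sum_i q_i\le\frac52\omega$, so $\chi\le\ceil{\frac54\omega}$. An explicit colouring achieves this: give disjoint colour sets to nonadjacent pairs $Q_i,Q_{i+2}$ greedily around the cycle. I expect the hardest step to be the case where the outside vertices neither extend the blowup nor are separated from it by a clique cutset, for example vertices adjacent to parts of $Q_i\cup Q_{i+1}$ that chain together through several components. My first attempt would be an infinite-descent argument. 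Assume a bad configuration exists, take a component $C$ of $G-B$ (where $B$ is the blowup) whose attachment set is not a clique. We would then find inside $C\cup B$ a smaller bad configuration or a longer induced path. Repeating this would eventually produce a $P_7$ or an even hole, which is a contradiction. With that step done, the coloring of the blowup extends to $G$, and Theorem~\ref{thm:main} follows.
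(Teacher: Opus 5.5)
Your outline has the same skeleton as the paper: reduce to a basic graph, split on whether $G$ contains $C_7$, take a maximal blowup of $C_5$, classify the attachments, and use a descent lemma. But the step that is meant to finish the $C_5$ case rests on a false expectation. You expect that once a maximal blowup $B$ is fixed, every component of $G-B$ attaches through a clique, so a minimal counterexample is essentially the blowup.

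In fact, basic graphs in this class genuinely contain vertices attached to one, two or three consecutive cliques (the paper's $A_1,A_2,A_3$) without creating clique cutsets. For example, every vertex of $A_3(i)$ has neighbours in both $B_{i-1}$ and $B_{i+1}$, which are anticomplete. So the neighbourhood of a component of $A_3(i)$ is never a clique, and Section~\ref{sec:only A3} is devoted entirely to graphs $G=H\cup A_3$ of this type.

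The paper's descent argument (Lemma~\ref{lem:complete subgraphs}, minimising $|P_a|+|P_b|$ over nice 6-tuples) does not eliminate such components. It only proves that they are cliques complete to two non-adjacent attachment vertices. What your proposal lacks is the machinery that actually closes Sections~\ref{sec:A_1 is not empty}--\ref{sec:only A3}:
\begin{itemize}
\item the small-vertex argument: minimum degree $\ge\ceil{\frac54\omega}$ forces various cliques to have more than $\ceil{\frac{\omega}{4}}$ vertices, and four disjoint ones whose union is a clique give a clique larger than $\omega$;
\item $(4,5)$-good subgraphs, with the induction of Lemma~\ref{lem:goodsubgraph}, and good stable sets;
\item explicit precolour-then-degeneracy colourings that reduce to the hyperhole formula of Lemma~\ref{lem:hyperhole}.
\end{itemize}
Without these, your final sentence, that the colouring of the blowup extends to $G$, has nothing behind it.

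Two further steps would fail as written. First, your blowup requires $Q_i$ complete to $Q_{i\pm1}$, i.e.\ a hyperhole. The paper's nice blowup only asks that every vertex of $B_i$ have a neighbour in $B_{i-1}$ and in $B_{i+1}$, subject to the $P_4$-structure condition. This weaker notion is what makes maximality informative (Lemma~\ref{lem:blowup-threeneighbor}). Such 5-rings are not hyperholes, so even colouring the base structure needs \cite{MIV20} rather than $\max(\omega,\ceil{\sum_i q_i/2})$.

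Second, the $C_7$ case does not end in a clique cutset. Blowups of $C_7$ already lie in the class and have no clique cutsets, and a basic graph containing $C_7$ is one of Penev's blowups of $M$, $M_1$, $M_2$ (Lemma~\ref{blowup-G-of-H}). The paper then exhibits a $(4,5)$-good subgraph in each case, and your proposal has no substitute for this. (The reduction for adjacent $u,v$ with $N[u]\subseteq N[v]$ is also not available in general, though you never use it.)
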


\noindent {\bf Remark.} This bound is optimal as seen by equal-size blowups of a five-cycle (the precise definition of equal-size blowups is given in Section \ref{sec:pre}).
Reed's Conjecture \cite{Reed98} states that every graph $G$ satisfies $\chi(G)\le \ceil{\frac{\Delta(G)+\omega(G)+1}{2}}$ where $\Delta(G)$ is the maximum degree of $G$. 
Our result implies by a result in \cite{KM18} that the class of ($P_7$, even-hole)-free graphs satisfies Reed's Conjecture. 
For an integer $\ell\ge 4$, a graph is {\em $\ell$-holed} if all holes in this graph have length $\ell$. 
Such graphs were studied \cite{Woodroofe09} in the context of algebraic combinatorics and commutative algebra.
A structure theorem for $\ell$-holed graphs 
when $\ell \ge 7$ is proved in \cite{COOK202496}. Using this structure theorem, Wang and Wu \cite{WW25} showed that $f(n)=\ceil{\frac{\ell}{\ell-1}n}$ is the optimal $\chi$-bounding function for the class of $\ell$-holed graphs when $\ell\ge 7$ is odd. They further conjecture that this bound also holds for $\ell=5$. 
Our result is essentially for the class of 5-holed graphs without an induced $P_7$, and proves their conjecture for this graph class. 
One difficulty in our proof is that there are no structure theorems for 5-holed graphs at hand and we need to do heavy structural analysis along the way.

\medskip
\noindent {\bf Proof sketch.} Since the proof is long, let us sketch the proof here. For integer $s\ge 3$, we denote by $C_s$ the cycle on $s$ vertices.
Let $G$ be a $(P_7,C_4,C_6)$-free (which is the same as ($P_7$, even-hole)-free) graph. 
The overall strategy is to consider two cases depending on whether $G$ contains an induced $C_7$. In the case that $G$ contains an induced $C_7$, we show, based on a structural result of Penev \cite{Penev25}, that Theorem \ref{thm:main} holds (this case is easy). 
Afterwards, we can assume that $G$ is $C_7$-free. In this case, we can assume that $G$ contains an induced $C_5$ (this case is difficult and most of our proofs are devoted to this case) using a classical result on chordal graphs by Dirac (Lemma \ref{lem:chordal has simplicial vertices}). Then we define and analyze the so-called nice blowup $H$ of an induced $C_5$ that mimics the structure of an induced $C_5$. We then partition $V(G)\setminus V(H)$ into subsets $A_0,A_1,A_2,A_3,A_5$ in terms of their neighborhoods on $H$.
After proving structural properties of those subsets, we show that Theorem \ref{thm:main} holds when $A_1\neq \emptyset$ (Section \ref{sec:A_1 is not empty}), $A_1=\emptyset$ and $A_2\neq \emptyset$ (Section \ref{sec:A_2 is not emptyset}), and $A_1=A_2=\emptyset$ but $A_3\neq \emptyset$ (Section \ref{sec:only A3}). 
These three sections together complete the proof of Theorem \ref{thm:main}.

\medskip
\noindent {\bf Technical highlights.} To carry out this strategy, we use technical arguments based on several reducible structures in the study of $\chi$-bounding functions-clique cutsets, small vertices (Lemma \ref{lem:reducible structure}) and $(p,q)$-good subgraphs (Lemma \ref{lem:goodsubgraph}). It is easy to show that it suffices to prove Theorem \ref{thm:main} for graphs $G$ without those structures (Lemma \ref{lem:basic graphs}).
The first argument is called {\em clique cutset argument}, which aims to show that the neighborhood of certain subsets is a clique. In some cases, a direct analysis does not work and we need to combine new technical ideas such as nice partitions defined in Section \ref{sec:a general lemma}. The idea is similar to Infinite Descent Method in number theory: if the desired property does not hold, one can first show the existence of a nice partition and then argue that a given nice partition can be made ``smaller" in the sense defined in Section \ref{sec:a general lemma}, as long as $G$ has no clique cutsets.  
Since the process cannot continue forever, $G$ contains a clique cutset, which is a contradiction.
The second argument is called {\em small vertex argument}. 
The idea is to show that the neighborhood of a vertex $v$ in $V(G)\setminus K$ is a clique where $K$ is a clique of $G$. 
This implies that $K$ is ``large", meaning that $|K| > \ceil{\frac{\omega(G)}{4}}$. Applying this argument to (minimal) simplicial vertices allows us to conclude in many cases that there are four pairwise disjoint ``large" cliques whose union is still a clique. This gives a clique of size larger than $\omega(G)$, which is a contradiction. The {\em good subgraph argument} is to find a $(p,q)$-good subgraph. The $(p,q)$-good subgraphs were already implicit in \cite{KM19}. We explicitly formulate this notion and use $(4,5)$-good subgraphs to prove our main theorem.
This argument is technically involved and used in Sections \ref{sec:C_7} and \ref{sec:only A3}. The final argument is called {\em pre-coloring argument}. For graphs $G$ that cannot be handled by the previous three arguments, we need to give an explicit proper coloring using $\ceil{\frac{5}{4}\omega(G)}$ colors. The idea is to color a subgraph of $G$ first and then color the rest of the graph by reducing the problem (based on structural insight) to color hyperholes (Lemma \ref{lem:hyperhole}). To be able to apply Lemma \ref{lem:hyperhole}, we need to carefully choose the subgraph to be pre-colored so that the calculations can go through.

\medskip
\noindent {\bf Paper organization.} The paper is organized as follows. In Section \ref{sec:pre}, we define terminology and notations used in this paper and prove some general lemmas. In Section \ref{sec:C_7}, we handle the case that $G$ contains an induced $C_7$. In Section \ref{sec:nice blowup of C_5}, we define nice blowups of $C_5$ and their attachments $A_0,A_1,A_2,A_3,A_5$ and prove some basic properties about those sets.
In Section \ref{sec:A_0}, we show that $A_0=\emptyset$. In Sections \ref{sec:A_1}-\ref{sec:A_3}, we obtain further properties of $A_1,A_2, A_3$. In Section \ref{sec:non-neighbors of C5}, we prove a useful lemma about components of non-neighbors of a $C_5$. In Section \ref{sec:a general lemma}, we prove a key technical lemma. In Section \ref{sec:A_1 is not empty}, we prove Theorem \ref{thm:main} when $A_1$ is not empty. In Section \ref{sec:A_2 is not emptyset}, we prove Theorem \ref{thm:main} when $A_1$ is empty and $A_2$ is not empty. In Section \ref{sec:only A3}, we prove Theorem \ref{thm:main} when $A_1, A_2$ are empty and $A_3$ is not empty. In Section \ref{sec:conclue}, we give some concluding remarks.

\section{Preliminaries}\label{sec:pre}

All graphs in this paper are simple and finite.
Let $G$ be a graph. We denote by $V(G)$ and $E(G)$ the {\em vertex set} and the {\em edge set} of $G$, respectively. For an edge $\{u,v\}$ of $G$, we shortly say $uv$.  For $S\subseteq V(G)$, let $G[S]$ denote the subgraph of $G$ {\em induced by} $S$, and let $G-S=G[V(G)\setminus S]$. For convenience, we identify a subset $S\subseteq V(G)$ as $G[S]$.
A graph $H$ is an {\em induced subgraph} of $G$ if there exists a subset $S\subseteq V(G)$ such that $G[S]$ is isomorphic to $H$.
For two disjoint vertex sets $S_1$ and $S_2$ in $G$, $S_1$ is \textit{complete} to $S_2$ if every vertex in $S_1$ is adjacent to every vertex in $S_2$, and $S_1$ is \textit{anticomplete} to $S_2$ if there are no edges between $S_1$ and $S_2$. If $S_1=\{v\}$, we simply say that $v$ is complete or anticomplete to $S_2$ instead of $\{v\}$ is complete or anticomplete to $S_2$. A vertex $v$ {\em mixes on} a subset $S$ if $v$ is neither complete nor anticomplete to $S$. We say $v$ mixes on an edge $xy$ if $v$ mixes on $\{x,y\}$.

Let $v$ be a vertex of $G$ and $H$ be a subgraph of $V(G)$. The neighbors of $v$ in $G$ that are contained in $H$ is denoted by $N_{H}(v)$. So $N_G(v)$ is the set of all neighbors of $v$ in $G$. The \textit{degree} of $v$ in $G$, denoted by $d_G(v)$, is the number of neighbors of $v$ in $G$.  
Let $N_G[v]=N_G(v)\cup \{v\}$.
For a non-empty vertex subset $S$ of $G$, we define $N_H(S)=\left(\bigcup_{v\in S}N_H(v)\right)\setminus S$.
If the context is clear, we shall omit the subscript $G$, writing $N_G(v)$ as $N(v)$ and so on. We say that $v\in V(H)$ is {\em universal} in $H$ if $N_H[v]=V(H)$. A set $S\subseteq V(H)$ is universal in $H$ if each vertex in $S$ is universal in $H$.

For a positive integer $n$, let $[n]$ denote the set of positive integers at most $n$. For two integers $i,j\in [n]$, the operations $+$ and $-$ are the same as in the finite cyclic group $\mathbb{Z}_n$.
For a positive integer $n$, let $P_n$ denote the path on $n$ vertices where $V(P_n)=\{v_1,\ldots, v_n\}$ and $E(P_n)=\{v_i v_{i+1}:i\in [n-1]\}$. The {\em internal} of $P$ is $V(P)\setminus \{v_1,v_n\}$.
Let $C_n$ denote the cycle on $n$ vertices where $V(C_n)=\{v_1,\ldots, v_n\}$ and $E(C_n)=\{v_i v_{i+1}:i\in [n]\}$.
For clarity, we write $P_n$ as $v_1-v_2-\cdots-v_n$ and write $C_n$ as $v_1-v_2-\cdots-v_n-v_1$. For $v_i$ and $v_j$ in $P_n$ with $i<j$, we denote by $v_i-P-v_j$ the {\em subpath} of $P$ from $v_i$ to $v_j$.
We say that a graph $G$ is \textit{connected} if there is a path between $v$ to $w$ for every vertices $v,w\in V(G)$.
A \textit{component} of $G$ is a maximal connected subgraph of $G$.
The complete graph of order $n$ is denoted by $K_n$.
The \textit{clique number} of a graph $G$, denoted by $\omega(G)$, is the size of a largest clique in $G$. The \textit{independence number} of a graph $G$, denoted by $\alpha(G)$, is the size of a largest stable set in $G$.
A vertex subset $S$ of a connected graph $G$ is said to be a \textit{clique cutset} if $S$ is a clique and $G-S$ is not connected. If $H_1,\ldots,H_t$ are components of $G-S$ where $S$ is a clique cutset, then $G[V(H_i)\cup S]$ ($i\in [t]$) is called an {\em $S$-component}. When $G$ is disconnected, any component of $G$ is simply an $S$-component for $S=\emptyset$.

A graph $H$ is said to be a \textit{blowup} of a graph $G$ if $H$ is obtained from $G$ by replacing each vertex $v$ of $G$ by some (possibly empty) clique $C_v$ and adding all edges between $C_v$ and $C_w$ when $vw$ is an edge of $G$. If $C_v=\emptyset$ for some $v$, it means to remove $v$ from $G$. If $C_v\neq \emptyset$ for each $v\in V(G)$, then $H$ is called a {\em non-empty} blowup of $G$.
If $C_v$ is of the same size for all $v$, then $H$ is called an {\em equal-size blowup} of $G$.
For an integer $k\ge 4$, a {\em $k$-hyperhole} is a non-empty blowup of $C_k=v_1-v_2-\cdots-v_k-v_1$. We denote $C_k[s_1,\ldots,s_k]$ by a $k$-hyperhole where the clique $C_{v_i}$ substituting for $v_i$ has size $s_i$. 

\begin{lemma}[\cite{NS01}]\label{lem:hyperhole}
    For any $k$-hyperhole $H$, $\chi (H)=\max \left \{\omega(H), \ceil{\frac{|V(H)|}{\alpha(H)}} \right \}$.
\end{lemma}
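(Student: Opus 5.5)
The lower bound $\chi(H)\ge\max\{\omega(H),\lceil |V(H)|/\alpha(H)\rceil\}$ holds for every graph, since each color class is a stable set. So the content of Lemma~\ref{lem:hyperhole} is the upper bound. Write $H=C_k[s_1,\ldots,s_k]$ with cliques $Q_1,\ldots,Q_k$ arranged cyclically, $n=|V(H)|$, and $k\ge4$. A stable set meets each $Q_i$ in at most one vertex, and the indices it uses form a stable set of $C_k$. Hence $\alpha(H)=\lfloor k/2\rfloor$.

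If $k$ is even, $H$ is perfect: any odd hole of $H$ would have to be a hole of $C_k$ after choosing one vertex per clique, and any odd antihole would require such a structure as well. Therefore $\chi=\omega$ follows, for example by coloring $Q_1\cup Q_3\cup\cdots$ and $Q_2\cup Q_4\cup\cdots$ greedily. More concretely, give $Q_i$ the color interval starting at $0$ if $i$ is odd, and the interval starting at $\omega-s_i$ if $i$ is even; adjacent cliques then use disjoint intervals because $s_i+s_{i+1}\le\omega$.

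The main case is $k=2m+1$ odd, so $\alpha=m$. Set $t=\max\{\omega,\lceil n/m\rceil\}$; we must color $H$ with $t$ colors. I would use the standard circular interval approach. Identify the colors with $\mathbb{Z}_t$ and give each $Q_i$ a cyclic interval $I_i$ of length $s_i$, placing them consecutively around the circle: $I_1=[0,s_1)$, $I_2=[s_1,s_1+s_2)$, and so on, taken modulo $t$. Consecutive intervals are then disjoint as long as $s_i+s_{i+1}\le t$, which holds since $t\ge\omega$. The danger is non-adjacent cliques, which are allowed to share colors. So I would instead wrap the intervals with a skip: assign $I_i$ to start at position $p_i=\sum_{j<i}s_j$ wrapped modulo $t$. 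Walking around twice covers total length $n\le mt$, and with odd $k$ the sequence winds about $m$ times; each color is then used by at most $m$ cliques, which pairwise need to be non-adjacent.

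A cleaner route, which I would actually carry out, is the fractional/greedy one. Choose $t$ stable sets of size $m$, or possibly smaller, such that each vertex of $Q_i$ is covered exactly once. This is an integer decomposition problem. Label the vertices of $H$ in the cyclic order $Q_1,Q_2,\ldots,Q_k$ as $x_0,\ldots,x_{n-1}$, and give $x_j$ the color $j \bmod t$. Two vertices with the same color are then at distance a multiple of $t$ in this order. Vertices in the same or adjacent cliques lie within a window of length $s_i+s_{i+1}\le\omega\le t$, so they never collide, except for the wraparound pair $Q_k,Q_1$. For that pair I would need $n\le$ some bound, or a correction.

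The wraparound is the main obstacle. To handle it, I would not use color $j\bmod t$ for every index. Instead, I would use the stable-set count $n\le mt$ together with $s_k+s_1\le t$: choose the starting offset (that is, rotate the labeling to begin at the clique $Q_i$ maximizing some slack) and pad with "phantom" vertices so that the sequence length becomes exactly $mt$. Under the modular coloring, the color classes are then sets of $m$ positions spaced $t$ apart. These correspond to picks from $m$ distinct cliques, which must be pairwise non-consecutive, including the pair $Q_k,Q_1$ across the wrap, because total length $mt$ means that going around once shifts by exactly $0 \bmod t$. Verifying non-consecutiveness reduces to checking that no color appears in both $Q_i$ and $Q_{i+1}$, and this follows from $s_i+s_{i+1}\le t$ once the phantom padding is placed inside the largest gap. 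I would check this last placement carefully, as that is where the argument can fail.
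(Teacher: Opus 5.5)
The paper does not prove this lemma; it cites \cite{NS01}, so your argument must stand on its own. Several parts are sound: the lower bound, $\alpha(H)=\lfloor k/2\rfloor$, your explicit interval coloring for even $k$, and the final idea of coloring a padded cyclic sequence of length $mt$ by position modulo $t$. The two earlier attempts in the odd case (the ``skip'' intervals and the stable-set decomposition) are not arguments and can be dropped.

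The genuine gap is the step you flagged. Suppose $p$ phantom vertices are inserted between $Q_i$ and $Q_{i+1}$. Then $Q_i$, the phantoms and $Q_{i+1}$ form a window of length $s_i+p+s_{i+1}$, and two real vertices in it get the same color as soon as this length exceeds $t$ (only $p$ modulo $t$ matters). So the relevant condition is not $s_i+s_{i+1}\le t$. It is $(mt-n)\bmod t\le t-s_i-s_{i+1}$ for the gap receiving the padding. This can fail for every choice of gap, hence for every rotation of the labeling. For example, $H=C_7[5,5,1,5,1,5,3]$ has $m=3$, $n=25$, $\omega=10$ and $t=10$. So $5$ phantoms are needed, but the slacks $t-s_i-s_{i+1}$ are $0,4,4,4,4,2,2$.

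The missing idea is to spread the padding over all gaps.
\begin{itemize}
\item Put $r_i$ phantoms between $Q_i$ and $Q_{i+1}$ (indices modulo $k$), with $0\le r_i\le t-s_i-s_{i+1}$ and $\sum_i r_i=mt-n$. This is possible because the total slack is $kt-2n$. The inequality $kt-2n\ge mt-n$ is equivalent to $n\le (m+1)t$, which holds since $n\le mt$.
\item Color position $j\in\mathbb{Z}_{mt}$ by $j \bmod t$; this is well defined because $t\mid mt$.
\item Two real vertices of $Q_i\cup Q_{i+1}$ lie in a cyclic window of length $s_i+r_i+s_{i+1}\le t$. Going forward from one to the other, their positions differ by some $d\in\{1,\dots,t-1\}$. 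Since $t\mid mt$, their colors differ by $d\bmod t\neq 0$. The pair $Q_k,Q_1$ is no longer special.
\end{itemize}
In the example, $r_2=4$ and $r_3=1$ give a proper $10$-coloring.

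With this change your proof is complete. The same construction also handles even $k$: there $n\le \frac{k}{2}\omega$, so $t=\omega$, and the perfection remark becomes unnecessary.
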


\medskip
We first present two simple properties of $C_4$-free graphs.

\begin{lemma}\label{lem:nbd containment for an edge}
    Let $G$ be a $C_4$-free graph. For any clique $K$ of $G$ and any edge $vw\in E(G)$ with $v,w\notin K$, $N_K(v)\subseteq N_K(w)$ or $N_K(w)\subseteq N_K(v)$. 
\end{lemma}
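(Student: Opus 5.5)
The plan is a direct argument by contradiction. Suppose the conclusion fails. Then neither neighborhood is contained in the other, so there exist vertices $x\in N_K(v)\setminus N_K(w)$ and $y\in N_K(w)\setminus N_K(v)$. Necessarily $x\neq y$: $x$ is adjacent to $v$ but not to $w$, while $y$ is adjacent to $w$ but not to $v$.

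Next I will check that the four vertices $v,w,y,x$ induce a $4$-cycle, by going through all six pairs.
\begin{itemize}
\item $vw$ is an edge by hypothesis.
\item $wy$ and $vx$ are edges by the choice of $x$ and $y$.
\item $xy$ is an edge because $x,y$ are distinct vertices of the clique $K$.
\item The diagonals $vy$ and $wx$ are non-edges, again by the choice of $x$ and $y$.
\end{itemize}
All four vertices are distinct, since $v,w\notin K$ while $x,y\in K$. Hence $v-w-y-x-v$ is an induced $C_4$ in $G$, contradicting that $G$ is $C_4$-free. Therefore $N_K(v)\subseteq N_K(w)$ or $N_K(w)\subseteq N_K(v)$.

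There is no real obstacle here. The only points to verify are the distinctness of the four vertices, which uses the hypothesis $v,w\notin K$, and that exactly the two diagonals are missing.
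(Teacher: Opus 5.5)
Your proof is correct and is the same as the paper's: you take a private neighbor of each endpoint inside $K$, and these two vertices together with $v$ and $w$ form an induced $C_4$. The paper writes this cycle as $a-v-w-b-a$. Your explicit checks of distinctness and of the two missing diagonals are details the paper leaves implicit.
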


\begin{proof}
    Suppose not. Then there exist vertices $a\in N_K(v)\setminus N_K(w)$ and $b\in N_K(w)\setminus N_K(v)$. It follows that $a-v-w-b-a$ is an induced $C_4$. 
\end{proof}

\begin{lemma}\label{lem:P3 to P4}
    Let $G$ be a $C_4$-free graph and $a-b-c$ be an induced $P_3$ in $G$. If $c$ has a neighbor $d$ not adjacent to $b$, then $d$ is not adjacent to $a$.
\end{lemma}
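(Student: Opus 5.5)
We argue by contradiction, directly from the definition of a $C_4$. Let $a-b-c$ be an induced $P_3$, so $ab,bc\in E(G)$ and $ac\notin E(G)$. Let $d$ be a neighbor of $c$ with $db\notin E(G)$, and suppose for contradiction that $da\in E(G)$. Note first that $d$ is distinct from $a$, $b$ and $c$. It differs from $c$ because it is a neighbor of $c$. It differs from $b$ because $b$ is adjacent to $c$ while $d$ is not adjacent to $b$. It differs from $a$ because $a$ is not adjacent to $c$ while $d$ is.

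Consider the cycle $a-b-c-d-a$. Its four edges $ab$, $bc$, $cd$ and $da$ are all present, by hypothesis and by our assumption. The only possible chords of a $4$-cycle are the two diagonals, here $ac$ and $bd$. Both are absent: $ac$ because $a-b-c$ is induced, and $bd$ by the choice of $d$. So $\{a,b,c,d\}$ induces a $C_4$ in $G$. This contradicts the assumption that $G$ is $C_4$-free, and hence $d$ is not adjacent to $a$.

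The argument is a one-step check with no real obstacle. The only care needed is to confirm that the four vertices are distinct and that both diagonals are non-edges, which is done above.
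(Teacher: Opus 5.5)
Your proof is correct and is the same as the paper's. The paper simply notes that if $da\in E(G)$ then $a-b-c-d-a$ is an induced $C_4$. Your added checks of distinctness and of the two absent diagonals $ac$ and $bd$ spell out exactly what that one-line argument leaves implicit.
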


\begin{proof}
    If not, then $a-b-c-d-a$ is an induced $C_4$.
\end{proof}

Throughout the paper, we will use Lemmas \ref{lem:nbd containment for an edge} and \ref{lem:P3 to P4} without referring to them.
A graph is {\em chordal} if it has no induced cycle of length at least 4. A vertex $v$ in a graph $G$ is {\em simplicial in} $G$ if $N_G[v]$ is a clique. In this case, we also say that $v$ is a simplicial vertex of $G$. 
The following is a well-known result for chordal graphs.

\begin{lemma}[\cite{Dirac61}]\label{lem:chordal has simplicial vertices}
    For every chordal graph $G$ that is not a complete graph, there are two non-adjacent simplicial vertices of $G$.
\end{lemma}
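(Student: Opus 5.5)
We prove Dirac's lemma by induction on $|V(G)|$, using minimal separators. The base case is trivial: a chordal non-complete graph has at least two vertices.

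First suppose $G$ is disconnected. Each component is chordal. If a component is complete, any of its vertices is simplicial in $G$. Otherwise, by induction it has a simplicial vertex (in fact two), and simpliciality is preserved in $G$ because neighbourhoods lie inside the component. Choosing one simplicial vertex from each of two different components gives two non-adjacent simplicial vertices.

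Now suppose $G$ is connected and not complete, and pick non-adjacent vertices $a,b$. Let $S$ be a minimal $(a,b)$-separator. Let $A$ and $B$ be the components of $G-S$ containing $a$ and $b$. By minimality, every vertex of $S$ has a neighbour in $A$ and a neighbour in $B$.

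The key step is to show that $S$ is a clique. Suppose $x,y\in S$ are non-adjacent. Take a shortest $x$--$y$ path $P_A$ whose internal vertices lie in $A$, and similarly $P_B$ through $B$. Each has at least one internal vertex, and there are no edges between $A$ and $B$. Gluing the two paths gives a cycle of length at least $4$. It has no chords: chords inside $P_A$ or $P_B$ are excluded by minimality, chords between the two interiors are excluded since $A$ is anticomplete to $B$, and $xy\notin E(G)$. This induced cycle contradicts chordality, so $S$ is a clique.

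Next consider $G_A=G[A\cup S]$, which is chordal and smaller than $G$ since $b\notin G_A$. If $G_A$ is complete, any vertex of $A$ is simplicial in $G_A$. Otherwise, induction gives two non-adjacent simplicial vertices of $G_A$; since $S$ is a clique they cannot both lie in $S$, so one of them lies in $A$. In either case we obtain a vertex $u\in A$ that is simplicial in $G_A$. Because $N_G(u)\subseteq A\cup S$, the vertex $u$ is simplicial in $G$. Symmetrically we obtain a simplicial vertex $w\in B$. Since $A$ is anticomplete to $B$, the vertices $u$ and $w$ are non-adjacent.

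The main obstacle is the clique-separator step, specifically verifying that the glued cycle is induced. This is exactly where minimality of $S$ and the choice of shortest paths are needed.
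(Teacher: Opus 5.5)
Your proof is correct: the separator step (the two shortest paths through $A$ and $B$ glue into an induced cycle of length at least $4$, so a minimal $(a,b)$-separator $S$ is a clique), the use of $N_G(u)\subseteq A\cup S$ to lift simpliciality from $G[A\cup S]$ to $G$, and the disconnected case are all handled properly. The paper gives no proof of this lemma and simply cites Dirac; your minimal-separator induction is the standard argument behind that citation, so there is no alternative route in the paper to compare against.
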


Lemma \ref{lem:chordal has simplicial vertices} implies that every chordal graph $G$ has $\chi(G)=\omega(G)$. Next, we define the notion of minimal simplicial vertices.

\begin{definition}[Minimal simplicial vertices]
    {\em Let $G$ be a graph, and $X$ and $Y$ be two disjoint subsets of $V(G)$.
    For a simplicial vertex $s'$ of $X$, we say that $s$ is a {\em $(X,Y)$-minimal simplicial vertex arising from} $s'$ if $s\in N_X[s']$ is simplicial in $X$ and $N_Y(s)$ is minimal (under set inclusion) over all vertices in $N_X[s']$ simplicial in $X$. We say $s$ is {\em $(X,Y)$-minimal simplicial vertex} if it is a $(X,Y)$-minimal simplicial vertex arising from some simplicial vertex of $X$.}
\end{definition}

The next lemma says that two non-adjacent simplicial vertices give rise to two non-adjacent minimal simplicial vertices.

\begin{lemma}[Two non-adjacent minimal simplicail vertices]\label{lem:two non-adjacent simplicail verteices}
    If $s_j$ is a $(X,Y)$-minimal simplicial vertex arising from a simplicial vertex $s'_j$ of $X$ for $j=1,2$ with $s'_1s'_2\notin E(G)$, then $s_1\neq s_2$ and $s_1s_2\notin E(G)$.
\end{lemma}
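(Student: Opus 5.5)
The plan is to use only the definition of minimal simplicial vertex and the elementary fact about simplicial vertices: if $s$ is simplicial in $X$, then $N_X[s]$ is a clique. The argument is essentially a one-line check; we must just be careful that everything happens inside $G[X]$, since adjacency in $X$ is adjacency in $G$ for vertices of $X$.

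By definition, $s_j\in N_X[s'_j]$ for $j=1,2$, so either $s_j=s'_j$ or $s_j$ is a neighbor of $s'_j$ in $X$. We first prove $s_1s_2\notin E(G)$ and $s_1\neq s_2$ together by contradiction. Suppose $s_1=s_2$ or $s_1s_2\in E(G)$. Then $s_1\in N_X[s_2]$. Since $s_2\in N_X[s'_2]$ and $s'_2$ is simplicial in $X$, the set $N_X[s'_2]$ is a clique. Hence $s'_2\in N_X[s_2]$, i.e.\ $s'_2=s_2$ or $s'_2s_2\in E(G)$. The key step is to exploit that $s_2$ is itself simplicial in $X$: the closed neighborhood $N_X[s_2]$ is a clique containing both $s'_2$ and $s_1$. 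Hence $s_1\in N_X[s'_2]$.

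We now repeat the argument on the other side. The set $N_X[s'_1]$ is a clique containing $s_1$, so $s'_1\in N_X[s_1]$. Also $s_1$ is simplicial in $X$, so $N_X[s_1]$ is a clique containing both $s'_1$ and $s'_2$, the latter because $s_1\in N_X[s'_2]$ means $s'_2\in N_X[s_1]$. This forces $s'_1=s'_2$ or $s'_1s'_2\in E(G)$. The hypothesis $s'_1s'_2\notin E(G)$ excludes adjacency, and equality is excluded implicitly, since ``non-adjacent'' vertices are taken as distinct, as in Lemma~\ref{lem:chordal has simplicial vertices}. This contradiction proves the lemma.

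The only delicate point is the bookkeeping of the equality cases, where $N_X[\cdot]$ includes the vertex itself. Minimality of $N_Y(s_j)$ plays no role; only the simpliciality of $s_j$ and $s'_j$ in $X$ is used.
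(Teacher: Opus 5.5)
Your argument is correct and is essentially the paper's proof. Both use only the simpliciality of $s_1$ and $s_2$. The paper first shows via $s_1$ that $s_1$ is not adjacent to $s'_2$, and then via $s_2$ that $s_1s_2\notin E(G)$. You run the same two steps in the reverse order as a single contradiction, and the closed neighborhoods $N_X[\cdot]$ absorb the paper's case split on whether $s_j=s'_j$.

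Two minor remarks. First, $s'_j\in N_X[s_j]$ follows from symmetry of adjacency alone, so the clique property of $N_X[s'_j]$ is not needed there. Second, the paper also tacitly assumes $s'_1\neq s'_2$, which you flagged.
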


\begin{proof}
    Note that $s_1$ may be $s'_1$ and $s_2$ may be $s'_2$. If $s_1=s'_1$ and $s_2=s'_2$, then we are done. So we may assume that $s_1\neq s'_1$. As $s'_1$ mixes on $\{s_1,s'_2\}$, $s_1\neq s'_2$. Since $s_1$ is simplicial in $X$ and $s'_1s'_2\notin E(G)$, $s_1$ is not adjacent to $s'_2$ and thus $s_1\neq s_2$. If $s_2=s'_2$, we are done. So $s_2\neq s'_2$. If $s_1s_2\in E(G)$, then $s'_1,s_2$ are two neighbors of $s_2$ but they are not adjacent. This contradicts that $s_2$ is a simplicial vertex of $X$. 
\end{proof}

The next lemma is a useful property of minimal simplicial vertex which will be used frequently.

\begin{lemma}\label{lem:minimal simplicial vertex}
    Let $G$ be a $C_4$-free graph and let $X,Y\subseteq V(G)$ be disjoint subsets of $V(G)$ with $Y$ being a clique. For any $(X,Y)$-minimal simplicial vertex $s$, $N_{X\cup Y}(s)$ is a clique or there is an induced $P_4=a-s-c-d$ with $a\in Y$ and $c,d\in X$.
\end{lemma}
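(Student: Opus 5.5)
Let $s$ be an $(X,Y)$-minimal simplicial vertex arising from a simplicial vertex $s'$ of $X$. Since $s$ is simplicial in $X$, $N_X(s)$ is a clique, and $Y$ is a clique by hypothesis. So if $N_{X\cup Y}(s)$ is not a clique, there are non-adjacent vertices $a\in N_Y(s)$ and $c\in N_X(s)$. Fix such a pair. The aim is to find $d\in X$ adjacent to $c$ and non-adjacent to both $s$ and $a$; then $a-s-c-d$ is an induced $P_4$, because $as,sc\in E(G)$, $ac\notin E(G)$, $cd\in E(G)$ and $sd,ad\notin E(G)$.

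The key step uses the minimality of $N_Y(s)$. Consider the clique $N_X[s]$, which contains $c$. If $c$ is simplicial in $X$, then $c\in N_X[s]\subseteq N_X[s']$: indeed $s\in N_X[s']$ and $N_X[s]$ is a clique containing $s'$, so every $X$-neighbour of $s$ is $s'$ or adjacent to $s'$. Now $a\in N_Y(s)\setminus N_Y(c)$, so $N_Y(c)\neq N_Y(s)$. By minimality of $N_Y(s)$ we cannot have $N_Y(c)\subsetneq N_Y(s)$. Since $sc$ is an edge with $s,c\notin Y$, Lemma~\ref{lem:nbd containment for an edge} applied to the clique $Y$ gives $N_Y(c)\subseteq N_Y(s)$ or $N_Y(s)\subseteq N_Y(c)$. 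The first fails, and the second is impossible because $a\notin N_Y(c)$. This contradiction shows $c$ is not simplicial in $X$.

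Hence $c$ has two non-adjacent $X$-neighbours. Since $N_X[s]$ is a clique, at least one of them, say $d$, lies outside $N_X[s]$. So $d\in X$, $cd\in E(G)$ and $sd\notin E(G)$. Then $a-s-c$ is an induced $P_3$ and $c$ has a neighbour $d$ not adjacent to $s$, so Lemma~\ref{lem:P3 to P4} gives $ad\notin E(G)$. This completes the induced $P_4$ $a-s-c-d$ with $a\in Y$ and $c,d\in X$.

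The main point to check is that $c$ really lies in $N_X[s']$, so that it competes in the minimality. This holds because $s$ is simplicial and adjacent or equal to $s'$.
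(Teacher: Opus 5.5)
Your proof is correct and is essentially the paper's argument: you rule out $c$ being simplicial in $X$ using the minimality of $N_Y(s)$ (you phrase the resulting $C_4$ through Lemma~\ref{lem:nbd containment for an edge}, where the paper writes it out), and then you take a neighbour $d$ of $c$ outside $N_X[s]$. You also justify two points the paper passes over quickly: that $c\in N_X[s']$, so the minimality condition applies to $c$, and, via Lemma~\ref{lem:P3 to P4}, that $ad\notin E(G)$.
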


\begin{proof}
    Suppose that $s$ is a $(X,Y)$-minimal simplicial vertex arising from $s'\in X$.
    Suppose that $N_{X\cup Y}(s)$ is not clique. 
    Let $v,v'\in N_{X\cup Y}(s)$ be non-adjacent. 
    Since $s$ is simplicial in $X$ and $Y$ is a clique, we may assume that $v\in X$ and $v'\in Y$. 
    Suppose first that $v$ is simplicial in $X$. 
    Then $v\in N_X[v]=N_X[s]=N_X[s']$. By the choice of $s$, $v$ has a neighbor $w\in Y$ which is not adjacent to $s$. 
    Then $v'-s-v-w-v'$ is an induced $C_4$. So $v$ is not simplicial in $X$ and thus has a neighbor $w\in X\setminus N[s]$. 
    Now $v'-s-v-w$ is a desired induced $P_4$.
\end{proof}

The next lemma allows to conclude that certain subgraphs are $P_4$-free.

\begin{lemma}[The $P_4$-free lemma]\label{lem:P_4-free lemma}
    Let $G$ be $(C_4,C_6)$-free graph and $u\in V(G)$. If $X$ and $Y$ are disjoint subsets of $V(G)\setminus \{u\}$ satisfies 
    \begin{itemize}
        \item[$\bullet$] $u$ is complete to $X$ and anticomplete to $Y$, 
        \item[$\bullet$] every two non-adjacent vertices in $X$ have their non-empty neighborhoods in $Y$ complete to each other, and
        \item[$\bullet$] there is no induced $P_5=a-b-c-d-e$ with $e\in Y$ and $a,b,c,d\in X$,
    \end{itemize}
    then $X$ is $P_4$-free.
\end{lemma}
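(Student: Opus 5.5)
We argue by contradiction. Suppose $X$ contains an induced $P_4=a-b-c-d$; its non-edges are $ac$, $ad$ and $bd$. We will find an induced $C_4$. Throughout, the only facts about $u$ we use are that it is adjacent to all of $a,b,c,d$ and to no vertex of $Y$.

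\emph{Caveat on the hypotheses.} The argument needs $a$ and $d$ to have neighbors in $Y$, and the statement as quoted does not guarantee this. Without it the lemma is false: take $X$ an induced $P_4$, $Y=\emptyset$ and $u$ complete to $X$. Then $G[X\cup\{u\}]$ is a $C_5$, which is $(C_4,C_6)$-free, so every hypothesis holds vacuously, yet $X$ is not $P_4$-free. So I will assume that every vertex of $X$ has a neighbor in $Y$, which I expect is implicit where the lemma is applied. If the intended hypothesis is different, the first point to revisit is where $y_a$ and $y_d$ are chosen below.

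\emph{Step 1: disjointness.} If $x,x'\in X$ are non-adjacent, then no $y\in Y$ is adjacent to both. Otherwise $u-x-y-x'-u$ is an induced $C_4$: indeed $uy\notin E(G)$ because $u$ is anticomplete to $Y$, and $xx'\notin E(G)$. Applying this to the three non-edges of the path gives
\[
N_Y(a)\cap N_Y(c)=\emptyset,\qquad N_Y(a)\cap N_Y(d)=\emptyset,\qquad N_Y(b)\cap N_Y(d)=\emptyset .
\]

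\emph{Step 2: locating the neighbors.} Pick $y_a\in N_Y(a)$. Consider $d-c-b-a-y_a$; by the third hypothesis it is not an induced $P_5$, so $y_a$ is adjacent to one of $b,c,d$. Step 1 excludes $c$ and $d$, so $y_a b\in E(G)$, while $y_a c, y_a d\notin E(G)$. Symmetrically, pick $y_d\in N_Y(d)$. Then $y_d c\in E(G)$, while $y_d a, y_d b\notin E(G)$.

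\emph{Step 3: the $C_4$.} The vertices $y_a,y_d$ are distinct by Step 1. Since $a,d$ are non-adjacent with non-empty neighborhoods in $Y$, the second hypothesis makes $y_a y_d$ an edge. Now $y_a-b-c-y_d-y_a$ is a $4$-cycle. Its diagonals are $y_a c$ and $b y_d$, and both are non-edges by Step 2. So it is an induced $C_4$, a contradiction. Hence $X$ is $P_4$-free.

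The argument never uses $C_6$-freeness. The main obstacle is not the case analysis, which is short, but the caveat above: the existence of $y_a$ and $y_d$ is exactly the input the stated hypotheses do not supply, and without it the lemma fails.
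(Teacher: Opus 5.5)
Your proof is correct and follows the paper's argument closely. The paper also takes $a'\in N_Y(a)$ and $d'\in N_Y(d)$; its ``$a'\in X$'' is a typo for $Y$. It gets $a'd'\in E(G)$ from the second hypothesis and the non-edges $a'c,a'd,d'a,d'b$ from $C_4$-freeness via $u$, exactly as in your Step~1, and it ends with the same induced $C_4$ $a'-b-c-d'-a'$.

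The only difference is in the middle. The paper splits on $a'b,d'c$ and kills the case where both are non-edges with the induced $C_6$ $a-b-c-d-d'-a'-a$. You instead apply the $P_5$ hypothesis from each end to force both edges at once. In that case $a-b-c-d-d'$ is already an induced $P_5$ with $d'\in Y$, so your remark is right: $C_6$-freeness is superfluous, and your version is a small genuine sharpening.

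On your caveat, the second hypothesis is meant to assert that non-adjacent vertices of $X$ have non-empty neighborhoods in $Y$. Compare ``non-empty, disjoint and complete neighborhoods'' in Lemma~\ref{lem:complete subgraphs}; the paper's proof uses exactly this when it picks $a',d'$. Since $a$ and $d$ are non-adjacent, this reading already supplies your $y_a,y_d$. So you do not need the stronger assumption that every vertex of $X$ has a neighbor in $Y$, although it does hold in the paper's applications.

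Under the purely conditional reading your counterexample is valid, but one detail is off. $G[X\cup\{u\}]$ there is a gem (a $P_4$ plus a dominating vertex), not a $C_5$; it is still $(C_4,C_6)$-free.
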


\begin{proof}
    Suppose that $a-b-c-d$ is an induced $P_4$ in $X$. Let $a'\in X$ be a neighbor of $a$ and $d'\in X$ be a neighbor of $d$. Then $a'd'\in E(G)$. Since $G$ is $C_4$-free, $a'c,a'd,d'a,d'b\notin E(G)$. If $a'b,d'c\in E(G)$, then $a'-b-c-d'-a'$ is an induced $C_4$. If $a'b,d'c\notin E(G)$, then $a-b-c-d-d'-a'-a$ is an induced $C_6$. So we may assume that $a'b\in E(G)$ and $d'c\notin E(G)$. Then $a-b-c-d-d'$ is an induced $P_5$ violating the last condition of $X$ and $Y$. 
\end{proof}

Next we define certain special subgraphs that are useful for induction.

\begin{definition}[$(p,q)$-good subgraphs]
{\em 
    For two positive integers $p$ and $q$, an induced subgraph $H$ of a graph $G$ is said to be a \emph{$(p,q)$-good subgraph} if
\begin{enumerate}[(1)]
    \item for every maximal clique $K$ of $G$, $K$ contains at least $p - (\omega(G) -\abs{K})$ vertices from $H$ and
    \item $H$ is $q$-colorable.
\end{enumerate}
}
\end{definition}

Note that a $(1,1)$-good subgraph is a stable set whose removal decreases the clique number by at least one. This kind of stable set is called {\em good stable set} in the literature. In particular, a universal vertex is a good stable set and so is a $(1,1)$-good subgraph.

\begin{lemma}\label{lem:goodsubgraph}
Let $p$ and $q$ be positive integers.
If a graph $G$ contains a $(p,q)$-good subgraph $H$ and $\chi(G-V(H))\le \ceil{\frac{q}{p}\omega(G-V(H))}$, then $\chi(G)\le \ceil{\frac{q}{p}\omega(G)}$.
\end{lemma}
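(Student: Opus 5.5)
The plan is to color $V(H)$ and $G-V(H)$ with disjoint palettes and add the counts. Write $\omega=\omega(G)$ and $G'=G-V(H)$.

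First, bound the clique number of $G'$: we aim for $\omega(G')\le \omega-p$. Take any clique $K'$ of $G'$ and extend it to a maximal clique $K$ of $G$. By condition (1) in the definition of a $(p,q)$-good subgraph, $K$ contains at least $p-(\omega-|K|)$ vertices of $H$. Since $K'\subseteq K\setminus V(H)$, this gives
\[
|K'|\le |K|-\bigl(p-(\omega-|K|)\bigr)=\omega-p+\bigl(2|K|-2\omega\bigr)\cdot 0+ (\omega-|K|)+|K|-\omega-p+\omega-|K| .
\]
Simplifying directly, $|K'|\le |K|-p+\omega-|K|=\omega-p$. Here we use that if $p-(\omega-|K|)\le 0$ the condition is vacuous, but then $|K|\le \omega-p$ anyway, so $|K'|\le\omega-p$ holds in both cases. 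Hence $\omega(G')\le \max\{\omega-p,0\}$.

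Second, assume $\omega(G')\ge 1$; otherwise $G'$ is empty and $\chi(G)\le q\le\ceil{\frac qp\omega}$ because $\omega\ge p$. That inequality holds since some maximal clique must meet $H$ in at least $p-(\omega-\omega)=p$ vertices, taking $K$ to be a maximum clique. By hypothesis and monotonicity of the ceiling,
\[
\chi(G')\le \ceil{\tfrac qp\omega(G')}\le \ceil{\tfrac qp(\omega-p)}=\ceil{\tfrac qp\omega}-q,
\]
where the last equality holds because $q$ is an integer. Coloring $H$ with $q$ new colors by condition (2) then gives $\chi(G)\le \ceil{\frac qp\omega}$.

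The only delicate points are the vacuous case of condition (1) and the edge case where $G'$ is empty, both handled above. There is no real obstacle beyond this bookkeeping.
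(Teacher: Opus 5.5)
Your proof is correct and follows the paper's argument. Extend a clique of $G-V(H)$ to a maximal clique $K$ of $G$ to get $\omega(G-V(H))\le\omega(G)-p$ from condition (1). Then color $H$ with $q$ extra colors and use $\ceil{\frac{q}{p}(\omega-p)}=\ceil{\frac{q}{p}\omega}-q$.

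Your separate handling of the vacuous case and of an empty $G-V(H)$ is harmless but unnecessary, since $|K\setminus V(H)|\le\omega-p$ holds in every case.

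One fix: your first displayed chain is wrong as written, because its right-hand side simplifies to $2\omega-2p-|K|$, not $\omega-p$. Delete it and keep the next line, $|K'|\le|K|-p+\omega-|K|=\omega-p$, which is the correct computation.
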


\begin{proof}

We first show that $\omega(G-V(H))\le \omega(G)-p$.
Let $K^*$ be a maximum clique in $G-V(H)$ and $K$ be a maximal clique of $G$ containing $K^*$. Since $H$ is $(p,q)$-good, $|K\cap H|\ge p-(\omega(G)-|K|)$. So 
 \begin{align*} 
    |K^*| & = |K| - |K\cap H| \quad \quad \quad \quad \quad \quad (K^* \text{ is a maximum clique in } G-V(H))
    \\[3pt]
     & \le    |K| - (p-(\omega(G)-|K|))
    \\[3pt] 
     & = \omega(G) - p.
\end{align*}
It follows that
 \begin{align*} 
    \chi(G)& \le \chi(G-V(H)) + \chi (H)
    \\[3pt]
     & \le  \ceil{\frac{q}{p}\omega(G-V(H))} + q \quad \quad (\text{assumption})
    \\[3pt] 
     & \le \ceil{\frac{q}{p}(\omega(G)-p)} + q
     \quad \quad \quad (\omega(G-V(H))\le \omega(G)-p))
    \\[3pt]
    & =\ceil{\frac{q}{p}\omega(G)}.
\end{align*}
This completes the proof.
\end{proof}

\begin{lemma}[Reducible structures]\label{lem:reducible structure}
    Let $G$ be a connected graph and $f:\mathbb{N}\rightarrow \mathbb{N}$ be a function with $f(n+1)\ge f(n)+1$ for every $n\in \mathbb{N}$. 
    Then the following holds.
    \begin{itemize}
        \item[$(1)$]   If $G$ has a universal vertex $u$ and $\chi(G-u)\le f(\omega(G-u))$, then $\chi(G)\le f(\omega(G))$.
        \item[$(2)$] If $G$ has a clique cutset $K$ with $G_1,\ldots,G_t$ being $K$-components such that $\chi(G_i)\le f(\omega(G_i))$ for $i\in [t]$, then $\chi(G)\le f(\omega(G))$.
        \item[$(3)$]   If $G$ has a vertex $u$ with $d(u)\le f(\omega(G))-1$ and $\chi(G-u)\le f(\omega(G-u))$, then $\chi(G)\le f(\omega(G))$.  
    \end{itemize}   
\end{lemma}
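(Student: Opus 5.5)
All three parts come from directly building a coloring of $G$ with at most $f(\omega(G))$ colors from the assumed colorings of the smaller graphs. The only properties of $f$ used are $f(n+1)\ge f(n)+1$, which in particular makes $f$ nondecreasing, and the observation that $\omega$ can only drop when we pass to an induced subgraph. I would treat the parts in the order stated.

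For (1), let $u$ be universal. Every maximum clique of $G-u$ extends to a clique of $G$ by adding $u$, so $\omega(G)=\omega(G-u)+1$. Color $G-u$ with $f(\omega(G-u))$ colors and give $u$ one new color. This is proper because $u$ is the only vertex with the new color. The number of colors used is
\[
f(\omega(G-u))+1 = f(\omega(G)-1)+1 \le f(\omega(G)),
\]
where the last step uses the growth hypothesis on $f$.

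For (2), each $K$-component $G_i$ is an induced subgraph of $G$, so $\omega(G_i)\le\omega(G)$. By monotonicity, each $G_i$ has a proper coloring $c_i$ using at most $f(\omega(G))$ colors. Since $K$ is a clique, every $c_i$ gives distinct colors to the vertices of $K$. We may therefore permute the colors of each $c_i$ so that all the $c_i$ agree on $K$. Define $c$ on $G$ by taking $c_i$ on $G_i$; this is well defined because the $G_i$ pairwise intersect exactly in $K$. It is proper because every edge of $G$ lies inside some $G_i$: there are no edges between different components of $G-K$. The case $K=\emptyset$, where $G$ is disconnected, is covered by the same argument.

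For (3), $\omega(G-u)\le\omega(G)$, so $G-u$ can be colored with at most $f(\omega(G))$ colors. The vertex $u$ has at most $f(\omega(G))-1$ neighbors, so at least one of the $f(\omega(G))$ colors is missing from its neighborhood, and we assign that color to $u$.

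None of the steps is a real obstacle. The only points needing care are the equality $\omega(G)=\omega(G-u)+1$ in (1), which is exactly where the hypothesis $f(n+1)\ge f(n)+1$ is needed, and the color-permutation step in (2) that makes the colorings agree on the clique cutset.
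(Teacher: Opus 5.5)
Your proof is correct and follows the paper's argument. Parts (1) and (3) are identical to the paper's. In part (2) the paper cites the standard fact $\chi(G)=\max_i\chi(G_i)$ for $K$-components of a clique cutset, whereas you prove it directly by permuting colors so the colorings agree on $K$; combined with $\omega(G_i)\le\omega(G)$ and the monotonicity of $f$, this gives the same chain of inequalities.
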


\begin{proof}
    We prove one by one.

    \medskip
    \noindent (1) It is clear that $\chi(G)\le \chi(G-u)+1$ and $\omega(G-u)=\omega(G)-1$. So
     \begin{align*} 
     \chi(G) & \le f(\omega(G-u))+1 
    \\[3pt]
      & =     f(\omega(G)-1)+1
    \\[3pt] 
     & \le (f(\omega(G))-1) + 1 \quad \quad \quad (f(n+1)\ge f(n)+1)
         \\[3pt] 
         & =f(\omega(G)).
    \end{align*}

    \medskip
    \noindent (2) It is known \cite{Ta85} that $\chi(G)=\max\limits_{i\in [t]}\chi(G_i)$. So
    \begin{align*} 
     \chi(G) & = \max_{i\in [t]}\chi(G_i)
    \\[3pt]
      &  \le \max_{i\in [t]} f(\omega(G_i)) 
    \\[3pt] 
     &  \le f(\omega(G)). \quad \quad \quad \quad \quad \quad (f(n+1)\ge f(n)+1)
         \\[3pt] 
    \end{align*}

    \noindent (3) It follows that $\chi(G-u)\le f(\omega(G-u))\le f(\omega(G))$ since $f$ is increasing. So $G-u$ has a $f(\omega(G))$-coloring. 
    Since $u$ has degree at most $f(\omega(G))-1$, $N(u)$ are colored with at most $f(\omega(G))-1$ colors. So we can assign a color from $[f(\omega(G))]$ to $u$. This shows that $\chi(G)\le f(\omega(G))$.
\end{proof}

Let $f:\mathbb{N}\rightarrow \mathbb{N}$ be a function with $f(n+1)\ge f(n)+1$ for every $n\in \mathbb{N}$. A vertex $v$ of $G$ is said to be \textit{$f$-small} if $d_G(v)\le f(\omega(G))-1$. We say $v$ is {\em small} if it is $f$-small for $f(n)=\ceil{\frac{5}{4}n}$.
In particular, any simplicial vertex is small.
We say that a graph $G$ is {\em basic} if it is connected and has no $(4,5)$-good subgraphs, no $(1,1)$-good subgraphs, no clique cutsets and no small vertices. The next lemma follows immediately from Lemmas \ref{lem:goodsubgraph} and \ref{lem:reducible structure}.

\begin{lemma}\label{lem:basic graphs}
    Let $\mathcal{G}$ be a graph class. If every basic graph $G'$ in $\mathcal{G}$ has $\chi(G')\le \ceil{\frac{5\omega(G')}{4}}$, then every graph $G$ in $\mathcal{G}$ has $\chi(G)\le \ceil{\frac{5\omega(G)}{4}}$.
\end{lemma}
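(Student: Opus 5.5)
We argue by contradiction on a minimal counterexample. Let $f(n)=\ceil{\frac{5}{4}n}$; since $\frac54\ge 1$, we have $f(n+1)\ge f(n)+1$, so Lemma \ref{lem:reducible structure} applies to $f$. We also need $\mathcal{G}$ to be closed under induced subgraphs. This holds for the class of ($P_7$, even-hole)-free graphs, where the lemma will be applied, and we will state it as the standing assumption; otherwise the induction has nothing to bite on. Suppose some $G\in\mathcal{G}$ has $\chi(G)>f(\omega(G))$, and choose such a $G$ with $|V(G)|$ minimum. Then every proper induced subgraph $G'$ of $G$ lies in $\mathcal{G}$ and satisfies $\chi(G')\le f(\omega(G'))$. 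We will show that $G$ is basic, which contradicts the hypothesis of the lemma.

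\emph{Connectivity and clique cutsets.} If $G$ is disconnected, apply Lemma \ref{lem:reducible structure}(2) with $K=\emptyset$. The components are proper induced subgraphs, so each satisfies the bound, and hence so does $G$, a contradiction. So $G$ is connected. If $G$ has a clique cutset $K$, every $K$-component $G_i$ is a proper induced subgraph, since some vertex of $G-K$ lies outside it. Hence $\chi(G_i)\le f(\omega(G_i))$ for all $i$, and Lemma \ref{lem:reducible structure}(2) gives a contradiction.

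\emph{Small vertices.} If $u$ is small, then $d(u)\le f(\omega(G))-1$ and $G-u$ satisfies the bound by minimality. Lemma \ref{lem:reducible structure}(3) yields a contradiction.

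\emph{Good subgraphs.} Suppose $H$ is a $(4,5)$-good subgraph. If $V(H)\ne\emptyset$, then $G-V(H)$ is a proper induced subgraph, so $\chi(G-V(H))\le\ceil{\frac54\omega(G-V(H))}$, and Lemma \ref{lem:goodsubgraph} with $p=4,q=5$ gives a contradiction. Similarly, if $H$ is a nonempty $(1,1)$-good subgraph, then $\chi(G-V(H))\le\ceil{\frac54\omega(G-V(H))}$. In the proof of Lemma \ref{lem:goodsubgraph}, the step requiring $p=1$ gives $\omega(G-V(H))\le\omega(G)-1$, so $\chi(G)\le f(\omega(G)-1)+1\le f(\omega(G))$ by monotonicity of $f$, a contradiction.

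The only subtle point is the degenerate case $V(H)=\emptyset$ (or $G$ having no vertices at all). An empty $H$ is $(p,q)$-good only if every maximal clique $K$ satisfies $0\ge p-(\omega(G)-|K|)$. For a maximum clique this reads $0\ge p$, which is impossible for positive $p$ once $G$ is nonempty. So good subgraphs are automatically nonempty and the argument above applies. Thus $G$ is connected and has no clique cutsets, no small vertices, and no $(4,5)$- or $(1,1)$-good subgraphs; that is, $G$ is basic. The hypothesis then gives $\chi(G)\le f(\omega(G))$, a contradiction. I expect no real obstacle here beyond making the hereditary assumption explicit.
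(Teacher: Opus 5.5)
Your proof is correct and is exactly the argument the paper intends when it says the lemma follows immediately from Lemmas \ref{lem:goodsubgraph} and \ref{lem:reducible structure}: take a vertex-minimal counterexample and show that each reducible structure is excluded. You are also right to make explicit two points the paper passes over. First, $\mathcal{G}$ must be hereditary, as the class of $(P_7,C_4,C_6)$-free graphs is. Second, the $(1,1)$-good case cannot use Lemma \ref{lem:goodsubgraph} verbatim and instead needs $\omega(G-V(H))\le\omega(G)-1$ together with $f(n+1)\ge f(n)+1$.
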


In the following, we can assume that $G$ is basic unless otherwise stated. For convenience, we set $\omega=\omega(G)$.

\section{Containing an induced $C_7$}\label{sec:C_7}

In this section, we assume that a $(P_7, C_4, C_6)$-free graph $G$ contains an induced $C=C_7=v_1-v_2-\cdots-v_7-v_1$. 
Our main goal of this section is to show that $\chi(G)\le \ceil{\frac{5}{4}\omega(G)}$ for such a graph $G$. 
We need to use a known result of Penev \cite{Penev25}.
To introduce the result, we first define three graphs $M$, $M_1$ and $M_2$. The graph $M$ has $V(M)=\{v_1,\dots,v_{12}\}$ and $E(M)$ as follows.
\begin{itemize}
    \item $v_1-v_2-\cdots-v_7-v_1$ is an induced $C_7$.
    \item $\{v_8,v_9,v_{10},v_{11},v_{12}\}$ induces a clique.
    \item $v_8$ is complete to $\{v_6,v_7,v_1,v_2,v_3\}$ and anticomplete to $\{v_4,v_5\}$.
    \item $v_9$ is complete to $\{v_7,v_1,v_2,v_3,v_4\}$ and anticomplete to $\{v_5,v_6\}$.
    \item $v_{10}$ is complete to $\{v_3,v_4,v_5,v_6,v_7\}$ and anticomplete to $\{v_1,v_2\}$.
    \item $v_{11}$ is complete to $\{v_3,v_6,v_7\}$ and anticomplete to $\{v_1,v_2,v_4,v_5\}$.
    \item $v_{12}$ is complete to $\{v_3,v_4,v_7\}$ and anticomplete to $\{v_1,v_2,v_5,v_6\}$.            
\end{itemize}
The graph $M_1$ has $V(M_1)=\{v_1,\dots,v_{9},v^*\}$ and $E(M_1)$ as follows.
\begin{itemize}
    \item $v_1-v_2-\cdots-v_7-v_1$ is an induced $C_7$.
    \item $v_8-v_9-v^*$ is an induced $P_3$.
    \item $v_8$ is complete to $\{v_6,v_7,v_1,v_2,v_3\}$ and anticomplete to $\{v_4,v_5\}$.
    \item $v_9$ is complete to $\{v_7,v_1,v_2,v_3,v_4\}$ and anticomplete to $\{v_5,v_6\}$.
    \item $v^*$ is complete to $\{v_1,v_4,v_5\}$ and anticomplete to $\{v_2,v_3,v_6,v_7\}$.       
\end{itemize}
The graph $M_2=M_1-v_9$ (see Figure \ref{fig:M} for $M,M_1,M_2$).
A blowup of $M$ is {\em special} if the cliques substituting $v_i\in V(M)$ is not empty for each $i\in [7]$.

\begin{lemma}[Theorem 3.11 in \cite{Penev25}]\label{blowup-G-of-H}
$G$ is a special blowup of $M$  or a non-empty blowup of $M_1$ or $M_2$.
\end{lemma}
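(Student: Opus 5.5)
Lemma~\ref{blowup-G-of-H} is Theorem~3.11 of \cite{Penev25}, so my first step is to check that our $G$ meets Penev's hypotheses. He works with $(P_7,C_4,C_6)$-free graphs that contain an induced $C_7$ and have no clique cutset and no universal vertex. Our $G$ is basic: it has no clique cutset, and it has no universal vertex because a universal vertex is a $(1,1)$-good subgraph. So the lemma should follow by direct citation. Still, I would want to understand the argument well enough to reconstruct it, and I would organize it as follows.

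\textbf{Attachments to $C$.} For $x\notin V(C)$, look at $N_C(x)$. If $|N_C(x)|\ge 2$, the neighbors cut $C$ into gaps. A gap between consecutive neighbors at distance $d$ along $C$ (with no neighbors in between) closes, together with $x$, an induced cycle of length $d+2$. Since $G$ is $(C_4,C_6)$-free and a hole of length $9$ would contain an induced $P_7$, each gap has $d=1$ or $d\in\{3,5\}$, and the gaps sum to $7$. This leaves only the following possible neighborhoods:
\begin{itemize}
\item all of $C$ (gaps $1,\dots,1$);
\item five consecutive vertices (gaps $3,1,1,1,1$), which is the type of $v_8,v_9,v_{10}$;
\item three consecutive vertices (gaps $5,1,1$);
\item the pattern $\{v_i,v_{i+3},v_{i+4}\}$ (gaps $3,3,1$), which is the type of $v_{11},v_{12}$;
\item a single vertex.
\end{itemize}
Two of these cases need more work. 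I would rule out vertices with exactly one neighbor on $C$, and vertices anticomplete to $C$, using $P_7$-freeness together with the absence of clique cutsets. A pendant attachment extends a long subpath of $C$ to an induced $P_7$, unless its component is separated by a clique. I would handle the three-consecutive type similarly: it yields $x-v_{i+1}$ plus the long arc as a hole or path of the wrong parity or length. This should show that every vertex is in $C$, is complete to $C$, or has one of the two five-vertex or $\{v_i,v_{i+3},v_{i+4}\}$ patterns, except for the vertex $v^*$ with neighborhood $\{v_1,v_4,v_5\}$ that appears in $M_1$ and $M_2$.

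\textbf{Assembling the blowup.} Next I would group the vertices by their attachment type. For each type class, and each pair of type classes, I would decide adjacency using Lemma~\ref{lem:nbd containment for an edge} (for $C_4$-freeness) and the short $C_4$, $C_6$, $P_7$ certificates, by exhibiting an explicit forbidden configuration in each non-conforming case. Vertices complete to $C$ would then be forced to be universal, or to produce a $C_4$ with two non-adjacent attachments, so there are none. Each class would be a clique whose adjacencies to the other classes are uniform, which exhibits $G$ as a blowup of one of $M$, $M_1$, $M_2$. Nonemptiness of the $C_7$ parts is automatic, so for $M$ the blowup is special. For $M_1$ and $M_2$, nonemptiness of the remaining parts holds by the definition of the type classes.

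\textbf{Main obstacle.} I expect the hardest step to be the pairwise analysis between different five-vertex types, which is where the dichotomy between $M$ (the $v_8$--$v_{12}$ clique) and $M_1$/$M_2$ (with $v^*$) arises. The point is to show that $v^*$-type vertices cannot coexist with $v_{10}$-, $v_{11}$-, or $v_{12}$-types, and that the rotations realized are exactly those of $M$ up to symmetry of $C_7$. I would first fix a normalization by rotating so that some five-consecutive type sits at $\{v_6,v_7,v_1,v_2,v_3\}$, and then enumerate the remaining rotations, killing each non-conforming one with an explicit even hole or induced $P_7$.
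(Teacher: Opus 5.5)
Citing Theorem~3.11 of \cite{Penev25} for the basic $(P_7,C_4,C_6)$-free graph $G$ containing an induced $C_7$ is exactly what the paper does; it gives no argument of its own. The reconstruction you sketch, however, has a step that would fail.

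\textbf{The main gap.} You propose to rule out vertices $x$ with $N_C(x)=\{v_{i-1},v_i,v_{i+1}\}$. But $x-v_{i+1}-v_{i+2}-v_{i+3}-v_{i+4}-v_{i+5}-v_{i-1}-x$ is an induced $C_7$, which is allowed, and no other contradiction is available. These are exactly the vertices of $L_i\setminus\{v_i\}$ in the blowup. Lemma~\ref{lem:L_i is large} shows $|L_i|\ge 3$ for basic $G$, so they are always present. Your intended classification (every vertex lies on $C$, is complete to $C$, or has a five-consecutive or $\{v_i,v_{i+3},v_{i+4}\}$ neighbourhood) is therefore false, and your assembly would force $|L_i|=1$.

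What is needed instead is to absorb this type into the class of $v_i$:
\begin{itemize}
\item Two such vertices for the same $i$ are adjacent; otherwise they form a $C_4$ through $v_{i-1},v_{i+1}$.
\item Such an $x$ for $i$ and $y$ for $i+1$ are adjacent; otherwise $y-v_{i+2}-v_{i+3}-v_{i+4}-v_{i+5}-v_{i-1}-x$ is an induced $P_7$.
\item Such an $x$ for $i$ and $y$ for $i+2$ are nonadjacent; otherwise $x-y-v_{i+3}-v_{i+4}-v_{i+5}-v_{i-1}-x$ is an induced $C_6$.
\end{itemize}
You must then also show that the remaining types attach uniformly to the whole cliques $L_j$, not only to the representatives $v_j$. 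For example, a vertex $w$ with $N_C(w)=\{v_6,v_7,v_1,v_2,v_3\}$ is adjacent to every $x\in L_3\setminus\{v_3\}$, since otherwise $x-v_4-v_5-v_6-w-v_2-x$ is an induced $C_6$.

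\textbf{Two smaller corrections.} First, $v^*$ is not an exceptional attachment type. $N_C(v^*)=\{v_1,v_4,v_5\}$ is your $\{v_i,v_{i+3},v_{i+4}\}$ pattern with $i=1$, the same type as $v_{11}$ and $v_{12}$. The split between $M$ and $M_1,M_2$ comes from which rotations of the two surviving types coexist and how they are joined. For instance, $v_8v^*\notin E(G)$ is forced, since otherwise $v^*-v_4-v_3-v_8-v^*$ is an induced $C_4$.

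Second, in the gap analysis the remaining case to exclude is $d=6$, i.e.\ exactly two adjacent neighbours $v_a,v_{a+1}$; no $9$-hole arises. This case gives the induced path $x-v_{a+1}-v_{a+2}-\cdots-v_{a+6}$ on seven vertices. Likewise, a vertex whose only neighbour on $C$ is $v_i$ yields the induced $P_7$ $x-v_i-v_{i+1}-\cdots-v_{i+5}$ outright. So only vertices anticomplete to $C$ need the clique-cutset argument.
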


\begin{figure}[htbp]
    \centering
    \begin{subfigure}{0.45\linewidth}
        \centering
        \begin{tikzpicture}[scale=0.8]
        \tikzstyle{v}=[circle, draw, solid, fill=black, inner sep=0pt, minimum width=3pt]

        \draw[thick] (0, 3.5)--(3, 2);
        \draw[thick] (3, 2)--(4, -1);
        \draw[thick] (4, -1)--(1.8, -3.5);
        \draw[thick] (1.8, -3.5)--(-1.8, -3.5);
        \draw[thick] (-1.8, -3.5)--(-4, -1);
        \draw[thick] (-4, -1)--(-3, 2);
        \draw[thick] (-3, 2)--(0, 3.5);

        \draw[thick] (0, 1.2)--(1.5, 0);
        \draw[thick] (0, 1.2)--(-1.5, 0);
        \draw[thick] (0, 1.2)--(-0.9, -1.5);
        \draw[thick] (0, 1.2)--(0.9, -1.5);
        \draw[thick] (1.5, 0)--(-1.5, 0);
        \draw[thick] (1.5, 0)--(0.9, -1.5);
        \draw[thick] (1.5, 0)--(-0.9, -1.5);
        \draw[thick] (-1.5, 0)--(0.9, -1.5);
        \draw[thick] (-1.5, 0)--(-0.9, -1.5);
        \draw[thick] (0.9, -1.5)--(-0.9, -1.5);

        \draw[red, thick] (0, 1.2)..controls (2.2, 1)..(4, -1);
        \draw[red, thick] (0, 1.2)..controls (-2.2, 1)..(-4, -1);
        \draw[red, thick] (0, 1.2)--(3, 2);
        \draw[red, thick] (0, 1.2)--(-3, 2);
        \draw[red, thick] (0, 1.2)--(0, 3.5);

        \draw[orange, thick] (1.5, 0)..controls (-0.5, 0.7)..(-3, 2);
        \draw[orange, thick] (1.5, 0)--(0, 3.5);
        \draw[orange, thick] (1.5, 0)--(3, 2);
        \draw[orange, thick] (1.5, 0)--(4, -1);
        \draw[orange, thick] (1.5, 0)--(1.8, -3.5);

        \draw[purple, thick] (-0.9, -1.5)..controls (1.5, -2.5)..(4, -1);
        \draw[purple, thick] (-0.9, -1.5)--(1.8, -3.5);
        \draw[purple, thick] (-0.9, -1.5)--(-1.8, -3.5);
        \draw[purple, thick] (-0.9, -1.5)--(-4, -1);
        \draw[purple, thick] (-0.9, -1.5)..controls (-2.6, -0.5)..(-3, 2);

        \draw[blue, thick] (-1.5, 0)--(4, -1);
        \draw[blue, thick] (-1.5, 0)--(-4, -1);
        \draw[blue, thick] (-1.5, 0)--(-3, 2);

        \draw[green, thick] (0.9, -1.5)--(4, -1);
        \draw[green, thick] (0.9, -1.5)--(1.8, -3.5);
        \draw[green, thick] (0.9, -1.5)..controls (-1.5, 1.2)..(-3, 2);

        \filldraw[color=black, fill=white!100, thick] (0, 3.5) circle (0.7);
        \node [label=$v_1$] (v) at (0, 3.0){};

        \filldraw[color=black, fill=white!100, thick] (3, 2) circle (0.7);
        \node [label=$v_2$] (v) at (3, 1.5){};

        \filldraw[color=black, fill=white!100, thick] (4, -1) circle (0.7);
        \node [label=$v_3$] (v) at (4, -1.5){};

        \filldraw[color=black, fill=white!100, thick] (1.8, -3.5) circle (0.7);
        \node [label=$v_4$] (v) at (1.8, -4){};

        \filldraw[color=black, fill=white!100, thick] (-1.8, -3.5) circle (0.7);
        \node [label=$v_5$] (v) at (-1.8, -4){};

        \filldraw[color=black, fill=white!100, thick] (-4, -1) circle (0.7);
        \node [label=$v_6$] (v) at (-4, -1.5){};

        \filldraw[color=black, fill=white!100, thick] (-3, 2) circle (0.7);
        \node [label=$v_7$] (v) at (-3, 1.5){};

        \filldraw[color=red, fill=white!100, thick] (0, 1.2) circle (0.5);
        \node [label=$v_8$] (v) at (0, 0.7){};
        
        \filldraw[color=orange, fill=white!100, thick] (1.5, 0) circle (0.5);
        \node [label=$v_{9}$] (v) at (1.5, -0.5){};
        
        \filldraw[color=purple, fill=white!100, thick] (-0.9, -1.5) circle (0.5);
        \node [label=$v_{10}$] (v) at (-0.9, -2){};

        \filldraw[color=blue, fill=white!100, thick] (-1.5, 0) circle (0.5);
        \node [label=$v_{11}$] (v) at (-1.5, -0.5){};
        
        \filldraw[color=green, fill=white!100, thick] (0.9, -1.5) circle (0.5);
        \node [label=$v_{12}$] (v) at (0.9, -2){};
        
    \end{tikzpicture}
    \subcaption{The graph $M$.}
    \end{subfigure}
    \medskip

    \begin{subfigure}{0.45\linewidth}
        \centering
        \begin{tikzpicture}[scale=0.8]
        \tikzstyle{v}=[circle, draw, solid, fill=black, inner sep=0pt, minimum width=3pt]

        \draw[thick] (0, 3.5)--(3, 2);
        \draw[thick] (3, 2)--(4, -1);
        \draw[thick] (4, -1)--(1.8, -3.5);
        \draw[thick] (1.8, -3.5)--(-1.8, -3.5);
        \draw[thick] (-1.8, -3.5)--(-4, -1);
        \draw[thick] (-4, -1)--(-3, 2);
        \draw[thick] (-3, 2)--(0, 3.5);

        \draw[thick] (0.25, 1.2)--(1.5, -0.2);
        \draw[thick] (1.5, -0.2)--(-0.9, -1.5);

        \draw[red, thick] (0.25, 1.2)--(4, -1);
        \draw[red, thick] (0.25, 1.2)--(-4, -1);
        \draw[red, thick] (0.25, 1.2)--(3, 2);
        \draw[red, thick] (0.25, 1.2)--(-3, 2);
        \draw[red, thick] (0.25, 1.2)--(0, 3.5);

        \draw[orange, thick] (1.5, -0.2)..controls (-0.5, 0.7)..(-3, 2);
        \draw[orange, thick] (1.5, -0.2)--(0, 3.5);
        \draw[orange, thick] (1.5, -0.2)--(3, 2);
        \draw[orange, thick] (1.5, -0.2)--(4, -1);
        \draw[orange, thick] (1.5, -0.2)--(1.8, -3.5);

        \draw[purple, thick] (-0.9, -1.5)--(0, 3.5);
        \draw[purple, thick] (-0.9, -1.5)--(1.8, -3.5);
        \draw[purple, thick] (-0.9, -1.5)--(-1.8, -3.5);

        \filldraw[color=black, fill=white!100, thick] (0, 3.5) circle (0.7);
        \node [label=$v_1$] (v) at (0, 3.0){};

        \filldraw[color=black, fill=white!100, thick] (3, 2) circle (0.7);
        \node [label=$v_2$] (v) at (3, 1.5){};

        \filldraw[color=black, fill=white!100, thick] (4, -1) circle (0.7);
        \node [label=$v_3$] (v) at (4, -1.5){};

        \filldraw[color=black, fill=white!100, thick] (1.8, -3.5) circle (0.7);
        \node [label=$v_4$] (v) at (1.8, -4){};

        \filldraw[color=black, fill=white!100, thick] (-1.8, -3.5) circle (0.7);
        \node [label=$v_5$] (v) at (-1.8, -4){};

        \filldraw[color=black, fill=white!100, thick] (-4, -1) circle (0.7);
        \node [label=$v_6$] (v) at (-4, -1.5){};

        \filldraw[color=black, fill=white!100, thick] (-3, 2) circle (0.7);
        \node [label=$v_7$] (v) at (-3, 1.5){};

        \filldraw[color=red, fill=white!100, thick] (0.25, 1.2) circle (0.5);
        \node [label=$v_8$] (v) at (0.25, 0.7){};
        
        \filldraw[color=orange, fill=white!100, thick] (1.5, -0.2) circle (0.5);
        \node [label=$v_{9}$] (v) at (1.5, -0.7){};
        
        \filldraw[color=purple, fill=white!100, thick] (-0.9, -1.5) circle (0.5);
        \node [label=$v^*$] (v) at (-0.9, -2){};
        
    \end{tikzpicture}   
    \subcaption{The graph $M_1$.}
    \end{subfigure}
    \hfill
    \begin{subfigure}{0.45\linewidth}
        \centering
        \begin{tikzpicture}[scale=0.8]
        \tikzstyle{v}=[circle, draw, solid, fill=black, inner sep=0pt, minimum width=3pt]

        \draw[thick] (0, 3.5)--(3, 2);
        \draw[thick] (3, 2)--(4, -1);
        \draw[thick] (4, -1)--(1.8, -3.5);
        \draw[thick] (1.8, -3.5)--(-1.8, -3.5);
        \draw[thick] (-1.8, -3.5)--(-4, -1);
        \draw[thick] (-4, -1)--(-3, 2);
        \draw[thick] (-3, 2)--(0, 3.5);

        \draw[red, thick] (0.5, 1.2)--(4, -1);
        \draw[red, thick] (0.5, 1.2)--(-4, -1);
        \draw[red, thick] (0.5, 1.2)--(3, 2);
        \draw[red, thick] (0.5, 1.2)--(-3, 2);
        \draw[red, thick] (0.5, 1.2)--(0, 3.5);

        \draw[purple, thick] (-0.9, -1.5)--(0, 3.5);
        \draw[purple, thick] (-0.9, -1.5)--(1.8, -3.5);
        \draw[purple, thick] (-0.9, -1.5)--(-1.8, -3.5);

        \filldraw[color=black, fill=white!100, thick] (0, 3.5) circle (0.7);
        \node [label=$v_1$] (v) at (0, 3.0){};

        \filldraw[color=black, fill=white!100, thick] (3, 2) circle (0.7);
        \node [label=$v_2$] (v) at (3, 1.5){};

        \filldraw[color=black, fill=white!100, thick] (4, -1) circle (0.7);
        \node [label=$v_3$] (v) at (4, -1.5){};

        \filldraw[color=black, fill=white!100, thick] (1.8, -3.5) circle (0.7);
        \node [label=$v_4$] (v) at (1.8, -4){};

        \filldraw[color=black, fill=white!100, thick] (-1.8, -3.5) circle (0.7);
        \node [label=$v_5$] (v) at (-1.8, -4){};

        \filldraw[color=black, fill=white!100, thick] (-4, -1) circle (0.7);
        \node [label=$v_6$] (v) at (-4, -1.5){};

        \filldraw[color=black, fill=white!100, thick] (-3, 2) circle (0.7);
        \node [label=$v_7$] (v) at (-3, 1.5){};

        \filldraw[color=red, fill=white!100, thick] (0.5, 1.2) circle (0.5);
        \node [label=$v_8$] (v) at (0.5, 0.7){};
        
        \filldraw[color=purple, fill=white!100, thick] (-0.9, -1.5) circle (0.5);
        \node [label=$v^*$] (v) at (-0.9, -2){};

    \end{tikzpicture}  
    \subcaption{The graph $M_2$.}
    \end{subfigure}
    \caption{Three special graphs $M$, $M_1$ and $M_2$.}
    \label{fig:M}
\end{figure}
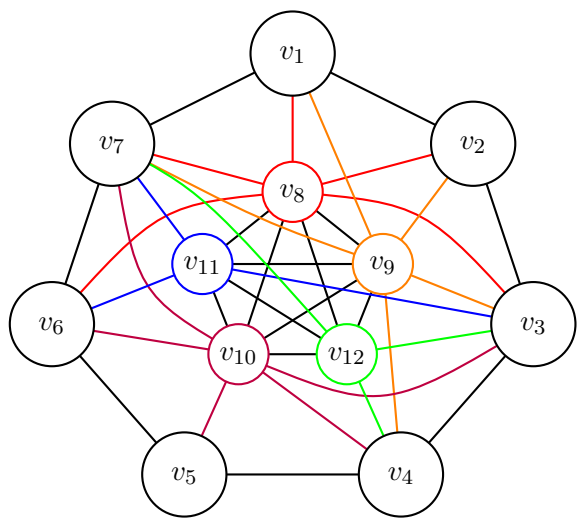
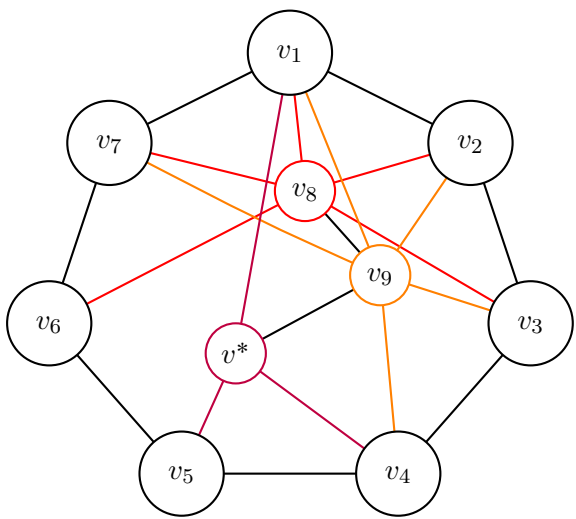
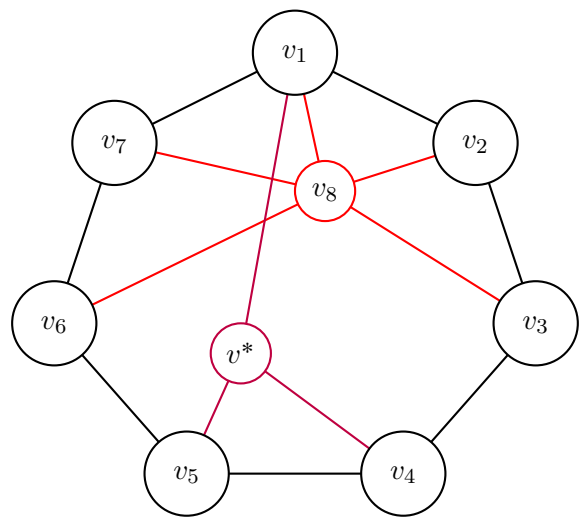

We first deal with special blowups of $M$.
Let $G=L_1\cup \cdots\cup L_{12}$ be a blowup of $M$ where $L_i$ is the clique substituting $v_i$ for $i\in [12]$ with $L_j\neq \emptyset$ for each $j\in [7]$.
By Lemma \ref{lem:hyperhole}, we may assume that $L_j\neq \emptyset$ for some $j\in \{8,9,10,11,12\}$.

\begin{lemma}\label{lem:L_i is large}
    For each $i\in [7]$, $|L_i|>\ceil{\frac{\omega}{4}}$ and  $|L_i|\ge 3$.
\end{lemma}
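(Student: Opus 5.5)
The plan is to run the \emph{small vertex argument} on a neighbour of $L_i$ rather than on $L_i$ itself. Since $G$ is basic, it has no small vertices, so every vertex $y$ satisfies $d(y)\ge \ceil{\frac54\omega}$. I will pick $y$ in a clique $L_j$ with $j=i\pm1$ such that $N[y]$ is covered by $L_i$ together with a single clique. That forces $|L_i|$ to be large.

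For $j\in[7]$, let $Z_j$ be the union of the hub cliques $L_t$, $t\in\{8,\dots,12\}$, that are complete to $L_j$. Reading off the definition of $M$:
\[
Z_1=Z_2=L_8\cup L_9,\quad Z_3=Z_7=L_8\cup\cdots\cup L_{12},\quad Z_4=L_9\cup L_{10}\cup L_{12},\quad Z_5=L_{10},\quad Z_6=L_8\cup L_{10}\cup L_{11}.
\]
For $y\in L_j$ we have $N[y]=L_{j-1}\cup L_j\cup L_{j+1}\cup Z_j$. Suppose $k\in\{j-1,j+1\}$ satisfies $Z_j\subseteq Z_k$. Then $L_j\cup L_k\cup Z_j$ is a clique, because the hub cliques together form a clique and each is complete to $L_j$ and $L_k$. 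Hence $|L_j\cup L_k\cup Z_j|\le\omega$. Letting $i$ be the other neighbour index of $j$, this gives
\[
\ceil{\tfrac54\omega}+1\le d(y)+1=|N[y]|\le |L_i|+\omega .
\]
So $|L_i|\ge \ceil{\frac54\omega}-\omega+1=\ceil{\frac{\omega}{4}}+1>\ceil{\frac{\omega}{4}}$.

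It remains to check that every $i\in[7]$ admits such a pair $(j,k)$, with $j$ adjacent to $i$ on the $7$-cycle and $k$ the other neighbour of $j$. The choices are:
\begin{itemize}
\item $i=1$: $(j,k)=(2,3)$, since $Z_2\subseteq Z_3$.
\item $i=2$: $(1,7)$, since $Z_1\subseteq Z_7$.
\item $i=3$: $(2,1)$.
\item $i=4$: $(5,6)$.
\item $i=5$: $(4,3)$.
\item $i=6$: $(5,4)$.
\item $i=7$: $(1,2)$.
\end{itemize}
Each inclusion is immediate from the list of the $Z_j$ above. Note that $j=3,7$ can never serve this role, which is why the check has to be done case by case. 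This proves the first assertion.

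For $|L_i|\ge 3$, the bound above already gives it whenever $\omega\ge5$, since then $\ceil{\omega/4}+1\ge3$. The main obstacle is the range $\omega\le4$, where the bound only yields $|L_i|\ge2$. Here $\omega\ge3$, because some hub clique is nonempty and lies in a triangle with two consecutive $L_j$'s.

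My first attempt is to sharpen the counting using the other clique bounds available. I would bound $|L_i\cup L_{i+1}\cup (Z_i\cap Z_{i+1})|\le\omega$ and combine it with the degree condition at several vertices at once. If that fails, I would use the remaining reducible structures of a basic graph. Assuming some $|L_i|\le2$ with $\omega\le4$, I would either find a clique cutset or exhibit a $(1,1)$-good subgraph, namely a stable set meeting every maximum clique. Such a set could be one vertex from each of $L_1$, $L_3$, $L_5$ together with one hub vertex avoiding those, adjusted according to which hubs are nonempty. Either outcome contradicts $G$ being basic.
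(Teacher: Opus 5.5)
Your argument for the first assertion is correct and is the paper's argument. For each $i$ you pick a vertex $y$ in a neighbouring clique $L_j$ such that $N[y]\setminus L_i$ is a clique, and the absence of small vertices then forces $|L_i|\ge\ceil{\frac{\omega}{4}}+1$. Your table of the $Z_j$ and all seven pairs $(j,k)$ check out against the definition of $M$. The paper treats only $i=2,3,4,5$ and appeals to the reflection symmetry of $M$.

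The second assertion is not proved. For $\omega\le 4$ you only sketch fallback strategies (sharper counting, a clique cutset, a $(1,1)$-good subgraph), each conditional on ``if that fails'', and none is carried out.

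The missing observation is that the case $\omega\le 4$ cannot occur. Your first part already gives $|L_j|\ge\ceil{\frac{\omega}{4}}+1\ge 2$ for every $j\in[7]$. Every hub clique is complete to two consecutive cycle cliques: $L_8$ and $L_{11}$ to $L_6\cup L_7$, $L_9$ to $L_7\cup L_1$, and $L_{10}$ and $L_{12}$ to $L_3\cup L_4$. So a vertex of a nonempty hub, together with two vertices from each of its two consecutive cliques, is a clique of size $5$. Hence $\omega\ge 5$, so $\ceil{\frac{\omega}{4}}\ge 2$ and $|L_i|\ge 3$. This is exactly how the paper concludes. You noticed the triangle through a hub, but you did not feed back the bound $|L_j|\ge 2$, which upgrades it to a $K_5$.
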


\begin{proof}
    By symmetry, it suffices to prove the lemma for $i=2,3,4,5$. 
\begin{itemize}
    \item[$(L_2)$]  Let $v\in L_1$. Then $N(v)\setminus L_2\subseteq L_1 \cup L_7\cup L_8\cup L_9$ is a clique. Since $G$ has no small vertices, $|L_2|\ge |N(v)\cap L_2|>\ceil{\frac{\omega}{4}}$. 
    \item[$(L_3)$] Let $v\in L_2$. Then $N(v)\setminus L_3\subseteq L_1 \cup L_8\cup L_9$ is a clique. Since $G$ has no small vertices, $|L_3|\ge |N(v)\cap L_3|>\ceil{\frac{\omega}{4}}$. 
    \item[$(L_4)$]  Let $v\in L_5$. Then $N(v)\setminus L_4\subseteq L_6 \cup L_{10}$ is a clique. Since $G$ has no small vertices, $|L_4|\ge |N(v)\cap L_4|>\ceil{\frac{\omega}{4}}$.
    \item[$(L_5)$]  Let $v\in L_4$. Then $N(v)\setminus L_5\subseteq L_3 \cup L_9\cup L_{10}\cup L_{12}$ is a clique. Since $G$ has no small vertices, $|L_5|\ge |N(v)\cap L_5|>\ceil{\frac{\omega}{4}}$
\end{itemize}
Since $\omega\ge 2$, $|L_i|\ge 2$ for $i\in [7]$. Since $L_j\neq \emptyset$ for some $j\in \{8,9,10,11,12\}$, $\omega\ge 5$ and so $|L_i|\ge 3$ for $i\in [7]$.   
\end{proof}

We show that $G$ is not basic by finding a $(4,5)$-good subgraph, which contradicts our assumption on $G$. By Lemma \ref{lem:L_i is large}, one can choose three vertices from each $L_i$ for $i\in [7]$ when constructing $(4,5)$-good subgraphs. We first give two lemmas (Lemmas \ref{lem:general 1} and \ref{lem:general 2}) and then consider all possible cases depending on the number of non-empty $L_j$s for $j\in \{8,9,10\}$ (Lemmas \ref{lem:L_8 L_9 L_{10} are not empty}-\ref{lem:only one S_Y}).

\begin{lemma}\label{lem:general 1}
    If $L_{10}=L_{11}=L_{12}=\emptyset$, then $G$ has a $(4,5)$-good subgraph.
\end{lemma}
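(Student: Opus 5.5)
The plan is to build the $(4,5)$-good subgraph inside the blown-up $C_7$ alone, taking two vertices from each $L_i$ with $i\in[7]$ and no vertices from $L_8$ or $L_9$. By Lemma \ref{lem:L_i is large}, $|L_i|\ge 3$ for every $i\in[7]$. So we may choose a set $S_i\subseteq L_i$ with $|S_i|=2$ for each $i\in[7]$, and set $H=G[S_1\cup\cdots\cup S_7]$. Since $L_1,\dots,L_7$ are the cliques of a blowup of the induced $C_7=v_1-\cdots-v_7-v_1$, the graph $H$ is exactly the $7$-hyperhole $C_7[2,2,2,2,2,2,2]$.

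\emph{$H$ is $5$-colorable.} We have $\omega(H)=4$ and $|V(H)|=14$. Moreover $\alpha(H)=\alpha(C_7)=3$, because a stable set of a blowup meets each substituted clique in at most one vertex and uses a stable set of $C_7$. Lemma \ref{lem:hyperhole} then gives $\chi(H)=\max\{4,\ceil{14/3}\}=5$.

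\emph{Condition (1) of the definition.} Since $L_{10}=L_{11}=L_{12}=\emptyset$, the graph $G$ is a blowup of $C_7$ together with the cliques $L_8$ and $L_9$. Here $L_8$ is complete to $L_8$'s neighbours $L_6,L_7,L_1,L_2,L_3$, $L_9$ is complete to $L_7,L_1,L_2,L_3,L_4$, and $L_8$ is complete to $L_9$. We claim that every maximal clique $K$ of $G$ contains $L_i\cup L_{i+1}$ for some $i\in[7]$.

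\begin{itemize}
\item $K$ meets at most two consecutive $L_i$'s, because two non-consecutive ones are anticomplete. Also all $L_i$ with $i\in[7]$ are non-empty.
\item If $K$ meets no $L_i$ with $i\le 7$, then $K\subseteq L_8\cup L_9$. In that case $K$ could be extended by $L_1\cup L_2$, which is complete to both $L_8$ and $L_9$.
\item Otherwise $K\cap(L_8\cup L_9)$ is complete to some non-empty $K\cap L_i$. The common neighbourhood of the vertices of $K\cap(L_8\cup L_9)$ on the cycle is a set of at least two consecutive $L_j$'s: it is $\{L_6,L_7,L_1,L_2,L_3\}$, $\{L_7,\dots,L_4\}$, or their intersection $\{L_7,L_1,L_2,L_3\}$.
\item Hence, by maximality, $K$ contains the whole of two consecutive cliques $L_i\cup L_{i+1}$ (with $K\cap L_i\neq\emptyset$). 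Here we use that in a blowup each $L_j$ is a module.
\end{itemize}

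It follows that $|K\cap V(H)|\ge |S_i|+|S_{i+1}|=4\ge 4-(\omega-|K|)$. Thus $H$ is a $(4,5)$-good subgraph.

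The only step that needs care is the claim that every maximal clique contains a full pair $L_i\cup L_{i+1}$. This is where the hypothesis $L_{10}=L_{11}=L_{12}=\emptyset$ is used: $L_{11}$ and $L_{12}$ are adjacent to non-consecutive $L_j$'s, so with them present there could be maximal cliques meeting only one cycle clique. With only $L_8$ and $L_9$, this is a routine verification.
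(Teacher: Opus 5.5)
Your proof is correct and is the same as the paper's: take two vertices from each $L_i$ with $i\in[7]$, so that the resulting $C_7[2,\dots,2]$ is $5$-colorable by Lemma~\ref{lem:hyperhole}. Then note that every maximal clique of $G$ contains some $L_i\cup L_{i+1}$, and hence four of the chosen vertices. Your case check of the maximal cliques, using the common neighbourhoods of $L_8$ and $L_9$ on the cycle, supplies the detail the paper leaves as ``easy to see.''
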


\begin{proof}
    Suppose not. Choose two vertices in $L_i$ for each $i\in [7]$ and let $H$ be the union of all those chosen vertices. By Lemma \ref{lem:hyperhole}, $\chi(H)\le 5$. 
    It is easy to see that every maximal clique of $G$ contains 4 vertices of $H$. Therefore, $H$ is a $(4,5)$-good subgraph. This contradicts that $G$ is basic. 
\end{proof}

\begin{lemma}\label{lem:general 2}
    If $L_{10}=\emptyset$ and $L_{11},L_{12}\neq \emptyset$, $G$ has a $(4,5)$-good subgraph.    
\end{lemma}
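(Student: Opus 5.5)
The plan is to write down an explicit $(4,5)$-good subgraph $H$, as in Lemma \ref{lem:general 1}, and to check its two defining conditions directly. I will use the fact that $|L_i|\ge 3$ for $i\in[7]$ (Lemma \ref{lem:L_i is large}) and that $L_{10}=\emptyset$.

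\textbf{Step 1: list the maximal cliques.} With $L_{10}=\emptyset$, the hub cliques $L_8,L_9,L_{11},L_{12}$ are pairwise complete. I expect every maximal clique of $G$ to be the union of the $L$'s over one of the following index sets:
\[
\{8,9,11,12,3\},\ \{8,9,11,12,7\},\ \{8,9,7,1\},\ \{8,9,1,2\},\ \{8,9,2,3\},\ \{9,12,3,4\},\ \{8,11,6,7\},\ \{4,5\},\ \{5,6\}.
\]
This holds whenever $L_8,L_9$ are non-empty. If one of them is empty, the list degenerates, but only in a harmless way. For a maximal clique $K$ we need $|K\cap V(H)|\ge 4-(\omega-|K|)$. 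Edge cliques and the cliques containing $L_8\cup L_9$ together with two consecutive $L_i$'s are handled as in Lemma \ref{lem:general 1}, by taking two vertices of each $L_i$, $i\in[7]$. The new difficulty is the two cliques $L_3\cup L_8\cup L_9\cup L_{11}\cup L_{12}$ and $L_7\cup L_8\cup L_9\cup L_{11}\cup L_{12}$, each of which meets the hyperhole part in only one $L_i$.

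\textbf{Step 2: choose $H$.} The naive choice, namely two vertices from each $L_i$ ($i\in[7]$) plus one vertex $x_{11}\in L_{11}$ and one vertex $x_{12}\in L_{12}$, satisfies condition (1). However, a counting check shows it is not $5$-colorable. Every stable set of the $C_7$-part has at most $3$ vertices, and a stable set containing a hub vertex has at most $2$ hyperhole vertices. So five colour classes cover at most $15<16$ vertices.

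I will therefore use the slack $\omega-|K|$ to delete one hyperhole vertex. The natural target is a vertex of $L_5$ (or symmetrically of $L_1$), because the cliques containing $L_5$ are only $L_4\cup L_5$ and $L_5\cup L_6$. Since $L_3\cup L_4\cup L_9\cup L_{12}$ and $L_6\cup L_7\cup L_8\cup L_{11}$ are cliques, I expect to derive $\omega-|L_4\cup L_5|\ge 1$ and $\omega-|L_5\cup L_6|\ge 1$. These follow once $|L_3|+|L_9|+|L_{12}|>|L_5|$ and $|L_7|+|L_8|+|L_{11}|>|L_5|$. When either inequality fails, I will instead use Lemma \ref{lem:L_i is large}-type arguments (no small vertices applied to a vertex of $L_5$ or $L_4$) to obtain a contradiction or to choose three vertices from a different $L_i$. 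This yields a short case split according to which of these size inequalities holds.

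\textbf{Step 3: colour $H$ with five colours.} In the main case, $H$ consists of $C_7[2,2,2,2,1,2,2]$ plus $x_{11},x_{12}$, which is $15$ vertices. I will exhibit an explicit colouring:
\begin{itemize}
\item $x_{11}$ together with one vertex each of $L_1$ and $L_4$;
\item $x_{12}$ together with one vertex each of $L_2$ and $L_5$;
\item three further classes $\{L_1,L_3,L_6\}$, $\{L_2,L_4,L_6\}$, $\{L_3,L_7\}\cup\ldots$, each using one vertex of each listed $L_i$, adjusted so that every class is stable.
\end{itemize}
This colouring must be checked by hand. The main obstacle is Step 2: making the deletion compatible with condition (1) in all size configurations. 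Step 3 is routine once $H$ is fixed.
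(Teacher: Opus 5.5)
\textbf{Step 2 does not establish condition (1).} Your Step 1 list of maximal cliques is correct. So is your counting argument that the naive $16$-vertex choice cannot be $5$-coloured.

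Deleting a vertex of $L_5$ leaves only $3$ vertices of $H$ in $L_4\cup L_5$ and in $L_5\cup L_6$. You therefore need both cliques to be non-maximum, and nothing you invoke gives this:
\begin{itemize}
\item The inequality $|L_3|+|L_9|+|L_{12}|>|L_5|$ can fail.
\item No-small-vertex arguments only give lower bounds on the $|L_i|$, never the upper bound $|L_4|+|L_5|<\omega$.
\end{itemize}
For instance, take $(|L_1|,\dots,|L_7|)=(4,4,3,3,5,3,3)$, $|L_{11}|=|L_{12}|=1$ and $L_8=L_9=L_{10}=\emptyset$. Then $\omega=8$, every $|L_i|\ge 3$, and every vertex has degree at least $10=\lceil 5\omega/4\rceil$. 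Yet $|L_4\cup L_5|=|L_5\cup L_6|=\omega$, so the no-small-vertex condition does not exclude this configuration. Your fallback (``obtain a contradiction or choose three vertices from a different $L_i$'') is left unspecified.

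\textbf{The missing idea is to shift weight rather than delete.} Take three vertices of $L_5$ and only one each of $L_4$ and $L_6$. Keep two vertices in each of $L_1,L_2,L_3,L_7$ and one in each of $L_{11},L_{12}$. Then $L_4\cup L_5$ and $L_5\cup L_6$ each get $4$. The losses in $L_3\cup L_4\cup L_9\cup L_{12}$ and $L_6\cup L_7\cup L_8\cup L_{11}$ are exactly compensated by the chosen vertices of $L_{12}$ and $L_{11}$; this is precisely where $L_{11},L_{12}\neq\emptyset$ is used. Every maximal clique then meets $H$ in exactly $4$ vertices, whatever the sizes, and only $|L_5|\ge 3$ is needed. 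This $H$ is $5$-colourable, e.g.\ with colour sets $\{1,2\},\{3,4\},\{1,5\},\{3\},\{2,4,5\},\{1\},\{3,5\}$ on $L_1,\dots,L_7$, colour $4$ on $L_{11}$ and colour $2$ on $L_{12}$.

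\textbf{Your Step 3 colouring is also wrong as listed.} Both $x_{11}$ and $x_{12}$ are complete to $L_3\cup L_7$, and all $15$ classes-worth of vertices force every class to have exactly $3$ vertices. So the four chosen vertices of $L_3\cup L_7$ must lie in the other three classes, and some class meets both $L_3$ and $L_7$. That class can only be $\{\ell_3,\ell_5,\ell_7\}$. Hence the single $L_5$-vertex cannot go with $x_{12}$. Your list also places $L_7$ in only one class although two vertices of $L_7$ are chosen. A valid colouring of your $15$-vertex graph is
\[
\{x_{11},\ell_2,\ell_4\},\ \{x_{12},\ell_1,\ell_6\},\ \{\ell_3,\ell_5,\ell_7\},\ \{\ell_1',\ell_3',\ell_6'\},\ \{\ell_2',\ell_4',\ell_7'\}.
\]
That repair does not rescue Step 2, which remains the real gap.
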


\begin{proof}
    If not, we give a $(4,5)$-good subgraph with a 5-coloring as follows (see Figure \ref{fig:1}).

    \begin{figure}[htbp]
        \centering
        \begin{tikzpicture}[scale=0.8]
        \tikzstyle{v}=[circle, draw, solid, fill=black, inner sep=0pt, minimum width=3pt]

        \draw[thick] (0, 3.5)--(3, 2);
        \draw[thick] (3, 2)--(4, -1);
        \draw[thick] (4, -1)--(1.8, -3.5);
        \draw[thick] (1.8, -3.5)--(-1.8, -3.5);
        \draw[thick] (-1.8, -3.5)--(-4, -1);
        \draw[thick] (-4, -1)--(-3, 2);
        \draw[thick] (-3, 2)--(0, 3.5);

        \draw[thick] (0, 1.2)--(1.5, 0);
        \draw[thick] (0, 1.2)--(-1.5, 0);
        \draw[thick] (0, 1.2)--(0.9, -1.5);
        \draw[thick] (1.5, 0)--(-1.5, 0);
        \draw[thick] (1.5, 0)--(0.9, -1.5);
        \draw[thick] (-1.5, 0)--(0.9, -1.5);

        \draw[red, thick] (0, 1.2)..controls (2.2, 1)..(4, -1);
        \draw[red, thick] (0, 1.2)..controls (-2.2, 1)..(-4, -1);
        \draw[red, thick] (0, 1.2)--(3, 2);
        \draw[red, thick] (0, 1.2)--(-3, 2);
        \draw[red, thick] (0, 1.2)--(0, 3.5);

        \draw[orange, thick] (1.5, 0)..controls (-0.5, 0.7)..(-3, 2);
        \draw[orange, thick] (1.5, 0)--(0, 3.5);
        \draw[orange, thick] (1.5, 0)--(3, 2);
        \draw[orange, thick] (1.5, 0)--(4, -1);
        \draw[orange, thick] (1.5, 0)--(1.8, -3.5);

        \draw[blue, thick] (-1.5, 0)--(4, -1);
        \draw[blue, thick] (-1.5, 0)--(-4, -1);
        \draw[blue, thick] (-1.5, 0)--(-3, 2);

        \draw[green, thick] (0.9, -1.5)--(4, -1);
        \draw[green, thick] (0.9, -1.5)--(1.8, -3.5);
        \draw[green, thick] (0.9, -1.5)..controls (-1.5, 1.2)..(-3, 2);

        \filldraw[color=black, fill=white!100, thick] (0, 3.5) circle (0.7);
        \node [label={$1, 2$}] (v) at (0, 2.95){};
        \node [label=$L_1$] (v) at (0, 4){};

        \filldraw[color=black, fill=white!100, thick] (3, 2) circle (0.7);
        \node [label={$3, 4$}] (v) at (3, 1.45){};
        \node [label=$L_2$] (v) at (3, 2.5){};

        \filldraw[color=black, fill=white!100, thick] (4, -1) circle (0.7);
        \node [label={$1, 5$}] (v) at (4, -1.55){};
        \node [label=$L_3$] (v) at (4.2, -0.5){};

        \filldraw[color=black, fill=white!100, thick] (1.8, -3.5) circle (0.7);
        \node [label={$3$}] (v) at (1.8, -4){};
        \node [label=$L_4$] (v) at (2.9, -4){};

        \filldraw[color=black, fill=white!100, thick] (-1.8, -3.5) circle (0.7);
        \node [label={$2, 4, 5$}] (v) at (-1.8, -4.05){};
        \node [label=$L_5$] (v) at (-2.9, -4){};

        \filldraw[color=black, fill=white!100, thick] (-4, -1) circle (0.7);
        \node [label={$1$}] (v) at (-4, -1.5){};
        \node [label=$L_6$] (v) at (-4.2, -0.5){};

        \filldraw[color=black, fill=white!100, thick] (-3, 2) circle (0.7);
        \node [label={$3, 5$}] (v) at (-3, 1.45){};
        \node [label=$L_7$] (v) at (-3, 2.5){};

        \filldraw[color=red, fill=white!100, thick] (0, 1.2) circle (0.5);
        \node [label=$L_8$] (v) at (-0.5, 1.4){};
        
        \filldraw[color=orange, fill=white!100, thick] (1.5, 0) circle (0.5);
        \node [label=$L_{9}$] (v) at (2.35, -0.3){};

        \filldraw[color=blue, fill=white!100, thick] (-1.5, 0) circle (0.5);
        \node [label={$4$}] (v) at (-1.5, -0.5){};
        \node [label=$L_{11}$] (v) at (-1.5, -1.4){};
        
        \filldraw[color=green, fill=white!100, thick] (0.9, -1.5) circle (0.5);
        \node [label={$2$}] (v) at (0.9, -2){};
        \node [label=$L_{12}$] (v) at (0.5, -2.7){};
        
    \end{tikzpicture}
        \caption{A $(4,5)$-good subgraph in Lemma \ref{lem:general 2}.}
        \label{fig:1}
    \end{figure}
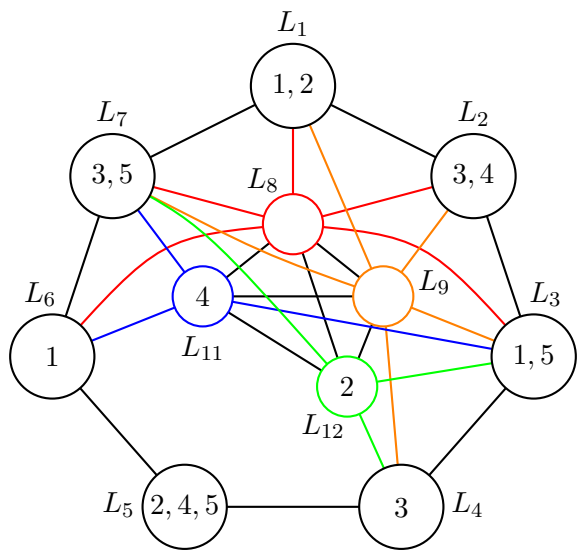
    
    Choose two vertices in $L_1$ and color them with colors 1 and 2, two vertices in $L_2$ and color them with colors 3 and 4, two vertices in $L_3$ and color them with colors 1 and 5, one vertex in $L_4$ and color them with color 3, three vertices in $L_5$ and color them with colors 2, 4 and 5, one vertex in $L_6$ and color them with color 1, two vertices in $L_7$ and color them with colors 3 and 5, one vertex in $L_{11}$ and color it with color 4, one vertex in $L_{12}$ and color it with color 2.
\end{proof}

\begin{lemma}\label{lem:L_8 L_9 L_{10} are not empty}
    If $L_8,L_9,L_{10}\neq \emptyset$, then $G$ has a $(4,5)$-good subgraph.
\end{lemma}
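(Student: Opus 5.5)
The plan is to write down an explicit $(4,5)$-good subgraph $H$, in the same style as Lemma \ref{lem:general 2}, with its $5$-coloring given by hand. I will use only one vertex from each of $L_8,L_9,L_{10}$, and no vertices of $L_{11},L_{12}$. By Lemma \ref{lem:L_i is large}, each $L_i$ with $i\in[7]$ has at least three vertices, so the choices below are available. Such an $H$ contradicts that $G$ is basic.

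\textbf{Step 1: list the maximal cliques.} I first list the maximal cliques of $M$:
\[
\{v_1,v_2,v_8,v_9\},\ \{v_2,v_3,v_8,v_9\},\ \{v_3,v_4,v_9,v_{10},v_{12}\},\ \{v_4,v_5,v_{10}\},\ \{v_5,v_6,v_{10}\},
\]
\[
\{v_6,v_7,v_8,v_{10},v_{11}\},\ \{v_7,v_1,v_8,v_9\},\ \{v_3,v_8,\dots,v_{12}\},\ \{v_7,v_8,\dots,v_{12}\}.
\]
The hub set $\{v_8,\dots,v_{12}\}$ is a clique whose common neighbours on the $C_7$ are exactly $v_3$ and $v_7$, and these two are non-adjacent. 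Every maximal clique $K$ of $G$ is the union of those $L_j$ that are non-empty for $j$ in a clique $Q$ of $M$ contained in one of the cliques above. Since $H$ takes nothing from $L_{11}\cup L_{12}$, it is enough to check that $H$ has at least $4$ vertices in $\bigcup_{j\in Q,\, j\notin\{11,12\}}L_j$ for each $Q$ in the list. Then condition (1) of a $(4,5)$-good subgraph holds with no need for the slack term $\omega-|K|$.

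\textbf{Step 2: choose $H$.} Take one vertex of $L_8$, one of $L_9$ and one of $L_{10}$. From $L_1,L_2,L_3,L_4,L_6,L_7$ take one vertex each, and from $L_5$ take two vertices. Counting over the nine cliques gives
\begin{itemize}
\item $2+2$ for $\{1,2,8,9\}$, $\{2,3,8,9\}$ and $\{7,1,8,9\}$;
\item $2+2$ for $\{3,4,9,10\}$ and $1+3$ for each hub clique;
\item $1+2+1=4$ for $\{4,5,10\}$ and $\{5,6,10\}$;
\item $2+2$ for $\{6,7,8,10\}$.
\end{itemize}
So every maximal clique meets $H$ in at least $4$ vertices.

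\textbf{Step 3: colour $H$.} Colour $L_8,L_9,L_{10}$ with $1,2,3$ respectively. Then colour
\[
L_1\colon 3,\quad L_2\colon 4,\quad L_3\colon 5,\quad L_4\colon 1,\quad L_5\colon 2,4,\quad L_6\colon 5,\quad L_7\colon 4.
\]
To confirm this is proper, check each chosen vertex against its neighbours in $H$:
\begin{itemize}
\item $L_1$ sees $L_2,L_7,L_8,L_9$;
\item $L_3$ sees $L_2,L_4,L_8,L_9,L_{10}$;
\item $L_4$ sees $L_3,L_5,L_9,L_{10}$;
\item $L_5$ sees $L_4,L_6,L_{10}$;
\item $L_6$ sees $L_5,L_7,L_8,L_{10}$;
\item $L_7$ sees $L_6,L_1,L_8,L_9,L_{10}$.
\end{itemize}
In each case all the colours differ. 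Hence $\chi(H)\le 5$, and $H$ is a $(4,5)$-good subgraph.

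The only real obstacle is Step 1: making sure no maximal clique of the blowup is missed. Emptiness of some of $L_{11},L_{12}$ can create new maximal cliques, such as $\{L_3,L_4,L_9,L_{10}\}$. These are all sub-patterns of the listed cliques with the $11,12$-parts removed, so the count in Step 2, which already ignores $L_{11}$ and $L_{12}$, still applies.
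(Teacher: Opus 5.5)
Your proposal is correct and follows the paper's proof. The paper selects exactly the same vertices (one from each of $L_1,\dots,L_4,L_6,\dots,L_{10}$ and two from $L_5$) and differs from you only in the particular proper $5$-colouring, which is immaterial. Your explicit list of the nine maximal cliques of $M$, together with the observation that every maximal clique of $G$ contains $\bigcup_{j\in Q,\,j\le 10}L_j$ for some maximal clique $Q$ of $M$, supplies the verification that the paper leaves to Figure~\ref{fig:extremesubgraph}.
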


\begin{proof}
    We give a $(4,5)$-good subgraph with a 5-coloring as follows (see Figure \ref{fig:extremesubgraph}).
    
    \begin{figure}[htbp]
        \centering
        \begin{tikzpicture}[scale=0.8]
        \tikzstyle{v}=[circle, draw, solid, fill=black, inner sep=0pt, minimum width=3pt]

        \draw[thick] (0, 3.5)--(3, 2);
        \draw[thick] (3, 2)--(4, -1);
        \draw[thick] (4, -1)--(1.8, -3.5);
        \draw[thick] (1.8, -3.5)--(-1.8, -3.5);
        \draw[thick] (-1.8, -3.5)--(-4, -1);
        \draw[thick] (-4, -1)--(-3, 2);
        \draw[thick] (-3, 2)--(0, 3.5);

        \draw[thick] (0, 1.2)--(1.5, 0);
        \draw[thick] (0, 1.2)--(-1.5, 0);
        \draw[thick] (0, 1.2)--(-0.9, -1.5);
        \draw[thick] (0, 1.2)--(0.9, -1.5);
        \draw[thick] (1.5, 0)--(-1.5, 0);
        \draw[thick] (1.5, 0)--(0.9, -1.5);
        \draw[thick] (1.5, 0)--(-0.9, -1.5);
        \draw[thick] (-1.5, 0)--(0.9, -1.5);
        \draw[thick] (-1.5, 0)--(-0.9, -1.5);
        \draw[thick] (0.9, -1.5)--(-0.9, -1.5);

        \draw[red, thick] (0, 1.2)..controls (2.2, 1)..(4, -1);
        \draw[red, thick] (0, 1.2)..controls (-2.2, 1)..(-4, -1);
        \draw[red, thick] (0, 1.2)--(3, 2);
        \draw[red, thick] (0, 1.2)--(-3, 2);
        \draw[red, thick] (0, 1.2)--(0, 3.5);

        \draw[orange, thick] (1.5, 0)..controls (-0.5, 0.7)..(-3, 2);
        \draw[orange, thick] (1.5, 0)--(0, 3.5);
        \draw[orange, thick] (1.5, 0)--(3, 2);
        \draw[orange, thick] (1.5, 0)--(4, -1);
        \draw[orange, thick] (1.5, 0)..controls (2.5, -1.5)..(1.8, -3.5);

        \draw[purple, thick] (-0.9, -1.5)..controls (1.5, -2.5)..(4, -1);
        \draw[purple, thick] (-0.9, -1.5)--(1.8, -3.5);
        \draw[purple, thick] (-0.9, -1.5)--(-1.8, -3.5);
        \draw[purple, thick] (-0.9, -1.5)--(-4, -1);
        \draw[purple, thick] (-0.9, -1.5)..controls (-2.7, -0.6)..(-3, 2);

        \draw[blue, thick] (-1.5, 0)--(4, -1);
        \draw[blue, thick] (-1.5, 0)--(-4, -1);
        \draw[blue, thick] (-1.5, 0)--(-3, 2);

        \draw[green, thick] (0.9, -1.5)--(4, -1);
        \draw[green, thick] (0.9, -1.5)--(1.8, -3.5);
        \draw[green, thick] (0.9, -1.5)..controls (-1.5, 1.2)..(-3, 2);

        \filldraw[color=black, fill=white!100, thick] (0, 3.5) circle (0.7);
        \node [label={$1$}] (v) at (0, 3){};
        \node [label=$L_1$] (v) at (0, 4){};

        \filldraw[color=black, fill=white!100, thick] (3, 2) circle (0.7);
        \node [label={$2$}] (v) at (3, 1.5){};
        \node [label=$L_2$] (v) at (3, 2.5){};

        \filldraw[color=black, fill=white!100, thick] (4, -1) circle (0.7);
        \node [label={$3$}] (v) at (4, -1.5){};
        \node [label=$L_3$] (v) at (4.2, -0.5){};

        \filldraw[color=black, fill=white!100, thick] (1.8, -3.5) circle (0.7);
        \node [label={$4$}] (v) at (1.8, -4){};
        \node [label=$L_4$] (v) at (2.9, -4){};

        \filldraw[color=black, fill=white!100, thick] (-1.8, -3.5) circle (0.7);
        \node [label={$2, 3$}] (v) at (-1.8, -4.05){};
        \node [label=$L_5$] (v) at (-2.9, -4){};

        \filldraw[color=black, fill=white!100, thick] (-4, -1) circle (0.7);
        \node [label={$5$}] (v) at (-4, -1.5){};
        \node [label=$L_6$] (v) at (-4.2, -0.5){};

        \filldraw[color=black, fill=white!100, thick] (-3, 2) circle (0.7);
        \node [label={$2$}] (v) at (-3, 1.5){};
        \node [label=$L_7$] (v) at (-3, 2.5){};

        \filldraw[color=red, fill=white!100, thick] (0, 1.2) circle (0.5);
        \node [label=$4$] (v) at (0, 0.7){};
        \node [label=$L_8$] (v) at (-0.5, 1.4){};
        
        \filldraw[color=orange, fill=white!100, thick] (1.5, 0) circle (0.5);
        \node [label=$5$] (v) at (1.5, -0.5){};
        \node [label=$L_{9}$] (v) at (2.35, -0.3){};

        \filldraw[color=purple, fill=white!100, thick] (-0.9, -1.5) circle (0.5);
        \node [label=$1$] (v) at (-0.9, -2){};
        \node [label=$L_{10}$] (v) at (-1.7, -2.4){};

        \filldraw[color=blue, fill=white!100, thick] (-1.5, 0) circle (0.5);
        \node [label={}] (v) at (-1.5, -0.5){};
        \node [label=$L_{11}$] (v) at (-1.7, -1.2){};
        
        \filldraw[color=green, fill=white!100, thick] (0.9, -1.5) circle (0.5);
        \node [label={}] (v) at (0.9, -2){};
        \node [label=$L_{12}$] (v) at (1.8, -2.3){};
        
    \end{tikzpicture}
        \caption{A $(4,5)$-good subgraph in Lemma \ref{lem:L_8 L_9 L_{10} are not empty}.}
        \label{fig:extremesubgraph}
    \end{figure}
    
    Choose 
    one vertex in $L_1$ and color it with color 1, 
    one vertex in $L_2$ and color it with color 2, 
    one vertex in $L_3$ and color it with color 3, 
    one vertex in $L_4$ and color it with color 4, 
    two vertices in $L_5$ and color them with colors 2 and 3, 
    one vertex in $L_6$ and color it with color 5,
    one vertex in $L_7$ and color it with color 2,
    one vertex in $L_8$ and color it with color 4,
    one vertex in $L_9$ and color it with color 5,
    one vertex in $L_{10}$ and color it with color 1. 
\end{proof}

\begin{lemma}\label{lem:exactly two S_5}
    If exactly two of $L_8$, $L_9$ and $L_{10}$ are not empty, then $G$ has a $(4,5)$-good subgraph.
\end{lemma}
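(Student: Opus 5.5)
We first reduce the number of cases by symmetry. The map $v_i\mapsto v_{10-i}$ (indices mod $7$) on the $C_7$, together with $v_8\leftrightarrow v_9$, $v_{11}\leftrightarrow v_{12}$ and $v_{10}$ fixed, is an automorphism of $M$. Indeed, it sends $N(v_8)\cap C=\{6,7,1,2,3\}$ to $\{4,3,2,1,7\}=N(v_9)\cap C$, fixes $\{3,4,5,6,7\}$, and sends $\{3,6,7\}$ to $\{7,4,3\}$. So it suffices to treat two cases: $L_{10}=\emptyset$ with $L_8,L_9\neq\emptyset$, and $L_9=\emptyset$ with $L_8,L_{10}\neq\emptyset$.

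In the first case, Lemma~\ref{lem:general 1} applies if $L_{11}=L_{12}=\emptyset$, and Lemma~\ref{lem:general 2} applies if both are non-empty. By the symmetry, we may therefore assume $L_{11}\neq\emptyset$ and $L_{12}=\emptyset$. In the second case, all four possibilities for $(L_{11},L_{12})$ remain. I would first try a single construction that takes no vertices from $L_{11}\cup L_{12}$, and split into subcases only if it fails.

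In each case, I follow the pattern of Lemmas~\ref{lem:general 2} and \ref{lem:L_8 L_9 L_{10} are not empty}. We build $H$ by choosing $a_i\le 3$ vertices from each $L_i$, $i\in[7]$ (possible by Lemma~\ref{lem:L_i is large}), and one vertex from each relevant non-empty hub clique $L_8,L_9,L_{10},L_{11}$. We then give an explicit proper $5$-coloring of $H$. Since $H$ is itself a blowup of an induced subgraph of $M$, properness only needs checking on the pairs of adjacent bags. To verify condition (1) of a $(4,5)$-good subgraph, it suffices to show that every maximal clique $K$ of $G$ contains at least $4$ vertices of $H$. I would first list the maximal cliques of the relevant blowup of $M$. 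Each is of the form $L_i\cup L_{i+1}\cup\bigcup\{L_j: j\ge 8,\ v_j \text{ complete to } \{v_i,v_{i+1}\}\}$, except that the hub clique $L_8\cup\dots\cup L_{12}$ may also combine with a single $L_i$, for example $L_3\cup L_8\cup L_9\cup L_{11}$ or $L_7\cup(\text{hubs})$. For each $K$ on this list we then check that the chosen counts sum to at least $4$. For example, in the case $L_8,L_9,L_{11}\neq\emptyset$ one must check the cliques $L_4L_5$, $L_5L_6$, $L_3L_4L_9$, $L_6L_7L_8L_{11}$, $L_3L_8L_9L_{11}$, $L_7L_8L_9L_{11}$, $L_1L_2L_8L_9$, and so on. The requirement on $L_4L_5$ and $L_5L_6$, which receive no hub help, forces $a_4+a_5\ge4$ and $a_5+a_6\ge4$, so $L_5$ should get about $2$ or $3$ vertices, as in the earlier figures.

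The main obstacle is finding the numbers $a_i$ together with a $5$-coloring simultaneously. The counting constraints push the cycle cliques toward $2$ vertices each, which would give a $C_7$-hyperhole needing $\lceil 14/3\rceil=5$ colors, but the hub vertices are adjacent to five consecutive bags, so they compete for colors. My first attempt is to start from the coloring in Figure~\ref{fig:extremesubgraph} and delete the vertex of the missing hub. We then compensate on the cliques that lose a vertex by adding a vertex to an adjacent cycle bag whose color is free there. If no such adjustment works, we instead start from the coloring in Figure~\ref{fig:1} and modify it the same way. In each subcase, once a candidate is written down, checking properness and the clique counts is a finite routine verification.
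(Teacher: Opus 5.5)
The setup is right, but the proposal stops before the step that constitutes the whole proof. Your symmetry reduction is correct: the reflection $v_i\mapsto v_{10-i}$ together with $v_8\leftrightarrow v_9$ and $v_{11}\leftrightarrow v_{12}$ is an automorphism of $M$. Your case split, and your sufficient condition for (1) (every maximal clique meets $H$ in at least $4$ vertices), are the same as the paper's.

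\textbf{The gap.} You never exhibit a $(4,5)$-good subgraph. This lemma has no content beyond the existence of explicit certificates. You identify finding them as ``the main obstacle'' and then leave it unresolved.

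The search is also less mechanical than you suggest. Take your normalised case $L_8,L_9,L_{11}\neq\emptyset$, $L_{10}=L_{12}=\emptyset$. Delete the $L_{10}$-vertex from Figure~\ref{fig:extremesubgraph}, and spend the freed colour $1$ on an $L_{11}$-vertex. That is the only colour it admits, and it is the natural way to repair $L_6L_7L_8L_{11}$, $L_3L_8L_9L_{11}$ and $L_7L_8L_9L_{11}$. Without recolouring, you are then stuck:
\begin{itemize}
\item a new vertex of $L_3$ or $L_6$ has no free colour;
\item a new vertex of $L_4$ or $L_5$ can only get colour $1$;
\item $L_4$ is complete to $L_5$, so only one of them can be extended.
\end{itemize}
Hence one of $L_3L_4L_9$ and $L_5L_6$ is left with only $3$ chosen vertices.

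\textbf{How to complete it.} Your plan does work if you write the certificates down. All colours below are from $\{1,\dots,5\}$.

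For $L_8,L_9\neq\emptyset=L_{10}$, take:
\begin{itemize}
\item one vertex each of $L_1,L_2,L_4,L_6$, coloured $1,2,4,5$ respectively;
\item two vertices of $L_3$, coloured $1,3$;
\item three vertices of $L_5$, coloured $1,2,3$;
\item two vertices of $L_7$, coloured $2,3$;
\item one vertex each of $L_8,L_9$, coloured $4,5$.
\end{itemize}
This is a patch of Figure~\ref{fig:extremesubgraph} that uses no vertex of $L_{11}\cup L_{12}$. It works for every choice of $L_{11},L_{12}$, so Lemmas~\ref{lem:general 1} and~\ref{lem:general 2} are not needed here. The paper instead uses an $L_{12}$-vertex of colour $1$ in place of the second $L_3$-vertex, which is why it first reduces to $L_{12}\neq\emptyset$.

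For $L_8,L_{10}\neq\emptyset=L_9$, take:
\begin{itemize}
\item two vertices of $L_1$, coloured $1,5$;
\item one vertex of $L_2$, coloured $2$;
\item two vertices of $L_3$, coloured $3,5$;
\item one vertex of $L_4$, coloured $4$;
\item two vertices of $L_5$, coloured $2,3$;
\item one vertex of $L_6$, coloured $5$;
\item two vertices of $L_7$, coloured $2,3$;
\item one vertex each of $L_8,L_{10}$, coloured $4,1$.
\end{itemize}
This again works for all $L_{11},L_{12}$. The paper uses a different certificate for this case.

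Once explicit sets like these are written down, the check really is the routine verification you describe. You need properness on adjacent bags, plus nine clique counts: the seven cliques formed by $L_i\cup L_{i+1}$ with the non-empty hubs complete to both, and the two cliques formed by $L_3$ or $L_7$ with all non-empty hubs.
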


\begin{proof}
    Suppose first that $L_8,L_{10}\neq \emptyset$ (the case $L_9,L_{10}\neq \emptyset$ is symmetric). We give a $(4,5)$-good subgraph with a 5-coloring as follows (see Figure \ref{fig:8 and 10}).
    Choose 
    two vertices in $L_1$ and color them with colors 1 and 2, 
    one vertex in $L_2$ and color it with color 3, 
    two vertices in $L_3$ and color them with colors 2 and 5,  
    one vertex in $L_4$ and color it with color 4, 
    two vertices in $L_5$ and color them with colors 3 and 5, 
    one vertex in $L_6$ and color it with color 2,
    two vertices in $L_7$ and color them with colors 3 and 5, 
    one vertex in $L_8$ and color it with color 4,
    one vertex in $L_{10}$ and color it with color 1. 

    Suppose now that $L_8,L_{9}\neq \emptyset$ and so $L_{10}=\emptyset$. By Lemma \ref{lem:general 1}, we may assume by symmetry that $L_{12}\neq \emptyset$. 
    We give a $(4,5)$-good subgraph with a 5-coloring as follows (see Figure \ref{fig:8 9 and 12}.
    Choose 
    one vertex in $L_1$ and color it with color 1, 
    one vertex in $L_2$ and color it with color 2, 
    one vertex in $L_3$ and color it with color 3, 
    one vertex in $L_4$ and color it with color 4, 
    three vertices in $L_5$ and color them with colors 1, 2 and 3, 
    one vertex in $L_6$ and color it with color 5,
    two vertices in $L_7$ and color them with colors 2 and 3,
    one vertex in $L_8$ and color it with color 4,
    one vertex in $L_9$ and color it with color 5,
    one vertex in $L_{12}$ and color it with color 1. 
\end{proof}

\begin{lemma}\label{lem:exactly one S_5}
    If exactly one of $L_8$, $L_9$ and $L_{10}$ is not empty, then $G$ has a $(4,5)$-good subgraph.
\end{lemma}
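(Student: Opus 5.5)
By the symmetry of $M$ that maps $v_i\mapsto v_{10-i}$ (indices mod $7$), which swaps $v_8\leftrightarrow v_9$ and $v_{11}\leftrightarrow v_{12}$ and fixes $v_{10}$, only two cases remain: either $L_8$ is the nonempty one, or $L_{10}$ is. In both cases I would argue by contradiction, exhibiting an explicit $(4,5)$-good subgraph $H$ together with a $5$-coloring, as in Lemmas \ref{lem:general 2}--\ref{lem:exactly two S_5}. Lemma \ref{lem:L_i is large} allows up to three vertices from each $L_i$, $i\in[7]$, and one vertex from each nonempty $L_j$, $j\ge 8$.

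\emph{Case $L_8\neq\emptyset$, $L_9=L_{10}=\emptyset$.} Lemma \ref{lem:general 1} handles $L_{11}=L_{12}=\emptyset$, and Lemma \ref{lem:general 2} handles $L_{11},L_{12}\neq\emptyset$. So exactly one of $L_{11},L_{12}$ is nonempty, giving two subcases. For each I would first list the maximal cliques of the blowup. These have the form $L_i\cup L_{i+1}$ together with those $L_j$ ($j\in\{8,11,12\}$) complete to both $v_i$ and $v_{i+1}$. For $L_8$ and $L_{11}$ there are also cliques such as $L_6\cup L_7\cup L_8\cup L_{11}$ and $L_3\cup L_8\cup L_{11}$. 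The triangle-type clique $\{v_3,v_8,v_{11}\}$ contains no cycle edge, so it must be checked separately. I would then choose multiplicities $h_i=|H\cap L_i|$ so that each such clique $K$ receives at least $4$ vertices of $H$. (Receiving $4-(\omega-|K|)$ suffices, so a clique that is small in $G$ needs fewer.) Finally I would $5$-color $H$ by hand, imitating Figures \ref{fig:1} and \ref{fig:8 and 10}: put colors on the apex vertices first, then route the colors around the $7$-cycle so that consecutive bags share no color. A natural template is the coloring from Lemma \ref{lem:exactly two S_5} with the $L_9$ or $L_{10}$ vertex deleted and one vertex of $L_{11}$ or $L_{12}$ recolored.

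\emph{Case $L_{10}\neq\emptyset$, $L_8=L_9=\emptyset$.} Now Lemma \ref{lem:general 1} does not apply, so all four subcases for $(L_{11},L_{12})$ must be covered. The edge $v_1v_2$ is not covered by any apex, so $L_1\cup L_2$ is itself a maximal clique and needs four vertices from $L_1\cup L_2$. Here $v_{10}$ sees $v_3,\dots,v_7$, and $v_{11},v_{12}$ both see $v_3$ and $v_7$. I would aim for one $H$ that works uniformly: take one vertex of $L_{10}$, one of $L_{11}$ if it is nonempty, and one of $L_{12}$ if it is nonempty. If $L_{11}$ or $L_{12}$ is empty, the cliques through it simply disappear, and the remaining cliques still receive four vertices, so the same $H$ and coloring restrict correctly.

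The main obstacle is finding, in each subcase, multiplicities and a $5$-coloring that simultaneously meet every maximal clique (including the apex-only cliques such as $\{v_3,v_{10},v_{11},v_{12}\}\cup\ldots$) in four vertices, while using at most three vertices per bag. I would first solve the fractional version, demanding weight $4$ on each maximal clique with total colorability $5$, and then adjust greedily around the cycle. If some small clique cannot reach $4$, I would use the slack $\omega-|K|$, noting that $|K|\le\omega-1$ there because $K$ extends into a larger clique containing apex vertices.
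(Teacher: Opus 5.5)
Your case split (via the reflection $v_i\mapsto v_{10-i}$, which is indeed an automorphism of $M$) and your strategy of exhibiting an explicit $5$-colorable $H$ match the paper. However, you never exhibit a single $H$, and that construction is the whole content of the lemma.

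\textbf{The $L_{10}$ case.} The one concrete commitment you make here cannot be realized. Suppose $L_{10},L_{11},L_{12}$ are all nonempty and $H$ contains $x\in L_{10}$, $y\in L_{11}$, $z\in L_{12}$. Put $h_j=|H\cap L_j|$ and fix a $5$-coloring of $H$.
\begin{itemize}
\item The three colors of the pairwise adjacent $x,y,z$ do not occur on $L_3\cup L_7$, since all three are complete to it. Each of them occurs at most once on the clique $L_1\cup L_2$.
\item Each of the other two colors occurs at most twice on the path of cliques $L_7-L_1-L_2-L_3$.
\end{itemize}
Hence $h_7+h_1+h_2+h_3\le 7$. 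But $L_7\cup L_1$ and $L_2\cup L_3$ are maximal cliques that may both have size $\omega$. For example, take sizes $8,8,8,5,6,5,8$ on $L_1,\dots,L_7$ and $1$ on $L_{10},L_{11},L_{12}$; then $\omega=16$, which is consistent with Lemma~\ref{lem:L_i is large}. Goodness then forces $h_7+h_1\ge 4$ and $h_2+h_3\ge 4$, so no $H$ of your shape exists.

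Your restriction argument is also false. Deleting an apex bag does not make the cliques through it disappear; it shrinks them. For instance, $L_6\cup L_7\cup L_{10}\cup L_{11}$ becomes the maximal clique $L_6\cup L_7\cup L_{10}$ when $L_{11}=\emptyset$, and it loses a vertex of $H$. Finally, the slack remark is vacuous: a maximal clique does not extend to a larger clique, and nothing forces $|K|<\omega$.

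\textbf{What does work.} This is essentially the paper's proof.
\begin{itemize}
\item For $L_{10},L_{11}\neq\emptyset$, take one vertex of $L_{10}$, one of $L_{11}$ and \emph{none} of $L_{12}$, with $2,2,2,1,3,0,2$ vertices in $L_1,\dots,L_7$. This meets every maximal clique in at least $4$ vertices whether or not $L_{12}$ is empty. The case $L_{12}\neq\emptyset=L_{11}$ is symmetric. (The coloring listed in the paper gives both $L_2$ and $L_3$ color $5$; coloring $L_2$ with $3,4$ fixes this.)
\item You are right that Lemma~\ref{lem:general 1} does not literally cover $L_{10}\neq\emptyset=L_{11}=L_{12}$, and the $H$ above fails there: $L_6\cup L_7\cup L_{10}$ gets only $3$ vertices. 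Its argument does work, though. Every maximal clique then contains some $L_j\cup L_{j+1}$, so two vertices from each $L_j$, $j\in[7]$, give a $5$-colorable $C_7[2,\dots,2]$ meeting each maximal clique in $4$ vertices.
\item In your $L_8$ case the plan is feasible: mirror images of the paper's two $L_9$ colorings work. But it is unexecuted, and your template does not survive the deletion. Removing the $L_{10}$ vertex from the $(L_8,L_{10})$ coloring leaves $L_4\cup L_5$ and $L_5\cup L_6$ with only three vertices of $H$.
\end{itemize}
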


\begin{proof}
    Suppose first that $L_9\neq \emptyset$ (the case that $L_8\neq \emptyset$ is symmetric). By Lemmas \ref{lem:general 1} and \ref{lem:general 2}, exactly one of $L_{11}$ and $L_{12}$ is not empty. 
    If $L_{11}\neq \emptyset$, 
    we give a $(4,5)$-good subgraph with a 5-coloring as follows (see Figure \ref{fig:9 and 11}).
    Choose 
    two vertices in $L_1$ and color them with colors 1 and 2, 
    one vertex in $L_2$ and color it with color 3, 
    two vertices in $L_3$ and color them with colors 2 and 5,
    one vertex in $L_4$ and color it with color 3, 
    three vertices in $L_5$ and color them with colors 1, 2 and 5, 
    one vertex in $L_6$ and color it with color 4,
    two vertices in $L_7$ and color them with colors 3 and 5,
    one vertex in $L_9$ and color it with color 4,
    one vertex in $L_{11}$ and color it with color 1.

    If $L_{12}\neq \emptyset$, 
    we give a $(4,5)$-good subgraph with a 5-coloring as follows (see Figure \ref{fig:9 and 12}).
    Choose 
    one vertex in $L_1$ and color it with color 1,
    two vertices in $L_2$ and color them with colors 2 and 3,
    one vertex in $L_3$ and color it with color 5,
    one vertex in $L_4$ and color it with color 2, 
    three vertices in $L_5$ and color them with colors 1, 3 and 5, 
    two vertices in $L_6$ and color them with colors 2 and 4,
    two vertices in $L_7$ and color them with colors 3 and 5,
    one vertex in $L_9$ and color it with color 4,
    one vertex in $L_{12}$ and color it with color 1.

    Suppose now that $L_{10}\neq \emptyset$. By Lemma \ref{lem:general 1} and by symmetry, $L_{11}\neq \emptyset$ or $L_{12}\neq \emptyset$. By symmetry, we assume that $L_{11}\neq \emptyset$.
    We give a $(4,5)$-good subgraph with a 5-coloring as follows (see Figure \ref{fig:10 and 11}). 
    Choose 
    two vertices in $L_1$ and color them with colors 1 and 2,
    two vertices in $L_2$ and color them with colors 3 and 5,
    two vertices in $L_3$ and color them with colors 1 and 5,
    one vertex in $L_4$ and color it with color 4, 
    three vertices in $L_5$ and color them with colors 1, 2 and 5, 
    two vertices in $L_7$ and color them with colors 4 and 5,
    one vertex in $L_{10}$ and color it with color 3,
    one vertex in $L_{11}$ and color it with color 2.
\end{proof}

\begin{lemma}\label{lem:only one S_Y}
    If $L_8=L_9=L_{10}=\emptyset$, then $G$ has a $(4,5)$-good subgraph.
\end{lemma}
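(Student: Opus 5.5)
We first reduce to a single configuration. Since $L_8=L_9=L_{10}=\emptyset$, Lemma \ref{lem:general 1} shows that we may assume at least one of $L_{11},L_{12}$ is non-empty, and Lemma \ref{lem:general 2} rules out both being non-empty. So exactly one of them is non-empty. The reflection $v_i\mapsto v_{10-i}$ (indices mod $7$) fixes $v_3$ and $v_7$, swaps $v_4\leftrightarrow v_6$, and swaps the neighbourhood patterns of $v_{11}$ and $v_{12}$. Hence we may assume by symmetry that $L_{11}\neq\emptyset$ and $L_{12}=\emptyset$.

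Then $G$ is the $7$-hyperhole on $L_1,\dots,L_7$ together with a clique $L_{11}$ complete to $L_3\cup L_6\cup L_7$ and anticomplete to the rest. Since $L_3$ is anticomplete to $L_6\cup L_7$, the maximal cliques of $G$ are
\[
L_1\cup L_2,\ L_2\cup L_3,\ L_3\cup L_4,\ L_4\cup L_5,\ L_5\cup L_6,\ L_7\cup L_1,\ L_6\cup L_7\cup L_{11},\ L_3\cup L_{11}.
\]
The cleanest sufficient condition for $(4,5)$-goodness is that $H$ meets each of these eight cliques in at least $4$ vertices, since then condition (1) holds regardless of $\omega-|K|$. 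By Lemma \ref{lem:L_i is large} we have $|L_i|\ge 3$ for $i\in[7]$, so we may pick up to three vertices from each $L_i$, and at least one from $L_{11}$.

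Let $a_i$ be the number of vertices chosen in $L_i$ and $b$ the number chosen in $L_{11}$. The plan is to find integers satisfying
\[
a_i+a_{i+1}\ge 4 \text{ for the six hyperhole edges other than } L_6L_7,\quad a_6+a_7+b\ge 4,\quad a_3+b\ge 4,
\]
together with an explicit $5$-colouring of the chosen vertices. Each chosen $L_i$ and $L_{11}$ must receive distinct colours, and adjacent blocks must receive disjoint colour sets. We then present the colouring in the same format as Lemmas \ref{lem:general 2}--\ref{lem:exactly one S_5} (with a figure). Every $L_i$ with $i\in[7]$ and $L_{11}$ itself then lie in maximal cliques meeting $H$ in at least $4$ vertices, so $H$ is a $(4,5)$-good subgraph, contradicting that $G$ is basic.

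The main obstacle is the vertex of $L_{11}$. It is adjacent to the chosen parts of $L_3$, $L_6$ and $L_7$, so these three blocks together may use at most $4$ colours. At the same time, $a_3$ must be large to make $a_3+b\ge 4$, and the colour classes must still propagate around the odd cycle. A first naive attempt illustrates the difficulty: with $b=1$ and $L_3$ coloured $\{1,2,3\}$, the colours are forced along $L_4,L_5,L_6$ and the attempt collapses at $L_4$. The colouring from Lemma \ref{lem:general 2} with $L_{12}$ deleted also fails, because $L_3\cup L_4$ then meets $H$ in only $3$ vertices.

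So the first thing to try is $b=2$, with $L_3$ using two colours and $L_{11}$ the two remaining ones. Then $L_6$ and $L_7$ must share colours with $L_3$ (using $L_3$'s colours and the fifth colour), so that $L_6\cup L_7$ has only $2$ or $3$ chosen vertices, and $a_4,a_5$ and $a_1,a_2$ absorb the slack. If no assignment reaching $4$ on every clique works, the fallback is the weaker requirement in (1). For the cliques $L_3\cup L_{11}$ or $L_6\cup L_7\cup L_{11}$ we would use the non-small-vertex bound $|L_i|>\lceil\omega/4\rceil$ from Lemma \ref{lem:L_i is large} to show that these cliques have size strictly less than $\omega$. That would allow them to meet $H$ in only $3$ vertices.
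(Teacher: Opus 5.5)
Your reduction is sound, but the explicit colouring that is the whole content of this lemma is missing, and your account of why it is hard is wrong.

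On the reduction: Lemmas \ref{lem:general 1} and \ref{lem:general 2} leave exactly one of $L_{11},L_{12}$ non-empty. The reflection $v_i\mapsto v_{10-i}$ does exchange their attachment patterns. Note that it fixes $v_5$ and swaps $v_3\leftrightarrow v_7$, $v_4\leftrightarrow v_6$, $v_1\leftrightarrow v_2$; it does not fix $v_3$ and $v_7$. Your list of eight maximal cliques and the resulting inequalities are also correct.

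But you never exhibit a $5$-coloured choice of vertices satisfying those inequalities, so the proposal does not prove the lemma. The obstruction you report for $b=1$ with $L_3$ coloured $\{1,2,3\}$ does not exist. Choose:
\begin{itemize}
\item three vertices of $L_1$ coloured $1,2,5$;
\item one vertex of $L_2$ coloured $4$;
\item three vertices of $L_3$ coloured $1,2,3$;
\item one vertex of $L_4$ coloured $4$;
\item three vertices of $L_5$ coloured $1,3,5$;
\item two vertices of $L_6$ coloured $2,4$;
\item one vertex of $L_7$ coloured $3$;
\item one vertex of $L_{11}$ coloured $5$.
\end{itemize}
Adjacent blocks get disjoint colour sets, including $L_{11}$ against $L_3$, $L_6$ and $L_7$. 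The eight maximal cliques in your list meet this set in $4,4,4,4,5,4,4,4$ vertices respectively. This is exactly the mirror image, under your reflection, of the paper's colouring, which treats the case $L_{12}\neq\emptyset$ instead.

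Your proposed alternatives would not have rescued the argument.
\begin{itemize}
\item Taking $b=2$ needs $|L_{11}|\ge 2$, but only $L_{11}\neq\emptyset$ is known; Lemma \ref{lem:L_i is large} concerns $i\in[7]$ only.
\item Moreover $a_3=b=2$ is infeasible. Give $L_3$ the colours $\{1,2\}$ and $L_{11}$ the colours $\{3,4\}$. Then $a_2,a_4\ge 2$ with $L_2,L_4\subseteq\{3,4,5\}$. The condition $a_1+a_7\ge 4$ with $L_7\subseteq\{1,2,5\}$ forces colour $5$ into both $L_2$ and $L_7$. Hence $L_6\subseteq\{1,2\}$, and $L_5$ must avoid at least two colours of $\{3,4,5\}$, giving $a_5+a_6\le 3$.
\item The fallback of showing that $L_3\cup L_{11}$ or $L_6\cup L_7\cup L_{11}$ has fewer than $\omega$ vertices is unsupported. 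Lemma \ref{lem:L_i is large} only bounds the $|L_i|$ from below, and nothing prevents either clique from being a maximum clique.
\end{itemize}
What was needed is simply the explicit assignment above, which uses a single vertex of $L_{11}$.
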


\begin{proof}
    If $L_{11}$ and $L_{12}$ are not empty, we are done by Lemma \ref{lem:general 2}. By symmetry, we assume that $L_{12}\neq \emptyset$ and $L_{11}=\emptyset$. 
    We give a $(4,5)$-good subgraph with a 5-coloring as follows (see Figure \ref{fig:one S_Y}).
    Choose 
    one vertex in $L_1$ and color it with color 4, 
    three vertices in $L_2$ and color them with colors 1, 2 and 5, 
    one vertex in $L_3$ and color it with color 3, 
    two vertices in $L_4$ and color them with colors 2 and 4,
    three vertices in $L_5$ and color them with colors 1, 3 and 5, 
    one vertex in $L_6$ and color it with color 4,
    three vertices in $L_7$ and color them with colors 1, 2 and 3,
    one vertex in $L_{12}$ and color it with color 5.
\end{proof}

This completes the proof for special blowups of $M$. Non-empty blowups of $M_1$ and $M_2$ can be handled in a similar way. We leave the routine work to the readers. 

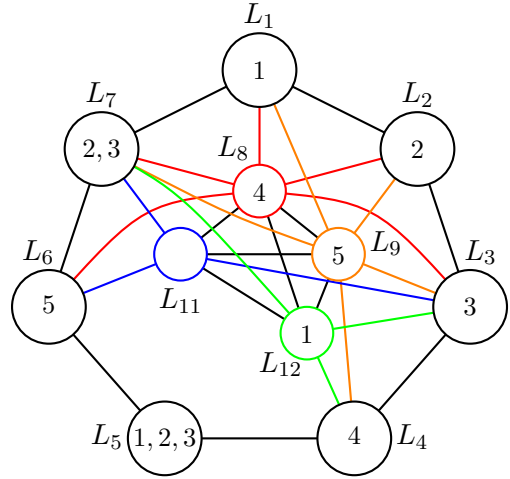
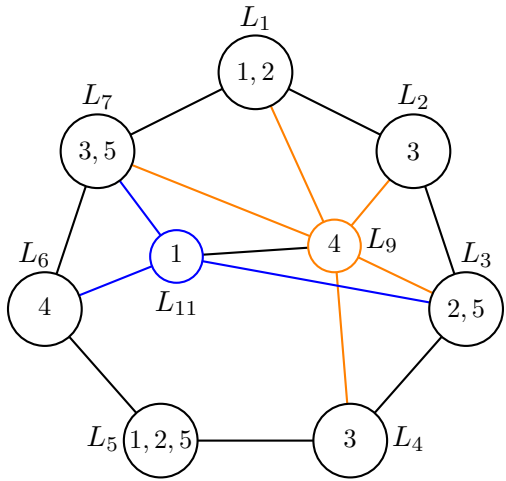
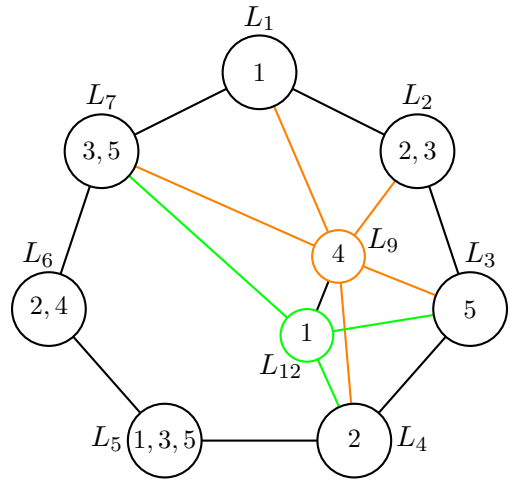
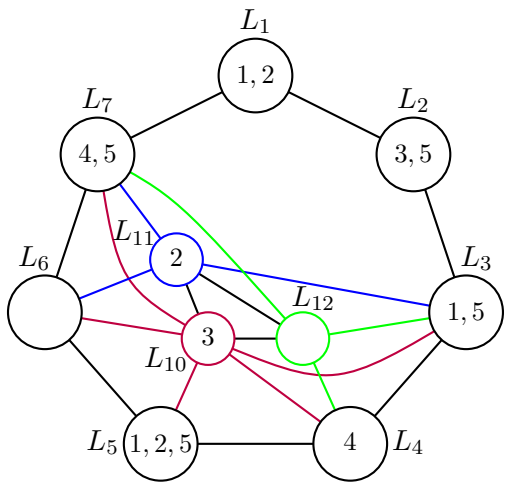
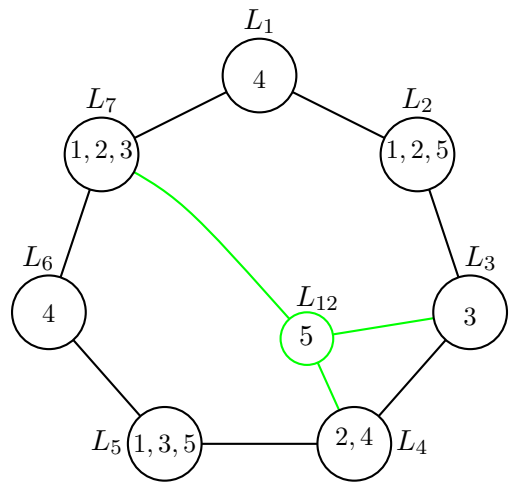
\begin{figure}
    \centering
    \begin{subfigure}{0.45\linewidth}
        \centering
        \begin{tikzpicture}[scale=0.7]
        \tikzstyle{v}=[circle, draw, solid, fill=black, inner sep=0pt, minimum width=3pt]

        \draw[thick] (0, 3.5)--(3, 2);
        \draw[thick] (3, 2)--(4, -1);
        \draw[thick] (4, -1)--(1.8, -3.5);
        \draw[thick] (1.8, -3.5)--(-1.8, -3.5);
        \draw[thick] (-1.8, -3.5)--(-4, -1);
        \draw[thick] (-4, -1)--(-3, 2);
        \draw[thick] (-3, 2)--(0, 3.5);

        \draw[thick] (0, 1.2)--(-1.5, 0);
        \draw[thick] (0, 1.2)--(-0.9, -1.5);
        \draw[thick] (0, 1.2)--(0.9, -1.5);
        \draw[thick] (-1.5, 0)--(0.9, -1.5);
        \draw[thick] (-1.5, 0)--(-0.9, -1.5);
        \draw[thick] (0.9, -1.5)--(-0.9, -1.5);

        \draw[red, thick] (0, 1.2)--(4, -1);
        \draw[red, thick] (0, 1.2)..controls (-2.6, 1.4)..(-4, -1);
        \draw[red, thick] (0, 1.2)--(3, 2);
        \draw[red, thick] (0, 1.2)--(-3, 2);
        \draw[red, thick] (0, 1.2)--(0, 3.5);

        \draw[purple, thick] (-0.9, -1.5)..controls (1.5, -2.5)..(4, -1);
        \draw[purple, thick] (-0.9, -1.5)--(1.8, -3.5);
        \draw[purple, thick] (-0.9, -1.5)--(-1.8, -3.5);
        \draw[purple, thick] (-0.9, -1.5)--(-4, -1);
        \draw[purple, thick] (-0.9, -1.5)..controls (-2.6, -0.5)..(-3, 2);

        \draw[blue, thick] (-1.5, 0)--(4, -1);
        \draw[blue, thick] (-1.5, 0)--(-4, -1);
        \draw[blue, thick] (-1.5, 0)--(-3, 2);

        \draw[green, thick] (0.9, -1.5)--(4, -1);
        \draw[green, thick] (0.9, -1.5)--(1.8, -3.5);
        \draw[green, thick] (0.9, -1.5)..controls (-1.5, 1.2)..(-3, 2);

        \filldraw[color=black, fill=white!100, thick] (0, 3.5) circle (0.7);
        \node [label={\small $1, 2$}] (v) at (0, 2.95 - 0.08){};
        \node [label=$L_1$] (v) at (0, 4 - 0.08){};

        \filldraw[color=black, fill=white!100, thick] (3, 2) circle (0.7);
        \node [label={\small $3$}] (v) at (3, 1.45){};
        \node [label=$L_2$] (v) at (3, 2.5 - 0.08){};

        \filldraw[color=black, fill=white!100, thick] (4, -1) circle (0.7);
        \node [label={\small $2, 5$}] (v) at (4, -1.55 - 0.08){};
        \node [label=$L_3$] (v) at (4.2, -0.5 - 0.08){};

        \filldraw[color=black, fill=white!100, thick] (1.8, -3.5) circle (0.7);
        \node [label={\small $4$}] (v) at (1.8, -4){};
        \node [label=$L_4$] (v) at (2.9, -4 - 0.08){};

        \filldraw[color=black, fill=white!100, thick] (-1.8, -3.5) circle (0.7);
        \node [label={\small $3, 5$}] (v) at (-1.8, -4.05 - 0.08){};
        \node [label=$L_5$] (v) at (-2.9, -4 - 0.08){};

        \filldraw[color=black, fill=white!100, thick] (-4, -1) circle (0.7);
        \node [label={\small $2$}] (v) at (-4, -1.5){};
        \node [label=$L_6$] (v) at (-4.2, -0.5 - 0.08){};

        \filldraw[color=black, fill=white!100, thick] (-3, 2) circle (0.7);
        \node [label={\small $3, 5$}] (v) at (-3, 1.45 - 0.08){};
        \node [label=$L_7$] (v) at (-3, 2.5 - 0.08){};

        \filldraw[color=red, fill=white!100, thick] (0, 1.2) circle (0.5);
        \node [label={\small $4$}] (v) at (0, 0.7 - 0.08){};
        \node [label=$L_8$] (v) at (-0.5, 1.4){};
        
        \filldraw[color=purple, fill=white!100, thick] (-0.9, -1.5) circle (0.5);
        \node [label={\small $1$}] (v) at (-0.9, -2 - 0.08){};
        \node [label=$L_{10}$] (v) at (-1.7, -2.5){};

        \filldraw[color=blue, fill=white!100, thick] (-1.5, 0) circle (0.5);
        \node [label={$L_{11}$}] (v) at (-2.3, -0.1){};
        
        \filldraw[color=green, fill=white!100, thick] (0.9, -1.5) circle (0.5);
        \node [label=$L_{12}$] (v) at (1.1, -1.35){};
        
    \end{tikzpicture}
    \subcaption{A $(4,5)$-good subgraph in  Lemma \ref{lem:exactly two S_5}.}
    \label{fig:8 and 10}
    \end{subfigure}
    \hfill
    \begin{subfigure}{0.45\linewidth}
        \centering
        \begin{tikzpicture}[scale=0.7]
        \tikzstyle{v}=[circle, draw, solid, fill=black, inner sep=0pt, minimum width=3pt]

        \draw[thick] (0, 3.5)--(3, 2);
        \draw[thick] (3, 2)--(4, -1);
        \draw[thick] (4, -1)--(1.8, -3.5);
        \draw[thick] (1.8, -3.5)--(-1.8, -3.5);
        \draw[thick] (-1.8, -3.5)--(-4, -1);
        \draw[thick] (-4, -1)--(-3, 2);
        \draw[thick] (-3, 2)--(0, 3.5);

        \draw[thick] (0, 1.2)--(1.5, 0);
        \draw[thick] (0, 1.2)--(-1.5, 0);
        \draw[thick] (0, 1.2)--(0.9, -1.5);
        \draw[thick] (1.5, 0)--(-1.5, 0);
        \draw[thick] (1.5, 0)--(0.9, -1.5);
        \draw[thick] (-1.5, 0)--(0.9, -1.5);

        \draw[red, thick] (0, 1.2)..controls (2.2, 1)..(4, -1);
        \draw[red, thick] (0, 1.2)..controls (-2.2, 1)..(-4, -1);
        \draw[red, thick] (0, 1.2)--(3, 2);
        \draw[red, thick] (0, 1.2)--(-3, 2);
        \draw[red, thick] (0, 1.2)--(0, 3.5);

        \draw[orange, thick] (1.5, 0)..controls (-0.5, 0.7)..(-3, 2);
        \draw[orange, thick] (1.5, 0)--(0, 3.5);
        \draw[orange, thick] (1.5, 0)--(3, 2);
        \draw[orange, thick] (1.5, 0)--(4, -1);
        \draw[orange, thick] (1.5, 0)--(1.8, -3.5);

        \draw[blue, thick] (-1.5, 0)--(4, -1);
        \draw[blue, thick] (-1.5, 0)--(-4, -1);
        \draw[blue, thick] (-1.5, 0)--(-3, 2);

        \draw[green, thick] (0.9, -1.5)--(4, -1);
        \draw[green, thick] (0.9, -1.5)--(1.8, -3.5);
        \draw[green, thick] (0.9, -1.5)..controls (-1.5, 1.2)..(-3, 2);

        \filldraw[color=black, fill=white!100, thick] (0, 3.5) circle (0.7);
        \node [label={\small $1$}] (v) at (0, 2.95){};
        \node [label=$L_1$] (v) at (0, 4 - 0.08){};

        \filldraw[color=black, fill=white!100, thick] (3, 2) circle (0.7);
        \node [label={\small $2$}] (v) at (3, 1.45){};
        \node [label=$L_2$] (v) at (3, 2.5 - 0.08){};

        \filldraw[color=black, fill=white!100, thick] (4, -1) circle (0.7);
        \node [label={\small $3$}] (v) at (4, -1.55){};
        \node [label=$L_3$] (v) at (4.2, -0.5 - 0.08){};

        \filldraw[color=black, fill=white!100, thick] (1.8, -3.5) circle (0.7);
        \node [label={\small $4$}] (v) at (1.8, -4){};
        \node [label=$L_4$] (v) at (2.9, -4 - 0.08){};

        \filldraw[color=black, fill=white!100, thick] (-1.8, -3.5) circle (0.7);
        \node [label={\small $1, 2, 3$}] (v) at (-1.8, -4.05 - 0.08){};
        \node [label=$L_5$] (v) at (-2.9, -4 - 0.08){};

        \filldraw[color=black, fill=white!100, thick] (-4, -1) circle (0.7);
        \node [label={\small $5$}] (v) at (-4, -1.5){};
        \node [label=$L_6$] (v) at (-4.2, -0.5 - 0.08){};

        \filldraw[color=black, fill=white!100, thick] (-3, 2) circle (0.7);
        \node [label={\small $2, 3$}] (v) at (-3, 1.45 - 0.08){};
        \node [label=$L_7$] (v) at (-3, 2.5 - 0.08){};

        \filldraw[color=red, fill=white!100, thick] (0, 1.2) circle (0.5);
        \node [label={\small $4$}] (v) at (0, 0.7 - 0.08){};
        \node [label=$L_8$] (v) at (-0.5, 1.4){};
        
        \filldraw[color=orange, fill=white!100, thick] (1.5, 0) circle (0.5);
        \node [label={\small $5$}] (v) at (1.5, -0.5 - 0.08){};
        \node [label=$L_{9}$] (v) at (2.4, -0.4){};
        
        \filldraw[color=blue, fill=white!100, thick] (-1.5, 0) circle (0.5);
        \node [label=$L_{11}$] (v) at (-1.5, -1.5){};
        
        \filldraw[color=green, fill=white!100, thick] (0.9, -1.5) circle (0.5);
        \node [label={\small $1$}] (v) at (0.9, -2 - 0.08){};
        \node [label=$L_{12}$] (v) at (0.4, -2.7){};
        
    \end{tikzpicture}
    \subcaption{A $(4,5)$-good subgraph in Lemma \ref{lem:exactly two S_5}.}
    \label{fig:8 9 and 12}
    \end{subfigure}

    \medskip

    \begin{subfigure}{0.45\linewidth}
        \centering
        \begin{tikzpicture}[scale=0.7]
        \tikzstyle{v}=[circle, draw, solid, fill=black, inner sep=0pt, minimum width=3pt]

        \draw[thick] (0, 3.5)--(3, 2);
        \draw[thick] (3, 2)--(4, -1);
        \draw[thick] (4, -1)--(1.8, -3.5);
        \draw[thick] (1.8, -3.5)--(-1.8, -3.5);
        \draw[thick] (-1.8, -3.5)--(-4, -1);
        \draw[thick] (-4, -1)--(-3, 2);
        \draw[thick] (-3, 2)--(0, 3.5);

        \draw[thick] (1.5, 0.2)--(-1.5, 0);

        \draw[orange, thick] (1.5, 0.2)--(-3, 2);
        \draw[orange, thick] (1.5, 0.2)--(0, 3.5);
        \draw[orange, thick] (1.5, 0.2)--(3, 2);
        \draw[orange, thick] (1.5, 0.2)--(4, -1);
        \draw[orange, thick] (1.5, 0.2)--(1.8, -3.5);

        \draw[blue, thick] (-1.5, 0)--(4, -1);
        \draw[blue, thick] (-1.5, 0)--(-4, -1);
        \draw[blue, thick] (-1.5, 0)--(-3, 2);

        \filldraw[color=black, fill=white!100, thick] (0, 3.5) circle (0.7);
        \node [label={\small $1, 2$}] (v) at (0, 2.95 - 0.08){};
        \node [label=$L_1$] (v) at (0, 4 - 0.08){};

        \filldraw[color=black, fill=white!100, thick] (3, 2) circle (0.7);
        \node [label={\small $3$}] (v) at (3, 1.45){};
        \node [label=$L_2$] (v) at (3, 2.5 - 0.08){};

        \filldraw[color=black, fill=white!100, thick] (4, -1) circle (0.7);
        \node [label={\small $2, 5$}] (v) at (4, -1.55 - 0.08){};
        \node [label=$L_3$] (v) at (4.2, -0.5 - 0.08){};

        \filldraw[color=black, fill=white!100, thick] (1.8, -3.5) circle (0.7);
        \node [label={\small $3$}] (v) at (1.8, -4){};
        \node [label=$L_4$] (v) at (2.9, -4 - 0.08){};

        \filldraw[color=black, fill=white!100, thick] (-1.8, -3.5) circle (0.7);
        \node [label={\small $1, 2, 5$}] (v) at (-1.8, -4.05 - 0.08){};
        \node [label=$L_5$] (v) at (-2.9, -4 - 0.08){};

        \filldraw[color=black, fill=white!100, thick] (-4, -1) circle (0.7);
        \node [label={\small $4$}] (v) at (-4, -1.5){};
        \node [label=$L_6$] (v) at (-4.2, -0.5 - 0.08){};

        \filldraw[color=black, fill=white!100, thick] (-3, 2) circle (0.7);
        \node [label={\small $3, 5$}] (v) at (-3, 1.45 - 0.08){};
        \node [label=$L_7$] (v) at (-3, 2.5 - 0.08){};

        \filldraw[color=orange, fill=white!100, thick] (1.5, 0.2) circle (0.5);
        \node [label={\small $4$}] (v) at (1.5, -0.3){};
        \node [label=$L_{9}$] (v) at (2.4, -0.3){};
        
        \filldraw[color=blue, fill=white!100, thick] (-1.5, 0) circle (0.5);
        \node [label={\small $1$}] (v) at (-1.5, -0.5){};
        \node [label=$L_{11}$] (v) at (-1.5, -1.5){};
                
    \end{tikzpicture}
    \subcaption{A $(4,5)$-good subgraph in Lemma \ref{lem:exactly one S_5}.}
    \label{fig:9 and 11}
    \end{subfigure}
    \hfill
    \begin{subfigure}{0.45\linewidth}
        \centering
        \begin{tikzpicture}[scale=0.7]
        \tikzstyle{v}=[circle, draw, solid, fill=black, inner sep=0pt, minimum width=3pt]

        \draw[thick] (0, 3.5)--(3, 2);
        \draw[thick] (3, 2)--(4, -1);
        \draw[thick] (4, -1)--(1.8, -3.5);
        \draw[thick] (1.8, -3.5)--(-1.8, -3.5);
        \draw[thick] (-1.8, -3.5)--(-4, -1);
        \draw[thick] (-4, -1)--(-3, 2);
        \draw[thick] (-3, 2)--(0, 3.5);

        \draw[thick] (1.5, 0)--(0.9, -1.5);

        \draw[orange, thick] (1.5, 0)--(-3, 2);
        \draw[orange, thick] (1.5, 0)--(0, 3.5);
        \draw[orange, thick] (1.5, 0)--(3, 2);
        \draw[orange, thick] (1.5, 0)--(4, -1);
        \draw[orange, thick] (1.5, 0)--(1.8, -3.5);

        \draw[green, thick] (0.9, -1.5)--(4, -1);
        \draw[green, thick] (0.9, -1.5)--(1.8, -3.5);
        \draw[green, thick] (0.9, -1.5)--(-3, 2);

        \filldraw[color=black, fill=white!100, thick] (0, 3.5) circle (0.7);
        \node [label={\small $1$}] (v) at (0, 2.95){};
        \node [label=$L_1$] (v) at (0, 4 - 0.08){};

        \filldraw[color=black, fill=white!100, thick] (3, 2) circle (0.7);
        \node [label={\small $2, 3$}] (v) at (3, 1.45 - 0.08){};
        \node [label=$L_2$] (v) at (3, 2.5 - 0.08){};

        \filldraw[color=black, fill=white!100, thick] (4, -1) circle (0.7);
        \node [label={\small $5$}] (v) at (4, -1.55){};
        \node [label=$L_3$] (v) at (4.2, -0.5 - 0.08){};

        \filldraw[color=black, fill=white!100, thick] (1.8, -3.5) circle (0.7);
        \node [label={\small $2$}] (v) at (1.8, -4){};
        \node [label=$L_4$] (v) at (2.9, -4 - 0.08){};

        \filldraw[color=black, fill=white!100, thick] (-1.8, -3.5) circle (0.7);
        \node [label={\small $1, 3, 5$}] (v) at (-1.8, -4.05 - 0.08){};
        \node [label=$L_5$] (v) at (-2.9, -4 - 0.08){};

        \filldraw[color=black, fill=white!100, thick] (-4, -1) circle (0.7);
        \node [label={\small $2, 4$}] (v) at (-4, -1.5 - 0.08){};
        \node [label=$L_6$] (v) at (-4.2, -0.5 - 0.08){};

        \filldraw[color=black, fill=white!100, thick] (-3, 2) circle (0.7);
        \node [label={\small $3, 5$}] (v) at (-3, 1.45 - 0.08){};
        \node [label=$L_7$] (v) at (-3, 2.5 - 0.08){};
        
        \filldraw[color=orange, fill=white!100, thick] (1.5, 0) circle (0.5);
        \node [label={\small $4$}] (v) at (1.5, -0.5){};        
        \node [label=$L_{9}$] (v) at (2.35, -0.3){};
        
        \filldraw[color=green, fill=white!100, thick] (0.9, -1.5) circle (0.5);
        \node [label={\small $1$}] (v) at (0.9, -2){};
        \node [label=$L_{12}$] (v) at (0.4, -2.7){};
        
    \end{tikzpicture}
    \subcaption{A $(4,5)$-good subgraph in Lemma \ref{lem:exactly one S_5}.}
    \label{fig:9 and 12}
    \end{subfigure}

    \medskip

    \begin{subfigure}{0.45\linewidth}
        \centering
        \begin{tikzpicture}[scale=0.7]
        \tikzstyle{v}=[circle, draw, solid, fill=black, inner sep=0pt, minimum width=3pt]

        \draw[thick] (0, 3.5)--(3, 2);
        \draw[thick] (3, 2)--(4, -1);
        \draw[thick] (4, -1)--(1.8, -3.5);
        \draw[thick] (1.8, -3.5)--(-1.8, -3.5);
        \draw[thick] (-1.8, -3.5)--(-4, -1);
        \draw[thick] (-4, -1)--(-3, 2);
        \draw[thick] (-3, 2)--(0, 3.5);

        \draw[thick] (-1.5, 0)--(0.9, -1.5);
        \draw[thick] (-1.5, 0)--(-0.9, -1.5);
        \draw[thick] (0.9, -1.5)--(-0.9, -1.5);

        \draw[purple, thick] (-0.9, -1.5)..controls (1.5, -2.5)..(4, -1);
        \draw[purple, thick] (-0.9, -1.5)--(1.8, -3.5);
        \draw[purple, thick] (-0.9, -1.5)--(-1.8, -3.5);
        \draw[purple, thick] (-0.9, -1.5)--(-4, -1);
        \draw[purple, thick] (-0.9, -1.5)..controls (-2.6, -0.5)..(-3, 2);

        \draw[blue, thick] (-1.5, 0)--(4, -1);
        \draw[blue, thick] (-1.5, 0)--(-4, -1);
        \draw[blue, thick] (-1.5, 0)--(-3, 2);

        \draw[green, thick] (0.9, -1.5)--(4, -1);
        \draw[green, thick] (0.9, -1.5)--(1.8, -3.5);
        \draw[green, thick] (0.9, -1.5)..controls (-1.5, 1.2)..(-3, 2);

        \filldraw[color=black, fill=white!100, thick] (0, 3.5) circle (0.7);
        \node [label={\small $1, 2$}] (v) at (0, 2.95 - 0.08){};
        \node [label=$L_1$] (v) at (0, 4 - 0.08){};

        \filldraw[color=black, fill=white!100, thick] (3, 2) circle (0.7);
        \node [label={\small $3, 5$}] (v) at (3, 1.45 - 0.08){};
        \node [label=$L_2$] (v) at (3, 2.5 - 0.08){};

        \filldraw[color=black, fill=white!100, thick] (4, -1) circle (0.7);
        \node [label={\small $1, 5$}] (v) at (4, -1.55 - 0.08){};
        \node [label=$L_3$] (v) at (4.2, -0.5 - 0.08){};

        \filldraw[color=black, fill=white!100, thick] (1.8, -3.5) circle (0.7);
        \node [label={\small $4$}] (v) at (1.8, -4){};
        \node [label=$L_4$] (v) at (2.9, -4 - 0.08){};

        \filldraw[color=black, fill=white!100, thick] (-1.8, -3.5) circle (0.7);
        \node [label={\small $1, 2, 5$}] (v) at (-1.8, -4.05 - 0.08){};
        \node [label=$L_5$] (v) at (-2.9, -4 - 0.08){};

        \filldraw[color=black, fill=white!100, thick] (-4, -1) circle (0.7);
        \node [label={\small}] (v) at (-4, -1.5 - 0.08){};
        \node [label=$L_6$] (v) at (-4.2, -0.5 - 0.08){};

        \filldraw[color=black, fill=white!100, thick] (-3, 2) circle (0.7);
        \node [label={\small $4, 5$}] (v) at (-3, 1.45 - 0.08){};
        \node [label=$L_7$] (v) at (-3, 2.5 - 0.08){};

        \filldraw[color=purple, fill=white!100, thick] (-0.9, -1.5) circle (0.5);
        \node [label={\small $3$}] (v) at (-0.9, -2){};
        \node [label=$L_{10}$] (v) at (-1.7, -2.5){};

        \filldraw[color=blue, fill=white!100, thick] (-1.5, 0) circle (0.5);
        \node [label={\small $2$}] (v) at (-1.5, -0.5){};
        \node [label=$L_{11}$] (v) at (-2.3, -0.1){};
        
        \filldraw[color=green, fill=white!100, thick] (0.9, -1.5) circle (0.5);
        \node [label={\small}] (v) at (0.9, -2 - 0.08){};
        \node [label=$L_{12}$] (v) at (1.1, -1.35){};
        
    \end{tikzpicture}
    \subcaption{A $(4,5)$-good subgraph in Lemma \ref{lem:exactly one S_5}.}\label{fig:10 and 11}
    \end{subfigure}
    \hfill
    \begin{subfigure}{0.45\linewidth}
        \centering
        \begin{tikzpicture}[scale=0.7]
        \tikzstyle{v}=[circle, draw, solid, fill=black, inner sep=0pt, minimum width=3pt]

        \draw[thick] (0, 3.5)--(3, 2);
        \draw[thick] (3, 2)--(4, -1);
        \draw[thick] (4, -1)--(1.8, -3.5);
        \draw[thick] (1.8, -3.5)--(-1.8, -3.5);
        \draw[thick] (-1.8, -3.5)--(-4, -1);
        \draw[thick] (-4, -1)--(-3, 2);
        \draw[thick] (-3, 2)--(0, 3.5);

        \draw[green, thick] (0.9, -1.5)--(4, -1);
        \draw[green, thick] (0.9, -1.5)--(1.8, -3.5);
        \draw[green, thick] (0.9, -1.5)..controls (-1.5, 1.2)..(-3, 2);

        \filldraw[color=black, fill=white!100, thick] (0, 3.5) circle (0.7);
        \node [label={\small $4$}] (v) at (0, 2.95 - 0.08){};
        \node [label=$L_1$] (v) at (0, 4 - 0.08){};

        \filldraw[color=black, fill=white!100, thick] (3, 2) circle (0.7);
        \node [label={\small $1,2,5$}] (v) at (3, 1.45){};
        \node [label=$L_2$] (v) at (3, 2.5 - 0.08){};

        \filldraw[color=black, fill=white!100, thick] (4, -1) circle (0.7);
        \node [label={\small $3$}] (v) at (4, -1.55 - 0.08){};
        \node [label=$L_3$] (v) at (4.2, -0.5 - 0.08){};

        \filldraw[color=black, fill=white!100, thick] (1.8, -3.5) circle (0.7);
        \node [label={\small $2,4$}] (v) at (1.8, -4){};
        \node [label=$L_4$] (v) at (2.9, -4 - 0.08){};

        \filldraw[color=black, fill=white!100, thick] (-1.8, -3.5) circle (0.7);
        \node [label={\small $1,3,5$}] (v) at (-1.8, -4.05 - 0.08){};
        \node [label=$L_5$] (v) at (-2.9, -4 - 0.08){};

        \filldraw[color=black, fill=white!100, thick] (-4, -1) circle (0.7);
        \node [label={\small $4$}] (v) at (-4, -1.5 - 0.08){};
        \node [label=$L_6$] (v) at (-4.2, -0.5 - 0.08){};

        \filldraw[color=black, fill=white!100, thick] (-3, 2) circle (0.7);
        \node [label={\small $1,2,3$}] (v) at (-3, 1.45){};
        \node [label=$L_7$] (v) at (-3, 2.5 - 0.08){};

        \filldraw[color=green, fill=white!100, thick] (0.9, -1.5) circle (0.5);
        \node [label={\small} $5$] (v) at (0.8, -2){};
        \node [label=$L_{12}$] (v) at (1.1, -1.35){};

    \end{tikzpicture}
    \subcaption{A $(4,5)$-good subgraph in Lemma \ref{lem:only one S_Y}.}
    \label{fig:one S_Y}
    \end{subfigure}
    \caption{$(4,5)$-good subgraphs in Lemmas \ref{lem:exactly two S_5}-\ref{lem:only one S_Y}.}
\end{figure}

\newpage

\section{Nice blowups of $C_5$}\label{sec:nice blowup of C_5}

From now on, we may assume that our graph $G$ is $(P_7, C_4, C_6, C_7)$-free. If $G$ is $C_5$-free, then $G$ is chordal and so $\chi(G)=\omega(G)$. So we assume that $G$ contains an induced $C_5$. 
The indices below are modulo 5.

A tuple $(B_1, B_2, B_3, B_4, B_5)$ is called a \emph{nice blowup of $C_5$} in a graph $G$  if
\begin{itemize}
    \item each $B_i$ is a clique in $G$, and distinct $B_i$ and $B_j$ are disjoint,  
    \item for every $i\in [5]$ and $v\in B_i$, $v$ has a neighbor in $B_{i-1}$ and a neighbor in $B_{i+1}$,
    \item for every $i\in [5]$, $B_i$ is anticomplete to $B_{i+2}$, 
    \item for every $i\in [5]$, $a\in B_i$, $b,c\in B_{i+1}$, $d\in B_{i+2}$ with $b\neq c$, $G[\{a,b,c,d\}]$ is not isomorphic to $P_4$.
\end{itemize}
We call the last condition of nice blowups the {\em $P_4$-structure} condition. Observe that a nice blowup of $C_5$ is $(P_7, C_4, C_6)$-free. Next we give some basic properties of nice blowups of $C_5$.

\begin{lemma}\label{lem:blowup-basicproperty}
    Let $G$ be a $(P_7, C_4, C_6)$-free graph, and 
    let $H=(B_1, B_2, B_3, B_4, B_5)$ be a nice blowup of $C_5$ in $G$. 
    \begin{itemize}
        \item[$(1)$]  One can order vertices in $B_i$ as $b_i^1,b_i^2,\ldots, b_i^{|B_i|}$ so that $N_{B_{i-1}\cup B_{i+1}}(b_i^1)\subseteq N_{B_{i-1}\cup B_{i+1}}(b_i^2)\subseteq \cdots \subseteq N_{B_{i-1}\cup B_{i+1}}(b_i^{|B_i|})$.
        \item[$(2)$] For each $i\in [5]$, there is a vertex in $B_i$ complete to $B_{i-1}\cup B_{i+1}$. In particular, each vertex in $B_{i-1}$ and each vertex in $B_{i+1}$ have a common neighbor in $B_{i}$.
    \end{itemize}
\end{lemma}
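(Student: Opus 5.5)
For (1), I would show that for any two distinct vertices $x,y\in B_i$, their neighborhoods in $B_{i-1}\cup B_{i+1}$ are comparable under inclusion. A linear order (a chain) then follows by sorting by the size of this neighborhood.

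The comparability comes in two steps. First, within a single side, $N_{B_{i+1}}(x)$ and $N_{B_{i+1}}(y)$ are comparable by Lemma \ref{lem:nbd containment for an edge}, applied to the edge $xy$ and the clique $K=B_{i+1}$; the same holds for $B_{i-1}$. Second, I must rule out a ``crossing'': $N_{B_{i+1}}(x)\supsetneq N_{B_{i+1}}(y)$ while $N_{B_{i-1}}(y)\supsetneq N_{B_{i-1}}(x)$. In that case pick $c\in B_{i+1}$ adjacent to $x$ but not $y$, and $a\in B_{i-1}$ adjacent to $y$ but not $x$. Since $B_{i-1}$ is anticomplete to $B_{i+1}$, the set $\{a,y,x,c\}$ induces the path $a-y-x-c$. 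Here $a\in B_{i-1}$, $x,y\in B_i$ with $x\neq y$, and $c\in B_{i+1}$, so this violates the $P_4$-structure condition with index $i-1$. Hence the neighborhoods of $x$ and $y$ are comparable on both sides in the same direction, and the union neighborhoods form a chain.

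For (2), take $b_i^{|B_i|}$, the maximum vertex in the order from (1). I claim it is complete to $B_{i+1}$. Let $u\in B_{i+1}$. By the definition of a nice blowup, $u$ has a neighbor $b\in B_i$. Then $u\in N(b)\subseteq N(b_i^{|B_i|})$, so $u$ is adjacent to $b_i^{|B_i|}$. The same argument applies to $B_{i-1}$. The ``in particular'' statement follows at once, since this vertex is a common neighbor of any vertex of $B_{i-1}$ and any vertex of $B_{i+1}$.

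The only delicate step is the crossing case in (1), which is exactly where the $P_4$-structure condition is needed.
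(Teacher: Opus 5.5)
Your proof is correct and follows essentially the same route as the paper: you get comparability on each side from Lemma~\ref{lem:nbd containment for an edge}, use the $P_4$-structure condition to rule out a crossing, and for (2) take the maximal vertex $b_i^{|B_i|}$ together with the defining property that every vertex of $B_{i\pm 1}$ has a neighbor in $B_i$. The paper's version phrases the crossing step as ``assume $N_{B_{i-1}}(x)\subseteq N_{B_{i-1}}(y)$; if the inclusion is strict, the $P_4$-structure condition forces $N_{B_{i+1}}(x)\subseteq N_{B_{i+1}}(y)$,'' which is the same argument.
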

\begin{proof}
    We prove the properties one by one.

    \medskip
    \noindent (1)  It suffices to show that for every two distinct vertices $x,y\in B_i$, we have $N_{B_{i-1}\cup B_{i+1}}(x)\subseteq N_{B_{i-1}\cup B_{i+1}}(y)$ or $N_{B_{i-1}\cup B_{i+1}}(y)\subseteq N_{B_{i-1}\cup B_{i+1}}(x)$. By Lemma \ref{lem:nbd containment for an edge}, we may assume that $N_{B_{i-1}}(x)\subseteq N_{B_{i-1}}(y)$. If $N_{B_{i-1}}(x)= N_{B_{i-1}}(y)$, we are done by Lemma \ref{lem:nbd containment for an edge}. So we may assume that exists a vertex $z\in B_{i-1}$ that is adjacent to $y$ but not adjacent to $x$. Then the $P_4$-structure condition implies that $N_{B_{i+1}}(x)\subseteq N_{B_{i+1}}(y)$. So $N_{B_{i-1}\cup B_{i+1}}(x)\subseteq N_{B_{i-1}\cup B_{i+1}}(y)$.

    \medskip
    \noindent (2) If $b_i^{|B_i|}$ is not adjacent to a vertex $v\in B_j$ for some $j\in \{i-1,i+1\}$, then $v$ is anticomplete to $B_i$ by (2). This contradicts the definition of nice blowup of $C_5$. So $b_i^{|B_i|}$ is a desired vertex.
\end{proof}

Let $H=(B_1, B_2, B_3, B_4, B_5)$ be a nice blowup of $C_5$ in a graph $G$. For each $i\in [5]$, we fix a vertex  $q_i\in B_i$ that is complete to $B_{i-1}\cup B_{i+1}$. We point out that nice blowups of $C_5$ coincide with {\em 5-rings} defined in \cite{MIV20}.
It was proved in  \cite{MIV20} (Lemma 6.7)
that any nice blowup $H$ of $C_5$ has $\chi(H)\le \ceil{\frac{5}{4}\omega(H)}$. So we can assume that $V(G)\setminus V(H)$ is not empty.

For every vertex $v\in V(G)\setminus V(H)$, we write $\supp(v)=\{i\in [5]:B_i\cap N_G(v)\neq\emptyset\}$. 
Next, we investigate how the vertices outside $H$ attach to $H$.

\begin{lemma}\label{lem:blowup-consecutive}
    For $v\in V(G)\setminus V(H)$, $\supp(v)$ consists of consecutive integers modulo $5$ that has size in $\{0,1,2,3,5\}$.
\end{lemma}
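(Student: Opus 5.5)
The plan is to reduce everything to the representative vertices $q_1,\dots,q_5$, which induce a $C_5$ $q_1-q_2-q_3-q_4-q_5-q_1$, together with the fact that $G$ is $(P_7,C_4,C_6,C_7)$-free. The key observation is that $v$ may see different vertices in different $B_i$'s. I would therefore pick, for each $i\in\supp(v)$, a witness neighbor $x_i\in B_i$. I would then use Lemma~\ref{lem:blowup-basicproperty}(2), which says that neighbors in $B_{i-1}$ and $B_{i+1}$ have a common neighbor in $B_i$ (for instance $q_i$), to build induced cycles or paths through $H$.

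\textbf{Gap of length one.} Suppose $i-1,i+1\in\supp(v)$ but $i\notin\supp(v)$. Take $a\in B_{i-1}\cap N(v)$ and $b\in B_{i+1}\cap N(v)$. These are non-adjacent, since $B_{i-1}$ is anticomplete to $B_{i+1}$, and $q_i$ is adjacent to both. Then $v-a-q_i-b-v$ is an induced $C_4$, because $vq_i\notin E(G)$. This is a contradiction, so whenever $i-1,i+1\in\supp(v)$ we must have $i\in\supp(v)$.

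\textbf{Gap of length two.} Suppose now $i\in\supp(v)$ and $i+3\in\supp(v)$ (equivalently $i-2$), while $i+1,i+2\notin\supp(v)$. Pick $a\in B_i\cap N(v)$ and $b\in B_{i+3}\cap N(v)$. Choose $c\in B_{i+1}$ adjacent to $a$, and $d\in B_{i+2}$ adjacent to both $c$ and $b$. The cleanest choice is to take $c=q_{i+1}$ (complete to $B_i\cup B_{i+2}$) and $d=q_{i+2}$ (complete to $B_{i+1}\cup B_{i+3}$). Then $v-a-q_{i+1}-q_{i+2}-b-v$ is a cycle of length 5. That is not forbidden, so this case is fine and only says a vertex seeing $B_i$ and $B_{i+3}$ sees an arc of size 2, namely $\{i+3,i+4,i\}$ after applying the gap-one rule. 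Under the gap-one rule, $i+4$ must also be in the support.

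Combining, $\supp(v)$ is a union of cyclic arcs, and any two arcs are separated by gaps of size at least 2. In $\mathbb{Z}_5$ this forces $\supp(v)$ to be a single arc, or empty, or all of $[5]$. The one exception is two singletons separated by gaps of sizes 2 and 1, which gap-one excludes. Hence $\supp(v)$ is consecutive.

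\textbf{Excluding size 4.} Suppose $\supp(v)=\{i,i+1,i+2,i+3\}$, missing $i+4$. Take $a\in B_{i+3}\cap N(v)$ and $b\in B_i\cap N(v)$; these are non-adjacent, and $q_{i+4}$ is adjacent to both. Then $v-a-q_{i+4}-b-v$ is an induced $C_4$, since $vq_{i+4}\notin E(G)$ and $ab\notin E(G)$. This is a contradiction. This is really the gap-one argument again, applied to the missing index $i+4$.

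The main subtlety is only bookkeeping in the combination step. The gap-one argument already does nearly all the work: it forbids every support whose complement contains an isolated index. In $\mathbb{Z}_5$, the subsets with no isolated missing index are exactly $\emptyset$, the arcs of size $1,2,3$, and $[5]$. I expect no real obstacle; only $C_4$-freeness is needed.
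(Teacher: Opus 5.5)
Your proof is correct and is essentially the paper's argument. The paper also observes that a non-consecutive support, or one of size $4$, has a missing index $i+1$ with $i,i+2\in\supp(v)$, and then $v-u-q_{i+1}-w-v$ is an induced $C_4$. Your gap-of-length-two paragraph is unnecessary, and the arc $\{i+3,i+4,i\}$ you get there has size $3$, not $2$: as your last paragraph notes, the gap-one argument alone already pins down the allowed supports.
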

\begin{proof}
    Suppose that $\supp(v)$ does not consist of consecutive integers modulo $5$.
    Then there exists $i\in [5]$ where $N_{B_i}(v)$ and $N_{B_{i+2}}(v)$ are non-empty and $N_{B_{i+1}}(v)$ is empty. Let $u\in N_{B_i}(v)$ and $w\in N_{B_{i+2}}(v)$. Then $v-u-q_{i+1}-w-v$ is an induced $C_4$, a contradiction. By the same argument, $\supp(v)$ cannot have size $4$.
\end{proof}

\begin{definition}[Attachment of $H$]
{\em We divide the vertices in $V(G)\setminus V(H)$ as follows.  Let $i\in [5]$. 
\begin{itemize}
    \item $A_0$ is the set of vertices $v$ in $V(G)\setminus V(H)$ where $\supp(v)=\emptyset$.
    \item $A_1(i)$ is the set of vertices $v$ in $V(G)\setminus V(H)$ where $\supp(v)=\{i\}$.
    \item $A_2(i)$ is the set of vertices $v$ in $V(G)\setminus V(H)$ where $\supp(v)=\{i,i+1\}$.
    \item $A_3(i)$ is the set of vertices $v$ in $V(G)\setminus V(H)$ where $\supp(v)=\{i-1,i,i+1\}$.
    \item $A_5$ is the set of vertices $v$ in $V(G)\setminus V(H)$ where $\supp(v)=[5]$.
\end{itemize} 
For each $j\in \{1,2,3\}$, let $A_j=\bigcup_{i\in [5]}A_j(i)$.}
\end{definition}

By Lemma~\ref{lem:blowup-consecutive}, the union of above sets is $V(G)\setminus V(H)$. For convenience, we write $A_x(i\pm y)$ for the union of $A_x(i+y)$ and $A_x(i-y)$.   
Let $A'_3(i)=A_3(i)\cup B_i$ for $i\in [5]$. 

Let $G$ be a graph and $P=v_1-v_2-\cdots-v_7$ be a 7-vertex path (not necessarily induced) in $G$. We say that $P$ is {\em bad} if $v_1-v_2-\cdots-v_6$ is an induced $P_6$ and $v_3-v_4-\cdots -v_7$ is an induced $P_5$. The following observation encodes $(P_7,C_6,C_7)$-freeness by bad $P_7$s.

\begin{lemma}\label{lem:badP7}
    A $(P_7,C_6,C_7)$-free graph does not have a bad $P_7$.
\end{lemma}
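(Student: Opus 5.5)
Suppose, for a contradiction, that $P=v_1-v_2-\cdots-v_7$ is a bad $P_7$ in a $(P_7,C_6,C_7)$-free graph $G$. By definition, $v_1-\cdots-v_6$ is an induced $P_6$ and $v_3-\cdots-v_7$ is an induced $P_5$. So among pairs of non-consecutive vertices of $P$, the only ones whose adjacency is not yet determined involve $v_7$ together with $v_1$ or $v_2$. Indeed, every non-consecutive pair inside $\{v_1,\dots,v_6\}$ is a non-edge, and so is every non-consecutive pair inside $\{v_3,\dots,v_7\}$. The plan is a three-way case split on the edges $v_1v_7$ and $v_2v_7$.

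\textbf{Case 1: $v_7$ is adjacent to neither $v_1$ nor $v_2$.} Then all non-consecutive pairs of $P$ are non-edges. Hence $P$ itself is an induced $P_7$, a contradiction.

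\textbf{Case 2: $v_7$ is adjacent to $v_1$ but not to $v_2$.} Then the vertices $v_1,\dots,v_7$ induce exactly the cycle $v_1-v_2-\cdots-v_7-v_1$. This cycle has no chords, so it is an induced $C_7$, a contradiction.

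\textbf{Case 3: $v_7$ is adjacent to $v_2$.} Consider the cycle $v_2-v_3-v_4-v_5-v_6-v_7-v_2$ on six vertices. Its potential chords are pairs among $v_2,\dots,v_7$ other than consecutive ones and other than $v_2v_7$. Those inside $\{v_2,\dots,v_6\}$ are non-edges by the induced $P_6$, and those inside $\{v_3,\dots,v_7\}$ are non-edges by the induced $P_5$. Every remaining pair lies in one of these two sets, so the cycle is an induced $C_6$, a contradiction. This case covers both possibilities for the edge $v_1v_7$.

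There is no real obstacle. The only point needing care is checking that in each case every possible chord pair falls inside $\{v_1,\dots,v_6\}$ or inside $\{v_3,\dots,v_7\}$, which is immediate from the index ranges.
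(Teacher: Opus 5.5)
Your proposal is correct and is essentially the paper's proof. The paper likewise uses the edge $v_2v_7$ to produce the induced $C_6$ $v_2-v_3-\cdots-v_7-v_2$, and otherwise notes that $V(P)$ induces a $C_7$ or a $P_7$ depending on whether $v_1v_7$ is an edge.
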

\begin{proof}
    Suppose that $P=v_1-v_2-\cdots-v_7$ is a bad $P_7$. If $v_7v_2\in E(G)$, then $v_2-v_3-v_4-v_5-v_6-v_7-v_2$ is an induced $C_6$. So we may assume that $v_7v_2\notin E(G)$. Now $V(P)$ induces either a $C_7$ or a $P_7$ depending on whether $v_7v_1\in E(G)$.
\end{proof}

Next we define the notions of induced sequences and induced cyclic sequences which will be useful for finding forbidden induced paths and cycles.

\begin{definition}[Induced sequences and induced cyclic sequences]
{\em    Let $A_1, \ldots, A_m$ be disjoint sets of vertices in $G$ where $m\in \mathbb{N}$. We say that a sequence $A_1-A_2- \cdots -A_m$ is an \emph{induced sequence} if 
\begin{itemize}
    \item for all $i\in [m]$, $A_i$ is connected,
    \item for all $i\in [m-1]$, there is an edge between $A_i$ and $A_{i+1}$, and
    \item for $i, j\in [m]$ with $i<j$, there is no edge between $A_i$ and $A_j$.
\end{itemize}  
Similarly, a sequence $A_1-A_2-\cdots -A_m-A_1$ is an \emph{induced cyclic sequence} if 
\begin{itemize}
    \item for all $i\in [m]$, $A_i$ is connected,
    \item for all $i\in [m-1]$, there is an edge between $A_i$ and $A_{i+1}$, and
    \item for $i, j\in [m]$ with $|j-i|\not\equiv 0,1\pmod m$, there is no edge between $A_i$ and $A_j$.
\end{itemize}
}
\end{definition}

If $A_i$ is a singleton $\{v\}$, then we may replace $A_i$ with $v$ in the notation. So induced sequences and induced cyclic sequences generalize induced paths and induced cycles, respectively.

\begin{lemma}\label{lem:linearsequence}
   If $A_1-A_2-\cdots-A_m$ is an induced sequence in $G$, then $G$ contains an induced path with at least $m$ vertices between any vertex in $A_1$ and any vertex in $A_m$.
\end{lemma}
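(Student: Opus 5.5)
The natural approach is to take a shortest path in the subgraph induced by the union of the sets, and to show that it is induced and passes through every $A_i$. Fix $a\in A_1$ and $b\in A_m$. We build a path from $a$ to $b$ in $G[A_1\cup\cdots\cup A_m]$ whose vertices lie only in these sets, and choose it as short as possible. Such a path exists because each $A_i$ is connected and consecutive sets are joined by an edge. A shortest path in an induced subgraph is automatically an induced path. So it only remains to show that it has at least $m$ vertices.

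We argue by induction on $m$; the case $m=1$ is trivial. The key observation concerns any path $Q$ from $A_1$ to $A_m$ inside $\bigcup_i A_i$. Every edge of $Q$ joins two vertices in the same $A_i$ or in consecutive $A_i,A_{i+1}$, since there are no edges between $A_i$ and $A_j$ with $|i-j|\ge 2$. Therefore the index of the set containing the current vertex changes by at most one at each step along $Q$. Since the index starts at $1$ and ends at $m$, every index in $[m]$ is visited. As the sets are disjoint, $Q$ contains at least one vertex from each $A_i$, and hence has at least $m$ vertices.

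Applying this to the shortest path, which is induced, gives an induced path from $a$ to $b$ with at least $m$ vertices, as required. There is no real obstacle here. The only point needing care is that the shortest path is taken inside $G[\bigcup_i A_i]$ rather than in $G$, so that it is both induced in $G$ and forced to traverse every $A_i$ by the index-stepping argument. The connectivity of each $A_i$ is used only to guarantee that such a path exists.
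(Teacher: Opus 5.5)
Your proof is correct and is essentially the paper's argument: take a shortest path inside $G[\bigcup_i A_i]$, which is induced, and observe that because edges only join equal or consecutive indices it meets every $A_i$, so it has at least $m$ vertices. Two small remarks. The induction on $m$ you announce is never used; the index-stepping argument is direct. And fixing the endpoints $a\in A_1$, $b\in A_m$ in advance, as you do, matches the ``any vertex'' wording of the statement slightly more explicitly than the paper's phrasing.
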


\begin{proof}
    Note that $A=\bigcup_{i\in [m]}A_i$ is connected by the first two conditions of induced sequences. 
    Let $P\subseteq A$ be a shortest path between a vertex in $A_1$ and a vertex in $A_m$.
    By the third condition of induced sequences, 
    $P$ contains at least one vertex in each $A_i$ and so $P$ is a desired path.
\end{proof}

\begin{lemma}\label{lem:cyclicsequence}
    If $A_1-A_2-\cdots-A_m-A_1$ is an induced cyclic sequence in $G$ with $m\ge 3$, then $G$ contains an induced cycle with at least $m$ vertices.
\end{lemma}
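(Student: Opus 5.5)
The plan is to mimic the proof of Lemma~\ref{lem:linearsequence}, with a shortest-path argument closed up by a single edge. Fix an edge $xy$ with $x\in A_1$ and $y\in A_m$; such an edge exists since $m\ge 3$ and the cyclic condition requires edges between consecutive sets, including the pair $A_m,A_1$. (If the definition only guarantees edges for $i\in[m-1]$, I would read the cyclic sequence as also requiring an edge between $A_m$ and $A_1$, which is what the notation $A_m-A_1$ means.)

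First consider the set $A'=A_2\cup\cdots\cup A_{m-1}$ together with the two vertices $x$ and $y$. I would choose $x\in A_1$ and $y\in A_m$ so that, among all edges between $A_1$ and $A_m$ and all paths, a shortest path $P$ from a vertex of $A_1$ to a vertex of $A_m$ with interior contained in $A'$ is as short as possible. Concretely, let $P=p_1-p_2-\cdots-p_k$ be a shortest path in $G[A_1\cup A']\cup G[A'\cup A_m]$, from some $p_1\in A_1$ to some $p_k\in A_m$, whose interior lies in $A'$. Such a path exists because $A'$ is connected with edges to $A_1$ and $A_m$. By minimality, $p_1$ is the only vertex of $P$ in $A_1$ and $p_k$ is the only vertex in $A_m$. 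Also, $P$ is induced: a chord would yield a shorter path. Since $A_i$ is anticomplete to $A_j$ for non-consecutive indices, $P$ must pass through every $A_i$ with $2\le i\le m-1$. Hence $k\ge m$, and the only possible edges from $p_1$ or $p_k$ to the interior go to vertices in $A_2$ and $A_{m-1}$, respectively; minimality already forbids such chords except with $p_2$ and $p_{k-1}$.

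Next, close the path into a cycle. The vertices $p_1\in A_1$ and $p_k\in A_m$ need not be adjacent, so I would connect them through $A_1\cup A_m$. Let $Q$ be a shortest path from $p_k$ to $p_1$ inside $G[A_m\cup A_1]$, which is connected since both sets are connected and there is an edge between them. Interior vertices of $Q$ lie in $A_1\cup A_m$. A vertex of $A_1$ can only be adjacent to $A_2$ among the interior of $P$, and a vertex of $A_m$ only to $A_{m-1}$. This could create chords, so instead of fixing $p_1,p_k$ first, I would choose the whole configuration minimally. Take a shortest cycle $C$ in $G[\bigcup A_i]$ that contains a vertex of every $A_i$. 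Such a cycle exists because we can concatenate $P$ and $Q$ to obtain a closed walk meeting every $A_i$ and then extract a cycle that still meets every part; this extraction is where care is needed.

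The main obstacle is showing that a shortest such cycle is induced. Suppose $C$ has a chord $uv$. Then $u$ and $v$ lie in equal or consecutive parts, and the chord splits $C$ into two shorter cycles. The cyclic structure for $m\ge3$, with edges only between cyclically consecutive parts, forces one of the two arcs to still "wind around" and meet every $A_i$. The reason is that the arcs project onto walks in the cycle $C_m$ on the indices; the total winding of $C$ is $\pm1$, since a cycle meeting all parts with no edges between non-consecutive parts must wind, and a chord contributes winding $0$. So one sub-cycle has winding $\pm1$ and meets all parts, contradicting minimality. Winding number $0$ is impossible for a cycle meeting all $m\ge3$ parts only if we start from a closed walk of winding $1$. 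So I would define $C$ as a shortest cycle of winding number $\pm1$ rather than one merely meeting all parts, and obtain one from $P$ plus $Q$ (winding $1$) by removing repeated vertices, which preserves the odd winding contribution. An induced cycle with winding $\pm1$ meets every $A_i$ and hence has length at least $m$; when $m=3$, length $3$ is allowed.
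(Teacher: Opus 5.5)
Your final argument, in the fourth paragraph, is a valid route, but one step does not follow as written. Let $C$ be a shortest cycle of winding $\pm1$, and let a chord $uv$ split it into $C_1$ and $C_2$. Cancellation of the chord only gives $w(C_1)+w(C_2)=w(C)=\pm1$. This forces one of $C_1,C_2$ to have nonzero winding (indeed odd winding). It does not force winding $\pm1$: a split such as $3+(-2)$ is not excluded, and simple cycles of winding $2$ or more do exist when the $A_i$ are large.

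The fix is to minimise over cycles of nonzero winding, or of odd winding. With that change:
\begin{itemize}
\item the chord step goes through verbatim;
\item your decomposition of the winding-$1$ closed walk $P+Q$ produces such a cycle;
\item nonzero winding is all you need at the end, because a closed walk whose lifted index changes by a nonzero multiple of $m$ in steps of $0,\pm1$ must pass every residue, so the cycle meets every $A_i$.
\end{itemize}
Also drop the claim that a cycle meeting all parts must wind. Such a cycle can go forward and come back with winding $0$, which is exactly why your earlier formulation (a shortest cycle meeting all parts) would not work. Finally, say explicitly why the step function is well defined: the $A_i$ are disjoint, non-consecutive parts are anticomplete, and $m\ge 3$ so that $+1\not\equiv -1 \pmod m$.

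With that repair, your proof takes a genuinely different route from the paper's. The paper applies Lemma~\ref{lem:linearsequence} to get an induced path $P$ through $A_1\cup\cdots\cup A_{m-1}$ between a vertex of $A_1$ and a vertex of $A_{m-1}$, both having neighbours in $A_m$. It then cuts $P$ down to the segment between the last vertex of $A_1$ (counted from one end) and the last vertex of $A_{m-1}$ (counted from the other end) that have neighbours in $A_m$, so no interior vertex of the segment sees $A_m$. It closes the segment with a shortest path whose interior lies in $A_m$ alone. Since $A_m$ is adjacent only to $A_1\cup A_{m-1}$, the chords you worried about (from interior vertices of a closing path in $A_1\cup A_m$ to $A_2$) never arise, and the argument is short and constructive. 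Your version is the standard ``shortest non-contractible cycle is chordless'' argument. It needs a little more setup (the winding number and the decomposition of a closed walk into cycles). In return it is uniform, avoids any bookkeeping about the order in which the path visits the parts, and does not rely on Lemma~\ref{lem:linearsequence}.
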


\begin{proof}
    Let $v_j\in A_j$ be a vertex having a neighbor in $A_m$ for $j\in \{1,m-1\}$. 
    By Lemma \ref{lem:linearsequence}, there exists an induced path $P$ contained in $A_1\cup \cdots\cup A_{m-1}$ between $v_1$ and $v_{m-1}$ with at least $m-1$ vertices. 
    Let $v'_1\in P$ be the last vertex (from $v_1$ to $v_{m-1}$) in $A_1$ having a neighbor in $A_m$ and $v'_{m-1}\in P$ be the last vertex (from $v_{m-1}$ to $v_1$) in $A_{m-1}$ having a neighbor in $A_m$. 
    Let $Q$ be a shortest path between $v'_1$ and $v'_{m-1}$ whose internal is contained in $A_m$. 
    Then $v'_1-P-v'_{m-1}-Q-v'_1$ is a desired induced cycle. 
\end{proof}

\subsection{Basic properties}

In this subsection, we prove some basic properties that are used throughout the paper.

\begin{lemma}\label{lem:blowup-twoneighbor}
   For a connected subgraph $K$ of $A_2(i)$, $N_{A'_3(i)}(K)$ is complete to $N_{A'_3(i+1)}(K)$.
\end{lemma}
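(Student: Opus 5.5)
Suppose $x\in N_{A'_3(i)}(K)$ and $y\in N_{A'_3(i+1)}(K)$ are non-adjacent; we derive a forbidden $C_4$, $C_6$ or $C_7$. Here $A'_3(i)=A_3(i)\cup B_i$, and every vertex of $A'_3(i)$ has support contained in $\{i-1,i,i+1\}$ and meets both $B_{i-1}$ and $B_{i+1}$ (for $B_i$ by the definition of a nice blowup). Symmetrically, every vertex of $A'_3(i+1)$ meets both $B_i$ and $B_{i+2}$. Every vertex of $K\subseteq A_2(i)$ has support $\{i,i+1\}$, so it is anticomplete to $B_{i-1}\cup B_{i+2}\cup B_{i+3}$.

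Take $a\in K$ adjacent to $x$ and $b\in K$ adjacent to $y$, and let $P$ be a shortest path from $a$ to $b$ inside the connected set $K$. Choose $P$ so that its interior avoids $x$ and $y$ by taking $a$ to be the last vertex of $P$ adjacent to $x$ and $b$ the first one adjacent to $y$ after it. This gives an induced path $x-a-P-b-y$ with $x,y$ non-adjacent. If some vertex of $P$ is adjacent to both, or $a=b$, we get a short path $x-c-y$ with $c\in K$.

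Now close this path through the blowup, away from $K$. Pick $p\in N_{B_{i-1}}(x)$ and $r\in N_{B_{i+2}}(y)$. The vertices $p$ and $r$ are non-adjacent, since $B_{i-1}$ is anticomplete to $B_{i+1}$ and $B_{i+2}$ is anticomplete to $B_i$. Using $q_{i-2}\in B_{i+3}$, which is complete to $B_{i-1}\cup B_{i+2}$ as $i+3=i-2$ is adjacent to both $i-1$ and $i+2$ on the cycle, we get the walk $x-p-q_{i+3}-r-y$. We need $x$ non-adjacent to $r$ and $q_{i+3}$, which holds because $\supp(x)\subseteq\{i-1,i,i+1\}$. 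Symmetrically $y$ is non-adjacent to $p$ and $q_{i+3}$. The vertices of $K$ are anticomplete to $p$, $q_{i+3}$ and $r$.

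Hence $x-P'-y-r-q_{i+3}-p-x$ is an induced cycle, where $P'$ is the $K$-part of the path. In the shortest case, with a common neighbor $c$, it is $x-c-y-r-q_{i+3}-p-x$, an induced $C_6$, a contradiction. In general, the path $x-a-\cdots-b-y$ has at least $3$ vertices, so the cycle has length at least $6$. Lengths $6$ and $7$ contradict $(C_6,C_7)$-freeness; the paper assumes $C_7$-freeness from this section on, and otherwise one still gets a $P_7$ inside a longer hole. For length at least $8$, the hole contains an induced $P_7$, which is also a contradiction.

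The main obstacle is making sure that $p,q_{i+3},r$ really are non-adjacent to the chosen path vertices and to $x,y$ in every case, including when $x\in B_i$ or $y\in B_{i+1}$. These all follow from the support restrictions and the anticompleteness of $B_j$ to $B_{j+2}$. If one prefers, Lemma~\ref{lem:cyclicsequence} applied to the induced cyclic sequence $x-K-y-r-q_{i+3}-p-x$ packages this directly: it gives an induced cycle of length at least $6$, hence an even hole, a $C_7$, or an induced $P_7$.
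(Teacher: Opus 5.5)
Your proof is correct and is essentially the paper's argument: the paper observes that $v-K-w-N_{B_{i+2}}(w)-q_{i-2}-N_{B_{i-1}}(v)-v$ is an induced cyclic sequence and invokes Lemma~\ref{lem:cyclicsequence}, exactly as in your final remark. One small correction: $p\in B_{i-1}$ and $r\in B_{i+2}$ are non-adjacent because $B_{i+2}$ is anticomplete to $B_{i+4}=B_{i-1}$, not for the two anticompleteness facts you cite.
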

\begin{proof}
    Suppose for a contradiction that there exist $v\in N_{A'_3(i)}(K)$ and $w\in N_{A'_3(i+1)}(K)$ that are not adjacent. Then $v-K-w-N_{B_{i+2}}(w)-q_{i-2}-N_{B_{i-1}}(v)-v$ is an induced cyclic sequence, a contradiction.
\end{proof}

\begin{lemma}\label{lem:opposite A_1(i) and A_2(j)}
    Let $u\in A_1(i)$ and $v\in A_2(i+2)$ such that $uv\in E(G)$.
    Then $v$ is complete to $B_{i+2}\cup B_{i-2}$.
\end{lemma}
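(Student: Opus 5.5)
The plan is to argue by contradiction: every failure of completeness produces an induced $C_6$ or $C_7$ through $u$. Throughout I use that $G$ is $(P_7,C_4,C_6,C_7)$-free at this point, together with the fixed vertices $q_j\in B_j$ complete to $B_{j-1}\cup B_{j+1}$ from Lemma~\ref{lem:blowup-basicproperty}(2). Since $u\in A_1(i)$, it has a neighbor $a\in B_i$ and no neighbor in $B_{i\pm1}\cup B_{i\pm2}$. Since $v\in A_2(i+2)$, we have $\supp(v)=\{i+2,i-2\}$, so $v$ has a neighbor $w\in B_{i-2}$, a neighbor $y\in B_{i+2}$, and no neighbor in $B_{i}\cup B_{i\pm 1}$.

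First I show that $v$ is complete to $B_{i+2}$. Suppose $x\in B_{i+2}$ with $xv\notin E(G)$, and put $p=q_{i+1}$, which is adjacent to both $a$ and $x$. I split on whether $x$ is adjacent to the chosen $w\in N_{B_{i-2}}(v)$.
\begin{itemize}
\item If $xw\in E(G)$, consider $u-a-p-x-w-v-u$. Every chord is excluded. The vertex $u$ sees only $a$ and $v$, and $v$ sees only $u$ and $w$ (by $\supp(v)$ and $xv\notin E(G)$). The pairs $a$--$x$, $a$--$w$ and $p$--$w$ are non-edges because $B_j$ is anticomplete to $B_{j+2}$. Hence this is an induced $C_6$, a contradiction.
\item If $xw\notin E(G)$, note that $x$ has a neighbor $z\in B_{i-2}$ by the nice-blowup axioms, and $z\neq w$.
\begin{itemize}
\item If $zv\in E(G)$, then $u-a-p-x-z-v-u$ is an induced $C_6$, by the same chord check.
\item If $zv\notin E(G)$, then $zw\in E(G)$ because $B_{i-2}$ is a clique. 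Now $u-a-p-x-z-w-v-u$ is an induced $C_7$: the only new pairs to check are $v$--$z$, $v$--$x$ and $x$--$w$, and these are non-edges by assumption. This is a contradiction.
\end{itemize}
\end{itemize}

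The case of $B_{i-2}$ is symmetric under the reflection $j\mapsto 2i-j$ of indices, which fixes $i$ and swaps $i+2$ with $i-2$ and $i+1$ with $i-1$. The roles of $A_2(i+2)$ are preserved under this reflection, since $\supp(v)=\{i+2,i-2\}$ is invariant. So I take $x\in B_{i-2}\setminus N(v)$, use $p=q_{i-1}$, let the vertex $y\in N_{B_{i+2}}(v)$ play the part of $w$, and rerun the same three-way case analysis.

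The only real care point is the chord verification, in particular confirming that $v$ has no neighbors in $B_{i\pm1}$ and that $a$ has none in $B_{i\pm2}$. Both follow directly from the support definitions and the anticompleteness axiom of nice blowups. No deeper obstacle is expected.
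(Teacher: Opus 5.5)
Your proof is correct and takes essentially the same route as the paper: a non-neighbor of $v$ in $B_{i\pm2}$, together with $u$, a neighbor of $u$ in $B_i$, and $q_{i\pm1}$, yields an induced $C_6$ or $C_7$. The only difference is cosmetic: you split on adjacency to one fixed neighbor $w$ of $v$, giving three subcases, whereas the paper splits on whether the non-neighbor has any neighbor in $N(v)$ on the opposite side, giving two cases.
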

\begin{proof}
    Suppose that $v$ has a non-neighbor $a$ in $B_{i+2}\cup B_{i-2}$. By symmetry, we may assume that $a\in B_{i-2}$. 
    Let $b \in N_{B_i}(u)$.
    If $a$ has a neighbor $x$ in $N_{B_{i+2}}(v)$, then $v-u-b-q_{i-1}-a-x-v$ is an induced $C_6$. So assume that $a$ has no neighbor in $N_{B_{i+2}}(v)$. Let $y\in N_{B_{i+2}}(v)$ and $z \in N_{B_{i+2}}(a)$. Then $v-u-b-q_{i-1}-a-z-y-v$ is an induced $C_7$, a contradiction.
\end{proof}

A nice blowup $H$ of $C_5$ is {\em maximal} if there is no nice blowup $H'$ of $C_5$ such that $V(H)\subset V(H')$.

\begin{lemma}\label{lem:blowup-threeneighbor}
   Let $H=(B_1,B_2,B_3,B_4,B_5)$ be a maximal nice blowup of $C_5$ and $v\in A_3(i)$ for some $i\in [5]$. The following properties hold. 
    \begin{itemize}
        \item[$(1)$] $B_i\setminus N_G(v)\neq \emptyset$.
        \item[$(2)$] Either $B_{i-1}\setminus N_G(v)\neq \emptyset$ or $B_{i+1}\setminus N_G(v)\neq \emptyset$.
        \item[$(3)$] $B_i\setminus N_G(v)$ is anticomplete to either $N_{B_{i-1}}(v)$ or $N_{B_{i+1}}(v)$.
    \end{itemize}
\end{lemma}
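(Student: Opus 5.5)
The plan is to prove the three items separately. Item (1) uses the maximality of $H$. Items (2) and (3) only use $C_4$-freeness and the $P_4$-structure condition of $H$. Throughout, $v\in A_3(i)$, so $v$ is anticomplete to $B_{i+2}\cup B_{i-2}$ and has neighbors in each of $B_{i-1},B_i,B_{i+1}$.

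\medskip
\noindent\textbf{(1)} Suppose $v$ is complete to $B_i$. I will show that $H'=(B_1,\dots,B_i\cup\{v\},\dots,B_5)$ is again a nice blowup of $C_5$, contradicting maximality. The first three conditions are immediate:
\begin{itemize}
\item $B_i\cup\{v\}$ is a clique;
\item $v$ has neighbors in $B_{i-1}$ and in $B_{i+1}$, and every vertex of $B_{i\pm1}$ already had a neighbor in $B_i$;
\item $v$ is anticomplete to $B_{i\pm2}$.
\end{itemize}
It remains to check the $P_4$-structure condition for quadruples containing $v$. Each violating induced $P_4$ will be closed into an induced $C_6$ using the vertices $q_{i+2}$ and $q_{i-2}$, which are adjacent.

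\emph{Case: $v$ is a middle vertex.} The bad configuration is an induced $a-v-c-d$ with $a\in B_{i-1}$, $c\in B_i$, $d\in B_{i+1}$ (or the symmetric one). Then $a-v-c-d-q_{i+2}-q_{i-2}-a$ is an induced $C_6$. Every potential chord is excluded either by anticompleteness of $B_j$ and $B_{j+2}$ or because $v$ and $c$ miss $B_{i\pm2}$.

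\emph{Case: $v$ is an endpoint.} The bad configuration is an induced $a-b-c-v$ with $a\in B_{i-2}$ and $b,c\in B_{i-1}$ (or the symmetric one). Pick $w\in N_{B_{i+1}}(v)$. Then $a-b-c-v-w-q_{i+2}-a$ is an induced $C_6$, again by the anticompleteness relations.

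In both cases we contradict $C_6$-freeness, so $H'$ is nice and $H$ was not maximal.

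\medskip
\noindent\textbf{(2)} Suppose $v$ is complete to $B_{i-1}\cup B_{i+1}$. By (1) there is $c\in B_i$ with $cv\notin E(G)$. Choose $a\in N_{B_{i-1}}(c)$ and $d\in N_{B_{i+1}}(c)$; these exist by the definition of a nice blowup. Since $ad\notin E(G)$, the cycle $c-a-v-d-c$ is an induced $C_4$, a contradiction.

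\medskip
\noindent\textbf{(3)} Suppose the conclusion fails. Then there are $u,u'\in B_i\setminus N(v)$, $a\in N_{B_{i-1}}(v)$ and $d\in N_{B_{i+1}}(v)$ with $ua,u'd\in E(G)$.
\begin{itemize}
\item If $u$ can be taken equal to $u'$, then $u-a-v-d-u$ is an induced $C_4$.
\item Otherwise $ud\notin E(G)$ and $u'a\notin E(G)$, since either adjacency would let us take $u=u'$ in the previous case. Then $a-u-u'-d$ is an induced $P_4$ with $a\in B_{i-1}$, $u\neq u'\in B_i$ and $d\in B_{i+1}$, violating the $P_4$-structure condition.
\end{itemize}

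\medskip
The main obstacle is item (1): one has to list all the ways adding $v$ could break the $P_4$-structure condition and find the closing $C_6$ in each case. The rest is short.
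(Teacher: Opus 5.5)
Your proof is correct and follows the paper's argument. For (1) you add $v$ to $B_i$ and close any violating $P_4$ into an induced $C_6$; for (2) you use the induced $C_4$ through a non-neighbor of $v$ in $B_i$; and for (3) you use the $C_4$ versus $P_4$-structure dichotomy. Your split of (1) into the ``middle'' and ``endpoint'' cases, closing through $q_{i\pm2}$ and an explicitly chosen neighbor $w$ of $v$, is slightly more careful than the paper's single closing vertex $q_{i-1}$. That vertex would need $vq_{i-1}\in E(G)$ when $v$ is the first vertex of the $P_4$, and this is not guaranteed.
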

\begin{proof} We prove the properties one by one.

    \medskip
    \noindent (1) Suppose for contradiction that $N_{B_i}(v)=B_i$. 
    For every $j\neq i$, let $B_j'=B_j$ and let $B_i'=B_i\cup \{v\}$.
    We claim that $(B_1', B_2', B_3', B_4', B_5')$ is also a nice blowup of $C_5$, which leads a contradiction to the assumption that $H$ is maximal.

    The first three conditions of a nice blowup are straightforward to verify. 
    Suppose that for some $p\in [5]$, there exist four vertices $a\in B_p'$, $b,c\in B_{p+1}'$, $d\in B_{p+2}'$ with $b\neq c$  where $G[\{a,b,c,d\}]$ is isomorphic to $P_4$. 
    As $d$ has a neighbor in $e\in B_{p-2}'$, it follows that $a-b-c-d-e-q_{i-1}-a$ is an induced $C_6$, a contradiction.

    \medskip
    \noindent (2) Suppose that $B_{i-1}\setminus N_G(v)= \emptyset$ and $B_{i+1}\setminus N_G(v)= \emptyset$. By (1), there is a vertex in $B_i\setminus N_G(v)$, say $u$. By definition, $u$ has a neighbor $w$ in $B_{i-1}$ and a neighbor $z$ in $B_{i+1}$. Then $v-w-u-z-v$ is an induced $C_4$, a contradiction.

    \medskip
    \noindent (3) Suppose that there is an edge between $a\in B_i\setminus N_G(v)$ and $b\in N_{B_{i-1}}(v)$, and there is an edge between $c\in B_i\setminus N_G(v)$ and $d\in N_{B_{i+1}}(v)$.
    If $a=c$, then $a-b-v-d-a$ is an induced $C_4$. So we may assume that $a\neq c$. This implies that $c$ is not adjacent to $b$, and $a$ is not adjacent to $d$. Then $b-a-c-d$ is an induced $P_4$, a contradiction.
\end{proof}

The next lemma generalizes the $P_4$-structure condition.

\begin{lemma}\label{lem:generalized P4-structure}
    There is no induced $P_4=a-b-c-d$ with $a\in A'_3(i-2)$, $b,c\in A'_3(i-1)$ and $d\in A'_3(i)$. 
\end{lemma}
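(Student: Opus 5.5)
Suppose for a contradiction that $a-b-c-d$ is an induced $P_4$ with $a\in A'_3(i-2)$, $b,c\in A'_3(i-1)$ and $d\in A'_3(i)$. The plan is to close this $P_4$ into an induced $C_6$ or $C_7$ (or find an induced $P_7$) through the opposite side of $H$. Recall that every vertex of $A'_3(j)$ has neighbors in $B_{j-1},B_j,B_{j+1}$ and none in $B_{j+2}\cup B_{j-2}$; for $B_j$ itself this follows from the definition of nice blowups. So $a$ is anticomplete to $B_i\cup B_{i+1}$, $d$ is anticomplete to $B_{i-2}\cup B_{i+2}$, and $b,c$ are anticomplete to $B_{i+1}\cup B_{i+2}$.

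First choose $x\in N_{B_{i+2}}(a)$ (note $i+2=(i-2)-1$) and $y\in N_{B_{i+1}}(d)$. If $xy\in E(G)$, then $a-b-c-d-y-x-a$ is a $6$-cycle. By the anticompleteness facts above, its only possible chords are $ad$, $ac$, $bd$ (excluded since $a-b-c-d$ is induced) and $xd$, $ya$ (excluded by supports). Hence it is an induced $C_6$, a contradiction.

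If $xy\notin E(G)$, use $q_{i+2}$ when possible: $q_{i+2}$ is adjacent to all of $B_{i+1}$, so if $aq_{i+2}\in E(G)$ we may take $x=q_{i+2}$ and return to the previous case. Symmetrically, if $d$ is adjacent to $q_{i+1}$, take $y=q_{i+1}$. Otherwise pick a common neighbor route $a-x-q_{i+1}-y-d$ or $a-x-z-y-d$ with $z\in B_{i+1}\cup B_{i+2}$ adjacent to both $x$ and $y$ (by Lemma~\ref{lem:blowup-basicproperty}(2), $x$ and $q_{i+1}$ are adjacent, and $q_{i+1}y$ is an edge within a clique). The cycle $a-b-c-d-y-q_{i+1}-x-a$ has length $7$ and is induced, a contradiction. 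Its chords: $q_{i+1}$ is anticomplete to $a,b,c$ (these lie in $A'_3(i-2)\cup A'_3(i-1)$, whose supports avoid $i+1$); $q_{i+1}$ versus $d$ is fine either way, since an edge $q_{i+1}d$ puts us back in the $C_6$ case; and $x$ versus $y$ is a non-edge by the case assumption.

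The main obstacle is making sure every chord is excluded in all cases. This is especially delicate when vertices lie in $B_j$ rather than $A_3(j)$, and when $x$ or $y$ might be adjacent to $b$ or $c$. For instance, $x\in B_{i+2}$ and $c\in A'_3(i-1)$, where $i+2=(i-1)-2$, so the supports make $x$ and $c$ non-adjacent. Similarly $y\in B_{i+1}$ and $b,c\in A'_3(i-1)$, with $i+1=(i-1)+2$, give non-adjacency. I expect the careful bookkeeping of these support relations, modulo 5, to be the crux. If needed, I would fall back on Lemma~\ref{lem:cyclicsequence}, applied to the induced cyclic sequence $a-b-c-d-N_{B_{i+1}}(d)-N_{B_{i+2}}(a)-a$, to produce an induced cycle of length at least $6$. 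Any such cycle is a $C_6$, a $C_7$, or contains an induced $P_7$, all of which are forbidden.
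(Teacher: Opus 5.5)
Your proof is correct and is essentially the paper's argument: the paper closes $a-b-c-d$ through $B_{i+1}$ and $B_{i+2}$ as the induced cyclic sequence $a-b-c-d-B_{i+1}-B_{i+2}-a$ and applies Lemma~\ref{lem:cyclicsequence}. Your explicit split (take $y=q_{i+1}$ if $dq_{i+1}\in E(G)$; otherwise an induced $C_6$ if $xy\in E(G)$, or the induced $C_7$ $a-b-c-d-y-q_{i+1}-x-a$ if $xy\notin E(G)$) simply unpacks that lemma to show the cycle is a $C_6$ or a $C_7$. The vague ``common neighbor route'' sentence and the $aq_{i+2}$ clause can be dropped.
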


\begin{proof}
    If such a $P_4$ exists, then $a-b-c-d-B_{i+1}-B_{i+2}-a$ is an induced cyclic sequence. This contradicts Lemma \ref{lem:cyclicsequence}.
\end{proof}

For a subset $S$ and two vertices $x,y\notin S$, we say that $x$ and $y$ are {\em comparable} or have {\em neighborhood containment in $S$} if $N_S(x)\subseteq N_S(y)$ or $N_S(y)\subseteq N_S(x)$, and have {\em disjoint neighborhoods in $S$} if $N_S(x)\cap N_S(y)=\emptyset$.

\begin{lemma}\label{lem:nonedge in A_3(i)}
    For $x,y\in A_3'(i)$ with $xy\notin E(G)$, $x$ and $y$ have neighborhood containment in one of $B_{i-1}$ and $B_{i+1}$ and have disjoint neighborhoods in the other.
\end{lemma}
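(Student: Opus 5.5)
We need to show: for non-adjacent $x,y\in A'_3(i)$, in one of $B_{i-1},B_{i+1}$ they are comparable and in the other their neighborhoods are disjoint. Note that every vertex of $A'_3(i)$ has nonempty neighborhoods in both $B_{i-1}$ and $B_{i+1}$: for $B_i$ this is the definition of a nice blowup, and for $A_3(i)$ it is the definition of the support. The proof has two parts, both built from induced cyclic sequences and Lemma~\ref{lem:cyclicsequence}.

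\textbf{Part 1: at least one side is disjoint.} Suppose there are common neighbors $a\in N_{B_{i-1}}(x)\cap N_{B_{i-1}}(y)$ and $b\in N_{B_{i+1}}(x)\cap N_{B_{i+1}}(y)$. Since $B_{i-1}$ is anticomplete to $B_{i+1}$, $ab\notin E(G)$, and then $x-a-y-b-x$ is an induced $C_4$, a contradiction. So on at least one side, say $B_{i+1}$ by symmetry, the neighborhoods of $x$ and $y$ are disjoint.

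\textbf{Part 2: on the other side they are comparable.} Suppose instead that $x,y$ are incomparable in $B_{i-1}$. Pick $a\in N_{B_{i-1}}(x)\setminus N(y)$ and $a'\in N_{B_{i-1}}(y)\setminus N(x)$, and pick $b\in N_{B_{i+1}}(x)$ and $b'\in N_{B_{i+1}}(y)$; by Part 1 we have $b\neq b'$ and $b$ is not adjacent to $y$, $b'$ is not adjacent to $x$. Both $a a'$ and $b b'$ are edges, since each $B_j$ is a clique. Now $x-a-a'-y-b'-b-x$ is a $6$-cycle. Its only possible chords are $xa'$, $xb'$, $ya$, $yb$, $ab$, $ab'$, $a'b$, $a'b'$, and $xy$. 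The first four are excluded by the choice of $a,a',b,b'$, the next four lie between $B_{i-1}$ and $B_{i+1}$ and so are absent, and $xy\notin E(G)$ by hypothesis. Hence this is an induced $C_6$, a contradiction. So $x,y$ are comparable in $B_{i-1}$.

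This gives the lemma directly, with no appeal to $P_7$- or $C_7$-freeness or to maximality of $H$, and I do not foresee a real obstacle. The one step to verify carefully is the chord check in Part 2, in particular that $b\neq b'$ and $b'x,by\notin E(G)$ follow from the disjointness established in Part 1. One edge case also needs handling: if the neighborhoods are disjoint on both sides, we just choose the side on which they are comparable, and Part 2 shows that such a side exists.
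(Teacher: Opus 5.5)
Your proof is correct and uses the same two ingredients as the paper: the induced $C_4$ through common neighbors on both sides, and the induced $C_6$ $x-a-a'-y-b'-b-x$. The only difference is the order: the paper first gets comparability on one side from the $C_6$, then uses nonemptiness to produce a common neighbor there and $C_4$-freeness to force disjointness on the other side, whereas you get disjointness first and comparability second. Your closing ``edge case'' is vacuous: since both neighborhoods are nonempty, disjointness on both sides would mean incomparability on both, which Part~2 already rules out.
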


\begin{proof}
    If $x$ and $y$ do not have neighborhood containment in either $B_{i-1}$ or $B_{i+1}$, then there is an induced $C_6$. So we may assume that $x$ and $y$ have neighborhood containment in $B_{i-1}$. In particular, $x$ and $y$ have a common neighbor in $B_{i-1}$. Since $G$ is $C_4$-free, $x$ and $y$ cannot have a common neighbor in $B_{i+1}$. So $x$ and $y$ have disjoint neighborhoods in $B_{i+1}$.
\end{proof}

\begin{lemma}\label{lem:comparable vertices in A_3(i)}
    Each pair of vertices in $A_3(i)$ have comparable neighborhoods in $B_{i}$. This implies that $B_i$ contains a vertex that is anticomplete to $A_3(i)$ \textup{(}a non-neighbor of a largest vertex in $A_3(i)$\textup{)}.
\end{lemma}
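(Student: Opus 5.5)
Let $x,y\in A_3(i)$ be distinct and suppose, for a contradiction, that they are not comparable in $B_i$. Then there are $a\in N_{B_i}(x)\setminus N_{B_i}(y)$ and $b\in N_{B_i}(y)\setminus N_{B_i}(x)$, and $a\neq b$ with $ab\in E(G)$ because $B_i$ is a clique.

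\emph{Case $xy\in E(G)$.} Then $x-a-b-y-x$ is an induced $C_4$. Equivalently, this is Lemma \ref{lem:nbd containment for an edge} applied to the clique $B_i$ and the edge $xy$. This is a contradiction.

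\emph{Case $xy\notin E(G)$.} Both $x$ and $y$ lie in $A'_3(i)$, so Lemma \ref{lem:nonedge in A_3(i)} applies. By symmetry, $x$ and $y$ have neighborhood containment in $B_{i-1}$ and disjoint neighborhoods in $B_{i+1}$. Since $x,y\in A_3(i)$, both have neighbors in $B_{i-1}$. Containment therefore gives a common neighbor $c\in B_{i-1}$.

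We now check the adjacencies of $c$ to $a$ and $b$.
\begin{itemize}
\item If $ca,cb\in E(G)$, then $c-a-y$ is not an edge pattern we need. Instead, consider the 4-cycle $c-x-a-b-y-c$: the vertices $x,a,b,y$ together with $c$ form a $C_5$-like structure. To get a contradiction directly, note that $y-c-a-x$ with $y\not\sim a$ and $x\not\sim y$ does not yet give a $C_4$, so we must use the other side.
\item If $ca\notin E(G)$, then $x-a-b-y-c-x$ is a 5-cycle; it is not a forbidden hole, so again we use the other side.
\end{itemize}
So the $B_{i-1}$ side alone does not give a contradiction, and the argument has to go through $B_{i+1}$.

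Pick $u\in N_{B_{i+1}}(x)$ and $w\in N_{B_{i+1}}(y)$. These are distinct because the neighborhoods are disjoint, and $uw\in E(G)$ because $B_{i+1}$ is a clique. Also $uy,wx\notin E(G)$.

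\emph{Case $au,bw\notin E(G)$.} Then $x-a-b-y-w-u-x$ is a hole on six vertices. Its only possible chords are $aw$ and $bu$.
\begin{itemize}
\item If neither $aw$ nor $bu$ is present, this is an induced $C_6$, a contradiction.
\item If $aw\in E(G)$ and $bu\notin E(G)$, then $x-a-w-u-x$ is an induced $C_4$ (since $xw\notin E(G)$ and $au\notin E(G)$).
\item If $bu\in E(G)$ and $aw\notin E(G)$, then symmetrically $y-b-u-w-y$ is an induced $C_4$.
\item If both $aw,bu\in E(G)$, then $a-w-u-b-a$ is a 4-cycle, and $ab,uw$ are also edges, so it is not induced. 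In this case we use $x-a-w-u-x$ instead: $xw\notin E(G)$ and $au\notin E(G)$, so it is an induced $C_4$.
\end{itemize}

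\emph{Case $au\in E(G)$.} Then $bu\notin E(G)$, since otherwise $x-a-b-y$ together with $u$ gives the structure handled below via the $P_4$-structure. Now $y-b-a-u$ is an induced $P_4$: we have $yb\in E(G)$, $ya\notin E(G)$, $yu\notin E(G)$ and $bu\notin E(G)$. We have $y\in A'_3(i)$, $a,b\in B_i$ and $u\in B_{i+1}$. This looks like the $P_4$-structure configuration, but it is shifted: $y$ is not in $A'_3(i-1)$. So instead we use $y$'s neighbor $w\in B_{i+1}$.
\begin{itemize}
\item If $aw\in E(G)$, then $a-w-y-b-a$ is a $C_4$ unless $bw\in E(G)$. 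If $bw\in E(G)$, this contradicts $bu\notin E(G)$ via Lemma \ref{lem:nbd containment for an edge}. To see this, compare $a$ and $b$ on the clique $B_{i+1}$ using the edge $ab$: their neighborhoods in $B_{i+1}$ must be nested, but $u\in N(a)\setminus N(b)$ while $w\in N(b)\setminus N(a)$ is excluded.
\item If $aw\notin E(G)$, the remaining configurations are handled the same way.
\end{itemize}

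The main obstacle is exactly this chord bookkeeping. The key tool throughout is the nesting of $N_{B_{i+1}}(a)$ and $N_{B_{i+1}}(b)$ given by Lemma \ref{lem:blowup-basicproperty}(1), combined with avoiding induced $C_4$s.

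For the second claim, comparability means the sets $N_{B_i}(v)$ for $v\in A_3(i)$ form a chain under inclusion. Let $v^*$ be a vertex whose set is largest in this chain. By Lemma \ref{lem:blowup-threeneighbor}(1), $v^*$ has a non-neighbor in $B_i$, and this non-neighbor is not adjacent to any vertex of $A_3(i)$, as required.
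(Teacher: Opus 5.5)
The case $xy\in E(G)$ and the subcase $au,bw\notin E(G)$ are fine, and so is your argument for the second assertion. However, the case $au\in E(G)$ (and its mirror $bw\in E(G)$) is not proved, and that is where the content of the lemma lies. Three steps fail there:
\begin{itemize}
\item The claim $bu\notin E(G)$ is asserted without argument.
\item In the subcase $aw,bw\in E(G)$ there is no clash with Lemma~\ref{lem:nbd containment for an edge}. Since $w\in N(a)$, the inclusion $N_{B_{i+1}}(b)\subseteq N_{B_{i+1}}(a)$ is perfectly consistent.
\item The subcase $aw\notin E(G)$ is only waved at.
\end{itemize}

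More importantly, no chord bookkeeping on $\{x,y,a,b,u,w,c\}$ alone can close this case. Take the graph on these seven vertices whose edges are exactly $xa,xu,xc,yb,yw,yc,ab,au,uw$. It is compatible with all the adjacencies and non-adjacencies you establish, including $c$ anticomplete to $\{u,w\}$. Yet it contains no $C_4$, $C_6$, $C_7$ or induced $P_7$. So the contradiction must use vertices outside $B_{i-1}\cup B_i\cup B_{i+1}\cup A_3(i)$.

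Here is the missing step, in your orientation (disjoint neighborhoods in $B_{i+1}$).
\begin{itemize}
\item The path $x-a-b-y-w-q_{i+2}-q_{i-2}$ is an induced $P_7$ unless $aw$ or $bw$ is an edge. If $aw\in E(G)$ and $bw\notin E(G)$, then $a-w-y-b-a$ is an induced $C_4$. Hence $bw\in E(G)$.
\item Symmetrically, the path $y-b-a-x-u-q_{i+2}-q_{i-2}$ gives $au\in E(G)$.
\item The cycle $a-b-w-u-a$ must then have a chord; by symmetry say $aw\in E(G)$.
\item Now $w$ is a common neighbor in $B_{i+1}$ of the non-adjacent pair $a,y\in A'_3(i)$. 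A second application of Lemma~\ref{lem:nonedge in A_3(i)}, this time to $\{a,y\}$, shows that $a$ and $y$ have disjoint neighborhoods in $B_{i-1}$.
\item In particular the common neighbor $c\in B_{i-1}$ of $x,y$ is not adjacent to $a$. Then $c-x-a-w-q_{i+2}-q_{i-2}-c$ is an induced $C_6$.
\end{itemize}

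This is the paper's argument with the roles of $B_{i-1}$ and $B_{i+1}$ exchanged. You are missing two ingredients: the detour through $q_{i+2}$ and $q_{i-2}$, and applying Lemma~\ref{lem:nonedge in A_3(i)} to a mixed pair consisting of a vertex of $B_i$ and a vertex of $A_3(i)$.
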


\begin{proof}
    Suppose not. 
    Assume that there are $u,v\in A_3(i)$ and $s,s'\in B_{i}$ such that $us,vs'\in E(G)$ but $us',vs\notin E(G)$. 
    Since $u-s-s'-v-u$ is not an induced $C_4$, $uv\notin E(G)$. 
    Then $u,v$ have disjoint neighborhoods either in $B_{i+1}$ or in $B_{i-1}$. 
    By symmetry, we may assume that $u,v$ have disjoint neighborhoods in $B_{i-1}$. 
    Let $u',v'\in B_{i-1}$ such that $uu',vv'\in E(G)$. 
    Since $u-s-s'-v-v'-q_{i-2}-q_{i+2}$ is not an induced $P_7$, either $v's\in E(G)$ or $v's'\in E(G)$. 
    If $v's'\notin E(G)$, then $v's\notin E(G)$. 
    It follows that $v's'\in E(G)$. 
    By symmetry, $u's\in E(G)$. 
    Since $s-s'-v'-u'-s$ is not an induced $C_4$, either $sv'\in E(G)$ or $s'u'\in E(G)$. 
    By symmetry, we may assume that $sv'\in E(G)$. 
    Then $v'$ is a common neighbor of $v$ and $s$. 
    Thus, $v$ and $s$ have disjoint neighborhoods in $B_{i+1}$. 
    Let $w$ be a common neighbor of $u,v$ in $B_{i+1}$. 
    Thus, $w$ is not adjacent to $s$. 
    Then $w-q_{i+2}-q_{i-2}-v'-s-u-w$ is an induced $C_6$. 
\end{proof}

In the following, we often say that $a-B-c-\cdots -d$ is an induced path where $B$ is a subset of vertices. By that we mean any vertex in $B$ can be taken on the path so that the resulting sequence is an induced path. The same notation applies for induced cycles.

\begin{lemma}\label{lem:blowup-fiveneighbor1}
The following properties hold for $A_5$.

\begin{itemize}
    \item[$(1)$]  $A_5$ is complete to $V(H)$.
    \item[$(2)$]  $A_5$ is a clique complete to $A_3$.
\end{itemize}
    
\end{lemma}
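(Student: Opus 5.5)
For (1), I would argue by contradiction. Let $v\in A_5$ and suppose $v$ has a non-neighbor $a\in B_i$. Since $\supp(v)=[5]$, $v$ has neighbors $x\in B_{i-1}$, $y\in B_i$ and $z\in B_{i+1}$. The first goal is to show that $v$ cannot be adjacent both to a neighbor of $a$ in $B_{i-1}$ and to a neighbor of $a$ in $B_{i+1}$. Indeed, if $x'\in N_{B_{i-1}}(a)$ and $z'\in N_{B_{i+1}}(a)$ are both adjacent to $v$, then $x'z'\notin E(G)$ because $B_{i-1}$ is anticomplete to $B_{i+1}$. Hence $v-x'-a-z'-v$ is an induced $C_4$. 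So, by symmetry, we may take $v$ to be anticomplete to $N_{B_{i+1}}(a)$. Choose $z'\in N_{B_{i+1}}(a)$, which is nonempty by the definition of a nice blowup, and let $z\in N_{B_{i+1}}(v)$, so $z\neq z'$.

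Next I would apply Lemma~\ref{lem:nbd containment for an edge} inside the cliques $B_i$ and $B_{i+1}$, and use neighbors of $v$ in $B_{i+2}$, to produce a forbidden structure. The vertex $v$ has a neighbor $w\in B_{i+2}$. Consider the configuration $a-z'-z-v$: here $az'\in E(G)$, $z'z\in E(G)$ (both lie in the clique $B_{i+1}$), $zv\in E(G)$, $vz'\notin E(G)$ and $va\notin E(G)$. If $az\notin E(G)$, this is an induced $P_4$. Now take $q_{i+2}$, which is adjacent to $z'$ and $z$. If $vq_{i+2}\in E(G)$, then the cycle $a-z'-q_{i+2}-v-y-a$ needs checking. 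Here $y\in N_{B_i}(v)$, and $ay\in E(G)$ since $B_i$ is a clique. The pair $yq_{i+2}$ is a non-edge, and $z'y$, $z'v$, $aq_{i+2}$ are non-edges or edges to be determined. Since $z'v\notin E(G)$, if $z'y\notin E(G)$ we get the induced $C_5$-type cycle $a-z'-q_{i+2}-v-y-a$, which is allowed, so I would instead look at 4-cycles. Specifically, $v-y-z'-q_{i+2}-v$ is an induced $C_4$ whenever $yz'\in E(G)$ and $vq_{i+2}\in E(G)$.

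The cleanest plan is therefore to use the vertex $y$ adjacent to $v$ in $B_i$. For $yz'\in E(G)$, take $t\in N_{B_{i+2}}(v)$; then $v-y-z'-t$ closes a $C_4$ unless $z't\notin E(G)$, which then leads to a $P_4$ or $C_4$ through $q_{i+2}$. The key step, and the main obstacle, is organizing this local case analysis. The cases are $ay$ together with $yz'\in E(G)$ or not, and $v$'s neighbors in $B_{i+2}$ relative to $z'$. In each case I would aim to find either an induced $C_4$ through $v$, or a violation of the $P_4$-structure condition, for example $y-z'-\ast-\ast$ type paths, or of Lemma~\ref{lem:generalized P4-structure}, after noting that $v$ behaves like a vertex of $A'_3$.

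For (2), let $u,v\in A_5$. If $uv\notin E(G)$, then by (1) both are complete to $B_1\cup B_3$, so $u-q_1-v-q_3-u$ is an induced $C_4$ (recall $q_1q_3\notin E(G)$). Hence $A_5$ is a clique. For $u\in A_5$ and $v\in A_3(i)$, pick $a\in N_{B_{i-1}}(v)$ and $c\in N_{B_{i+1}}(v)$. These are nonadjacent, and both are adjacent to $u$ by (1). So if $uv\notin E(G)$, then $u-a-v-c-u$ is an induced $C_4$, a contradiction.
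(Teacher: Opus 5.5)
Part (2) is correct and is essentially the paper's argument. Part (1), however, has a genuine gap. After your (correct) reduction to the case that $v$ is anticomplete to $N_{B_{i+1}}(a)$, you start an open-ended case analysis with $y$, $z'$ and $q_{i+2}$ and never close it; you only describe what you ``would aim to find.'' The fallback of treating $v$ ``like a vertex of $A'_3$'' to invoke Lemma~\ref{lem:generalized P4-structure} is also unjustified, since that lemma applies only to vertices of $A'_3$.

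The missing idea is to use the fixed vertices $q_j$ from Lemma~\ref{lem:blowup-basicproperty}(2), each of which is complete to $B_{j-1}\cup B_{j+1}$.

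\begin{itemize}
\item Since $q_{i+1}$ is complete to $B_i$, it lies in $N_{B_{i+1}}(a)$. Your reduction therefore gives $vq_{i+1}\notin E(G)$.
\item The vertex $v$ has a neighbor $y\in B_i$ and a neighbor $t\in B_{i+2}$. Both are adjacent to $q_{i+1}$, and $yt\notin E(G)$ because $B_i$ is anticomplete to $B_{i+2}$.
\item Hence $v-y-q_{i+1}-t-v$ is an induced $C_4$, a contradiction. This finishes (1).
\end{itemize}

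The paper organizes the same idea in two steps. First, $v$ is adjacent to every $q_j$, since otherwise $v-N_{B_{j-1}}(v)-q_j-N_{B_{j+1}}(v)-v$ is an induced $C_4$. Second, a non-neighbor $a\in B_i$ of $v$ yields the induced $C_4$ $v-q_{i-1}-a-q_{i+1}-v$.

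Your own first observation already contains both steps. Apply it with $a:=q_j$: every neighbor of $v$ in $B_{j\pm1}$ is a neighbor of $q_j$, so $v\sim q_j$. Then apply it with $x'=q_{i-1}$ and $z'=q_{i+1}$.
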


\begin{proof}  Let $v\in A_5$.

    \medskip
    \noindent (1) We first claim that $v$ is complete to $q_i$. Suppose that $v$ is not adjacent to $q_i$ for some $i\in [5]$. As $q_i$ is complete to $B_{i-1}\cup B_{i+1}$, it follows that $v - N_{B_{i-1}}(v) - q_i - N_{B_{i+1}}(v) - v$ form an induced $C_4$, a contradiction. Now we show that $v$ is complete to $V(H)$. Suppose that $v$ is not adjacent to $w\in B_i$ for some $i\in [5]$. Since $v$ is adjacent to $q_{i-1}$ and $q_{i+1}$, $v-q_{i-1}-w-q_{i+1}-v$ is an induced $C_4$, a contradiction.

    \medskip
    \noindent (2) Suppose that $v$ is not adjacent to $w\in A_3(i)\cup A_5$. Let $a\in N_{B_{i-1}}(w)$ and $b\in N_{B_{i+1}}(w)$. By (1), we obtain $va,vb\in E(G)$ and so $v-a-w-b-v$ is an induced $C_4$.
\end{proof}

\subsection{Anticomplete pairs}

In this subsection, we prove that certain sets are anticomplete to each other.

\begin{lemma}\label{A1A2}
The following holds. 
    \begin{itemize}
        \item[$(1)$] Let $u\in A_1$ and $v\in A_1\cup A_2$ such that $\supp(u)\cap \supp(v)=\emptyset$ and $\{u,v\}$ is anticomplete to at least two consecutive sets $B_{j}$ and $B_{j+1}$. Then $uv\notin E(G)$.
        \item[$(2)$] $A_1(i)$ is anticomplete to $A_{1}(j)$ for $i\neq j$, and $A_{1}(i)$ is anticomplete to $A_{2}(i+1)\cup A_2(i-2)$.
    \end{itemize}
\end{lemma}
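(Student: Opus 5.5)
For (1), I would argue by contradiction: assume $uv\in E(G)$ and build a long induced cycle through $H$, contradicting $(C_4,C_6,C_7)$-freeness through Lemma \ref{lem:cyclicsequence}. Write $\supp(u)=\{i\}$ and choose $a\in N_{B_i}(u)$. Let $b\in N(v)\cap V(H)$ lie in a block $B_k$ with $k\neq i$; this is possible because the supports are disjoint. The segment of the $C_5$ from $B_i$ to $B_k$ that avoids the two consecutive blocks $B_j,B_{j+1}$ anticomplete to $\{u,v\}$ closes the edge $uv$ into a cycle.

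Concretely, I take a path $a=q'_i,\,q_{i+1},\dots$ (or in the other direction) up to $b$, using the vertices $q_\ell$, which are complete to the neighbouring blocks. I go around the side containing $B_j,B_{j+1}$ so that $u$ and $v$ have no chords to the interior of the segment. The hypothesis that $\{u,v\}$ misses two consecutive blocks guarantees that this side exists and has at least two internal blocks. The result is the induced cyclic sequence
\[
u-v-b-q_{k\pm1}-\cdots-a-u .
\]
Here $k$ is at distance $1$ or $2$ from $i$. Since $v\in A_1\cup A_2$, $v$ has neighbours in at most two consecutive blocks, all avoiding $i$. Counting the blocks gives a cycle of length between $5$ and $7$ in the worst cases.

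The cases I must check are:
\begin{itemize}
\item $\supp(v)=\{i+2\}$ (a single block);
\item $\supp(v)=\{i+1,i+2\}$ or $\{i+2,i+3\}$ (adjacent pairs);
\item the mirror images of these.
\end{itemize}
In each case I choose the route that avoids $\supp(v)\cup\{i\}$ internally. One subtlety is that $v$ may be adjacent to both ends of a pair $B_{k},B_{k+1}$. In that case I pick $b$ in the block nearer to $i$ along the chosen route, so that $v$ has no neighbour among the internal vertices. The other issue is that the cycle must have length at least $6$, since $C_5$ is allowed. Length $5$ would occur when $v$ is adjacent to $B_{i+1}$; I expect this to be excluded because then the two consecutive anticomplete blocks force the route to go the long way, giving length at least $6$.

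Part (2) should follow directly from (1). For $u\in A_1(i)$ and $w\in A_1(j)$ with $j\neq i$, the union $\{i,j\}$ leaves three blocks uncovered, and two of these are consecutive. For $A_2(i+1)$, with support $\{i+1,i+2\}$, the uncovered blocks are $B_{i+3},B_{i+4}$, which are consecutive. Similarly, for $A_2(i-2)$, with support $\{i-2,i-1\}$, the blocks $B_{i+1},B_{i+2}$ are consecutive and uncovered.

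The main obstacle is the case analysis in (1): making sure that every configuration produces an induced cycle of length $6$ or $7$ rather than $5$, and that no chords arise from $v$'s neighbours in two blocks. I would handle this by always taking the route through the consecutive blocks $B_j,B_{j+1}$.
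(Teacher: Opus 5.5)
Your strategy is the paper's: close the edge $uv$ into an induced $C_6$ or $C_7$ using $q$-vertices of the two consecutive blocks that $\{u,v\}$ misses. Part (2) also follows from (1) exactly as you say. However, part (1) has a genuine hole in a case you never list explicitly, though you allude to it: $\supp(v)=\{i+1\}$, or its mirror $\{i-1\}$. Part (2) needs this case for $A_1(i)$ versus $A_1(i\pm1)$. Here $b$ must be taken in $B_{i+1}$, and the long route gives $u-v-b-q_{i+2}-q_{i-2}-q_{i-1}-a-u$. You only rule out chords from $u$ and $v$ into the interior of the segment. But the two ends $a\in B_i$ and $b\in B_{i+1}$ lie in consecutive blocks, and nothing lets you choose them non-adjacent; for instance $N_{B_i}(u)$ may be complete to $N_{B_{i+1}}(v)$. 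In that situation $ab$ is a chord, so neither your cycle nor the corresponding cyclic sequence is induced.

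The missing step is to split on $ab$.
\begin{itemize}
\item If $ab\in E(G)$, then $u-v-b-a-u$ is itself an induced $C_4$, since $u$ misses $B_{i+1}$ and $v$ misses $B_i$.
\item If $ab\notin E(G)$, the long cycle is an induced $C_7$.
\end{itemize}
So your worry that the short way yields an allowed $C_5$ is misplaced. The short way yields a forbidden $C_4$ exactly when the long way fails.

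This is how the paper argues. It first normalizes so that $\{u,v\}$ misses $B_{i-2}\cup B_{i-1}$. Then any neighbour $b\in B_{i+2}$ of $v$ gives the induced $C_6$ $v-u-a-q_{i-1}-q_{i-2}-b-v$, which handles $\{i+2\}$ and $\{i+1,i+2\}$ together. The remaining case $\supp(v)=\{i+1\}$ is settled by the split on $ab$ above.

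Finally, drop $\supp(v)=\{i+2,i+3\}$ from your case list; it is not covered by the hypothesis. There $\{u,v\}$ misses only $B_{i+1}$ and $B_{i-1}$, which are not consecutive, and both routes around $H$ give only $C_5$s. Such edges genuinely occur (Lemma \ref{lem:opposite A_1(i) and A_2(j)}), so this case must be discarded as vacuous rather than checked.
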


\begin{proof} 
    The second statement follows immediately from the first statement. So it remains to prove (1). We may assume that $u\in A_1(i)$ for some $i\in [5]$ and $\{u,v\}$ is anticomplete to $B_{i-2}\cup B_{i-1}$. Let $a \in N_{B_i}(u)$. Suppose to the contrary that $uv\in E(G)$.
    If $v$ has a neighbor $b$ in $B_{i+2}$, then $v - u - a - q_{i-1} - q_{i-2} - b - v$ is an induced $C_6$, a contradiction. So $v$ is anticomplete to $B_{i+2}$. It follows that $v$ has a neighbor in $B_{i+1}$ and let $b \in N_{B_{i+1}}(v)$.
    If $ab\in E(G)$, then $u - v - b - a - u$ is an induced $C_4$. If $ab\notin E(G)$, then $v - u - a - q_{i-1} - q_{i-2} - q_{i+2} - b - v$ is an induced $C_7$, a contradiction.
\end{proof}

\begin{lemma}\label{A1A3}
    $A_1(i)$ is anticomplete to $A_3(i-2) \cup A_3(i+2)$.
\end{lemma}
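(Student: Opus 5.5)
By the symmetry $i\mapsto -i$ of the indexing, it suffices to show that $A_1(i)$ is anticomplete to $A_3(i+2)$. Recall that $\supp(v)=\{i+1,i+2,i-2\}$ for such $v$. Suppose for a contradiction that $u\in A_1(i)$ and $v\in A_3(i+2)$ are adjacent, and fix $a\in N_{B_i}(u)$. Then $u$ is anticomplete to $B_{i\pm1}\cup B_{i\pm2}$, and $v$ is anticomplete to $B_i\cup B_{i-1}$. The idea is the same as in Lemmas \ref{lem:opposite A_1(i) and A_2(j)} and \ref{A1A2}: close the edge $uv$ into a hole through $H$, and show that the hole can always be made to have length $4$, $6$ or $7$, or to yield an induced $P_7$.

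\emph{Step 1 (the $B_{i+1}$ side).} First I show that $a$ is anticomplete to $N_{B_{i+1}}(v)$. Indeed, if $a$ is adjacent to some $b\in N_{B_{i+1}}(v)$, then $u-v-b-a-u$ is an induced $C_4$. Since $a$ has a neighbor in $B_{i+1}$, there is therefore a vertex $b'\in B_{i+1}\setminus N(v)$ adjacent to $a$.

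\emph{Step 2 (the $B_{i-2}$ side).} Next I go around the other way, through $q_{i-1}$, which is adjacent to $a$ and nonadjacent to $u$ and $v$. Suppose some $x\in B_{i-2}$ is not adjacent to $v$, and let $y\in N_{B_{i+2}}(v)$.
\begin{itemize}
\item[$\bullet$] If $x$ has a neighbor $y$ in $N_{B_{i+2}}(v)$, then $v-u-a-q_{i-1}-x-y-v$ is an induced $C_6$.
\item[$\bullet$] Otherwise, pick a neighbor $z$ of $x$ in $B_{i+2}$. Then $v-u-a-q_{i-1}-x-z-y-v$ is an induced $C_7$, exactly as in Lemma \ref{lem:opposite A_1(i) and A_2(j)}.
\end{itemize}
Either way we reach a contradiction, so we may assume that $v$ is complete to $B_{i-2}$.

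\emph{Step 3 (the main obstacle).} The remaining case, $v$ complete to $B_{i-2}$, is the one I expect to be hardest. Here the natural holes $v-u-a-q_{i-1}-c-v$ with $c\in B_{i-2}$, and $v-u-a-b'-b-v$ with $b\in N_{B_{i+1}}(v)$, are only $C_5$'s, which are allowed. The plan is to use maximality of $H$ via Lemma \ref{lem:blowup-threeneighbor}.
\begin{itemize}
\item[$\bullet$] Part (1) of that lemma gives $y'\in B_{i+2}\setminus N(v)$.
\item[$\bullet$] Part (2), together with $v$ being complete to $B_{i-2}$, gives a non-neighbor of $v$ in $B_{i+1}$; Step 1 already produced one, namely $b'$.
\item[$\bullet$] Part (3) then says that $B_{i+2}\setminus N(v)$ is anticomplete to $N_{B_{i+1}}(v)$, since it cannot be anticomplete to $N_{B_{i-2}}(v)=B_{i-2}$.
\end{itemize}
Now $y'$ has a neighbor in $B_{i+1}$, which must lie outside $N(v)$. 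Using Lemma \ref{lem:blowup-basicproperty}(1), I would choose $b'$ to be such a neighbor of $y'$ that is also adjacent to $a$. Then I consider $u-a-b'-y'-x-v$ with $x\in B_{i-2}\cap N(y')$; here $x$ is adjacent to $v$, $y'$ is not adjacent to $v$, and $b'$ is not adjacent to $v$. This closes to the hole $v-u-a-b'-y'-x-v$, which is an induced $C_6$ provided $a$ is anticomplete to $\{y',x\}$ and $b'$ is not adjacent to $x$; both hold because $B_i$ is anticomplete to $B_{i+2}\cup B_{i-2}$ and $B_{i+1}$ is anticomplete to $B_{i-2}$.

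The delicate point is guaranteeing a single $b'$ adjacent to both $a$ and $y'$. By Lemma \ref{lem:blowup-basicproperty}(2), $a$ and $y'$ have a common neighbor in $B_{i+1}$; I would take this common neighbor as $b'$, since it is automatically not in $N(v)$ by the anticompleteness from part (3). If this choice fails, I would fall back on a bad $P_7$ via Lemma \ref{lem:badP7}, along $u-a-q_{i-1}-\cdots$.
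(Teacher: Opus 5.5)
Your proof is correct, but it takes a longer route than the paper.

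\textbf{Where the two proofs agree.} Your Step~1 is exactly the paper's first step.

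\textbf{Where they diverge.} After Step~1 the paper removes $u$ from the hole. Take $b\in N_{B_{i+1}}(v)$, take $y\in N_{B_{i+1}}(a)\setminus N(v)$ (which exists by Step~1), and take any $c\in N_{B_{i-2}}(v)$. Then $v-b-y-a-q_{i-1}-c-v$ is already an induced $C_6$. The chord $ab$ is excluded by Step~1. The chords $va$ and $vq_{i-1}$ are excluded by $\supp(v)=\{i+1,i+2,i-2\}$, and $vy$ because $y\notin N(v)$. Every other chord joins $B_j$ to $B_{j+2}$ for some $j$, so it is absent.

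In other words, your two ``obstacle'' $C_5$'s, $v-u-a-q_{i-1}-c-v$ and $v-u-a-b'-b-v$, glue into this $C_6$ once the shared vertex $u$ is deleted.

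\textbf{What each approach buys.} The paper's proof needs no case split on whether $v$ is complete to $B_{i-2}$. It also makes no use of maximality of $H$, so the lemma holds for every nice blowup. Your Step~3 invokes Lemma~\ref{lem:blowup-threeneighbor}(1), and hence maximality. That is harmless here, since the paper works with a maximal $H$ throughout, but it is an extra hypothesis the statement does not need.

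\textbf{Steps 2 and 3 are still sound.} In Step~3 the point you flag as delicate is not delicate. The vertex $q_{i+1}$ is complete to $B_i\cup B_{i+2}$. It is adjacent to $a$, so by Step~1 it is not adjacent to $v$. Hence $v-u-a-q_{i+1}-y'-x-v$ is an induced $C_6$ for any $y'\in B_{i+2}\setminus N(v)$ and any $x\in N_{B_{i-2}}(y')$. So neither Lemma~\ref{lem:blowup-threeneighbor}(3) nor the bad-$P_7$ fallback is needed, and you should delete that hedge.
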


\begin{proof}
    Let $u\in A_1(i)$ and $v\in A_3(i-2) \cup A_3(i+2)$. Suppose that $uv\in E(G)$. By symmetry, we may assume $v\in A_3(i+2)$. Let $a \in N_{B_i}(u)$. Let $b \in N_{B_{i+1}}(v)$ and $c \in N_{B_{i-2}}(v)$. If $a$ has a neighbor $x$ in $N_{B_{i+1}}(v)$, then $a - x - v - u - a$ is an induced $C_4$. So, $a$ is anticomplete to $N_{B_{i+1}}(v)$ and this implies that $a$ has a neighbor $y$ in $B_{i+1}\setminus N_G(v)$. Then $v - b - y - a - q_{i-1} - c - v$ is an induced $C_6$, a contradiction.
\end{proof}

\begin{lemma}\label{A2A2}
    $A_2(i)$ is anticomplete to $A_2(i-1) \cup A_2(i+1)$.
\end{lemma}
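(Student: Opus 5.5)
By the symmetry $j\mapsto -j$ of the indices modulo $5$, it suffices to show that $A_2(i)$ is anticomplete to $A_2(i+1)$. So suppose for a contradiction that $u\in A_2(i)$ and $v\in A_2(i+1)$ are adjacent. Then $\supp(u)=\{i,i+1\}$ and $\supp(v)=\{i+1,i+2\}$. Thus $\{u,v\}$ is anticomplete to $B_{i-2}\cup B_{i-1}$, and $B_{i+1}$ is the only set that both $u$ and $v$ see. The plan is to build a forbidden hole through $u$ and $v$ that runs around the far side of the blowup, using the vertices $q_j$ fixed earlier.

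Pick $a\in N_{B_i}(u)$ and $d\in N_{B_{i+2}}(v)$. The first step is to rule out common neighbours of $u$ and $v$ in $B_{i+1}$ with respect to these choices.
\begin{itemize}
\item If $u$ and $v$ have a common neighbour $c\in B_{i+1}$, look at $u-c-d$ and $v-c-a$. Also consider the sequence $a-u-v-d-q_{i-2}-q_{i-1}-a$. Here $a$ is anticomplete to $B_{i+2}$, so $ad\notin E(G)$.
\item Since $u$ misses $B_{i+2}$ and $v$ misses $B_i$, the cycle $a-u-v-d-q_{i-2}-q_{i-1}-a$ has length six. Its only potential chords are $ud$, $va$, $aq_{i-2}$ and $dq_{i-1}$, and all of these are non-edges.
\item Hence $a-u-v-d-q_{i-2}-q_{i-1}-a$ is an induced $C_6$, a contradiction.
\end{itemize}
This argument in fact does not need $B_{i+1}$ at all: it uses only $u\sim a$, $v\sim d$, the edge $uv$, and the fact that $q_{i-2}$ and $q_{i-1}$ are complete to their neighbouring sets.

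So the whole proof reduces to checking that $a-u-v-d-q_{i-2}-q_{i-1}-a$ is an induced cycle.
\begin{itemize}
\item The edges $d\,q_{i-2}$ and $q_{i-1}a$ hold because $q_j$ is complete to $B_{j-1}\cup B_{j+1}$.
\item The edge $q_{i-2}q_{i-1}$ holds for the same reason.
\item The non-edges among blowup vertices follow from $B_j$ being anticomplete to $B_{j+2}$.
\item The non-edges between $\{u,v\}$ and $\{q_{i-2},q_{i-1}\}$, and the non-edges $ud$ and $va$, follow from the supports of $u$ and $v$.
\end{itemize}

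The main obstacle is only this bookkeeping of chords. No case analysis on $B_{i+1}$ is needed, which suggests this lemma is a direct analogue of the $C_6$ argument in Lemma~\ref{A1A2}. If I have misjudged a support, the fallback is to extend the cycle through a vertex $b\in B_{i+1}$ and obtain an induced $C_7$ instead.
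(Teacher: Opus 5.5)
Your proof is correct and is the paper's argument: you assume by symmetry that $v\in A_2(i+1)$ and show that $a-u-v-d-q_{i-2}-q_{i-1}-a$ is an induced $C_6$. The first bullet list about a common neighbour $c\in B_{i+1}$ is unnecessary, as you note yourself, and your first list of possible chords is incomplete. However, your later chord check (using $ad\notin E(G)$ and the supports of $u$ and $v$) does cover every non-edge, so nothing is missing.
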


\begin{proof}
   Let $u\in A_2(i)$ and $v\in A_2(i-1) \cup A_2(i+1)$. Suppose that $uv\in E(G)$. By symmetry, we may assume that $v\in A_2(i+1)$. Let $a \in N_{B_i}(u)$ and $b \in N_{B_{i+2}}(v)$. Then $u - v - b - q_{i-2} - q_{i-1} - a - u$ is an induced $C_6$, a contradiction.
\end{proof}

\begin{lemma}\label{lem:anticompleteA3-1}
    Let $u,v\in V(G)\setminus V(H)$ such that 
    \begin{itemize}
        \item $u$ has a neighbor in each of $B_{i-1}$ and $B_i$, and is anticomplete to $B_{i-2}\cup B_{i+2}$, 
        \item $v$ has a neighbor in each of $B_{i-2}$ and $B_{i+2}$, and is anticomplete to $B_{i-1}\cup B_i$.
    \end{itemize}
    Then $uv\notin E(G)$.
    This implies the following.
    \begin{itemize}
        \item $A_3(i)$ is anticomplete to $A_3(i- 2) \cup A_3(i+2)$.
        \item $A_3(i)$ is anticomplete to $A_2(i+2)$.
        \item $A_2(i)$ is anticomplete to $A_2(i-2) \cup A_2(i+2)$. 
    \end{itemize}
\end{lemma}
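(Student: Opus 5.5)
Suppose for a contradiction that $uv\in E(G)$. The aim is to build a forbidden induced cycle (length $4$, $6$ or $7$) through $u$, $v$ and the blowup $H$, in the style of Lemmas \ref{A1A2} and \ref{A2A2}. By hypothesis $u$ sees $B_{i-1}$ and $B_i$ and misses $B_{i\pm2}$, while $v$ sees $B_{i-2}$ and $B_{i+2}$ and misses $B_{i-1}\cup B_i$. Whether $u$ sees $B_{i+1}$ or $v$ sees $B_{i+1}$ does not matter, since we never use $B_{i+1}$. Fix $a\in N_{B_{i-1}}(u)$ and $b\in N_{B_{i+2}}(v)$.

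\textbf{Case $ab\in E(G)$ impossible.} Since $B_{i-1}$ is anticomplete to $B_{i+2}$, we have $ab\notin E(G)$. Hence the only question is how to join $a$ and $b$ by a short induced path that avoids the neighborhoods of $u$ and $v$ in the wrong places.

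\textbf{Main case.} Consider $u-a-N_{B_{i-2}}(a)-\cdots-b-v-u$, using $B_{i-2}$ as the bridge. Choose $c\in B_{i-2}$ adjacent to $a$, for example $c=q_{i-2}$, which is complete to $B_{i-1}\cup B_{i+2}$. Then $c$ is adjacent to both $a$ and $b$. We have $uc\notin E(G)$ because $u$ is anticomplete to $B_{i-2}$. We have $va\notin E(G)$ because $v$ misses $B_{i-1}$. So $u-a-q_{i-2}-b-v-u$ is a $5$-cycle, and it is not yet a contradiction. We therefore route the other way round, through $B_i$ and $B_{i+1}$.

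Take $a'\in N_{B_i}(u)$ and $b'\in N_{B_{i-2}}(v)$, and consider $u-a'-q_{i+1}-q_{i+2}-q_{i-2}\ldots$. This is messy. The cleaner route is $u-a'-q_{i+1}-b-v-u$. Its adjacencies are as follows:
\begin{itemize}
\item $a'b\notin E(G)$, since $B_i$ is anticomplete to $B_{i+2}$;
\item $vq_{i+1}$ might be an edge;
\item $uq_{i+1}$ might be an edge.
\end{itemize}
To avoid these, I will instead use the length-$6$ cycle $u-a'-q_{i+1}-q_{i+2}-\ldots$. Better still, use $a\in B_{i-1}$ and $b'\in B_{i-2}$ together with the $B_i,B_{i+1},B_{i+2}$ side:
\[u-a'-q_{i+1}-q_{i+2}-b'\ \text{?}\]
The simplest choice is $C=u-a-q_{i-2}\ldots$. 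I'll commit to the following. Let $a\in N_{B_{i-1}}(u)$ and $b'\in N_{B_{i-2}}(v)$.

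\textbf{Subcase 1: $ab'\in E(G)$.} Then $u-a-b'-v-u$ is an induced $C_4$. The non-edges $ub'$ and $va$ hold by hypothesis, so this is a contradiction.

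\textbf{Subcase 2: $ab'\notin E(G)$.} Then by the same argument applied to any $a\in N_{B_{i-1}}(u)$ and $b\in N_{B_{i+2}}(v)$ (with $a'\in N_{B_i}(u)$), consider $u-a-x-b'-v$ where $x=q_{i-2}$'s neighbor... It is easier to form the cyclic sequence
\[u-a-(B_{i-2}\setminus N(v))\ \ldots\]
Concretely, $a$ has a neighbor $y\in B_{i-2}$ with $yv\notin E(G)$; otherwise the vertices of $B_{i-2}$ adjacent to $a$ are adjacent to $v$, which gives Subcase 1. Then $u-a-y-q_{i+2}$... I expect the main obstacle to be handling $q_{i+2}$'s adjacency to $v$. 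If $vq_{i+2}\in E(G)$, then $u-a-y-q_{i+2}-v-u$ is an induced $C_5$, not forbidden. So instead go the long way: $v-b-q_{i+1}-a'-u-v$ with $b\in N_{B_{i+2}}(v)$ and $a'\in N_{B_i}(u)$. If $q_{i+1}$ is nonadjacent to both $u$ and $v$, this is a $C_5$ again.

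The fix is to combine the two sides. Take the induced cyclic sequence $u-a-y-(B_{i+2}\setminus N(v))\ldots$. In the end I expect to use the vertex $q_{i+1}$ together with $q_{i-2}$, getting the $7$-cycle
\[v-b-q_{i+1}-a'-a\ ?\]
Here $a'$ and $a$ lie in consecutive cliques, so the last step is fine. For the final answer I would check $C_6$/$C_7$ options by choosing $a\in N_{B_{i-1}}(u)$ and $a'\in N_{B_i}(u)$ nonadjacent where possible (else use the $C_4$ $u-a-\ldots$). Then the three corollaries follow immediately by checking supports: for example, $A_3(i)$ versus $A_3(i+2)$ has supports $\{i-1,i,i+1\}$ and $\{i+1,i+2,i-2\}$, which fit the hypotheses.
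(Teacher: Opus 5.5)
You correctly handle the case where some $a\in N_{B_{i-1}}(u)$ is adjacent to some $b'\in N_{B_{i-2}}(v)$: then $u-a-b'-v-u$ is an induced $C_4$. Your support check for the three consequences is also fine, provided you note that for some pairs (e.g.\ $A_2(i)$ versus $A_2(i+2)$, or $A_3(i)$ versus $A_3(i-2)$) you must swap the roles of $u$ and $v$ and shift the index.

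However, the remaining case, where $N_{B_{i-1}}(u)$ is anticomplete to $N_{B_{i-2}}(v)$, is never closed, so there is a genuine gap. The cycles you actually write down are $5$-cycles, for example $u-a-y-q_{i+2}-v-u$ and $v-b-q_{i+1}-a'-u-v$. Your suggested $7$-cycle through both $a$ and $a'$ has the chord $ua'$. Routes through $B_{i+1}$ are uncontrolled, since both $u$ and $v$ may see $B_{i+1}$ (e.g.\ $u\in A_3(i)$, $v\in A_3(i+2)$). Working from $u$'s side alone also fails. Take $y\in N_{B_{i-2}}(a)$, so $yv\notin E(G)$. If $y$ has a neighbor $c'\in N_{B_{i+2}}(v)$, you only get the induced $C_5$ $u-a-y-c'-v-u$.

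The missing idea is to take witnesses from both sides and use the fact that they are adjacent.
\begin{itemize}
\item Let $b'\in N_{B_{i-2}}(v)$, $c\in N_{B_{i-2}}(a)$ and $d\in N_{B_{i-1}}(b')$. By the $C_4$ case, $cv\notin E(G)$ and $du\notin E(G)$; in particular $d\neq a$.
\item Since $ad$ is an edge and $b'\in N(d)\setminus N(a)$, Lemma \ref{lem:nbd containment for an edge} applied to the clique $B_{i-2}$ gives $N_{B_{i-2}}(a)\subseteq N_{B_{i-2}}(d)$. Hence $cd\in E(G)$.
\item If $c$ has a neighbor $c'\in N_{B_{i+2}}(v)$ and $d$ has a neighbor $d'\in N_{B_i}(u)$, then $u-v-c'-c-d-d'-u$ is an induced $C_6$.
\item Otherwise, say $c$ is anticomplete to $N_{B_{i+2}}(v)$; the case of $d$ is symmetric, using $B_i\setminus N(u)$. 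Then $c$ has a neighbor $x\in B_{i+2}\setminus N(v)$. For any $y'\in N_{B_{i+2}}(v)$, the cycle $u-a-c-x-y'-v-u$ is an induced $C_6$.
\end{itemize}
This two-sided step is how the paper concludes; without it your argument does not reach a contradiction.
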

\begin{proof}
    Suppose that $uv\in E(G)$. Since $G$ is $C_4$-free, $N_{B_{i-1}}(u)$ is anticomplete to $N_{B_{i-2}}(v)$. Let $a \in N_{B_{i-1}}(u)$ and $b \in N_{B_{i-2}}(v)$. Then $a$ has a neighbor $c$ in $B_{i-2}\setminus N_G(v)$ and $b$ has a neighbor $d$ in $B_{i-1}\setminus N_G(u)$. By Lemma \ref{lem:blowup-basicproperty} (1), $cd \in E(G)$. If $c$ has a neighbor $c'$ in $N_{B_{i+2}}(v)$ and $d$ has a neighbor $d'$ in $N_{B_i}(u)$, then $u - v - c' - c - d - d' - u$ is an induced $C_6$. So we may assume that either $c$ is anticomplete to $N_{B_{i+2}}(v)$ or $d$ is anticomplete to $N_{B_i}(u)$. By symmetry, we assume that $c$ is anticomplete to $N_{B_{i+2}}(v)$. Then $c$ has some neighbor $x$ in $B_{i+2}\setminus N_G(v)$ and thus $u - a - c - x - N_{B_{i+2}}(v) - v - u$ is an induced $C_6$, a contradiction.
\end{proof}

\begin{lemma}\label{lem:anticompleteA3}
    $A_3(i)$ is anticomplete to $A_3(i-1) \cup A_3(i+1)$.
\end{lemma}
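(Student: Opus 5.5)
Suppose, for a contradiction, that $u\in A_3(i)$ and $v\in A_3(i+1)$ are adjacent. Then $u$ has neighbors in $B_{i-1},B_i,B_{i+1}$ and is anticomplete to $B_{i+2}\cup B_{i-2}$, while $v$ has neighbors in $B_i,B_{i+1},B_{i+2}$ and is anticomplete to $B_{i-2}\cup B_{i-1}$. Fix $a\in N_{B_{i-1}}(u)$ and $d\in N_{B_{i+2}}(v)$. Then $a-u-v-d-q_{i-2}-a$ is an induced $C_5$. This is not yet a contradiction, so we must use the extra room inside $B_i$ and $B_{i+1}$, as in the proof of Lemma \ref{lem:anticompleteA3-1}.

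\emph{Normalization.} Since $uv$ is an edge and $B_i,B_{i+1}$ are cliques, Lemma \ref{lem:nbd containment for an edge} makes $N_{B_i}(u),N_{B_i}(v)$ comparable, and likewise $N_{B_{i+1}}(u),N_{B_{i+1}}(v)$. The reflection $j\mapsto 2i+1-j$ swaps the roles of $u$ and $v$ and of $B_i$ and $B_{i+1}$. So I would reduce to the case $N_{B_i}(v)\subseteq N_{B_i}(u)$, possibly after a short argument if the two containments point in opposite directions.

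\emph{Key observation.} No vertex $x\in N_{B_i}(u)$ can be adjacent to $d$, since $B_i$ and $B_{i+2}$ are anticomplete; this is automatic. More useful is the following. A vertex $w\in B_i$ adjacent to $v$ but not to $u$ cannot be adjacent to $a$, since otherwise $a-u-v-w-a$ is an induced $C_4$. Symmetrically, a vertex of $B_{i+1}$ adjacent to $u$ but not to $v$ cannot be adjacent to $d$.

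\emph{Main case.} By Lemma \ref{lem:blowup-threeneighbor}(1) there is $w\in B_i\setminus N(u)$, and by the normalization $w\notin N(v)$. The vertex $w$ has a neighbor $a'\in B_{i-1}$ and a neighbor in $B_{i+1}$. I would split according to whether $w$ is adjacent to $a$ (or to some neighbor of $u$ in $B_{i-1}$) and whether $w$ has a neighbor in $N_{B_{i+1}}(v)$.
\begin{itemize}
\item If $w$ has a neighbor $y\in N_{B_{i+1}}(v)\setminus N(u)$ and a neighbor $a'\in N_{B_{i-1}}(u)$, then $u-a'-w-y-v-u$ gives an induced $C_5$. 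Adding $d$ and $q_{i-2}$ should produce a $C_6$ or $C_7$ through $a'-w-y-v-d-q_{i-2}$, or a bad $P_7$ (Lemma \ref{lem:badP7}).
\item If $w$ is anticomplete to $N_{B_{i-1}}(u)$, then $w$ has a neighbor $a'\in B_{i-1}\setminus N(u)$, and $a'a\in E(G)$ since $B_{i-1}$ is a clique. Then $v-u-a-a'-w-(\text{neighbor in }B_{i+1})$, closed up through $v$, gives a hole of length $6$ or a $C_4$, depending on the adjacency between that $B_{i+1}$-neighbor and $u,v$.
\end{itemize}
In the remaining configurations I would use Lemma \ref{lem:generalized P4-structure} to forbid $P_4$'s of the form $a-u-x-\cdot$ with $u,x\in A'_3(i)$, and Lemma \ref{lem:blowup-threeneighbor}(3) to control how $B_i\setminus N(u)$ attaches to $N_{B_{i\pm1}}(u)$.

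\emph{Main obstacle.} The hard part is the bookkeeping. For each choice of neighbors of $w$ we must exhibit an induced cyclic sequence of length $4$, $6$ or $7$ (Lemma \ref{lem:cyclicsequence}), or an induced $P_7$ such as $w-\cdots-a-u-v-d-q_{i-2}$-extended. I expect the configuration where $w$'s neighbors in $B_{i-1}$ are exactly $N_{B_{i-1}}(u)$ to need the extra vertex $q_{i+2}$ or $q_{i-2}$ to reach length $7$. In that configuration I would first try the sequence $w-B_{i+1}\setminus N(u)-d-q_{i-2}-a-u-v$.
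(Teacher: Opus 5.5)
Your setup is sound, and your observation that a vertex of $B_i$ adjacent to $v$ but not to $u$ cannot see $a$ is exactly the paper's first step. However, the two bullets that are meant to carry the proof do not close, and the idea that closes them is missing.

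In your first bullet, $w\in B_i\setminus N(u)$ is adjacent to $a'\in N_{B_{i-1}}(u)$ and to $y\in N_{B_{i+1}}(v)\setminus N(u)$. The hexagon $a'-w-y-v-d-q_{i-2}-a'$ is induced only if $yd\notin E(G)$. Nothing rules out $N_{B_{i+2}}(y)=N_{B_{i+2}}(v)$; Lemma~\ref{lem:generalized P4-structure} applied to $u-v-y-\cdot$ even gives $N_{B_{i+2}}(y)\subseteq N_{B_{i+2}}(v)$. In that case $\{u,v,a',w,y,d,q_{i-2}\}$ induces a nine-edge graph (three induced $C_5$'s and the triangle $vyd$) with no induced $C_4$, $C_6$, $C_7$ or $P_7$, and hence no bad $P_7$. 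This is precisely the configuration into which the paper's Case~1 is forced once some vertex of $B_i\setminus N(u)$ meets $N_{B_{i-1}}(u)$. It is killed with an extra vertex $z\in B_{i+1}\setminus N(v)$:
\begin{itemize}
\item $uz\notin E(G)$ by comparability in $B_{i+1}$.
\item $wz\notin E(G)$, since otherwise $z-w-a'-u-v-B_{i+2}-z$ gives a hole of length at least $6$.
\item $z$ has no neighbor $z'\in B_i\setminus N(u)$: by Lemmas~\ref{lem:blowup-threeneighbor}(3) and~\ref{lem:nonedge in A_3(i)}, $z'$ and $u$ would share a neighbor $z''\in B_{i-1}$, and $v-u-z''-z'-z-B_{i+2}-v$ gives a hole of length at least $6$.
\item $z$ has no neighbor $b\in N_{B_i}(u)\setminus N_{B_i}(v)$, since otherwise $u-b-y-v-u$ or $y-w-b-z-y$ is an induced $C_4$.
\end{itemize}
Hence $z$ and $v$ share a neighbor in $B_i$, so they have disjoint neighborhoods in $B_{i+2}$. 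For $d'\in N_{B_{i+2}}(z)$, the path $z-d'-d-v-u-a'-w$ is then an induced $P_7$.

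In your second bullet the claimed dichotomy is false. Suppose the $B_{i+1}$-neighbor $y'$ of $w$ lies in $N(u)$; Lemma~\ref{lem:blowup-threeneighbor}(3) permits this, since $w$ misses $N_{B_{i-1}}(u)$. Then $u-a-a'-w-y'-u$ is an induced $C_5$, not a $C_4$, and there is no contradiction. What works is the induced $P_7$ $w-a'-a-u-v-c-d'$ with $c\in N_{B_{i+2}}(v)$ and $d'\in B_{i+2}\setminus N(v)$. This needs $B_{i+2}\not\subseteq N(v)$. If $v$ is complete to $B_{i+2}$, every vertex of $B_{i+1}\setminus N(v)$ meets $N_{B_{i+2}}(v)$; that is the mirror image of your first bullet and must be excluded first by the mirrored argument. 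This is why the paper splits globally on whether $B_i\setminus N(u)$ is anticomplete to $N_{B_{i-1}}(u)$ (and on the mirror condition for $v$), rather than on the neighbors of a single $w$. Finally, the sequence $w-(B_{i+1}\setminus N(u))-d-q_{i-2}-a-u-v$ you plan to try is not induced, since $vd\in E(G)$.
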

\begin{proof}
    Let $u \in A_3(i)$ and $v\in A_3(i+1)\cup A_3(i-1)$. Suppose that $uv \in E(G)$. By symmetry, we may assume that $v\in A_3(i+1)$. By Lemma~\ref{lem:nbd containment for an edge}, $N_G(u)$ and $N_G(v)$ are comparable within each of $B_i$ and $B_{i+1}$. By Lemma~\ref{lem:blowup-threeneighbor}, $B_i\setminus N_G(u)$ and $B_{i+1}\setminus N_G(v)$ are not empty.

    \medskip
    \noindent {\bf Case 1: $B_i\setminus N_G(u)$ is not anticomplete to $N_{B_{i-1}}(u)$.} 
    \medskip 
    
    Let $x\in B_i\setminus N_G(u)$ and $y\in N_{B_{i-1}}(u)$ with $xy\in E(G)$. If $x$ is adjacent to $v$, then $x - y - u - v - x$ is an induced $C_4$. So $xv \notin E(G)$.

    Assume that $x$ has a neighbor $x'\in B_{i+1}\setminus N_G(v)$. Then $vx' \notin E(G)$ and it follows that $x'-x-y-u-v-B_{i+2}-x'$ contains an induced $C_6$ or $C_7$ depending on whether $v$ and $x'$ have a common neighbor in $B_{i+2}$. So $x$ is anticomplete to $B_{i+1}\setminus N_G(v)$. By Lemma~\ref{lem:blowup-threeneighbor}, $x$ is anticomplete to $N_{B_{i+1}}(u)$. Therefore, $x$ has a neighbor $a\in N_{B_{i+1}}(v)\setminus N_{B_{i+1}}(u)$.

    Let $z\in B_{i+1}\setminus N_G(v)$. It follows that $xz\notin E(G)$. Assume that $z$ has a neighbor $z'$ in $B_i\setminus N_G(u)$. By Lemma \ref{lem:nonedge in A_3(i)}, $u$ and $z'$ have some common neighbor $z''$ in $B_{i-1}$. Then $v - u - z'' - z' - z - B_{i+2} - v$ is an induced $C_6$ or $C_7$ depending on whether $v$ and $z$ have a common neighbor in $B_{i+2}$. Thus, $z$ is anticomplete to $B_i \setminus N_G(u)$.
 
    Assume that $z$ has a neighbor $b \in N_{B_i}(u)\setminus N_{B_i}(v)$. If $ab\in E(G)$, then $u - b - a - v - u$ is an induced $C_4$. If $ab\notin E(G)$, then $a - x - b - z - a$ is an induced $C_4$. Therefore, $z$ has a neighbor in $N_{B_i}(u)\cap N_{B_i}(v)$. By Lemma \ref{lem:nonedge in A_3(i)}, $z$ and $v$ have disjoint neighborhoods in $B_{i+2}$. Let $c \in N_{B_{i+2}}(v)$ and $d \in N_{B_{i+2}}(z)$. Note that $uz\notin E(G)$ for otherwise $u - z - a - v - u$ is an induced $C_4$. Therefore, $z - d - c - v - u - y - x$ is an induced $P_7$, a contradiction.

    \medskip
    \noindent {\bf Case 2: $B_i\setminus N_G(u)$ is anticomplete to $N_{B_{i-1}}(u)$.} 
    
    \medskip 
    By symmetry, we may assume that $B_{i+1}\setminus N_G(v)$ is anticomplete to $N_{B_{i+2}}(v)$. Let $x\in B_i\setminus N_G(u)$ and $z\in B_{i+1}\setminus N_G(v)$. Let $a \in N_{B_{i-1}}(u)$ and $b \in N_{B_{i-1}}(x)$, and let $c \in N_{B_{i+2}}(v)$ and $d \in N_{B_{i+2}}(z)$. If $uz \notin E(G)$, then $z - d - c - v - u - a - b$ is an induced $P_7$. If $vx \notin E(G)$, then $x - b - a - u - v - c - d$ is an induced $P_7$. Thus, we may assume that $uz, vx \in E(G)$. If $xz \notin E(G)$, then $b - x - v - u - z - d - q_{i-2} - b$ is an induced $C_7$. Hence, we obtain $xz \in E(G)$, and then $u - z - x - v - u$ is an induced $C_4$, a contradiction.
\end{proof}

\begin{lemma}\label{anti-A2A3}
    $A_2(i)$ is anticomplete to $A_3(i+2)\cup A_3(i-1)$.
\end{lemma}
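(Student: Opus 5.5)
By the reflection of $H$ that swaps $B_i$ and $B_{i+1}$, $A_2(i)$ is mapped to itself and $A_3(i+2)$ is mapped to $A_3(i-1)$. So it suffices to show that $A_2(i)$ is anticomplete to $A_3(i+2)$. Suppose $u\in A_2(i)$ and $v\in A_3(i+2)$ with $uv\in E(G)$; I aim for a contradiction. Recall the supports: $\supp(u)=\{i,i+1\}$ and $\supp(v)=\{i+1,i+2,i-2\}$. So $u$ is anticomplete to $B_{i+2}\cup B_{i-2}\cup B_{i-1}$, and $v$ is anticomplete to $B_{i-1}\cup B_i$. The only shared class is $B_{i+1}$.

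\emph{Step 1: fix a common neighbour.} Since $uv$ is an edge outside the clique $B_{i+1}$, Lemma~\ref{lem:nbd containment for an edge} says $N_{B_{i+1}}(u)$ and $N_{B_{i+1}}(v)$ are comparable. Both are non-empty, so $u$ and $v$ have a common neighbour $b\in B_{i+1}$. Fix $a\in N_{B_i}(u)$, $d\in N_{B_{i+2}}(v)$ and $c\in N_{B_{i-2}}(v)$. Then $u-a-q_{i-1}-c-v-u$ is an induced $C_5$, since $q_{i-1}$ is complete to $B_i\cup B_{i-2}$.
\begin{itemize}
\item If $ab\notin E(G)$, then $a$ has a neighbour $x\in B_{i+1}\setminus\{b\}$. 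Applying Lemma~\ref{lem:P3 to P4} to the induced path $a-u-b$ forces $xu\in E(G)$. I would then look at $x$ against $v$ and $d$. If $xv\notin E(G)$, I try the induced path $a-x-\ldots$ or a cycle through $d$. If $xv\in E(G)$, then $a-x-v$ together with $b$ should give an induced $C_4$ with $u$.
\item So the main case is $ab\in E(G)$. Symmetrically, using $d$, I reduce to the case $bd\in E(G)$ (or derive a $C_4$ from a common neighbour of $a$ and $d$ in $B_{i+1}$, which exists by Lemma~\ref{lem:blowup-basicproperty}(2)).
\end{itemize}

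\emph{Step 2: use a non-neighbour of $v$.} Now $b$ is adjacent to $u,v,a,d$. The obstruction must come from vertices of $H$ that are not adjacent to $v$. I would invoke Lemma~\ref{lem:blowup-threeneighbor}, assuming $H$ is maximal as in the surrounding sections. It gives a vertex $y\in B_{i+2}\setminus N(v)$, and possibly a non-neighbour $z\in B_{i+1}$ or $B_{i-2}$, chosen according to part~(3). Following the pattern of the proof of Lemma~\ref{lem:anticompleteA3}, I would then build a long induced path or hole through $u,v$ and these vertices. A natural candidate is $a-u-v-c-\cdots$ continued around $H$ via $q_{i-1}$, or the path $z-d-\ldots$. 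If the relevant adjacencies are absent this yields an induced $P_7$. If they are present it yields a $C_6$ or $C_7$ (possibly via Lemma~\ref{lem:cyclicsequence} or Lemma~\ref{lem:badP7}) or a $C_4$. In particular, I expect Lemma~\ref{lem:nonedge in A_3(i)} applied to the pair $u\notin A'_3$ would not apply directly. Instead I would apply it to a non-adjacent pair $v$, $w$ with $w\in B_{i+2}$ to get disjoint neighbourhoods on one side; that is what produces the long path.

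\emph{Expected obstacle.} The hard part is Step~2: choosing the right non-neighbour of $v$, and organising the case split on which side of $B_{i+2}$ part~(3) of Lemma~\ref{lem:blowup-threeneighbor} places the anticompleteness. I would first try the case where $B_{i+2}\setminus N(v)$ is anticomplete to $N_{B_{i-2}}(v)$. There a vertex $y\in B_{i+2}\setminus N(v)$ has a neighbour $y'\in B_{i-2}\setminus N(v)$, and $u-v-c-\ldots$ versus $a-q_{i-1}-y'-y$ should give an induced $C_6$/$C_7$ or $P_7$. The other case is then handled symmetrically through $B_{i+1}$ and $b$.
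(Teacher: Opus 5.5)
You have the right skeleton, and it is the paper's. You reduce to $A_3(i+2)$ by the reflection, you note that $H$ must be maximal, and you split according to part~(3) of Lemma~\ref{lem:blowup-threeneighbor} applied to $B_{i+2}\setminus N(v)$. However, Step~2, which is the whole proof, is left at the level of ``should give''. Your claim that the second case is ``handled symmetrically through $B_{i+1}$ and $b$'' is false. The reflection fixing $B_{i+2}$ and swapping $B_{i+1}$ with $B_{i-2}$ sends $A_2(i)$ to $A_2(i-2)$, so $u$ does not keep its role. Also, $b$ is useless in that case because it is adjacent to $v$.

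Step~1 is unnecessary and partly wrong. If $xv\in E(G)$, then $x$ is adjacent to all of $a,u,b,v$, so no induced $C_4$ arises; you have only found a better choice of $b$. The subcase $xv\notin E(G)$ is closed by Lemma~\ref{lem:generalized P4-structure} applied to $a-x-b-v$.

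The missing observation is that $q_{i+2}\in N(v)$ in both cases. This holds because $q_{i+2}$ is complete to $B_{i+1}\cup B_{i-2}$, while $B_{i+2}\setminus N(v)$ is anticomplete to one of the non-empty sets $N_{B_{i+1}}(v)$, $N_{B_{i-2}}(v)$.

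\emph{First case.} Suppose $B_{i+2}\setminus N(v)$ is anticomplete to $N_{B_{i-2}}(v)$. Take $x\in B_{i+2}\setminus N(v)$ and $y'\in N_{B_{i-2}}(x)$. Then $u-v-q_{i+2}-y'-q_{i-1}-a-u$ is an induced $C_6$. Your connector $c$ would not work here, since $c$ is adjacent to both $q_{i-1}$ and $y'$.

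\emph{Second case.} Otherwise, choose $x\in B_{i+2}\setminus N(v)$ with a neighbour $c'\in N_{B_{i-2}}(v)$. Part~(3) then makes $x$ anticomplete to $N_{B_{i+1}}(v)$, so any neighbour $y\in B_{i+1}$ of $x$ is non-adjacent to $v$.
\begin{itemize}
\item If $uy\in E(G)$, then $u-y-q_{i+2}-v-u$ is an induced $C_4$.
\item If $uy\notin E(G)$, join $u$ to $y$ through $B_i$ and close with $y-x-c'-v-u$. This gives an induced $C_6$ or $C_7$, according to whether $u$ and $y$ have a common neighbour in $B_i$.
\end{itemize}
This second case is what your proposal does not supply. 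It needs the specific choice of $x$ adjacent to $N_{B_{i-2}}(v)$, which is available only because you are in the complement of the first case, and it needs the detour through $B_i$.
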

\begin{proof}
    Let $u \in A_2(i)$ and $v\in A_3(i-1)\cup A_3(i+2)$. Suppose that $uv \in E(G)$. By symmetry, we may assume that $v\in A_3(i+2)$. By Lemma~\ref{lem:blowup-threeneighbor}, $B_{i+2}\setminus N_G(v) \neq \emptyset$.

    \medskip 
    \noindent {\bf Case 1:} $B_{i+2}\setminus N_G(v)$ is not anticomplete to $N_{B_{i-2}}(v)$.

     \medskip 
    Let $x\in B_{i+2}\setminus N_G(v)$ have a neighbor $a\in N_{B_{i-2}}(v)$. By Lemma~\ref{lem:blowup-threeneighbor} (3), $x$ is anticomplete to $N_{B_{i+1}}(v)$. Let $y \in B_{i+1}(x)$. If $uy \notin E(G)$, then $u - B_i - y - x - a - v - u$ is an induced $C_6$ or $C_7$ depending on whether $u$ and $y$ have a common neighbor in $B_i$. Thus, $uy \in E(G)$. By Lemma \ref{lem:blowup-threeneighbor} (3), we obtain $q_{i+2} \in N_{B_{i+2}}(v)$ and it follows that $u - y - q_{i+2} - v - u$ is an induced $C_4$, a contradiction.  

    \medskip 
    \noindent {\bf Case 2:}  $B_{i+2}\setminus N_G(v)$ is anticomplete to $N_{B_{i-2}}(v)$.

    \medskip 
Let $x\in B_{i+2}\setminus N_G(v)$. Recall that $q_{i+2} \in N_{B_{i+2}}(v)$. Then $u - v - q_{i+2} - N_{B_{i-2}}(x) - q_{i-1} - N_{B_i}(u) - u$ is an induced $C_6$, a contradiction.
\end{proof}

\section{$A_0$ is empty}\label{sec:A_0}

In this section, we show that $A_0=\emptyset$.

\begin{lemma}\label{lem:homogeneoustoA0}
    Let $K$ be a component of $G[A_0]$.
    \begin{itemize}
        \item[$(1)$] If $v\in A_1\cup A_2$, then $v$ is complete or anticomplete to $K$.
        \item[$(2)$] If $v\in A_3$, then $v$ is complete or anticomplete to $K$.
    \end{itemize}
\end{lemma}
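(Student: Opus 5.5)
Throughout, suppose for a contradiction that some $v\in A_1\cup A_2\cup A_3$ mixes on $K$. Since $K$ is connected, the standard trick gives an edge $xy$ of $K$ with $vx\in E(G)$ and $vy\notin E(G)$. Both $x,y\in A_0$ are anticomplete to $V(H)$. So $y-x-v$ is an induced $P_3$ whose two left vertices see nothing of $H$. The plan is to extend it through $H$ by four more vertices into an induced $P_7$, using only the vertices $q_j$ and the facts that $B_j$ is anticomplete to $B_{j\pm 2}$ and that $v$ has no neighbours in $B_j$ for $j\notin\supp(v)$. In each case below, the only adjacencies that need checking are between $v$ and $H$ and between non-consecutive blowup classes, and these are all non-edges by construction.

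For (1), first let $v\in A_1(i)$ and pick $a\in N_{B_i}(v)$. I claim $y-x-v-a-q_{i+1}-q_{i+2}-q_{i-2}$ is an induced $P_7$.
\begin{itemize}
\item $v$ has no neighbours in $B_{i+1}\cup B_{i+2}\cup B_{i-2}$, since $\supp(v)=\{i\}$.
\item $a\in B_i$ is anticomplete to $B_{i\pm 2}$.
\item $q_{i+1}$ is nonadjacent to $q_{i-2}=q_{i+3}$.
\end{itemize}
This contradicts $P_7$-freeness.

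Next let $v\in A_2(i)$ and pick $a\in N_{B_i}(v)$. I claim $y-x-v-a-q_{i-1}-q_{i-2}-q_{i+2}$ is an induced $P_7$.
\begin{itemize}
\item $v$ misses $B_{i-1}\cup B_{i-2}\cup B_{i+2}$, since $\supp(v)=\{i,i+1\}$.
\item $a$ misses $B_{i\pm2}$.
\item $q_{i-1}q_{i+2}\notin E(G)$, because $B_{i-1}$ and $B_{i+2}$ are at distance two on the $5$-cycle.
\end{itemize}

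For (2), let $v\in A_3(i)$. By Lemma~\ref{lem:blowup-threeneighbor}(2), one of $B_{i-1}$, $B_{i+1}$ contains a non-neighbour of $v$; by symmetry take $c\in B_{i-1}\setminus N_G(v)$, and pick $b\in N_{B_{i+1}}(v)$. Then I claim $y-x-v-b-q_{i+2}-q_{i-2}-c$ is an induced $P_7$.
\begin{itemize}
\item $v$ misses $B_{i\pm2}$, and $v$ misses $c$ by the choice of $c$.
\item $b\in B_{i+1}$ misses $B_{i-1}\cup B_{i-2}$.
\item $q_{i+2}$ misses $B_{i-1}$.
\item $c$ is adjacent to $q_{i-2}$, since $q_{i-2}$ is complete to $B_{i-1}$.
\end{itemize}

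The only step that is not purely mechanical is (2). There, the naive extension through a neighbour of $v$ in $B_{i-1}$ yields only six vertices before the path closes up. The fix is to end the path at a non-neighbour of $v$, which is exactly what Lemma~\ref{lem:blowup-threeneighbor}(2) supplies; this is where maximality of $H$ is used.
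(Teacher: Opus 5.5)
Your proof is correct and is essentially the paper's argument: in each case you extend the $P_3$ formed by $v$ and the mixed edge of $K$ through $H$ to an induced $P_7$. The paths are the same, including the use of Lemma~\ref{lem:blowup-threeneighbor}(2) to end the path at a non-neighbour $c\in B_{i-1}$ of $v$ when $v\in A_3(i)$. The only difference is that the paper handles $A_1(i)$ and $A_2(i)$ with the single path $y-x-v-a-q_{i-1}-q_{i-2}-q_{i+2}$, whereas you route the $A_1(i)$ case the other way around the blowup; this is immaterial.
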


\begin{proof} We prove the properties one by one.

\medskip
\noindent  (1) Suppose that $v$ has a neighbor in $B_i$ and has no neighbors in $B_{i+2}\cup B_{i-2}\cup B_{i-1}$. Let $c\in B_i$ be a neighbor of $v$. 
If $v$ is adjacent to $a$ but not adjacent to $b$ with $ab\in E(K)$, then $b-a-v-c-q_{i-1}-q_{i-2}-q_{i+2}$ is an induced $P_7$.

\medskip 
\noindent (2) Let $v\in A_3(i)$ for some $i\in [5]$. Suppose that $v$ is adjacent to $a$ but not adjacent to $b$ with $ab\in E(K)$. By Lemma~\ref{lem:blowup-threeneighbor} (2), $B_{i-1}\setminus N_G(v)\neq \emptyset$ or $B_{i+1}\setminus N_G(v)\neq \emptyset$. By symmetry, we assume that $B_{i-1}\setminus N_G(v)$ contains a vertex $c$. Let $d\in N(v)\cap B_{i+1}$. Then $a-b-v-d-q_{i+2}-q_{i-2}-c$ is an induced $P_7$.
\end{proof}

\begin{lemma}\label{lem:emptyA0-1}
    Let $K$ be a component of $G[A_0]$.
    Let $v\in A_1$ and $w\in A_1\cup A_2\cup A_3\cup A_5$ such that $v,w$ have neighbors in $K$. Then $vw\in E(G)$.
\end{lemma}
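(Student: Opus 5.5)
The plan is to argue by contradiction. Suppose $v\in A_1(i)$ and $w\in A_1\cup A_2\cup A_3\cup A_5$ both have neighbours in $K$, but $vw\notin E(G)$. By Lemma \ref{lem:homogeneoustoA0}(1), $v$ is complete to $K$. So we can pick $a\in N_K(w)$ with $av\in E(G)$, and $w-a-v$ is an induced $P_3$. Fix $c\in N_{B_i}(v)$. Since $a\in A_0$, $a$ has no neighbours in $V(H)$. The idea is to close $w-a-v-c$ through the $q$-vertices of $H$ into an induced $C_4,C_6,C_7$ or $P_7$, choosing the route in $H$ according to $\supp(w)$.

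The cases where $w$ is adjacent to $c$, or $w\in A_1$, are quick.
\begin{itemize}
\item If $wc\in E(G)$, then $w-a-v-c-w$ is an induced $C_4$. So from now on $w$ is anticomplete to $N_{B_i}(v)$. This already settles $w\in A_5$, because $A_5$ is complete to $V(H)$ by Lemma \ref{lem:blowup-fiveneighbor1}.
\item If $w\in A_1(i)$, then $w$ is anticomplete to $B_{i\pm1}\cup B_{i\pm2}$, and $w-a-v-c-q_{i+1}-q_{i+2}-q_{i-2}$ is an induced $P_7$.
\item If $w\in A_1(i+1)$, we go around the other way: $w-a-v-c-q_{i-1}-q_{i-2}-q_{i+2}$ is an induced $P_7$.
\item If $w\in A_1(i+2)$, take $d\in N_{B_{i+2}}(w)$. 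Then $w-a-v-c-q_{i-1}-q_{i-2}-d-w$ is an induced $C_7$, using that $d$ is adjacent to $q_{i-2}$.
\item The cases $A_1(i-1)$ and $A_1(i-2)$ are symmetric.
\end{itemize}

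For $w\in A_2\cup A_3$ the same pattern should work. Whenever $\supp(w)$ avoids $i$ and one of its neighbours $i\pm1$, the path $w-a-v-c-q_{i\mp1}-\cdots$ along the three $q$'s on the side away from $\supp(w)$ is an induced $P_7$. Otherwise we pick a neighbour $d$ of $w$ in the nearest $B_j$ and close up. Where $w$ sees consecutive sets near $B_i$, a common neighbour $d$ of $w$ and $c$ gives $w-a-v-c-d-w$. That is only a $C_5$, so in that case I would instead route $v-c-q_{i-1}-q_{i-2}-\cdots$ away from $\supp(w)$ to get a $P_7$. I would also use Lemma \ref{lem:badP7} to merge the $P_7$/$C_7$ alternatives.

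The main obstacle is $w\in A_3(i)$, and similarly $A_2(i)$ or $A_2(i-1)$. Here $w$ has neighbours in $B_{i-1}$, $B_i$ and $B_{i+1}$, but none in $N_{B_i}(v)$, so both short routes through $H$ tend to give only $C_5$'s. My first attempt would use Lemma \ref{lem:blowup-threeneighbor}(2),(3) to find a non-neighbour $x$ of $w$ in, say, $B_{i-1}$ with $xc\in E(G)$. With $e\in N_{B_{i+1}}(w)$, I would then try the path $x-c-v-a-w-e-q_{i+2}$, or its variant through $q_{i-2}$, as an induced $P_7$. If $e$ is adjacent to $c$, I would use the $P_4$-structure (Lemma \ref{lem:generalized P4-structure}) to rule out that configuration. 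If this fails, the fallback is the ordering of Lemma \ref{lem:blowup-basicproperty}(1): take $c$ to be the vertex of $N_{B_i}(v)$ with the largest neighbourhood, and derive a $C_4$.
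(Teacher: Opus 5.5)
Your setup matches the paper: assume $v\in A_1(i)$ and $vw\notin E(G)$, take a common neighbour $a\in K$ via Lemma~\ref{lem:homogeneoustoA0}, and use $C_4$-freeness to forbid common neighbours of $v,w$ in $H$, which disposes of $A_5$. Your $P_7$/$C_7$ arguments for the $A_1$ cases check out.

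\textbf{The gap: the case $w\in A_3(i)$ is left open.} Your path $x-c-v-a-w-e-q_{i+2}$ fails exactly when $ce\in E(G)$. Then the seven vertices induce only the $C_5$ $c-v-a-w-e-c$ with two pendant vertices. Lemma~\ref{lem:generalized P4-structure} does not rescue this: the $P_4$ $x-c-e-w$ has vertices in $A'_3(i-1),A'_3(i),A'_3(i+1),A'_3(i)$, not in the pattern $A'_3(j-2),A'_3(j-1),A'_3(j-1),A'_3(j)$ that the lemma forbids. The ordering fallback also fails. A $C_4$ through the non-adjacent pair $c,w$ needs common neighbours in both $B_{i-1}$ and $B_{i+1}$, and Lemma~\ref{lem:nonedge in A_3(i)} already says that never happens.

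The missing idea is to avoid $B_i$ entirely. By Lemma~\ref{lem:blowup-threeneighbor}(2), you may assume $w$ has a non-neighbour $x\in B_{i-1}$. Take $b\in N_{B_{i+1}}(w)$. Then $v-a-w-b-q_{i+2}-q_{i-2}-x$ is an induced $P_7$, because $w$ misses $q_{i+2},q_{i-2},x$ and $b,q_{i+2},x$ lie in pairwise non-consecutive bags. This is the paper's argument.

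The remaining $A_2\cup A_3$ cases are only asserted ("the same pattern should work"). Your criterion there is also wrong as stated. For $w\in A_3(i-2)$, $\supp(w)$ avoids $i$ and $i+1$, yet it meets both triples of $q$'s, so neither route gives an induced $P_7$.

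Conversely, $A_2(i)$ and $A_2(i-1)$ are not obstacles at all. Since $w$ misses $c$, the same $P_7$ $w-a-v-c-q_{i\mp1}-q_{i\mp2}-q_{i\pm2}$ you used for $A_1(i\pm1)$ works.

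Your bad-$P_7$ remark in fact settles every $w$ with $\supp(w)\subseteq\{i,i+1,i+2\}$. In that case $w-a-v-c-q_{i-1}-q_{i-2}-q_{i+2}$ is a bad $P_7$, excluded by Lemma~\ref{lem:badP7}; the range $\{i-2,i-1,i\}$ is symmetric. What remains besides $A_3(i)$:
\begin{itemize}
\item For $A_2(i+2)$ and $A_3(i-2)$, $w$ misses $B_i\cup B_{i+1}$. With $d\in N_{B_{i+2}}(w)$, the cycle $v-a-w-d-q_{i+1}-c-v$ is an induced $C_6$.
\item $A_3(i+2)$ is symmetric, routing through $B_{i-2}$ and $q_{i-1}$.
\end{itemize}
With these written out and the $A_3(i)$ path above, the proof would be complete.
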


\begin{proof}
    Suppose that $v\in A_1(i)$ for some $i\in [5]$ and $vw\notin E(G)$. By Lemma~\ref{lem:homogeneoustoA0}, $v$ is complete to $K$.
    Let $z$ be a common neighbor of $v$ and $w$ in $K$. Since $G$ is $C_4$-free, $v$ and $w$ have no 
    common neighbors in $H$. So $w\notin A_5$ by Lemma \ref{lem:blowup-fiveneighbor1}.
    Let $a$ be a neighbor of $v$ in $B_i$. We divide cases depending on the place of $w$.

\medskip 
\noindent {\bf Case 1:} $w\in A_1(i)\cup A_1(i\pm 1)\cup A_2(i-1)\cup A_2(i)$.
    
    By symmetry, we may assume that $N_G(w)\cap V(H)$ are contained in $B_{i-1}\cup B_i$.
    Note that $a$ is not adjacent to $w$. Then 
    $w-z-v-a-q_{i+1}-q_{i+2}-q_{i-2}$
    is an induced $P_7$.

     \medskip 
\noindent  {\bf Case 2:} $w\in A_1(i\pm 2)\cup A_2(i+2)\cup A_3(i\pm 2)$.

    By symmetry, we may assume that $w$ has a neighbor $b$ in $B_{i+2}$ and $w$ is anticomplete to $B_{i+1}$.
    Then $v-z-w-b-q_{i+1}-a-v$ is an induced $C_6$.

     \medskip 
\noindent  {\bf Case 3:} $w\in A_2(i+1)\cup A_2(i-2)\cup A_3(i\pm 1)$.
    
    By symmetry, we may assume that 
    $w$ has a neighbor $b$ in $B_{i+2}$ and $w$ is anticomplete to $B_{i-2}\cup B_{i-1}$. Then 
    $a-v-z-w-b-q_{i-2}-q_{i-1}-a$
    is an induced $C_7$.

     \medskip 
\noindent  {\bf Case 4:} $w\in A_3(i)$.
 
    By Lemma~\ref{lem:blowup-threeneighbor}, either $B_{i-1}\setminus N_G(v)\neq \emptyset$ or $B_{i+1}\setminus N_G(v)\neq \emptyset$.
    By symmetry, we assume $B_{i-1}\setminus N_G(v)\neq \emptyset$. Let $b$ be a neighbor of $w$ in $B_{i+1}$ and $c$ be a non-neighbor of $w$ in $B_{i-1}$.  Then 
    $v-z-w-b-q_{i+2}-q_{i-2}-c$
is an induced $P_7$, a contradiction. 
\end{proof}

\begin{lemma}\label{lem:emptyA0-2}
    Let $K$ be a component of $G[A_0]$.
    Let $v\in A_2$ and $w\in A_2\cup A_3\cup A_5$ such that $v,w$ have neighbors in $K$. Then $vw\in E(G)$.
\end{lemma}
\begin{proof}
    Suppose $v\in A_2(i)$ for some $i\in [5]$ and $vw\notin E(G)$.
    Let $z$ be a common neighbor of $v$ and $w$ in $K$. Since $G$ is $C_4$-free, $v$ and $w$ have no 
    common neighbor in $H$. So $w\notin A_5$ by Lemma \ref{lem:blowup-fiveneighbor1}.
    We divide cases depending on the place of $w$.

    \medskip 
    \noindent {\bf Case 1:} $w\in A_2(i)$.
    
    Let $a$ be a neighbor of $v$ in $B_{i+1}$.
    Then 
    $w-z-v-a-q_{i+2}-q_{i-2}-q_{i-1}$
    is an induced $P_7$.

    \medskip 
     \noindent {\bf Case 2:} $w\in A_2(i\pm 1)\cup A_3(i)\cup A_3(i+1)$.
    
    By symmetry, we may assume that $w$ has a neighbor in $B_{i-1}$ and it is anticomplete to $B_{i+2}\cup B_{i-2}$.
     Let $a$ be a neighbor of $v$ in $B_{i+1}$ and $b$ be a neighbor of $w$ in $B_{i-1}$. Then 
     $v-z-w-b-q_{i-2}-q_{i+2}-a-v$ is an induced $C_7$.
    
    \medskip 
     \noindent {\bf Case 3:}  $w\in A_2(i\pm 2)\cup A_3(i-1)\cup A_3(i+2)$.

    By symmetry, we may assume that $w$ has a neighbor in $B_{i-2}$ and $w$ is anticomplete to $B_{i-1}$.
    Let $a$ be a neighbor of $v$ in $B_i$ and $b$ be a neighbor of $w$ in $B_{i-2}$. Then $v-z-w-b-q_{i-1}-a-v$ is an induced $C_6$. 

    \medskip 
     \noindent {\bf Case 4:}  $w\in A_3(i-2)$.
    
    By Lemma~\ref{lem:blowup-threeneighbor}, $B_{i-2}\setminus N_G(w)$ contains a vertex $a$. By Lemma~\ref{lem:blowup-threeneighbor} and symmetry, we may assume that $a$ is anticomplete to $N_G(w)\cap B_{i+2}$. This implies that $a$ has a neighbor $c$ in $B_{i+2}\setminus N_G(w)$.
    Then $v-z-w-b-a-c-q_{i+1}$ is a bad $P_7$.
\end{proof}

\begin{lemma}\label{lem:emptyA0-3}
    Let $K$ be a component of $G[A_0]$.
    Let $v, w\in A_3$ such that $v,w$ have neighbors in $K$. Then $vw\in E(G)$.
\end{lemma}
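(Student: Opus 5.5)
By Lemma \ref{lem:homogeneoustoA0} (2), both $v$ and $w$ are complete to $K$, so they have a common neighbor $z\in K$. Suppose for a contradiction that $vw\notin E(G)$. Then, since $G$ is $C_4$-free, $v$ and $w$ have no common neighbor in $H$: a common neighbor $x$ would give the induced $C_4$ $v-z-w-x-v$. I would take $v\in A_3(i)$ and split into cases on $w\in A_3(j)$, using the symmetry $i\mapsto -i$ to reduce to $j\in\{i,i+1,i+2\}$.

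\textbf{Case $j=i$.} Here $v,w\in A_3'(i)$ are non-adjacent. By Lemma \ref{lem:nonedge in A_3(i)}, they have neighborhood containment in one of $B_{i-1},B_{i+1}$. Both neighborhoods there are non-empty, so containment forces a common neighbor in $H$, which is a contradiction.

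\textbf{Case $j=i+1$.} Choose $a\in N_{B_{i-1}}(v)$ and $c\in N_{B_{i+2}}(w)$. The vertex $q_{i-2}$ is complete to $B_{i+2}\cup B_{i-1}$, and $v$ (respectively $w$) is anticomplete to $B_{i+2}\cup B_{i-2}$ (respectively $B_{i-2}\cup B_{i-1}$). Also $a,c$ are non-adjacent, as $B_{i-1}$ is anticomplete to $B_{i+2}$, and $z\in A_0$ has no neighbors in $H$. Hence $v-z-w-c-q_{i-2}-a-v$ is an induced $C_6$, a contradiction.

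\textbf{Case $j=i+2$.} This is the main obstacle. The supports $\{i-1,i,i+1\}$ and $\{i+1,i+2,i-2\}$ overlap only in $B_{i+1}$, so there is no short cycle through a single $q$. I would imitate Case 4 of Lemma \ref{lem:emptyA0-2}. By Lemma \ref{lem:blowup-threeneighbor} (1) there is $a\in B_{i+2}\setminus N(w)$. By Lemma \ref{lem:blowup-threeneighbor} (3) and symmetry (the reflection fixing $B_{i+2}$ swaps $B_{i+1}$ and $B_{i-2}$), there are two subcases.
\begin{itemize}
\item If $a$ is anticomplete to $N_{B_{i-2}}(w)$, then $a$ has a neighbor $c\in B_{i-2}\setminus N(w)$. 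For $b\in N_{B_{i+2}}(w)$ adjacent to $a$, the walk $v-z-w-b-a-c-q_{i-1}$ should be a bad $P_7$, contradicting Lemma \ref{lem:badP7}. Here I must check that $v$ is non-adjacent to $b,a,c$ (true since $v$ is anticomplete to $B_{i+2}\cup B_{i-2}$). I must also check that $q_{i-1}$ is non-adjacent to $w,b,a$, which holds because $B_{i-1}$ is anticomplete to $B_{i+2}$ and $w$ is anticomplete to $B_{i-1}$.
\item Otherwise $a$ is anticomplete to $N_{B_{i+1}}(w)$, so $a$ has a neighbor $c\in B_{i+1}\setminus N(w)$. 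Then I would use $v-z-w-b-a-c$ together with a neighbor of $v$ in $B_{i-1}$ or $B_i$ to close an induced $C_6$ or $C_7$. Here I must handle whether $v$ is adjacent to $c$: if $vc\in E(G)$, then $v-z-w-b-a-c-v$ is already an induced $C_6$. If $vc\notin E(G)$, I would route through $N_{B_i}(c)$ and $N_{B_i}(v)$, as in Lemma \ref{lem:anticompleteA3}, to close a $C_6$ or $C_7$, or find a $P_7$ ending at $q_{i-1}$.
\end{itemize}
The delicate point is precisely this second subcase, where adjacencies between $v$ and the vertices of $B_{i+1}$ are not controlled. I expect to need Lemma \ref{lem:nonedge in A_3(i)} applied to $c$ and $v$ in $A_3'(i+1)$, or Lemma \ref{lem:generalized P4-structure}, to finish it.
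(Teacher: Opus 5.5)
Your cases $j=i$ and $j=i+1$ are correct and coincide with Cases~1 and~2 of the paper, and so does your reduction to ``$v,w$ have no common neighbor in $H$''. The case $j=i+2$, however, has a genuine gap. In both of your subcases the vertices $b,a,c$ form a triangle, so the paths you build are not induced.

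In the first subcase, $q_{i-2}$ is complete to $B_{i+2}$, hence adjacent to $a$, hence not adjacent to $w$. So every $d\in N_{B_{i-2}}(w)$ must satisfy $bd\in E(G)$, since otherwise $w-b-q_{i-2}-d-w$ is an induced $C_4$. Thus $d\in N(b)\setminus N(a)$, and the nested ordering of Lemma~\ref{lem:blowup-basicproperty}~(1) gives $N_{B_{i+1}\cup B_{i-2}}(a)\subseteq N_{B_{i+1}\cup B_{i-2}}(b)$. This is Lemma~\ref{lem:single vertex in A_3(i)}~(3), in mirrored form, for $w$. Since $c\in N(a)$, we get $bc\in E(G)$. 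Hence $v-z-w-b-a-c$ is never an induced $P_6$, and your walk is never a bad $P_7$.

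The mirror argument, with $q_{i+1}$ in place of $q_{i-2}$, gives $bc\in E(G)$ in your second subcase too. So $v-z-w-b-a-c-v$ is not an induced $C_6$ when $vc\in E(G)$; in that situation $v-z-w-b-c-v$ is merely an induced $C_5$. The rest of that subcase is only sketched. The template you borrowed from Case~4 of Lemma~\ref{lem:emptyA0-2} runs into the same chord if its $b$ is a neighbor of $w$ in the middle block, so it is not a safe model here.

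The missing idea is to use the block $B_{i+1}$ that $v$ and $w$ share, which you noticed but set aside. Take $a\in N_{B_{i+1}}(v)$ and $b\in N_{B_{i+1}}(w)$. Because $v$ and $w$ have no common neighbor in $H$, we have $a\ne b$, $ab\in E(G)$ and $vb,wa\notin E(G)$, so $v-a-b-w$ is an induced $P_4$. Close it around the other side of $H$ by taking $c\in N_{B_{i-1}}(v)$:
\begin{itemize}
\item If $c$ and $w$ have a common neighbor $d\in B_{i-2}$, then $v-a-b-w-d-c-v$ is an induced $C_6$.
\item Otherwise, for $d\in N_{B_{i-2}}(w)$ and $e\in N_{B_{i-2}}(c)$, the cycle $v-a-b-w-d-e-c-v$ is an induced $C_7$.
\end{itemize}
All required non-adjacencies come from the supports of $v$ and $w$ and from $B_k$ being anticomplete to $B_{k\pm 2}$. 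This is the paper's proof of this case. It uses $z$ only through the no-common-neighbor fact and needs no case split via Lemma~\ref{lem:blowup-threeneighbor}~(3).
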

\begin{proof}
    Suppose $v\in A_3(i)$ for some $i\in [5]$ and $vw\notin E(G)$.
    Let $z$ be a common neighbor of $v$ and $w$ in $K$. Since $G$ is $C_4$-free, $v$ and $w$ have no 
    common neighbors in $H$. We divide cases depending on the place of $w$.
    
    \medskip 
    \noindent {\bf Case 1:} $w\in A_3(i)$. 
    
    This is impossible by Lemma \ref{lem:nonedge in A_3(i)}.

    \medskip 
    \noindent {\bf Case 2:} $w\in A_3(i\pm 1)$.

    By symmetry, we assume that $w\in A_3(i+1)$.
    Let $a\in B_{i-1}$ be a neighbor of $v$ and $b\in B_{i+2}$ be a neighbor of $w$. Then $z-v-a-q_{i-2}-b-w-z$ is an induced $C_6$.

    \medskip 
    \noindent {\bf Case 3:} $w\in A_3(i\pm 2)$.
    
    By symmetry, we assume that $w\in A_3(i+2)$.
    Let $a\in B_{i+1}$ be a neighbor of $v$ and $b\in B_{i+1}$ be a neighbor of $w$.
    Let $c\in B_{i-1}$ be a neighbor of $v$. If $c$ and $w$ have a common neighbor $d\in B_{i-2}$, then $v-a-b-w-d-c-v$ is an induced $C_6$. Otherwise,  $v-a-b-w-d-e-c-v$ is an induced $C_7$ where $d\in B_{i-2}$ is a neighbor of $w$ and $e\in B_{i-2}$ is a neighbor of $c$.
\end{proof}

\begin{lemma}\label{lem:A0cliqueseparator}
    $A_0=\emptyset$.
\end{lemma}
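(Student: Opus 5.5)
The plan is a clique cutset argument. Suppose $A_0\neq\emptyset$ and let $K$ be a component of $G[A_0]$. Let $N=N_G(K)$ be the set of vertices outside $K$ with a neighbor in $K$. Since $K$ is a component of $G[A_0]$ and vertices of $A_0$ have no neighbors in $H$, we get $N\subseteq A_1\cup A_2\cup A_3\cup A_5$. Also $N\neq\emptyset$, because $G$ is connected (it is basic) and $V(H)\neq\emptyset$. We will show that $N$ is a clique. Then $N$ separates $K$ from $V(H)$, since $H$ is disjoint from $K\cup N$ and nonempty, so $N$ is a clique cutset of $G$. This contradicts that $G$ is basic.

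To show $N$ is a clique, take two distinct vertices $v,w\in N$ and split according to which sets contain them.
\begin{itemize}
\item If one of them lies in $A_1$, then Lemma~\ref{lem:emptyA0-1} gives $vw\in E(G)$.
\item If neither lies in $A_1$ but one lies in $A_2$, then Lemma~\ref{lem:emptyA0-2} applies, with the other vertex in $A_2\cup A_3\cup A_5$.
\item If both lie in $A_3$, Lemma~\ref{lem:emptyA0-3} applies.
\item If one lies in $A_5$ and the other in $A_3\cup A_5$, Lemma~\ref{lem:blowup-fiveneighbor1}(2) says $A_5$ is a clique complete to $A_3$.
\end{itemize}
These four cases cover every pair, so $N$ is a clique.

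The substantive work is already contained in Lemmas~\ref{lem:emptyA0-1}--\ref{lem:emptyA0-3}. The only points left to check are that every vertex of $N$ has a neighbor in $K$, which holds by definition of $N$, and the cutset claim. For the cutset claim, note that $G-N$ has no edge between $K$ and $V(G)\setminus(K\cup N)$, and the latter set contains $V(H)$, so $G-N$ is disconnected. Lemma~\ref{lem:homogeneoustoA0} is not even needed beyond its use inside the cited lemmas.

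I expect no serious obstacle here. The step to double-check is that the case analysis covers all pairs, including $A_5$ with $A_5$, which is handled by Lemma~\ref{lem:blowup-fiveneighbor1}(2).
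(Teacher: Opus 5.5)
Your proof is correct and is the same clique-cutset argument the paper uses: for a component $K$ of $A_0$, Lemmas~\ref{lem:emptyA0-1}--\ref{lem:emptyA0-3} together with Lemma~\ref{lem:blowup-fiveneighbor1} show that $N(K)$ is a clique, so it is a clique cutset separating $K$ from $V(H)$, which contradicts that $G$ is basic. Your write-up only spells out what the paper's one-line proof leaves implicit, namely that the case split covers every pair and why $N(K)$ separates $K$ from $V(H)$.
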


\begin{proof}
    If not, then $G$ has a clique cutset 
    by Lemmas~\ref{lem:emptyA0-1}, ~\ref{lem:emptyA0-2}, ~\ref{lem:emptyA0-3} and \ref{lem:blowup-fiveneighbor1}. 
\end{proof}

\section{Neighborhoods of $A_1(i)$}\label{sec:A_1}

    Our main result in this section is the following proposition.

\begin{proposition}\label{prop:component of A_1(i)}
    Let $K$ be a connected subgraph of $G[A_1(i)]$. 
    Then $N(K)\setminus (A_1(i)\cup A_2(i+2))$ is a clique.
\end{proposition}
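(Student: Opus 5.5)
First I will determine which sets can contain $N(K)\setminus(A_1(i)\cup A_2(i+2))$. A vertex of $K$ has neighbours in $H$ only inside $B_i$, and $A_0=\emptyset$ by Lemma \ref{lem:A0cliqueseparator}. By Lemma \ref{A1A2}(2), $A_1(i)$ is anticomplete to $A_1(j)$ for $j\ne i$ and to $A_2(i+1)\cup A_2(i-2)$. By Lemma \ref{A1A3}, it is anticomplete to $A_3(i\pm2)$. So every vertex of $N(K)$ outside $A_1(i)\cup A_2(i+2)$ lies in one of
\[
Z:=N_{B_i}(K)\cup N_{A_2(i)}(K)\cup N_{A_2(i-1)}(K)\cup N_{A_3(i)}(K)\cup N_{A_3(i\pm1)}(K)\cup N_{A_5}(K).
\]
Every vertex in this list has a neighbour in $B_i$, and $A_5$ is complete to everything relevant by Lemma \ref{lem:blowup-fiveneighbor1}. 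My plan is to show that any two non-adjacent $x,y\in Z$ yield a forbidden $C_4$, $C_6$, $C_7$ or $P_7$ (or a bad $P_7$ via Lemma \ref{lem:badP7}).

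The first reduction is to single vertices of $K$. Suppose $x$ and $y$ have a common neighbour $z\in K$. If $x,y$ also have a common neighbour in $H$, they are adjacent, since otherwise we get a $C_4$. If not, I use their attachments to $H$ to close an induced cycle $x-z-y-\cdots-x$ through $H$, in the style of Lemmas \ref{lem:emptyA0-1}--\ref{lem:emptyA0-3}. Concretely:
\begin{itemize}
\item For $x\in A_2(i)$ and $y\in A_2(i-1)$: the cycle $x-z-y-N_{B_{i-1}}(y)-q_{i-2}-q_{i+2}-N_{B_{i+1}}(x)-x$ is an induced $C_7$.
\item For $x$ and $y$ both in $B_i\cup A_2(i)\cup A_3(i)$ with disjoint neighbourhoods in the relevant $B_{i\pm1}$: I use Lemma \ref{lem:nonedge in A_3(i)} together with long paths of the form $x-z-y-\ldots-q_{i+2}-q_{i-2}$ to obtain $P_7$ or $C_6$.
\end{itemize}
To handle vertices of $K$ that are not common neighbours, I take a shortest path $P$ in $K$ from a neighbour of $x$ to a neighbour of $y$. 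If $P$ has length at least $1$, then $x-P-y$ followed by a route through $H$ gives a longer induced path or cycle, which only makes the contradiction easier. The $P_7$ bound caps the length of $P$. I will also choose $x,y$ so that $P$ is minimal, which keeps the interior of $P$ anticomplete to $\{x,y\}$.

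The expected main obstacle is the case where $x,y\in A'_3(i)\cup A_2(i)\cup A_2(i-1)$, so that both see $B_i$ and their $H$-neighbourhoods overlap in $B_i$, but they are non-adjacent. Here the $C_4$-freeness forces their neighbourhoods in $B_i$ to be disjoint or comparable, and we need a vertex of $B_i$ adjacent to $K$. I would handle this in three steps:
\begin{enumerate}
\item Use Lemma \ref{lem:homogeneoustoA0}-type homogeneity to note that every $a\in N_{B_i}(K)$ sees $x$ or $y$ in a controlled way.
\item Pick $a\in N_{B_i}(K)$ and argue via Lemma \ref{lem:nbd containment for an edge} on the edge from $K$ to $a$.
\item Build a path $x-K-y$ extended by $B_{i+1}-B_{i+2}-B_{i-2}$ to get an induced $P_7$, falling back on Lemma \ref{lem:generalized P4-structure} and Lemma \ref{lem:comparable vertices in A_3(i)} when the path closes up prematurely.
\end{enumerate}
I expect this subcase to need a careful split according to whether $x,y$ are comparable in $B_{i-1}$ or in $B_{i+1}$.
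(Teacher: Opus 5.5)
Your proposal has a genuine gap in the reduction to common neighbours. You claim that if $x,y$ have no common neighbour in $K$, a longer shortest path $P\subseteq K$ ``only makes the contradiction easier''. This fails whenever $x$ and $y$ share a neighbour $h\in V(H)$, and that happens for many of your pairs:
\begin{itemize}
\item $A_5$ is complete to $H$;
\item two non-adjacent vertices of $A'_3(i)$ (e.g.\ $x\in B_i$, $y\in A_3(i)$) always share a vertex of $B_{i-1}$ or $B_{i+1}$ by Lemma~\ref{lem:nonedge in A_3(i)};
\item $A_2(i-1)$, $A_2(i)$, $A_3(i-1)$ and $A_3(i+1)$ all see $B_i$.
\end{itemize}
For such a pair, a common neighbour $z\in K$ gives the $C_4$ $x-z-y-h-x$. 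But if $P=p_1p_2$ is a single edge, you only get $x-p_1-p_2-y-h-x$, which is an induced $C_5$ and is allowed. Your ``main obstacle'' paragraph is exactly this situation, and the three steps you sketch do not resolve it. They try to control how vertices of $N_{B_i}(K)$ meet $x,y$, but vertices of $B_i$ genuinely can mix on $K$ (only the containment of Lemma~\ref{lem:induced P3in A_1(i)} holds), so no homogeneity is available on that side.

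The missing idea is homogeneity on the other side. Every vertex of $A_2(i-1)\cup A_2(i)\cup A_3(i-1)\cup A_3(i)\cup A_3(i+1)$ is complete or anticomplete to $K$ (Lemmas~\ref{lem:A2i1tocompA1} and~\ref{lem:A3tocompA1}):
\begin{itemize}
\item if $v\in A_2(i)$ mixes on an edge $ab$ of $K$ with $vb\in E(G)$, then $a-b-v-N_{B_{i+1}}(v)-q_{i+2}-q_{i-2}-q_{i-1}$ is an induced $P_7$;
\item for $v\in A_3(i)$, use $a-b-v-N_{B_{i-1}}(v)-q_{i-2}-q_{i+2}-x'$ with $x'\in B_{i+1}\setminus N(v)$, together with Lemma~\ref{lem:blowup-threeneighbor}(2);
\item for $A_3(i\pm1)$, Lemma~\ref{lem:blowup-threeneighbor}(3) gives a $P_7$ or a bad $P_7$.
\end{itemize}
Since $B_i$ and $A_5$ are cliques and $A_5$ is complete to $B_i$, any two non-adjacent vertices of $N(K)\setminus(A_1(i)\cup A_2(i+2))$ then have a common neighbour $z\in K$. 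By $C_4$-freeness they have no common neighbour in $H$, and only then do short closures through $H$ work, e.g.\ the bad $P_7$ $N_{B_{i-1}}(y)-y-z-x-q_{i+1}-q_{i+2}-q_{i-2}$ for $x\in B_i$. In particular your main-obstacle case disappears at once. One of $x,y$ lies outside $B_i$ and is complete to $K$, so $x,y$ share a neighbour in $K$ and one in $H$, which gives a $C_4$.

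The paper also first prunes the cases via Lemma~\ref{lem:basic properties of components of A_1(i)}: $K$ misses one of $A_2(i-1),A_2(i)$ and touches at most one of $A_3(i-1),A_3(i),A_3(i+1)$. This reduces the check to $B_i\cup A_2(i-1)\cup A_3(j)\cup A_5$; your $C_7$ for $A_2(i)$ versus $A_2(i-1)$ is one instance of this pruning.
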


    We start with some auxiliary lemmas.

\begin{lemma}\label{lem:A2i1tocompA1}
    Let $K$ be a connected subgraph of $G[A_1(i)]$. Each vertex in $A_2(i-1)\cup A_2(i)$ is either complete or anticomplete to $K$.
\end{lemma}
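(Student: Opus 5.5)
By symmetry (reflection $j\mapsto 2i-j$ swaps $A_2(i-1)$ and $A_2(i)$), it suffices to treat $v\in A_2(i)$, so $\supp(v)=\{i,i+1\}$. Suppose for a contradiction that $v$ mixes on $K$. Since $K$ is connected, I will pick an edge $ab$ of $K$ with $va\in E(G)$ and $vb\notin E(G)$; this gives the induced path $b-a-v$. The plan is to extend this path into $H$ along the side of the blowup that $K$ does not see, producing an induced $P_7$, $C_6$ or $C_7$.

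First I extend toward $B_{i+1}$. Let $c\in N_{B_{i+1}}(v)$. Since $a,b\in A_1(i)$ have no neighbors in $B_{i+1}$, the path $b-a-v-c$ is induced. Next I continue $c-q_{i+2}-q_{i-2}-q_{i-1}$. Here $q_{i+2},q_{i-2},q_{i-1}$ lie in $B_{i+2}\cup B_{i-2}\cup B_{i-1}$, which is anticomplete to $a$, $b$ and $v$ because $\supp(a)=\supp(b)=\{i\}$ and $\supp(v)=\{i,i+1\}$. Consecutive vertices in $c,q_{i+2},q_{i-2},q_{i-1}$ are adjacent: $q_{i+2}$ is complete to $B_{i+1}$, and the $q$'s are complete to their neighboring bags. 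Non-consecutive ones are nonadjacent, since $B_j$ is anticomplete to $B_{j+2}$, so $c$ is nonadjacent to $q_{i-2}$ and $q_{i-1}$, and $q_{i+2}q_{i-1}\notin E(G)$. Hence $b-a-v-c-q_{i+2}-q_{i-2}-q_{i-1}$ is an induced $P_7$, which is a contradiction.

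The only point needing care is that $c$ is nonadjacent to $a$ and $b$. This holds because $A_1(i)$ has support exactly $\{i\}$, so neither $a$ nor $b$ has a neighbor in $B_{i+1}$. Also, $q_{i-1}$ is not adjacent to $v$, which is what makes the path end cleanly; it holds because $v$ has no neighbors in $B_{i-1}$. I do not expect a real obstacle. The argument is the same as in Lemma~\ref{lem:homogeneoustoA0}(1), with $K$ now playing the role of the $A_0$ component, and it again needs only that $\supp(v)$ avoids three consecutive bags $B_{i+2},B_{i-2},B_{i-1}$, which $a$ and $b$ avoid as well.
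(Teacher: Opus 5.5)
Your proof is correct and is essentially the paper's argument. The paper also reduces to $v\in A_2(i)$ by symmetry, takes an edge of $K$ on which $v$ mixes, and exhibits the same induced $P_7$, namely $b-a-v-N_{B_{i+1}}(v)-q_{i+2}-q_{i-2}-q_{i-1}$.
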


\begin{proof}
    Let $v \in A_2(i-1) \cup A_2(i)$ be a vertex that has a neighbor in $K$. By symmetry, we may assume that $v\in N(K)\cap A_2(i)$. We show that $v$ is complete to $K$. Suppose not. Then there exists an edge $ab \in E(K)$ such that $vb \in E(G)$ and $va \notin E(G)$. It follows that $a-b-v-N_{B_{i+1}}(v)-q_{i+2}-q_{i-2}-q_{i-1}$ is an induced $P_7$, a contradiction.
\end{proof}

\begin{lemma}\label{lem:A3tocompA1}
    Let $K$ be a connected subgraph of $G[A_1(i)]$. Each vertex in $A_3(i-1)\cup A_3(i)\cup A_3(i+1)$ is either complete or anticomplete to $K$.
\end{lemma}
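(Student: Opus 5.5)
Suppose the statement fails for some $v\in A_3(j)$ with $j\in\{i-1,i,i+1\}$. Then, since $K$ is connected, there is an edge $ab\in E(K)$ with $vb\in E(G)$ and $va\notin E(G)$. The plan is to extend $a-b-v$ into an induced $P_7$ (or a bad $P_7$, which Lemma \ref{lem:badP7} rules out) by running it through $H$ away from $B_i$. The facts we rely on are that $a,b$ have neighbors only in $B_i$ among $V(H)$, and that by Lemma \ref{lem:blowup-threeneighbor}(2) the vertex $v$ has a non-neighbor in $B_{j-1}$ or in $B_{j+1}$.

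\textbf{Case $v\in A_3(i+1)$} (the case $A_3(i-1)$ is symmetric). Here $v$ has neighbors in $B_i,B_{i+1},B_{i+2}$ and none in $B_{i-2},B_{i-1}$.
\begin{itemize}
\item Let $d\in N_{B_{i+2}}(v)$. We want $a-b-v-d-q_{i-2}-q_{i-1}-c$ with $c\in B_i$ anticomplete to $\{a,b\}$. Finding such a $c$ is awkward, so instead we use $a-b-v-d-q_{i-2}-q_{i-1}$ together with a suitable seventh vertex.
\item Alternative route: take $d\in N_{B_{i+2}}(v)$ and then $q_{i-2},q_{i-1}$. Then $a-b-v-d-q_{i-2}-q_{i-1}$ is an induced $P_6$, because $a,b$ are anticomplete to $B_{i+2}\cup B_{i-2}\cup B_{i-1}$ and $v$ is anticomplete to $B_{i-2}\cup B_{i-1}$. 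Checking $dq_{i-1}\notin E$ holds since $B_{i+2}$ is anticomplete to $B_{i-1}$.
\item Now append a vertex $x\in N_{B_i}(a)$. If $xb\notin E$ and $xv\notin E$, then $a-b-v-d-q_{i-2}-q_{i-1}-x-a$ forms a $C_7$, a contradiction. If $xv\in E$ and $xb\notin E$, then $a-b-v-x-a$ is a $C_4$ (since $va\notin E$). So every neighbor of $a$ in $B_i$ is adjacent to $b$. By Lemma \ref{lem:nbd containment for an edge}, $N_{B_i}(a)\subseteq N_{B_i}(b)$.
\item Take $x\in N_{B_i}(a)$, so $x$ is adjacent to both $a$ and $b$. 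If $xv\in E$, we get a diamond-like configuration, which is fine, so we need another contradiction. Use $N_{B_i}(v)$: any $y\in N_{B_i}(v)$ with $yb\notin E$ gives the $C_4$ $b-v-y-x-b$ whenever $x\notin N(v)$. Therefore we will instead choose $d$ and route $v-d-q_{i-2}-q_{i-1}-x$ with $x\in N_{B_i}(a)\setminus N(v)$ if one exists, giving the induced cycle $a-x-q_{i-1}-q_{i-2}-d-v-b-a$ of length 7.
\item If no such $x$ exists, then $N_{B_i}(a)\subseteq N(v)$, and any $x\in N_{B_i}(a)$ yields the $C_4$ $a-x-v-b$? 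No: $xb\in E$, so that is not a $C_4$. In this subcase we instead use the non-neighbor $c$ of $v$ in $B_{i+2}$ (or in $B_i$), provided by Lemma \ref{lem:blowup-threeneighbor}, to build $a-b-v-d-c-\dots$ as a bad $P_7$, with $c\in B_{i+2}\setminus N(v)$ and $d\in N_{B_{i+2}}(v)$ chosen via Lemma \ref{lem:blowup-threeneighbor}(3).
\end{itemize}

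\textbf{Case $v\in A_3(i)$.} Here $v$ has neighbors in $B_{i-1},B_i,B_{i+1}$. Assume by symmetry that $c\in B_{i-1}\setminus N(v)$, and let $d\in N_{B_{i+1}}(v)$. Then $a-b-v-d-q_{i+2}-q_{i-2}-c$ has every nonconsecutive pair nonadjacent. The one check that matters is $q_{i-2}c\in E$, which holds because $q_{i-2}$ is complete to $B_{i-1}$. So this is an induced $P_7$, a contradiction, exactly as in Lemma \ref{lem:homogeneoustoA0}(2).

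\textbf{Main obstacle.} The hard part is the $A_3(i\pm1)$ case, where $a$ and $b$ may share neighbors in $B_i$ with $v$. There, we must combine Lemma \ref{lem:blowup-threeneighbor}(1),(3) with the $C_4$, $C_6$, $C_7$ and bad-$P_7$ exclusions to close every subcase.
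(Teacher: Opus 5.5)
Your $A_3(i)$ case is correct and is the paper's argument in mirror image. The $A_3(i\pm1)$ case, however, has a genuine gap. After your analysis of $N_{B_i}(a)$ and $N_{B_i}(b)$ you are left with the subcase $N_{B_i}(a)\subseteq N(v)$. There you only gesture at a bad $P_7$ of the form $a-b-v-d-c-\cdots$ with $d\in N_{B_{i+2}}(v)$ and $c\in B_{i+2}\setminus N(v)$. No continuation is specified, and the natural one fails: for example, $q_{i-2}$ is complete to $B_{i+2}$, so it sees both $d$ and $c$.

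There is also an error on the way. You have just shown $N_{B_i}(a)\subseteq N_{B_i}(b)$. So for $x\in N_{B_i}(a)\setminus N(v)$, the cycle $a-x-q_{i-1}-q_{i-2}-d-v-b-a$ has the chord $xb$ and is not induced. The conclusion survives only because $b-x-q_{i-1}-q_{i-2}-d-v-b$ is an induced $C_6$.

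The idea you missed shows up in your first bullet. The seventh vertex never needs to be anticomplete to $\{a,b\}$, because Lemma~\ref{lem:badP7} absorbs exactly those chords.
\begin{itemize}
\item Start from your induced $P_6$ $a-b-v-d-q_{i-2}-q_{i-1}$. Any $c\in B_i\setminus N(v)$ makes $v-d-q_{i-2}-q_{i-1}-c$ an induced $P_5$, so $a-b-v-d-q_{i-2}-q_{i-1}-c$ is a bad $P_7$. The neighborhoods of $a$ and $b$ in $B_i$ are irrelevant.
\item Otherwise $v$ is complete to $B_i$. Take $z\in B_{i+1}\setminus N(v)$ (Lemma~\ref{lem:blowup-threeneighbor}(1)) and a neighbor $w\in B_i$ of $z$. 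Then $q_{i+2}\notin N(v)$, since otherwise $v-w-z-q_{i+2}-v$ is an induced $C_4$.
\item Now $a-b-v-y-q_{i+2}-q_{i-2}-q_{i-1}$ with $y\in N_{B_{i+1}}(v)$ is an induced $P_7$.
\end{itemize}

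The paper reaches the same two paths through Lemma~\ref{lem:blowup-threeneighbor}(3), written for $A_3(i-1)$. In your orientation, $B_{i+1}\setminus N(v)$ is anticomplete either to $N_{B_{i+2}}(v)$ or to $N_{B_i}(v)$.
\begin{itemize}
\item In the first case $q_{i+2}\notin N(v)$, giving the induced $P_7$ through $q_{i+2}$ above.
\item In the second case $q_i\notin N(v)$, and $a-b-v-d-q_{i-2}-q_{i-1}-q_i$ is a bad $P_7$.
\end{itemize}
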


\begin{proof}
    Let $v \in A_3(i-1)\cup A_3(i)\cup A_3(i+1)$ be a vertex that has a neighbor in $K$. Suppose that there exists an edge $ab \in E(K)$ such that $vb \in E(G)$ and $va \notin E(G)$. We first assume that $v \in A_3(i)$. If $B_{i+1}\setminus N(v) \neq \emptyset$, then $a - b - v - N_{B_{i-1}}(v) - q_{i-2} - q_{i+2} - x$ is an induced $P_7$. So, $v$ is complete to $B_{i+1}$. By symmetry, $v$ is complete to $B_{i-1}$. This contradicts Lemma \ref{lem:blowup-threeneighbor} (2).

    We now assume that $v \in A_3(i-1) \cup A_3(i+1)$. By symmetry, we may assume that $v \in A_3(i-1)$. By Lemma \ref{lem:blowup-threeneighbor} (3), $B_{i-1}\setminus N(v)$ is anticomplete to either $N_{B_{i-2}}(v)$ or $N_{B_i}(v)$. If $B_{i-1}\setminus N(v)$ is anticomplete to $N_{B_{i-2}}(v)$, then $a - b - v - N_{B_{i-1}}(v) - q_{i-2} - q_{i+2} - q_{i+1}$ is an induced $P_7$. So, $B_{i-1}\setminus N(v)$ is anticomplete to $N_{B_i}(v)$. It follows that $a - b - v - N_{B_{i-2}}(v) - q_{i+2} - q_{i+1} - q_i$ is a bad $P_7$, a contradiction.
\end{proof}

    \begin{lemma}\label{lem:basic properties of components of A_1(i)}
    Let $K$ be a connected subgraph of $G[A_1(i)]$. The following are satisfied.
    \begin{enumerate}
        \item[$(1)$] If $A_2(i-1) \neq \emptyset$ and $A_2(i) \neq \emptyset$, then $K$ is anticomplete to $A_2(i-1) \cup A_2(i)$.
        \item[$(2)$] $K$ is anticomplete to either $A_2(i)$ or $A_3(i-1)$. Similarly, $K$ is anticomplete to either $A_2(i-1)$ or $A_3(i+1)$.
        \item[$(3)$] $K$ is anticomplete to either $A_3(i-1)$ or $A_3(i+1)$.
        \item[$(4)$] $K$ is anticomplete to either $A_3(i-1)$ or $A_3(i)$. Similarly, $K$ is anticomplete to either $A_3(i)$ or $A_3(i+1)$.
        \item[$(5)$] $K$ has neighbors in at most one of $A_3(i-1),A_3(i),A_3(i+1)$.
    \end{enumerate}
    \end{lemma}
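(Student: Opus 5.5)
Every item has the same shape: assume $K$ has neighbors in two of the named sets and derive a contradiction. By Lemmas \ref{lem:A2i1tocompA1} and \ref{lem:A3tocompA1}, each vertex of $A_2(i-1)\cup A_2(i)\cup A_3(i-1)\cup A_3(i)\cup A_3(i+1)$ is complete or anticomplete to $K$. So if $u,v$ are two such vertices with neighbors in $K$, they have a common neighbor $z\in K$. Then $z-u$, $z-v$, and the neighborhoods of $u,v$ in $H$ give short cycles or long paths. Throughout, I will use that $z\in A_1(i)$ has a neighbor $a\in B_i$, and the fixed vertices $q_j$.

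\emph{(1).} Take $u\in A_2(i-1)$ and $v\in A_2(i)$, and suppose $K$ has a neighbor in $A_2(i-1)\cup A_2(i)$. By the symmetry $i-1\leftrightarrow i+1$, let $v\in A_2(i)$ be complete to $K$. By Lemma \ref{A2A2}, $u$ and $v$ are non-adjacent. I would pick $a'\in N_{B_{i+1}}(v)$ and $b\in N_{B_{i-1}}(u)$. If $u$ is anticomplete to $K$, then $b-u-\cdots$ combined with $z-v-a'-q_{i+2}-q_{i-2}$ should give an induced $P_7$ or a $C_6/C_7$ through $B_{i-1}$. The natural candidate is $u-b-q_{i-2}-q_{i+2}-a'-v-z$, after checking which chords through $B_i$ appear. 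If $u$ is also complete to $K$, then $u-z-v-a'-q_{i+2}-q_{i-2}-b-u$ is an induced $C_7$; here $u$ and $v$ are anticomplete to the relevant $B_j$ by their supports. Hence the hypothesis $A_2(i-1)\neq\emptyset$ is used only to supply $u$.

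\emph{(2)--(5).} For (2), suppose $v\in A_2(i)$ and $w\in A_3(i-1)$ both have neighbors in $K$, with common neighbor $z$. By Lemma \ref{anti-A2A3}, $vw\notin E(G)$. By $C_4$-freeness, $v$ and $w$ have no common neighbor in $B_i$. With $a'\in N_{B_{i+1}}(v)$ and $c\in N_{B_{i-2}}(w)$, the cycle $z-v-a'-q_{i+2}-c-w-z$ is an induced $C_6$. For (3), take $w\in A_3(i-1)$ and $w'\in A_3(i+1)$; they are non-adjacent by Lemma \ref{lem:anticompleteA3-1}. Then $z-w-N_{B_{i-2}}(w)-\cdots-N_{B_{i+2}}(w')-w'-z$ closes a $C_6$ or $C_7$, depending on whether those neighbors share adjacency. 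For (4), take $w\in A_3(i-1)$ and $x\in A_3(i)$; they are non-adjacent by Lemma \ref{lem:anticompleteA3}. Since $z$ is common to both, they have no common neighbor in $B_{i-1}\cup B_i$. Using a non-neighbor of $x$ in $B_{i+1}$ (Lemma \ref{lem:blowup-threeneighbor}(2) after symmetrizing), I would build $w-z-x-N_{B_{i+1}}(x)-q_{i+2}-\cdots$, a $C_6$/$C_7$ through $N_{B_{i-2}}(w)$, or else a bad $P_7$ (Lemma \ref{lem:badP7}). Finally, (5) follows at once from (3) and (4).

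The main obstacle is (4), and (1) when $u$ is anticomplete to $K$. Neither vertex is pinned to a single vertex of each $B_j$, so one must choose the $H$-neighbors to avoid chords. I would resolve this with Lemma \ref{lem:blowup-threeneighbor}(3) and the ordering in Lemma \ref{lem:blowup-basicproperty}(1), splitting into the two cases used in the proof of Lemma \ref{lem:anticompleteA3}. In each case the required path or cycle should be exhibited explicitly.
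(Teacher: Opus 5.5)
\textbf{Items (1), (2), (4) and (5).} These are fine or easily fixed, and match the paper's approach. The paper uses a shortest path $P$ in $K$ where you use a common neighbour $z$; given Lemmas~\ref{lem:A2i1tocompA1} and~\ref{lem:A3tocompA1} the two are the same.

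In (1), your candidate $u-b-q_{i-2}-q_{i+2}-a'-v-z$ has no vertex of $B_i$. It is an induced $P_7$ outright from the supports, so no chord-checking remains.

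Item (4) is not an obstacle. The cycle $w-z-x-N_{B_{i+1}}(x)-q_{i+2}-N_{B_{i-2}}(w)-w$ is already an induced $C_6$. This holds because $q_{i+2}$ is complete to $B_{i+1}\cup B_{i-2}$ and neither $w\in A_3(i-1)$ nor $x\in A_3(i)$ sees $B_{i+2}$. So you need no non-neighbour of $x$, no case split, and no bad $P_7$.

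Your side remarks that $v,w$ (resp.\ $w,x$) have no common neighbour in $B_i$ do not follow from $C_4$-freeness, since $z$ may see that neighbour. You never use them, so drop them.

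\textbf{The gap is in (3).} Take $c\in N_{B_{i-2}}(w)$ and $d\in N_{B_{i+2}}(w')$. If $cd\in E(G)$, then $z-w-c-d-w'-z$ is an induced $C_5$, which is not forbidden. You get a $C_6$ only when $N_{B_{i-2}}(w)$ and $N_{B_{i+2}}(w')$ are anticomplete. When they are complete, every cycle $z-w-(\text{path in } B_{i-2}\cup B_{i+2})-w'-z$ is a $C_5$. So no cycle routed through $B_{i\pm2}$ can give a contradiction.

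The missing idea is to go outward through $B_{i-1}$ and $B_{i+1}$ instead. By Lemma~\ref{lem:blowup-threeneighbor}(1), pick $x\in B_{i-1}\setminus N(w)$ and $x'\in B_{i+1}\setminus N(w')$. Also pick $y\in N_{B_{i-1}}(w)$ and $y'\in N_{B_{i+1}}(w')$. Then $x-y-w-z-w'-y'-x'$ is an induced $P_7$, for these reasons:
\begin{itemize}
\item $ww'\notin E(G)$ by Lemma~\ref{lem:anticompleteA3-1}.
\item $B_{i-1}$ is anticomplete to $B_{i+1}$.
\item $w$ misses $B_{i+1}$ and $w'$ misses $B_{i-1}$.
\item $z$ sees only $B_i$ within $H$.
\end{itemize}
This is the paper's argument. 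With (3) repaired this way, (5) follows from (3) and (4) as you say.
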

    
    \begin{proof}
    (1) Let $u \in A_2(i-1)$ and $v \in A_2(i)$ be vertices that have a neighbor in $K$. By Lemma~\ref{A2A2}, $uv\notin E(G)$. If $u$ and $v$ have a common neighbor $w$ in $K$, then $w - v - N_{B_{i+1}}(v) - q_{i+2} - q_{i-2} - N_{B_{i-1}}(u) - u - w$ is an induced $C_7$. So, $u$ and $v$ have disjoint neighborhood in $K$. Then, $(N(u)\cap K) - u - N_{B_{i-1}}(u) - q_{i-2} - q_{i+2} - N_{B_{i+1}}(v) - v$ and $(N(v)\cap K) - v - N_{B_{i+1}}(v) - q_{i+2} - q_{i-2} - N_{B_{i-1}}(u) - u$ are an induced $P_7$, a contradiction.

    \medskip
    \noindent (2) Let $u \in A_3(i-1)$ and $v \in A_2(i)$ be vertices that have a neighbor in $K$. By Lemma~\ref{anti-A2A3}, $uv\notin E(G)$. Let $P$ be a shortest path from $N(u)\cap V(K)$ to $N(v)\cap V(K)$ in $K$. Then $u - P - v - N_{B_{i+1}}(v) - q_{i+2} - N_{B_{i-2}}(u) - u$ is an induced cycle of length at least $6$, a contradiction.

    \medskip
    \noindent (3) Let $u \in A_3(i-1)$ and $v \in A_3(i+1)$ be vertices that have a neighbor in $K$. By Lemma~\ref{lem:anticompleteA3-1}, $uv\notin E(G)$. Let $P$ be a shortest path from $N(u)\cap V(K)$ to $N(v)\cap V(K)$ in $K$. By Lemma~\ref{lem:blowup-threeneighbor}, we obtain $B_{i-1} \setminus N(u) \neq \emptyset$ and $B_{i+1} \setminus N(v) \neq \emptyset$. Thus, it follows that $(B_{i-1} \setminus N(u)) - N_{B_{i-1}}(u) - u - P - v - N_{B_{i-1}}(v) - (B_{i+1} \setminus N(v))$ is an induced path of order at least $7$, a contradiction.

    \medskip 
    \noindent (4) Let $u \in A_3(i-1)$ and $v \in A_3(i)$ be vertices that have a neighbor in $K$. By Lemma~\ref{lem:anticompleteA3}, $uv\notin E(G)$. Let $P$ be a shortest path from $N(u)\cap V(K)$ to $N(v)\cap V(K)$ in $K$. Then $u - P - v - N_{B_{i+1}}(v) - q_{i+2} - N_{B_{i-2}}(u) - u$ is an induced cycle of length at least $6$, a contradiction.

    \medskip
    \noindent (5) This follows from (3) and (4). 
    \end{proof}

We are now ready to prove Proposition~\ref{prop:component of A_1(i)}.
    \begin{proof}[Proof of Proposition~\ref{prop:component of A_1(i)}]
        We may assume by Lemma \ref{lem:basic properties of components of A_1(i)} that $K$ has no neighbor in $A_2(i)$ and has neighbors in at most one of $A_3(i-1),A_3(i),A_3(i+1)$.
    So $N(K)\setminus (A_1(i)\cup A_2(i+2))\subseteq B_i\cup A_2(i-1)\cup A_3(j)\cup A_5$ for some $j\in \{i-1,i,i+1\}$.

    \medskip
    \noindent (1) Each vertex in $N(K)\cap A_5$ is universal in $N(K)\setminus (A_1(i)\cup A_2(i+2))$.
    
   \medskip
   Let $x \in N(K) \cap A_5$ and $ y\in N(K) \cap A_2(i-1)$ be non-adjacent. By Lemma \ref{lem:A2i1tocompA1}, $x$ and $y$ have a common neighbor $z$ in $K$. By Lemma \ref{lem:blowup-fiveneighbor1}, $x$ and $y$ have a common neighbor $w \in B_{i-1}$ and it follows that $z - y - w - x - z$ is an induced $C_4$, a contradiction. So $N(K)\cap A_5$ is complete to $N(K)\cap A_2(i-1)$. 
   The statement follows from Lemma \ref{lem:blowup-fiveneighbor1}.  This proves (1).

    \medskip
    \noindent (2) Each vertex in $N(K)\cap B_i$ is universal in $N(K)\setminus (A_1(i)\cup A_2(i+2))$.
    
   \medskip 
    Let $x \in N_{B_i}(K)$ and $y \in N(K)\setminus (A_1(i)\cup A_2(i+2))$ be non-adjacent.
    By symmetry, we may assume that $y\in A_2(i-1)\cup A_3(i-1)\cup A_3(i)$.
    By Lemma \ref{lem:A2i1tocompA1}, $x$ and $y$ have a common neighbor $z$ in $K$. 
    Since $G$ is $C_4$-free, $N_{B_{j}}(y)$ is anticomplete to $x$ for $j\in \{i-1,i+1\}$. 
    Then, $N_{B_{i-1}}(y) - y - z - x - q_{i+1} - q_{i+2} - q_{i-2}$ is a bad $P_7$. This proves (2).

    \medskip
    \noindent (3) Each vertex in $N(K)\cap A_2(i-1)$ is universal in $N(K)\setminus (A_1(i)\cup A_2(i+2))$.
    
   \medskip 
    Suppose that $x\in N(K) \cap A_2(i-1)$ and $y\in N(K) \setminus (A_1(i)\cup A_2(i+2))$ are non-adjacent. By Lemma \ref{lem:A2i1tocompA1}, $x$ and $y$ have a common neighbor $z$ in $K$. Since $G$ is $C_4$-free, $N_{B_{j}}(x)$ and $N_{B_{j}}(y)$ are disjoint for $j\in \{i-1,i+1\}$. If $y\in A_2(i-1)\cup A_3(i)\cup A_3(i+1)$, then $y - z - x - N_{B_{i-1}}(x) - q_{i-2} - q_{i+2} - q_{i+1}$ is a bad $P_7$. So $y\in A_3(i-1)$.
    If $B_i\setminus N(y) \neq \emptyset$, then $x - z - y - N_{B_{i-2}}(y) - q_{i+2} - q_{i+1} - B_i\setminus N(y)$ is a bad $P_7$. So, $y$ is complete to $B_i$ and this implies that $B_{i-1}\setminus N(y)$ is anticomplete to $N_{B_{i-2}}(y)$ by Lemma \ref{lem:blowup-threeneighbor} (3). Then $x - z - y - N_{B_{i-1}}(y) - q_{i-2} - q_{i+2} - q_{i+1}$ is an induced $P_7$, a contradiction.
    This proves (3).

    \medskip
    \noindent (4) Each vertex in $N(K)\cap A_3(j)$ is universal in $N(K)\setminus (A_1(i)\cup A_2(i+2))$ for each $j\in \{i-1,i,i+1\}$.
    
    \medskip
    Suppose that $x,y\in N(K)\cap A_3(j)$ are non-adjacent. By Lemma \ref{lem:A3tocompA1}, $x$ and $y$ have a common neighbor in $K$ and so $x$ and $y$ have disjoint neighborhoods in $B_j$ for $j\in \{i-1,i+1\}$ by $C_4$-freeness of $G$. 
    By Lemma \ref{lem:comparable vertices in A_3(i)}, $j=i$.
    This contradicts Lemma \ref{lem:nonedge in A_3(i)}.
    This proves (4).

    \medskip    
    The proposition follows from (1)-(4).
    \end{proof}

We conclude this section with two more lemmas.

\begin{lemma}\label{lem:induced P3in A_1(i)}
Let $a-b-c$ be an induced $P_3$ in $A_1(i)$. Then $N_{B_i}(a)\subseteq N_{B_i}(b)$ and $N_{B_i}(c)\subseteq N_{B_i}(b)$.
\end{lemma}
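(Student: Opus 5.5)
By symmetry it suffices to prove $N_{B_i}(a)\subseteq N_{B_i}(b)$. Suppose instead that some $x\in B_i$ is adjacent to $a$ but not to $b$. The plan is to turn $x$ into one end of a long induced path running through $H$, and to put $c$ at the other end; this produces an induced $P_7$ or a bad $P_7$, contradicting Lemma~\ref{lem:badP7}.

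First, note that $x$ is not adjacent to $c$. If $xc\in E(G)$, then $a-b-c-x-a$ is an induced $C_4$, since $ac\notin E(G)$ and $bx\notin E(G)$. So $x$ sees $a$ and misses both $b$ and $c$.

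Next, recall that $a,b,c\in A_1(i)$, so all three are anticomplete to $B_{i\pm1}\cup B_{i\pm2}$. Consider the walk
\[
c-b-a-x-q_{i+1}-q_{i+2}-q_{i-2}.
\]
\begin{itemize}
\item Inside $\{a,b,c\}$ the only edges are $ab$ and $bc$.
\item $x$ is adjacent to $a$ only, and $x$ misses $q_{i+2},q_{i-2}$ because $B_i$ is anticomplete to $B_{i\pm2}$.
\item $q_{i+1}$ is adjacent to $x$ (as $q_{i+1}$ is complete to $B_i$) and to $q_{i+2}$, and misses $q_{i-2}$ because $B_{i+1}$ is anticomplete to $B_{i-2}$.
\item $q_{i+2}q_{i-2}$ is an edge.
\item $a,b,c$ have no neighbours among $q_{i+1},q_{i+2},q_{i-2}$.
\end{itemize}
So this is an induced $P_7$, a contradiction.

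Hence every neighbour of $a$ in $B_i$ is adjacent to $b$, and symmetrically for $c$. The only point needing care is that $x$ misses $c$, and that follows immediately from $C_4$-freeness.
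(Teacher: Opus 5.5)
Your proof is correct and is essentially the paper's argument: a vertex of $B_i$ adjacent to $a$ but not to $b$ misses $c$ by $C_4$-freeness, and extending through three of the $q_j$'s gives an induced $P_7$. The only difference is orientation: the paper uses $c-b-a-p-q_{i-1}-q_{i-2}-q_{i+2}$, going around through $B_{i-1}$ instead of $B_{i+1}$, and it too gets a plain induced $P_7$, so the appeal to bad $P_7$'s in your opening plan is unnecessary.
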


\begin{proof}
    Suppose the lemma does not hold. Then there exists a vertex $p\in B_i$ that is adjacent to $a$ but not to $b$. Since $G$ is $C_4$-free, $pc \notin E(G)$. Then $c-b-a-p-q_{i-1}-q_{i-2}-q_{i+2}$ is an induced $P_7$, a contradiction. 
\end{proof}

\begin{lemma}\label{lem:nbr of component of A_1(i-1) in A_2(i-2)}
    Let $K$ be a component of $A_1(i-1)$. Then $N(K)\cap A_2(i-2)$ is complete to $B_{i-2}$. 
\end{lemma}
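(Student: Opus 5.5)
Throughout, $K$ is a component of $A_1(i-1)$ and $v\in N(K)\cap A_2(i-2)$, so $\supp(v)=\{i-2,i-1\}$ and $v$ is anticomplete to $B_i\cup B_{i+1}\cup B_{i+2}$. Lemma~\ref{lem:A2i1tocompA1}, applied with index $i-1$, says that $A_2(i-2)$ is complete or anticomplete to $K$, so $v$ is complete to $K$. Fix $u\in V(K)$; then $uv\in E(G)$. Let $b\in N_{B_{i-1}}(u)$, and recall that $u$ is anticomplete to $B_{i-2}\cup B_i\cup B_{i+1}\cup B_{i+2}$. I will argue by contradiction: suppose some $a\in B_{i-2}$ is not adjacent to $v$, and let $c\in N_{B_{i-2}}(v)$.

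The first step is to locate $v$'s neighbours in $B_{i-1}$ relative to $u$. If $bv\notin E(G)$ for some $b\in N_{B_{i-1}}(u)$, then $C_4$-freeness applied to the edge $uv$ and the clique $B_{i-1}$ (Lemma~\ref{lem:nbd containment for an edge}) gives $N_{B_{i-1}}(u)\subsetneq N_{B_{i-1}}(v)$ or the reverse. I will therefore split into two cases according to this comparison. In each case I would pick a vertex $x\in N_{B_{i-1}}(v)$, chosen as $x=b$ when possible. The point of this choice is that $u-v$ (or $u-b-v$) then attaches to the blowup in a single, controlled way.

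The second step is to use $a$ to produce a long induced path or hole. The candidate structure is
\[ u - v - c - q_{i+2} - q_{i+1} - q_i , \]
which is an induced $P_6$: $v$ misses $B_i\cup B_{i+1}\cup B_{i+2}$, $u$ misses everything except $B_{i-1}$, and $c,q_{i+2},q_{i+1},q_i$ form an induced path along the blowup. I want to close this off using $a$ or a vertex of $B_{i-1}$. The vertex $a$ has a neighbour $y\in B_{i+2}$ and is adjacent to $q_{i-1}$. Using the nested ordering of Lemma~\ref{lem:blowup-basicproperty}(1) on $B_{i-2}$, I expect that choosing $a$ to be a non-neighbour of $v$ with minimal neighbourhood, and $c$ to be a neighbour of $v$, yields one of two outcomes.
\begin{itemize}
\item[$\bullet$] Either $a$ has a neighbour in $B_{i-1}$ that is nonadjacent to $v$ and to $u$, which extends the $P_6$ to a bad $P_7$ or to an induced $C_6$/$C_7$ through $a$ and $q_{i-1}$. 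This is then excluded by Lemma~\ref{lem:badP7} or Lemma~\ref{lem:cyclicsequence}.
\item[$\bullet$] Or every neighbour of $a$ in $B_{i-1}$ is adjacent to $v$. In that case $C_4$-freeness on $a-x-v-c$ forces $cx\in E(G)$, and the $P_4$-structure condition on $a,c,x$ together with a suitable vertex of $B_{i+2}$ or $B_i$ gives the contradiction.
\end{itemize}

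The main obstacle is this last bookkeeping: guaranteeing that the vertex used to extend the path is nonadjacent to both $u$ and $v$. The vertex $u$ sees only $B_{i-1}$, but it may see a large part of it, so the extension is easiest through $B_{i+2}$ and $B_i$ rather than $B_{i-1}$. If a direct $P_7$ cannot be found, I would fall back on an induced cyclic sequence
\[ u - v - c - N_{B_{i+2}}(a) - q_{i+1} - q_i - b - u , \]
which is valid when $c$ misses $N_{B_{i+2}}(a)$, and otherwise use a $C_6$ through $a$ and $q_{i-1}$.
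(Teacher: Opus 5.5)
Your proposal has a genuine gap. The argument is purely local: it uses only $u$, $v$, $a$, $c$ and $H$. At that level of generality the statement is false, so no case analysis along your lines can close it.

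Here is a counterexample. Let $B_{i-2}=\{a,c\}$ and $B_j=\{q_j\}$ for $j\neq i-2$. Add a vertex $u$ adjacent only to $q_{i-1}$, and a vertex $v$ adjacent exactly to $u$, $c$ and $q_{i-1}$. Then:
\begin{itemize}
\item The two neighbours of $u$ are adjacent, and the two neighbours of $v$ other than $u$ are adjacent. So neither vertex lies on a hole, the only holes are $5$-holes, and $H$ is a maximal nice blowup.
\item Every $7$-subset of the eight vertices contains one of the triangles $\{a,c,q_{i-1}\}$, $\{a,c,q_{i+2}\}$, $\{u,v,q_{i-1}\}$. Hence there is no induced $P_7$, and your $P_6$ $u-v-c-q_{i+2}-q_{i+1}-q_i$ cannot be extended.
\item Yet $u\in A_1(i-1)$, $v\in A_2(i-2)$, $uv\in E(G)$ and $va\notin E(G)$.
\end{itemize}
Both of your proposed outcomes fail here. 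The only neighbour of $a$ in $B_{i-1}$ is $q_{i-1}$, which sees $u$, $v$ and $c$. So neither the extension nor your cyclic sequence through $b$ exists, and $\{q_{i+2},a,c,q_{i-1}\}$ induces no $P_4$. What excludes this configuration in the paper is that $G$ is basic: here $N(\{u\})=\{q_{i-1},v\}$ is a clique cutset.

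The missing ingredient is this global hypothesis. By Proposition~\ref{prop:component of A_1(i)} applied to $K\subseteq A_1(i-1)$, the set $N(K)\setminus(A_1(i-1)\cup A_2(i+1))$ is a clique. Also $N(K)\cap A_1(i-1)=\emptyset$ because $K$ is a component. So if $K$ had no neighbour in $A_2(i+1)$, then $N(K)$ would be a clique cutset. Hence you should choose $u\in K$ with a neighbour $v'\in A_2(i+1)$, rather than an arbitrary $u$. Then:
\begin{itemize}
\item $v'$ is complete to $B_{i+1}\cup B_{i+2}$ by Lemma~\ref{lem:opposite A_1(i) and A_2(j)};
\item $vv'\notin E(G)$ by Lemma~\ref{lem:anticompleteA3-1};
\item $uv\in E(G)$, as you showed via Lemma~\ref{lem:A2i1tocompA1}.
\end{itemize}
Therefore $a-c-v-u-v'-q_{i+1}-q_i$ is an induced $P_7$, which is the paper's proof. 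The vertex $v'$ gives a route from $u$ to $q_{i+1}$ that avoids $B_{i+2}$. This frees the $c$-side of the path, so the non-neighbour $a$ can be attached there.
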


\begin{proof}
    By Proposition \ref{prop:component of A_1(i)}, $K$ has a neighbor $v'\in A_2(i+1)$. Let $v\in K$ be a neighbor of $v'$. Let $w \in N(K) \cap A_2(i-2)$. By Lemma \ref{lem:A2i1tocompA1}, we have $vw \in E(G)$. If $w$ has non-neighbor $x$ in $B_{i-2}$, then $x - N_{B_{i-2}}(w) - w - v - v' - q_{i+1} - q_i$ is an induced $P_7$, a contradiction.   
\end{proof}

\section{Neighborhoods of $A_2(i)$}

In this section, we present three more lemmas on $A_2(i)$.

\begin{lemma}\label{lem:neighbors of A_2(i) in A_3(i+1) and A_3(i)}
    Let $K$ be a connected subgraph of $A_2(i)$. Then $K$ is anticomplete to either $A_3(i)$ or $A_3(i+1)$.
\end{lemma}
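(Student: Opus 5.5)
We argue by contradiction, following the pattern of Lemma~\ref{lem:basic properties of components of A_1(i)}~(2)--(4). Suppose $u\in A_3(i)$ and $v\in A_3(i+1)$ both have neighbors in $K$. By Lemma~\ref{lem:anticompleteA3}, $uv\notin E(G)$. Let $P$ be a shortest path in $K$ from $N(u)\cap V(K)$ to $N(v)\cap V(K)$; it may be a single vertex. Then $u-P-v$ is an induced path of length at least $2$, since $P$ has no internal vertices adjacent to $u$ or $v$ by minimality. The aim is to close this path into a long induced cycle, or extend it into a long induced path, through $H$. The vertices of $P$ see only $B_i\cup B_{i+1}$, while $u$ reaches $B_{i-1}$ and $v$ reaches $B_{i+2}$.

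\textbf{Main step.} Take $a\in N_{B_{i-1}}(u)$ and $b\in N_{B_{i+2}}(v)$. Since $B_{i-1}$ is anticomplete to $B_{i+1}$ and to $v$ (as $\supp(v)=\{i,i+1,i+2\}$), and symmetrically for $b$, the sequence
\[
u-P-v-b-q_{i-2}-a-u
\]
is a cyclic sequence. We need to check that it is induced. Here $P\subseteq A_2(i)$ is anticomplete to $B_{i-1}\cup B_{i+2}\cup B_{i-2}$, $u$ is anticomplete to $B_{i+2}\cup B_{i-2}$, and $v$ is anticomplete to $B_{i-1}\cup B_{i-2}$. We also have $q_{i-2}$ adjacent to both $a$ and $b$, while $a$ and $b$ are non-adjacent because $B_{i-1}$ is anticomplete to $B_{i+2}$. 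So this is an induced cyclic sequence on at least $6$ sets, and Lemma~\ref{lem:cyclicsequence} gives an induced cycle of length at least $6$.

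\textbf{Obstacle.} An induced cycle of length at least $6$ contradicts $(C_6,C_7)$-freeness only if its length is $6$ or $7$. It has length exactly $5+|P|$, so it is $C_6$ or $C_7$ when $|P|\le 2$. If $|P|\ge 3$, we instead drop $q_{i-2}$ and use the induced path $a-u-P-v-b$ on at least $7$ vertices, which contains an induced $P_7$. In every case we reach a contradiction.

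The only remaining care is confirming that interior vertices of $P$ are non-adjacent to $u$ and $v$, which holds by the choice of $P$ as a shortest path. No case analysis on $B_i$ or $B_{i+1}$ is needed, since the cycle avoids those sets entirely.
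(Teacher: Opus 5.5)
Your proof is correct and is essentially the paper's argument. The paper gets $uv\notin E(G)$ from Lemma~\ref{lem:anticompleteA3} and then cites Lemma~\ref{lem:blowup-twoneighbor}, whose proof is exactly the induced cyclic sequence $u-K-v-N_{B_{i+2}}(v)-q_{i-2}-N_{B_{i-1}}(u)-u$ that you inline. The only difference is that you use a shortest path $P$ in place of $K$ and handle the case $|P|\ge 3$ with an explicit induced $P_7$.
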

\begin{proof}
    Let $u \in A_3(i)$ and $v \in A_3(i+1)$ be vertices that have a neighbor in $K$. By Lemma \ref{lem:anticompleteA3}, we have $uv\notin E$. This contradicts Lemma \ref{lem:blowup-twoneighbor}.
\end{proof}

\begin{lemma}\label{lem:neighbors of components of A_2(i) in A_3(i+1)}
    Let $K$ be a connected subgraph of $A_2(i)$. If two vertices $u, v \in N(K)\cap A'_3(i+1)$ are non-adjacent, then $u$ and $v$ have a common neighbor in $K$.
\end{lemma}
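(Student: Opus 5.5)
Suppose $u,v\in N(K)\cap A'_3(i+1)$ are non-adjacent and have no common neighbor in $K$. We look for a long induced path or an induced cycle of length at least $6$, exploiting that $K\subseteq A_2(i)$ sees only $B_i\cup B_{i+1}$ inside $H$.

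\textbf{Step 1: choose a path through $K$.} Take a shortest path $P$ in $K$ from $N(u)\cap V(K)$ to $N(v)\cap V(K)$. Since $u,v$ have no common neighbor in $K$, $P$ has at least two vertices, and $u-P-v$ is an induced path on at least four vertices. Its ends lie in $A'_3(i+1)$ and its interior lies in $A_2(i)$.

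\textbf{Step 2: use Lemma \ref{lem:nonedge in A_3(i)}.} Since $uv\notin E(G)$ and $u,v\in A'_3(i+1)$, the vertices $u$ and $v$ have comparable neighborhoods in one of $B_i,B_{i+2}$ and disjoint neighborhoods in the other.

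\emph{Case A: disjoint neighborhoods in $B_{i+2}$.} Pick $a\in N_{B_{i+2}}(u)$ and $b\in N_{B_{i+2}}(v)$; these are adjacent since $B_{i+2}$ is a clique. The vertices of $P$ have no neighbors in $B_{i+2}$, and $a,b$ are distinct with $ub,va\notin E(G)$. So $u-P-v-b-a-u$ is an induced cycle of length at least $6$, a contradiction.

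\emph{Case B: disjoint neighborhoods in $B_i$ (comparable in $B_{i+2}$).} Pick $a\in N_{B_i}(u)$ and $b\in N_{B_i}(v)$; these are adjacent and distinct. Now $P$ may have neighbors in $B_i$, so the cycle $u-P-v-b-a-u$ may have chords from $P$ to $\{a,b\}$. To handle this, let $x$ be the last vertex of $u-P-v$ (traversed from $u$) adjacent to $a$, and let $y$ be the first vertex after $x$ adjacent to $b$. The cycle $x-\cdots-y-b-a-x$ is induced unless some vertex is adjacent to both $a$ and $b$. If it has length at least $6$ we are done, since $C_4$ is also excluded.

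\textbf{Step 3: finish Case B when that cycle is short.} This is the main obstacle: the cycle can be as short as a triangle when a single vertex $z$ of $P$ is adjacent to both $a$ and $b$. In that situation, consider $u-z'-\cdots$ together with the triangle-free structure around it. Since $u$ and $v$ have disjoint neighborhoods in $B_i$, $z\ne u,v$, so $z\in K$. Then apply Lemma \ref{lem:nbd containment for an edge} along consecutive edges of $u-P-v$ with respect to the clique $B_i$: the $B_i$-neighborhoods of adjacent vertices are nested. Running along the path from $u$, where $a$ is present and $b$ absent, to $v$, where $b$ is present and $a$ absent, the nested sets must grow to contain $\{a,b\}$ and then shrink. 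This forces a vertex $z_1$ with $a\in N(z_1)$ and $b\notin N(z_1)$ adjacent to a vertex $z$ containing both, and symmetrically on the other side.

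From these, build an induced path leaving through $B_{i+1}$, $B_{i+2}$ and $B_{i-2}$, using the $q_j$, to obtain an induced $P_7$ or a bad $P_7$ (Lemma \ref{lem:badP7}). For example, take $u-P-v$, with $u\in A_3(i+1)$ having a neighbor in $B_{i+2}$ and $v$ having a non-neighbor there, extended by $q_{i+2}$ or $q_{i-2}$. Alternatively, invoke Lemma \ref{lem:blowup-twoneighbor} on the subpath of $P$ together with $N_{A'_3(i)}$, using $a,b\in B_i\subseteq A'_3(i)$: every vertex of $N_{A'_3(i)}(K)$ is complete to $u$ and $v$. In particular $a$ is complete to $v$, contradicting that the neighborhoods of $u$ and $v$ in $B_i$ are disjoint.

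I expect this last observation to actually resolve Case B outright: $b\in N_{B_i}(K)\subseteq N_{A'_3(i)}(K)$ must be adjacent to $u\in N_{A'_3(i+1)}(K)$ by Lemma \ref{lem:blowup-twoneighbor}, so $b$ is a common neighbor of $u$ and $v$ in $B_i$. Hence Case B is impossible, and only Case A needs the cycle argument.
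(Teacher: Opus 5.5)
Once you discard the exploratory part of Case~B, your argument is essentially the paper's. Lemma~\ref{lem:blowup-twoneighbor} forces $u$ and $v$ to have a common neighbor in $B_i$. By $C_4$-freeness their neighborhoods in $B_{i+2}$ are then disjoint, and a long induced structure through $B_{i+2}$ gives the contradiction. The paper finishes with the induced path $u-P-v-N_{B_{i+2}}(v)-q_{i-2}-q_{i-1}$ on at least seven vertices, which needs only $P_7$-freeness. Your Case~A instead closes the induced cycle $u-P-v-b-a-u$ of length at least $6$. That is equally fine, since $G$ is $(P_7,C_4,C_6,C_7)$-free at this point.

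There is one slip in how you rule out Case~B. You assert $b\in N_{B_i}(K)$, and earlier that $a$ is adjacent to $v$. But $a$ and $b$ were chosen only as neighbors of $u$ and $v$ in $B_i$, and nothing guarantees they have a neighbor in $K$. (They do only in the sub-case where some vertex of $P$ sees both.) The fix is to take an arbitrary vertex $c\in N_{B_i}(K)$. Such a vertex exists because $K\neq\emptyset$ and every vertex of $A_2(i)$ has a neighbor in $B_i$. Lemma~\ref{lem:blowup-twoneighbor} makes $c$ adjacent to both $u$ and $v$, so Case~B cannot occur.

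With this fix, the rest of Case~B and Step~3 should be deleted. The attempted cycle through $a,b$, the nested-neighborhood discussion and the sketched $P_7$ constructions are unnecessary, and as written they do not amount to a proof.
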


\begin{proof}
    Suppose for a contradiction that $u$ and $v$ do not have a common neighbor in $K$. Let $P$ be a shortest path from $u$ and $v$ with internal in $K$.   Note that $P$ has at least $4$ vertices. 
    By Lemma \ref{lem:blowup-twoneighbor}, $u$ and $v$ have a common neighbor in $B_i$ and so have disjoint neighbors in $B_{i+2}$. 
    Then $u - P - v - N_{B_{i+2}}(v) - q_{i-2} - q_{i-1}$ is an induced path of with at least $7$ vertices, a contradiction.
\end{proof}

\begin{lemma}\label{lem:induced P3in A_2(i)}
Let $a-b-c$ be an induced $P_3$ in $A_2(i)$. Then $N_{A'_3(j)}(a)\subseteq N_{A'_3(j)}(b)$ and $N_{A'_3(j)}(c)\subseteq N_{A'_3(j)}(b)$ for $j\in \{i,i+1\}$.
\end{lemma}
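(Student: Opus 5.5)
The plan is to argue by contradiction, producing an induced $P_7$ directly from the induced $P_3$ $a-b-c$ together with a short path through $H$. By the symmetry $j\in\{i,i+1\}$, which reverses the indices of $H$, and by the symmetry of $a$ and $c$ in the statement, it suffices to show $N_{A'_3(i+1)}(a)\subseteq N_{A'_3(i+1)}(b)$. So suppose some $p\in A'_3(i+1)=A_3(i+1)\cup B_{i+1}$ is adjacent to $a$ but not to $b$.

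First, $pc\notin E(G)$: otherwise $a-b-c-p-a$ is an induced $C_4$, since $ac\notin E(G)$ and $pb\notin E(G)$. Next, choose $r\in N_{B_{i+2}}(p)$. Such a vertex exists whether $p\in A_3(i+1)$, where $\supp(p)=\{i,i+1,i+2\}$, or $p\in B_{i+1}$, where it holds by the definition of a nice blowup. The candidate path is
\[
c-b-a-p-r-q_{i-2}-q_{i-1}.
\]

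The remaining step is to verify that this path is induced, using only the support constraints. The vertices $a,b,c\in A_2(i)$ have support $\{i,i+1\}$, so they are anticomplete to $B_{i+2}\cup B_{i-2}\cup B_{i-1}$, and hence to $r,q_{i-2},q_{i-1}$. The vertex $p\in A'_3(i+1)$ is anticomplete to $B_{i-2}\cup B_{i-1}$. Finally, $r\in B_{i+2}$ is anticomplete to $B_{i-1}=B_{i+4}$, while $r q_{i-2}$ and $q_{i-2}q_{i-1}$ are edges because each $q_k$ is complete to $B_{k-1}\cup B_{k+1}$. Together with $ac,pb,pc\notin E(G)$, this shows the path is an induced $P_7$, which is a contradiction. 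For $j=i$ the mirror-image path is $c-b-a-p-N_{B_{i-1}}(p)-q_{i-2}-q_{i+2}$.

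I do not expect a real obstacle. The only care needed is to check, in the case $p\in B_{i+1}$, that $p$ still has the required neighbor in $B_{i+2}$ and no neighbors in $B_{i-2}\cup B_{i-1}$, and both follow from the nice blowup axioms.
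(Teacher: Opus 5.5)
Your proof is correct and is essentially the paper's argument: the paper treats $j=i$ and derives the contradiction from the induced $P_7$ $c-b-a-p-N_{B_{i-1}}(p)-q_{i-2}-q_{i+2}$, which is exactly your mirror-image path. Your version is slightly more careful than the paper's, since you explicitly allow $p\in A_3(i+1)$ as well as $p\in B_{i+1}$, whereas the paper's proof only writes $p\in B_i$ although the statement concerns all of $A'_3(j)$.
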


\begin{proof}
    By symmetry, we may assume that $j = i$. Suppose the lemma does not hold. Then there exists a vertex $p\in B_i$ that is adjacent to $a$ but not to $b$. Since $G$ is $C_4$-free, $pc \notin E(G)$. Then $c-b-a-p-N_{B_{i-1}}(p)-q_{i-2}-q_{i+2}$ is an induced $P_7$, a contradiction.
\end{proof}

\section{Neighborhoods of $A_3(i)$}\label{sec:A_3}

In this section, we prove more properties on $A_3(i)$.

\subsection{Single $A_3(i)$}

In this subsection, we prove properties for a single $A_3(i)$. We start with a single vertex in $A_3(i)$.

\begin{lemma}\label{lem:single vertex in A_3(i)}
    Let $x\in A_3(i)$ and suppose that $B_i\setminus N(x)$ is anticomplete to $N_{B_{i+1}}(x)$. Then the following properties hold. 

    \begin{itemize}
        \item[$(1)$] $q_i\in N_{B_i}(x)$ and $q_{i+1}\in B_{i+1}\setminus N(x)$. 

        \item[$(2)$] $N_{B_i}(x)$ is complete to $N_{B_j}(x)$ for $j\in \{i-1,i+1\}$.
       
        \item[$(3)$] For any vertex $a\in B_{i}\setminus N(x)$ and $b\in N_{B_{i}}(x)$, then $N_{B_{i-1}\cup B_{i+1}}(a)\subset N_{B_{i-1}\cup B_{i+1}}(b)$.
        
        \item[$(4)$] For any vertex $a\in B_{i-1}\setminus N(x)$ and $b\in N_{B_{i-1}}(x)$, then $N_{B_{i-2}\cup B_i}(a)\subseteq N_{B_{i-2}\cup B_i}(b)$. This implies that there is a vertex in $N_{B_{i-1}}(x)$ complete to $B_{i-2}\cup B_i$.
    \end{itemize}
\end{lemma}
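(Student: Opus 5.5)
The plan is to derive everything from three tools: the nested-neighbourhood ordering of Lemma \ref{lem:blowup-basicproperty}, the $C_4$-freeness of $G$, and the hypothesis that $B_i\setminus N(x)$ is anticomplete to $N_{B_{i+1}}(x)$. Throughout I fix some $d\in N_{B_{i+1}}(x)$ and, using Lemma \ref{lem:blowup-threeneighbor} (1), some $a_0\in B_i\setminus N(x)$.

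For (1): $q_i$ is complete to $B_{i+1}$, so $q_i$ is adjacent to $d$. The hypothesis then forbids $q_i\in B_i\setminus N(x)$, so $q_i\in N(x)$. Similarly, $q_{i+1}$ is complete to $B_i$, so $q_{i+1}$ is adjacent to $a_0$. Since $a_0$ has no neighbour in $N_{B_{i+1}}(x)$, we get $q_{i+1}\notin N(x)$.

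For (2) with $j=i+1$: suppose $b\in N_{B_i}(x)$ and $c\in N_{B_{i+1}}(x)$ are non-adjacent. Then $b-x-c-q_{i+1}-b$ is an induced $C_4$: indeed $b\sim q_{i+1}$ and $c\sim q_{i+1}$, while $bc\notin E(G)$ and $xq_{i+1}\notin E(G)$ by (1). This is a contradiction.

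For (2) with $j=i-1$: suppose $b\in N_{B_i}(x)$ and $c\in N_{B_{i-1}}(x)$ are non-adjacent. I would first show that every neighbour $e$ of $c$ in $B_i$ lies in $N(x)$; otherwise $x-c-e-b-x$ is an induced $C_4$. Likewise, any neighbour $c'$ of $b$ in $B_{i-1}$ lies in $N(x)$; otherwise $x-c-c'-b-x$ is an induced $C_4$. I would then take a vertex of $B_i\setminus N(x)$ together with one of its neighbours in $B_{i-1}$. Using (3), that neighbour is also adjacent to $b$, and therefore lies in $N(x)$. The aim is to build either an induced $C_4$ or an induced $P_4$ violating Lemma \ref{lem:generalized P4-structure}, e.g. one with two vertices in $A'_3(i-1)$ flanked by vertices of $B_{i-2}$ and $A'_3(i)$. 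Alternatively, the aim is an induced cyclic sequence of length at least $6$ through $B_{i+1},B_{i+2},B_{i-2}$, contradicting Lemma \ref{lem:cyclicsequence}. I expect this case to be the main obstacle, since the $i-1$ side is not directly controlled by the hypothesis. If the direct attempts fail, I would combine the $i+1$ case already proved (so $b\sim d$) with the $P_4$-structure condition on $B_{i-1},B_i,B_{i+1}$.

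For (3): by Lemma \ref{lem:blowup-basicproperty} (1), the neighbourhoods of $a$ and $b$ in $B_{i-1}\cup B_{i+1}$ are comparable. Now $b\sim d$ by (2), while $ad\notin E(G)$ by the hypothesis. So $N(b)\not\subseteq N(a)$, and the inclusion must be the strict one, $N(a)\subset N(b)$.

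For (4): again the two neighbourhoods in $B_{i-2}\cup B_i$ are comparable. Suppose $N(a)\not\subseteq N(b)$, and pick a neighbour $y$ of $a$ that is not adjacent to $b$. If $y\in B_i$, then $y\notin N(x)$, since otherwise $x-b-a-y-x$ would be an induced $C_4$. One would then derive a contradiction from (3) applied to $y$ and $q_i$, together with the $C_4$ $x-b-\cdot-\cdot$. If instead $y\in B_{i-2}$, I would use an induced path through $y,a,b,x,d,q_{i+2}$ to produce a bad $P_7$ or a $C_6$/$C_7$ via Lemma \ref{lem:cyclicsequence}. For the final claim, take the vertex of $B_{i-1}$ that comes last in the ordering of Lemma \ref{lem:blowup-basicproperty} (1); it is complete to $B_{i-2}\cup B_i$ by Lemma \ref{lem:blowup-basicproperty} (2). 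Since $N_{B_{i-1}}(x)\neq\emptyset$, any $b\in N_{B_{i-1}}(x)$ has a neighbourhood containing that of every vertex outside $N(x)$. Hence a maximal-neighbourhood vertex can be chosen inside $N_{B_{i-1}}(x)$, and this vertex is complete to $B_{i-2}\cup B_i$.
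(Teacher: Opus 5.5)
Your treatment of (1), of (2) for $j=i+1$, of (3) and of the final sentence of (4) is correct and matches the paper. The gap is (2) for $j=i-1$: you never reach a contradiction. The preliminary facts you derive there are true but lead nowhere. Your fallback, combining $b\sim d$ with the $P_4$-structure condition, cannot work for two reasons. First, $x\sim d$, so $c-x-b-d$ is not induced. Second, the $P_4$-structure condition on $B_{i-1},B_i,B_{i+1}$ says nothing about paths through $x\notin B_i$. The missing idea is to replace $d$ by $q_{i+1}$, which is a non-neighbour of $x$ by (1). Since $q_{i+1}$ is complete to $B_i$ and anticomplete to $B_{i-1}$, the path $c-x-b-q_{i+1}$ is an induced $P_4$ with $c\in A'_3(i-1)$, $x,b\in A'_3(i)$ and $q_{i+1}\in A'_3(i+1)$. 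This contradicts Lemma~\ref{lem:generalized P4-structure} with indices shifted by one. You did name that lemma, but you looked for the two middle vertices in $A'_3(i-1)$; they must be $x$ and $b$ in $A'_3(i)$.

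The case $y\in B_i$ of (4) has the same kind of hole. Applying (3) to $y$ and $q_i$ is vacuous, since $q_i$ is complete to $B_{i-1}\cup B_{i+1}$. The $C_4$ you allude to is never specified. What closes this case is the hypothesis on $x$ once more: $y\in B_i\setminus N(x)$ is anticomplete to $N_{B_{i+1}}(x)$. So for any $e\in N_{B_{i+1}}(x)$, the cycle $x-b-a-y-q_{i+1}-e-x$ is an induced $C_6$, using $xq_{i+1}\notin E(G)$ from (1). This is essentially the paper's argument. Your case $y\in B_{i-2}$ is fine: the six vertices $y,a,b,x,d,q_{i+2}$ do form an induced $C_6$, because $q_{i+2}$ is complete to $B_{i-2}$. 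The paper gets this case more directly, from the induced $P_4$ $y-a-b-x$ and Lemma~\ref{lem:generalized P4-structure}.
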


\begin{proof} 
We prove these one by one.

\medskip
\noindent (1) This follows from the definition of $q_i$ and $q_{i+1}$.

\medskip
\noindent (2) By (1), $q_{i+1}x\notin E(G)$. 
If $a\in N_{B_i}(x)$ and $b\in N_{B_{i+1}}(x)$ are non-adjacent, then $x-a-q_{i+1}-b-x$ is an induced $C_4$. 
If $a\in N_{B_i}(x)$ and $b\in N_{B_{i-1}}(x)$ are non-adjacent, then $b-x-a-q_{i+1}$ contradicts Lemma \ref{lem:generalized P4-structure}.

\medskip
\noindent (3) By definition, $x$ has a neighbor $c\in B_{i+1}$. By (2), $c$ is adjacent to $b$ but not to $a$. So we are done by Lemma \ref{lem:blowup-basicproperty} (1). 

\medskip
\noindent (4) By Lemma \ref{lem:generalized P4-structure}, $N_{B_{i-2}}(a)\subseteq N_{B_{i-2}}(b)$. If there is a vertex $c\in B_i$ adjacent to $a$ but not to $b$, then $c\in B_i\setminus N(x)$ by (2), and so there exist vertices $d,e\in B_{i+1}$ such that $x-b-a-c-d-e-x$ is an induced $C_6$ by the assumption on $x$.
\end{proof}

We then consider edges in $A_3(i)$. Let $B_{i\pm 1}=B_{i-1}\cup B_{i+1}$.

\begin{lemma}\label{lem:neighborcontainA3i}
    Let $xy$ be an edge in $A_3(i)$. Then 

    \begin{itemize}
        \item[$(1)$] $N_{B_{i\pm 1}}(x)\subseteq N_{B_{i\pm 1}}(y)$ or $N_{B_{i\pm 1}}(y)\subseteq N_{B_{i\pm 1}}(x)$.
        \item[$(2)$] For $j\in \{i-1,i+1\}$, $N_{B_i\cup B_j}(x)\subseteq N_{B_i\cup B_j}(y)$ or $N_{B_i\cup B_j}(y)\subseteq N_{B_i\cup B_j}(x)$.
        \item[$(3)$] $N_{B_{i-1}\cup B_i\cup B_{i+1}}(x)\subseteq N_{B_{i-1}\cup B_i\cup B_{i+1}}(y)$ or $N_{B_{i-1}\cup B_i\cup B_{i+1}}(y)\subseteq N_{B_{i-1}\cup B_i\cup B_{i+1}}(x)$.
    \end{itemize}
\end{lemma}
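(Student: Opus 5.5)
We take $xy$ to be an edge in $A_3(i)$. The basic tool is Lemma~\ref{lem:nbd containment for an edge}: since $xy$ is an edge outside each clique $B_{i-1},B_i,B_{i+1}$, the neighborhoods of $x$ and $y$ are comparable inside each of these three cliques separately. The work is to show that the three containments all point the same way. The plan is to prove (2) first, then derive (1) and (3) from it.

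\textbf{Proof of (2).} By symmetry take $j=i+1$. Suppose the containments in $B_i$ and $B_{i+1}$ point in opposite directions. Then there are $a\in B_i$ adjacent to $x$ but not to $y$, and $b\in B_{i+1}$ adjacent to $y$ but not to $x$. If $ab\in E(G)$, then $x-a-b-y-x$ is an induced $C_4$. If $ab\notin E(G)$, then $a-x-y-b$ is an induced $P_4$ with $a\in A'_3(i)$, $x,y\in A'_3(i)$ and $b\in A'_3(i+1)$. This does not match the pattern of Lemma~\ref{lem:generalized P4-structure} directly, so we instead find a long hole. Lemma~\ref{lem:blowup-basicproperty}(2) gives $q_{i+1}$, which is adjacent to $a$. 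If $q_{i+1}$ is adjacent to $y$, then $a-q_{i+1}-y-x-a$ is an induced $C_4$, provided $q_{i+1}x\notin E(G)$. If $q_{i+1}$ is adjacent to $x$, we switch to the vertex $b$ and use a common neighbor of $b$ in $B_{i+2}$.

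A cleaner route is to route through $B_{i+2},B_{i-2},B_{i-1}$. Take $c\in N_{B_{i+2}}(b)$. The cycle $a-x-y-b-c-q_{i-2}-N_{B_{i-1}}(a)-a$ needs care, because $x$ and $y$ have neighbors in $B_{i-1}$. We therefore choose $d\in B_{i-1}$ adjacent to $a$ and, as far as possible, nonadjacent to $x$ and $y$. Then Lemma~\ref{lem:cyclicsequence} applied to $a-x-y-b-B_{i+2}-B_{i-2}-d$ gives a hole of length at least $6$, a contradiction.

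If every neighbor of $a$ in $B_{i-1}$ is adjacent to $x$ or $y$, we use $C_4$-freeness. Suppose $d\in N_{B_{i-1}}(a)$ is adjacent to $y$. Since $ay\notin E(G)$, the cycle $a-x-y-d-a$ is a $C_4$ unless $dx$ is an edge. In that case $d$ is complete to $\{a,x,y\}$, and we replace the path through $d$ by the path $y-d-q_{i-2}-c-b$. This yields the short cycle $y-d-q_{i-2}-c-b-y$. Since $db,dc\notin E(G)$ and $q_{i-2}b,q_{i-2}y\notin E(G)$, it is an induced $C_5$, which is allowed. So this case needs a further argument: use Lemma~\ref{lem:P_4-free lemma}-style reasoning, or the ordering of Lemma~\ref{lem:blowup-basicproperty}(1) applied to $a$ versus a neighbor of $b$ in $B_i$. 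The goal there is an induced $P_7$ or $C_6$, with the $P_4$-structure condition (Lemma~\ref{lem:generalized P4-structure}) applied to $d-a-a'-b$ with $a'\in B_i$ and $d\in B_{i-1}$.

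\textbf{Proof of (1) and (3).} Statement (3) follows from (2) by transitivity. If $N_{B_i}(x)\ne N_{B_i}(y)$, then (2) for $j=i-1$ and $j=i+1$ forces both outer containments to point in the same direction as the containment in $B_i$. If $N_{B_i}(x)=N_{B_i}(y)$, we need (1) separately.

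To prove (1), suppose there are $a\in B_{i-1}$ with $a\sim x$, $a\not\sim y$, and $b\in B_{i+1}$ with $b\sim y$, $b\not\sim x$. Then $a-x-y-b$ is an induced $P_4$. Extending it by $b-B_{i+2}-B_{i-2}-a$ gives an induced cyclic sequence of length at least $6$, so Lemma~\ref{lem:cyclicsequence} yields a contradiction. Here $a$ and $b$ are anticomplete to the far sets by the definition of a nice blowup, and $x,y$ are anticomplete to $B_{i\pm2}$. This makes (1) easy.

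With (1) established, (3) follows: combine (1) with (2), and use (1) to settle the case where the $B_i$ neighborhoods are equal.

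\textbf{Main obstacle.} The hard part is (2), specifically the configuration where every connecting vertex in $B_{i-1}$ is adjacent to $x$ or $y$. There I would first exploit $q_i$ together with Lemma~\ref{lem:blowup-threeneighbor}(3) to pick suitable non-neighbors and produce an induced $P_7$.
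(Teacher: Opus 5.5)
Your arguments for (1) and (3) are correct and follow the paper. Statement (1) is Lemma~\ref{lem:generalized P4-structure}, whose proof is exactly the cyclic sequence $a-x-y-b-B_{i+2}-B_{i-2}-a$ you use. Statement (3) follows from (1) and (2) as you say.

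The gap is in (2). You reduce to $a\in B_i$ with $ax\in E(G)$, $ay\notin E(G)$, and $b\in B_{i+1}$ with $by\in E(G)$, $bx\notin E(G)$, $ab\notin E(G)$. You then only dispose of the subcase in which $a$ has a neighbor in $B_{i-1}$ outside $N(x)\cup N(y)$. By (1), $N_{B_{i\pm1}}(x)\subseteq N_{B_{i\pm1}}(y)$ here. So the complementary subcase says every neighbor of $a$ in $B_{i-1}$ is adjacent to $y$, and then also to $x$ by $C_4$-freeness. For this subcase you only exhibit a harmless induced $C_5$ and list tools one might try. That is exactly the configuration that needs an argument, so (2) is not proved, and neither is (3) whenever $N_{B_i}(x)\neq N_{B_i}(y)$.

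The missing idea is to use Lemma~\ref{lem:blowup-threeneighbor}(3) for $y$ as a dichotomy.
\begin{itemize}
\item In your open subcase, $a\in B_i\setminus N(y)$ is adjacent to some vertex of $N_{B_{i-1}}(y)$. So $B_i\setminus N(y)$ must be anticomplete to $N_{B_{i+1}}(y)$.
\item $N_{B_{i+1}}(y)$ contains $N_{B_{i+1}}(x)$ by Lemma~\ref{lem:nbd containment for an edge}, since $b\in N(y)\setminus N(x)$.
\item Pick $f\in N_{B_{i+1}}(x)$ and $e\in N_{B_{i+1}}(a)$. Then $af\notin E(G)$, and $e\notin N(y)$, hence $ex\notin E(G)$. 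So $a-x-f-e-a$ is an induced $C_4$.
\item Symmetrically, suppose $B_i\setminus N(y)$ is anticomplete to $N_{B_{i-1}}(y)\supseteq N_{B_{i-1}}(x)$. Take $c\in N_{B_{i-1}}(x)$ and $g\in N_{B_{i-1}}(a)$; then $a-x-c-g-a$ is an induced $C_4$.
\end{itemize}
So (2) follows from (1), Lemma~\ref{lem:blowup-threeneighbor}(3) and $C_4$-freeness alone, and your cyclic-sequence subcase is not even needed.

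The paper reaches the same dichotomy by another route. It takes a common non-neighbor $a'\in B_i$ of $x$ and $y$ from Lemma~\ref{lem:comparable vertices in A_3(i)}. It then forces $a'b\in E(G)$, since otherwise $a'-a-x-y-b-q_{i+2}-q_{i-2}$ is an induced $P_7$. Lemma~\ref{lem:blowup-threeneighbor}(3) then makes $B_i\setminus N(y)$ anticomplete to $N_{B_{i-1}}(y)$. Hence every neighbor of $a$ in $B_{i-1}$ avoids $N(x)\cup N(y)$, your open subcase cannot occur, and your first subcase (or the $C_4$ above) finishes.
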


\begin{proof} We prove the properties one by one.
    
   \medskip
   \noindent
   (1) This follows directly from Lemmas \ref{lem:nbd containment for an edge} and \ref{lem:generalized P4-structure}.

   \medskip
   \noindent
    (2) By symmetry, we prove for $j=i+1$. Suppose that the statement is not true. Let $z\in B_{i}$ be a neighbor of $x$ but not a neighbor of $y$. Let $w$ be a neighbor of $y$ but not a neighbor of $x$. Since $G$ is $C_4$-free, $w\in B_{i+1}$ and $zw\notin E(G)$. By Lemma \ref{lem:comparable vertices in A_3(i)},  one can take a common non-neighbor $a\in B_i$ of $x$ and $y$. If $aw\notin E(G)$, then $a-z-x-y-w-q_{i+2}-q_{i-2}$ is an induced $P_7$. So $aw\in E(G)$. Let $b\in B_{i-1}$ be a neighbor of $a$. By Lemma \ref{lem:blowup-threeneighbor}, $by\notin 
    E(G)$. By (1), $bx\notin E(G)$. Then
    $b-\{z,a\}-x-y-w-q_{i+2}-q_{i-2}-b$ is an induced cyclic sequence.

   \medskip
    \noindent
    (3) It follows directly from (1) and (2).
\end{proof}

\begin{lemma}\label{lem:edge in $A_3(i)$}
    Let $xy$ be an edge in $A_3(i)$. Then $N_{B_{i}}(x)=N_{B_{i}}(y)$.
\end{lemma}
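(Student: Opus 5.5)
We argue by contradiction. By Lemma~\ref{lem:comparable vertices in A_3(i)} (or Lemma~\ref{lem:nbd containment for an edge}), $N_{B_i}(x)$ and $N_{B_i}(y)$ are comparable, so we may assume $N_{B_i}(y)\subsetneq N_{B_i}(x)$ and fix $z\in B_i$ with $zx\in E(G)$ and $zy\notin E(G)$. By Lemma~\ref{lem:neighborcontainA3i}(3), the neighborhoods of $x$ and $y$ in $B_{i-1}\cup B_i\cup B_{i+1}$ are comparable; since $z$ witnesses that $N(y)\not\supseteq N(x)$ there, we get
\[N_{B_{i-1}\cup B_i\cup B_{i+1}}(y)\subsetneq N_{B_{i-1}\cup B_i\cup B_{i+1}}(x).\]
Thus every neighbor of $y$ in $B_{i\pm1}$ is also a neighbor of $x$. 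Let $c\in N_{B_{i+1}}(y)$ and $d\in N_{B_{i-1}}(y)$; both are adjacent to $x$.

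Next we use $C_4$-freeness on $z$. If $zc\notin E(G)$, then $y-c-\cdots$ gives no immediate contradiction, so we consider the two sides separately. The quadruple $z-x-c$ with $y$ adjacent to $x$ and $c$: if $zc\in E(G)$, then $y$ and $z$ have the common neighbors $x$ and $c$ and are non-adjacent, which looks like a $C_4$. However, $x$ is adjacent to both $y$ and $z$, and so is $c$, so $\{y,x,z,c\}$ contains the edge $xc$; hence this is not a $C_4$ but a diamond. So $C_4$ alone is not enough, and we need to bring in the rest of the structure.

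The plan is therefore to exploit a vertex of $B_i$ missed by both $x$ and $y$, which exists by Lemma~\ref{lem:comparable vertices in A_3(i)}. Call it $a\in B_i$. By Lemma~\ref{lem:blowup-threeneighbor}(3) applied to $x$, the set $B_i\setminus N(x)$ is anticomplete to $N_{B_{i-1}}(x)$ or to $N_{B_{i+1}}(x)$; by symmetry assume it is anticomplete to $N_{B_{i+1}}(x)$. Then Lemma~\ref{lem:single vertex in A_3(i)} applies to $x$: $N_{B_i}(x)$ is complete to $N_{B_{i\pm1}}(x)$, so $z$ is adjacent to $c$ and to $d$. Moreover $q_{i+1}\notin N(x)$, hence $q_{i+1}\notin N(y)$ as well.

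Now $y$ and $z$ are non-adjacent with common neighbors $c\in B_{i+1}$ and $d\in B_{i-1}$. Since $cd\notin E(G)$ (because $B_{i-1}$ is anticomplete to $B_{i+1}$), $y-c-z-d-y$ is an induced $C_4$, a contradiction. This proves $N_{B_i}(x)=N_{B_i}(y)$.

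The main obstacle is the step asserting the hypothesis of Lemma~\ref{lem:single vertex in A_3(i)} for $x$ on some side; Lemma~\ref{lem:blowup-threeneighbor}(3) supplies exactly this dichotomy, and both sides are symmetric. We also have to check that $y$ really has neighbors on both sides, which holds by the definition of $A_3(i)$. If the completeness in Lemma~\ref{lem:single vertex in A_3(i)}(2) turns out to be stated only for one side, the fallback is to use only $c$ together with Lemma~\ref{lem:generalized P4-structure} on $d-y-x-z$-type configurations to force $zd\in E(G)$.
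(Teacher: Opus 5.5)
Your proof is correct and is essentially the paper's argument: use Lemma~\ref{lem:neighborcontainA3i}(3) to get $N_{B_{i-1}\cup B_i\cup B_{i+1}}(y)\subseteq N_{B_{i-1}\cup B_i\cup B_{i+1}}(x)$, then Lemma~\ref{lem:single vertex in A_3(i)}(2) forces the private neighbor $z$ of $x$ to be adjacent to $c$ and $d$, which yields the induced $C_4$ $y-c-z-d-y$. The exploratory paragraph about the diamond, the unused common non-neighbor $a$, and the fallback remark can be deleted; your explicit appeal to Lemma~\ref{lem:blowup-threeneighbor}(3) to justify the hypothesis of Lemma~\ref{lem:single vertex in A_3(i)} is a small improvement, since the paper leaves that step implicit.
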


\begin{proof}
    By Lemma \ref{lem:neighborcontainA3i} (3), we may assume that $N_{B_{i-1}\cup B_i\cup B_{i+1}}(y)\subseteq N_{B_{i-1}\cup B_i\cup B_{i+1}}(x)$.
    Suppose that there exists a vertex $a\in B_i$ with $ax\in E(G)$ and $ay\notin E(G)$.
    Let $d\in B_{i+1}$ be a neighbor of $y$ and $e\in B_{i-1}$ be a neighbor of $y$.
    Since $N_{B_{i-1}\cup B_i\cup B_{i+1}}(y)\subseteq N_{B_{i-1}\cup B_i\cup B_{i+1}}(x)$, $dx,ex\in E(G)$. By Lemma \ref{lem:single vertex in A_3(i)} (2), $ad,ae\in E(G)$. It follows that $a-d-y-e-a$ is an induced $C_4$.
\end{proof}

We extend properties for edges to induced $P_3$s in $A_3(i)$.

\begin{lemma}\label{lem:P3 in A_3(i)}
    If $P=a-b-c$ is an induced $P_3$ in $A_3(i)$, then $N_H(a),N_H(c)\subseteq N_H(b)$.
\end{lemma}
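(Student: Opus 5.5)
By symmetry between $a$ and $c$, it suffices to show $N_H(a)\subseteq N_H(b)$. Since $a,b,c\in A_3(i)$, all of their neighbors in $H$ lie in $B_{i-1}\cup B_i\cup B_{i+1}$. By Lemma \ref{lem:edge in $A_3(i)$}, applied to the edges $ab$ and $bc$, we get $N_{B_i}(a)=N_{B_i}(b)=N_{B_i}(c)$. So the part in $B_i$ is already settled, and only $B_{i-1}$ and $B_{i+1}$ remain.

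Fix $j\in\{i-1,i+1\}$ and suppose there is $p\in B_j$ with $pa\in E(G)$ and $pb\notin E(G)$.

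\textbf{Step 1: $pc\notin E(G)$.} Since $a-b-c$ is an induced $P_3$, if $pc\in E(G)$ then $p-a-b-c-p$ is an induced $C_4$. This is exactly Lemma \ref{lem:P3 to P4}.

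\textbf{Step 2: use the nonedge $ac$.} By Lemma \ref{lem:nonedge in A_3(i)}, $a$ and $c$ have neighborhood containment in one of $B_{i-1},B_{i+1}$ and disjoint neighborhoods in the other. They share the neighbors $N_{B_i}(a)\neq\emptyset$ in $B_i$. Take $s\in N_{B_i}(a)$; then $s$ is adjacent to $a$, $b$ and $c$. A common neighbor of $a,c$ in $B_{i\pm1}$ would be adjacent or not to $s$, and in the non-adjacent case it gives a $C_4$ with $s$. I would not rely on this directly, though. It is cleaner to argue as follows.

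\textbf{Step 3: use $p$ and $s$.} By Lemma \ref{lem:neighborcontainA3i}(3) applied to the edge $ab$, the neighborhoods of $a$ and $b$ in $B_{i-1}\cup B_i\cup B_{i+1}$ are comparable. Since $p\in N(a)\setminus N(b)$, this forces $N_{B_{i\pm1}\cup B_i}(b)\subseteq N(a)$. Now consider $ps$.
\begin{itemize}
\item If $ps\notin E(G)$: by Lemma \ref{lem:single vertex in A_3(i)}-type reasoning, take a neighbor $d\in B_j$ of $b$ (so $d$ is also adjacent to $a$). Then the relationship between $p,d,s$ gives $p-a-\cdots$ structures. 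More simply, $s-b-a-p$ is an induced $P_4$ with $s\in B_i$, $a,b\in A_3(i)$, $p\in B_j$.
\item If $ps\in E(G)$: then $p-s-c$ is a path, with $pc\notin E$ and $pb\notin E$.
\end{itemize}

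\textbf{Step 4: find the forbidden structure.} The main tool will be Lemma \ref{lem:cyclicsequence} or a bad $P_7$, built by going around $H$. Take a neighbor $e$ of $c$ in $B_j$; since $e$ is adjacent to $b$ by the containment, $e$ is not $p$. I expect the case analysis to run as follows.
\begin{itemize}
\item Whether $pe\in E(G)$. If $pe\notin E(G)$ and $e$ is adjacent to $a$, we get $p-a-e$ plus a neighbor of $e$ in $B_{j\pm1}$. If $pe\in E(G)$, then $p-e-c$ together with $a$ and $b$ gives $a-p-e-b-a$, which is a $C_4$ unless $ae\in E(G)$.
\item Since $c$ and $p$ are non-adjacent, take a path $c-e-\cdots$ through $B_{j'}$ on the far side ($j'$ being the next index after $j$), then $q_{i+2}$ and $q_{i-2}$, and back to $p$. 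This should yield an induced $C_6$ or $C_7$, or the bad $P_7$ $c-b-a-p-q_{\cdot}-q_{\cdot}-\cdot$. The analogous argument in Lemma \ref{lem:induced P3in A_1(i)} uses $c-b-a-p-N(p)-q-q$, and I would try this first.
\end{itemize}
For that path to be induced, I need $p$ not adjacent to $b$ or $c$ (Steps 1 and the assumption) and the next vertex, chosen in $B_{j}$'s outer neighbor ($B_{i-2}$ or $B_{i+2}$), not adjacent to $a$, $b$ or $c$. The latter is automatic because vertices of $A_3(i)$ have no neighbors in $B_{i\pm2}$. So $c-b-a-p-N_{B_{i\pm2}}(p)-q-q$ is an induced $P_7$, with the two trailing $q$'s chosen on the remaining part of the cycle away from $B_i$.

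\textbf{Main obstacle.} The last two $q$ vertices must avoid $N(a)\cup N(b)\cup N(c)$. Going from $B_{i-2}$ through $B_{i+2}$ to $B_{i+1}$, the final vertex must lie in $B_{i+1}\setminus N(\{a,b,c\})$. When no such vertex exists, I expect to fall back on Lemma \ref{lem:blowup-threeneighbor} and Lemma \ref{lem:single vertex in A_3(i)}, turning the path into an induced $C_6$ or $C_7$ closed through a neighbor of $c$ in $B_{i+1}$.
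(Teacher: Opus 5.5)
There is a genuine gap: your Step 4 never closes. The path $c-b-a-p-N_{B_{i\pm 2}}(p)-q-q$, or its bad-$P_7$ variant, needs a last vertex on the far side of $H$. That vertex must be adjacent to the preceding $q$ but not to $a$; for $p\in B_{i-1}$ this means a vertex of $B_{i+1}\setminus N(a)$. Nothing guarantees such a vertex exists. $a$ may well be complete to $B_{i+1}$, since Lemma \ref{lem:blowup-threeneighbor}(2) only says one of the two sides is not fully covered, and then every candidate creates a chord to $a$. You flag this yourself as the ``main obstacle'' and only say you expect to fall back on other lemmas, so the argument is incomplete in exactly the hard case. Also, in Step 3 the path $s-b-a-p$ is not induced: $s\in N_{B_i}(a)$ gives $sa\in E(G)$.

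No path-building is needed, because you already have both ingredients. In Step 3 you correctly derived $N_H(b)\subsetneq N_H(a)$ from Lemma \ref{lem:neighborcontainA3i}(3) applied to the edge $ab$. Apply the same lemma to the edge $bc$.
\begin{itemize}
\item If $N_H(c)\subseteq N_H(b)$, then $N_H(c)\subseteq N_H(a)$. But by Lemma \ref{lem:nonedge in A_3(i)}, $a$ and $c$ have disjoint neighborhoods in one of $B_{i-1},B_{i+1}$, and $c$ has a neighbor there. That neighbor would lie in $N(a)$, a contradiction.
\item If $N_H(b)\subseteq N_H(c)$, then $N_H(b)\subseteq N_H(a)\cap N_H(c)$. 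Since $b$ has neighbors in both $B_{i-1}$ and $B_{i+1}$, $a$ and $c$ would have common neighbors on both sides, again contradicting Lemma \ref{lem:nonedge in A_3(i)}.
\end{itemize}
This is essentially the paper's two-line proof. The non-adjacent pair $a,c$ must have incomparable $H$-neighborhoods, and comparability along the edges $ab$ and $bc$ then forces $N_H(b)$ to contain both.
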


\begin{proof}
    Since $ac\notin E(G)$, $a$ and $c$ have disjoint neighborhoods in one of $B_{i-1}$ or $B_{i+1}$. So $N_H(a)$ and $N_H(c)$ cannot have a containment relation. This implies that $N_H(a),N_H(c)\subseteq N_H(b)$.
\end{proof}

We now prove the following key property.

\begin{lemma}[$P_4$-freeness of $A_3(i)$]\label{lem:P4-freenessofA3}
    $A_3(i)$ is $P_4$-free for each $i\in [5]$.
\end{lemma}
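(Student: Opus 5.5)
Suppose $a-b-c-d$ is an induced $P_4$ in $A_3(i)$; we aim for a contradiction. First apply Lemma \ref{lem:P3 in A_3(i)} to the two induced $P_3$s $a-b-c$ and $b-c-d$. This gives $N_H(a),N_H(c)\subseteq N_H(b)$ and $N_H(b),N_H(d)\subseteq N_H(c)$, so $N_H(b)=N_H(c)$. Since $b$ and $c$ are adjacent, Lemma \ref{lem:edge in $A_3(i)$} independently confirms $N_{B_i}(b)=N_{B_i}(c)$.

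Next consider the non-adjacent pair $a,c$. By Lemma \ref{lem:nonedge in A_3(i)}, they have disjoint neighborhoods in one of $B_{i-1}$, $B_{i+1}$. The same holds for the non-adjacent pair $b,d$, and for $a,d$. Because $N_H(a)\subseteq N_H(c)$, disjointness of $a$ and $c$ in $B_j$ forces $N_{B_j}(a)\cap N_{B_j}(c)=N_{B_j}(a)$ to be empty. This is impossible, since every vertex of $A_3(i)$ has a neighbor in both $B_{i-1}$ and $B_{i+1}$. So this route already seems to give the contradiction directly: $a\in A_3(i)$ has a neighbor in $B_{i-1}$ and in $B_{i+1}$, and all of them lie in $N_H(c)$, so $a$ and $c$ share neighbors in both $B_{i-1}$ and $B_{i+1}$. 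That contradicts Lemma \ref{lem:nonedge in A_3(i)} (and directly gives an induced $C_4$ through a common neighbor on each side).

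On reflection, this shows that Lemma \ref{lem:P3 in A_3(i)} combined with Lemma \ref{lem:nonedge in A_3(i)} even rules out induced $P_3$s in $A_3(i)$. That is suspicious, so before relying on it I would recheck Lemma \ref{lem:P3 in A_3(i)} as stated. If it holds, the proof is immediate: the first inclusion $N_H(a)\subseteq N_H(b)$ from $a-b-c$ is not even needed, only $N_H(a)\subseteq N_H(c)$, and I would double-check which containment actually follows. The argument for Lemma \ref{lem:P3 in A_3(i)} only yields containments into the middle vertex, so from $a-b-c$ we get containment into $b$, not into $c$.

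The correct plan is therefore to use the middle edge. From $a-b-c$ and $b-c-d$ we get $N_H(a)\subseteq N_H(b)$ and $N_H(d)\subseteq N_H(c)$, together with $N_H(c)\subseteq N_H(b)$ and $N_H(b)\subseteq N_H(c)$. Hence $N_H(b)=N_H(c)$ and $N_H(a),N_H(d)\subseteq N_H(b)$. Now apply Lemma \ref{lem:nonedge in A_3(i)} to the pair $a,d$: they are disjoint in one side, say $B_{i+1}$ by symmetry, so we can pick $a'\in N_{B_{i+1}}(a)$ and $d'\in N_{B_{i+1}}(d)$ with $a'\neq d'$. Both are neighbors of $b$ and of $c$.

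If $a'd\notin E(G)$ and $d'a\notin E(G)$ (guaranteed by disjointness), consider $a-a'$ versus $c$. The vertices $a$ and $c$ are non-adjacent and share $a'$, so they share a neighbor in $B_{i+1}$. They are also non-adjacent to each other, so by Lemma \ref{lem:nonedge in A_3(i)} they have disjoint neighborhoods in $B_{i-1}$. But $a$ has a neighbor in $B_{i-1}$, which lies in $N_H(b)=N_H(c)$; this is a contradiction. This final step is where the main work lies; the only delicate point is confirming that $N_{B_{i-1}}(a)\subseteq N_H(c)$, which follows from $N_H(a)\subseteq N_H(b)=N_H(c)$.
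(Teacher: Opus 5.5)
Your final argument is correct and is the paper's proof: the two induced $P_3$s give $N_H(a)\subseteq N_H(b)=N_H(c)$, so the non-adjacent pair $a,c$ shares neighbors in both $B_{i-1}$ and $B_{i+1}$, contradicting Lemma \ref{lem:nonedge in A_3(i)}. Your first paragraph already contained this complete argument. The worry that it would also exclude induced $P_3$s is unfounded, because a lone $P_3$ $a-b-c$ does not give $N_H(b)=N_H(c)$, so the detour through the pair $a,d$ can simply be dropped.
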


\begin{proof}
    Let $a-b-c-d$ be an induced $P_4$ in $A_3(i)$. By Lemma \ref{lem:P3 in A_3(i)} to $a-b-c$ and $b-c-d$, we have $N_H(a)\subseteq N_H(b)=N_H(c)$. This contradicts Lemma \ref{lem:nonedge in A_3(i)}.
\end{proof}

The next two lemmas are about components of $A_3(i)$.

\begin{lemma}\label{lem:single vertex covers A_3(i)}
    For any component $K$ of $A_3(i)$, there exists a vertex $x\in K$ universal in $K$ such that $N_H(K)= N_H(x)$.
\end{lemma}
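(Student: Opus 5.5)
We plan to combine Lemma \ref{lem:P4-freenessofA3} with Lemma \ref{lem:P3 in A_3(i)}. Since $K$ is a component of $A_3(i)$, it is connected, and by Lemma \ref{lem:P4-freenessofA3} it is $P_4$-free. A connected $P_4$-free graph on at least two vertices has a disconnected complement (Seinsche). We argue by induction on $|K|$ that $K$ contains a vertex $x$ with $N_H(y)\subseteq N_H(x)$ for all $y\in K$, and that among such vertices one can be chosen universal in $K$. Then $N_H(K)=\bigcup_{y\in K}N_H(y)=N_H(x)$, since $N_H(K)$ only collects neighbors in $V(H)$, which is disjoint from $K$.

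The first step is to find a vertex dominating all others with respect to $N_H$. Take $x\in K$ with $N_H(x)$ inclusion-maximal, and let $y\in K$ be arbitrary. If $xy\in E(G)$, Lemma \ref{lem:neighborcontainA3i}(3) and Lemma \ref{lem:edge in $A_3(i)$} give comparability of $N_H(x)$ and $N_H(y)$; note that $N_H$ of a vertex in $A_3(i)$ lies in $B_{i-1}\cup B_i\cup B_{i+1}$. Maximality of $N_H(x)$ then yields $N_H(y)\subseteq N_H(x)$.

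If $xy\notin E(G)$, we use that connected $P_4$-free graphs have diameter at most $2$. So there is a common neighbor $b$, and $x-b-y$ is an induced $P_3$. Lemma \ref{lem:P3 in A_3(i)} gives $N_H(x),N_H(y)\subseteq N_H(b)$. By maximality $N_H(b)=N_H(x)$, and hence $N_H(y)\subseteq N_H(x)$. Thus every vertex of maximum (by inclusion) $N_H$-neighborhood already works for the second claim of the lemma.

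The main obstacle is ensuring that such an $x$ is also universal in $K$. Let $M=\{v\in K: N_H(v)=N_H(x)\}$. We claim that any $v\in K$ nonadjacent to some $u\in K$ is either outside $M$ or can be exchanged. Indeed, if $u,v$ are nonadjacent, Lemma \ref{lem:nonedge in A_3(i)} shows that their neighborhoods in one of $B_{i\pm1}$ are disjoint, and both neighborhoods are nonempty by the definition of $A_3(i)$. So $N_H(u)$ and $N_H(v)$ are incomparable; this is the same observation used in the proof of Lemma \ref{lem:P3 in A_3(i)}. But every $u\in K$ satisfies $N_H(u)\subseteq N_H(x)$. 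Hence if $v\in M$ and $uv\notin E(G)$, we get $N_H(u)\subseteq N_H(v)$, contradicting incomparability. Therefore every vertex of $M$, in particular $x$, is adjacent to all other vertices of $K$, so $x$ is universal in $K$.

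This last step actually makes the diameter argument partly redundant. Once we show, in the case $xy\notin E(G)$, that $N_H(x)$ and $N_H(y)$ are incomparable and yet both are dominated via a common neighbor, we obtain a contradiction with maximality. So the write-up will first establish comparability and domination as above, and then derive universality from Lemma \ref{lem:nonedge in A_3(i)}.
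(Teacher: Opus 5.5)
Your proposal is correct, but it gets universality by a different mechanism than the paper.

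\textbf{The paper's route.} It uses the full cograph structure of $K$. It splits $V(K)$ into $X$ and $Y$ with $X$ complete to $Y$, and uses $C_4$-freeness to make $X$ a clique. It takes $|X|$ maximum, so that every $y\in Y$ has a non-neighbour $z\in Y$, and picks $x\in X$ with largest $N_H$ inside the clique $X$. Universality of $x$ is then automatic from the decomposition. The containment $N_H(y)\subseteq N_H(x)$ for $y\in Y$ comes from Lemma~\ref{lem:P3 in A_3(i)} applied to $y-x-z$.

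\textbf{Your route.} You take any inclusion-maximal $N_H(x)$ over all of $K$. You use only the diameter-$2$ consequence of $P_4$-freeness to get a common neighbour. Lemma~\ref{lem:nonedge in A_3(i)} then rules out any non-neighbour of $x$: non-adjacent vertices of $A_3(i)$ have nonempty, disjoint neighbourhoods in one of $B_{i\pm1}$, so their $N_H$'s are incomparable.

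\textbf{Comparison.} Your version yields a slightly stronger statement: every vertex of $K$ with inclusion-maximal $N_H$ is universal in $K$. It also avoids the extremal choice of partition. The paper's route instead exposes the clique-joined-to-$Y$ structure, which it reuses later.

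\textbf{Clean-ups.} The announced ``induction on $|K|$'' is never used and should be dropped. The diameter bound needs its one-line justification: a shortest path of length at least $3$ in $K$ contains an induced $P_4$. As you note yourself, the cleanest write-up runs as follows.
\begin{enumerate}
\item Suppose $x$ has a non-neighbour $y$ in $K$, and take a common neighbour $b$ in $K$.
\item Lemma~\ref{lem:P3 in A_3(i)} and maximality give $N_H(b)=N_H(x)\supseteq N_H(y)$, contradicting incomparability. Hence $x$ is universal in $K$.
\item For every $y\in K$, Lemma~\ref{lem:neighborcontainA3i}(3) and maximality then give $N_H(y)\subseteq N_H(x)$.
\end{enumerate}
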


\begin{proof}
    By Lemma \ref{lem:P4-freenessofA3},  $K$ is $P_4$-free. It follows from \cite{CLB81} that $V(K)$ can be partitioned into subsets $X$ and $Y$ such that $X$ and $Y$ are complete. Since $G$ is $C_4$-free, we may assume that $X$ is a clique. Choose such a partition $(X,Y)$ such that $|X|$ is maximum. 
    By Lemma \ref{lem:neighborcontainA3i}, there exists
    a vertex $x\in X$ such that $N_H(x)$ is maximum over all vertices in $X$. 
    For each vertex $y\in Y$, $y$ has a non-neighbor $z\in Y$ by the choice of $(X,Y)$. Applying Lemma \ref{lem:P3 in A_3(i)} to $y-x-z$, we have $N_H(y),N_H(z)\subseteq N_H(x)$.
    So $x$ is a desired vertex.
\end{proof}

\begin{lemma}\label{lem:component of A_3(i)}
    Let $K$ be a component of $A_3(i)$ such that $B_i\setminus N(K)$ is anticomplete to $N_{B_{i+1}}(K)$. The set $X_{i-1}$ of vertices in $B_{i-1}\setminus N(K)$ having a neighbor in $B_i\setminus N(K)$ is complete to $B_{i-2}\cup B_i$.
\end{lemma}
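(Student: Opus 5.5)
The plan is to reduce to a single vertex and then apply Lemma~\ref{lem:single vertex in A_3(i)}. By Lemma~\ref{lem:single vertex covers A_3(i)}, $K$ has a vertex $x$ that is universal in $K$ with $N_H(K)=N_H(x)$. So $B_i\setminus N(K)=B_i\setminus N(x)$, $N_{B_{i+1}}(K)=N_{B_{i+1}}(x)$, and $B_{i-1}\setminus N(K)=B_{i-1}\setminus N(x)$. The hypothesis then says exactly that $B_i\setminus N(x)$ is anticomplete to $N_{B_{i+1}}(x)$, so all of Lemma~\ref{lem:single vertex in A_3(i)} applies to $x$. In particular $q_i\in N(x)$ and $q_{i+1}\notin N(x)$, and $N_{B_i}(x)$ is complete to $N_{B_{i\pm1}}(x)$. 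By part (4), we may also fix $b\in N_{B_{i-1}}(x)$ complete to $B_{i-2}\cup B_i$. Now take $a\in X_{i-1}$ with a neighbor $c\in B_i\setminus N(x)$. We must show $a$ is complete to $B_i$ and to $B_{i-2}$.

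\emph{Completeness to $B_i$.} Suppose $ad\notin E(G)$ for some $d\in B_i$. Then $a-c-d-b-a$ has edges $ac,cd,db,ba$ and non-edge $ad$, so $C_4$-freeness forces $cb\in E(G)$. I will aim for a long induced path or hole built from $a,c,d,b,x$, a neighbor $e\in N_{B_{i+1}}(x)$, and the vertices $q_{i+1},q_{i+2},q_{i-2}$. The useful facts are:
\begin{itemize}
\item $e$ is anticomplete to $c$ by hypothesis and to $a,b$ since $B_{i-1}$ is anticomplete to $B_{i+1}$;
\item $x$ is non-adjacent to $a,c,q_{i+1}$;
\item $d$ may or may not be in $N(x)$, giving two cases.
\end{itemize}
If $d\in N(x)$, then $de\in E(G)$ by part (2). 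I would compare $a$ and $d$ through their neighborhoods in $B_{i-2}$, using Lemma~\ref{lem:blowup-basicproperty}~(1) for the nested neighborhood orderings. The goal is an induced $P_7$ or $C_6$/$C_7$ such as $a-c-q_{i+1}-e-x-\cdots$ closed through $B_{i-2}$. If $d\notin N(x)$, then both $c$ and $d$ avoid $N_{B_{i+1}}(x)$, and I would use the $P_4$-structure condition on $a,\{c,d\},B_{i+1}$.

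\emph{Completeness to $B_{i-2}$.} Suppose $af\notin E(G)$ with $f\in B_{i-2}$, and let $g\in N_{B_{i-2}}(a)$. The $P_4$-structure condition on $B_{i-2},B_{i-1},B_i$ and Lemma~\ref{lem:generalized P4-structure} (with $x\in A'_3(i)$) should force relations among $f,g,a,b$. The natural contradiction is a hole through $c\in B_i\setminus N(x)$, $a$, $B_{i-2}$ and $B_{i+2}$, returning to $x$ via $N_{B_{i+1}}(x)$ and non-neighbours of $x$ in $B_{i+1}$. This mirrors the proof of Lemma~\ref{lem:single vertex in A_3(i)}~(4), which produced an induced $C_6$ $x-b-a-c-d-e-x$.

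The main obstacle is the case analysis in the $B_i$-completeness step. The obvious short cycles, such as $c-b-x-e-q_{i+1}-c$, only give $C_5$'s, so the right $P_7$ or bad $P_7$ (Lemma~\ref{lem:badP7}) must be found by routing through $B_{i+2}$ and $B_{i-2}$. I would first try the path $f'-a-c-q_{i+1}-e-x-\cdots$ with $f'$ chosen in $B_{i-2}$, and fall back on Lemma~\ref{lem:blowup-threeneighbor}~(3) to restrict where $x$'s non-neighbours sit.
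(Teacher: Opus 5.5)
Your proposal is a plan, not a proof: each of the three sub-cases stops at ``aim for'' an induced $P_7$ or a hole, and none of the needed contradictions is produced. The one concrete tool you name, for the case $d\in B_i\setminus N(x)$, cannot work. You suggest the $P_4$-structure condition on $a,\{c,d\},B_{i+1}$. But $c$ and $d$ have nested neighbourhoods in $B_{i-1}\cup B_{i+1}$ by Lemma~\ref{lem:blowup-basicproperty}~(1), and $a\in N(c)\setminus N(d)$ then forces $N_{B_{i+1}}(d)\subseteq N_{B_{i+1}}(c)$. So no induced $P_4$ of the form $a-c-d-f$ with $f\in B_{i+1}$ exists. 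The deduction $cb\in E(G)$ from the $4$-cycle $a-c-d-b-a$ also gives nothing, since you chose $b$ complete to $B_i$.

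The missing idea is to use the hypothesis through a neighbour $c'\in N_{B_{i+1}}(c)$. This vertex exists by niceness, is a non-neighbour of $x$ because $B_i\setminus N(K)$ is anticomplete to $N_{B_{i+1}}(x)$, and is adjacent to every $e\in N_{B_{i+1}}(x)$ since $B_{i+1}$ is a clique. Together with $q_{i-2}$ and $q_{i+2}$, it gives induced paths that end at $x$. Fix $e\in N_{B_{i+1}}(x)$.
\begin{itemize}
\item If $f\in B_{i-2}\setminus N(a)$, then $f-q_{i-2}-a-c-c'-e-x$ is an induced $P_7$.
\item If $d\in B_i\setminus N(x)$ and $ad\notin E(G)$, then $d-c-a-q_{i-2}-q_{i+2}-e-x$ is an induced $P_7$.
\item If $d\in N_{B_i}(x)$ and $ad\notin E(G)$, then $de\in E(G)$ by Lemma~\ref{lem:single vertex in A_3(i)}~(2) while $ce\notin E(G)$. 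So $a-c-d-e$ is an induced $P_4$ violating the $P_4$-structure condition.
\end{itemize}
You had listed both facts needed for the last case but did not combine them. Neither $b$ nor part~(4) is needed. These three short arguments are exactly the paper's proof.
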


\begin{proof}
    By Lemma \ref{lem:single vertex covers A_3(i)}, there exists a vertex $u\in K$ such that $N_H(K)=N_H(u)$.
    Let $x\in X_{i-1}$ and $a\in B_i\setminus N(K)$ be a neighbor of $x$.
    If $x$ has a non-neighbor $x'$ in $B_{i-2}$, then $x' - q_{i-2} - x - a - N_{B_{i+1}}(a) - N_{B_{i+1}}(u) - u$ is an induced $P_7$. So $X_{i-1}$ is complete to $B_{i-2}$. If $x$ has a non-neighbor $x''$ in $N_{B_i}(u)$, then $x - a - x'' - b$ is a $P_4$-structure where $b\in N_{B_{i+1}}(u)$. So $X_{i-1}$ is complete to $N_{B_i}(u)$. If $x$ has a non-neighbor $x'''\in B_i\setminus N(K)$, then $x'''-a-x-q_{i-2}-q_{i+2}-N_{B_{i+1}}(u)-u$ is an induced $P_7$. Therefore, $X_{i-1}$ is complete to $B_i$. 
\end{proof}

The next lemma is used for coloring in Section \ref{sec:only A3}.

\begin{lemma}\label{lem:maximalityofA3}
    Suppose that $H$ is maximal. 
    For any component $K$ of $A_3(i)$,  $|B_i\setminus N(K)|\ge \omega(K)$.
\end{lemma}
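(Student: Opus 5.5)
We argue by maximality of $H$: if $|B_i\setminus N(K)|<\omega(K)$, we exhibit a strictly larger nice blowup of $C_5$. Let $W=B_i\setminus N(K)$; it is nonempty by Lemma \ref{lem:blowup-threeneighbor} (1). By Lemma \ref{lem:single vertex covers A_3(i)} there is a vertex $u\in K$, universal in $K$, with $N_H(K)=N_H(u)$. Let $Q$ be a maximum clique of $K$; since $u$ is universal in $K$, we may take $u\in Q$. The aim is to show that some enlargement of $H$ using $Q$ is again a nice blowup.

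\textbf{Key steps.}

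\emph{Step 1: show that $Q$ is homogeneous on $H$.} By Lemma \ref{lem:edge in $A_3(i)$}, every vertex of $Q$ has the same neighborhood $N_{B_i}(u)$ in $B_i$. By Lemma \ref{lem:neighborcontainA3i}, the neighborhoods of the vertices of $Q$ in $B_{i-1}\cup B_{i+1}$ form a chain.

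\emph{Step 2: choose the enlarged blowup.} The natural candidate is to replace $B_i$ by $B_i'=N_{B_i}(u)\cup Q$. This is a clique, since $Q$ is complete to $N_{B_i}(u)$. We then have to deal with the removed set $W$.
\begin{itemize}
\item If $|W|<|Q|$, this swap yields a blowup with more vertices.
\item However, $W$ must still attach somewhere. One option is to keep $W$ as part of $B_i$ and add $Q$, but that fails because $Q$ is anticomplete to $W$.
\end{itemize}
So instead I would use the counting itself. The vertices of $W$, together with $N_{B_i}(u)$ and any vertex of $Q$, are pairwise related in the following way: each $w\in W$ and each $q\in Q$ are non-adjacent vertices of $A'_3(i)$. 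By Lemma \ref{lem:nonedge in A_3(i)}, they have containment on one side and disjoint neighborhoods on the other. By Lemma \ref{lem:blowup-threeneighbor} (3), we may assume by symmetry that $W$ is anticomplete to $N_{B_{i+1}}(K)$. Then Lemma \ref{lem:single vertex in A_3(i)} gives the nested structure: $N_{B_{i\pm1}}(w)\subset N_{B_{i\pm1}}(b)$ for every $b\in N_{B_i}(u)$.

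\emph{Step 3: verify the blowup conditions.} I expect the cleanest route is to build a new nice blowup $H'$ with $B_i'=N_{B_i}(u)\cup Q$. Vertices of $W$ then become outside vertices, and all other $B_j$ are kept unchanged. We check:
\begin{itemize}
\item Each vertex of $Q$ has neighbors in $B_{i\pm1}$, since $K\subseteq A_3(i)$.
\item Each vertex of $B_{i\pm1}$ still has a neighbor in $B_i'$. This holds because $q_i\in N_{B_i}(u)$ by Lemma \ref{lem:single vertex in A_3(i)} (1), and $q_i$ is complete to $B_{i\pm 1}$.
\item Anticompleteness to $B_{i\pm2}$ holds by the definition of $A_3(i)$.
\item The $P_4$-structure condition holds by Lemma \ref{lem:generalized P4-structure}, since all new vertices lie in $A'_3(i)$.
\end{itemize}
Then $|V(H')|=|V(H)|-|W|+|Q|>|V(H)|$ if $|W|<\omega(K)$.

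\textbf{Main obstacle.} Maximality is stated with respect to vertex-set inclusion, not cardinality, so Step 3 does not directly contradict the hypothesis. To handle this, I would run the argument with $H$ chosen maximal together with a maximum-size convention, or show that $W$ can be re-added, for example into $A_3(i)$ for $H'$. Checking which notion of maximality the later sections actually use, and adapting the argument to it, is the step I expect to need the most care.
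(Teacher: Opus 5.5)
Your construction and checks are the same as the paper's proof: replace $B_i$ by $N_{B_i}(K)\cup Q$, note $q_i\in N_{B_i}(K)$, and verify the axioms. Your one-line use of Lemma~\ref{lem:generalized P4-structure} for the $P_4$-structure condition is valid, because every class of the new tuple lies in the corresponding $A_3'(j)$, and it covers both of the paper's cases. The detour in Step~2 through Lemmas~\ref{lem:nonedge in A_3(i)} and~\ref{lem:single vertex in A_3(i)} is not needed.

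The obstacle you flag is a genuine gap, and the paper's proof has the same one. It concludes ``which contradicts the maximality of $H$''. But $V(H')=(V(H)\setminus W)\cup Q$ with $W\neq\emptyset$, so $H'$ does not contain $H$, and only maximum \emph{order} is contradicted. The argument never uses that $G$ is basic, and without basicness the conclusion is false for inclusion-maximal $H$. Here is an example.
\begin{itemize}
\item Take $B_1=\{b,w\}$, $B_2=\{q_2,c\}$, $B_3=\{q_3\}$, $B_4=\{q_4\}$, $B_5=\{q_5\}$, with consecutive classes complete except that $wc\notin E(G)$.
\item Add a clique $K=\{k_1,k_2\}$ complete to $\{b,c,q_5\}$, with no other neighbours.
\item A direct check on these nine vertices shows the graph is $(P_7,C_4,C_6,C_7)$-free. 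It helps that $b$ misses only $q_3,q_4$, and that $k_1,k_2$ are adjacent twins.
\item $H$ is a nice blowup, since $q_5$ is complete to $B_1$ and $q_3$ is complete to $B_2$.
\item $K$ is a component of $A_3(1)$ with $|B_1\setminus N(K)|=1<2=\omega(K)$.
\end{itemize}
Yet $H$ is inclusion-maximal. An induced $C_5$ meets every class of a nice blowup exactly once, in cyclic order. So in any nice blowup containing $V(H)\cup\{k_j\}$, the induced $C_5$s $wq_2q_3q_4q_5$ and $k_jcq_3q_4q_5$ would place $w$ and $k_j$ in the same class, although $wk_j\notin E(G)$.

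The repair is in the choice of $H$: take it of maximum order, and your Step~3 then completes the proof. This is compatible with Section~\ref{sec:A_1 is not empty}. The blowup $H'$ still contains the induced $C_5$ $q_1q_2q_3q_4q_5$, because $q_i\in N_{B_i}(K)$. The vertices complete to this $C_5$ are exactly $A_5$, which lies in $S_5$ of the chosen $C$. So choose $H$ of maximum order among nice blowups containing an induced $C_5$ that minimizes $|S_5|$. Then $H'$ is an admissible competitor of larger order, and this choice still implies the inclusion-maximality used in Lemma~\ref{lem:blowup-threeneighbor}.
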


\begin{proof}
    By Lemma \ref{lem:edge in $A_3(i)$}, all vertices in $K$ have the same neighborhood in $B_i$. Suppose that $|B_i\setminus N(K)|<\omega(K)$. Let $M$ be a maximum clique of $K$. Then $B'_i=N_{B_i}(K) \cup M$ is a clique of size bigger than $B_i$. Moreover, $|M|=\omega(K)\ge 2$. Let $x$ and $y$ be two vertices in $M$.
    We now show that $(B_1,\cdots ,B'_i, \cdots ,B_5)$ is still a nice blowup of $C_5$, which contradicts the maximality of $H$. Since each vertex in $B_i\setminus N(K)$ is not complete to $N_{B_{i+1}}(K)$ or $N_{B_{i-1}}(K)$ by Lemma \ref{lem:blowup-threeneighbor}, $q_i\in N_{B_i}(K)$. Therefore, it suffices to check the $P_4$-structure condition. 

    \medskip
    \noindent 
    \textbf{Case 1}: $B_{i-1}$ (and $B_{i+1}$).
    
    \medskip
    If there exists an induced $P_4=a-b-c-x$ with $a\in B_{i-2}$ and $b,c\in B_{i-1}$, then $a-b-c-x-N_{B_{i+1}}(x)-q_{i+2}-a$ is an induced $C_6$. So the $P_4$-structure condition is satisfied for $B_{i-1}$. By symmetry, the $P_4$-structure condition is satisfied for $B_{i+1}$. 
   
    \medskip
    \noindent 
    \textbf{Case 2}: $B_i'$.
    
    \medskip    
    Suppose that $a-b-c-d$ is an induced $P_4$ with $a\in B_{i-1}$, $b,c\in B'_i$, and $d\in B_{i+1}$. By Lemma \ref{lem:neighborcontainA3i} and niceness of $(B_1,\ldots,B_5)$, exactly one of $x$ and $y$ is on this $P_4$. We may assume that $b=x$. If $c \in N_{B_i}(K)$, then $ac \in E(G)$ by Lemma \ref{lem:single vertex in A_3(i)} (2), which contradicts the assumption. Thus, we may assume that $c=y$, which contradicts Lemma \ref{lem:neighborcontainA3i}.
    Therefore, the $P_4$-structure condition is satisfied for $B'_i$.
\end{proof}

\subsection{Multiple $A_3(i)$}
In this subsection, we prove properties involving multiple $A_3(j)$s. Recall that $A'_3(i)= A_3(i)\cup B_i$ for each $i\in [5]$.

    \begin{lemma}\label{lem:no two directed edge pointing to the same B_i}
        Let $v,a\in A'_3(i-1)$ be non-adjacent, and $b,w\in A'_3(i+1)$ be non-adjacent. Then it cannot be the case that $N_{B_i}(v)$ and $N_{B_i}(a)$ are disjoint, and that $N_{B_i}(w)$ and $N_{B_i}(b)$ are disjoint.
    \end{lemma}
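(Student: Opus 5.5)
Suppose for a contradiction that both situations occur. The plan is to find a long induced cycle running through $B_{i-1}$ or $A_3(i-1)$, then $B_i$, then $B_{i+1}$ or $A_3(i+1)$, and closing up through $B_{i+2}$ and $B_{i-2}$.

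\textbf{Setup.} Lemma \ref{lem:nonedge in A_3(i)} applies to $v,a\in A'_3(i-1)$. Their neighborhoods in $B_i$ are disjoint, so they have neighborhood containment in $B_{i-2}$, say $N_{B_{i-2}}(a)\subseteq N_{B_{i-2}}(v)$. In particular they share a neighbor $s\in B_{i-2}$. Symmetrically, $b,w$ have disjoint neighborhoods in $B_i$ and a common neighbor $t\in B_{i+2}$. Pick $v_0\in N_{B_i}(v)$, $a_0\in N_{B_i}(a)$, $w_0\in N_{B_i}(w)$ and $b_0\in N_{B_i}(b)$; all of these lie in the clique $B_i$.

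\textbf{Adjacencies between the two sides.} Lemma \ref{lem:anticompleteA3-1} gives that $A_3(i-1)$ is anticomplete to $A_3(i+1)$. Also $B_{i-1}$ is anticomplete to $B_{i+1}$, and $A_3(i\pm1)$ is anticomplete to $B_{i\mp1}$ by the definition of support. Hence $\{v,a\}$ is anticomplete to $\{b,w\}$. Likewise $\{v,a\}$ is anticomplete to $B_{i+2}$ and $\{b,w\}$ is anticomplete to $B_{i-2}$.

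\textbf{Main step: a five-vertex path through $B_i$ and a long cycle.} Among the four vertices $v,a$ (whose $B_i$-neighborhoods are disjoint) and $w,b$ (whose $B_i$-neighborhoods are disjoint), I would use Lemma \ref{lem:nbd containment for an edge} and $C_4$-freeness to find $x\in\{v,a\}$, $y\in\{w,b\}$ and distinct $p,r\in B_i$ such that $x-p-r-y$ is an induced path. For this I need $N_{B_i}(x)\not\ni r$ and $N_{B_i}(y)\not\ni p$; if every choice fails, the two disjoint pairs must be nested in a way that forces an induced $C_4$ of the form $v-v_0-b_0-b$ or similar, and I would rule that out.

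Then $s-x-p-r-y-t$ is a path, where $s\in N_{B_{i-2}}(x)$ and $t\in N_{B_{i+2}}(y)$. It closes up into an induced cycle of length $6$ or $7$ through $B_{i-2},B_{i+2}$, either via a direct edge $st$ or via $q_{i+2}$ or $q_{i-2}$. The choice of $s$ and $t$ must avoid chords $sp$, $sr$, $tp$, $tr$, but these are automatic since $B_{i-2}$ and $B_{i+2}$ are anticomplete to $B_i$. If the closure is not possible as stated, I would instead exhibit an induced $P_7$, for example $a'-a-a_0-b_0-b-t-\cdots$, or use Lemma \ref{lem:generalized P4-structure}: the path $x-p-r-y$ with $x\in A'_3(i-1)$, $p,r\in A'_3(i)$ and $y\in A'_3(i+1)$ is exactly the forbidden configuration. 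This gives an immediate contradiction once such a path exists.

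\textbf{Main obstacle.} So the crux is showing that the disjointness of both pairs in $B_i$ forces some induced $P_4$ of the form $x-p-r-y$. I would attempt this by examining the $B_i$-neighborhoods of the four vertices in pairs and using Lemma \ref{lem:nbd containment for an edge} on edges $v_0w_0$ and so on. If every cross pair $x,y$ shares a neighbor in $B_i$, then $v$ and $a$ both meet $N_{B_i}(w)$ and also $N_{B_i}(b)$. Combined with the disjointness of the pairs, this yields an induced $C_4$ among $v$, $w$, and vertices of $B_i$, or $v,a$ with two vertices of $B_i$. Handling this case analysis cleanly is the hard part.
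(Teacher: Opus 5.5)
You have the right tool (Lemma \ref{lem:generalized P4-structure} applied to an induced path $x-p-r-y$ with $x\in\{v,a\}$, $p,r\in B_i$, $y\in\{w,b\}$), and your check that $\{v,a\}$ is anticomplete to $\{w,b\}$ is correct. However, the step you call the crux, that the two disjointness assumptions force such a path, is left unproved, and the fallback you sketch would fail. You propose that if no such path exists, one finds an induced $C_4$ among $v,a,w,b$ and vertices of $B_i$. But $\{v,a,w,b\}$ is a stable set and $B_i$ is a clique. Any $4$-cycle on these vertices therefore has the form $x-p-y-r-x$ with $p,r\in B_i$, and it has the chord $pr$. For example, take $N_{B_i}(v)=\{p_1,p_2\}$, $N_{B_i}(a)=\{p_3,p_4\}$, $N_{B_i}(w)=\{p_1,p_3\}$ and $N_{B_i}(b)=\{p_2,p_4\}$. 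Both pairs are disjoint and every cross pair shares a neighbor, yet there is no induced $C_4$. What rules this pattern out is the induced $P_4$ $v-p_2-p_3-w$, not a $C_4$.

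The missing observation is that no case analysis is needed. For non-adjacent $x\in A'_3(i-1)$ and $y\in A'_3(i+1)$, an induced path $x-p-r-y$ with $p,r\in B_i$ exists exactly when $N_{B_i}(x)$ and $N_{B_i}(y)$ are not nested: take $p\in N_{B_i}(x)\setminus N(y)$ and $r\in N_{B_i}(y)\setminus N(x)$. So Lemma \ref{lem:generalized P4-structure} says that every cross pair has nested neighborhoods in $B_i$.

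All four $B_i$-neighborhoods are non-empty by the definition of $A'_3(i\pm1)$. Suppose $N_{B_i}(w)\subseteq N_{B_i}(v)$. Then nestedness of $w$ with $a$ gives either $N_{B_i}(w)\subseteq N_{B_i}(v)\cap N_{B_i}(a)=\emptyset$, or $\emptyset\neq N_{B_i}(a)\subseteq N_{B_i}(w)\subseteq N_{B_i}(v)$. Both are impossible. The same holds with the roles of $v$ and $a$ swapped, so $N_{B_i}(w)\supseteq N_{B_i}(v)\cup N_{B_i}(a)$. Likewise $N_{B_i}(b)\supseteq N_{B_i}(v)\cup N_{B_i}(a)$, so $w$ and $b$ share every vertex of $N_{B_i}(v)$, a contradiction. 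This is the paper's proof.

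Your construction of $s,t$ and the $C_6$/$C_7$ closure through $B_{i\pm2}$ only re-derives Lemma \ref{lem:generalized P4-structure} and can be dropped.
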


    \begin{proof}\label{lem:B_i is complete to two large cliques}
        By Lemma \ref{lem:generalized P4-structure}, every $x\in A'_3(i-1)$ and $y\in A'_3(i+1)$  have neighborhood containment in $B_i$.
        If $N_{B_i}(v)$ and $N_{B_i}(a)$ are disjoint, then each of $N_{B_i}(w)$ and $N_{B_i}(b)$ contains both $N_{B_i}(v)$ and $N_{B_i}(a)$. This implies that $w$ and $b$ have a common neighbor in $B_i$.
    \end{proof}

\begin{lemma}\label{lem:B_i complete to two large cliques in B_{i-1}}
    Let $x,y\in A'_3(i-2)$ be two non-adjacent vertices such that $N_{B_{i-1}}(x)$ and $N_{B_{i-1}}(y)$ are disjoint. Then $A'_3(i)$ is complete to $N_{B_{i-1}}(x)\cup N_{B_{i-1}}(y)$.  
\end{lemma}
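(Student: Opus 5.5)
Fix $z\in A'_3(i)$ and $p\in N_{B_{i-1}}(x)$; by symmetry between $x$ and $y$ it suffices to show $zp\in E(G)$. First, $x$ and $y$ are non-adjacent vertices of $A'_3(i-2)$, so Lemma \ref{lem:nonedge in A_3(i)} applies with index $i-2$. Since they have disjoint neighborhoods in $B_{i-1}$, they have neighborhood containment in $B_{i-3}=B_{i+2}$. In particular, $x$ and $y$ have a common neighbor $c\in B_{i+2}$.

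Next, by Lemma \ref{lem:generalized P4-structure} (applied with the triple $A'_3(i-2),A'_3(i-1),A'_3(i)$), any two vertices of $B_{i-1}$ together with $x$ (respectively $y$) and $z$ cannot form an induced $P_4$. Fix also $p'\in N_{B_{i-1}}(y)$. Then $pp'\in E(G)$, while $py,p'x\notin E(G)$. Now suppose $zp\notin E(G)$, and split according to whether $zp'\in E(G)$.

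\textbf{Case $zp'\in E(G)$.} Then $x-p-p'-z$ is an induced $P_4$: we have $xp,pp',p'z$ as edges, and $xp'\notin E(G)$, $pz\notin E(G)$, and $xz\notin E(G)$ since $A'_3(i-2)$ is anticomplete to $A'_3(i)$ (Lemma \ref{lem:anticompleteA3-1} together with the blowup conditions). This contradicts Lemma \ref{lem:generalized P4-structure}.

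\textbf{Case $zp'\notin E(G)$.} Here $z$ is anticomplete to $\{p,p'\}$. Let $w\in N_{B_{i-1}}(z)$. Since $w$ is adjacent to both $p$ and $p'$, applying Lemma \ref{lem:generalized P4-structure} to $x-p-w-z$ forces $xw\in E(G)$, and similarly $y-p'-w-z$ forces $yw\in E(G)$. Then $w$ is a common neighbor of $x$ and $y$ in $B_{i-1}$, contradicting disjointness. If $p$ or $p'$ happens to be adjacent to $z$, we are in the previous case or done.

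This gives $zp\in E(G)$, and the lemma follows. The main point to verify carefully is the anticompleteness $xz\notin E(G)$ when $x\in B_{i-2}$ or $z\in B_i$. This follows from the blowup definition ($B_{i-2}$ anticomplete to $B_i$) and from the fact that $A_3(i)$ only sees $B_{i-1}\cup B_i\cup B_{i+1}$. The $A_3$–$A_3$ case is Lemma \ref{lem:anticompleteA3-1}. Note that the common neighbor $c\in B_{i+2}$ turns out not to be needed.
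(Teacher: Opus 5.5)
Your proof is correct and takes essentially the same route as the paper. Both arguments rule out the two bad configurations by exhibiting an induced $P_4$ forbidden by Lemma~\ref{lem:generalized P4-structure}:
\begin{itemize}
\item $z$ sees exactly one side: your $x-p-p'-z$ corresponds to the paper's $v-a-b-y$.
\item $z$ sees neither $p$ nor $p'$: your choice of $w\in N_{B_{i-1}}(z)$ corresponds to the paper's $c$.
\end{itemize}
Your explicit check that $A'_3(i-2)$ is anticomplete to $A'_3(i)$, which is needed for these $P_4$s to be induced, is a detail the paper leaves implicit. You are also right that the common neighbor in $B_{i+2}$ is not needed.
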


\begin{proof}
    We first show that each vertex $v\in A'_3(i)$ is either complete or anticomplete to $N_{B_{i-1}}(x)\cup N_{B_{i-1}}(y)$. Suppose that $v$ has a neighbor $a\in N_{B_{i-1}}(x)$. If $v$ has a non-neighbor $b\in N_{B_{i-1}}(y)$, then $v-a-b-y$ contradicts Lemma \ref{lem:generalized P4-structure}. So $v$ is complete to $N_{B_{i-1}}(y)$. It follows that $v$ is complete to $N_{B_{i-1}}(x)$. Now suppose that $v$ is anticomplete to $N_{B_{i-1}}(x)\cup N_{B_{i-1}}(y)$. Let $c\in B_{i-1}$ be a neighbor of $v$ and $d\in N_{B_{i-1}}(x)$. Then $v-c-d-x$ contradicts Lemma \ref{lem:generalized P4-structure}. So $A'_3(i)$ is complete to $N_{B_{i-1}}(x) \cup N_{B_{i-1}}(y)$.
\end{proof}

\begin{lemma}\label{lem:compare nbd of different A_3(i) in B_j}
    The neighbors of $A_3(i+2)$ in $B_{i-2}$ are complete to $A_3(i-2)$.  
\end{lemma}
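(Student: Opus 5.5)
Fix $u\in A_3(i+2)$, a neighbor $a\in N_{B_{i-2}}(u)$, and $w\in A_3(i-2)$, and suppose for a contradiction that $aw\notin E(G)$. Recall that $A_3(i+2)$ has neighbors in $B_{i+1}\cup B_{i+2}\cup B_{i-2}$ and $A_3(i-2)$ has neighbors in $B_{i+2}\cup B_{i-2}\cup B_{i-1}$. Also $u$ is anticomplete to $B_{i-1}\cup B_i$, and $w$ is anticomplete to $B_i\cup B_{i+1}$. By Lemma~\ref{lem:anticompleteA3}, $uw\notin E(G)$.

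The plan is to build a forbidden hole or a long induced path out of $w$, $a$, $u$ and the blowup. First I would show that $u$ and $w$ have no common neighbor in $B_{i+2}$. Suppose $c\in B_{i+2}$ is adjacent to both. If $ac\in E(G)$, then $w-c-a$ is an induced $P_3$ ending in $a$, and this configuration should be excluded using $C_4$-freeness together with a neighbor of $w$ in $B_{i-2}$. If $ac\notin E(G)$, then $c-u-a-\cdots$ can be closed through $w$ and $B_{i-1}$, and I expect an induced $C_4$ or $C_6$ to appear. Second, I would take $d\in N_{B_{i-2}}(w)$ and $e\in N_{B_{i-1}}(w)$. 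Then $a-d-w$ is a path, and $ud\notin E(G)$ would give $u-a-d-w$ as an induced $P_4$ with $u\in A'_3(i+2)$, $a,d\in A'_3(i-2)$ and $w\in A'_3(i-2)$. This is not directly the pattern of Lemma~\ref{lem:generalized P4-structure}, so I would instead use $u-a-d-e$ with $e\in B_{i-1}$. That is an induced $P_4$ over $A'_3(i+2)$, $A'_3(i-2)$ (twice) and $A'_3(i-1)$, which Lemma~\ref{lem:generalized P4-structure} forbids provided $ae\notin E(G)$ and $de\in E(G)$.

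So the cases split by whether $u$ and $w$ share neighbors in $B_{i-2}$ and by the adjacencies between $a$ and $N_{B_{i-1}}(w)$.

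1. If $a$ has a neighbor $e\in N_{B_{i-1}}(w)$, then $a-e-w$ plus a common neighbor of $u,w$ in $B_{i-2}$, if one exists, gives a $C_4$. Otherwise, route through a neighbor $c\in N_{B_{i+2}}(u)$: the cycle $w-e-a-u-c-\cdots-w$ closes via $N_{B_{i+2}}(w)$ or via $B_{i+1}-q_i-B_{i-1}$. This should give an induced $C_6$ or $C_7$, in the style of Lemma~\ref{lem:anticompleteA3-1}.
2. If $a$ is anticomplete to $N_{B_{i-1}}(w)$, take $e\in N_{B_{i-1}}(w)$ and $a'\in N_{B_{i-1}}(a)$, which exist by niceness. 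Then $u-a-a'-\cdots$ together with the route $w-e-q_i-N_{B_{i+1}}(u)-u$ yields an induced cyclic sequence of length at least $6$, contradicting Lemma~\ref{lem:cyclicsequence}. Alternatively it yields a bad $P_7$ such as $w-e-q_i-q_{i+1}-u-a-\cdots$, contradicting Lemma~\ref{lem:badP7}.

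Lemma~\ref{lem:nonedge in A_3(i)} applied to the non-adjacent pair $a,w\in A'_3(i-2)$ helps here. It says $a$ and $w$ have containment in one of $B_{i+2},B_{i-1}$ and disjoint neighborhoods in the other, which halves the cases.

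The main obstacle is the subcase where $u$ and $w$ have a common neighbor in $B_{i-2}$ but $a$ is not one of them. The short cycles then collapse, and I would need Lemma~\ref{lem:blowup-threeneighbor}(3) for $w$ to pick a non-neighbor of $w$ in $B_{i-2}$ with controlled adjacency. That would produce an induced $P_7$ of the form $x-\cdots-w-\cdots$ running around the blowup, as in the proof of Lemma~\ref{lem:anticompleteA3}.
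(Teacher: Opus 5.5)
Your case split is the right one: whether $a$ has a neighbor in $N_{B_{i-1}}(w)$ is exactly the dichotomy that Lemma~\ref{lem:nonedge in A_3(i)} gives for the non-adjacent pair $a,w\in A'_3(i-2)$. However, neither branch is actually closed, and the concrete configurations you propose all pick up chords. The missing tool is Lemma~\ref{lem:single vertex in A_3(i)}(2), whose hypothesis holds up to symmetry by Lemma~\ref{lem:blowup-threeneighbor}(3). It says that for $x\in A_3(j)$, $N_{B_j}(x)$ is complete to $N_{B_{j-1}}(x)\cup N_{B_{j+1}}(x)$.

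\emph{Applied to $w$:} $N_{B_{i-2}}(w)$ is complete to $N_{B_{i-1}}(w)\cup N_{B_{i+2}}(w)$. So the $C_4$s you hope for, $w-c-a-d$ and $a-e-w-d$ with $d\in N_{B_{i-2}}(w)$, always have the chord $cd$, resp.\ $ed$. In particular, your preliminary claim that $u,w$ share no neighbor in $B_{i+2}$ is never established.

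\emph{Applied to $u$:} $N_{B_{i+2}}(u)$ is complete to $a$. So your subcase $ac\notin E(G)$ is vacuous, and your Case~1 cycle $w-e-a-u-c-\cdots$ with $c\in N_{B_{i+2}}(u)$ has the chord $ac$ however it is closed. The closing through $q_i$ additionally meets $e$.

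In Case~2, the cyclic sequence through $a'\in N_{B_{i-1}}(a)$ and $e-q_i$ is not induced, since $a'$ is adjacent to both $e$ and $q_i$. The bad $P_7$ $w-e-q_i-q_{i+1}-u-a-\cdots$ is never completed. The ``main obstacle'' you flag is left open.

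Once you have the fact about $u$, both branches close quickly, and common neighbors of $u,w$ in $B_{i-2}$ play no role.

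\emph{First branch.} Suppose $a,w$ have disjoint neighborhoods in $B_{i+2}$. Then they have a common neighbor $e\in B_{i-1}$. For $c\in N_{B_{i+2}}(w)$ we get $ac\notin E(G)$, hence $uc\notin E(G)$. Then $c-w-e-a-u-B_{i+1}-c$ is an induced cyclic sequence of length $6$, contradicting Lemma~\ref{lem:cyclicsequence}. Note that you return to $c$ through $u$'s neighbor in $B_{i+1}$, not in $B_{i+2}$.

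\emph{Second branch.} Suppose $a,w$ have disjoint neighborhoods in $B_{i-1}$. Take $e\in N_{B_{i-1}}(w)$ and $a'\in N_{B_{i-1}}(a)$.
\begin{itemize}
\item If $u$ misses some $g\in B_{i+1}$, then $w-e-a'-a-u-N_{B_{i+1}}(u)-g$ is an induced $P_7$.
\item Otherwise $u$ is complete to $B_{i+1}$. Lemma~\ref{lem:blowup-threeneighbor}(1) gives $s\in B_{i+2}\setminus N(u)$. In (3), the option ``anticomplete to $N_{B_{i+1}}(u)$'' is impossible because $s$ has a neighbor in $B_{i+1}$. So (3) yields $as\notin E(G)$. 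Then $w-e-a'-a-u-q_{i+1}-s$ is a bad $P_7$, contradicting Lemma~\ref{lem:badP7}.
\end{itemize}
This is the paper's proof.
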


\begin{proof}
    Suppose not. 
    Let $w\in B_{i-2}$ be a neighbor of $v'\in A_3(i+2)$ and have a non-neighbor
    $v\in A_3(i-2)$.
    Assume first that $v$ and $w$ have disjoint neighborhoods in $B_{i+2}$. 
    By Lemma \ref{lem:nonedge in A_3(i)}, 
    $v$ and $w$ have a common neighbor $c$ in $B_{i-1}$. 
    Let $u\in B_{i+2}$ be a neighbor of $v$. 
    By Lemma \ref{lem:single vertex in A_3(i)} (2), $uv'\notin E(G)$. 
    Then $u-v-c-w-v'-B_{i+1}-u$ is an induced induced cyclic sequence.
    So we may assume that $v$ and $w$ have disjoint neighborhoods in $B_{i-1}$. 
    If $v'$ is not complete to $B_{i+1}$, then $v-N_{B_{i-1}}(v)-N_{B_{i-1}}(w)-w-v'-N_{B_{i+1}}(v')-(B_{i+1}\setminus N(v'))$ is an induced $P_7$. 
    So $v'$ is complete to $B_{i+1}$. 
    Then $v-N_{B_{i-1}}(v)-N_{B_{i-1}}(w)-w-v'-q_{i+1}-(B_{i+2}\setminus N(v'))$ is a bad $P_7$.
\end{proof}

\begin{lemma}\label{lem:complete and anti on w}
    Let $v\in A_3(i-2)$ and $u\in A_3(i+2)$ such that $B_{i-2}\setminus N(v)$ is anticomplete to $N_{B_{i+2}}(v)$ and $B_{i+2}\setminus N(u)$ is anticomplete to $N_{B_{i-2}}(u)$. 
    Then 
    \begin{itemize}
        \item[$(1)$] $B_{i-2}\setminus N(v)$ is anticomplete to $B_{i+2}\setminus N(u)$.  
        \item[$(2)$] For any $w\in B_{i-2}\setminus N(v)$ and $w'\in B_{i+2}\setminus N(u)$, 
        $N_{B_{i+2}}(w)$ is complete to $N_{B_{i-2}}(w')$.
    \end{itemize}
\end{lemma}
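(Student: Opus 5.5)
The plan is to prove $(1)$ by contradiction using the two anticompleteness hypotheses and $C_4$-freeness, and then to obtain $(2)$ almost immediately from $(1)$.

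\emph{Preliminary facts.} By Lemma \ref{lem:anticompleteA3}, $uv\notin E(G)$, since $A_3(i-2)$ and $A_3(i+2)=A_3(i-3)$ have consecutive indices. Applying Lemma \ref{lem:single vertex in A_3(i)} to $v$, with the roles of $i-1$ and $i+1$ swapped (its hypothesis concerns $B_{i+2}=B_{(i-2)-1}$), gives $q_{i-2}\in N(v)$ and $q_{i+2}\notin N(v)$. Symmetrically, applying it to $u$ gives $q_{i+2}\in N(u)$ and $q_{i-2}\notin N(u)$. Part $(2)$ of that lemma also gives completeness of $N_{B_{i-2}}(v)$ to $N_{B_{i+2}}(v)\cup N_{B_{i-1}}(v)$, and similarly for $u$.

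\emph{Proof of $(1)$.} Suppose $w\in B_{i-2}\setminus N(v)$ is adjacent to $w'\in B_{i+2}\setminus N(u)$. Pick $a\in N_{B_{i+2}}(v)$ and $b\in N_{B_{i-2}}(u)$. By hypothesis $wa\notin E(G)$ and $w'b\notin E(G)$. Since each $B_j$ is a clique, $aw',bw\in E(G)$. If $ab\in E(G)$, then $b-w-w'-a-b$ is an induced $C_4$. Hence $ab\notin E(G)$, and $b-w-w'-a$ is an induced $P_4$ whose ends attach to $u$ and $v$ respectively.

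I would then extend this $P_4$ through $u$ and $v$ around the other side of $H$, namely through $N_{B_{i+1}}(u)$, then $q_i$ (or a suitable vertex of $B_i$), then $N_{B_{i-1}}(v)$. This should produce an induced cyclic sequence of length at least $6$, contradicting Lemma \ref{lem:cyclicsequence}, or a bad $P_7$, contradicting Lemma \ref{lem:badP7}. The possible chords to control are $vb$, $vw'$, $ua$, $uw$. Each of these creates a short cycle ($v-b-w-w'-a-v$ when $vb\in E$, and the like) or forces a $P_4$-structure violation via Lemma \ref{lem:generalized P4-structure}. I would split into cases on these adjacencies, using $q_{i+2}\notin N(v)$ and $q_{i-2}\notin N(u)$ as convenient substitutes for $a$ and $b$ where needed.

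I expect this case analysis to be the main obstacle: finding, in each case, the right substitute vertex so that the resulting cycle is induced and of length $6$ or $7$, or the path is a bad $P_7$. My first attempt would be to choose $a=q_{i-2}$-side witnesses that minimise the chords.

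\emph{Proof of $(2)$.} Suppose $x\in N_{B_{i+2}}(w)$ and $y\in N_{B_{i-2}}(w')$ are non-adjacent. Then $x\neq w'$ and $y\neq w$, and $ww'\notin E(G)$ by $(1)$. As $B_{i+2}$ and $B_{i-2}$ are cliques, $xw',yw\in E(G)$. Therefore $w-x-w'-y-w$ is an induced $C_4$, a contradiction. Hence $N_{B_{i+2}}(w)$ is complete to $N_{B_{i-2}}(w')$.
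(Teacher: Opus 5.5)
\textbf{Gap in part $(1)$.} Your proof of $(2)$ is correct and is the paper's argument. The gap is in $(1)$. You rule out $ab\in E(G)$ but then only sketch a case analysis for $ab\notin E(G)$ and never carry it out, so $(1)$ is not proved.

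The chord argument you outline also does not work as stated. If $vb\in E(G)$ and $vw'\notin E(G)$, then $v-b-w-w'-a-v$ is an induced $C_5$, and $G$ is allowed to contain those. What actually kills that subcase is that $b\in N_{B_{i-2}}(v)$ and $a\in N_{B_{i+2}}(v)$. The completeness from Lemma~\ref{lem:single vertex in A_3(i)}(2), which you recorded yourself, then gives $ab\in E(G)$. The same happens symmetrically if $ua\in E(G)$. So the only genuinely open subcase is $vb\notin E(G)$ and $ua\notin E(G)$.

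\textbf{Missing ingredient.} What you need is Lemma~\ref{lem:compare nbd of different A_3(i) in B_j}: the neighbors of $A_3(i+2)$ in $B_{i-2}$ are complete to $A_3(i-2)$.
\begin{itemize}
\item Applied to your $b\in N_{B_{i-2}}(u)$, it gives $bv\in E(G)$, so the open subcase never occurs.
\item Lemma~\ref{lem:single vertex in A_3(i)}(2) for $v$ then forces $ab\in E(G)$.
\item Hence $w-w'-a-b-w$ is an induced $C_4$. The non-edges $wa$ and $w'b$ come from the two anticompleteness hypotheses, and distinctness follows from $w\notin N(v)$ and $w'\notin N(u)$.
\end{itemize}
This is the paper's entire proof of $(1)$.

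Without that lemma, your plan of routing through $N_{B_{i+1}}(u)$, $B_i$ and $N_{B_{i-1}}(v)$ amounts to reproving it. Its proof in the paper is a genuine argument, using an induced cyclic sequence, an induced $P_7$ and a bad $P_7$, and the proposal does not supply it.
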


\begin{proof}
    We prove one by one.

    \medskip
    \noindent (1)
    Let $v'\in N_{B_{i+2}}(v)$ and $u'\in N_{B_{i-2}}(u)$. 
    Then $v'w, u'w'\notin E(G)$.
    By Lemma \ref{lem:compare nbd of different A_3(i) in B_j}, $v'u,u'v\in E(G)$. 
    By Lemma \ref{lem:single vertex in A_3(i)} (2), $v'u'\in E(G)$.  
    If $w\in B_{i-2}\setminus N(v)$ and $w'\in B_{i+2}\setminus N(u)$ with $ww'\in E(G)$, then $w-w'-v'-u'-w$ is an induced $C_4$.
    This proves (1). 

    \medskip
    \noindent (2)
    Suppose that there are $s\in N_{B_{i+2}}(w)$ and $s'\in N_{B_{i-2}}(w')$ with $ss'\notin E(G)$. Then $w-s-w'-s'-w$ is an induced $C_4$.
    This proves (2).
\end{proof}

\begin{lemma}\label{lem:cross four cliques}
    Let $a_1,b_1$ be two non-adjacent vertices in $A'_3(i-1)$ and $a_2,b_2$ be two non-adjacent vertices in $A'_3(i-2)$ such that $N_{B_{i-2}}(a_1)$ and $N_{B_{i-2}}(b_1)$ are disjoint, and $N_{B_{i-1}}(a_2)$ and $N_{B_{i-1}}(b_2)$ are disjoint. 
    Then $N_{B_{i-2}}(a_1)\cup N_{B_{i-2}}(b_1)\cup N_{B_{i-1}}(a_2)\cup N_{B_{i-1}}(b_2)$ is a clique. 
\end{lemma}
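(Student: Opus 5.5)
Write $S_1=N_{B_{i-2}}(a_1)$, $T_1=N_{B_{i-2}}(b_1)$, $S_2=N_{B_{i-1}}(a_2)$ and $T_2=N_{B_{i-1}}(b_2)$. The sets $S_1\cup T_1\subseteq B_{i-2}$ and $S_2\cup T_2\subseteq B_{i-1}$ each lie in a single clique $B_j$. So the statement reduces to showing that $S_1\cup T_1$ is complete to $S_2\cup T_2$. By the symmetry $a_1\leftrightarrow b_1$ and $a_2\leftrightarrow b_2$, it suffices to show that every $s\in S_1$ is adjacent to every $t\in S_2$.

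First I collect the information the hypotheses give for free. Since $a_1b_1\notin E(G)$ and their neighborhoods in $B_{i-2}$ are disjoint, Lemma~\ref{lem:nonedge in A_3(i)} says they have neighborhood containment in $B_i$; in particular they have a common neighbor $c_1\in B_i$. Likewise $a_2,b_2$ have a common neighbor $c_2\in B_{i+2}$. Next I record which of $a_1,b_1$ see $t$ and which of $a_2,b_2$ see $s$. Note that $t\in B_{i-1}\subseteq A'_3(i-1)$ and $s\in B_{i-2}\subseteq A'_3(i-2)$, so Lemma~\ref{lem:generalized P4-structure} applies to the pairs $\{a_1,t\}\subseteq A'_3(i-1)$ and $\{s,a_2\}\subseteq A'_3(i-2)$. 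A first application is that $a_1$ and $b_1$ cannot both be anticomplete to $\{t,a_2,b_2\}$ in a way that creates an induced $P_4$ $u-a_1-t-v$, and similarly on the other side.

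Now suppose for a contradiction that $st\notin E(G)$. I would split into cases according to whether $a_1t\in E(G)$ and whether $a_2s\in E(G)$.

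\textbf{Case $a_1t\in E(G)$ and $a_2s\in E(G)$.} Then $s-a_1-t-a_2-s$ is a $4$-cycle, and $sa_1$ and $ta_2$ are edges by definition. Since $st\notin E(G)$, $C_4$-freeness forces $a_1a_2\in E(G)$, and then $a_1,a_2$ play symmetric roles. I would bring in $b_1$ (non-adjacent to $a_1$, with $N_{B_{i-2}}(b_1)$ disjoint from $S_1$) together with $T_1$, and produce either an induced $P_4$ $b_1'-b_1-\cdot-\cdot$ violating Lemma~\ref{lem:generalized P4-structure}, or an induced $C_4$ through $c_1$.

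\textbf{Case $a_1t\notin E(G)$.} Here $s-a_1$, $a_1-c_1$, $c_1$ and $B_{i-1}$, $t-a_2$, $a_2-c_2$ should close into an induced cyclic sequence through $B_{i+1}$ of length at least $6$, contradicting Lemma~\ref{lem:cyclicsequence}. Alternatively I would exhibit the induced $P_4$ $a_2-t-q-a_1$ or $t-s'-s-a_1$ with the middle two vertices in $A'_3(i-1)$ or $A'_3(i-2)$, and invoke Lemma~\ref{lem:generalized P4-structure}. The case $a_2s\notin E(G)$ is the mirror image.

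The main obstacle is the bookkeeping of adjacencies between $\{a_1,b_1\}$ and $\{a_2,b_2\}$. These vertices live in $A_3$ and not just in $H$, and Lemma~\ref{lem:anticompleteA3} only forbids edges between $A_3(i-1)$ and $A_3(i-2)$ when both vertices are outside $H$. When one of them is a $B$-vertex, adjacency is possible, and that is exactly where the $C_4$ arguments do the work. My first attempt is to use Lemma~\ref{lem:anticompleteA3} to kill all $A_3$–$A_3$ adjacencies. That reduces the remaining configurations to those in which at least one of $a_1,a_2$ lies in $B$, where $P_4$-structure of the nice blowup and Lemma~\ref{lem:single vertex in A_3(i)}(2) should finish the argument.
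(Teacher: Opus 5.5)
\textbf{There is a genuine gap: the key ingredient is missing.} That ingredient is Lemma~\ref{lem:compare nbd of different A_3(i) in B_j}. In the form needed here, it says that neighbors of $A_3(i-2)$ in $B_{i-1}$ are complete to $A_3(i-1)$, and, by reflection, neighbors of $A_3(i-1)$ in $B_{i-2}$ are complete to $A_3(i-2)$.

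Your Case~2 forces $a_1\in A_3(i-1)$: if $a_1\in B_{i-1}$, then either $a_1t\in E(G)$ or $a_1=t$, and the latter gives $st\in E(G)$. This is exactly where that lemma is needed, and the tools you propose instead do not apply:
\begin{itemize}
\item In $a_2-t-q-a_1$ and $t-s'-s-a_1$ the endpoints are not in the two classes flanking the middle class. In the second, both endpoints lie in $A'_3(i-1)$. So Lemma~\ref{lem:generalized P4-structure} says nothing about these paths.
\item The ``cyclic sequence through $B_{i+1}$'' is never specified, and it cannot close up through $st$, which is the non-edge.
\item Lemma~\ref{lem:anticompleteA3} does not remove the configuration $a_1\in A_3(i-1)$, $a_2\in A_3(i-2)$; you still have to prove completeness there.
\end{itemize}
With the compare lemma that configuration is immediate, even with your own Case~1 idea. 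The lemma gives $a_1t,a_2s\in E(G)$, Lemma~\ref{lem:anticompleteA3} gives $a_1a_2\notin E(G)$, and so $s-a_1-t-a_2-s$ is an induced $C_4$. The paper instead uses $S_1\subseteq N_{B_{i-2}}(a_2)$ together with Lemma~\ref{lem:single vertex in A_3(i)}(2).

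\textbf{The cases with a vertex in $H$ are not handled either.} Your plan there is ``I would bring in $b_1$'', and the proposed $C_4$ ``through $c_1$'' cannot exist, because $s\in B_{i-2}$ is anticomplete to $B_i\ni c_1$. What works is to break the symmetry you invoked. Since $B_{i-1}$ and $B_{i-2}$ are cliques, you may take $b_1\in A_3(i-1)$ and $b_2\in A_3(i-2)$. Then prove, in order:
\begin{enumerate}
\item $T_1$ is complete to $T_2$;
\item $S_1$ is complete to $T_2$, and $T_1$ is complete to $S_2$;
\item $S_1$ is complete to $S_2$.
\end{enumerate}
A pair whose two defining vertices both lie in $A_3$ is handled by the compare lemma. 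A pair in which one defining vertex lies in $B$ is handled by an induced $C_4$ that uses an earlier step.

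For example, suppose $a_1\in B_{i-1}$ and $st\notin E(G)$. Take $k\in T_1$. Then $a_1-s-k-t-a_1$ is an induced $C_4$: $kt\in E(G)$ by the earlier step, and $a_1k\notin E(G)$ because $S_1\cap T_1=\emptyset$. Your reduction to the single pair $(S_1,S_2)$ hides this dependency on the other pairs, and that dependency is what makes the $B$-vertex cases go through.
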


\begin{proof}
    Let $K_{a_1}=N_{B_{i-2}}(a_1)$, $K_{a_2}=N_{B_{i-1}}(a_2)$, $K_{b_1}=N_{B_{i-2}}(b_1)$ and $K_{b_2}=N_{B_{i-1}}(b_2)$. 
    Note that $K_{a_j}\cup K_{b_j}$ is a clique for $j=1,2$. 
    By symmetry, we may assume that $b_1\in A_3(i-1)$ and $b_2\in A_3(i-2)$.
    We show that $K_{a_1}\cup K_{b_1}$ is complete to $K_{a_2}\cup K_{b_2}$. 
    By Lemma \ref{lem:compare nbd of different A_3(i) in B_j}, $K_{b_1}\subseteq N(b_2)$. 
    By Lemma \ref{lem:single vertex in A_3(i)} (2), $K_{b_1}$ is complete to $K_{b_2}$. 
    Suppose first that $K_{a_1}$ is not complete to $K_{b_2}$. 
    Then $a_1\in B_{i-1}$. 
    Let $s\in K_{a_1}$ and $s'\in K_{b_2}$ with $ss'\notin E(G)$. 
    Since $s'a_1\in E(G)$, $a_1-s-K_{b_1}-s'-a_1$ is an induced $C_4$. 
    So $K_{a_1}$ is complete to $K_{b_2}$. 
    By symmetry, $K_{a_2}$ is complete to $K_{b_1}$.
    Suppose that $K_{a_1}$ is not complete to $K_{a_2}$. 
    Then we may assume that $a_1\in B_{i-1}$ and $a_2\in B_{i-2}$. 
    By Lemma \ref{lem:complete and anti on w} (2), $K_{a_1}$ is complete to $K_{a_2}$, a contradiction.
\end{proof}

\subsection{Small vertex argument for $A_3(i)$}

In this subsection, we apply the small vertex argument for some vertices in $A'_3(i)$ to conclude that certain sets have large sizes.

\begin{lemma}[Small vertex argument for $A_3(i)$]\label{lem:small vertex argument for simplicial vertices in A_3(i-2)}
    Let $v$ be a $(A_3(i),H)$-minimal simplicial vertex.  
    If $v$ has no neighbors in $A_1\cup A_2$, then both $N(v)\setminus B_{i-1}$ and $N(v)\setminus B_{i+1}$ are cliques and so $|N_{B_{i-1}}(v)|,|N_{B_{i+1}}(v)|>\ceil{\frac{\omega}{4}}$.
\end{lemma}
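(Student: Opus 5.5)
First I pin down $N(v)$. Since $v\in A_3(i)$ has no neighbors in $A_1\cup A_2$, and $A_0=\emptyset$ (Lemma \ref{lem:A0cliqueseparator}), every neighbor of $v$ lies in $H\cup A_3\cup A_5$. The sets $A_3(i\pm1)$ and $A_3(i\pm 2)$ are anticomplete to $A_3(i)$ by Lemmas \ref{lem:anticompleteA3} and \ref{lem:anticompleteA3-1}. Hence
\[
N(v)\subseteq N_{A_3(i)}(v)\cup N_{B_{i-1}}(v)\cup N_{B_i}(v)\cup N_{B_{i+1}}(v)\cup A_5 .
\]
By Lemma \ref{lem:blowup-fiveneighbor1}, $A_5$ is a clique complete to $V(H)\cup A_3$. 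So $A_5$ can be ignored when checking cliqueness, and it suffices to show that $N(v)\cap(A_3(i)\cup B_{i-1}\cup B_i)$ is a clique. The set $N(v)\setminus B_{i-1}$ is symmetric.

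The plan is to apply Lemma \ref{lem:minimal simplicial vertex} with $X=A_3(i)$ and $Y=B_i$, where $Y$ is a clique. Two difficulties arise. First, the lemma is stated for $Y=H$, which is not a clique. Second, it does not by itself handle $B_{i-1}$. I would argue directly instead.

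\textbf{Clique argument.}
\begin{enumerate}[(a)]
\item $N_{A_3(i)}(v)$ is a clique, because $v$ is simplicial in $A_3(i)$.
\item $N_{A_3(i)}(v)$ is complete to $N_{B_i}(v)$. By Lemma \ref{lem:edge in $A_3(i)$}, adjacent vertices of $A_3(i)$ have equal neighborhoods in $B_i$, so each $u\in N_{A_3(i)}(v)$ has $N_{B_i}(u)=N_{B_i}(v)$.
\item $N_{B_{i-1}}(v)$ is complete to $N_{B_i}(v)$. If $a\in N_{B_{i-1}}(v)$ and $b\in N_{B_i}(v)$ were non-adjacent, then for $c\in N_{B_{i+1}}(v)$ either $a-v-b-q_{i+1}$ contradicts Lemma \ref{lem:generalized P4-structure}, or, when $bc\notin E$, a $C_4$ or generalized-$P_4$ arises. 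This is essentially the argument of Lemma \ref{lem:single vertex in A_3(i)} (2).
\item $N_{A_3(i)}(v)$ is complete to $N_{B_{i-1}}(v)$. This is where minimality is used. Suppose $u\in N_{A_3(i)}(v)$ and $a\in N_{B_{i-1}}(v)$ are non-adjacent. Lemma \ref{lem:neighborcontainA3i} gives comparability of $N_H(u)$ and $N_H(v)$ on $B_{i-1}\cup B_i\cup B_{i+1}$, so $N_H(u)\subsetneq N_H(v)$. If $u$ is simplicial in $A_3(i)$, then $u\in N_{A_3(i)}[v]$ equals $N_{A_3(i)}[s']$, and this contradicts the minimality of $N_H(v)$. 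Otherwise $u$ has a neighbor $w\in A_3(i)$ non-adjacent to $v$. Then $v-u-w$ is an induced $P_3$, and Lemma \ref{lem:P3 in A_3(i)} gives $N_H(v)\subseteq N_H(u)$, which contradicts the strict containment.
\end{enumerate}

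Combining (a)–(d) with the corresponding statements for $B_{i+1}$ shows that both $N(v)\setminus B_{i+1}$ and $N(v)\setminus B_{i-1}$ are cliques.

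\textbf{Size bound.} $G$ is basic, so $v$ is not small, giving $d(v)\ge\ceil{\frac54\omega}$. Also $|N(v)\setminus B_{i+1}|\le\omega-1$, since adding $v$ still gives a clique. Therefore
\[
|N_{B_{i+1}}(v)|\ge\ceil{\tfrac54\omega}-(\omega-1)>\ceil{\tfrac{\omega}{4}},
\]
and symmetrically for $B_{i-1}$. The extra $+1$ in the strict inequality needs a short check, e.g.\ via $\ceil{\frac54\omega}-\omega\ge\ceil{\frac{\omega}{4}}$.

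The main obstacle is step (d). In particular I need to verify that "minimal simplicial" yields the contradiction in both subcases: when the minimal vertex $v$ differs from the original simplicial vertex $s'$, and when $u$ is not simplicial.
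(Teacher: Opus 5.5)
Your proof is correct and is essentially the paper's own argument. The paper likewise reduces, via Lemmas \ref{lem:blowup-fiveneighbor1}, \ref{lem:single vertex in A_3(i)}~(2) and \ref{lem:edge in $A_3(i)$}, to a non-adjacent pair with one vertex in $A_3(i)$ and the other in $B_{i\pm1}$; it then uses the minimality of $v$ when that $A_3(i)$-vertex is simplicial in $A_3(i)$ and Lemma \ref{lem:P3 in A_3(i)} when it is not, exactly as in your steps (a)--(d).

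For step (c), replace the vague case split on $c$ by a direct citation of Lemma \ref{lem:single vertex in A_3(i)}~(2). Its hypothesis holds up to the symmetry $i-1\leftrightarrow i+1$ by Lemma \ref{lem:blowup-threeneighbor}~(3), so $v$ misses $q_{i-1}$ or $q_{i+1}$, and this yields a $C_4$ or a forbidden $P_4$. The final bound then needs no extra check, since $\ceil{\frac{5}{4}\omega}-\omega=\ceil{\frac{\omega}{4}}$ exactly.
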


\begin{proof}
    Suppose not. 
    By symmetry, we may assume that there are two non-adjacent vertices $s,s'$ in $N(v)\setminus B_{i-1}$. 
    Note that $N(v)\setminus B_{i-1}\subseteq B_{i+1}\cup B_{i}\cup A_3(i)\cup A_5$. 
    By Lemmas \ref{lem:blowup-fiveneighbor1}, \ref{lem:single vertex in A_3(i)} (2) and \ref{lem:edge in $A_3(i)$},
    we may assume $s\in A_3(i)$ and $s'\in B_{i+1}$.
    Suppose first that $s$ is simplicial in $A_3(i)$. By the choice of $v$, $s$ has a neighbor $s''\in B_{i-1}\cup B_{i+1}$ with $s''v\notin E(G)$. This contradicts Lemmas \ref{lem:nbd containment for an edge} and \ref{lem:generalized P4-structure}.
    So $s$ is not simplicial in $A_3(i)$. 
    Then $s$ has a neighbor $s''\in A_3(i)$ with $s''v\notin E(G)$. 
    Now $s''-s-v$ is an induced $P_3$ in $A_3(i)$, which contradicts Lemma \ref{lem:P3 in A_3(i)}.
\end{proof}

\begin{lemma}[Small vertex argument for non-neighbors of $A_3(i)$ in $B_i$]\label{lem:smv on w}
    Suppose that $A_3(i)\neq \emptyset$ and $B_{i}$ is anticomplete to $A_1\cup A_2$. 
    If $w\in B_{i}\setminus N(A_3(i))$ has minimal neighborhood in $H$, then
    $N(w)\setminus B_{i-1}$ and $N(w)\setminus B_{i+1}$ are cliques, and so $|N_{B_{i-1}}(w)|,|N_{B_{i+1}}(w)|>\ceil{\frac{\omega}{4}}$.
\end{lemma}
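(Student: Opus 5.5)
By symmetry it suffices to show that $N(w)\setminus B_{i-1}$ is a clique. The size bound then follows exactly as in Lemma \ref{lem:L_i is large}. Since $G$ has no small vertices, $d(w)\ge\ceil{\frac54\omega}$. Because $N(w)\setminus B_{i-1}$ is a clique, it has at most $\omega$ vertices, so $|N_{B_{i-1}}(w)|\ge \ceil{\frac54\omega}-\omega\ge\ceil{\frac{\omega}{4}}$. The strict inequality is obtained the same way as in the earlier small-vertex arguments: we also have $w\notin N(w)$ but $w$ lies in a clique with $N(w)\setminus B_{i-1}$, so this clique plus $w$ has size at most $\omega$.

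First we locate $N(w)$. By hypothesis $w\in B_i$ is anticomplete to $A_1\cup A_2$ and to $A_3(i)$. By Lemmas \ref{A1A3}, \ref{lem:anticompleteA3-1} and \ref{lem:anticompleteA3}, and the definition of the $A_3(j)$ (a vertex of $A_3(i\pm2)$ has no neighbors in $B_i$), we get $N(w)\subseteq B_{i-1}\cup B_i\cup B_{i+1}\cup A_3(i-1)\cup A_3(i+1)\cup A_5$. Hence $N(w)\setminus B_{i-1}\subseteq B_i\cup B_{i+1}\cup A_3(i+1)\cup A_5\cup A_3(i-1)$. Lemma \ref{lem:blowup-fiveneighbor1} makes $A_5$ universal here, and $B_i$, $B_{i+1}$ are cliques complete to each other on relevant pieces. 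So suppose $s,s'\in N(w)\setminus B_{i-1}$ are non-adjacent, and we case on where they lie.

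\textbf{Case $s\in B_i$, $s'\in B_{i+1}$.} Minimality of $N_H(w)$ among $B_i\setminus N(A_3(i))$ should give a contradiction. If $s\notin N(A_3(i))$, then $w,s$ are comparable by Lemma \ref{lem:blowup-basicproperty}(1), and $s'\in N(w)\setminus N(s)$ forces $N_H(s)\subsetneq N_H(w)$ unless there are crossing neighbors. This contradicts the choice of $w$ (strictness is arranged by taking $w$ with inclusion-minimal $N_H$). If instead $s$ is adjacent to some $x\in A_3(i)$, Lemma \ref{lem:comparable vertices in A_3(i)} and Lemma \ref{lem:single vertex in A_3(i)}(3) (applicable after choosing the side via Lemma \ref{lem:blowup-threeneighbor}(3)) give $N_{B_{i\pm1}}(w)\subseteq N_{B_{i\pm1}}(s)$, contradicting $s'\notin N(s)$.

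\textbf{Case $s\in A_3(i\pm1)$, $s'\in B_i\cup B_{i+1}$.} Use Lemma \ref{lem:generalized P4-structure}. For instance, with $s\in A_3(i+1)$ and $s'\in B_i$, the path $N_{B_{i-1}}(s')-s'-w-s$ is a forbidden $P_4$ in $A'_3(i-1),A'_3(i),A'_3(i),A'_3(i+1)$, which can be made induced using $C_4$-freeness. With $s'\in B_{i+1}$, the vertices $s,s'$ lie in $A'_3(i+1)$, and we apply Lemma \ref{lem:nonedge in A_3(i)} together with $C_4$-freeness through $w$.

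\textbf{Case $s\in A_3(i-1)$, $s'\in A_3(i+1)\cup B_{i+1}$.} Build the induced cyclic sequence $s-w-s'-B_{i+2}-B_{i-2}-s$ (lengths $\ge 6$ after checking non-adjacencies via Lemma \ref{lem:anticompleteA3-1}). This contradicts Lemma \ref{lem:cyclicsequence}.

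I expect the main obstacle to be the first case, namely exploiting the minimality of $w$. The difficulty is that a neighbor $s\in B_i$ of $w$ might itself be adjacent to $A_3(i)$, so it is not a competitor in the minimality. I would handle this using the structure of $A_3(i)$ through a universal vertex $x$ of a component (Lemma \ref{lem:single vertex covers A_3(i)}) and the containments in Lemma \ref{lem:single vertex in A_3(i)}(3).
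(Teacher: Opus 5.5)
Your degree count and your first case ($s\in B_i$, $s'\in B_{i+1}$) are correct and match the paper. Lemma~\ref{lem:single vertex in A_3(i)}(3) forces $s\notin N(A_3(i))$, and minimality of $w$ then gives the contradiction. The paper phrases this through a neighbor $s''$ of $s$ missing $w$, which yields a $P_4$-structure or an induced $C_4$; you use the total order of Lemma~\ref{lem:blowup-basicproperty}(1) instead, which is the same argument.

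The gap is in every case where $s$ or $s'$ lies in $A_3(i-1)\cup A_3(i+1)$; none of your arguments there produces a contradiction.
\begin{itemize}
\item The path $t-s'-w-s$ with $t\in N_{B_{i-1}}(s')$ is induced only if $tw\notin E(G)$. Such a $t$ need not exist, for instance if $s'$ and $w$ have the same neighborhood in $H$, which the minimality of $w$ allows.
\item Lemma~\ref{lem:nonedge in A_3(i)} explicitly permits two non-adjacent vertices of $A'_3(i+1)$ to share the neighbor $w\in B_i$; it only makes them disjoint on $B_{i+2}$.
\item The sequence $s-w-s'-B_{i+2}-B_{i-2}-s$ has only five terms, so Lemma~\ref{lem:cyclicsequence} gives a hole of length at least $5$, and a $C_5$ is allowed.
\item You never treat two non-adjacent neighbors both in $A_3(i+1)$, or both in $A_3(i-1)$.
\end{itemize}

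These configurations are not contradictory locally, and none of your arguments uses the hypothesis $A_3(i)\neq\emptyset$, which is essential here. Consider the following graph.
\begin{itemize}
\item $B_{i-1}=\{t\}$, $B_i=\{w,s',q\}$, $B_{i+1}=\{u,p\}$, $B_{i+2}=\{v\}$, $B_{i-2}=\{y\}$; all $B_j$ are cliques.
\item $y$ is adjacent to $t$ and $v$; $t$ and $u$ are complete to $B_i$; $p$ is adjacent only to $q$ in $B_i$; $v$ is complete to $B_{i+1}$.
\item One extra vertex $s$ has $N(s)=\{w,u,v\}$.
\end{itemize}
This graph is $(P_7,C_4,C_6,C_7)$-free and $H$ is a maximal nice blowup. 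Here $s\in A_3(i+1)$, $A_3(i)=\emptyset$, and $w$ has minimal neighborhood in $H$. Yet $s,s'\in N(w)\setminus B_{i-1}$ are non-adjacent.

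The missing idea is Lemma~\ref{lem:compare nbd of different A_3(i) in B_j} together with its mirror image: every neighbor in $B_i$ of $A_3(i-1)\cup A_3(i+1)$ is complete to $A_3(i)$. Since $A_3(i)\neq\emptyset$ and $w\notin N(A_3(i))$, $w$ has no neighbor in $A_3(i\pm1)$. Hence $N(w)\setminus B_{i-1}\subseteq B_i\cup B_{i+1}\cup A_5$. All your $A_3(i\pm1)$ cases are then vacuous, and your first case completes the proof exactly as in the paper.
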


\begin{proof}
    Suppose for a contradiction that $N(w)\setminus B_{i+1}$ is not a clique. 
    By Lemma \ref{lem:compare nbd of different A_3(i) in B_j} and $A_3(i)\neq \emptyset$, $N(w)\setminus B_{i+1}\subseteq B_{i-1}\cup B_{i}\cup A_5$. 
    Let $s\in B_{i}$ and $s'\in B_{i-1}$ be two non-adjacent neighbors of $w$.
    By Lemma \ref{lem:single vertex in A_3(i)} (3), $s\in B_{i}\setminus N(A_3(i))$. By the minimality of $w$, $s$ has a neighbor $s''\in B_{i-1}\cup B_{i+1}$ not adjacent to $w$. 
    It follows that $s''-s-w-s'$ is a $P_4$-structure if $s''\in B_{i+1}$ or $s''-s-w-s'-s''$ is an induced $C_4$ if $s''\in B_{i-1}$. 
\end{proof}

\section{Non-neighbors of $C_5$}\label{sec:non-neighbors of C5}

For an induced $C_5=v_1-v_2-v_3-v_4-v_5-v_1$ and $X\subseteq V(C)$, we denote by $S(X)$  the set of vertices $v\in V(G)\setminus V(C)$ with $N_C(v)=X$.
We set $S_0=S(\emptyset)$ and $S_5=S(V(C))$. For convenience, we omit the set notation $\{\}$ when we write $X$ in $S(X)$. For instance, we write $S(\{v_1,v_2\})$ as $S(v_1,v_2)$ etc. The following lemma will be used to prove certain subgraphs are chordal in Section \ref{sec:A_1 is not empty}.

\begin{lemma}\label{lem:non-neighbors of C5}
    Let $C=v_1-v_2-v_3-v_4-v_5-v_1$ be an induced $C_5$ of $G$ and $K$ be a non-clique component of $S_0$. Then the following holds for neighbors of $K$.

    \begin{itemize}
        \item[$(1)$] Every vertex in $N(K)\cap S(v_{i-2},v_{i+2})$ is complete to $K$. 
        \item[$(2)$] $N(K)\cap S(v_{i-1},v_i,v_{i+1})$ or $N(K)\cap S(v_{i+1},v_{i+2},v_{i+3})$ is empty.  
        \item[$(3)$] If $p,q\in N(K)$ are not adjacent, then $p\in S(v_{i-1},v_i,v_{i+1})$ and $q\in S(v_{i-2},v_{i+2})$ for some $i\in \mathbb{Z}_5$.
        \item[$(4)$] $N(K)$ cannot contain 4 neighbors $p,p',q,q'$ such that $p\in S(v_{i-1},v_i,v_{i+1})$, $q\in S(v_{i-2},v_{i+2})$, $p'\in S(v_{i},v_{i+1},v_{i+2})$ and $q'\in S(v_{i-2},v_{i-1})$.
    \end{itemize}
\end{lemma}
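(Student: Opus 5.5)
The plan is to use two tools: the basic attachment types of vertices to $C$, and induced paths or cycles that run through $K$ and around $C$, which produce a forbidden $P_7$, $C_4$, $C_6$ or $C_7$.

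\textbf{Attachment types and homogeneity.} As in Lemma~\ref{lem:blowup-consecutive} (applied with singleton $B_i$), every vertex outside $C$ has a neighborhood on $C$ that is empty, a single vertex, two consecutive vertices, three consecutive vertices, or all of $C$. Note that $S(v_{i-2},v_{i+2})$ is exactly the type of two consecutive vertices. Vertices of $S_5$ are pairwise adjacent, since two non-adjacent ones would form a $C_4$ with $v_1,v_3$. Next I would establish a homogeneity step. Suppose $p\in N(K)$ mixes on an edge $ab$ of $K$, with $pa\in E(G)$ and $pb\notin E(G)$. If $p\in S(v_{i-2},v_{i+2})$, then $b-a-p-v_{i-2}-v_{i-1}-v_i-v_{i+1}$ is an induced $P_7$. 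If $p\in S(v_i)$, then $b-a-p-v_i-v_{i+1}-v_{i+2}-v_{i+3}$ is an induced $P_7$. Since $K$ is connected, every neighbor of $K$ of size-1 or size-2 type is complete to $K$; this gives (1). Vertices of size-3 type may mix on $K$, because the analogous path only has six vertices. This is exactly why they appear in (2)--(4), and there I will use that $K$ is not a clique. Since $K$ is connected and not complete, it contains an induced $P_3=a-b-c$. A size-3 neighbor adjacent to $b$ but not to $c$ gives paths of the form $c-b-p-\cdots$ that extend by one more vertex.

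\textbf{Parts (3) and (2).} For (3), take non-adjacent $p,q\in N(K)$ and a shortest path $P$ between $N_K(p)$ and $N_K(q)$ with interior in $K$. Then $p-P-q$ is an induced path with $|P|\ge 1$ vertices inside $K$. Close it up through $C$.
\begin{itemize}
\item If $p,q$ have a common neighbor on $C$, the cycle has length $|P|+3$. This is forbidden unless $|P|=2$ (giving a $C_5$).
\item Otherwise route through the shorter arc of $C$ between $N_C(p)$ and $N_C(q)$, or extend to a $P_7$ using the longer arc.
\end{itemize}
Going through the types pairwise, each pair other than (three consecutive, the opposite two) is killed. In the remaining $|P|=2$ cases, when both $p,q$ are complete to $K$ (types of size $\le 2$), I will replace the common neighbor by a vertex of $K$ and use the induced $P_3$ of $K$ to extend to a $P_7$. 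For (2), take $p\in S(v_{i-1},v_i,v_{i+1})$ and $p'\in S(v_{i+1},v_{i+2},v_{i+3})$. They must be non-adjacent: otherwise $p-p'-v_{i+3}-v_{i+4}-p$ is a $C_4$, since $v_{i+4}=v_{i-1}$. Then (3) already forbids this pair, as it is not of the allowed form.

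\textbf{Part (4) and the main obstacle.} For (4), suppose $p,p',q,q'$ exist as stated. Then $q,q'$ are complete to $K$ by (1), so $q-k-q'$ for any $k\in K$. The pairs $p,q'$ and $p',q$ must be adjacent by (3), and $q,q'$ are adjacent by (3). I would then find a $C_4$, for instance among $p,q',q,p'$ or with $v_{i-2}$, or else a $C_6$ or $C_7$ through $p-p'$ and $C$. The main obstacle is the bookkeeping in (3): whether the cycle length $|P|+3$ or $|P|+4$ lands on the harmless value $5$. That is where the non-clique hypothesis on $K$ and the homogeneity from (1) must be used carefully. I would first handle the case where both endpoints are complete to $K$, then the case where exactly one endpoint has three-consecutive type.
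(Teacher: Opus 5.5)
Your homogeneity argument for (1), the reduction of (2) to (3), and the arc-routing for pairs with disjoint neighborhoods on $C$ are all fine, and they use the same toolkit as the paper. The gap is in (3), in the case where \emph{both} $p$ and $q$ have three neighbors on $C$.

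\textbf{Why your plan stalls there.} Two 3-subsets of $V(C)$ always intersect, so such a pair always falls under your first bullet. Since 3-type vertices may mix on $K$, $|P|=2$ can occur a priori (here $|P|$ counts the vertices of $P$ in $K$, as in your notation). The cycle through a common $C$-neighbor is then an allowed $C_5$, and your plan has no way to finish. Your sentence on ``the remaining $|P|=2$ cases, when both $p,q$ are complete to $K$'' addresses an empty case: two vertices complete to $K$ have a common neighbor in $K$, so $|P|=1$ there. You also only invoke the longer arc when there is no common $C$-neighbor.

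\textbf{The missing step.} Close $P$ through the part of $C$ that avoids $N_C(p)\cap N_C(q)$.
\begin{itemize}
\item For $p\in S(v_{i-1},v_i,v_{i+1})$ and $q\in S(v_i,v_{i+1},v_{i+2})$, the cycle $p-v_{i-1}-v_{i-2}-v_{i+2}-q-P-p$ is induced and has length $|P|+5\ge 6$.
\item For $q\in S(v_{i+1},v_{i+2},v_{i+3})$, the cycle $p-v_{i-1}-v_{i-2}-q-P-p$ is induced and has length $|P|+4\ge 6$. Here $|P|\ge 2$, because a common neighbor in $K$ together with $v_{i+1}$ gives a $C_4$.
\item Any induced cycle of length at least $8$ contains an induced $P_7$.
\item For same-center 3-types, or when one vertex lies in $S_5$, two non-adjacent common $C$-neighbors give a $C_4$ at once.
\end{itemize}
Since you derive (2) from (3), your (2) currently rests on exactly the unproved centers-$i,i+2$ case. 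The paper instead proves (2) directly with the second cycle above, and then uses (2) inside (3).

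\textbf{Non-cliqueness of $K$.} The paper uses this hypothesis only once. Two non-adjacent vertices of $N(K)$ cannot both be complete to $K$: with $k_1k_2\notin E(G)$ in $K$, the cycle $p-k_1-q-k_2-p$ would be a $C_4$. So by (1), one of $p,q$ is of 3-type or in $S_5$, which shrinks the case analysis. Your routing through $C$ also handles pairs complete to $K$. For example, $p-k-q-v_{j+1}-v_{j+2}-v_{j+3}-v_{j+4}$ is an induced $P_7$ for $p\in S(v_j)$, $q\in S(v_{j+1})$. So the step ``use the induced $P_3$ of $K$ to extend to a $P_7$'' is not needed, and as stated it yields a $C_4$ rather than a $P_7$.

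\textbf{Part (4).} Commit to the $C_4$ $p-p'-q-q'-p$. The edges $pp'$, $pq'$, $p'q$, $qq'$ all come from (3). The non-edges hold because if $pq\in E(G)$ then $p-v_{i+1}-v_{i+2}-q-p$ is an induced $C_4$, and if $p'q'\in E(G)$ then $p'-v_i-v_{i-1}-q'-p'$ is an induced $C_4$.
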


\begin{proof}
    We prove the properties one by one.

    \medskip
    \noindent (1) Let $c\in N(K)\cap S(v_{i-2},v_{i+2})$ be adjacent to $b\in K$. If $c$ has a non-neighbor $a\in K$, then $a-b-c-v_{i-2}-v_{i-1}-v_i-v_{i+1}$ is an induced $P_7$. This proves (1).
    
    \medskip
    \noindent (2) Suppose for a contradiction that $a\in N(K)\cap S(v_{i-1},v_i,v_{i+1})$ and $b\in N(K)\cap S(v_{i+1},v_{i+2},v_{i+3})$. 
    Since $a-v_{i-1}-v_{i-2}-b-a$ is not an induced $C_4$, $ab\notin E(G)$. 
    Let $P$ be a shortest path between $a$ and $b$ with internal in $K$. 
    Since $G$ is $C_4$-free, $P$ is an induced path on at least 4 vertices. 
    Then $a-v_{i-1}-v_{i-2}-b-P-a$ is an induced cycle of length at least 6. 
    This proves (2). 

    \medskip
    \noindent (3) Since $G$ has no clique cutsets, $N(K)$ contains two non-adjacent vertices $p$ and $q$. 
    Note first that if a vertex $a\in N(K)$ has at most 2 neighbors on $C$, then $a$ is complete to $K$ for otherwise there is an induced $P_7$.
    Since $G$ is $C_4$-free and $K$ is not a clique, we may assume that $p\in S(v_{i-1},v_i,v_{i+1})\cup S_5$. Suppose first that $p\in S_5$. Since $S_5$ is a clique  complete to $S(v_{j-1},v_j,v_{j+1})$ for each $j\in [5]$, we have $q$ has at two neighbors on $C$. It follows that $p$ and $q$ have a common neighbor in both $K$ and $C$, a contradiction. So $p\in S(v_{i-1},v_i,v_{i+1})$.
    Since $S_5$ is complete to $p$, we have $q$ has at most 3 neighbors on $C$.
    By (2), $q\notin S(v_{i+1},v_{i+2},v_{i+3})$.
    If $q\in S(v_{i-1},v_i,v_{i+1})$, $p-v_{i-1}-q-v_{i+1}$ is an induced $C_4$. If $q\in S(v_{i},v_{i+1},v_{i+2})$, then $p-v_{i-1}-v_{i-2}-v_{i+2}-q-K-p$ is an induced cyclic sequence.  So $q$ has at most two neighbors on $C$. It follows that $p$ and $q$ have a common neighbor $u\in K$. Since $G$ is $C_4$-free, $p$ and $q$ have no common neighbors on $C$. If $q\in S(v_{i+2})$, then $p-v_{i-1}-v_{i-2}-v_{i+2}-q-N_K(p)-p$ is an induced $C_6$. So $q\in S(v_{i-2},v_{i+2})$. This proves (3).

    \medskip
    \noindent (4) Suppose not. It follows from (3) that $p-p'-q-q'-p$ is an induced $C_4$. This proves (4).
\end{proof}

\section{A key technical lemma}\label{sec:a general lemma}

In this section, we prove a key technical lemma.
Let $K$ be a connected subgraph of $G$ and $a,b\in N(K)$ be non-adjacent. We say that a 6-tuple $(a,b,U,P_a,P_b,Q)$ is {\em nice} if the following hold:
    \begin{itemize}
        \item $U,P_a,P_b,Q$ form a partition of $K$ where $U,P_a,P_b\neq \emptyset$.
        \item $a$ is complete to $P_a$ and anticomplete to $P_b$, and $b$ is complete to $P_b$ and anticomplete to $P_a$.
        \item $P_a,P_b,Q$ are pairwise anticomplete.
        \item $U$ is complete to $P_a\cup P_b\cup \{a,b\}$.
    \end{itemize}
    Note that in a nice 6-tuple, the adjacency between $Q$ and $\{a,b\}\cup U$ can be arbitrary.

\begin{lemma}\label{lem:complete subgraphs}
    Let $G$ be a $(P_7,C_4,C_6,C_7)$-free graph with no clique cutsets.
    Let $K$ be a non-empty connected subgraph of $G$ with the following properties.
    \begin{itemize}
        \item[$(1)$]  There is a subset $X\subseteq V(G)$ such that each vertex in $N(K)\setminus X$ is universal in $N(K)$. 
        \item[$(2)$] $K\cup X$ contains no induced $P_4=a-b-c-d$ with $d\in X$, $c\in K$ and $a,b\in K\cup (N(K)\cap X)$.
        \item[$(3)$] There is a subset $Y\subseteq V(G)$ anticomplete to $K$ such that any two non-adjacent vertices in $N(K)\cap X$ have non-empty, disjoint and complete neighborhoods in $Y$.   
    \end{itemize}
    Then $N(K)\cap X$ contains two non-adjacent vertices that are complete to $K$. In particular, $K$ is a clique.
\end{lemma}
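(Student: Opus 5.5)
We argue by contradiction and follow the infinite-descent outline from the introduction: first produce a nice 6-tuple, then show that any nice 6-tuple can be replaced by one with strictly smaller $|Q|$ (or larger $|U|$). Since this cannot go on forever, $G$ must have a clique cutset, which is excluded.

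\emph{Setup.} Since $G$ has no clique cutsets and $K$ is connected, $N(K)$ is not a clique; otherwise $N(K)$ would separate $K$ from the rest of $G$. If $V(G)=K\cup N(K)$, a short separate argument should still give two non-adjacent vertices of $N(K)$. By (1), any two non-adjacent vertices $a,b\in N(K)$ lie in $X$.

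\emph{Easy direction.} Suppose $a$ is not complete to $K$. Then there is an edge $cd$ of $K$ with $a\sim c$ and $a\not\sim d$. By (3), $a$ has a neighbor $y\in Y$, and $y$ is anticomplete to $K$. Hence $d-c-a-y$ is an induced $P_4$. This is not yet forbidden by (2), whose endpoint must lie in $X$. So (2) should instead be used in the configuration $a'-b'-c-d$ with $d\in X$: take the path inside $K\cup N(K)\cap X$ ending at the other vertex $b\in X$.

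\emph{Constructing a nice tuple.} Take non-adjacent $a,b\in N(K)\cap X$. Let $P$ be a shortest path from $a$ to $b$ through $K$. By (3), $a,b$ have disjoint neighborhoods $Y_a,Y_b$ in $Y$ that are complete to each other. Pick $y_a\in Y_a$ and $y_b\in Y_b$. Then $a-P-b-y_b-y_a-a$ is an induced cycle of length $|P|+2$. Since $G$ is $(C_4,C_6,C_7)$-free, it has length $5$, so $P=a-u-b$ and $a,b$ have a common neighbor $u\in K$.

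Now let $U$ be the set of common neighbors of $a$ and $b$ in $K$. Put into $P_a$ the vertices of $K$ adjacent to $a$ but not to $b$, and into $P_b$ those adjacent to $b$ but not to $a$. Put the remaining vertices of $K$ into $Q$.

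Using (2), I would show:
\begin{itemize}
\item $U$ is complete to $P_a\cup P_b$. If $u\in U$ and $p\in P_a$ with $u\not\sim p$, then $p-a-u-b$ is an induced $P_4$ with $b\in X$ and $u\in K$, which is forbidden.
\item $P_a$, $P_b$, $Q$ are pairwise anticomplete. Again this follows from (2) together with $C_4$-freeness.
\end{itemize}
The one remaining issue is to arrange $P_a,P_b\neq\emptyset$. This is exactly where the conclusion can fail: if $P_a=P_b=\emptyset$ and $Q=\emptyset$, then $a,b$ are complete to $K$, and we are done.

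\emph{Descent (main obstacle).} Given a nice tuple, consider $P_a$. Its neighbors outside $K$ are $a$, $U$, and vertices of $N(K)$. The set $\{a\}\cup U\cup(\text{other neighbors})$ must fail to be a clique cutset separating $P_a$. This forces a vertex $a'\in N(P_a)\cap X$ that is non-adjacent to $a$ or to some vertex of $U$. Applying the same construction to the pair $(a,a')$ or $(a',b)$ should give a new nice tuple whose $Q\cup P$-part is strictly contained in the old one; for instance, the new $P_{a'}$ is a proper subset of $P_a$.

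The induced-path checks here use $P_7$-freeness together with (3), and I expect this step to be the hardest. One must choose the measure carefully, say $|P_a|+|P_b|$ or $|K\setminus U|$, and verify that it strictly decreases. Finiteness then gives the contradiction. The statement that $K$ is a clique follows because two non-adjacent $a,b$ complete to $K$, together with $C_4$-freeness, force $K$ to be a clique.
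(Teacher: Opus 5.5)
Your plan has a genuine gap in its overall logic, beyond the descent step you flag. A nice 6-tuple needs $P_a,P_b\neq\emptyset$, i.e.\ a non-adjacent pair $a,b\in N(K)\cap X$ whose neighbourhoods in $K$ are incomparable. So even a complete descent only shows that every non-adjacent pair in $N(K)\cap X$ has \emph{nested} neighbourhoods in $K$. Assuming the conclusion fails does not produce a nice tuple, since it can fail with $N(b)\cap K\subsetneq N(a)\cap K$, or with $N(a)\cap K=N(b)\cap K\subsetneq K$. Your remark that one is done when $P_a=P_b=Q=\emptyset$ skips exactly these cases.

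The paper needs a second clique-cutset argument here:
\begin{itemize}
\item Among non-adjacent pairs in $N(K)\cap X$, choose $a,b$ with $U=N(a)\cap N(b)\cap K$ as large as possible; by nestedness we may take $N(b)\cap K=U$. Suppose $K\neq U$.
\item For a component $T$ of $K\setminus U$, nestedness together with (2) and $C_4$-freeness shows that $U$ is a clique complete to $N(T)\setminus U$. Vertices of $N(K)\setminus X$ are also universal in $N(T)$.
\item Since $N(T)$ separates $T$ from $b$, it is not a clique. Hence it contains non-adjacent $c,c'\in N(K)\cap X$.
\item These $c,c'$ are complete to $U$. By your cycle-through-$Y$ argument, run inside the connected set $T$, they also have a common neighbour in $T$. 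This contradicts the maximality of $|U|$.
\end{itemize}
Your observation that non-adjacent vertices of $N(K)\cap X$ always have a common neighbour in $K$ is correct and is exactly what closes this step. The step itself, however, is missing from your plan.

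Second, the descent is not carried out, and your sketch points at configurations that cannot occur. The natural replacement keeps $U$ and enlarges $Q$, so ``smaller $|Q|$ or larger $|U|$'' is the wrong measure; the paper minimises $|P_a|+|P_b|$. It first proves two facts:
\begin{itemize}
\item Every vertex of $U$ is universal in $N(P_a)$.
\item $N(P_a)\cap X\subseteq N(a)$. Suppose $c\in N(P_a)\cap X$ misses $a$, and take $y_a,y_b$ as in your construction. $C_4$-freeness excludes $cy_a$. Since $p_b-b-y_b-y_a-a-p_a-c$ is not a bad $P_7$, we get $cy_b\in E(G)$. Then $cb\in E(G)$, as otherwise $c-u-b-y_b-c$ is an induced $C_4$ for $u\in U$. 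Now $b-c-p_a-a$ violates (2).
\end{itemize}
Hence $N(P_a)$ fails to be a clique not because some vertex misses $a$ or $U$, as you suggest. It fails because of two non-adjacent $c,c'\in N(P_a)\cap X$, both adjacent to $a$.

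The descent then runs as follows. By $C_4$-freeness one of them, say $c$, misses $b$. The second fact for $P_b$ makes $c$ anticomplete to $P_b$. $P_7$-freeness makes $N(c)\cap P_a$ anticomplete to $P_a\setminus N(c)$. So $(c,b,U,P_a\cap N(c),P_b,Q\cup(P_a\setminus N(c)))$ is nice.

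Minimality then only yields that $c$ is complete to $P_a$, which is not yet a contradiction. You must apply the second fact to the nice tuple $(c,b,U,P_a,P_b,Q)$ to get $cc'\in E(G)$.

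Finally, your claim that $Q$ is anticomplete to $P_a\cup P_b$ does not follow from (2) and $C_4$-freeness alone. It needs (3) and $P_7$-freeness, via the bad $P_7$ $q-p_a-a-y_a-y_b-b-p_b$.
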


\begin{proof}
    By (1) and $G$ has no clique cutsets, $N(K)\cap X$ contains two non-adjacent vertices. 

    \medskip
    \noindent (i) If $N(a)\cap K\nsubseteq N(b)\cap K$ and $N(b)\cap K\nsubseteq N(a)\cap K$ where $a,b\in N(K)\cap X$ are not adjacent, then there is a nice 6-tuple.

    \medskip
    Let $U$ be the set of common neighbors of $a$ and $b$ in $K$, $P_a$ be the set of private neighbors of $a$ in $K$, $P_b$ be the set of private neighbors of $b$ in $K$, and $Q$ be the set of common non-neighbors of $a$ and $b$ in $K$. By assumption, $P_a$ and $P_b$ are not empty.  We show that $(a,b,U,P_a,P_b,Q)$ is nice.  Let $a'\in Y$ be a neighbor of $a$ and Let $b'\in Y$ be a neighbor of $b$. By (3), $a'b'\in E(G)$. 

    If $u\in U$ is not adjacent to $p_a\in P_a$, then $p_a-a-u-b$ is a forbidden $P_4$ in (2). So $U$ is complete to $P_a$. By symmetry, $U$ is complete to $P_b$. If $p_a\in P_a$ is adjacent to $p_b\in P_b$, then $p_a-a-a'-b'-b-p_b-p_a$ is an induced $C_6$. So $P_a$ and $P_b$ are anticomplete. If $q\in Q$ has a neighbor $p_a\in P_a$, then $q-p_a-a-a'-b'-b-p_b$ is a bad $P_7$, since the only possible chord is $qp_b$. So $Q$ is anticomplete to $P_a$ and $P_b$. By connectivity of $K$, we have $U\neq \emptyset$.  This completes the proof of (i).

    \medskip
    \noindent (ii) For any non-adjacent vertices $a$ and $b$ in $N(K)$,
    $N(a)\cap K\subseteq N(b)\cap K$ or $N(b)\cap K\subseteq N(a)\cap K$.

    \medskip
    Suppose not. By (i), nice 6-tuples exist. We choose a nice 6-tuple $(a,b,U,P_a,P_b,Q)$ so that $|P_a|+|P_b|$ is minimum. We shall deduce a contradiction by applying the clique cutset argument to $P_a$. Let $p_a\in P_a$ and $p_b\in P_b$.
    Let $a'\in Y$ be a neighbor of $a$ and $b'\in Y$ be a neighbor of $b$. By (3), $a'b'\in E(G)$.

    Note that $N(P_a)\subseteq (N(K)\setminus X)\cup (X\setminus \{b\})\cup U$ by the definition of 6-nice tuple. Since all vertices in $N(K)\setminus X$ are common neighbors of $a$ and $b$ and $G$ is $C_4$-free, $U$ is a clique complete to $N(K)\setminus X$.
    If $u\in U$ is not adjacent to $v\in N(P_a)\cap X$, then $v-p_a-u-b$ is a forbidden $P_4$ in (2). So $U$ is complete to $N(P_a)\cap X$ and thus each vertex in $U$ is universal in $N(P_a)$.

    Next we show that {\color{orange} $N(P_a)\cap X$ is complete to $a$.} Suppose by contradiction that $c\in N(P_a)\cap X$ is not adjacent to $a$. We may assume that $p_ac\in E(G)$. Note that $p_b-b-b'-a'-a-p_a$ is an induced $P_6$. Since $p_b-b-b'-a'-a-p_a-c$ is not a bad $P_7$, $cb'\in E(G)$. Since $c-U-b-b'-c$ is not an induced $C_4$, $cb\in E(G)$. Then $b-c-p_a-a$ contradicts (2). This shows that $N(P_a)\cap X$ is complete to $a$.
    
    Since $N(P_a)$ is not a clique, there exist vertices $c,c'\in N(P_a)\cap X$ such that $c$ and $c'$ are not adjacent. It follows that $ca,c'a\in E(G)$. Since $G$ is $C_4$-free, $b$ is not adjacent to $c$ or $c'$, say $c$.
    Applying to $P_b$ the statement $N(P_a)\cap X$ is complete to $a$, we have $c$ is anticomplete to $P_b$. 
    If there is an edge $st$ with $s\in N(c)\cap P_a$ and $t\in P_a\setminus N(c)$, then $t-s-c-N_{Y}(c)-b'-b-p_b$ is an induced $P_7$. So $N(c)\cap P_a$ is anticomplete to $P_a\setminus N(c)$ and thus $(c,b,U,P_a\cap N(c),P_b,Q\cup (P_a\setminus N(c)))$ is a nice 6-tuple. By the choice of $(a,b,U,P_a,P_b,Q)$, $c$ is complete to $P_a$.
    But now $c'$ is a neighbor of $P_a$ and should be adjacent to $c$ by the argument showing $N(P_a)\cap X$ is complete to $a$. This completes the proof of (ii).

    \medskip
    \noindent (iii) Let $\{a,b\}$ in $N(K)\cap X$ such that $a$ and $b$ have maximum number of common neighbors in $K$. Then $K\subseteq N(a)\cap N(b)$.

    \medskip
    By (ii), we may assume that $N(b)\cap K\subseteq N(a)\cap K$. Let $U$ be the set of common neighbors in $K$, $P$ be the private neighbors of $a$ in $K$ and $Q$ be the set of common non-neighbors of $a$ and $b$ in $K$. Suppose that $P\cup Q$ is not empty. Let $T$ be a non-empty component of $P\cup Q$. Note that $N(T)\subseteq (N(K)\setminus X)\cup (X\setminus \{b\})\cup U$. 
    Since all vertices in $N(K)\setminus X$ are common neighbors of $a$ and $b$ by (1), 
    $U$ is a clique complete to
    $N(K)\setminus X$.
    Suppose that $u\in U$ is not adjacent to $c\in N(T)\cap X$.
    By (2), $c$ and $u$ have a common neighbor $d\in T$. 
    If $d\in P$, then 
    then $bc\in E(G)$ by (ii) and so $c-d-u-b-c$ is an induced $C_4$. If $d\in Q$, then $ca,cb\in E(G)$ by (ii). It follows that $c-a-u-b-c$ is an induced $C_4$. So $U$ is complete to $N(T)\cap X$ and so each vertex in $N(T)\cap U$ is universal in $N(T)$.
    Since $N(T)$ is not a clique, $P\cup Q$ has two non-adjacent neighbors $c,c'$ in $N(T)\cap X$. Since $b$ is anticomplete to $T$, we have $b\neq c,c'$. Since $T$ is connected, $G$ is $C_6$-free and $c,c'$ have disjoint and complete neighborhoods in $Y$, we have that $c,c'$ have a common neighbor in $T$. Then $\{c,c'\}$ contradicts the choice of $\{a,b\}$. This completes the proof of the lemma.
\end{proof}

\section{$A_1$ is not empty}\label{sec:A_1 is not empty}

In this section, we color $G$ when $A_1\neq \emptyset$.
Let $C=v_1-v_2-v_3-v_4-v_5-v_1$ be an induced $C_5$ of $G$ such that $|S_5|$ (see Section \ref{sec:non-neighbors of C5}) is minimum among all induced $C_5$s of $G$. Let $H$ be a maximal nice blowup of $C$. From now on, we fix the choice of $C$ and $H$. We first prove some general properties and then consider two cases depending the number of non-empty $A_1(i)$s (Subsections \ref{sec:two non-empty A_1(i)} and \ref{sec:one non-empty A_1(i)}).

\begin{lemma}\label{lem:A5 vs A1A2}
    $A_5$ is complete to any edge $xy$ with $x\in A_1(i)$ and $y\in A_2(i+2)$ for each $i\in [5]$. 
\end{lemma}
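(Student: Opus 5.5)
We argue by contradiction: fix an edge $xy$ with $x\in A_1(i)$, $y\in A_2(i+2)$ and a vertex $z\in A_5$ missing $x$ or $y$. We first collect the adjacencies we need.
\begin{itemize}
\item[(a)] Lemma \ref{lem:opposite A_1(i) and A_2(j)} gives that $y$ is complete to $B_{i+2}\cup B_{i-2}$; in particular $yq_{i+2}\in E(G)$.
\item[(b)] By Lemma \ref{lem:blowup-fiveneighbor1} (1), $z$ is complete to $V(H)$.
\item[(c)] Fix $b\in N_{B_i}(x)$. Then $b\notin N(y)$ since $\supp(y)=\{i+2,i+3\}$, and $xq_{i+2},xq_{i+1}\notin E(G)$ since $\supp(x)=\{i\}$.
\end{itemize}
We then split into three cases according to which of $zx,zy$ are edges.

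If $zy\in E(G)$ and $zx\notin E(G)$, then $z-b-x-y-z$ is an induced $C_4$. Indeed $zb\in E(G)$ by (b), and $by\notin E(G)$ by (c). If $zx\in E(G)$ and $zy\notin E(G)$, then $z-x-y-q_{i+2}-z$ is an induced $C_4$, using $yq_{i+2}\in E(G)$ from (a) and $xq_{i+2}\notin E(G)$ from (c). Both cases contradict $C_4$-freeness.

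The remaining case, with $z$ anticomplete to $\{x,y\}$, is the real obstacle: the natural short cycles through $z$ are induced $C_5$s, which are not forbidden. Here we use the choice of $C$ with $|S_5|$ minimum. Consider $C'=x-y-q_{i+2}-q_{i+1}-b-x$. It is an induced $C_5$. The cycle edges hold because $q_{i+1}$ is complete to $B_i\cup B_{i+2}$. The non-edges hold because $B_i$ is anticomplete to $B_{i+2}$, $y$ is anticomplete to $B_i\cup B_{i+1}$, and $x$ is anticomplete to $B_{i+1}\cup B_{i+2}$.

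We claim that every vertex $w$ complete to $C'$ lies in $A_5$. Such a $w$ is not in $V(H)$: a vertex of $B_i$ would be adjacent to $q_{i+2}$, and vertices of the other $B_j$ miss $x$. Moreover $\supp(w)\supseteq\{i,i+1,i+2\}$, so $w\in A_3(i+1)\cup A_5$ by Lemma \ref{lem:blowup-consecutive}. Finally $w\notin A_3(i+1)$, since $w$ is adjacent to $y\in A_2(i+2)$ and Lemma \ref{anti-A2A3} makes $A_2(i+2)$ anticomplete to $A_3(i+1)$.

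Since $A_5$ is complete to $V(H)\supseteq V(C)$, we have $A_5\subseteq S_5$ for $C$. Hence the set of vertices complete to $C'$ is contained in $S_5$. It misses $z\in S_5$ because $zx\notin E(G)$. So $C'$ has strictly fewer complete vertices than $C$, contradicting the choice of $C$.

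The delicate step is this last verification, namely that every vertex complete to $C'$ lies in $A_5$; it rests on Lemma \ref{anti-A2A3} and the consecutiveness of supports.
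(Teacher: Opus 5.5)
Your proof is correct and is essentially the paper's argument: the paper builds the same induced $C_5$ (using an arbitrary $y'\in N_{B_{i+2}}(y)$ in place of $q_{i+2}$, so it does not need Lemma~\ref{lem:opposite A_1(i) and A_2(j)} for the cycle). It then shows via Lemma~\ref{anti-A2A3} that every vertex complete to it lies in $A_5$, and contradicts the minimality of $|S_5|$. The one difference is that the paper applies the minimality argument directly to any $u\in A_5$ not complete to $\{x,y\}$, so your two $C_4$ cases are valid but unnecessary.
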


\begin{proof}
    Suppose not. 
    Let $xy$ be an edge with $x\in A_1(i)$ and $y\in A_2(i+2)$ such that some $u\in A_5$ is not complete to $\{x,y\}$.
    Let $x'\in N_{B_{i}}(x)$ and $y'\in N_{B_{i+2}}(y)$. 
    Then $C'=x'-q_{i+1}-y'-y-x-x'$ is an induced $C_5$. 
    Let $S_5'$ be the set of vertices that are complete to $V(C')$. 
    By definition of $S'_5$, $S'_5\subseteq A'_3(i+1)\cup A_5$.
    By Lemma \ref{anti-A2A3}, $S'_5\subseteq A_5$. 
    Since $A_5\subseteq S_5$ and $u\notin S'_5$, $|S'_5|<|S_5|$ which  contradicts the choice of $C$. 
\end{proof}

\begin{lemma}\label{lem:reduct to at most two non-empty A_1(i)}
    For each $i\in \mathbb{Z}_5$, either $A_1(i)$ is anticomplete to $A_2(i+2)$ or $A_1(i+1)$ is anticomplete to $A_2(i-2)$.
\end{lemma}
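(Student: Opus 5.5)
The plan is to argue by contradiction: assume both adjacencies occur, and build an induced $C_7$ or $P_7$ through a common vertex of $B_{i-2}$. So suppose there are edges $x_1y_1$ and $x_2y_2$ with $x_1\in A_1(i)$, $y_1\in A_2(i+2)$, $x_2\in A_1(i+1)$ and $y_2\in A_2(i-2)$. Note that $\supp(y_1)=\{i+2,i-2\}$ and $\supp(y_2)=\{i-2,i-1\}$.

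First I would pin down the adjacencies to $H$. By Lemma \ref{lem:opposite A_1(i) and A_2(j)} applied to $x_1y_1$, the vertex $y_1$ is complete to $B_{i+2}\cup B_{i-2}$. Applying the same lemma to $x_2y_2$, with index $i+1$ in place of $i$ (so that $A_2((i+1)+2)=A_2(i-2)$), $y_2$ is complete to $B_{i-2}\cup B_{i-1}$. In particular both $y_1$ and $y_2$ are adjacent to $q_{i-2}$.

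Next I would record the non-edges among the four vertices, all from earlier lemmas:
\begin{itemize}
\item $y_1y_2\notin E(G)$ by Lemma \ref{A2A2}, since $A_2(i+2)$ and $A_2(i-2)=A_2(i+3)$ are consecutive;
\item $x_1x_2\notin E(G)$ by Lemma \ref{A1A2} (2);
\item $x_1y_2\notin E(G)$ by Lemma \ref{A1A2} (2), because $A_1(i)$ is anticomplete to $A_2(i-2)$;
\item $x_2y_1\notin E(G)$ by Lemma \ref{A1A2} (2), because $A_1(i+1)$ is anticomplete to $A_2((i+1)+1)$.
\end{itemize}
Moreover $q_{i-2}$ is not adjacent to $x_1$ or $x_2$, since their supports are $\{i\}$ and $\{i+1\}$. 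Hence $x_1-y_1-q_{i-2}-y_2-x_2$ is an induced $P_5$.

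Finally I would close it up through $B_i\cup B_{i+1}$. Pick $a\in N_{B_i}(x_1)$ and $b\in N_{B_{i+1}}(x_2)$. The vertex $a$ is nonadjacent to $x_2$, $y_1$, $y_2$ and $q_{i-2}$: the first three by the supports, and the last since $B_i$ is anticomplete to $B_{i-2}$. Symmetrically, $b$ is nonadjacent to $x_1$, $y_1$, $y_2$ and $q_{i-2}$, the last since $B_{i+1}$ is anticomplete to $B_{i+3}=B_{i-2}$. If $ab\in E(G)$, then $a-x_1-y_1-q_{i-2}-y_2-x_2-b-a$ is an induced $C_7$. Otherwise $a-x_1-y_1-q_{i-2}-y_2-x_2-b$ is an induced $P_7$. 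Either way we contradict that $G$ is $(P_7,C_7)$-free.

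The only delicate point is the chord bookkeeping, in particular checking that each index shift in Lemmas \ref{A1A2} and \ref{A2A2} lands on the right sets. I do not expect a real obstacle.
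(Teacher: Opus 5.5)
Your proof is correct and follows essentially the same route as the paper: you get the induced $2P_2$ from the anticomplete-pair lemmas, use Lemma~\ref{lem:opposite A_1(i) and A_2(j)} to obtain a common neighbor of $y_1,y_2$ in $B_{i-2}$, and close the path through $B_i\cup B_{i+1}$. The only difference is presentational. The paper invokes Lemma~\ref{lem:cyclicsequence} on the induced cyclic sequence $B_i-x_1-y_1-q-y_2-x_2-B_{i+1}-B_i$ with $q\in B_{i-2}$. You instead fix explicit vertices $a\in N_{B_i}(x_1)$ and $b\in N_{B_{i+1}}(x_2)$ and split on whether $ab\in E(G)$, which yields an induced $C_7$ or an induced $P_7$ directly.
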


\begin{proof}
    Suppose not. Let $x\in A_1(i)$ be adjacent to $w\in A_2(i+2)$ and $y\in A_1(i+1)$ be adjacent to $z\in A_2(i-2)$. It follows from anticomplete pair properties that $xw$ and $yz$ induce a $2P_2$. By Lemma \ref{lem:opposite A_1(i) and A_2(j)}, $w$ and $z$ have common neighbor $a$ in $B_{i-2}$. Then $B_i-x-w-a-z-y-B_{i+1}-B_i$ is an induced cyclic sequence.
\end{proof}

By Lemma \ref{lem:reduct to at most two non-empty A_1(i)} and Proposition \ref{prop:component of A_1(i)}, there are at most 2 non-empty $A_1(i)$s. If there are two such $A_1(i)$s, their support are of distance 2 on the maximal nice blowup $H$.

\begin{lemma}\label{lem:A_1(i) is chordal}
    $A_1(i)$ is chordal for each $i\in [5]$.
\end{lemma}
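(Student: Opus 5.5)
Since $G$ is $(C_4,C_6)$-free, an induced cycle of length at least $4$ in $G[A_1(i)]$ could only be a $C_5$ or a hole of length at least $7$. A hole of length at least $8$ contains an induced $P_7$, and $C_7$ is excluded, so the only possible hole is an induced $C_5$. The plan is therefore to suppose that $D=x_1-x_2-x_3-x_4-x_5-x_1$ is an induced $C_5$ inside $A_1(i)$ and derive a contradiction. The natural tool is Lemma \ref{lem:induced P3in A_1(i)}.

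Every three consecutive vertices $x_{j-1}-x_j-x_{j+1}$ of $D$ form an induced $P_3$ in $A_1(i)$. Lemma \ref{lem:induced P3in A_1(i)} then gives $N_{B_i}(x_{j-1})\subseteq N_{B_i}(x_j)$ and $N_{B_i}(x_{j+1})\subseteq N_{B_i}(x_j)$. Applying this at $j$ and at $j+1$ yields $N_{B_i}(x_j)\subseteq N_{B_i}(x_{j+1})\subseteq N_{B_i}(x_j)$. Going around the cycle, all five vertices of $D$ therefore have the same neighborhood $N\subseteq B_i$.

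Each vertex of $A_1(i)$ has a neighbor in $B_i$ by definition, so $N\neq\emptyset$. Pick $b\in N$. Then $b$ is adjacent to $x_1$ and $x_3$, which are non-adjacent, and also to $x_2$, which is adjacent to both. This alone is not yet a contradiction; in particular, a vertex complete to a $C_5$ does not create a $C_4$.

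The real contradiction should come from the outside of $H$. The vertex $b$ is complete to $D$. Take the vertex $q_{i+1}\in B_{i+1}$, which is complete to $B_i$; it is adjacent to $b$ and anticomplete to $D$, since $\supp(x_j)=\{i\}$. I would consider $x_1-x_2-x_3$, $b$, $q_{i+1}$, $q_{i+2}$, $q_{i-2}$, but every path through $b$ is short.

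A better route is a clique-cutset or small-vertex type argument, but this lemma seems intended to be elementary. I would instead reconsider and use the non-adjacency of $x_1,x_3$ to $b$'s other neighbors. If simple paths fail, I would replace $b$ by choosing $b$ with minimal neighborhood in $B_{i-1}\cup B_{i+1}$ and look for a non-neighbor of $b$ in $B_{i-1}$ to extend: for example $x_1-x_2-b'$ patterns with $b'\in B_i\setminus N$. Concretely, I would aim for an induced $P_7$ of the form $x_4-x_5-x_1-\dots$. This fails, however, because every extension from $D$ to $H$ goes through $N$, which is complete to $D$, so paths through $D$ can use at most $2$ vertices of $D$ before $b$.

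Hence the main obstacle is that local arguments alone do not kill a $C_5$ complete to $N$. I expect the actual proof uses that $G$ is basic. I would consider the component $K$ of $A_1(i)$ containing $D$ and use Proposition \ref{prop:component of A_1(i)} together with Lemma \ref{lem:complete subgraphs}. Take $X$ to be the neighbors of $K$ in $B_i\cup A_2(i+2)$ that are non-universal, and take $Y$ in $B_{i+1}\cup B_{i-1}$ or in $B_{i+3}$. Condition (2) of Lemma \ref{lem:complete subgraphs} would be checked via Lemma \ref{lem:induced P3in A_1(i)} and $P_7$-freeness, and condition (3) via Lemma \ref{lem:opposite A_1(i) and A_2(j)} and Lemma \ref{lem:nbr of component of A_1(i-1) in A_2(i-2)}. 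The lemma would then give that $K$ is a clique, which is stronger than chordality. Verifying conditions (2) and (3) is the hard step. I would try $Y=B_{i+1}\cup B_{i-2}$ first, using that the $A_2(i+2)$-neighbors are complete to $B_{i+2}\cup B_{i-2}$.
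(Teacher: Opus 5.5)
Your reduction to an induced $C_5$ in $A_1(i)$ is correct, and so is your use of Lemma~\ref{lem:induced P3in A_1(i)} to show that its five vertices share one nonempty neighborhood in $B_i$. But from there you have no proof. The fallback through Lemma~\ref{lem:complete subgraphs} cannot be carried out as you describe.

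Take a component $K$ of $A_1(i)$. Proposition~\ref{prop:component of A_1(i)} together with the absence of clique cutsets forces $N(K)\cap A_2(i+2)\neq\emptyset$. On the other hand, every vertex of $N(K)$ in $B_i\cup A_2(i-1)\cup A_2(i)\cup A_3(i-1)\cup A_3(i)\cup A_3(i+1)$ is anticomplete to $A_2(i+2)$, by supports and Lemmas~\ref{lem:anticompleteA3-1} and~\ref{anti-A2A3}. So condition (1) pushes essentially all of $N(K)\setminus A_5$ into $X$. Condition (3) must then hold for a pair $x\in N_{B_i}(K)$, $a\in N(K)\cap A_2(i+2)$. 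With your $Y=B_{i+1}\cup B_{i-2}$ it fails outright, since $N_Y(x)\subseteq B_{i+1}$ and $N_Y(a)\subseteq B_{i-2}$ are nonempty and anticomplete.

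More fundamentally, the conclusion you aim for (every component of $A_1(i)$ is a clique) is much stronger than chordality and is never available in the paper. Even at the end of Section~\ref{sec:A_1 is not empty}, one subcase only yields that $A_1(i)$ is a blowup of $P_3$ (Lemma~\ref{lem:A_1(i)}), which allows connected non-clique components.

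The missing idea is to use the induced $C_5$ itself, say $D=v_1-v_2-v_3-v_4-v_5-v_1$ in $A_1(i)$, as a new five-cycle, and apply Lemma~\ref{lem:non-neighbors of C5} to its non-neighbors. The argument runs as follows.

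\begin{enumerate}
\item The set $V(H)\setminus B_i$ is connected, not a clique, and anticomplete to $A_1(i)$. So it lies in a non-clique component $K$ of $G-(N(D)\cup V(D))$.
\item Hence $K$ has non-adjacent neighbors $p\in S(v_5,v_1,v_2)$ and $q\in S(v_3,v_4)$, with $q$ complete to $K$.
\item Every vertex of $N(D)\setminus A_5$ has a non-neighbor in $V(H)\setminus B_i$, so $q\in A_5$.
\item $P_7$-freeness puts $p$ in $A_1(i)\cup A_2(i+2)\cup A_5$.
\item $A_5$ is a clique and, by Lemma~\ref{lem:A5 vs A1A2}, is complete to every vertex of $A_2(i+2)$ with a neighbor in $A_1(i)$. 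Since $pq\notin E(G)$, this gives $p\in A_1(i)$.
\item A second clique-cutset argument then yields a clique cutset, a contradiction. It is applied to the component of $K$ minus the neighbors of all such $p$ that contains $V(H)\setminus B_i$.
\end{enumerate}

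Note that Lemma~\ref{lem:A5 vs A1A2} depends on the global choice of $C$ with $|S_5|$ minimum. Your proposal never uses this ingredient.
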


\begin{proof}
    Suppose that $C'=v_1-v_2-v_3-v_4-v_5$ is an induced $C_5$ in $A_1(i)$. Let \[N=B_i\cup A_1(i)\cup A_2(i-1)\cup A_2(i)\cup A_2(i+2)\cup A_3(i-1)\cup A_3(i)\cup A_3(i+1)\cup A_5.\] Note that $N(C')\subseteq N$. Let $K$ be the component of $G-(N(C')\cup C)$ containing $B_{i-2}\cup B_{i-1}\cup B_{i+1}\cup B_{i+2}$. 
    The notations $S(v_3,v_4)$ and $S(v_5,v_1,v_2)$ below are the same as defined in Section \ref{sec:non-neighbors of C5}.
    Since $K$ is not a clique, $K$ has two non-adjacent neighbors $p$ and $q$ by Lemma \ref{lem:non-neighbors of C5}. By symmetry, we may assume that $p\in S(v_5,v_1,v_2)$ and $q\in S(v_3,v_4)$. Since $q$ is complete to $K$ by Lemma \ref{lem:non-neighbors of C5} and each vertex in $N(C')\setminus A_5$ has a non-neighbor in $K$, we have $q\in A_5$. 
    If $p\in N(C')\setminus (A_1(i)\cup A_2(i+2)\cup A_5)$, then one can extend the induced $P_4=v_3-v_4-v_5-p$ to an induced $P_7$ by using vertices in  $B_{i-2}\cup B_{i-1}\cup B_{i+1}\cup B_{i+2}$. So $p\in (A_1(i)\cup A_2(i+2)\cup A_5)$. Since $q\in A_5$ is adjacent to any vertex in $A_5$ or any vertex in $A_2(i+2)$ that has a neighbor in $A_1(i)$ and $pq\notin E(G)$, we have $p\in A_1(i)$. 

    Let $S$ be the set of vertices in $K$ that is a neighbor of any such vertex $p\in S(v_5,v_1,v_2)$. Since $A_1(i)$ is anticomplete to $V(H)\setminus B_i$, $K\setminus S$ contains $V(H)\setminus B_i$.
    Let $L$ be a component of $K\setminus S$ containing $V(H)\setminus B_i$.
     We show that $N(L)$ is a clique. By Lemma \ref{lem:non-neighbors of C5} (3), $N(L)\setminus S$ is a clique. Note that any vertex $N(L)\cap S$ is complete to $L$ for otherwise there is an induced $P_7$ in $G$. Since any neighbor of a vertex in $A_1(i)$ which is not in $A_5$ has a non-neighbor in $L$, $N(L)\cap S\subseteq A_5$ and so $N(L)\cap S$ is a clique. Next we show that $N(L)\cap S$ is complete to $N(L)\setminus S$. Suppose by contradiction that $a\in N(L)\cap S$ and $b\in N(L)\setminus S$ are not adjacent. 
    Since $a$ is complete to $L$, we can take a common neighbor $c\in L$ of $a$ and $b$. By definition of $S$, $a$ has a neighbor $r\in S(v_5,v_1,v_2)$. Since $r-a-c-b-r$ is not an induced $C_4$, $br\notin E(G)$. If $b$ is adjacent to both $v_2$ and $v_5$, then $r-v_2-b-v_5-r$ is an induced $C_4$. So we may assume that $bv_2\notin E(G)$ by symmetry. Then $v_4-v_3-v_2-r-a-c-b$ is a bad $P_7$. This proves that $N(L)\cap S$ is complete to $N(L)\setminus S$ and so $N(L)$ is a clique, a contradiction.
\end{proof}

\begin{lemma}\label{lem:A_2(i) is chordal}
    $A_2(i-2)$ is chordal for each $i\in [5]$. 
\end{lemma}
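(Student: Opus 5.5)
We follow the template of the proof of Lemma \ref{lem:A_1(i) is chordal}. Suppose that $C'=v_1-v_2-v_3-v_4-v_5-v_1$ is an induced $C_5$ in $A_2(i-2)$, and relabel so that $A_2(i-2)$ has support $\{i-2,i-1\}$. We first record a superset $N$ of $N(C')$ using the anticomplete pair lemmas: Lemmas \ref{A1A2}, \ref{A2A2}, \ref{lem:anticompleteA3-1} and \ref{anti-A2A3}. This gives
\[
N(C')\subseteq B_{i-2}\cup B_{i-1}\cup A_1(i-2)\cup A_1(i-1)\cup A_1(i+1)\cup A_2(i-2)\cup A_3(i-2)\cup A_3(i-1)\cup A_5 .
\]
Let $K$ be the component of $G-(N(C')\cup C')$ that contains $B_{i}\cup B_{i+1}\cup B_{i+2}$. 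These three sets are anticomplete to $A_2(i-2)$, they are connected through $q_i,q_{i+1},q_{i+2}$, and $K$ is not a clique. Applying Lemma \ref{lem:non-neighbors of C5} to $C'$ and $K$, and using that $G$ has no clique cutsets, we get two non-adjacent vertices $p,q\in N(K)$ with $p\in S(v_{j-1},v_j,v_{j+1})$ and $q\in S(v_{j-2},v_{j+2})$; moreover $q$ is complete to $K$.

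Next we identify $q$ and $p$. A vertex complete to $K$ must be adjacent to $q_i,q_{i+1},q_{i+2}$, and more generally to all of $B_i\cup B_{i+1}\cup B_{i+2}$. Checking the list above, every candidate other than $A_5$ fails this. Candidates in $B_{i-2}\cup B_{i-1}\cup A_3(i-2)\cup A_3(i-1)$ miss some vertex of $B_i$ or $B_{i+2}$ by the support definitions. The $A_1$ candidates have support of size one and so cannot reach all three sets. Hence $q\in A_5$. Since $A_5$ is a clique complete to $V(H)\cup A_3$ by Lemma \ref{lem:blowup-fiveneighbor1}, $p\notin A_5\cup A_3\cup V(H)$. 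Thus $p\in A_1\cup A_2(i-2)$, and since $p$ has three neighbors on $C'$, $p\in A_1(i-2)\cup A_1(i-1)\cup A_1(i+1)\cup A_2(i-2)$. When $p\in A_2(i-2)$ or $p\in A_1(i-2)\cup A_1(i-1)$, we aim for a contradiction in two ways. One is to extend the induced path $v_{j+2}-v_{j-2}-v_{j-1}-p$ (or a similar one) through $B_{i-2}$ or $B_{i-1}$ into $K$ to get an induced $P_7$ or a bad $P_7$ (Lemma \ref{lem:badP7}). The other is to exhibit an induced $C_4$ or $C_6$ with $q$ and a common neighbor of $p$ and $q$ in $B_{i-2}\cup B_{i-1}$. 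So we expect $p\in A_1(i+1)$. Here Lemma \ref{A1A2}(2) is exactly the statement that $A_1(i+1)$ can be adjacent to $A_2(i-2)$. Lemma \ref{lem:A5 vs A1A2} then forces $q$ to be complete to every edge between $A_1(i+1)$ and $A_2(i-2)$, hence adjacent to $p$, contradicting $pq\notin E(G)$. If that route does not close directly, we fall back on the same clique cutset argument used for $A_1(i)$ in the previous lemma.

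That fallback goes as follows. Let $S$ be the set of vertices of $K$ adjacent to some such $p$, and let $L$ be the component of $K\setminus S$ containing $B_{i}\cup B_{i+2}$ and whatever else of $H$ survives. We then show that $N(L)$ is a clique. By Lemma \ref{lem:non-neighbors of C5}(3), $N(L)\setminus S$ is a clique. Every vertex of $N(L)\cap S$ is complete to $L$, since otherwise we get an induced $P_7$. Neighbors of $A_1(i+1)$ outside $A_5$ have non-neighbors in $L$, so $N(L)\cap S\subseteq A_5$. Finally, $N(L)\cap S$ is complete to $N(L)\setminus S$ by the same argument as before, using the bad path $v_{j+2}-v_{j+1}-v_j-r-a-c-b$. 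This yields a clique cutset, a contradiction.

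The main obstacle is the case analysis placing $p$. In particular, ruling out $p\in A_1(i-2)\cup A_1(i-1)$ needs explicit forbidden paths mixing $C'$ with $B_{i-2}\cup B_{i-1}$. This is less symmetric than in Lemma \ref{lem:A_1(i) is chordal}, because $A_2(i-2)$ has two support cliques, so $B_{i-2}$ and $B_{i-1}$ both lie in $N(C')$ and not in $K$. We will handle it by choosing the extension into $K$ via $q_{i+2}$ or $q_i$ accordingly.
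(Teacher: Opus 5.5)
Your setup agrees with the paper up to $q\in A_5$. Your exclusion of $p\in A_1(i+1)$ via Lemma~\ref{lem:A5 vs A1A2} (the minimality of $|S_5|$ in the choice of $C$) is exactly the paper's step. The gap is that you treat the remaining cases $p\in A_1(i-2)\cup A_1(i-1)\cup A_2(i-2)$ as routine, whereas they carry the whole content of the lemma. For them you only announce two local routes, and neither is carried out; neither closes in general either.

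Normalise $j=1$, so $p\in S(v_5,v_1,v_2)$ and $q\in S(v_3,v_4)$. Suppose every neighbour of $p$ in $B_{i-2}\cup B_{i-1}$ is complete to $C'$, and the neighbours of $p$ in $K$ are vertices $k\in A_5$ that are anticomplete to $C'$. Then $v_3-v_4-v_5-p$ cannot be extended through those vertices of $B_{i-2}\cup B_{i-1}$. It also cannot be continued beyond $k$ inside $H$, since $k$ is complete to $H$. Finally, every common neighbour $b$ of $p$ and $q$ in $B_{i-2}\cup B_{i-1}$ is adjacent to $k$, so $p-b-q-k-p$ has a chord. No contradiction comes out locally.

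What is missing is the clique cutset argument, run on exactly these cases. Your fallback has the right shape, but you attach it to $p\in A_1(i+1)$, which is already closed. In that case $S$ would swallow part of $B_{i+1}$, which is why you are left with ``$B_i\cup B_{i+2}$ and whatever else survives''.

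The argument that works is as follows.
\begin{enumerate}
\item Let $S$ be the set of $K$-neighbours of all vertices $p$ of the above type. These lie in $A_1(i-2)\cup A_1(i-1)\cup A_2(i-2)$, which is anticomplete to $B_i\cup B_{i+1}\cup B_{i+2}$. So all of $B_i\cup B_{i+1}\cup B_{i+2}$ lies in one component $L$ of $K\setminus S$.
\item By Lemma~\ref{lem:non-neighbors of C5}(3), $N(L)\setminus S$ is a clique.
\item Each $s\in N(L)\cap S$ is complete to $L$. Otherwise $s$ mixes on an edge $xy$ of $L$ with $sx\in E(G)$, and $y-x-s-p-v_5-v_4-v_3$ is an induced $P_7$, where $p$ is a vertex of this type adjacent to $s$.
\item Every vertex of $H\cup A_1\cup A_2\cup A_3$ misses a vertex of $B_i\cup B_{i+1}\cup B_{i+2}$. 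Hence $N(L)\cap S\subseteq A_5$, which is a clique.
\item Take non-adjacent $a\in N(L)\cap S$ and $b\in N(L)\setminus S$, a common neighbour $c\in L$, and a vertex $r$ of this type adjacent to $a$. Then $br\notin E(G)$ by $C_4$-freeness, and up to symmetry $bv_2\notin E(G)$. Now $v_4-v_3-v_2-r-a-c-b$ is a bad $P_7$, contradicting Lemma~\ref{lem:badP7}.
\end{enumerate}
Note that your path $v_{j+2}-v_{j+1}-v_j-r$ is not induced, since $r$ is adjacent to both $v_j$ and $v_{j+1}$. So $N(L)$ is a clique cutset, which is the contradiction you need.
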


\begin{proof}
    Suppose that $C'=v_1-v_2-v_3-v_4-v_5$ is an induced $C_5$ in $A_2(i-2)$. Let 
    \[N=B_{i-2}\cup B_{i-1}\cup A_1(i-2)\cup A_1(i-1)\cup A_1(i+1)\cup A_2(i-2)\cup A_3(i-2)\cup A_3(i-1)\cup A_5.
    \] 
    Note that $N(C')\subseteq N$. Let $K$ be the component of $G-(N(C')\cup V(C'))$ containing $B_i\cup B_{i+1}\cup B_{i+2}$. Since $K$ is not a clique, $K$ has two non-adjacent neighbors $p$ and $q$ by Lemma \ref{lem:non-neighbors of C5}.
     The notations $S(v_3,v_4)$ and $S(v_5,v_1,v_2)$ below are the same as defined in Section \ref{sec:non-neighbors of C5}.
    By symmetry, we may assume that $p\in S(v_5,v_1,v_2)$ and $q\in S(v_3,v_4)$. Since $q$ is complete to $K$ by Lemma \ref{lem:non-neighbors of C5} and each vertex in $N(C')\setminus A_5$ has a non-neighbor in $K$, we have $q\in A_5$. By the choice of $C=C_5$ and $q\in A_5$, we have 
    $p\in A_1(i-2)\cup A_1(i-1)\cup A_2(i-2)$. 

    Let $S$ be the set of vertices in $K$ that is a neighbor of any such vertex $p\in S(v_5,v_1,v_2)$. Since $A_1(i-2)\cup A_1(i-1)\cup A_2(i-2)$ is anticomplete to $B_i\cup B_{i+1}\cup B_{i+2}$, we have $B_i\cup B_{i+1}\cup B_{i+2}\subseteq K\setminus S$. Let $L$ be the component of $K\setminus S$ containing $B_i\cup B_{i+1}\cup B_{i+2}$. We show that $N(L)$ is a clique. By Lemma \ref{lem:non-neighbors of C5}, $N(L)\setminus S$ is a clique. Note that any vertex $N(L)\cap S$ is complete to $L$ for otherwise there is an induced $P_7$ in $G$. Since any vertex in $H\cup A_1\cup A_2$ has a non-neighbor in $B_i\cup B_{i+1}\cup B_{i+2}$, $(N(L)\cap S)\cap (H\cup A_1\cup A_2)=\emptyset$. Since any vertex in $A_3$ has a non-neighbor in $B_i\cup B_{i+1}\cup B_{i+2}$, $N(L)\cap S$ contains no vertex from $A_3$. So $N(L)\cap S\subseteq A_5$ and so is a clique.

    Next we show that $N(L)\cap S$ is complete to $N(L)\setminus S$. Suppose by contradiction that $a\in N(L)\cap S$ and $b\in N(L)\setminus S$ are not adjacent. 
    Since $a$ is complete to $L$, we can take a common neighbor $c\in L$ of $a$ and $b$. By definition of $S$, $a$ has a neighbor $r\in S(v_5,v_1,v_2)$. Since $r-a-c-b-r$ is not an induced $C_4$, $br\notin E(G)$. If $b$ is adjacent to both $v_2$ and $v_5$, then $r-v_2-b-v_5-r$ is an induced $C_4$. So we may assume that $bv_2\notin E(G)$ by symmetry. Then $v_4-v_3-v_2-r-a-c-b$ is a bad $P_7$. This proves that $N(L)\cap S$ is complete to $N(L)\setminus S$ and so $N(L)$ is a clique, a contradiction.
\end{proof}

\begin{lemma}\label{lem:no common neighbors in A_1(i)}
    For any two non-adjacent vertices $u,v\in A_3(i)\cup B_i$, $u$ and $v$ cannot have a common neighbor in $A_1(i)$.
\end{lemma}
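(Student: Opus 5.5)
We argue by contradiction: suppose $u,v\in A'_3(i)=A_3(i)\cup B_i$ are non-adjacent and share a neighbor $z\in A_1(i)$. By Lemma~\ref{lem:nonedge in A_3(i)}, $u$ and $v$ have neighborhood containment in one of $B_{i-1},B_{i+1}$ and disjoint neighborhoods in the other; by symmetry, say the neighborhoods are disjoint in $B_{i+1}$ and comparable in $B_{i-1}$. In particular $u,v$ have a common neighbor $c\in B_{i-1}$. Then $u-z-v-c-u$ is a cycle on four vertices. Here $z\in A_1(i)$ has no neighbor in $B_{i-1}$, so $zc\notin E(G)$, and $uv\notin E(G)$ by assumption. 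This is an induced $C_4$, a contradiction.

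The only points to check are the following. First, Lemma~\ref{lem:nonedge in A_3(i)} is stated for $x,y\in A_3'(i)$, which covers all three combinations ($u,v$ both in $B_i$, both in $A_3(i)$, or one in each). When both lie in $B_i$ they are adjacent, since $B_i$ is a clique, so that case is vacuous. Second, the lemma guarantees a common neighbor in one of $B_{i\pm1}$: neighborhood containment together with non-emptiness of the neighborhoods gives this. Each vertex of $B_i$ has a neighbor in $B_{i-1}$ and in $B_{i+1}$ by the definition of a nice blowup, and each vertex of $A_3(i)$ has neighbors in both by the definition of $A_3(i)$. Hence the smaller neighborhood is non-empty and contained in the larger, which yields the common neighbor $c$. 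Third, $\supp(z)=\{i\}$ ensures $z$ is anticomplete to $B_{i-1}\cup B_{i+1}$.

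No step looks like a real obstacle; the main care is confirming that non-emptiness makes ``containment'' produce a common neighbor, which the definitions supply directly.
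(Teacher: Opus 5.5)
Your proof is correct and is essentially the paper's argument. The paper phrases it contrapositively: since $z$ is anticomplete to $B_{i-1}\cup B_{i+1}$, $C_4$-freeness forces $u,v$ to have disjoint neighborhoods in both $B_{i-1}$ and $B_{i+1}$, contradicting Lemma~\ref{lem:nonedge in A_3(i)}. You instead extract a common neighbor $c$ on the containment side and exhibit the induced $C_4$ $u-z-v-c-u$ directly; this is the same contradiction.
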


\begin{proof}
    Suppose not. Let $x\in A_1(i)$ be a common neighbor of $u$ and $v$. Since $A_1(i)$ is anticomplete to $B_{i-1}\cup B_{i+1}$ and $G$ is $C_4$-free, $u$ and $v$ have disjoint neighborhoods in each of $B_{i-1}$ and $B_{i+1}$. This contradicts Lemma \ref{lem:nonedge in A_3(i)}.
\end{proof}

\begin{lemma}\label{lem:clique components of A_1(i)}
    Any component of $A_1(i)\setminus N(A_2(i+2))$ is complete to a pair of non-adjacent vertices in $N(A_2(i+2))\cap A_1(i)$ and so is a clique.  
\end{lemma}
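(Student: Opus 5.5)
The plan is to apply the key technical Lemma \ref{lem:complete subgraphs} to $K$, a component of $A_1(i)\setminus N(A_2(i+2))$. Its conclusion is exactly the claim: $N(K)\cap X$ contains two non-adjacent vertices complete to $K$, and $K$ is a clique. So the work is choosing $X$ and $Y$, checking conditions (1)--(3), and then locating those two vertices inside $A_1(i)\cap N(A_2(i+2))$.

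\textbf{Choice of $X$ and $Y$.} Since $K$ is a component of $A_1(i)\setminus N(A_2(i+2))$, we have $N(K)\cap A_1(i)\subseteq N(A_2(i+2))$. By Proposition~\ref{prop:component of A_1(i)}, $N(K)\setminus(A_1(i)\cup A_2(i+2))$ is a clique. Moreover $K$ has no neighbour in $A_2(i+2)$ by construction. I therefore take $X=A_1(i)\cap N(A_2(i+2))$.

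\textbf{Condition (1).} I must show that every vertex of the clique $N(K)\setminus X$ is adjacent to every vertex of $N(K)\cap X$. Let $c\in N(K)\setminus X$ and $x\in N(K)\cap X$ be non-adjacent, and fix a neighbour $y\in A_2(i+2)$ of $x$. By Lemma \ref{lem:opposite A_1(i) and A_2(j)}, $y$ is complete to $B_{i+2}\cup B_{i-2}$.
\begin{itemize}
\item If $c\in A_5$, Lemma \ref{lem:A5 vs A1A2} gives $cx\in E(G)$.
\item If $c\in B_i\cup A_2(i-1)\cup A_3(i-1)\cup A_3(i)\cup A_3(i+1)$, I would build a forbidden structure. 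Take a shortest path from $c$ to $x$ through $K$, then continue along $x-y-q_{i+2}$ or $x-y-q_{i-2}$ and close through $B$'s. This should give an induced cycle of length $6$ or $7$, or a bad $P_7$, in the style of the proof of Proposition~\ref{prop:component of A_1(i)}. Lemma \ref{lem:A2i1tocompA1} and Lemma \ref{lem:A3tocompA1} help control how these vertices attach to $K\cup\{x\}$, which is connected in $A_1(i)$.
\end{itemize}

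\textbf{Condition (3).} I take $Y=A_2(i+2)\cup B_{i+2}$, or possibly just $A_2(i+2)$; $Y$ is anticomplete to $K$. Let $x,x'\in X$ be non-adjacent. Their neighbourhoods in $Y$ are non-empty by definition of $X$. The two neighbourhoods are disjoint, since a common neighbour $y$ would combine with a common neighbour in $K$, or with the $B_i$ vertices, to form a $C_4$. To show they are complete to each other, take $y\in N(x)$ and $y'\in N(x')$ in $A_2(i+2)$ and suppose $yy'\notin E(G)$. Both are complete to $B_{i-2}$, so they have a common neighbour $q_{i-2}$. Then $x-y-q_{i-2}-y'-x'$ closes through $K$ or through $B_i$ into a $C_6$ or $C_7$, or yields a $C_4$ if $x$ and $x'$ share a neighbour.

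\textbf{Condition (2).} There must be no induced $P_4=a-b-c-d$ with $d\in X$, $c\in K$ and $a,b\in K\cup X$. Since $K\cup X\subseteq A_1(i)$, I would extend such a $P_4$ at $d$ by $y\in A_2(i+2)$ and $B_{i+2}$ to get a $P_7$ or a bad $P_7$. As an alternative route, Lemma \ref{lem:induced P3in A_1(i)}, combined with chordality of $A_1(i)$ (Lemma \ref{lem:A_1(i) is chordal}), forces neighbourhood containments that rule the $P_4$ out.

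\textbf{Conclusion and main obstacle.} The lemma then yields two non-adjacent vertices of $X$ complete to $K$, as required. I expect condition (1) to be the main obstacle, since it needs a case analysis over $B_i$, $A_2(i-1)$ and the three $A_3$ sets. Condition (2) is second, because $X$-vertices outside $K$ can form $P_4$s that are harder to extend to a forbidden $P_7$.
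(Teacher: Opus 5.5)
Your overall route is the paper's. It applies Lemma~\ref{lem:complete subgraphs} to $K$ with $X=N(A_2(i+2))\cap A_1(i)$ and $Y=A_2(i+2)$; adding $B_{i+2}$ to $Y$ is harmless, since vertices of $A_1(i)$ have no neighbours there.

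Condition (1) is actually the routine part, not the main obstacle:
\begin{itemize}
\item For $c\in A_2(i-1)\cup A_2(i)\cup A_3(i-1)\cup A_3(i)\cup A_3(i+1)$, Lemmas~\ref{lem:A2i1tocompA1} and~\ref{lem:A3tocompA1} applied to the connected set $K\cup\{x\}$ give $cx\in E(G)$ directly, with no forbidden structure needed. Note that your list omitted $A_2(i)$.
\item For $c\in B_i$, the hole $c-(K)-x-y-q_{i-2}-q_{i-1}-c$ excludes non-adjacency.
\item For $c\in A_5$, Lemma~\ref{lem:A5 vs A1A2} gives $cx\in E(G)$, as you said.
\end{itemize}

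The genuine gap is the disjointness claim in condition (3). You assert that a common neighbour $y\in A_2(i+2)$ of non-adjacent $x,x'\in N(K)\cap X$ ``would combine with a common neighbour in $K$, or with the $B_i$ vertices, to form a $C_4$''. Nothing you wrote guarantees that $x$ and $x'$ share a neighbour in $K$ or in $B_i$.

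If they share none in $K$, a shortest $x$--$x'$ path through $K$ can have exactly two internal vertices. Closing it through $y$ then gives a $C_5$, which is not forbidden.

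The missing step is the following.
\begin{itemize}
\item In that case neither $x$ nor $x'$ is complete to $K$. Since $K$ is connected, $x$ mixes on some edge $uv$ of $K$, with $xv\in E(G)$ and $xu\notin E(G)$.
\item By Lemma~\ref{lem:opposite A_1(i) and A_2(j)}, $y$ is complete to $B_{i+2}$. So for $t\in B_i\setminus N(x)$, the sequence $u-v-x-y-q_{i+2}-q_{i+1}-t$ would be a bad $P_7$.
\item Hence $x$ is complete to $B_i$, and likewise $x'$. Now $x$ and $x'$ do share a neighbour, in $K$ or in $B_i$.
\item With that common neighbour, disjointness follows from $C_4$-freeness, and your completeness argument then goes through.
\end{itemize}

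Condition (2) inherits the same gap. In $a-b-c-d-y-q_{i+2}-q_{i+1}$, a chord $by$ gives a $C_4$. A chord $ay$, however, only gives the $C_5$ $a-b-c-d-y-a$. It is excluded precisely because $a$ and $d$ are non-adjacent vertices of $N(K)\cap X$ with disjoint neighbourhoods in $Y$, which is the disjointness from (3).

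Your fallback via Lemma~\ref{lem:induced P3in A_1(i)} and chordality does not exclude such a $P_4$. It only forces $N_{B_i}(b)=N_{B_i}(c)$.
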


\begin{proof}
    Let $S=N(A_2(i+2))\cap A_1(i)$.
    Let $K$ be a component of $A_1(i)\setminus S$.  Let $N=B_i\cup A_2(i-1)\cup A_2(i)\cup  A_3(i-1)\cup A_3(i)\cup A_3(i+1)\cup A_5$. Note that $N(K)\subseteq N\cup S$. By Proposition \ref{prop:component of A_1(i)}, $N(K)\setminus S$ is a clique. Next we show that $N(K)\setminus S$ is complete to $N(K)\cap S$. Suppose by contradiction that $s\in N(K)\cap S$ is not adjacent to $t\in N(K)\setminus S$. Let $r\in A_2(i+2)$ be a neighbor of $s$. If $t\in B_i$, then $t-K-s-r-B_{i-2}-B_{i-1}-t$ is an induced cyclic sequence. So $t\notin B_i$. By Lemma \ref{lem:A2i1tocompA1}, $t\notin A_2(i-1)\cup A_2(i)$. By Lemma \ref{lem:A3tocompA1}, $t\notin A_3(i-1)\cup A_3(i)\cup A_3(i+1)$. 
    By the choice of $C$, $t\notin A_5$. This proves that $N(K)\setminus S$ is complete to $N(K)\cap S$. Let $a,b\in N(K)\cap S$ be two non-adjacent vertices. If $a,b$ have a common neighbor in $K$, then $a,b$ have disjoint neighborhoods in $A_2(i+2)$ which are complete to each other by $G$ is $C_6$-free. Suppose that $a,b$ do not have a common neighbor in $K$. Then each of $a$ and $b$ mixes on $K$. If $a$ mixes on $uv\in K$, then
    Since $u-v-a-(N(a)\cap A_2(i+2))-q_{i+2}-q_{i+1}-B_i$ is not a bad $P_7$, $a$ is complete to $B_i$. By symmetry, $b$ is complete to $B_i$. So $a,b$ have a common neighbor in $B_i$, which implies that $a,b$ have disjoint and complete neighborhoods in $A_2(i+2)$. If there is an induced $P_4=a-b-c-d$ such that $d\in N(K)\cap S$, $c\in K$ and $a,b\in (N(K)\cap S)\cup K$, then $a-b-c-d-(N(d)\cap A_2(i+2))-q_{i+2}-q_{i+1}$ is an induced $P_7$. 
    
    Now all three conditions of Lemma \ref{lem:complete subgraphs} are satisfied for $K$ where $X=S$ and $Y=A_2(i+2)$, and so we are done.
    \end{proof}

\begin{lemma}\label{lem:A_5 is complete to A_1}
    $A_5$ is complete to $A_1$.
\end{lemma}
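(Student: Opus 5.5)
The plan is to split according to whether the vertex of $A_1$ has a neighbor in the corresponding $A_2$-set. The first case follows at once from Lemma \ref{lem:A5 vs A1A2}. The second is reduced to the first through Lemma \ref{lem:clique components of A_1(i)} and $C_4$-freeness.

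Let $u\in A_5$ and $x\in A_1(i)$ for some $i\in[5]$; we show $ux\in E(G)$. Set $S=N(A_2(i+2))\cap A_1(i)$, as in Lemma \ref{lem:clique components of A_1(i)}.

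\emph{Case 1: $x\in S$.} Then $x$ has a neighbor $y\in A_2(i+2)$. The edge $xy$ has $x\in A_1(i)$ and $y\in A_2(i+2)$, so Lemma \ref{lem:A5 vs A1A2} says $A_5$ is complete to $\{x,y\}$. In particular $ux\in E(G)$. Thus $A_5$ is complete to $S$.

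\emph{Case 2: $x\notin S$.} Let $K$ be the component of $A_1(i)\setminus S$ containing $x$. By Lemma \ref{lem:clique components of A_1(i)}, there are two non-adjacent vertices $a,b\in S$ that are complete to $K$; in particular $xa,xb\in E(G)$. By Case 1, $u$ is adjacent to both $a$ and $b$. Suppose $ux\notin E(G)$. The four vertices are distinct, since $u\in A_5$ while $x,a,b\in A_1(i)$ and $x\notin S\ni a,b$. The only non-edges among them are $ab$ and $ux$, so $x-a-u-b-x$ is an induced $C_4$. This contradicts $C_4$-freeness, hence $ux\in E(G)$.

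Since $i$ was arbitrary, $A_5$ is complete to $A_1$. The real work lies in the two cited lemmas, which this argument takes as given. Lemma \ref{lem:A5 vs A1A2} depends on the choice of $C$ minimizing $|S_5|$. Lemma \ref{lem:clique components of A_1(i)} provides the non-adjacent pair $a,b$; it uses the key technical Lemma \ref{lem:complete subgraphs} and the absence of clique cutsets.
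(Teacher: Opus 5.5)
Your proposal is correct and follows essentially the same route as the paper: Lemma \ref{lem:A5 vs A1A2} gives that $A_5$ is complete to $S=N(A_2(i+2))\cap A_1(i)$. Then Lemma \ref{lem:clique components of A_1(i)} supplies a non-adjacent pair in $S$ complete to each component of $A_1(i)\setminus S$, and $C_4$-freeness finishes the argument.
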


\begin{proof}
    We show that $A_5$ is complete to $A_1(i)$. Let $S=N(A_2(i+2))\cap A_1(i)$. By Lemma \ref{lem:A5 vs A1A2}, $A_5$ is complete to $S$. By Lemma \ref{lem:clique components of A_1(i)}, each component of $A_1(i)\setminus S$ is complete to a pair of non-adjacent vertices in $S$. Since $G$ is $C_4$-free, $A_5$ is complete to that component. 
\end{proof}

\begin{lemma}[Simplicial vertices in $A_1(i)$]\label{lem:simplicial vertices in A_1(i)}
    Let $K$ be a component of $A_1(i)$ and $s$ be a $(K,B_i)$-minimal simplicial vertex.
    Then $N(s)\setminus A_2(i+2)$ is a clique and so 
    $N(s)\cap A_2(i+2)$ is a clique of size $>\ceil{\frac{\omega}{4}}$ complete to $B_{i+2}\cup B_{i-2}$.
\end{lemma}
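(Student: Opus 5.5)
We prove the two claims in order. First we show that $N(s)\setminus A_2(i+2)$ is a clique. Then we deduce the statement about $N(s)\cap A_2(i+2)$ from the small vertex argument and Lemma \ref{lem:opposite A_1(i) and A_2(j)}.

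\emph{Step 1: $N(s)\setminus A_2(i+2)$ is a clique.} We have
\[N(s)\setminus A_2(i+2)\subseteq N_K(s)\cup N_{B_i}(s)\cup (N(K)\setminus (A_1(i)\cup A_2(i+2))),\]
since $A_1(i)$ is anticomplete to other components of $A_1(i)$ and to $A_1(j)$ for $j\neq i$. The set $N_K[s]$ is a clique because $s$ is simplicial in $K$. The set $N(K)\setminus (A_1(i)\cup A_2(i+2))$, which contains $N_{B_i}(K)$, is a clique by Proposition \ref{prop:component of A_1(i)}.

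So the only possible non-adjacent pair is $v\in N_K(s)$ and $v'\in N(s)\setminus (A_1(i)\cup A_2(i+2))$.

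\begin{itemize}
\item Suppose $v'\in B_i$. This case is exactly Lemma \ref{lem:minimal simplicial vertex} with $X=K$ and $Y=B_i$. It gives either that $N_{K\cup B_i}(s)$ is a clique, or an induced $P_4=a-s-c-d$ with $a\in B_i$ and $c,d\in K$. In the second case, $d-c-s-a-q_{i-1}-q_{i-2}-q_{i+2}$ is an induced $P_7$, because $K\subseteq A_1(i)$ is anticomplete to $V(H)\setminus B_i$ and $a$ is non-adjacent to $c,d$. This is a contradiction.
\item Suppose $v'\in A_2(i-1)\cup A_2(i)\cup A_3(i-1)\cup A_3(i)\cup A_3(i+1)$. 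Then $v'$ is complete or anticomplete to $K$ by Lemmas \ref{lem:A2i1tocompA1} and \ref{lem:A3tocompA1}. Since $v'$ is adjacent to $s$, it is complete to $K$, so $v'v\in E(G)$.
\item Suppose $v'\in A_5$. Then $v'$ is complete to $A_1$ by Lemma \ref{lem:A_5 is complete to A_1}, so again $v'v\in E(G)$.
\end{itemize}

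Hence $N(s)\setminus A_2(i+2)$ is a clique.

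\emph{Step 2: size and completeness of $N(s)\cap A_2(i+2)$.} First, $N(s)\cap A_2(i+2)$ is a clique. If two of its vertices were non-adjacent, then together with $s$ they would form an induced $P_3$. Taking their neighbours in $B_{i+2}$ would then give an induced $C_4$, $C_6$ or $P_7$ through $s$ and $B_i$. Alternatively, Lemma \ref{lem:opposite A_1(i) and A_2(j)} shows that both vertices are complete to $B_{i+2}\cup B_{i-2}$, so they have a common neighbour $q_{i+2}$, and then $s-x-q_{i+2}-y-s$ is an induced $C_4$. Either way this is a contradiction.

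Second, $N(s)\cap A_2(i+2)$ is complete to $B_{i+2}\cup B_{i-2}$, directly by Lemma \ref{lem:opposite A_1(i) and A_2(j)}.

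Finally, $G$ is basic, so $s$ is not small: $d(s)\ge \ceil{\frac{5}{4}\omega}$. We also have $|N(s)\setminus A_2(i+2)|\le \omega-1$, because this set is a clique and $s$ is adjacent to all of it. Therefore
\[|N(s)\cap A_2(i+2)|\ge \ceil{\tfrac54\omega}-\omega+1>\ceil{\tfrac{\omega}{4}}.\]

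\emph{Main obstacle.} The delicate step is the $B_i$ part of Step 1. One has to check that every chord is excluded in the $P_7$ $d-c-s-a-q_{i-1}-q_{i-2}-q_{i+2}$. It is also possible that the minimality condition in the definition was intended to handle this case instead; if so, we would argue via Lemma \ref{lem:induced P3in A_1(i)}.
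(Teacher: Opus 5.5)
Your proof is correct and follows the paper's argument. Proposition~\ref{prop:component of A_1(i)} handles everything outside $K$; Lemma~\ref{lem:minimal simplicial vertex} together with the $P_7$ underlying Lemma~\ref{lem:induced P3in A_1(i)} handles $N_{B_i}(s)$ versus $N_K(s)$; and Lemmas~\ref{lem:A2i1tocompA1}, \ref{lem:A3tocompA1} and \ref{lem:A_5 is complete to A_1} handle the remaining sets. The paper's preliminary narrowing via Lemma~\ref{lem:basic properties of components of A_1(i)} is unnecessary, and you rightly skip it. Your Step~2 correctly fills in what the paper leaves implicit: the induced $C_4$ through $q_{i+2}$ obtained from Lemma~\ref{lem:opposite A_1(i) and A_2(j)}, and the count $\ceil{\frac{5}{4}\omega}-(\omega-1)=\ceil{\frac{\omega}{4}}+1$. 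However, you should drop the vague first justification of the clique property there, since the $C_4$ argument alone suffices.
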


\begin{proof} 
    By Proposition \ref{prop:component of A_1(i)} applying to $\{s\}$, $N(s)\setminus (K\cup A_2(i+2))$ is a clique. Next we investigate the neighbor of $s$ in $K$ and the neighbor of $s$ in $N(s)\setminus (A_2(i+2)\cup K)\subseteq B_i\cup A_1(i)\cup A_2(i-1)\cup A_2(i)\cup A_3(i-1)\cup A_3(i)\cup A_3(i+1)\cup A_5$.

    By Lemma \ref{lem:basic properties of components of A_1(i)} (5), $K$ has neighbors in at most one of $A_3(i-1)$, $A_3(i)$ and $A_3(i+1)$. If $K$ is anticomplete to $A_3(i-1)\cup A_3(i+1)$, we may assume by Lemma \ref{lem:basic properties of components of A_1(i)} (1) that $K$ is anticomplete to $A_2(i)$. So $N(s)\setminus A_2(i+2)\subseteq B_i\cup A_2(i-1)\cup A_3(i)\cup A_5\cup K$. If $K$ has a neighbor in $A_3(i-1)$, then $K$ is anticomplete to $A_2(i)$ by Lemmma \ref{lem:basic properties of components of A_1(i)} (2). Then $N(s)\setminus A_2(i+2)\subseteq B_i\cup A_2(i-1)\cup A_3(i-1)\cup A_5\cup K$. In either case, we have $N(s)\setminus A_2(i+2)\subseteq B_i\cup A_2(i-1)\cup A_3(j)\cup A_5\cup K$ for $j\in \{i-1,i\}$.

    Recall that $N(s)\setminus (A_2(i+2)\cup K)$ is a clique. If $N(s)\cap B_i$ is not complete to $N(s)\cap K$, there is an induced $P_4$ with one end in $
    B_i$ and the other three vertices in $A_1(i)$ by Lemma \ref{lem:minimal simplicial vertex}. This contradicts Lemma \ref{lem:induced P3in A_1(i)}.
    So $N(s)\cap B_i$ is complete to $N(s)\cap K$. By Lemma \ref{lem:A2i1tocompA1}, $N(s)\cap A_2(i-1)$ is complete to $N(s)\cap K$. 
    By Lemma \ref{lem:A3tocompA1}, $N(s)\cap A_3(j)$ is complete to $N(s)\cap K$ for $j\in \{i-1,i\}$.
    By Lemma \ref{lem:A_5 is complete to A_1},  $N(s)\cap K$ is complete to $N(s)\cap A_5$. 
    This completes the proof.
\end{proof}

Let $A_2^1(i-2)$ be the set of vertices in $A_2(i-2)$ that have a neighbor in $A_1(i-1)$ and $A_2^1(i+1)$ be the set of vertices in $A_2(i+1)$ that have a neighbor in $A_1(i+1)$.

\begin{lemma}\label{lem:A_2^1(i-2) is universal in A_2(i-2)}
    Each vertex in $A^1_2(i-2)$ is a universal vertex in $A_2(i-2)$. By symmetry, each vertex in $A^1_2(i+1)$ is a universal vertex in $A_2(i+1)$. 
\end{lemma}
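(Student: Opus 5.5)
Fix $w\in A^1_2(i-2)$ and a neighbor $x\in A_1(i-1)$ of $w$. Let $K$ be the component of $A_1(i-1)$ containing $x$. Suppose for a contradiction that some $v\in A_2(i-2)$ is not adjacent to $w$. The plan is to first collect the structure around $w$ and $K$, and then split on whether $v$ sees $K$.

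\textbf{Structure around $w$ and $K$.}
\begin{itemize}
\item By Lemma~\ref{lem:nbr of component of A_1(i-1) in A_2(i-2)}, $w$ is complete to $B_{i-2}$.
\item Since $A_2(i-2)=A_2((i-1)-1)$, Lemma~\ref{lem:A2i1tocompA1} applied to the connected subgraph $K$ of $A_1(i-1)$ shows that every vertex of $A_2(i-2)$ is complete or anticomplete to $K$. In particular $w$ is complete to $K$.
\item By Proposition~\ref{prop:component of A_1(i)} (with index $i-1$), $N(K)\setminus(A_1(i-1)\cup A_2(i+1))$ is a clique. Because $G$ has no clique cutsets, $K$ has a neighbor $y\in A_2(i+1)$. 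Let $z\in K$ be a neighbor of $y$; then $wz\in E(G)$.
\end{itemize}

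\textbf{Case 1: $v$ has a neighbor in $K$.} Then $v$ is complete to $K$, so $z$ is a common neighbor of the non-adjacent vertices $v$ and $w$. As $G$ is $C_4$-free, $v$ and $w$ have no common neighbor in $H$. But $v$ has a neighbor in $B_{i-2}$ and $w$ is complete to $B_{i-2}$, a contradiction.

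\textbf{Case 2: $v$ is anticomplete to $K$.} Pick $b\in N_{B_{i-2}}(v)$; then $bw\in E(G)$. Pick $c\in N_{B_{i+1}}(y)$. I claim that
\[v-b-w-z-y-c-q_i\]
is an induced $P_7$. The non-edges to check are as follows.
\begin{itemize}
\item $v$ is non-adjacent to $w$ by assumption and to $z$ by the case assumption. It is non-adjacent to $y$ because $A_2(i-2)$ is anticomplete to $A_2((i-2)-2)=A_2(i+1)$ by Lemma~\ref{lem:anticompleteA3-1}. It misses $c\in B_{i+1}$ and $q_i\in B_i$ because its support is $\{i-2,i-1\}$.
\item $b\in B_{i-2}$ misses $z$ and $y$ by their supports, and misses $c$ and $q_i$ since $B_{i-2}$ is anticomplete to $B_{i+1}\cup B_i$.
\item $w$ misses $y$ (same anticomplete pair as above) and misses $c,q_i$ by its support.
\item $z\in A_1(i-1)$ misses $c$ and $q_i$.
\item $y$ misses $q_i$ because its support is $\{i+1,i+2\}$.
\end{itemize}
This contradicts $P_7$-freeness, so no such $v$ exists and $w$ is universal in $A_2(i-2)$.

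The second statement, for $A^1_2(i+1)$, follows from the first by the reflection $j\mapsto 2i-j$ of the indices. This map sends $A_1(i-1)$ to $A_1(i+1)$ and $A_2(i-2)$, whose support is $\{i-2,i-1\}$, to $A_2(i+1)$, whose support is $\{i+1,i+2\}$.

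The only delicate point is the adjacency bookkeeping in Case 2, in particular that $y$ is anticomplete to $\{v,w\}$. This rests on the index identity $i+1\equiv (i-2)-2$, which makes the anticomplete pair $A_2(j)$, $A_2(j-2)$ of Lemma~\ref{lem:anticompleteA3-1} apply with $j=i-2$.
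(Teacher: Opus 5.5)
Your proof is correct and follows the paper's argument: the paper uses the same induced path $d-v-a-b-c-q_{i+1}-q_i$, with $q_{i+1}$ in the slot where you use an arbitrary $c\in N_{B_{i+1}}(y)$. Your Case~1 makes explicit the induced $C_4$ $v-z-w-b-v$ that the paper leaves implicit. One small fix there: drop the claim that $v,w$ have no common neighbor in all of $H$, since $z\in A_1(i-1)$ may be adjacent to common neighbors in $B_{i-1}$; the only common neighbor you actually use lies in $B_{i-2}$, which $z$ misses, so the $C_4$ is induced.
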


\begin{proof}
    Suppose not. Let $a\in A^1_2(i-2)$ have a non-neighbor $d\in A_2(i-2)$. By Lemma \ref{lem:nbr of component of A_1(i-1) in A_2(i-2)}, $a$ is complete to $B_{i-2}$ and thus $a$ and $d$ have a common neighbor $v\in B_{i-2}$.
    By definition of $A_2^1(i-2)$, $a$ has a neighbor in a component $K$ of $A_1(i-1)$. By Lemma \ref{lem:A2i1tocompA1}, $a$ is complete to $K$. By Proposition \ref{prop:component of A_1(i)}, $K$ has a vertex $b$ that is adjacent to some vertex $c\in A_2(i+1)$. Then $d-v-a-b-c-q_{i+1}-q_i$ is an induced $P_7$, a contradiction.
\end{proof}

\subsection{Two non-empty $A_1(i)$}\label{sec:two non-empty A_1(i)}

\medskip
The global assumption in this subsection is that $A_1(i-1)$ and $A_1(i+1)$ are not empty. By Proposition \ref{prop:component of A_1(i)}, there is an edge between $A_1(i-1)$ and $A_2(i+1)$ and there is an edge between $A_1(i+1)$ and $A_2(i-2)$. Let $x\in A_1(i-1)$, $y\in A_1(i+1)$, $z\in A_2(i+1)$ and $w\in A_2(i-2)$ with $xz,yw\in E(G)$. By Lemma \ref{lem:opposite A_1(i) and A_2(j)}, $z$ is complete to $B_{i+1}\cup B_{i+2}$ and $w$ is complete to $B_{i-1}\cup B_{i-2}$. By Lemma \ref{lem:blowup-twoneighbor}, $B_{i-1}$ is complete to $B_{i-2}$ and $B_{i+1}$ is complete to $B_{i+2}$. Recall that $\omega=\omega(G)$.

\begin{lemma}\label{lem:two opposite edges}
    Suppose that $x'\in A_1(i-1)$ has a neighbor $z'\in A_2(i+1)$ and $y'\in A_1(i+1)$ has a neighbor $w'\in A_2(i-2)$. Then exactly one of $x'w'$ and $y'z'$ is an edge.
\end{lemma}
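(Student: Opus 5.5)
The plan is to rule out the two bad cases separately, using only the anticomplete-pair lemmas and the adjacency facts recorded at the start of this subsection. First I would list the edges among $x',y',z',w'$ that are forced to be absent.
\begin{itemize}
\item $x'y'\notin E(G)$ by Lemma \ref{A1A2} (2).
\item $z'w'\notin E(G)$ by the third consequence of Lemma \ref{lem:anticompleteA3-1}, since $A_2(i+1)$ is anticomplete to $A_2(i+1+2)=A_2(i-2)$.
\end{itemize}
The pairs $x'w'$ (which is $A_1(i-1)$ against $A_2((i-1)-1)$) and $y'z'$ (which is $A_1(i+1)$ against $A_2(i+1)$) are not covered by Lemma \ref{A1A2}, so they are genuinely free.

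\textbf{Not both edges.} Suppose $x'w',y'z'\in E(G)$. Together with $x'z',y'w'\in E(G)$, this makes $x'-z'-y'-w'-x'$ a $4$-cycle. Its only possible chords are $x'y'$ and $z'w'$, and both are absent. So it is an induced $C_4$, a contradiction.

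\textbf{Not both non-edges.} Suppose $x'w',y'z'\notin E(G)$. I would apply Lemma \ref{lem:opposite A_1(i) and A_2(j)} to the edges $x'z'$ and $y'w'$. This gives that $z'$ is complete to $B_{i+1}\cup B_{i+2}$ and $w'$ is complete to $B_{i-1}\cup B_{i-2}$. Pick $a\in N_{B_{i-1}}(x')$ and $b\in N_{B_{i+1}}(y')$; then $aw'\in E(G)$ and $bz'\in E(G)$. Consider the $6$-cycle
\[
x'-a-w'-y'-b-z'-x'.
\]
I would check that it has no chords.
\begin{itemize}
\item $x'$ has support $\{i-1\}$, so $x'b\notin E(G)$. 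Also $x'w'\notin E(G)$ by assumption and $x'y'\notin E(G)$ from above.
\item $y'$ has support $\{i+1\}$, so $y'a\notin E(G)$. Also $y'z'\notin E(G)$ by assumption.
\item $ab\notin E(G)$ because $B_{i-1}$ is anticomplete to $B_{i+1}$ in a nice blowup.
\item $az'\notin E(G)$ since $\supp(z')=\{i+1,i+2\}$, and $bw'\notin E(G)$ since $\supp(w')=\{i-2,i-1\}$.
\item $z'w'\notin E(G)$ from above.
\end{itemize}
Hence this is an induced $C_6$, contradicting that $G$ is $C_6$-free.

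The only step needing care is this chord check for the $6$-cycle, which reduces to the support sets and the anticomplete lemmas cited above. I do not expect any real obstacle; the argument is short once Lemma \ref{lem:opposite A_1(i) and A_2(j)} supplies the edges $aw'$ and $bz'$.
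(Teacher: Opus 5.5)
Your proof is correct and follows the paper's argument essentially verbatim: both establish $x'y',z'w'\notin E(G)$ from Lemmas \ref{A1A2} and \ref{lem:anticompleteA3-1}, rule out both edges via the induced $C_4$ $x'-z'-y'-w'-x'$, and rule out both non-edges via the induced $C_6$ through $N_{B_{i-1}}(x')$ and $N_{B_{i+1}}(y')$ using Lemma \ref{lem:opposite A_1(i) and A_2(j)}. Your explicit check of all nine possible chords is more detailed than the paper, but the route is the same.
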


\begin{proof}
    By Lemmas \ref{A1A2} and \ref{lem:anticompleteA3-1}, $x'y', z'w' \notin E(G)$. By Lemma \ref{lem:opposite A_1(i) and A_2(j)}, $z'$ is complete to $B_{i+1}$ and $w'$ is complete to $B_{i-1}$. If $x'w',y'z'\notin E(G)$, then $x'-N_{B_{i-1}}(x')-w'-y'-N_{B_{i+1}}(y')-z'-x'$ is an induced $C_6$. If $x'w', y'z'\in E(G)$, then $x'-w'-y'-z'-x'$ is an induced $C_4$. So exactly one of $x'w'$ and $y'z'$ is an edge.  
\end{proof}

\begin{lemma}\label{lem:reduce to basic graph}
   If $xw\in E(G)$, then $x$ is complete to $B_{i-1}$ and $B_i$ is complete to $B_{i+1}$. By symmetry, if $yz\in E(G)$, then $y$ is complete to $B_{i+1}$ and $B_i$ is complete to $B_{i-1}$.
\end{lemma}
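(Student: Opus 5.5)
The plan is to argue by contradiction, using the two configurations already available in the global setup. First I note what $xw\in E(G)$ forces. By Lemma~\ref{lem:two opposite edges}, applied with $x'=x$, $z'=z$, $y'=y$ and $w'=w$, we get $yz\notin E(G)$. Moreover $z$ is complete to $B_{i+1}\cup B_{i+2}$ and $w$ is complete to $B_{i-1}\cup B_{i-2}$.

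Pick $c\in N_{B_{i+1}}(y)$. Then $x-w-y-c-z-x$ is an induced $C_5$. Its chords are excluded as follows: $xy$ by Lemma~\ref{A1A2}; $wz$ by Lemma~\ref{lem:anticompleteA3-1}; $yz$ as just shown; and $xc$, $wc$ by the supports of $x$ and $w$. This auxiliary $C_5$, together with $q_{i\pm1},q_{i\pm2}$, is the source of all the forbidden induced subgraphs below.

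\textbf{Step 1: $x$ is complete to $B_{i-1}$.} Suppose $a\in B_{i-1}$ is a non-neighbour of $x$. Then $a$ is adjacent to $w$. Let $d\in N_{B_i}(a)$ and consider the sequence
\[
d-a-w-x-z-q_{i+2}.
\]
Using the supports of $x$ ($\{i-1\}$), $w$ ($\{i-2,i-1\}$) and $z$ ($\{i+1,i+2\}$), and the anticompleteness of $B_j$ to $B_{j\pm2}$, this is an induced $P_6$. I then extend it by one more vertex to get an induced $P_7$ or a bad $P_7$ (Lemma~\ref{lem:badP7}). Candidates for the extension are a vertex of $B_{i+1}$ adjacent to $q_{i+2}$, or $y$ via $w$. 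The only chords to check are those from $d$ into $B_{i+1}$. If $d$ is adjacent to the chosen vertex of $B_{i+1}$, I instead close the sequence into a $C_6$/$C_7$ through $d$ and invoke Lemma~\ref{lem:cyclicsequence}. If these choices give trouble, a fallback is to use $y$: the sequence $a-w-y-c-z-x$ with $a$ attached only to $w$ should also yield an induced $P_7$ after appending a vertex of $B_i$ at $a$.

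\textbf{Step 2: $B_i$ is complete to $B_{i+1}$.} Suppose $d\in B_i$ and $e\in B_{i+1}$ are non-adjacent. By Step 1, $x$ is complete to $B_{i-1}$, so $x$, $w$ and $q_{i-1}$ are pairwise adjacent. I would take $d'\in N_{B_{i-1}}(d)$ and form a path of the shape
\[
e-\cdots-z-x-d'-d,
\]
routed through $z$ (which is complete to $B_{i+1}$) or through $y$ and $c$. This gives an induced path of length at least $7$ or a long induced cycle, contradicting Lemma~\ref{lem:cyclicsequence} or Lemma~\ref{lem:badP7}. The $P_4$-structure condition and Lemma~\ref{lem:blowup-basicproperty}(1) restrict which vertices of $B_i$ can be non-adjacent to $e$, and so reduce to a choice where $d$ has a neighbour in $B_{i-1}$ but $e$ is only reachable via $q_{i+2}$.

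The symmetric statement, for $yz\in E(G)$, then follows by reflecting indices $i+j\leftrightarrow i-j$.

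\textbf{Main obstacle.} I expect Step 2 to be the hardest part. Chords between $B_i$ and $B_{i+1}$, and the possibility that $d$ is adjacent to $q_{i+1}$, make it delicate to certify that the produced path is induced. The first thing I would try is to choose $d$ with minimal neighbourhood in $B_{i-1}\cup B_{i+1}$ using the ordering of Lemma~\ref{lem:blowup-basicproperty}(1).
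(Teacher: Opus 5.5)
The genuine gap is in Step 2: the route you propose cannot produce a forbidden configuration. Because $z$ is complete to $B_{i+1}$, the vertex $e$ is adjacent to $z$ (and to $c$, since $B_{i+1}$ is a clique). So any path of the shape $e-\cdots-z-x-d'-d$, including one routed through $y$ and $c$, collapses to $e-z-x-d'-d$, which has only five vertices. This path does not close into a hole. A common neighbour of $d$ and $e$ in $B_{i+1}$ is adjacent to $z$. If instead $b\in B_i$ is a neighbour of $e$, then $e\in N(b)\setminus N(d)$ forces $N_{B_{i-1}}(d)\subseteq N_{B_{i-1}}(b)$ by Lemma~\ref{lem:blowup-basicproperty}(1), so $bd'$ is a chord for every choice of $d'$. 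Choosing $d$ minimal does not change this. The path also cannot be lengthened inside $H$. At the $e$-end, vertices of $B_{i+1}\cup B_{i+2}$ are adjacent to $z$ and vertices of $B_i$ are adjacent to $d$. At the $d$-end, vertices of $B_{i+1}$ are adjacent to $z$, and vertices of $B_{i-1}$ are adjacent to $x$, which is exactly what your Step 1 gives.

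The missing idea is to go around the other side of the ring, avoiding $z$ and $B_{i-1}$ entirely. Let $b\in N_{B_i}(e)$ and $f\in N_{B_{i+2}}(e)$; both exist by the definition of a nice blowup. Then
\[
d-b-e-f-q_{i-2}-w-x
\]
is an induced $P_7$. Indeed, $w$ (support $\{i-2,i-1\}$) and $x$ (support $\{i-1\}$) have no neighbours in $B_i\cup B_{i+1}\cup B_{i+2}$, and $q_{i-2}$ is not adjacent to $x$. The remaining non-edges come from $de\notin E(G)$ and the anticompleteness of $B_j$ to $B_{j\pm 2}$. This is where $xw\in E(G)$ is used for the second assertion, and Step 1 is not needed for it.

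Step 1 is also not a proof as written. You cannot extend the $P_6$ $d-a-w-x-z-q_{i+2}$ the way you propose. Every vertex of $B_{i+1}$ is adjacent to $z$, not merely possibly to $d$. The vertex $y$ attaches to the interior vertex $w$, so it does not extend the path. Your fallback $a-w-y-c-z-x$ contains the chord $wx$. Only your contingency branch works, and it should be the whole argument. For any $g\in N_{B_{i+1}}(d)$, the cycle $d-a-w-x-z-g-d$ is an induced $C_6$. Taking $d=q_i$ and $g=q_{i+1}$, this is the cycle $x-w-a-q_i-q_{i+1}-z-x$. The auxiliary $C_5$ $x-w-y-c-z-x$ plays no role anywhere.
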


\begin{proof}
    By Lemma \ref{lem:two opposite edges}, exactly one of $xw$ and $yz$ is an edge. By symmetry, we may assume that $xw\in E(G)$ and $yz\notin E(G)$. If $x$ has a non-neighbor $s\in B_{i-1}$, then $x-w-s-q_i-q_{i+1}-z-x$ is an induced $C_6$. So $x$ is complete to $B_{i-1}$. If $c\in B_{i+1}$ is adjacent to $b\in B_{i}$ but not adjacent to $a\in B_i$, then $a-b-c-(N(c)\cap B_{i+2})-q_{i-2}-w-x$ is an induced $P_7$. So $B_i$ is complete to $B_{i+1}$.
\end{proof} 

\begin{lemma}\label{lem:A_3(i-1) is empty}
    $A_3(i-1)\cup A_3(i+1)=\emptyset$.
\end{lemma}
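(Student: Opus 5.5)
We prove that $A_3(i-1)=\emptyset$; the claim $A_3(i+1)=\emptyset$ then follows by the symmetry $i-1\leftrightarrow i+1$, $x\leftrightarrow y$, $z\leftrightarrow w$ of the standing assumptions. The approach is by contradiction: fix $v\in A_3(i-1)$ and use the rigid structure already derived in this subsection. Recall that $z$ is complete to $B_{i+1}\cup B_{i+2}$ and $w$ is complete to $B_{i-1}\cup B_{i-2}$. Recall also that $B_{i-1}$ is complete to $B_{i-2}$ and $B_{i+1}$ is complete to $B_{i+2}$. Finally, by Lemmas \ref{lem:two opposite edges} and \ref{lem:reduce to basic graph}, exactly one of $xw,yz$ is an edge. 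Accordingly, either $x$ is complete to $B_{i-1}$ and $B_i$ is complete to $B_{i+1}$, or $y$ is complete to $B_{i+1}$ and $B_i$ is complete to $B_{i-1}$.

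First I would record the adjacencies of $v$ that are forced by earlier lemmas. Applying Lemma \ref{lem:anticompleteA3-1} with index $i+2$ to the pair $(z,v)$ gives $vz\notin E(G)$. Likewise, Lemmas \ref{A1A3} and \ref{anti-A2A3} give $vy\notin E(G)$, and whichever of $vw$ is excluded. Lemma \ref{lem:A3tocompA1} shows that $v$ is complete or anticomplete to the component of $A_1(i-1)$ containing $x$.

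Next I would locate a non-neighbor of $v$ in $B_{i-1}$. By Lemma \ref{lem:blowup-threeneighbor} (maximality of $H$), there is $a\in B_{i-1}\setminus N(v)$. Since $B_{i-1}$ is complete to $B_{i-2}$, the set $B_{i-1}\setminus N(v)$ is not anticomplete to $N_{B_{i-2}}(v)$. By Lemma \ref{lem:blowup-threeneighbor}(3), it is therefore anticomplete to $N_{B_i}(v)$. So $a$ has a neighbor $c\in B_i\setminus N(v)$, and in particular $B_i$ is not complete to $B_{i-1}$. By Lemma \ref{lem:reduce to basic graph}, this forces the case $xw\in E(G)$ and $yz\notin E(G)$: $x$ is complete to $B_{i-1}$ and $B_i$ is complete to $B_{i+1}$. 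Lemma \ref{lem:single vertex in A_3(i)} (applied with $i-1$ in place of $i$, possibly after reflecting) then gives $q_{i-1}\in N(v)$, and $N_{B_{i-1}}(v)$ is complete to $N_{B_{i-2}}(v)\cup N_{B_i}(v)$.

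Now I split on whether $vx\in E(G)$; this is the step I expect to be the main obstacle. If $vx\notin E(G)$, then $x$ is adjacent to $q_{i-1}$ because $x$ is complete to $B_{i-1}$. Take $d\in N_{B_i}(v)$ and consider $v-d-q_{i+1}-z-x-q_{i-1}-v$. The possible chords are $dq_{i-1}$ and $vx$. Since $vx$ is absent, I would instead replace $q_{i-1}$ by $a$. The candidate is then $c-a-x-z-q_{i+1}-d-v$ or a similar induced $P_7$ or bad $P_7$ (Lemma \ref{lem:badP7}), with $c,a$ chosen to kill the chords from the $P_4$-structure condition. If $vx\in E(G)$, then $v$ is complete to the component of $x$. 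In that case $v-x-z-q_{i+1}-d-v$ is a $5$-cycle, and I would extend it to a long hole or path through $w,y$. The route is $x-w-y-N_{B_{i+1}}(y)$, using $vy,vw\notin E(G)$ together with the anticomplete pairs from the earlier lemmas. For instance, $v-x-w-y-N_{B_{i+1}}(y)-q_{i+2}-N_{B_{i-2}}(v)-v$ should be an induced cyclic sequence of length at least $6$, contradicting Lemma \ref{lem:cyclicsequence}. The delicate part is checking every chord, especially those between $v$ and $w$ and those inside $B_i\cup B_{i-1}$. I would handle these using $w$ being complete to $B_{i-1}$ and the $C_4$-freeness applied to $v,w$ with their common neighbors in $B_{i-1}$.
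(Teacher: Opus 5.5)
You set this up correctly. From $v\in A_3(i-1)$ you get $a\in B_{i-1}\setminus N(v)$ anticomplete to $N_{B_i}(v)$. Hence $B_i$ is not complete to $B_{i-1}$, so $yz\notin E(G)$, $xw\in E(G)$, and $x$ is complete to $B_{i-1}$. Your symmetry reduction for $A_3(i+1)$ is also legitimate.

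The gap is that the contradiction is never produced: both structures you propose have chords.
\begin{itemize}
\item Case $vx\notin E(G)$. The sequence $c-a-x-z-q_{i+1}-d-v$ contains two vertices $c,d$ of the clique $B_i$, and $q_{i+1}$ is complete to $B_i$. So $cd$ and $cq_{i+1}$ are chords, and it is neither an induced $P_7$ nor a bad $P_7$. No choice of $c\in B_i$ repairs this.
\item Case $vx\in E(G)$. The cyclic sequence $v-x-w-y-N_{B_{i+1}}(y)-q_{i+2}-N_{B_{i-2}}(v)-v$ fails because $w$ is complete to $B_{i-2}$, so $w$ is adjacent to the non-consecutive term $N_{B_{i-2}}(v)$.
\item Your remark that some earlier lemma excludes $vw$ is unsupported. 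Lemma~\ref{anti-A2A3} only makes $A_2(i-2)$ anticomplete to $A_3(i)\cup A_3(i+2)$.
\end{itemize}

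The missing idea is to route through $B_{i-2}$ rather than $B_i$. Since $B_{i-1}$ is complete to $B_{i-2}$ (a fact you already used), $v$ and $a$ have a common neighbor $c'\in B_{i-2}$.
\begin{itemize}
\item If $vx\in E(G)$, then $v-c'-a-x-v$ is an induced $C_4$, because $va\notin E(G)$ and $x$ has no neighbor in $B_{i-2}$. So the case you expected to be hardest is immediate.
\item If $vx\notin E(G)$, take $d\in N_{B_i}(v)$. Then $v-c'-a-x-z-q_{i+1}-d-v$ is an induced $C_7$:
  \begin{itemize}
  \item $ad\notin E(G)$ by the anticompleteness you derived;
  \item $vz\notin E(G)$ by Lemma~\ref{lem:anticompleteA3-1};
  \item every other non-consecutive pair either lies in anticomplete bags of $H$, or involves $x$ or $z$, whose supports are $\{i-1\}$ and $\{i+1,i+2\}$.
  \end{itemize}
\end{itemize}
This is exactly the paper's argument. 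The paper also handles $A_3(i+1)$ more directly: once $xw\in E(G)$, the bag $B_{i+1}$ is complete to $B_i\cup B_{i+2}$, which is incompatible with Lemma~\ref{lem:blowup-threeneighbor} for any vertex of $A_3(i+1)$.
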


\begin{proof}
    By Lemma \ref{lem:two opposite edges}, we may assume that $xw\in E(G)$.
    By Lemma \ref{lem:reduce to basic graph}, $B_{i+1}$ is complete to $B_i\cup B_{i+2}$. It follows from Lemma \ref{lem:blowup-threeneighbor} that $A_3(i+1)=\emptyset$.

    Suppose that $A_3(i-1)$ contains a vertex $t$.
    By Lemma \ref{lem:blowup-threeneighbor} (1), let $b\in B_{i-1}$ be a non-neighbor of $t$.
    Since $B_{i-1}$ is complete to $B_{i-2}$, we can take a common neighbor $c\in B_{i-2}$ of $t$ and $b$.
    Let $d\in N_{B_i}(t)$ and so $bd\notin E(G)$ by Lemma \ref{lem:blowup-threeneighbor} (3). By Lemma \ref{lem:reduce to basic graph}, $xb\in E(G)$. 
    If $tx \in E(G)$, then $t - c - b - x - t$ is an induced $C_4$. So $tx \notin E(G)$. By Lemma \ref{lem:anticompleteA3-1}, $tz \notin E(G)$. Then $t-c-b-x-z-q_{i+1}-d-t$ is an induced $C_7$, a contradiction.
\end{proof}

\begin{lemma}\label{lem:A_2=A_2(i-2)+A_2(i+1)}
    $A_2=A_2(i-2)\cup A_2(i+1)$. 
\end{lemma}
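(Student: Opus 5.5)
We need to show $A_2(i-1)$, $A_2(i)$ and $A_2(i+2)$ are empty, using the global setup: $x\in A_1(i-1)$ adjacent to $z\in A_2(i+1)$, and $y\in A_1(i+1)$ adjacent to $w\in A_2(i-2)$. By Lemma \ref{lem:two opposite edges} we may assume $xw\in E(G)$ and $yz\notin E(G)$. Lemma \ref{lem:reduce to basic graph} then gives that $x$ is complete to $B_{i-1}$ and $B_i$ is complete to $B_{i+1}$. We also have $B_{i-1}$ complete to $B_{i-2}$, $B_{i+1}$ complete to $B_{i+2}$, $z$ complete to $B_{i+1}\cup B_{i+2}$, and $w$ complete to $B_{i-1}\cup B_{i-2}$. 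The plan is to take a hypothetical vertex $v$ in each of the three remaining sets and build an induced $C_4$, $C_6$, $C_7$ or $P_7$. Throughout, we use the anticomplete-pair lemmas (Lemmas \ref{A1A2}, \ref{A2A2}, \ref{lem:anticompleteA3-1}) to control the adjacencies between $v$ and $x,y,z,w$.

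\emph{Case $v\in A_2(i+2)$.} Lemma \ref{lem:anticompleteA3-1} makes $v$ anticomplete to $A_2(i)$, and Lemma \ref{A2A2} makes it anticomplete to $A_2(i+1)\cup A_2(i-2)$, so $vz,vw\notin E(G)$. By Lemma \ref{A1A2}(2), $A_1(i+1)$ is anticomplete to $A_2(i+2)$, so $vy\notin E(G)$. The adjacency $vx$ is not covered by these lemmas. Pick $a\in N_{B_{i+2}}(v)$. Since $z$ is complete to $B_{i+2}$, $az\in E(G)$; and $ya\notin E(G)$. The path $v-a-z-x-w-y$ should be induced, or close to it, and we extend it on either side through $B_{i+1}$ or $B_{-}$-sets (for example via $N_{B_{i+1}}(y)$) to get an induced $P_7$ or a short hole. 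If $vx\in E(G)$, then the cycle $v-x-w-N_{B_{i-2}}(v)-v$, or a $C_4$ through $z$, should close up. Which of these configurations applies is decided by the remaining cases on $vx$ and on $N_{B_{i-2}}(v)$.

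\emph{Case $v\in A_2(i)$.} Its neighbours in $H$ lie in $B_i\cup B_{i+1}$. Lemma \ref{A1A2}(2) makes $v$ anticomplete to $A_1(i+1)$ via the $A_2(i-1)$-relation, so $vy\notin E(G)$, and Lemma \ref{A2A2} gives $vz\notin E(G)$. Since $B_i$ is complete to $B_{i+1}$, Lemma \ref{lem:blowup-threeneighbor}-type arguments do not apply directly, but the structure is rigid. Take $b\in N_{B_{i+1}}(v)$; then $bz\in E(G)$ and $by\in E(G)$ when $b\in N(y)$. The cycle $v-b-z-x-N_{B_{i-1}}(x)-N_{B_i}(v)-v$ is a $C_6$ unless chords occur. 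The possible chords are: $x$ is complete to $B_{i-1}$; $B_{i-1}$ may meet $N_{B_i}(v)$; and $z$ is anticomplete to $B_i$. A common neighbour in $B_{i-1}$ of $x$ and of $N_{B_i}(v)$ gives a $C_6$. If there is none, we get a $C_7$ or use $q_{i}$. The case $v\in A_2(i-1)$ is symmetric to this one, with $x,w$ in the roles of $y,z$ and the closing cycle running through $w$ and $B_{i-2}$. The asymmetry between $xw\in E(G)$ and $yz\notin E(G)$ means this case has to be redone rather than quoted by symmetry.

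I expect the main obstacle to be the case $A_2(i+2)$, specifically when $v$ is adjacent to $x$ or to $y$. Such an edge is allowed by Lemma \ref{lem:reduct to at most two non-empty A_1(i)} only in one direction. There I would invoke Lemma \ref{lem:two opposite edges} with the pair $(x,v)$, or Lemma \ref{lem:opposite A_1(i) and A_2(j)} to make $v$ complete to two blobs. I would then use Lemma \ref{lem:reduct to at most two non-empty A_1(i)} directly to contradict the existence of both $x$–$z$ and the new edge.
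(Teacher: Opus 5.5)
Your proposal has a genuine gap, and it lies in the case you treat as routine, $v\in A_2(i)$, rather than in $A_2(i+2)$.

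Two steps in that case fail. First, Lemma~\ref{A1A2}(2) applied to $A_1(i+1)$ only gives anticompleteness to $A_2(i+2)\cup A_2(i-1)$. Since $\supp(y)\subseteq\supp(v)$, nothing you cite forbids $vy\in E(G)$. The paper excludes this edge with the induced $P_7$ $N_{B_i}(v)-v-y-w-x-z-q_{i+2}$. Second, your hexagon $v-b-z-x-c-d-v$, with $c\in B_{i-1}$ and $d\in N_{B_i}(v)$, always has the chord $bd$, because $B_i$ is complete to $B_{i+1}$ once $xw\in E(G)$. What remains, $b-z-x-c-d-b$, is an induced $C_5$, which is allowed.

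More fundamentally, no argument that only exhibits a forbidden subgraph on $V(H)\cup\{x,y,z,w,v\}$ can work here. Take every $B_j=\{q_j\}$, add $x,y,z,w$ with exactly the adjacencies of the setup ($xw\in E(G)$, $yz\notin E(G)$), and add $v$ adjacent only to $q_i,q_{i+1}$. This graph is $(P_7,C_4,C_6,C_7)$-free: its only holes are $C_5$'s, $v$ is simplicial, and a direct check finds no induced $P_7$. Yet $v\in A_2(i)$. So this case must use that $G$ is basic, which your proposal never invokes.

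The paper first shows that $A_2(i)$ is complete to $B_{i+1}$ and anticomplete to $A_1$. It uses the $P_7$ above, the $P_7$ $v-N_{B_i}(v)-b'-z-x-w-q_{i-2}$ for a non-neighbour $b'\in B_{i+1}$ of $v$, and a bad $P_7$ for the remaining vertices of $A_1(i+1)$. Hence a component $K$ of $A_2(i)$ has $N(K)\subseteq H\cup A_3(i)\cup A_5$. A clique-cutset argument then produces non-adjacent $t,t'\in N(K)\cap A_3(i)$, complete to $B_{i+1}$, with a common neighbour $u\in K$. Either $t'-u-t-N_{B_{i-1}}(t)-x-z-q_{i+2}$ is an induced $P_7$, or $x$ sees one of them and $x-t-q_{i+1}-z-x$ is an induced $C_4$.

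The other two cases are one-liners, but your proposal does not actually supply them.

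For $v\in A_2(i+2)$, the edge $vx$ is also excluded by Lemma~\ref{A1A2}(2), since $A_1(i-1)$ is anticomplete to $A_2(i)\cup A_2(i+2)$. So the obstacle you anticipate never arises. However, your induced $P_6$ $v-a-z-x-w-y$ cannot be extended: $N_{B_{i+1}}(y)$ is complete to $\{a,z\}$, and $N_{B_{i-2}}(v)$ is complete to $w$. The working path leaves $v$ through $B_{i-2}$ instead: $v-N_{B_{i-2}}(v)-w-x-z-q_{i+1}-q_i$ is an induced $P_7$.

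For $v\in A_2(i-1)$ you give no argument. The paper uses $x-w-q_{i-2}-q_{i+2}-q_{i+1}-N_{B_i}(v)-v$. This is a bad $P_7$ rather than an induced one, since $vx$ may be an edge, and the paper concludes by Lemma~\ref{lem:badP7}.
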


\begin{proof}
    By Lemma \ref{lem:two opposite edges}, we may assume by symmetry that $xw\in E(G)$.
    We show that $A_2(i-1)\cup A_2(i+2)\cup A_2(i)=\emptyset$. If $A_2(i-1)$ contains a vertex $t$, then $x-w-q_{i-2}-q_{i+2}-q_{i+1}-(N(t)\cap B_i)-t$ is a bad $P_7$. If $A_2(i+2)$ contains a vertex $t$, then $t-(N(t)\cap B_{i-2})-w-x-z-q_{i+1}-q_i$ is an induced $P_7$. This shows that $A_2(i-1)\cup A_2(i+2)=\emptyset$. 

    It remains to show that $A_2(i)=\emptyset$. Suppose by contradiction that
    $A_2(i)$ contains a vertex $a$. If $ay\in E$, $(N(a)\cap B_i)-a-y-w-x-z-q_{i+2}$ is an induced $P_7$. So $y$ is anticomplete to $A_2(i)$. If $a$ has a non-neighbor $b\in B_{i+1}$, then $a-(N(a)\cap B_i)-b-z-x-w-q_{i-2}$ is an induced $P_7$. So $A_2(i)$ is complete to $B_{i+1}$. 
    If $a$ is adjacent to some vertex $y'\in A_1(i+1)\setminus \{y\}$, then $y'-a-(N(a)\cap B_i)-q_{i-1}-x-z-q_{i+2}$ is a bad $P_7$. So $A_2(i)$ is anticomplete to $A_1=A_1(i-1)\cup A_1(i+1)$. By Lemmas \ref{A2A2}, \ref{lem:anticompleteA3-1}, \ref{anti-A2A3}, \ref{lem:blowup-twoneighbor}, \ref{lem:neighbors of A_2(i) in A_3(i+1) and A_3(i)} and \ref{lem:A_3(i-1) is empty},  any component of $A_2(i)$ have a neighbor in $A_3(i)$. Let $K$ be the component of $A_2(i)$ containing $a$.

    By Lemmas \ref{A1A2},  \ref{A2A2} and \ref{lem:anticompleteA3-1}, $N(K)\subseteq H\cup A_3(i)\cup A_5$. Note that any vertex in $N(K)\cap A_5$ is universal in $N(K)$ and $N(K)\cap H$ is a clique by Lemma \ref{lem:blowup-twoneighbor}. Next we show that $N(K)\cap H$ is complete to $N(K)\cap A_3(i)$. Suppose first that $a\in A_2(i)$ is adjacent to $t\in A_3(i)$. If $a$ has a neighbor $b\in B_i\setminus N(t)$, then $b-a-t-N_{B_{i-1}}(t)-q_{i-2}-q_{i+2}-z$ is an induced $P_7$. So $a$ is anticomplete to $B_i\setminus N(t)$, i.e., $N(a)\cap B_i\subseteq N(t)\cap B_i$. By Lemma \ref{lem:blowup-twoneighbor}, $t$ is complete to $B_{i+1}$. This shows that $N(K)\cap A_3(i)$ is complete to $B_{i+1}$. Now suppose that $t\in A_3(i)\cap N(K)$ is not adjacent to $s\in N(K)\cap B_i$. By Lemma \ref{lem:blowup-threeneighbor} and the fact that $B_i$ is complete to $B_{i+1}$, $s$ and $t$ have disjoint neighborhoods in $B_{i-1}$. Let $P$ be a shortest path from $s$ to $t$ such that the internal of $P$ is contained in $K$. 
    Since $N(a)\cap B_i\subseteq N(t)\cap B_i$, $P$ has at least $4$ vertices. Then $s - P - t - N_{B_{i-1}}(t) - q_{i-2} - q_{i+2}$ is an induced path of order at least $7$. This proves that $N(K)\cap H$ is complete to $N(K)\cap A_3(i)$.
    Since $N(K)$ is not a clique, $N(K)\cap A_3(i)$ contains two non-adjacent vertices $t$ and $t'$. The previous argument shows that $t$ and $t'$ are complete to $B_{i+1}$, and so $t$ and $t'$ have disjoint neighborhoods in $B_{i-1}$. Moreover, $t$ and $t'$ have a common neighbor $a\in K$ by Lemma \ref{lem:neighbors of components of A_2(i) in A_3(i+1)}. If $x$ is not adjacent to $t$ and $t'$, then $t'-a-t-(N(t)\cap B_{i-1})-x-z-q_{i+2}$ is an induced $P_7$. So we may assume by symmetry that $xt\in E$. Then $x-t-q_{i+1}-z-x$ is an induced $C_4$.
\end{proof}

\begin{lemma}\label{lem:A_3(i)}
    The following holds for $A_3(i)$.
    \begin{itemize}
        \item[$(1)$] $A_3(i)$ is anticomplete to $A_1(i-1)\cup A_1(i+1)$.
        
        \item[$(2)$] If $xw\in E(G)$, then $A_3(i)$ is complete to $B_{i+1}$. By symmetry, if $yz\in E(G)$, then $A_3(i)$ is complete to $B_{i-1}$.
        
        \item[$(3)$] Suppose that $A_3(i)\neq \emptyset$. If $xw\in E(G)$, then  $|B_{i-1}|>\frac{\omega}{2}$. By symmetry, if $yz\in E(G)$, then  $|B_{i+1}|>\frac{\omega}{2}$.
    \end{itemize}
\end{lemma}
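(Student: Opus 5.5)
We treat the three parts in order, using the global setup of this subsection: $x\in A_1(i-1)$, $y\in A_1(i+1)$, $z\in A_2(i+1)$, $w\in A_2(i-2)$ with $xz,yw\in E(G)$. By Lemma \ref{lem:two opposite edges} we may assume by symmetry that $xw\in E(G)$ and $yz\notin E(G)$. Lemma \ref{lem:reduce to basic graph} then gives that $x$ is complete to $B_{i-1}$ and $B_i$ is complete to $B_{i+1}$. Recall also that $z$ is complete to $B_{i+1}\cup B_{i+2}$ and $w$ is complete to $B_{i-1}\cup B_{i-2}$.

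For (1), suppose $t\in A_3(i)$ is adjacent to some $x'\in A_1(i-1)$. We will look for an induced $C_4$ or a long hole. By Lemma \ref{lem:no common neighbors in A_1(i)} style reasoning and $C_4$-freeness, $t$ and $x'$ have no common neighbor in $B_{i-1}$ beyond what is forced, so pick $c\in N_{B_{i-1}}(x')$ and $d\in N_{B_{i-1}}(t)$ with $cd\in E$ or nonadjacent, and close with $B_{i+1}-B_{i+2}-B_{i-2}$. Concretely, we will check that $t-N_{B_{i+1}}(t)-q_{i+2}-q_{i-2}-N_{B_{i-1}}(x')-x'-t$ either has a chord forcing a $C_4$ (if $t$ and $x'$ share a neighbor in $B_{i-1}$, then together with $w$ or $q_i$ we get a $C_4$) or is an induced $C_6$. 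In addition, $t$ is anticomplete to $z$ and $w$ (Lemmas \ref{anti-A2A3} and \ref{lem:anticompleteA3-1}), which gives paths such as $x'-t-N_{B_{i+1}}(t)-z-\dots$ to rule out configurations. The case $A_1(i+1)$ is symmetric, using $y,w$ and the analogous cycle through $B_{i+1}$.

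For (2), take $t\in A_3(i)$ and suppose $t$ has a non-neighbor $s\in B_{i+1}$. Since $B_i$ is complete to $B_{i+1}$, Lemma \ref{lem:blowup-threeneighbor}(3) forces $B_i\setminus N(t)$ to be anticomplete to $N_{B_{i-1}}(t)$. We then build an induced $P_7$ or bad $P_7$ through $x$ and $z$, for example $t-N_{B_{i-1}}(t)-x-z-s$ extended suitably, or $s-N_{B_i}(t)-t-N_{B_{i-1}}(t)-w-q_{i-2}-\dots$. We will use (1), which gives $tx\notin E$, together with $tz,tw\notin E$ to certify inducedness.

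For (3), assume $A_3(i)\neq\emptyset$ and $xw\in E$. By (2) and Lemma \ref{lem:blowup-threeneighbor}(2), every $t\in A_3(i)$ misses some vertex of $B_{i-1}$. The plan is a small vertex argument giving two disjoint cliques in $B_{i-1}$ whose union is a clique. The first is $N_{B_{i-1}}(t)$ for a suitable (minimal simplicial) $t$. The second comes from a vertex of $B_i\setminus N(A_3(i))$, whose $B_{i-1}$-neighbors are disjoint from $N(t)$ by Lemma \ref{lem:nonedge in A_3(i)}, since $B_{i+1}$ is common. Each should have size greater than $\omega/4$ via Lemmas \ref{lem:small vertex argument for simplicial vertices in A_3(i-2)} and \ref{lem:smv on w}. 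The hypotheses there require no neighbors in $A_1\cup A_2$, which we verify from (1), Lemma \ref{lem:A_2=A_2(i-2)+A_2(i+1)} and the anticomplete lemmas. Getting $>\omega/2$ rather than $>2\lceil\omega/4\rceil$ may instead come from the bound $d(v)\ge\lceil 5\omega/4\rceil$ applied to a vertex $v$ whose neighborhood outside $B_{i-1}$ is a clique of size at most $\omega-|N_{B_{i-1}}(v)|$, possibly combined with $x$ being complete to $B_{i-1}$. I expect this counting in (3), and confirming the hypotheses of the small-vertex lemmas, to be the main obstacle.
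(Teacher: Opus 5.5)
\textbf{Part (1) has a genuine gap.} Your hexagon $t-a-q_{i+2}-q_{i-2}-c-x'-t$ (with $a\in N_{B_{i+1}}(t)$ and $c\in N_{B_{i-1}}(x')$) is induced only when $tc\notin E(G)$. So it handles the case $N_{B_{i-1}}(x')\not\subseteq N_{B_{i-1}}(t)$ and nothing more.

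In the remaining case $N_{B_{i-1}}(x')\subseteq N_{B_{i-1}}(t)$ (these sets are comparable because $x't$ is an edge), the $C_4$ you promise does not exist. Every $c\in N_{B_{i-1}}(x')$ is adjacent to $t$, so $q_i-t-x'-c-q_i$ has the chord $tc$. A $C_4$ of the form $w-c'-t-x'-w$ would need both $wx'\in E(G)$ and a vertex $c'\in N_{B_{i-1}}(t)\setminus N(x')$. For example, the case $N_{B_{i-1}}(x')=N_{B_{i-1}}(t)$ with $wx'\notin E(G)$ is not covered by anything you list.

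The missing step is to use the normalization $xw\in E(G)$ that you already made. It makes $B_i$ complete to $B_{i+1}$. Then Lemma~\ref{lem:blowup-threeneighbor}(3) forces $B_i\setminus N(t)$ to be anticomplete to $N_{B_{i-1}}(t)$. Lemma~\ref{lem:single vertex in A_3(i)}(1), with $i-1$ in the role of $i+1$, then gives $q_i\in N(t)$ and $q_{i-1}\notin N(t)$.

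Now take any $p\in A_1(i-1)\cup A_1(i+1)$ adjacent to $t$. First, $pq_{i-1}\notin E(G)$, since otherwise $p-q_{i-1}-q_i-t-p$ is an induced $C_4$. Second, $tz\notin E(G)$ by Lemma~\ref{anti-A2A3}. Hence $p-t-q_i-q_{i-1}-q_{i-2}-q_{i+2}-z$ is a bad $P_7$, contradicting Lemma~\ref{lem:badP7}. This treats $A_1(i-1)$ and $A_1(i+1)$ at once, and the normalization costs nothing because (1) is symmetric.

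\textbf{Part (2) is only a plan, and neither example works.} In the situation you derived, Lemma~\ref{lem:single vertex in A_3(i)}(2) makes $N_{B_i}(t)$ complete to $N_{B_{i-1}}(t)$, so $s-N_{B_i}(t)-t-N_{B_{i-1}}(t)-w$ is not induced. The path $t-c-x-z-s$ is induced, but closing it through some $b\in N_{B_i}(t)$ only yields the $5$-hole $c-x-z-s-b-c$. Its natural extensions in $H$ pick up chords to $c$ or $z$.

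The path that works avoids $z$. For $b\in N_{B_i}(t)$, $t-b-s-q_{i+2}-q_{i-2}-w-x$ is an induced $P_7$:
\begin{itemize}
\item $bs\in E(G)$ because $B_i$ is complete to $B_{i+1}$;
\item $tw\notin E(G)$ by Lemma~\ref{anti-A2A3};
\item $tx\notin E(G)$ by (1).
\end{itemize}
So (2) genuinely depends on repairing (1).

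\textbf{Part (3) matches the paper.} You take a minimal simplicial $v\in A_3(i)$ from Lemma~\ref{lem:small vertex argument for simplicial vertices in A_3(i-2)} and $u\in B_i\setminus N(A_3(i))$ from Lemma~\ref{lem:smv on w}. Their neighbourhoods in $B_{i-1}$ are disjoint by Lemma~\ref{lem:nonedge in A_3(i)}, because $u$ is complete to $B_{i+1}$. The hypotheses of those lemmas follow from (1), Lemma~\ref{lem:A_2=A_2(i-2)+A_2(i+1)} and Lemma~\ref{anti-A2A3}.

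The counting you flag as the main obstacle is not one. Each of the two disjoint sets has at least $\lceil\omega/4\rceil+1$ vertices, so $|B_{i-1}|\ge 2\lceil\omega/4\rceil+2>\omega/2$ directly. No degree argument is needed.
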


\begin{proof} We prove one by one.

\medskip
\noindent (1) Suppose that $p\in A_1(i-1)\cup A_1(i+1)$ is adjacent to $t\in A_3(i)$. By Lemma \ref{lem:two opposite edges}, we may assume by symmetry that $xw\in E(G)$ and so $B_{i+1}$ is complete to $B_i$ by Lemma \ref{lem:reduce to basic graph}. By Lemma \ref{lem:blowup-threeneighbor} (3), $B_i\setminus N(t)$ is anticomplete to $B_{i-1}\cap N(t)$. By Lemma \ref{lem:single vertex in A_3(i)} (1), $q_i\in B_i\cap N(t)$ and $q_{i-1}\in B_{i-1}\setminus N(t)$. 
Then $p-t-q_i-q_{i-1}-q_{i-2}-q_{i+2}-z$ is a bad $P_7$. 

\medskip
\noindent (2) If $t\in A_3(i)$ is not adjacent to $a\in B_{i+1}$, then $t-(N(t)\cap B_i)-a-q_{i+2}-q_{i-2}-w-x$ is an induced $P_7$.

\medskip
\noindent (3) By Lemma \ref{lem:A_2=A_2(i-2)+A_2(i+1)} and (1), $A'_3(i)$ is anticomplete to $A_1\cup A_2$. 
By Lemmas \ref{lem:small vertex argument for simplicial vertices in A_3(i-2)} and  \ref{lem:smv on w}, there exist vertices $v\in A_3(i)$ and $u\in B_i\setminus N(v)$ such that $|N(v)\cap B_{i-1}|,|N(u)\cap B_{i-1}|>\ceil{\frac{\omega}{4}}$. 
Since $B_i$ is complete to $B_{i+1}$ by Lemma \ref{lem:reduce to basic graph}, $N(v)\cap B_{i-1}$ and $N(u)\cap B_{i-1}$ are disjoint. So $|B_{i-1}|>\frac{\omega}{2}$.
\end{proof}

Recall that $A_2^1(i-2)$ is the set of vertices in $A_2(i-2)$ that have a neighbor in $A_1(i-1)$ and $A_2^1(i+1)$ is the set of vertices in $A_2(i+1)$ that have a neighbor in $A_1(i+1)$.
Let $A_2^0(i-2)=A_2(i-2)\setminus A_2^1(i-2)$,  $X=A^0_2(i-2)\cap N(A_1(i+1))$ and $Y=A^0_2(i-2)\setminus X$. Let $A_2^0(i+1)=A_2(i+1)\setminus A_2^1(i+1)$, $X'=A^0_2(i+1)\cap N(A_1(i-1))$ and $Y'=A^0_2(i+1)\setminus X'$.

\begin{lemma}\label{lem:A_3(i-2)}
    The following properties hold for $A_3(i-2)\cup A_3(i+2)$.

    \begin{itemize}
        \item[$(1)$] $A_3(i-2)$ is complete to $A_2^1(i-2)\cup X$. By symmetry, $A_3(i+2)$ is complete to $A_2^1(i+1)\cup X'$. 
        \item[$(2)$] $A_3(i-2)$ is complete to $B_{i-1}$. By symmetry, $A_3(i+2)$ is complete to $B_{i+1}$.
        \item[$(3)$] $A_3(i-2)$ is anticomplete to $A_1(i-1)$. By symmetry, $A_3(i+2)$ is complete to $A_1(i+1)$.
    \end{itemize}
\end{lemma}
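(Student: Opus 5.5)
The plan is to prove the three items in the order $(3)$, $(1)$, $(2)$, since each later item uses the earlier ones. Throughout, fix $t\in A_3(i-2)$, so $\supp(t)=\{i+2,i-2,i-1\}$. We keep the global vertices $x\in A_1(i-1)$, $y\in A_1(i+1)$, $z\in A_2(i+1)$ and $w\in A_2(i-2)$ with $xz,yw\in E(G)$. We also keep the facts already derived in this subsection: $w$ is complete to $B_{i-1}\cup B_{i-2}$, $z$ is complete to $B_{i+1}\cup B_{i+2}$, and $B_{i-1}$ is complete to $B_{i-2}$. The anticomplete pairs we rely on are as follows. By Lemma~\ref{lem:anticompleteA3-1}, $A_3(i-2)$ is anticomplete to $A_2(i)$, and symmetrically $A_3(i+2)$ is anticomplete to $A_2(i-1)$; in particular $tz\notin E(G)$. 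By Lemma~\ref{A1A3}, $A_1(i+1)$ is anticomplete to $A_3(i-2)$, so $ty\notin E(G)$. I read the symmetric half of $(3)$ as ``anticomplete''.

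\textbf{Step (3).} Suppose $t$ is adjacent to $p\in A_1(i-1)$. By Proposition~\ref{prop:component of A_1(i)}, the component $K$ of $A_1(i-1)$ containing $p$ has a neighbor $z'\in A_2(i+1)$, and $tz'\notin E(G)$ by the anticomplete pair above. By Lemma~\ref{lem:A3tocompA1}, $t$ is complete to $K$, so we may take $p$ adjacent to $z'$. Then $t-p-z'$ extends through $q_{i+1},q_i$ on one side and through $N_{B_{i+2}}(t)$ or $N_{B_{i-1}}(t)$ on the other, giving a bad $P_7$ or an induced $C_6$/$C_7$. Which one we get depends on whether $t$ and $p$ have a common neighbor in $B_{i-1}$. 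If they do, we use Lemma~\ref{lem:blowup-threeneighbor}~(3) to choose a non-neighbor of $t$ and continue the path. This step is a direct case check.

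\textbf{Step (1).} Let $a\in A_2^1(i-2)\cup X$ with $ta\notin E(G)$, and let $p$ be a neighbor of $a$ in $A_1(i-1)$ or in $A_1(i+1)$. In either case $tp\notin E(G)$, by $(3)$ or by Lemma~\ref{A1A3}.
\begin{itemize}
\item If $p\in A_1(i-1)$: by Lemma~\ref{lem:nbr of component of A_1(i-1) in A_2(i-2)}, $a$ is complete to $B_{i-2}$, so $t$ and $a$ have a common neighbor $c\in B_{i-2}$. Choosing $p$ adjacent to some $z'\in A_2(i+1)$ as in Step~(3), the path $t-c-a-p-z'-q_{i+1}-q_i$ should be an induced $P_7$; the non-edges follow from the anticomplete lemmas.
\item If $a\in X$ with $p\in A_1(i+1)$: Lemma~\ref{lem:opposite A_1(i) and A_2(j)} makes $a$ complete to $B_{i-1}\cup B_{i-2}$, so again there is a common neighbor $c$ of $a$ and $t$. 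Then $t-c-a-p-N_{B_{i+1}}(p)-q_{i+2}$, closed up through $N_{B_{i+2}}(t)$, should give an induced $C_6$ or $C_7$.
\end{itemize}

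\textbf{Step (2).} Suppose $s\in B_{i-1}$ is a non-neighbor of $t$. By $(1)$, $tw\in E(G)$, because $w$ has the neighbor $y\in A_1(i+1)$ and hence lies in $A_2^1(i-2)\cup X$. Also $ty\notin E(G)$, and $w$ is adjacent to $s$. From $t-w-y-N_{B_{i+1}}(y)$ we extend on the $t$ side via $N_{B_{i+2}}(t)$, or use $s-q_i$, aiming for an induced $P_7$ or a $C_6$ contradiction. Lemma~\ref{lem:blowup-threeneighbor}~(3) picks which side of $t$ has the useful non-neighbor.

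I expect this last step to be the main obstacle. Controlling the chords among $B_{i-1}$, $B_i$ and $N(t)$ may need Lemma~\ref{lem:single vertex in A_3(i)} applied to $t$, or Lemma~\ref{lem:reduce to basic graph} depending on which of $xw,yz$ is an edge. I would split into these two cases first.
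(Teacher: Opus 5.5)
There is a genuine gap in Step~(2). The route you sketch does not produce a contradiction. Take $r\in N_{B_{i+1}}(y)$ and $u\in N_{B_{i+2}}(t)$. Since $B_{i+1}$ is complete to $B_{i+2}$ in this subsection, $t-w-y-r-u-t$ is an induced $C_5$, which is allowed. So the vertex $y$ is irrelevant, and no case split on $xw$ versus $yz$ is needed.

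The missing observation is this. Once (1) gives $tw\in E(G)$, the vertices $t\in A_3(i-2)$ and $s\in B_{i-1}\subseteq A'_3(i-1)$ are two non-adjacent neighbours of the single vertex $w\in A_2(i-2)$. Lemma~\ref{lem:blowup-twoneighbor}, applied to $K=\{w\}$, rules this out. Concretely, with $u\in N_{B_{i+2}}(t)$, the cycle $t-w-s-q_i-q_{i+1}-u-t$ is an induced $C_6$. Here $w$ sees only $B_{i-2}\cup B_{i-1}$ in $H$, $t$ sees only $B_{i-2}\cup B_{i-1}\cup B_{i+2}$, and every other non-edge is between non-consecutive blocks. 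This is the paper's one-line proof of (2). You had all the ingredients ($tw,ws\in E(G)$, $ts\notin E(G)$) but did not close the cycle through $q_i,q_{i+1}$ and $N_{B_{i+2}}(t)$.

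Steps (3) and (1) are essentially the paper's arguments. Proving (3) first is sensible, since the paper's sequence in (1) tacitly needs $t$ to be non-adjacent to the vertex of $A_1(i-1)$ it uses. Your reading of the second half of (3) as ``anticomplete'' is also right. Some details need fixing:
\begin{itemize}
\item The non-edge $tz'$ for $z'\in A_2(i+1)$ comes from Lemma~\ref{anti-A2A3}. Lemma~\ref{lem:anticompleteA3-1} only gives anticompleteness to $A_2(i)$.
\item Step~(3) needs no long path. By Lemma~\ref{lem:opposite A_1(i) and A_2(j)}, $z'$ is complete to $B_{i+1}\cup B_{i+2}$, so $t-p-z'-u-t$ with $u\in N_{B_{i+2}}(t)$ is an induced $C_4$. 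Your proposed extension through $N_{B_{i+2}}(t)$ runs straight into this chord.
\item In the first case of Step~(1), you need $a$ complete to the component of $A_1(i-1)$, so that $p$ can be chosen adjacent to both $a$ and $z'$. This comes from Lemma~\ref{lem:A2i1tocompA1}, not from the argument in Step~(3).
\item In the second case of Step~(1), the common neighbour $c$ must be taken in $B_{i-1}$. Every vertex of $B_{i-2}$ is adjacent to $q_{i+2}$, which would create a chord. With $c\in B_{i-1}$, your cycle is exactly the paper's induced cyclic sequence.
\end{itemize}
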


\begin{proof} We prove one by one.

\medskip
\noindent (1) Suppose by contradiction that $t\in A_3(i-2)$ and $a\in A_2^1(i-2)\cup X$ are not adjacent. Suppose first that $a\in A_2^1(i-2)$. By definition of $A_2^1(i-2)$, $a$ has a neighbor in a component $K$ of $A_1(i-1)$. By Lemma \ref{lem:A2i1tocompA1}, $a$ is complete to $K$. By Proposition \ref{prop:component of A_1(i)}, $K$ has a vertex $b$ that is adjacent to some vertex $c\in A_2(i+1)$. Then $t-N_{B_{i-2}}(t)-a-b-c-q_{i+1}-q_i$ is an induced sequence. Now suppose that $a\in X$. By definition of $X$, $a$ has a neighbor $b\in A_1(i+1)$. Then $a-B_{i-1}-t-(N(t)\cap B_{i+2})-(N(b)\cap B_{i+1})-b-a$ is an induced cyclic sequence. This proves (1).

\medskip
\noindent (2) By definition, $w\in A_2^1(i-2)\cup X$. By (1), $w$ is complete to $A_3(i-2)$. By Lemma \ref{lem:blowup-twoneighbor}, (2) holds.
    
\medskip
\noindent (3)
    By Lemma \ref{lem:A3tocompA1}, $t$ cannot mix on any component $K$ of $A_1(i-1)$. By Proposition \ref{prop:component of A_1(i)}, $K$ has a neighbor $p'\in A_2(i+1)$. Let $p\in K$ be a neighbor of $p'$. If $tp\in E(G)$, then $t-p-p'-N_{B_{i+2}}(t)-t$ is an induced $C_4$. So $t$ is not adjacent to $p$ and thus anticomplete to $K$.
\end{proof}

\begin{lemma}\label{lem:no bad P4 when two non-empty A_1(i)}
    $G$ has no induced $P_4=a-b-c-d$ contained in $A_2(i-2)\cup B_{i-1}\cup B_{i-2}\cup A_3(i-2)$ such that $a$ has a neighbor in $B_{i+2}$ that is not a neighbor of $b$. By symmetry, $G$ has no induced $P_4=a-b-c-d$ contained in $A_2(i+1)\cup B_{i+1}\cup B_{i+2}\cup A_3(i+2)$ such that $a$ has a neighbor in $B_{i-2}$ that is not a neighbor of $b$.
\end{lemma}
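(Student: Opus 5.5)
Let $P=a-b-c-d$ be such an induced $P_4$ and let $e\in B_{i+2}$ be a neighbor of $a$ with $eb\notin E(G)$. The idea is to extend $P$ through $e$ and then around the far side of $H$ to get a bad $P_7$, an induced $P_7$, or a long induced cycle. We use the global facts of this subsection. $B_{i-1}$ is complete to $B_{i-2}$. $A_3(i-2)$ is complete to $B_{i-1}$ (Lemma \ref{lem:A_3(i-2)} (2)). $A_3(i-1)=\emptyset$ (Lemma \ref{lem:A_3(i-1) is empty}). Finally, $w$ is complete to $B_{i-1}\cup B_{i-2}$, which gives a vertex adjacent to everything in $B_{i-1}\cup B_{i-2}$ and to $y\in A_1(i+1)$. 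Let $W=A_2(i-2)\cup B_{i-1}\cup B_{i-2}\cup A_3(i-2)$. Since $e\in B_{i+2}$ and $a$ is adjacent to $e$, we have $a\in B_{i-2}\cup A_3(i-2)$, because $A_2(i-2)$ and $B_{i-1}$ are anticomplete to $B_{i+2}$.

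\textbf{Key step.} Every vertex of $W$ except those in $B_{i-2}\cup A_3(i-2)$ is anticomplete to $B_{i+1}$. So the natural extension is
$d-c-b-a-e-f-g$, with $f\in N_{B_{i+1}}(e)$ and $g\in B_i$ (or $g=q_i$) a neighbor of $f$ chosen off $N(a)$. We want $d-c-b-a-e-f$ to be an induced $P_6$ and $b-a-e-f-g$ to be an induced $P_5$. Then Lemma \ref{lem:badP7} finishes, since no extra chords are needed among the first vertices.

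\textbf{Checking chords.} First, $b,c,d$ must avoid $e$. For $b$ this holds by hypothesis. For $c,d$, we split by type. If $c$ or $d$ lies in $A_2(i-2)\cup B_{i-1}$, it is anticomplete to $B_{i+2}$. If $c$ or $d$ lies in $B_{i-2}\cup A_3(i-2)$ and is adjacent to $e$, use the $C_4$-freeness lemmas (Lemmas \ref{lem:nbd containment for an edge}, \ref{lem:P3 to P4}) along the path $a-b-c-d$. An edge $ce$ would give the $C_4$ $a-b-c-e-a$, since $eb\notin E(G)$. Likewise, $de$ with $ce\notin E(G)$ reroutes to the shorter induced path $d-e-a-b$ or a $C_5$-type configuration. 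In that case we replace $e$ and restart with $d$ in the role of the endpoint.

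Second, we must choose $f\in B_{i+1}$ adjacent to $e$ but not to $a$. This is the main obstacle. If $a\in B_{i-2}$, it is automatic. If $a\in A_3(i-2)$, then $a$ has no neighbors in $B_{i+1}$, since it is supported on $\{i-3,i-2,i-1\}=\{i+2,i-2,i-1\}$, so $f=q_{i+1}$ works. Then $g\in B_i$ is anticomplete to all of $W\setminus B_{i-1}$. When $b,c$, or $d$ lies in $B_{i-1}$ and is adjacent to $g$, we instead close a cycle. The path $b-\cdots-e-f-g$ together with that edge forms an induced cyclic sequence of length at least $6$, and Lemma \ref{lem:cyclicsequence} gives a contradiction. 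If instead we must use $q_{i+1}$-type vertices, we use $y\in A_1(i+1)$ and $w$ as the alternative tail $e-N_{B_{i+1}}(y)-y$, with $yw\in E(G)$ supplying the closing edge.

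The main obstacle is the bookkeeping over which of $a,b,c,d$ lie in $B_{i-1}$, since those vertices see $B_i$. We handle it by picking $g$ as a non-neighbor of the relevant $B_{i-1}$-vertices via Lemma \ref{lem:blowup-basicproperty} (1), or else closing a $C_6$/$C_7$.
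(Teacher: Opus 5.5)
Your plan is the paper's: rule out the edges $ec$ and $ed$, then run out through $B_{i+1}$ and $B_i$. The paper exhibits $d-c-b-a-e-q_{i+1}-q_i$. Your argument for $ce\notin E(G)$ is right, but two steps fail.

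\textbf{First gap: the edge $de$.} Take $d\in B_{i-2}\cup A_3(i-2)$ with $de\in E(G)$ and $ce\notin E(G)$. Then $a-b-c-d-e-a$ is an induced $C_5$, which is allowed. The rerouted path $d-e-a-b$ is not contained in $A_2(i-2)\cup B_{i-1}\cup B_{i-2}\cup A_3(i-2)$, so there is nothing to ``restart'' on. The missing idea uses facts you already listed. $B_{i-2}\cup A_3(i-2)$ is complete to $B_{i-1}$, so $a$ and $d$ have the common neighbor $q_{i-1}$, and $a-q_{i-1}-d-e-a$ is an induced $C_4$. By contrast, the choice of $f$ that you call the main obstacle is automatic, since all of $A_2(i-2)\cup B_{i-1}\cup B_{i-2}\cup A_3(i-2)$ is anticomplete to $B_{i+1}$.

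\textbf{Second gap: the case $b\in B_{i-1}$.} Since $a$ is complete to $B_{i-1}$ and $ac,ad\notin E(G)$, only $b$ can lie in $B_{i-1}$. For $b$ your closing step is wrong: $b-a-e-f-g-b$ is an induced $C_5$, not a cycle of length at least $6$. Nor can you always pick $g\in N_{B_i}(f)\setminus N(b)$, because if $yz\in E(G)$ then $B_i$ is complete to $B_{i-1}$ (Lemma~\ref{lem:reduce to basic graph}). Your fallback with $yw$ as a ``closing edge'' gives nothing either. Here $w$ is adjacent to $a$: it is complete to $B_{i-2}$, and complete to $A_3(i-2)$ by Lemma~\ref{lem:A_3(i-2)}~(1). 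So $a-e-f'-y-w-a$ is just another $C_5$.

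\textbf{What works for $b\in B_{i-1}$.} Use the tail $e-f'-y$ with $f'\in N_{B_{i+1}}(y)$. Here $f'e\in E(G)$ because $B_{i+1}$ is complete to $B_{i+2}$ in this subsection. Also $y$ is anticomplete to $a$, $b$ and $e$, by supports and Lemma~\ref{A1A3}. So $d-c-b-a-e-f'-y$ is a bad $P_7$, and Lemma~\ref{lem:badP7} gives the contradiction. For $b\notin B_{i-1}$, the tail $e-q_{i+1}-q_i$ is already induced. Note that the paper's own path $d-c-b-a-e-q_{i+1}-q_i$ has the chord $bq_i$ when $b\in B_{i-1}$, so that subcase needs the $y$-tail there as well.
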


\begin{proof}
    Suppose not. Let $a'\in B_{i+2}$ be a neighbor of $a$ not adjacent to $b$. Since $G$ is $C_4$-free, $a'c\notin E(G)$. 
    Note that $a\in B_{i-2}\cup A_3(i-2)$. If $d\in B_{i-2}\cup A_3(i-2)$, then $a'd\notin E(G)$ since $B_{i-1}$ is complete to $B_{i-2}\cup A_3(i-2)$ by Lemma \ref{lem:A_3(i-2)} (2).
    If $d\in A_2(i-2)\cup B_{i-1}$, then $a'd\notin E(G)$ by definition.
    Then $d-c-b-a-a'-q_{i+1}-q_i$ is an induced $P_7$.
\end{proof}

\begin{lemma}\label{lem:B_{i-1} vs A_1(i-1)}
    Each vertex in $B_{i-1}$ is either complete or anticomplete to any component $K$ of $A_1(i-1)$. By symmetry, each vertex in $B_{i+1}$ is either complete or anticomplete to any component of $A_1(i+1)$. 
\end{lemma}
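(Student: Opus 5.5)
We argue by contradiction and find a forbidden induced path or cycle, following the pattern of Lemma \ref{lem:A2i1tocompA1}, Lemma \ref{lem:A3tocompA1} and Lemma \ref{lem:induced P3in A_1(i)}. By symmetry it suffices to treat $B_{i-1}$ and a component $K$ of $A_1(i-1)$. Suppose some $p\in B_{i-1}$ mixes on $K$. Since $K$ is connected, there is an edge $ab\in E(K)$ with $pb\in E(G)$ and $pa\notin E(G)$.

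The first attempt is the path $a-b-p-q_{i-2}-q_{i+2}-q_{i+1}-\cdots$. The vertices $a,b\in A_1(i-1)$ are anticomplete to $B_{i-2}\cup B_{i+2}\cup B_{i+1}$, so $a-b-p-q_{i-2}-q_{i+2}-q_{i+1}$ is an induced $P_6$. To extend it to an induced $P_7$, I want a seventh vertex adjacent to $q_{i+1}$ and to none of the earlier vertices. The natural candidate is a vertex of $B_i$ non-adjacent to $p$. However, in the current setting one of $B_i$--$B_{i-1}$, $B_i$--$B_{i+1}$ is complete by Lemma \ref{lem:reduce to basic graph}, so this must be split into cases via Lemma \ref{lem:two opposite edges}.

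\textbf{Case $xw\in E(G)$.} Here $B_i$ is complete to $B_{i+1}$, and $B_{i-1}$ is complete to $B_{i-2}$ (noted at the start of the subsection). If $p$ has a non-neighbor $r\in B_i$, then $a-b-p-q_{i-2}-q_{i+2}-q_{i+1}-r$ is an induced $P_7$: $r$ is adjacent to $q_{i+1}$ and to no earlier vertex, since $A_1(i-1)$ is anticomplete to $B_i$ and $B_i$ is anticomplete to $B_{i\pm2}$. So $p$ is complete to $B_i$. Then I use the $A_2$-attachment of $K$. By Proposition \ref{prop:component of A_1(i)}, $K$ has a neighbor $z'\in A_2(i+1)$, and by Lemma \ref{lem:opposite A_1(i) and A_2(j)}, $z'$ is complete to $B_{i+1}\cup B_{i+2}$. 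Let $c\in K$ be adjacent to $z'$, and take an induced path in $K$ from $a$ or $b$ to $c$. Then $p-b-\cdots-c-z'-q_{i+2}-q_{i-2}-p$, or a similar cyclic sequence through $B_i$, $q_{i+1}$ and $z'$, should give an induced cyclic sequence of length at least $6$, contradicting Lemma \ref{lem:cyclicsequence}. Which form applies depends on whether $p$ is adjacent to $c$, and if $pc\in E(G)$ I instead take $a-c-p$ style chords. Here Lemma \ref{lem:induced P3in A_1(i)} controls $B_i$-neighborhoods along paths in $A_1$; the analogous statement for $B_{i-1}$ is what is needed, so I would reprove it with $P_7$s, or use the $C_4$ obtained from $p, c, z'$ together with a common neighbor.

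\textbf{Case $yz\in E(G)$.} Here $B_i$ is complete to $B_{i-1}$, so $p$ is complete to $B_i$ automatically, and $x$ is complete to $B_{i+1}$ by symmetry. I expect the path $a-b-p-q_i-q_{i+1}-y-w$, or one routed through $w\in A_2(i-2)$ (which is complete to $B_{i-1}\cup B_{i-2}$), to yield an induced $P_7$ or a bad $P_7$, using the anticomplete-pair lemmas (Lemmas \ref{A1A2} and \ref{lem:anticompleteA3-1}) to check non-edges.

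The main obstacle is the first case after showing $p$ complete to $B_i$. There, the extension must go through $A_2(i+1)$, and the adjacency of $p$ to the $K$-vertices attached to $z'$ must be handled carefully, possibly also using Lemma \ref{lem:A_2^1(i-2) is universal in A_2(i-2)}.
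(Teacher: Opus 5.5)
Your proposal has a genuine gap: neither case is completed. In the case $xw\in E(G)$, once $p$ is complete to $B_i$, the cyclic sequence through $z'$ is never pinned down. As written it fails when $pc\in E(G)$, because $p-c-z'-q_{i+2}-q_{i-2}-p$ then closes into a $C_5$, which is allowed. In the case $yz\in E(G)$, your candidate $a-b-p-q_i-q_{i+1}-y-w$ is not induced. As recorded at the start of the subsection, $w$ is complete to $B_{i-1}\cup B_{i-2}$, so $pw\in E(G)$; moreover $w$ may have neighbors in $K$. (Incidentally, your step showing $p$ complete to $B_i$ needs no case split, since $q_{i+1}$ is always complete to $B_i$.)

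The missing idea is to end your path at the vertex $y\in A_1(i+1)$, which exists by the global assumption of the subsection. Enter $y$ through a neighbor $y'\in N_{B_{i+1}}(y)$ rather than through $q_{i+1}$. Your induced $P_6$ then extends at once to the induced $P_7$
\[
a-b-p-q_{i-2}-q_{i+2}-y'-y.
\]
The edges along this path hold as follows:
\begin{itemize}
\item $pq_{i-2}\in E(G)$ because $q_{i-2}$ is complete to $B_{i-1}$.
\item $q_{i-2}q_{i+2}\in E(G)$ because $q_{i+2}$ is complete to $B_{i-2}$.
\item $q_{i+2}y'\in E(G)$ because $q_{i+2}$ is complete to $B_{i+1}$.
\end{itemize}
The required non-edges hold as follows:
\begin{itemize}
\item $a,b$ are anticomplete to $B_{i-2}\cup B_{i+1}\cup B_{i+2}$ by their support, and anticomplete to $y$ by Lemma~\ref{A1A2}~(2).
\item $y$ has neighbors in $V(H)$ only in $B_{i+1}$.
\item Every non-edge among $p,q_{i-2},q_{i+2},y'$ is a non-edge of the blowup.
\end{itemize}
This is the paper's one-line proof, which uses a neighbor of $p$ in $B_{i-2}$ in place of $q_{i-2}$. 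It needs no distinction between $xw\in E(G)$ and $yz\in E(G)$.
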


\begin{proof}
    If $c\in B_{i-1}$ mixes on an edge $ab\in K$, then $a-b-c-(N(c)\cap B_{i-2})-q_{i+2}-(N(y)\cap B_{i+1})-y$ is an induced $P_7$.  
\end{proof}

\begin{lemma}\label{lem:simplicial vertices in A_1(i-1)}
    Let $K$ be a component of $A_1(i-1)$. Then any simplicial $s$ vertex of $K$ has $N(s)\setminus A_2(i+1)$ is a clique and so 
    $|N(s)\cap A_2(i+1)|>\ceil{\frac{\omega}{4}}$.
    By symmetry, any simplicial $s$ vertex of $A_1(i+1)$ has $N(s)\setminus A_2(i-2)$ is a clique and so $|N(s)\cap A_2(i-2)|>\ceil{\frac{\omega}{4}}$.
\end{lemma}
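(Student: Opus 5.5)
The plan is to mirror the proof of Lemma~\ref{lem:simplicial vertices in A_1(i)}, but now using the stronger structure available when both $A_1(i-1)$ and $A_1(i+1)$ are non-empty. That structure should remove the need to pass to a minimal simplicial vertex: Lemma~\ref{lem:B_{i-1} vs A_1(i-1)} replaces the minimality argument. By symmetry it suffices to treat a simplicial vertex $s$ of a component $K$ of $A_1(i-1)$.

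First I locate $N(s)$. By the anticomplete-pair lemmas (Lemmas~\ref{A1A2} and \ref{A1A3}), Lemma~\ref{lem:A_2=A_2(i-2)+A_2(i+1)}, and Lemmas~\ref{lem:A_3(i-1) is empty}, \ref{lem:A_3(i)}(1) and \ref{lem:A_3(i-2)}(3), I expect
\[
N(s)\subseteq K\cup B_{i-1}\cup A_2(i-2)\cup A_2(i+1)\cup A_5 .
\]
The relevant points are these. $A_1(i-1)$ is anticomplete to the other $A_1(j)$. $A_2=A_2(i-2)\cup A_2(i+1)$. $A_3(i-1)=A_3(i+1)=\emptyset$. $A_3(i)$ and $A_3(i-2)$ are anticomplete to $A_1(i-1)$. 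Finally, $A_3(i+2)$ is anticomplete to $A_1(i-1)$ by Lemma~\ref{A1A3}, since $i+2=(i-1)-2$.

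Next I apply Proposition~\ref{prop:component of A_1(i)} (with index $i-1$, so the excluded set is $A_2(i+1)$) to the connected subgraph $\{s\}$. This gives that $N(s)\setminus(A_1(i-1)\cup A_2(i+1))$ is a clique. Since $s$ is simplicial in $K$ and $K$ is a component of $A_1(i-1)$, the set $N(s)\cap K$ is a clique, and $N(s)\cap A_1(i-1)=N(s)\cap K$. It remains to show that $N(s)\cap K$ is complete to each of the other parts.

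\begin{itemize}
\item $B_{i-1}$: each vertex of $B_{i-1}$ is complete or anticomplete to $K$ by Lemma~\ref{lem:B_{i-1} vs A_1(i-1)}, so $N(s)\cap B_{i-1}$ is complete to $K$.
\item $A_2(i-2)$: here $A_2(i-2)=A_2(i+3)$ is the ``$A_2(i')$'' for $i'=i-1$ in Lemma~\ref{lem:A2i1tocompA1}. That lemma gives complete-or-anticomplete to $K$, so $N(s)\cap A_2(i-2)$ is complete to $K$.
\item $A_5$: complete to $A_1$ by Lemma~\ref{lem:A_5 is complete to A_1}.
\end{itemize}

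Together these show that $N(s)\setminus A_2(i+1)$ is a clique.

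The size bound then follows from basicness. $G$ has no small vertices, so $d(s)\ge \ceil{\frac{5}{4}\omega}$, while $|N(s)\setminus A_2(i+1)|\le \omega-1$ because $N[s]\setminus A_2(i+1)$ is a clique. Hence
\[
|N(s)\cap A_2(i+1)| \ge \ceil{\tfrac{5}{4}\omega}-(\omega-1) > \ceil{\tfrac{\omega}{4}} .
\]

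The main obstacle is the first step: confirming carefully that no other sets (for example $A_3(i+2)$ and $A_3(i-2)$, or parts of $H$ other than $B_{i-1}$) contribute neighbors of $s$. For the $A_3$ sets I would rely on Lemma~\ref{A1A3} and Lemma~\ref{lem:A_3(i-2)}(3), checking the index shifts. The parts of $H$ are excluded simply because $\supp(s)=\{i-1\}$.
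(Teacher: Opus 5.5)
Your proposal is correct and is essentially the paper's proof. The steps match: apply Proposition~\ref{prop:component of A_1(i)} to $\{s\}$; confine $N(s)\setminus A_2(i+1)$ to $K\cup B_{i-1}\cup A_2(i-2)\cup A_5$ using the $A_3$/$A_2$ emptiness and anticompleteness lemmas; make $N(s)\cap K$ complete to each remaining part via Lemmas~\ref{lem:B_{i-1} vs A_1(i-1)}, \ref{lem:A2i1tocompA1} and \ref{lem:A_5 is complete to A_1}; and finish with the no-small-vertex count.

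There is one cosmetic slip: with $i'=i-1$, the set $A_2(i-2)$ is the $A_2(i'-1)$ term of Lemma~\ref{lem:A2i1tocompA1}, not $A_2(i')$. That lemma covers both sets, so the argument is unaffected.
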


\begin{proof} 
    By Proposition \ref{prop:component of A_1(i)} applying to $\{s\}$, $N(s)\setminus (K\cup A_2(i+1))$ is a clique. Next, we investigate the neighbor of $s$ in $K$ and the neighbor of $s$ in $N(s)\setminus A_2(i+1)$.
    By Lemma \ref{lem:A_3(i-2)} (3) and Lemma \ref{lem:A_3(i)} (1), 
    $N(s)\setminus A_2(i+1)\subseteq B_{i-1}\cup A^1_2(i-2)\cup A_5\cup K$. 
    By Lemma \ref{lem:A2i1tocompA1}, $N(s)\cap K$ is complete to $N(s)\cap A^1_2(i-2)$.
    By Lemma \ref{lem:B_{i-1} vs A_1(i-1)}, 
    $N(s)\cap K$ is complete to $N(s)\cap B_{i-1}$. By Lemma \ref{lem:A_5 is complete to A_1}, $N(s)\cap K$ is complete to $N(s)\cap A_5$. This completes the proof.
\end{proof}

\begin{lemma}\label{lem:S is chordal}
    $A_2(i-2)\cup B_{i-1}\cup B_{i-2}\cup A_3(i-2)$ is chordal. By symmetry, $A_2(i+1)\cup B_{i+1}\cup B_{i+2}\cup A_3(i+2)$ is chordal.
\end{lemma}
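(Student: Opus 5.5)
Let $W=A_2(i-2)\cup B_{i-1}\cup B_{i-2}\cup A_3(i-2)$. I would argue by contradiction, mirroring the proofs of Lemmas \ref{lem:A_1(i) is chordal} and \ref{lem:A_2(i) is chordal}. Since $G$ is $(C_4,C_6,C_7)$-free and $P_7$-free, an induced hole of length at least $8$ already contains an induced $P_7$. So the only possible holes in $W$ are induced $C_5$'s, and I assume $C'=u_1-u_2-u_3-u_4-u_5-u_1$ is an induced $C_5$ in $W$.

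\textbf{Step 1: locating $C'$.} First I would show that $C'$ must use a vertex of $A_2(i-2)$ and a vertex of $A_3(i-2)$. The set $B_{i-1}\cup B_{i-2}$ is a clique, because $B_{i-1}$ is complete to $B_{i-2}$ in this subsection. By Lemma \ref{lem:A_3(i-2)} (2), $B_{i-1}$ is complete to $A_3(i-2)$. By Lemma \ref{lem:blowup-twoneighbor}, the neighbours of a vertex of $A_2(i-2)$ in $B_{i-2}$ and in $B_{i-1}$ are complete to each other.

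Consider the possible compositions of $C'$. If it meets $A_3(i-2)$ but not $A_2(i-2)$, then $A'_3(i-2)\cup B_{i-1}$ is chordal: $B_{i-1}$ is complete to $A_3(i-2)$, the set $A'_3(i-2)$ has no hole by Lemma \ref{lem:nonedge in A_3(i)} and $P_4$-freeness, and a vertex complete to a graph adds no hole. If $C'$ lies in $A_2(i-2)\cup B_{i-1}\cup B_{i-2}$, I would use chordality of $A_2(i-2)$ (Lemma \ref{lem:A_2(i) is chordal}) together with Lemma \ref{lem:induced P3in A_2(i)}.

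\textbf{Step 2: building a long induced path.} The key tool is Lemma \ref{lem:no bad P4 when two non-empty A_1(i)}. Every vertex of $C'\cap (B_{i-2}\cup A_3(i-2))$ has neighbours in $B_{i+2}$. I would pick a vertex $a\in C'$ with a neighbour $a'\in B_{i+2}$ that is not adjacent to the next vertex $b$ on $C'$; this is possible because not every vertex of $C'$ can be complete to $N_{B_{i+2}}(a)$. Then $a-b-c-d$ along $C'$ is an induced $P_4$ inside $W$, contradicting that lemma.

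The delicate point is choosing the orientation and the vertex $a$. I need $a$ and $b$ to be consecutive on $C'$ with $N_{B_{i+2}}(a)\not\subseteq N(b)$. Since $C'$ has at most two consecutive vertices among those with neighbours in $B_{i+2}$, I would compare neighbourhoods in $B_{i+2}$ using Lemma \ref{lem:nbd containment for an edge}. Taking $a$ with maximal $B_{i+2}$-neighbourhood, either some neighbour $b$ of $a$ on $C'$ misses part of $N_{B_{i+2}}(a)$, or both neighbours of $a$ contain it. In the latter case the two neighbours share a vertex of $B_{i+2}$, and together with the nonadjacent remainder of $C'$ this gives an induced $C_4$ or a shorter hole.

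\textbf{Step 3: $C'$ has no vertex with a neighbour in $B_{i+2}$.} If no vertex of $C'$ has a neighbour in $B_{i+2}$, then $C'\subseteq A_2(i-2)\cup B_{i-1}$. Here I would apply the argument of Lemma \ref{lem:A_2(i) is chordal}: take the component $K$ of $G-(N(C')\cup C')$ containing $B_i\cup B_{i+1}\cup B_{i+2}$ and show, via Lemma \ref{lem:non-neighbors of C5} and the minimality of $|S_5|$, that $N(K)$ is a clique cutset.

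I expect the main obstacle to be the case analysis in Step 2 that guarantees the required vertex $a$.

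By symmetry, the same argument gives the second statement.
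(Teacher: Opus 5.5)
Your overall route is sound and differs from the paper's. The paper does not use Lemma~\ref{lem:no bad P4 when two non-empty A_1(i)} here; it proceeds in three stages.
\begin{itemize}
\item It shows $A_2(i-2)\cup B_{i-1}\cup B_{i-2}$ is chordal, as in your Step~1.
\item It handles $A_3(i-2)\cup B_{i-2}\cup A_2(i-2)$ via the bad $P_7$ $v_4-v_5-v_1-v_2-N_{B_{i+2}}(v_2)-q_{i+1}-q_i$ at an edge $v_1v_2$ from $A_2(i-2)$ to $A'_3(i-2)$. It then uses $P_4$-freeness of $A_3(i-2)$ and the fact that adjacent vertices of $A_3(i-2)$ have equal neighbourhoods in $B_{i-2}$.
\item It excludes a $C_5$ through $B_{i-1}$ by a bad $P_7$ ending in $N_{B_{i+1}}(y)-y$, because $q_i$ sees $B_{i-1}$.
\end{itemize}
You instead reduce to a $C_5$ meeting both $A_2(i-2)$ and $A_3(i-2)$, and finish with one application of Lemma~\ref{lem:no bad P4 when two non-empty A_1(i)} to four consecutive vertices of $C'$. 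That lemma is proved by essentially the paper's second $P_7$, so this is a tidy repackaging.

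Choose $b\in A_2(i-2)$; this is always possible. If $C'$ meets $B_{i-1}$ in $u_1$, then $u_3,u_4\in A_2(i-2)$, because $B_{i-1}$ is a clique complete to $A'_3(i-2)$. The $A_3(i-2)$-vertex is then $u_2$ or $u_5$, which is adjacent to $u_3$ or $u_4$. With this choice, $b$ and also $c,d$ (non-neighbours of $a\in A'_3(i-2)$) all avoid $B_{i-1}$. So $q_i$ is anticomplete to them, the path $d-c-b-a-a'-q_{i+1}-q_i$ behind that lemma really is induced, and the detour through $y$ is not needed.

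What needs fixing is the step you flag as the main obstacle. Your justification for it does not work, although the step itself is trivial.
\begin{itemize}
\item ``Not every vertex of $C'$ can be complete to $N_{B_{i+2}}(a)$'' does not give a \emph{cycle-neighbour} of $a$ that misses a vertex of $N_{B_{i+2}}(a)$.
\item In your maximal-neighbourhood dichotomy, the second case does not close using $C'$. If $a'\in N_{B_{i+2}}(a)$ sees both cycle-neighbours $b,e$ of $a$, it may miss both remaining vertices $c,d$; then $b-a'-e-d-c-b$ is just another induced $C_5$, which is allowed. Or it may see both, and then $a'$ is complete to $C'$.
\end{itemize}
The correct argument is one line from your Step~1. $C'$ contains a vertex of $A_2(i-2)$, which has no neighbour in $B_{i+2}$, and a vertex of $A_3(i-2)$, which has one. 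Going around $C'$ you therefore find consecutive $a,b$ with $N_{B_{i+2}}(a)\neq\emptyset=N_{B_{i+2}}(b)$, and every $a'\in N_{B_{i+2}}(a)$ misses $b$. Alternatively, in your second case $b$ and $e$ would be non-adjacent vertices of $A'_3(i-2)$ sharing a neighbour in $B_{i+2}$ and all of $B_{i-1}$, which gives an induced $C_4$.

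Two further points:
\begin{itemize}
\item The claim that $C'$ has at most two consecutive vertices with neighbours in $B_{i+2}$ is unsupported and not needed.
\item Step~3 is vacuous: a $C_5$ inside $A_2(i-2)\cup B_{i-1}$ is already excluded by your Step~1. The clique-cutset argument of Lemma~\ref{lem:A_2(i) is chordal} would not transfer verbatim anyway, since a $B_{i-1}$-vertex of $C'$ has neighbours in $B_i$.
\end{itemize}
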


\begin{proof}
    We first show that $A_2(i-2)\cup B_{i-1}\cup B_{i-2}$ is chordal. Since $B_{i-1}\cup B_{i-2}$ is a clique, any induced $C_5$ contains at most two vertices from $B_{i-1}\cup B_{i-2}$ and so any induced $C_5$ in $B_{i-1}\cup B_{i-2}\cup A_2(i-2)$ contains an induced $P_4=a-b-c-d$ where $a,b,c\in A_2(i-2)$ and $d\in B_{i-1}\cup B_{i-2}$ by Lemma \ref{lem:A_2(i) is chordal}. This contradicts Lemma \ref{lem:induced P3in A_2(i)}. So $A_2(i-2)\cup B_{i-1}\cup B_{i-2}$ is chordal. 

    Suppose that $A_3(i-2)\cup B_{i-2}\cup A_2(i-2)$ contains an induced $C_5=v_1-v_2-v_3-v_4-v_5-v_1$. 
    Suppose first that $v_1\in A_2(i-2)$. If $v_2\in B_{i-2}\cup A_3(i-2)$, then $v_4-v_5-v_1-v_2-N_{B_{i+2}}(v_2)-q_{i+1}-q_i$ is a bad $P_7$. So $v_2,v_5\in A_2(i-2)$. By the same argument, $v_3,v_4\in A_2(i-2)$. This contradicts Lemma \ref{lem:A_2(i) is chordal}. So we may assume that $C_5$ is contained in $B_{i-2}\cup A_3(i-2)$. Since $B_{i-2}$ is a clique and $A_3(i-2)$ is $P_4$-free, we may assume that $v_1,v_2\in B_{i-2}$ and $v_3,v_4,v_5\in A_3(i-2)$. But this contradicts Lemma \ref{lem:edge in $A_3(i)$}. So $A_3(i-2)\cup B_{i-2}\cup A_2(i-2)$ is chordal. 

    If $A_2(i-2)\cup B_{i-1}\cup B_{i-2}\cup A_3(i-2)$ contains an induced $C_5=v_1-v_2-v_3-v_4-v_5$ , then we may assume that $v_1\in B_{i-1}$ and $v_2\in A_3(i-2)$. But then $v_4-v_5-v_1-v_2-N_{B_{i+2}}(v_2)-N_{B_{i+1}}(y)-y$ is a bad $P_7$.
\end{proof}

\begin{lemma}\label{lem:B_{i-1} is large when two non-empty A_1(i)}
    If $A_3(i)=\emptyset$, then $|B_{i-1}|,|B_{i+1}|>\ceil{\frac{\omega}{4}}$.
\end{lemma}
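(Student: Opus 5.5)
By symmetry it suffices to show $|B_{i-1}|>\ceil{\frac{\omega}{4}}$. I will use the small vertex argument: find a vertex $v$ and a clique $Q$ with $N(v)\setminus Q$ a clique and $Q\subseteq B_{i-1}$. Since $G$ is basic, $v$ is not small, so $d(v)\ge \ceil{\frac54\omega}$. Also $|N(v)\setminus Q|\le \omega-1$. Together these give $|Q|>\ceil{\frac{\omega}{4}}$.

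The natural candidate is a vertex of $B_i$. By Lemma~\ref{lem:two opposite edges} we may assume $xw\in E(G)$. Lemma~\ref{lem:reduce to basic graph} then gives that $B_i$ is complete to $B_{i+1}$ and that $x$ is complete to $B_{i-1}$. The neighbors of $B_i$ lie in $B_{i-1}\cup B_i\cup B_{i+1}\cup A_1(i\pm1)\cup A_2\cup A_3\cup A_5$, and I will prune this list.
\begin{itemize}
\item $A_1(i\pm 1)$ has support $\{i\pm1\}$, so it is anticomplete to $B_i$.
\item By Lemma~\ref{lem:A_2=A_2(i-2)+A_2(i+1)}, $A_2=A_2(i-2)\cup A_2(i+1)$. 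Of these, only $A_2(i+1)$ can meet $B_i$? No: $A_2(i+1)$ has support $\{i+1,i+2\}$ and $A_2(i-2)$ has support $\{i-2,i-1\}$, so neither touches $B_i$.
\item $A_3(i\pm1)=\emptyset$ by Lemma~\ref{lem:A_3(i-1) is empty}, and $A_3(i)=\emptyset$ by hypothesis. The sets $A_3(i\pm2)$ have supports avoiding $i$.
\end{itemize}
Hence $N(v)\subseteq B_{i-1}\cup B_i\cup B_{i+1}\cup A_5$ for every $v\in B_i$. Since $B_i\cup B_{i+1}\cup A_5$ is a clique (using Lemma~\ref{lem:blowup-fiveneighbor1}), $N(v)\setminus B_{i-1}$ is a clique. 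Taking $Q=N_{B_{i-1}}(v)\subseteq B_{i-1}$, the argument above gives $|B_{i-1}|\ge |N_{B_{i-1}}(v)|>\ceil{\frac{\omega}{4}}$.

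For $B_{i+1}$ the roles are not symmetric once $xw\in E(G)$ is fixed, so I will treat it separately. Pick $u\in B_{i+2}$. Its neighbors lie in $B_{i+1}\cup B_{i+2}\cup B_{i-2}\cup A_2(i+1)\cup A_2(i-2)\cup A_3(i+2)\cup A_3(i-2)\cup A_5$, with $A_3(i+1)=\emptyset$. This set is not obviously a clique after deleting $B_{i+1}$, so I will instead use a vertex of $B_i$ again. For $v\in B_i$ we have $N(v)\subseteq B_{i-1}\cup B_i\cup B_{i+1}\cup A_5$, and $B_{i+1}$ is complete to $B_i$.

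The key step is to show that $N(v)\setminus B_{i+1}=B_i\cup N_{B_{i-1}}(v)\cup A_5$ is a clique. This holds as long as $B_i$ is complete to $B_{i-1}$. That completeness should follow from the symmetric form of Lemma~\ref{lem:reduce to basic graph} if $yz\in E(G)$, but we assumed $xw\in E(G)$ instead. This is the main obstacle. In the case where some $v\in B_i$ has a non-neighbor $b\in N_{B_{i-1}}(\cdot)$, I would try to choose $v$ with minimal neighborhood in $B_{i-1}$ and argue via the $P_4$-structure condition that $N_{B_{i-1}}(v)$ is complete to $B_i$. If that fails, I would fall back on Lemma~\ref{lem:simplicial vertices in A_1(i-1)} and the structure around $B_{i+1}$: namely $z$ complete to $B_{i+1}\cup B_{i+2}$, and $y$ with $N_{B_{i+1}}(y)$. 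The aim there is to get a vertex $v'$ with $N(v')\setminus B_{i+1}$ a clique, for example a $y'\in A_1(i+1)$ whose neighborhood outside $B_{i+1}$ is forced into $A_2(i-2)\cup A_5\cup A_1(i+1)$.
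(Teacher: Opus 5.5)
Your argument for $|B_{i-1}|>\ceil{\frac{\omega}{4}}$ is correct. With $xw\in E(G)$, Lemma~\ref{lem:reduce to basic graph} makes $B_i$ complete to $B_{i+1}$. Your pruning shows $N(v)\subseteq B_{i-1}\cup B_i\cup B_{i+1}\cup A_5$ for every $v\in B_i$; you should also record that $A_1(i)=\emptyset$, which follows from the remark after Lemma~\ref{lem:reduct to at most two non-empty A_1(i)}. Hence $N(v)\setminus B_{i-1}$ is a clique.

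The gap is in the second half: you never prove $|B_{i+1}|>\ceil{\frac{\omega}{4}}$ and leave it as a plan with fallbacks. The obstacle you identify is not real. You do not need $B_i$ complete to $B_{i-1}$; you only need \emph{some} $v\in B_i$ with $N_{B_{i-1}}(v)$ complete to $B_i$. Since $B_{i-1}$ is a clique and any two vertices of $B_i$ are adjacent, Lemma~\ref{lem:nbd containment for an edge} (equivalently Lemma~\ref{lem:blowup-basicproperty}(1)) shows that the sets $N_{B_{i-1}}(u)$, $u\in B_i$, form a chain under inclusion. So if $v$ minimizes $|N_{B_{i-1}}(v)|$, then $N_{B_{i-1}}(v)\subseteq N_{B_{i-1}}(u)$ for every $u\in B_i$. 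Consequently $N(v)\setminus B_{i+1}=(B_i\setminus\{v\})\cup N_{B_{i-1}}(v)\cup A_5$ is a clique. Since $v$ is not small, $|B_{i+1}|\ge |N_{B_{i+1}}(v)|\ge \ceil{\frac{5}{4}\omega}-(\omega-1)>\ceil{\frac{\omega}{4}}$. Neither the $P_4$-structure condition nor the detour through vertices of $A_1(i+1)$ is needed.

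This is exactly the paper's proof. It takes $b\in B_i$ with minimum neighborhood in $B_{i-1}\cup B_{i+1}$, and this one vertex gives both bounds. $N(b)\setminus B_{i+1}$ is a clique by minimality, and $N(b)\setminus B_{i-1}$ is a clique because $B_i$ is complete to $B_{i+1}$. Your proposal names the right idea but does not carry it out, so the $B_{i+1}$ half is unproven as written.
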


\begin{proof}
    Let $b\in B_i$ such that $|N(b)\cap (B_{i-1}\cup B_{i+1})|$ is minimum. 
    By Lemma \ref{lem:A_3(i-1) is empty} and the assumption, $A_3(j)=\emptyset$ for $j\in \{i-1,i,i+1\}$.
    Then $N(b)=(N(b)\cap (B_i\cup B_{i-1}))\cup B_{i+1}$. 
    By the choice of $b$, $N(b)\cap (B_i\cup B_{i-1})$ is a clique.
    So $d(b)\le (\omega-1)+|N(b)\cap B_{i+1}|$. 
    Since $G$ has no small vertices, $|N(b)\cap B_{i+1}|>\ceil{\frac{\omega}{4}}$.
\end{proof}

\begin{lemma}\label{lem:neighbors of A_1(i-1) in A_2(i-2)}
    $A^1_2(i-2)\cup B_{i-1}\cup B_{i-2}\cup A_5$ is a clique. By symmetry, $A^1_2(i+1)\cup B_{i+1}\cup B_{i+2}\cup A_5$ is a clique.
\end{lemma}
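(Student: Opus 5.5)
We need to show $A^1_2(i-2)\cup B_{i-1}\cup B_{i-2}\cup A_5$ is a clique; the symmetric statement then follows by the reflection $i-1\leftrightarrow i+1$. The plan is to check pairwise completeness of the four pieces, since each piece is already a clique except possibly $A^1_2(i-2)$.

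\textbf{Cliques and edges already available.} $B_{i-1}$ and $B_{i-2}$ are cliques, and in the standing setup of this subsection $B_{i-1}$ is complete to $B_{i-2}$. By Lemma~\ref{lem:blowup-fiveneighbor1}, $A_5$ is a clique complete to $V(H)$. It remains to treat $A^1_2(i-2)$ internally and against the other three sets.

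\textbf{$A^1_2(i-2)$ internally and against $H$.} By Lemma~\ref{lem:A_2^1(i-2) is universal in A_2(i-2)}, every vertex of $A^1_2(i-2)$ is universal in $A_2(i-2)$, so $A^1_2(i-2)$ is a clique. Each $a\in A^1_2(i-2)$ has a neighbor in some component of $A_1(i-1)$. Lemma~\ref{lem:nbr of component of A_1(i-1) in A_2(i-2)}, applied to that component, gives that $a$ is complete to $B_{i-2}$. For completeness to $B_{i-1}$, take the edge $yw$ fixed in this subsection: $w$ is complete to $B_{i-1}\cup B_{i-2}$ by Lemma~\ref{lem:opposite A_1(i) and A_2(j)}. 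I would redo the argument of Lemma~\ref{lem:nbr of component of A_1(i-1) in A_2(i-2)} on the other side. If $a$ has a non-neighbor $s\in B_{i-1}$, then $a$ is complete to its component $K$ of $A_1(i-1)$ by Lemma~\ref{lem:A2i1tocompA1}, and $K$ contains a vertex $b$ adjacent to some $c\in A_2(i+1)$ by Proposition~\ref{prop:component of A_1(i)}. This gives the path $s-N_{B_{i-1}}(a)-a-b-c-q_{i+1}-q_i$. Here $s$ is non-adjacent to $b$ because $A_1(i-1)$ is anticomplete to $B_{i-1}$... except that $s\in B_{i-1}$ and $A_1(i-1)$ is attached exactly to $B_{i-1}$, so this path fails. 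I would therefore instead use Lemma~\ref{lem:B_{i-1} vs A_1(i-1)}: $s$ is complete or anticomplete to $K$. If $s$ is complete to $K$, then $s-a'-a-b$ with a common neighbor $a'$ on $B_{i-2}$ gives a $C_4$ of the form $s-k-a-q_{i-2}-s$, where $k\in K\cap N(s)$ and $q_{i-2}$ is adjacent to both $a$ and $s$. If $s$ is anticomplete to $K$, then $s-q_{i-2}-a-b-c-q_{i+1}-q_i$ is an induced $P_7$ once the non-edges are checked via the anticomplete-pair lemmas. This case analysis is the step I expect to need the most care.

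\textbf{$A^1_2(i-2)$ versus $A_5$.} Let $u\in A_5$ and $a\in A^1_2(i-2)$ be non-adjacent, and let $k\in A_1(i-1)$ be a neighbor of $a$. Then $u$ is adjacent to $k$ by Lemma~\ref{lem:A_5 is complete to A_1}, and $u$ is adjacent to every vertex of $B_{i-2}$. Taking any $v\in N_{B_{i-2}}(a)$, the cycle $u-k-a-v-u$ is an induced $C_4$, because $k$ has no neighbor in $B_{i-2}$ since $A_1(i-1)$ is anticomplete to $B_{i-2}$. This is a contradiction.

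Combining the three parts, the whole set is a clique.
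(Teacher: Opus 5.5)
Your argument is correct and follows the paper's proof essentially step by step. Completeness to $B_{i-2}$ comes from Lemma~\ref{lem:nbr of component of A_1(i-1) in A_2(i-2)}. For a non-neighbor $s\in B_{i-1}$, the complete/anticomplete dichotomy of Lemma~\ref{lem:B_{i-1} vs A_1(i-1)} gives either the induced $C_4$ $s-k-a-q_{i-2}-s$ or a long induced path or hole. For a non-neighbor in $A_5$, you find an induced $C_4$ through $B_{i-2}$; there you cite Lemma~\ref{lem:A_5 is complete to A_1}, while the paper uses the minimality of $|S_5|$ via Lemma~\ref{lem:A5 vs A1A2}, and both work.

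One correction is needed in the anticomplete case. There $sq_i\in E(G)$, because $q_i$ is complete to $B_{i-1}$, so $s-q_{i-2}-a-b-c-q_{i+1}-q_i-s$ is an induced $C_7$, not an induced $P_7$. Since $G$ is $C_7$-free, the contradiction still stands.
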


\begin{proof}
    By Lemma \ref{lem:nbr of component of A_1(i-1) in A_2(i-2)}, $A^1_2(i-2)$ is complete to $B_{i-2}$. Suppose that $p\in A^1_2(i-2)$ is not adjacent to $q\in B_{i-1}\cup A_5$. Suppose first that $q\in A_5$. By definition of $A_2^1(i-2)$, $p$ is complete to a component $K$ of $A_1(i-1)$. By Proposition \ref{prop:component of A_1(i)}, $K$ has a vertex $a$ that has a neighbor in $A_2(i+1)$. By the choice of $C=C_5$, $qa\in E(G)$. Then $p-(N(p)\cap B_{i-2})-q-a-p$ is an induced $C_4$. So $q\in B_{i-1}$. If $q$ is complete to $K$, then $q-(N(p)\cap K)-p-(N(q)\cap B_{i-2})-q$ is an induced $C_4$. If $q$ is anticomplete to $K$, then $q-q_{i-2}-p-a-(N(a)\cap A_2(i+1))-q_{i+1}-q_i-q$ is an induced $C_7$.
    By Lemma \ref{lem:A_2^1(i-2) is universal in A_2(i-2)} and the fact that $B_{i-2}$ is complete to $B_{i-1}$, $A^1_2(i-2)\cup B_{i-1}\cup B_{i-2}\cup A_5$ is a clique. 
\end{proof}

\begin{lemma}\label{lem:simplicial vertex in Y}
    If each vertex in $A_2(i-2)\setminus Y$ is universal in $A_2(i-2)$, then any simplicial vertex of $A_2(i-2)\cup B_{i-2}\cup B_{i-1}\cup A_3(i-2)$ is not in $Y$. By symmetry, if each vertex in $A_2(i+1)\setminus Y'$ is universal in $A_2(i+1)$, then any simplicial vertex of $A_2(i+1)\cup B_{i+2}\cup B_{i+1}\cup A_3(i+2)$ is not in $Y'$. 
\end{lemma}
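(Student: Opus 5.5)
Suppose for contradiction that $s\in Y$ is simplicial in $W:=A_2(i-2)\cup B_{i-2}\cup B_{i-1}\cup A_3(i-2)$. The plan is to exhibit a non-edge inside $N_W(s)$, which contradicts simpliciality. The natural candidates are a vertex of $A_2(i-2)\setminus Y$ and a vertex of $B_{i-2}\cup B_{i-1}$ that is adjacent to $s$ but has a private neighbour elsewhere.

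\textbf{Step 1: $s$ has a neighbour $w_0$ in $A_2(i-2)\setminus Y$.} The set $A_2(i-2)\setminus Y=A^1_2(i-2)\cup X$ contains the fixed vertex $w$, since $w$ is adjacent to $y\in A_1(i+1)$. By hypothesis, every vertex of $A_2(i-2)\setminus Y$ is universal in $A_2(i-2)$, so $s$ is adjacent to $w$. Set $w_0=w$.

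\textbf{Step 2: find a neighbour $u$ of $s$ in $B_{i-2}\cup B_{i-1}$ with $uw_0\notin E(G)$.} First recall the structure of $w_0$. It is complete to $B_{i-1}\cup B_{i-2}$ by Lemma~\ref{lem:opposite A_1(i) and A_2(j)} (or by Lemma~\ref{lem:nbr of component of A_1(i-1) in A_2(i-2)} if it lies in $A^1_2$). Hence a direct non-edge from $w_0$ to $H$ cannot occur, and the second vertex of the non-edge should be taken outside $H$: either $y$ itself, or a neighbour in $A_3(i-2)$.

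Note that simpliciality refers only to $W$, and $y\notin W$. So the argument must instead show that $s$ is forced to have two non-adjacent neighbours inside $W$, or else derive a contradiction directly.

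\textbf{Step 2 (direct route).} Since $s\in Y$, $s$ has no neighbour in $A_1(i-1)\cup A_1(i+1)$. Let $a\in N_{B_{i-1}}(s)$ and $b\in N_{B_{i-2}}(s)$. If $s$ is not complete to $B_{i-1}$, pick $c\in B_{i-1}\setminus N(s)$. Now $w_0$ and $c$ are both adjacent to $q_{i-2}$, and we consider the path
\[
s-w_0-y-N_{B_{i+1}}(y)-q_{i+2}-\cdots
\]
More concretely, we aim for an induced $P_7$ such as
\[
c'-b'-s-w_0-y-N_{B_{i+1}}(y)-q_{i+2},
\]
where $b'$ is a neighbour of $s$ in $B_{i-1}\cup B_{i-2}$ that is non-adjacent to $w_0$. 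This requires such a $b'$ to exist, but $w_0$ is complete to $B_{i-1}\cup B_{i-2}$, so that route fails. Instead, use a neighbour of $s$ in $A_3(i-2)$ or in $A_2(i-2)$. By simpliciality, $N_W(s)$ is a clique containing $w_0$ and $N_{B_{i-2}\cup B_{i-1}}(s)$.

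\textbf{Step 3: the small-vertex contradiction.} With $N_W(s)$ a clique, the remaining neighbours of $s$ lie in $A_5\cup B_{i-2}\cup B_{i-1}$ together with possibly $A_3(i-2)$. All of these are contained in $W\cup A_5$, and $A_5$ is complete to $N_W(s)$ by Lemma~\ref{lem:neighbors of A_1(i-1) in A_2(i-2)} and Lemma~\ref{lem:blowup-fiveneighbor1}. This uses $N(s)\subseteq W\cup A_5$, which follows from $s\in Y$, Lemma~\ref{lem:A_2=A_2(i-2)+A_2(i+1)}, and the anticomplete-pair lemmas (Lemmas~\ref{A1A2}, \ref{A2A2}, \ref{anti-A2A3}, \ref{lem:anticompleteA3-1}, \ref{lem:anticompleteA3}).

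Consequently $N[s]$ is a clique, so $s$ is a small vertex. This contradicts $G$ being basic.

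\textbf{Main obstacle.} The delicate step is verifying $N(s)\subseteq W\cup A_5$. One must exclude neighbours of $s$ in $A_3(i-1)$ and $A_3(i+1)$, which are empty by Lemma~\ref{lem:A_3(i-1) is empty}; in $A_3(i)$, handled by Lemma~\ref{anti-A2A3}; in $A_3(i+2)$, handled by Lemma~\ref{lem:anticompleteA3-1}; and in $A_2(i+1)$, handled by Lemma~\ref{A2A2}. One must also check that $A_5$ is complete to every vertex of $N_W(s)$, including its $A_3(i-2)$-part; this follows from Lemma~\ref{lem:blowup-fiveneighbor1}(2) and a $C_4$ argument through $w_0$.
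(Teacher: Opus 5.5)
Your endgame is the same as the paper's: show $N[s]$ is a clique, so that $s$ is small, contradicting that $G$ is basic. The gap is in Step~3, where you assert that $A_5$ is complete to $N_W(s)$. The lemmas you cite, together with Lemma~\ref{lem:A5 vs A1A2} (the choice of $C$, which you also need for the $X$-part), only show that $A_5$ is complete to $W\setminus Y=B_{i-2}\cup B_{i-1}\cup A_3(i-2)\cup A^1_2(i-2)\cup X$. Nothing makes $A_5$ complete to $Y$, and $N_W(s)$ may contain further vertices of $Y$. If some $p\in N(s)\cap Y$ and $q\in N(s)\cap A_5$ are non-adjacent, then $N[s]$ is not a clique and no contradiction is reached. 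Your proposed $C_4$ argument through $w_0$ does not exclude this: $w_0$ is adjacent to all of $s,p,q$, so $\{p,s,q,w_0\}$ induces a diamond, not a $C_4$. The paper in fact treats $Y$--$A_5$ non-edges as a real possibility (see the proof of Lemma~\ref{lem:simplicial vertex in A_3(i-2) and B_{i-2}}). A telltale sign is that the hypothesis on $A_2(i-2)\setminus Y$ is used only to get $sw\in E(G)$, which then plays no role, and Step~2 is abandoned.

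The missing ingredient is the clique cutset argument. Let $K$ be the component of $Y$ containing the simplicial vertex $u$.
\begin{itemize}
\item By the hypothesis together with Lemmas~\ref{lem:neighbors of A_1(i-1) in A_2(i-2)}, \ref{lem:A5 vs A1A2}, \ref{lem:opposite A_1(i) and A_2(j)} and \ref{lem:A_3(i-2)}(1),(2), every vertex of $N(K)\setminus A'_3(i-2)$ is universal in $N(K)$.
\item Since $A'_3(i-2)$ is complete to $B_{i-1}$, non-adjacent vertices of $N(K)\cap A'_3(i-2)$ have disjoint neighbourhoods in $B_{i+2}$.
\item A forbidden $P_4$ would extend through $B_{i+2},q_{i+1},q_i$ to an induced $P_7$.
\end{itemize}
Hence Lemma~\ref{lem:complete subgraphs}, applied with $X=A'_3(i-2)$ and $Y=B_{i+2}$, shows that $K$ is a clique.

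Now re-choose $s\in N_W[u]\cap K$ simplicial in $W$ with $|N(s)\cap A_5|$ minimum. Suppose $p\in N(s)\cap Y$ and $q\in N(s)\cap A_5$ are non-adjacent. Then $p\in N_W[u]\cap K$. Moreover $p$ is not simplicial in $W$: otherwise minimality gives some $q'\in (N(p)\cap A_5)\setminus N(s)$, and $s-q-q'-p-s$ is an induced $C_4$. So $p$ has a neighbour $a\in W\setminus N(s)$. By $C_4$-freeness $aq\notin E(G)$, and since $q$ is complete to $W\setminus Y$, we get $a\in Y$. Thus $s-p-a$ is an induced $P_3$ inside the clique $K$, a contradiction. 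Only after this is $N[s]$ a clique.

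A minor point: your inclusion $N(s)\subseteq W\cup A_5$ is correct, but two citations are off. $A_2(i+1)$ is anticomplete to $A_2(i-2)$ by Lemma~\ref{lem:anticompleteA3-1}, not Lemma~\ref{A2A2}. $A_3(i+2)$ is anticomplete to $A_2(i-2)$ by Lemma~\ref{anti-A2A3}, not Lemma~\ref{lem:anticompleteA3-1}.
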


\begin{proof}
    Let $S=A_2(i-2)\cup B_{i-2}\cup B_{i-1}\cup A_3(i-2)$.
    Suppose by contradiction that there is a simplicial vertex $u$ of $S$ in $Y$. 
    Let $K$ be the component of $Y$ containing $u$.
    We choose a vertex $s\in N_S[u]\cap K$  simplicial in $S$ such that $|N(s)\cap A_5|$ is minimum.
    
    By definition of $Y$, $K$ is anticomplete to $A_1$. 
    By Lemma \ref{lem:A_3(i-1) is empty}, $N(K)\subseteq A_5\cup B_{i-1}\cup B_{i-2}\cup (A_2(i-2)\setminus Y)\cup A_3(i-2)$. By Lemma \ref{lem:neighbors of A_1(i-1) in A_2(i-2)}, $B_{i-2}\cup B_{i-1}\cup A^1_2(i-2)\cup A_5$ is a clique. By the choice of $C$, $X$ is complete $A_5$. By Lemma \ref{lem:opposite A_1(i) and A_2(j)}, $X$ is complete to $B_{i-2}\cup B_{i-1}$. By the assumption that each vertex in $A_2(i-2)\setminus Y$ is universal in $A_2(i-2)$, $N(K)\setminus A_3(i-2)$ is a clique.
    By Lemma \ref{lem:A_3(i-2)} (1) and (2), $A_3(i-2)$ is complete to $B_{i-1}\cup (A_2(i-2)\setminus Y) \cup A_5$. It follows that each vertex in $N(K)\setminus A'_3(i-2)$ is universal in $N(K)$. 
    By Lemma \ref{lem:A_3(i-2)} (2), any two non-adjacent vertices in $N(K)\cap A'_3(i-2)$ has disjoint neighborhoods in $B_{i+2}$. 
    If there is an induced $P_4=a-b-c-d$ with $a,b\in K\cup (N(K)\cap A'_3(i-2))$, $c\in K$ and $d\in N(K)\cap A'_3(i-2))$, then $a-b-c-d-N_{B_{i+2}}(d)-q_{i+1}-q_i$ is an induced $P_7$. 
    So all three conditions of Lemma \ref{lem:complete subgraphs} are satisfied for $K$ with $X=A'_3(i-2)$ and $Y=B_{i+2}$, and so $K$ is clique.

    We claim that $s$ is simplicial in $G$. Suppose not. 
    Since $s$ is simplicial in $S$, there exist vertices $p\in N(s)\cap Y$ and $q\in A_5$ that are not adjacent by Lemmas \ref{lem:neighbors of A_1(i-1) in A_2(i-2)} and the choice of $C$.
    By the choice of $s$ and $G$ is $C_4$-free, $p$ is not simplicial in $S$ and so has a neighbor $a\in S\setminus N(s)$. Since $G$ is $C_4$-free, $aq\notin E(G)$. Since $q\in A_5$ is complete to $S\setminus Y$, we have $a\in Y$. Now the induced path $s-p-a$ contradicts that $K$ is a clique.
\end{proof}

\begin{lemma}\label{lem:simplicial vertex in A_3(i-2) and B_{i-2}}
    If $u\in A_3(i-2)\cup B_{i-2}$ is simplicial in $A_2(i-2)\cup B_{i-2}\cup B_{i-1}\cup A_3(i-2)$, then $N(u)\setminus (B_{i+2}\cup A_3(i+2))$ is a clique and so $|N(u)\cap (B_{i+2}\cup A_3(i+2))|>\ceil{\frac{\omega}{4}}$. 
    By symmetry, if $u\in A_3(i+2)\cup B_{i+2}$ is simplicial in $A_2(i+1)\cup B_{i+2}\cup B_{i+1}\cup A_3(i+2)$. Then $N(u)\setminus (B_{i-2}\cup A_3(i-2))$ is a clique and so $|N(u)\cap (B_{i-2}\cup A_3(i-2))|>\ceil{\frac{\omega}{4}}$. 
\end{lemma}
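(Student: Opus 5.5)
Let $S=A_2(i-2)\cup B_{i-2}\cup B_{i-1}\cup A_3(i-2)$ and let $u\in A_3(i-2)\cup B_{i-2}$ be simplicial in $S$. The plan is to show that $N(u)\setminus(B_{i+2}\cup A_3(i+2))$ is a clique. The size bound then follows immediately from the small vertex argument: since $G$ has no small vertices, $d(u)\ge \ceil{\frac{5}{4}\omega}$. If $N(u)\setminus (B_{i+2}\cup A_3(i+2))$ is a clique, it has at most $\omega-1$ vertices other than $u$, so $|N(u)\cap (B_{i+2}\cup A_3(i+2))|>\ceil{\frac{\omega}{4}}$.

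\textbf{Locating $N(u)$.} For $u\in A_3(i-2)$, the support of $u$ is $\{i+2,i-2,i-1\}$. The neighbors of $u$ outside $H$ lie in $A_1\cup A_2\cup A_3\cup A_5$. In the current setting we know:
\begin{itemize}
\item $A_1=A_1(i-1)\cup A_1(i+1)$.
\item $A_3(i-1)=A_3(i+1)=\emptyset$ by Lemma~\ref{lem:A_3(i-1) is empty}.
\item $A_2=A_2(i-2)\cup A_2(i+1)$ by Lemma~\ref{lem:A_2=A_2(i-2)+A_2(i+1)}.
\end{itemize}
The anticomplete-pair lemmas then give the following:
\begin{itemize}
\item $A_3(i-2)$ is anticomplete to $A_3(i)$ (Lemma~\ref{lem:anticompleteA3-1}).
\item $A_3(i-2)$ is anticomplete to $A_2(i+1)$ (Lemma~\ref{anti-A2A3}).
\item $A_3(i-2)$ is anticomplete to $A_1(i+1)$ (Lemma~\ref{A1A3}).
\item $A_3(i-2)$ is anticomplete to $A_1(i-1)$ (Lemma~\ref{lem:A_3(i-2)}(3)).
\end{itemize}
For $u\in B_{i-2}$ the same conclusions hold: $B_{i-2}$ is anticomplete to $A_1$ and to $A_2(i+1)$ by the definition of supports, and its neighbors in $A_3$ lie in $A_3(i-2)\cup A_3(i+2)$ since $A_3(i-1)=\emptyset$. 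Hence in both cases
\[
N(u)\setminus (B_{i+2}\cup A_3(i+2))\subseteq (N(u)\cap S)\cup (N(u)\cap A_5).
\]
The set $N(u)\cap S$ is a clique because $u$ is simplicial in $S$, and $A_5$ is a clique. It therefore remains to show that $N(u)\cap A_5$ is complete to $N(u)\cap S$.

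\textbf{Completeness of $A_5$ to $N(u)\cap S$.} By Lemma~\ref{lem:blowup-fiveneighbor1}, $A_5$ is complete to $V(H)$ and to $A_3$. By Lemma~\ref{lem:neighbors of A_1(i-1) in A_2(i-2)}, $A_5$ is complete to $A_2^1(i-2)$. So the only possible non-adjacent pair is some $q\in A_5$ and some $p\in N(u)\cap A_2^0(i-2)$.

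\begin{itemize}
\item If $p\in X$, then $p$ has a neighbor in $A_1(i+1)$. By the choice of $C$ (minimum $|S_5|$, as in the proof of Lemma~\ref{lem:A5 vs A1A2}), $A_5$ is complete to such edges, and in particular $pq\in E(G)$.
\item If $p\in Y$, this is the main obstacle. Here $p$ has no neighbors in $A_1$, so the argument cannot use $A_1$ directly. I would first use that $u$ and $p$ have a common neighbor in $H$ that is also adjacent to $q$. For $u\in A_3(i-2)$, Lemma~\ref{lem:A_3(i-2)}(2) makes $u$ complete to $B_{i-1}$, so $u$ and $p$ share a neighbor in $B_{i-1}$ whenever $p$ has one there. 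Then $q-u-p$ together with that vertex gives an induced $C_4$, a contradiction. So $p$ is anticomplete to $B_{i-1}$, which is impossible since $p\in A_2(i-2)$. For $u\in B_{i-2}$, $B_{i-2}$ is complete to $B_{i-1}$, so $u$ and $p$ again share a neighbor $r\in N_{B_{i-1}}(p)$. Since $q$ is adjacent to both $u$ and $r$, the cycle $q-u-p-r-q$ would be an induced $C_4$ unless $pq\in E(G)$.
\end{itemize}
I expect the $Y$ case to be the delicate point: one must check that the common neighbor $r$ is really adjacent to $u$ and that $ur$ is the only relevant chord. In particular, if $u\in A_3(i-2)$ and $p\in Y$ with $up\in E(G)$, the cycle is in fact the $C_4$ $q-u-p-r-q$ only when $ur\in E(G)$, which Lemma~\ref{lem:A_3(i-2)}(2) guarantees. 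With this, $N(u)\setminus(B_{i+2}\cup A_3(i+2))$ is a clique. The symmetric statement follows by reflecting $i\mapsto -i$.
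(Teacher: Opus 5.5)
Your location of $N(u)$, your reduction to showing that $N(u)\cap A_5$ is complete to $N(u)\cap S$, and your cases $p\in A_2^1(i-2)$ and $p\in X$ are correct and agree with the paper. The gap is the case $p\in Y$, $q\in A_5$, $pq\notin E(G)$, which is exactly the case the paper's proof is about.

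In both subcases you take $r\in N_{B_{i-1}}(p)$ and call $q-u-p-r-q$ an induced $C_4$. But you also (correctly) have $ur\in E(G)$: by Lemma~\ref{lem:A_3(i-2)}(2) when $u\in A_3(i-2)$, and because $B_{i-2}$ is complete to $B_{i-1}$ when $u\in B_{i-2}$. With the chord $ur$ present, $\{q,u,p,r\}$ induces a diamond ($K_4$ minus the edge $pq$), not a $C_4$. An induced $C_4$ needs both $pq$ and $ur$ to be non-edges, so your closing remark that the cycle is a $C_4$ ``only when $ur\in E(G)$'' is backwards. No local patch of this kind can work either. Since $A_5$ is complete to $V(H)\cup A_3$, $C_4$-freeness already forces the common neighbours of $p$ and $q$ in $H\cup A_3(i-2)$ to be pairwise adjacent. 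So no contradiction can come from a $4$-cycle through $p$ and $q$ alone.

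What is missing is a non-local argument that uses the absence of clique cutsets. The paper's argument runs as follows.
\begin{itemize}
\item Let $K$ be the component of $Y\setminus N(q)$ containing $p$. Every vertex of $N(K)$ is adjacent to $q$. Hence two non-adjacent vertices $a,b\in N(K)$, which exist because $G$ has no clique cutset, have no common neighbour in $K$.
\item Suppose some $a\in N(K)\setminus A_5$ were non-adjacent to $u$. Then a shortest path from $a$ to $u$ through $K$ has at least $4$ vertices. It ends in an induced $P_4$ of $S$ whose end $u$ has a neighbour in $B_{i+2}$ that the next vertex (a vertex of $Y$) misses. This contradicts Lemma~\ref{lem:no bad P4 when two non-empty A_1(i)}.
\item Simpliciality of $u$ then forces $a\in A_5$ and $b\in Y\cap N(u)$. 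Hence there is an induced $P_4$ $a-c-d-b$ with $c,d\in K$.
\item Simpliciality gives $uc\notin E(G)$. The $4$-cycle $a-c-d-u-a$ gives $ud\notin E(G)$. Then $u-b-d-c$ again violates Lemma~\ref{lem:no bad P4 when two non-empty A_1(i)}.
\end{itemize}
You need some argument of this kind to close the $Y$ case. As written, your proof does not show that $N(u)\setminus(B_{i+2}\cup A_3(i+2))$ is a clique.
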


\begin{proof}
    Let $S=A_2(i-2)\cup B_{i-2}\cup B_{i-1}\cup A_3(i-2)$. 
    Suppose that $N(u)\setminus (B_{i+2}\cup A_3(i+2))$ is not a clique.
    Since $u$ is simplicial in $S$, there exist vertices $p\in Y$ and $q\in A_5$ with $pq\notin E(G)$. 
    Let $K$ be the component of $Y\setminus N(q)$ containing $p$. Since $N(K)$ is not a clique, $N(K)$ contains two non-adjacent vertices $a,b$. Note that $a,b\in A_5\cup B_{i-1}\cup B_{i-2}\cup A_3(i-2)\cup (A_2(i-2)\setminus Y)\cup (N(q)\cap Y)$ are adjacent to $q$. So $a$ and $b$ do not have a common neighbor in $K$.
    We show that if $a\notin A_5$, then $au\in E(G)$. Suppose not. Since $q$ is adjacent to both $u$ and $a$, $u$ and $a$ have no common neighbors in $K$.
    Then there exists an induced path $P=a-\cdots-u$ with at least 4 vertices such that $P\setminus \{a,u\}\subseteq K$. This contradicts Lemma \ref{lem:no bad P4 when two non-empty A_1(i)}. 
    If $a,b\notin A_5$, $u$ is not adjacent to at least one of $a,b$ since $u$ is simplicial in $S$, a contradiction.  Therefore, $a\in A_5$ and $b\in Y$ with $bu\in E(G)$. Since $a,b$ have no common neighbors in $K$, there exist $c,d\in K$ such that $a-c-d-b$ is an induced $P_4$ and so $a-c-d-b-q-a$ is an induced $C_5$. Since $u$ is simplicial in $S$, $u$ is not adjacent to $c$. Then $u$ is not adjacent to $d$ for otherwise $a-c-d-u-a$ is an induced $C_4$. So $c-d-b-u$ contradicts Lemma \ref{lem:no bad P4 when two non-empty A_1(i)}.
\end{proof}

Next, we consider two cases. The first case is that some vertex in $X$ is not universal in $A_2(i-2)$ or some vertex in $X'$ is not universal in $A_2(i+1)$, and the second case is that every vertex in $X$ is  universal in $A_2(i-2)$ and every vertex in $X'$ is  universal in $A_2(i+1)$.

\subsubsection{$X$ or $X'$ is not universal}

By symmetry, we may assume that $a\in X'$ is not universal in $A_2(i+1)$. By definition of $X'$, $a$ has a neighbor $b\in A_1(i-1)$ and $ay\notin E(G)$. By applying Lemma \ref{lem:two opposite edges} to edges $ab$ and $yw$, we have $bw\in E(G)$ (at this point, we break the symmetry). For consistency, we rename $a$ as $z$ and rename $b$ as $x$. So the global assumption in this subsection is that $z$ has a non-neighbor $z'\in A_2(i+1)$. It follows from Lemma \ref{lem:reduce to basic graph} that $G$ contains the graph in Figure \ref{fig: a third basic graph} as an induced subgraph.

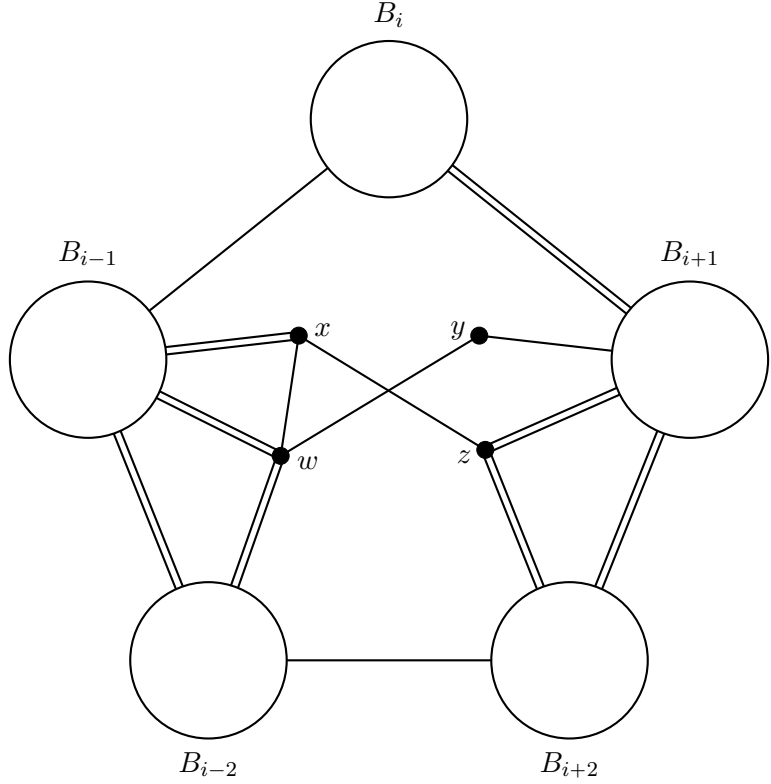
\begin{figure}[h]
    \centering
    \begin{tikzpicture}[scale=0.8]
        \tikzstyle{v}=[circle, draw, solid, fill=black, inner sep=0pt, minimum width=3pt]

        \draw[thick] (-1.5, 0.4)--(1.6, -1.5);
        \draw[thick] (-1.8, -1.6)--(1.5, 0.4);
        \draw[thick] (-1.5, 0.4)--(-1.8, -1.6);

        \draw[double distance=2pt, thick] (-1.8, -1.6)--(-5, 0);
        \draw[double distance=2pt, thick] (-1.8, -1.6)--(-3, -5);
        \draw[double distance=2pt, thick] (1.6, -1.5)--(5, 0);
        \draw[double distance=2pt, thick] (1.6, -1.5)--(3, -5);
        \draw[thick] (1.5, 0.4)--(5, 0);
        \draw[thick] (-3, -5)--(3, -5);
        \draw[thick] (-5, 0)--(0, 4);

        \draw[double distance=2pt, thick] (-5, 0)--(-3, -5);
        \draw[double distance=2pt, thick] (5, 0)--(3, -5);
        \draw[double distance=2pt, thick] (0, 4)--(5, 0);
        \draw[double distance=2pt, thick] (-1.5, 0.4)--(-5, 0);

        \filldraw[color=black, fill=white!100, thick] (0, 4) circle (1.3);
        \node [label=$B_i$] (v) at (0, 5.2){};

        \filldraw[color=black, fill=white!100, thick] (-5, 0) circle (1.3);
        \node [label=$B_{i-1}$] (v) at (-5, 1.2){};

        \filldraw[color=black, fill=white!100, thick] (5, 0) circle (1.3);
        \node [label=$B_{i+1}$] (v) at (5, 1.2){};

        \filldraw[color=black, fill=white!100, thick] (-3, -5) circle (1.3);
        \node [label=$B_{i-2}$] (v) at (-3, -7.3){};

        \filldraw[color=black, fill=white!100, thick] (3, -5) circle (1.3);
        \node [label=$B_{i+2}$] (v) at (3, -7.3){};

        \filldraw[black] (-1.5, 0.4) circle (4pt);
        \node [label = $x$] (v) at (-1.1, 0.05){};

        \filldraw[black] (-1.8, -1.6) circle (4pt);
        \node [label = $w$] (v) at (-1.35, -2.15){};

        \filldraw[black] (1.5, 0.4) circle (4pt);
        \node [label = $y$] (v) at (1.15, 0){};

        \filldraw[black] (1.6, -1.5) circle (4pt);
        \node [label = $z$] (v) at (1.25, -2.05){};
        
    \end{tikzpicture}
    \caption{A basic graph arising from two non-empty $A_1(j)$s. A double line means complete.}
    \label{fig: a third basic graph}
\end{figure}

\begin{lemma}\label{lem:z is not universal}
    $A_3(i)=\emptyset$.
\end{lemma}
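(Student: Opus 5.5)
I would argue by contradiction: suppose $t\in A_3(i)$. First I would collect what is already known in this setting. We have $xw\in E(G)$, and $z\in A_2(i+1)$ has a non-neighbor $z'\in A_2(i+1)$. By Lemma \ref{lem:reduce to basic graph}, $x$ is complete to $B_{i-1}$ and $B_i$ is complete to $B_{i+1}$. By Lemma \ref{lem:A_3(i)} (1) and (2), $t$ is anticomplete to $A_1(i-1)\cup A_1(i+1)$ and complete to $B_{i+1}$. By Lemma \ref{lem:anticompleteA3-1}, $t$ is not adjacent to $z$ or $z'$ (since $A_3(i)$ is anticomplete to $A_2(i+2)$; I would double check the indexing, and otherwise use Lemma \ref{lem:blowup-twoneighbor}).

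Next I would describe the position of $z'$. The vertex $z$ is complete to $B_{i+1}\cup B_{i+2}$. The vertices $z$ and $z'$ are non-adjacent and have a common neighbor in $B_{i+1}\cup B_{i+2}$. Since $G$ is $C_4$-free, $N_{B_{i+1}}(z')$ and $N_{B_{i+2}}(z')$ cannot both be non-empty with common neighbors in both sets. But $z'\in A_2(i+1)$ needs neighbors in both $B_{i+1}$ and $B_{i+2}$, and $z$ is complete to both. So $z,z',$ a neighbor of $z'$ in $B_{i+1}$ and a neighbor of $z'$ in $B_{i+2}$ would give a $C_4$, because those two neighbors are adjacent as $B_{i+1}$ is complete to $B_{i+2}$. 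This seems to contradict the existence of $z'$ outright. If so, the real content must be that $z'$ interacts with $A_1(i-1)$ or $x$, and I would re-read the setting. More likely, the intended use of $z'$ is that $z'\notin N(x)$ and that $z'$ sees $x$'s component, so I would get $xz'\notin E(G)$ and $z'$'s neighbors from Lemma \ref{lem:two opposite edges}.

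The main step is to find a forbidden long path through $t$. Take $d\in N_{B_i}(t)$ and $c\in N_{B_{i-1}}(t)$. The vertex $t$ is not complete to $B_{i-1}$, by Lemma \ref{lem:blowup-threeneighbor} (2) and since $t$ is complete to $B_{i+1}$. So choose $b\in B_{i-1}\setminus N(t)$; then $xb\in E(G)$. I would then try candidate induced paths or cycles such as $t-c-x-z-q_{i+2}\cdots$ or $t-q_{i+1}-z-x-b-q_{i-2}\cdots$, using that $x$ is not adjacent to $t$ and $z$ is not adjacent to $t$. For example, $c-t-q_{i+1}-z-x$ closes via $xc$ into a $C_5$, which is allowed, so instead I would use $b$: the path $b-x-z-q_{i+1}-t-\cdots$ extended by $N_{B_i}(t)$ and a vertex of $B_{i-1}$, or a cyclic sequence $t-d-q_{i-1}-x-z-q_{i+1}-t$ checked for chords. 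I expect the obstacle to be exactly this chord bookkeeping, where $q_{i-1}$ may or may not be adjacent to $t$.

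If this direct approach fails, I would fall back on Lemma \ref{lem:A_3(i)} (3), which gives $|B_{i-1}|>\omega/2$. I would combine it with the small vertex arguments for $N(x)$ and $N(z)$ (Lemma \ref{lem:simplicial vertices in A_1(i-1)}) to exhibit cliques $B_{i-1}\cup\{\cdot\}$ and $N(s)\cap A_2(i+1)$ together with $B_{i+1}$, whose union is a clique of size larger than $\omega$. I would use $z'$ to show that the large set $N(s)\cap A_2(i+1)$ is disjoint from what is counted.
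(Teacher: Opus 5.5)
Your proposal has a genuine gap: none of the routes you sketch reaches a contradiction, and you discard the one ingredient that makes this lemma short. The contradiction you derive from $z'$ is not one. For $p\in N_{B_{i+1}}(z')$ and $r\in N_{B_{i+2}}(z')$, the cycle $z-p-z'-r-z$ has the chord $pr$, because $B_{i+1}$ is complete to $B_{i+2}$. So it is a diamond, not an induced $C_4$, and non-adjacent vertices of $A_2(i+1)$ may share adjacent neighbors in $B_{i+1}$ and $B_{i+2}$.

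What $C_4$-freeness does give is $xz'\notin E(G)$: otherwise $x-z-r-z'-x$ is an induced $C_4$, since $x$ is anticomplete to $B_{i+2}$. With this, the paper's argument is immediate. Suppose some $a\in B_{i-1}$ is adjacent to $b\in B_i$ but not to $c\in B_i$. Then $z'-r-z-x-a-b-c$ is an induced $P_7$, using that $x$ is complete to $B_{i-1}$ by Lemma~\ref{lem:reduce to basic graph}. Hence $B_{i-1}$ is complete to $B_i$, and by the same lemma $B_i$ is complete to $B_{i+1}$. Now a non-neighbor of $t$ in $B_i$ exists by Lemma~\ref{lem:blowup-threeneighbor}(1), and it sees both $N_{B_{i-1}}(t)$ and $N_{B_{i+1}}(t)$. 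This contradicts Lemma~\ref{lem:blowup-threeneighbor}(3).

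Your direct attempts through $t$ fail as written. Any $d\in N_{B_i}(t)$ is adjacent to $q_{i+1}$, since $B_i$ is complete to $B_{i+1}$. Any neighbor of $t$ in $B_{i-1}$ is adjacent to both $x$ and $b$. So the sequences you list, such as $t-d-q_{i-1}-x-z-q_{i+1}-t$ and extensions of $b-x-z-q_{i+1}-t$, all have chords. Without a vertex like $z'$ one is pushed into the much longer counting argument of Lemma~\ref{lem:A_3(i) is empty when universal}.

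Your fallback is also not a proof. $B_{i-1}$ is anticomplete to $B_{i+1}$, so it cannot be merged with $N(s)\cap A_2(i+1)$ and $B_{i+1}$ into one clique. The extra size bounds such a count would need, for example on $B_{i-2}$, are not available at this point of the subsection.

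A minor point: $tz,tz'\notin E(G)$ follows from Lemma~\ref{anti-A2A3}, since $A_2(i+1)$ is anticomplete to $A_3(i)$. It does not follow from Lemma~\ref{lem:anticompleteA3-1}.
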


\begin{proof}
    If $a\in B_{i-1}$ is adjacent to $b\in B_{i}$ but not adjacent to $c\in B_i$, then $z'-(N(z')\cap B_{i+2})-z-x-a-b-c$ is an induced $P_7$. So $B_{i-1}$ is complete to $B_{i}$. By Lemma \ref{lem:reduce to basic graph}, $B_{i+1}$ is complete to $B_i$. By Lemma \ref{lem:blowup-threeneighbor} (3), $A_3(i)=\emptyset$.
\end{proof}

\begin{lemma}\label{lem:A_3(i+2) is empty}
    $A_3(i+2)=\emptyset$.
\end{lemma}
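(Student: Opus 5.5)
Suppose for contradiction that $t\in A_3(i+2)$. We will pin down how $t$ sits on $B_{i+1}\cup B_{i+2}\cup B_{i-2}$, using the configuration in Figure~\ref{fig: a third basic graph}, and then exhibit a forbidden induced $P_7$, $C_6$ or $C_7$ through $z,z',x,w$.

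\textbf{What is already known about $t$.} By Lemma~\ref{lem:A_3(i-2)}~(2), $t$ is complete to $B_{i+1}$. By Lemma~\ref{lem:A_3(i-2)}~(1), $t$ is complete to $A_2^1(i+1)\cup X'$, so in particular $tz\in E(G)$. By Lemma~\ref{lem:blowup-threeneighbor}~(1), $t$ has a non-neighbor $c\in B_{i+2}$. Since $B_{i+1}$ is complete to $B_{i+2}$, Lemma~\ref{lem:blowup-threeneighbor}~(3) shows that $B_{i+2}\setminus N(t)$ is anticomplete to $N_{B_{i-2}}(t)$. The anticomplete-pair lemmas (Lemmas~\ref{A1A3} and \ref{anti-A2A3}) give $tx\notin E(G)$ and $tw\notin E(G)$. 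We also know that $w$ is complete to $B_{i-2}\cup B_{i-1}$, that $z$ is complete to $B_{i+1}\cup B_{i+2}$, and that $x$ is complete to $B_{i-1}$.

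\textbf{Deriving the contradiction.} First use $z'$. If $tz'\notin E(G)$, take a neighbor $d\in N_{B_{i+2}}(z')$; it is adjacent to $z$. If $d\in N(t)$, then $z'-d-t$ together with $z$ gives $z'-d-t-z$ with $zz'\notin E$ and $z$ adjacent to both, which contradicts $C_4$-freeness. So $d\notin N(t)$. Now extend through $t$, $N_{B_{i-2}}(t)$, $w$, $x$ to look for an induced long path or hole, for instance $z'-d-\cdots$ combined with $t-N_{B_{i-2}}(t)-w-x$. If instead $tz'\in E(G)$, then $z$ and $z'$ have the common neighbor $t$. Since $z'$ is non-adjacent to $z$, the graph $t,z,z',c$ forces a $C_4$ or a $P_4$-structure with $B_{i+2}$. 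Lemma~\ref{lem:no bad P4 when two non-empty A_1(i)} rules out exactly such $P_4$s inside $A_2(i+1)\cup B_{i+1}\cup B_{i+2}\cup A_3(i+2)$ whose end has a private neighbor in $B_{i-2}$. Use $t$'s neighbor in $B_{i-2}$, which $z,z'$ do not see, to apply it: for example $t'-t-\ldots$ with an end in $N_{B_{i-2}}(t)$.

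\textbf{Main obstacle.} The crux is choosing the right path. My first try is $c-z-x-w-N_{B_{i-2}}(t)-t-N_{B_{i+1}}(y)-y$ style sequences. Alternatively, with $c\in B_{i+2}\setminus N(t)$ adjacent to some $e\in B_{i-2}\setminus N(t)$ (such an $e$ exists by the anticompleteness above), try the cyclic sequence $t-z-x-w-e-c-\ldots$ closing back to $t$, or the path $c-e-w-x-z-t-N_{B_{i-2}}(t)$. Checking non-edges, the pairs to verify are $zw$ (from Lemma~\ref{lem:anticompleteA3-1}), $xc$ and $xe$ (from the support of $x$), and $ze$. If $ze$ is not excluded directly, I would split into cases on whether $z$ has a neighbor in $B_{i-2}$. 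Getting every chord excluded without further case analysis is where the real work lies; if a single path fails, I would branch on whether $N_{B_{i-2}}(t)$ meets $N(c)$.
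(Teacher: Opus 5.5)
\textbf{Verdict: there is a genuine gap.} The case $tz'\notin E(G)$ with $N_{B_{i+2}}(z')\subseteq N(t)$ is never handled, and it is the heart of the lemma.

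\textbf{What works.} Your preliminary facts are correct. In particular $tz\in E(G)$ follows directly from Lemma~\ref{lem:A_3(i-2)}~(1), because $z\in X'$.

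Your subcase $tz'\in E(G)$ closes, with one correction:
\begin{itemize}
\item If $z'c\in E(G)$, then $z'-c-z-t-z'$ is an induced $C_4$.
\item Otherwise $c-z-t-z'$ is an induced $P_4$ inside $A_2(i+1)\cup B_{i+1}\cup B_{i+2}\cup A_3(i+2)$, and Lemma~\ref{lem:no bad P4 when two non-empty A_1(i)} forbids it. The end with the private $B_{i-2}$-neighbor is $c$, not a vertex of $N_{B_{i-2}}(t)$.
\end{itemize}
When $tz'\notin E(G)$ and $z'$ has a neighbor $d\in B_{i+2}\setminus N(t)$, routing through $w$ and $x$ fails: $zx$ is a chord, and $w$ sees all of $B_{i-2}$. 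The candidate paths in your last paragraph likewise contain chords such as $zc$, $zt$, $ws$ and $es$. The path that works is $z'-d-z-t-s-q_{i-1}-q_i$ with $s\in N_{B_{i-2}}(t)$. It is induced because $B_{i+2}\setminus N(t)$ is anticomplete to $N_{B_{i-2}}(t)$.

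\textbf{The gap.} The remaining case is $tz'\notin E(G)$ and $N_{B_{i+2}}(z')\subseteq N(t)$. Your claim that $d\in N(t)$ yields a $C_4$ on $\{z',d,t,z\}$ is false. Since $z$ is complete to $B_{i+2}$, the edges there are $z'd$, $dt$, $dz$, $tz$, which form a paw, not a $C_4$.

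This is precisely the hard case (Case~2 in the paper), and no choice of path through $z',t,z,x,w$ and $H$ will settle it:
\begin{itemize}
\item $z'$ and $t$ have adjacent common neighbors in $B_{i+1}$ and $B_{i+2}$, so no $C_4$ arises.
\item $z'x\notin E(G)$, since otherwise Lemma~\ref{lem:opposite A_1(i) and A_2(j)} would make $z'$ complete to $B_{i+2}$.
\item In small examples $z'$ can even be simplicial, with $N(z')$ a clique in $B_{i+1}\cup N_{B_{i+2}}(t)$, without creating any forbidden induced subgraph.
\end{itemize}
So you must use that $G$ has no clique cutset. The paper does this in four steps:
\begin{enumerate}
\item Apply the previous subcase to every non-neighbor of $z$ in $A_2(i+1)$. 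Together with Lemma~\ref{lem:opposite A_1(i) and A_2(j)} and the universality of $A_2^1(i+1)$ in $A_2(i+1)$, this shows $A_2(i+1)\setminus N(z)$ is anticomplete to $A_1$.
\item Let $K_{z'}$ be the component of $A_2(i+1)\setminus N(z)$ containing $z'$. One checks that every vertex of $N(K_{z'})$ lying in $A_5\cup B_{i+1}\cup N_{A_2(i+1)}(z)$ is universal in $N(K_{z'})$.
\item Lemma~\ref{lem:complete subgraphs}, with $X=A'_3(i+2)$ and $Y=B_{i-2}$, then gives non-adjacent $p,q\in A'_3(i+2)$ complete to $K_{z'}$.
\item The bad $P_7$ $p-z'-q-N_{B_{i-2}}(q)-w-x-z$ forces $zp,zq\in E(G)$, and then $p-z'-q-z-p$ is an induced $C_4$.
\end{enumerate}
Without this clique-cutset argument, or some substitute that uses basicness, your proof does not go through.
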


\begin{proof}
    Suppose by contradiction that $A_3(i+2)$ contains a vertex $t$. Since $B_{i+1}$ is complete to $B_{i+2}$, $B_{i+2}\setminus N(t)$ is anticomplete to $N(t)\cap B_{i-2}$ by Lemma \ref{lem:blowup-threeneighbor}. By Lemmas \ref{A1A3} and \ref{anti-A2A3}, $tx,tw\notin E(G)$.

    \medskip
    \noindent {\bf Case 1. $z'$ has a neighbor $b$ in $B_{i+2}\setminus N(t)$.}
    If $tz\notin E(G)$, then $z'-b-z-x-w-(N(t)\cap B_{i-2})-t$ is a bad $P_7$. So $tz\in E(G)$. If $tz'\in E(G)$, then $z'-b-t-z'$ is an induced $C_4$. So $tz'\notin E(G)$ and thus $z'-b-z-t-(N(t)\cap B_{i-2})-q_{i-1}-q_i$ is an induced $P_7$.

    \medskip
    \noindent {\bf Case 2. $z'$ has no neighbors in $B_{i+2}\setminus N(t)$.} 
    By {\bf Case 1}, $A_2(i+1)\setminus N(z)$ is anticomplete to $A_1(i-1)$. 
    By Lemma \ref{lem:A_2^1(i-2) is universal in A_2(i-2)}, $A_2(i+1)\setminus N(z)$ is anticomplete to $A_1$. 
    Let $K_{z'}$ be the component of $A_2(i+1)\setminus N(z)$ containing $z'$.
    By Lemma \ref{lem:anticompleteA3-1}, $K_{z'}$ is anticomplete to $A_2\setminus N_{A_2(i+1)}(z)$. By Lemmas \ref{lem:anticompleteA3-1}, \ref{anti-A2A3}, \ref{lem:A_3(i-1) is empty} and \ref{lem:z is not universal}, $N(K_{z'})\cap A_3\subseteq A_3(i+2)$ and so $N(K_{z'})\subseteq B_{i+1}\cup N_{A_2(i+1)}(z)\cup A'_3(i+2)\cup A_5$.

    Next we show that $N(K_{z'})\cap A_5$ is complete to $N(K_{z'})\cap N_{A_2(i+1)}(z)$. For contradiction, let $p\in N(K_{z'})\cap N_{A_2(i+1)}(z)$ and $q\in N(K_{z'})\cap A_5$ be non-adjacent. By the choice of $C$, $q$ is complete to $\{x,z\}$. If $p$ and $q$ have a common neighbor $c\in K_{z'}$, then $c-p-z-q-c$ is an induced $C_4$. So $p$ and $q$ have disjoint neighborhoods in $K_{z'}$. This implies that  $p$ mixes on an edge $ab\in K_{z'}$. Then $xp\in E(G)$ for otherwise $a-b-p-z-x-q_{i-1}-q_i$ is an induced $P_7$. By Lemma \ref{lem:opposite A_1(i) and A_2(j)}, $p$ is complete to $B_{i+2}$. Then $p-x-q-q_{i+2}-p$ is an induced $C_4$.
    By Lemma \ref{lem:blowup-fiveneighbor1}, each vertex in $N(K_{z'})\cap A_5$ is universal in $N(K_{z'})$. 

    Next we show that $N(K_{z'})\cap B_{i+1}$ is complete to $N(K_{z'})\cap N_{A_2(i+1)}(z)$. For contradiction, let $p\in N(K_{z'})\cap N_{A_2(i+1)}(z)$ and $q\in N(K_{z'})\cap B_{i+1}$ be non-adjacent. By definition, $pz\in E(G)$ and $qz\in E(G)$. Since $G$ is $C_4$-free, $p$ and $q$ do not have a common neighbor in $K_{z'}$. So $p$ mixes on $K_{z'}$ and thus $xp\in E(G)$. By Lemma \ref{lem:opposite A_1(i) and A_2(j)}, $pq\in E(G)$, a contradiction.  By Lemma \ref{lem:blowup-twoneighbor}, each vertex in $N(K_{z'})\cap B_{i+1}$ is universal in $N(K_{z'})$.

   We show that $N(K_{z'})\cap N_{A_2(i+1)}(z)$ is a clique complete to $A'_3(i+2)$.  Suppose that $p,q\in N_{A_2(i+1)}(z)$  are not adjacent. Since $G$ is $C_4$-free, $p$ and $q$ have disjoint neighborhoods in $K_{z'}$. This implies that both $p$ and $q$ mix on $K_{z'}$ and so $xp,xq\in E(G)$. Then $p-x-q-q_{i+2}-p$ is an induced $C_4$. This shows that $N(K_{z'})\cap N_{A_2(i+1)}(z)$ is a clique.  Suppose that $p\in N(K_{z'})\cap N_{A_2(i+1)}(z)$ and $q\in N(K_{z'})\cap A'_3(i+2)$ be non-adjacent. Recall that $qx\notin E(G)$. If $qz\notin E(G)$, then $q-K_{z'}-\{p,z\}-x-w-B_{i-2}-q$ is an induced cyclic sequence. So $qz\in E(G)$. Since $pz\in E(G)$ and $G$ is $C_4$-free, $p$ and $q$ do not have common neighbors in $K_{z'}$. So $p$ mixes on $K_{z'}$ and so $xp\in E(G)$. Then $q-K_{z'}-p-x-w-B_{i-2}-q$ is an induced cyclic sequence. So each vertex in $N(K_{z'})\cap N_{A_2(i+1)}(z)$ is universal in $N(K_{z'})$.
   
   Since $B_{i+1}$ is complete to $A'_3(i+2)$ by Lemma \ref{lem:A_3(i-2)} (2), each pair of non-adjacent vertices in $N(K_{z'})\cap A'_3(i+2)$ has disjoint neighborhoods in $B_{i-2}$. If there is an induced $P_4=a-b-c-d$ with $a,b\in K_{z'}\cup N(K_{z'})$, $c\in K_{z'}$ and $d\in A'_3(i+2)$, then $a-b-c-d-N_{B_{i-2}}(d)-q_{i-1}-q_i$ is an induced $P_7$. So all three conditions of  Lemma \ref{lem:complete subgraphs} are satisfied for $K_{z'}$ with $X=A'_3(i+2)$ and $Y=B_{i-2}$. It follows that $K_{z'}$ is complete to two non-adjacent vertices $p,q\in A'_3(i+2)$. If $zq\notin E(G)$, then $p-z'-q-(N(q)\cap B_{i-2})-w-x-z$ is a bad $P_7$. So $zq\in E(G)$. By symmetry, $zp\in E(G)$. Then $p-z'-q-z-p$ is an induced $C_4$.
\end{proof}

\begin{lemma}\label{lem:A_1(i+1) is dominated by A_2(i-2)}
    Each vertex in $A_1(i+1)$  has a neighbor in $A_2(i-2)$.
\end{lemma}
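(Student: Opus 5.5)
We argue by contradiction with the clique cutset argument. Suppose some component $K$ of $A_1(i+1)$ contains a vertex with no neighbor in $A_2(i-2)$. By Lemma \ref{lem:A2i1tocompA1}, every vertex of $A_2(i-2)$ that is not in $A_2(i+1\pm 1)$ is complete or anticomplete to $K$. This applies because $A_2(i-2)=A_2((i+1)+2)$ is exactly the \emph{exceptional} set in Proposition \ref{prop:component of A_1(i)}, so we must handle it separately. Let $T$ be the set of vertices of $K$ with no neighbor in $A_2(i-2)$. We take a component $L$ of $G[T]$ and aim to show that $N(L)$ is a clique, which contradicts the assumption that $G$ has no clique cutsets.

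First we pin down $N(L)$. It lies in $(K\setminus T)\cup B_{i+1}\cup A_2(i)\cup A_2(i+1)\cup A_3(i)\cup A_3(i+1)\cup A_3(i+2)\cup A_5$. We prune this list using the global facts of the subsection.
\begin{itemize}
\item $A_2=A_2(i-2)\cup A_2(i+1)$ (Lemma \ref{lem:A_2=A_2(i-2)+A_2(i+1)}).
\item $A_3(i)=A_3(i+1)=A_3(i+2)=\emptyset$ (Lemmas \ref{lem:A_3(i-1) is empty}, \ref{lem:z is not universal} and \ref{lem:A_3(i+2) is empty}).
\item $A_5$ is complete to $A_1$ (Lemma \ref{lem:A_5 is complete to A_1}).
\end{itemize}
Hence $N(L)\subseteq (K\setminus T)\cup B_{i+1}\cup A_2(i+1)\cup A_5$. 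By Proposition \ref{prop:component of A_1(i)} applied to $L$, the set $N(L)\setminus (A_1(i+1)\cup A_2(i-2))=N(L)\cap (B_{i+1}\cup A_2(i+1)\cup A_5)$ is already a clique. It therefore remains to show that $N(L)\cap (K\setminus T)$ is a clique and that it is complete to the rest of $N(L)$.

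Take $s\in N(L)\cap K$, so $s$ has a neighbor $r\in A_2(i-2)$, and $r$ is complete to $K$ or anticomplete to it. Since $L\subseteq K$ and $rL$ contains no edges, $r$ must be anticomplete to $K$. But $r$ is adjacent to $s\in K$, so this is a contradiction. Hence in fact $T=\emptyset$, provided every vertex of $A_2(i-2)$ falls under Lemma \ref{lem:A2i1tocompA1} in the form we need. The lemma as stated concerns $A_2(j-1)\cup A_2(j)$ for $K\subseteq A_1(j)$, so it does not cover $A_2(j+2)$ directly, and this dichotomy is not available for free.

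The real work is therefore to prove that a vertex $r\in A_2(i-2)$ cannot mix on an edge $ab$ of $A_1(i+1)$ with $ra\notin E$ and $rb\in E$. The plan is to use the known structure: $r$ has a neighbor $c\in B_{i-2}$, and $B_{i-1}$ is complete to $B_{i-2}$. For the $P_7$ we use the fixed vertices $x,w,z$: we aim for a path such as $a-b-r-c-\cdots$ continuing through $q_{i-1}$ or $x$ and then $z$, exploiting the facts that $z$ is complete to $B_{i+1}\cup B_{i+2}$, that $xw\in E$, and that $A_1$ is anticomplete to most of $H$. The first path to try is $a-b-r-N_{B_{i-1}}(r)-q_i-\ldots$, ending in $B_{i+1}\setminus N(a)$ or at $z$. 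Wherever a chord appears, we switch to an induced $C_6$ or $C_7$ through $B_{i+1}$, $N_{B_{i+1}}(a)$, and $q_{i+2}$.

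If this step fails in some configuration, the fallback is to return to the clique cutset argument on $L$. In that approach the mixed vertices $r\in A_2(i-2)$ serve as the set $X$ in Lemma \ref{lem:complete subgraphs}, with $Y=B_{i-1}$: two non-adjacent vertices of $A_2(i-2)$ with a common neighbor in $A_1(i+1)$ have disjoint neighborhoods in $B_{i-1}$ that are complete to each other, by $C_4$- and $C_6$-freeness. That lemma would then force $L$ to have neighbors in $A_2(i-2)$, which contradicts the definition of $T$. I expect the main obstacle to be exactly this mixing case: verifying the forbidden-$P_4$ condition (2) of Lemma \ref{lem:complete subgraphs}, and checking that the chords among $x,w,z,y$ do not break the required induced paths.
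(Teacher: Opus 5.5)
Your set-up is the paper's. You take a component $L$ of the vertices of $A_1(i+1)$ with no neighbor in $A_2(i-2)$, localize $N(L)\subseteq (A_1(i+1)\setminus T)\cup B_{i+1}\cup A_2(i+1)\cup A_5$, and get from Proposition~\ref{prop:component of A_1(i)} that the part of $N(L)$ outside $A_1(i+1)$ is a clique. But the step that carries the proof is missing. You reduce everything to the claim that no $r\in A_2(i-2)$ mixes on an edge $ab$ of $A_1(i+1)$, and never prove it. (Your opening appeal to Lemma~\ref{lem:A2i1tocompA1} for this is a misquote, as you later notice.) The path $a-b-r-N_{B_{i-1}}(r)-q_i-\cdots$ needs a vertex of $B_{i+1}$ adjacent to neither $a$ nor $b$, and nothing provides one; ``switch to a $C_6$ or $C_7$ wherever a chord appears'' is not an argument. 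The fallback fails outright. First, $L$ has no neighbor in $A_2(i-2)$ by definition, so with $X=A_2(i-2)$, condition (1) of Lemma~\ref{lem:complete subgraphs} for $L$ already demands that $N(L)$ be a clique. Second, any $r\in A_2(i-2)$ with a neighbor in $A_1(i+1)$ is complete to $B_{i-2}\cup B_{i-1}$ by Lemma~\ref{lem:opposite A_1(i) and A_2(j)}, so two such vertices never have disjoint neighborhoods in $B_{i-1}$.

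The missing ingredient is the vertex $z$ of this subsubsection: $z\in X'$ has no neighbor in $A_1(i+1)$, yet $z$ is complete to $B_{i+1}\cup B_{i+2}$. The paper uses this directly and needs no dichotomy. Take $p\in N(L)\cap A_1(i+1)$ with a neighbor $p'\in A_2(i-2)$, so $p'$ is complete to $B_{i-2}\cup B_{i-1}$.
\begin{itemize}
\item If $p$ mixes on an edge $ab$ of $L$, then $a-b-p-p'-q_{i-2}-q_{i+2}-z$ is a bad $P_7$. Hence $p$ is complete to $L$.
\item Two non-adjacent such $p,q$ therefore share a neighbor $u\in L$, hence have disjoint neighborhoods in $A_2(i-2)$, and $q-u-p-p'-q_{i-2}-q_{i+2}-z$ is a bad $P_7$.
\item A non-edge $pq$ with $q\in B_{i+1}$ gives the induced $C_6$ $p-u-q-q_i-q_{i-1}-p'-p$; with $q\in A_2(i+1)$ it gives $p-u-q-N_{B_{i+2}}(q)-q_{i-2}-p'-p$. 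Also $A_5$ is complete to $A_1$.
\end{itemize}
So $N(L)$ is a clique, a contradiction.

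Your dichotomy route can also be completed, but it needs two ideas absent from the proposal. First, for $r\in A_2(i-2)$ adjacent to $b\in A_1(i+1)$, apply Lemma~\ref{lem:two opposite edges} to $xz$ and $br$; since $bz\notin E(G)$, this forces $xr\in E(G)$. Second, if $r$ then mixes on $ab$, use the non-neighbor $z'$ of $z$: $a-b-r-x-z-N_{B_{i+2}}(z')-z'$ is a bad $P_7$. Here $xz'\notin E(G)$, since otherwise $z-x-z'-q_{i+2}-z$ is an induced $C_4$. This is exactly what the paper proves immediately afterwards, in its $P_4$-freeness lemma for $A_1(i+1)$. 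With it, a component of $A_1(i+1)$ is either dominated by a single vertex of $A_2(i-2)$ or anticomplete to $A_2(i-2)$. In the latter case the Proposition gives a clique cutset, so this would be a valid and slightly shorter proof. As written, however, the key step is not supplied.
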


\begin{proof}
    Let $S\subseteq A_1(i+1)$ be the set of vertices that have a neighbor in $A_2(i-2)$ and $T=A_1(i+1)\setminus S$. The lemma is equivalent to show 
    $T=\emptyset$. Suppose not. Let $K$ be a connected component of $T$. 
    By Lemmas \ref{lem:z is not universal} and \ref{lem:A_3(i+2) is empty},
    \[
    N(K)=(N(K)\cap B_{i+1})\cup (N(K)\cap A_1(i+1))\cup (N(K)\cap A_2(i+1))\cup (N(K)\cap A_5)
    .\]
    By Proposition \ref{prop:component of A_1(i)}, $N(K)\setminus A_1(i+1)$ is a clique.
    Observe that any vertex $p\in S$ is not adjacent to $z$ by Lemma \ref{lem:A_2^1(i-2) is universal in A_2(i-2)} and the assumption that $z$ is not universal in $A_2(i+1)$. If a vertex $p\in N(K)\cap A_1(i+1)$ is mixed on $ab\in K$, then $a-b-p-(N(p)\cap A_2(i-2))-q_{i-2}-q_{i+2}-z$ is a bad $P_7$. So any vertex in $N(K)\cap A_1(i+1)$ is complete to $K$.

    We show that $N(K)\cap A_1(i+1)$ is complete to $N(K)\setminus A_1(i+1)$. Suppose by contradiction that $p\in N(K)\cap A_1(i+1)$ and $q\in N(K)\setminus A_1(i+1)$ with $pq\notin E(G)$. 
    By definition of $p$, $p$ has a neighbor $w'\in A_2(i-2)$.
    Let $u\in K$ be a common neighbor of $p$ and $q$. 
    By Lemma \ref{lem:A_5 is complete to A_1}, $q\notin A_5$.
    If $q\in B_{i+1}$, then $p-u-q-q_i-q_{i-1}-w'-p$ is an induced $C_6$.
    If $q\in A_2(i+1)$, then $p-u-q-N_{B_{i+2}}(q)-q_{i-2}-w'-p$ is an induced $C_6$.
     
    Therefore, each vertex in $N(K)\setminus A_1(i+1)$ is universal in $N(K)$. Since $N(K)$ is not clique, there are two non-adjacent vertices $p,q\in N(K)\cap A_1(i+1)$. So $p$ and $q$ have a common neighbor $u\in K$ and thus $p$ and $q$ have disjoint neighborhoods in $A_2(i-2)$. Since $pz\notin E(G)$, $q-u-p-(N(p)\cap A_2(i-2))-q_{i-2}-q_{i+2}-z$ is a bad $P_7$.
\end{proof}

\begin{lemma}[$P_4$-freeness of $A_1(i+1)$]\label{lem:A_1(i+1) is $P_4$-free}
    The following properties hold for $A_1(i+1)$.

    \begin{itemize}
        \item[$(1)$]  $x$ is complete to $N(A_1(i+1))\cap A_2(i-2)$.
        \item[$(2)$] For any vertex $w'\in A_2(i-2)$ and any component $K$ of $A_1(i+1)$, $w$ is either complete or anticomplete to $K$.
        \item[$(3)$] $A_1(i+1)$ is $P_4$-free.
    \end{itemize}
\end{lemma}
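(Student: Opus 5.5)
I will prove the three parts in order, since (3) uses (1) and (2). Throughout, the setting is the one of this subsubsection: $x\in A_1(i-1)$, $z\in A_2(i+1)$ with $xz\in E(G)$, $y\in A_1(i+1)$, $w\in A_2(i-2)$ with $yw,xw\in E(G)$, $yz\notin E(G)$, and $z$ has a non-neighbor $z'\in A_2(i+1)$. Lemma \ref{lem:reduce to basic graph} gives that $x$ is complete to $B_{i-1}$, and Lemmas \ref{lem:z is not universal} and \ref{lem:A_3(i+2) is empty} give $A_3(i)=A_3(i+2)=\emptyset$.

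For (1), I take $w'\in A_2(i-2)$ adjacent to some $y'\in A_1(i+1)$. Lemma \ref{lem:two opposite edges} applied to the edges $xz$ and $y'w'$ says exactly one of $xw'$ and $y'z$ is an edge. So it suffices to show $y'z\notin E(G)$. Suppose $y'z\in E(G)$. Then $y'$ has a neighbor in $A_2(i-2)$ (namely $w'$) and is adjacent to $z$. I would argue as in the proof of Lemma \ref{lem:A_1(i+1) is dominated by A_2(i-2)}, where it was observed that no vertex of $A_1(i+1)$ with a neighbor in $A_2(i-2)$ is adjacent to $z$. That observation came from Lemma \ref{lem:A_2^1(i-2) is universal in A_2(i-2)} (symmetric form: $z\in A_2^1(i+1)$ would be universal in $A_2(i+1)$, contradicting the non-neighbor $z'$). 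Hence $y'z\notin E(G)$ and $xw'\in E(G)$.

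For (2), let $K$ be a component of $A_1(i+1)$ and suppose $w'\in A_2(i-2)$ mixes on an edge $ab$ of $K$, with $w'b\in E(G)$ and $w'a\notin E(G)$. By (1), $xw'\in E(G)$. Then I consider the path $a-b-w'-x-z-N_{B_{i+2}}(z')\cdots$, or more simply $a-b-w'-x-z-q_{i+2}-\ldots$. Non-adjacencies follow from the anticomplete lemmas: $A_1(i+1)$ is anticomplete to $A_1(i-1)$ by Lemma \ref{A1A2}, and anticomplete to $z$ by the argument in (1). The cleanest choice is probably $a-b-w'-x-z-q_{i+2}$ extended by a vertex of $B_{i+2}$ or $B_{i-2}$ not adjacent to $w'$. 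If that fails, I would fall back to Lemma \ref{lem:A2i1tocompA1} applied in the symmetric index (it is exactly the statement that $A_2(j-1)\cup A_2(j)$ vertices are complete or anticomplete to components of $A_1(j)$, and $A_2(i-2)=A_2((i+1)+2)$ is not covered). Hence the explicit $P_7$ $a-b-w'-x-z-q_{i+1}-q_i$ is the one I will check: $q_{i+1}w'\notin E$ since $\supp(w')=\{i-2,i-1\}$, and $q_i$ is non-adjacent to $a,b,w',z$. Checking $xq_{i+1},xq_i\notin E$ uses $x\in A_1(i-1)$, and $bq_{i+1}$ needs replacing $q_{i+1}$ by a vertex of $B_{i+2}$ (giving $a-b-w'-x-z-q_{i+2}-q_{i-2}$, valid since $w'$ is adjacent to $q_{i-2}$). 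This bookkeeping of which $q_j$ avoids $a,b$ is the main obstacle; I would pick the path ending in $B_{i+2}$, $B_{i+1}\setminus N(z)$ as appropriate.

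For (3), suppose $a-b-c-d$ is an induced $P_4$ in $A_1(i+1)$, lying in one component $K$. By Lemma \ref{lem:A_1(i+1) is dominated by A_2(i-2)}, $a$ has a neighbor $w'\in A_2(i-2)$, and by (2) $w'$ is complete to $K$. So $w'$ is adjacent to $a,c$, which are non-adjacent. Then $a$, $c$, $d$ together with $w'$ and $b$ yield an induced $C_4$ $w'-a-b-c$? No: $w'$ is complete, giving $w'$ adjacent to all four. Instead I use Lemma \ref{lem:induced P3in A_1(i)} on $a-b-c$ and $b-c-d$: $N_{B_{i+1}}(b)=N_{B_{i+1}}(c)$ contains $N_{B_{i+1}}(a)$ and $N_{B_{i+1}}(d)$. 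A neighbor $p\in B_{i+1}$ of $a$ is then adjacent to $a,b,c$ with $ac\notin E$, and combined with $w'$ (adjacent to $a,c$, not to $p$) gives the induced $C_4$ $a-p-c-w'-a$, a contradiction.
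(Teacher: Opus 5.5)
Your part (1) is the paper's argument. Your part (3) is correct and more direct than the paper's. The paper verifies the hypotheses of Lemma \ref{lem:P_4-free lemma} with $u=w'$, $X=K$, $Y=B_{i+1}$, ruling out the relevant $P_5$ via the $P_7$ $a-b-c-d-e-q_i-q_{i-1}$. You instead apply Lemma \ref{lem:induced P3in A_1(i)} to $a-b-c$ and $b-c-d$ to get $N_{B_{i+1}}(a)\subseteq N_{B_{i+1}}(c)$. Then $a$ and $c$ have the two non-adjacent common neighbors $p\in B_{i+1}$ and $w'$, which is an induced $C_4$. Both arguments rely on (2), and (2) is where your proposal breaks.

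In (2), none of the paths you check is induced.
\begin{itemize}
\item In $a-b-w'-x-z-q_{i+2}-q_{i-2}$, the edge $w'q_{i-2}$ is a chord, not a justification. Indeed $w'$ is complete to $B_{i-2}\cup B_{i-1}$ by Lemma \ref{lem:opposite A_1(i) and A_2(j)}, since $w'b\in E(G)$.
\item In $a-b-w'-x-z-q_{i+1}-q_i$, the edges $aq_{i+1}$ and $bq_{i+1}$ are uncontrolled. If $bq_{i+1}\in E(G)$ you only get the $5$-hole $b-w'-x-z-q_{i+1}-b$.
\item The set $B_{i+1}\setminus N(z)$ is empty, because $z$ is complete to $B_{i+1}\cup B_{i+2}$.
\end{itemize}
More generally, any continuation $z-s-t$ with $s,t\in V(H)$ fails. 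Either $t\in B_{i-2}\cup B_{i+1}\cup B_{i+2}$, which is adjacent to $w'$ or to $z$. Or $s\in B_{i+1}$ and $t\in B_i$, where nothing prevents $s$ from being adjacent to $b$.

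The missing idea is the hypothesis that defines this subsubsection: the non-neighbor $z'\in A_2(i+1)$ of $z$. Take $s\in N_{B_{i+2}}(z')$. The path $a-b-w'-x-z-s-z'$ is then a bad $P_7$:
\begin{itemize}
\item $a-b-w'-x-z-s$ is induced, by Lemma \ref{A1A2}, Lemma \ref{lem:anticompleteA3-1}, and the fact (from your part (1)) that $z$ is anticomplete to $A_1(i+1)$.
\item $w'-x-z-s-z'$ is induced. We have $zz'\notin E(G)$ by choice of $z'$, and $z'w'\notin E(G)$ by Lemma \ref{lem:anticompleteA3-1}. Also $z'x\notin E(G)$, since otherwise $z-x-z'-s-z$ is an induced $C_4$ ($z$ is complete to $B_{i+2}$).
\item The only possible chords, $z'a$ and $z'b$, are exactly those a bad $P_7$ permits.
\end{itemize}
So Lemma \ref{lem:badP7} gives the contradiction. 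Without this step (2) is unproved, and with it your (3) goes through.
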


\begin{proof} We prove these properties one by one.

\medskip \noindent
    (1) Let $w'\in N(A_1(i+1))\cap A_2(i-2)$ and $y'\in A_1(i+1)$ be a neighbor of $w'$. By Lemma \ref{lem:A_2^1(i-2) is universal in A_2(i-2)}, $zy'\notin E(G)$. By Lemma \ref{lem:two opposite edges} applying to $xz$ and $y'w'$, $xw'\in E(G)$.

\medskip \noindent
    (2) Suppose by contradiction that there exists an edge $ab\in E(K)$ such that $w'$ is adjacent to $b$ but not to $a$. By (1), $xw'\in E(G)$ and thus $a-b-w'-x-z-(N(z')\cap B_{i+2})-z'$ is a bad $P_7$.

\medskip \noindent
    (3) It suffices to show that each component $K$ of $A_1(i+1)$ is $P_4$-free.
     By Lemma \ref{lem:A_1(i+1) is dominated by A_2(i-2)} and (2), there exists a vertex $w'\in A_2(i-2)$ that is complete to $K$. 
     If there is an induced $P_5=a-b-c-d-e$ with $a,b,c,d\in K$ and $e\in B_{i+1}$, then $a-b-c-d-e-q_i-q_{i-1}$ is an induced $P_7$. By Lemma \ref{lem:P_4-free lemma}, $K$ is $P_4$-free. 
\end{proof}

\begin{lemma}\label{lem:A^0_2(i-2)}
    $A^0_2(i-2)$ is anticomplete to $A_1(i+1)$. 
\end{lemma}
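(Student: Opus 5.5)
The statement says exactly that the set $X=A^0_2(i-2)\cap N(A_1(i+1))$ is empty. I would argue by contradiction, using only the definitions and Lemma~\ref{lem:A_1(i+1) is $P_4$-free}~(1).

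Suppose some $w'\in A^0_2(i-2)$ has a neighbor $y'\in A_1(i+1)$. Then $w'\in N(A_1(i+1))\cap A_2(i-2)$. By Lemma~\ref{lem:A_1(i+1) is $P_4$-free}~(1), the vertex $x$ is complete to $N(A_1(i+1))\cap A_2(i-2)$, so $xw'\in E(G)$. For completeness, that lemma is obtained from Lemma~\ref{lem:two opposite edges} applied to the edges $xz$ and $y'w'$, together with $zy'\notin E(G)$ from Lemma~\ref{lem:A_2^1(i-2) is universal in A_2(i-2)}.

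Recall that $x\in A_1(i-1)$ is one of the fixed vertices of this subsection. Hence $w'$ has a neighbor in $A_1(i-1)$. By the definition of $A^1_2(i-2)$ this means $w'\in A^1_2(i-2)$. This contradicts $w'\in A^0_2(i-2)=A_2(i-2)\setminus A^1_2(i-2)$. Therefore no vertex of $A^0_2(i-2)$ has a neighbor in $A_1(i+1)$, which is the claim; equivalently, $X=\emptyset$ and $A^0_2(i-2)=Y$.

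I do not expect any real obstacle. The only point to check is that Lemma~\ref{lem:A_1(i+1) is $P_4$-free}~(1) is available in the current setting. It is, because it was proved under this subsection's global assumption ($xz\in E(G)$, $z$ not universal in $A_2(i+1)$, and $xw\in E(G)$), and that assumption is still in force here.
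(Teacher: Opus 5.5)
Your proof is correct and is essentially the paper's argument. Both combine Lemma~\ref{lem:two opposite edges}, applied to $xz$ and the offending edge, with the fact that $z$ (non-universal in $A_2(i+1)$) has no neighbor in $A_1(i+1)$. The paper argues ``$x$ is not adjacent to the vertex of $A^0_2(i-2)$, so $z$ must be adjacent to the vertex of $A_1(i+1)$, a contradiction,'' while you run the contrapositive through part~(1) of the $P_4$-freeness lemma for $A_1(i+1)$.
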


\begin{proof}
    Suppose that $a\in A^0_2(i-2)$ is adjacent to $b\in A_1(i+1)$. 
    By definition of $A^0_2(i-2)$, $ax\notin E(G)$. Applying Lemma \ref{lem:two opposite edges} to the edges $xz$ and $ab$, we have $bz\in E(G)$. 
    By Lemma \ref{lem:A_2^1(i-2) is universal in A_2(i-2)}, $z$ is  universal in $A_2(i+1)$. This contradicts the assumption that $z$ is not universal in $A_2(i+1)$.
\end{proof}

\begin{lemma}\label{lem:A_2^0(i-2)=empty}
    $A_2^0(i-2)=\emptyset$. 
\end{lemma}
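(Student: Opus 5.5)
The plan is a clique cutset argument. Suppose $A_2^0(i-2)\neq\emptyset$ and let $K$ be a component of $G[A_2^0(i-2)]$. By Lemma \ref{lem:A^0_2(i-2)}, $X=\emptyset$, so $A_2^0(i-2)=Y$ and $K$ is anticomplete to $A_1(i+1)$. By definition of $A_2^1(i-2)$, $K$ is also anticomplete to $A_1(i-1)$. Lemmas \ref{lem:A_3(i-1) is empty}, \ref{lem:z is not universal} and \ref{lem:A_3(i+2) is empty} give $A_3(i\pm1)=A_3(i)=A_3(i+2)=\emptyset$. Lemma \ref{lem:A_2=A_2(i-2)+A_2(i+1)} gives $A_2=A_2(i-2)\cup A_2(i+1)$. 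Lemma \ref{lem:anticompleteA3-1} shows $A_2(i-2)$ is anticomplete to $A_2(i+1)$. Hence
\[N(K)\subseteq B_{i-2}\cup B_{i-1}\cup A_2^1(i-2)\cup A_3(i-2)\cup A_5.\]
The goal is to show $N(K)$ is a clique, contradicting that $G$ has no clique cutset. Note $K$ has no neighbor in $B_i\cup B_{i+1}\cup B_{i+2}$, so $N(K)$ separates $K$ from $H$.

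The first step handles $N(K)\setminus A_3(i-2)$. By Lemma \ref{lem:neighbors of A_1(i-1) in A_2(i-2)}, $A_2^1(i-2)\cup B_{i-1}\cup B_{i-2}\cup A_5$ is a clique. Lemma \ref{lem:A_2^1(i-2) is universal in A_2(i-2)} makes $A_2^1(i-2)$ complete to $K$'s neighbors inside $A_2(i-2)$, which we do not need here since $K$ is a full component of $A_2^0(i-2)$. By Lemma \ref{lem:A_3(i-2)} (1) and (2), $A_3(i-2)$ is complete to $A_2^1(i-2)\cup B_{i-1}$. By Lemma \ref{lem:blowup-fiveneighbor1}, it is also complete to $A_5$. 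So every vertex of $N(K)\setminus A'_3(i-2)$ is universal in $N(K)$.

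The second step treats $N(K)\cap A'_3(i-2)$, where $A'_3(i-2)=A_3(i-2)\cup B_{i-2}$, using Lemma \ref{lem:complete subgraphs} with $X=A'_3(i-2)$ and $Y=B_{i+2}$, exactly as in the proof of Lemma \ref{lem:simplicial vertex in Y}.
\begin{itemize}
\item[(1)] This holds by the previous step.
\item[(3)] Two non-adjacent vertices of $A'_3(i-2)$ have a common neighbor in $B_{i-1}$, since they are complete to $B_{i-1}$ by Lemma \ref{lem:A_3(i-2)} (2). By Lemma \ref{lem:nonedge in A_3(i)}, they therefore have disjoint neighborhoods in $B_{i+2}$. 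These neighborhoods are complete to each other because $B_{i+2}$ is a clique.
\item[(2)] An induced $P_4=a-b-c-d$ with $d\in X$ extends to $a-b-c-d-N_{B_{i+2}}(d)-q_{i+1}-q_i$, an induced $P_7$.
\end{itemize}
Lemma \ref{lem:complete subgraphs} then yields two non-adjacent vertices $p,q\in A'_3(i-2)$ that are complete to $K$.

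The final step, which I expect to be the main obstacle, derives a contradiction from $p,q$. Take $k\in K$. By Lemma \ref{lem:A_3(i-2)} (1), $w\in A_2^1(i-2)\cup X$ is adjacent to $p$ and $q$. Then $p-k-q-w-p$ is an induced $C_4$ unless $wk\in E(G)$. If $wk\in E(G)$, use $x$ instead: $x$ is adjacent to $w$ but not to $k$, since $k\in Y$ is anticomplete to $A_1$. Then $k-w-x-z-\cdots$ can be extended through $B_{i+1}$ and $B_{i+2}$ toward the neighbors of $p$ and $q$ in $B_{i+2}$, giving an induced $C_6$, $C_7$ or $P_7$. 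One could also argue $wk\in E(G)$ is impossible directly, since $w$ is universal in $A_2(i-2)$ by Lemma \ref{lem:A_2^1(i-2) is universal in A_2(i-2)}, which forces the $C_4$ $p-k-q-w$ to have the chord $wk$ and so points toward re-examining which case we are in. Checking the chords of these paths is the delicate part.
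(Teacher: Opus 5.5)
Your first two steps are correct and mirror the proof of Lemma~\ref{lem:simplicial vertex in Y}. The bound on $N(K)$, the universality of $N(K)\setminus A'_3(i-2)$, and the three hypotheses of Lemma~\ref{lem:complete subgraphs} with $X=A'_3(i-2)$ and $Y=B_{i+2}$ all check out. The gap is the final step.

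The conclusion of Lemma~\ref{lem:complete subgraphs} is that $K$ is a clique complete to two non-adjacent $p,q\in A'_3(i-2)$. That is not a contradiction by itself, and neither of your attempts turns it into one.
\begin{itemize}
\item[(a)] Since $w\in A_2^1(i-2)$, Lemma~\ref{lem:A_2^1(i-2) is universal in A_2(i-2)} \emph{forces} $wk\in E(G)$. Your closing remark, that universality makes $wk$ impossible, has it backwards. So $p-k-q-w-p$ is never an induced $C_4$.
\item[(b)] $w$ is adjacent to both $p$ and $q$, by Lemma~\ref{lem:A_3(i-2)}~(1) and because $w$ is complete to $B_{i-2}$ (Lemma~\ref{lem:opposite A_1(i) and A_2(j)}). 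So any route $k-w-x-z-b-p$ with $b\in N_{B_{i+2}}(p)$ has the chord $wp$.
\item[(c)] What is left, $w-x-z-b-p-w$, is an induced $C_5$, which is allowed. Here $xp,zp\notin E(G)$ by Lemma~\ref{lem:A_3(i-2)}~(3), Lemma~\ref{anti-A2A3} and the supports of $x,z$. No $C_6$, $C_7$ or $P_7$ arises this way.
\end{itemize}

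The missing idea is that the contradiction here is numerical: it uses that $G$ has no small vertices, not a forbidden induced subgraph. The only useful output of your argument is $A_3(i-2)\neq\emptyset$. The paper gets this directly: $N(K)\setminus A_3(i-2)$ is already a clique by Lemma~\ref{lem:neighbors of A_1(i-1) in A_2(i-2)}, so the absence of clique cutsets forces a neighbor in $A_3(i-2)$. Lemma~\ref{lem:complete subgraphs} is not needed. The paper then argues as follows.
\begin{itemize}
\item[(1)] $S=B_{i-1}\cup B_{i-2}\cup A_3(i-2)\cup A_2(i-2)$ is chordal (Lemma~\ref{lem:S is chordal}) and not a clique, so it has non-adjacent simplicial vertices $u,u'$.
\item[(2)] Since $X=\emptyset$ and $A_2^1(i-2)$ is universal in $A_2(i-2)$, Lemma~\ref{lem:simplicial vertex in Y} keeps $u,u'$ out of $Y$. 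Lemma~\ref{lem:A_3(i-2)}~(1),(2) then forces $u\in A_3(i-2)$ and $u'\in A_3(i-2)\cup B_{i-2}$.
\item[(3)] As $A_3(i+2)=\emptyset$, Lemma~\ref{lem:simplicial vertex in A_3(i-2) and B_{i-2}} gives $|N_{B_{i+2}}(u)|,|N_{B_{i+2}}(u')|>\ceil{\frac{\omega}{4}}$. These sets are disjoint because $u,u'$ share a neighbor in $B_{i-1}$.
\item[(4)] Also $|B_{i+1}|>\ceil{\frac{\omega}{4}}$ by Lemma~\ref{lem:B_{i-1} is large when two non-empty A_1(i)}, using $A_3(i)=\emptyset$.
\item[(5)] By Lemma~\ref{lem:simplicial vertices in A_1(i-1)}, $A_2(i+1)$ contains a clique of size $>\ceil{\frac{\omega}{4}}$ complete to $B_{i+1}\cup B_{i+2}$.
\end{itemize}
Together these give a clique of size larger than $\omega$ in $B_{i+1}\cup B_{i+2}\cup A_2(i+1)$, which is the contradiction.
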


\begin{proof}
    Suppose not. 
    Let $L$ be a component of $A_2^0(i-2)$. By Lemma \ref{lem:A_1(i+1) is $P_4$-free} (1), $L$ is anticomplete to $A_1(i+1)$. By Lemma \ref{lem:A^0_2(i-2)}, $N(L)\subseteq A_5\cup B_{i-1}\cup B_{i-2}\cup A^1_2(i-2)\cup A_3(i-2)$. By Lemma \ref{lem:neighbors of A_1(i-1) in A_2(i-2)}, $N(L)\setminus A_3(i-2)$ is a clique. 
    Since $G$ has no clique cutsets, $L$ has neighbors in $A_3(i-2)$ and so $A_3(i-2)\neq \emptyset$. 
     
    Let $S=B_{i-1}\cup B_{i-2}\cup A_3(i-2)\cup A_2(i-2)$. 
    Since $A_3(i-2)\neq \emptyset$, $S$ is not a clique and so $S$ has two non-adjacent simplicial vertices $u,u'$ by Lemma \ref{lem:S is chordal}. 
    Since $A^1_2(i-2)\cup B_{i-1}\cup B_{i-2}$ is a clique, we may assume that $u\in A^0_2(i-2)\cup A_3(i-2)$.
    By Lemma \ref{lem:A^0_2(i-2)}, $X=\emptyset$ and so $A_2(i-2)\setminus Y=A_2^1(i-2)$. By Lemma \ref{lem:A_2^1(i-2) is universal in A_2(i-2)}, each vertex in $A_2^1(i-2)$ is universal in $A_2(i-2)$. 
    By Lemma \ref{lem:simplicial vertex in Y}, $u\in A_3(i-2)$. 
    By Lemma \ref{lem:A_3(i-2)} (1) and (2) and Lemma \ref{lem:simplicial vertex in Y}, $u'\in A_3(i-2)\cup B_{i-2}$. 
    By Lemma \ref{lem:A_3(i+2) is empty}, $A_3(i+2)=\emptyset$. 
    It follows from Lemma \ref{lem:simplicial vertex in A_3(i-2) and B_{i-2}} that $|N(u)\cap B_{i+2}|>\ceil{\frac{\omega}{4}}$ and $|N(u')\cap B_{i+2}|>\ceil{\frac{\omega}{4}}$. Since $A_3(i-2)\cup B_{i-2}$ is complete to $B_{i-1}$, $u$ and $u'$ have disjoint neighborhoods in $B_{i+2}$. By Lemma \ref{lem:simplicial vertices in A_1(i-1)}, $|A_2(i+1)|>\ceil{\frac{\omega}{4}}$. By Lemmas \ref{lem:z is not universal} and Lemma \ref{lem:B_{i-1} is large when two non-empty A_1(i)}, $|B_{i+1}|>\ceil{\frac{\omega}{4}}$. 
    So $B_{i+1}\cup B_{i+2}\cup A_2(i+1)$ contains a clique of size larger than $\omega$, a contradiction. 
\end{proof}

\begin{lemma} \label{lem:B_{i+2} is large}
    There is a vertex $b\in B_{i-2}$ anticomplete to $A_3(i-2)$ such that $|N(b)\cap B_{i+2}|>\ceil{\frac{\omega}{4}}$.
    This implies that $A_3(i-2)=\emptyset$.
\end{lemma}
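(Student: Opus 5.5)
Within the current subsubsection we already know $A_3(i)=A_3(i+2)=A_3(i-1)=A_3(i+1)=\emptyset$ and $A_2^0(i-2)=\emptyset$, so $A_2(i-2)=A^1_2(i-2)$. The plan is to produce $b$ by a small vertex argument in $B_{i-2}$ and then use the resulting size bound to rule out $A_3(i-2)$ by a clique count, as in the proof of Lemma \ref{lem:A_2^0(i-2)=empty}.

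\textbf{Choice of $b$ and its neighbourhood.} By Lemma \ref{lem:comparable vertices in A_3(i)}, $B_{i-2}\setminus N(A_3(i-2))$ is non-empty. Among its vertices choose $b$ with $N(b)\cap B_{i+2}$ minimal under inclusion (equivalently, minimal size, since $B_{i-2}$-neighbourhoods in $B_{i+2}$ are nested by Lemma \ref{lem:blowup-basicproperty} (1)). I then bound $N(b)$. We have $N(b)\subseteq B_{i-1}\cup B_{i-2}\cup B_{i+2}\cup A_2(i-2)\cup A_5$, plus possibly $A_1$ and $A_2(i+2)$-type vertices. These are excluded because $A_1(i\pm1)$ does not touch $B_{i-2}$ and $A_2=A_2(i-2)\cup A_2(i+1)$ by Lemma \ref{lem:A_2=A_2(i-2)+A_2(i+1)}. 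Moreover $b$ has no neighbour in $A_3(i-2)$, and $A_3(i+2)=\emptyset$.

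\textbf{Showing $N(b)\setminus B_{i+2}$ is a clique.} By Lemma \ref{lem:neighbors of A_1(i-1) in A_2(i-2)}, $A^1_2(i-2)\cup B_{i-1}\cup B_{i-2}\cup A_5$ is a clique, and it contains $N(b)\setminus B_{i+2}$. Hence $d(b)\le (\omega-1)+|N(b)\cap B_{i+2}|$. Since $G$ has no small vertices, this gives $|N(b)\cap B_{i+2}|>\ceil{\frac{\omega}{4}}$. The inclusion-minimal choice of $b$ is not strictly needed for this count; it is needed only for the second part.

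\textbf{Deducing $A_3(i-2)=\emptyset$.} Suppose $t\in A_3(i-2)$. By Lemma \ref{lem:A_3(i-2)} (2), $A'_3(i-2)$ is complete to $B_{i-1}$, so $t$ and $b$ have a common neighbour in $B_{i-1}$. Since $tb\notin E(G)$, Lemma \ref{lem:nonedge in A_3(i)} shows that $N_{B_{i+2}}(t)$ and $N_{B_{i+2}}(b)$ are disjoint.

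To bound $N_{B_{i+2}}(t)$, take $s$ to be an $(A_3(i-2),H)$-minimal simplicial vertex. It has no neighbours in $A_1\cup A_2$: $A_3(i-2)$ is anticomplete to $A_1(i-1)$ by Lemma \ref{lem:A_3(i-2)} (3), and anticomplete to $A_1(i+1)$ by Lemma \ref{A1A3}. For $A_2$, note $A_2(i-2)=A^1_2(i-2)$ is complete to $A_3(i-2)$ by Lemma \ref{lem:A_3(i-2)} (1). So $A_2(i-2)$-neighbours do occur, and Lemma \ref{lem:small vertex argument for simplicial vertices in A_3(i-2)} does not apply verbatim. Instead I rerun its argument, noting that $A^1_2(i-2)$ lies inside the clique $A^1_2(i-2)\cup B_{i-1}\cup B_{i-2}\cup A_5$ and is complete to $A_3(i-2)$, to conclude that $N(s)\setminus B_{i+2}$ is a clique. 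This gives $|N_{B_{i+2}}(s)|>\ceil{\frac{\omega}{4}}$, and $N_{B_{i+2}}(s)$ is disjoint from $N_{B_{i+2}}(b)$.

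\textbf{Reaching a contradiction.} Finally, combine with $|A_2(i+1)|>\ceil{\frac{\omega}{4}}$ (Lemma \ref{lem:simplicial vertices in A_1(i-1)}) and $|B_{i+1}|>\ceil{\frac{\omega}{4}}$ (Lemmas \ref{lem:z is not universal} and \ref{lem:B_{i-1} is large when two non-empty A_1(i)}). The set $B_{i+2}\cup B_{i+1}\cup A_2(i+1)$ is a clique by Lemma \ref{lem:neighbors of A_1(i-1) in A_2(i-2)}, using that $A_2(i+1)$ is complete to $B_{i+1}\cup B_{i+2}$ by Lemma \ref{lem:opposite A_1(i) and A_2(j)} and Lemma \ref{lem:A_2^1(i-2) is universal in A_2(i-2)} together with the symmetric version of the $A_2^0$ analysis. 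It then contains four disjoint pieces each larger than $\ceil{\frac{\omega}{4}}$, which gives a clique of size more than $\omega$.

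The main obstacle is this last step: proving that all of $A_2(i+1)$ lies in the clique, since the symmetric statement $A^0_2(i+1)=\emptyset$ is not yet available. If it fails, I would fall back on $N(s')\cap A_2(i+1)$ for a simplicial $s'$ of $A_1(i-1)$, which is a clique complete to $B_{i+1}\cup B_{i+2}$ by Lemma \ref{lem:simplicial vertices in A_1(i-1)} and Lemma \ref{lem:opposite A_1(i) and A_2(j)}.
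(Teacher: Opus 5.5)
Your outline is the paper's argument. It consists of the small-vertex count at a vertex $b\in B_{i-2}$ anticomplete to $A_3(i-2)$ (using $A_2(i-2)=A^1_2(i-2)$ and Lemma~\ref{lem:neighbors of A_1(i-1) in A_2(i-2)}), the same count at a simplicial vertex $s$ of $A_3(i-2)$, disjointness of $N_{B_{i+2}}(b)$ and $N_{B_{i+2}}(s)$ via a common neighbour in $B_{i-1}$, and a clique of size $>\omega$ built around $B_{i+1}\cup B_{i+2}$. You are right to bring in $|B_{i+1}|>\ceil{\frac{\omega}{4}}$ (from $A_3(i)=\emptyset$). The paper leaves this implicit, but the count needs it.

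The problem is the main version of your last step, which is not merely unproven but false here. The standing assumption of this subsubsection is that $z$ has a non-neighbour $z'\in A_2(i+1)$, so $B_{i+1}\cup B_{i+2}\cup A_2(i+1)$ is never a clique. There is also no ``symmetric'' statement $A^0_2(i+1)=\emptyset$: the symmetry was deliberately broken by fixing $xw\in E(G)$, and by construction $z\in X'\subseteq A^0_2(i+1)$.

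So your fallback is required, and it is exactly the paper's route. Take a simplicial vertex $s'$ of a component of $A_1(i-1)$ and set $M=N(s')\cap A_2(i+1)$. Then:
\begin{itemize}
\item $|M|>\ceil{\frac{\omega}{4}}$ by Lemma~\ref{lem:simplicial vertices in A_1(i-1)};
\item each vertex of $M$ is complete to $B_{i+1}\cup B_{i+2}$ by Lemma~\ref{lem:opposite A_1(i) and A_2(j)};
\item $M$ is a clique, since two non-adjacent $p,q\in M$ would give the induced $C_4$ $p-s'-q-q_{i+1}-p$.
\end{itemize}
Hence $B_{i+1}\cup N_{B_{i+2}}(b)\cup N_{B_{i+2}}(s)\cup M$ is a clique with at least $4\ceil{\frac{\omega}{4}}+4>\omega$ vertices.

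Two minor points. First, neither minimality choice is used anywhere. Disjointness in $B_{i+2}$ holds for every such $b$. Any simplicial $s$ works, because $N(s)\setminus B_{i+2}\subseteq B_{i-2}\cup B_{i-1}\cup A_2(i-2)\cup A_3(i-2)\cup A_5$ and adjacent vertices of $A_3(i-2)$ have the same neighbourhood in $B_{i-2}$. Second, to get that inclusion you must also exclude $A_2(i+1)$, which follows from Lemma~\ref{anti-A2A3}.
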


\begin{proof}
    By Lemma \ref{lem:comparable vertices in A_3(i)}, there is a vertex $b\in B_{i-2}$ that is anticomplete to $A_3(i-2)$. By Lemmas \ref{lem:neighbors of A_1(i-1) in A_2(i-2)} and \ref{lem:A_2^0(i-2)=empty}, we have $N(b)\setminus B_{i+2}$ is a clique. Since $G$ has no small vertices, $|N(b)\cap B_{i+2}|>\ceil{\frac{\omega}{4}}$.

    By Lemma \ref{lem:simplicial vertices in A_1(i-1)}, $A_2(i+1)$ contains a clique $M$ of size larger than $\ceil{\frac{\omega}{4}}$ complete to $B_{i+1}\cup B_{i+2}$. Suppose that $A_3(i-2)\neq \emptyset$. By Lemma \ref{lem:P4-freenessofA3}, let $t$ be a simplicial vertex of $A_3(i-2)$. By Lemma \ref{lem:edge in $A_3(i)$}, $N(t)\cap A_3(i-2)$ is complete to $N(t)\cap B_{i-2}$. By Lemma \ref{lem:A_3(i-2)} (1), (2) and Lemma \ref{lem:neighbors of A_1(i-1) in A_2(i-2)}, $N(t)\setminus B_{i+2}$ is a clique. Since $G$ has no small vertices, $|N(t)\cap B_{i+2}|>\ceil{\frac{\omega}{4}}$. Since $N(t)\cap B_{i+2}$ and $N(b)\cap B_{i+2}$ are disjoint, $B_{i+1}\cup B_{i+2}\cup M$ is a clique of size larger than $\omega$, a contradiction. 
\end{proof}

\begin{lemma}\label{lem:A_1(i-1) is a clique}
    $A_1(i-1)$ is a clique.
\end{lemma}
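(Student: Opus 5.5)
The plan is a small vertex argument in the spirit of Lemma \ref{lem:B_{i+2} is large}. In the present configuration we have:
\begin{itemize}
\item $A_3(i)=\emptyset$ (Lemma \ref{lem:z is not universal});
\item $A_3(i\pm 1)=\emptyset$ (Lemma \ref{lem:A_3(i-1) is empty});
\item $A_3(i+2)=\emptyset$ (Lemma \ref{lem:A_3(i+2) is empty});
\item $A_3(i-2)=\emptyset$ (Lemma \ref{lem:B_{i+2} is large});
\item $A_2=A_2(i-2)\cup A_2(i+1)$ with $A_2(i-2)=A^1_2(i-2)$ (Lemmas \ref{lem:A_2=A_2(i-2)+A_2(i+1)} and \ref{lem:A_2^0(i-2)=empty}).
\end{itemize}
Hence for $s\in A_1(i-1)$,
\[
N(s)\subseteq B_{i-1}\cup A^1_2(i-2)\cup A_5\cup A_1(i-1)\cup A_2(i+1).
\]

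Suppose $A_1(i-1)$ is not a clique. By Lemma \ref{lem:A_1(i) is chordal}, $A_1(i-1)$ is chordal. So, whether $A_1(i-1)$ is a non-clique connected graph or is disconnected, Lemma \ref{lem:chordal has simplicial vertices} (applied to a component, or to two components) gives two non-adjacent simplicial vertices $s,s'$ of $A_1(i-1)$. By Lemma \ref{lem:simplicial vertices in A_1(i-1)}, the sets $M=N(s)\cap A_2(i+1)$ and $M'=N(s')\cap A_2(i+1)$ each have size $>\ceil{\frac{\omega}{4}}$ and are cliques. By Lemma \ref{lem:opposite A_1(i) and A_2(j)}, both are complete to $B_{i+1}\cup B_{i+2}$. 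Moreover:
\begin{itemize}
\item $|B_{i+1}|>\ceil{\frac{\omega}{4}}$ by Lemma \ref{lem:B_{i-1} is large when two non-empty A_1(i)}, which applies since $A_3(i)=\emptyset$;
\item $N(b)\cap B_{i+2}$ has size $>\ceil{\frac{\omega}{4}}$ for the vertex $b$ of Lemma \ref{lem:B_{i+2} is large};
\item $B_{i+1}$ is complete to $B_{i+2}$, as noted at the start of Subsection \ref{sec:two non-empty A_1(i)}.
\end{itemize}
So if $M\cap M'=\emptyset$ and $M$ is complete to $M'$, then $B_{i+1}\cup B_{i+2}\cup M\cup M'$ is a clique of size $>\omega$, a contradiction.

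It remains to prove these two facts. For disjointness, suppose $p\in M\cap M'$. If $s$ and $s'$ lie in the same component, take a shortest path $Q$ from $s$ to $s'$ inside $A_1(i-1)$. Since $p$ is adjacent to both ends of $Q$ and $G$ is $C_4$-free, $p$ must mix on an edge $ab$ of $Q$. Then I expect $a-b-p-q_{i+2}-q_{i-2}-w-\cdots$ to give an induced $P_7$ or a bad $P_7$, using that $p$ is complete to $B_{i+1}\cup B_{i+2}$ and that $x$ and $w$ are available. If $s$ and $s'$ lie in different components, $s-p-s'$ together with $N_{B_{i-1}}(s)$ and $N_{B_{i-1}}(s')$ and $q_{i-2},q_{i+2}$ should yield a forbidden structure. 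Here I would also use Lemma \ref{lem:B_{i-1} vs A_1(i-1)}: vertices of $B_{i-1}$ are complete or anticomplete to components of $A_1(i-1)$.

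For completeness, suppose $m\in M$ and $m'\in M'$ are non-adjacent. Consider the cycle $s-m-q_{i+2}-m'-s'$ closed up either through a path in $A_1(i-1)$ or through $B_{i-1}$, $B_{i-2}$ and $w$. This should give an induced $C_6$ or $C_7$ unless $s$ and $s'$ have a common neighbor; in that case it closes into a $C_5$, and there I would appeal to the choice of $C$ minimizing $|S_5|$, as in Lemma \ref{lem:A5 vs A1A2}.

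I expect the completeness of $M$ to $M'$ to be the main obstacle, specifically the case where $s$ and $s'$ have a common neighbor in $A_1(i-1)$ or in $B_{i-1}$, because the resulting cycle has length $5$ and is not immediately forbidden. If the direct approach fails, I would try Lemma \ref{lem:complete subgraphs} on a component $K$ of $A_1(i-1)$. I would take its $X$ to be $N(K)\cap(A_2(i+1)\cup B_{i-1})$ and its $Y$ to be $B_{i-2}\cup B_{i+2}$. By Lemma \ref{lem:neighbors of A_1(i-1) in A_2(i-2)} and Lemma \ref{lem:A_5 is complete to A_1}, the remaining neighbors of $K$ (those in $A^1_2(i-2)\cup A_5$) are universal in $N(K)$. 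This would show directly that $K$ is a clique.
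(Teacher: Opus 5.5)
Your frame is the same as the paper's, and your final count is correct. You take two non-adjacent simplicial vertices $s,s'$ of $A_1(i-1)$, the cliques $M=N(s)\cap A_2(i+1)$ and $M'=N(s')\cap A_2(i+1)$ of size $>\ceil{\frac{\omega}{4}}$, and the oversized clique $B_{i+1}\cup B_{i+2}\cup M\cup M'$.

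\textbf{The gap.} The step everything depends on is that $M\cap M'=\emptyset$ and $M$ is complete to $M'$, and you never prove it.
\begin{itemize}
\item The forbidden $P_7$s you propose are only expected, not checked.
\item For $s,s'$ in different components the natural argument breaks down. If $N_{B_{i-1}}(s)\cap N_{B_{i-1}}(s')=\emptyset$, a common neighbour $p\in A_2(i+1)$ only gives the induced $5$-cycle $s-p-s'-c'-c-s$, where $c\in N_{B_{i-1}}(s)$ and $c'\in N_{B_{i-1}}(s')$. That cycle is not forbidden.
\item Your diagnosis of the obstacle is inverted. A common neighbour $c\in B_{i-1}$ of $s$ and $s'$ closes $s-m-q_{i+2}-m'-s'$ into a $6$-cycle, not a $5$-cycle. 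That is exactly the case that works.
\end{itemize}

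\textbf{The fallback also fails.} Applying Lemma~\ref{lem:complete subgraphs} to the component $K$ containing $x$ does not work, for three reasons.
\begin{itemize}
\item Condition (1) fails. The vertex $w\in N(K)\cap A^1_2(i-2)$ lies outside your $X$ but is non-adjacent to $z\in N(K)\cap A_2(i+1)$, because $A_2(i-2)$ is anticomplete to $A_2(i+1)$ (Lemma~\ref{lem:anticompleteA3-1}).
\item Condition (3) fails for any non-adjacent pair in $N(K)\cap A_2(i+1)$, since both vertices are complete to $B_{i+2}$ (Lemma~\ref{lem:opposite A_1(i) and A_2(j)}).
\item Even if it applied, it would only make each component a clique. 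It would not rule out a second component.
\end{itemize}

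\textbf{The missing idea.} Anchor $s$ in the component $K$ of $A_1(i-1)$ that contains $x$.
\begin{itemize}
\item Since $xw\in E(G)$, Lemma~\ref{lem:reduce to basic graph} makes $x$ complete to $B_{i-1}$. Lemma~\ref{lem:B_{i-1} vs A_1(i-1)} then makes all of $K$ complete to $B_{i-1}$.
\item If $A_1(i-1)$ is not a clique, then either $K$ is not a clique, and being chordal it has two non-adjacent simplicial vertices, or $A_1(i-1)$ has a second component. Either way there is a simplicial vertex $s\in K$ and a simplicial vertex $s'$ of $A_1(i-1)$ with $ss'\notin E(G)$.
\item Any neighbour $c\in B_{i-1}$ of $s'$ is then adjacent to $s$.
\item A vertex $p\in M\cap M'$ would give the induced $C_4$ $s-p-s'-c-s$, so $M\cap M'=\emptyset$.
\item Non-adjacent $m\in M$ and $m'\in M'$ would give the induced $C_6$ $s-m-q_{i+2}-m'-s'-c-s$. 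This uses that $m,m'$ are complete to $B_{i+2}$, and that $c$ has no neighbours in $B_{i+2}\cup A_2(i+1)$.
\end{itemize}
With $M$ and $M'$ disjoint and complete, your count finishes the proof exactly as in the paper.
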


\begin{proof}
    Let $K$ be the component of $A_1(i-1)$ containing $x$. By Lemmas \ref{lem:reduce to basic graph} and \ref{lem:B_{i-1} vs A_1(i-1)}, $K$ is complete to $B_{i-1}$. If there is simplicial vertex of $A_1(i-1)$ not adjacent to some simplicial vertex of $K$, then $A_2(i+1)\cup B_{i+1}\cup B_{i+2}$ contains a clique larger than $\omega$ by Lemmas \ref{lem:simplicial vertices in A_1(i-1)} and \ref{lem:B_{i+2} is large}. This implies that $A_1(i-1)$ is a clique.
\end{proof}

\begin{lemma}\label{lem:A and C are disjoint}
    $A_2^1(i+1)$ is anticomplete to $A_1(i-1)$.
\end{lemma}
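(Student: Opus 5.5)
The plan is to argue by contradiction, using the two-edge exchange of Lemma~\ref{lem:two opposite edges} twice together with $C_4$-freeness. Suppose some $a\in A^1_2(i+1)$ is adjacent to some $b\in A_1(i-1)$. By the definition of $A^1_2(i+1)$, $a$ has a neighbor $c\in A_1(i+1)$. By Lemma~\ref{lem:A_1(i+1) is dominated by A_2(i-2)}, $c$ has a neighbor $w'\in A_2(i-2)$.

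First I would apply Lemma~\ref{lem:two opposite edges} to the edges $ba$ (with $b\in A_1(i-1)$, $a\in A_2(i+1)$) and $cw'$ (with $c\in A_1(i+1)$, $w'\in A_2(i-2)$). Exactly one of $bw'$ and $ca$ is an edge. Since $ca\in E(G)$, we get $bw'\notin E(G)$. By Lemma~\ref{lem:A_1(i+1) is $P_4$-free}~(1), $x$ is complete to $N(A_1(i+1))\cap A_2(i-2)$, so $xw'\in E(G)$. In particular $b\neq x$. By Lemma~\ref{lem:A_1(i-1) is a clique}, $A_1(i-1)$ is a clique, so $xb\in E(G)$.

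Next I would show $xa\notin E(G)$. The relevant non-edges are: $xc\notin E(G)$ by Lemma~\ref{A1A2} (distinct $A_1$'s are anticomplete), and $aw'\notin E(G)$ by Lemma~\ref{lem:anticompleteA3-1} ($A_2(i+1)$ is anticomplete to $A_2(i-2)$). So if $xa\in E(G)$, then $x-a-c-w'-x$ is an induced $C_4$, a contradiction.

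Now I bring in $z$. By the symmetric form of Lemma~\ref{lem:A_2^1(i-2) is universal in A_2(i-2)}, $a$ is universal in $A_2(i+1)$, so $az\in E(G)$; here $a\neq z$ because $z$ is not universal. If $bz\notin E(G)$, then $x-z-a-b-x$ is an induced $C_4$: the edges $xz,za,ab,bx$ are present and the chords $xa,zb$ are absent. So $bz\in E(G)$. Then I apply Lemma~\ref{lem:two opposite edges} to the edges $bz$ and $cw'$. Since $bw'\notin E(G)$, this forces $cz\in E(G)$. However, in the proof of Lemma~\ref{lem:A_1(i+1) is dominated by A_2(i-2)} it was observed that every vertex of $A_1(i+1)$ with a neighbor in $A_2(i-2)$ is non-adjacent to $z$; this follows from Lemma~\ref{lem:A_2^1(i-2) is universal in A_2(i-2)} and the fact that $z$ is not universal in $A_2(i+1)$. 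Hence $cz\notin E(G)$, a contradiction.

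The main obstacle I expect is this last step, which needs a case split on $bz$. The case $bz\notin E(G)$ closes by the $C_4$ above. The case $bz\in E(G)$ depends on recycling the "$c$ not adjacent to $z$" observation. I would double-check that observation, in particular that $c$ has a neighbor in $A_2(i-2)$, which is guaranteed by Lemma~\ref{lem:A_1(i+1) is dominated by A_2(i-2)}.
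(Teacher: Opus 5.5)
Your proof is correct. Up to the point where you have $bw'\notin E(G)$, $xw'\in E(G)$ and $xb\in E(G)$, it is the paper's argument: the same choice of $c$ and $w'$, the same application of Lemma~\ref{lem:two opposite edges}, and the same use of the completeness of $x$ to $N(A_1(i+1))\cap A_2(i-2)$ and the cliqueness of $A_1(i-1)$.

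The paper stops at that point. $\{x,b\}$ is a connected subgraph of $A_1(i-1)$, and $w'\in A_2(i-2)$ is adjacent to $x$ but not to $b$, which contradicts Lemma~\ref{lem:A2i1tocompA1}.

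You instead continue through the global vertex $z$, in three steps:
\begin{itemize}
\item You exclude $xa$ via the $C_4$ $x-a-c-w'-x$.
\item You force $bz\in E(G)$ via the $C_4$ $x-z-a-b-x$, using that $a$ is universal in $A_2(i+1)$ and that $xz\in E(G)$.
\item You apply Lemma~\ref{lem:two opposite edges} a second time, to $bz$ and $cw'$, which forces $cz\in E(G)$. This is impossible, since $z\in X'$ has no neighbor in $A_1(i+1)$.
\end{itemize}
All of these steps check out, including the non-edges $xc$ and $aw'$ and the distinctness claims $a\neq z$ and $b\neq x$.

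The trade-off is this. The paper's finish is one line, using the general fact that vertices of $A_2(i-2)\cup A_2(i-1)$ cannot mix on connected subgraphs of $A_1(i-1)$. Yours avoids that lemma, but it needs the universality of $A_2^1(i+1)$ in $A_2(i+1)$, a second application of Lemma~\ref{lem:two opposite edges}, and the configuration-specific properties of $z$ (non-universal, adjacent to $x$, anticomplete to $A_1(i+1)$).
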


\begin{proof}
    Suppose by contradiction that $v\in A_2^1(i+1)$ has a neighbor $v'\in A_1(i-1)$. By definition, $v$ has a neighbor $a\in A_1(i+1)$. By Lemma \ref{lem:A_1(i+1) is dominated by A_2(i-2)}, $a$ has a neighbor $a'\in A_2(i-2)$. By applying Lemma \ref{lem:two opposite edges} to $aa'$ and $vv'$, $a'v'\notin E(G)$. By Lemma \ref{lem:A_1(i+1) is $P_4$-free} (3), $xa'\in E(G)$. Since $A_1(i-1)$ is a clique by Lemma \ref{lem:A_1(i-1) is a clique}, $a'$ mixes on $xv'$ which contradicts Lemma \ref{lem:A2i1tocompA1}.
\end{proof}

Recall that $A_2^1(i+1)$ is the set of vertices in $A_2(i+1)$ that have a neighbor in $A_1(i+1)$, $A_2^0(i+1)=A_2(i+1)\setminus A_2^1(i+1)$, $X'=A^0_2(i+1)\cap N(A_1(i-1))$ and $Y'=A^0_2(i+1)\setminus X'$.

\begin{lemma}\label{lem:simplicial vertex in A_2(i+1)+B_{i+1}+B_{i+2}}
    Let $s\in A_2(i+1)\cup B_{i+1}\cup B_{i+2}$ be a simplicial vertex of $A_2(i+1)\cup B_{i+1}\cup B_{i+2}$. If $s\in X'$, then $N(s)\setminus A_1(i-1)$ is a clique and so $|N(s)\cap A_1(i-1)|>\ceil{\frac{\omega}{4}}$.
\end{lemma}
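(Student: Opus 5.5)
The plan is to determine $N(s)$ completely from the structure already established in this subsubsection, show that everything outside $A_1(i-1)$ is a clique, and then apply the small vertex argument. The second conclusion follows from the first: if $N(s)\setminus A_1(i-1)$ is a clique, then $d(s)\le (\omega-1)+|N(s)\cap A_1(i-1)|$. Since $G$ is basic, it has no small vertices, which forces $|N(s)\cap A_1(i-1)|>\ceil{\frac{\omega}{4}}$.

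First I locate $N(s)$. If $s\in B_{i+1}\cup B_{i+2}$ there is nothing to prove, since $X'\subseteq A_2(i+1)$, so I may assume $s\in X'$. By definition of $X'$, $s$ has no neighbor in $A_1(i+1)$ and has some neighbor $x'\in A_1(i-1)$. The remaining attachment sets are handled as follows.
\begin{itemize}
\item By Lemmas \ref{lem:A_3(i-1) is empty}, \ref{lem:z is not universal}, \ref{lem:A_3(i+2) is empty} and \ref{lem:B_{i+2} is large}, every $A_3(j)$ is empty.
\item By Lemma \ref{lem:A_2=A_2(i-2)+A_2(i+1)}, $A_2=A_2(i-2)\cup A_2(i+1)$.
\item By Lemma \ref{lem:anticompleteA3-1}, $A_2(i+1)$ is anticomplete to $A_2(i-2)$, since $i+1$ and $i-2$ are at distance $2$.
\item By the remark after Lemma \ref{lem:reduct to at most two non-empty A_1(i)}, the only non-empty $A_1(j)$ are $A_1(i\pm1)$.
\item $A_0=\emptyset$ by Lemma \ref{lem:A0cliqueseparator}.
\end{itemize}
Hence $N(s)\setminus A_1(i-1)\subseteq \big(N(s)\cap(A_2(i+1)\cup B_{i+1}\cup B_{i+2})\big)\cup (N(s)\cap A_5)$. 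The first part is a clique because $s$ is simplicial in $A_2(i+1)\cup B_{i+1}\cup B_{i+2}$. The second part is a clique, and it is complete to $V(H)$, by Lemma \ref{lem:blowup-fiveneighbor1}. So it remains to show that $N(s)\cap A_5$ is complete to $N(s)\cap A_2(i+1)$.

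For this, suppose $q\in N(s)\cap A_5$ and $p\in N(s)\cap A_2(i+1)$ are non-adjacent. By Lemma \ref{lem:A5 vs A1A2}, applied to the edge $x's$, the vertex $q$ is adjacent to $x'$. If $px'\in E(G)$, the same lemma applied to the edge $x'p$ makes $q$ adjacent to $p$, a contradiction; so $px'\notin E(G)$. Now $p$ and $q$ are non-adjacent with the common neighbor $s$, and every $H$-neighbor of $p$ is also a common neighbor (as $q$ is complete to $H$). If some $b\in N_{B_{i+1}\cup B_{i+2}}(p)$ is not adjacent to $s$, then $p-s-q-b-p$ is an induced $C_4$, and we are done.

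The main obstacle is the remaining case, where $s$ is complete to $N_H(p)$. My first attempt is a path argument using $x'$. Here $x'$ is adjacent to $s$ and $q$ but not to $p$, and $x'$ is anticomplete to $B_{i+1}\cup B_{i+2}$. I would try to build an induced $P_7$ or a bad $P_7$ starting $p-s-x'-N_{B_{i-1}}(x')-q_{i-2}-\cdots$, or an induced cycle of forbidden length through $x'$, $s$, $B_{i+2}$ and $B_{i-2}$. If this fails, the fallback is to use the minimality of $|S_5|$ in the choice of $C$. I would form the induced $C_5$ $C'=x'-s-b-q_{i-2}-c-x'$ with $b\in N_{B_{i+2}}(s)$ and $c\in N_{B_{i-1}}(x')\cap N(q_{i-2})$, and compare the vertices complete to $C'$ with $S_5$, as in Lemma \ref{lem:A5 vs A1A2}. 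Here $p$ is the vertex that separates $q$ from the configuration. With either route, $N(s)\setminus A_1(i-1)$ is a clique, and the degree bound above finishes the proof.
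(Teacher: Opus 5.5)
Your reduction is sound. With every $A_3(j)$ empty, $A_2=A_2(i-2)\cup A_2(i+1)$, and $A_2(i+1)$ anticomplete to $A_2(i-2)$, it does suffice to show that $N(s)\cap A_5$ is complete to $N(s)\cap A_2(i+1)$. Your $C_4$ argument also correctly handles a non-adjacent pair $q\in A_5$, $p\in A_2(i+1)$ when $s$ misses some vertex of $N_H(p)$. You could note at once that such a $p$ lies in $Y'$: Lemma~\ref{lem:A5 vs A1A2} applies to every edge between $A_1(i-1)$ and $A_2(i+1)$, so $A_5$ is complete to $X'$, and $A_5$ is complete to $A_2^1(i+1)$ by the symmetric form of Lemma~\ref{lem:neighbors of A_1(i-1) in A_2(i-2)}.

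\textbf{The gap.} The remaining case, $p\in Y'$ with $s$ complete to $N_H(p)$, is the heart of the lemma, and you leave it open. Neither sketched route closes it.
\begin{itemize}
\item Your fallback cannot work. By Lemmas~\ref{lem:blowup-fiveneighbor1} and~\ref{lem:A5 vs A1A2}, every vertex of $A_5$, including $q$, is complete to $C'=x'-s-b-q_{i-2}-c-x'$. No vertex of $H$ is complete to $C'$. A vertex outside $H$ that is complete to $C'$ has support containing $\{i-2,i-1,i+2\}$, so it lies in $A_3(i-2)\cup A_5=A_5$. Hence the vertices complete to $C'$ are exactly $A_5$, which is also the set of vertices complete to $C$, and minimality of $|S_5|$ gives nothing. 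Also, $p$ is not on $C'$, so it plays no role there.
\item The path route is not carried out. The vertex $q$ never appears in it, and extending $p-s-x'-c-q_{i-2}$ by two more vertices needs vertices of $B_{i+2}\cup B_{i+1}$ avoiding $N(p)\cup N(s)$, which need not exist.
\end{itemize}

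\textbf{The missing idea.} You need the global hypothesis that $G$ has no clique cutset; the paper uses it as follows.
\begin{itemize}
\item Let $K$ be the component of $Y'\setminus N(q)$ containing $p$. Then $N(K)\subseteq A_2(i+1)\cup B_{i+1}\cup B_{i+2}\cup A_5$ and $N(K)\cap Y'\subseteq N(q)$. Moreover, $N(K)\cap(A_2^1(i+1)\cup B_{i+1}\cup B_{i+2}\cup A_5)$ is a clique complete to $N(K)\cap X'$.
\item Every vertex of $N(K)\cap X'$, in particular $s$, is universal in $N(K)$. Suppose $p_1\in X'$ and $p_2\in N(K)$ are non-adjacent. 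A common neighbor $u\in K$ gives the induced $C_4$ $p_1-u-p_2-q-p_1$. Otherwise, take a shortest $p_2$--$p_1$ path through $K$ and append $r-N_{B_{i-1}}(r)-q_i$, where $r\in N_{A_1(i-1)}(p_1)$. Here $r$ is non-adjacent to $p_2$, because $p_2\in X'\cup Y'$ and two non-adjacent vertices of $X'$, both complete to $B_{i+2}$, share no neighbor in $A_1(i-1)$. The result is an induced path on at least $7$ vertices.
\item Since $N(K)$ is not a clique, it contains non-adjacent $p_1,p_2$. Simpliciality of $s$ then forces $p_1\in Y'$ and $p_2\in A_5$.
\item These two have no common neighbor in $K$, since otherwise there is a $C_4$ through $q$. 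Also $N_K(s)\subseteq N_K(p_1)$. So a shortest path $s-v_1-\cdots-v_k-p_2$ with all $v_j\in K$ has $k\ge 2$.
\item If $k=2$, this gives the induced $C_4$ $s-v_1-v_2-p_2-s$. If $k\ge 3$, it gives the induced $P_7$ $v_3-v_2-v_1-s-r-N_{B_{i-1}}(r)-q_i$ with $r\in N_{A_1(i-1)}(s)$.
\end{itemize}
An argument of this kind, using that $G$ is basic and not only its forbidden induced subgraphs, is what your proof is missing.
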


\begin{proof}
    Suppose not. 
    By Lemma \ref{lem:A and C are disjoint}, $s$ is anticomplete to $A_1(i+1)$ and thus $N(s)\setminus A_1(i-1)\subseteq B_{i+1}\cup B_{i+2}\cup A_2(i+1)\cup A_5$.
    By Lemma \ref{lem:neighbors of A_1(i-1) in A_2(i-2)} and \ref{lem:A5 vs A1A2}, there exist vertices $b\in N(s)\cap Y'$ and $q\in N(s)\cap A_5$ that are not adjacent.

    Let $K$ be the component of $Y'\setminus N(q)$ containing $b$.
    Note that $N(K)\subseteq A_2(i+1)\cup B_{i+1}\cup B_{i+2}\cup A_5$. By Lemmas \ref{lem:A_2^1(i-2) is universal in A_2(i-2)} and \ref{lem:neighbors of A_1(i-1) in A_2(i-2)},
    $N(K)\cap (A_2^1(i+1)\cup B_{i+1}\cup B_{i+2}\cup A_5)$ is a clique and complete to $N(K)\cap X'$. 
    Since $N(K)$ is not a clique, $K$ has two non-adjacent neighbors $p_1,p_2$. Suppose first that $p_1\in X'$. Then $p_2\in X'\cup Y'$. 
    If $p_1,p_2$ have a common neighbor $u\in K$, then $p_1-u-p_2-q-p_1$ is an induced $C_4$. So $p_1,p_2$ have no common neighbors in $K$. 
    Let $P$ be a shortest path from $p_1$ to $p_2$ whose internal is contained in $K$. So $|P|\ge 4$. Then $p_2-P-p_1-r-N_{B_{i-1}}(r)-q_i$ is an induced path with at least 7 vertices, where $r$ is a neighbor of $p_1\in A_1(i-1)$. 
    This proves that each vertex in $N(K)\cap X'$ is universal in $N(K)$.
    So $p_1,p_2\notin X'$ and $s$ is adjacent to $p_1$ and $p_2$.
    
    Therefore, we may assume that $p_1\in Y'$. Since $A_2^1(i+1)$ is universal in $A_2(i+1)$, $p_2\in Y'\cup B_{i+1}\cup B_{i+2}\cup A_5$. 
    Since $s$ is simplicial in $A_2(i+1)\cup B_{i+1}\cup B_{i+2}$, we have $p_2\in A_5$. 
    Since $q$ is a common neighbor of $p_1$ and $p_2$, $p_1$ and $p_2$ have no common neighbors in $K$. 
    Since $s$ is simplicial in $A_2(i+1)\cup B_{i+1}\cup B_{i+2}$, $N(s)\cap K\subseteq N(p_1)\cap K$ and so $s$ and $p_2$ have no common neighbors in $K$. Let $P=s-v_1-\cdots v_k-p_2$ be a shortest path between $s$ and $p_2$ with $v_i\in K$ for all $i\in [k]$. If $k=2$, then $s-v_1-v_2-p_2-s$ is an induced $C_4$. If $k\ge 3$, then $v_3-v_2-v_1-s-r-N_{B_{i-1}}(r)-q_i$ is an induced path at least 7 vertices, where $r\in A_1(i-1)$ is a neighbor of $s$.
\end{proof}

\begin{lemma}
    $\chi(G)\le \ceil{\frac{5}{4}\omega}$.
\end{lemma}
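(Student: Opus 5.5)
The final lemma of this subsection should follow by a contradiction from the "large clique" bounds collected so far, rather than by an explicit colouring. The setting is the one fixed in the subsection. We have $xw,xz\in E(G)$ and $z$ has a non-neighbour $z'\in A_2(i+1)$. By Lemmas \ref{lem:z is not universal}, \ref{lem:A_3(i+2) is empty}, \ref{lem:B_{i+2} is large} and \ref{lem:A_3(i-1) is empty}, we know $A_3=\emptyset$. By Lemma \ref{lem:A_2=A_2(i-2)+A_2(i+1)}, $A_2=A_2(i-2)\cup A_2(i+1)$. By Lemma \ref{lem:A_2^0(i-2)=empty}, $A_2(i-2)=A_2^1(i-2)$, so $A_2(i-2)\cup B_{i-1}\cup B_{i-2}\cup A_5$ is a clique by Lemma \ref{lem:neighbors of A_1(i-1) in A_2(i-2)}. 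Finally, $A_1(i-1)$ is a clique by Lemma \ref{lem:A_1(i-1) is a clique}. The plan is to use basicness of $G$ (no small vertices, no clique cutsets) to produce a clique of size greater than $\omega$ on the side $B_{i+1}\cup B_{i+2}\cup A_2(i+1)$.

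First I would study $S'=A_2(i+1)\cup B_{i+1}\cup B_{i+2}$, which is chordal by Lemma \ref{lem:S is chordal} (using $A_3(i+2)=\emptyset$). Because $z'$ is not adjacent to $z$, $S'$ is not a clique, so Lemma \ref{lem:chordal has simplicial vertices} gives two non-adjacent simplicial vertices $s_1,s_2$ of $S'$. Since $A_2^1(i+1)\cup B_{i+1}\cup B_{i+2}$ is a clique by Lemma \ref{lem:neighbors of A_1(i-1) in A_2(i-2)}, at least one of them, say $s_1$, lies in $X'\cup Y'$.

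I would then split into cases according to where $s_1$ and $s_2$ lie.
\begin{itemize}
\item[(a)] If $s_1\in X'$, Lemma \ref{lem:simplicial vertex in A_2(i+1)+B_{i+1}+B_{i+2}} gives $|N(s_1)\cap A_1(i-1)|>\ceil{\frac{\omega}{4}}$. Since $A_1(i-1)$ is a clique complete to $B_{i-1}$ (Lemma \ref{lem:reduce to basic graph} together with Lemma \ref{lem:B_{i-1} vs A_1(i-1)}), I would combine it with $B_{i-1}$, which is large by Lemma \ref{lem:B_{i-1} is large when two non-empty A_1(i)}, and with $A_5$ or $A_2(i-2)$.
\item[(b)] If $s_1\in Y'$, I would run the argument of Lemma \ref{lem:simplicial vertex in Y} with the roles of the two sides exchanged. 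That lemma's hypothesis holds: $X'$ vertices are universal in $A_2(i+1)$ in the relevant sense, or are handled by case (a). This shows $s_1$ cannot lie in $Y'$.
\item[(c)] If the other simplicial vertex lies in $B_{i+2}$, the symmetric half of Lemma \ref{lem:simplicial vertex in A_3(i-2) and B_{i-2}} gives $|N(s)\cap B_{i-2}|>\ceil{\frac{\omega}{4}}$.
\end{itemize}

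To conclude, I would add these disjoint large pieces. The candidates are: the clique $M\subseteq A_2(i+1)$ of size $>\ceil{\frac{\omega}{4}}$ complete to $B_{i+1}\cup B_{i+2}$ (Lemma \ref{lem:simplicial vertices in A_1(i-1)}), $|B_{i+1}|>\ceil{\frac{\omega}{4}}$, and two disjoint large pieces of $B_{i+2}$. One of these pieces is $N(b)\cap B_{i+2}$ from Lemma \ref{lem:B_{i+2} is large}; the other comes from a second non-neighbour with respect to $B_{i-2}$, or alternatively from a small-vertex argument on $z'$ or on a vertex of $B_{i+2}$ with minimal neighbourhood. Together these would form a clique of size $>\omega$, which is a contradiction. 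If instead $G$ turns out to be forced into a near-blowup of the graph in Figure \ref{fig: a third basic graph}, I would fall back on the approach of Section \ref{sec:C_7}. There I would exhibit a $(4,5)$-good subgraph by picking one or two vertices from each of $B_1,\dots,B_5$, $\{x\}$, $\{w\}$ and $\{z\}$ and hand-colouring them with five colours, then invoke Lemma \ref{lem:goodsubgraph}.

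The main obstacle is the case analysis on where the two simplicial vertices of $S'$ lie, and in particular getting two \emph{disjoint} large subsets of $B_{i+2}$ (or of $A_2(i+1)$). That disjointness needs $C_4$-freeness plus the completeness $B_{i+1}$–$B_{i+2}$, which is exactly what forces the neighbourhoods of the two simplicial vertices on the far side to be disjoint. I would try first to mirror the proof of Lemma \ref{lem:A_2^0(i-2)=empty}, which is the same counting pattern.
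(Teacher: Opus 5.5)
Your instinct to close this case by contradiction rather than by a colouring is legitimate. The paper's proof establishes, as its intermediate claim (2), that $A_2(i+1)\cup B_{i+1}\cup B_{i+2}$ is a clique, which is incompatible with $zz'\notin E(G)$; the paper nevertheless goes on to an explicit precolouring. However, the route you sketch to the contradiction does not go through, for two concrete reasons.

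\textbf{First gap: case (b).} Your case (b) mirrors Lemma~\ref{lem:simplicial vertex in Y}. Its hypothesis is that every vertex of $A_2(i+1)\setminus Y'$ is universal in $A_2(i+1)$. In this subsubsection that is false by assumption, since $z\in X'$ misses $z'$. Saying ``universal in the relevant sense'' does not repair this: the proof of that lemma uses the hypothesis to make $N(K)\setminus A'_3(i+2)$ a clique for a component $K$ of $Y'$.

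The missing idea is to bound $A_5$ first.
\begin{itemize}
\item The set $A_5\cup M\cup B_{i+1}\cup B_{i+2}$ is a clique, because $A_5$ is complete to $M$ by Lemma~\ref{lem:A5 vs A1A2}.
\item Each of $|M|$, $|B_{i+1}|$ and $|N(b)\cap B_{i+2}|$ exceeds $\ceil{\frac{\omega}{4}}$, so $|A_5|<\ceil{\frac{\omega}{4}}$.
\item A vertex $s\in Y'$ simplicial in $S'$ has $N(s)\subseteq N_{S'}(s)\cup A_5$. This holds because $s$ has no $A_1$-neighbours by definition of $Y'$, $A_3=\emptyset$, and $A_2(i+1)$ is anticomplete to $A_2(i-2)$.
\item Since $s$ is not small, $|N(s)\cap A_5|>\ceil{\frac{\omega}{4}}$, a contradiction.
\end{itemize}

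\textbf{Second gap: case (a) and the conclusion.} One simplicial vertex in $X'$ is not enough. The sets $N(s_1)\cap A_1(i-1)$, $B_{i-1}$ and $A_2(i-2)$ give only three pieces of size $>\ceil{\frac{\omega}{4}}$, and $A_5$ is small, not large. You need both simplicial vertices in $X'$, which the first gap's argument delivers.
\begin{itemize}
\item Once $Y'$ contains no simplicial vertex of $S'$, the fact that $A_2^1(i+1)\cup B_{i+1}\cup B_{i+2}$ is a clique complete to $X'$ forces \emph{both} non-adjacent simplicial vertices $s_1,s_2$ into $X'$. In particular your case (c) cannot occur.
\item They share the neighbour $q_{i+1}$, so $C_4$-freeness makes $N(s_1)\cap A_1(i-1)$ and $N(s_2)\cap A_1(i-1)$ disjoint. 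Each has size $>\ceil{\frac{\omega}{4}}$ by Lemma~\ref{lem:simplicial vertex in A_2(i+1)+B_{i+1}+B_{i+2}}.
\item $A_1(i-1)$ is a clique complete to $B_{i-1}\cup A_2(i-2)$. Completeness to $A_2(i-2)$ follows from Lemma~\ref{lem:A2i1tocompA1} and $A_2(i-2)=A_2^1(i-2)$.
\item $|B_{i-1}|>\ceil{\frac{\omega}{4}}$ and $|A_2(i-2)|>\ceil{\frac{\omega}{4}}$, by Lemmas~\ref{lem:B_{i-1} is large when two non-empty A_1(i)} and~\ref{lem:simplicial vertices in A_1(i-1)}.
\end{itemize}
Together these form a clique of size $>4\ceil{\frac{\omega}{4}}\ge\omega$, which is the contradiction.

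By contrast, your proposed final clique on the $A_2(i+1)\cup B_{i+1}\cup B_{i+2}$ side has no support. With $A_3(i-2)=\emptyset$ there is no source for a second large piece of $B_{i+2}$ disjoint from $N(b)\cap B_{i+2}$. The fallback to a hand-built $(4,5)$-good subgraph is not yet an argument.
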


\begin{proof}
    By Lemma \ref{lem:simplicial vertices in A_1(i-1)}, $A_2(i+1)$ contains a clique $M$ complete to $B_{i+1}\cup B_{i+2}$ with $|M|>\ceil{\frac{\omega}{4}}$. 
    Since $A_5\cup M\cup B_{i+1}\cup B_{i+2}$ is a clique, $|A_5|<\ceil{\frac{\omega}{4}}$. 

    \medskip
    \noindent (1) $A_1(i-1)$ is a clique and complete to $B_{i-1}\cup A_2(i-2)$.

    \medskip
    By Lemma \ref{lem:A2i1tocompA1} and $A_2(i-2)=A_2^1(i-2)$, $A_2(i-2)$ is complete to $A_1(i-1)$. By $x$ is complete to $B_{i-1}$ and Lemma \ref{lem:B_{i-1} vs A_1(i-1)}, $A_1(i-1)$ is complete to $B_{i-1}$. This proves (1).

    \medskip
     Let $s\in A_2(i+1)\cup B_{i+1}\cup B_{i+2}$ be a simplicial vertex in $A_2(i+1)\cup B_{i+1}\cup B_{i+2}$. If $s\in Y'$, then $N(s)\setminus A_5$ is a clique and so $|A_5|>\ceil{\frac{\omega}{4}}$, a contradiction. 
    So $s\notin Y'$.

    \medskip
    \noindent (2) $A_2(i+1)\cup B_{i+1}\cup B_{i+2}$ is a clique.

    \medskip
    Suppose not. $A_2(i+1)\cup B_{i+1}\cup B_{i+2}$ contains two non-adjacent simplicial vertices $s,s'$ by Lemma \ref{lem:S is chordal}. 
    Since $A_2^1(i+1)\cup B_{i+1}\cup B_{i+2}$ is a clique complete to $X'$, we have $s,s'\in X'$. 
    By Lemma \ref{lem:simplicial vertex in A_2(i+1)+B_{i+1}+B_{i+2}}, $|N(s)\cap A_1(i-1)|,|N(s')\cap A_1(i-1)|>\ceil{\frac{\omega}{4}}$. Since $N(s)\cap A_1(i-1)$ and $N(s')\cap A_1(i-1)$ are disjoint and complete, then $B_{i-1}\cup A_1(i-1)\cup A_2(i-2)$ is a clique of size larger than $\omega$ by (1). This proves (2).

    \medskip
    By (2), $Y'=\emptyset$. By Lemmas \ref{lem:A5 vs A1A2}, \ref{lem:neighbors of A_1(i-1) in A_2(i-2)} and \ref{lem:A_5 is complete to A_1}, each vertex in $A_5$ is universal in $G$. Since $G$ is basic, $A_5=\emptyset$.

    \medskip
    \noindent (3) $A_1(i+1)$ is a clique.

    \medskip
    Suppose not. Let $s,s'\in A_1(i+1)$ be two non-adjacent simplicial vertices of $A_1(i+1)$.
    If $N(s)\cap A_2(i-2)$ and  $N(s')\cap A_2(i-2)$ are disjoint, then $A_2(i-2)\cup B_{i-1}\cup A_1(i-1)$ is a clique of size larger than $\omega$ by Lemmas \ref{lem:simplicial vertices in A_1(i-1)}, \ref{lem:simplicial vertex in A_2(i+1)+B_{i+1}+B_{i+2}} and \ref{lem:B_{i-1} is large when two non-empty A_1(i)}. So $s$ and $s'$ have a common neighbor in $A_2(i-2)$ and so $s$ and $s'$ have disjoint neighborhoods in $B_{i+1}\cup A_2^1(i+1)$.
    By Lemma \ref{lem:A_1(i+1) is $P_4$-free} (2) and $A_2^0(i-2)=\emptyset$, $|N(s)\cap (B_{i+1}\cup A_2^1(i+1))|>\ceil{\frac{\omega}{4}}$ and $|N(s')\cap (B_{i+1}\cup A_2^1(i+1))|>\ceil{\frac{\omega}{4}}$. Then $M\cup A_2^1(i+1)\cup B_{i+1}\cup B_{i+2}$ is a clique of size larger than $\omega$. This proves (3).

    \medskip
    \noindent (4) For every vertex $v\in A_1(i+1)$, $|N(v)\cap B_{i+1}|>\ceil{\frac{\omega}{4}}$. 

    \medskip
    If $v$ has no neighbors in $A_2^1(i+1)$, then $N(v)\cap B_{i+1}=N(v)\cap (B_{i+1}\cup A_2^1(i+1))$ and so the statement holds since $|N(v)\cap (B_{i+1}\cup A_2^1(i+1))|>\ceil{\frac{\omega}{4}}$. 
    So we may assume that $v$ has a neighbor $c\in A_2^1(i+1)$. 
    If $v$ has a non-neighbor $v'\in B_{i+1}$, then $v'-c-v-N_{A_2(i-2)}(v)-q_{i-1}-q_i-v'$ is an induced $C_6$. 
    Then $v$ is complete to $B_{i+1}$ and so $|N(v)\cap B_{i+1}|>\ceil{\frac{\omega}{4}}$. 
    This proves (4).

    \medskip

Next we give a $\ceil{\frac{5}{4}\omega}$-coloring of $G$.
Let $x=|B_{i-1}|$, $y=|B_{i-2}|$, and $z=|A_2(i-2)|$.
Our strategy is to use $p=x+y+z-3\ceil{\frac{\omega}{4}}$ colors to color some vertices of $G$ first and then argue that the remaining vertices can be colored with $f=\ceil{\frac{5}{4}\omega}-p$ colors. 
Note that
    \begin{align*} 
    f & = \ceil{\frac{5}{4}\omega}-(x+y+z-3\ceil{\frac{\omega}{4}}) 
    \\[3pt]
     & = \omega+(4\ceil{\frac{\omega}{4}}-(x+y+z))  
    \\[3pt]
     & \ge \omega,
\end{align*}
since $x+y+z\leq \omega$.

\medskip
\noindent {\bf Step 1: Precolor a subgraph.}

\medskip
Next we define vertices to be colored with $p$ colors. Let $S_{i-1}\subseteq B_{i-1}$ be the set of $x-\ceil{\frac{\omega}{4}}$ vertices, $S_{i-2}\subseteq B_{i-2}$ be the set of $y-\ceil{\frac{\omega}{4}}$ vertices, $S_{z}\subseteq A_2(i-2)$ be the set of $z-\ceil{\frac{\omega}{4}}$ vertices, and $S_{i+1}\subseteq B_{i+1}$ be the set of $p$ vertices with largest neighborhoods in $A_1(i+1)$. Since $(x+y+z)-3\ceil{\frac{\omega}{4}}\leq \ceil{\frac{\omega}{4}}<|B_{i+1}|$, the choice of $S_i$ is possible. 
By (4), $S_{i+1}$ is complete to $A_1(i+1)$. 
We color $S_{i+1}$ with all colors in $[p]$, and color all vertices in $S_{i-1}\cup S_{i-2}\cup S_z$ with all colors in $[p]$. Since $B_{i-1}$ is anticomplete to $B_{i-1}\cup B_{i-2}\cup A_2(i-2)$, this coloring is proper. So it suffices to show that
$G_1=G-(S_{i-1}\cup S_{i-2}\cup S_{i+1}\cup S_z)$ is $f$-colorable.

\medskip
\noindent {\bf Step 2: Reduce by degeneracy.}

\medskip
Let $v$ be a vertex in $B_i\cup B_{i+2}\cup A_1(i+1)$. 
Note that $|N_{G_1}(v)|<\ceil{\frac{\omega}{4}}+(\omega-\ceil{\frac{\omega}{4}})\leq f$. 
It follows that $G_1$ is $f$-colorable if and only if $G_1-(B_i\cup B_{i+2}\cup A_1(i+1))$ is $f$-colorable. 
Since $G_1-(B_i\cup B_{i+2}\cup A_1(i+1))$ is chordal, $\chi(G_1-(B_i\cup B_{i+2}\cup A_1(i+1)))\leq \omega\leq f$. 
\end{proof}

\subsubsection{$X$ and $X'$ are universal}

Recall that $A_2^1(i-2)$ is the set of vertices in $A_2(i-2)$ that have a neighbor in $A_1(i-1)$ and $A_2^0(i-2)=A_2(i-2)\setminus A_2^1(i-2)$, $X=A^0_2(i-2)\cap N(A_1(i+1))$ and $Y=A^0_2(i-2)\setminus X$. Similarly. $A_2^1(i+1)$ is the set of vertices in $A_2(i+1)$ that have a neighbor in $A_1(i+1)$ and $A_2^0(i+1)=A_2(i+1)\setminus A_2^1(i+1)$, and $X'=A^0_2(i+1)\cap N(A_1(i-1))$ and $Y'=A^0_2(i+1)\setminus X'$. The global assumption in this subsection is that each vertex in $X$ is universal in $A_2(i-2)$ and each vertex in $X'$ is universal in $A_2(i+1)$.

\begin{lemma}\label{lem:A_3(i-2) is empty}
    $A_3(i-2)=A_3(i+2)=\emptyset$. 
\end{lemma}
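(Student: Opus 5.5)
By the symmetry between the two sides (exchanging $i-1\leftrightarrow i+1$, $i-2\leftrightarrow i+2$, $x\leftrightarrow y$, $z\leftrightarrow w$), it suffices to show $A_3(i-2)=\emptyset$. The approach is a small-vertex and large-clique counting argument, modelled on the proof of Lemma \ref{lem:B_{i+2} is large}. We assume $A_3(i-2)\neq\emptyset$ and aim to exhibit a clique of size larger than $\omega$.

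\textbf{Step 1: control $Y$.} Let $S=A_2(i-2)\cup B_{i-2}\cup B_{i-1}\cup A_3(i-2)$; it is chordal by Lemma \ref{lem:S is chordal}. Under the global assumption, every vertex of $X$ is universal in $A_2(i-2)$, and by Lemma \ref{lem:A_2^1(i-2) is universal in A_2(i-2)} so is every vertex of $A^1_2(i-2)$. Hence every vertex of $A_2(i-2)\setminus Y$ is universal in $A_2(i-2)$, and Lemma \ref{lem:simplicial vertex in Y} shows that no simplicial vertex of $S$ lies in $Y$.

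\textbf{Step 2: two non-adjacent simplicial vertices in $A_3(i-2)\cup B_{i-2}$.} Since $A_3(i-2)\neq\emptyset$ and Lemma \ref{lem:blowup-threeneighbor} gives a vertex of $B_{i-2}$ non-adjacent to a vertex of $A_3(i-2)$, $S$ is not a clique. Lemma \ref{lem:chordal has simplicial vertices} then yields two non-adjacent simplicial vertices $u,u'$ of $S$. The set $A^1_2(i-2)\cup B_{i-1}\cup B_{i-2}$ is a clique (Lemma \ref{lem:neighbors of A_1(i-1) in A_2(i-2)}), and $X$ is complete to $B_{i-2}\cup B_{i-1}$ and universal in $A_2(i-2)$. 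Lemma \ref{lem:A_3(i-2)} (1),(2) makes $A_3(i-2)$ complete to $B_{i-1}\cup A^1_2(i-2)\cup X$. Combining these with Step 1, neither $u$ nor $u'$ lies in $B_{i-1}\cup A_2(i-2)$, so $u,u'\in A_3(i-2)\cup B_{i-2}$.

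\textbf{Step 3: large disjoint neighbourhoods on the $B_{i+2}$ side.} By Lemma \ref{lem:simplicial vertex in A_3(i-2) and B_{i-2}}, $|N(u)\cap(B_{i+2}\cup A_3(i+2))|>\ceil{\frac{\omega}{4}}$, and the same holds for $u'$. Both $u$ and $u'$ are complete to $B_{i-1}$, since $A_3(i-2)$ is complete to $B_{i-1}$ by Lemma \ref{lem:A_3(i-2)} (2) and $B_{i-2}$ is complete to $B_{i-1}$. Being non-adjacent, $C_4$-freeness forces their neighbourhoods in $B_{i+2}\cup A_3(i+2)$ to be disjoint, and the two sets are complete to each other. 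Inside $B_{i+2}$ this is immediate because $B_{i+2}$ is a clique. If $A_3(i+2)$ vertices are involved, the claim follows from Lemma \ref{lem:cross four cliques}, Lemma \ref{lem:compare nbd of different A_3(i) in B_j}, and Lemma \ref{lem:P4-freenessofA3} together with Lemma \ref{lem:edge in $A_3(i)$}. This gives a clique $Q$ of size at least $2\ceil{\frac{\omega}{4}}+2$.

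\textbf{Step 4: contradiction.} Lemma \ref{lem:simplicial vertices in A_1(i-1)} supplies a clique $M\subseteq A_2(i+1)$ of size larger than $\ceil{\frac{\omega}{4}}$, complete to $B_{i+1}\cup B_{i+2}$. If $A_3(i)=\emptyset$, Lemma \ref{lem:B_{i-1} is large when two non-empty A_1(i)} gives $|B_{i+1}|>\ceil{\frac{\omega}{4}}$. If $A_3(i)\neq\emptyset$, Lemma \ref{lem:A_3(i)} (3) gives a half-sized $B_{i\pm1}$, which together with $M$ and $Q$ should again overflow. We want $Q\cup M\cup B_{i+1}$ to be a clique of size larger than $\omega$. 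This needs $A_3(i+2)\cap Q$ to be complete to $M\cup B_{i+1}$, which follows from Lemma \ref{lem:A_3(i-2)} (1),(2): $A_3(i+2)$ is complete to $B_{i+1}$ and to $A^1_2(i+1)\cup X'$, and $M$ can be chosen inside $A^1_2(i+1)\cup X'$ as the neighbourhood of a simplicial vertex of $A_1(i-1)$.

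\textbf{Main obstacle.} I expect the main obstacle to be Step 3 when $N(u)$ meets $A_3(i+2)$, that is, ensuring the two large cliques are truly disjoint and mutually complete. If Lemma \ref{lem:cross four cliques} does not apply directly, the first fallback is to replace $u$ by a $(A_3(i-2),H)$-minimal simplicial vertex. Lemma \ref{lem:small vertex argument for simplicial vertices in A_3(i-2)} then keeps the large neighbourhood inside $B_{i+2}$ alone, since $A_3(i-2)$ has no neighbours in $A_1$ by Lemma \ref{lem:A_3(i-2)} (3) and its $A_2$-neighbours lie in $X\cup A^1_2(i-2)$, which are handled by Step 1.
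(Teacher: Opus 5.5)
Your skeleton matches the paper's proof: Steps 1--2, Lemma \ref{lem:simplicial vertex in A_3(i-2) and B_{i-2}}, and the final overflow in $B_{i+1}\cup B_{i+2}\cup A_2(i+1)$. Step 2 is correct as written. However, two steps do not go through.

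\textbf{Step 3.} You never rule out neighbours in $A_3(i+2)$. The lemmas you cite do not give the needed completeness; for instance, Lemma \ref{lem:cross four cliques} concerns neighbourhoods in $B_{i-1}$ and $B_{i-2}$. Your fallback also fails: Lemma \ref{lem:small vertex argument for simplicial vertices in A_3(i-2)} needs $v$ to have no neighbours in $A_1\cup A_2$, but by Lemma \ref{lem:A_3(i-2)}~(1) every vertex of $A_3(i-2)$ is adjacent to $w\in A_2(i-2)$. The missing observation is that this case cannot occur.
\begin{itemize}
\item Since $B_{i-2}$ is a clique, at least one of $u,u'$ lies in $A_3(i-2)$.
\item $A_3(i-2)$ is anticomplete to $A_3(i+2)$ by Lemma \ref{lem:anticompleteA3}, because the indices $i-2$ and $i+2$ are consecutive modulo $5$.
\item A vertex of $B_{i-2}$ with a neighbour in $A_3(i+2)$ would be complete to $A_3(i-2)$ by Lemma \ref{lem:compare nbd of different A_3(i) in B_j}. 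It would then be adjacent to whichever of $u,u'$ lies in $A_3(i-2)$, a contradiction.
\end{itemize}
So both large neighbourhoods lie in the clique $B_{i+2}$, and your $Q$ is as you want.

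\textbf{Step 4, case $A_3(i)\neq\emptyset$.} Lemma \ref{lem:A_3(i)}~(3) gives $|B_{i+1}|>\frac{\omega}{2}$ only when $yz\in E(G)$. When $xw\in E(G)$ it gives $|B_{i-1}|>\frac{\omega}{2}$ instead, and $B_{i-1}$ is anticomplete to $Q\cup M$, so nothing overflows. You cannot normalise to $yz\in E(G)$. The reflection was already spent on assuming $A_3(i-2)\neq\emptyset$, and by Lemma \ref{lem:two opposite edges} exactly one of $xw,yz$ is an edge. So the case $xw\in E(G)$ is open in your write-up.

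The paper closes this with Lemma \ref{lem:smv on w}. Since $A_1=A_1(i-1)\cup A_1(i+1)$, and $A_2=A_2(i-2)\cup A_2(i+1)$ by Lemma \ref{lem:A_2=A_2(i-2)+A_2(i+1)}, $B_i$ is anticomplete to $A_1\cup A_2$. Hence a vertex of $B_i\setminus N(A_3(i))$ with minimal neighbourhood in $H$ has more than $\ceil{\frac{\omega}{4}}$ neighbours in $B_{i+1}$, whichever of $xw,yz$ is an edge. With these two repairs your argument is exactly the paper's.
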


\begin{proof}
    Suppose by symmetry that $A_3(i-2)\neq \emptyset$. By Lemma \ref{lem:A_3(i-1) is empty}, $A_3(i-1)=\emptyset$. Since $A_3(i-2)\neq \emptyset$, $S=A_2(i-2)\cup B_{i-1}\cup B_{i-2}\cup A_3(i-2)$ is not clique. By Lemma \ref{lem:S is chordal}, $S$ contains two non-adjacent simplicial vertices $u$ and $u'$. Since each vertex in $X$ is universal in $A_2(i-2)$, $A^1_2(i-2)\cup X\cup B_{i-2}\cup B_{i-1}$ is a clique. So we may assume that $u\in Y\cup A_3(i-2)$. By Lemma \ref{lem:simplicial vertex in Y}, $u\in A_3(i-2)$. 
    By Lemma \ref{lem:A_3(i-2)} (1) and (2) and Lemma \ref{lem:simplicial vertex in Y}, $u'\in A_3(i-2)\cup B_{i-2}$. 
    Since $u\in A_3(i-2)$ and $uu'\notin E(G)$, $N(u)\cap A_3'(i+2)\subseteq B_{i+2}$ and $N(u')\cap A_3'(i+2)\subseteq B_{i+2}$ by Lemma \ref{lem:compare nbd of different A_3(i) in B_j}. 
    By Lemma \ref{lem:simplicial vertex in A_3(i-2) and B_{i-2}}, $|N_{B_{i+2}}(u)|,|N_{B_{i+2}}(u')|>\ceil{\frac{\omega}{4}}$. 
    
    Since $A_3(i-2)\cup B_{i-2}$ is complete to $B_{i-1}$, $u$ and $u'$ have disjoint neighborhoods in $B_{i+2}$. By Lemma \ref{lem:simplicial vertices in A_1(i-1)}, $|X'|>\ceil{\frac{\omega}{4}}$. 
    If $A_3(i)=\emptyset$, then $|B_{i+1}|>\ceil{\frac{\omega}{4}}$ by Lemma \ref{lem:B_{i-1} is large when two non-empty A_1(i)}. 
    If $A_3(i)\neq \emptyset$, then $|B_{i+1}|>\ceil{\frac{\omega}{4}}$ by Lemmas \ref{lem:smv on w} and \ref{lem:A_2=A_2(i-2)+A_2(i+1)}.
    So $B_{i+1}\cup B_{i+2}\cup A_2(i+1)$ contains a clique of size larger than $\omega$, a contradiction. 
\end{proof}

\begin{lemma}\label{lem:Y is empty}
     $Y=Y'=\emptyset$.
\end{lemma}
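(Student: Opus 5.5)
By symmetry it suffices to prove $Y=\emptyset$, and I will argue by contradiction via a clique cutset. This is the same template as Lemmas~\ref{lem:simplicial vertex in Y} and~\ref{lem:A_2^0(i-2)=empty}.

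Suppose $Y\neq\emptyset$ and let $K$ be a component of $Y$. First I pin down $N(K)$. By definition of $Y$, $K$ is anticomplete to $A_1(i+1)$, and since $Y\subseteq A_2^0(i-2)$, $K$ is also anticomplete to $A_1(i-1)$. By Lemmas~\ref{A2A2} and~\ref{lem:A_2=A_2(i-2)+A_2(i+1)}, $K$ has no neighbors in $A_2\setminus A_2(i-2)$. By Lemmas~\ref{lem:A_3(i-1) is empty} and~\ref{lem:A_3(i-2) is empty}, $A_3(i-2)$, $A_3(i-1)$ and $A_3(i+1)$ are empty, and $A_3(i+2)$ is empty too, so no $A_3$-vertex is adjacent to $K$. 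Hence
\[
N(K)\subseteq B_{i-2}\cup B_{i-1}\cup A_2^1(i-2)\cup X\cup A_5 .
\]

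Next I show this set is a clique. Lemma~\ref{lem:neighbors of A_1(i-1) in A_2(i-2)} gives that $A_2^1(i-2)\cup B_{i-1}\cup B_{i-2}\cup A_5$ is a clique. Lemma~\ref{lem:opposite A_1(i) and A_2(j)} shows that $X$ is complete to $B_{i-1}\cup B_{i-2}$, since each $X$-vertex has a neighbor in $A_1(i+1)$. The global assumption of this subsection makes every $X$-vertex universal in $A_2(i-2)$, so $X$ is a clique complete to $A_2^1(i-2)$. It remains to show $X$ is complete to $A_5$.

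This step is the one I expect to need the most care. Take $a\in X$ with a neighbor $b\in A_1(i+1)$. The edge $ab$ is an $A_1(i+1)$--$A_2(i+1+2)$ edge, and $A_2(i+3)=A_2(i-2)$, so Lemma~\ref{lem:A5 vs A1A2} applies and forces $A_5$ to be complete to $\{a,b\}$. This uses the minimal choice of $|S_5|$ through that lemma.

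Therefore $N(K)$ is a clique. It separates $K$ from $B_i\neq\emptyset$, which has no neighbors in $K$ because $K\subseteq A_2(i-2)$, so $N(K)$ is a clique cutset, contradicting that $G$ is basic. The same argument with the roles of $i-1$ and $i+1$ exchanged gives $Y'=\emptyset$.
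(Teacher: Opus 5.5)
Your argument is correct and is the same clique-cutset argument the paper uses: the paper's proof is the one-line containment $N(Y)\subseteq A^1_2(i-2)\cup X\cup B_{i-1}\cup B_{i-2}\cup A_5$ together with the fact that this set is a clique, and you justify the completeness of $X$ to $B_{i-1}\cup B_{i-2}$, to $A_2^1(i-2)$ and to $A_5$ exactly as the paper does. Two citation fixes are needed. First, $A_2(i-2)$ is anticomplete to $A_2(i+1)$ by Lemma~\ref{lem:anticompleteA3-1}, since the indices differ by $2$ modulo $5$, not by Lemma~\ref{A2A2}, which covers consecutive indices. Second, your list of empty $A_3(j)$'s skips $A_3(i)$, which is not yet known to be empty here (Lemma~\ref{lem:A_3(i) is empty when universal} uses the present lemma); it is nonetheless anticomplete to $A_2(i-2)$ by Lemma~\ref{anti-A2A3}.
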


\begin{proof}
    If $Y\neq \emptyset$, then $N(Y)\subseteq A^1_2(i-2)\cup X\cup B_{i-1}\cup B_{i-2}\cup A_5$ by Lemmas \ref{lem:A_3(i-1) is empty} and \ref{lem:A_3(i-2) is empty}, and so is a clique. This contradicts that $G$ has no clique cutsets. 
\end{proof}

\begin{lemma}\label{lem:A_3(i) is empty when universal}
    $A_3(i)=\emptyset$.
\end{lemma}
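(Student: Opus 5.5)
We argue by contradiction, assuming $A_3(i)\neq\emptyset$, and aim for a clique of size larger than $\omega$ in $B_{i-1}\cup B_{i-2}\cup A_2(i-2)$ (or symmetrically on the $i+1$ side).

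By Lemma \ref{lem:two opposite edges}, by symmetry we may assume $xw\in E(G)$. Lemma \ref{lem:reduce to basic graph} then gives that $B_i$ is complete to $B_{i+1}$ and $x$ is complete to $B_{i-1}$. Lemma \ref{lem:A_3(i)} (2) gives that $A_3(i)$ is complete to $B_{i+1}$, and Lemma \ref{lem:A_3(i)} (3) gives $|B_{i-1}|>\frac{\omega}{2}$. That lemma already provides disjoint sets $N(v)\cap B_{i-1}$ and $N(u)\cap B_{i-1}$, each of size greater than $\ceil{\frac{\omega}{4}}$, for a minimal simplicial $v\in A_3(i)$ and a suitable $u\in B_i\setminus N(A_3(i))$.

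Next we want two further disjoint large cliques complete to $B_{i-1}$.

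\emph{First clique.} By Lemma \ref{lem:simplicial vertices in A_1(i-1)}, applied to a simplicial vertex of $A_1(i+1)$, $A_2(i-2)$ contains a clique of size greater than $\ceil{\frac{\omega}{4}}$. By Lemma \ref{lem:opposite A_1(i) and A_2(j)}, this clique is complete to $B_{i-1}\cup B_{i-2}$.

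\emph{Second clique.} We find a large subset of $B_{i-2}$ by a small vertex argument. Since $A_3(i-2)=A_3(i-1)=\emptyset$ (Lemmas \ref{lem:A_3(i-2) is empty} and \ref{lem:A_3(i-1) is empty}) and $Y=\emptyset$, the set $A_2(i-2)\cup B_{i-1}\cup B_{i-2}\cup A_5$ is a clique by Lemma \ref{lem:neighbors of A_1(i-1) in A_2(i-2)}. Here we use that $X$ is universal and complete to $B_{i-1}\cup B_{i-2}\cup A_5$, as checked in Lemma \ref{lem:simplicial vertex in Y}. Take $b\in B_{i-2}$ with minimal neighborhood in $B_{i+2}$. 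Then $N(b)\setminus B_{i+2}$ lies in that clique, so $|N(b)\cap B_{i+2}|>\ceil{\frac{\omega}{4}}$. This set is a clique in $B_{i+2}$, not $B_{i-2}$, so we would rather count directly: the clique $A_2(i-2)\cup B_{i-1}\cup B_{i-2}$ must have size at most $\omega$, while $|B_{i-1}|>\frac{\omega}{2}$ and the $A_2(i-2)$-part exceeds $\ceil{\frac{\omega}{4}}$. It then remains to show $|B_{i-2}|\ge\ceil{\frac{\omega}{4}}$.

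For this, apply the small vertex argument to some $c\in B_{i-1}$. Its neighbors lie in $B_{i-2}\cup B_{i-1}\cup A_2(i-2)\cup A_1(i-1)\cup A_5\cup B_i\cup A_3(i)$. We choose $c$ non-adjacent to $v$'s side, i.e. $c\in N(u)\cap B_{i-1}$. Then, by Lemma \ref{lem:nonedge in A_3(i)}, the vertices of $N(c)\cap(B_i\cup A_3(i))$ form a clique together with $N(c)\cap B_{i-1}$. Since $A_1(i-1)\cup A_2(i-2)$ is complete to $B_{i-1}$ and $B_{i-2}$, the part $N(c)\setminus(B_i\cup A_3(i))$ is a clique. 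Hence $|N(c)\cap(B_i\cup A_3(i))|>\ceil{\frac{\omega}{4}}$ or $|B_{i-2}|$ is large.

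The main obstacle is arranging these four disjoint parts (two halves of $B_{i-1}$, a large piece of $A_2(i-2)$, and a large piece of $B_{i-2}$ or of $B_i\cup A_3(i)$) so that their union is genuinely a clique. The delicate case is when the large piece lies in $B_i\cup A_3(i)$: this set is complete to only one of the two halves of $B_{i-1}$, so the sizes must be compared to $\omega$ in the clique $B_{i+1}\cup B_i\cup N_{B_{i-1}}(u)$ instead. I would try that alternative clique first, using that $B_{i+1}$ is large by Lemma \ref{lem:smv on w}.
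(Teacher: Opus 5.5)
Your overall plan matches the paper's. You break symmetry so that $xw\in E(G)$, take $|B_{i-1}|>\frac{\omega}{2}$ from Lemma~\ref{lem:A_3(i)}~(3), and take a clique $L\subseteq A_2(i-2)$ with $|L|>\ceil{\frac{\omega}{4}}$, complete to $B_{i-1}\cup B_{i-2}$; your justification via Lemma~\ref{lem:opposite A_1(i) and A_2(j)} is correct, since every vertex of $L$ is adjacent to $s\in A_1(i+1)$. Your reduction to bounding $|B_{i-2}|$ from below is also right.

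\textbf{The gap.} You never prove that lower bound on $|B_{i-2}|$, so the argument does not reach a contradiction. The small-vertex argument at $b\in B_{i-2}$ only bounds $B_{i+2}$, as you notice. The attempt at $c\in B_{i-1}$ ends in an unresolved disjunction. It also rests on a false claim: $A_1(i-1)$ has no neighbors in $B_{i-2}$, so it is not complete to it. Your last paragraph leaves the ``main obstacle'' open. No four-part clique through $B_i\cup A_3(i)$ is needed.

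\textbf{The fix.} Run your $b$-argument on the mirror side, at a vertex of $B_{i+2}$ such as $q_{i+2}$. We have $A_3(i\pm1)=A_3(i\pm2)=\emptyset$ (Lemmas~\ref{lem:A_3(i-1) is empty} and~\ref{lem:A_3(i-2) is empty}), $A_1=A_1(i-1)\cup A_1(i+1)$ and $A_2=A_2(i-2)\cup A_2(i+1)$. Hence
\[
N(q_{i+2})\setminus B_{i-2}\subseteq B_{i+1}\cup B_{i+2}\cup A_2(i+1)\cup A_5 .
\]
This set is a clique, for the following reasons:
\begin{itemize}
\item $Y'=\emptyset$ (Lemma~\ref{lem:Y is empty}), and both $A_2^1(i+1)$ and $X'$ are universal in $A_2(i+1)$, so $A_2(i+1)$ is a clique.
\item $A_2^1(i+1)\cup B_{i+1}\cup B_{i+2}\cup A_5$ is a clique, by the symmetric half of Lemma~\ref{lem:neighbors of A_1(i-1) in A_2(i-2)}.
\item $X'$ is complete to $B_{i+1}\cup B_{i+2}$ by Lemma~\ref{lem:opposite A_1(i) and A_2(j)}, and to $A_5$ by Lemma~\ref{lem:A5 vs A1A2}.
\item $B_{i+1}$ is complete to $B_{i+2}$ by the preamble of the subsection.
\end{itemize}
None of this depends on which of $xw,yz$ is an edge, so it survives your symmetry reduction. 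Since $G$ has no small vertices, $|B_{i-2}|\ge|N(q_{i+2})\cap B_{i-2}|>\ceil{\frac{\omega}{4}}$. Also $B_{i-2}$ is complete to $B_{i-1}$ (preamble). So $B_{i-2}\cup B_{i-1}\cup L$ is a clique of size more than $2\ceil{\frac{\omega}{4}}+\frac{\omega}{2}\ge\omega$, a contradiction.

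This is exactly the paper's proof. The paper first uses an induced $P_7$ to show $A_2^1(i+1)=\emptyset$, so that the neighborhood lands in $B_{i+1}\cup B_{i+2}\cup X'\cup A_5$. That step is dispensable, because $A_2^1(i+1)$ is already universal in $A_2(i+1)$ and complete to $B_{i+1}\cup B_{i+2}\cup A_5$.
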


\begin{proof}
    Suppose that $t\in A_3(i)$. 
    If $a\in A_1(i+1)$ is adjacent to $b\in A_2(i+1)$, then $t-q_i-q_{i-1}-q_{i-2}-(N(b)\cap B_{i+2})-b-a$ is an induced $P_7$. 
    Then $A_1(i+1)$ is anticomplete to $A_2(i+1)$ and so $A_2^1(i+1)=\emptyset$. 
    By Lemma \ref{lem:A_3(i-2) is empty}, $A_3(i+2)=\emptyset$. 
    By Lemma \ref{lem:Y is empty}, $Y'=\emptyset$. By Lemma \ref{lem:A_3(i-2)} (2) and Lemma \ref{lem:single vertex in A_3(i)} (1), $q_{i+2}$ is anticomplete to $A_3(i-2)$. It follows that $N(q_{i+2})\setminus B_{i-2}\subseteq B_{i+1}\cup B_{i+2}\cup X'\cup A_5$ is a clique. Since $G$ has no small vertices, $|B_{i-2}|>\ceil{\frac{\omega}{4}}$. 
    By Lemma \ref{lem:simplicial vertices in A_1(i-1)}, $A_2^1(i-2)\cup X$ contains a clique $L$ of size larger than $\ceil{\frac{\omega}{4}}$. 
    By Lemma \ref{lem:A_3(i)} (3), then $|B_{i-1}|>\frac{\omega}{2}$.  By Lemmas \ref{lem:A_3(i-2)} (1) and (2), $B_{i-2}\cup B_{i-1}\cup L$ contains a clique of size larger than $\omega$, a contradiction.
\end{proof}

We say that an edge $ab\in E(G)$ with $a\in A_1(i-1)$ and $b\in A_2(i-2)$ is a {\em vertical edge} if $a$ has a neighbor in $A_2(i+1)$ and $b$ has a neighbor in $A_1(i+1)$. A vertical edge between $A_1(i+1)$ and $A_2(i+1)$ is defined in a similar way.

\begin{lemma}
    Vertical edges between $A_1(i-1)$ and $A_2(i-2)$ and vertical edges between $A_1(i+1)$ and $A_2(i+1)$ cannot exist at the same time.
\end{lemma}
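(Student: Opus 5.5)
Suppose for a contradiction that both kinds of vertical edges exist. Let $ab$ be a vertical edge with $a\in A_1(i-1)$, $b\in A_2(i-2)$, so $a$ has a neighbor $a'\in A_2(i+1)$ and $b$ has a neighbor $b'\in A_1(i+1)$. Let $cd$ be a vertical edge with $c\in A_1(i+1)$, $d\in A_2(i+1)$, so $c$ has a neighbor $c'\in A_2(i-2)$ and $d$ has a neighbor $d'\in A_1(i-1)$. The plan is to use Lemma \ref{lem:two opposite edges} on suitable pairs of these edges, together with the anticomplete-pair lemmas (Lemmas \ref{A1A2}, \ref{A2A2}, \ref{lem:anticompleteA3-1}), to pin down the adjacencies among $a,b,c,d$, and then exhibit an induced $C_4$ or $C_6$.

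First apply Lemma \ref{lem:two opposite edges} to the edges $aa'$ (between $A_1(i-1)$ and $A_2(i+1)$) and $b'b$ (between $A_1(i+1)$ and $A_2(i-2)$). It gives that exactly one of $ab$ and $b'a'$ is an edge. Since $ab\in E(G)$, we get $a'b'\notin E(G)$. Symmetrically, applying it to $d'd$ and $cc'$, and using $cd\in E(G)$, we get $d'c'\notin E(G)$. Next apply it to $aa'$ and $cc'$: exactly one of $ac'$ and $ca'$ is an edge. Applying it to $d'd$ and $b'b$ gives that exactly one of $d'b$ and $b'd$ is an edge. Finally, applying it to $ad$ or $cb$ directly is not possible, since those pairs are not of the right type. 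Instead, Lemma \ref{lem:two opposite edges} applied to $d'd$ and $cc'$ is already used, and applied to $a\,a'$ and $c\,c'$ as above.

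Now do the case analysis. By the symmetry between the two sides, assume $ac'\in E(G)$ and $ca'\notin E(G)$. Recall from Lemma \ref{lem:opposite A_1(i) and A_2(j)} that $b,c'$ are complete to $B_{i-1}\cup B_{i-2}$ and $a',d$ are complete to $B_{i+1}\cup B_{i+2}$. Also recall that in the current subsection $A_2^1$, $X$ and $X'$ consist of vertices universal in their $A_2$-sets (Lemma \ref{lem:A_2^1(i-2) is universal in A_2(i-2)} and the global assumption), so $bc'\in E(G)$ and $a'd\in E(G)$ whenever the endpoints are distinct. Then consider $a-c'-c-d-a'-a$ when $ad\notin E(G)$. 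It has the chords $ac'$ and $cd$, while $ca'\notin E(G)$, $c'd\notin E(G)$ and $c'a'\notin E(G)$ by Lemma \ref{lem:anticompleteA3-1}. So it is an induced $C_5$ at best, and a 4-cycle $c'-c-d-a'-\dots$ does not close. Hence I would instead look for the $C_4$ $a-c'-c-d-a$ (if $ad\in E(G)$), or for the $C_4$ $a-c'-b-\cdots$. The cleanest route is to use Lemma \ref{lem:A2i1tocompA1}: $c'$ and $b$ are both in $A_2(i-2)$ adjacent to $a$, and $d\in A_2(i+1)$ is adjacent to $c$. Combined with $b b'$ and the dichotomy for $d'b$ versus $b'd$, each branch should produce either an induced $C_4$ (such as $a-c'-c-d-a$ or $b-b'-d-d'-b$, via Lemma \ref{A1A2} giving $bd\notin E$), or an induced $C_6$ using $B_{i-1},B_i,B_{i+1}$ vertices, as in the proof of Lemma \ref{lem:two opposite edges}.

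The main obstacle is the bookkeeping. Some of $a',b',c',d'$ may coincide with $d,c,b,a$, and the adjacencies $ad$ and $bc$ are not controlled by Lemma \ref{lem:two opposite edges}. I would first dispose of the degenerate coincidences separately, for example $a'=d$ together with $c'=b$, which immediately gives $a-b-c-d-a$, an induced $C_4$. Then I would handle $ad$ and $bc$ using $P_7$-freeness with paths through $q_i,q_{i-1},q_{i+1}$, in the style of Lemma \ref{lem:A and C are disjoint}.
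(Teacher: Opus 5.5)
Your argument is purely local. From $a,b,a',b',c,d,c',d'$ and $H$, using Lemma \ref{lem:two opposite edges}, the anticomplete-pair lemmas and the universality of $A_2^1$, $X$, $X'$, you hope to find an induced $C_4$, $C_6$ or $P_7$. The key step, ``each branch should produce an induced $C_4$ or $C_6$'', is never carried out, and it is false.

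Consider the branch $ac'\in E(G)$, $d'b\in E(G)$, which forces $ca',b'd,ad,bc,ad'\notin E(G)$. There the cycles $a-c'-c-d-a'-a$ and $d'-b-c'-c-d-d'$ are induced $C_5$'s. Lemma \ref{lem:A2i1tocompA1} only tells you that $a$ and $d'$ lie in different components of $A_1(i-1)$.

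This branch is realizable. Take $H$ to be an induced $C_5$ $q_{i-2}q_{i-1}q_iq_{i+1}q_{i+2}$ with singleton bags. Give the new vertices the following neighbors in $H$: $a,d'$ see only $q_{i-1}$; $b,c'$ see $q_{i-2},q_{i-1}$; $b',c$ see only $q_{i+1}$; $a',d$ see $q_{i+1},q_{i+2}$. Among the new vertices add exactly the edges $ab,ac',aa',d'b,d'd,bc',bb',c'c,cd,a'd$. Then $G-q_i$ consists of two chordal pieces:
\begin{itemize}
\item $L=\{q_{i-2},q_{i-1},a,d',b,c'\}$, in which $q_{i-1}$ and $b$ are universal;
\item $R=\{q_{i+1},q_{i+2},a',d,b',c\}$, in which $q_{i+1}$ is universal.
\end{itemize}
They are joined only by the perfect matching $q_{i-2}q_{i+2},aa',d'd,bb',c'c$ between $L\setminus\{q_{i-1}\}$ and $R\setminus\{q_{i+1}\}$. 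This matching carries the non-edges of $G[L\setminus\{q_{i-1}\}]$ exactly onto the edges of $G[R\setminus\{q_{i+1}\}]$. From this one checks that every hole is a $C_5$ and that there is no induced $P_7$. Both $ab$ and $cd$ are vertical edges, and every fact you invoke holds there. So no forbidden-subgraph argument on these vertices can yield the contradiction.

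What the example violates is basicness: $\omega=4$ and $d(a)=4$, so $a$ is small. That is the ingredient your proposal never uses, and the paper's proof runs through it as follows.
\begin{enumerate}
\item First, $A_3=\emptyset$ and $Y=Y'=\emptyset$ (Lemma \ref{lem:Y is empty}). So $A_2(i-2)\cup B_{i-1}\cup B_{i-2}\cup A_5$ and $A_2(i+1)\cup B_{i+1}\cup B_{i+2}\cup A_5$ are cliques.
\item Since $q_{i\mp 2}$ is not small, $|B_{i\pm 2}|>\lceil\omega/4\rceil$.
\item Lemma \ref{lem:simplicial vertices in A_1(i-1)} then shows that two non-adjacent simplicial vertices of $A_1(i-1)$ would create a clique of size $>\omega$ in $A_2(i+1)\cup B_{i+1}\cup B_{i+2}$. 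So $A_1(i-1)$ is a clique, and symmetrically so is $A_1(i+1)$.
\item Take the first vertical edge to be $xw$, so $yz\notin E(G)$. By Lemma \ref{lem:two opposite edges}, no vertex of $A_1(i+1)$ has neighbors in both $A_2^1(i-2)$ and $X$. Hence a vertex $u\in X$ with a neighbor $v\in A_1(i+1)$ gives the induced $C_4$ $v-y-w-u-v$, so $X=\emptyset$.
\item Symmetrically, $X'=\emptyset$; this is where the second vertical edge is used.
\item Now every vertex of $A_2(i+1)$ has a neighbor in the clique $A_1(i+1)$. By Lemma \ref{lem:A2i1tocompA1}, $A_2(i+1)$ is complete to $A_1(i+1)$, so $yz\in E(G)$, a contradiction.
\end{enumerate}
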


\begin{proof}
    Suppose not. 
    Without loss of generality, $xw\in E(G)$ and $zy\notin E(G)$.
    By Lemmas \ref{lem:A_3(i) is empty when universal}, \ref{lem:A_3(i-1) is empty} and \ref{lem:A_3(i-2) is empty}, $A_3=\emptyset$.
    By Lemma \ref{lem:Y is empty}, $A_2(i-2)\cup B_{i-1}\cup B_{i-2}\cup A_5$ is a clique and $A_2(i+1)\cup B_{i+1}\cup B_{i+2}\cup A_5$ is a clique.
    Since $q_{i-2}$ is not a small vertex, $|B_{i+2}|>\ceil{\frac{\omega}{4}}$. By symmetry, $|B_{i-2}|>\ceil{\frac{\omega}{4}}$. 

    Next we show that $A_1(j)$ is a clique for $j\in \{i-1,i+1\}$. Let $K$ be the component of $A_1(i-1)$ containing $x$. By Lemmas \ref{lem:reduce to basic graph} and \ref{lem:B_{i-1} vs A_1(i-1)}, $K$ is complete to $B_{i-1}$. If there is simplicial vertex of $A_1(i-1)$ not adjacent to some simplicial vertex of $K$, then $A_2(i+1)\cup B_{i+1}\cup B_{i+2}$ contains a clique larger than $\omega$ by Lemma \ref{lem:simplicial vertices in A_1(i-1)}. This implies that $A_1(i-1)$ is a clique. By symmetry $A_1(i+1)$ is a clique. 

    Suppose that $v\in A_1(i+1)$ has a neighbor $a\in A^1_2(i-2)$ and a neighbor $b\in X$. Let $c\in A_1(i-1)$ be a neighbor of $a$ that has a neighbor $d\in A_2(i+1)$. By Lemma \ref{lem:two opposite edges}, $v$ is adjacent to and non-adjacent to $d$, a contradiction. So no vertex in $A_1(i+1)$ can have a neighbor in both $A^1_2(i-2)$ and $X$. 
    Next we show that $X=\emptyset$. Suppose not. Let $a\in X$ and $b\in A_1(i+1)$ be a neighbor of $a$. Then $b\neq y$ and $bw\notin E(G)$. Since $A_1(i+1)$ is a clique, $b-y-w-a-b$ is an induced $C_4$.
    This proves that $X=\emptyset$. So $A_2(i-2)=A^1_2(i-2)$ or equivalently each vertex in $A_2(i-2)$ has a neighbor in $A_1(i-1)$. By symmetry, each vertex in $A_2(i+1)$ has a neighbor in $A_1(i+1)$.

    By Lemmas \ref{lem:A2i1tocompA1}, $A_2(i-2)$ is complete to $A_1(i-1)$ and $A_2(i+1)$ is complete to $A_1(i+1)$. This contradicts that $zy\notin E(G)$.
\end{proof}

\begin{lemma}
    $\chi(G)\leq \ceil{\frac{5}{4}\omega}$.
\end{lemma}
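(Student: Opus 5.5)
By the previous lemma, vertical edges do not occur on both sides. I will assume by symmetry that there are no vertical edges between $A_1(i+1)$ and $A_2(i+1)$; if there are no vertical edges at all, the argument is only easier.

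\textbf{Structural picture.} I will first collect what is known at this point.
\begin{itemize}
\item $A_3=\emptyset$, by Lemmas~\ref{lem:A_3(i-1) is empty}, \ref{lem:A_3(i-2) is empty} and \ref{lem:A_3(i) is empty when universal}.
\item $A_2=A_2(i-2)\cup A_2(i+1)$, by Lemma~\ref{lem:A_2=A_2(i-2)+A_2(i+1)}.
\item $Y=Y'=\emptyset$, by Lemma~\ref{lem:Y is empty}.
\item Each of $A_2(i-2)\cup B_{i-1}\cup B_{i-2}\cup A_5$ and $A_2(i+1)\cup B_{i+1}\cup B_{i+2}\cup A_5$ is a clique. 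This uses Lemma~\ref{lem:neighbors of A_1(i-1) in A_2(i-2)}, the universality of $X,X'$, the fact that $X$ is complete to $A_5$ by the choice of $C$, and Lemma~\ref{lem:opposite A_1(i) and A_2(j)}.
\end{itemize}
As in the proof of the previous lemma, the small-vertex argument on $q_{i\pm 2}$ gives $|B_{i+2}|,|B_{i-2}|>\ceil{\frac{\omega}{4}}$. Lemma~\ref{lem:simplicial vertices in A_1(i-1)} gives cliques in $A_2(i+1)$ and in $A_2(i-2)$, each of size $>\ceil{\frac{\omega}{4}}$ and complete to the corresponding $B$'s.

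Every vertex of $A_5$ is complete to $A_1$ (Lemma~\ref{lem:A_5 is complete to A_1}), to $A_2$ (by the two cliques above), and to $H$. So $A_5$ consists of universal vertices, and since $G$ is basic, $A_5=\emptyset$.

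Next I will show that $A_1(i\pm1)$ are cliques. I will use the simplicial-vertex argument of the previous lemma: two non-adjacent simplicial vertices of $A_1(i-1)$ would produce two disjoint large cliques in $A_2(i+1)$, and together with $B_{i+1}\cup B_{i+2}$ these exceed $\omega$. The same argument applies to $A_1(i+1)$. I will then use Lemmas~\ref{lem:B_{i-1} vs A_1(i-1)} and \ref{lem:reduce to basic graph} to see how $A_1(i-1)$ attaches to $B_{i-1}$; in particular $B_i$ is complete to $B_{i+1}$.

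\textbf{Coloring.} I will mimic the precoloring of the previous subsection. Set $x=|B_{i-1}|$, $y=|B_{i-2}|$, $z=|A_2(i-2)|$ and $p=x+y+z-3\ceil{\frac{\omega}{4}}\le\ceil{\frac{\omega}{4}}$.
\begin{enumerate}
\item Precolor $p$ vertices of each of $B_{i-1}$, $B_{i-2}$ and $A_2(i-2)$ (the excess over $\ceil{\frac{\omega}{4}}$ in each) using the colors $[p]$.
\item Use the same $p$ colors on a set $S_{i+1}$ of $p$ vertices of $B_{i+1}$ with the largest neighborhoods in $A_1(i+1)$. This is legal because $B_{i+1}$ is anticomplete to $B_{i-1}\cup B_{i-2}\cup A_2(i-2)$.
\item Show that the remaining $f=\ceil{\frac{5}{4}\omega}-p\ge\omega$ colors suffice for the rest. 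Every vertex of $B_i\cup B_{i+2}\cup A_1(i+1)$ should have fewer than $f$ uncolored neighbors, so these vertices can be colored last by degeneracy. The remainder should be chordal, by Lemmas~\ref{lem:S is chordal}, \ref{lem:A_1(i) is chordal} and the clique structure above, and is therefore $\omega$-colorable.
\end{enumerate}

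\textbf{Main obstacle.} The hardest step is the degree bound for $A_1(i+1)$. I need the analogue of claim (4) of the previous subsection: every $v\in A_1(i+1)$ has more than $\ceil{\frac{\omega}{4}}$ neighbors in $B_{i+1}$, so that $S_{i+1}$ is complete to $A_1(i+1)$. I will try to get this from the minimal-simplicial-vertex Lemma~\ref{lem:simplicial vertices in A_1(i)} combined with the $C_6$ argument from (4), $v'-c-v-N_{A_2(i-2)}(v)-q_{i-1}-q_i-v'$. The fact that there are no vertical edges on the $(i+1)$ side should replace the use of $A_2^1(i+1)$ there. If this fails, I will swap the roles of the two sides and precolor starting from $B_{i+1}\cup B_{i+2}\cup A_2(i+1)$ instead.
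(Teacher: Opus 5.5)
Your structural reductions and your coloring scheme match the paper's: precolor the excess of $B_{i-1}$, $B_{i-2}$, $A_2(i-2)$ together with $p$ vertices of $B_{i+1}$, remove $B_i\cup B_{i+2}\cup A_1(i+1)$ by degeneracy, and finish on a chordal remainder. The gap is the step you flag as the main obstacle, that $S_{i+1}$ is complete to $A_1(i+1)$; you leave it open, and the tools you name do not close it.

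Applied to $A_1(i+1)$, Lemma~\ref{lem:simplicial vertices in A_1(i)} says $N(s)\setminus A_2(i-2)$ is a clique. That gives a lower bound on $|N(s)\cap A_2(i-2)|$, not on $|N(s)\cap B_{i+1}|$. The $C_6$ from claim (4) only handles vertices with neighbors in both $A_2(i+1)$ and $A_2(i-2)$. ``No vertical edges on the $(i+1)$ side'' is weaker than $A_2^1(i+1)=\emptyset$, so edges between $A_1(i+1)$ and $A_2(i+1)$ may remain, and you do not check that the degree bound for $A_1(i+1)$ tolerates them. The fallback of swapping sides is also unavailable. Your assumption forces $yz\notin E(G)$, hence $xw\in E(G)$ by Lemma~\ref{lem:two opposite edges}. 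So only ``$B_i$ complete to $B_{i+1}$'' is known, while the swapped degree count for $B_i$ would need $B_i$ complete to $B_{i-1}$.

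The paper settles this before coloring. If $X$ and $X'$ were both non-empty, Lemma~\ref{lem:two opposite edges} would be violated, so one may take $X=\emptyset$; then $A_2(i-2)$ is complete to $A_1(i-1)$. Now if some $u\in A_2^1(i+1)$ existed, then with $v\in A_2(i-2)$ maximizing $|N(v)\cap A_1(i+1)|$, the set $\{v,u,q_i\}$ would be a $(1,1)$-good subgraph. Hence $A_1(i+1)$ is anticomplete to $A_2(i+1)$.

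Your route can also be finished directly, without this reduction:
\begin{itemize}
\item Each vertex of $B_{i+1}$ (Lemma~\ref{lem:B_{i-1} vs A_1(i-1)}) and each vertex of $A_2(i+1)$ (Lemma~\ref{lem:A2i1tocompA1}) is complete or anticomplete to the clique $A_1(i+1)$. By $C_4$-freeness, the sets $N_{A_2(i-2)}(v)$ for $v\in A_1(i+1)$ form a chain.
\item If some $c\in A_2(i+1)$ is complete to $A_1(i+1)$, then $v'-c-y-w-q_{i-1}-q_i-v'$ is an induced $C_6$ for every $v'\in B_{i+1}\setminus N(y)$. So $A_1(i+1)$ is complete to $B_{i+1}$, which has more than $\ceil{\frac{\omega}{4}}$ vertices by Lemma~\ref{lem:B_{i-1} is large when two non-empty A_1(i)}.
\item Otherwise, take $v_0\in A_1(i+1)$ with minimal $N_{A_2(i-2)}(v_0)$. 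Then $N(v_0)\setminus B_{i+1}$ is a clique, and since $v_0$ is not small, the common $B_{i+1}$-neighborhood of $A_1(i+1)$ has more than $\ceil{\frac{\omega}{4}}$ vertices.
\end{itemize}
Either way $S_{i+1}$ is complete to $A_1(i+1)$. For $v\in A_1(i+1)$, the set $N[v]\cap(B_{i+1}\cup A_1(i+1)\cup A_2(i+1))$ is a clique containing $v$ and $S_{i+1}$. So $v$ has at most $(\omega-1-p)+\ceil{\frac{\omega}{4}}<f$ uncolored neighbors, even when $A_2^1(i+1)\neq\emptyset$.
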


\begin{proof}
    By Lemmas \ref{lem:A_3(i) is empty when universal}, \ref{lem:A_3(i-1) is empty} and \ref{lem:A_3(i-2) is empty}, $A_3=\emptyset$.
    By Lemma \ref{lem:Y is empty}, $Y=Y'=\emptyset$. 
    Then each vertex in $A_5$ is universal in $G$ by Lemma \ref{lem:A_5 is complete to A_1}. So $A_5=\emptyset$ by Lemma \ref{lem:reducible structure}.
    Since $q_{i-2}$ is not a small vertex, $|B_{i+2}|>\ceil{\frac{\omega}{4}}$. By symmetry, $|B_{i-2}|>\ceil{\frac{\omega}{4}}$. It follows from Lemma \ref{lem:simplicial vertices in A_1(i-1)} that $A_1(i-1)$ and $A_1(i+1)$ are cliques.

    Suppose that $X$ contains a vertex $a$ and $X'$ contains a vertex $b$.
    Let $c\in A_1(i+1)$ be a neighbor of $a$ and $d\in A_1(i-1)$ be a neighbor of $b$. Then $ac$ and $bd$ contradict Lemma \ref{lem:two opposite edges}.
    So we may assume that $X=\emptyset$. Equivalently, each vertex in $A_2(i-2)$ has a neighbor in $A_1(i-1)$.  By Lemma \ref{lem:A2i1tocompA1}, $A_2(i-2)$ is complete to $A_1(i-1)$.

    Let $v\in A_2(i-2)$ be a vertex such that $|N(v)\cap A_1(i+1)|$ is maximum. If $A^1_2(i+1)$ contains a vertex $u$, we show that $\{v,u,q_i\}$ is a $(1,1)$-good subgraph. It suffices to show that every  maximum clique $M$ of $G$ contains a vertex in  $\{v,u,q_i\}$. Since $v$ is universal in $B_{i-2}\cup B_{i-1}\cup A_1(i-1)\cup A_2(i-2)$ and $u$ is universal in $B_{i+2}\cup B_{i+1}\cup A_1(i+1)\cup A_2(i+1)$, $M$ contains $v$ or $u$ if $M\subseteq B_{i-2}\cup B_{i-1}\cup A_1(i-1)\cup A_2(i-2)$ or $M\subseteq B_{i+2}\cup B_{i+1}\cup A_1(i+1)\cup A_2(i+1)$. For the same reason, $M$ contains $q_i$ if $M\subseteq B_i\cup B_j$ for $j\in \{i-1,i+1\}$. The remaining cases are the following.

    \begin{itemize}
        \item[$\bullet$] $M\subseteq B_{i-2}\cup B_{i+2}$. Then $|B_{i-2}|\ge \frac{\omega}{2}$ or $|B_{i+2}|\ge \frac{\omega}{2}$. If $|B_{i-2}|\ge \frac{\omega}{2}$, then $B_{i-2}\cup B_{i-1}\cup A_2(i-2)$ is a clique of size larger than $\omega$. If $|B_{i+2}|\ge \frac{\omega}{2}$, then $B_{i+2}\cup B_{i+1}\cup A_2(i+1)$ is a clique of size larger than $\omega$.
        \item[$\bullet$] $M\subseteq A_2(i-2)\cup A_1(i+1)$. Then $v\in M$ by the choice of $v$.
        \item[$\bullet$] $M\subseteq A_2(i+1)\cup A_1(i-1)$. Then $|A_2(i+1)|\ge \frac{\omega}{2}$ or $|A_1(i-1)|\ge \frac{\omega}{2}$. If $|A_2(i+1)|\ge \frac{\omega}{2}$, then  $B_{i+2}\cup B_{i+1}\cup A_2(i+1)$ is a clique of size larger than $\omega$. If $|A_1(i-1)|\ge \frac{\omega}{2}$, then $A_1(i-1)\cup B_{i-1}\cup A_2(i-2)$ is a clique of size larger than $\omega$.
    \end{itemize}
    \vspace{-10pt}
So $A_2^1(i+1)=\emptyset$ and thus $A_1(i+1)$ is anticomplete to $A_2(i+1)$.

Next we give a $\ceil{\frac{5}{4}\omega}$-coloring of $G$.
Let $x=|B_{i-1}|$, $y=|B_{i-2}|$, and $z=|A_2(i-2)|$.
Our strategy is to use $p=x+y+z-3\ceil{\frac{\omega}{4}}$ colors to color some vertices of $G$ first and then argue that the remaining vertices can be colored with $f=\ceil{\frac{5}{4}\omega}-p$ colors. 
Note that
    \begin{align*} 
    f & = \ceil{\frac{5}{4}\omega}-(x+y+z-3\ceil{\frac{\omega}{4}}) 
    \\[3pt]
     & = \omega+(4\ceil{\frac{\omega}{4}}-(x+y+z))  
    \\[3pt]
     & \ge \omega,
\end{align*}
since $x+y+z\leq \omega$.

\medskip
\noindent {\bf Step 1: Precolor a subgraph.}

\medskip
Next we define vertices to be colored with $p$ colors. Let $S_{i-1}\subseteq B_{i-1}$ be the set of $x-\ceil{\frac{\omega}{4}}$ vertices, $S_{i-2}\subseteq B_{i-2}$ be the set of $y-\ceil{\frac{\omega}{4}}$ vertices, $S_{z}\subseteq A_2(i-2)$ be the set of $z-\ceil{\frac{\omega}{4}}$ vertices, and $S_{i+1}\subseteq B_{i+1}$ be the set of $p$ vertices with largest neighborhoods in $A_1(i+1)$. Since $(x+y+z)-3\ceil{\frac{\omega}{4}}\leq \ceil{\frac{\omega}{4}}<|B_{i+1}|$, the choice of $S_{i+1}$ is possible. 
Since $A_1(i+1)$ is anticomplete to $A_2(i+1)$, every vertex in $A_1(i+1)$ has at least $\ceil{\frac{\omega}{4}}+1$ neighbors in $B_{i+1}$ and so $S_{i+1}$ is complete to $A_1(i+1)$. 
We color $S_{i+1}$ with all colors in $[p]$, and color all vertices in $S_{i-1}\cup S_{i-2}\cup S_z$ with all colors in $[p]$. Since $B_{i+1}$ is anticomplete to $B_{i-1}\cup B_{i-2}\cup A_2(i-2)$, this coloring is proper. So it suffices to show that
$G_1=G-(S_{i-1}\cup S_{i-2}\cup S_{i+1}\cup S_z)$ is $f$-colorable.

\medskip
\noindent {\bf Step 2: Reduce by degeneracy.}

\medskip
Let $v$ be a vertex in $B_i\cup B_{i+2}\cup A_1(i+1)$. 
Note that $|N_{G_1}(v)|<\ceil{\frac{\omega}{4}}+(\omega-\ceil{\frac{\omega}{4}})\leq f$. 
It follows that $G_1$ is $f$-colorable if and only if $G_1-(B_i\cup B_{i+2}\cup A_1(i+1))$ is $f$-colorable. 
Since $G_1-(B_i\cup B_{i+2}\cup A_1(i+1))$ is chordal, $\chi(G_1-(B_i\cup B_{i+2}\cup A_1(i+1)))\leq \omega\leq f$.
\end{proof}

\subsection{One non-empty $A_1(i)$}\label{sec:one non-empty A_1(i)}

The global assumption in this subsection is that $A_1=A_1(i)\neq \emptyset$.
Suppose that $x\in A_1(i)$. By Proposition \ref{prop:component of A_1(i)}, the component of $A_1(i)$ containing $x$ has a neighbor $y\in A_2(i+1)$. We may assume that $xy\in E(G)$ by renaming the vertex.

\begin{lemma}\label{lem:B_{i-2} complete to B_{i+2}}
    $B_{i-2}$ and $B_{i+2}$ are complete. 
\end{lemma}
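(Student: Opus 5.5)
We argue by contradiction, mimicking the proof of Lemma~\ref{lem:reduce to basic graph}. Suppose $a\in B_{i-2}$ and $b\in B_{i+2}$ are non-adjacent. The global setup gives $x\in A_1(i)$ adjacent to $y\in A_2(i+1)$. By Lemma~\ref{lem:opposite A_1(i) and A_2(j)}, $y$ is complete to $B_{i+1}\cup B_{i+2}$, and by Lemma~\ref{lem:blowup-twoneighbor}, $B_{i+1}$ is complete to $B_{i+2}$. In particular $yb\in E(G)$ and $ya\notin E(G)$, since $y$ is anticomplete to $B_{i-2}$. The plan is to build a long induced path running from $a$ through $b$ and $y$ to $x$ and into $B_i$, and then close it up or extend it.

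First, $a$ has some neighbor $c\in B_{i+2}$, necessarily $c\neq b$. Then $a-c-b$ is an induced $P_3$ inside $B_{i-2}\cup B_{i+2}$. Extend it with $y$: we have $yc\in E(G)$, so use $b$ only through the common neighbor $y$. More precisely, consider the induced path $a-c-y-x-x'-\cdots$, where $x'\in N_{B_i}(x)$. This path is induced because $x$ is anticomplete to $B_{i\pm 1}\cup B_{i\pm 2}$, $y$ is anticomplete to $B_i\cup B_{i-1}\cup B_{i-2}$, and $c$ is anticomplete to $B_i$. However, it is too short and does not use the non-edge $ab$.

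Instead, I would exploit the non-edge directly. Take $d\in N_{B_{i-1}}(a)$. By Lemma~\ref{lem:blowup-basicproperty}, we can choose a neighbor $e\in B_{i+1}$ of $b$. The natural candidate is the walk $a-d-q_i-e-b$, which I aim to close using $y$ and $x$. Concretely, compare $x-y-b-B_{i+2}\text{-neighbor of }a-a-d-\ldots$: the path $x-y-b$ followed by $c-a-d$ is not induced, because $yc\in E(G)$. So the usable configuration is rather $b-y-x-x'$ together with $a-d-(\text{vertex of }B_i)$.

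The cleanest attempt is the sequence $x-x'-N_{B_{i-1}}(x')-a'$ with $a'$ a neighbor in $B_{i-2}$, which I would combine with the mixed pair $\{a,b\}$. Since $a$ is non-adjacent to $b$ but adjacent to $c\in B_{i+2}$, the set $B_{i+2}$ is mixed on $a$. I would try the path $b-c-a-d-x''-x-y$, with $x''\in B_i\cap N(d)\cap N(x)$, and choose vertices so that it is induced. This gives a path on $7$ vertices whose only possible chords are $yb$ and $yc$. Since $yb\in E(G)$ and $yc\in E(G)$, the configuration instead yields the cycle $y-c-a-d-x''-x-y$, which is an induced $C_6$ provided $x''$ is adjacent to both $d$ and $x$. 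If no single $x''$ has both adjacencies, take $x_1\in N(x)\cap B_i$ and $x_2\in N(d)\cap B_i$ with $x_1d\notin E(G)$. Then $y-c-a-d-x_2-x_1-x-y$ is an induced $C_7$.

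This argument never uses $b$, so it would show $c$ cannot exist at all, which is false. Hence the real obstacle is choosing $c$ or $d$ so that the cycle fails generically and the contradiction comes specifically from $ab\notin E(G)$. I expect the correct choice is to route through $b$ and $q_{i+1}$: the path $a-c-b$ is replaced by the $P_4$-structure $a-c-b$ with $b$'s neighbor $e\in B_{i+1}$ non-adjacent to $c$. Lemma~\ref{lem:blowup-basicproperty}(1) comparabilities inside $B_{i+2}$ should force a $P_4$-structure or an induced $P_7$ of the form $a'-a-c-\ldots$, ending at $y-x$. This is the step where I expect the main difficulty.
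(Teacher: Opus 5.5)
There is a genuine gap: your proposal never reaches a contradiction, and the cause is a wrong premise rather than a hard step. You place the neighbor $y$ of the component of $x$ in $A_2(i+1)$. You then read Lemma~\ref{lem:opposite A_1(i) and A_2(j)} as making $y$ complete to $B_{i+1}\cup B_{i+2}$ and anticomplete to $B_{i-2}$. But that lemma concerns an edge from $A_1(i)$ to $A_2(i+2)$, that is, to a vertex with support $\{i+2,i-2\}$. Moreover, by Lemma~\ref{A1A2}(2), $A_1(i)$ is anticomplete to $A_2(i+1)$, so an edge $xy$ with $y\in A_2(i+1)$ cannot exist at all.

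The correct location follows from Proposition~\ref{prop:component of A_1(i)}, which says $N(K)\setminus(A_1(i)\cup A_2(i+2))$ is a clique. Since $G$ has no clique cutsets, the component of $x$ must have a neighbor in $A_2(i+2)$; the ``$A_2(i+1)$'' in the setup of this subsection should be read as $A_2(i+2)$. Because your configuration is impossible, the paths and cycles you try to build (e.g.\ $y-c-a-d-x''-x-y$) are not available. This is also why, as you noticed yourself, the argument never uses the non-edge $ab$ and would prove too much.

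With $y\in A_2(i+2)$ the lemma is immediate, and this is the paper's proof. By Lemma~\ref{lem:opposite A_1(i) and A_2(j)}, $y$ is complete to $B_{i-2}\cup B_{i+2}$. So any non-adjacent $a\in B_{i-2}$ and $b\in B_{i+2}$ have the common neighbor $y$. Then $a-y-b-N_{B_{i+1}}(b)-q_i-N_{B_{i-1}}(a)-a$ is an induced cyclic sequence. By Lemma~\ref{lem:cyclicsequence} this gives an induced cycle on at least six vertices, which is impossible in a $(P_7,C_4,C_6,C_7)$-free graph. This is exactly Lemma~\ref{lem:blowup-twoneighbor} applied to $K=\{y\}$.

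The idea you were missing is that $y$ itself is a common neighbor of $a$ and $b$, so no detour through $x$ and $B_i$ is needed.
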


\begin{proof}
    By Lemma \ref{lem:opposite A_1(i) and A_2(j)}, $y$ is complete to $B_{i-2}\cup B_{i+2}$. We are done by Lemma \ref{lem:blowup-twoneighbor}. 
\end{proof}

\begin{lemma}\label{lem:A_2(i-2) is empty}
    $A_2(i-2)=\emptyset$. By symmetry, $A_2(i+1)=\emptyset$. 
\end{lemma}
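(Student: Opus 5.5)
Suppose for a contradiction that some $t\in A_2(i-2)$ exists, so that $\supp(t)=\{i-2,i-1\}$. By the symmetry of this subsection (reflecting indices about $i$), it suffices to treat $A_2(i-2)$.

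\textbf{Setup.} I read the vertex $y$ fixed before Lemma \ref{lem:B_{i-2} complete to B_{i+2}} as lying in $A_2(i+2)$. This is the only reading consistent with Proposition \ref{prop:component of A_1(i)}, whose exceptional set is $A_2(i+2)$. It is also the reading used in the proof of Lemma \ref{lem:B_{i-2} complete to B_{i+2}}, where Lemma \ref{lem:opposite A_1(i) and A_2(j)} makes $y$ complete to $B_{i+2}\cup B_{i-2}$.

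\textbf{Non-adjacencies and the first attempt.} By Lemma \ref{A1A2}(2), $tx\notin E(G)$, and by Lemma \ref{A2A2}, $ty\notin E(G)$. Fix $c\in N_{B_{i-2}}(t)$, $b\in N_{B_{i-1}}(t)$ and $a\in N_{B_i}(x)$. Then $c$ is a common neighbor of $y$ and $t$, and $bc\in E(G)$ by Lemma \ref{lem:blowup-twoneighbor}.
\begin{itemize}
\item[$\bullet$] If $ab\notin E(G)$, I use the walk $q_{i+1}-a-x-y-c-t-b$. Checking chords: $B_i$ is anticomplete to $B_{i-2}$, $\supp(x)=\{i\}$, $\supp(y)=\{i+2,i-2\}$ and $\supp(t)=\{i-2,i-1\}$, so the only chord is $cb$. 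Hence this walk is not induced. I would instead replace $b$ by the continuation from $t$ that avoids $c$, e.g. $t-b-q_i$, or shift the end at $B_{i+1}$, so as to obtain an induced $P_7$ or a bad $P_7$.
\item[$\bullet$] If $ab\in E(G)$, then $a-x-y-c-b-a$ is an induced $C_5$ and gives no contradiction. Here I would pick a vertex $b'\in B_{i-1}$ adjacent to $a$ but not to $t$, or $a'\in B_i$ adjacent to $b$ but not to $x$, and then look for a $C_6$ or $C_7$ through $t$, $x$ and $y$.
\end{itemize}
Throughout I would exploit that $A_1=A_1(i)$, so $A_1(i\pm1)$ and $A_1(i\pm2)$ are empty.

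\textbf{Fallback: clique cutset argument.} If the direct search becomes messy, I expect it to, and I would switch to this argument. Let $K$ be a component of $A_2(i-2)$.
\begin{itemize}
\item[$\bullet$] By Lemmas \ref{A1A2}, \ref{A2A2}, \ref{lem:anticompleteA3-1} and \ref{anti-A2A3}, together with $A_1=A_1(i)$, we get
\[
N(K)\subseteq B_{i-2}\cup B_{i-1}\cup A_3(i-2)\cup A_3(i-1)\cup A_5 .
\]
\item[$\bullet$] By Lemma \ref{lem:neighbors of A_2(i) in A_3(i+1) and A_3(i)}, $K$ meets at most one of $A_3(i-2)$ and $A_3(i-1)$; by symmetry say $A_3(i-2)$.
\item[$\bullet$] Each vertex of $N(K)\cap A_5$ is universal in $N(K)$ by Lemma \ref{lem:blowup-fiveneighbor1}. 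Also $N(K)\cap A'_3(i-2)$ is complete to $N(K)\cap B_{i-1}$ by Lemma \ref{lem:blowup-twoneighbor}.
\item[$\bullet$] Two non-adjacent vertices of $N(K)\cap A'_3(i-2)$ therefore have disjoint neighborhoods in $B_{i+2}$ (Lemma \ref{lem:nonedge in A_3(i)}). These neighborhoods are complete to each other, since $B_{i+2}$ is a clique.
\item[$\bullet$] An induced $P_4$ $a-b-c'-d$ with $d\in A'_3(i-2)$, $c'\in K$ and $a,b\in K\cup A'_3(i-2)$ extends by $d-N_{B_{i+2}}(d)-y-x$ to an induced $P_7$. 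Here I use that $y$ is complete to $B_{i+2}$ and $B_{i-2}$, and that $y$ is anticomplete to $K$.
\end{itemize}
Thus Lemma \ref{lem:complete subgraphs} applies with $X=A'_3(i-2)$ and $Y=B_{i+2}$. It yields two non-adjacent $p,q\in A'_3(i-2)$ complete to $K$.

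Then $p$ and $q$ have disjoint neighborhoods in $B_{i+2}$ and a common neighbor in $B_{i-1}$. Their common neighbor $y$ in $B_{i-2}$, or more precisely the vertex of $N_{B_{i-2}}$ shared with $y$, produces an induced $C_4$ of the form $p-k-q-(\text{common neighbor})-p$ with $k\in K$. Alternatively, $p-k-q-N_{B_{i+2}}(q)-y-N_{B_{i+2}}(p)$ closes a $C_6$. Either way we reach a contradiction.

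\textbf{Main obstacle.} The hardest step is verifying condition (2) of Lemma \ref{lem:complete subgraphs}, namely that the extension $d-N_{B_{i+2}}(d)-y-x$ is really induced when $d\in B_{i-2}$ is adjacent to $y$. In that case I would instead extend through $q_{i+1}-q_i$, i.e. use $d-N_{B_{i+2}}(d)-q_{i+1}-q_i$, which yields a bad $P_7$.
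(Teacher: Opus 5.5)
Your fallback is sound up to the point where Lemma \ref{lem:complete subgraphs} gives you non-adjacent $p,q\in A'_3(i-2)$ complete to $K$. The closing contradiction, however, is not established.

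\textbf{Why the final step fails.} Since $B_{i-2}$ is a clique, at least one of $p,q$ lies in $A_3(i-2)$. You only know that $y$ is complete to $B_{i-2}\cup B_{i+2}$, so you have not shown that $y$ is a common neighbour of $p$ and $q$. A common neighbour in $B_{i-1}$ or $B_{i-2}$ is no use either. In a $C_4$-free graph the common neighbours of the non-adjacent pair $p,q$ form a clique, so any such vertex is adjacent to $k$ and gives no induced $C_4$.

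Your alternative cycle $p-k-q-N_{B_{i+2}}(q)-y-N_{B_{i+2}}(p)-p$ has a chord between its two vertices of $B_{i+2}$, which is a clique. In fact $p-k-q-N_{B_{i+2}}(q)-N_{B_{i+2}}(p)-p$ is an induced $C_5$, so closing the cycle through $B_{i+2}$ cannot give a contradiction.

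\textbf{A related slip in condition (2).} Your first extension $d-N_{B_{i+2}}(d)-y-x$ is never induced. The vertex $y$ is complete to $B_{i-2}$, and also to $A_3(i-2)$ by Lemma \ref{lem:A_3(i-2) is complete to y}, whose proof does not use the present lemma; hence $yd\in E(G)$. Your backup $d-N_{B_{i+2}}(d)-q_{i+1}-q_i$ is the right extension. It gives an induced $P_7$ in every case, by the disjointness of neighbourhoods in $B_{i+2}$ from condition (3).

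\textbf{The fix.} Leave $q$ towards $x$ instead of back towards $B_{i-2}$. Take $e\in N_{B_{i+2}}(q)$ and $f\in N_{B_i}(x)$. Then $p-k-q-e-q_{i+1}-f-x$ is an induced $P_7$:
\begin{itemize}
\item $pe\notin E(G)$ by the disjointness in $B_{i+2}$;
\item $x$ is anticomplete to $K\cup A'_3(i-2)$ by Lemmas \ref{A1A2} and \ref{A1A3};
\item all other non-edges follow from the supports.
\end{itemize}

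This is the paper's argument, and it shows that Lemma \ref{lem:complete subgraphs} is unnecessary here. The paper takes any two non-adjacent $a,b\in N(K)\cap A'_3(i-2)$. These exist because every vertex of $N(K)\setminus A'_3(i-2)$ is universal in $N(K)$ and $G$ has no clique cutset. It then applies Lemma \ref{lem:linearsequence} to the induced sequence $a-K-b-N_{B_{i+2}}(b)-B_{i+1}-N_{B_i}(x)-x$.

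Your $C_4$ idea can also be rescued. First prove that $y$ is complete to $A_3(i-2)$, using the bad $P_7$ argument of Lemma \ref{lem:A_3(i-2) is complete to y}; then $p-k-q-y-p$ is an induced $C_4$.

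\textbf{The other case.} The case $N(K)\cap A_3\subseteq A_3(i-1)$ is not literally symmetric to the one you treat, since the reflection fixing $x$ and $y$ swaps $A_2(i-2)$ with $A_2(i+1)$. The paper glosses over this too. The same argument works there with $B_i$ in place of $B_{i+2}$, via the induced sequence $a-K-b-N_{B_i}(b)-q_{i+1}-q_{i+2}-y$.
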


\begin{proof}
    Suppose by contradiction that $A_2(i-2)\neq \emptyset$. Let $K$ be a component of $A_2(i-2)$. By Lemma \ref{A1A2} (2) and $A_1=A_1(i)$, $K$ has no neighbor in $A_1$. By Lemma \ref{lem:blowup-twoneighbor}, $N(K)\cap (H\cup A_5)$ is a clique. 
    Since $G$ has no clique cutsets, $K$ has a neighbor in $A_3$. 
    By Lemmas \ref{lem:neighbors of A_2(i) in A_3(i+1) and A_3(i)} and \ref{lem:anticompleteA3-1}, $N(K)\cap A_3\subseteq A_3(i-2)$ or  $N(K)\cap A_3\subseteq A_3(i-1)$. 

    Suppose first that $N(K)\cap A_3\subseteq A_3(i-2)$. Then $N(K)\subseteq B_{i-2}\cup B_{i-1}\cup A_3(i-2)\cup A_5$. Each vertex in $N(K)\cap A_5$ is universal in $N(K)$. 
    By Lemma \ref{lem:blowup-twoneighbor}, $N(K)\cap B_{i-1}$ and $N(K)\cap A'_3(i-2)$ are complete. Since $G$ has no clique cutsets, there exist $a,b\in N(K)\cap A'_3(i-2)$.
    By Lemma \ref{lem:neighbors of A_2(i) in A_3(i+1) and A_3(i)}, $a,b$ have a common neighbor in $B_{i-1}$ and so have disjoint neighborhoods in $B_{i+2}$. Then $a-K-b-(N(b)\cap B_{i+2})-B_{i+1}-(N(x)\cap B_i)-x$ is an induced sequence, a contradiction. 
    The argument for the case $N(K)\cap A_3\subseteq A_3(i-1)$ is symmetric.
    \end{proof}

\begin{lemma}\label{lem:A_3(i) is empty when A_1=A_1(i)}
    $A_3(i)=\emptyset$.
\end{lemma}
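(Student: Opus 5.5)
Suppose for a contradiction that $t\in A_3(i)$. The current setting gives the following facts. $A_1=A_1(i)$, and $x\in A_1(i)$ is adjacent to $y\in A_2(i+1)$. By Lemma \ref{lem:opposite A_1(i) and A_2(j)}, $y$ is complete to $B_{i+1}\cup B_{i+2}$, and by Lemma \ref{lem:B_{i-2} complete to B_{i+2}}, $B_{i-2}$ is complete to $B_{i+2}$. By Lemma \ref{anti-A2A3}, $ty\notin E(G)$, since $y\in A_2(i+1)$ and $t\in A_3(i)=A_3((i+1)-1)$. The plan is to locate a forbidden $P_7$, bad $P_7$, $C_6$ or $C_7$ through the path $t - B_{i+1} - y - x$ or $t - B_i - x - y$, splitting on whether $t$ is adjacent to $x$.

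\textbf{Case $tx\in E(G)$.} Here $t$ and $y$ are nonadjacent neighbours of $x$.
\begin{itemize}
\item If $t$ and $y$ have a common neighbour $c\in B_{i+1}$, then $t-x-y-c-t$ is an induced $C_4$. So $N_{B_{i+1}}(t)$ is anticomplete to $y$. This contradicts $y$ being complete to $B_{i+1}$, because $t$ has a neighbour in $B_{i+1}$.
\end{itemize}
So this case is immediately impossible.

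\textbf{Case $tx\notin E(G)$.} Pick $d\in N_{B_{i+1}}(t)$; then $dy\in E(G)$. Pick $a\in N_{B_i}(x)$.
\begin{itemize}
\item If $at\in E(G)$ and $ad\in E(G)$, we get no contradiction directly. Instead I would use Lemma \ref{lem:blowup-threeneighbor}(2) to find a non-neighbour of $t$ in $B_{i-1}$ or $B_{i+1}$ and build a long path.
\item One natural candidate is $x - y - d - t - N_{B_{i-1}}(t) - q_{i-2} - \cdots$. However, $q_{i-2}$ is adjacent to $y$ when $y$ is complete to $B_{i+2}$, so the path must be routed carefully.
\item A better candidate uses Lemma \ref{lem:blowup-threeneighbor}(3). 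Either $B_i\setminus N(t)$ is anticomplete to $N_{B_{i-1}}(t)$, or it is anticomplete to $N_{B_{i+1}}(t)$. Apply Lemma \ref{lem:single vertex in A_3(i)} in the appropriate orientation to get $q_i\in N(t)$ and, say, $q_{i-1}\notin N(t)$ or $q_{i+1}\notin N(t)$.
\end{itemize}

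\textbf{Subcase: $q_{i+1}\notin N(t)$.} Consider $y - x - a - \cdots$ together with $t$. The cycle $y-q_{i+1}-q_i-t-d-y$ has chords $q_{i+1}d$ and $q_id$, so it is not useful; I need another route.

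\textbf{Subcase: $q_{i-1}\notin N(t)$.} Consider
\[
x - y - q_{i+2} - q_{i-2} - q_{i-1} - \cdots,
\]
which fails because $y$ is adjacent to $q_{i-2}$. Alternatively consider
\[
q_{i-1} - q_i - t - d - y - x,
\]
with $x$ adjacent to $N_{B_i}(x)$. If $a\notin N(t)$ and $a$ is adjacent to $q_{i-1}$, this gives the cycle $a - q_{i-1}\cdots$.

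In the remaining case, $N_{B_i}(x)\subseteq N(t)$ and $x$ is not adjacent to $t$. Then $x - a - t$ together with $y - d$ yields the $5$-cycle $x-a-t-d-y-x$. This is an induced $C_5$ provided $ad\notin E(G)$ and $ay\notin E(G)$. If instead $ad\in E(G)$, then $x-a-d-y-x$ is an induced $C_4$. So we may assume $ad\notin E(G)$. I would then extend the $C_5$ using $N_{B_{i-1}}(t)$ and a vertex of $B_{i-2}$ to obtain a forbidden $C_6$/$C_7$ or a bad $P_7$.

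\textbf{Fallback: clique-cutset argument.} Should the direct paths fail, I would take a component $K$ of $A_3(i)$ and show that $N(K)$ is a clique. This gives a clique cutset, using:
\begin{itemize}
\item $A_2\subseteq A_2(i-1)\cup A_2(i)\cup A_2(i+2)$, by Lemma \ref{lem:A_2(i-2) is empty};
\item the anticomplete lemmas (\ref{anti-A2A3}, \ref{lem:anticompleteA3}, \ref{lem:anticompleteA3-1}) to restrict $N(K)$ to $A'_3(i)$, $B_{i\pm1}$, $A_1(i)$, $A_2(i-1)\cup A_2(i)$ and $A_5$;
\item Lemma \ref{lem:no common neighbors in A_1(i)} and Lemma \ref{lem:single vertex covers A_3(i)} to control $A_1(i)$.
\end{itemize}

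I expect the main obstacle to be the final subcase. Finding the right seventh vertex there requires the precise completeness between $B_{i+2}$ and $B_{i-2}$ and the position of $q_{i\pm1}$ relative to $t$.
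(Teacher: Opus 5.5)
There is a genuine gap, and it starts in your setup. The neighbour $y$ of $x$ cannot lie in $A_2(i+1)$, because by Lemma~\ref{A1A2}~(2) $A_1(i)$ is anticomplete to $A_2(i+1)\cup A_2(i-2)$. Proposition~\ref{prop:component of A_1(i)}, together with the absence of clique cutsets, forces a neighbour in $A_2(i+2)$, which is the one set the proposition excludes. The ``$A_2(i+1)$'' in the opening sentence of this subsection is a slip; the next lemma confirms this by obtaining, via Lemma~\ref{lem:opposite A_1(i) and A_2(j)}, that $y$ is complete to $B_{i-2}\cup B_{i+2}$.

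So $y\in A_2(i+2)$ is complete to $B_{i+2}\cup B_{i-2}$ and has no neighbour in $B_{i+1}$. Your case $tx\in E(G)$ therefore collapses: no $c\in B_{i+1}$ is adjacent to $y$, so there is no induced $C_4$ $t-x-y-c-t$. The fact $ty\notin E(G)$ does survive, but via Lemma~\ref{lem:anticompleteA3-1} ($A_3(i)$ is anticomplete to $A_2(i+2)$), not Lemma~\ref{anti-A2A3}. More seriously, the case $tx\notin E(G)$ is never closed even on your own premises. You try several paths, note that each fails, and defer to a clique-cutset argument that is only announced, not carried out. As written, the proposal contains no proof.

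The missing idea is to pivot on a non-neighbour $b\in B_i$ of $t$, which exists by Lemma~\ref{lem:blowup-threeneighbor}~(1).

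\begin{itemize}
\item Since $b,t\in A'_3(i)$ are non-adjacent, Lemma~\ref{lem:nonedge in A_3(i)} lets you assume that $b$ and $t$ have a common neighbour $c\in B_{i-1}$ and disjoint neighbourhoods in $B_{i+1}$. This is without loss of generality, because $\supp(y)=\{i-2,i+2\}$ makes the configuration symmetric under reflecting indices about $i$.
\item The path $t-c-b-N_{B_{i+1}}(b)-q_{i+2}-y-x$ is a bad $P_7$ unless $bx\in E(G)$.
\item The path $b-c-t-N_{B_{i+1}}(t)-q_{i+2}-y-x$ is a bad $P_7$ unless $tx\in E(G)$.
\item Every non-edge these two paths need comes from the anticomplete pairs of $H$, the supports of $x$ and $y$, the disjointness in $B_{i+1}$, and $ty\notin E(G)$. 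Both bad $P_7$s are excluded by Lemma~\ref{lem:badP7}.
\end{itemize}

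Hence $x$ is a common neighbour in $A_1(i)$ of the non-adjacent vertices $b,t\in A_3(i)\cup B_i$, contradicting Lemma~\ref{lem:no common neighbors in A_1(i)}. Your split on $tx$ is not the productive one: the second bad $P_7$ disposes of $tx\notin E(G)$ in one line, and the case $tx\in E(G)$ is finished only by bringing in $b$.
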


\begin{proof}
    Suppose not. Let $t\in A_3(i)$ and $b\in B_i$ be a non-neighbor of $t$. By Lemma \ref{lem:nonedge in A_3(i)}, we may assume that $b$ and $t$ have a common neighbor $c\in B_{i-1}$ and have disjoint neighborhoods in $B_{i+1}$. Since $t-c-b-(N(b)\cap B_{i+1})-q_{i+2}-y-x$ is not a bad $P_7$, $bx\in E(G)$. By symmetry, $tx\in E(G)$. But this contradicts Lemma \ref{lem:no common neighbors in A_1(i)}.
\end{proof}

Let $S=A_2(i-1)\cap N(A_1(i))$.

\begin{lemma}\label{lem:nbr of S in A_1(i)}
    For any vertex in $S$, it has a neighbor in $A_1(i)\cap N(A_2(i+2))$.  
\end{lemma}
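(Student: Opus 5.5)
The plan is to pass from the single neighbor of $s$ in $A_1(i)$ to the whole component of $A_1(i)$ containing it, and then use the absence of clique cutsets to force an edge from that component to $A_2(i+2)$. The witness $y$ fixed above is a neighbor in $A_2(i+2)$ in the sense of Proposition \ref{prop:component of A_1(i)}, and Lemma \ref{lem:opposite A_1(i) and A_2(j)} is applied to it as a vertex of $A_2(i+2)$.

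Let $s\in S$, and let $u\in A_1(i)$ be a neighbor of $s$. Let $K$ be the component of $G[A_1(i)]$ containing $u$.

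\emph{Step 1: $s$ is complete to $K$.} Since $s\in A_2(i-1)$, Lemma \ref{lem:A2i1tocompA1} applied to the connected subgraph $K$ of $A_1(i)$ says that $s$ is either complete or anticomplete to $K$. As $su\in E(G)$, $s$ is complete to $K$.

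\emph{Step 2: $K$ has a neighbor in $A_2(i+2)$.} Because $K$ is a full component of $G[A_1(i)]$, we have $N(K)\cap A_1(i)=\emptyset$. So by Proposition \ref{prop:component of A_1(i)}, the set $N(K)\setminus A_2(i+2)$ is a clique. Suppose $K$ had no neighbor in $A_2(i+2)$. Then $N(K)$ itself is a clique. Moreover $N(K)$ separates $K$ from $V(H)\setminus B_i$, which is nonempty and anticomplete to $A_1(i)$. Hence $N(K)$ is a clique cutset, contradicting that $G$ is basic.

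\emph{Step 3: conclusion.} By Step 2 there are $v\in K$ and $y'\in A_2(i+2)$ with $vy'\in E(G)$, so $v\in A_1(i)\cap N(A_2(i+2))$. By Step 1, $sv\in E(G)$, so $v$ is the required neighbor of $s$.

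I do not expect a real obstacle. The only point needing care is Step 2: one must apply Proposition \ref{prop:component of A_1(i)} to the whole component of $A_1(i)$ rather than to an arbitrary connected subgraph, so that $N(K)$ contains no vertices of $A_1(i)$. With that, the clique-cutset argument goes through directly.
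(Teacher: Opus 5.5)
Your proof is correct, but it takes a different route from the paper's.

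The paper argues locally from the given neighbor $a\in A_1(i)$ of $s$. Lemma \ref{lem:clique components of A_1(i)} gives a neighbor $a'$ of $a$ in $A_1(i)\cap N(A_2(i+2))$. If $sa'\notin E(G)$, then $b-s-a-a'-(N(a')\cap A_2(i+2))-q_{i+2}-q_{i+1}$, with $b\in N_{B_{i-1}}(s)$, is an induced $P_7$.

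You instead proceed in two steps:
\begin{itemize}
\item the homogeneity Lemma \ref{lem:A2i1tocompA1} makes $s$ complete to the whole component $K$ of $A_1(i)$ containing its neighbor;
\item the clique-cutset consequence of Proposition \ref{prop:component of A_1(i)} shows that $K$ meets $N(A_2(i+2))$.
\end{itemize}
The second step is the same deduction the paper uses implicitly to produce $y$ at the start of the subsection, and again in Lemmas \ref{lem:nbr of component of A_1(i-1) in A_2(i-2)} and \ref{lem:A_2^1(i-2) is universal in A_2(i-2)}.

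Your route has lighter dependencies. It does not need Lemma \ref{lem:clique components of A_1(i)}, so it avoids the key technical Lemma \ref{lem:complete subgraphs} and the choice of $C$ minimizing $|S_5|$ that this lemma relies on. It also gives a stronger conclusion: $s$ is complete to every component of $A_1(i)$ it touches, and each such component contains a vertex of $N(A_2(i+2))$. In particular this already implies $sa'\in E(G)$, since $a'$ lies in the component of $a$.

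The paper's argument buys only locality, via a single forbidden $P_7$. As written, its contradiction hypothesis should really be that $s$ has no neighbor in $A_1(i)\cap N(A_2(i+2))$, since that is what justifies $sa'\notin E(G)$. Your direct argument does not need any such hypothesis.
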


\begin{proof}
    Let $s\in S$ and $a\in A_1(i)$ be a neighbor of $s$. Suppose by contradiction that $a\notin A_1(i)\cap N(A_2(i+2))$. Let $b\in B_{i-1}$ be a neighbor of $s$. By Lemma \ref{lem:clique components of A_1(i)}, $a$ has a neighbor $a'\in A_1(i)\cap N(A_2(i+2))$. Since $sa'\notin E(G)$, $b-s-a-a'-(N(a')\cap A_2(i+2))-q_{i+2}-q_{i+1}$ is an induced $P_7$. 
\end{proof}

\begin{lemma}\label{lem:two non-adjacent neighbors}
    For any component $K$ of $A_2(i-1)\setminus S$, $K$ has two non-adjacent neighbors in $A'_3(i-1)$ complete to $K$.
\end{lemma}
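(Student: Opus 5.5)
The plan is to apply Lemma \ref{lem:complete subgraphs} to $K$ with $X=A'_3(i-1)$ and $Y=B_{i+1}$. This mirrors Lemma \ref{lem:clique components of A_1(i)} and the proof of Lemma \ref{lem:simplicial vertex in Y}.

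First I would determine $N(K)$. By the definition of $S$, $K$ is anticomplete to $A_1=A_1(i)$. By Lemmas \ref{A2A2} and \ref{lem:anticompleteA3-1}, $K$ is anticomplete to $A_2(i\pm1)\cup A_2(i+2)$, and $A_2(i-2)=A_2(i+1)=\emptyset$ by Lemma \ref{lem:A_2(i-2) is empty}. By Lemma \ref{anti-A2A3}, $K$ is anticomplete to $A_3(i+1)\cup A_3(i-2)$. The neighbors of $K$ in $A_3$ then lie in $A_3(i-1)\cup A_3(i)$, and $A_3(i)=\emptyset$ by Lemma \ref{lem:A_3(i) is empty when A_1=A_1(i)}. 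Hence
\[N(K)\subseteq B_{i-1}\cup B_i\cup S\cup A_3(i-1)\cup A_5.\]

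Next I would verify condition (1) of Lemma \ref{lem:complete subgraphs}, namely that every vertex of $N(K)\setminus A'_3(i-1)$ is universal in $N(K)$.
\begin{itemize}
\item Vertices of $A_5$ are handled by Lemma \ref{lem:blowup-fiveneighbor1}, together with a $C_4$ argument against $S$: a nonadjacent pair $u\in A_5$, $s\in S$ would have a common neighbor in $B_{i-1}$, and then a common neighbor in $K$ would give a $C_4$.
\item $N(K)\cap B_i$ is complete to $N(K)\cap A'_3(i-1)$ by Lemma \ref{lem:blowup-twoneighbor}.
\item For $s\in S$ I would use Lemma \ref{lem:nbr of S in A_1(i)}: $s$ has a neighbor $a\in A_1(i)$, and $a$ has a neighbor in $A_2(i+2)$. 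If $s$ is nonadjacent to some $t\in N(K)\cap (A'_3(i-1)\cup B_i\cup S)$, I would build a path $t-K-s-a-(N(a)\cap A_2(i+2))-q_{i+2}-\cdots$, or an induced cyclic sequence through $B_{i+1}$, to get a bad $P_7$ or a long hole.
\end{itemize}
I expect this step, the universality of $S$, to be the main obstacle. It requires case splitting on whether $t$ and $s$ share a neighbor in $K$ or only mix on it.

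For condition (2) I need that there is no induced $P_4=a-b-c-d$ with $d\in A'_3(i-1)$, $c\in K$ and $a,b\in K\cup N(K)$. Such a path would extend as $a-b-c-d-N_{B_{i-2}}(d)-q_{i+2}-q_{i+1}$ to an induced $P_7$. The points to check are that $a$ and $b$ avoid $B_{i-2}$, $B_{i+2}$ and $B_{i+1}$, which holds since they lie in $K\cup S\cup A_3(i-1)$, and that $d$ has a private neighbor in $B_{i-2}$; the latter is given by Lemma \ref{lem:blowup-twoneighbor} and $C_4$-freeness.

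For condition (3), take two nonadjacent vertices of $N(K)\cap A'_3(i-1)$. By Lemma \ref{lem:blowup-twoneighbor} they have a common neighbor in $B_i$, so by Lemma \ref{lem:nonedge in A_3(i)} their neighborhoods in $B_{i-2}$ are disjoint. These neighborhoods are complete to each other because $B_{i-2}$ is a clique. So condition (3) holds with $Y=B_{i-2}$ instead of $B_{i+1}$; $B_{i-2}$ is anticomplete to $K$.

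Lemma \ref{lem:complete subgraphs} then gives two nonadjacent vertices of $N(K)\cap A'_3(i-1)$ that are complete to $K$, which is the statement.
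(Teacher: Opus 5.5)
Your plan is the paper's: apply Lemma~\ref{lem:complete subgraphs} to $K$ with $X=A'_3(i-1)$ and $Y=B_{i-2}$. The $Y=B_{i+1}$ in your first line cannot work, since vertices of $A'_3(i-1)$ have no neighbors in $B_{i+1}$, but you correct this later. The following parts are fine:
\begin{itemize}
\item the resulting bound $N(K)\subseteq B_{i-1}\cup B_i\cup S\cup A_3(i-1)\cup A_5$;
\item your use of Lemma~\ref{lem:blowup-twoneighbor} for $N(K)\cap B_i$;
\item your checks of conditions (2) and (3).
\end{itemize}
In (2), restrict $a,b$ to $K\cup(N(K)\cap X)$, as the lemma requires. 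For $a\in A_5$ the extension to a $P_7$ fails, because $aq_{i+2}\in E(G)$. Vertices of $B_{i-1}$ can then occur among $a,b$; they are handled by your own argument, since any two non-adjacent vertices of $N(K)\cap A'_3(i-1)$ share a neighbor in $B_i$ and so have disjoint neighborhoods in $B_{i-2}$.

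The step that fails as written is the $A_5$ bullet. Take the non-adjacent pair $u\in A_5$, $s\in S$ with common neighbors $b\in B_{i-1}$ and $k\in K$. Then $u-b-s-k-u$ is an induced $C_4$ only if $bk\notin E(G)$, and this is not guaranteed: vertices of $K\subseteq A_2(i-1)$ have neighbors in $B_{i-1}$, and $N_{B_{i-1}}(s)\subseteq N_{B_{i-1}}(k)$ is possible. You also have not shown, at that point, that such a $k$ exists. The repair is to replace $k$ by a neighbor $s'\in A_1(i)$ of $s$. By Lemma~\ref{lem:A_5 is complete to A_1}, $us'\in E(G)$, and $s'b\notin E(G)$, so $u-s'-s-b-u$ is an induced $C_4$.

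The universality of $S$, which you leave as a sketch with a case split, closes quickly if you first show that every $s\in N(K)\cap S$ is complete to $K$. Take $s'\in A_1(i)$ and $r\in N(s')\cap A_2(i+2)$ from Lemma~\ref{lem:nbr of S in A_1(i)}; note $rq_{i+2}\in E(G)$ by Lemma~\ref{lem:opposite A_1(i) and A_2(j)}. If $s$ mixed on an edge $k_1k_2$ of $K$, then $k_1-k_2-s-s'-r-q_{i+2}-q_{i+1}$ would be an induced $P_7$. Hence any $t\in N(K)$ non-adjacent to $s$ shares a neighbor $k\in K$ with $s$. $C_4$-freeness then forces $ts'\notin E(G)$, which already rules out $t\in A_5$. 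For the remaining cases, $t-k-s-s'-r-q_{i+2}-q_{i+1}$ is a bad $P_7$. This is exactly the paper's argument. No cyclic sequence through $B_{i+1}$ is needed, and the $A_5$ case follows without a separate argument.
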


\begin{proof}
    Note that $N(K)\subseteq S\cup A_5\cup B_i\cup A'_3(i-1)$ by Lemma \ref{lem:A_3(i) is empty when A_1=A_1(i)}. We first show that each vertex in $N(K)\cap S$ is universal in $N(K)$.
    Suppose by contradiction that $s\in N(K)\cap S$ is not adjacent to $t\in N(K)$. 
    By Lemma \ref{lem:nbr of S in A_1(i)}, let $s'\in A_1(i)\cap N(A_2(i+2))$ be a neighbor of $s$. 
    If $s$ mixes on $ab\in K$, then $a-b-s-s'-(N(s')\cap A_2(i+2))-q_{i+2}-q_{i+1}$ is an induced $P_7$.
    So $s$ is complete to $K$ and thus
   $s$ and $t$ have a common neighbor $u\in K$. Since $G$ is $C_4$-free, $ts'\notin E(G)$ and thus $t\notin A_5$. So $t\in S\cup B_i\cup B_{i-1}\cup A_3(i-1)$ and then $t-u-s-s'-(N(s')\cap A_2(i+2))-q_{i+2}-q_{i+1}$ is a bad $P_7$. This shows that each vertex in $N(K)\cap S$ is universal in $N(K)$.
   This implies that each vertex in $N(K)\cap A_5$ is universal in $N(K)$. If $s\in N(K)\cap B_i$ is not adjacent to $t\in N(K)\cap A'_3(i-1)$, then $t-K-s-q_{i+1}-q_{i+2}-(N(t)\cap B_{i-2})-t$ is an induced cyclic sequence. So each vertex in $N(K)\cap B_i$ is universal in $N(K)$. Therefore, each vertex in $N(K)\setminus A'_3(i-1)$ is universal in $N(K)$.

   By Lemma \ref{lem:neighbors of components of A_2(i) in A_3(i+1)}, each pair of non-adjacent vertices in $N(K)\setminus A'_3(i-1)$ has a common neighbor in $K$ and so has disjoint neighborhoods in $B_{i-2}$. If $K\cup A'_3(i-1)$ contains an induced $P_4=a-b-c-d$ with $d\in N(K)\cap A'_3(i-1)$ and $c\in K$ and $a,b\in (N(K)\cap A'_3(i-1))\cup K$, then $a-b-c-d-(N(d)\cap B_{i-2})-q_{i+2}-q_{i+1}$ is an induced $P_7$. So $K\cup A'_3(i-1)$ contains no such induced $P_4$s.

   Therefore, all three conditions of Lemma \ref{lem:complete subgraphs} are satisfied for $K$ where $X=A'_3(i-1)$ and $Y=B_{i-2}$. This completes the proof.
\end{proof}

\begin{lemma}\label{lem:A_2(i-1) or A_2(i) is empty}
    One of $A_2(i-1)$ and $A_2(i)$ is empty.
\end{lemma}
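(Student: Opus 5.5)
We argue by contradiction: suppose $a\in A_2(i-1)$ and $b\in A_2(i)$. Recall the setting: $A_1=A_1(i)$ contains $x$ with a neighbor $y\in A_2(i+2)$, and $y$ is complete to $B_{i+2}\cup B_{i-2}$ by Lemma \ref{lem:opposite A_1(i) and A_2(j)}. We also have $A_2(i-2)=A_2(i+1)=\emptyset$ and $A_3(i)=\emptyset$. By Lemma \ref{A2A2}, $ab\notin E(G)$. By Lemma \ref{lem:basic properties of components of A_1(i)} (1), every component of $A_1(i)$ is anticomplete to $A_2(i-1)\cup A_2(i)$, so $S=\emptyset$. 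By symmetry the analogous set for $A_2(i)$ is empty as well.

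The first step is to exploit $x$ and $y$ directly and look for a bad $P_7$ or long hole. Take $a'\in N_{B_{i-1}}(a)$ and $c\in N_{B_i}(a)$. By Lemma \ref{A1A2} (2) and Lemma \ref{lem:anticompleteA3-1}, $x$ and $y$ are nonadjacent to $a$, and likewise to $b$ ($y\in A_2(i+2)$ versus $A_2(i)$). So we consider the path
\[a-a'-q_{i-2}-y-x-N_{B_i}(x)\]
and its mirror through $b$ and $B_{i+1}$, and try to close them into a $C_6$ or $C_7$ or extend them to a $P_7$. In particular, if $N_{B_i}(x)$ avoids $N(a)$, then $a-c-\cdots$ together with $N_{B_i}(x)$ should yield a bad $P_7$. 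We therefore expect to show that $N_{B_i}(a)$ and $N_{B_i}(b)$ both meet $N_{B_i}(x)$, and, using $C_4$-freeness and Lemma \ref{lem:blowup-twoneighbor}, that $a$ and $b$ have a common neighbor in $B_i$.

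The second step derives a contradiction from that common neighbor. With $d\in N_{B_i}(a)\cap N_{B_i}(b)$, consider $a-d-b-N_{B_{i+1}}(b)-q_{i+2}-q_{i-2}-N_{B_{i-1}}(a)$. This has to be a hole, since $N_{B_{i+1}}(b)$ is anticomplete to $N_{B_{i-1}}(a)$ and $a,b$ are anticomplete to $B_{i+2}\cup B_{i-2}$. If $d$ is nonadjacent to one of the chosen $B_{i\pm1}$ vertices, we get a $C_7$. Otherwise we pick the neighbors to avoid chords: $d$ complete to both makes $a-d-b-\cdots$ contain a $C_6$ after dropping $d$'s chords appropriately.

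If the direct argument fails, we fall back on the clique cutset argument. Take a component $K$ of $A_2(i-1)$. Its neighborhood lies in $B_{i-1}\cup B_i\cup A'_3(i-1)\cup A_5$, since $A_3(i)=\emptyset$ and $S=\emptyset$. By Lemma \ref{lem:two non-adjacent neighbors}, $K$ has two nonadjacent neighbors $t,t'\in A'_3(i-1)$ complete to $K$, and these have disjoint neighborhoods in $B_{i-2}$. Then $t-K-t'-N_{B_{i-2}}(t')-y-x$, combined with a vertex of $A_2(i)$ or $B_{i+1}$, should produce an induced $P_7$.

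The main obstacle is the adjacency of $x$ with $B_i$ relative to $N(a)$ and $N(b)$. We would first use Lemma \ref{lem:no common neighbors in A_1(i)} and Lemma \ref{lem:single vertex in A_3(i)} to pin down that adjacency, and then case-split on whether $t,t'\in B_{i-1}$ or $A_3(i-1)$.
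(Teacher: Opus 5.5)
Your proposal has a genuine gap: neither of the two main steps yields a contradiction.

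The second step is false. Suppose $d\in N_{B_i}(a)\cap N_{B_i}(b)$. Apply Lemma~\ref{lem:blowup-twoneighbor} to $\{a\}\subseteq A_2(i-1)$ and to $\{b\}\subseteq A_2(i)$. This forces $d$ to be adjacent to every vertex of $N_{B_{i-1}}(a)$ and of $N_{B_{i+1}}(b)$. Take $a''\in N_{B_{i-1}}(a)$ and $b''\in N_{B_{i+1}}(b)$. The seven vertices $a,d,b,b'',q_{i+2},q_{i-2},a''$ then induce the $5$-hole $d-b''-q_{i+2}-q_{i-2}-a''-d$, together with $a$ and $b$, each attached to an edge of it. This configuration contains no $C_6$, $C_7$ or $P_7$, and no choice of neighbours or ``dropping chords'' produces one. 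The first step, that $a,b$ must share a neighbour in $B_i$, is only conjectured, and tracking $N_{B_i}(x)$ does not lead anywhere. A small citation point as well: $xa\notin E(G)$ comes from Lemma~\ref{lem:basic properties of components of A_1(i)}~(1), not from Lemma~\ref{A1A2}~(2).

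Your fallback has the right ingredient but routes the path the wrong way. Neither $x$ nor $y$ has a neighbour in $A_2(i)\cup B_{i+1}$, so $t-K-t'-N_{B_{i-2}}(t')-y-x$ cannot be extended to a vertex of $A_2(i)$. Drop $x$ and $y$ entirely and argue as follows:
\begin{itemize}
\item Let $v\in A_2(i-1)$ and $w\in A_2(i)$. Since $S=\emptyset$, Lemma~\ref{lem:two non-adjacent neighbors} applies to the component of $v$ and gives non-adjacent $t,t'\in A'_3(i-1)$ complete to it.
\item Take $s\in N_{B_{i-2}}(t')$. Then $ts\notin E(G)$ by $C_4$-freeness.
\item Take $r\in N_{B_{i+1}}(w)$.
\end{itemize}
Then $t-v-t'-s-q_{i+2}-r-w$ is an induced $P_7$. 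The non-edges follow from the supports, from $vw\notin E(G)$ (Lemma~\ref{A2A2}), and from $A_3(i-1)$ being anticomplete to $A_2(i)$ (Lemma~\ref{anti-A2A3}). This is the paper's proof. It needs no information about $x$ and no case split on whether $t,t'$ lie in $B_{i-1}$ or in $A_3(i-1)$.
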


\begin{proof}
    Suppose not. Let $v\in A_2(i-1)$ and $w\in A_2(i)$. 
    By Lemma \ref{lem:basic properties of components of A_1(i)} (1), $A_1(i)$ is anticomplete to $A_2(i)\cup A_2(i-1)$.
    By Lemma \ref{lem:two non-adjacent neighbors}, $v$ has two non-adjacent neighbors $a,b$ in  $B_{i-1}\cup A_3(i-1)$. Since $G$ is $C_4$-free, $a,b$ do not have a common neighbor in $B_{i-2}$. Then $a-v-b-(N(b)\cap B_{i-2})-q_{i+2}-(N(w)\cap B_{i+1})-w$ is an induced $P_7$.
\end{proof}

\begin{lemma}\label{lem:A_3(i-2) or A_3(i+2) is empty}
    Either $A_3(i+2)=\emptyset$ or $A_3(i-2)=\emptyset$.
\end{lemma}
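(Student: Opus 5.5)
Suppose for a contradiction that $t\in A_3(i+2)$ and $t'\in A_3(i-2)$. We are in the setting $A_1=A_1(i)$, where $x\in A_1(i)$ is adjacent to $y\in A_2(i+2)$ (the paper writes $A_2(i+1)$ at that point, but the edge $xy$ must be an edge between $A_1(i)$ and $A_2(i+2)$ by Proposition \ref{prop:component of A_1(i)}). By Lemma \ref{lem:opposite A_1(i) and A_2(j)}, $y$ is complete to $B_{i+2}\cup B_{i-2}$, and by Lemma \ref{lem:B_{i-2} complete to B_{i+2}}, $B_{i-2}$ is complete to $B_{i+2}$.

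\textbf{Step 1: relations among $t,t',y$.} Lemma \ref{lem:blowup-threeneighbor} (3), combined with the completeness of $B_{i-2}$ to $B_{i+2}$, gives two facts:
\begin{itemize}
\item $B_{i+2}\setminus N(t)$ is anticomplete to $N_{B_{i+1}}(t)$;
\item symmetrically, $B_{i-2}\setminus N(t')$ is anticomplete to $N_{B_{i-1}}(t')$.
\end{itemize}
Lemma \ref{lem:single vertex in A_3(i)} (1) then yields $q_{i+2}\in N(t)$, $q_{i+1}\notin N(t)$, $q_{i-2}\in N(t')$ and $q_{i-1}\notin N(t')$. By Lemma \ref{lem:anticompleteA3}, $tt'\notin E(G)$. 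By Lemma \ref{lem:compare nbd of different A_3(i) in B_j}, $N_{B_{i-2}}(t)\subseteq N(t')$ and $N_{B_{i+2}}(t')\subseteq N(t)$. Finally, $t$ and $y$ cannot be adjacent: if they were, $y-t-(N(t)\cap B_{i+1})-\cdots$ together with Lemma \ref{lem:blowup-twoneighbor}-type arguments would put $t$ in a forbidden position. I expect to settle this directly with a $C_4$ on $t,y,q_{i+2}$ and a common neighbor, or otherwise by Lemma \ref{A1A3}-style reasoning. By Lemma \ref{A1A3}, $x$ is anticomplete to $\{t,t'\}$.

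\textbf{Step 2: locate a forbidden path.} The natural candidate is a long induced path that runs from $t$ through $B_{i+1}$ and $B_i$ to $x$, then to $y$, and then into $B_{i-2}\setminus N(t')$ or to $t'$. Concretely, I would take $a\in N_{B_{i+1}}(t)$, $c\in B_i$ adjacent to $a$ and to $x$, and consider the path
\[t-a-c-x-y-\cdots\]
I would continue it either to $t'$ through $N_{B_{i-2}}(t')\setminus N(t)$, or to the vertices $q_{i-1}$ and $(B_{i-2}\setminus N(t'))$. For the second alternative, recall that $y$ is complete to $B_{i\pm2}$ and $q_{i-1}$ is not adjacent to $t'$.

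\textbf{Step 3: the main obstacle.} The hard part is the chords between $t$ and $B_{i-2}$ and between $y$ and $t,t'$. This forces a case split on whether $x$ has a neighbor in $N_{B_i}(t)$ and whether $t'$ is complete to $B_{i-2}$. Where no direct path works, I would fall back on the small vertex argument. Lemma \ref{lem:small vertex argument for simplicial vertices in A_3(i-2)} applies to minimal simplicial vertices of $A_3(i\pm2)$, provided I first check that they have no neighbors in $A_1\cup A_2$. This gives large disjoint pieces of $B_{i+1}$ and $B_{i-1}$. Combined with Lemma \ref{lem:smv on w}, which handles non-neighbors in $B_{i\pm2}$, and with $B_{i-2}$ being complete to $B_{i+2}$, the aim is to produce four pairwise complete sets, each of size $>\ceil{\frac{\omega}{4}}$, inside a clique such as
\[B_{i-2}\cup B_{i+2}\cup N(y)\cap\cdots,\]
which would exceed $\omega$.
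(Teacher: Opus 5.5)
There is a genuine gap: the proposal never exhibits a contradiction, and the route it outlines cannot produce one. Your Step~1 facts are correct, as is your reading $y\in A_2(i+2)$.

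\textbf{The claim $ty\notin E(G)$ is false here.} By your first bullet and Lemma~\ref{lem:blowup-threeneighbor}(1), $t$ has both a neighbor and a non-neighbor in the clique $B_{i+1}$. So $t$ mixes on an edge $ee'$ of $B_{i+1}$ with $e't\in E(G)$ and $et\notin E(G)$. Suppose $ty\notin E(G)$, and take $u\in N_{B_{i-2}}(t)$ and $w\in N_{B_i}(x)$. Then $e-e'-t-u-y-x-w$ is a bad $P_7$. This is exactly the argument of Lemma~\ref{lem:A_3(i-2) is complete to y}, which does not depend on the present lemma. Hence $ty\in E(G)$, and by the same argument $t'y\in E(G)$.

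\textbf{Consequences for your Steps~2 and~3.}
Since $ty\in E(G)$, your candidate $t-a-c-x-y$ closes up into an induced $C_5$. No continuation through $y$ towards $t'$ or $B_{i-2}$ can be induced.

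The fallback in Step~3 also fails. Lemma~\ref{lem:small vertex argument for simplicial vertices in A_3(i-2)} requires the vertex to have no neighbor in $A_1\cup A_2$. Lemma~\ref{lem:smv on w} requires $B_{i\pm 2}$ to be anticomplete to $A_1\cup A_2$. But every vertex of $A_3(i\pm2)$ is adjacent to $y$, and $y$ is complete to $B_{i\pm2}$. You also never identify the four large, pairwise complete sets.

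\textbf{The missing idea.} The vertices $x$ and $y$ play no role. Once $B_{i-2}$ is complete to $B_{i+2}$, an induced $P_7$ already lives in $H\cup\{t,t'\}$, and your own Step~1 facts give it. Use fresh letters.
\begin{itemize}
\item Take $b\in B_{i+2}\setminus N(t)$, a neighbor $p\in B_{i+1}$ of $b$, and $q\in N_{B_{i+1}}(t)$. Your first bullet gives $pt,bq\notin E(G)$, so $b-p-q-t$ is an induced $P_4$.
\item Symmetrically, take $a\in B_{i-2}\setminus N(t')$, $c\in N_{B_{i-1}}(a)$ and $d\in N_{B_{i-1}}(t')$. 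Your second bullet makes $a-c-d-t'$ an induced $P_4$.
\item Since $ab\in E(G)$, $t-q-p-b-a-c-d$ is a path whose only possible chord is $ta$.
\item If $ta\in E(G)$, then $a\in N_{B_{i-2}}(t)\subseteq N(t')$ by Lemma~\ref{lem:compare nbd of different A_3(i) in B_j}. This contradicts the choice of $a$, so the path is an induced $P_7$.
\end{itemize}

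The paper's proof differs slightly. It obtains the two $P_4$s from Lemma~\ref{lem:nonedge in A_3(i)}. Rather than excluding $ta$, it uses $P_7$-freeness on $t-q-p-b-a-c-d$ to force $ta\in E(G)$. It then exhibits the induced $P_7$ $t'-d-c-a-t-q-p$.
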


\begin{proof}
    Suppose not. Let $s\in A_3(i-2)$ and $t\in A_3(i+2)$. Let $a\in B_{i-2}$ be a non-neighbor of $s$ and  $b\in B_{i-2}$ be a non-neighbor of $t$.
    By Lemmas \ref{lem:nonedge in A_3(i)} and \ref{lem:B_{i-2} complete to B_{i+2}}, there exist vertices $c,d\in B_{i-1}$ such that $a-c-d-s$ is an induced $P_4$ and  there exist vertices $p,q\in B_{i+1}$ such that $b-p-q-t$ is an induced $P_4$. Since $t-q-p-b-a-c-d$ is not an induced $P_7$, $at\in E(G)$. Then $s-d-c-a-t-q-p$ is an induced $P_7$. 
\end{proof}

Let $T=N(A_1(i))\cap A_2(i+2)$ and $T_0=A_2(i+2)\setminus T$.

\begin{lemma}\label{lem:A_3(i-2) is complete to y}
    The following holds for $A_3(i-2)$. 
    \begin{itemize}
        \item[$(1)$] $A_3(i-2)$ is complete to $T$.
        \item[$(2)$] $A_3(i-2)$ is complete to $B_{i+2}$.
    \end{itemize}
This implies by $C_4$-freeness that $T$ or $A'_3(i-2)$ is a clique.
\end{lemma}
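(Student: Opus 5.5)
The plan is to prove (1) first and then derive (2) from (1) together with Lemma \ref{lem:blowup-twoneighbor} and Lemma \ref{lem:B_{i-2} complete to B_{i+2}}. The global setting is $A_1=A_1(i)$. We also have $xy\in E(G)$ with $y\in A_2(i+2)$; note that the $A_2(i+1)$ in the text preceding Lemma \ref{lem:B_{i-2} complete to B_{i+2}} should read $A_2(i+2)$, by Proposition \ref{prop:component of A_1(i)}. By Lemma \ref{lem:opposite A_1(i) and A_2(j)}, $y$ is complete to $B_{i+2}\cup B_{i-2}$.

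\emph{Step (1).} Let $t\in A_3(i-2)$ and $a\in T$, and suppose $ta\notin E(G)$. Pick $a'\in A_1(i)$ adjacent to $a$. By Lemma \ref{A1A3}, $ta'\notin E(G)$, and $a'$ is anticomplete to $B_{i-2}\cup B_{i+2}$. By Lemma \ref{lem:opposite A_1(i) and A_2(j)}, $a$ is complete to $B_{i-2}\cup B_{i+2}$. Two cases, according to where $t$ meets $a$'s neighbourhood:
\begin{itemize}
\item If $t$ has a neighbour $c\in B_{i-2}$ (always true, since $t\in A_3(i-2)$), then $a$ and $t$ share $c$. Take $d\in N_{B_{i-1}}(t)$ and $e\in N_{B_i}(a')$; neither $a$ nor $a'$ sees $d$. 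This gives the sequence $t-d-(B_{i-1}\cap N(e))-e-a'-a-c-t$, and the ends need to be checked against the cycle $t-c-a-a'-e-\cdots-d-t$.
\item More concretely, consider the cycle $t-d-q_i'-e-a'-a-c-t$, where $q_i'$ is a common neighbour of $d$ and $e$ in $B_{i-1}$ or $B_i$. After adjusting which of $d$ and $e$ carries the connection, this is an induced $C_6$ or $C_7$. If $d$ and $e$ are adjacent, it is $t-d-e-a'-a-c-t$, an induced $C_6$, provided $ce\notin E(G)$ and $de\ldots$; non-adjacency between $B_{i-2}$ and $B_i$ holds automatically.
\end{itemize}
So I would use the cyclic sequence $t-N_{B_{i-1}}(t)-B_i\text{-path}-a'-a-c-t$ and argue it is an induced cyclic sequence of length at least $6$, contradicting Lemma \ref{lem:cyclicsequence}. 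The edges to verify are these: $a$ is anticomplete to $B_{i-1}\cup B_i$; $a'$ is anticomplete to $B_{i-1}$; $t$ is anticomplete to $B_i$ and to $a'$; $c\in B_{i-2}$ is anticomplete to $B_i$, to $a'$, and to $B_{i-1}\ldots$. The last is the snag, since $c$ may see the chosen vertex of $B_{i-1}$. Replacing $c$ by a vertex of $B_{i+2}$ seen by $t$ avoids this: $a$ is complete to $B_{i+2}$, and $B_{i+2}$ is anticomplete to $B_{i-1}\cup B_i$. The cycle is then $t-f-a-a'-e-(B_{i-1}\text{ or }q_{i-1})-d-t$ with $f\in N_{B_{i+2}}(t)$. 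If $t$ has no neighbour in $B_{i+2}$ the situation is even easier; but $t\in A_3(i-2)$ does meet $B_{i+2}$.

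\emph{Step (2).} By (1) and $y\in T$, $y$ is complete to $A_3(i-2)$. Apply Lemma \ref{lem:blowup-twoneighbor} to $K=\{y\}\subseteq A_2(i+2)$: $N_{A'_3(i+2)}(y)$ is complete to $N_{A'_3(i-2)}(y)\supseteq A_3(i-2)$. Since $y$ is complete to $B_{i+2}$, this gives that $A_3(i-2)$ is complete to $B_{i+2}$.

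\emph{Final claim.} If both $T$ and $A'_3(i-2)$ contain non-edges, say $a_1a_2\notin E(G)$ in $T$ and $t_1t_2\notin E(G)$ in $A'_3(i-2)$, then by (1), (2) and the completeness of $T$ to $B_{i-2}$ we get $a_1-t_1-a_2-t_2-a_1$ as an induced $C_4$.

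The main obstacle is Step (1): choosing the right connecting vertices so that the cyclic sequence is genuinely induced, particularly whether $d\in N_{B_{i-1}}(t)$ and $e\in N_{B_i}(a')$ are adjacent. I would split on that adjacency, obtaining a $C_6$ or a $C_7$ (using a $B_{i-1}$–$B_i$ path), and use the bad-$P_7$ Lemma \ref{lem:badP7} as a fallback.
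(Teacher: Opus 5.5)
Your proof is correct. For (2) and for the closing $C_4$ remark it matches the paper: Lemma \ref{lem:blowup-twoneighbor} applied to $\{y\}$, and completeness of $T$ to $A'_3(i-2)$ via (1) and Lemma \ref{lem:opposite A_1(i) and A_2(j)}. You are also right that the $A_2(i+1)$ before Lemma \ref{lem:B_{i-2} complete to B_{i+2}} should read $A_2(i+2)$.

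For (1) you take a genuinely different route. Both arguments begin with the same induced path $t-f-a-a'-e$, where $f\in N_{B_{i+2}}(t)$ and $e\in N_{B_i}(a')$.

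\emph{The paper's route.} It extends this path backwards into $B_{i-1}$. First it uses Lemma \ref{lem:B_{i-2} complete to B_{i+2}} and Lemma \ref{lem:nonedge in A_3(i)} to show that $t$ has a non-neighbour in $B_{i-1}$. Hence $t$ mixes on an edge $a_0b_0$ of $B_{i-1}$, and $a_0-b_0-t-f-a-a'-e$ is a bad $P_7$.

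\emph{Your route.} You close the path into a hole instead. The sequence $t-f-a-a'-B_i-B_{i-1}-t$ is an induced cyclic sequence of length $6$: every non-consecutive pair is anticomplete by the supports, by $ta\notin E(G)$, and by Lemma \ref{A1A3}. Lemma \ref{lem:cyclicsequence} then gives a hole of length at least $6$, which a $(P_7,C_6,C_7)$-free graph cannot contain.

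\emph{Comparison.} Your argument only needs $t$ to have some neighbour in $B_{i-1}$, which holds automatically. So you avoid both the completeness of $B_{i-2}$ to $B_{i+2}$ and the mixing step. In fact it proves (1) for any vertex of $A_2(i+2)$ with a neighbour in $A_1(i)$, without the global assumption $A_1=A_1(i)$. The paper's version is equally short only because Lemma \ref{lem:B_{i-2} complete to B_{i+2}} was already available.

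\emph{Write-up.} Drop the false start through $c\in B_{i-2}$; as you noticed, it fails because $c$ may see $B_{i-1}$. State the cyclic sequence directly with the whole sets $B_i$ and $B_{i-1}$. That makes the case split on whether $de\in E(G)$, and the bad-$P_7$ fallback, unnecessary.
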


\begin{proof} 
    We prove one by one.
    
    \noindent (1)  Let $t\in A_3(i-2)$ be non-adjacent to $y'\in T$. By definition, $y'$ has a neighbor $x'\in A_1(i)$.
    Since $B_{i-2}$ is complete to $B_{i+2}$ by Lemma \ref{lem:B_{i-2} complete to B_{i+2}}, $t$ has a non-neighbor in $B_{i-1}$ by Lemma \ref{lem:nonedge in A_3(i)} and so $t$ mixes on an edge $ab\in B_{i-1}$. Then $a-b-t-(N(t)\cap B_{i+2})-y'-x'-(N(x')\cap B_i)$ is a bad $P_7$. 

    \noindent (2) This follows from (1) and Lemma \ref{lem:blowup-twoneighbor}.
\end{proof}

Next we explore the internal structure of $A_2(i+2)$.

\begin{lemma}\label{lem:clique components of A_2(i+2)}
    There exists $j\in \{i-2,i+2\}$ such that
    every component of $T_0$ has two non-adjacent neighbors in either $T$ or $A'_3(j)$ but not in both and it is complete to two non-adjacent vertices in $T$ or $A'_3(j)$.
\end{lemma}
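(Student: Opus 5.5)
We first fix $j$. By Lemma \ref{lem:A_3(i-2) or A_3(i+2) is empty}, one of $A_3(i-2)$, $A_3(i+2)$ is empty; let $j$ be the index with $A_3(j)$ possibly non-empty, choosing arbitrarily if both are empty. Let $K$ be a component of $T_0$. Since $K\subseteq A_2(i+2)$ and $K$ has no neighbor in $A_1(i)$ (by the definition of $T_0$), the anticomplete-pair lemmas (\ref{A1A2}, \ref{A2A2}, \ref{lem:anticompleteA3-1}, \ref{anti-A2A3}) give $N(K)\subseteq T\cup A_5\cup A'_3(i+2)\cup A'_3(i-2)$. Here we use $A_1=A_1(i)$, and $A_2(i-2)=A_2(i+1)=\emptyset$ by Lemma \ref{lem:A_2(i-2) is empty}. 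By Lemma \ref{lem:neighbors of A_2(i) in A_3(i+1) and A_3(i)} and the choice of $j$, the $A_3$-neighbors of $K$ lie in $A_3(j)$. So $N(K)\subseteq T\cup A_5\cup B_{i+2}\cup B_{i-2}\cup A_3(j)$.

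Next we show that the vertices of $N(K)$ outside $T\cup A'_3(j)$ are universal in $N(K)$. For $A_5$ this follows from Lemma \ref{lem:blowup-fiveneighbor1}. We also need $A_5$ complete to $T$: a vertex $t\in T$ lies on an edge $xt$ with $x\in A_1(i)$, so Lemma \ref{lem:A5 vs A1A2} applies. For $B_{-j}$ (the blowup class other than $B_j$ among $B_{i\pm2}$), $N(K)\cap B_{-j}$ is complete to $N(K)\cap A'_3(j)$ by Lemma \ref{lem:blowup-twoneighbor}, and $B_{i-2}$ is complete to $B_{i+2}$ by Lemma \ref{lem:B_{i-2} complete to B_{i+2}}. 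Completeness to $T$ comes from Lemma \ref{lem:opposite A_1(i) and A_2(j)}, which makes $T$ complete to $B_{i+2}\cup B_{i-2}$. We then show that $T$ is complete to $A'_3(j)$: Lemma \ref{lem:A_3(i-2) is complete to y} handles $j=i-2$, and the symmetric argument (or a bad $P_7$ through $x$) handles $j=i+2$.

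Since $G$ has no clique cutsets, $N(K)$ contains two non-adjacent vertices, and by the above both lie in $T$ or both lie in $A'_3(j)$. This gives the ``either/but not both'' part, because every vertex of $T$ is adjacent to every vertex of $A'_3(j)$. Moreover, if $T$ contains a non-adjacent pair, then $A'_3(j)\cap N(K)$ is a clique by $C_4$-freeness: two non-adjacent vertices of $A'_3(j)$ together with two non-adjacent vertices of $T$ would induce a $C_4$.

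Finally we apply Lemma \ref{lem:complete subgraphs}. In the case where the non-adjacent pair lies in $A'_3(j)$, take $X=A'_3(j)$ and $Y=B_{j'}$, where $j'=j\mp1$ is the side away from $B_{i+2}\cup B_{i-2}$. Two non-adjacent vertices of $A'_3(j)\cap N(K)$ have a common neighbor in the other blowup class, hence disjoint (and, by $C_6$-freeness, complete) neighborhoods in $Y$. A forbidden $P_4$ $a-b-c-d$ would extend to an induced $P_7$ via $N_{Y}(d)-q-q$. In the case where the pair lies in $T$, take $X=T$ and $Y=A_1(i)$. Non-adjacent $t,t'\in T$ are both complete to $B_{i+2}$, so their $A_1(i)$-neighborhoods are disjoint and, by $C_6$-freeness, complete to each other. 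A forbidden $P_4$ extends to a $P_7$ through $N_{A_1(i)}(d)$ and then into $B_i$ and $q_{i-1}$.

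The main obstacle is verifying condition (2) of Lemma \ref{lem:complete subgraphs} in the $T$-case, namely producing the induced $P_7$ or bad $P_7$ while controlling chords between $A_1(i)$ and $K$; this is where $K\subseteq T_0$ is used.
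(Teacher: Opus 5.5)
Your proof is correct and follows the paper's own argument. You confine $N(K)$ to $T\cup A_5\cup B_{i-2}\cup B_{i+2}\cup A_3(j)$, use Lemma~\ref{lem:A_3(i-2) is complete to y} and $C_4$-freeness to see that only one of $T$, $A'_3(j)$ can contain a non-adjacent pair of $N(K)$, and then apply Lemma~\ref{lem:complete subgraphs} with $(X,Y)=(A'_3(j),B_{i-1})$ (for $j=i-2$) or $(X,Y)=(T,A_1(i))$.

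The differences from the paper are cosmetic:
\begin{itemize}
\item You split cases on where the non-adjacent pair of $N(K)$ lies, rather than on which of $T$, $A'_3(j)$ is a clique.
\item Showing that the $A_1(i)$-neighborhoods of two non-adjacent vertices of $T$ are complete needs $C_7$-freeness as well as $C_6$-freeness: $C_7$ handles the case where the two $A_1(i)$-vertices have no common neighbor in $B_i$.
\item Your extension $d-N_{A_1(i)}(d)-N_{B_i}(\cdot)-q_{i-1}$ is the correct form of the path in the $T$-case. The paper's version passes through $q_i$, which need not be adjacent to $N(d)\cap A_1(i)$.
\end{itemize}
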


\begin{proof}
    Let $K$ be a component of $T_0$. By symmetry, we may assume that $A_3(i+2)=\emptyset$ by Lemma \ref{lem:A_3(i-2) or A_3(i+2) is empty}.
    Note that $N(K)\subseteq A_3(i-2)\cup B_{i-2}\cup T\cup B_{i+2}\cup A_5$. 
    By Lemma \ref{lem:A_3(i-2) is complete to y}, each vertex in $N(K)\cap (A_5\cup B_{i+2})$ is universal in $N(K)$, and either each vertex in $N(K)\cap T$ or each vertex in $N(K)\cap A'_3(i-2)$ is universal in $N(K)$.

    Suppose first that $T$ is a clique and so each vertex in $N(K)\setminus A'_3(i-1)$ is universal in $N(K)$. By Lemma \ref{lem:A_3(i-2) is complete to y} (2), each pair of non-adjacent vertices in $N(K)\cap A'_3(i-2)$ has disjoint neighborhoods in $B_{i-1}$. 
    If there is an induced $P_4=a-b-c-d$ with $d\in N(K)\cap A'_3(i-1)$, $c\in K$ and $a,b\in (N(K)\cap A'_3(i-1))\cup K$, then $a-b-c-d-(N(d)\cap B_{i-1})-q_i-q_{i+1}$ is an induced $P_7$, a contradiction. So all three conditions of Lemma \ref{lem:complete subgraphs} are satisfied for $K$ where $X=A'_3(i-2)$ and $Y=B_{i-1}$. Therefore, the lemma follows.

    Now suppose that $A'_3(i-1)$ is a clique. Then each vertex in $N(K)\setminus T$ is universal in $N(K)$.  For any  pair of non-adjacent vertices in $N(K)\cap T$, they have a common neighbor $q_{i+2}$ and so have disjoint complete neighborhoods in $A_1(i)$ by $(C_4,C_6,C_7)$-freeness of $G$.
    If there is an induced $P_4=a-b-c-d$ with $d\in N(K)\cap T$, $c\in K$ and $a,b\in (N(K)\cap T)\cup K$, then $a-b-c-d-(N(d)\cap A_1(i))-q_i-q_{i+1}$ is an induced $P_7$, a contradiction. So all three conditions of Lemma \ref{lem:complete subgraphs} are satisfied for $K$ where $X=T$ and $Y=A_1(i)$. Therefore, the lemma follows.
\end{proof}

\begin{lemma}\label{lem:A_5 is complete to A_2(i+2)}
    Each vertex in $A_5$ is a universal vertex in $G$.
\end{lemma}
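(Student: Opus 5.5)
We need to show that $A_5$ is complete to every vertex of $G$. Since $V(G)=V(H)\cup A_0\cup A_1\cup A_2\cup A_3\cup A_5$ and $A_0=\emptyset$ by Lemma \ref{lem:A0cliqueseparator}, we go through the remaining parts one by one. By Lemma \ref{lem:blowup-fiveneighbor1}, $A_5$ is a clique complete to $V(H)\cup A_3$. By Lemma \ref{lem:A_5 is complete to A_1}, $A_5$ is complete to $A_1=A_1(i)$. Under the global assumption $A_1=A_1(i)$, Lemma \ref{lem:A_2(i-2) is empty} gives $A_2(i-2)=A_2(i+1)=\emptyset$. So only $A_2(i-1)$, $A_2(i)$ and $A_2(i+2)=T\cup T_0$ remain. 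By Lemma \ref{lem:A_2(i-1) or A_2(i) is empty} and symmetry, we may assume $A_2(i)=\emptyset$, leaving $A_2(i-1)$ and $A_2(i+2)$.

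\emph{The set $T$.} Each vertex of $T$ has a neighbor $x'\in A_1(i)$, so Lemma \ref{lem:A5 vs A1A2} applied to the edge $x't$ shows that $A_5$ is complete to $T$.

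\emph{The set $T_0$.} By Lemma \ref{lem:clique components of A_2(i+2)}, each component $K$ of $T_0$ is complete to two non-adjacent vertices $p,q\in T\cup A'_3(j)$. Now $A_5$ is complete to $p$ and $q$ by the previous step and Lemma \ref{lem:blowup-fiveneighbor1}. Suppose $u\in A_5$ and $k\in K$ with $uk\notin E(G)$. Then $u-p-k-q-u$ is an induced $C_4$, a contradiction. Hence $A_5$ is complete to $T_0$.

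\emph{The set $A_2(i-1)$.} Set $S=A_2(i-1)\cap N(A_1(i))$. Take $s\in S$ and let $a\in A_1(i)\cap N(A_2(i+2))$ be a neighbor of $s$, which exists by Lemma \ref{lem:nbr of S in A_1(i)}. Let $r\in A_2(i+2)$ be a neighbor of $a$, and suppose $u\in A_5$ is not adjacent to $s$. We know $ua, ur\in E(G)$, and $u$ is adjacent to every vertex of $H$.

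We aim for a contradiction through a short induced cycle or an induced $P_7$. Take $b\in N_{B_{i-1}}(s)$. Since $s$ and $u$ are both adjacent to $a$ and to $b$, if $ab\notin E(G)$ then $s-a-u-b-s$ is an induced $C_4$. But $a\in A_1(i)$ has no neighbors in $B_{i-1}$, so $ab\notin E(G)$ always holds, giving the contradiction.

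For the components $K$ of $A_2(i-1)\setminus S$, Lemma \ref{lem:two non-adjacent neighbors} gives two non-adjacent vertices of $A'_3(i-1)$ complete to $K$. These are adjacent to $A_5$, so the same $C_4$ argument as for $T_0$ applies.

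The main obstacle is to confirm carefully that these pieces exhaust $V(G)\setminus A_5$ under the current assumptions, in particular that $A_2(i)$ may be discarded by symmetry. Since $A_1=A_1(i)$ breaks the symmetry between $A_2(i-1)$ and $A_2(i)$ only through a reflection fixing $i$, the argument for $A_2(i)$ is the mirror image of the one for $A_2(i-1)$.
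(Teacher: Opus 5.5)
Your proof is correct and follows the paper's argument. You show that $A_5$ is complete to $T$ by Lemma~\ref{lem:A5 vs A1A2} (the choice of $C$), and to $T_0$ and to the components of $A_2(i-1)\setminus S$ via the non-adjacent pair from Lemmas~\ref{lem:clique components of A_2(i+2)} and~\ref{lem:two non-adjacent neighbors} together with $C_4$-freeness. Vertices of $A_2(i-1)$ with a neighbor in $A_1(i)$ are handled by the $C_4$ $s-a-u-b-s$ with $b \in N_{B_{i-1}}(s)$, and $A_1$ by Lemma~\ref{lem:A_5 is complete to A_1}.

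Your reduction to $A_2(i)=\emptyset$ is legitimate because the setting is invariant under the reflection fixing $i$; the paper simply treats $A_2(i)$ as the mirror case of $A_2(i-1)$. Note also that the vertex $r\in A_2(i+2)$ and the appeal to Lemma~\ref{lem:nbr of S in A_1(i)} are unnecessary, since every neighbor of $s$ in $A_1(i)$ is already complete to $A_5$.
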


\begin{proof}
    We first show that $A_5$ is complete to $A_2(i+2)$.
    By the choice of $C$, $A_5$ is complete to $T$. By Lemma \ref{lem:clique components of A_2(i+2)}, every component $K$ of $T_0$ has two non-adjacent vertices $\{a,b\}$ complete to $K$ where $a,b\in T$ or $a,b\in A'_3(i-2)$. Since $A_5$ is complete to $T\cup A'_3(i-2)$, $A_5$ is complete to $K$ by $C_4$-free. 

    Next we show that $A_5$ is complete to $A_2(i)\cup A_2(i-1)$. Suppose first that $s\in A_2(i-1)$ has no neighbor in $A_1(i)$. By Lemma \ref{lem:two non-adjacent neighbors}, $s$ has two non-adjacent vertices $\{a,b\}$ with $a,b\in A'_3(i-1)$. Since $G$ is $C_4$-free, $A_5$ is complete to $s$. Now suppose that $s$ has a neighbor in $A_1(i)$. By Lemma \ref{lem:nbr of S in A_1(i)}, $s$ has a neighbor $s'\in A_1(i)$ which has a neighbor in $A_2(i+2)$. Then $A_5$ is complete to $s'$. Since $s-(N(s)\cap B_{i-1})-A_5-s'-s$ is not an induced $C_4$, $A_5$ is complete to $s$. This proves that $A_5$ is complete to $A_2(i-1)\cup A_2(i)$.
    
    The lemma now follows from Lemma \ref{lem:A_5 is complete to A_1}.
\end{proof}

So we may assume that $A_5=\emptyset$.

\begin{lemma}\label{lem:A'_3(i-2) is large}
    There exists a vertex $v\in B_{i-1}\setminus N(A_3(i-1))$ such that $|N(v)\cap B_{i-2}|>\ceil{\frac{\omega}{4}}$. By symmetry, There exists a vertex $u\in B_{i+1}\setminus N(A_3(i+1))$ such that $|N(u)\cap B_{i+2}|>\ceil{\frac{\omega}{4}}$.
\end{lemma}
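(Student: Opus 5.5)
By Lemma \ref{lem:comparable vertices in A_3(i)}, I would first pick a vertex of $B_{i-1}$ anticomplete to $A_3(i-1)$. If $A_3(i-1)=\emptyset$, every vertex of $B_{i-1}$ qualifies. Among these candidates I would take $v$ with $N_H(v)$ minimal, which is well defined by Lemma \ref{lem:blowup-basicproperty} (1). The aim is to show that $N(v)\setminus B_{i-2}$ is a clique. Once that holds, $d(v)\le (\omega-1)+|N(v)\cap B_{i-2}|$, and since $G$ is basic it has no small vertex, so $|N(v)\cap B_{i-2}|>\ceil{\frac{\omega}{4}}$.

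To locate $N(v)\setminus B_{i-2}$, I would use the current global facts.
\begin{itemize}
\item $A_1=A_1(i)$, and $A_1(i)$ is anticomplete to $B_{i-1}$.
\item $A_2(i-2)=A_2(i+1)=\emptyset$.
\item $A_3(i)=\emptyset$.
\item $A_5=\emptyset$.
\item $A_3(i-2)$ is complete to $B_{i-1}$ only partially; its relevant neighbours are treated below.
\item $v$ is anticomplete to $A_3(i-1)$ by choice.
\end{itemize}
Hence $N(v)\setminus B_{i-2}\subseteq B_{i-1}\cup B_i\cup A_2(i-1)\cup A_3(i-2)$.

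Next I would show that the pieces of $N(v)\setminus B_{i-2}$ are pairwise complete.
\begin{itemize}
\item $B_{i-1}$ and $B_i$ are cliques. A non-adjacent pair $s\in N_{B_{i-1}}(v)$, $s'\in N_{B_i}(v)$ is handled as in Lemma \ref{lem:smv on w}. Minimality of $v$ gives $s$ a neighbour $s''\in H$ outside $N(v)$, which yields either a $P_4$-structure or a $C_4$. When $s$ lies in $N(A_3(i-1))$ I would instead use Lemma \ref{lem:single vertex in A_3(i)} (3). This is where it matters that $B_{i-1}\setminus N(A_3(i-1))$ is "below" $N(A_3(i-1))$ in the neighbourhood order.
\item For $A_2(i-1)$: its neighbours in $A'_3(i-1)$ and in $A'_3(i)$ are complete to each other by Lemma \ref{lem:blowup-twoneighbor}, and $A_2(i-1)$ contributes to $B_i$ through that lemma. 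Non-adjacent vertices of $N(v)\cap A_2(i-1)$ would share the common neighbour $v$. I would then use Lemma \ref{lem:two non-adjacent neighbors} together with Lemma \ref{lem:nbr of S in A_1(i)} to find an induced $P_7$ or $C_6$ through $q_{i+1},q_{i+2}$, or the path via $A_1(i)\to A_2(i+2)$.
\item For $A_3(i-2)$: it is anticomplete to $A_2(i-1)$ by Lemma \ref{anti-A2A3}. So I would either show $N(v)\cap A_3(i-2)$ is empty, or show that it is complete to $N(v)\cap B_{i-1}$ via Lemma \ref{lem:single vertex in A_3(i)} (2) and then rule out the remaining mixed pairs with Lemma \ref{lem:generalized P4-structure}.
\end{itemize}

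The main obstacle I expect is the $A_2(i-1)$ part: showing $N(v)\cap A_2(i-1)$ is a clique complete to $N(v)\cap(B_{i-1}\cup B_i)$. I would try first the dichotomy $A_2(i-1)=\emptyset$ or $A_2(i)=\emptyset$ from Lemma \ref{lem:A_2(i-1) or A_2(i) is empty}, and run a small clique-cutset argument on the components of $A_2(i-1)$ if needed. If that fails, I would fall back on choosing $v$ more cleverly, for instance as a minimal simplicial vertex of $B_{i-1}\cup A_2(i-1)$ restricted to non-neighbours of $A_3(i-1)$.

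The second statement, about a vertex $u\in B_{i+1}\setminus N(A_3(i+1))$ with $|N(u)\cap B_{i+2}|>\ceil{\frac{\omega}{4}}$, follows by the reflection $i-j\leftrightarrow i+j$. This reflection interchanges the roles of $A_2(i-1)$ and $A_2(i)$, and all the hypotheses used above are symmetric under it.
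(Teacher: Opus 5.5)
Your frame matches the paper's: choose $v$, show $N(v)\setminus B_{i-2}$ is a clique, then use that $G$ has no small vertices. Your handling of $N(v)\cap(B_{i-1}\cup B_i)$ via minimality and Lemma~\ref{lem:single vertex in A_3(i)}~(3) is fine.

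\textbf{The gap at $A_3(i-2)$.} Its vertices have support $\{i+2,i-2,i-1\}$, so they have no neighbour in $B_i$, while $v$ always has one. Your second option (make $N(v)\cap A_3(i-2)$ complete to $N(v)\cap B_{i-1}$ and dispose of the ``mixed pairs'' with Lemma~\ref{lem:generalized P4-structure}) therefore cannot work. The anticompleteness to $A_2(i-1)$ that you cite is a further obstruction, not a help. You genuinely need $N(v)\cap A_3(i-2)=\emptyset$.
\begin{itemize}
\item When $A_3(i-1)\neq\emptyset$, this is exactly Lemma~\ref{lem:compare nbd of different A_3(i) in B_j}: the neighbours of $A_3(i-2)$ in $B_{i-1}$ are complete to $A_3(i-1)$. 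You do not use it.
\item When $A_3(i-1)=\emptyset$, your choice of $v$ does not guarantee it. Every $x\in A_3(i-2)$ is complete to $B_{i+2}$ (Lemma~\ref{lem:A_3(i-2) is complete to y}). By Lemma~\ref{lem:blowup-threeneighbor}~(1),(3), $N_{B_{i-1}}(x)$ is then anticomplete to the nonempty set $B_{i-2}\setminus N(x)$. If $B_{i-1}$ is complete to $B_i$, the order on $B_{i-1}$ is by $B_{i-2}$-neighbourhoods alone, so an $N_H$-minimal vertex may lie in $N_{B_{i-1}}(x)$. Then $x$ and $q_i$ are non-adjacent vertices of $N(v)\setminus B_{i-2}$, and your clique claim fails.
\end{itemize}
The paper avoids this as follows. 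In that subcase it takes $v=q_{i-1}$, which is complete to $B_{i-2}$ and so misses $A_3(i-2)$. Otherwise it takes $v$ minimal with respect to $B_i$ (your $N_H$-minimal choice qualifies there). Such a $v$ misses $A_3(i-2)$ because $N_{B_{i-1}}(A_3(i-2))$ is complete to $B_i$ by Lemma~\ref{lem:B_i complete to two large cliques in B_{i-1}}.

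\textbf{The $A_2(i-1)$ step.} You flag this as the main obstacle but give only a list of candidate tools. No change of $v$ is needed, but the argument must be supplied. Take non-adjacent $s,s'\in N(v)\setminus B_{i-2}$ with $s\in A_2(i-1)$.
\begin{itemize}
\item If $s$ has a neighbour in $A_1(i)$, Lemma~\ref{lem:nbr of S in A_1(i)} gives such a neighbour $p$ with a neighbour $r\in A_2(i+2)$. Then $ps'\notin E(G)$, since $s'-v-s-p-s'$ is not an induced $C_4$. Now $s'-v-s-p-r-q_{i+2}-q_{i+1}$ is a bad $P_7$.
\item Otherwise, Lemma~\ref{lem:two non-adjacent neighbors} gives non-adjacent $a,b\in A'_3(i-1)$ complete to $s$, say with $a\in A_3(i-1)$. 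They have disjoint neighbourhoods in $B_{i-2}$. Avoiding the bad $P_7$ $b-s-a-N_{B_{i-2}}(a)-q_{i+2}-q_{i+1}-B_i$ forces $a$ to be complete to $B_i$. Then $C_4$-freeness makes $N_{B_{i-2}}(a)$ anticomplete to $v$ and $s'$. Since $va\notin E(G)$ by the choice of $v$, $s'-v-s-a-N_{B_{i-2}}(a)-q_{i+2}-q_{i+1}$ is an induced $P_7$ or $C_7$.
\end{itemize}
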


\begin{proof}
    If $A_3(i-1)\neq \emptyset$, let $v$ be a vertex in $B_{i-1}\setminus N(A_3(i-1))$ with minimal neighborhood in $B_i$.
    If $A_3(i-1)=\emptyset$, then let $v=q_{i-1}$ if $B_{i-1}$ is complete to $B_i$, and let $v\in B_{i-1}$ be a vertex with minimal neighborhood in $B_i$ if $B_{i-1}$ is not complete to $B_i$. We show that $v$ is anticomplete to $A_3(i-2)$ and $N(v)\cap (B_{i-1}\cup B_i)$ is a clique. Suppose first that $A_3(i-1)\neq \emptyset$. By Lemma \ref{lem:compare nbd of different A_3(i) in B_j}, $v$ is anticomplete to $A_3(i-2)$. 
    By Lemma \ref{lem:single vertex in A_3(i)} (3) and the choice of $v$, $N(v)\cap (B_{i-1}\cup B_i)$ is a clique. If $v=q_{i-1}$, then the claim holds by the choice of $v$.
    If $A_3(i-1)=\emptyset$ and $B_{i-1}$ is not complete to $B_i$, then the claim holds by Lemma \ref{lem:B_i complete to two large cliques in B_{i-1}} and the choice of $v$.
    So
    $N(v)\setminus B_{i-2}\subseteq B_{i-1}\cup B_i\cup S\cup (A_2(i-1)\setminus S)$. 
    Suppose that $N(v)\setminus B_{i-2}$ contains two non-adjacent vertices $s$ and $s'$. Suppose first that $s\in S$. By Lemma \ref{lem:nbr of S in A_1(i)}, we can choose $p\in A_1(i)\cap N(A_2(i+2))$ to be a neighbor of $s$. Since $s'-v-s-p-s'$ is not an induced $C_4$, $ps'\notin E(G)$. Then $s'-v-s-p-(N(p)\cap A_2(i+2))-q_{i+2}-q_{i+1}$ is a bad $P_7$. So $s\notin S$. Since $N(v)\cap (B_{i-1}\cup B_i)$ is a clique, we may assume that $s\in A_2(i-1)\setminus S$. By Lemma \ref{lem:two non-adjacent neighbors}, $s$ has two non-adjacent neighbors $a,b$ in $A'_3(i-1)$. By symmetry, we may assume that $a\in A_3(i-1)$. Since $s$ is a common neighbor of $a,b$, we have $a,b$ have disjoint neighborhoods in $B_{i-2}$. Since $b-s-a-N(a)\cap B_{i-2}-q_{i+2}-q_{i+1}-B_i$ is not a bad $P_7$, we have $a$ is complete to $B_i$. It follows that $a,s'$ and $a,v$ have a common neighbor in $B_i$ and so $N(a)\cap B_{i-2}$ is anticomplete to $s'$ and $v$. Since $va\notin E(G)$ by the choice of $v$, $s'-v-s-a$ is an induced $P_4$. Then $s'-v-s-a-(N(a)\cap B_{i-2})-q_{i+2}-q_{i+1}$ is an induced $P_7$ or $C_7$. 
    So $N(v)\setminus B_{i-2}$ is a clique. 
    Since $G$ has no small vertices, $|N(v)\cap B_{i-2}|>\ceil{\frac{\omega}{4}}$. 
\end{proof}

\begin{lemma}\label{lem:T_0 is a clique}
    $T_0$ is a clique.
\end{lemma}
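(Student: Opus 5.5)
We argue by contradiction, in the same way as the earlier \emph{large clique} arguments (compare Lemmas \ref{lem:B_{i+2} is large} and \ref{lem:A_1(i-1) is a clique}). Suppose $T_0$ is not a clique. By symmetry (Lemma \ref{lem:A_3(i-2) or A_3(i+2) is empty}) assume $A_3(i+2)=\emptyset$, as in the proof of Lemma \ref{lem:clique components of A_2(i+2)}.

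\textbf{Step 1: two non-adjacent simplicial vertices.} First we check that $A_2(i+2)$ is chordal. This is Lemma \ref{lem:A_2(i) is chordal} up to relabelling the index, since only the $C_5$-freeness of $A_2(\cdot)$ is needed. By Lemma \ref{lem:clique components of A_2(i+2)}, every component of $T_0$ is a clique. Hence two non-adjacent vertices of $T_0$ lie in distinct components $K_1,K_2$ of $T_0$. For each $K_j$ we take a simplicial vertex $s_j$ of $T_0$, chosen $(K_j,\cdot)$-minimal in the sense of Lemma \ref{lem:minimal simplicial vertex}, so that its neighbourhood outside $B_{i-2}\cup B_{i+2}$ is controlled.

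\textbf{Step 2: small vertex argument.} For $s=s_j$ we have $N(s)\subseteq K_j\cup T\cup A'_3(i-2)\cup B_{i+2}$, because $A_5=\emptyset$, $A_1$ is anticomplete to $T_0$ by definition, and $A_3(i+2)=\emptyset$. By Lemma \ref{lem:A_3(i-2) is complete to y}, either $T$ or $A'_3(i-2)$ is a clique, and $B_{i+2}$ is complete to $T\cup A'_3(i-2)$ (by Lemma \ref{lem:B_{i-2} complete to B_{i+2}} and Lemma \ref{lem:blowup-twoneighbor}).

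\begin{itemize}
\item If $T$ is the clique, we aim to show that $N(s)\setminus (B_{i-2}\cup A_3(i-2))$ is a clique. The two non-adjacent vertices of $A'_3(i-2)$ complete to $K_j$ have disjoint neighbourhoods in $B_{i-1}$. Non-small-ness then gives $|N(s)\cap A'_3(i-2)|>\ceil{\frac{\omega}{4}}$.
\item If $A'_3(i-2)$ is the clique, the symmetric argument bounds $|N(s)\cap T|>\ceil{\frac{\omega}{4}}$.
\end{itemize}

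Any mixed adjacency between $K_j$ and a neighbour would produce an induced $P_4$ ending in $B_{i+2}$. We rule this out with the $P_7$ extensions through $q_{i-1},q_i$ used in Lemma \ref{lem:clique components of A_2(i+2)}.

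\textbf{Step 3: a clique larger than $\omega$.} Since $s_1s_2\notin E(G)$ and both have $q_{i+2}$ as a common neighbour, $C_4$-freeness makes their large neighbourhoods in $T$ (respectively $A'_3(i-2)$) disjoint and complete to each other. Together with a large piece of $B_{i+2}$ this should give a clique of size greater than $\omega$. The piece of $B_{i+2}$ comes from Lemma \ref{lem:A'_3(i-2) is large}, namely $|N(u)\cap B_{i+2}|>\ceil{\frac{\omega}{4}}$. If needed, we add a large piece of $A_1(i)$ or $B_{i-2}$ obtained from Lemma \ref{lem:simplicial vertices in A_1(i)}. We combine four disjoint large cliques as in Lemma \ref{lem:B_{i+2} is large}, and this is the contradiction.

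\textbf{Main obstacle.} The hardest step is Step 2: showing that the chosen simplicial vertex of $T_0$ has clique neighbourhood outside one large part. In particular we must control mixed adjacencies between $s$ and the non-clique side ($T$ or $A'_3(i-2)$). We would first try to reduce this to Lemma \ref{lem:minimal simplicial vertex}, then kill the resulting $P_4$ with the $P_7$ $a-s-c-d-\ldots-q_i$. We also need four genuinely disjoint large cliques in Step 3, and this may require a case split on which of $T$ and $A'_3(i-2)$ is the clique.
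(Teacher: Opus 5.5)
Your proposal has a genuine gap, and it lies in Step 3, not Step 2. Step 2 is actually easy. The special pair of $K_j$ lies on the non-clique side ($A'_3(i-2)$ if $T$ is a clique, $T$ otherwise). That pair is complete to $B_{i+2}$ and to the clique side by Lemmas \ref{lem:opposite A_1(i) and A_2(j)}, \ref{lem:B_{i-2} complete to B_{i+2}} and \ref{lem:A_3(i-2) is complete to y}. So $C_4$-freeness makes $K_j$ complete to $B_{i+2}$ and to the clique side. Consequently, for every $s\in K_j$ the neighbourhood of $s$ off the non-clique side is a clique, and non-smallness gives more than $\ceil{\frac{\omega}{4}}$ neighbours on the non-clique side.

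The trouble is that the set you have bounded is never a clique: it contains the special pair of $K_j$, which is non-adjacent by definition. It therefore cannot be one of the ``large cliques'' in a clique of size $>\omega$. Your disjointness step is also invalid. All of $B_{i+2}$, in particular $q_{i+2}$, is complete to $T\cup A'_3(i-2)$. Hence a common neighbour of $s_1,s_2$ in $T$ or $A'_3(i-2)$ forms no induced $C_4$ with $q_{i+2}$, and $C_4$-freeness gives you nothing there. Finally, the four pairwise disjoint, mutually complete large cliques are never identified. Step 3 only says it ``should'' work, so no contradiction is actually derived.

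The missing idea is that no counting is needed: two components of $T_0$ already force an induced $P_7$. This is what the paper does.
\begin{enumerate}
\item Take components $K\neq K'$ of $T_0$, vertices $v\in K$, $v'\in K'$, and special pairs $\{a,b\}$, $\{a',b'\}$, both in $A'_3(i-2)$ or both in $T$.
\item $C_4$-freeness stops $v'$ from seeing both $a$ and $b$. If $v'a\in E(G)$, then $v'-a-v-b-N_{B_{i-1}}(b)-q_i-q_{i+1}$ is an induced $P_7$. When the pairs lie in $T$, use the tail $(N(b)\cap A_1(i))-B_i-q_{i+1}$ instead.
\item Hence $v'$ is anticomplete to $\{a,b\}$, and symmetrically $v$ is anticomplete to $\{a',b'\}$. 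The same kind of path shows that $\{a,b,a',b'\}$ is stable.
\item Let $c,c'$ be neighbours of $a,a'$ in $B_{i-1}$ (respectively in $A_1(i)$). Then $b-v-a-c-c'-a'-v'$ is an induced $P_7$, a contradiction.
\end{enumerate}
The disjointness you were trying to extract from $q_{i+2}$ is exactly what the first of these $P_7$'s provides.
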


\begin{proof}
    Suppose not. Let $K$ and $K'$ be two components of $T_0$. Let $v\in K$ and $v'\in K'$.  By Lemma \ref{lem:clique components of A_2(i+2)}, $K$ has two non-adjacent neighbors $a,b$ complete to $K$ and  $K'$ has two non-adjacent neighbors $a',b'$ complete to $K'$.

    Suppose first that $a,b\in A'_3(i-2)$. Then $a',b'\in A'_3(i-2)$.
    Since $G$ is $C_4$-free, $v'$ is not adjacent to either $a$ or $b$. If $v'a\in E(G)$, then $v'-a-v-b-(N(b)\cap B_{i-1})-q_i-q_{i+1}$ is an induced $P_7$. So $v'a,v'b\notin E(G)$. By symmetry, $va',vb'\notin E(G)$. So $a,a',b,b'$ are pairwise distinct. If $aa'\in E(G)$, then $a'b\notin E(G)$ or $a'-a-v-b-a'$ is an induced $C_4$, and so $a'-a-v-b-N_{B_{i-1}}(b)-q_i-q_{i+1}$ is an induced $P_7$. So $\{a,a',b,b'\}$ is a stable set. Let $c,c'\in B_{i-1}$ be a neighbor of $a,a'$, respectively. Then $b-v-a-c-c'-a'-v'$ is an induced $P_7$.

    Suppose now that $a,b\in T$. Then $a',b'\in T$.
    Since $G$ is $C_4$-free, $v'$ is not adjacent to either $a$ or $b$. If $v'a\in E(G)$, then $v'-a-v-b-(N(b)\cap A_1(i))-B_i-q_{i+1}$ is an induced $P_7$. So $v'a,v'b\notin E(G)$. By symmetry, $va',vb'\notin E(G)$. 
    So $a,a',b,b'$ are pairwise distinct. If $aa'\in E(G)$, then $a'b\notin E(G)$ or $a'-a-v-b-a'$ is an induced $C_4$, and so $a'-a-v-b-N_{A_1(i)}(b)-B_i-q_{i+1}$ is an induced sequence. So $\{a,a',b,b'\}$ is a stable set. Let $c,c'\in A_1(i)$ be a neighbor of $a,a'$, respectively. Then $b-v-a-c-c'-a'-v'$ is an induced $P_7$.
\end{proof}

\begin{lemma}\label{lem: T is not a clique}
    Either $T$ is a clique or $A_3(i-2)\cup A_3(i+2)=\emptyset$.
\end{lemma}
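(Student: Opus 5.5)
We argue by contradiction: suppose $T$ is not a clique and $A_3(i-2)\cup A_3(i+2)\neq\emptyset$. By Lemma \ref{lem:A_3(i-2) or A_3(i+2) is empty}, one of the two sets is empty; by the symmetry fixed in Lemma \ref{lem:clique components of A_2(i+2)} we take $A_3(i+2)=\emptyset$ and $A_3(i-2)\neq\emptyset$. Let $t\in A_3(i-2)$ and let $y_1,y_2\in T$ be non-adjacent. By Lemma \ref{lem:A_3(i-2) is complete to y}(1), $t$ is complete to $T$. Then $y_1-t-y_2-q_{i+2}-y_1$ has chords only if $t$ and $q_{i+2}$ are adjacent, and $q_{i+2}$ is adjacent to both $y_1,y_2$ because $T$ is complete to $B_{i+2}$ (Lemma \ref{lem:opposite A_1(i) and A_2(j)}). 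Also $t$ is complete to $B_{i+2}$ by Lemma \ref{lem:A_3(i-2) is complete to y}(2). The $4$-cycle $y_1-t-y_2-q_{i+2}-y_1$ therefore has chord $tq_{i+2}$, so we get no contradiction yet. This is why Lemma \ref{lem:A_3(i-2) is complete to y} only gives the dichotomy ``$T$ or $A'_3(i-2)$ is a clique''. From it, non-cliqueness of $T$ forces $A'_3(i-2)$ to be a clique. That already contradicts Lemma \ref{lem:blowup-threeneighbor}(1): $t$ has a non-neighbour in $B_{i-2}$, while $A'_3(i-2)=A_3(i-2)\cup B_{i-2}$ contains both. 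So the key step is to verify the $C_4$ that gives this dichotomy.

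For the dichotomy itself we take $y_1,y_2\in T$ non-adjacent and $a,b\in A'_3(i-2)$ non-adjacent. We want $a,b$ complete to $\{y_1,y_2\}$, which makes $y_1-a-y_2-b-y_1$ an induced $C_4$. If $a\in A_3(i-2)$, completeness to $T$ is Lemma \ref{lem:A_3(i-2) is complete to y}(1). If $a\in B_{i-2}$, we use Lemma \ref{lem:opposite A_1(i) and A_2(j)}: each $y\in T$ has a neighbour in $A_1(i)$, hence is complete to $B_{i-2}\cup B_{i+2}$. So every vertex of $A'_3(i-2)$ is complete to $T$, and the $C_4$ appears. Applying this with $a=t$ and $b$ a non-neighbour of $t$ in $B_{i-2}$ settles the case.

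The case $A_3(i+2)\neq\emptyset$, $A_3(i-2)=\emptyset$ is the main obstacle, since the lemmas are phrased for $A_3(i-2)$ and the configuration is not symmetric: $T\subseteq A_2(i+2)$ is supported on $B_{i+2}\cup B_{i-2}$, and reflecting $i\mapsto -i$ fixes $i$ and $A_2(i+2)$. Under this reflection $A_3(i-2)\leftrightarrow A_3(i+2)$ and $B_{i-2}\leftrightarrow B_{i+2}$, so the proof of Lemma \ref{lem:A_3(i-2) is complete to y} transfers verbatim, now using a mixed edge in $B_{i+1}$, the bad $P_7$ through $N(t)\cap B_{i-2}$, and $B_{i-2}$ complete to $B_{i+2}$. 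We would check this reflection carefully, in particular that Lemma \ref{lem:nonedge in A_3(i)} still gives $t$ a non-neighbour in $B_{i+1}$. Then the same $C_4$ argument with $B_{i+2}$ in place of $B_{i-2}$ finishes the proof.
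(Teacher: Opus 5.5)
Your argument is correct and is essentially the paper's: once the detour through $q_{i+2}$ is discarded, you get the induced $C_4$ $y_1-t-y_2-s-y_1$. Here $s\in B_{i-2}\setminus N(t)$ comes from Lemma~\ref{lem:blowup-threeneighbor}(1), $t$ is complete to $T$ by Lemma~\ref{lem:A_3(i-2) is complete to y}(1), and $T$ is complete to $B_{i-2}$ by Lemma~\ref{lem:opposite A_1(i) and A_2(j)}, which is exactly the paper's one-line proof. Your reflection $k\mapsto 2i-k$ for the $A_3(i+2)$ case just spells out the symmetry the paper uses implicitly. The appeal to Lemma~\ref{lem:A_3(i-2) or A_3(i+2) is empty} is harmless but unnecessary.
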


\begin{proof}
    Suppose that $T$ contains two non-adjacent vertices $a$ and $b$. If $t\in A_3(i-2)$, then $t$ has a non-neighbor $s\in B_{i-2}$ and it follows from Lemma \ref{lem:A_3(i-2) is complete to y} that $a-s-b-t-a$ is an induced $C_4$. 
\end{proof}

\begin{lemma}\label{lem:B_{i-1} is large}
    For any non-empty component $K$ of $A_3(i-2)$ and any simplicial vertex $t\in K$,  $N(t)\setminus B_{i-1}$ is a clique and so $|N(t)\cap B_{i-1}|>\ceil{\frac{\omega}{4}}$. 
\end{lemma}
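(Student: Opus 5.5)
We locate $N(t)$ first and then show that everything outside $B_{i-1}$ forms a clique. Since $t\in A_3(i-2)$ and $A_1=A_1(i)$, we have $t$ anticomplete to $A_1$ by Lemma \ref{A1A3}. By Lemma \ref{lem:A_2(i-2) is empty}, $A_2(i-2)=A_2(i+1)=\emptyset$. Lemmas \ref{lem:anticompleteA3-1} and \ref{anti-A2A3} then exclude $A_2(i)\cup A_2(i-1)$ and all $A_3(j)$ with $j\neq i-2$, and we already have $A_5=\emptyset$. So
\[
N(t)\setminus B_{i-1}\subseteq B_{i-2}\cup B_{i+2}\cup A_2(i+2)\cup K .
\]
Moreover, since $A_3(i-2)\neq\emptyset$, Lemma \ref{lem: T is not a clique} makes $T$ a clique, and Lemma \ref{lem:T_0 is a clique} makes $T_0$ a clique.

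Next we check pairwise completeness of the pieces.
\begin{enumerate}
\item $N(t)\cap K$ is a clique because $t$ is simplicial in $K$.
\item $N(t)\cap K$ is complete to $N(t)\cap B_{i-2}$ by Lemma \ref{lem:edge in $A_3(i)$}.
\item $N(t)\cap K$ is complete to $B_{i+2}$ and to $T$ by Lemma \ref{lem:A_3(i-2) is complete to y}; these two lemmas also give $t$ complete to $B_{i+2}\cup T$.
\item $B_{i-2}$ is complete to $B_{i+2}$ by Lemma \ref{lem:B_{i-2} complete to B_{i+2}}.
\item $T$ is complete to $B_{i-2}\cup B_{i+2}$ by Lemma \ref{lem:opposite A_1(i) and A_2(j)}.
\end{enumerate}
Hence the only pairs that could still be non-adjacent involve a vertex $r\in N(t)\cap T_0$.

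The main obstacle is controlling such an $r$. Suppose $r\in N(t)\cap T_0$ fails to be adjacent to some $s\in N(t)\setminus B_{i-1}$. Lemma \ref{lem:blowup-twoneighbor}, applied to the connected set $\{r\}$, gives $N_{B_{i-2}\cup A_3(i-2)}(r)$ complete to $N_{B_{i+2}}(r)$. Then $t$ together with $r$'s neighbor in $B_{i+2}$ makes $r$ and $s$ share neighbors, so a $C_4$ is the first thing to try. The remaining possibilities are $s\in T$ or $s\in T_0$, and for these we use the structure from Lemma \ref{lem:clique components of A_2(i+2)}: every component of $T_0$ is complete to two non-adjacent vertices, lying either in $T$ or in $A'_3(i-2)$. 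In the case $T$ a clique, these two vertices lie in $A'_3(i-2)$. Let them be $a,b$, with $a,b,t$ all adjacent to $r$. If $t$ is adjacent to neither $a$ nor $b$, we can try to build an induced $P_7$ of the form
\[
t-r-a-N_{B_{i-1}}(a)-q_i-q_{i+1}-\cdots,
\]
or else find a $C_4$ through $t,a,r,b$. If this route fails, we fall back on choosing $t$ as an $(K,H)$-minimal simplicial vertex instead, via Lemma \ref{lem:minimal simplicial vertex}. Any induced $P_4$ it produces can then be ruled out with Lemma \ref{lem:P3 in A_3(i)} or Lemma \ref{lem:generalized P4-structure}.

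Once $N(t)\setminus B_{i-1}$ is a clique, we have $d(t)\le(\omega-1)+|N(t)\cap B_{i-1}|$. Since $G$ has no small vertices, $d(t)\ge \ceil{\frac{5}{4}\omega}$, so $|N(t)\cap B_{i-1}|>\ceil{\frac{\omega}{4}}$.
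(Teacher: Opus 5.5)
Your localization of $N(t)\setminus B_{i-1}$ and your check that all pairs avoiding $T_0$ are adjacent are correct, and they coincide with the first half of the paper's proof. One small slip: $A_3(i-1)$ and $A_3(i+2)$ are excluded by Lemma~\ref{lem:anticompleteA3}, not Lemma~\ref{lem:anticompleteA3-1}.

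There is a genuine gap in the $T_0$ case, which is where all the work lies. None of the tools you propose there closes it.
\begin{itemize}
\item The ``shared neighbors'' $C_4$ built from $t$ and some $p\in N_{B_{i+2}}(r)$ fails, because $tp\in E(G)$.
\item Your case split is misplaced. The case $s\in T_0$ is trivial since $T_0$ is a clique. The case $s\in B_{i+2}\cup T$ is disposed of by the induced $C_4$ $a-r-b-s-a$, since the special pair $a,b$ is complete to $B_{i+2}\cup T$. The genuinely hard cases are $s\in K\cup B_{i-2}$.
\item A $C_4$ through $t,a,r,b$ can never exist, because $t,a,b$ are all adjacent to $r$.
\item Your path $t-r-a-N_{B_{i-1}}(a)-q_i-q_{i+1}-\cdots$ has only six vertices, and you do not say where the seventh comes from.
\item You never treat the case where $t$ is adjacent to both $a$ and $b$.
\item The fallback to a minimal simplicial vertex would prove a weaker statement than the one claimed. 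The lemma is asserted, and later used, for \emph{every} simplicial $t$. Also, Lemma~\ref{lem:minimal simplicial vertex} requires $Y$ to be a clique, and $H$ is not.
\end{itemize}

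The missing step is to put the non-neighbor $s$ of $r$ at the front of the path. Suppose $t\neq b$ and $tb\notin E(G)$.
\begin{itemize}
\item Then $sb\notin E(G)$, since otherwise $s-t-r-b-s$ is an induced $C_4$.
\item For $s\in K\cup B_{i-2}$, both $s$ and $t$ share a neighbor with $b$ in $B_{i+2}$. By Lemma~\ref{lem:nonedge in A_3(i)}, their neighborhoods in $B_{i-1}$ are disjoint from that of $b$.
\item Hence $s-t-r-b-c-q_i-q_{i+1}$, with $c\in N_{B_{i-1}}(b)$, is an induced $P_7$.
\end{itemize}
Applying this to both members of the pair shows $t\notin\{a,b\}$ and $ta,tb\in E(G)$.

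Since $a,b$ are non-adjacent and $B_{i-2}$ is a clique, one of them, say $a$, lies in $A_3(i-2)$, hence in $K$. Because $t$ is simplicial in $K$, $b\notin K$, so $b\in B_{i-2}$. But then $b$ is adjacent to $t$ and not to $a$, although $at$ is an edge of $A_3(i-2)$. This contradicts Lemma~\ref{lem:edge in $A_3(i)$}, which says the two ends of an edge of $A_3(i-2)$ have the same neighborhood in $B_{i-2}$. This combination of simpliciality of $t$ with that lemma is the idea your proposal never reaches, and it is exactly how the paper finishes.
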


\begin{proof}
    Note that $N(t)\setminus B_{i-1}\subseteq K\cup B_{i-2}\cup B_{i+2}\cup T\cup T_0$. Suppose that $N(t)\setminus B_{i-1}$ contains two non-adjacent vertices $v$ and $u$. Suppose first that $v,u\notin T_0$. Since $B_{i-2}\cup B_{i+2}\cup T$ is a clique by Lemma \ref{lem: T is not a clique}
    and $N_K(t)$ is a clique, we may assume that $v\in K$ and $u\in B_{i-2}\cup B_{i+2}\cup T$. Since $B_{i+2}\cup T$ is complete to $A_3(i-2)$ by Lemma \ref{lem:A_3(i-2) is complete to y}, $u\in B_{i-2}$. But this contradicts Lemma \ref{lem:edge in $A_3(i)$}. So we may assume that $v\in T_0$. By Lemma \ref{lem:clique components of A_2(i+2)}, $v$ has two non-adjacent neighbors $a,b\in T$ or $A'_3(i-2)$. By Lemma \ref{lem: T is not a clique}, $a,b\in A'_3(i-2)$. We may assume that $a\in A_3(i-2)$ and $t\neq b$. If $tb\notin E(G)$, then $u-t-v-b-(N(b)\cap B_{i-1})-q_i-q_{i+1}$ is an induced $P_7$. So $tb\in E(G)$ and so $t\neq a$. By symmetry, $ta\in E(G)$. Since $t$ is simplicial in $K$, $b\in B_{i-2}$ and so $b$ mixes on $at\in E(K)$. This contradicts Lemma \ref{lem:edge in $A_3(i)$}.
\end{proof}

\begin{lemma}\label{lem:B_i is large}
    $|B_i|>\ceil{\frac{\omega}{4}}$.
\end{lemma}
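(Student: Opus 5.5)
The plan is to use the small vertex argument on a well-chosen vertex of $B_{i+1}$ (or symmetrically $B_{i-1}$) whose neighbourhood outside $B_i$ is a clique. Because $G$ is basic it has no small vertices, so such a vertex has degree at least $\ceil{\frac{5}{4}\omega}$. Its degree is at most $(\omega-1)+|N(v)\cap B_i|$. Together these give $|N(v)\cap B_i|>\ceil{\frac{\omega}{4}}$, and hence $|B_i|>\ceil{\frac{\omega}{4}}$.

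The first step is to record the global state at this point. We have $A_1=A_1(i)$, $A_5=\emptyset$, $A_2(i-2)=A_2(i+1)=\emptyset$ (Lemma \ref{lem:A_2(i-2) is empty}) and $A_3(i)=\emptyset$ (Lemma \ref{lem:A_3(i) is empty when A_1=A_1(i)}). By Lemma \ref{lem:A_2(i-1) or A_2(i) is empty}, one of $A_2(i-1)$ and $A_2(i)$ is empty; by symmetry I assume $A_2(i)=\emptyset$, and I work on the side $B_{i+1}$, which then meets no $A_2$-set containing $i+1$ in its support.

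The second step is to choose $v\in B_{i+1}$. Mirroring Lemma \ref{lem:A'_3(i-2) is large}, I take $v\in B_{i+1}\setminus N(A_3(i+1))$ with minimal neighbourhood in $B_{i+2}$ when $A_3(i+1)\neq\emptyset$; such a vertex exists by Lemma \ref{lem:comparable vertices in A_3(i)}. Otherwise I take a vertex of $B_{i+1}$ with minimal neighbourhood in $B_{i+2}$. With $A_1=A_1(i)$, $A_2(i)=A_2(i+1)=\emptyset$ and $A_5=\emptyset$, the neighbours of $v$ outside $B_i$ lie in $B_{i+1}\cup B_{i+2}\cup A_3(i+1)\cup A_3(i+2)$. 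The choice of $v$ excludes $A_3(i+1)$. Lemma \ref{lem:compare nbd of different A_3(i) in B_j} should let me handle $A_3(i+2)$, either by making its neighbours of $v$ complete to $N(v)\cap B_{i+2}$ or by excluding them. If needed, I would use Lemma \ref{lem:A_3(i-2) or A_3(i+2) is empty} to assume $A_3(i+2)=\emptyset$; the $i-2$ side can be handled in the same way using Lemma \ref{lem:A_3(i-2) is complete to y}.

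The third step, which I expect to be the main obstacle, is showing that $N(v)\cap(B_{i+1}\cup B_{i+2})$ is a clique. This requires that every $B_{i+1}$-neighbour of $v$ is complete to $N(v)\cap B_{i+2}$. If $A_3(i+1)\neq\emptyset$, I would use the minimality of $v$ and Lemma \ref{lem:single vertex in A_3(i)} (3), in the same way as in Lemma \ref{lem:smv on w}, where a failure produces a $P_4$-structure or an induced $C_4$. If $A_3(i+1)=\emptyset$, I would use Lemma \ref{lem:blowup-basicproperty} (1) together with the $P_4$-structure condition.

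If the $B_{i+1}$ side breaks down because $A_2(i)$ turns out to be nonempty in the symmetric choice, I would instead use the vertex $x\in A_1(i)$. Proposition \ref{prop:component of A_1(i)} shows that $N(x)\setminus A_2(i+2)$ is a clique. In that case I would bound $N(x)\cap A_2(i+2)$ using Lemma \ref{lem:simplicial vertices in A_1(i)} and then obtain the size of $B_i$ through a degree count on a minimal simplicial vertex.
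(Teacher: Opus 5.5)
Your overall strategy matches the paper's: find $v\in B_{i\pm1}$ with $N(v)\setminus B_i$ a clique, then use that $G$ has no small vertices. Your argument is fine in two cases. When $A_3(i+1)=A_3(i+2)=\emptyset$, minimality plus comparability suffices. When $A_3(i+1)\neq\emptyset$, your choice works; it is essentially Lemma~\ref{lem:smv on w}. That lemma applies because $A_2(i)=\emptyset$ makes $B_{i+1}$ anticomplete to $A_1\cup A_2$, and Lemma~\ref{lem:compare nbd of different A_3(i) in B_j} makes $v$ anticomplete to $A_3(i+2)$.

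The gap is the case $A_3(i+1)=\emptyset\neq A_3(i+2)$. You cannot remove it by ``assuming $A_3(i+2)=\emptyset$''. You already used the reflection fixing $i$ to get $A_2(i)=\emptyset$, so the configuration $A_2(i)=\emptyset\neq A_2(i-1)$, $A_3(i+2)\neq\emptyset=A_3(i-2)$ remains. In this case two things go wrong.
\begin{itemize}
\item Your $v$, chosen minimal in $B_{i+2}$ over all of $B_{i+1}$, may be adjacent to some $r\in A_3(i+2)$. Since $A_3(i+2)$ is complete to $B_{i-2}$, Lemma~\ref{lem:single vertex in A_3(i)}(1) gives $rq_{i+1}\notin E(G)$. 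So $r,q_{i+1}$ are non-adjacent vertices of $N(v)\setminus B_i$; making $r$ complete to $N(v)\cap B_{i+2}$ is not enough.
\item Even if you choose $v$ anticomplete to $A_3(i+2)$, a neighbour $s\in B_{i+1}$ of $v$ with a neighbour $r\in A_3(i+2)$ has $N_{B_{i+2}}(s)\subseteq N(r)$. So $s$ can miss some $t\in N(v)\cap B_{i+2}$. Then $r-s-v-t-u-r$ with $u\in N_{B_{i-2}}(t)$ is an induced $C_5$. Hence the minimality of $v$, Lemma~\ref{lem:blowup-basicproperty}(1), the $P_4$-structure condition and Lemma~\ref{lem:A_3(i-2) is complete to y} do not exclude this obstruction by themselves.
\end{itemize}

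The missing idea is how the paper kills this obstruction. Once such $s,r$ exist, Lemma~\ref{lem:A_3(i-2) or A_3(i+2) is empty} gives $A_3(i-2)=\emptyset$. If also $A_2(i-1)=\emptyset$, switch to the $B_{i-1}$ side, where the analogous obstruction would need a vertex of $A_3(i-2)$ and so cannot occur. If some $w\in A_2(i-1)$ exists, you get an induced $P_7$:
\begin{itemize}
\item if $w$ has a neighbour $w'\in A_1(i)$, take $v-s-r-q_{i-2}-N_{B_{i-1}}(w)-w-w'$;
\item otherwise take $v-s-r-N_{B_{i-2}}(a)-a-w-b$, where $a,b\in A'_3(i-1)$ are the non-adjacent neighbours of $w$ from Lemma~\ref{lem:two non-adjacent neighbors}.
\end{itemize}
The paper runs exactly this argument with the two sides exchanged.

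Your final fallback through $x\in A_1(i)$ does not replace this step. Lemma~\ref{lem:simplicial vertices in A_1(i)} gives a lower bound on $|N(s)\cap T|$, not on $|B_i|$. Also, a vertex of $A_1(i)$ can have non-adjacent neighbours outside $B_i$, e.g.\ in $T$ and in $A_2(i-1)$, which are anticomplete. So no degree count on it yields $|B_i|>\ceil{\frac{\omega}{4}}$.
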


\begin{proof}
    By Lemma \ref{lem:A_2(i-1) or A_2(i) is empty}, we may assume that $A_2(i-1)=\emptyset$.
    We choose a vertex $v\in B_{i-1}$ anticomplete to $A_3(i-2)\cup A_3(i-1)$ such that $|N(v)\cap B_{i-2}|$ is minimum. Then $N(v)\setminus B_i\subseteq B_{i-1}\cup B_{i-2}$. Suppose that $N(v)\setminus B_i$ contains two non-adjacent vertices $s,t$ with $s\in B_{i-1}$ and $t\in B_{i-2}$. By the choice of $v$, $s$ has a neighbor $r$ in $A_3(i-1)$ or $A_3(i-2)$. By Lemma \ref{lem:single vertex in A_3(i)} (3), $r\in A_3(i-2)$.
    
    By Lemma \ref{lem:A_3(i-2) or A_3(i+2) is empty}, $A_3(i+2)=\emptyset$. If $A_2(i)= \emptyset$, we are done by symmetry. 
    So $A_2(i)$ contains a vertex $w$. If $w$ has a neighbor $w'\in A_1(i)$, then $v-s-r-q_{i+2}-N_{B_{i+1}}(w)-w-w'$ is an induced $P_7$. So $w$ is anticomplete to $A_1(i)$. 
    By Lemma \ref{lem:two non-adjacent neighbors}, $w$ has two non-adjacent neighbors $a,b\in A'_3(i+1)$. Then $v-s-r-N_{B_{i+2}}(a)-a-w-b$ is an induced $P_7$.
    This proves that $N(v)\setminus B_i$ is a clique and the lemma follows.
\end{proof}

\begin{lemma}\label{lem:large cliques in A_1(i)}
   For any two non-adjacent vertices $t_1,t_2\in T$, $N(t_1)\cap A_1(i)$ and $N(t_2)\cap A_1(i)$ are disjoint and  $(N(t_1)\cap A_1(i))\cup (N(t_2)\cap A_1(i))\cup B_i$ is a clique.
\end{lemma}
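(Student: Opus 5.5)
The statement has three parts: the two neighborhoods in $A_1(i)$ are disjoint, each is a clique complete to $B_i$, and they are complete to each other. I will prove them in that order, all from the fact that $t_1,t_2\in T\subseteq A_2(i+2)$ have the common neighbor $q_{i+2}$. This holds because $q_{i+2}$ is complete to $B_{i+1}\cup B_{i-2}$, and by Lemma \ref{lem:opposite A_1(i) and A_2(j)} every vertex of $T$ is complete to $B_{i+2}\cup B_{i-2}$.

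\emph{Disjointness.} Since $t_1t_2\notin E(G)$ and $q_{i+2}$ is a common neighbor, $C_4$-freeness rules out a second common neighbor. Hence $N(t_1)\cap A_1(i)$ and $N(t_2)\cap A_1(i)$ are disjoint.

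\emph{Each neighborhood is complete to $B_i$.} Take $a\in N(t_1)\cap A_1(i)$ and any $c\in B_i$ with $ac\notin E(G)$. Choose $c'\in N_{B_i}(a)$, which exists by the definition of $A_1(i)$. The path $c-c'-a-t_1-q_{i+2}$ already has no chords, since $A_1(i)$ is anticomplete to $B_{i+2}$ and $t_1$ is anticomplete to $B_i$. I would extend it by $q_{i+1}$ or use $t_2$ together with a neighbor of $t_2$ in $A_1(i)$, aiming for an induced $P_7$ or a bad $P_7$. The cleaner route is to apply Lemma \ref{lem:simplicial vertices in A_1(i)} or Lemma \ref{lem:induced P3in A_1(i)} together with Lemma \ref{lem:A2i1tocompA1}-type arguments. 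Alternatively, the component $K$ of $A_1(i)$ containing $a$ has $N(K)\setminus A_2(i+2)$ a clique by Proposition \ref{prop:component of A_1(i)}, and in the present setting $A_2(i-1)$ or $A_2(i)$ is empty and $A_3(i)=\emptyset$.

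\emph{Each neighborhood is a clique.} If $a,a'\in N(t_1)\cap A_1(i)$ were non-adjacent, they would have the two common neighbors $t_1$ and any vertex of $B_i$ (using the previous step), which creates a $C_4$. Together with the previous step, each $N(t_j)\cap A_1(i)$ is a clique complete to $B_i$.

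\emph{The two neighborhoods are complete to each other.} This is the key step. Let $a_1\in N(t_1)\cap A_1(i)$ and $a_2\in N(t_2)\cap A_1(i)$ with $a_1a_2\notin E(G)$. Disjointness gives $a_1t_2,a_2t_1\notin E(G)$, and by the previous step $a_1,a_2$ share a neighbor $c\in B_i$. Then $t_1-a_1-c-a_2-t_2-q_{i+2}-t_1$ is an induced $C_6$: the only possible chords are $t_jc$ (excluded because $T\subseteq A_2(i+2)$ is anticomplete to $B_i$) and $a_jq_{i+2}$ (excluded by the definition of $A_1(i)$). This contradiction shows the two neighborhoods are complete, so their union with $B_i$ is a clique.

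The main obstacle is showing each neighborhood is complete to $B_i$. I would first try the path $c-c'-a-t_1-q_{i+2}-q_{i-2}-q_{i-1}$, checking $c'q_{i-1}$: if $c'q_{i-1}\notin E(G)$ this is an induced $P_7$. Otherwise I would replace $c'$ by a vertex chosen through the ordering of Lemma \ref{lem:blowup-basicproperty}(1), or appeal to Lemma \ref{lem:B_i is large} and minimal simplicial vertices.
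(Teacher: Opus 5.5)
Your disjointness argument and your closing induced $C_6$ are correct. The gap is the step you yourself flag as the main obstacle: that each $N(t_j)\cap A_1(i)$ is complete to $B_i$. It is never proved, and both your clique step and your key step (which needs a common neighbor $c\in B_i$ of $a_1,a_2$) depend on it.

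The concrete path you try, $c-c'-a-t_1-q_{i+2}-q_{i-2}-q_{i-1}$, fails for two reasons. First, $q_{i-1}$ is complete to $B_i$, so $cq_{i-1}$ and $c'q_{i-1}$ are both edges. Second, $t_1$ is complete to $B_{i-2}$ by Lemma~\ref{lem:opposite A_1(i) and A_2(j)}, so $t_1q_{i-2}$ is a chord. The other routes you list (Proposition~\ref{prop:component of A_1(i)}, Lemma~\ref{lem:simplicial vertices in A_1(i)}, Lemma~\ref{lem:induced P3in A_1(i)}) say nothing about a non-neighbor of $a$ in $B_i$. All of them also ignore $t_2$, and the presence of $N(t_2)\cap A_1(i)$ is exactly what forces the conclusion.

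The repair is to reverse your order, which is how the paper argues.
\begin{enumerate}
\item \emph{Completeness between the two neighborhoods.} Prove this without any common neighbor in $B_i$. If $s_1\in N(t_1)\cap A_1(i)$ and $s_2\in N(t_2)\cap A_1(i)$ are non-adjacent, then $t_1-s_1-B_i-s_2-t_2-q_{i+2}-t_1$ is an induced cyclic sequence. The needed non-edges are those you already checked, plus $B_i$ being anticomplete to $\{t_1,t_2,q_{i+2}\}$. Lemma~\ref{lem:cyclicsequence} then gives an induced cycle on at least $6$ vertices, which is impossible since $G$ is $(P_7,C_6,C_7)$-free.
\item \emph{Each neighborhood is a clique.} Two non-adjacent vertices of $N(t_1)\cap A_1(i)$, together with $t_1$ and any vertex of $N(t_2)\cap A_1(i)$, induce a $C_4$. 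That second neighborhood is non-empty because $t_2\in T$.
\item \emph{Completeness to $B_i$.} If $b\in B_i$ is adjacent to $s_1$ but not to $s_2$, then $b-s_1-s_2-t_2-q_{i+2}-q_{i+1}-b$ is an induced $C_6$. So a vertex of $B_i$ with a neighbor in one neighborhood is complete to the other, and by symmetry each $b\in B_i$ is complete or anticomplete to the union. An anticomplete $b$ gives the induced $P_7$ $s_1-s_2-t_2-q_{i-2}-q_{i-1}-b-q_{i+1}$.
\end{enumerate}
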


\begin{proof}
    Since $q_{i+2}$ is a common neighbor of $t_1$ and $t_2$ and $G$ is $C_4$-free, $N(t_1)\cap A_1(i)$ and $N(t_2)\cap A_1(i)$ are disjoint. If $s_j\in N(t_j)\cap A_1(i)$ are not adjacent, then $t_1-s_1-B_i-s_2-t_2-q_{i+2}-t_1$ is an induced cyclic sequence. So $N(t_1)\cap A_1(i)$ and $N(t_2)\cap A_1(i)$ are complete. Since $G$ is $C_4$-free, $N(t_1)\cap A_1(i)$ and $N(t_2)\cap A_1(i)$ are clique. 

    Now suppose $b\in B_i$ has a neighbor $s_1\in N(t_1)\cap A_1(i)$. If $b$ has a non-neighbor $s_2\in N(t_2)\cap A_1(i)$, then $b-s_1-s_2-t_2-q_{i+2}-q_{i+1}-b$ is an induced $C_6$. So $b$ is complete to $N(t_2)\cap A_1(i)$. It follows that $b$ is complete to $N(t_1)\cap A_1(i)$. So each vertex in $B_i$ is either complete or anticomplete to $(N(t_1)\cap A_1(i))\cup (N(t_2)\cap A_1(i))$. If there is a vertex $b\in B_i$ anticomplete to $(N(t_1)\cap A_1(i))\cup (N(t_2)\cap A_1(i))$, then $s_1-s_2-t_2-q_{i-2}-q_{i-1}-b-q_{i+1}$ is an induced $P_7$. This proves that $B_i$ is complete to $(N(t_1)\cap A_1(i))\cup (N(t_2)\cap A_1(i))$. 
\end{proof}

\begin{lemma}\label{lem:structure of T}
    $T$ is a blowup of $P_3$. This implies that $\alpha(T)\le 2$.
\end{lemma}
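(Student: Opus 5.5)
I plan to show directly that the complement of $G[T]$ has no induced $2K_2$, $C_4$ or $P_4$ pattern beyond what a blowup of $P_3$ allows. Equivalently, $T$ splits into three cliques $T_1,T_2,T_3$ with $T_2$ complete to $T_1\cup T_3$ and $T_1$ anticomplete to $T_3$. Recall that $T\subseteq A_2(i+2)$ consists of vertices having a neighbor in $A_1(i)$. Every vertex of $T$ is complete to $B_{i+2}\cup B_{i-2}$ by Lemma \ref{lem:opposite A_1(i) and A_2(j)}, so any two vertices of $T$ share $q_{i+2}$.

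\textbf{Step 1: $\alpha(T)\le 2$.} Take three pairwise non-adjacent $t_1,t_2,t_3\in T$. By Lemma \ref{lem:large cliques in A_1(i)} applied to each pair, the sets $N(t_j)\cap A_1(i)$ are pairwise disjoint and pairwise complete, and together with $B_i$ they form a clique. Pick $s_j\in N(t_j)\cap A_1(i)$. Then $t_1-s_1-s_2-t_2$ is an induced path whose ends share $q_{i+2}$. Moreover $t_3$ is adjacent to $s_3$ and $q_{i+2}$, but not to $s_1,s_2$ (disjointness) and not to $t_1,t_2$. So $s_1-s_3-s_2$ together with $t_1,t_3,t_2$ should give $t_1-s_1-s_3-t_3-q_{i+2}-t_1$, an induced $C_5$, which is allowed. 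I therefore need to refine the argument. The cleaner route is $t_1-s_1-s_2-t_2-q_{i+2}$ combined with $t_3$: the cycle $t_3-s_3-s_1-t_1-q_{i+2}-t_3$ is a $C_5$, so there is no contradiction there. Instead I would use $C_4$-freeness on $s_1,s_3$, the vertex $b\in B_i$, and the $t$'s.

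A better approach is to use the \emph{small vertex}/clique size bound. Each $N(t_j)\cap A_1(i)$ contains a simplicial vertex of $A_1(i)$ whose $A_2(i+2)$-neighborhood has size $>\ceil{\omega/4}$ (Lemma \ref{lem:simplicial vertices in A_1(i)}). Alternatively, $|N(t_j)\cap A_1(i)|$ is large via minimal simplicial vertices. Then three disjoint, pairwise complete large sets plus $B_i$ (which has size $>\ceil{\omega/4}$ by Lemma \ref{lem:B_i is large}) give a clique of size $>\omega$. So I will show that each $N(t_j)\cap A_1(i)$ has size $>\ceil{\omega/4}$, using small vertices in $A_1(i)$: a vertex $s\in N(t_j)\cap A_1(i)$ chosen minimal has $N(s)\setminus A_2(i+2)$ a clique.

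\textbf{Step 2: $\overline{T}$ is $P_4$-free and $C_4$-free, i.e.\ $T$ has no induced $2K_2$ or $P_4$.} An induced $2K_2$ $t_1t_2,t_3t_4$ in $T$ with all cross pairs non-adjacent gives a $C_4$ $t_1-q\ldots$. Concretely, $t_1,t_3$ non-adjacent and $t_1,t_4$ non-adjacent force $N(t_1)\cap A_1(i)$ to be complete to both $N(t_3)\cap A_1(i)$ and $N(t_4)\cap A_1(i)$, and then $C_4$-freeness yields a contradiction via $s_1-s_3-t_3-t_4-s_4$. For an induced $P_4$ $t_1-t_2-t_3-t_4$, the pairs $t_1t_3$, $t_2t_4$, $t_1t_4$ are non-adjacent, and I expect a forbidden $C_4$ or $C_6$ through $A_1(i)$. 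Combining this with $\alpha(T)\le2$, the complement of $T$ is triangle-free, $P_4$-free and $C_4$-free, hence each component of $\overline{T}$ is a star. With $\alpha\le2$ and co-connectivity handled, this forces $T$ to be a blowup of $P_3$ (possibly degenerate).

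\textbf{Main obstacle.} The main obstacle is Step 1, namely getting the size bound on $N(t_j)\cap A_1(i)$, which needs a careful small-vertex argument.
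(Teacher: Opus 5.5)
Your proposal has a genuine gap in Step~1, and that step is where the paper does most of its work. The small-vertex argument you sketch bounds the wrong set. For a minimal simplicial $s\in A_1(i)$, Lemma~\ref{lem:simplicial vertices in A_1(i)} gives that $N(s)\setminus A_2(i+2)$ is a clique, hence $|N(s)\cap A_2(i+2)|>\ceil{\frac{\omega}{4}}$. That is a bound on a neighbourhood in $T$, and says nothing about $|N(t_j)\cap A_1(i)|$.

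To bound $|N(t_j)\cap A_1(i)|$ you must run the small-vertex argument on $t_j$ itself, i.e.\ show that $N(t_j)\setminus A_1(i)$ is a clique. Since $T$ is not a clique, Lemma~\ref{lem: T is not a clique} gives $A_3(i-2)\cup A_3(i+2)=\emptyset$. With $A_5=\emptyset$ this yields $N(t_j)\setminus A_1(i)\subseteq A_2(i+2)\cup B_{i-2}\cup B_{i+2}$. That set is a clique exactly when $t_j$ is simplicial in $A_2(i+2)\cup B_{i-2}\cup B_{i+2}$. Nothing guarantees this for the members of an arbitrary independent triple, since a priori they may have non-adjacent neighbours in $T\cup T_0$.

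So your Step~1, once corrected, proves only the paper's claim~(2): $T$ has no three pairwise non-adjacent vertices that are simplicial in $T\cup T_0\cup B_{i-2}\cup B_{i+2}$. The missing idea is the passage from arbitrary vertices to such simplicial ones. The paper gets it in claims~(1), (3) and~(4) as follows:
\begin{itemize}
\item It first shows that $B_{i-2}\cup B_{i+2}$ is universal in $T\cup T_0\cup B_{i-2}\cup B_{i+2}$.
\item It uses chordality of $A_2(i+2)$ (Lemma~\ref{lem:A_2(i) is chordal}).
\item It uses that $T_0$, when non-empty, is a clique complete to two non-adjacent vertices of $T$ (Lemmas~\ref{lem:T_0 is a clique} and~\ref{lem:clique components of A_2(i+2)}).
\item It picks a simplicial vertex with minimal neighbourhood in $T_0$.
\end{itemize}
Together these find, in each piece of $T$ cut off by vertices universal in $A_2(i+2)\cup B_{i-2}\cup B_{i+2}$, a vertex simplicial in that whole set. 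Your proposal contains none of this.

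Step~2 aims at the wrong target. Induced $2K_2$'s in $T$ are not excluded: the lemma's own conclusion allows $T$ to be two anticomplete cliques of size at least two (empty middle part). Your proposed contradiction also does not materialise. The walk $s_1-s_3-t_3-t_4-s_4$ closes into a $5$-cycle, not a $C_4$, and its chords $s_3s_4$, $s_3t_4$, $t_3s_4$ are uncontrolled. Compare the perfectly legal induced $C_5$ $t_1-s_1-s_3-t_3-q_{i+2}-t_1$. Forcing $\overline{T}$ to be $C_4$-free would make one end of the $P_3$-blowup a single vertex, which is more than the lemma claims.

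You do not need this anyway: a $C_4$-free, $P_4$-free graph with $\alpha\le 2$ is already a blowup of $P_3$. The $P_4$-freeness, which you only expect, does hold via Lemma~\ref{lem:P_4-free lemma} with $u=q_{i+2}$, $X=T$, $Y=A_1(i)$:
\begin{itemize}
\item Non-adjacent vertices of $T$ have disjoint, complete neighbourhoods in $A_1(i)$ (Lemma~\ref{lem:large cliques in A_1(i)}).
\item An induced $P_5$ $a-b-c-d-e$ with $a,b,c,d\in T$ and $e\in A_1(i)$ extends to the induced $P_7$ $a-b-c-d-e-N_{B_i}(e)-q_{i+1}$.
\end{itemize}
So the lemma reduces entirely to $\alpha(T)\le 2$, which is precisely the step your proposal leaves unproved.
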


\begin{proof}
    If $T$ is a clique, then there is nothing to prove. So $T$ is not a clique and so $A_3(i-2)\cup A_3(i+2)=\emptyset$ by Lemma \ref{lem: T is not a clique}. 

    \medskip
    \noindent (1) Each vertex in $B_{i-2}\cup B_{i+2}$ is universal in $T\cup T_0\cup B_{i-2}\cup B_{i+2}$.   
    
    \medskip
    By Lemmas \ref{lem:blowup-twoneighbor} and \ref{lem:opposite A_1(i) and A_2(j)}, $B_{i+2}\cup B_{i-2}$ is a clique complete to $T$.  If $T_0$ is empty, we are done. 
    So $T_0$ is not empty. By Lemma \ref{lem:clique components of A_2(i+2)}, there are two non-adjacent vertices $a,b\in T$ complete to $T_0$. Since $G$ is $C_4$-free, $B_{i-2}\cup B_{i+2}$ is universal in $T\cup T_0\cup B_{i-2}\cup B_{i+2}$. This proves (1).

    \medskip
    \noindent (2) $T$ cannot contain three pairwise non-adjacent vertices simplicial in $T\cup T_0\cup B_{i-2}\cup B_{i+2}$.  
    
    \medskip
     Suppose that $T$ contains three pairwise non-adjacent vertices $v_1,v_2,v_3$ simplicial in $T\cup T_0\cup B_{i-2}\cup B_{i+2}$. For $j=1,2,3$, we have $|N(v_j)\cap A_1(i)|>\ceil{\frac{\omega}{4}}$.
    It follows from Lemmas \ref{lem:large cliques in A_1(i)} and \ref{lem:B_i is large} that $(N(v_1)\cap A_1(i))\cup (N(v_2)\cap A_1(i))\cup (N(v_3)\cap A_1(i))\cup B_i$ is a clique of size larger than $\omega$, a contradiction. This proves (2).
    
    \medskip 
    \noindent (3) Let $T'\subseteq T$ be cutset of $G[T]$ such that each vertex in $T'$ is universal in $A_2(i+2)\cup B_{i-2}\cup B_{i+2}$. Then for each component $T'$-component $K$ of $G[T]$, there is a vertex $v_K$ in $K\setminus T'$ that is simplicial in $K\cup T_0\cup B_{i-2}\cup B_{i+2}$.
    
    \medskip
    If $K\cup T_0\cup B_{i-2}\cup B_{i+2}\cup T'$ is a clique, there is nothing to prove. If $K\cup T_0\cup B_{i-2}\cup B_{i+2}$ is not a clique, then $K\cup T_0\cup B_{i-2}\cup B_{i+2}$ contains two non-adjacent simplicial vertices $s$ and $s'$ by Lemma \ref{lem:A_2(i) is chordal}. Since each vertex in $B_{i-2}\cup B_{i+2}\cup T'$ is universal in $K\cup T_0\cup B_{i-2}\cup B_{i+2}$, we have $s,s'\in (K\setminus T')\cup T_0$. Since $T_0$ is a clique by Lemma \ref{lem:T_0 is a clique}, one of $s,s'$ is in $K\setminus T'$. This proves (3).

    \medskip
    By (2) and (3), $T$ has at most two components. Next we show the following. 
    
    \medskip 
    \noindent $(4)$ Let $L_1,L_2\subseteq T$ be two connected subgraphs such that $L_1$ and $L_2$ are disjoint and anticomplete, and each vertex in $T\setminus (L_1\cup L_2)$ is universal in $A_2(i+2)\cup B_{i-2}\cup B_{i+2}$. Then $L_1$ and $L_2$ are cliques. 
    
    \medskip 
    Let $T'=T\setminus (L_1\cup L_2)$. 
    Suppose first that $T_0=\emptyset$. By the assumption of (4),
    every simplicial vertex of $L_i$ is simplicial in $A_2(i+2)\cup B_{i-2}\cup B_{i+2}$. 
    By (2), $L_1$ and $L_2$ are cliques.
    So we assume that $T_0\neq \emptyset$. By Lemma \ref{lem:clique components of A_2(i+2)}, there exist two non-adjacent vertices $a,b\in T$ complete to $T_0$. Since each vertex in $T'$ is universal in $A_2(i+2)\cup B_{i-2}\cup B_{i+2}$, $a,b\in L_1\cup L_2$.
    
    Suppose first that $a,b$ belong to the same $L_i$, say $L_1$. Since $L_1\cup T'\cup T_0$ is not a clique, it contains two non-adjacent vertices $s,s'$ simplicial in $L_1\cup T'\cup T_0$ by Lemmas \ref{lem:A_2(i) is chordal} and \ref{lem:chordal has simplicial vertices}. By our assumption on $T'$, $s,s'\in L_1$. By $L_1$ is anticomplete to $L_2$ and (1), $s,s'$ are simplicial in $A_2(i+2)\cup B_{i-2}\cup B_{i+2}$. By (3), $L_2$ contains a vertex $t$ simplicial in  $A_2(i+2)\cup B_{i-2}\cup B_{i+2}$. Then $s,s',t$ contradicts (2).
    
    So we assume that $a\in L_1$ and $b\in L_2$.
    Let $u\in L_1$ be a simplicial vertex of $L_1$. 
    Choose a simplicial vertex $s\in N_{L_1}[u]$ with $|N(s)\cap T_0|$ is minimum. 
    Suppose that $s$ is not simplicial in $T'\cup L_1\cup T_0$. 
    Then $s$ has two non-adjacent vertices $p\in L_1$ and $q\in T_0$. 
    If $p$ is not simplicial in $L_1$, then $p$ has a neighbor $r\in L_1$ not adjacent to $s$. 
    It follows that $r-p-s-q-b-(N(b)\cap A_1(i))-B_i$ is an induced sequence. 
    So $p$ is simplicial in $L_1$. 
    By the choice of $s$, $p$ has a neighbor $q'\in T_0$ not adjacent to $s$. 
    It follows that $p-q'-q-s-p$ is an induced $C_4$. 
    So $s$ is simplicial in $T'\cup T_0\cup L_1$. 
    By Lemma \ref{lem:two non-adjacent simplicail verteices}, (2) and (3), $L_1$ and $L_2$ are cliques. This proves (4).

    \medskip
    If $T$ has two components, then each component is a clique by applying (4) with $L_1$ and $L_2$ being the two components of $T$. So $T$ is connected. 
    Let $x,y\in T$ with $xy\notin E(G)$. If $p\in N(x)\cap A_1(i)$ and $q\in N(y)\cap A_1(i)$ are not adjacent, then $p-B_i-q-y-B_{i+2}-x-p$ is an induced cyclic sequence. If there is an induced $P_5=a-b-c-d-e$ with $a,b,c,d\in T$ and $e\in A_1(i)$, then $a-b-c-d-e-N_{B_i}(e)-q_{i+1}$ is an induced $P_7$. By Lemma \ref{lem:opposite A_1(i) and A_2(j)}, $q_{i+2}$ is complete to $T$. So all three conditions of Lemma \ref{lem:P_4-free lemma} are satisfied for $X=T$, $Y=A_1(i)$ and $u=q_{i+2}$.
    By Lemma \ref{lem:P_4-free lemma}, $T$ is $P_4$-free and so $T$ can be partitioned into two subsets $T_1$ and $T_2$ such that  $T_1$ and $T_2$ are complete. Since $G$ is $C_4$-free, we may assume that $T_1$ is a clique. We choose such a partition $(T_1,T_2)$ so that $|T_1|$ is maximum. It follows that $T_2$ is disconnected. Since $A_3(i-2)\cup A_3(i+2)=\emptyset$ and $T_1$ is a universal clique in $T$, $T_1$ is complete to $T_0$. It follows that each component of $T_2$ has a vertex simplicial in $A_2(i+2)\cup B_{i-2}\cup B_{i+2}$.
    By (2), $T_2$ has exactly two components. By applying (4) to the two components of $T_2$, each component of $T_2$ is a clique. So $T$ is a blowup of $P_3$.
\end{proof}

\begin{lemma}\label{lem:T is complete to T_0}
    $T$ is complete to $T_0$.
\end{lemma}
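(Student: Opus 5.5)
I would argue by contradiction, relying mainly on Lemma \ref{lem:clique components of A_2(i+2)}, Lemma \ref{lem:T_0 is a clique} and the structure of $T$ from Lemma \ref{lem:structure of T}. If $T_0=\emptyset$ there is nothing to prove, so assume $T_0\neq\emptyset$. By Lemma \ref{lem:T_0 is a clique}, $T_0$ is a single clique component $K$. By Lemma \ref{lem:clique components of A_2(i+2)}, $K$ has two non-adjacent neighbours $a,b$, both in $T$ or both in $A'_3(j)$, and they are complete to $K$.

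\textbf{Case 1: $T$ is a clique.} I would show that each $t\in T$ is complete to $T_0$. Suppose $t\in T$ has a non-neighbour $v\in T_0$. First I would show $t$ has some neighbour in $T_0$: otherwise take a shortest path from $t$ through $T_0$, or use $a,b$. Since $T$ is a clique, $a,b\notin T$, so $a,b\in A'_3(j)$, and $t$ is adjacent to them whenever $a,b\in A_3(i-2)$ (Lemma \ref{lem:A_3(i-2) is complete to y}); similarly for $B_{i+2}$. Then $t$ is a common neighbour of the non-adjacent $a,b$, as is $v$. So $a-t-b-v-a$ is an induced $C_4$ unless $tv\in E(G)$.

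The main subtlety is when $a$ or $b$ lies in $B_{i-2}$, since $T$ need not be complete to $B_{i-2}$. Here I would use Lemma \ref{lem:opposite A_1(i) and A_2(j)}: every vertex of $T$ has a neighbour in $A_1(i)$, so it is complete to $B_{i-2}\cup B_{i+2}$. Hence $T$ is complete to $A'_3(i-2)$ in all cases, and the $C_4$ argument applies. If instead $j=i+2$, the symmetric statement needs the analogue of Lemma \ref{lem:A_3(i-2) is complete to y}. Alternatively, $A_3(i+2)=\emptyset$ by the convention fixed via Lemma \ref{lem:A_3(i-2) or A_3(i+2) is empty}, so $a,b\in B_{i+2}$, which contradicts $B_{i+2}$ being a clique.

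\textbf{Case 2: $T$ is not a clique.} By Lemma \ref{lem: T is not a clique}, $A_3(i-2)\cup A_3(i+2)=\emptyset$. Then $a,b\in B_{i\pm2}$ is impossible, since $B_{i-2}\cup B_{i+2}$ is a clique by Lemma \ref{lem:B_{i-2} complete to B_{i+2}}. So $a,b\in T$. Let $t\in T$ and $v\in T_0$, and suppose $tv\notin E(G)$. By Lemma \ref{lem:structure of T}, $T$ is a blowup of $P_3$ with parts $T_1$ (the middle clique) and $T_2',T_2''$ (the two ends), and $a,b$ lie in different end cliques.

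If $t\in T_1$, then $t$ is adjacent to both $a$ and $b$, and $a-t-b-v-a$ is an induced $C_4$. If $t$ lies in the end clique containing $a$, say, then $t$ is adjacent to $a$ and not to $b$. Let $s\in N(b)\cap A_1(i)$ and $r\in N(t)\cap A_1(i)$. By Lemma \ref{lem:large cliques in A_1(i)}, $rs\in E(G)$, and $r,s$ are distinct and complete to $B_i$. The vertex $v\in T_0$ has no neighbour in $A_1(i)$. Then $v-a-t-r-s-b-v$ has length six. Checking chords: $ab\notin E(G)$, $vt\notin E(G)$, $vr,vs\notin E(G)$, $as\notin E(G)$ since the neighbourhoods of $a$ and $b$ in $A_1(i)$ are disjoint, $tb\notin E(G)$, and $ar$ possibly. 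If $ar\in E(G)$, then $v-a-r-s-b-v$ is a $C_5$, so instead I would use $t-a-v-b-s-r-t$ as a candidate $C_6$, or extend $t-a-v-b-s$ by $N_{B_i}(s)$ and $q_{i+1}$ into an induced $P_7$.

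The hardest step is this last chord bookkeeping. I expect one of the resulting cycles or paths to be induced, contradicting $(C_4,C_6,P_7)$-freeness.
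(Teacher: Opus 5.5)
Your proposal is correct and follows the paper's proof. Your split on whether $T$ is a clique is equivalent to the paper's split on whether the special pair $\{a,b\}$ lies in $A'_3(i-2)$ or in $T$. The case where $t$ sees both $a,b$ is excluded by the induced $C_4$ $a-t-b-v-a$, and your appeal to Lemma~\ref{lem:opposite A_1(i) and A_2(j)} for $a$ or $b\in B_{i-2}$ makes explicit a step the paper leaves implicit. The case where $t$ sees exactly one of $a,b$ is excluded by an induced $P_7$.

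In that last case you can drop the $C_6$ detour. The path $t-a-v-b-s-x-q_{i+1}$ with $x\in N_{B_i}(s)$ is induced whether or not $ar\in E(G)$, since it only needs $s\notin N(a)\cup N(t)$, which follows from $C_4$-freeness via the common neighbour $q_{i+2}$. This is exactly the paper's induced sequence. Your alternative cycle $t-a-v-b-s-r-t$ has the same vertex set as $v-a-t-r-s-b-v$, so it fails exactly when the first one does.
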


\begin{proof}
    If $T_0=\emptyset$, there is nothing to prove. Let us assume that $T_0$ is not empty. By Lemmas \ref{lem:T_0 is a clique} and \ref{lem:clique components of A_2(i+2)}, $T_0$ is complete to two non-adjacent vertices $a,b$ where $a,b\in A'_3(i-2)$ or $a,b\in T$. If $a,b\in A'_3(i-2)$, then $T\cup T_0$ is complete to $\{a,b\}$ by Lemma \ref{lem:A_3(i-2) is complete to y} and so $T\cup T_0$ is a clique. 

    So $a,b\in T$. Suppose for a contradiction that $c\in T$ is not adjacent to $v\in T_0$. Then $c\neq a,b$. Since $G$ is $C_4$-free, $c$ is not adjacent to $a$ or $b$. If $c$ is adjacent to $a$, then $c-a-v-b-(N(b)\cap A_1(i))-B_i-q_{i+1}$ is an induced sequence. So $c$ is anticomplete to $\{a,b\}$. Then $\{a,b,c\}$ is an independent set in $T$, contradicting Lemma \ref{lem:structure of T}.
\end{proof}

\begin{lemma}\label{lem:T_0 is complete to B_{i-2} and B_{i+2}}
    $T_0$ is complete to $B_{i-2}\cup B_{i+2}$.
\end{lemma}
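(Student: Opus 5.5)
We need to show $T_0$ is complete to $B_{i-2}\cup B_{i+2}$. We may assume $T_0\neq\emptyset$. By Lemma \ref{lem:T_0 is a clique}, $T_0$ is a clique, and by Lemma \ref{lem:clique components of A_2(i+2)} it is complete to two non-adjacent vertices $a,b$ lying either both in $T$ or both in $A'_3(i-2)$ (after the symmetry reduction in that lemma, $A_3(i+2)=\emptyset$). The plan is to handle these two cases separately and use $C_4$-freeness each time.

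\textbf{Case $a,b\in T$.} By Lemma \ref{lem:opposite A_1(i) and A_2(j)}, every vertex of $T$ is complete to $B_{i-2}\cup B_{i+2}$. Suppose $v\in T_0$ is non-adjacent to some $c\in B_{i-2}\cup B_{i+2}$. Then $a-v-b-c-a$ is an induced $C_4$, since $ab\notin E(G)$ and $a,b,c$ are pairwise as required. This is a contradiction, so $T_0$ is complete to $B_{i-2}\cup B_{i+2}$.

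\textbf{Case $a,b\in A'_3(i-2)$.} Lemma \ref{lem:A_3(i-2) is complete to y}(2) gives that $A_3(i-2)$ is complete to $B_{i+2}$, and $B_{i-2}\subseteq A'_3(i-2)$ is complete to $B_{i+2}$ by Lemma \ref{lem:B_{i-2} complete to B_{i+2}}. Hence $a,b$ are common neighbors of every $c\in B_{i+2}$. If $v\in T_0$ misses $c\in B_{i+2}$, then $a-v-b-c-a$ is an induced $C_4$. So $T_0$ is complete to $B_{i+2}$.

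For $B_{i-2}$, suppose $v\in T_0$ misses $c\in B_{i-2}$. Since $a,b\in A'_3(i-2)$ are non-adjacent, Lemma \ref{lem:nonedge in A_3(i)} together with Lemma \ref{lem:A_3(i-2) is complete to y}(2) and Lemma \ref{lem:B_{i-2} complete to B_{i+2}} shows they have a common neighbor $d\in B_{i+2}$, and that their neighborhoods in $B_{i-1}$ are disjoint. The aim is to find a $C_4$ through $c$, or a long induced path. If $c$ is adjacent to both $a$ and $b$, then $a-v-b-c-a$ is an induced $C_4$. Otherwise some member, say $a$, misses $c$; then $a\in A_3(i-2)$, since $B_{i-2}$ is a clique.
\begin{itemize}
\item If $bc\in E(G)$, then $v-b-c$ is an induced path. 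Since $v$ is complete to $B_{i+2}$ (shown above) while $A_2(i+2)$ is anticomplete to $B_{i-1}\cup B_i$, extending through $N_{B_{i-1}}(c)$, then $q_i$ and $q_{i+1}$, should give an induced $P_7$ or bad $P_7$ (Lemma \ref{lem:badP7}).
\item If $c$ misses both $a$ and $b$, use a neighbor of $a$ in $B_{i-1}$ together with the path through $v,b$, and $N_{B_{i-1}}(b)$, aiming again for a bad $P_7$ or a $C_6$.
\end{itemize}

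The main obstacle is this last case, $B_{i-2}$ with $a,b\in A'_3(i-2)$. Chords from $q_{i+1}$ and $q_i$ to $B_{i-1}$ and $B_{i+2}$ must be controlled carefully, so I expect to choose $c$ minimal in $B_{i-2}$ (Lemma \ref{lem:blowup-basicproperty}(1)) to pin down its neighbors. Everything else is a direct $C_4$ argument.
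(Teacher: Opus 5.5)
Your case $a,b\in T$, the completeness of $T_0$ to $B_{i+2}$, and the subcase where the non-neighbour $c\in B_{i-2}$ of $v$ sees both $a$ and $b$ are exactly the paper's $C_4$ arguments.

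In the subcase $bc\in E(G)$, $ac\notin E(G)$, route the path through $a$, not through $N_{B_{i-1}}(c)$. A path $a-v-b-c-y-\cdots$ with $y\in N_{B_{i-1}}(c)$ has the chord $by$ whenever $N_{B_{i-1}}(c)\subseteq N(b)$. The paper instead uses $c-b-v-a-x-q_i-q_{i+1}$ with $x\in N_{B_{i-1}}(a)$. This path is induced for two reasons:
\begin{itemize}
\item $cx\notin E(G)$ by Lemma~\ref{lem:blowup-threeneighbor}(3), since $a$ is complete to $B_{i+2}$ and $B_{i+2}$ is complete to $B_{i-2}$.
\item $bx\notin E(G)$, since $a,b$ share the neighbour $v$ and therefore have disjoint neighbourhoods in $B_{i-1}$.
\end{itemize}

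The genuine gap is your last subcase: $c$ anticomplete to $\{a,b,v\}$, which forces $a,b\in A_3(i-2)$. No bad $P_7$ or $C_6$ is available there. For $x_a\in N_{B_{i-1}}(a)$ and $x_b\in N_{B_{i-1}}(b)$, the cycle $x_a-a-v-b-x_b-x_a$ is only an induced $C_5$, and $c$ misses all of it. In fact the configuration occurs in a $(P_7,C_4,C_6,C_7)$-free graph. Take $i=1$, and let $B_1=\{q_1\}$, $B_2=\{q_2\}$, $B_3=\{q_3\}$, $B_4=\{q_4,c\}$, $B_5=\{p,x_a,x_b\}$ be cliques with all edges between consecutive $B_j$ except $cx_a$ and $cx_b$. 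Add $a,b,v,t,s$ with $N(a)=\{q_3,q_4,x_a,v,t\}$, $N(b)=\{q_3,q_4,x_b,v,t\}$, $N(v)=\{q_3,q_4,a,b,t\}$, $N(t)=\{q_3,q_4,c,a,b,v,s\}$ and $N(s)=\{q_1,t\}$. This graph is $(P_7,C_4,C_6,C_7)$-free and $H$ is a maximal nice blowup. Here $T=\{t\}$ and $T_0=\{v\}$ is complete to the non-adjacent pair $a,b\in A_3(4)$, yet $cv\notin E(G)$. So the lemma genuinely needs $G$ to be basic.

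The missing idea is the paper's small-vertex count:
\begin{enumerate}
\item Pick $w\in B_{i-2}$ anticomplete to $A_3(i-2)$. Since $A_3(i-2)\neq\emptyset$, $T$ is a clique (Lemma~\ref{lem: T is not a clique}) and $N(w)\setminus B_{i-1}\subseteq B_{i-2}\cup B_{i+2}\cup T\cup T_0$.
\item This set is a clique. Use Lemma~\ref{lem:T is complete to T_0}; and a non-edge $u'v'$ with $u'\in B_{i-2}$, $v'\in T_0$ would give the induced $P_7$ $u'-w-v'-a-N_{B_{i-1}}(a)-q_i-q_{i+1}$, taking $a$ to be whichever of the pair misses $u'$.
\item Since $w$ is not small, $|N(w)\cap B_{i-1}|>\ceil{\frac{\omega}{4}}$.
\item Two non-adjacent simplicial vertices $t_1,t_2$ of $A_3(i-2)$ satisfy the same bound, by Lemma~\ref{lem:B_{i-1} is large}.
\item The three neighbourhoods of $w,t_1,t_2$ in $B_{i-1}$ are pairwise disjoint, because each pair has a common neighbour in $B_{i+2}$. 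They are complete to $B_i$ by Lemma~\ref{lem:B_i complete to two large cliques in B_{i-1}}, and $|B_i|>\ceil{\frac{\omega}{4}}$ by Lemma~\ref{lem:B_i is large}. Together they give a clique of size larger than $\omega$.
\end{enumerate}
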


\begin{proof}
    If $T_0$ is empty, there is nothing to prove. Let $v\in T_0$. By Lemma \ref{lem:clique components of A_2(i+2)}, there are two vertices $a,b$ complete to $T_0$ where $a,b\in A'_3(i-2)$ or $a,b\in T$. If $a,b\in T$, then $B_{i-2}\cup B_{i+2}\cup T_0$ is complete to $\{a,b\}$ and so we are done. So assume that $a,b\in A'_3(i-2)$.

    By Lemmas \ref{lem:B_{i-2} complete to B_{i+2}} and \ref{lem:A_3(i-2) is complete to y}, $T_0$ is complete to $B_{i+2}$. Suppose for a contradiction that $u\in B_{i-2}$ is not adjacent to $v$. Then $u\neq a,b$. Since $G$ is $C_4$-free, $u$ is not adjacent to $a$ or $b$. If $u$ is adjacent to $a$, then $u-a-v-b-(N(b)\cap B_{i-1})-q_i-q_{i+1}$ is an induced $P_7$. So $u$ is anticomplete to $\{a,b\}$. In particular, $a,b\in A_3(i-2)$.

    Choose a vertex $w\in B_{i-2}$ anticomplete to $A_3(i-2)$. By Lemma \ref{lem:compare nbd of different A_3(i) in B_j}, $w$ is anticomplete to $A_3(i-1)$.
    By Lemma \ref{lem:A_3(i-2) or A_3(i+2) is empty} and $A_3(i-2)\neq \emptyset$, we have $A_3(i+2)=\emptyset$. By Lemma \ref{lem:A_2(i-2) is empty}, $w$ has no neighbor in $A_2\setminus A_2(i+2)$. 
    Then $N(w)\setminus B_{i-1}\subseteq B_{i-2}\cup B_{i+2}\cup T\cup T_0$. As $T_0\cup B_{i+2}$ is complete to $\{a,b\}$,  $T_0\cup B_{i+2}$ is a clique. Suppose that $v',u\in N(w)\setminus B_{i-1}$ are not adjacent. By Lemma \ref{lem:T is complete to T_0},
    $u\in B_{i-2}$ and $v'\in T_0$ and so $u-w-v'-a-N_{B_{i-1}}(a)-q_{i}-q_{i+1}$ is an induced $P_7$. This proves that $N(w)\setminus B_{i-1}$ is a clique and so $|N(w)\cap B_{i-1}|>\ceil{\frac{\omega}{4}}$. 

    Since $a,b\in A_3(i-2)$ are not adjacent, there exist two non-adjacent vertices $t_1,t_2\in A_3(i-2)$ such that $|N(t_j)\cap B_{i-1}|>\ceil{\frac{\omega}{4}}$ for $j=1,2$ by Lemma \ref{lem:B_{i-1} is large}. By Lemmas \ref{lem:B_i is large} and \ref{lem:B_i complete to two large cliques in B_{i-1}}, $B_i\cup (N(t_1)\cap B_{i-1})\cup (N(t_2)\cap B_{i-1})\cup (N(w)\cap B_{i-1})$ is a clique of size larger than $\omega$.
\end{proof}

\begin{lemma}\label{lem:A_1(i) is dominated by A_2(i+2)}
    $A_1(i)\setminus N(A_2(i+2))=\emptyset$. In other words, each vertex in $A_1(i)$ has a neighbor in $A_2(i+2)$.
\end{lemma}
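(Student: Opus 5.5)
We argue by contradiction, with a clique cutset argument in the spirit of Lemma \ref{lem:A_1(i+1) is dominated by A_2(i-2)}. Suppose $T' := A_1(i)\setminus N(A_2(i+2))\neq\emptyset$ and let $K$ be a component of $T'$. Lemma \ref{lem:clique components of A_1(i)} already says that $K$ is a clique complete to two non-adjacent vertices $a,b\in S':=N(A_2(i+2))\cap A_1(i)$. So we cannot aim to show that $N(K)$ is a clique. Instead we aim to show that $a,b$ force a clique of size larger than $\omega$, or an induced $P_7$, $C_6$ or $C_7$.

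\textbf{Step 1: neighbours of $a,b$.} Let $a'\in N(a)\cap A_2(i+2)$ and $b'\in N(b)\cap A_2(i+2)$, so $a',b'\in T$. Since $a,b$ have a common neighbour $k\in K$ and $G$ is $C_4$-free, $a$ and $b$ have no common neighbour in $A_2(i+2)$. By $C_6$-freeness, applied to $a-a'-q_{i+2}-b'-b-k-a$ (and its variants), $a'$ and $b'$ are distinct. The edge $a'b'$ must be present, since otherwise $k-a-a'-q_{i+2}-b'-b-k$ is an induced $C_6$. For the same reason $N(a)\cap T$ is complete to $N(b)\cap T$.

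\textbf{Step 2: $a,b$ are complete to $B_i$.} We use Lemma \ref{lem:induced P3in A_1(i)} on the induced $P_3$ $a-k-b$, together with the bad-$P_7$ argument from the proof of Lemma \ref{lem:clique components of A_1(i)}. This should give $N_{B_i}(a),N_{B_i}(b)\subseteq N_{B_i}(k)$, and then, via the path $u-v-a-a'-q_{i+2}-q_{i+1}-B_i$, that $a$ and $b$ are complete to $B_i$. But then $a$ and $b$ have a common neighbour $c\in B_i$ as well as the common neighbour $k$, so $a-k-b-c-a$ is an induced $C_4$ unless $kc\in E(G)$. This already constrains things strongly. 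The cleaner route is the following. By Lemma \ref{lem:no common neighbors in A_1(i)} applied in reverse (with $a,b$ non-adjacent in $A_1(i)$), $N_{B_i}(a)$ and $N_{B_i}(b)$ are disjoint, because otherwise $a$ and $b$ have two non-adjacent common neighbours, giving a $C_4$. So $a$ and $b$ cannot both be complete to $B_i$. We obtain a contradiction once Step 2 shows that each of them is complete to $B_i$.

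\textbf{The main obstacle} is justifying Step 2 when $a$ (say) does not mix on any edge inside $A_1(i)$. The bad-$P_7$ argument needs an edge $uv$ of $A_1(i)$ on which $a$ mixes. Here $a$ is complete to $K$ while $K\cap S'=\emptyset$, so any $u\in K$ with a neighbour $v\in K$ not adjacent to $a$ would suffice. But $a$ is complete to $K$, so such a pair does not exist inside $K$. We would instead use $a'$. Take $u\in K$: since $a'\in T$ is not adjacent to $K$, the path $u-a-a'-q_{i+2}-q_{i+1}-w-\cdots$ with $w\in B_i\setminus N(a)$ gives an induced $P_7$ of the form $u-a-a'-q_{i+2}-q_{i+1}-w-w'$. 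Here $w'$ is obtained by choosing $w\in B_i\setminus N(a)$ adjacent to $q_{i+1}$ and $w'\in B_{i-1}$, checking non-adjacency via Lemma \ref{lem:opposite A_1(i) and A_2(j)}. If this fails, we fall back on a size argument. In that case $a,b$ are complete to $B_i$, so $B_i$ is complete to $N(a)\cap A_1(i)$ and $N(b)\cap A_1(i)$. Combined with Lemmas \ref{lem:large cliques in A_1(i)}, \ref{lem:B_i is large} and the simplicial-vertex bound of Lemma \ref{lem:simplicial vertices in A_1(i)}, this should produce a clique exceeding $\omega$.
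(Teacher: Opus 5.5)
Your argument has a genuine gap: the contradiction you draw at the end of Step~2 is invalid, and without it the proposal has no contradiction at all. Lemma~\ref{lem:no common neighbors in A_1(i)} concerns non-adjacent vertices of $A_3'(i)$ and cannot be ``applied in reverse''. Its proof rests on Lemma~\ref{lem:nonedge in A_3(i)}, which has no counterpart for $A_1(i)$.

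For non-adjacent $a,b\in A_1(i)$, $C_4$-freeness only says that their common neighbours form a clique. Lemma~\ref{lem:induced P3in A_1(i)}, which you invoke yourself, gives $N_{B_i}(a)\subseteq N_{B_i}(k)$. So every common neighbour $c\in B_i$ of $a,b$ is adjacent to $k$, and $a-k-b-c-a$ is not an induced $C_4$. In fact the paper proves, as a true intermediate fact, exactly the configuration you hoped to rule out: with $X=N(A_2(i+2))\cap A_1(i)$, $a$ and $b$ are complete to $B_i\cup(A_1(i)\setminus X)$, and this set is a clique. Reaching that is also easier than you feared: for $v\in B_i\setminus N(a)$, the path $b-k-a-a'-q_{i+2}-q_{i+1}-v$ is a bad $P_7$.

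So the whole weight of the lemma lies in the size argument you only gesture at. You cite cliques built from $A_1(i)\cup B_i$ and simplicial vertices of $A_1(i)$ but never combine them. The paper's contradiction comes from $T$ instead, in four steps.
\begin{enumerate}
\item It shows $A_1(i)\setminus X$ is connected, hence a single clique. Two components with pairs $\{a_1,b_1\},\{a_2,b_2\}$ would give a stable set of four vertices, and then an induced $P_7$ or an induced sequence through $T$.
\item It shows $N(a)\cap X$ is complete to $B_i\cup(A_1(i)\setminus X)$.
\item It chooses the non-adjacent pair $\{a_0,b_0\}\subseteq X$ complete to $A_1(i)\setminus X$ that minimises $|N(a_0)\cap X|+|N(b_0)\cap X|$, and proves $N(a_0)\setminus T$ is a clique. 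Here $a_0$ is anticomplete to $A_3(i\pm1)$ by a $C_4$, and one may assume $A_2(i)=\emptyset$ by Lemma~\ref{lem:A_2(i-1) or A_2(i) is empty}. Two non-adjacent neighbours of $a_0$ in $X$ are then excluded by the minimality together with Lemma~\ref{lem:P_4-free lemma}.
\item Since $G$ has no small vertices, $|N(a_0)\cap T|$ and $|N(b_0)\cap T|$ both exceed $\lceil\omega/4\rceil$. These two sets are disjoint and complete, which is your Step~1. The sets $B_{i-2}$ and $B_{i+2}$ are complete to each other and to $T$, and each exceeds $\lceil\omega/4\rceil$ by Lemma~\ref{lem:A'_3(i-2) is large}. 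All four together form a clique of size larger than $\omega$.
\end{enumerate}
The minimal choice of the pair is the missing idea. For an arbitrary pair $a,b$, the vertex $a$ may have non-adjacent neighbours in $X$, and then no small-vertex bound on $|N(a)\cap T|$ is available.
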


\begin{proof}
    Suppose not. Let $X=N(A_2(i+2))\cap A_1(i)$.
    We first show that $A_1(i)\setminus X$ is a clique. Suppose for a contradiction that $K_1,K_2$ are two components of $A_1(i)\setminus X$. By Lemma \ref{lem:clique components of A_1(i)}, there are two non-adjacent vertices $a_1,b_1\in X$ complete to $K_1$ and two non-adjacent vertices $a_2,b_2\in X$ complete to $K_2$. Let $v_1$ be a vertex in $K_1$. 
    Since $v_1\notin K_2$, $v_1$ is not complete to $\{a_2,b_2\}$. If $v_1$ is adjacent to $a_2$, then $v_1-a_2-K_2-b_2-(N(b_2)\cap T)-q_{i+2}-q_{i+1}$ is an induced $P_7$. This proves that $K_1$ is anticomplete to $\{a_2,b_2\}$. By symmetry, $K_2$ is anticomplete to  $\{a_1,b_1\}$. It follows that $a_1,b_1,a_2,b_2$ are four pairwise distinct vertices.  Let $b_1'\in N(b_1)\cap T$ and $b_2'\in N(b_2)\cap T$. 

    Next, we show that $\{a_1,b_1,a_2,b_2\}$ is a stable set. Suppose for a contradiction that $a_1a_2\in E(G)$. Since $a_1$ is anticomplete to $K_2$ and $a_2$ is anticomplete to $K_1$, we have $a_1b_2,a_2b_1\notin E(G)$. Since $a_1-a_2-K_2-b_2-N_T(b_2)-q_{i+2}-q_{i+1}$ is not an induced $P_7$, $a_1$ is complete to $N(b_2)\cap T$. By symmetry, $a_2$ is complete to $N(b_1)\cap T$. 
    Then $a_1b_2',a_2b_1'\in E(G)$. Since $a_1,b_1$ have a common neighbor in $K_1$, the neighborhoods of $a_1$ and $b_1$ in $T$ are disjoint and complete cliques and so $b_1'b_2'\in E(G)$. It follows that $a_1-a_2-b_1'-b_2'-a_1$ is an induced $C_4$. 
    This proves that $\{a_1,b_1,a_2,b_2\}$ is stable.

    If $b_1'=b_2'$, then $a_1,a_2$ are not adjacent to $b_1'$ since $G$ is $C_4$-free. Then $a_1-K_1-b_1-b_1'-b_2-K_2-a_2$ is an induced $P_7$. Therefore, each pair of vertices from $\{a_1,b_1,a_2,b_2\}$ has no common neighbors in $T$. 
    Then $a_1-K_1-b_1-\{b_1',q_{i+2},b_2'\}-b_2-K_2-a_2$ is an induced sequence.
    This proves that $A_1(i)\setminus X$ is connected. By Lemma \ref{lem:clique components of A_1(i)}, $A_1(i)\setminus X$ is a clique.

    Let $u\in A_1(i)\setminus X$.
    By Lemma \ref{lem:clique components of A_1(i)}, there are two non-adjacent vertices $a,b\in X$ complete to $A_1(i)\setminus X$. If $v\in B_i$ that is not adjacent to $a$, then $b-u-a-(N(a)\cap T)-q_{i+2}-q_{i+1}-v$ is a bad $P_7$. So $a,b$ are complete to $B_i\cup (A_1(i)\setminus X)$ and $B_i\cup (A_1(i)\setminus X)$ is a clique. Suppose that $v\in N(a)\cap X$ is not adjacent to $v'\in A_1(i)\setminus X$. It follows that $v'b\in E(G)$ and so $vb\notin E(G)$. 
    Since $v$ and $b$ have a common neighbor in $B_i$, $v-a-v'-b-(N(b)\cap T)-q_{i+2}-q_{i+1}$ is an induced $P_7$.  So $N(a)\cap X$ is complete to $A_1(i)\setminus X$.
    By symmetry, $\{v,b\}$ is complete to $B_i$.
    So $N(a)\cap X$ is complete to $B_i\cup (A_1(i)\setminus X)$.

    We choose a non-adjacent pair $\{a_0,b_0\}$ of vertices in $X$ complete to $A_1(i)\setminus X$ such that $|N(a_0)\cap X|+|N(b_0)\cap X|$ is minimum.
    Next we show that $N(a_0)\setminus T$ is a clique.  If $a_0$ has a neighbor $t\in A_3(i-1)$, then $a_0-t-(N(t)\cap B_{i-2})-(N(a_0)\cap T)-a_0$ is an induced $C_4$. So $a_0$ is anticomplete to $A_3(i-1)\cup A_3(i+1)$.
    By Lemma \ref{lem:A_2(i-1) or A_2(i) is empty}, we may assume that $A_2(i)=\emptyset$ and so $N(a)\setminus T\subseteq B_i\cup A_1(i)\cup A_2(i-1)$.
    Let $s,s'$ be two non-adjacent neighbors of $a_0$ in $B_i\cup A_1(i)\cup A_2(i-1)$. By Proposition \ref{prop:component of A_1(i)}, $N(a_0)\cap (B_i\cup A_2(i-1))$ is a clique and so we may assume that $s\in A_1(i)$. By Lemma \ref{lem:A2i1tocompA1}, $s'\notin A_2(i-1)$ and so $s'\in B_i\cup A_1(i)$. Since $B_i\cup (A_1(i)\setminus X)$ is a clique, we may assume that $s\in X$. Since $s$ is complete to $B_i\cup (A_1(i)\setminus X)$, we have $s'\in X$. Since $G$ is $C_4$-free, $b_0$ is not adjacent to $s$ or $s'$, say $sb_0\notin E(G)$.
    By the choice of $\{a_0,b_0\}$, $s$ has a neighbor $s'' \in X$ with $a_0s''\notin E(G)$. 
    Applying the claim $N(a)\cap X$ is complete to $A_1(i)\setminus X$ to the pair $\{s,b_0\}$, we have $s''$ is complete to $A_1(i)\setminus X$. So $s''-s-a_0-s'$ is induced $P_4$, which contradicts Lemma \ref{lem:P_4-free lemma}.

    So $N(a_0)\setminus T$ is a clique and thus $|N(a_0)\cap T|>\ceil{\frac{\omega}{4}}$. By symmetry, $|N(b_0)\cap T|>\ceil{\frac{\omega}{4}}$. 
    Since $a_0,b_0$ have a common neighbor in $A_1(i)\setminus X$, $N(a_0)\cap T$ and $N(b_0)\cap T$ are disjoint and complete cliques. By Lemma \ref{lem:A'_3(i-2) is large}, $(N(a_0)\cap T)\cup (N(b_0)\cap T)\cup B_{i+2}\cup B_{i-2}$ is a clique of size larger than $\omega$, a contradiction.
\end{proof}

\begin{lemma}\label{lem:A_1(i) is P_4-free}
    $A_1(i)$ is $P_4$-free.
\end{lemma}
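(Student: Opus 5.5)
The plan is to apply the $P_4$-free lemma (Lemma \ref{lem:P_4-free lemma}) to each component of $A_1(i)$. This is the same template used for $A_1(i+1)$ in Lemma \ref{lem:A_1(i+1) is $P_4$-free} and for $T$ in Lemma \ref{lem:structure of T}. Since a graph is $P_4$-free if and only if each of its components is, it suffices to fix a component $K$ of $A_1(i)$ and show that $K$ is $P_4$-free. We will take $X=K$ and $Y=A_2(i+2)$; the remaining task is to find a vertex $u$ complete to $K$ and anticomplete to $Y$.

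\textbf{Choosing $u$.} By Lemma \ref{lem:A_1(i) is dominated by A_2(i+2)}, every vertex of $K$ has a neighbor in $A_2(i+2)$, so the natural candidate for $u$ is a vertex of $B_i$ complete to $K$. By Lemma \ref{lem:induced P3in A_1(i)}, neighborhoods in $B_i$ along induced $P_3$s in $A_1(i)$ are nested toward the middle vertex, and I would like to deduce from this that a vertex of $B_i$ complete to $K$ exists. If that deduction is not direct, I would instead argue as in Lemma \ref{lem:large cliques in A_1(i)}. For non-adjacent $t_1,t_2\in T$, every $b\in B_i$ is complete or anticomplete to $(N(t_1)\cap A_1(i))\cup(N(t_2)\cap A_1(i))$, and a vertex anticomplete to this set yields an induced $P_7$. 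Carrying that argument out for a general vertex $b\in B_i$ mixing on an edge $ab$ of $K$ should give a path $a-b'-\ldots$ through $N(a)\cap A_2(i+2)$, $q_{i+2}$, $q_{i+1}$ that is a bad $P_7$. In either case, $u\in B_i$ is anticomplete to $A_2(i+2)$ by definition, so $u$ satisfies the first hypothesis of Lemma \ref{lem:P_4-free lemma}.

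\textbf{Second condition.} Let $a,b\in K$ be non-adjacent. They have the common neighbor $u$, so by $C_4$-freeness their neighborhoods in $A_2(i+2)$ are disjoint, and they are non-empty by Lemma \ref{lem:A_1(i) is dominated by A_2(i+2)}. Suppose $a'\in N(a)\cap A_2(i+2)$ and $b'\in N(b)\cap A_2(i+2)$ are non-adjacent. Then $a-u-b-b'-q_{i+2}-a'-a$ is an induced $C_6$; here we use that $a',b'$ are complete to $B_{i+2}$ by Lemma \ref{lem:opposite A_1(i) and A_2(j)}, and that $u$ has no neighbors in $A_2(i+2)$. Hence the two neighborhoods are complete to each other.

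\textbf{Third condition.} Suppose there is an induced $P_5$ $a-b-c-d-e$ with $a,b,c,d\in K$ and $e\in A_2(i+2)$. Then $a-b-c-d-e-q_{i+2}-q_{i+1}$ is an induced $P_7$: vertices of $A_1(i)$ miss $B_{i+1}\cup B_{i+2}$, and $e$ misses $B_{i+1}$. This contradiction gives the third condition, and Lemma \ref{lem:P_4-free lemma} then shows that $K$ is $P_4$-free.

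\textbf{Main obstacle.} I expect the hardest step to be producing the common neighbor $u\in B_i$ of all of $K$, since every other step is a routine forbidden-path check. If that fails, I would fall back on using $q_{i+2}$-type hubs: replace $u$ by a vertex of $T$ complete to $K$ and take $Y=B_i$ instead, verifying the completeness of the neighborhoods in $B_i$ via Lemma \ref{lem:large cliques in A_1(i)}.
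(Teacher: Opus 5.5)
Your checks of the second and third hypotheses of Lemma \ref{lem:P_4-free lemma} are correct once a hub $u\in B_i$ is available. The gap is exactly the step you flagged as the main obstacle: nothing you cite produces a vertex of $B_i$ (or, in your fallback, of $T$) complete to an entire component $K$ of $A_1(i)$, and the paper never proves such a statement in this setting.

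Lemma \ref{lem:induced P3in A_1(i)} only places the $B_i$-neighborhoods of the two ends of an induced $P_3$ inside that of the middle vertex. It does not make the ends share a neighbor.

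Your mixing argument does not close either. Suppose $b\in B_i$ is adjacent to $a_2$ but not to $a_1$ for an edge $a_1a_2$ of $K$, and take $s\in N(a_1)\cap A_2(i+2)$. The natural cycle $b-a_2-a_1-s-q_{i+2}-q_{i+1}-b$ is induced only when $sa_2\notin E(G)$. That fails whenever $N(a_1)\cap A_2(i+2)\subseteq N(a_2)\cap A_2(i+2)$.

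The same thing happens for an induced $P_3$ $l-m-r$ in $K$ whose ends have no common neighbor in $B_i$. Any common neighbor $s\in T$ of $l$ and $r$ is forced by $C_4$-freeness to be adjacent to $m$, so no $C_6$ appears. Consistently, the paper rules out vertices of $B_i$ mixing on components of $A_1(i)$ only later, under the extra hypothesis $A_3(i\pm 2)\neq\emptyset$ (Lemma \ref{lem:A_1(i) is a clique when A_3(i-2) is not empty}). In the proof of Lemma \ref{lem:A_1(i)} it still has to handle non-adjacent simplicial vertices of $A_1(i)$ with disjoint neighborhoods in $B_i$.

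Your fallback has the same problem. The corresponding complete-or-anticomplete statement in the two-$A_1$ subsection used the extra vertices $x,z,z'$ of that configuration, and these do not exist here.

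The missing idea is to localize. You do not need a hub for all of $K$, only for the four vertices of a putative induced $P_4$ $a-b-c-d$ in $A_1(i)$. Lemma \ref{lem:P_4-free lemma} with $X=\{a,b,c,d\}$ and $Y=A_2(i+2)$ then suffices, since its proof only uses the hypotheses on that $P_4$. The paper's argument runs as follows.

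\begin{enumerate}
\item Applying Lemma \ref{lem:induced P3in A_1(i)} to $a-b-c$ and to $b-c-d$ gives $N_{B_i}(a)\cup N_{B_i}(d)\subseteq N_{B_i}(b)=N_{B_i}(c)$. So it suffices that $a$ and $d$ have a common neighbor in $B_i$.
\item Suppose they do not. Then $(C_6,C_7)$-freeness, using $q_{i+2}$ and Lemma \ref{lem:opposite A_1(i) and A_2(j)}, makes their neighborhoods in $A_2(i+2)$ comparable. By symmetry assume $N(a)\cap A_2(i+2)\subseteq N(d)$, and pick $s$ in this set, which is non-empty by Lemma \ref{lem:A_1(i) is dominated by A_2(i+2)}.
\item Take $u\in N_{B_i}(d)\subseteq N_{B_i}(b)$. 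Since $bd\notin E(G)$, $C_4$-freeness on $s-b-u-d-s$ gives $sb\notin E(G)$. Then $u-b-a-s-q_{i+2}-q_{i+1}-u$ is an induced $C_6$, a contradiction.
\end{enumerate}

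This step needs $b$ and $d$ to be non-adjacent, which a $P_4$ provides and an arbitrary pair in $K$ does not. With a common neighbor of $a,b,c,d$ in $B_i$, your $C_6$ and $P_7$ checks go through verbatim and finish the proof.
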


\begin{proof}
    Suppose by contradiction that $a-b-c-d$ is an induced $P_4$ in $A_1(i)$. By Lemma \ref{lem:induced P3in A_1(i)}, $N(a)\cap B_i,N(d)\cap B_i\subseteq N(b)\cap B_i=N(c)\cap B_i$. Suppose first that $a$ and $d$ have no common neighbors in $B_i$. Since $G$ is $(C_6,C_7)$-free, $a$ and $d$ have neighborhood containment in $A_2(i+2)$, say $N(a)\cap T\subseteq N(d)\cap T$. 
    By Lemma \ref{lem:A_1(i) is dominated by A_2(i+2)}, there is a vertex $s\in N(a)\cap A_2(i+2)$. Then $sd\in E(G)$. Since $b,d$ has a common neighbor $u\in B_i$, $sb\notin E(G)$ and then $u-b-a-s-q_{i+2}-q_{i+1}-u$ is an induced $C_6$. So $a,d$ have a common neighbor  $s\in B_i$ and this vertex is complete to $a-b-c-d$. 
    But this contradicts Lemma \ref{lem:P_4-free lemma}.
\end{proof}

\begin{lemma}\label{lem:A_1(i) is anticomplete to A_3}
    $A_1(i)$ is anticomplete to $A_3(i-1)\cup A_3(i+1)$. This implies that $A_1(i)$ is anticomplete to $A_3$.
\end{lemma}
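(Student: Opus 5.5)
By the earlier anticomplete-pair lemmas, \ref{A1A3} already makes $A_1(i)$ anticomplete to $A_3(i\pm 2)$. Lemma \ref{lem:A_3(i) is empty when A_1=A_1(i)} gives $A_3(i)=\emptyset$. So the second sentence of the statement follows once we prove the first, and it suffices to handle $A_3(i+1)$; the case $A_3(i-1)$ is symmetric.

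Suppose for a contradiction that $x'\in A_1(i)$ is adjacent to $t\in A_3(i+1)$. By Lemma \ref{lem:A_1(i) is dominated by A_2(i+2)}, $x'$ has a neighbor $y'\in T\subseteq A_2(i+2)$. By Lemma \ref{anti-A2A3}, $A_2(i+2)$ is anticomplete to $A_3(i+1)$ (this is the case $A_3(j-1)$ with $j=i+2$), so $ty'\notin E(G)$. By Lemma \ref{lem:opposite A_1(i) and A_2(j)}, $y'$ is complete to $B_{i+2}\cup B_{i-2}$. Since $t$ has a neighbor in $B_{i+2}$, we get a common neighbor $c\in B_{i+2}$ of $t$ and $y'$. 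Then $x'-t-c-y'-x'$ is a $4$-cycle. It has chords only if $x'c\in E(G)$ or $ty'\in E(G)$. Neither holds, because $x'\in A_1(i)$ has no neighbors in $B_{i+2}$ and $ty'\notin E(G)$. So this is an induced $C_4$, a contradiction.

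I expect the argument to be essentially immediate. The only point to check is that Lemma \ref{anti-A2A3} is used with the correct index shift: $A_2(j)$ is anticomplete to $A_3(j+2)\cup A_3(j-1)$, and we take $j=i+2$, so $j-1=i+1$. For $A_3(i-1)$ the same index bookkeeping gives $j+2=i+4=i-1$. Then $t\in A_3(i-1)$ has a neighbor in $B_{i-2}$, and $y'$ is complete to $B_{i-2}$, so the same $C_4$ argument applies with $c\in B_{i-2}$.

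If that index check failed, I would fall back on Lemma \ref{lem:no common neighbors in A_1(i)}, or on a $P_7$ built from $t-x'-y'$ extended through $q_{i+2},q_{i-2},q_{i-1}$. I do not expect this to be needed.
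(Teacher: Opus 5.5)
Your argument is correct, and it takes a different, shorter route than the paper.

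\textbf{The paper's route.} It takes $t\in A_3(i-1)$ adjacent to $a\in A_1(i)$. It picks a non-neighbor $b\in B_{i-1}$ of $t$, which exists by Lemma \ref{lem:blowup-threeneighbor}(1). It uses Lemma \ref{lem:A_1(i) is dominated by A_2(i+2)} to get a neighbor $c\in T$ of $a$. It then exhibits the induced $P_7$ $b-q_{i-1}-t-a-c-q_{i+2}-q_{i+1}$. This path quietly needs several adjacencies: $tq_{i-1}\in E(G)$, which follows from Lemma \ref{lem:blowup-threeneighbor}(3); $cq_{i+2}\in E(G)$; and $ct\notin E(G)$ from Lemma \ref{anti-A2A3}.

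\textbf{Your route.} You use the same two ingredients: the domination lemma and Lemma \ref{anti-A2A3}. You add Lemma \ref{lem:opposite A_1(i) and A_2(j)}, so $y'$ is complete to $B_{i+2}\cup B_{i-2}$. Since $t$ has a neighbor in the relevant one of these cliques, you close an induced $C_4$ immediately.

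\textbf{Comparison.} At this step you need only $C_4$-freeness, not $P_7$-freeness. You do not have to produce a non-neighbor of $t$ in $B_{i-1}$ or check that $t$ is adjacent to $q_{i-1}$. You also treat $A_3(i+1)$ and $A_3(i-1)$ uniformly, whereas the paper handles only $A_3(i-1)$ and leaves the other case to symmetry.

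Your index check in Lemma \ref{anti-A2A3} is right: taking $j=i+2$ gives $j-1=i+1$ and $j+2=i-1$. The second sentence of the statement does follow from Lemma \ref{A1A3} together with Lemma \ref{lem:A_3(i) is empty when A_1=A_1(i)}, as you say.
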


\begin{proof}
    Let $t\in A_3(i-1)$ be adjacent to $a\in A_1(i)$. Let $b\in B_{i-1}$ be a non-neighbor of $t$. By Lemma \ref{lem:A_1(i) is dominated by A_2(i+2)}, $a$ has a neighbor $c\in T$. Then $b-q_{i-1}-t-a-c-q_{i+2}-q_{i+1}$ is an induced $P_7$.
\end{proof}

\begin{lemma}\label{lem:small vertex argument for vertices in A_3(i-1)}
    For a component $K$ of $A_3(i-1)$, let $s$ be a $(K,B_i)$-minimal simplicial vertex. Then $N(s)\setminus B_{i-2}$ is a clique and so $|N(s)\cap B_{i-2}|>\ceil{\frac{\omega}{4}}$.
\end{lemma}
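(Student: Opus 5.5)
The plan is to determine $N(s)\setminus B_{i-2}$ precisely and then show it is a clique. Once that holds, the last assertion follows immediately: $s$ is not small, so $d(s)\ge\ceil{\frac{5}{4}\omega}$. Since $N(s)\setminus B_{i-2}$ has at most $\omega-1$ vertices, we get $|N(s)\cap B_{i-2}|\ge \ceil{\frac54\omega}-\omega+1>\ceil{\frac{\omega}{4}}$.

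First we locate the neighbors of $s$ outside $H$.
\begin{itemize}
\item By Lemma \ref{lem:A_1(i) is anticomplete to A_3}, $s$ has no neighbor in $A_1$.
\item By Lemma \ref{lem:A_2(i-2) is empty}, $A_2(i-2)=A_2(i+1)=\emptyset$.
\item By Lemma \ref{anti-A2A3}, $A_2(i+2)$ and $A_2(i)$ are anticomplete to $A_3(i-1)$. (Here $A_2(i)$ is anticomplete to $A_3(i-1)$, and $A_2(i+2)$ is anticomplete to $A_3(i+1)$ and $A_3(i-1)$; this uses the index conventions, and I would double-check it.)
\item The remaining possible $A_2$-neighbors lie in $A_2(i-1)$.
\item Lemmas \ref{lem:anticompleteA3} and \ref{lem:anticompleteA3-1} show $s$ is anticomplete to $A_3\setminus A_3(i-1)$.
\item $A_5=\emptyset$ by Lemma \ref{lem:A_5 is complete to A_2(i+2)}.
\end{itemize}
Hence $N(s)\setminus B_{i-2}\subseteq B_{i-1}\cup B_i\cup A_3(i-1)\cup A_2(i-1)$.

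Next we handle the part without $A_2(i-1)$, i.e.\ $N(s)\cap(B_{i-1}\cup B_i\cup A_3(i-1))$, using $s$ as a $(K,B_i)$-minimal simplicial vertex. By Lemma \ref{lem:edge in $A_3(i)$}, all of $K$ has the same neighborhood in $B_{i-1}$, so $N_K(s)$ is complete to $N_{B_{i-1}}(s)$ via Lemma \ref{lem:nbd containment for an edge}. For $N_K(s)$ versus $N_{B_i}(s)$, we apply Lemma \ref{lem:minimal simplicial vertex} with $X=K$ and $Y=B_i$. A non-clique would yield an induced $P_4=a-s-c-d$ with $a\in B_i$ and $c,d\in K$. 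Then $s,c,d$ would have nested $H$-neighborhoods, and Lemma \ref{lem:P3 in A_3(i)} together with Lemma \ref{lem:neighborcontainA3i} gives a contradiction, mirroring the proof of Lemma \ref{lem:small vertex argument for simplicial vertices in A_3(i-2)}. Finally, $N_{B_{i-1}}(s)$ is complete to $N_{B_i}(s)$ once we know $B_i\setminus N(s)$ is anticomplete to the $B_{i-2}$ side. Here I would invoke Lemma \ref{lem:single vertex in A_3(i)}(2), choosing orientation by Lemma \ref{lem:blowup-threeneighbor}(3); otherwise a $P_4$-structure violates Lemma \ref{lem:generalized P4-structure}.

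The main obstacle is $N(s)\cap A_2(i-1)$. Let $v\in N(s)\cap A_2(i-1)$ and let $u$ be a non-adjacent neighbor of $s$.
\begin{itemize}
\item If $u\in B_{i-1}\cup B_i$: since $N_{A'_3(i-1)}$ and $N_{A'_3(i)}$ of $v$ are complete by Lemma \ref{lem:blowup-twoneighbor}, only $u\in B_{i-1}$ not adjacent to $v$ is possible. This gives $u-s-v$ with $s,u\in A'_3(i-1)$, and we argue via a common neighbor in $B_i$ and a $C_4$.
\item If $u\in A_3(i-1)$: use Lemma \ref{lem:neighbors of components of A_2(i) in A_3(i+1)} and $P_7$'s through $B_{i-2},q_{i+2},q_{i+1}$ as in Lemma \ref{lem:two non-adjacent neighbors}.
\item If $u\in A_2(i-1)$: two non-adjacent $A_2(i-1)$ neighbors of $s$ together with a common neighbor in $B_i$ form a $C_4$.
\end{itemize}
If some case resists, we may assume $A_2(i-1)=\emptyset$ by Lemma \ref{lem:A_2(i-1) or A_2(i) is empty} and symmetry, although the symmetry breaks the $i-1$ versus $i+1$ roles, so that reduction needs care.
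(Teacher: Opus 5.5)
Your localization $N(s)\setminus B_{i-2}\subseteq B_{i-1}\cup B_i\cup A_3(i-1)\cup A_2(i-1)$ is fine. So are the degree count at the end and your treatment of the part avoiding $A_2(i-1)$. There, Lemma~\ref{lem:P3 in A_3(i)} applied to the induced $P_3$ $s-c-d$ inside the $P_4$ $a-s-c-d$ gives $a\in N_H(s)\setminus N_H(c)$ at once. The paper instead extends $a-s-c-d$ to an induced $P_7$ through $q_{i+1},q_{i+2},y$.

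The gap is $N(s)\cap A_2(i-1)$, where each of your local arguments fails.
\begin{itemize}
\item For $u\in B_i$: Lemma~\ref{lem:blowup-twoneighbor} says $N_{A'_3(i-1)}(v)$ is complete to $N_{A'_3(i)}(v)$, i.e.\ $s$ is complete to $N_{B_i}(v)$. It does not give $N_{B_i}(s)\subseteq N(v)$, so $u\in B_i$ is not excluded.
\item For $u\in B_{i-1}\cup A_2(i-1)$: a common neighbor $b\in B_i$ of $u$ and $v$ is adjacent to $s$ by that same lemma, so $u-s-v-b-u$ has the chord $sb$. For $u\in B_{i-1}$, the cycle $u-s-v-z-u$ with $z\in N_{B_{i-1}}(v)$ has the chord $sz$ as soon as $N_{B_{i-1}}(v)\subseteq N(s)$.
\item For $u\in A_3(i-1)$: Lemma~\ref{lem:neighbors of components of A_2(i) in A_3(i+1)} is about two neighbors of $v$'s component, but $u\notin N(v)$.
\end{itemize}

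In fact no argument that only looks at $H$, $K$ and $A_2(i-1)$ can succeed. Take $B_{i-2}=\{p,p'\}$, $B_{i-1}=\{u,z,x\}$, $B_i=\{w\}$, $B_{i+1}=\{q_{i+1}\}$, $B_{i+2}=\{q_{i+2}\}$, each a clique. Add the edges $q_{i+1}w$, $q_{i+1}q_{i+2}$, $q_{i+2}p$, $q_{i+2}p'$, $pu$, $pz$, and make $w$ and $p'$ complete to $B_{i-1}$. Finally add $s$ with $N(s)=\{p,u,z,w,v\}$ and $v$ with $N(v)=\{z,w,s\}$. This is a nice blowup of $C_5$ (even a maximal one), and the graph is $(P_7,C_4,C_6,C_7)$-free. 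Here $\{s\}$ is a component of $A_3(i-1)$, so $s$ is trivially $(K,B_i)$-minimal simplicial, and $v\in A_2(i-1)$. Yet $u,v\in N(s)\setminus B_{i-2}$ are non-adjacent.

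So you must use the standing hypothesis $A_1=A_1(i)\neq\emptyset$ of this subsection and what was derived from it, which your proposal never invokes. Let $v\in N(s)\cap A_2(i-1)$ and let $u\in N(s)\setminus B_{i-2}$ be a non-neighbor of $v$.
\begin{itemize}
\item If $v$ has a neighbor $a\in A_1(i)$: $a$ has a neighbor $t\in T$ by Lemma~\ref{lem:A_1(i) is dominated by A_2(i+2)}, $sa\notin E(G)$ by Lemma~\ref{lem:A_1(i) is anticomplete to A_3}, and $ua\notin E(G)$ by $C_4$-freeness. Then $u-s-v-a-t-q_{i+2}-q_{i+1}$ is an induced $P_7$, or an induced $C_7$ if $u\in B_i$.
\item Otherwise, Lemma~\ref{lem:two non-adjacent neighbors} gives non-adjacent $t_1,t_2\in A'_3(i-1)$ complete to $v$. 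For $b\in B_i\setminus N(t_2)$, the path $t_1-v-t_2-N_{B_{i-2}}(t_2)-q_{i+2}-q_{i+1}-b$ would be a bad $P_7$, so $t_1,t_2$ are complete to $B_i$. If $st_j\notin E(G)$, you get either the $C_4$ $u-s-v-t_j-u$ or the induced $P_7$ $u-s-v-t_j-N_{B_{i-2}}(t_j)-q_{i+2}-q_{i+1}$. Hence $s$ is distinct from and adjacent to both $t_1$ and $t_2$. Since $s$ is simplicial in $K$ and $B_{i-1}$ is a clique, one of them lies in $B_{i-1}$ and mixes on an edge of $A_3(i-1)$, contradicting Lemma~\ref{lem:edge in $A_3(i)$}.
\end{itemize}

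Your fallback of assuming $A_2(i-1)=\emptyset$ is not available. Reflection exchanges $(A_3(i-1),A_2(i-1))$ with $(A_3(i+1),A_2(i))$, so Lemma~\ref{lem:A_2(i-1) or A_2(i) is empty} never removes the case $A_2(i-1)\neq\emptyset$ for $A_3(i-1)$. That is also exactly the case in which this lemma is later applied, in Lemma~\ref{lem:A_3(i-1) is anticomplete to A_2(i-1)}.
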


\begin{proof}
     By Lemma \ref{lem:A_1(i) is anticomplete to A_3}, $N(s)\setminus B_{i-2}\subseteq B_i\cup B_{i-1}\cup A_3(i-1)\cup A_2(i-1)$. Suppose that $v,v'\in N(s)\setminus B_{i-2}$ are not adjacent. Suppose first that $v,v'\notin A_2(i-1)$.
     By Lemmas \ref{lem:single vertex in A_3(i)} and \ref{lem:edge in $A_3(i)$}, $v\in K$ and $v'\in B_i$. 
    By Lemma \ref{lem:minimal simplicial vertex}, there is an induced $P_4=a-b-s-c$ with $a,b\in K$ and $c\in B_i$. Then $a-b-s-c-q_{i+1}-q_{i+2}-y$ is an induced $P_7$. 

    Now we may assume that $v\in A_2(i-1)$ and $v'\in  B_i\cup B_{i-1}\cup A_3(i-1)\cup A_2(i-1)$.
    Suppose first that $v$ has a neighbor $u\in A_1(i)$. By Lemma \ref{lem:A_1(i) is dominated by A_2(i+2)}, $u$ has a neighbor $w\in T$. By Lemma \ref{lem:A_1(i) is anticomplete to A_3}, $su\notin E(G)$. It follows that $v'-s-v-u-w-q_{i+2}-q_{i+1}$ is an induced $P_7$. So $v$ has no neighbors in $A_1(i)$. By Lemma \ref{lem:two non-adjacent neighbors}, $v$ has non-adjacent vertices $t_1,t_2\in A'_3(i-1)$. Since $t_1-v-t_2-(N(t)\cap B_{i-2})-q_{i+2}-q_{i+1}-B_i$ is not a bad $P_7$, $t_2$ is complete to $B_i$. By symmetry, $t_1$ is complete to $B_i$. We may assume that $s\neq t_1$. Note that both pairs $\{v',t_1\}$ and $\{s,t_1\}$ have a common neighbor in $B_i$ and so have disjoint neighborhoods in $B_{i-2}$.
    If $st_1\notin E(G)$, then $v'-s-v-t_1-(N(t_1)\cap B_{i-2})-q_{i+2}-q_{i+1}$ is an induced $P_7$. So $st_1\in E(G)$. Since $t_2t_1\notin E(G)$, we have $s\neq t_2$. By symmetry, $st_2\in E(G)$. Since $s$ is simplicial in $K$, we may assume that $t_1\in B_{i-1}$ and $t_2\in A_3(i-1)$. Now $t_1\in B_{i-1}$ mixes on an edge in $A_3(i-1)$, which contradicts Lemma \ref{lem:edge in $A_3(i)$}. This proves that $N(s)\setminus B_{i-2}$ is a clique and so $|N(s)\cap B_{i-2}|>\ceil{\frac{\omega}{4}}$. 
\end{proof}

\begin{lemma}\label{lem:A_3(i-1) is anticomplete to A_2(i-1)}
     $A_3(i-1)$ is anticomplete to $A_2(i-1)$. By symmetry,  $A_3(i+1)$ is anticomplete to $A_2(i)$.
\end{lemma}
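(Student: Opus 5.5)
Suppose, for a contradiction, that $t\in A_3(i-1)$ is adjacent to $v\in A_2(i-1)$. The plan is to exploit the setting of this subsection: $A_1=A_1(i)\neq\emptyset$, there is an edge $xy$ with $x\in A_1(i)$ and $y\in A_2(i+2)$, $A_2(i-2)=A_2(i+1)=\emptyset$, $A_3(i)=\emptyset$, $A_5=\emptyset$, and $B_{i-2}$ is complete to $B_{i+2}$. By Lemma \ref{lem:A_2(i-1) or A_2(i) is empty} we may assume $A_2(i)=\emptyset$, although this is not essential for the argument. I would split into two cases according to whether $v$ has a neighbor in $A_1(i)$.

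\emph{Case 1: $v$ has a neighbor $u\in A_1(i)$.} By Lemma \ref{lem:A_1(i) is dominated by A_2(i+2)}, $u$ has a neighbor $w\in T\subseteq A_2(i+2)$. We have $tu\notin E(G)$ by Lemma \ref{lem:A_1(i) is anticomplete to A_3}, and $tw\notin E(G)$ by Lemma \ref{anti-A2A3}. Also $vw\notin E(G)$ by Lemma \ref{lem:anticompleteA3-1}. By Lemma \ref{lem:blowup-threeneighbor} (1), $t$ has a non-neighbor $b\in B_{i-1}$. Then $b-N_{B_{i-1}}(t)-t-v-u-w-q_{i+2}$ is a candidate induced $P_7$, except that $b$ or $N_{B_{i-1}}(t)$ may be adjacent to $v$, and $q_{i+2}$ must be non-adjacent to $t,v,u$, which holds by support. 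To avoid the adjacency issue with $v$, the cleaner choice is the path $t-v-u-w-q_{i+2}-q_{i-2}$ extended at the $t$ end, or else a cycle closed through $N_{B_{i-2}}(t)$. Since $t$ is adjacent to $N_{B_{i-2}}(t)$, which is adjacent to $q_{i+2}$ via $B_{i-2}$ complete to $B_{i+2}$ (Lemma \ref{lem:B_{i-2} complete to B_{i+2}}), the sequence $t-v-u-w-q_{i+2}-N_{B_{i-2}}(t)-t$ is a $6$-cycle. Its only possible chords are $vq_{i+2}$ (absent by support), $uN_{B_{i-2}}(t)$ (absent), and $wN_{B_{i-2}}(t)$. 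If $w$ has a neighbor in $N_{B_{i-2}}(t)$, replace $q_{i+2}$ by that neighbor to get a $C_5$ configuration, and then use Lemma \ref{lem:opposite A_1(i) and A_2(j)} ($w$ is complete to $B_{i-2}\cup B_{i+2}$) together with $t$ to obtain the $C_4$ $t-v-u-w-s-t$. Since that cycle has five vertices, I would instead argue with $w$ complete to $B_{i-2}$, replacing $q_{i+2}$ by nothing: $t-v-u-w-s-t$ is an induced $C_5$, so I need an extra vertex. Use $N_{B_{i-1}}(v)$: a vertex $c\in N_{B_{i-1}}(v)$ that is adjacent to $s$ gives $c$ complete to $\{v,s\}$ and hence a $C_4$ $c-v-u-w-s$ unless $c$ sees $t$. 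This case analysis is where care is needed.

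\emph{Case 2: $v$ has no neighbor in $A_1(i)$.} By Lemma \ref{lem:two non-adjacent neighbors}, $v$ is complete to the component containing $v$ together with two non-adjacent vertices $a,b\in A'_3(i-1)$. As in the proof of Lemma \ref{lem:small vertex argument for vertices in A_3(i-1)}, both $a$ and $b$ are complete to $B_i$ (using the bad $P_7$ $a-v-b-N_{B_{i-2}}(b)-q_{i+2}-q_{i+1}-B_i$), and they have disjoint neighborhoods in $B_{i-2}$. Now compare $t$ with $a,b$. If $t\notin\{a,b\}$ and $t$ misses $a$, the path $t-v-a-N_{B_{i-2}}(a)-q_{i+2}-q_{i+1}-\cdot$ should yield an induced $P_7$ or bad $P_7$, mirroring the earlier lemma. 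Otherwise $t$ is adjacent to both $a$ and $b$, and Lemma \ref{lem:edge in $A_3(i)$} together with Lemma \ref{lem:P3 in A_3(i)} forces a contradiction: for example, $a-t-b$ would be an induced $P_3$ whose ends have disjoint neighborhoods in $B_{i-2}$, contradicting neighborhood containment by $t$ under Lemma \ref{lem:nonedge in A_3(i)}.

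The main obstacle is Case 1, because the natural hole closes through $B_{i-2}$ and its length depends on the adjacencies between $w$ and $N_{B_{i-2}}(t)$. I would first try to show $tw$-type chords are impossible, and otherwise use $w$ complete to $B_{i-2}$ and the choice of a non-neighbor of $t$ in $B_{i-2}$ to force a $C_4$ or $C_6$.
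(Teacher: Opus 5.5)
\textbf{Gap: neither of your two cases reaches a contradiction.} You derive $tu,tw,vw\notin E(G)$ correctly in Case~1, but the hole you try to close through $B_{i-2}$ is too short. Since $uw\in E(G)$, Lemma~\ref{lem:opposite A_1(i) and A_2(j)} makes $w$ complete to $B_{i-2}\cup B_{i+2}$. So for $r\in N_{B_{i-2}}(t)$, the cycle $t-v-u-w-r-t$ is an induced $C_5$, which is allowed, and your further attempts are not carried out. The path $b-N_{B_{i-1}}(t)-t-v-u-w-q_{i+2}$ cannot be rescued either: here $v$ is complete to $B_{i-1}$, since if $v$ mixed on an edge $ab$ of $B_{i-1}$, then $a-b-v-u-w-q_{i+2}-q_{i+1}$ would be an induced $P_7$.

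The closing step of Case~2 is wrong. If $a,b\in A_3(i-1)$ are both adjacent to $t$, then Lemma~\ref{lem:P3 in A_3(i)} applied to $a-t-b$ only gives $N_H(a),N_H(b)\subseteq N_H(t)$. This is consistent with $a,b$ having disjoint neighborhoods in $B_{i-2}$, and Lemma~\ref{lem:nonedge in A_3(i)} says nothing about the adjacent pairs $ta,tb$. You only get a contradiction when one of $a,b$ lies in $B_{i-1}$: it would then mix on an edge of $A_3(i-1)$, whose ends have equal neighborhoods in $B_{i-1}$. The subcase ``$t$ misses $a$'' is also unfinished. There $t-v-a-N_{B_{i-2}}(a)-q_{i+2}-q_{i+1}$ is only an induced $P_6$, and a vertex of $B_i$ cannot be appended, because $a$ is complete to $B_i$.

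\textbf{What the local analysis actually gives.} In both cases it only shows that $t$ is complete to $B_i$. In Case~1 this holds because otherwise $u-v-t-N_{B_{i-2}}(t)-q_{i+2}-q_{i+1}-(B_i\setminus N(t))$ is a bad $P_7$. In Case~2, the two non-adjacent neighbors of $v$ from Lemma~\ref{lem:two non-adjacent neighbors} are complete to $B_i$; hence so is $v$ by $C_4$-freeness, and hence so is $t$ by Lemma~\ref{lem:blowup-twoneighbor}.

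\textbf{The missing ingredient is global: $G$ has no small vertices.} The paper's argument runs as follows.
\begin{itemize}
\item Let $K$ be the component of $A_3(i-1)$ containing $t$, and let $s$ be a $(K,B_i)$-minimal simplicial vertex; then $|N_{B_{i-2}}(s)|>\lceil\omega/4\rceil$ by Lemma~\ref{lem:small vertex argument for vertices in A_3(i-1)}.
\item Take $z\in B_{i-1}$ anticomplete to $A_3(i-1)$ with $|N_{B_{i-2}}(z)|>\lceil\omega/4\rceil$, from Lemma~\ref{lem:A'_3(i-2) is large}.
\item If these two neighborhoods were disjoint, $B_{i-2}$, $B_{i+2}$ and the large clique in $T$ from Lemma~\ref{lem:simplicial vertices in A_1(i)} would form a clique of size exceeding $\omega$. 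So $s$ and $z$ share a neighbor in $B_{i-2}$ and are disjoint in $B_i$. Hence $s\neq t$ and $sv\notin E(G)$.
\item On a shortest path $s=v_0-v_1-\cdots-v_k-v$ with $v_1,\dots,v_k\in K$, completeness to $B_i$ propagates from $v_k$ back to $v_1$. A non-neighbor $b\in B_i$ of $v_{j-1}$ would give the induced $P_7$ $v_{j-2}-v_{j-1}-v_j-b-q_{i+1}-q_{i+2}-y$, where $y\in A_2(i+2)$ is the fixed neighbor of $x$.
\item Then Lemma~\ref{lem:neighborcontainA3i} gives $N_{B_{i-2}}(s)\subseteq N_{B_{i-2}}(v_1)$, while Lemma~\ref{lem:blowup-threeneighbor}~(3) makes $z$ anticomplete to $N_{B_{i-2}}(v_1)$. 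This contradicts $s$ and $z$ sharing a neighbor in $B_{i-2}$.
\end{itemize}
Your plan looks only for a forbidden induced subgraph and never uses this size argument, so neither case has a way to finish as it stands.
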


\begin{proof}
    Suppose for a contradiction that $t\in A_3(i-1)$ is adjacent to $v\in A_2(i-1)$. We first show that $t$ is complete to $B_i$. Suppose first that $v$ has a neighbor $u\in A_1(i)$.
    By Lemma \ref{lem:A_1(i) is anticomplete to A_3}, $ut\notin E(G)$. Since $u-v-t-(N(t)\cap B_{i-2})-q_{i+2}-q_{i+1}-B_i$ is not a bad $P_7$, $t$ is complete to $B_i$. Now suppose that $v$ has no neighbors in $A_1(i)$. By Lemma \ref{lem:two non-adjacent neighbors}, $v$ has two non-adjacent neighbors $t_1,t_2\in A_3'(i-1)$. Since $t_1-v-t_2-(N(t_2)\cap B_{i-2})-q_{i+2}-q_{i+1}-B_i$ is not a bad $P_7$, we have $t_2$ is complete to $B_i$. By symmetry, $t_1$ is complete to $B_i$. Since $G$ is $C_4$-free, $v$ is complete to $B_i$.
    By Lemma \ref{lem:blowup-twoneighbor}, $t$ is complete to $B_i$.

    Let $K$ be the component of $A_3(i-1)$ containing $t$ and $s\in K$ be a $(K,B_i)$-minimal simplicial vertex. By Lemma \ref{lem:small vertex argument for vertices in A_3(i-1)}, $|N(s)\cap B_{i-2}|>\ceil{\frac{\omega}{4}}$.  By Lemma \ref{lem:A'_3(i-2) is large}, there exists $w\in B_{i-1}$ be anticomplete to $A_3(i-1)$ with $|N(w)\cap B_{i-2}|>\ceil{\frac{\omega}{4}}$. If $N(w)\cap B_{i-2}$ and $N(s)\cap B_{i-2}$ are disjoint, then $|B_{i-2}|>\frac{\omega}{2}$.
    By Lemma \ref{lem:simplicial vertices in A_1(i)}, $|T|>\ceil{\frac{\omega}{4}}$.
    By Lemma \ref{lem:A'_3(i-2) is large}, $B_{i-2}\cup B_{i+2}\cup T$ is a clique of size larger than $\omega$. 
    
    So $s$ and $w$ have a common neighbor in $B_{i-2}$ and thus they have disjoint neighborhoods in $B_i$. Recall that $t$ is complete to $B_i$. So $s\neq t$ and thus $sv\notin E(G)$. Let $P=v_0-v_1-\cdots-v_k-v$ be a shortest path such that $v_0=s$ and $v_i\in K$ for each $1\le i\le k$. We show that each $v_i$ with $1\le i\le k$ is complete to $B_i$. We have proved that $v_k$ is complete to $B_i$. Suppose now that $v_i$ is complete to $B_i$ for $2\le i\le k$. If $v_{i-1}$ has a non-neighbor $b\in B_i$, then $v_{i-2}-v_{i-1}-v_i-b-q_{i+1}-q_{i+2}-y$ is an induced $P_7$. So $v_{i-1}$ is complete to $B_i$. Therefore, $v_1$ is complete to $B_i$. By Lemma \ref{lem:neighborcontainA3i} (3), $N(s)\cap B_{i-2}\subseteq N(v_1)\cap B_{i-2}$. By Lemma \ref{lem:blowup-threeneighbor}, $w$ is anticomplete to $N(v_1)\cap B_{i-2}$ and so anticomplete to $N(s)\cap B_{i-2}$. This implies that $N(w)\cap B_{i-2}$ and $N(s)\cap B_{i-2}$ are disjoint, a contradiction.
\end{proof}

\begin{lemma}\label{lem:A_2(i-1)}
    $A_2(i-1)$ is a clique complete to $B_{i-1}\cup B_i$. By symmetry, $A_2(i)$ is a clique complete to $B_{i+1}\cup B_i$.
\end{lemma}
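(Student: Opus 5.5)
First I would locate $N(A_2(i-1))$ in the current setting. In this subsection $A_1=A_1(i)$, $A_2(i-2)=A_2(i+1)=\emptyset$ (Lemma \ref{lem:A_2(i-2) is empty}), $A_3(i)=\emptyset$ (Lemma \ref{lem:A_3(i) is empty when A_1=A_1(i)}) and $A_5=\emptyset$. The anticomplete-pair lemmas and Lemma \ref{lem:A_3(i-1) is anticomplete to A_2(i-1)} then leave only
\[
N(A_2(i-1))\subseteq B_{i-1}\cup B_i\cup A_1(i)\cup A_2(i-1).
\]
The key consequence is that every component $K$ of $A_2(i-1)\setminus S$ (where $S=A_2(i-1)\cap N(A_1(i))$) has no neighbour in $A_3(i-1)$. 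Lemma \ref{lem:two non-adjacent neighbors} says such a $K$ has two non-adjacent neighbours in $A'_3(i-1)$ complete to $K$; these must both lie in $B_{i-1}$, which is a clique. This is impossible, so $A_2(i-1)=S$, i.e.\ every vertex of $A_2(i-1)$ has a neighbour in $A_1(i)$. By Lemma \ref{lem:nbr of S in A_1(i)} it even has a neighbour in $A_1(i)\cap N(A_2(i+2))$, and by Lemma \ref{lem:A_1(i) is dominated by A_2(i+2)} every vertex of $A_1(i)$ has a neighbour in $T$.

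Next I would prove that each $v\in A_2(i-1)$ is complete to $B_{i-1}\cup B_i$. Let $u\in A_1(i)$ be a neighbour of $v$ and $w\in T$ a neighbour of $u$.
\begin{itemize}
\item Suppose $v$ misses some $b\in B_i$. Pick $a\in N_{B_{i-1}}(v)$ adjacent to $b$ (or route through $q_{i-1}$). Then the path $b-a-v-u-w-q_{i+2}-q_{i+1}$ should give an induced $P_7$, a bad $P_7$, or a $C_4$ via $b-u$. If $bu\in E(G)$, then $b,v$ have the common neighbours $u$ and $a$, forcing $ab\notin E(G)$, and I would instead use $q_{i-1}$ and the cycle $v-N_{B_{i-1}}(v)-q_{i-1}\ldots$.
\item Suppose $v$ misses some $b\in B_{i-1}$. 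I would use $b-q_{i-2}-q_{i+2}-w-u-v$ together with $N_{B_{i-1}}(v)$ to obtain an induced $C_6$ or $C_7$, in the style of Lemma \ref{lem:opposite A_1(i) and A_2(j)}.
\end{itemize}

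Finally, to show $A_2(i-1)$ is a clique, suppose $v,v'\in A_2(i-1)$ are non-adjacent. By the previous step both are complete to $B_{i-1}\cup B_i$, so they have a common neighbour $q_i$ and a common neighbour in $B_{i-1}$, giving the induced $C_4$ $v-q_{i-1}-v'-q_i-v$. Hence $A_2(i-1)$ is a clique. The statement for $A_2(i)$ follows by the reflection symmetry $j\mapsto 2i-j$, which fixes $A_1(i)$ and swaps $A_2(i-1)$ with $A_2(i)$.

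The main obstacle is the completeness step: choosing the right $B_{i-1}$–$B_i$ vertices so that the long path is genuinely induced. I would handle it by splitting on whether $v$'s $A_1(i)$-neighbour $u$ is adjacent to the missed vertex, and use Proposition \ref{prop:component of A_1(i)} to control the neighbourhood of $u$'s component.
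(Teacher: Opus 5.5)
Your first step is correct and matches the paper: since $A_3(i-1)$ is anticomplete to $A_2(i-1)$ (Lemma~\ref{lem:A_3(i-1) is anticomplete to A_2(i-1)}), the two non-adjacent neighbours supplied by Lemma~\ref{lem:two non-adjacent neighbors} would both lie in the clique $B_{i-1}$. So every $v\in A_2(i-1)$ has a neighbour $u\in A_1(i)$, and $u$ has a neighbour $w\in T$.

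The rest does not go through as written.
\begin{itemize}
\item \emph{Clique step.} $q_{i-1}$ is complete to $B_i$, so $q_{i-1}q_i\in E(G)$ and $v-q_{i-1}-v'-q_i-v$ has a chord. Two non-adjacent vertices complete to $B_{i-1}\cup B_i$ give a $C_4$ only if $B_{i-1}$ is not complete to $B_i$, which you cannot assume.
\item \emph{$B_{i-1}$ step.} Your $C_6$/$C_7$ plan collapses. By Lemma~\ref{lem:opposite A_1(i) and A_2(j)}, $w$ is complete to $B_{i-2}$, and $q_{i-2}$ is complete to $B_{i-1}$. So for $a\in N_{B_{i-1}}(v)$ your cycle has chords $aq_{i-2}$ and $q_{i-2}w$. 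It shrinks to the induced $C_5$ $a-q_{i-2}-w-u-v-a$, which is no contradiction.
\item \emph{$B_i$ step.} You need some $a\in N_{B_{i-1}}(v)$ adjacent to the missed $b\in B_i$. Nothing guarantees this before you know $vq_{i-1}\in E(G)$, and that is exactly the case your sketch leaves open. (If such an $a$ exists and $bu\in E(G)$, then $a-b-u-v-a$ is already an induced $C_4$, so that sub-case is not the difficulty.)
\end{itemize}

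The repair is to do $B_{i-1}$ first, using the path you wrote down for the other case. Suppose $v$ misses $b\in B_{i-1}$, and take $a\in N_{B_{i-1}}(v)$. Then $b-a-v-u-w-q_{i+2}-q_{i+1}$ is an induced $P_7$: the supports of $A_1(i)$, $A_2(i-1)$, $A_2(i+2)$ together with Lemma~\ref{lem:anticompleteA3-1} rule out every chord. Hence every vertex of $A_2(i-1)$ is complete to $B_{i-1}$, and in particular $vq_{i-1}\in E(G)$.

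Now suppose $v$ misses some $v'\in B_i\cup A_2(i-1)$. Then $v'q_{i-1}\in E(G)$, by the definition of $q_{i-1}$ or by the previous step applied to $v'$. Also $v'u\notin E(G)$, since otherwise $v-u-v'-q_{i-1}-v$ is an induced $C_4$. So $v'-q_{i-1}-v-u-w-q_{i+2}-q_{i+1}$ is a bad $P_7$; the only possible chord is $v'q_{i+1}$, when $v'\in B_i$. This contradicts Lemma~\ref{lem:badP7}, and it yields completeness to $B_i$ and the clique property at once. This is the paper's proof.
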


\begin{proof}
    By Lemmas \ref{lem:A_3(i-1) is anticomplete to A_2(i-1)} and Lemma \ref{lem:two non-adjacent neighbors}, each vertex in $A_2(i-1)$ has a neighbor in $A_1(i)$. Suppose that $v\in A_2(i-1)$ mixes on an edge $ab\in B_{i-1}$. Let $u\in A_1(i)$ be a neighbor of $v$ and $w\in A_2(i+2)$ be a neighbor of $u$. Then $a-b-v-u-w-q_{i+2}-q_{i+1}$ is an induced $P_7$. So $v$ is complete to $B_{i-1}$. Suppose that $v$ is not adjacent to $v'\in A_2(i-1)\cup B_i$. Then $v'-q_{i-1}-v-u-w-q_{i+2}-q_{i+1}$ is a bad $P_7$.
    This proves the lemma.
\end{proof}

\begin{lemma}\label{lem:A_1(i) is a clique if A_2(i) exists}
    If $A_2(i)\cup A_2(i-1)\neq \emptyset$, then $A_1(i)$ is a clique.
\end{lemma}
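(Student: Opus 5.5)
By symmetry (Lemma \ref{lem:A_2(i-1) or A_2(i) is empty}) I would assume $A_2(i)=\emptyset$ and fix a vertex $v\in A_2(i-1)$. By Lemma \ref{lem:A_2(i-1)}, $A_2(i-1)$ is a clique complete to $B_{i-1}\cup B_i$. The first step is to show that $v$ is complete to $A_1(i)$.

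By Lemmas \ref{lem:A_3(i-1) is anticomplete to A_2(i-1)} and \ref{lem:two non-adjacent neighbors}, $v$ has a neighbor $u\in A_1(i)$. By Lemma \ref{lem:A2i1tocompA1}, $v$ is complete to the component $K$ of $A_1(i)$ containing $u$. The other components must then be ruled out. If $u'\in A_1(i)$ lies in another component, Lemma \ref{lem:A_1(i) is dominated by A_2(i+2)} gives $u'$ a neighbor $w'\in T$. I would then take $b\in B_{i-1}$. Since $v$ is complete to $B_{i-1}\cup B_i$, one can choose $c\in N_{B_i}(u')$, which is adjacent to $v$. A path such as $b-v-c-u'-w'-q_{i+2}-q_{i+1}$ should then give an induced $P_7$ or a bad $P_7$. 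Only the adjacencies of $b$ and $q_{i+1}$ to $c$ need checking, and $c$ can be chosen to avoid them, possibly after replacing $b$ by a suitable vertex. Alternatively, $C_4$-freeness of $u'-c-v-\cdots$ closes this case. So $A_1(i)$ is connected, and $v$ is complete to $A_1(i)$.

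Next I would show that $A_1(i)$ is a clique. Suppose $a,b\in A_1(i)$ are non-adjacent. Both are adjacent to $v$. By Lemma \ref{lem:A_1(i) is dominated by A_2(i+2)}, $a$ and $b$ have neighbors $a',b'\in T$. Since $v$ is a common neighbor of $a$ and $b$ and $G$ is $C_4$-free, $a$ and $b$ have no common neighbor in $T$ or in $B_i$ outside $N(v)$; also $v$ is complete to $B_i$. Hence $N_{B_i}(a)$ and $N_{B_i}(b)$ are disjoint, because any common neighbor $c\in B_i$ would create the $C_4$ $a-c-b-v-a$. For the same reason $N_T(a)\cap N_T(b)=\emptyset$. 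Then $a'b'\in E(G)$ must hold, since otherwise $a-a'-q_{i+2}-b'-b-v-a$ is an induced $C_6$ (noting that $v$ is anticomplete to $T$ by Lemma \ref{lem:anticompleteA3-1}). With $a'b'\in E(G)$, the cycle $a-a'-b'-b-v-a$ is an induced $C_5$, which is allowed. So instead I would use the path $a-v-b-b'-q_{i+2}-q_{i+1}-N_{B_i}(a)$. Here $N_{B_i}(a)$ is adjacent to $v$, so I would rather take $x\in N_{B_i}(a)$ and consider $b-v-\ldots$.

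The main obstacle is to find the right forbidden configuration. My first try is the 7-cycle $a-x-q_{i+1}-q_{i+2}-b'-b-v-a$: it has the chord $xv$, so it fails. Next I would consider $b'-b-v-x-a$ together with $q_{i-1}$. The most promising alternative is $P_4$-freeness (Lemma \ref{lem:A_1(i) is P_4-free}): together with the common neighbor $v$ and Lemma \ref{lem:P_4-free lemma}-style reasoning, it shows that $A_1(i)$ is a blowup of a complete multipartite graph. One then applies the small vertex argument to simplicial vertices (Lemma \ref{lem:simplicial vertices in A_1(i)}). Two non-adjacent simplicial vertices would have disjoint and complete neighborhoods in $T$, each of size greater than $\ceil{\frac{\omega}{4}}$, by the $C_6$-argument above. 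Together with $B_{i+2}\cup B_{i-2}$, whose sizes are bounded below by Lemma \ref{lem:A'_3(i-2) is large}, this yields a clique of size larger than $\omega$, a contradiction.
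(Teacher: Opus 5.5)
There is a genuine gap. Your closing argument needs the two non-adjacent simplicial vertices to have a common neighbor that is anticomplete to $T$. Such a vertex is what forces their $T$-neighborhoods of size $>\ceil{\frac{\omega}{4}}$ to be disjoint and complete, and hence forces an oversized clique in $T\cup B_{i-2}\cup B_{i+2}$. You obtain this common neighbor from the claim that $v$ is complete to $A_1(i)$, i.e.\ that every component of $A_1(i)$ meets $N(v)$. That claim is never proved.

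Your path $b-v-c-u'-w'-q_{i+2}-q_{i+1}$ always has the chord $cq_{i+1}$, because $q_{i+1}$ is by definition complete to $B_i$. No choice of $b$ or $c$ removes this chord, and with it the set only contains the allowed induced $C_5$ $c-u'-w'-q_{i+2}-q_{i+1}-c$. The appeal to ``$C_4$-freeness of $u'-c-v-\cdots$'' is not an argument.

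Without a common neighbor outside $T$, nothing stops two simplicial vertices in different components from sharing a neighbor $t\in T$. For instance, $s_1-c_1-c_2-s_2-t-s_1$ is a permitted induced $C_5$, and then the disjointness fails.

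Separately, your claim that $N_{B_i}(a)\cap N_{B_i}(b)=\emptyset$ is wrongly justified: $v$ is complete to $B_i$, so $a-c-b-v-a$ has the chord $cv$. This step is not load-bearing, since for $T$ the argument is fine because $v$ is anticomplete to $T$.

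The paper avoids the connectivity question by ordering the steps differently.
\begin{enumerate}
\item Restrict to the component $K$ on which $v$ is complete (Lemma \ref{lem:A2i1tocompA1}). If $K$ is not a clique, Lemmas \ref{lem:A_1(i) is chordal}, \ref{lem:two non-adjacent simplicail verteices} and \ref{lem:simplicial vertices in A_1(i)} give two non-adjacent $(K,B_i)$-minimal simplicial vertices. Now $v$ is their common neighbor and your clique contradiction applies, so $K$ is a clique.
\item Show $K$ is complete to $B_i$. If $y\in K$ misses $x\in B_i$, then $x-v-y-N_T(y)-q_{i+2}-q_{i+1}-x$ is an induced $C_6$ in your orientation.
\item If another component $K'$ exists, take a $(K',B_i)$-minimal simplicial vertex $s_2$ and any $s_1\in K$. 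They now share a neighbor in $B_i$, since $s_1$ is complete to $B_i$. That neighbor is anticomplete to $T$, so the same oversized-clique contradiction (using Lemma \ref{lem:A'_3(i-2) is large}) finishes the proof.
\end{enumerate}
Lemma \ref{lem:simplicial vertices in A_1(i)} is stated for minimal simplicial vertices, which is why Lemma \ref{lem:two non-adjacent simplicail verteices} is the right tool here. No complete multipartite structure is needed.
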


\begin{proof}
    By symmetry, assume that $A_2(i)$ contains a vertex $b$. By Lemmas \ref{lem:A_3(i-1) is anticomplete to A_2(i-1)} and \ref{lem:two non-adjacent neighbors}, $b$ has a neighbor $a\in A_1(i)$. By Lemma \ref{lem:A2i1tocompA1}, $b$ is complete to the component $K$ of $A_1(i)$ containing $a$. Suppose that $K$ is not a clique. 
    By Lemmas \ref{lem:A_1(i) is chordal}, \ref{lem:two non-adjacent simplicail verteices} and
    \ref{lem:simplicial vertices in A_1(i)}, there exist two non-adjacent simplicial vertices $s_1,s_2$ of $K$ with $|N(s_j)\cap T|>\ceil{\frac{\omega}{4}}$ for $j=1,2$. 
    Since $s_1$ and $s_2$ have a common $b\in A_2(i)$, $N(s_1)\cap T$ and $N(s_2)\cap T$ are disjoint and complete. 
    By Lemma \ref{lem:A'_3(i-2) is large}, $T\cup B_{i-2}\cup B_{i+2}$ is a clique of size larger than $\omega$. So $K$ is a clique. If $v\in K$ is not adjacent to $u\in B_i$, then $u-b-v-N_T(v)-q_{i-2}-q_{i-1}-u$ is an induced $C_6$ by Lemma \ref{lem:A_2(i-1)}. So $K$ is complete to $B_i$. If $A_1(i)\neq K$, then there exist two non-adjacent simplicial vertices $s_2\in A_1(i)\setminus K$ and $s_1\in K$ with $|N(s_j)\cap T|>\ceil{\frac{\omega}{4}}$ for $j=1,2$ by Lemma \ref{lem:simplicial vertices in A_1(i)}. Since $s_1$ is complete to $K$, $s_1$ and $s_2$ hava a common neighbor in $B_i$ and so $N(s_1)\cap T$ and $N(s_2)\cap T$ are disjoint and complete.  Then $T\cup B_{i-2}\cup B_{i+2}$ is a clique of size larger than $\omega$.
\end{proof}

\subsubsection{$T$ is not a clique}

In this subsection, $T$ is not a clique. Let $t_1,t_2\in T$ be two non-adjacent simplicial vertices of $T$. 

\begin{lemma}\label{lem:two large cliques}
    For $j=1,2$, $N(t_j)\setminus A_1(i)$ is a clique and so $K_{t_j}=N(t_j)\cap A_1(i)$  has size $>\ceil{\frac{\omega}{4}}$. Moreover, $K_{t_1}$ and $K_{t_2}$ are disjoint and complete.
\end{lemma}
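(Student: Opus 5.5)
The plan is to pin down $N(t_j)$ from the global assumptions of this subsection, show that $N(t_j)\setminus A_1(i)$ is a clique, and then let the no-small-vertex condition force $K_{t_j}$ to be large.

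\textbf{Step 1: neighbourhood of $t_j$.} Since $T$ is not a clique, Lemma \ref{lem: T is not a clique} gives $A_3(i-2)\cup A_3(i+2)=\emptyset$. Moreover $A_5=\emptyset$, $A_1=A_1(i)$, and $A_2(i-2)=A_2(i+1)=\emptyset$. Using Lemmas \ref{anti-A2A3} and \ref{lem:anticompleteA3-1}, $A_2(i+2)$ is anticomplete to $A_3(i\pm1)$ and $A_3(i)$. By Lemma \ref{A2A2} it is anticomplete to $A_2(i-1)\cup A_2(i)$. Hence
\[ N(t_j)\setminus A_1(i)\subseteq B_{i-2}\cup B_{i+2}\cup T\cup T_0. \]
By Lemma \ref{lem:opposite A_1(i) and A_2(j)}, $t_j$ is complete to $B_{i-2}\cup B_{i+2}$.

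\textbf{Step 2: this set is a clique.} The relevant adjacencies are:
\begin{itemize}
\item $B_{i-2}\cup B_{i+2}$ is a clique, by Lemma \ref{lem:B_{i-2} complete to B_{i+2}}.
\item $B_{i-2}\cup B_{i+2}$ is complete to $T$, by Lemmas \ref{lem:opposite A_1(i) and A_2(j)} and \ref{lem:blowup-twoneighbor}.
\item $B_{i-2}\cup B_{i+2}$ is complete to $T_0$, by Lemma \ref{lem:T_0 is complete to B_{i-2} and B_{i+2}}.
\item $T_0$ is a clique (Lemma \ref{lem:T_0 is a clique}) and is complete to $T$ (Lemma \ref{lem:T is complete to T_0}).
\item $N(t_j)\cap T$ is a clique because $t_j$ is simplicial in $T$.
\end{itemize}
Together these make $N(t_j)\setminus A_1(i)$ a clique.

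\textbf{Step 3: size of $K_{t_j}$.} Because $t_j$ is not small, $d(t_j)>\ceil{\frac{5}{4}\omega}-1$. The clique $N(t_j)\setminus A_1(i)$ has at most $\omega-1$ vertices, since together with $t_j$ it is a clique. Therefore $|K_{t_j}|\ge \ceil{\frac{5}{4}\omega}-(\omega-1)>\ceil{\frac{\omega}{4}}$.

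\textbf{Step 4: disjointness and completeness.} This is exactly Lemma \ref{lem:large cliques in A_1(i)} applied to the non-adjacent pair $t_1,t_2\in T$. It also shows $K_{t_1}\cup K_{t_2}\cup B_i$ is a clique.

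The only delicate point is the bookkeeping in Step 1, making sure no part of $A_2$ or $A_3$ other than $T\cup T_0$ meets $N(t_j)$. Each case reduces to an anticomplete-pair lemma already proved, so there is no real obstacle.
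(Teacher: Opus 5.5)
Your proposal is correct and follows essentially the same argument as the paper: confine $N(t_j)\setminus A_1(i)$ to $B_{i-2}\cup B_{i+2}\cup T\cup T_0$, then check it is a clique using Lemmas \ref{lem:B_{i-2} complete to B_{i+2}}, \ref{lem:opposite A_1(i) and A_2(j)}, \ref{lem:T_0 is a clique}, \ref{lem:T is complete to T_0} and \ref{lem:T_0 is complete to B_{i-2} and B_{i+2}}, then apply the no-small-vertex count and Lemma \ref{lem:large cliques in A_1(i)}. One citation slip: the fact that $A_2(i+2)$ is anticomplete to $A_2(i-1)\cup A_2(i)$ comes from the third bullet of Lemma \ref{lem:anticompleteA3-1}, not from Lemma \ref{A2A2}, which only treats $A_2(i\pm1)$.
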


\begin{proof}
    By Lemma \ref{lem: T is not a clique}, $N(t_j)\setminus A_1(i)\subseteq B_{i-2}\cup B_{i+2}\cup A_2(i+2)$. By Lemmas \ref{lem:T_0 is a clique}, \ref{lem:T is complete to T_0}, \ref{lem:T_0 is complete to B_{i-2} and B_{i+2}} and \ref{lem:opposite A_1(i) and A_2(j)}, $N(t_j)\setminus A_1(i)$ is a clique.
\end{proof}

\begin{lemma}
    $A_1(i)$ is a clique.
\end{lemma}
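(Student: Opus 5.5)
We argue by contradiction, using the standard size argument from the preceding lemmas: if $A_1(i)$ is not a clique, we will find a clique of size larger than $\omega$ inside $T\cup B_{i-2}\cup B_{i+2}\cup A_1(i)\cup B_i$.

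First we fix the large cliques we can use. By Lemma~\ref{lem:two large cliques}, $K_{t_1}\cup K_{t_2}$ is a clique with $|K_{t_1}|,|K_{t_2}|>\ceil{\frac{\omega}{4}}$. By Lemma~\ref{lem:large cliques in A_1(i)}, $K_{t_1}\cup K_{t_2}\cup B_i$ is a clique. By Lemma~\ref{lem:B_i is large}, $|B_i|>\ceil{\frac{\omega}{4}}$. So $K_{t_1}\cup K_{t_2}\cup B_i$ already has more than $3\ceil{\frac{\omega}{4}}$ vertices. The aim is to find one more disjoint clique of size $>\ceil{\frac{\omega}{4}}$ in $A_1(i)\cup B_i$ that is complete to $K_{t_1}\cup K_{t_2}\cup B_i$.

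Now suppose $A_1(i)$ is not a clique. By Lemma~\ref{lem:A_1(i) is chordal} and Lemma~\ref{lem:two non-adjacent simplicail verteices}, there are two non-adjacent vertices $s_1,s_2$ that are $(K,B_i)$-minimal simplicial in components of $A_1(i)$ (possibly the same component). By Lemma~\ref{lem:simplicial vertices in A_1(i)}, $N(s_j)\cap T$ is a clique of size $>\ceil{\frac{\omega}{4}}$ complete to $B_{i-2}\cup B_{i+2}$.

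We then split into two cases, according to whether $s_1,s_2$ have a common neighbour in $B_i\cup A_1(i)$.
\begin{itemize}
\item[$\bullet$] If they do, then by $C_4$-freeness $N(s_1)\cap T$ and $N(s_2)\cap T$ are disjoint. They are also complete to each other, by the cyclic-sequence argument of Lemma~\ref{lem:large cliques in A_1(i)}. Together with $B_{i-2}\cup B_{i+2}$, which has more than $2\ceil{\frac{\omega}{4}}$ vertices by Lemma~\ref{lem:A'_3(i-2) is large}, this gives a clique of size $>\omega$.
\item[$\bullet$] If they do not, then $s_1,s_2$ have disjoint neighbourhoods in $B_i$. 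We use Lemma~\ref{lem:A_1(i) is P_4-free}: $A_1(i)$ is $P_4$-free, so each component of $A_1(i)$ is a join. Since $s_1,s_2$ have no common neighbour, they lie in different components of $A_1(i)$. We then compare with $t_1,t_2$. Each $s_j$ has a neighbour in $T$ by Lemma~\ref{lem:A_1(i) is dominated by A_2(i+2)}. Using $C_6/C_7$-freeness through $q_{i+2}$, the sets $N(s_1)\cap T$ and $N(s_2)\cap T$ are comparable or complete.
\end{itemize}

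The hardest step is the second case. There I would try to show that one of $s_1,s_2$, say $s_1$, is complete to $K_{t_1}\cup K_{t_2}$, or else is anticomplete to it. In the anticomplete case, $K_{t_1}$, $K_{t_2}$ and the component containing $s_1$ give three pairwise anticomplete pieces attached to $T$, and an induced $P_7$ should arise of the form $s_1 - N_T(s_1) - q_{i+2} - t_1 - K_{t_1} - B_i - \cdots$.

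Once $s_1$ is shown to be complete to $K_{t_1}\cup K_{t_2}$, the clique $N(s_1)\cap B_i$ is large, since $N(s_1)\setminus B_i$ is a clique when the $T$-part is absorbed. With the four large cliques $B_i$, $K_{t_1}$, $K_{t_2}$ and $N(s_1)\cap T$, or alternatively $N(s_1)\cap B_i$, this gives a clique of size $>\omega$, a contradiction.
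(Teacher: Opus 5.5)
The gap is in your second case and in the final count, and it comes from a missing idea.

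\textbf{What works.} Your first case matches the paper's when the common neighbour $u$ lies in $B_i$. Then $u$ is anticomplete to $T$. By $C_4$-freeness the sets $N(s_1)\cap T$ and $N(s_2)\cap T$ are disjoint, and the six-cycle $s_1-p-q_{i+2}-q-s_2-u-s_1$ makes them complete to each other.

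\textbf{A smaller issue.} For a common neighbour $u\in A_1(i)$ your justification does not apply. Such a $u$ may be adjacent to a common $T$-neighbour, which gives a diamond rather than a $C_4$, and the six-cycle through $u$ may have chords. You should split only on whether $s_1,s_2$ have a common neighbour in $B_i$.

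\textbf{The main gap.} The clique you end with does not exist. $N(s_1)\cap T\subseteq A_2(i+2)$ is anticomplete to $B_i$, so it cannot be added to $B_i\cup K_{t_1}\cup K_{t_2}$. $N(s_1)\cap B_i\subseteq B_i$ adds no new vertices. For the same reason, the aim in your first paragraph (a fourth large clique disjoint from $B_i$ and complete to $K_{t_1}\cup K_{t_2}\cup B_i$) is not what one should look for. The proposed dichotomy ``$s_1$ complete or anticomplete to $K_{t_1}\cup K_{t_2}$'' is unsupported, and it would not produce such a clique anyway.

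\textbf{The missing idea.} Find the fourth quarter \emph{inside} $B_i$. To do this, run the small-vertex argument on $s_1$ and $s_2$ with respect to $B_i$ rather than $T$.
\begin{enumerate}
\item[(1)] By Lemma~\ref{lem:A_1(i) is a clique if A_2(i) exists}, you may assume $A_2(i-1)=A_2(i)=\emptyset$.
\item[(2)] Together with Lemma~\ref{lem:A_1(i) is anticomplete to A_3} and $A_5=\emptyset$, this gives $N(s_j)\setminus B_i\subseteq A_1(i)\cup T$.
\item[(3)] $N(s_j)\cap A_1(i)$ is a clique because $s_j$ is simplicial. $N(s_j)\cap T$ is a clique because of the common neighbour $q_{i+2}$. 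So a non-adjacent pair in $N(s_1)\setminus B_i$ must be some $s\in A_1(i)$ and $s'\in T$.
\item[(4)] If $s$ has a neighbour $w\in B_i\setminus N(s_1)$, then $w-s-s_1-s'-q_{i+2}-q_{i+1}-w$ is an induced $C_6$.
\item[(5)] Otherwise $N_{B_i}(s)\subseteq N_{B_i}(s_1)$. For $b\in N_{B_i}(s_2)$, the path $s-s_1-s'-q_{i+2}-q_{i+1}-b-q_{i-1}$ is then an induced $P_7$. This is exactly where $N_{B_i}(s_1)\cap N_{B_i}(s_2)=\emptyset$ is used.
\end{enumerate}

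Hence $N(s_j)\setminus B_i$ is a clique, so $|N(s_j)\cap B_i|>\ceil{\frac{\omega}{4}}$ for $j=1,2$. These are disjoint subsets of $B_i$. By Lemma~\ref{lem:large cliques in A_1(i)}, $K_{t_1}\cup K_{t_2}$ is complete to $B_i$, and by Lemma~\ref{lem:two large cliques} it consists of two disjoint parts of size $>\ceil{\frac{\omega}{4}}$. Together these form a clique of size $>\omega$, the required contradiction.

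Lemma~\ref{lem:B_i is large} on its own supplies only one quarter in $B_i$, which is why your count came up one short. Note also that this step uses only that $s_j$ is simplicial in $A_1(i)$, not any minimality of $s_j$. Your $(K,B_i)$-minimality is still needed for the first case.
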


\begin{proof}
    Suppose not. By Lemma \ref{lem:A_1(i) is a clique if A_2(i) exists}, $A_2(i-1)=A_2(i)=\emptyset$. 
    Let $a'_1,a'_2$ be two non-adjacent simplicial vertices of $A_1(i)$. 
    Choose $a_j\in N_{A_1(i)}[a'_j]$ simplicial in $A_1(i)$ with minimal neighborhood in $B_i$ and with minimal neighborhood in $T$ for $j=1,2$.
    By Lemma \ref{lem:simplicial vertices in A_1(i)},  $N(a_j)\setminus T$ is a clique and so $|N(a_j)\cap T|>\ceil{\frac{\omega}{4}}$.
   
    If $a_1$ and $a_2$ have a common neighbor in $B_i$, then $N(a_1)\cap T$ and $N(a_2)\cap T$ are disjoint and complete sine $G$ is $(C_4,C_6)$-free, and so $(N(a_1)\cap T)\cup (N(a_2)\cap T)\cup B_{i-2}\cup B_{i+2}$ is a clique of size larger than $\omega$. 
    So $a_1$ and $a_2$ have disjoint neighborhoods in $B_i$. 
    Next we show that $N(a_j)\setminus B_i$ is a clique. Note that $N(a_j)\setminus B_i\subseteq A_1(i)\cup T$. Suppose that $s,s'$ are two non-adjacent vertices in $N(a_j)\setminus B_i$. Then $s\in A_1(i)$ and $s'\in T$. 
    Suppose first that $s$ is simplicial in $A_1(i)$. 
    By the minimality of $a_1$, $s$ has a neighbor $w\in T\cup B_i$ that is not adjacent to $a_1$. 
    If $w\in B_i$, then $w-s-a_1-s'-q_{i+2}-q_{i+1}-w$ is an induced $C_6$. 
    So $w\in T$ and $N(s)\cap B_i\subseteq N(a_1)\cap B_i$. 
    Then $s-a_1-s'-q_{i+2}-q_{i+1}-N_{B_i}(a_2)-q_{i-1}$ is an induced $P_7$. 
    So $s$ is no simplicial in $A_1(i)$. 
    Let $w$ be a neighbor of $s$ in $A_1(i)$ that is not adjacent to $a_1$. 
    Then $w-s-a_1-s'-q_{i+2}-q_{i+1}-N_{B_i}(a_2)$ is a bad $P_7$.
    So $N(a_1)\setminus B_i$ is a clique and so $|N(a_1)\cap B_i|>\ceil{\frac{\omega}{4}}$. 
    By symmetry, $|N(a_2)\cap B_i|>\ceil{\frac{\omega}{4}}$. 
    By Lemmas \ref{lem:large cliques in A_1(i)} and \ref{lem:two large cliques}, $(N(a_1)\cap B_i)\cup (N(a_2)\cap B_i)\cup K_{t_1}\cup K_{t_2}$ is a clique of size larger than $\omega$.
\end{proof}

\begin{lemma}\label{lem:A_3(i+1) is empty}
    $A_3(i+1)=A_3(i-1)=\emptyset$
\end{lemma}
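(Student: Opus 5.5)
By symmetry it suffices to show $A_3(i-1)=\emptyset$. Suppose for a contradiction that $A_3(i-1)\neq\emptyset$. The plan is to produce a clique of size larger than $\omega$ by the small vertex argument, using the large cliques already available in this subsection: $K_{t_1}\cup K_{t_2}$ is a clique of size $>2\ceil{\frac{\omega}{4}}$ (Lemma \ref{lem:two large cliques}). Moreover $A_1(i)$ is now a clique, and by Lemma \ref{lem:large cliques in A_1(i)} it is complete to $B_i$ together with $K_{t_1}\cup K_{t_2}$.

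\textbf{Step 1: a second large piece on the $B_{i-2}$ side.} Let $K$ be a component of $A_3(i-1)$ and $s$ a $(K,B_i)$-minimal simplicial vertex. By Lemma \ref{lem:small vertex argument for vertices in A_3(i-1)}, $|N(s)\cap B_{i-2}|>\ceil{\frac{\omega}{4}}$. By Lemma \ref{lem:A'_3(i-2) is large} there is $w\in B_{i-1}\setminus N(A_3(i-1))$ with $|N(w)\cap B_{i-2}|>\ceil{\frac{\omega}{4}}$.

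\textbf{Step 2: case on whether $N(s)\cap B_{i-2}$ and $N(w)\cap B_{i-2}$ meet.}

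If they are disjoint, then $|B_{i-2}|>\frac{\omega}{2}$. Since $T$ is not a clique, $|T|\ge|K_{t_1}|$-type bounds are not directly usable, so instead we use $N(t_1)\setminus A_1(i)\supseteq B_{i-2}\cup B_{i+2}$ together with Lemma \ref{lem:A'_3(i-2) is large}. That lemma gives $|B_{i+2}|>\ceil{\frac{\omega}{4}}$, and $B_{i-2}$ is complete to $B_{i+2}$ (Lemma \ref{lem:B_{i-2} complete to B_{i+2}}). Then $B_{i-2}\cup B_{i+2}\cup\{t_1\}\cup$ (a large clique in $T$ from Lemma \ref{lem:simplicial vertices in A_1(i)}) exceeds $\omega$, exactly as in the proof of Lemma \ref{lem:A_3(i-1) is anticomplete to A_2(i-1)}.

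Otherwise $s$ and $w$ have a common neighbor in $B_{i-2}$. Since $sw\notin E(G)$, Lemma \ref{lem:nonedge in A_3(i)} shows they have disjoint neighborhoods in $B_i$. We then aim to show $|N(s)\cap B_i|$ and $|N(w)\cap B_i|$ are both large. For $w$ this follows from Lemma \ref{lem:smv on w} applied at index $i-1$, since $B_{i-1}$ is anticomplete to $A_1\cup A_2$: $A_2(i-1)=\emptyset$ here, and $A_1=A_1(i)$. For $s$, we run the argument of Lemma \ref{lem:small vertex argument for simplicial vertices in A_3(i-2)}. Its hypothesis holds because $s$ has no neighbors in $A_1\cup A_2$, by Lemma \ref{lem:A_1(i) is anticomplete to A_3} and $A_2(i-1)=\emptyset$. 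It yields $|N(s)\cap B_i|>\ceil{\frac{\omega}{4}}$.

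\textbf{Step 3: the final clique.} With these two disjoint large subsets of $B_i$, the set $B_i\cup K_{t_1}\cup K_{t_2}$ is a clique by Lemma \ref{lem:large cliques in A_1(i)}, and its size is $>4\ceil{\frac{\omega}{4}}\ge\omega$, a contradiction.

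The main obstacle is Step 2's second case. Before invoking Lemmas \ref{lem:smv on w} and \ref{lem:small vertex argument for simplicial vertices in A_3(i-2)}, we must first justify that $A_2(i-1)=A_2(i)=\emptyset$. This follows from Lemma \ref{lem:A_1(i) is a clique if A_2(i) exists} being invoked in reverse, or else directly from Lemma \ref{lem:A_3(i-1) is anticomplete to A_2(i-1)} combined with Lemma \ref{lem:A_2(i-1)}. The alternative we would try is to use Lemma \ref{lem:A_2(i-1)} to get $A_2(i-1)$ complete to $B_{i-1}\cup B_i$, and then add $A_2(i-1)$ to the clique.
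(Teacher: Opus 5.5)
Your plan is the paper's own argument in mirror image; the paper treats $A_3(i+1)$. You split on whether the two large neighborhoods in the outer clique meet. The first case is closed by $B_{i-2}\cup B_{i+2}$ plus the large clique of $T$ from Lemma~\ref{lem:simplicial vertices in A_1(i)}; leave out $t_1$, which need not be complete to that clique. The second case is closed by $B_i\cup K_{t_1}\cup K_{t_2}$.

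The gap is exactly the step you flag, and neither justification you offer works.
\begin{itemize}
\item The contrapositive of Lemma~\ref{lem:A_1(i) is a clique if A_2(i) exists} is vacuous here, because $A_1(i)$ is already a clique, and its converse is not available.
\item Lemma~\ref{lem:A_3(i-1) is anticomplete to A_2(i-1)} is an anticompleteness statement and says nothing about $A_2(i-1)$ being empty.
\end{itemize}
Nothing proved so far empties $A_2(i-1)$; Lemma~\ref{lem:A_2(i-1) or A_2(i) is empty} only empties one of $A_2(i-1),A_2(i)$. Yet Lemma~\ref{lem:smv on w} at index $i-1$ genuinely needs $B_{i-1}$ anticomplete to $A_2$.

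The missing idea is that emptiness comes out of the second case itself. There $w$ misses the non-empty set $N_{B_i}(s)$, so $B_{i-1}$ is not complete to $B_i$. But any $u\in A_2(i-1)$ is complete to $B_{i-1}\cup B_i$ by Lemma~\ref{lem:A_2(i-1)}. Lemma~\ref{lem:blowup-twoneighbor} applied to $\{u\}$ would then make $B_{i-1}$ complete to $B_i$, a contradiction. So $A_2(i-1)=\emptyset$ in that case. Together with $A_2(i-2)=\emptyset$ and $A_1=A_1(i)$, this is the hypothesis of Lemma~\ref{lem:smv on w}; it is precisely the paper's line. Your fallback of adding $A_2(i-1)$ to the clique is then unnecessary.

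There are two smaller repairs.
\begin{itemize}
\item Choose $s$ as an $(A_3(i-1),H)$-minimal simplicial vertex, not a $(K,B_i)$-minimal one. The argument of Lemma~\ref{lem:small vertex argument for simplicial vertices in A_3(i-2)} uses minimality of $N_H$. Under $B_i$-minimality alone it does not exclude a simplicial neighbor $p$ of $s$ with $N_{B_i}(p)=N_{B_i}(s)$ and $N_{B_{i-2}}(p)\subsetneq N_{B_{i-2}}(s)$. So you do not get that $N(s)\setminus B_i$ is a clique, and hence not the bound on $|N_{B_i}(s)|$. With the $H$-minimal choice, that lemma gives $|N_{B_{i-2}}(s)|,|N_{B_i}(s)|>\ceil{\frac{\omega}{4}}$ at once. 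Its hypothesis holds unconditionally by Lemmas~\ref{lem:A_1(i) is anticomplete to A_3}, \ref{anti-A2A3} and~\ref{lem:A_3(i-1) is anticomplete to A_2(i-1)}, so no emptiness of $A_2(i-1)$ is needed for $s$.
\item Take $w$ with minimal neighborhood in $H$ among $B_{i-1}\setminus N(A_3(i-1))$. Neighborhoods of vertices of $B_{i-1}$ in $B_{i-2}\cup B_i$ form a chain, so this $w$ is also $B_i$-minimal. Hence the proofs of Lemmas~\ref{lem:A'_3(i-2) is large} and~\ref{lem:smv on w} apply to the same vertex.
\end{itemize}
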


\begin{proof}
    By symmetry, we prove that $A_3(i+1)=\emptyset$. Suppose not. By Lemma \ref{lem:small vertex argument for simplicial vertices in A_3(i-2)}, there exists a vertex $v\in A_3(i+1)$ such that $|N(v)\cap B_i|,|N(v)\cap B_{i+2}|>\ceil{\frac{\omega}{4}}$. 
    Let $w\in B_{i+1}\setminus N(A_3(i+1))$ with minimal neighborhood in $B_i\cup B_{i+2}$. 
    Suppose first that $w$ and $v$ have disjoint neighborhoods in $B_{i+2}$. 
    By Lemma \ref{lem:A'_3(i-2) is large}, $|N(w)\cap B_{i+2}|>\ceil{\frac{\omega}{4}}$ and so $(N(v)\cap B_{i+2})\cup (N(w)\cap B_{i+2})\cup B_{i-2}\cup T$ is a clique of size larger than $\omega$. 
    So $w$ and $v$ have disjoint neighborhoods in $B_{i}$ and so $B_i$ is not complete to $B_{i+1}$. 
    By Lemma \ref{lem:A_2(i-1)}, $A_2(i)=\emptyset$. 
    By Lemma \ref{lem:smv on w}, $|N(w)\cap B_{i}|>\ceil{\frac{\omega}{4}}$. 
    Then $(N(v)\cap B_{i})\cup (N(w)\cap B_{i})\cup K_{t_1}\cup K_{t_2}$ is a clique of size larger than $\omega$ by Lemma \ref{lem:B_i complete to two large cliques in B_{i-1}}.  
\end{proof}

Next, we give a lemma for reducing vertices of small degrees when we color $G$. This lemma will also be used in Section \ref{sec:A_2 is not emptyset}.

\begin{lemma}\label{lem:degeneracy}
    Let $X_1,X_2,X_3,X^*$ be four disjoint cliques of $G$ with $|X_1|>\ceil{\frac{\omega}{4}}$, $|X_2|\leq \frac{3}{4}\omega$ and $|X_3|=\ceil{\frac{\omega}{4}}$ such that 
    \begin{itemize}
        \item $X_1\cup X^*$ is anticomplete to $X_3$,
        \item every two vertices in $X_1\cup X^*$ have comparable neighborhoods in $X_2$, 
        \item for every $v\in X^*$ and every vertex $u\in X_1$, $N_{X_2}(v)\subseteq N_{X_2}(u)$,
        \item $P\subseteq X_1$ is the set of $p$ vertices with largest neighborhoods in $X_2$ with $p\in \left[ \ceil{\frac{\omega}{4}} \right]$, and 
        \item $F$ is an induced subgraph of $G$ with $F\cap P=\emptyset$.
    \end{itemize}
    If a vertex $v\in F\cap X_2$ satisfies $N_F(v)\subseteq X_1\cup X_2\cup X_3\cup X^*$ and $N(v)\cap (X_1\cup X_2\cup X^*)$ is a clique of $G$, then $d_F(v)<\ceil{\frac{5}{4}\omega}-p$. 
\end{lemma}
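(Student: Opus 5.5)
The proof is a direct counting argument. Since $N_F(v)\subseteq X_1\cup X_2\cup X_3\cup X^*$, we split
\[
d_F(v)\le |N_F(v)\cap (X_1\cup X_2\cup X^*)|+|N_F(v)\cap X_3|.
\]
For the first term, $N(v)\cap (X_1\cup X_2\cup X^*)$ is a clique of $G$ not containing $v$. Adding $v$ still gives a clique, so $|N(v)\cap (X_1\cup X_2\cup X^*)|\le \omega-1$. The second term is at most $|X_3|=\ceil{\frac{\omega}{4}}$. Without using $P$ this gives only $d_F(v)\le \omega-1+\ceil{\frac{\omega}{4}}=\ceil{\frac{5}{4}\omega}-1$, so we must gain $p$ from the removal of $P$.

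The key step is to show that if $v$ has any neighbor in $X_1\cup X^*$, then $v$ is complete to $P$. Let $u\in N(v)\cap (X_1\cup X^*)$. If $u\in X^*$, the third hypothesis gives $N_{X_2}(u)\subseteq N_{X_2}(w)$ for every $w\in X_1$, and in particular for every $w\in P$. If $u\in X_1$, then either $u\in P$, or $u\notin P$ and $u$'s neighborhood in $X_2$ is no larger than that of any vertex of $P$. In the latter case, comparability of neighborhoods in $X_2$ (the second hypothesis) upgrades "no larger" to $N_{X_2}(u)\subseteq N_{X_2}(w)$ for all $w\in P$. Since $v\in X_2$ and $v\in N_{X_2}(u)$, it follows that $v$ is adjacent to every vertex of $P$.

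Now consider two cases.
\begin{itemize}
\item[(a)] If $v$ has a neighbor in $X_1\cup X^*$, then $v$ is complete to $P$. So $P\cup (N(v)\cap (X_1\cup X_2\cup X^*))$ lies in the clique $N(v)\cap (X_1\cup X_2\cup X^*)$. Because $F\cap P=\emptyset$, we get $|N_F(v)\cap (X_1\cup X_2\cup X^*)|\le \omega-1-p$, hence $d_F(v)\le \omega-1-p+\ceil{\frac{\omega}{4}}<\ceil{\frac{5}{4}\omega}-p$.
\item[(b)] Otherwise $N_F(v)\subseteq (X_2\setminus\{v\})\cup X_3$, so $d_F(v)\le |X_2|-1+\ceil{\frac{\omega}{4}}\le \frac{3}{4}\omega-1+\ceil{\frac{\omega}{4}}$. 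Since $p\le \ceil{\frac{\omega}{4}}$, it suffices to check $\frac{3}{4}\omega-1+\ceil{\frac{\omega}{4}}<\ceil{\frac{5}{4}\omega}-\ceil{\frac{\omega}{4}}$. Using $\ceil{\frac{5}{4}\omega}=\omega+\ceil{\frac{\omega}{4}}$, this reduces to $\ceil{\frac{\omega}{4}}-1<\frac{\omega}{4}$, which holds.
\end{itemize}

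The main obstacle is the step showing $v$ is complete to $P$ when $u\in X_1\setminus P$: we need that "largest neighborhoods" together with comparability gives a genuine containment. We handle this by ordering $X_1$ linearly by inclusion of $X_2$-neighborhoods and taking $P$ to be the top $p$ elements of that chain. The hypothesis $|X_1|>\ceil{\frac{\omega}{4}}\ge p$ guarantees that $P$ is well defined. Disjointness of the cliques and $F\cap P=\emptyset$ are used only in the bookkeeping above.
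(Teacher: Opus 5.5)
Your proof has a gap in the key step. You claim that if $v$ has \emph{any} neighbor $u\in X_1\cup X^*$, then $v$ is complete to $P$. You name the alternative $u\in P$ but only treat the other case, $u\in X_1\setminus P$. When $u\in P$ the claim is false in general, because the top $p$ vertices of the chain can themselves have strictly nested $X_2$-neighborhoods. For example, take $P=\{w_1,w_2\}$ with $N_{X_2}(w_2)\subsetneq N_{X_2}(w_1)$ and $v\in N_{X_2}(w_1)\setminus N_{X_2}(w_2)$. This configuration is compatible with all the hypotheses. Here $v$ has a neighbor in $X_1$ but is not complete to $P$, so your case (a) bound $|N_F(v)\cap (X_1\cup X_2\cup X^*)|\le \omega-1-p$ is not justified by your argument in this sub-case.

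The repair is to split on whether $N_F(v)\cap (X_1\cup X^*)$ is empty, rather than on neighbors of $v$ in $G$.
\begin{itemize}
\item If it is non-empty, any such neighbor lies in $(X_1\setminus P)\cup X^*$, because $F\cap P=\emptyset$. For those vertices your comparability argument does show that $v$ is complete to $P$, so case (a) goes through as written.
\item If it is empty, then $N_F(v)\subseteq (X_2\setminus\{v\})\cup X_3$, and your case (b) computation applies verbatim. Neighbors of $v$ inside $P$ are irrelevant because they are not in $F$.
\end{itemize}

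With this change your argument is correct and is essentially the paper's proof, which splits on whether $N_F(v)\cap X_1\neq\emptyset$. Two details of your write-up are in fact more careful than the paper's version. Including $v$ itself in the clique gives $\omega-1-p$ rather than $\omega-p$, which is what actually yields the strict inequality. Splitting on $X_1\cup X^*$ rather than $X_1$ alone accounts for possible $F$-neighbors of $v$ in $X^*$.
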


\begin{proof}
    If $N_{F_1}(v)\cap X_1\neq \emptyset$, then $(N_{F_1}(v)\cap (X_1\cup X_2\cup X^*))\cup P$ is also a clique of $G$ by the definition of $P$. 
    This implies that $|N_{F_1}(v)\cap (X_1\cup X_2\cup X^*)|\leq \omega-p$ or $|N_{F_1}(v)\cap (X_1\cup X_2\cup X^*)|=|N_{F_1}(v)\cap X_2|< \frac{3}{4}\omega$. 
    It follows that 
    \begin{align*} 
    |N_{F_1}(v)| & =|N_{F_1}(v)\cap X_3|+|N_{F_1}(v)\cap (X_1\cup X_2\cup X^*)|
    \\[3pt]
     & < \ceil{\frac{\omega}{4}}+\max \left \{\omega-p, \frac{3}{4}\omega \right \}
    \\[3pt]
     & \le \ceil{\frac{5}{4}\omega}-p.
\end{align*}
This completes the proof.
\end{proof}

\begin{lemma}
    $\chi(G)\le \ceil{\frac{5}{4}\omega}$.
\end{lemma}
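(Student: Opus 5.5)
We are in the subsection ``$T$ is not a clique'' of the case $A_1=A_1(i)$. The plan is to first record the structure collected so far and then give an explicit coloring by precoloring plus degeneracy, in the style of the two earlier coloring lemmas.

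\textbf{Structure.} By Lemma~\ref{lem: T is not a clique}, $A_3(i\pm2)=\emptyset$, and by Lemma~\ref{lem:A_3(i+1) is empty}, $A_3(i\pm1)=\emptyset$. Since $t_1,t_2\in T$ are non-adjacent and $A_3(i)=\emptyset$ (Lemma~\ref{lem:A_3(i) is empty when A_1=A_1(i)}), we get $A_3=\emptyset$. Also $A_5=\emptyset$, $A_2(i-2)=A_2(i+1)=\emptyset$, and $A_1(i)$ is a clique. By Lemma~\ref{lem:A_2(i-1)}, each of $A_2(i-1)$ and $A_2(i)$ is a clique complete to the two adjacent $B$'s, and by Lemma~\ref{lem:A_2(i-1) or A_2(i) is empty} we may assume $A_2(i-1)=\emptyset$. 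Finally, $A_2(i+2)=T\cup T_0$, where $T$ is a blowup of $P_3$, $T_0$ is a clique complete to $T\cup B_{i-2}\cup B_{i+2}$, and $B_{i-2}$ is complete to $B_{i+2}$.

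So $G$ is essentially a blowup of $C_5$ with the following extras:
\begin{itemize}
\item the clique $A_1(i)\cup$ part of $B_i$;
\item the $P_3$-blowup $T$ hanging on $B_{i-2}\cup B_{i+2}$ and on the two large disjoint cliques $K_{t_1},K_{t_2}\subseteq A_1(i)$ (Lemma~\ref{lem:two large cliques});
\item the clique $A_2(i)$.
\end{itemize}

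\textbf{Size facts.} I would collect the following size estimates.
\begin{itemize}
\item $|K_{t_j}|>\ceil{\frac{\omega}{4}}$ for $j=1,2$.
\item $|B_i|>\ceil{\frac{\omega}{4}}$ by Lemma~\ref{lem:B_i is large}.
\item $|N(v)\cap B_{i-2}|>\ceil{\frac{\omega}{4}}$ and $|N(u)\cap B_{i+2}|>\ceil{\frac{\omega}{4}}$ by Lemma~\ref{lem:A'_3(i-2) is large}.
\end{itemize}
Since $K_{t_1}\cup K_{t_2}\cup B_i$ is a clique by Lemma~\ref{lem:large cliques in A_1(i)}, it follows that $|B_i|<\omega/2$. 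Similarly, the middle clique $T_m$ of the $P_3$-blowup $T$ together with $T_0\cup B_{i-2}\cup B_{i+2}$ forms a clique, which bounds these sizes as well.

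\textbf{Coloring.} I would mimic the earlier scheme and choose $p$ colors to precolor a set $P$ of vertices that meets every maximum clique in at least $p$ vertices, so that $f=\ceil{\frac54\omega}-p\ge\omega$ colors remain.

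Concretely, I would precolor $p=|B_{i-2}|+|B_{i+2}|+|T_0\cup T_m|-3\ceil{\frac{\omega}{4}}$ vertices (or a similar count using the largest cliques of the "right side" $B_{i-2}\cup B_{i+2}\cup A_2(i+2)$). These colors would be reused on a set of $p$ vertices of $B_i$ (or of $A_1(i)$) having the largest neighborhoods, chosen non-adjacent to the right side.

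After this, the remaining vertices split into two groups:
\begin{itemize}
\item Vertices of $B_{i-1}$, $B_{i+1}$, and the ends of $T$. Their neighborhoods consist of one clique plus a set of size at most $\ceil{\frac{\omega}{4}}$. For these I would apply Lemma~\ref{lem:degeneracy} with suitable $X_1,X_2,X_3,X^*$ to show degree $<f$, and delete them.
\item What remains after deletion. I expect it to be chordal (using Lemma~\ref{lem:A_2(i) is chordal} and the fact that $A_1(i)$ is a clique), hence $\omega$-colorable with $\omega\le f$.
\end{itemize}

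\textbf{Main obstacle.} The hard part is choosing the precolored set so that it is consistent with both sides of the cycle. In particular, the precolored vertices in $B_i\cup A_1(i)$ must be complete to the appropriate neighbors, so that the hypotheses of Lemma~\ref{lem:degeneracy} hold for $B_{i-1}$ and $B_{i+1}$ simultaneously, while $A_2(i)$ is also present.

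I would try $X_1=B_i$, $X_2=B_{i+1}$ (respectively $B_{i-1}$), $X_3$ a $\ceil{\frac{\omega}{4}}$-subset of $B_{i+2}$, and $X^*=A_2(i)$. The key check is that $B_i$-vertices are nested in $B_{i\pm1}$, which follows from Lemma~\ref{lem:blowup-basicproperty}(1). If the arithmetic fails, I would fall back on Lemma~\ref{lem:hyperhole} for the residual $5$-hyperhole.
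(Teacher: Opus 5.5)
Your structural summary agrees with the paper, and so does your handling of $B_{i-1}$ and $B_{i+1}$: you reuse $p$ colors on the $p$ vertices of $B_i$ with largest neighborhoods, then peel $B_{i\pm1}$ off with Lemma~\ref{lem:degeneracy}, using $X^*=A_2(i)$ on the $B_{i+1}$ side. The gap is in the finishing step: the ends of $T$ cannot be removed by degeneracy. By Lemma~\ref{lem:two large cliques}, the simplicial ends $t_1\in T_1$ and $t_2\in T_3$ have $N(t_j)\setminus A_1(i)$ a clique plus more than $\ceil{\frac{\omega}{4}}$ further neighbors in $A_1(i)$. That is exactly the statement that they are not small, so $d(t_j)\ge\ceil{\frac{5}{4}\omega}$. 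Your reused vertices lie in $B_i$, which is anticomplete to $T$, so at most $p$ neighbors of $t_j$ are precolored and $d_F(t_j)\ge f$.

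Lemma~\ref{lem:degeneracy} cannot be applied either. $N_F(t_j)$ contains the $2\ceil{\frac{\omega}{4}}$ unprecolored vertices of $B_{i-2}\cup B_{i+2}$ and all of $K_{t_j}$. These two sets have no edges between them and each has more than $\ceil{\frac{\omega}{4}}$ vertices, so one of them would have to lie inside $X_3$, which has exactly $\ceil{\frac{\omega}{4}}$ vertices. Moving the reused colors into $A_1(i)$ instead breaks $P\subseteq X_1=B_i$, on which your treatment of $B_{i\pm1}$ relies.

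Consequently the residual graph is not chordal. Take $\ell\in K_{t_1}$, $r\in K_{t_2}$ and an unprecolored $u\in B_{i+2}$. Then $\ell-t_1-u-t_2-r-\ell$ is an induced $C_5$, since $K_{t_1}$ and $K_{t_2}$ are disjoint and complete to each other. Your value of $p$ is also not admissible: $T_0\cup T_m$ may have fewer than $\ceil{\frac{\omega}{4}}$ vertices, and both parts can be empty. The paper takes $p=|B_{i-2}|+|B_{i+2}|-2\ceil{\frac{\omega}{4}}$, using only that both $B_{i\pm2}$ exceed $\ceil{\frac{\omega}{4}}$.

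The missing idea is to keep the ring rather than destroy it. After deleting $B_{i\pm1}$, the paper continues as follows.
\begin{enumerate}
\item It deletes $B_i\cup A_2(i)$ as successively simplicial vertices, taking at each step a vertex with minimal neighborhood in $A_1(i)$.
\item It deletes $M=A_1(i)\setminus(L\cup R)$ the same way, taking minimal neighborhoods in $T_2$.
\item It removes the middle clique $T_2$ (your $T_m$), which is now universal: it is complete to $T_1\cup T_3\cup T_0\cup B_{i\pm2}$ and to $L\cup R$.
\end{enumerate}
What remains is the $5$-ring $L,\,T_1,\,(B_{i-2}\cup B_{i+2}\cup T_0)\setminus(S_{i-2}\cup S_{i+2}),\,T_3,\,R$, to be colored with $f-|T_2|$ colors. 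This is not a hyperhole, since $T_1$ need not be complete to $L$, so Lemma~\ref{lem:hyperhole} does not apply directly. The paper uses \cite{MIV20} (Theorem~1.2) to reduce to the worst hyperhole $H'$ inside the ring. It then checks $|V(H')|<2(f-|T_2|)$, using $|L\cup R|<\omega-\ceil{\frac{\omega}{4}}$ (as $L\cup R$ is complete to $B_i$) and $|V(G_4)\cap(T_j\cup B_{i-2}\cup B_{i+2}\cup T_0)|\le\omega-|T_2|-p$ for $j=1,3$.
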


\begin{proof}
By Lemmas \ref{lem:A_2(i-1) or A_2(i) is empty} and \ref{lem:A_2(i-2) is empty}, we may assume that $A_2=A_2(i)\cup A_2(i+2)$. By Lemmas \ref{lem:A_3(i) is empty when A_1=A_1(i)} \ref{lem:A_3(i+1) is empty} and \ref{lem: T is not a clique}, $A_3=\emptyset$.

By Lemma \ref{lem:structure of T}, $T$ can be partitioned into three cliques $T_1,T_2,T_3$ such that $T_2$ is complete to $T_1\cup T_3$ and $T_1$ and $T_3$ are anticomplete where $T_1,T_3\neq \emptyset$ and $T_2$ may be empty. Let $L=N(T_1)\cap A_1(i)$, $R=N(T_3)\cap A_1(i)$ and $M=A_1(i)\setminus (L\cup R)$. Note that $L$ and $R$ are disjoint. We show that $T_2$ is complete to $L\cup R$. Suppose that $v\in T_2$ has a non-neighbor $u\in L\cup R$, say $u\in L$. If $v$ is anticomplete to $L\cup R$, then $v$ has a neighbor $v'\in M$ and so $v-v'-u-N_{T_1}(u)-v$ is an induced $C_4$. So $v$ has a neighbor $w\in L\cup R$. If $w\in R$, then $w-u-N_{T_1}(u)-v-w$ is an induced $C_4$. So $v$ is anticomplete to $R$. By symmetry, $v$ is anticomplete to $L$, a contradiction.

Next we give a $\ceil{\frac{5\omega}{4} }$-coloring of $G$.
Let $x=|B_{i-2}|$ and $y=|B_{i+2}|$.
Our strategy is to use $p=x+y-2\ceil{\frac{\omega}{4}}$ colors to color some vertices of $G$ first and then argue that the remaining vertices can be colored with $f=\ceil{\frac{5}{4}\omega}-p$ colors. 
Note that
    \begin{align*} 
    f & = \ceil{\frac{5}{4}\omega}-(x+y-2\ceil{\frac{\omega}{4}}) 
    \\[3pt]
     & = \omega+(3\ceil{\frac{\omega}{4}}-(x+y))  
    \\[3pt]
     & \ge \omega,
\end{align*}
since $x+y\leq \frac{3}{4}\omega$.

\medskip
\noindent {\bf Step 1: Precolor a subgraph.}

\medskip
Next we define vertices to be colored with $p$ colors. Let $S_{i-2}\subseteq B_{i-2}$ be the set of $x-\ceil{\frac{\omega}{4}}$ vertices, $S_{i+2}\subseteq B_{i+2}$ be the set of $y-\ceil{\frac{\omega}{4}}$ vertices, and $S_i\subseteq B_i$ be the set of $p$ vertices with largest neighborhoods in $H$. Since $(x+y)-2\ceil{\frac{\omega}{4}}\leq \ceil{\frac{\omega}{4}}<|B_i|$, the choice of $S_i$ is possible.
We color $S_i$ with all colors in $[p]$, and color all vertices in $S_{i-2}\cup S_{i+2}$ with all colors in $[p]$. Since $B_i$ is anticomplete to $B_{i-2}\cup B_{i+2}$, this coloring is proper. So it suffices to show that
$G_1=G-(S_i\cup S_{i-2}\cup S_{i+2})$ is $f$-colorable.

\medskip
\noindent {\bf Step 2: Reduce by degeneracy.}

\medskip
Next, we use Lemma \ref{lem:degeneracy} to further reduce the problem to color a subgraph of $G_1$. 
Let $v\in B_{i-1}$ with minimal neighborhood in $B_{i-2}\cup B_i$. 
By Lemma \ref{lem:degeneracy} with $(X_1,X_2,X_3,X^*)=(B_i,B_{i-1},B_{i-2}\setminus S_{i-2},\emptyset)$, $P=S_i$, and $F=G_1$, we have $d_{G_1}(v)<f$. 
It follows that $G_1$ is $f$-colorable if and only if $G_1-v$ is $f$-colorable. Repeatedly applying Lemma \ref{lem:degeneracy} we conclude that $G_1$ is $f$-colorable if only if $G_1-B_{i-1}$ is $f$-colorable.

Next, we reduce vertices in $B_{i+1}$. By Lemmas \ref{lem:A_2(i-1)} and \ref{lem:blowup-twoneighbor}, $N_{B_{i+1}}(u')= N_{B_{i+1}}(u)$ for every $u\in B_{i}$ and $u'\in A_2(i)$.
Let $v\in B_{i+1}$ with minimal neighborhood in $B_{i+2}\cup B_i$. 
By Lemma \ref{lem:degeneracy} with $(X_1,X_2,X_3,X^*)=(B_i,B_{i+1},B_{i+2}\setminus S_{i+2},A_2(i))$, $P=S_i$, and $F=G_1-B_{i-1}$, we have $d_{G_1-B_{i-1}}(v)<f$. It follows that $G_1-B_{i-1}$ is $f$-colorable if and only if $(G_1-B_{i-1})-v$ is $f$-colorable. Repeatedly applying Lemma \ref{lem:degeneracy} we conclude that $G_1-B_{i-1}$ is $f$-colorable if only if $(G_1-B_{i-1})-B_{i+1}$ is $f$-colorable.

\medskip
\noindent {\bf Step 3: Reduce simplicial vertices.}

\medskip
Now it suffices to show that $G_2=G_1-B_{i-1}-B_{i+1}$ is $f$-colorable. 
Let $v\in B_{i}\cup A_2(i)$ be a vertex with minimal neighborhood in $A_1(i)$. 
Note that $v$ is simplicial in $G_2$. 
So $G_2$ is $f$-colorable if and only if $G_2-v$ is $f$-colorable. 
By applying this step repeatedly, we have $G_2$ is $f$-colorable if and only if $G_2-(B_{i}\cup A_2(i))$ is $f$-colorable. 

Let $v\in M$ be a vertex with minimal neighborhood in $T_2$. 
Note that $v$ is simplicial in $G_2-(B_{i}\cup A_2(i))$. 
So $G_2-(B_{i}\cup A_2(i))$ is $f$-colorable if and only if $(G_2-(B_{i}\cup A_2(i)))-v$ is $f$-colorable. 
By applying this step repeatedly, we have $G_2-(B_{i}\cup A_2(i))$ is $f$-colorable if and only if $G_2-(B_{i}\cup A_2(i))-M$ is $f$-colorable. 
Let $G_3=G_2-(B_{i}\cup A_2(i))-M$. 

\medskip
\noindent {\bf Step 4: Color $G_3$.}

\medskip
So it suffices to show that $G_3$ is $f$-colorable. 
Note that every vertex in $T_2$ is universal in $G_3$. 
So it suffices to show that $G_4=G_3-T_2$ is $(f-|T_2|)$-colorable.  
Let $H'$ be a subgraph of $G_4$ which is hyperhole with maximum chromatic number. 
It follows that $\omega(H')\leq \omega-|T_2|\leq f-|T_2|$.  
Since $L\cup R$ is complete to $B_i$, $|L\cup R|<\omega-\ceil{\frac{\omega}{4}}$. 
Let $M_j=V(G_4)\cap (T_j\cup B_{i-2}\cup B_{i+2}\cup T_0)$ for $j=1,3$. 
Since $T_1\cup B_{i-2}\cup B_{i+2}\cup T_0$ is complete to $T_2$, $|M_1|=|V(G_4)\cap (T_1\cup B_{i-2}\cup B_{i+2}\cup T_0)|\leq \omega-|T_2|-p$. 
By symmetry, $|M_3|=|V(G_4)\cap (T_3\cup B_{i-2}\cup B_{i+2}\cup T_0)|\leq \omega-|T_2|-p$. 
Then 
    \begin{align*}
	    |V(H')|\leq |V(G_4)| &= |L\cup R|+|M_1|+|M_3|-|B_{i-2}\cup B_{i+2}\cup T_0|
            \\[3pt]& < \left(\omega-\ceil{\frac{\omega}{4}}\right)+2(\omega-|T_2|-p)-2\ceil{\frac{\omega}{4}}
            \\[3pt]& =3\omega+\ceil{\frac{\omega}{4}}-2(x+y)-2|T_2| 
            \\[3pt]& <2\omega+6\ceil{\frac{\omega}{4}}-2(x+y)-2|T_2| 
            \\[3pt]&= 2(f-|T_2|), 
    \end{align*}
By \cite{MIV20} (Theorem 1.2), $\chi(G_4)= \chi(H')=\max\left \{\omega(H'),\ceil{\frac{|V(H')|}{2}}\right \}\leq f-|T_2|$.
\end{proof}

\subsubsection{$T$ is a clique}

\begin{lemma}\label{lem:small vertex argument for vertices in B_{i-2}}
     If $A_3(i-2)\neq \emptyset$, any vertex $w\in B_{i-2}$ anticomplete to $A_3(i-2)$ has $N(w)\setminus B_{i-1}$ is a clique and so $|N(w)\cap B_{i-1}|>\ceil{\frac{\omega}{4}}$.
\end{lemma}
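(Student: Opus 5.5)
We need to show $N(w)\setminus B_{i-1}$ is a clique, where $w\in B_{i-2}$ is anticomplete to $A_3(i-2)$; the size bound then follows because $G$ has no small vertices. So the plan is to list where neighbors of $w$ can lie outside $B_{i-1}$, and then check each pair of candidates.

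\textbf{Locating the neighbors.} We are in the one-$A_1(i)$ setting with $T$ a clique, and $A_5=\emptyset$. By Lemma~\ref{lem:A_2(i-2) is empty}, $A_2(i-2)=A_2(i+1)=\emptyset$, and $A_1=A_1(i)$ is anticomplete to $B_{i-2}$. Since $A_3(i-2)\neq\emptyset$, Lemma~\ref{lem:A_3(i-2) or A_3(i+2) is empty} gives $A_3(i+2)=\emptyset$. Vertices of $A_3(i-1)$ are anticomplete to $B_{i+2}$, and $w$ misses $A_3(i-2)$ by hypothesis. The remaining candidates are therefore
\[
N(w)\setminus B_{i-1}\subseteq B_{i-2}\cup B_{i+2}\cup T\cup T_0\cup A_3(i-1)\cup A_2(i-1).
\]
First we argue $w$ has no neighbor in $A_3(i-1)$. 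Take $t\in A_3(i-1)$ adjacent to $w$. Every vertex of $A_3(i-2)$ is complete to $B_{i+2}$ by Lemma~\ref{lem:A_3(i-2) is complete to y}(2), so we may apply Lemma~\ref{lem:compare nbd of different A_3(i) in B_j}, with indices shifted, to the pair $A_3(i-1)$, $A_3(i-2)$ and their common neighbors in $B_{i-2}$. This should force $w$ to be adjacent to some vertex of $A_3(i-2)$, a contradiction. If that shifted form does not apply directly, the fallback is an explicit argument: a $P_4$-structure or an induced $C_6$ built through $B_{i-1}$ and $B_{i+2}$.

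\textbf{The clique part.} Several pieces are already known to be pairwise complete. $B_{i-2}\cup B_{i+2}$ is a clique by Lemma~\ref{lem:B_{i-2} complete to B_{i+2}}. $T$ is complete to $B_{i\pm2}$ by Lemma~\ref{lem:opposite A_1(i) and A_2(j)}, and $T$ is a clique by assumption. $T\cup T_0$ is a clique by Lemmas~\ref{lem:T_0 is a clique} and~\ref{lem:T is complete to T_0}. $T_0$ is complete to $B_{i\pm2}$ by Lemma~\ref{lem:T_0 is complete to B_{i-2} and B_{i+2}}. Together these show $B_{i-2}\cup B_{i+2}\cup T\cup T_0$ is a clique.

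\textbf{Handling $A_2(i-1)$.} What remains is to show that any neighbor $v\in A_2(i-1)$ of $w$ is adjacent to every other vertex of $N(w)\setminus B_{i-1}$. For neighbors of $v$ in $B_{i-1}\cup B_i$, Lemma~\ref{lem:A_2(i-1)} says $v$ is complete to them. The remaining case is $v$ against a vertex $u\in B_{i-2}\cup B_{i+2}\cup T\cup T_0$. By Lemma~\ref{lem:A_2(i-1)}, $v$ has a neighbor $a\in A_1(i)$, and $a$ has a neighbor $c\in T$ by Lemma~\ref{lem:A_1(i) is dominated by A_2(i+2)}. If $u$ is a non-neighbor of $v$, we try to build a bad $P_7$ or an induced $C_4$, $C_6$ or $C_7$ using $u-w-v-a-c-q_{i+2}-\cdots$; the anticomplete-pair lemmas handle the chords. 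A potential complication is that $v\in A_2(i-1)$ is by definition anticomplete to $B_{i-2}$, which would make $w$ unable to have any neighbor in $A_2(i-1)$ at all; if so, this case disappears.

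\textbf{Main obstacle.} The step I expect to be hardest is excluding $A_3(i-1)$ from $N(w)$, since it depends on transferring Lemma~\ref{lem:compare nbd of different A_3(i) in B_j} to the right pair of indices. Once that is settled, $N(w)\setminus B_{i-1}$ is a clique. Because $G$ has no small vertices, $d(w)\ge\ceil{\frac{5}{4}\omega}$, while $d(w)\le(\omega-1)+|N(w)\cap B_{i-1}|$; comparing the two gives $|N(w)\cap B_{i-1}|>\ceil{\frac{\omega}{4}}$.
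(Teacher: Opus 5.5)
Your proposal is correct and follows the paper's proof. The paper also excludes $A_1$, $A_2\setminus A_2(i+2)$, $A_3(i+2)$ and $A_3(i-1)$ from $N(w)$, notes that $B_{i-2}\cup B_{i+2}\cup T\cup T_0$ is a clique, and finishes with the no-small-vertex count. Both of your hedges resolve favorably: $A_2(i-1)$ has support $\{i-1,i\}$, so it is anticomplete to $B_{i-2}$ by definition and that case is vacuous, and Lemma~\ref{lem:compare nbd of different A_3(i) in B_j}, read up to the reflection symmetry of $C_5$, says directly that neighbors of $A_3(i-1)$ in $B_{i-2}$ are complete to $A_3(i-2)\neq\emptyset$, which is exactly how the paper excludes $A_3(i-1)$ in one line.
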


\begin{proof}
    By Lemma \ref{lem:A_3(i-2) or A_3(i+2) is empty}, we have $A_3(i+2)=\emptyset$. 
    By Lemma \ref{lem:compare nbd of different A_3(i) in B_j}, $w$ is anticomplete to $A_3(i-1)$.
    By Lemma \ref{lem:A_2(i-2) is empty}, $w$ has no neighbor in $A_2\setminus A_2(i+2)$. 
    By Lemmas \ref{lem:T is complete to T_0}, \ref{lem:T_0 is complete to B_{i-2} and B_{i+2}}, \ref{lem:T_0 is a clique} and the assumption that $T$ is a clique, $N(w)\setminus B_{i-1}=B_{i-2}\cup B_{i+2}\cup T\cup T_0$ is a clique.  
\end{proof}

\begin{lemma}\label{lem:A_3(i-2) is a clique}
    $A_3(i-2)$ is a clique. By symmetry, $A_3(i+2)$ is a clique.
\end{lemma}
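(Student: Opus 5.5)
We argue by contradiction, following the small vertex argument used repeatedly above. Suppose $A_3(i-2)$ is not a clique. Since $A_3(i-2)$ is $P_4$-free by Lemma \ref{lem:P4-freenessofA3}, it is chordal (it is also $C_4$-free), so it has two non-adjacent simplicial vertices. These lie either in a common non-clique component or in two different components. Lemma \ref{lem:B_{i-1} is large} gives, for each such simplicial vertex $t$, that $|N(t)\cap B_{i-1}|>\ceil{\frac{\omega}{4}}$.

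So pick non-adjacent simplicial vertices $t_1,t_2$ of $A_3(i-2)$ (if needed, via Lemma \ref{lem:two non-adjacent simplicail verteices} applied componentwise). The first step is to show that $N(t_1)\cap B_{i-1}$ and $N(t_2)\cap B_{i-1}$ are disjoint. By Lemma \ref{lem:A_3(i-2) is complete to y} (2), $t_1,t_2$ have a common neighbor in $B_{i+2}$ (indeed, $B_{i+2}$ is complete to $A_3(i-2)$). By $C_4$-freeness, they therefore have no common neighbor in $B_{i-1}$. Consistently with Lemma \ref{lem:nonedge in A_3(i)}, their neighborhoods in $B_{i+2}$ are comparable and their neighborhoods in $B_{i-1}$ are disjoint.

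Next, we produce a third large clique in $B_{i-1}$ disjoint from these two. By Lemma \ref{lem:comparable vertices in A_3(i)}, there is $w\in B_{i-2}$ anticomplete to $A_3(i-2)$. Lemma \ref{lem:small vertex argument for vertices in B_{i-2}} then gives $|N(w)\cap B_{i-1}|>\ceil{\frac{\omega}{4}}$. Since $w\notin N(t_j)$ and $w,t_j$ are non-adjacent vertices of $A'_3(i-2)$, Lemma \ref{lem:nonedge in A_3(i)} together with the fact that $w$ and $t_j$ have a common neighbor in $B_{i+2}$ (because $B_{i-2}$ is complete to $B_{i+2}$ by Lemma \ref{lem:B_{i-2} complete to B_{i+2}}) gives that $N(w)\cap B_{i-1}$ is disjoint from $N(t_j)\cap B_{i-1}$ for $j=1,2$. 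Thus $B_{i-1}$ contains three pairwise disjoint sets, each of size $>\ceil{\frac{\omega}{4}}$.

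Finally, we add $B_i$. By Lemma \ref{lem:B_i is large}, $|B_i|>\ceil{\frac{\omega}{4}}$, and by Lemma \ref{lem:B_i complete to two large cliques in B_{i-1}} applied to the non-adjacent pair $t_1,t_2$, the set $A'_3(i)\supseteq B_i$ is complete to $(N(t_1)\cup N(t_2))\cap B_{i-1}$. The main obstacle is showing that $B_i$ is also complete to $N(w)\cap B_{i-1}$. For this we apply Lemma \ref{lem:B_i complete to two large cliques in B_{i-1}} again to the non-adjacent pair $\{w,t_1\}\subseteq A'_3(i-2)$, whose $B_{i-1}$-neighborhoods were shown to be disjoint. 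Then $B_i\cup B_{i-1}$ contains a clique of size $>4\ceil{\frac{\omega}{4}}\ge\omega$, a contradiction. The statement for $A_3(i+2)$ follows by the symmetric argument.
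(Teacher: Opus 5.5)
Your proof is correct and is essentially the paper's argument. Two non-adjacent simplicial vertices of $A_3(i-2)$ and a vertex $w\in B_{i-2}$ anticomplete to $A_3(i-2)$ give three large, pairwise disjoint cliques in $B_{i-1}$, all complete to $B_i$ by Lemma~\ref{lem:B_i complete to two large cliques in B_{i-1}}; together with $|B_i|>\ceil{\frac{\omega}{4}}$ this yields a clique of size larger than $\omega$. Your write-up also makes explicit two steps the paper leaves implicit: why the three $B_{i-1}$-neighborhoods are pairwise disjoint (a common neighbor in $B_{i+2}$ plus $C_4$-freeness), and the second application of that lemma, to the pair $\{w,t_1\}$, to get $B_i$ complete to $N(w)\cap B_{i-1}$.
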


\begin{proof}
    Suppose not. Let $s,s'\in A_3(i-2)$ be two non-adjacent vertices simplicial in $A_3(i-2)$. By Lemma \ref{lem:B_{i-1} is large}, $|N(s)\cap B_{i-1}|,|N(s')\cap B_{i-1}|>\ceil{\frac{\omega}{4}}$. Let $w\in B_{i-2}$ be anticomplete to $A_3(i-2)$. By Lemma \ref{lem:small vertex argument for vertices in B_{i-2}}, $|N(w)\cap B_{i-1}|>\ceil{\frac{\omega}{4}}$. It follows from 
    Lemmas \ref{lem:B_i complete to two large cliques in B_{i-1}} and \ref{lem:B_i is large} that $B_i\cup (N(s)\cap B_{i-1})\cup (N(s')\cap B_{i-1})\cup (N(w)\cap B_{i-1})$ is a clique of size larger than $\omega$. 
\end{proof}

\begin{lemma}\label{lem:A_3{i-1} and A_3{i+1} is empty when A_3{i-2} exists}
    If $A_3(i-2)\cup A_3(i+2)\neq \emptyset$, then $A_3(i-1)\cup A_3(i+1)=\emptyset$. 
\end{lemma}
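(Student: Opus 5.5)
By symmetry, it suffices to suppose that $A_3(i-2)\neq\emptyset$ and $A_3(i+1)\neq\emptyset$ (or $A_3(i-1)\neq\emptyset$) and derive a contradiction. The argument is a ``large clique'' argument, as in Lemmas \ref{lem:A_3(i-2) is a clique} and \ref{lem:A_3(i+1) is empty}. We produce several pairwise disjoint sets, each of size larger than $\ceil{\frac{\omega}{4}}$, whose union is a clique. Four such sets would form a clique of size larger than $\omega$.

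\textbf{Case $t\in A_3(i-2)$ and $s\in A_3(i-1)$.} We fix $t$ to be simplicial in $A_3(i-2)$; note that $A_3(i-2)$ is a clique by Lemma \ref{lem:A_3(i-2) is a clique}. We take $s$ to be a $(K,B_i)$-minimal simplicial vertex of its component $K$.
\begin{itemize}
\item By Lemma \ref{lem:B_{i-1} is large}, $|N(t)\cap B_{i-1}|>\ceil{\frac{\omega}{4}}$.
\item By Lemma \ref{lem:small vertex argument for vertices in A_3(i-1)}, $|N(s)\cap B_{i-2}|>\ceil{\frac{\omega}{4}}$.
\item We pick $w\in B_{i-2}$ anticomplete to $A_3(i-2)$, which exists by Lemma \ref{lem:comparable vertices in A_3(i)}. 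Lemma \ref{lem:small vertex argument for vertices in B_{i-2}} then gives $|N(w)\cap B_{i-1}|>\ceil{\frac{\omega}{4}}$.
\item Lemma \ref{lem:A'_3(i-2) is large} supplies $v\in B_{i-1}\setminus N(A_3(i-1))$ with $|N(v)\cap B_{i-2}|>\ceil{\frac{\omega}{4}}$.
\end{itemize}
The non-adjacent pairs are $t,w$ in $A'_3(i-2)$ and $s,v$ in $A'_3(i-1)$. By Lemma \ref{lem:nonedge in A_3(i)}, each pair has disjoint neighbourhoods on one side. I would show that $N(t)\cap B_{i-1}$ and $N(w)\cap B_{i-1}$ are disjoint, and that $N(s)\cap B_{i-2}$ and $N(v)\cap B_{i-2}$ are disjoint. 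For this I would use Lemma \ref{lem:A_3(i-2) is complete to y}(2): $A_3(i-2)$ is complete to $B_{i+2}$, and $B_{i-2}$ is complete to $B_{i+2}$, so $t$ and $w$ have a common neighbour in $B_{i+2}$. For $s$ and $v$, I would use the analogous common neighbour in $B_i$, or else argue with the shortest-path device of Lemma \ref{lem:A_3(i-1) is anticomplete to A_2(i-1)}.

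Once the four sets have been shown pairwise disjoint, Lemma \ref{lem:cross four cliques} shows that their union is a clique. Its size is larger than $4\ceil{\frac{\omega}{4}}\ge\omega$, a contradiction.

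\textbf{Case $A_3(i+1)\neq\emptyset$.} This case needs a different combination, because $A_3(i-2)$ is not adjacent to $i+1$. Here I would use Lemma \ref{lem:small vertex argument for simplicial vertices in A_3(i-2)}; its hypothesis holds because $A_1$ is anticomplete to $A_3$ by Lemma \ref{lem:A_1(i) is anticomplete to A_3}. It gives a vertex $u\in A_3(i+1)$ with $|N(u)\cap B_{i+2}|>\ceil{\frac{\omega}{4}}$. Together with $u'$ from Lemma \ref{lem:A'_3(i-2) is large}, this should produce two disjoint large cliques in $B_{i+2}$, as in Lemma \ref{lem:A_3(i+1) is empty}. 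A third disjoint large clique comes from the neighbourhoods of the simplicial vertex $t\in A_3(i-2)$ in $B_{i+2}$; this may need a small-vertex argument for $t$. Lemma \ref{lem:compare nbd of different A_3(i) in B_j} makes these cliques complete to one another. Adding $T$, which is a clique complete to $B_{i+2}$ and has size larger than $\ceil{\frac{\omega}{4}}$ by Lemma \ref{lem:simplicial vertices in A_1(i)}, gives a clique of size larger than $\omega$.

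\textbf{Main obstacle.} I expect the hard step to be proving the disjointness claims, that is, deciding on which side each non-adjacent pair has its disjoint neighbourhood. Where a pair instead has a common neighbour, I would fall back on a forbidden $P_7$ or $C_6$ through $q_{i+1},q_{i+2}$ and $y$, in the style of Lemma \ref{lem:A_3(i-2) or A_3(i+2) is empty}.
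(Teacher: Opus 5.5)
Your setup is right, but the step you flag as the main obstacle is a genuine gap, and the case you leave open is the only nontrivial one. Lemma~\ref{lem:nonedge in A_3(i)} says only that the non-adjacent pair $s,v\in A'_3(i-1)$ is disjoint in one of $B_{i-2}$ or $B_i$. Nothing gives them a common neighbour in $B_i$: $A_3(i-1)$ is not complete to $B_i$, and Lemma~\ref{lem:blowup-threeneighbor}(3) allows either orientation. Your fallback does not apply either. The shortest-path device of Lemma~\ref{lem:A_3(i-1) is anticomplete to A_2(i-1)} starts from a vertex of $K$ complete to $B_i$, which there came from a neighbour in $A_2(i-1)$ that you do not have. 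Worse, the orientation you want is vacuous. If $N(s)\cap B_{i-2}$ and $N(v)\cap B_{i-2}$ were disjoint, then $|B_{i-2}|>2\ceil{\frac{\omega}{4}}$, and $B_{i-2}\cup B_{i+2}\cup T$ is already a clique of size $>\omega$, exactly as in the proof of Lemma~\ref{lem:A_3(i-1) is anticomplete to A_2(i-1)}; you never need $t$ and $w$. The real case is when $s,v$ share a neighbour in $B_{i-2}$ and are disjoint in $B_i$. Your chosen vertices say nothing there, because Lemmas~\ref{lem:small vertex argument for vertices in A_3(i-1)} and~\ref{lem:A'_3(i-2) is large} only control neighbourhoods in $B_{i-2}$.

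The paper closes this by choosing the $A'_3(i-1)$ pair with large neighbourhoods on \emph{both} sides. It takes an $(A_3(i-1),H)$-minimal simplicial $s'$ via Lemma~\ref{lem:small vertex argument for simplicial vertices in A_3(i-2)}, and a minimal $v'\in B_{i-1}\setminus N(A_3(i-1))$ via Lemma~\ref{lem:smv on w}. The latter lemma needs $B_{i-1}$ anticomplete to $A_1\cup A_2$, so you must first show $A_2(i-1)=\emptyset$. For $a\in A_2(i-1)$, $s-(N(s)\cap B_{i-1})-a-(N(a)\cap A_1(i))-T-q_{i+2}-q_{i+1}$ is an induced sequence. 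Now suppose $s',v'$ are disjoint in $B_i$. Lemma~\ref{lem:B_i complete to two large cliques in B_{i-1}}, applied to your $t,w$ (disjoint in $B_{i-1}$ by the common neighbour in $B_{i+2}$ you found), makes $B_i$ complete to $N_{B_{i-1}}(t)\cup N_{B_{i-1}}(w)$. So $N_{B_i}(s')\cup N_{B_i}(v')\cup N_{B_{i-1}}(t)\cup N_{B_{i-1}}(w)$ is a clique of size $>\omega$. If instead they are disjoint in $B_{i-2}$, Lemma~\ref{lem:cross four cliques} finishes as in your plan.

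Your second case has the same defect. In addition, $N(t)\cap B_{i+2}=B_{i+2}$ by Lemma~\ref{lem:A_3(i-2) is complete to y}(2), so it cannot be a third clique disjoint from the others. You are right that $A_3(i+1)$ needs its own argument; the paper writes out only $A_3(i-1)$. The same dichotomy handles it:
\begin{itemize}
\item Show $A_2(i)=\emptyset$ by the mirrored induced sequence.
\item Take $u\in A_3(i+1)$ and $u'\in B_{i+1}\setminus N(A_3(i+1))$, each large on both sides.
\item If $u,u'$ are disjoint in $B_i$, glue $N_{B_i}(u)\cup N_{B_i}(u')$ to $N_{B_{i-1}}(t)\cup N_{B_{i-1}}(w)$ via Lemma~\ref{lem:B_i complete to two large cliques in B_{i-1}}.
\item If they are disjoint in $B_{i+2}$, then $B_{i-2}\cup B_{i+2}\cup T$ is too large.
\end{itemize}
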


\begin{proof}
    By symmetry, we assume that $A_3(i-2)\neq \emptyset$. Let $t\in A_3(i-2)$ be a vertex simplicial in $A_3(i-2)$ and $b_{i-2}\in B_{i-2}$ be a vertex anticomplete to $A_3(i-2)$. By Lemma \ref{lem:compare nbd of different A_3(i) in B_j}, $b_{i-2}$ is anticomplete to $A_3(i-1)$. By Lemma \ref{lem:B_{i-1} is large}, $|N(t)\cap B_{i-1}|>\ceil{\frac{\omega}{4}}$. By Lemma \ref{lem:small vertex argument for vertices in B_{i-2}}, $|N(b_{i-2})\cap B_{i-1}|>\ceil{\frac{\omega}{4}}$.

    Let $s\in A_3(i-1)$ and $b_{i-1}\in B_{i-1}$ be anticomplete to $A_3(i-1)$. If $A_2(i-1)$ contains a vertex $a$, then $s-(N(s)\cap B_{i-1})-a-(N(a)\cap A_1(i))-T-q_{i+2}-q_{i+1}$ is an induced sequence. So $A_2(i-1)=\emptyset$. 
    By Lemmas \ref{lem:small vertex argument for simplicial vertices in A_3(i-2)} and \ref{lem:smv on w}, there are $v\in A_3(i-1)$ and $w\in B_{i-1}\setminus N(A_3(i-1))$ such that $N(v)\cap B_{i},N(v)\cap B_{i-2},N(w)\cap B_{i},N(w)\cap B_{i-2}$ are cliques of size larger than $\ceil{\frac{\omega}{4}}$. 
    If $v$ and $w$ have disjoint neighborhoods in $B_i$, then $(N(v)\cap B_{i})\cup (N(w)\cap B_{i})\cup (N(t)\cap B_{i-1})\cup (N(b_{i-2})\cap B_{i-1})$ is a clique of size larger than $\omega$ by Lemma \ref{lem:B_i complete to two large cliques in B_{i-1}}. 
    So $v$ and $w$ have disjoint neighborhoods in $B_{i-2}$. 
    Then $(N(v)\cap B_{i-2})\cup (N(w)\cap B_{i-2})\cup (N(t)\cap B_{i-1})\cup (N(b_{i-2})\cap B_{i-1})$ is a clique of size larger than $\omega$ by Lemma \ref{lem:cross four cliques}. 
    \end{proof}

\begin{lemma}\label{lem:A_1(i) is a clique when A_3(i-2) is not empty}
    If $A_3(i\pm2)\neq \emptyset$, then $A_1(i)$ is a clique.
\end{lemma}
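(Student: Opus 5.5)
By symmetry we assume $A_3(i-2)\neq\emptyset$. Then $A_3(i+2)=\emptyset$ by Lemma \ref{lem:A_3(i-2) or A_3(i+2) is empty}, $A_3(i-1)\cup A_3(i+1)=\emptyset$ by Lemma \ref{lem:A_3{i-1} and A_3{i+1} is empty when A_3{i-2} exists}, and $T$ is a clique by Lemma \ref{lem: T is not a clique}. The approach is a clique-size contradiction modeled on Lemma \ref{lem:A_1(i) is a clique if A_2(i) exists}. In that proof two non-adjacent simplicial vertices of $A_1(i)$ produced two disjoint, mutually complete cliques in $T$, each of size $>\ceil{\frac{\omega}{4}}$. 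Here $T$ is a clique, so disjointness in $T$ does not by itself exceed $\omega$. Instead we will collect four large cliques inside $B_{i-1}\cup B_i$, or inside $B_i\cup A_1(i)$, and argue that their union is a clique.

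\textbf{Large cliques around $B_{i-1}$.} Let $t\in A_3(i-2)$; this is a clique by Lemma \ref{lem:A_3(i-2) is a clique}, so every vertex is simplicial there. Lemma \ref{lem:B_{i-1} is large} gives $|N(t)\cap B_{i-1}|>\ceil{\frac{\omega}{4}}$. Let $w\in B_{i-2}$ be anticomplete to $A_3(i-2)$. Lemma \ref{lem:small vertex argument for vertices in B_{i-2}} gives $|N(w)\cap B_{i-1}|>\ceil{\frac{\omega}{4}}$. Since $t$ and $w$ are non-adjacent with a common neighbor in $B_{i+2}$ (Lemma \ref{lem:A_3(i-2) is complete to y}(2)), Lemma \ref{lem:nonedge in A_3(i)} makes their neighborhoods in $B_{i-1}$ disjoint. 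Lemma \ref{lem:B_i complete to two large cliques in B_{i-1}} then makes $B_i$ complete to both, and Lemma \ref{lem:B_i is large} gives $|B_i|>\ceil{\frac{\omega}{4}}$. This leaves room for only a small clique in $B_i$-adjacent structure: any further clique of size $>\ceil{\frac{\omega}{4}}$ complete to $(N(t)\cup N(w))\cap B_{i-1}\cup B_i$ is forbidden.

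\textbf{Non-clique $A_1(i)$.} Suppose $A_1(i)$ is not a clique. By Lemma \ref{lem:A_1(i) is a clique if A_2(i) exists}, $A_2(i-1)=A_2(i)=\emptyset$. Since $A_1(i)$ is chordal (Lemma \ref{lem:A_1(i) is chordal}), Lemma \ref{lem:two non-adjacent simplicail verteices} supplies two non-adjacent $(A_1(i),B_i)$-minimal simplicial vertices $a_1,a_2$. If they have a common neighbor in $B_i$, then their $T$-neighborhoods are disjoint, and each has size $>\ceil{\frac{\omega}{4}}$ by Lemma \ref{lem:simplicial vertices in A_1(i)}. Together with $B_{i-2}\cup B_{i+2}$, which are large by Lemma \ref{lem:A'_3(i-2) is large}, this yields a clique of size $>\omega$. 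Otherwise their $B_i$-neighborhoods are disjoint. As in the proof of the "$T$ not a clique" case, we show that $N(a_j)\setminus B_i$ is a clique, using the induced paths $w-s-a_j-s'-q_{i+2}-q_{i+1}-\cdots$. Here $N(a_j)\setminus B_i\subseteq A_1(i)\cup T$, because $A_1(i)$ is anticomplete to $A_3$ (Lemma \ref{lem:A_1(i) is anticomplete to A_3}) and $A_5=\emptyset$. Hence $|N(a_j)\cap B_i|>\ceil{\frac{\omega}{4}}$ for $j=1,2$.

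\textbf{Main obstacle.} The hard step is showing that $N(a_1)\cap B_i$ and $N(a_2)\cap B_i$ are complete to $(N(t)\cup N(w))\cap B_{i-1}$. That would give four disjoint, mutually complete cliques, each of size $>\ceil{\frac{\omega}{4}}$, contradicting $\omega$. Completeness toward $B_{i-1}$ should follow from Lemma \ref{lem:B_i complete to two large cliques in B_{i-1}}, since all of $B_i$ lies in $A'_3(i)$. The delicate point is to confirm that $N(a_1)\cap B_i$ and $N(a_2)\cap B_i$ are complete to each other. They are, because $B_i$ is a clique. If disjointness against the $B_{i-1}$-cliques fails, we fall back on Lemma \ref{lem:cross four cliques}-type reasoning, or compare with $T$ through induced $C_6$s $a_j-B_i-q_{i-1}-B_{i-2}-T-a_j$, to force the sets apart.
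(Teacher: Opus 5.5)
Your proof is correct, but it is organized differently from the paper's.

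\textbf{The paper's route.} It uses $A_3(i-2)\neq\emptyset$ twice.
\begin{enumerate}
\item It first shows, via the induced $P_7$ $a-q_{i+2}-b-q_{i-1}-c-s-t$ (with $a\in A_3(i-2)$ and $b\in B_{i-2}$ anticomplete to $A_3(i-2)$), that no vertex of $B_i$ mixes on a component of $A_1(i)$. Hence two non-adjacent simplicial vertices of one component share a $B_i$-neighbour, so their $T$-neighbourhoods are disjoint. Then $|T|>\frac{\omega}{2}$, and $T\cup B_{i-2}\cup B_{i+2}$ is too large a clique. So every component of $A_1(i)$ is a clique.
\item It then treats two clique components $L_1,L_2$ separately. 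It shows they miss $A_2(i)$ and picks $s_j\in L_j$ with minimal $T$-neighbourhood. This gives disjoint, mutually complete cliques $N(s_j)\cap(B_i\cup A_2(i-1))$, each of size $>\ceil{\frac{\omega}{4}}$. These are combined with $N(a)\cap B_{i-1}$ and $N(b)\cap B_{i-1}$ using Lemmas \ref{lem:B_i complete to two large cliques in B_{i-1}} and \ref{lem:A_2(i-1)}.
\end{enumerate}

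\textbf{Your route.} You remove $A_2(i-1)\cup A_2(i)$ immediately via Lemma \ref{lem:A_1(i) is a clique if A_2(i) exists}. You then take two non-adjacent $(A_1(i),B_i)$-minimal simplicial vertices and split only on whether they share a $B_i$-neighbour. In effect you transplant the proof from the ``$T$ is not a clique'' subsubsection, with $N(t)\cap B_{i-1}$ and $N(w)\cap B_{i-1}$ replacing $K_{t_1},K_{t_2}$. This gives a uniform argument with no non-mixing claim, no connected/disconnected case split, and no handling of $A_2(i-1)$. The paper's version, in exchange, does not rely on the earlier lemma that eliminates $A_2(i-1)\cup A_2(i)$.

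\textbf{Remarks.}
\begin{itemize}
\item What you call the main obstacle is immediate. Two of your four cliques lie in $B_{i-1}$ and two in $B_i$, which are disjoint cliques. Lemma \ref{lem:B_i complete to two large cliques in B_{i-1}} makes $B_i$ complete to $(N(t)\cup N(w))\cap B_{i-1}$. So the union is a clique of size at least $4\ceil{\frac{\omega}{4}}+4>\omega$, and the fallback sentence is never needed.
\item The step ``$N(a_j)\setminus B_i$ is a clique'' needs no minimality. Suppose $s\in N(a_1)\cap A_1(i)$ is non-adjacent to $s'\in N(a_1)\cap T$. If $s$ has a neighbour $x\in B_i\setminus N(a_1)$, then $x-s-a_1-s'-q_{i+2}-q_{i+1}-x$ is an induced $C_6$. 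Otherwise $N_{B_i}(s)\subseteq N_{B_i}(a_1)$, and $s-a_1-s'-q_{i+2}-q_{i+1}-x'-q_{i-1}$ with $x'\in N_{B_i}(a_2)$ is an induced $P_7$.
\item You use the letter $w$ for two different vertices; rename one.
\end{itemize}
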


\begin{proof}
    By symmetry, assume that $A_3(i-2)\neq \emptyset$.
    Suppose that $a\in A_3(i-2)$ is simplicial in $A_3(i-2)$.  Let $b\in B_{i-2}$ anticomplete to $A_3(i-2)$. 
    If $c\in B_i$ mixes on an edge $st\in A_1(i)$, then $a-q_{i+2}-b-q_{i-1}-c-s-t$ is an induced $P_7$. So each vertex in $B_i$ is complete or anticomplete to any component of $A_1(i)$.
    If a component $K$ of $A_1(i)$ is not a clique, then $|T|>\frac{\omega}{2}$ by Lemma \ref{lem:simplicial vertices in A_1(i)} and so $B_{i-2}\cup B_{i+2}\cup T$ is a clique of size larger than $\omega$. So any component of $A_1(i)$ is a clique. 

    Suppose that $A_1(i)$ has two components $L_1$ and $L_2$. Note that $K_a=N(a)\cap B_{i-1}$ and $K_b=N(b)\cap B_{i-1}$ are disjoint and have size $>\ceil{\frac{\omega}{4}}$ by Lemmas \ref{lem:small vertex argument for vertices in B_{i-2}} and \ref{lem:B_{i-1} is large}. If $u\in L_j$ has a neighbor $w\in A_2(i)$, then $u-w-(N(w)\cap B_{i+1})-q_{i+2}-b-K_b-K_a$ is an induced $P_7$. So $L_j$ is anticomplete to $A_2(i)$ for $j=1,2$. It follows that $N(L_j)\subseteq B_i\cup A_2(i-1)\cup T$.
    Let $s_j\in L_j$ such that $|N(s_j)\cap T|$ is minimum. Since $L_j$ and $T$ are cliques, $N(s_j)\cap (L_j\cup T)$ is a clique. By Proposition \ref{prop:component of A_1(i)}, $K_{s_j}=N(s_j)\cap (B_i\cup A_2(i-1))=N(s_j)\setminus (L_j\cup T)$ is a clique and so $|K_{s_j}|>\ceil{\frac{\omega}{4}}$. If $K_{s_1}$ and $K_{s_2}$ are not disjoint, then $|T|>\frac{\omega}{2}$ and so $T\cup B_{i-2}\cup B_{i+2}$ is a clique of size larger than $\omega$. So $K_{s_1}$ and $K_{s_2}$ are disjoint. Since $G$ is $(C_6,C_7)$-free, $K_{s_1}$ and $K_{s_2}$ are complete.
    By Lemma \ref{lem:B_i complete to two large cliques in B_{i-1}} and \ref{lem:A_2(i-1)}, $K_{s_1}\cup K_{s_2}\cup K_a\cup K_b$ is a clique of size larger than $\omega$. This proves that $A_1(i)$ is a clique.
\end{proof}

\begin{lemma}\label{lem:A_3(i-1) or A_3(i+1) is empty}
    $A_3(i-1)$ or $A_3(i+1)$ is empty.
\end{lemma}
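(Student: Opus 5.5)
Suppose for a contradiction that both $A_3(i-1)$ and $A_3(i+1)$ are non-empty. The plan is the clique-size contradiction used repeatedly in this subsection: find four pairwise disjoint ``large'' cliques inside $B_i$ (each of size $>\ceil{\frac{\omega}{4}}$) whose union is a clique. Lemma \ref{lem:A_3{i-1} and A_3{i+1} is empty when A_3{i-2} exists} gives $A_3(i-2)\cup A_3(i+2)=\emptyset$. Lemma \ref{lem:A_1(i) is anticomplete to A_3} makes $A_1(i)$ anticomplete to $A_3$.

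First I would make $A'_3(i\pm1)$ anticomplete to $A_1\cup A_2$, so that Lemmas \ref{lem:small vertex argument for simplicial vertices in A_3(i-2)} and \ref{lem:smv on w} apply on both sides.
\begin{itemize}
\item By Lemma \ref{lem:A_3(i-1) is anticomplete to A_2(i-1)}, $A_3(i-1)$ is anticomplete to $A_2(i-1)$.
\item To kill $A_2(i-1)$ itself, I would reuse the induced-sequence argument from the proof of Lemma \ref{lem:A_3{i-1} and A_3{i+1} is empty when A_3{i-2} exists}. For $a\in A_2(i-1)$, Lemma \ref{lem:A_2(i-1)} makes $a$ complete to $B_{i-1}\cup B_i$, and by Lemmas \ref{lem:A_3(i-1) is anticomplete to A_2(i-1)} and \ref{lem:two non-adjacent neighbors} $a$ has a neighbor in $A_1(i)$. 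Then
$s-(N(s)\cap B_{i-1})-a-(N(a)\cap A_1(i))-T-q_{i+2}-q_{i+1}$, with $s\in A_3(i-1)$, gives a long induced path. Hence $A_2(i-1)=\emptyset$, and symmetrically $A_2(i)=\emptyset$.
\item The remaining adjacencies follow from the anticomplete-pair lemmas of Section \ref{sec:nice blowup of C_5} (for example Lemmas \ref{anti-A2A3} and \ref{lem:anticompleteA3-1} for $A_2(i+2)$), together with Lemma \ref{lem:A_2(i-2) is empty}.
\end{itemize}

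Now apply those two lemmas on each side. This gives $v\in A_3(i-1)$ and $w\in B_{i-1}\setminus N(A_3(i-1))$ such that $N(v)\cap B_i$, $N(v)\cap B_{i-2}$, $N(w)\cap B_i$ and $N(w)\cap B_{i-2}$ all have size $>\ceil{\frac{\omega}{4}}$. Symmetrically it gives $v'\in A_3(i+1)$ and $w'\in B_{i+1}\setminus N(A_3(i+1))$ with the analogous bounds in $B_i$ and $B_{i+2}$. Since $vw\notin E(G)$, Lemma \ref{lem:nonedge in A_3(i)} says $v$ and $w$ have disjoint neighborhoods in exactly one of $B_i$ and $B_{i-2}$; the same holds for $v',w'$ in $B_i$ or $B_{i+2}$.

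The case split is as follows.
\begin{itemize}
\item If $v,w$ are disjoint in $B_i$ and $v',w'$ are disjoint in $B_i$, Lemma \ref{lem:no two directed edge pointing to the same B_i} gives a contradiction directly.
\item If $v,w$ are disjoint in $B_{i-2}$, then $N(v)\cap B_{i-2}$ and $N(w)\cap B_{i-2}$ are two disjoint large cliques. Since $T$ is a clique, Lemmas \ref{lem:A'_3(i-2) is large}, \ref{lem:T_0 is complete to B_{i-2} and B_{i+2}} and \ref{lem:simplicial vertices in A_1(i)} should let me add two more large disjoint cliques, taken from $B_{i+2}$ and from $T$ (the latter complete to $B_{i-2}\cup B_{i+2}$). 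This yields a clique of size $>\omega$ in $B_{i-2}\cup B_{i+2}\cup T$.
\item The case where $v',w'$ are disjoint in $B_{i+2}$ is symmetric.
\end{itemize}

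The main obstacle is the middle case. There I must check that $N(v)\cap B_{i-2}$ and $N(w)\cap B_{i-2}$ are complete to the large pieces of $B_{i+2}$ and $T$. Here Lemma \ref{lem:B_{i-2} complete to B_{i+2}} and Lemma \ref{lem:opposite A_1(i) and A_2(j)} (which gives $T$ complete to $B_{i\pm2}$) should suffice. I must also check that the piece of $B_{i+2}$ supplied by Lemma \ref{lem:A'_3(i-2) is large}, namely $N(u)\cap B_{i+2}$ with $u\in B_{i+1}\setminus N(A_3(i+1))$, is large. If the count falls short, my fallback is Lemma \ref{lem:cross four cliques} applied to the pairs $(v,w)$ in $A'_3(i-1)$ and $(v',w')$ in $A'_3(i+1)$, using $B_{i\pm1}$ in place of $B_{i\pm2}$ as appropriate.
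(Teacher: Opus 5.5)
Your proof is correct, and it ends exactly as the paper's does. Lemma \ref{lem:no two directed edge pointing to the same B_i} rules out both non-adjacent pairs being disjoint in $B_i$. Otherwise, two disjoint cliques of size $>\ceil{\frac{\omega}{4}}$ in $B_{i-2}$ (or $B_{i+2}$), together with $B_{i+2}$ (resp.\ $B_{i-2}$) and $T$, give a clique of size $>\omega$ in $B_{i-2}\cup B_{i+2}\cup T$.

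Where you differ is in how you produce the large cliques.
\begin{itemize}
\item The paper quotes Lemmas \ref{lem:small vertex argument for vertices in A_3(i-1)} and \ref{lem:A'_3(i-2) is large}. These were proved earlier in this subsection precisely to supply $s\in A_3(i-1)$ and $a\in B_{i-1}\setminus N(A_3(i-1))$, each with more than $\ceil{\frac{\omega}{4}}$ neighbors in $B_{i-2}$, with no assumption on $A_2(i-1)$. As a result its proof is three lines.
\item You use the general Lemmas \ref{lem:small vertex argument for simplicial vertices in A_3(i-2)} and \ref{lem:smv on w}. The latter requires $B_{i-1}$ to be anticomplete to $A_2$, which forces your preliminary step $A_2(i-1)=A_2(i)=\emptyset$.
\end{itemize}

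That preliminary step is valid here.
\begin{itemize}
\item Lemma \ref{lem:A_2(i-1)} gives the edge from $a$ into $N(s)\cap B_{i-1}$.
\item Lemma \ref{lem:A_1(i) is dominated by A_2(i+2)} gives the edge from $N(a)\cap A_1(i)$ to $T$.
\item $N(a)\cap A_1(i)$ is connected, because $A_1(i)$ is a clique once $A_2(i-1)\neq\emptyset$ (Lemma \ref{lem:A_1(i) is a clique if A_2(i) exists}).
\item $T$ is a clique in this subsubsection.
\item The required non-edges come from Lemmas \ref{lem:A_3(i-1) is anticomplete to A_2(i-1)}, \ref{anti-A2A3}, \ref{lem:anticompleteA3-1} and \ref{lem:A_1(i) is anticomplete to A_3}.
\end{itemize}

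The step yields extra structure, but the bounds in $B_i$ it buys are never used. The case where both pairs are disjoint in $B_i$ is dismissed by Lemma \ref{lem:no two directed edge pointing to the same B_i} with no counting.

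Your middle case also does not fall short. You already have $|B_{i+2}|>\ceil{\frac{\omega}{4}}$ via $v'$, and $|T|>\ceil{\frac{\omega}{4}}$ by Lemma \ref{lem:simplicial vertices in A_1(i)}. So the fallback is unnecessary. Lemma \ref{lem:cross four cliques} would not fit anyway, since it concerns pairs in the consecutive classes $A'_3(i-1)$ and $A'_3(i-2)$, not $A'_3(i-1)$ and $A'_3(i+1)$.
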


\begin{proof}
    Suppose not.
    By Lemmas \ref{lem:small vertex argument for vertices in A_3(i-1)} and \ref{lem:A'_3(i-2) is large}, there exist $s\in A_3(i-1)$ and $a\in B_{i-1}\setminus N(s)$ such that $|N(a)\cap B_{i-2}|>\ceil{\frac{\omega}{4}}$ and $|N(s)\cap B_{i-2}|>\ceil{\frac{\omega}{4}}$, and there exist $t\in A_3(i+1)$ and $b\in B_{i+1}\setminus N(t)$ such that $|N(b)\cap B_{i+2}|>\ceil{\frac{\omega}{4}}$ and $|N(t)\cap B_{i+2}|>\ceil{\frac{\omega}{4}}$.
    By Lemma \ref{lem:no two directed edge pointing to the same B_i}, we may assume that $N(a)\cap B_{i-2}$ and $N(s)\cap B_{i-2}$ are disjoint, and so $\omega(B_{i-2}\cup B_{i+2}\cup T)>\omega$, a contradiction. 
\end{proof}

\begin{lemma}\label{lem:A_1(i)}
    $A_1(i)$ is a blowup of $P_3$.
\end{lemma}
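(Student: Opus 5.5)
We are in the setting of the subsection where $A_1=A_1(i)$ and $T$ is a clique. The goal is to prove that $A_1(i)$ is a blowup of $P_3$, mirroring the proof of Lemma \ref{lem:structure of T} with the roles of $T$ and $A_1(i)$ swapped.

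First we would handle the easy cases. If $A_3(i\pm 2)\neq\emptyset$, then Lemma \ref{lem:A_1(i) is a clique when A_3(i-2) is not empty} gives that $A_1(i)$ is a clique, and we are done. If $A_2(i)\cup A_2(i-1)\neq\emptyset$, Lemma \ref{lem:A_1(i) is a clique if A_2(i) exists} gives the same conclusion. So we may assume $A_3(i\pm2)=\emptyset$ and $A_2=A_2(i+2)$. By Lemma \ref{lem:A_1(i) is P_4-free}, $A_1(i)$ is $P_4$-free, and we would argue that it is connected. If it had two components, Lemma \ref{lem:simplicial vertices in A_1(i)} applied to simplicial vertices $s_1,s_2$ of different components gives $|N(s_j)\cap T|>\ceil{\frac{\omega}{4}}$. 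Since $T$ is a clique, these two sets either overlap or combine with $B_{i\pm2}$. Lemma \ref{lem:A'_3(i-2) is large} should give $|B_{i-2}\cup B_{i+2}|$ large, and then the minimum-$T$-neighborhood trick used in Lemma \ref{lem:A_1(i) is a clique when A_3(i-2) is not empty} forces a clique of size larger than $\omega$.

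Next, since $A_1(i)$ is connected and $P_4$-free, we partition it into $(X_1,X_2)$ complete to each other, with $X_1$ a clique ($C_4$-freeness) and $|X_1|$ maximum, so $X_2$ is disconnected. We then show that $X_2$ has exactly two components, each a clique. The key step is an analogue of claim (2) in Lemma \ref{lem:structure of T}: no three pairwise non-adjacent vertices of $A_1(i)$ can each have more than $\ceil{\frac{\omega}{4}}$ private neighbours in $T\cup B_i$. Concretely, we would take $(A_1(i),B_i)$-minimal simplicial vertices in distinct components of $X_2$ (via Lemma \ref{lem:two non-adjacent simplicail verteices}). Since $X_1$ is a common neighbour, Lemma \ref{lem:no common neighbors in A_1(i)}-type and $C_4$ arguments make their neighbourhoods in $T$ disjoint, and $(C_6,C_7)$-freeness makes them complete. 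Together with $B_{i\pm2}$, which are complete to $T$ by Lemma \ref{lem:opposite A_1(i) and A_2(j)}, two such sets already exceed $\omega$ unless their large neighbourhoods lie in $B_i$ instead. For that case we would use Lemma \ref{lem:B_i is large} and the three-simplicial-vertices count from Lemma \ref{lem:structure of T}(2).

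The main obstacle is showing that each component of $X_2$ is a clique, the analogue of claim (4) in Lemma \ref{lem:structure of T}. Here we would choose minimal simplicial vertices $s$ in a component $L$ with respect to $T$ and $B_i$. Any non-adjacent pair in $N(s)$ would have to consist of a vertex $p\in L$ and a vertex $q\in T\cup B_i$. Such a pair yields either an induced $C_4$ or an induced path of the form $r-p-s-q-q_{i+2}-q_{i+1}-\cdots$, violating $P_7$-freeness, as in the proof of Lemma \ref{lem:structure of T}(4). This gives two non-adjacent simplicial vertices in $L$ whose large neighbourhoods, combined with the simplicial vertex of the other component, contradict the previous claim. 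Hence each of $X_1$ and the two components of $X_2$ is a clique, so $A_1(i)$ is a blowup of $P_3$.
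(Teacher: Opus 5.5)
Your opening reductions (to $A_3(i\pm2)=\emptyset$ and $A_2(i-1)=A_2(i)=\emptyset$) and your closing step (the split $(X_1,X_2)$ with $|X_1|$ maximal, plus excluding three pairwise non-adjacent simplicial vertices) agree with the paper. The argument in between, however, has genuine gaps.

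First, you should not try to prove that $A_1(i)$ is connected. A disjoint union of two cliques is already a blowup of $P_3$ (with empty middle clique). Moreover, the trick of Lemma \ref{lem:A_1(i) is a clique when A_3(i-2) is not empty} works only because $A_3(i-2)\neq\emptyset$ supplies two further disjoint cliques $K_a,K_b\subseteq B_{i-1}$ of size $>\ceil{\frac{\omega}{4}}$. Without them you have only three large pieces ($N_{B_i}(s_1)$, $N_{B_i}(s_2)$, and $B_{i-1}$ or $B_{i+1}$), which gives no contradiction.

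Second, a common neighbour in $X_1$ does not make the $T$-neighbourhoods of vertices in different components of $X_2$ disjoint. $C_4$-freeness only says that the common neighbourhood of two non-adjacent vertices is a clique, and vertices of $X_1$ may be adjacent to $T$ (every vertex of $A_1(i)$ has a neighbour in $T$ by Lemma \ref{lem:A_1(i) is dominated by A_2(i+2)}). The implication actually runs the other way. Two non-adjacent $(A_1(i),B_i)$-minimal simplicial vertices have $T$-neighbourhoods of size $>\ceil{\frac{\omega}{4}}$ by Lemma \ref{lem:simplicial vertices in A_1(i)}. Since $|B_{i-2}|,|B_{i+2}|>\ceil{\frac{\omega}{4}}$ by Lemma \ref{lem:A'_3(i-2) is large}, they must share a $T$-neighbour, or else the clique $T\cup B_{i-2}\cup B_{i+2}$ is too large. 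Hence, by $C_4$-freeness, they have disjoint neighbourhoods in $B_i$.

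The missing idea is how to exploit disjoint $B_i$-neighbourhoods. Suppose $a,b$ are non-adjacent simplicial vertices of $A_1(i)$ with $N_{B_i}(a)\cap N_{B_i}(b)=\emptyset$. Then $N(a)\setminus B_i\subseteq A_1(i)\cup T$ is a clique. Indeed, a non-edge $ss'$ with $s\in N(a)\cap A_1(i)$ and $s'\in N(a)\cap T$ gives one of two configurations:
\begin{itemize}
\item the induced $C_6$ $p-s-a-s'-q_{i+2}-q_{i+1}-p$ with $p\in N_{B_i}(s)\setminus N(a)$;
\item the induced $P_7$ $s-a-s'-q_{i+2}-q_{i+1}-N_{B_i}(b)-q_{i-1}$.
\end{itemize}
So the partner $b$ is essential here. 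It follows that $|N_{B_i}(a)|,|N_{B_i}(b)|>\ceil{\frac{\omega}{4}}$. Induced $C_6$s through $N_T(\cdot)$ and $q_{i-2}$ (resp.\ $q_{i+2}$) then show that $N_{B_i}(a)\cup N_{B_i}(b)$ is complete to $B_{i-1}\cup B_{i+1}$.

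You also need $|B_{i-1}|>\ceil{\frac{\omega}{4}}$ or $|B_{i+1}|>\ceil{\frac{\omega}{4}}$. By Lemma \ref{lem:A_3(i-1) or A_3(i+1) is empty} you may assume $A_3(i-1)=\emptyset$. Then each $v\in B_{i-2}$ has $N(v)\setminus B_{i-1}$ inside the clique $B_{i-2}\cup B_{i+2}\cup A_2(i+2)$, so $B_{i-1}$ is large.

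Three pairwise non-adjacent simplicial vertices then give three disjoint large cliques in $B_i$, plus a large $B_{i-1}$ or $B_{i+1}$ complete to them. Together these exceed $\omega$. Lemma \ref{lem:B_i is large} cannot serve as your fourth piece, since the three pieces already lie inside $B_i$.

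Your ``main obstacle'' step is ill-posed and unnecessary. Any $t\in N_T(s)$ and $b\in N_{B_i}(s)$ are non-adjacent, so non-adjacent pairs in $N(s)$ need not involve $L$. Once three pairwise non-adjacent simplicial vertices are excluded, the conclusion is immediate. $A_1(i)$ is chordal, and a simplicial vertex of $X_2$ (or of a component of $A_1(i)$) is simplicial in $A_1(i)$. Hence $A_1(i)$ has at most two components, and if it has two, both are cliques. If it is connected, $X_2$ has exactly two components, both cliques.
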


\begin{proof}
    By Lemma \ref{lem:A_1(i) is a clique if A_2(i) exists}, we may assume that $A_2(i-1)=A_2(i)=\emptyset$ and $A_1(i)$ is not a clique.
    
    \medskip
    \noindent (1) Any two non-adjacent $(A_1(i),B_i)$-minimal simplicial vertices $a,b$ have a common neighbor in $T$. 
    
    \medskip    
    Suppose not.  By Lemma \ref{lem:simplicial vertices in A_1(i)}, $|N(a)\cap T|,|N(b)\cap T|>\ceil{\frac{\omega}{4}}$. It follows that $\omega(T\cup B_{i-2}\cup B_{i+2})>\omega$, a contradiction. This proves (1).

    \medskip
    \noindent (2) If there are two non-adjacent simplicial vertices $a,b$ of $A_1(i)$ which have disjoint neighborhoods in $B_i$, then $|N(a)\cap B_i|,|N(b)\cap B_i|>\ceil{\frac{\omega}{4}}$. Moreover, $(N(a)\cap B_i)\cup(N(b)\cap B_i)$ is complete to $B_{i-1}\cup B_{i+1}$.

    \medskip
    It suffices to show that $N(a)\setminus B_i$ and $N(b)\setminus B_i$ are cliques. Note that $N(a)\setminus B_i\subseteq A_1(i)\cup T$.
    Since $a$ is simplicial in $A_1(i)$ and $T$ is a clique, it suffices to show that $N(a)\cap A_1(i)$ is complete to $N(a)\cap T$. Suppose that $s\in N(a)\cap A_1(i)$ is not adjacent to $s'\in N(a)\cap T$. If $s$ has a neighbor $p\in B_i$ not adjacent to $a$, then $p-s-a-s'-q_{i+2}-q_{i+1}-p$ is an induced $C_6$. So $N(s)\cap B_i\subseteq N(a)\cap B_i$. Then $s-a-s'-q_{i+2}-q_{i+1}-N_{B_i}(b)-q_{i-1}$ is an induced $P_7$. This proves that $N(a)\setminus B_i$ is a clique. By symmetry, $N(b)\setminus B_i$ is a clique. 
    
    We first show that each vertex $v\in B_{i-1}$ is either complete or anticomplete to $(N(a)\cap B_i)\cup(N(b)\cap B_i)$. Suppose that $v$ has a neighbor $a'\in N(a)\cap B_i$. If $v$ has a non-neighbor $b'\in N(b)\cap B_i$, then $v-a'-b'-b-N_T(b)-q_{i-2}-v$ is an induced $C_6$. So $v$ is complete to $N(b)\cap B_i$. It follows that $v$ is complete to $N(a)\cap B_i$. Now suppose that $v$ is anticomplete to $(N(a)\cap B_i)\cup(N(b)\cap B_i)$. Let $c\in B_{i}$ be a neighbor of $v$ and $d\in N(a)\cap B_i$. Then $v-c-d-a-N_T(a)-q_{i-2}-v$ is an induced $C_6$.
    This proves (2).

    \medskip
    \noindent (3)  $|B_{i-1}|>\ceil{\frac{\omega}{4}}$ or $|B_{i+1}|>\ceil{\frac{\omega}{4}}$. 
    
    \medskip     
    By Lemma \ref{lem:small vertex argument for vertices in B_{i-2}}, we may assume that $A_3(i-2)=A_3(i+2)=\emptyset$. 
    By Lemma \ref{lem:A_3(i-1) or A_3(i+1) is empty}, we may assume that $A_3(i-1)=\emptyset$. For any vertex $v\in B_{i-2}$, $N(v)\setminus B_{i-1}=A_2(i+2)\cup B_{i+2}\cup B_{i-2}$ is a clique since $T$ is a clique. 
    It follows that $|B_{i-1}|>\ceil{\frac{\omega}{4}}$.
    This proves (3).
    
    \medskip
    By (1), (2), (3) and Lemma \ref{lem:two non-adjacent simplicail verteices}, $A_1(i)$ cannot have three pairwise non-adjacent simplicial vertices. This implies that $A_1(i)$ has at most two components and if $A_1(i)$ has two components, then each component is a clique. So assume that $A_1(i)$ is connected. Since $A_1(i)$ is $P_4$-free, $A_1(i)$ can be partitioned into two non-empty subsets $X_1$ and $X_2$ such that $X_1$ and $X_2$ are complete. Since $G$ is $C_4$-free, $X_1$ is a clique. Choose such a partition $(X_1,X_2)$ such that $|X_1|$ is maximum.  It follows that $X_2$ is disconnected.
    Since each vertex simplicial in $X_2$ is simplicial in $A_1(i)$, $X_2$ has exactly two clique components. So $A_1(i)$ is a blowup of $P_3$.
\end{proof}

\begin{lemma}\label{lem:A_3(i-1) is a clique when only one A_1}
    $A_3(i-1)$ is a clique and for every vertex $a\in A_3(i-1)$, $N_{B_i}(a)$ is anticomplete to $B_{i-1}\setminus N(a)$. 
\end{lemma}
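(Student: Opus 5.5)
We work in the setting of the subsection ``$T$ is a clique'' with $A_1=A_1(i)\neq\emptyset$, $A_5=\emptyset$, and $A_2(i\pm 2)$ as established. We may assume $A_3(i-1)\neq\emptyset$, otherwise there is nothing to prove. By Lemma~\ref{lem:A_3{i-1} and A_3{i+1} is empty when A_3{i-2} exists}, $A_3(i-2)=A_3(i+2)=\emptyset$, and by Lemma~\ref{lem:A_3(i-1) or A_3(i+1) is empty}, $A_3(i+1)=\emptyset$. By Lemma~\ref{lem:A_1(i) is anticomplete to A_3}, $A_1(i)$ is anticomplete to $A_3(i-1)$, and by Lemma~\ref{lem:A_3(i-1) is anticomplete to A_2(i-1)}, $A_3(i-1)$ is anticomplete to $A_2(i-1)$.

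\textbf{Second statement.} Take $a\in A_3(i-1)$ and write the indices relative to $a$'s own centre $i-1$. Lemma~\ref{lem:blowup-threeneighbor}(3) says $B_{i-1}\setminus N(a)$ is anticomplete to either $N_{B_{i-2}}(a)$ or $N_{B_i}(a)$; we must exclude the first alternative being the only one. Suppose instead there are $c\in B_{i-1}\setminus N(a)$ and $d\in N_{B_i}(a)$ with $cd\in E(G)$, so $B_{i-1}\setminus N(a)$ is anticomplete to $N_{B_{i-2}}(a)$. Take $e\in N_{B_{i-2}}(a)$. By Lemma~\ref{lem:nonedge in A_3(i)}, $a$ and $c$ have a common neighbor on one side and disjoint neighborhoods on the other; since $d$ is a common neighbor in $B_i$, they are disjoint in $B_{i-2}$. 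Now combine with the path through $A_1(i)$ and $T$: pick $x\in A_1(i)$ with a neighbor $y'\in T$ (Lemma~\ref{lem:A_1(i) is dominated by A_2(i+2)}). I would aim for an induced $P_7$ of the form
\[
a-e-q_{i+2}-y'-x-N_{B_i}(x)-\cdots,
\]
or for an induced cycle $a-d-c-N_{B_{i-2}}(c)-\cdots$, using $q_{i+2}$ and $T$ to close it. The case analysis on whether $x$ is adjacent to $d$ (and whether $N_{B_i}(x)$ meets $N_{B_i}(a)$) is where $C_6$/$C_7$ or a bad $P_7$ (Lemma~\ref{lem:badP7}) should appear. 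If this direct approach stalls, I would fall back on the small vertex argument, mirroring Lemma~\ref{lem:A_3(i-1) or A_3(i+1) is empty}: it would give a large clique in $B_{i-2}$ that, together with $B_{i+2}\cup T$, exceeds $\omega$.

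\textbf{Clique statement.} Suppose $A_3(i-1)$ is not a clique. By Lemma~\ref{lem:P4-freenessofA3} it is chordal and $P_4$-free, so Lemma~\ref{lem:two non-adjacent simplicail verteices} supplies two non-adjacent $(A_3(i-1),B_i)$-minimal simplicial vertices $s,s'$, possibly in different components. Lemma~\ref{lem:small vertex argument for vertices in A_3(i-1)} gives $|N(s)\cap B_{i-2}|,|N(s')\cap B_{i-2}|>\ceil{\frac{\omega}{4}}$. Lemma~\ref{lem:A'_3(i-2) is large} supplies $w\in B_{i-1}\setminus N(A_3(i-1))$ with $|N(w)\cap B_{i-2}|>\ceil{\frac{\omega}{4}}$. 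By Lemma~\ref{lem:nonedge in A_3(i)}, each non-adjacent pair among $s,s',w$ is disjoint in exactly one of $B_{i-2}$ and $B_i$.
\begin{itemize}
\item If some pair is disjoint in $B_{i-2}$, then $|B_{i-2}|>\frac{\omega}{2}$. Lemma~\ref{lem:simplicial vertices in A_1(i)} gives $|T|>\ceil{\frac{\omega}{4}}$, and Lemma~\ref{lem:A'_3(i-2) is large} gives $|B_{i+2}|>\ceil{\frac{\omega}{4}}$. Hence $B_{i-2}\cup B_{i+2}\cup T$, a clique by Lemma~\ref{lem:B_{i-2} complete to B_{i+2}} and Lemma~\ref{lem:opposite A_1(i) and A_2(j)}, exceeds $\omega$, a contradiction.
\item Otherwise all three pairwise share a neighbor in $B_{i-2}$ and are pairwise disjoint in $B_i$. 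The second statement together with Lemma~\ref{lem:no two directed edge pointing to the same B_i} should make $N_{B_i}(s)$ and $N_{B_i}(s')$ nested rather than disjoint, giving a contradiction.
\end{itemize}

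The main obstacle is the second statement; the clique part mostly reuses earlier counting patterns.
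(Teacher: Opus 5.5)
There is a genuine gap in both halves of your plan. Your Case~1 of the clique argument is fine, but Case~2 and the anticompleteness statement are not established.

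\textbf{Case~2 of the clique argument.} Here $s,s',w$ pairwise share a neighbor in $B_{i-2}$ and are pairwise disjoint in $B_i$, and you cannot close this the way you suggest. Since $A_3(i+1)=\emptyset$, we have $A'_3(i+1)=B_{i+1}$, which is a clique. So Lemma~\ref{lem:no two directed edge pointing to the same B_i} has no non-adjacent pair in $A'_3(i+1)$ to act on, and the second statement does nothing to force $N_{B_i}(s)$ and $N_{B_i}(s')$ to be nested. What you are missing is largeness on the $B_i$ side: Lemmas~\ref{lem:small vertex argument for vertices in A_3(i-1)} and~\ref{lem:A'_3(i-2) is large} only control $B_{i-2}$.

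The paper proceeds as follows.
\begin{enumerate}
\item It shows $A_2(i-1)=\emptyset$: for $p\in A_2(i-1)$ and $s\in A_3(i-1)$, $s-N_{B_{i-1}}(s)-p-N_{A_1(i)}(p)-T-q_{i+2}-q_{i+1}$ is an induced sequence. Hence neither $A_3(i-1)$ nor $B_{i-1}$ sees $A_1\cup A_2$.
\item It applies Lemma~\ref{lem:small vertex argument for simplicial vertices in A_3(i-2)} to two non-adjacent $(A_3(i-1),H)$-minimal simplicial vertices, and Lemma~\ref{lem:smv on w} to a $w\in B_{i-1}\setminus N(A_3(i-1))$ with minimal $H$-neighborhood. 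This gives that $N_{B_i}(s)$, $N_{B_i}(s')$ and $N_{B_i}(w)$ all have size $>\ceil{\frac{\omega}{4}}$, in addition to the $B_{i-2}$ sides.
\item In your Case~2 these are three pairwise disjoint cliques in $B_i$, complete to $B_{i+1}$ by Lemma~\ref{lem:B_i complete to two large cliques in B_{i-1}}.
\item Also $|B_{i+1}|>\ceil{\frac{\omega}{4}}$, because a vertex of $B_{i+2}$ (whose neighborhood outside $B_{i+1}$ lies in the clique $B_{i-2}\cup B_{i+2}\cup A_2(i+2)$) is not small. Their union with $B_{i+1}$ is then a clique larger than $\omega$.
\end{enumerate}

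\textbf{The anticompleteness statement.} You have not actually proved it. The direct $P_7$/cycle search is left open. Your fallback needs $|N_{B_{i-2}}(a)|>\ceil{\frac{\omega}{4}}$ for an \emph{arbitrary} $a\in A_3(i-1)$, whereas the small-vertex lemmas give this only for minimal simplicial vertices.

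The paper reverses your order and proves the clique statement first. Once $A_3(i-1)$ is a clique, its vertices have pairwise comparable $H$-neighborhoods (Lemma~\ref{lem:neighborcontainA3i}). So the vertex $u$ with minimum $H$-neighborhood is simplicial, and $N_{B_{i-2}}(u)\subseteq N_{B_{i-2}}(a)$ for every $a$. Both $N_{B_{i-2}}(u)$ and $N_{B_{i-2}}(w)$ exceed $\ceil{\frac{\omega}{4}}$, so they must meet (otherwise $B_{i-2}\cup B_{i+2}\cup A_2(i+2)$ is too large). Hence every $a$ shares a neighbor in $B_{i-2}$ with $w\in B_{i-1}\setminus N(a)$, which rules out the first alternative of Lemma~\ref{lem:blowup-threeneighbor}(3).

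To keep your order instead, you would have to show separately that every $a$ dominates, in $N_H$, some $(A_3(i-1),H)$-minimal simplicial vertex. This is doable with Lemmas~\ref{lem:P3 in A_3(i)} and~\ref{lem:nonedge in A_3(i)}, but that step is absent from your sketch.
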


\begin{proof}
    Suppose first that $A_3(i-1)$ is not a clique. 
    By Lemma \ref{lem:A_3{i-1} and A_3{i+1} is empty when A_3{i-2} exists}, $A_3(i-2)=A_3(i+2)=\emptyset$. 
    If $A_2(i-1)$ contains a vertex $p$, let $s\in A_3(i-1)$ and then $s-(N(s)\cap B_{i-1})-p-(N(p)\cap A_1(i))-T-q_{i+2}-q_{i+1}$ is an induced sequence. So $A_2(i-1)=\emptyset$. 
    By Lemmas \ref{lem:two non-adjacent simplicail verteices}, \ref{lem:small vertex argument for simplicial vertices in A_3(i-2)} and \ref{lem:smv on w}, there exist three pairwise non-adjacent vertices $v_1,v_2\in A_3(i-1)$ and $w\in B_{i-1}\setminus N(A_3(i-1))$ such that $N(v_1)\cap B_{i},N(v_1)\cap B_{i-2},N(v_2)\cap B_{i},N(v_2)\cap B_{i-2},N(w)\cap B_{i},N(w)\cap B_{i-2}$ are cliques of size larger than $\ceil{\frac{\omega}{4}}$. 
    If any two of $v_1,v_2,w$ have disjoint neighborhoods in $B_{i-2}$, then $B_{i-2}\cup B_{i+2}\cup T$ is a clique of size larger than $\omega$. 
    So $v_1,v_2,w$ have pairwise comparable neighborhoods in $B_{i-2}$ and so have disjoint neighborhoods in $B_{i}$. 
    Let $v$ be a vertex in $B_{i+2}$. 
    Since $N(v)\subseteq B_{i+1}\cup (B_{i-2}\cup B_{i+2}\cup A_2(i+2))$ and $B_{i-2}\cup B_{i+2}\cup A_2(i+2)$ is a clique, $|N(v)\cap B_{i+1}|>\ceil{\frac{\omega}{4}}$. 
    By Lemmas \ref{lem:B_i complete to two large cliques in B_{i-1}}, $(N(v_1)\cap B_{i})\cup (N(v_2)\cap B_{i})\cup (N(w)\cap B_{i})\cup B_{i+1}$ is a clique of size larger than $\omega$. 
    So $A_3(i-1)$ is a clique. 

    Let $u$ be a vertex in $A_3(i-1)$ with minimal neighborhood in $H$. 
    Since $A_3(i-1)$ is a clique, $u$ is simplicial in $A_3(i-1)$. 
    By Lemma \ref{lem:small vertex argument for simplicial vertices in A_3(i-2)}, $|N(u)\cap B_{i-2}|>\ceil{\frac{\omega}{4}}$. 
    By Lemma \ref{lem:smv on w}, there exists $w\in B_{i-1}\setminus N(A_3(i-1))$ such that $|N(w)\cap B_{i-2}|>\ceil{\frac{\omega}{4}}$. 
    If $u$ and $w$ have disjoint neighborhoods in $B_{i-2}$, then $B_{i-2}\cup B_{i+2}\cup A_2(i+2)$ is a clique of size larger than $\omega$. 
    So $w$ and $v$ have a common neighbor in $B_{i-2}$. 
    By the minimality of $u$, every vertex $a$ in $A_3(i-1)$ and $w$ have a common neighbor in $B_{i-2}$, and so $N_{B_i}(a)$ is anticomplete to $B_{i-1}\setminus N(a)$. 
\end{proof}

\begin{lemma}\label{lem:A_1(i) is a clique when A_3(i-1) is not empty}
    If $A_3(i\pm 1)\neq \emptyset$, then $A_1(i)$ is a clique.
\end{lemma}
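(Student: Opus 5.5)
By symmetry I assume $A_3(i-1)\neq\emptyset$; by Lemma \ref{lem:A_3(i-1) or A_3(i+1) is empty} this gives $A_3(i+1)=\emptyset$. By Lemma \ref{lem:A_3{i-1} and A_3{i+1} is empty when A_3{i-2} exists} it also gives $A_3(i-2)=A_3(i+2)=\emptyset$. By Lemma \ref{lem:A_1(i) is a clique if A_2(i) exists} I may further assume $A_2(i-1)=A_2(i)=\emptyset$. Finally, by Lemma \ref{lem:A_1(i)} we know that $A_1(i)$ is a blowup of $P_3$. Suppose $A_1(i)$ is not a clique. Then it has two non-adjacent simplicial vertices, and by Lemma \ref{lem:two non-adjacent simplicail verteices} I can take them to be two non-adjacent $(A_1(i),B_i)$-minimal simplicial vertices $a,b$.

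The first step repeats the argument of (1) in Lemma \ref{lem:A_1(i)}. By Lemma \ref{lem:simplicial vertices in A_1(i)}, $|N(a)\cap T|,|N(b)\cap T|>\ceil{\frac{\omega}{4}}$. If these two neighborhoods were disjoint, then $B_{i-2}\cup B_{i+2}\cup T$ would be a clique of size larger than $\omega$, using Lemma \ref{lem:A'_3(i-2) is large}. So $a$ and $b$ have a common neighbor in $T$, and by $C_4$-freeness they have disjoint neighborhoods in $B_i$. Part (2) of Lemma \ref{lem:A_1(i)} then gives two disjoint cliques $N(a)\cap B_i$ and $N(b)\cap B_i$, each of size $>\ceil{\frac{\omega}{4}}$, whose union is complete to $B_{i-1}\cup B_{i+1}$.

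The next step uses $A_3(i-1)$ to find a further large clique. Let $u\in A_3(i-1)$. By Lemma \ref{lem:A_3(i-1) is a clique when only one A_1}, $A_3(i-1)$ is a clique, and $N_{B_i}(u)$ is anticomplete to $B_{i-1}\setminus N(u)$. By Lemma \ref{lem:small vertex argument for simplicial vertices in A_3(i-2)} (its hypothesis holds because $A_1(i)$ is anticomplete to $A_3$ and $A_2(i-1)=\emptyset$), we have $|N(u)\cap B_{i-2}|>\ceil{\frac{\omega}{4}}$ and $|N(u)\cap B_i|>\ceil{\frac{\omega}{4}}$. By Lemma \ref{lem:smv on w}, there is $w\in B_{i-1}\setminus N(A_3(i-1))$ with $|N(w)\cap B_{i-2}|>\ceil{\frac{\omega}{4}}$, and $w$ shares a neighbor in $B_{i-2}$ with $u$. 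Now I compare $N(u)\cap B_i$ with $N(a)\cap B_i$ and $N(b)\cap B_i$. Consider the induced path $u-(N(u)\cap B_{i-2})-q_{i+2}-(N(a)\cap T)-a$, which continues through $B_i$ to $b$. I expect that forbidding induced $P_7$, $C_6$ and $C_7$ here forces $N(u)\cap B_i$ to be disjoint from $N(a)\cap B_i\cup N(b)\cap B_i$ and complete to it. A more robust way to get this is Lemma \ref{lem:B_i complete to two large cliques in B_{i-1}} with indices shifted: $a$ and $b$ play the role of the two vertices, with disjoint neighborhoods, and $u$ lies in $A'_3(i-1)$.

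Granting this, $(N(a)\cap B_i)\cup(N(b)\cap B_i)\cup(N(u)\cap B_i)$ together with a large part of $B_{i-1}$ forms a clique. For the fourth piece I take $N(u)\cap B_{i-1}$, which has size $>\ceil{\frac{\omega}{4}}$ by Lemma \ref{lem:small vertex argument for simplicial vertices in A_3(i-2)}; it is complete to $N(a)\cap B_i$ and $N(b)\cap B_i$ by the previous paragraph, and complete to $N(u)\cap B_i$ by Lemma \ref{lem:single vertex in A_3(i)} (2). These four pairwise disjoint cliques of size $>\ceil{\frac{\omega}{4}}$ give a clique of size larger than $\omega$, a contradiction.

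The main obstacle is the middle step: showing that $N(u)\cap B_i$ avoids $N(a)\cap B_i\cup N(b)\cap B_i$ and is complete to it. If the direct $P_7$/$C_6$ argument fails, the fallback is to run the previous step with $w$ in place of $u$, relying on $N_{B_i}(u)$ being anticomplete to $w$.
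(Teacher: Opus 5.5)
Your middle step is not just unproved; the structure forces the opposite of what you need. Take $x\in A'_3(i-1)$ and $y\in A_1(i)$. An induced $P_4$ $x-b'-c'-y$ with $b',c'\in B_i$ would give the induced $C_6$ $x-b'-c'-y-t-s-x$, where $t\in N_T(y)$ and $s\in N_{B_{i-2}}(x)$. Here $ts\in E(G)$ because $T$ is complete to $B_{i-2}$ (Lemma \ref{lem:opposite A_1(i) and A_2(j)}). The vertex $x$ is non-adjacent to $y$ and $t$ by Lemmas \ref{lem:A_1(i) is anticomplete to A_3} and \ref{anti-A2A3}, or by supports when $x\in B_{i-1}$; the remaining non-edges also follow from supports. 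Hence $N_{B_i}(x)$ and $N_{B_i}(y)$ are comparable.

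Since $N_{B_i}(a)$ and $N_{B_i}(b)$ are non-empty and disjoint, comparability of $N_{B_i}(u)$ with both forces $N_{B_i}(a)\cup N_{B_i}(b)\subseteq N_{B_i}(u)$. So $N(u)\cap B_i$ always meets $N(a)\cap B_i$, and your four-clique count cannot be carried out. Your ``more robust'' route points the same way. Lemma \ref{lem:B_i complete to two large cliques in B_{i-1}} does not apply as stated, since $a,b\notin A'_3(i\pm 1)$. Its natural analogue says $u$ is \emph{complete to} $N_{B_i}(a)\cup N_{B_i}(b)$, which is exactly the inclusion above, not disjointness.

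There is a separate error in the last step. Lemma \ref{lem:small vertex argument for simplicial vertices in A_3(i-2)} applied to $u\in A_3(i-1)$ bounds $|N(u)\cap B_{i-2}|$ and $|N(u)\cap B_i|$, not $|N(u)\cap B_{i-1}|$. So your fourth clique has no size bound either.

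The missing idea is to apply this comparability to $w$ as well and then use that $u$ and $w$ are disjoint on $B_i$. Since $w\in B_{i-1}\subseteq A'_3(i-1)$, the same argument gives $N_{B_i}(a)\cup N_{B_i}(b)\subseteq N_{B_i}(w)$. By Lemma \ref{lem:A_3(i-1) is a clique when only one A_1}, $N_{B_i}(u)$ is anticomplete to $w$, so $N_{B_i}(u)\cap N_{B_i}(w)=\emptyset$. This contradicts that both contain the non-empty set $N_{B_i}(a)$.

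This is essentially the paper's proof. Your first step, that $a$ and $b$ share a neighbour in $T$ and hence have disjoint neighbourhoods in $B_i$, is correct and is the only size argument needed. The reductions via Lemma \ref{lem:A_1(i)} and part (2) of its proof are unnecessary.
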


\begin{proof}
    Suppose for a contradiction that $A_1(i)$ is not a clique.
    By symmetry, $A_3(i-1)\ne \emptyset$.
    Let $v\in A_3(i-1)$ and $w\in B_{i-1}\setminus N(A_3(i-1))$.
    By Lemma \ref{lem:A_3(i-1) is a clique when only one A_1}, $v$ and $w$ have disjoint neighborhoods in $B_i$.
    By Lemma \ref{lem:simplicial vertices in A_1(i)}, $A_1(i)$ has two non-adjacent vertices $s_1,s_2$ such that $|N(s_j)\cap T|>\ceil{\frac{\omega}{4}}$ for $j=1,2$. If $s_1$ and $s_2$ have disjoint neighborhoods in $T$, then $\omega(T\cup B_{i-2}\cup B_{i+2})>\omega$. So $s_1$ and $s_2$ have disjoint neighborhoods in $B_i$. 
    If there is an induced $P_4=a-b-c-d$ such that $a\in A'_3(i-1)$, $b,c\in B_i$ and $d\in A_1(i)$, then $a-b-c-d-N_T(d)-N_{B_{i-2}}(a)-a$ is an induced $C_6$. 
    It follows that $N(v),N(w)\subseteq N(s_j)$ for $j=1,2$. This contradicts that $s_1$ and $s_2$ have disjoint neighborhoods in $B_i$.
\end{proof}

\begin{lemma}
    If $A_3\neq \emptyset$, $\chi(G)\le \ceil{\frac{5}{4}\omega}$.
\end{lemma}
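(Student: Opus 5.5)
Under the standing assumptions of this subsubsection ($A_1=A_1(i)\neq\emptyset$, $A_5=\emptyset$, $T$ a clique, $A_3(i)=\emptyset$ by Lemma \ref{lem:A_3(i) is empty when A_1=A_1(i)}, $A_2(i\pm 2)$ handled by Lemma \ref{lem:A_2(i-2) is empty}), we have $A_3\subseteq A_3(i-2)\cup A_3(i+2)\cup A_3(i-1)\cup A_3(i+1)$. The argument splits into two cases according to Lemma \ref{lem:A_3{i-1} and A_3{i+1} is empty when A_3{i-2} exists}.

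\textbf{Case 1: $A_3(i-2)\cup A_3(i+2)\neq\emptyset$.} By symmetry and Lemma \ref{lem:A_3(i-2) or A_3(i+2) is empty}, assume $A_3(i-2)\neq\emptyset$, so $A_3(i+2)=A_3(i\pm1)=\emptyset$. Then $A_3(i-2)$ is a clique by Lemma \ref{lem:A_3(i-2) is a clique}, complete to $B_{i+2}\cup T$ by Lemma \ref{lem:A_3(i-2) is complete to y}. Also $A_1(i)$ is a clique by Lemma \ref{lem:A_1(i) is a clique when A_3(i-2) is not empty}, and $A_2(i-1)\cup A_2(i)$ consists of at most one clique, complete to $B_i$ and one neighbour, by Lemmas \ref{lem:A_2(i-1) or A_2(i) is empty} and \ref{lem:A_2(i-1)}.

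\textbf{Case 2: $A_3(i-2)\cup A_3(i+2)=\emptyset$.} By symmetry and Lemma \ref{lem:A_3(i-1) or A_3(i+1) is empty}, assume $A_3(i-1)\neq\emptyset$ and $A_3(i+1)=\emptyset$. Then $A_3(i-1)$ is a clique whose $B_i$-neighbourhoods are anticomplete to the non-neighbours in $B_{i-1}$ (Lemma \ref{lem:A_3(i-1) is a clique when only one A_1}), $A_1(i)$ is a clique (Lemma \ref{lem:A_1(i) is a clique when A_3(i-1) is not empty}), and $A_2(i-1)$ is anticomplete to $A_3(i-1)$ (Lemma \ref{lem:A_3(i-1) is anticomplete to A_2(i-1)}).

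In both cases $A_1(i)$, $T\cup T_0$ (Lemmas \ref{lem:T is complete to T_0}, \ref{lem:T_0 is complete to B_{i-2} and B_{i+2}}) and the relevant $A_3(j)$ are cliques sitting next to fixed bags of the blowup. So $G$ is close to a $5$- or $6$-hyperhole, and I would mimic the coloring scheme of the $T$-not-a-clique subsection.

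\textbf{Step 1 (precoloring).} Let $x=|B_{i-2}|$ and $y=|B_{i+2}|$; $B_{i-2}\cup B_{i+2}\cup T$ is a clique, so $x+y\le\omega$. Lemma \ref{lem:A'_3(i-2) is large} gives $x,y>\ceil{\frac{\omega}{4}}$ in the relevant places. Set $p=x+y-2\ceil{\frac{\omega}{4}}$ and $f=\ceil{\frac54\omega}-p$. I would precolor $p$ vertices of $B_{i-2}$, $p$ vertices of $B_{i+2}$ and $p$ vertices of $B_i$ (the ones with largest neighbourhoods) with the same $p$ colors, using $|B_i|>\ceil{\frac{\omega}{4}}$ from Lemma \ref{lem:B_i is large}. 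This is proper because $B_i$ is anticomplete to $B_{i\pm2}$.

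\textbf{Step 2 (degeneracy).} Apply Lemma \ref{lem:degeneracy} repeatedly to delete $B_{i-1}$ and $B_{i+1}$, taking $X^*=A_2(i-1)$, $A_2(i)$ or $A_3(i-1)$ as appropriate. The comparability hypotheses come from Lemmas \ref{lem:A_2(i-1)}, \ref{lem:blowup-twoneighbor} and \ref{lem:A_3(i-1) is a clique when only one A_1}. Where a vertex lies in $A_3(i-2)$ or $A_3(i-1)$, I would instead check directly that its degree is below $f$ in the remaining graph. Next, delete simplicial vertices in $B_i\cup A_2$ with minimal neighbourhood in $A_1(i)$.

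\textbf{Step 3 (final coloring).} What remains is built from the cliques $A_1(i)$, $T\cup T_0\cup B_{i\pm2}$ (with the precolored part removed) and possibly a clique $A_3(i-2)$ or $A_3(i-1)$. This is chordal or a hyperhole, and I would finish with Lemma \ref{lem:hyperhole} (or Theorem 1.2 of \cite{MIV20}) by bounding $|V|\le 2f$ as in the previous subsection.

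\textbf{Main obstacle.} The hardest part is Case 1, where $A_3(i-2)$ joins $B_{i-1}$, $B_{i-2}$ and $B_{i+2}$. There $B_{i-1}$ does not have the clean comparability that Lemma \ref{lem:degeneracy} needs. If the degree count fails, I would try instead to exhibit a $(4,5)$-good subgraph, mimicking Section \ref{sec:C_7}.
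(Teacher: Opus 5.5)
There is a genuine gap: the case you flag as hardest is left open, and the bookkeeping you do give does not close. Step~1 is not a proper precolouring as written. Here $B_{i-2}$ is complete to $B_{i+2}$ (Lemma \ref{lem:B_{i-2} complete to B_{i+2}}), so only $x-\ceil{\frac{\omega}{4}}$ vertices of $B_{i-2}$ and $y-\ceil{\frac{\omega}{4}}$ vertices of $B_{i+2}$ can share the $p$ colours. Also, $f\ge\omega$ needs $x+y\le 3\ceil{\frac{\omega}{4}}$, which comes from $|T|>\ceil{\frac{\omega}{4}}$, not from $x+y\le\omega$.

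More seriously, Lemma \ref{lem:degeneracy} needs every neighbour of the deleted vertex outside $X_1\cup X_2\cup X^*$ to lie in one clique $X_3$ of size $\ceil{\frac{\omega}{4}}$. In Case~1 the vertices of $N_{B_{i-1}}(A_3(i-2))$ see $A_3(i-2)$, which is not in $B_{i-2}\setminus S_{i-2}$. So $B_{i-1}$ cannot be deleted this way, and a $(4,5)$-good subgraph is only a hope.

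In Case~2 your last deletion fails. By the $C_6$ argument in Lemma \ref{lem:A_1(i) is a clique when A_3(i-1) is not empty}, any $t\in A_3(i-1)$ and $a\in A_1(i)$ have comparable, hence intersecting, neighbourhoods in $B_i$. Let $b$ be a common neighbour. Since $at\notin E(G)$ (Lemma \ref{lem:A_1(i) is anticomplete to A_3}), $b$ is not simplicial. With $\tau\in N_T(a)$ and $\beta\in N_{B_{i-2}}(t)$, the cycle $b-a-\tau-\beta-t-b$ is an induced $C_5$ of $G$ avoiding $B_{i\pm1}$, so what you leave need not be chordal. The promised direct degree check for $A_3$-vertices is never carried out, and $S_i$ is not chosen to match the sets being deleted.

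The missing idea is to precolour three mutually complete cliques instead of two, and to delete less. The paper takes $X=A_3(i-2)\cup N_{B_{i-2}}(A_3(i-2))$ in Case~1 (resp.\ $B_{i-2}$ in Case~2), $B_{i+2}$ and $T$, of sizes $x,y,z$ with $x+y+z\le\omega$. It precolours $p=x+y+z-3\ceil{\frac{\omega}{4}}$ of their vertices, leaving exactly $\ceil{\frac{\omega}{4}}$ in each. With the same $p$ colours it colours $p$ vertices of $B_i$ with largest neighbourhoods in $H\cup A_1(i)$ (resp.\ $A_3(i-1)\cup B_{i+1}\cup A_1(i)$).

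It then applies Lemma \ref{lem:degeneracy} three times:
\begin{itemize}
\item in Case~1 only to $R_{i-1}$, the vertices of $B_{i-1}$ anticomplete to $B_{i-2}\setminus N(A_3(i-2))$, whose neighbours in $B_{i-2}\cup A_3(i-2)$ all lie in $X$, so $X_3=X\setminus S_{i-2}$ works; in Case~2 to $A_3(i-1)$ itself, with $X_2=A_3(i-1)\cup N_{B_{i-1}}(A_3(i-1))$;
\item then to $B_{i+1}$;
\item then to $A_1(i)$, with $X_3=T\setminus S_T$ and $X^*=A_2(i-1)$, after checking $N_{A_1(i)}(u')\subseteq N_{A_1(i)}(u)$ for $u\in B_i$, $u'\in A_2(i-1)$.
\end{itemize}
Precolouring $T$ is what makes $A_1(i)$ removable. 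Keeping part of $B_{i-1}$, the set $A_3(i-2)$ and all of $B_i$ in the remainder avoids the comparability problem you ran into. What remains is chordal, so $\chi\le\omega\le f$ and no hyperhole count is needed.
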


\begin{proof}
    By Lemma \ref{lem:A_2(i-2) is empty}, we may assume that $A_2=A_2(i-1)\cup A_2(i)\cup A_2(i+2)$, where $A_2(i-1)$ and $A_2(i)$ may be empty. 
     By Lemmas \ref{lem:A_1(i) is a clique when A_3(i-2) is not empty} and \ref{lem:A_1(i) is a clique when A_3(i-1) is not empty}, $A_1(i)$ is a clique.
    If $A_3(i-2)\neq \emptyset$, then by Lemmas \ref{lem:A_3{i-1} and A_3{i+1} is empty when A_3{i-2} exists}, \ref{lem:A_3(i) is empty when A_1=A_1(i)} and \ref{lem:A_3(i-2) or A_3(i+2) is empty}, $A_3=A_3(i-2)$ and by Lemma \ref{lem:A_3(i-2) is a clique}, $A_3(i-2)$ is a clique. If $A_3(i-2)=A_3(i+2)=\emptyset$, then by Lemma \ref{lem:A_3(i-1) or A_3(i+1) is empty}, we may assume that $A_3(i+1)=\emptyset$. 
    Moreover, by Lemma \ref{lem:A_3(i-1) is a clique when only one A_1}, $A_3(i-1)$ is a clique.

Next we give a $\ceil{\frac{5}{4}\omega}$-coloring of $G$.
Let \[x=\begin{cases}
            \,|A_3(i-2)\cup N_{B_{i-2}}(A_3(i-2))|, & \text{if }\, A_3(i-2)\neq\emptyset, \\
            \,|B_{i-2}|, & \text{if }\, A_3(i-2)=\emptyset,
        \end{cases}
        \]
 $y=|B_{i+2}|$, and $z=|T|$. 
Our strategy is to use $p=x+y+z-3\ceil{\frac{\omega}{4}}$ colors to color some vertices of $G$ first and then argue that the remaining vertices can be colored with $f=\ceil{\frac{5}{4}\omega}-p$ colors. 
Note that
    \begin{align*} 
    f & = \ceil{\frac{5}{4}\omega}-(x+y+z-3\ceil{\frac{\omega}{4}}) 
    \\[3pt]
     & = \omega+(4\ceil{\frac{\omega}{4}}-(x+y+z))  
    \\[3pt]
     & \ge \omega,
\end{align*}
since $x+y+z\leq \omega$.

\medskip
\noindent {\bf Step 1: Precolor a subgraph.}

\medskip
Next we define vertices to be colored with $p$ colors. Let 
\[S_{i-2}\subseteq
        \begin{cases}
            \,A_3(i-2)\cup N_{B_{i-2}}(A_3(i-2)), & \text{if }\, A_3(i-2)\neq\emptyset, \\
            \,B_{i-2}, & \text{if }\, A_3(i-2)=\emptyset,
        \end{cases}
        \]
be the set of $x-\ceil{\frac{\omega}{4}}$ vertices, $S_{i+2}\subseteq B_{i+2}$ be the set of $y-\ceil{\frac{\omega}{4}}$ vertices, $S_{T}\subseteq T$ be the set of $z-\ceil{\frac{\omega}{4}}$ vertices, and 
$S_i\subseteq B_i$ be the set of $p$ vertices with largest neighborhoods in $H\cup A_1(i)$ if $A_3(i-2)\neq \emptyset$ or with largest neighborhoods in $A_3(i-1)\cup B_{i+1}\cup A_1(i)$ if $A_3(i-2)= \emptyset$. Since $(x+y+z)-3\ceil{\frac{\omega}{4}}\leq \ceil{\frac{\omega}{4}}<|B_i|$, the choice of $S_i$ is possible.
We color $S_i$ with all colors in $[p]$, and color all vertices in $S_{i-2}\cup S_{i+2}\cup S_T$ with all colors in $[p]$. Since $B_i$ is anticomplete to $A'_3(i-2)\cup B_{i+2}\cup T$, this coloring is proper. So it suffices to show that
$G_1=G-(S_i\cup S_{i-2}\cup S_{i+2}\cup S_T)$ is $f$-colorable.

\medskip
\noindent {\bf Step 2: Reduce by degeneracy.}

\medskip
Next, we use Lemma \ref{lem:degeneracy} to further reduce the problem to color a subgraph of $G_1$. 
Suppose first that $A_3(i-2)\neq \emptyset$. 
Let $R_{i-1}$ be the set of vertices in $B_{i-1}$ that is anticomplete to $B_{i-2}\setminus N(A_3(i-2))$. Let $v\in R_{i-1}$ with minimal neighborhood in $B_{i-2}\cup B_i$ among $R_{i-1}$. Since  $v$ is anticomplete to $B_{i-2}\setminus N(A_3(i-2))$, $v$ is also minimal among $B_{i-1}$.
By Lemma \ref{lem:degeneracy} with $(X_1,X_2,X_3,X^*)=\left (B_i,B_{i-1},\left (A_3(i-2)\cup N_{B_{i-2}}\left (A_3(i-2)\right)\right )\setminus S_{i-2},A_2(i-1)\right )$, $P=S_i$, and $F=G_1$, we have $d_{G_1}(v)<f$. It follows that $G_1$ is $f$-colorable if and only if $G_1-v$ is $f$-colorable. Repeatedly applying Lemma \ref{lem:degeneracy} we conclude that $G_1$ is $f$-colorable if only if $G_1-R_{i-1}$ is $f$-colorable. 
Then we suppose that $A_3(i-2)= \emptyset$. 
Let $R_{i-1}=A_3(i-1)$. 
Let $v\in R_{i-1}$ with minimal neighborhood in $B_i$ among $R_{i-1}$.  
By Lemma \ref{lem:degeneracy} with $(X_1,X_2,X_3,X^*)=(B_i,A_3(i-1)\cup N_{B_{i-1}}(A_3(i-1)),B_{i-2}\setminus S_{i-2},A_2(i-1))$, $P=S_i$, and $F=G_1$, we have $d_{G_1}(v)<f$. It follows that $G_1$ is $f$-colorable if and only if $G_1-v$ is $f$-colorable. Repeatedly applying Lemma \ref{lem:degeneracy} we conclude that $G_1$ is $f$-colorable if only if $G_1-R_{i-1}$ is $f$-colorable.

Let $v\in B_{i+1}$ be a vertex with minimal neighborhood in $B_{i+2}\cup B_i$ among $B_{i+1}$. 
By Lemma \ref{lem:degeneracy}  with $(X_1,X_2,X_3,X^*)=(B_i,B_{i+1},B_{i+2}\setminus S_{i+2},A_2(i))$, $P=S_i$, and $F=G_1-R_{i-1}$, we have $d_{G_1-R_{i-1}}(v)<f$. It follows that $G_1-R_{i-1}$ is $f$-colorable if and only if $(G_1-R_{i-1})-v$ is $f$-colorable. Repeatedly applying Lemma \ref{lem:degeneracy} we conclude that $G_1-R_{i-1}$ is $f$-colorable if only if $(G_1-R_{i-1})-B_{i+1}$ is $f$-colorable.

Let $v\in A_1(i)$ be a vertex with minimal neighborhood in $B_i\cup T$ among $A_1(i)$. 
By Lemma \ref{lem:A_2(i-1) or A_2(i) is empty}, we may assume that $A_2(i)=\emptyset$. 
Let $u\in B_i$ and $u'\in A_2(i-1)$. 
If $u'$ has a neighbor $u''\in A_1(i)$ with $uu''\notin E(G)$, then $u-(B_{i-1}\cup \{u'\})-u''-T-q_{i+2}-q_{i+1}-u$ is an induced cyclic sequence. 
So for every $u\in B_{i}$ and $u'\in A_2(i-1)$, $N_{A_1(i)}(u')\subseteq N_{A_1(i)}(u)$.
By Lemma \ref{lem:degeneracy} with $(X_1,X_2,X_3,X^*)=(B_i,A_1(i),T\setminus S_{T},A_2(i-1))$, $P=S_i$, and $F=G_1-R_{i-1}-B_{i+1}$, we have $d_{G_1-R_{i-1}-B_{i+1}}(v)<f$. It follows that $G_1-R_{i-1}-B_{i+1}$ is $f$-colorable if and only if $(G_1-R_{i-1}-B_{i+1})-v$ is $f$-colorable. Repeatedly applying Lemma \ref{lem:degeneracy} we conclude that $G_1-R_{i-1}-B_{i+1}$ is $f$-colorable if only if $G_1-R_{i-1}-B_{i+1}-A_1(i)$ is $f$-colorable.

\medskip
Note that $G_2=G_1-R_{i-1}-B_{i+1}-A_1(i)$ is chordal and so $\chi(G_2)\leq \omega\leq f$. 
\end{proof}

\begin{lemma}
    If $A_3= \emptyset$, $\chi(G)\le \ceil{\frac{5}{4}\omega}$.
\end{lemma}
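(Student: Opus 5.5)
We are in the situation of the subsection where $T$ is a clique and $A_1=A_1(i)\neq\emptyset$, and now additionally $A_3=\emptyset$. The plan mirrors the coloring of the previous lemma (the case $A_3\neq\emptyset$), with a precoloring of $p$ colors and degeneracy reductions.

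\textbf{Structure.} By Lemma \ref{lem:A_2(i-2) is empty}, $A_2=A_2(i-1)\cup A_2(i)\cup A_2(i+2)$, and by Lemma \ref{lem:A_2(i-1) or A_2(i) is empty} we may assume $A_2(i)=\emptyset$. Lemmas \ref{lem:T_0 is a clique}, \ref{lem:T is complete to T_0} and \ref{lem:T_0 is complete to B_{i-2} and B_{i+2}}, together with the assumption that $T$ is a clique, give that $A_2(i+2)\cup B_{i-2}\cup B_{i+2}$ is a clique; here $B_{i-2}$ is complete to $B_{i+2}$ by Lemma \ref{lem:B_{i-2} complete to B_{i+2}}. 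Lemma \ref{lem:A_5 is complete to A_2(i+2)} gives $A_5=\emptyset$. By Lemma \ref{lem:A_1(i)}, $A_1(i)$ is either a clique or a blowup of $P_3$. In the latter case we also have $A_2(i-1)=\emptyset$ by Lemma \ref{lem:A_1(i) is a clique if A_2(i) exists}. Since $A_3=\emptyset$, the vertex $q_{i-2}$ has $N(q_{i-2})\setminus B_{i-1}$ a clique, so $|B_{i-1}|>\ceil{\frac{\omega}{4}}$; symmetrically $|B_{i+1}|>\ceil{\frac{\omega}{4}}$. By Lemma \ref{lem:B_i is large}, $|B_i|>\ceil{\frac{\omega}{4}}$.

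\textbf{Coloring.} Let $x=|B_{i-2}|$, $y=|B_{i+2}|$ and $z=|A_2(i+2)|$, with $x+y+z\le\omega$. Set $p=x+y+z-3\ceil{\frac{\omega}{4}}$ and $f=\ceil{\frac54\omega}-p\ge\omega$. If $p\le 0$, no precoloring is needed. Otherwise, precolor $x-\ceil{\frac{\omega}{4}}$ vertices of $B_{i-2}$, $y-\ceil{\frac{\omega}{4}}$ of $B_{i+2}$ and $z-\ceil{\frac{\omega}{4}}$ of $A_2(i+2)$ together with all $p$ colors. Give the same $p$ colors to a set $S_i\subseteq B_i$ of $p$ vertices with largest neighborhoods in $B_{i-1}\cup B_{i+1}\cup A_1(i)$; this is proper because $B_i$ is anticomplete to $B_{i\pm2}\cup A_2(i+2)$.

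Next we peel off vertices using Lemma \ref{lem:degeneracy}, in this order:
\begin{enumerate}
\item $B_{i-1}$, with $(X_1,X_2,X_3,X^*)=(B_i,B_{i-1},B_{i-2}\setminus S_{i-2},A_2(i-1))$, using Lemma \ref{lem:A_2(i-1)} for the containment hypothesis;
\item $B_{i+1}$, with $(B_i,B_{i+1},B_{i+2}\setminus S_{i+2},\emptyset)$;
\item $A_1(i)$, with $(B_i,A_1(i),T\setminus S_T,A_2(i-1))$, using the cyclic-sequence argument from the previous lemma to get $N_{A_1(i)}(u')\subseteq N_{A_1(i)}(u)$.
\end{enumerate}
If $A_1(i)$ is a blowup of $P_3$ rather than a clique, step 3 instead deletes simplicial vertices of $A_1(i)\cup B_i$ after $B_{i\pm1}$ are removed. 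If that fails, we fall back on the hyperhole count of Lemma \ref{lem:hyperhole}, as in the case where $T$ is not a clique. What remains is $B_i\cup A_2(i-1)\cup B_{i-2}\cup B_{i+2}\cup A_2(i+2)$ minus precolored vertices, which is chordal, so it is colorable with $\omega\le f$ colors.

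\textbf{Main obstacle.} The hardest part is verifying the hypotheses of Lemma \ref{lem:degeneracy} when $A_1(i)$ is not a clique: in that case $N(v)\cap(X_1\cup X_2\cup X^*)$ need not be a clique. Here I would first exploit that $A_2(i-1)=\emptyset$ and use Lemma \ref{lem:A_1(i)} (2)–(3) to show that $B_i$ behaves like a clique attached to each part of the $P_3$-blowup. The residual graph is then close to a hyperhole, and Lemma \ref{lem:hyperhole} applies with $\alpha\le 2$, using a vertex-count estimate like the one in the $T$-not-a-clique case.
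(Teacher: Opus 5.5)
Your argument is complete only when $A_1(i)$ is a clique. In that case it coincides with the paper's proof (with $L=A_1(i)$). Taking $z=|A_2(i+2)|$ instead of the paper's $z=|T|$ is harmless, since in either case at most $\ceil{\frac{\omega}{4}}$ vertices of $T$ stay uncolored.

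\textbf{The gap.} The case where $A_1(i)$ is a blowup of $P_3$ but not a clique is not handled, as your last paragraph concedes. Step 3 cannot be run there, because Lemma~\ref{lem:degeneracy} requires $X_2$ to be a clique. Even your choice of $S_i$ is not well defined. Two non-adjacent $(A_1(i),B_i)$-minimal simplicial vertices of $A_1(i)$ have a common neighbor in $T$, so they have disjoint neighborhoods in $B_i$. Hence the neighborhoods in $A_1(i)$ of vertices of $B_i$ are not nested. Your fallback (delete simplicial vertices of $A_1(i)\cup B_i$, otherwise do a hyperhole count) is not an argument. You give no reason why the graph left after removing $B_{i\pm1}$ is chordal or simplicially reducible. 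Nothing you use rules out a $5$-hole $b-\ell-t-t'-r-b$ with $b\in B_i$, with $\ell$ and $r$ in the two anticomplete parts of $A_1(i)$, and with $t,t'\in T$.

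\textbf{The missing idea.} You never need to delete all of $A_1(i)$.
\begin{enumerate}
\item Write $A_1(i)=L\cup M\cup R$ as in Lemma~\ref{lem:A_1(i)}, where $M$ is complete to $L\cup R$ and $L$ is anticomplete to $R$.
\item Choose $S_i$ with largest neighborhoods in $H\cup L\cup M$; these are nested because $L\cup M$ is a clique.
\item Apply Lemma~\ref{lem:degeneracy} with $(X_1,X_2,X_3,X^*)=(B_i,L\cup M,T\setminus S_T,A_2(i-1))$ to peel off only $L$. A vertex $v\in L$ with minimal neighborhood in $B_i$ has $N(v)\cap(A_1(i)\cup B_i)$ a clique, by Lemma~\ref{lem:induced P3in A_1(i)} applied to the induced path $v-m-r$ with $m\in M$, $r\in R$.
\end{enumerate}
What remains consists of three cliques: $B_i\cup A_2(i-1)$, $M\cup R$, and $A_2(i+2)\cup B_{i-2}\cup B_{i+2}$, with the first and last anticomplete. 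This graph is chordal. A hole meeting both end cliques would contain two vertices of the middle clique that are adjacent but not consecutive on the hole. A hole inside the union of two cliques has at most four vertices, and $G$ is $C_4$-free. So $\omega\le f$ colors suffice, and no hyperhole count is needed.
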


\begin{proof}
    By Lemma \ref{lem:A_2(i-2) is empty}, we may assume that $A_2=A_2(i-1)\cup A_2(i)\cup A_2(i+2)$, where $A_2(i-1)$ and $A_2(i)$ may be empty.
    By Lemma \ref{lem:A_1(i)}, $A_1(i)$ can be partitioned into three cliques $L\cup M\cup R$ such that $M$ is complete to $L\cup R$ and $R$ and $L$ are anticomplete where $L\neq \emptyset$. We may assume that if $R= \emptyset$, then $M=\emptyset$.
    For any $v\in L$ with minimal neighborhood in $B_i$, $N(v)\cap (A_1(i)\cup B_i)$ is also a clique by Lemma \ref{lem:induced P3in A_1(i)}.

Next we give a $\ceil{\frac{5}{4}\omega}$-coloring of $G$.
Let $x=|B_{i-2}|$,
 $y=|B_{i+2}|$, and $z=|T|$. 
Our strategy is to use $p=x+y+z-3\ceil{\frac{\omega}{4}}$ colors to color some vertices of $G$ first and then argue that the remaining vertices can be colored with $f=\ceil{\frac{5}{4}\omega}-p$ colors. 
Note that
    \begin{align*} 
    f & = \ceil{\frac{5}{4}\omega}-(x+y+z-3\ceil{\frac{\omega}{4}}) 
    \\[3pt]
     & = \omega+(4\ceil{\frac{\omega}{4}}-(x+y+z))  
    \\[3pt]
     & \ge \omega,
\end{align*}
since $x+y+z\leq \omega$.

\medskip
\noindent {\bf Step 1: Precolor a subgraph.}

\medskip
Next we define vertices to be colored with $p$ colors. Let 
$S_{i-2}\subseteq B_{i-2}$ be the set of $x-\ceil{\frac{\omega}{4}}$ vertices, $S_{i+2}\subseteq B_{i+2}$ be the set of $y-\ceil{\frac{\omega}{4}}$ vertices, $S_{T}\subseteq T$ be the set of $z-\ceil{\frac{\omega}{4}}$ vertices, and 
$S_i\subseteq B_i$ be the set of $p$ vertices with largest neighborhoods in $H\cup L\cup M$. Since $(x+y+z)-3\ceil{\frac{\omega}{4}}\leq \ceil{\frac{\omega}{4}}<|B_i|$, the choice of $S_i$ is possible.
We color $S_i$ with all colors in $[p]$, and color all vertices in $S_{i-2}\cup S_{i+2}\cup S_T$ with all colors in $[p]$. Since $B_i$ is anticomplete to $A'_3(i-2)\cup B_{i+2}\cup T$, this coloring is proper. So it suffices to show that
$G_1=G-(S_i\cup S_{i-2}\cup S_{i+2}\cup S_T)$ is $f$-colorable.

\medskip
\noindent {\bf Step 2: Reduce by degeneracy.}

\medskip
Next, we use Lemma \ref{lem:degeneracy} to further reduce the problem to color a subgraph of $G_1$. 
Let $R_{i-1}=B_{i-1}$. 
Let $v\in R_{i-1}$ with minimal neighborhood in $B_i$ among $R_{i-1}$.  
By Lemma \ref{lem:degeneracy} with $(X_1,X_2,X_3,X^*)=(B_i,B_{i-1},B_{i-2}\setminus S_{i-2},A_2(i-1))$, $P=S_i$, and $F=G_1$, we have $d_{G_1}(v)<f$. It follows that $G_1$ is $f$-colorable if and only if $G_1-v$ is $f$-colorable. Repeatedly applying Lemma \ref{lem:degeneracy} we conclude that $G_1$ is $f$-colorable if only if $G_1-R_{i-1}$ is $f$-colorable.

Let $v\in B_{i+1}$ be a vertex with minimal neighborhood in $B_{i+2}\cup B_i$ among $B_{i+1}$. 
By Lemma \ref{lem:degeneracy}  with $(X_1,X_2,X_3,X^*)=(B_i,B_{i+1},B_{i+2}\setminus S_{i+2},A_2(i))$, $P=S_i$, and $F=G_1-R_{i-1}$, we have $d_{G_1-R_{i-1}}(v)<f$. It follows that $G_1-R_{i-1}$ is $f$-colorable if and only if $(G_1-R_{i-1})-v$ is $f$-colorable. Repeatedly applying Lemma \ref{lem:degeneracy} we conclude that $G_1-R_{i-1}$ is $f$-colorable if only if $(G_1-R_{i-1})-B_{i+1}$ is $f$-colorable.

Next, we reduce $R_z=L$.
Let $v\in L$ be a vertex with minimal neighborhood in $B_i\cup T$ among $A_1(i)$. So $N(v)\cap (A_1(i)\cup B_i)$ is a clique.
By Lemma \ref{lem:A_2(i-1) or A_2(i) is empty}, we may assume that $A_2(i)=\emptyset$. 
Let $u\in B_i$ and $u'\in A_2(i-1)$. 
If $u'$ has a neighbor $u''\in A_1(i)$ with $uu''\notin E(G)$, then $u-(B_{i-1}\cup \{u'\})-u''-T-q_{i+2}-q_{i+1}-u$ is an induced cyclic sequence. 
So for every $u\in B_{i}$ and $u'\in A_2(i-1)$, $N_{A_1(i)}(u')\subseteq N_{A_1(i)}(u)$.
By Lemma \ref{lem:degeneracy} with $(X_1,X_2,X_3,X^*)=(B_i,L\cup M,T\setminus S_{T},A_2(i-1))$, $P=S_i$, and $F=G_1-R_{i-1}-B_{i+1}$, we have $d_{G_1-R_{i-1}-B_{i+1}}(v)<f$. It follows that $G_1-R_{i-1}-B_{i+1}$ is $f$-colorable if and only if $(G_1-R_{i-1}-B_{i+1})-v$ is $f$-colorable. Repeatedly applying Lemma \ref{lem:degeneracy} we conclude that $G_1-R_{i-1}-B_{i+1}$ is $f$-colorable if only if $G_1-R_{i-1}-B_{i+1}-R_z$ is $f$-colorable.

\medskip
Note that $G_2=G_1-R_{i-1}-B_{i+1}-R_z$ is chordal and so $\chi(G_2)\leq \omega\leq f$. 
\end{proof}

\section{$A_1$ is empty and $A_2$ is not empty}\label{sec:A_2 is not emptyset}

Now we can assume that $A_1=\emptyset$.  Let $K$ be a non-empty component of $A_2(i+2)$. 
By Lemma \ref{lem:complete subgraphs}, $K$ has two non-adjacent neighbors $a,b$ complete to $K$ where $a,b\in A'_3(i-2)$ or $a,b\in A'_3(i+2)$.  We call $\{a,b\}$ a {\em special pair} for $K$. Note that special pairs for $K$ are not unique. By Lemma \ref{lem:blowup-fiveneighbor1}, $A_5$ is complete to $K$. Since $K$ is arbitrary, each vertex in $A_5$ is universal in $G$ and so $A_5=\emptyset$. 

\begin{lemma}[Basic properties of $A_2(i)$]\label{lem:basic properties of A_2(i)}
    Let $K$ be a non-empty component of $A_2(i+2)$ and $\{a,b\}\subseteq A'_3(i-2)$ be a special pair for $K$. Then $a,b$ are complete to $B_{i+2}$ and have disjoint neighborhoods in $B_{i-1}$ and so $K\cup B_{i+2}$ is a clique. Moreover, $B_i$ and $B_{i+1}$ are complete.
\end{lemma}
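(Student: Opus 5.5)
We begin with the adjacencies already forced by the special pair. The pair $\{a,b\}\subseteq A'_3(i-2)$ is non-adjacent and has the common neighbours in $K$. By Lemma \ref{lem:nonedge in A_3(i)}, $a$ and $b$ have neighbourhood containment in one of $B_{i-2\pm1}$ and disjoint neighbourhoods in the other. By Lemma \ref{lem:blowup-twoneighbor}, applied to the connected set $K\subseteq A_2(i+2)$ (whose neighbours lie in $A'_3(i+2)$ and $A'_3(i-2)$), $N_{A'_3(i+2)}(K)$ is complete to $N_{A'_3(i-2)}(K)\ni a,b$. In particular $N_{B_{i+2}}(K)$ is complete to $\{a,b\}$. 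Since $a,b$ have a common neighbour in $K$ and $G$ is $C_4$-free, they cannot have a common neighbour in $B_{i+2}$ other than through containment, so the disjoint side must be $B_{i-1}$, provided we show both are complete to $B_{i+2}$.

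\textbf{Step 1: $a,b$ are complete to $B_{i+2}$.} Suppose $a$ has a non-neighbour $c\in B_{i+2}$. Let $d\in N_{B_{i+2}}(a)$, which exists because $a\in A'_3(i-2)$ is either in $B_{i-2}$ or has support containing $i+2$. We aim for a forbidden path using $A_1=\emptyset$. Take the path
\[
c-d-a-k-b-N_{B_{i-1}}(b)-q_i
\]
with $k\in K$ a common neighbour, and argue it is a bad $P_7$ or an induced $P_7$. Here we use that $b$ is adjacent to $d$, because $N_{B_{i+2}}(K)$ is complete to $b$, and that $c$ is not adjacent to $k$. If instead $c$ is adjacent to $k$, we get $c-k-a$ together with $d$, and $C_4$-freeness gives the contradiction.

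This case analysis is the main obstacle: $a$ may lie in $B_{i-2}$ or in $A_3(i-2)$, and $c$ may or may not see $K$. We will first try to settle it with Lemma \ref{lem:generalized P4-structure}. If $c$ misses $a$ while seeing some vertex of $N_{B_{i+2}}(a)$, then $c-d-a$ plus a neighbour of $a$ in $B_{i-1}$ yields a forbidden $P_4$-configuration across $A'_3(i+2),A'_3(i-2),A'_3(i-1)$. Otherwise we fall back on the $P_7$ above, routed through $q_{i+1}$.

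\textbf{Step 2: consequences.} Once $a,b$ are complete to $B_{i+2}$, they share a neighbour there, so by Lemma \ref{lem:nonedge in A_3(i)} they have disjoint neighbourhoods in $B_{i-1}$. Next, $K\cup B_{i+2}$ is a clique. For $K$, pick non-adjacent $u,v\in K$; then $u-a-v-b-u$ is an induced $C_4$, so $K$ is a clique. For completeness of $K$ to $B_{i+2}$, a vertex $e\in B_{i+2}$ non-adjacent to some $u\in K$ would give $u-a-e-b-u$ as an induced $C_4$, since $a$ and $b$ are both adjacent to $e$.

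\textbf{Step 3: $B_i$ is complete to $B_{i+1}$.} Suppose $x\in B_{i+1}$ is adjacent to $y\in B_i$ and non-adjacent to $y'\in B_i$. We build the path
\[
y'-y-x-q_{i+2}-k-a-N_{B_{i-1}}(a),
\]
and adjust it using the disjointness of $N_{B_{i-1}}(a)$ and $N_{B_{i-1}}(b)$ to pick the endpoint outside $N(y')$. Choosing between $a$ and $b$ so that the last vertex has no neighbour among $y,y'$ should give an induced $P_7$ or an induced $C_6$/$C_7$. The disjoint large cliques in $B_{i-1}$ should let one of the two choices avoid all chords.
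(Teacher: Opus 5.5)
Your Step 2 is fine, but Steps 1 and 3 do not go through as written.

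\textbf{Step 1.} The path $c-d-a-k-b-N_{B_{i-1}}(b)-q_i$ has the chord $db$; you note yourself that $b$ is adjacent to $d$. If $d\in N_{B_{i+2}}(K)$ there is also the chord $dk$. When $N_{B_{i+2}}(a)\subseteq N_{B_{i+2}}(b)$, which you cannot rule out, every choice of $d$ produces the chord $db$ between the second and fifth vertices. So the sequence is neither an induced $P_7$ nor a bad $P_7$.

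Your fallback does not help either. The sequence $c-d-a-e$ with $c,d\in B_{i+2}$ and $e\in N_{B_{i-1}}(a)$ is indeed an induced $P_4$. But its two middle vertices lie in different sets, $A'_3(i+2)$ and $A'_3(i-2)$, so it is not of the type excluded by Lemma \ref{lem:generalized P4-structure}. Such $P_4$s exist whenever a vertex of $A_3(i-2)$ misses part of $B_{i+2}$, so they give no contradiction.

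The missing move is to obtain the disjointness in $B_{i-1}$ first. This needs nothing from Step 1, and your own path already relies on it to keep $N_{B_{i-1}}(b)$ away from $a$. Since $k\in K$ is anticomplete to $B_{i-1}$, a common neighbour $w\in B_{i-1}$ of $a,b$ would give the induced $C_4$ $a-k-b-w-a$. Then run the path the other way round the blowup. For $c\in B_{i+2}\setminus N(b)$ and $w_b\in N_{B_{i-1}}(b)$, consider $a-k-b-w_b-q_i-q_{i+1}-c$. Its first six vertices induce a $P_6$ and its last five induce a $P_5$, so it is a bad $P_7$, contradicting Lemma \ref{lem:badP7}.

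\textbf{Step 3.} This fails for a structural reason. After Step 1, $a$ is complete to $B_{i+2}$, so your path $y'-y-x-q_{i+2}-k-a-e$ has the chord $q_{i+2}a$. Moreover, $a,b$ are non-adjacent vertices of $A'_3(i-2)$ with disjoint neighbourhoods in $B_{i-1}$. By Lemma \ref{lem:B_i complete to two large cliques in B_{i-1}}, $B_i$ is therefore complete to $N_{B_{i-1}}(a)\cup N_{B_{i-1}}(b)$. So $e$ is adjacent to both $y$ and $y'$, whichever of $a,b$ you route through, and switching between them cannot remove these chords.

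Any path that re-enters $B_{i+2}$ will see both $a$ and $b$. The right path avoids $B_{i+2}$ entirely and \emph{uses} the completeness that breaks yours. Suppose $c\in B_i$ is adjacent to $s$ but not to $t$ for some $s,t\in B_{i+1}$, and take $w_b\in N_{B_{i-1}}(b)$. Then $a-k-b-w_b-c-s-t$ is an induced $P_7$. Here $w_bc\in E(G)$ by Lemma \ref{lem:B_i complete to two large cliques in B_{i-1}}, and $w_ba\notin E(G)$ by the disjointness.
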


\begin{proof}
    Let $v\in K$. Since $G$ is $C_4$-free, $a$ and $b$ have disjoint neighborhoods in $B_{i-1}$. Since $a-v-b-N_{B_{i-1}}(b)-q_i-q_{i+1}-B_{i+2}$ is not a bad $P_7$, $b$ is complete to $B_{i+2}$. By symmetry, $a$ is complete to $B_{i+2}$. Since $K\cup B_{i+2}$ is complete to $\{a,b\}$, $K\cup B_{i+2}$ is a clique. If $c\in B_i$ mixes on an edge $st\in B_{i+1}$, then $a-v-b-N_{B_{i-1}}(b)-c-s-t$ is an induced $P_7$. This proves that $B_i$ is complete to $B_{i+1}$. 
\end{proof}

\begin{lemma}\label{lem:A_2(i+2) has neighbors in one A_3(j)}
    $A_2(i+2)$ is anticomplete to either $A_3(i-2)$ or $A_3(i+2)$.
\end{lemma}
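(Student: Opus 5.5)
We argue by contradiction: suppose some $u\in A_2(i+2)$ has a neighbor $s\in A_3(i-2)$ and some $u'\in A_2(i+2)$ has a neighbor $t\in A_3(i+2)$. By Lemma \ref{lem:anticompleteA3-1}, $st\notin E(G)$. The plan is to produce, in each configuration, a forbidden induced path or cycle via an induced (cyclic) sequence that goes through $B_{i-1},B_i,B_{i+1}$.

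First suppose $u$ and $u'$ lie in the same component $K$ of $A_2(i+2)$. Lemma \ref{lem:neighbors of A_2(i) in A_3(i+1) and A_3(i)}, applied with $i+2$ in place of $i$, says a connected subgraph of $A_2(i+2)$ is anticomplete to $A_3(i+2)$ or to $A_3(i+3)=A_3(i-2)$. This already gives a contradiction. So we may assume $u\in K$ and $u'\in K'$ for distinct components $K,K'$, and that $K$ has no neighbor in $A_3(i+2)$ while $K'$ has no neighbor in $A_3(i-2)$.

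Next we use special pairs. The plan is to argue, via Lemma \ref{lem:complete subgraphs} as in the paragraph preceding Lemma \ref{lem:basic properties of A_2(i)}, that $K$ has a special pair $\{a,b\}\subseteq A'_3(i-2)$ and $K'$ has a special pair $\{a',b'\}\subseteq A'_3(i+2)$. This should hold because the pair for $K$ must be chosen among the neighbors of $K$ in $A'_3$, and $s$ forces the relevant side; making this choice precise is the first point to check. Lemma \ref{lem:basic properties of A_2(i)} then gives that $a,b$ are complete to $B_{i+2}$ with disjoint neighborhoods in $B_{i-1}$, and that $B_i$ is complete to $B_{i+1}$. By the symmetric version, $a',b'$ are complete to $B_{i-2}$ with disjoint neighborhoods in $B_{i+1}$, and $B_i$ is complete to $B_{i-1}$.

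Now take $v\in K$ and the vertex $b$ of the pair; Lemma \ref{lem:blowup-threeneighbor} applies if $b\in A_3(i-2)$, while if $b\in B_{i-2}$ the needed facts come from the nice blowup. The idea is to build $a-v-b-N_{B_{i-1}}(b)-\cdots$ and close it through $B_i,B_{i+1}$ and the pair $\{a',b'\}$, much as in the proof of Lemma \ref{lem:basic properties of A_2(i)}. Concretely, Lemma \ref{lem:basic properties of A_2(i)} makes $B_{i-1}\cup B_i\cup B_{i+1}$ much more complete, while $b$ has a non-neighbor in $B_{i-1}$. The symmetric statement together with Lemma \ref{lem:blowup-threeneighbor} (2),(3) should then force $b\in B_{i-2}$, and likewise $a\in B_{i-2}$; but $a,b$ are non-adjacent, which is impossible since $B_{i-2}$ is a clique. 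Alternatively, if a vertex of $A_3(i-2)$ occurs, we exhibit $a-v-b-N_{B_{i-1}}(b)-q_i-N_{B_{i+1}}(b')-b'$, or a bad $P_7$ of this shape, and check non-adjacencies using Lemmas \ref{lem:anticompleteA3-1}, \ref{anti-A2A3} and \ref{lem:compare nbd of different A_3(i) in B_j}.

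The main obstacle is this last step: choosing the right $P_7$ or $C_6$/$C_7$ and verifying that it is induced. In particular, one must handle whether $b\in B_{i-2}$ or $b\in A_3(i-2)$, and whether $N_{B_{i+2}}(t)$ meets the neighborhoods of $a,b$.
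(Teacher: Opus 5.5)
Your reduction is correct up to the point where $u\in K$ and $u'\in K'$ lie in distinct components, with $K$ anticomplete to $A_3(i+2)$ and $K'$ anticomplete to $A_3(i-2)$; this matches the paper. However, you never carry out the decisive step. You call it ``the main obstacle,'' and the route you mainly pursue is to force both vertices of the special pair into $B_{i-2}$ using Lemma~\ref{lem:basic properties of A_2(i)}, its mirror image, and Lemma~\ref{lem:blowup-threeneighbor}. Those lemmas do not support this. They give completeness of $B_i$ to $B_{i\pm1}$ and facts about $a,b$ relative to $B_{i+2}$ and $B_{i-1}$, but nothing that rules out $b\in A_3(i-2)$, and you give no argument for it. As written, the proposal does not prove the lemma.

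The missing observation is that you do not need special pairs at all, because you already hold $u,s,u',t$. Pick $c\in N_{B_{i-1}}(s)$ and $d\in N_{B_{i+1}}(t)$. Then $u-s-c-q_i-d-t-u'$ is an induced $P_7$, for these reasons:
\begin{itemize}
\item $u,u'$ have support $\{i+2,i-2\}$, so they miss $B_{i-1}\cup B_i\cup B_{i+1}$.
\item $uu'\notin E(G)$ because they lie in different components.
\item $ut,u's\notin E(G)$ by the anticompleteness of $K$ and $K'$ noted above.
\item $s$ misses $B_i\cup B_{i+1}$, and $t$ misses $B_{i-1}\cup B_i$.
\item $cd\notin E(G)$, and $st\notin E(G)$.
\end{itemize}
This is exactly the paper's path $K-b-N_{B_{i-1}}(b)-q_i-N_{B_{i+1}}(b')-b'-K'$. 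There, $b\in A_3(i-2)$ and $b'\in A_3(i+2)$ are extracted from special pairs via Lemma~\ref{lem:complete subgraphs}; $s$ and $t$ do the same job without that detour.

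Your ``alternative'' path $a-v-b-N_{B_{i-1}}(b)-q_i-N_{B_{i+1}}(b')-b'$ can also be completed, as a bad $P_7$, once $b\in A_3(i-2)$ and $b'\in A_3(i+2)$. You left it unverified, though.

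Finally, $st\notin E(G)$ follows from Lemma~\ref{lem:anticompleteA3}, since $i-2$ and $i+2$ are consecutive modulo $5$. Lemma~\ref{lem:anticompleteA3-1}, which you cite, concerns indices at distance two and does not apply here.
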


\begin{proof}
    Suppose not. By Lemma \ref{lem:neighbors of A_2(i) in A_3(i+1) and A_3(i)}, let $K$ and $K'$ be two components of $A_2(i+2)$ such that $K$ has a neighbor in $A_3(i-2)$ and $K'$ has a neighbor in $A_3(i+2)$. 

    By Lemma \ref{lem:complete subgraphs}, $K$ has two non-adjacent neighbors $a,b$ complete to $K$ where $a,b\in A'_3(i-2)$ and $K'$ has two non-adjacent neighbors $a’,b'$ complete to $K$ where $a,b\in A'_3(i+2)$. We may assume that $b\in A_3(i-2)$ and $b'\in A_3(i+2)$. 
    By Lemma 
    \ref{lem:neighbors of A_2(i) in A_3(i+1) and A_3(i)}, $K$ is anticomplete to $b'$ and $K'$ is anticomplete to $b$. It follows that $K-b-N_{B_{i-1}}(b)-q_i-N_{B_{i+1}}(b')-b'-K'$ is an induced $P_7$.
\end{proof}

\begin{lemma}\label{lem:A_2(i) is a clique}
    $A_2(i+2)$ is a clique.
\end{lemma}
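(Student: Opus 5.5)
The plan is a clique-cutset and small-vertex argument, split according to Lemma \ref{lem:A_2(i+2) has neighbors in one A_3(j)}. By symmetry, assume $A_2(i+2)$ is anticomplete to $A_3(i+2)$, so every component of $A_2(i+2)$ has its special pair in $A'_3(i-2)$. Since $A_1=A_5=\emptyset$, and by Lemmas \ref{A2A2}, \ref{lem:anticompleteA3-1} and \ref{anti-A2A3}, for each component $K$ of $A_2(i+2)$ we have $N(K)\subseteq B_{i+2}\cup B_{i-2}\cup A_3(i-2)$.

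First I would show that each component $K$ is a clique. Lemma \ref{lem:complete subgraphs} was applied with $X=A'_3(i-2)$ and $Y=B_{i-1}$, which gives that conclusion. Lemma \ref{lem:basic properties of A_2(i)} then makes $K\cup B_{i+2}$ a clique.

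Next I would show that $A_2(i+2)$ has only one component. Suppose $K,K'$ are distinct components with special pairs $\{a,b\}$ and $\{a',b'\}$ in $A'_3(i-2)$. I would copy the argument of Lemma \ref{lem:T_0 is a clique}.
\begin{itemize}
\item Take $v\in K$ and $v'\in K'$. $C_4$-freeness and the $P_7$ $v'-a-v-b-N_{B_{i-1}}(b)-q_i-q_{i+1}$ force $v'$ to be anticomplete to $\{a,b\}$, and symmetrically $v$ is anticomplete to $\{a',b'\}$.
\item The same kind of path shows that $\{a,b,a',b'\}$ is stable.
\item Picking $c\in N_{B_{i-1}}(a)$ and $c'\in N_{B_{i-1}}(a')$ then yields $b-v-a-c-c'-a'-v'$. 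This is an induced $P_7$ when $c\neq c'$ and $cc'$ is an edge. If $a$ and $a'$ share a neighbour in $B_{i-1}$, I would instead use $b-v-a-c-a'-v'$ extended by $N_{B_{i-1}}(b')$ or by $q_i,q_{i+1}$ to get a bad $P_7$.
\end{itemize}

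There is a subtlety: two vertices of $B_{i-2}$ are adjacent, so a special pair cannot lie entirely in $B_{i-2}$. Hence at least one vertex of each pair is in $A_3(i-2)$, and Lemma \ref{lem:nonedge in A_3(i)} together with Lemma \ref{lem:basic properties of A_2(i)} keeps the neighbourhoods in $B_{i-1}$ disjoint.

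This second step is where I expect the main obstacle. The specific worry is the case where $a$ and $a'$ have comparable or shared neighbourhoods in $B_{i-1}$, which breaks the $P_7$ above. If the path argument fails there, I would fall back on a small-vertex argument. Take two simplicial vertices, one from $K$ and one from $K'$; each has neighbourhood $K\cup B_{i+2}\cup B_{i-2}$-part plus its special pair. I would show they force disjoint cliques of size $>\ceil{\frac{\omega}{4}}$ inside $B_{i-1}$, complete to $B_i$ by Lemma \ref{lem:B_i complete to two large cliques in B_{i-1}}. Combined with a large $B_i$, obtained from a non-small vertex of $B_{i+1}$ using that $B_i$ is complete to $B_{i+1}$, this would give a clique of size $>\omega$, a contradiction.
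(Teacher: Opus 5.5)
Your proof is essentially the paper's: Lemma~\ref{lem:complete subgraphs} makes each component a clique, and the same sequence of induced $P_7$'s rules out a second component, ending with $b-v-a-u-u'-a'-K'$. The case you flag as the main obstacle cannot arise, so the fallback is unnecessary: $\{a,b,a',b'\}$ is stable and all four vertices are complete to $B_{i+2}$ by Lemma~\ref{lem:basic properties of A_2(i)}, so $C_4$-freeness makes their neighbourhoods in $B_{i-1}$ pairwise disjoint, which gives $u\neq u'$ (hence $uu'\in E(G)$) and every non-edge the path needs.
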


\begin{proof}
    Suppose not. Let $K$ and $K'$ be two components of $A_2(i+2)$. By Lemmas \ref{lem:complete subgraphs} and \ref{lem:A_2(i+2) has neighbors in one A_3(j)}, $K$ has two non-adjacent neighbors in $a,b\in A'_3(i-2)$ and $K'$ has two non-adjacent neighbors $a',b'\in A'_3(i-2)$. Let $v\in K$. If $b$ has a neighbor $v'\in K'$, then $v'-b-v-a-N_{B_{i-1}}(a)-q_i-q_{i+1}$ is an induced $P_7$. So $b$ is anticomplete to $K'$. This implies that $a,b,a',b'$ are pairwise distinct. 
    If $aa'\in E(G)$, then $a'-a-v-b-N_{B_{i-1}}(b)-q_i-q_{i+1}$ is an induced $P_7$. So $\{a,b,a',b'\}$ is stable. 
    Let $u\in N_{B_{i-1}}(a)$ and $u'\in N_{B_{i-1}}(a')$. 
    Then $u$ is anticomplete to $\{a',b,b'\}$ and  $u'$ is anticomplete to $\{a,b,b'\}$. Then $b-v-a-u-u'-a'-K'$ is an induced $P_7$. 
\end{proof}

\begin{lemma}\label{lem:smv on special vertices}
    Let $a,b$ be two non-adjacent simplicial vertices of the set that consists of all vertices in $A'_3(i-2)$ complete to $A_2(i-2)$. 
    Then both of $N(a)\setminus B_{i+2}$ and $N(b)\setminus B_{i+2}$ are cliques, and so $N_{B_{i+2}}(a)$ and $N_{B_{i+2}}(b)$ are two disjoint complete cliques of size larger than $\ceil{\frac{\omega}{4}}$.
\end{lemma}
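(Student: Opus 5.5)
The plan is the small vertex argument. Let $Z$ be the set of vertices of $A'_3(i-2)$ that are complete to $A_2(i-2)$; here I read $A_2(i-2)$ as the clique from Lemma~\ref{lem:A_2(i) is a clique}, i.e.\ the component whose special pairs lie in $A'_3(i-2)$. Take $a,b$ non-adjacent and simplicial in $Z$. First I pin down where the neighbors of $a$ can lie. Since $A_1=A_5=\emptyset$, we have $N(a)\subseteq B_{i-2}\cup B_{i-1}\cup B_{i+2}\cup A_3(i-2)\cup A_2$. By the anticomplete-pair lemmas (Lemmas~\ref{lem:anticompleteA3-1}, \ref{lem:anticompleteA3}, \ref{anti-A2A3}) this narrows to
\[
N(a)\setminus B_{i+2}\subseteq B_{i-2}\cup B_{i-1}\cup A_3(i-2)\cup A_2(i-2)\cup A_2(i+2)\cup A_2(i-3).
\]
By Lemma~\ref{lem:A_2(i+2) has neighbors in one A_3(j)} and the choice of side, the relevant $A_2$-part is the clique $A_2(i-2)$, and $a$ is complete to it. Lemma~\ref{lem:basic properties of A_2(i)} also shows that $a,b$, as a special-type pair, are complete to $B_{i+2}$ and have disjoint neighborhoods in $B_{i-1}$.

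Next I show that $N(a)\setminus B_{i+2}$ is a clique, using the same structure as Lemma~\ref{lem:small vertex argument for simplicial vertices in A_3(i-2)}. Suppose $s,s'\in N(a)\setminus B_{i+2}$ are non-adjacent. The pieces are:
\begin{itemize}
\item $N_{B_{i-2}}(a)\cup N_{B_{i-1}}(a)$ is a clique by Lemma~\ref{lem:single vertex in A_3(i)}~(2), or trivially if $a\in B_{i-2}$.
\item $A_2(i-2)$ is a clique complete to $N_H(a)\cap(B_{i-2}\cup B_{i-1})$ by Lemma~\ref{lem:blowup-twoneighbor}.
\item $N(a)\cap A_3(i-2)$ is complete to $N_{B_{i-2}}(a)$ by Lemma~\ref{lem:edge in $A_3(i)$}.
\end{itemize}
So the remaining cases are these:
\begin{itemize}
\item Both $s,s'$ lie in $Z$. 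This contradicts the simpliciality of $a$ in $Z$.
\item $s\in A'_3(i-2)\setminus Z$ and $s'\in A_2(i-2)$. Here $s$ misses some $w\in A_2(i-2)$, and I expect an induced $P_7$ or a $C_4$/$C_6$, such as $s'-a-s-N_{B_{i-1}}(s)-q_i-q_{i+1}-\cdots$, or an appeal to Lemma~\ref{lem:neighbors of components of A_2(i) in A_3(i+1)}.
\item $s\in A_3(i-2)$ and $s'\in B_{i-1}$. This is handled by Lemma~\ref{lem:neighborcontainA3i} together with Lemma~\ref{lem:P3 in A_3(i)}, exactly as in Lemma~\ref{lem:small vertex argument for simplicial vertices in A_3(i-2)}.
\end{itemize}
Once $N(a)\setminus B_{i+2}$ is a clique, $d(a)\le(\omega-1)+|N_{B_{i+2}}(a)|$. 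Since $G$ has no small vertices, $|N_{B_{i+2}}(a)|>\ceil{\frac{\omega}{4}}$, and the same holds for $b$.

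Finally, disjointness. Because $a,b$ have a common neighbor in $A_2(i-2)$ and $G$ is $C_4$-free, $N_{B_{i+2}}(a)\cap N_{B_{i+2}}(b)=\emptyset$. Completeness comes from Lemma~\ref{lem:nonedge in A_3(i)} applied to $a,b$ in $A'_3(i-2)$: they have disjoint neighborhoods on one side, and $B_{i+2}$ is itself a clique. So the two neighborhoods are disjoint complete cliques.

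The main obstacle is the mixed case, where $s\in A'_3(i-2)$ is not complete to $A_2(i-2)$ and $s'$ lies in $A_2(i-2)$ or $B_{i-1}$. This is where the definition of $Z$ matters. I would first try to show that every neighbor of $a$ in $A'_3(i-2)$ that is adjacent to some vertex of $A_2(i-2)$ is forced into $Z$, using Lemma~\ref{lem:induced P3in A_2(i)} and a path of the form $w-s'-a-s-N_{B_{i-1}}(s)-q_i-q_{i+1}$.
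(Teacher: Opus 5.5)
Your frame is the paper's: once $N(a)\setminus B_{i+2}$ is a clique, the no-small-vertex count gives $|N_{B_{i+2}}(a)|>\ceil{\frac{\omega}{4}}$, and disjointness comes from a common neighbour in $A_2(i-2)$. But the central step, that $N(a)\setminus B_{i+2}$ is a clique, is not proved, and the route you sketch for it cannot work.

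\textbf{Orientation error.} Apply Lemma~\ref{lem:basic properties of A_2(i)} to the clique $A_2(i-2)$ (support $\{i-2,i-1\}$) with the special pair $\{a,b\}\subseteq A'_3(i-2)$. It says that $a,b$ are complete to $B_{i-1}$, have disjoint neighbourhoods in $B_{i+2}$, and that $A_2(i-2)\cup B_{i-1}$ is a clique. You state this the other way round, which contradicts the disjointness you derive at the end.

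\textbf{Your paths cannot be induced.} Because of the orientation error, your candidate paths leave through $B_{i-1}$. In $s'-a-s-N_{B_{i-1}}(s)-\cdots$ and in $w-s'-a-s-N_{B_{i-1}}(s)-q_i-q_{i+1}$, both $a$ and every vertex of $A_2(i-2)$ are adjacent to $N_{B_{i-1}}(s)$. Also $wa\in E(G)$, since $a\in Z$.

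\textbf{The supporting ``pieces'' are unjustified.}
Lemma~\ref{lem:blowup-twoneighbor} compares the neighbours of $A_2(i-2)$ in $A'_3(i-2)$ with those in $A'_3(i-1)$. It does not make $A_2(i-2)$ complete to $N_{B_{i-2}}(a)$, and that completeness is part of what you must prove.
The equal-neighbourhood lemma for edges inside $A_3(i-2)$ gives nothing when $a\in B_{i-2}$.
The proof of Lemma~\ref{lem:small vertex argument for simplicial vertices in A_3(i-2)} uses that the vertex is an $(A_3,H)$-minimal simplicial vertex of $A_3$, which $a$ need not be.

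\textbf{The case split is incomplete.} It misses $s,s'\in A'_3(i-2)$ with only one of them in $Z$, for example both in $A_3(i-2)$.

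\textbf{The missing idea} is to exit through $b$ and the far side $B_{i+2}$, where $a$ and $b$ are separated. Let $s\in N(a)\cap A'_3(i-2)$ miss some $w\in A_2(i-2)$.
\begin{enumerate}
\item $sb\notin E(G)$, since otherwise $a-w-b-s-a$ is an induced $C_4$.
\item Since $b$ is complete to $B_{i-1}$, the vertices $s$ and $b$ share a neighbour in $B_{i-1}$. By $C_4$-freeness (cf.\ Lemma~\ref{lem:nonedge in A_3(i)}) they therefore have disjoint neighbourhoods in $B_{i+2}$.
\item Take $t\in N_{B_{i+2}}(b)$. Then $s-a-w-b-t-q_{i+1}-q_i$ is an induced $P_7$.
\end{enumerate}
So every neighbour of $a$ in $A'_3(i-2)$ lies in $Z$. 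You do not need to restrict, as you propose, to neighbours having a neighbour in $A_2(i-2)$.

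Next, each $s\in N(a)\cap Z$ is complete to $B_{i-1}$. Suppose $s$ missed some $u\in B_{i-1}$. If $sb\in E(G)$, then $a-u-b-s-a$ is a $C_4$. Otherwise $\{s,b\}$ is a special pair, and Lemma~\ref{lem:basic properties of A_2(i)} makes $s$ complete to $B_{i-1}$.

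Since $N(a)\setminus B_{i+2}\subseteq B_{i-1}\cup A_2(i-2)\cup A'_3(i-2)$, it is the union of the clique $B_{i-1}\cup A_2(i-2)$ and the clique $N_Z(a)$, and these two are complete to each other. This is exactly the paper's argument: both of its cases ($s\in B_{i-1}\cup A_2(i-2)$, and $s,s'\in B_{i-2}\cup A_3(i-2)$) end with this same $P_7$, after which your degree count finishes the proof.
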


\begin{proof}
    Suppose for a contradiction that $s,s'$ are two non-adjacent vertices in $N(a)\setminus B_{i+2}$. 
    By Lemma \ref{lem:compare nbd of different A_3(i) in B_j},
    $N(a)\setminus B_{i+2}\subseteq B_{i-1}\cup A_2(i-2)\cup B_{i-2}\cup A_3(i-2)$. 
    Suppose first that $s\in B_{i-1}\cup A_2(i-2)$. 
    Since $B_{i-1}\cup A_2(i-2)$ is a clique by Lemma \ref{lem:basic properties of A_2(i)}, we may assume $s'\in B_{i-2}\cup A_3(i-2)$. 
    Since $s'$ is not complete to $B_{i-1}\cup A_2(i-2)$, we have $s'b\notin E(G)$.
    If $s'$ is complete to $A_2(i-2)$, then $\{s',b\}$ is also a special pair for $A_2(i-2)$ and then $s'$ is complete to $A_2(i-2)\cup B_{i-1}$, a contradiction. 
    So $s'$ has a non-neighbor $s''\in A_2(i-2)$. 
    Then $s'-a-s''-b-N_{B_{i+2}}(b)-q_{i+1}-q_i$ is an induced $P_7$. 
    So we may assume that $s,s'\in B_{i-2}\cup A_3(i-2)$. 
    By the assumption on $a$, we have at least one of $s,s'$ is not complete to $A_2(i-2)$. 
    By symmetry, we may assume that $s$ has a non-neighbor $s''\in A_2(i-2)$. 
    Then $sb\notin E(G)$, and so $s-a-s''-b-N_{B_{i+2}}(b)-q_{i+1}-q_i$ is an induced $P_7$. 
\end{proof}

\begin{lemma}\label{lem:at most two non-empty A_2(i)}
    There exists an index $i$ such that $A_2=A_2(i-2)\cup A_2(i+1)$. Moreover, if $A_2(i-2)$ and $A_2(i+1)$ are not empty, then $A_2(i-2)$ has two non-adjacent neighbors $a,b\in A'_3(i-2)$ complete to $A_2(i-2)$ and $A_2(i+1)$ has two non-adjacent neighbors $a,b\in A'_3(i+2)$ complete to $A_2(i+1)$.
\end{lemma}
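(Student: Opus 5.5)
Here $A_1=\emptyset$ and $A_5=\emptyset$. By Lemma \ref{lem:A_2(i) is a clique}, applied to every index, each non-empty $A_2(j)$ is a clique. By Lemma \ref{lem:complete subgraphs} it has a special pair, and by Lemma \ref{lem:A_2(i+2) has neighbors in one A_3(j)} that special pair lies in a single side $A'_3(j)$ or $A'_3(j+1)$. The plan is to show that two non-empty $A_2(j)$ can never be \emph{adjacent}, i.e.\ have consecutive indices $j,j+1$. Since five indices on a cycle admit at most two pairwise non-consecutive choices, this gives $A_2=A_2(i-2)\cup A_2(i+1)$ for a suitable $i$.

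\textbf{Step 1 (consecutive pairs).} Suppose $A_2(j)$ and $A_2(j+1)$ are both non-empty, with special pairs $\{a,b\}$ and $\{a',b'\}$. The key consequence of Lemma \ref{lem:basic properties of A_2(i)} is this: a special pair in $A'_3(j)$ forces $B_{j+2}$ complete to $B_{j+3}$, and symmetrically for a pair in $A'_3(j+1)$. I would split into the four orientation cases of the two special pairs.
\begin{itemize}
\item In each case I aim for an induced $P_7$ or a bad $P_7$ of the form
\[
a - K - b - N_{B}(b) - \cdots - K' - a' .
\]
This uses that $A_2(j)$ is anticomplete to $A_2(j+1)$ (Lemma \ref{A2A2}), together with the anticomplete pairs from Lemmas \ref{lem:anticompleteA3-1}, \ref{lem:anticompleteA3} and \ref{anti-A2A3} to kill the chords.
\item Alternatively, the completeness relations obtained from Lemma \ref{lem:basic properties of A_2(i)} may contradict Lemma \ref{lem:blowup-threeneighbor}(1), since some $A_3$ vertex in a special pair would become complete to $B$ on both sides. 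I would try this route first, because the special pair contains at least one $A_3$ vertex whenever it is not inside $B$.
\end{itemize}

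\textbf{Step 2 (support of the special pairs).} With $A_2=A_2(i-2)\cup A_2(i+1)$ and both sets non-empty, I must show that the special pair of $A_2(i-2)$ lies in $A'_3(i-2)$ rather than $A'_3(i-1)$, and symmetrically that the pair for $A_2(i+1)$ lies in $A'_3(i+2)$. These are the sides facing each other across $B_{i+2}$ and $B_{i-2}$.

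Suppose instead the pair $\{a,b\}$ of $A_2(i-2)$ lies in $A'_3(i-1)$. Then Lemma \ref{lem:basic properties of A_2(i)} says $a,b$ are complete to $B_{i-2}$ and have disjoint neighbourhoods in $B_i$. I would then build the path
\[
a - K - b - N_{B_i}(b) - N_{B_{i+1}}(\cdot) - K' - a'
\]
towards $A_2(i+1)$, or use a bad $P_7$ through $q_{i+2}$, to reach a contradiction.

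\textbf{Main obstacle.} The orientation case analysis in Step 1 is where I expect the difficulty. The subcase where both special pairs sit on the shared side ($A'_3(j+1)$) needs care, because there the $P_7$ passes through common $B$ neighbourhoods. If the path does not close, I would fall back on Lemma \ref{lem:no two directed edge pointing to the same B_i}: both pairs have disjoint neighbourhoods in the same $B_{j+1}$ (or $B_{j+2}$), which is exactly the configuration that lemma forbids.
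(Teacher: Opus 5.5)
Your overall decomposition covers the same ground as the paper: first rule out two non-empty $A_2(j)$'s with consecutive indices, then orient the special pairs. The paper instead fixes one non-empty $A_2(i+2)$ with its pair in $A'_3(i-2)$ and shows that $A_2(i+1)$, $A_2(i)$ and $A_2(i-2)$ are empty. Your far-side cases in Step~1 work, provided the $P_7$ ends at a vertex $v$ of the other $A_2$ rather than at $a'$, which may see the $B$-vertices on the path. The paper's version is $a-K-b-N_{B_{i-1}}(b)-q_i-N_{B_{i+1}}(v)-v$ for $K=A_2(i+2)$ and $v\in A_2(i+1)$.

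\textbf{First gap: both special pairs on the shared side.} This is the case you single out as the main obstacle, and your fallback does not work there. If both pairs lie in $A'_3(j+1)$, Lemma~\ref{lem:basic properties of A_2(i)} gives:
\begin{itemize}
\item the pair $\{a,b\}$ of $A_2(j)$ is complete to $B_j$, with disjoint neighbourhoods in $B_{j+2}$;
\item the pair $\{a',b'\}$ of $A_2(j+1)$ is complete to $B_{j+2}$, with disjoint neighbourhoods in $B_j$.
\end{itemize}
So the two pairs sit in the \emph{same} $A'_3$ and are disjoint in \emph{different} cliques. Lemma~\ref{lem:no two directed edge pointing to the same B_i} concerns a pair in $A'_3(i-1)$ and a pair in $A'_3(i+1)$ that are both disjoint in the common $B_i$, so it does not apply. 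Your other route also fails, since no single vertex is forced to be complete to both sides.

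The missing step is to compare one vertex from each pair. Since $a$ has a neighbour in $B_{j+2}$ and $a,b$ are disjoint there, $b$ has a non-neighbour $y\in B_{j+2}$; this rules out $b=b'$.
\begin{itemize}
\item If $bb'\in E(G)$ and $b'$ has a non-neighbour $x\in B_j$, then $x-b-b'-y$ violates Lemma~\ref{lem:generalized P4-structure}. Hence $b'$ is complete to $B_j$, so it meets $N_{B_j}(a')$, contradicting disjointness.
\item If $bb'\notin E(G)$, then $b,b'$ have common neighbours in both $B_j$ and $B_{j+2}$, contradicting Lemma~\ref{lem:nonedge in A_3(i)}.
\end{itemize}

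\textbf{Second gap: Step~2 needs a split on the orientation of $\{a',b'\}$.} Your path $a-K-b-N_{B_i}(b)-N_{B_{i+1}}(\cdot)-K'-a'$ is not induced in general, because it ignores where the special pair $\{a',b'\}$ of $K'=A_2(i+1)$ lies.
\begin{itemize}
\item If $\{a',b'\}\subseteq A'_3(i+2)$, Lemma~\ref{lem:basic properties of A_2(i)} makes $a'$ complete to $B_{i+1}$, so your path never closes. However, the same lemma makes $B_{i-1}$ complete to $B_i$. Take the one of $a,b$ lying in $A_3(i-1)$, say $b$, which is complete to $B_{i-2}$. Choose $u\in B_{i-1}\setminus N(b)$ (by Lemma~\ref{lem:blowup-threeneighbor}(1)), $w\in N_{B_{i-2}}(u)$ and $z\in N_{B_i}(b)$. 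Then $u-w-b-z-u$ is an induced $C_4$.
\item If $\{a',b'\}\subseteq A'_3(i+1)$, both pairs have disjoint neighbourhoods in $B_i$, and you should route through $b'$ instead of $B_{i+1}$. If some $x\in\{a,b\}$ and $y\in\{a',b'\}$ have a common neighbour $d\in B_i$, then $\bar{x}-K-x-d-y-K'-\bar{y}$ is an induced $P_7$, where $\bar x,\bar y$ are the partners. Otherwise $a,b,a',b'$ have pairwise disjoint neighbourhoods in $B_i$, and $a-K-b-N_{B_i}(b)-N_{B_i}(b')-b'-K'$ is an induced $P_7$. The remaining non-edges follow from Lemma~\ref{lem:anticompleteA3-1}.
\end{itemize}
This orientation split is exactly how the paper proves $A_2(i)=\emptyset$ in its own indexing.
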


\begin{proof}
    Suppose first that $A_2(i+2)\neq \emptyset$. By Lemme \ref{lem:complete subgraphs}, we may assume that a special pair $\{a,b\}$ for $A_2(i+2)$ are in $A'_3(i-2)$. If $v\in A_2(i+1)$, then $a-A_2(i+2)-b-N_{B_{i-1}}(b)-q_i-N_{B_{i+1}}(v)-v$ is an induced $P_7$. So $A_2(i+1)=\emptyset$. 

    Suppose that $A_2(i)\neq \emptyset$. Let $\{a',b'\}$ be a special pair for $A_2(i)$. 
    Suppose first that $a',b'\in A'_3(i)$. 
    If $b,b'$ have a common neighbor $d\in B_{i-1}$, then $a-A_2(i+2)-b-d-b'-A_2(i)-a'$ is an induced $P_7$. 
    So $b,a',b'$ have pairwise disjoint neighborhoods in $B_{i-1}$. 
    Then $a-A_2(i+2)-b-N_{B_{i-1}}(b) - N_{B_{i-1}}(b')-b'-A_2(i)$ is an induced $P_7$. 
    So $a',b'\in A_3'(i+1)$. By Lemma \ref{lem:basic properties of A_2(i)}, $B_{i-2}$ is complete to $B_{i-1}$. It follows that $a$ is complete to $B_{i-1}\cup B_{i+2}$ and so $a,b$ have a common neighbor in both $B_{i-1}$ and $B_{i+2}$, a contradiction. So $A_2(i)=\emptyset$.  

    Now suppose that $A_2(i-2)\neq \emptyset$. By symmetry and $A_2(i+2)\neq \emptyset$, a special pair $\{a',b'\}$ for $A_2(i-2)$ is contained in $A'_3(i-2)$. By Lemma \ref{lem:basic properties of A_2(i)}, $b'$ is complete to $B_{i-1}$ and $b$ is complete to $B_{i+2}$. If $b=b'$, then $b$ is complete to $B_{i-1}\cup B_{i+2}$ and so $a,b$ have a common neighbor in both $B_{i-1}$ and $B_{i+2}$. So $b\neq b'$. If $bb'\in E(G)$, then one of $b$ and $b'$ is complete to  $B_{i-1}\cup B_{i+2}$ by Lemma \ref{lem:neighborcontainA3i}. So $bb'\notin E(G)$. Since $b$ and $b'$ have a common neighbor in both $B_{i-1}$ and $B_{i+2}$, it contradicts Lemma \ref{lem:neighborcontainA3i}. So $A_2(i-2)=\emptyset$. 

    This implies that if $A_2(i-1)\neq \emptyset$, then a special pair for $A_2(i-1)$ is contained in $A'_3(i-1)$.  It follows that $i+1$ is such an index.
\end{proof}

\begin{lemma}
    There are at most one non-empty $A_2(j)$.
\end{lemma}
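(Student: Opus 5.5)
By Lemma \ref{lem:at most two non-empty A_2(i)}, there is an index $i$ with $A_2=A_2(i-2)\cup A_2(i+1)$. Moreover, if both sets are non-empty, then $A_2(i-2)$ has a special pair $\{a,b\}\subseteq A'_3(i-2)$ complete to $A_2(i-2)$, and $A_2(i+1)$ has a special pair $\{a',b'\}\subseteq A'_3(i+2)$ complete to $A_2(i+1)$. I will assume both are non-empty and derive a contradiction.

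First I record the consequences of Lemma \ref{lem:basic properties of A_2(i)} applied to each component. Since $A_2(i-2)$ is a clique by Lemma \ref{lem:A_2(i) is a clique} and its special pair lies in $A'_3(i-2)$, the vertices $a,b$ are complete to $B_{i-2}$ and have disjoint neighborhoods in $B_{i+2}$. Symmetrically, $a',b'$ are complete to $B_{i+2}$ and have disjoint neighborhoods in $B_{i-2}$. The lemma also gives that $B_i$ is complete to $B_{i\pm1}$ for the relevant indices.

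In particular, $a'$ and $b'$ have a common neighbor in $B_{i+2}$. Since $a,b$ are non-adjacent and lie in $A'_3(i-2)$, Lemma \ref{lem:nonedge in A_3(i)} tells us they have containment in one of $B_{i-1},B_{i+2}$ and disjoint neighborhoods in the other. We already know their neighborhoods in $B_{i+2}$ are disjoint, so they share a neighbor in $B_{i-1}$.

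The intended contradiction is a long induced path running through both special pairs, in the spirit of Lemma \ref{lem:A_2(i+2) has neighbors in one A_3(j)}. Let $v\in A_2(i-2)$ and $v'\in A_2(i+1)$. By Lemma \ref{lem:anticompleteA3-1}, $A_2(i-2)$ is anticomplete to $A_2(i+1)$. By Lemma \ref{anti-A2A3}, $A_2(i-2)$ is anticomplete to $A_3(i+2)$ and $A_2(i+1)$ is anticomplete to $A_3(i-2)$. I will take $b\in A_3(i-2)$ and $b'\in A_3(i+2)$ where possible; if a special-pair vertex lies in $B$, the anticompleteness is immediate from the supports.

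The candidate path is
\[ a-v-b-N_{B_{i+2}}(b)-\cdots-b'-v'-a', \]
or a shorter variant such as $v-b-N_{B_{i+2}}(b)-N_{B_{i+2}}(b')-b'-v'$, extended at both ends by $a$ and $a'$. This splits into two cases.

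\textbf{Case 1: $b$ and $b'$ have a common neighbor $d\in B_{i+2}$.} Then $a-v-b-d-b'-v'-a'$ is a candidate $P_7$. The work is to check non-adjacencies: $ab'$, $ba'$, $aa'$, and $d$ against $a,a'$. Where an extra edge appears, I would use $C_4$-freeness to swap in the other pair member; for example, if $ab'\in E(G)$, consider $b'$ with $a$ and $b$ together with $v$. Lemma \ref{lem:compare nbd of different A_3(i) in B_j} will help control these edges.

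\textbf{Case 2: the neighborhoods of $b,b'$ in $B_{i+2}$ are disjoint.} Then $N_{B_{i+2}}(b)$ and $N_{B_{i+2}}(b')$ are adjacent, since $B_{i+2}$ is a clique. This gives an even longer induced sequence, which yields a $P_7$ directly, or a bad $P_7$ via Lemma \ref{lem:badP7}.

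The main obstacle is the bookkeeping of possible chords between $\{a,b\}$ and $\{a',b'\}$, since both pairs lie in $A'_3(i-2)\cup A'_3(i+2)$ and vertices of $A_3(i-2)$ and $A_3(i+2)$ may be adjacent. I would first try to prove that $\{a,b\}$ is anticomplete to $\{a',b'\}$. Suppose $aa'\in E(G)$. Then $a$ is adjacent to $a'$ but not complete to $\{a',b'\}$, because otherwise the induced subgraph on $\{a',v',b',a\}$ would be a $C_4$ unless $av'\in E(G)$, which is impossible. So $ab'\notin E(G)$, and $b'-v'-a'-a-v-b$ plus a vertex of $B_{i+2}$ or $B_{i-1}$ should give a $P_7$. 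If direct chord-chasing becomes unwieldy, a fallback is a small-vertex argument. Lemma \ref{lem:smv on special vertices} gives large disjoint cliques $N_{B_{i+2}}(a),N_{B_{i+2}}(b)$ and, symmetrically, large disjoint cliques in $B_{i-2}$. Combined with $B_{i+2}$ being complete to $B_{i-2}$, this produces four pairwise disjoint cliques each of size larger than $\ceil{\frac{\omega}{4}}$ whose union is a clique, which is a contradiction.
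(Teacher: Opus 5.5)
\paragraph{Primary route.} Your induced $P_7$ of the form $a-v-b-d-b'-v'-a'$ is never checked, and the postponed chords are real. First, you have the completeness backwards. Lemma~\ref{lem:basic properties of A_2(i)} makes $\{a,b\}$ complete to $B_{i-1}$, not $B_{i-2}$; if $b\in A_3(i-2)$, being complete to $B_{i-2}$ is impossible by Lemma~\ref{lem:blowup-threeneighbor}(1). Likewise $\{a',b'\}$ is complete to $B_{i+1}$, not $B_{i+2}$. Second, in Case~1 you have not excluded $d$ being adjacent to $a'$, and this is automatic when $a'\in B_{i+2}$. Nor have you excluded $d$ being adjacent to $v'$, which has neighbours in $B_{i+2}$. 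So the case analysis does not close as written.

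\paragraph{Fallback.} This is the paper's argument, but two ingredients are missing.

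First, you claim the four cliques form one clique ``combined with $B_{i+2}$ being complete to $B_{i-2}$''. Nothing gives this, and in fact it fails in this configuration. Take $b\in A_3(i-2)$ and $b'\in A_3(i+2)$. Since $b$ is complete to $B_{i-1}$ and $b'$ is complete to $B_{i+1}$, Lemma~\ref{lem:blowup-threeneighbor}(3) places you in the hypotheses of Lemma~\ref{lem:complete and anti on w}. Its part~(1) then says $B_{i-2}\setminus N(b)$ is anticomplete to $B_{i+2}\setminus N(b')$, and both sets are non-empty. The correct tool is Lemma~\ref{lem:cross four cliques}, applied with $i$ replaced by $i-1$. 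It shows only that
\[
N_{B_{i+2}}(a)\cup N_{B_{i+2}}(b)\cup N_{B_{i-2}}(a')\cup N_{B_{i-2}}(b')
\]
is a clique. It needs one vertex of each pair in $A_3$, which you may assume because the pairs are non-adjacent and $B_{i\pm2}$ are cliques.

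Second, Lemma~\ref{lem:smv on special vertices} applies only when $a,b$ are two non-adjacent \emph{simplicial} vertices of the set of vertices of $A'_3(i-2)$ complete to $A_2(i-2)$. Similarly, $a',b'$ must be two non-adjacent simplicial vertices of the set of vertices of $A'_3(i+2)$ complete to $A_2(i+1)$. For an arbitrary special pair you do not know that $N(a)\setminus B_{i+2}$ is a clique, so you get no size bound. You must choose the pairs this way, as the paper does.

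With both corrections you obtain four pairwise disjoint cliques, each of size $>\ceil{\frac{\omega}{4}}$, whose union is a clique. That is the required contradiction.
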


\begin{proof}
    Suppose not. By Lemma \ref{lem:at most two non-empty A_2(i)}, we may assume that $A_2=A_2(i-2)\cup A_2(i+1)$ where $A_2(i-2)$ and $A_2(i+1)$ are not empty.

    Let $a_1,b_1$ be two non-adjacent simplicial vertices of the set that consists of all vertices in $A'_3(i-2)$ that complete to $A_2(i-2)$ and $a_2,b_2$ be two non-adjacent simplicial vertices of the set that consists of all vertices in $A'_3(i+2)$ that complete to $A_2(i+1)$. 
    By symmetry, we may assume that $b_1\in A_3(i-2)$ and $b_2\in A_3(i+2)$. 
    By Lemmas \ref{lem:cross four cliques} and \ref{lem:smv on special vertices}, $N_{B_{i+2}}(a_1)\cup N_{B_{i+2}}(b_1)\cup N_{B_{i-2}}(a_2)\cup N_{B_{i-2}}(b_2)$ is a clique of size larger than $\omega$, a contradiction.
\end{proof}

\subsection{One non-empty $A_2(i)$}

Suppose that $A_2=A_2(i+2)\neq \emptyset$. 
Let $\{a,b\}\subseteq A'_3(i-2)$ be a special pair for $A_2(i+2)$. By symmetry, we may assume that $b\in A_3(i-2)$.

\begin{lemma}\label{lem:A_3(i) is empty}
    $A_3(i)=A_3(i+2)=\emptyset$. 
\end{lemma}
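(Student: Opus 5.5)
Both statements will be proved by contradiction: in each case we will exhibit a forbidden induced $P_7$ (or a bad $P_7$, or an induced cyclic sequence of length at least $6$) that passes through the component $A_2(i+2)$ and the special pair. The setting is that $A_1=A_5=\emptyset$, that $A_2=A_2(i+2)$ is a clique by Lemma \ref{lem:A_2(i) is a clique}, and that $\{a,b\}\subseteq A'_3(i-2)$ is a special pair with $b\in A_3(i-2)$. By Lemma \ref{lem:basic properties of A_2(i)}, $a$ and $b$ are complete to $B_{i+2}$, they have disjoint neighborhoods in $B_{i-1}$, and $B_i$ is complete to $B_{i+1}$. Fix $v\in A_2(i+2)$, so that $a-v-b$ is an induced $P_3$ which we can extend.

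\textbf{$A_3(i+2)=\emptyset$.} Suppose $t\in A_3(i+2)$. By Lemma \ref{lem:A_2(i+2) has neighbors in one A_3(j)}, $t$ is anticomplete to $A_2(i+2)$; by Lemma \ref{lem:anticompleteA3-1}, $t$ is anticomplete to $b$ (since $b\in A_3(i-2)$). By Lemma \ref{lem:compare nbd of different A_3(i) in B_j}, $N_{B_{i-2}}(t)$ is complete to $A_3(i-2)$, so $b$ and $t$ have a common neighbor $c\in B_{i-2}$. If $c$ is non-adjacent to $v$, then $a-v-b-c-t-N_{B_{i+1}}(t)-q_i$ should be an induced $P_7$, after checking adjacencies with Lemma \ref{lem:single vertex in A_3(i)} and the disjointness properties. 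Otherwise $c$ is adjacent to $v$, and we instead use $t$'s neighbor in $B_{i+1}$ together with $N_{B_{i-1}}(b)$ and $q_i$ to close an induced cyclic sequence $v-b-N_{B_{i-1}}(b)-q_i-N_{B_{i+1}}(t)-t-\ldots$. A cleaner alternative is to apply Lemma \ref{lem:cross four cliques}-type reasoning: $t$ together with a non-neighbor in $B_{i+2}$ (which exists by Lemma \ref{lem:blowup-threeneighbor}) forms a non-adjacent pair with disjoint neighborhoods, which contradicts $a$ and $b$ being complete to $B_{i+2}$ via Lemma \ref{lem:B_i complete to two large cliques in B_{i-1}}.

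\textbf{$A_3(i)=\emptyset$.} Suppose $t\in A_3(i)$. Since $B_i$ is complete to $B_{i+1}$, Lemma \ref{lem:blowup-threeneighbor}(3) shows that $B_i\setminus N(t)$ is anticomplete to $N_{B_{i-1}}(t)$, so Lemma \ref{lem:single vertex in A_3(i)} applies with the roles of $i-1$ and $i+1$ swapped: $q_i\in N(t)$ and $q_{i-1}\notin N(t)$. Moreover $t$ is anticomplete to $A_2(i+2)$ and to $A_3(i-2)$ (Lemmas \ref{lem:anticompleteA3-1}, \ref{anti-A2A3}). Now $b$ has a neighbor $d\in B_{i-1}$. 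If $t$ is non-adjacent to $d$, we take the path $a-v-b-d-\ldots-t$ through $B_{i-1}$ and $B_i$ and extend it by $N_{B_{i+1}}(t)$, which should give an induced $P_7$ or a bad $P_7$ (Lemma \ref{lem:badP7}). If $t$ is adjacent to $d$, we use $a$ instead: $a$ and $b$ have disjoint neighborhoods in $B_{i-1}$, and by Lemma \ref{lem:nonedge in A_3(i)} $t$ cannot be complete to both $N_{B_{i-1}}(a)$ and $N_{B_{i-1}}(b)$ without producing a $C_4$ of the form $t-d-b-\ldots$. So $t$ misses $N_{B_{i-1}}(a)$, and the symmetric path $b-v-a-N_{B_{i-1}}(a)-q_i-t-N_{B_{i+1}}(t)$ works, possibly after handling the case $a\in B_{i-2}$.

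\textbf{Main obstacle.} The hardest part will be the bookkeeping of the case $a\in B_{i-2}$ versus $a\in A_3(i-2)$, and making sure every chord in the proposed paths is excluded. I expect to rely repeatedly on Lemma \ref{lem:generalized P4-structure} for this.
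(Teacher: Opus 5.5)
Both halves of your proposal have genuine gaps. In each half, the paths you propose have chords, and the step that would actually close the argument is missing.

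\textbf{The part $A_3(i)=\emptyset$.} Your key claim, that $t$ must miss $N_{B_{i-1}}(a)$, is false. Since $a,b$ are non-adjacent vertices of $A'_3(i-2)$ with disjoint neighborhoods in $B_{i-1}$, Lemma~\ref{lem:B_i complete to two large cliques in B_{i-1}} makes every vertex of $A'_3(i)$, in particular $t$, complete to $N_{B_{i-1}}(a)\cup N_{B_{i-1}}(b)$. So your case $td\notin E(G)$ is vacuous. In the remaining case your path $b-v-a-N_{B_{i-1}}(a)-q_i-t-N_{B_{i+1}}(t)$ has chords: $t$ is adjacent to $N_{B_{i-1}}(a)$, and $q_i$ is adjacent to $N_{B_{i+1}}(t)$.

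The missing idea is to pair $t$ with a non-neighbor $u\in B_i$, which exists by Lemma~\ref{lem:blowup-threeneighbor}(1). Since $B_i$ is complete to $B_{i+1}$, $t$ and $u$ have a common neighbor in $B_{i+1}$. Since $u\in A'_3(i)$ as well, the same lemma makes both $t$ and $u$ complete to $N_{B_{i-1}}(a)\neq\emptyset$, so they also have a common neighbor in $B_{i-1}$. This gives an induced $C_4$. This is exactly Lemma~\ref{lem:no two directed edge pointing to the same B_i}, which is how the paper argues.

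\textbf{The part $A_3(i+2)=\emptyset$.} Your path $a-v-b-c-\cdots$ is never induced, because $c\in N_{B_{i-2}}(t)$ is adjacent to $a$. Either $a\in B_{i-2}$, or $a\in A_3(i-2)$ and Lemma~\ref{lem:compare nbd of different A_3(i) in B_j} applies. So when $cv\notin E(G)$ you really get the $C_4$ $a-v-b-c-a$, which works but is not what you wrote. When $cv\in E(G)$, your cyclic sequence cannot be closed from $t$ back to $v$: every neighbor of $t$ in $B_{i+2}$ is adjacent to $b$, since $b$ is complete to $B_{i+2}$, and every neighbor of $t$ in $B_{i-2}$ is adjacent to $b$ by Lemma~\ref{lem:compare nbd of different A_3(i) in B_j}. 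That leaves no chord-free route through $B_{i+2}$ or $B_{i-2}$.

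Your ``cleaner alternative'' aims at the right contradiction but cites the wrong tool; Lemma~\ref{lem:B_i complete to two large cliques in B_{i-1}} says nothing useful about $B_{i+2}$ here. What is needed is Lemma~\ref{lem:compare nbd of different A_3(i) in B_j} with the roles of $i+2$ and $i-2$ exchanged: the neighbors of $b$ in $B_{i+2}$ are complete to $A_3(i+2)$. Since $b$ is complete to $B_{i+2}$ by Lemma~\ref{lem:basic properties of A_2(i)}, $t$ would then be complete to $B_{i+2}$, contradicting Lemma~\ref{lem:blowup-threeneighbor}(1). The paper phrases this via a vertex of $B_{i+2}$ anticomplete to $A_3(i+2)$, from Lemma~\ref{lem:comparable vertices in A_3(i)}.

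A minor point: $A_3(i+2)$ and $A_3(i-2)$ have consecutive indices, so $tb\notin E(G)$ comes from Lemma~\ref{lem:anticompleteA3}, not from Lemma~\ref{lem:anticompleteA3-1}.
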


\begin{proof}
      Suppose that $A_3(i)\neq \emptyset$. 
      By Lemma \ref{lem:basic properties of A_2(i)}, $a,b$ have disjoint neighborhoods in $B_{i-1}$ and $B_i$ is complete to $B_{i+1}$. But this contradicts Lemma \ref{lem:no two directed edge pointing to the same B_i}.

     Suppose that $A_3(i+2)\neq \emptyset$. Then $B_{i+2}\setminus N(A_3(i+2))\neq \emptyset$ by Lemma \ref{lem:comparable vertices in A_3(i)}. By Lemma \ref{lem:compare nbd of different A_3(i) in B_j}, $B_{i+2}\setminus N(A_3(i+2))$ is anticomplete to $b$, which contradicts Lemma \ref{lem:basic properties of A_2(i)}.
\end{proof}

\begin{lemma}\label{lem:B_i is large when A_1 is empty}
    $|B_i|>\ceil{\frac{\omega}{4}}$. 
\end{lemma}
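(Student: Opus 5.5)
We apply the small vertex argument to a suitable vertex of $B_{i+1}$ whose neighbourhood outside $B_i$ is a clique. Then $|N(v)\cap B_i|>\ceil{\frac{\omega}{4}}$, which gives the bound on $|B_i|$.

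The setting is $A_1=\emptyset$, $A_5=\emptyset$, $A_2=A_2(i+2)$, and $A_3(i)=A_3(i+2)=\emptyset$ by Lemma \ref{lem:A_3(i) is empty}. By Lemma \ref{lem:basic properties of A_2(i)}, $B_i$ is complete to $B_{i+1}$. Take $v=q_{i+1}$, or more carefully a vertex of $B_{i+1}$ anticomplete to $A_3(i+1)$ with minimal neighbourhood in $B_{i+2}$. Such a vertex exists by Lemma \ref{lem:comparable vertices in A_3(i)} when $A_3(i+1)\ne\emptyset$. Its neighbours then lie in
\[B_i\cup B_{i+1}\cup B_{i+2}\cup A_3(i+1)\cup A_3(i+2)\cup A_2(i+1)\cup A_2(i)\cup A_3(i).\]
With the emptiness facts above, this leaves $N(v)\subseteq B_i\cup B_{i+1}\cup B_{i+2}$, after also using the choice of $v$ to exclude $A_3(i+1)$.

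Next I show that $N(v)\setminus B_i$ is a clique. $B_{i+1}$ is a clique, so a bad pair must be $s\in B_{i+1}$ and $t\in B_{i+2}$ with $st\notin E(G)$. The minimality of $v$ gives $s$ a neighbour $s'\in B_{i+2}$ with $s'v\notin E(G)$. Then $s-v-t$ and $s'$ together create either an induced $C_4$ ($s'$ adjacent to $t$) or a $P_4$-structure $s'-s-v-t$ violating niceness.

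The main obstacle is the case where the minimal vertex is not anticomplete to $A_3(i+1)$. Here I would instead pick $v\in B_{i+1}\setminus N(A_3(i+1))$ with minimal neighbourhood in $H$. Lemma \ref{lem:smv on w} then applies directly, since $B_{i+1}$ is anticomplete to $A_1\cup A_2$ ($A_2=A_2(i+2)$ has no neighbours in $B_{i+1}$). It yields $|N(v)\cap B_i|>\ceil{\frac{\omega}{4}}$, and hence $|B_i|>\ceil{\frac{\omega}{4}}$. When $A_3(i+1)=\emptyset$, the direct argument above together with the fact that $G$ has no small vertices finishes the proof.
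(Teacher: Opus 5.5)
Your argument is correct, but when $A_3(i+1)\neq\emptyset$ it takes a different route from the paper.

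\textbf{The paper's route.} The paper fixes a single vertex $w\in B_{i+1}\setminus N(A_3(i+1))$ with minimal neighbourhood in $B_{i+2}$. It then shows directly that $N(w)\setminus B_i$ is a clique. Take a non-adjacent pair $s\in B_{i+1}$, $s'\in N_{B_{i+2}}(w)$.
\begin{itemize}
\item If $s\notin N(A_3(i+1))$, minimality gives an induced $C_4$. This is your third paragraph.
\item If $s\in N_{B_{i+1}}(A_3(i+1))$, your minimality argument cannot reach this case, since $w$ is minimal only among vertices anticomplete to $A_3(i+1)$. The paper excludes it with the induced $P_7$ $N_{A_3(i+1)}(s)-s-w-s'-a-N_{B_{i-1}}(a)-N_{B_{i-1}}(b)$, which runs through the special pair $\{a,b\}$.
\end{itemize}

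\textbf{Your route.} You split on whether $A_3(i+1)$ is empty. When it is not, you apply Lemma~\ref{lem:smv on w} at index $i+1$. Its hypotheses hold because $A_1=\emptyset$ and $A_2=A_2(i+2)$ has no neighbours in $B_{i+1}$. That lemma handles the troublesome $s$ through the neighbourhood nesting of Lemma~\ref{lem:single vertex in A_3(i)}~(3), not through a long induced path.

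\textbf{Comparison.} Your route is modular and never uses the special pair. The paper's route is uniform and self-contained.

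In fact neither the $P_7$ nor the case split is needed. Since $B_i$ is complete to $B_{i+1}$, Lemma~\ref{lem:blowup-threeneighbor}~(3) makes $B_{i+1}\setminus N(t)$ anticomplete to $N_{B_{i+2}}(t)$ for every $t\in A_3(i+1)$. Then Lemma~\ref{lem:single vertex in A_3(i)}~(3) shows that the vertex of $B_{i+1}$ with minimum neighbourhood in $H$ is automatically anticomplete to $A_3(i+1)$. Your direct argument then works for every $s\in B_{i+1}$.

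\textbf{Two small points.}
\begin{itemize}
\item $q_{i+1}$ is the wrong extreme to pick, since it is complete to $B_i\cup B_{i+2}$.
\item The ``$P_4$-structure'' alternative in your third paragraph never occurs. Both $s'$ and $t$ lie in the clique $B_{i+2}$, so you always get the induced $C_4$ $s-v-t-s'-s$.
\end{itemize}
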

\begin{proof}
    Let $w\in B_{i+1}\setminus N(A_3(i+1))$ with minimal neighborhood in $B_{i+2}$. By Lemma \ref{lem:A_3(i) is empty}, $N(w)\setminus B_{i}\subseteq B_{i+1}\cup B_{i+2}$. 
    Suppose that $N(w)\setminus B_{i}$ has two non-adjacent vertices $s,s'$
    with $s\in B_{i+1}$ and $s'\in B_{i+2}$. 
    If $s\in B_{i+1}\setminus N(A_3(i+1))$, then by the minimality of $w$, we have $s$ has a neighbor $s''\in B_{i+2}$ with $s''w\notin E(G)$. 
    Then $s-w-s'-s''-s$ is an induced $C_4$. 
    So we may assume that $s\in N_{B_{i+1}}(A_3(i+1))$. 
    Then $N_{A_3(i+1)}(s)-s-w-s'-a-N_{B_{i-1}}(a)-N_{B_{i-1}}(b)$ is an induced $P_7$.
\end{proof}

\begin{lemma}\label{clm:neighbor complete}
    For every special pair $\{a_0,b_0\}$ of $A_2(i+2)$, each vertex in $(N(a_0)\cup N(b_0))\cap A'_3(i-2)$ is complete to $A_2(i+2)$. 
\end{lemma}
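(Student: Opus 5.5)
We argue by contradiction. Fix a special pair $\{a_0,b_0\}\subseteq A'_3(i-2)$ for $A_2(i+2)$, and let $v\in A_2(i+2)$. By Lemma \ref{lem:A_2(i) is a clique}, $A_2(i+2)$ is a clique, so $a_0$ and $b_0$ are complete to all of $A_2(i+2)$. By Lemma \ref{lem:basic properties of A_2(i)}, $a_0,b_0$ are complete to $B_{i+2}$ and have disjoint neighborhoods in $B_{i-1}$. Suppose some $c\in N(a_0)\cap A'_3(i-2)$ (the case of $N(b_0)$ is symmetric) has a non-neighbor $u\in A_2(i+2)$. Then $c\neq a_0,b_0$. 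Since $c$ is adjacent to $a_0$, the $C_4$-freeness of $G$ will control how $c$ relates to $b_0$ and to $u$.

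The first step is to rule out $cb_0\in E(G)$. If $c$ were adjacent to both $a_0$ and $b_0$, then $a_0-u-b_0-c-a_0$ would be an induced $C_4$, because $a_0b_0\notin E(G)$ and $cu\notin E(G)$. Hence $cb_0\notin E(G)$.

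The second step uses the non-edge $cb_0$ to build a long induced path running from $c$ through $a_0$, $u$ and $b_0$ and then around the blowup. The candidate is
\[
c-a_0-u-b_0-N_{B_{i-1}}(b_0)-q_i-q_{i+1}.
\]
We check the non-edges in turn.
\begin{itemize}
\item $u$ has no neighbors in $B_{i-1}\cup B_i\cup B_{i+1}$, since $u\in A_2(i+2)$ is supported only on $\{i+2,i-2\}$.
\item $a_0$ is not adjacent to $N_{B_{i-1}}(b_0)$, because $a_0$ and $b_0$ have disjoint neighborhoods in $B_{i-1}$.
\item Vertices of $A'_3(i-2)$ are anticomplete to $B_i\cup B_{i+1}$.
\item $c$ must not be adjacent to the chosen $d\in N_{B_{i-1}}(b_0)$. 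If $c$ is adjacent to every such $d$, then $c$ and $b_0$ have a common neighbor in $B_{i-1}$. In that case we replace the path by the alternative $c-d-b_0-u-a_0-\ldots$ or use the cycle $c-a_0-u-b_0-d-c$, which is an induced $C_5$ and so gives no contradiction directly. For this reason the main argument should route through $B_{i+2}$ instead.
\end{itemize}

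The main obstacle is this adjacency between $c$ and $N_{B_{i-1}}(b_0)$, so we split into two cases.

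\textbf{Case A: $c$ has a common neighbor with $b_0$ in $B_{i-1}$.} By Lemma \ref{lem:nonedge in A_3(i)}, $c$ and $b_0$ then have disjoint neighborhoods in $B_{i+2}$. But $b_0$ is complete to $B_{i+2}$, so $c$ has no neighbor in $B_{i+2}$. This forces $c\in B_{i-2}$. Then $a_0\in A_3(i-2)$, since $a_0$ has a neighbor in $B_{i-1}$ and is adjacent to $c$. By Lemma \ref{lem:edge in $A_3(i)$}, or by Lemma \ref{lem:nonedge in A_3(i)} applied to the pair $c,b_0$, the vertex $c$ would then have a neighbor in $B_{i+2}$, which is a contradiction.

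\textbf{Case B: $c$ and $b_0$ have disjoint neighborhoods in $B_{i-1}$.} Then the displayed path is an induced $P_7$, which is a contradiction.

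If the exact choice of endpoints fails in Case B, we will fall back on a bad $P_7$ (Lemma \ref{lem:badP7}) with the last vertex taken from $B_{i+1}$. In both cases we conclude that $c$ is complete to $A_2(i+2)$.
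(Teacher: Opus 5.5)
This is the paper's argument: $C_4$-freeness gives $cb_0\notin E(G)$, and then $c-a_0-u-b_0-N_{B_{i-1}}(b_0)-q_i-q_{i+1}$ is an induced $P_7$. You are right to flag the adjacency between $c$ and $N_{B_{i-1}}(b_0)$, which the paper leaves implicit, and your Case B is exactly the paper's conclusion.

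Your Case A, however, is argued incorrectly, even though it ends at the right contradiction.

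\begin{itemize}
\item You correctly show that $c$ has no neighbor in $B_{i+2}$, but you then say this forces $c\in B_{i-2}$. That is false: by the definition of a nice blowup, every vertex of $B_{i-2}$ has a neighbor in $B_{i-3}=B_{i+2}$.
\item The next claim, $a_0\in A_3(i-2)$, does not follow from what precedes it.
\end{itemize}

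The correct argument is one line. Every vertex of $A'_3(i-2)$, whether in $A_3(i-2)$ or in $B_{i-2}$, has a neighbor $x\in B_{i+2}$, and $b_0$ is complete to $B_{i+2}$. Suppose $c$ and $b_0$ also had a common neighbor $y\in B_{i-1}$. Since $cb_0\notin E(G)$ and $B_{i+2}$ is anticomplete to $B_{i-1}$, the cycle $c-x-b_0-y-c$ would be an induced $C_4$.

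Hence Case A never occurs, and $c$ and $b_0$ always have disjoint neighborhoods in $B_{i-1}$. So the displayed path is always an induced $P_7$, and your fallback to a bad $P_7$ is unnecessary.
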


\begin{proof}
    Suppose not. 
    By symmetry, we may assume $a_0$ has a neighbor $a'\in A'_3(i-2)$ that has a non-neighbor $a''\in A_2(i+2)$. 
    By definition of $a_0,b_0$ and $C_4$-freeness of $G$, $a'b_0\notin E(G)$. 
    Then $a'-a_0-a''-b_0-N_{B_{i-1}}(b_0)-q_i-q_{i+1}$ is an induced $P_7$.
\end{proof}

Let $w$ be a vertex in $B_{i-2}\setminus N(A_3(i-2))$ with minimal neighborhood in $H$ among all vertices in $B_{i-2}\setminus N(A_3(i-2))$.

\begin{lemma}\label{clm:w big in left}
    $N(w)\setminus B_{i-1}$ is a clique and so $|N_{B_{i-1}}(w)|>\ceil{\frac{\omega}{4}}$. 
\end{lemma}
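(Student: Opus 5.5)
We follow the small-vertex template of Lemmas \ref{lem:smv on w} and \ref{lem:A'_3(i-2) is large}. Since $G$ has no small vertices, once $N(w)\setminus B_{i-1}$ is shown to be a clique we get $d(w)\le (\omega-1)+|N_{B_{i-1}}(w)|$. Then $d(w)>\ceil{\frac54\omega}-1$ forces $|N_{B_{i-1}}(w)|>\ceil{\frac{\omega}{4}}$. So all the work is in proving the clique property.

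\textbf{Locating $N(w)$.} We have $A_1=A_5=\emptyset$ and $A_2=A_2(i+2)$. By Lemma \ref{lem:A_3(i) is empty}, $A_3(i)=A_3(i+2)=\emptyset$. By choice, $w$ is anticomplete to $A_3(i-2)$. By Lemma \ref{lem:compare nbd of different A_3(i) in B_j}, applied with $A_3(i-2)\neq\emptyset$ (it contains $b$), $w$ is also anticomplete to $A_3(i-1)$. Hence
\[N(w)\setminus B_{i-1}\subseteq B_{i-2}\cup B_{i+2}\cup A_2(i+2).\]
By Lemma \ref{lem:A_2(i) is a clique}, $A_2(i+2)$ is a clique. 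By Lemma \ref{lem:basic properties of A_2(i)}, $A_2(i+2)\cup B_{i+2}$ is a clique. Also $a,b$ are complete to $B_{i+2}$ and have disjoint neighborhoods in $B_{i-1}$.

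\textbf{Ruling out a non-adjacent pair.} Suppose $s,s'\in N(w)\setminus B_{i-1}$ are non-adjacent. Since $B_{i-2}$ and $A_2(i+2)\cup B_{i+2}$ are both cliques, we may take $s\in B_{i-2}$ and $s'\in B_{i+2}\cup A_2(i+2)$.

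\emph{Case $s\notin N(A_3(i-2))$.} By minimality of $w$ in $H$, either $s$ has a neighbor $s''\in B_{i-1}\cup B_{i+2}$ with $s''w\notin E(G)$, or $s'$ lies in $A_2(i+2)$.
\begin{itemize}
\item If $s''\in B_{i+2}$, we aim for the $C_4$ $s-s''-s'-w-s$, or use Lemma \ref{lem:nbd containment for an edge} on the edge $ws$ against the clique $B_{i+2}$.
\item If $s''\in B_{i-1}$, the four vertices $s''-s-w-s'$ give a $P_4$-structure when $s'\in B_{i+2}$, contradicting niceness.
\item If $s'\in A_2(i+2)$, then $\{a,b\}$ is complete to $s'$, and we use $a$ or $b$ as in the next case.
\end{itemize}

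\emph{Case $s\in N(A_3(i-2))$.} Take $t\in A_3(i-2)$ adjacent to $s$, so $tw\notin E(G)$. By Lemmas \ref{lem:A_3(i-2) is complete to y}-type arguments and Lemma \ref{lem:basic properties of A_2(i)}, the special vertex $b$ (or $t$ itself) is complete to $B_{i+2}$, and a vertex of $A_3(i-2)$ adjacent to a vertex of $A_2(i+2)$ is complete to it (Lemma \ref{clm:neighbor complete}). We then aim for one of two long induced paths. The first is
\[N_{A_3(i-2)}(s)-s-w-s'-\cdots\]
continued through $N_{B_{i-1}}$ of the other special vertex, $q_i$ and $q_{i+1}$, giving an induced $P_7$. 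The second uses $s-w-s'-a-N_{B_{i-1}}(a)-N_{B_{i-1}}(b)$, mirroring Lemma \ref{lem:B_i is large when A_1 is empty}, where $a$ and $b$ have disjoint neighborhoods in $B_{i-1}$.

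\textbf{Main obstacle.} The hard step is this last case, where $s$ is adjacent to $A_3(i-2)$. We must choose the endpoints so the path is genuinely induced. This needs $t$ non-adjacent to $s'$; otherwise we get the $C_4$ $t-s-w-s'$. It also needs the $B_{i-1}$-neighbors to avoid $w$, which requires comparing $N_{B_{i-1}}(w)$ with the neighborhoods of $a$ and $b$ via Lemma \ref{lem:B_i complete to two large cliques in B_{i-1}}. We expect to split on whether $t$ equals $a$ or $b$, or is a neighbor of the special pair. Each subcase should close with either a $C_4$ or a bad $P_7$ (Lemma \ref{lem:badP7}).
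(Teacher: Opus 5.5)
Your reduction to the clique property and the containment $N(w)\setminus B_{i-1}\subseteq B_{i-2}\cup B_{i+2}\cup A_2(i+2)$ are correct and agree with the paper. So is the sub-case $s'\in B_{i+2}$, $s\notin N(A_3(i-2))$, where minimality of $w$ gives $s''$ and then a $C_4$ or a $P_4$-structure.

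\textbf{The gap.} The two remaining cases are not proved: $s'\in A_2(i+2)$, and $s'\in B_{i+2}$ with $s$ adjacent to some $t\in A_3(i-2)$. You only list candidate paths, and the facts you lean on are wrong or irrelevant.
\begin{itemize}
\item Nothing makes $t$ complete to $B_{i+2}$. In fact $ts'\notin E(G)$, since otherwise $t-s-w-s'-t$ is an induced $C_4$.
\item Lemma~\ref{clm:neighbor complete} is about neighbours of a special pair, not about arbitrary vertices of $A_3(i-2)$ with a neighbour in $A_2(i+2)$.
\item Lemma~\ref{lem:B_i complete to two large cliques in B_{i-1}} says nothing about $w\in B_{i-2}$.
\item Routing through the special pair does not obviously close. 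With $s'\in B_{i+2}$ and $x\in N_{B_{i-1}}(b)$, the chord $s'q_{i+1}$ destroys $s-w-s'-b-x-q_i-q_{i+1}$. Prepending $t$ instead leaves $tb$ and $tx$ uncontrolled.
\end{itemize}

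\textbf{Case $s'\in A_2(i+2)$ (whatever $s$ is).} Take $x\in N_{B_{i-1}}(b)$. Then $s-w-s'-b-x-q_i-q_{i+1}$ is an induced $P_7$:
\begin{itemize}
\item $sb\notin E(G)$, else $s-b-s'-w-s$ is a $C_4$;
\item $wx\notin E(G)$, else $w-x-b-s'-w$ is a $C_4$;
\item $sx\notin E(G)$, because $b$ is complete to $B_{i+2}$, so Lemma~\ref{lem:blowup-threeneighbor}(3) forces $B_{i-2}\setminus N(b)$ to be anticomplete to $N_{B_{i-1}}(b)$;
\item $s'q_{i+1}\notin E(G)$, since $A_2(i+2)$ is anticomplete to $B_{i+1}$.
\end{itemize}

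\textbf{Case $s'\in B_{i+2}$ and $t\in A_3(i-2)$ adjacent to $s$.} Avoid the special pair altogether.
\begin{itemize}
\item For any $x\in B_{i-1}\setminus N(w)$, the path $t-s-w-s'-q_{i+1}-q_i-x$ would be a bad $P_7$ (Lemma~\ref{lem:badP7}). So $w$ is complete to $B_{i-1}$.
\item Hence $w\in B_{i-2}\setminus N(t)$ has a neighbour in $N_{B_{i-1}}(t)$. By Lemma~\ref{lem:blowup-threeneighbor}(3), $B_{i-2}\setminus N(t)$ is anticomplete to $N_{B_{i+2}}(t)$.
\item Lemma~\ref{lem:single vertex in A_3(i)}(2), applied to $t$, then makes $s\in N_{B_{i-2}}(t)$ complete to $N_{B_{i+2}}(t)$.
\item Any $y\in N_{B_{i+2}}(t)$ now gives the induced $C_4$ $w-s'-y-s-w$.
\end{itemize}
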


\begin{proof}
    Suppose not. 
    Let $s,s'$ be two non-adjacent vertices in $N(w)\setminus B_{i-1}$. 
    By Lemmas \ref{lem:A_3(i) is empty} and \ref{lem:compare nbd of different A_3(i) in B_j}, $N(w)\setminus B_{i-1}\subseteq B_{i-2}\cup B_{i+2}\cup A_2(i+2)$. 
    Since $B_{i+2}\cup A_2(i+2)$ is a clique, we may assume that $s\in B_{i-2}$ and $s'\in B_{i+2}\cup A_2(i+2)$. 
    If $s'\in A_2(i+2)$, then $s-w-s'-b-N_{B_{i-1}}(b)-q_i-q_{i+1}$ is an induced $P_7$. 
    So we may assume that $s'\in B_{i+2}$. 
    Suppose first that $s\in B_{i-2}\setminus N(A_3(i-2))$.
    By the minimality of $w$, $s$ has a neighbor $s''$ in $B_{i+2}\cup B_{i-1}$ that is not adjacent to $w$. 
    If $s''\in B_{i+2}$, then $s''-s-w-s'-s''$ is an induced $C_4$. 
    If $s''\in B_{i-1}$, then $s''-s-w-s'$ is a $P_4$-structure.
    So we may assume that $s$ has a neighbor $s''\in A_3(i-2)$. 
    Since $s''-s-w-s'-q_{i+1}-q_i-B_{i-1}$ is not a bad $P_7$, we have $w$ is complete to $B_{i-1}$. 
    So $w$ and $s''$ have a common neighbor in $B_{i-1}$, and hence $w,s''$ have disjoint neighborhoods in $B_{i+2}$. 
    By Lemma \ref{lem:single vertex in A_3(i)} (2), $w-s'-N_{B_{i+2}}(s'')-s-w$ is an induced $C_4$.
\end{proof}

By Lemma \ref{lem:chordal has simplicial vertices}, we can assume that $a,b$ are simplicial vertices of the set that consists of all vertices in $A'_3(i-2)$ complete to $A_2(i+2)$.

\begin{lemma}\label{clm:a in B}
    $a\in B_{i-2}$. 
\end{lemma}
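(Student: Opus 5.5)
We argue by contradiction: assume $a\notin B_{i-2}$, so $a\in A_3(i-2)$. Then both $a$ and $b$ lie in $A_3(i-2)$, are non-adjacent, and are complete to $A_2(i+2)$. By Lemma \ref{lem:basic properties of A_2(i)}, $a$ and $b$ are complete to $B_{i+2}$ and have disjoint neighborhoods in $B_{i-1}$. The aim is to produce, from the two large cliques attached to $a$ and $b$ together with the vertex $w$ fixed before the statement, a clique in $G$ of size larger than $\omega$.

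First, by Lemma \ref{lem:smv on special vertices} applied to $a$ and $b$, the sets $N_{B_{i+2}}(a)$ and $N_{B_{i+2}}(b)$ are disjoint and complete, and each has size greater than $\ceil{\frac{\omega}{4}}$. This contradicts the fact that $a$ and $b$ are complete to $B_{i+2}$, since then $N_{B_{i+2}}(a)=N_{B_{i+2}}(b)=B_{i+2}$ cannot be disjoint.

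Checking that quoted statement, however, the lemma's "$N(a)\setminus B_{i+2}$ is a clique" is intended for the side where $a,b$ have private neighborhoods, which here is $B_{i-1}$. So if this route misreads the lemma, I will use the following fallback. Rerun the argument of Lemma \ref{lem:smv on special vertices} to show that $N(a)\setminus B_{i-1}$ and $N(b)\setminus B_{i-1}$ are cliques. Then $N_{B_{i-1}}(a)$ and $N_{B_{i-1}}(b)$ are disjoint, each has size greater than $\ceil{\frac{\omega}{4}}$, and they are complete to each other because $G$ is $C_6$-free. Next, $w\in B_{i-2}\setminus N(A_3(i-2))$ gives $|N_{B_{i-1}}(w)|>\ceil{\frac{\omega}{4}}$ by Lemma \ref{clm:w big in left}, and $w$ is non-adjacent to $a$ and $b$. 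Since $a,b\in A_3(i-2)$, Lemma \ref{lem:B_i complete to two large cliques in B_{i-1}} shows that $A'_3(i)\supseteq B_i$ is complete to $N_{B_{i-1}}(a)\cup N_{B_{i-1}}(b)$, and by Lemma \ref{lem:B_i is large when A_1 is empty} we have $|B_i|>\ceil{\frac{\omega}{4}}$.

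The main obstacle is to make $N_{B_{i-1}}(w)$ disjoint from, and complete to, the other two cliques. Completeness should follow from Lemma \ref{lem:single vertex in A_3(i)}(3)/(4) or from the $P_4$-structure condition. For disjointness, $w$ and $a$ have a common neighbor in $B_{i+2}$ (because $a$ is complete to $B_{i+2}$), so Lemma \ref{lem:nonedge in A_3(i)} forces their neighborhoods in $B_{i-1}$ to be disjoint, and likewise for $w$ and $b$. Once this is in place, $B_i\cup N_{B_{i-1}}(a)\cup N_{B_{i-1}}(b)\cup N_{B_{i-1}}(w)$ is a clique of size greater than $4\ceil{\frac{\omega}{4}}\ge\omega$, which is the desired contradiction. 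Hence $a\in B_{i-2}$.
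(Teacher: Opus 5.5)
Your fallback argument is correct and is essentially the paper's proof. Assume $a\in A_3(i-2)$. The mirror image of Lemma \ref{lem:smv on special vertices}, under the reflection fixing the index $i-2$, gives $|N_{B_{i-1}}(a)|,|N_{B_{i-1}}(b)|>\ceil{\frac{\omega}{4}}$, and Lemma \ref{clm:w big in left} gives the same bound for $w$. The three neighbourhoods in $B_{i-1}$ are pairwise disjoint, because $a,b,w$ are pairwise non-adjacent and share a neighbour in $B_{i+2}$. Lemma \ref{lem:B_i complete to two large cliques in B_{i-1}} makes them complete to $B_i$, and Lemma \ref{lem:B_i is large when A_1 is empty} then yields a clique larger than $\omega$.

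The first route, however, must be deleted rather than kept as an alternative. Lemma \ref{lem:smv on special vertices} is stated for special pairs of $A_2(i-2)$, whose private side is $B_{i+2}$. For the special pair of $A_2(i+2)$ considered here, $B_{i+2}$ is the common side and the private side is $B_{i-1}$. A quick sanity check confirms the misapplication: that argument never uses $a\notin B_{i-2}$. If it were valid, it would rule out every special pair of $A_2(i+2)$ chosen as in the paper, yet such pairs exist by Lemma \ref{lem:complete subgraphs}.

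Two small points in the fallback also need fixing.
\begin{itemize}
\item The mirrored Lemma \ref{lem:smv on special vertices} requires $a,b$ to be simplicial in the set of vertices of $A'_3(i-2)$ complete to $A_2(i+2)$. Cite the normalization made just before the statement.
\item Completeness among $N_{B_{i-1}}(a)$, $N_{B_{i-1}}(b)$ and $N_{B_{i-1}}(w)$ is automatic, since all three lie in the clique $B_{i-1}$. Neither $C_6$-freeness nor Lemma \ref{lem:single vertex in A_3(i)} is needed. What you actually must justify, and do not, is that $B_i$ is complete to $N_{B_{i-1}}(w)$. This follows from Lemma \ref{lem:B_i complete to two large cliques in B_{i-1}} applied to the pair $\{a,w\}$: they are non-adjacent vertices of $A'_3(i-2)$ whose neighbourhoods in $B_{i-1}$ you have already shown to be disjoint.
\end{itemize}
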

\begin{proof}
    Suppose not. 
    By Lemmas \ref{clm:w big in left} and \ref{lem:smv on special vertices},  $|N_{B_{i-1}}(w)|,|N_{B_{i-1}}(a)|,|N_{B_{i-1}}(b)|>\ceil{\frac{\omega}{4}}$.
    Since $a,b,w$ are pairwise non-adjacent and they have a common neighbor in $B_{i+2}$, $N_{B_{i-1}}(w)$, $N_{B_{i-1}}(a)$, $N_{B_{i-1}}(b)$ are pairwise disjoint. By Lemma \ref{lem:B_i complete to two large cliques in B_{i-1}}, $N_{B_{i-1}}(w)$, $N_{B_{i-1}}(a)$, $N_{B_{i-1}}(b)$ are complete to $B_{i}$. By Lemma \ref{lem:B_i is large when A_1 is empty},
    $N_{B_{i-1}}(w)\cup N_{B_{i-1}}(a)\cup N_{B_{i-1}}(b)\cup B_i$ is a clique of size larger than $\omega$, a contradiction. 
\end{proof}

\begin{lemma}\label{clm:A3i-2 big in left}
    Let $v$ be a $(A_3(i-2), B_{i+2}\cup A_2(i+2))$-minimal simplicial vertex.
    Then $N(v)\setminus B_{i-1}$ is a clique and so $|N_{B_{i-1}}(v)|>\ceil{\frac{\omega}{4}}$.
\end{lemma}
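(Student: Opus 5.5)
The approach is the standard small vertex argument: show that $N(v)\setminus B_{i-1}$ is a clique. Since $G$ has no small vertices, this forces $|N_{B_{i-1}}(v)|>\ceil{\frac{\omega}{4}}$. We are in the setting $A_1=\emptyset$, $A_5=\emptyset$ and $A_2=A_2(i+2)$, and $A_3(i)=A_3(i+2)=\emptyset$ by Lemma \ref{lem:A_3(i) is empty}.

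\textbf{Locating $N(v)$.} First I would list where neighbours of $v\in A_3(i-2)$ can lie. By Lemmas \ref{lem:anticompleteA3-1}, \ref{lem:anticompleteA3}, \ref{anti-A2A3} and the emptiness results above,
\[
N(v)\setminus B_{i-1}\subseteq B_{i-2}\cup B_{i+2}\cup A_3(i-2)\cup A_2(i+2).
\]
Here $B_{i+2}\cup A_2(i+2)$ is a clique by Lemma \ref{lem:basic properties of A_2(i)}, and $B_{i-2}\cup A_3(i-2)$ restricted to $N(v)$ is a clique. For the latter, $N_{A_3(i-2)}(v)$ is a clique because $v$ is simplicial in $A_3(i-2)$, and it is complete to $N_{B_{i-2}}(v)$ by Lemma \ref{lem:edge in $A_3(i)$}. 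So any two non-adjacent vertices $s,s'\in N(v)\setminus B_{i-1}$ satisfy $s\in B_{i-2}\cup A_3(i-2)$ and $s'\in B_{i+2}\cup A_2(i+2)$.

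\textbf{Case $s\in B_{i-2}$.} If $s'\in B_{i+2}$, this contradicts Lemma \ref{lem:single vertex in A_3(i)} (2) or the $C_4$-freeness applied with $N_{B_{i-2}}(v)$. The relevant fact is that $N_{B_{i-2}}(v)$ is complete to $N_{B_{i+2}}(v)$, at least when $B_{i-2}\setminus N(v)$ is anticomplete to $N_{B_{i+2}}(v)$. Otherwise I would use Lemma \ref{lem:blowup-threeneighbor} (3) to pick the correct side. If $s'\in A_2(i+2)$, I would use the special pair $\{a,b\}$, with $a\in B_{i-2}$ by Lemma \ref{clm:a in B} and $b\in A_3(i-2)$. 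Either $v$ is complete to $A_2(i+2)$, in which case $s'$ is adjacent to $v$ and the earlier clique statements apply; or I build an induced $P_7$ of the form $s-v-s'-b-N_{B_{i-1}}(b)-q_i-q_{i+1}$, as in Lemma \ref{lem:smv on special vertices}. Lemma \ref{clm:neighbor complete} should handle when $v$ is a neighbour of $a$ or $b$.

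\textbf{Case $s\in A_3(i-2)$.} This uses minimality, as in the proof of Lemma \ref{lem:minimal simplicial vertex}. Since $v$ is an $(A_3(i-2),Y)$-minimal simplicial vertex with $Y=B_{i+2}\cup A_2(i+2)$ a clique, Lemma \ref{lem:minimal simplicial vertex} gives an induced $P_4=y-v-c-d$ with $y\in Y$ and $c,d\in A_3(i-2)$. This contradicts Lemma \ref{lem:P3 in A_3(i)}: applied to $v-c-d$, it gives $N_H(v)\subseteq N_H(c)$, and then $y\in B_{i+2}$ is adjacent to $c$ but not to $v$, which is impossible. If $y\in A_2(i+2)$, I would instead argue via Lemma \ref{lem:blowup-twoneighbor} or an induced $P_7$ of the form $d-c-v-y-\cdots$ through $b$ and $B_{i-1}$.

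\textbf{Main obstacle.} The hardest step is the subcase $y\in A_2(i+2)$ (equivalently $s'\in A_2(i+2)$ with $s$ not complete to $A_2(i+2)$). There the contradiction must come from a long induced path through the special pair $b$, and I expect to need to check the adjacency of $v$ and $c$ to $b$ carefully using Lemma \ref{clm:neighbor complete}.
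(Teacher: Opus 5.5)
Your reduction is sound and is the paper's. Two non-adjacent vertices of $N(v)\setminus B_{i-1}$ must be $s\in A'_3(i-2)$ and $s'\in B_{i+2}\cup A_2(i+2)$. The case $s\in B_{i-2}$, $s'\in B_{i+2}$ is excluded by Lemma~\ref{lem:single vertex in A_3(i)}~(2) together with Lemma~\ref{lem:blowup-threeneighbor}~(3). For $s\in A_3(i-2)$, Lemma~\ref{lem:minimal simplicial vertex} gives an induced $y-v-c-d$, which Lemma~\ref{lem:P3 in A_3(i)} rules out when $y\in B_{i+2}$. You have the adjacencies reversed there: $y$ is adjacent to $v$, hence to $c$, contradicting $yc\notin E(G)$.

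The gap is that neither case with the offending neighbour in $A_2(i+2)$ is actually proved. For $s\in B_{i-2}$, $s'\in A_2(i+2)$, your dichotomy concerns $v$, but $vs'\in E(G)$ is given. What must be contradicted is $ss'\notin E(G)$. Your path $s-v-s'-b-\cdots$ needs $vb,sb\notin E(G)$, which you never establish, and $v$ may even equal $b$. The missing observation is that $s$ lies in the clique $B_{i-2}$ together with $a$ (Lemma~\ref{clm:a in B}), so $s$ is complete to $A_2(i+2)$ by Lemma~\ref{clm:neighbor complete}. The paper phrases this as follows: $sb\notin E(G)$, since otherwise $a-s-b-s'-a$ is an induced $C_4$, and then $s-a-s'-b-N_{B_{i-1}}(b)-q_i-q_{i+1}$ is an induced $P_7$. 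The path runs through $a$, not $v$.

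The subcase $y\in A_2(i+2)$, which you flag as the main obstacle, is left open, and it is the heart of the lemma. Here is the missing argument.
\begin{enumerate}
\item Since $cy\notin E(G)$, $c$ is not complete to $A_2(i+2)$. By Lemma~\ref{clm:neighbor complete}, $c$ is adjacent to neither $a$ nor $b$; in particular $v\neq b$.
\item Next, $va\notin E(G)$. Suppose otherwise. If $vb\in E(G)$, then $a$ sees exactly one end of the edge $vb$ of $A_3(i-2)$, contradicting Lemma~\ref{lem:edge in $A_3(i)$}. If $vb\notin E(G)$, then $v$ is complete to $A_2(i+2)$ as a neighbour of $a$. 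So $\{v,b\}$ is itself a special pair, and Lemma~\ref{clm:neighbor complete} applied to it forces $c$ to be complete to $A_2(i+2)$, a contradiction.
\item Now $c-v-y-a-N_{B_{i-1}}(a)-q_i-q_{i+1}$ is an induced $P_7$. Here $c$ and $v$ miss $N_{B_{i-1}}(a)$ by Lemma~\ref{lem:nonedge in A_3(i)}. This is because $a$ is complete to $B_{i+2}$ (Lemma~\ref{lem:basic properties of A_2(i)}), so it shares a $B_{i+2}$-neighbour with each of them.
\end{enumerate}
Your suggested path $d-c-v-y-b-N_{B_{i-1}}(b)-q_i$ can also be made induced. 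But that requires first proving $cb,db,vb\notin E(G)$ by exactly this special-pair argument, which is the step your proposal does not supply.
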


\begin{proof}
    Suppose not. 
    Let $s,s'$ be two non-adjacent neighbors of $v$ in $N(v)\setminus B_{i-1}$. 
    Note that $N(v)\setminus B_{i-1}\subseteq B_{i-2}\cup B_{i+2}\cup A_2(i+2)\cup A_3(i-2)$.  
    Since $v$ is simplicial in $A_3(i-2)$ and by Lemma \ref{lem:edge in $A_3(i)$}, we have at most one of $s,s'\in A'_3(i-2)$. 
    Since $B_{i+2}\cup A_2(i+2)$ is a clique, we may assume that $s\in A'_3(i-2)$ and $s'\in B_{i+2}\cup A_2(i+2)$. 
    Assume first that $s\in B_{i-2}$. 
    By Lemma \ref{lem:single vertex in A_3(i)} (2), $s'\in A_2(i+2)$.
    By Lemma \ref{clm:a in B}, $as\in E(G)$ and so $sb\notin E(G)$ by $G$ is $C_4$-free. 
    Then $s-a-s'-b-N_{B_{i-1}}(b)-q_i-q_{i+1}$ is an induced $P_7$. 
    So we may assume that $s\in A_3(i-2)$. 
    If $s$ is simplicial in $A_3(i-2)$, then $s$ has a neighbor $s''\in A_2(i+2)\cup B_{i+2}$ that is not adjacent to $v$ by the minimality of $v$, and so $s-s''-s'-v-s$ is an induced $C_4$. 
    So $s$ is not simplicial in $A_3(i-2)$ and thus $s$ has a neighbor $s''\in A_3(i-2)$ that is not adjacent to $v$. 
    It follows that $v-s-s''$ is an induced $P_3$ in $A_3(i-2)$. 
    By Lemma \ref{lem:P3 in A_3(i)}, $s'\in A_2(i+2)$.
    Since $s$ is not complete to $A_2(i+2)$, $v\neq a,b$ by Lemma \ref{clm:neighbor complete}. If $v$ is complete to $A_2(i+2)$, then $v$ is adjacent to $a$ and $b$ by Lemma \ref{clm:neighbor complete}. Then $a\in B_{i-2}$ mixes on $vb\in A_3(i-2)$, which contradicts Lemma \ref{lem:edge in $A_3(i)$}. So $v$ is not complete to $A_2(i+2)$ and so $av\notin E(G)$ or $bv\notin E(G)$ by Lemma \ref{lem:basic properties of A_2(i)}.
    By symmetry, we assume that $av\notin E(G)$. Then $s-v-s'-a-N_{B_{i-1}}(a)-q_i-q_{i+1}$ is an induced $P_7$. 
\end{proof}

\begin{lemma}\label{lem:A_3i-2 A_2i+2 Bi+2 is clique}
    $A_3(i-2)\cup A_2(i+2)\cup B_{i+2}$ is a clique. 
\end{lemma}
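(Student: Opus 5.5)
We argue by contradiction with the same ``too many large disjoint cliques'' counting used in Lemmas \ref{clm:a in B} and \ref{clm:A3i-2 big in left}. Since $B_{i+2}\cup A_2(i+2)$ is a clique by Lemmas \ref{lem:basic properties of A_2(i)} and \ref{lem:A_2(i) is a clique}, there are two cases.

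\emph{First step: $A_3(i-2)$ is complete to $B_{i+2}\cup A_2(i+2)$.} Suppose $t\in A_3(i-2)$ has a non-neighbor $s'\in B_{i+2}\cup A_2(i+2)$.
\begin{itemize}
\item If $s'\in A_2(i+2)$, then $t\neq b$. If $t$ is adjacent to $a$ or $b$, Lemma \ref{clm:neighbor complete} gives $ts'\in E(G)$, a contradiction. So $t$ is anticomplete to $\{a,b\}$. By Lemma \ref{clm:a in B}, $a\in B_{i-2}$.
\begin{itemize}
\item If $at\notin E(G)$ but $t$ and $a$ have a common neighbor in $B_{i+2}$, they have disjoint neighborhoods in $B_{i-1}$ (Lemma \ref{lem:nonedge in A_3(i)}). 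Then $t$--$N_{B_{i+2}}(t)$--$s'$--$b$--$N_{B_{i-1}}(b)$--$q_i$--$q_{i+1}$ should yield an induced $P_7$ or bad $P_7$, using that $b$ is complete to $B_{i+2}$.
\item Otherwise $t$ and $a$ have disjoint neighborhoods in $B_{i+2}$. Then $t$ mixes on the clique $B_{i+2}\cup\{s'\}$ side, and we would produce a $P_7$ through $B_{i-1}$ and $q_i$.
\end{itemize}
\item If $s'\in B_{i+2}$, then $t$ has a non-neighbor in $B_{i+2}$. By Lemma \ref{lem:compare nbd of different A_3(i) in B_j} and the fact that $a,b$ are complete to $B_{i+2}$ (Lemma \ref{lem:basic properties of A_2(i)}), we try to derive a $C_4$ $t$--$x$--$s'$--$y$ with $x,y\in\{a,b\}\cup A_2(i+2)$. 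Failing that, we argue via Lemma \ref{lem:single vertex in A_3(i)}(2) that $N_{B_{i-2}}(t)$ is complete to $N_{B_{i+2}}(t)$, and extend to a $P_7$ ending at $q_i$.
\end{itemize}
I expect this case analysis, especially the $s'\in B_{i+2}$ subcase, to be the main obstacle. If direct path-finding fails, the fallback is counting: take a $(A_3(i-2),B_{i+2}\cup A_2(i+2))$-minimal simplicial vertex $v$ in the component of $t$. Lemma \ref{clm:A3i-2 big in left} gives $|N_{B_{i-1}}(v)|>\ceil{\frac{\omega}{4}}$, and Lemma \ref{clm:w big in left} gives the same bound for $w$. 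Lemma \ref{lem:smv on special vertices} gives it for $b$. These three neighborhoods in $B_{i-1}$ are pairwise disjoint, because the three vertices share a neighbor in $B_{i+2}$. Together with $B_i$, which is complete to them by Lemma \ref{lem:B_i complete to two large cliques in B_{i-1}} and has $|B_i|>\ceil{\frac{\omega}{4}}$ by Lemma \ref{lem:B_i is large when A_1 is empty}, they form a clique of size $>\omega$. The real work is showing that $v$ can be chosen non-adjacent to $w$ and $b$ with a common neighbor in $B_{i+2}$.

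\emph{Second step: $A_3(i-2)$ is a clique.} Suppose it is not. By Lemma \ref{lem:P4-freenessofA3} and Lemmas \ref{lem:chordal has simplicial vertices} and \ref{lem:two non-adjacent simplicail verteices}, there are two non-adjacent minimal simplicial vertices $v_1,v_2$. By the first step, both are complete to $B_{i+2}\cup A_2(i+2)$, so they share neighbors in $B_{i+2}$ and hence have disjoint neighborhoods in $B_{i-1}$. Together with $w$, which is anticomplete to $A_3(i-2)$ and also adjacent to a common vertex of $B_{i+2}$, Lemmas \ref{clm:A3i-2 big in left} and \ref{clm:w big in left} give three pairwise disjoint subsets of $B_{i-1}$, each of size $>\ceil{\frac{\omega}{4}}$. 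Adding $B_i$ via Lemmas \ref{lem:B_i complete to two large cliques in B_{i-1}} and \ref{lem:B_i is large when A_1 is empty} gives a clique of size $>\omega$, a contradiction.

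Combining the two steps, $A_3(i-2)\cup A_2(i+2)\cup B_{i+2}$ is a clique.
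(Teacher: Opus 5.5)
\textbf{There is a genuine gap: your Step~1 is not proved, and it carries essentially the whole lemma.}

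\emph{Sub-case $s'\in A_2(i+2)$.} The path $t-N_{B_{i+2}}(t)-s'-b-N_{B_{i-1}}(b)-q_i-q_{i+1}$ is not induced. By Lemma~\ref{lem:basic properties of A_2(i)}, $b$ is complete to $B_{i+2}$, so the vertex of $N_{B_{i+2}}(t)$ is adjacent to $b$; it is also adjacent to $q_{i+1}$. Your ``otherwise'' sub-case never occurs, because $a$ is complete to $B_{i+2}$ as well, so $t$ and $a$ always share a neighbour there.

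\emph{Sub-case $s'\in B_{i+2}$.} This is only a plan, not an argument.

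\emph{Fallback count.} It is missing exactly the point you flag. Nothing available at this stage guarantees that a minimal simplicial vertex $v$ of the component $K_t$ of $t$ shares a neighbour in $B_{i+2}$ with $w$; that is essentially what Step~1 is trying to prove. Also, $v$ could be adjacent to $b$ when $b\in K_t$. Your Step~2 is fine, but only conditionally on Step~1.

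\textbf{How the paper avoids this.} It reverses your order and first proves that $A_3(i-2)$ is a clique. The key is that $a,b$ were chosen simplicial in the set $S$ of vertices of $A'_3(i-2)$ complete to $A_2(i+2)$.
\begin{itemize}
\item Take non-adjacent minimal simplicial vertices $v_1,v_2$ of $A_3(i-2)$. Then $b$ cannot be adjacent to both: by Lemma~\ref{clm:neighbor complete} both would lie in $S$, so $b$ would not be simplicial in $S$. Hence some $v$ has $vb\notin E(G)$.
\item If $va\notin E(G)$, then $v,a,b$ share a neighbour in $B_{i+2}$, so their neighbourhoods in $B_{i-1}$ are pairwise disjoint. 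Together with $B_i$ the count exceeds $\omega$.
\item If $va\in E(G)$, then $v\in S$ and $\{v,b\}$ is a pair of non-adjacent simplicial vertices of $S$ inside $A_3(i-2)$. This contradicts Lemma~\ref{clm:a in B}.
\end{itemize}
Completeness then follows in two lines. Every vertex of the clique $A_3(i-2)$ is adjacent to $b$, hence complete to $A_2(i+2)$ by Lemma~\ref{clm:neighbor complete}. Adjacent vertices of $A_3(i-2)$ have the same neighbourhood in $B_{i-2}$, and $a\in B_{i-2}$ is not adjacent to $b$, so $a$ is anticomplete to $A_3(i-2)$. Thus each $\{a,v\}$ is a special pair, and $v$ is complete to $B_{i+2}$ by Lemma~\ref{lem:basic properties of A_2(i)}.

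\textbf{Repairing Step~1 in your order.}
\begin{itemize}
\item $s'\in A_2(i+2)$ and $b\in K_t$: since $K_t$ is $P_4$-free (Lemma~\ref{lem:P4-freenessofA3}), $t$ and $b$ have a common neighbour $u\in K_t$. By Lemma~\ref{clm:neighbor complete}, $u$ is adjacent to $s'$. Then $t-u-s'-a-N_{B_{i-1}}(a)-q_i-q_{i+1}$ is an induced $P_7$. The path must go through $a$, which is anticomplete to $K_t$, rather than through $b$.
\item $s'\in A_2(i+2)$ and $b\notin K_t$: run your count with $v,a,b$ instead of $v,w,b$. Both $a$ and $b$ are anticomplete to $K_t$ and complete to $B_{i+2}$, so the common neighbour in $B_{i+2}$ is automatic.
\item $s'\in B_{i+2}$: by the previous cases $t$ is complete to $A_2(i+2)$. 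If $t$ misses $a$ or $b$, then $t$ lies in a special pair and is complete to $B_{i+2}$, a contradiction. If $t$ sees both, then $a\in B_{i-2}$ mixes on the edge $tb$ of $A_3(i-2)$, which is impossible since adjacent vertices of $A_3(i-2)$ have the same neighbourhood in $B_{i-2}$.
\end{itemize}
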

\begin{proof}
    We first show that $A_3(i-2)$ is a clique. 
    Suppose not. 
    By Lemma \ref{lem:two non-adjacent simplicail verteices}, there are two non-adjacent $(A_3(i-2), A_2(i+2)\cup B_{i+2})$-minimal simplicial vertices $v_1,v_2$.
    If $b$ is adjacent to both $v_1$ and $v_2$, then $v_1$ and $v_2$ complete to $A_2(i+2)$ by Lemma \ref{clm:neighbor complete}, which contradicts the choice of $b$.
    So we may assume that $v_1$ is not adjacent to $b$.
    For convenience, let $v=v_1$.
    Suppose first that $av\notin E(G)$. Since $a,b,v$ have a common neighbor in $B_{i+2}$, $a,b,v$ have pairwise disjoint neighborhoods in $B_{i-1}$. By Lemma \ref{lem:B_i complete to two large cliques in B_{i-1}}, $N_{B_{i-1}}(v)\cup N_{B_{i-1}}(a)\cup N_{B_{i-1}}(b)\cup B_i$ is a clique.
    By Lemmas \ref{clm:A3i-2 big in left}, \ref{lem:smv on special vertices} and \ref{lem:B_i is large when A_1 is empty}, $|N_{B_{i-1}}(v)\cup N_{B_{i-1}}(a)\cup N_{B_{i-1}}(b)\cup B_i|>\omega$, a contradiction. 
    So $av\in E(G)$. By Lemma \ref{clm:neighbor complete}, $v$ is complete to $A_2(i+2)$ and so $\{v,b\}$ is a special pair for $A_2(i+2)$. By Lemma \ref{lem:edge in $A_3(i)$}, $v$ and $b$ are simplicial in $A'_3(i-2)$. The fact that $v,b\in A_3(i-2)$ contradicts Lemma \ref{clm:a in B}. This proves that $A_3(i-2)$ is a clique.

    By Lemma \ref{clm:neighbor complete}, $A_3(i-2)$ is complete to $A_2(i+2)$. Since $a\in B_{i-2}$, $a$ is anticomplete to $A_3(i-2)$ by Lemma \ref{lem:edge in $A_3(i)$}. For any vertex $v\in A_3(i-2)$, $\{a,v\}$ is a special pair for $A_2(i+2)$ and so $v$ is complete to $B_{i+2}$ by Lemma \ref{lem:basic properties of A_2(i)}. This proves the lemma. 
\end{proof}

\begin{lemma}\label{clm:v in A3i-2 complete}
    $B_{i-2}$ is complete to $B_{i+2}\cup A_2(i+2)$. 
\end{lemma}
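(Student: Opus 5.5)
We argue by contradiction: suppose some $u\in B_{i-2}$ has a non-neighbor $s\in B_{i+2}\cup A_2(i+2)$, and split on where $s$ lies.

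\emph{Case $s\in A_2(i+2)$.} Recall $a\in B_{i-2}$ (Lemma \ref{clm:a in B}) and $a$ is complete to $A_2(i+2)$, so $u\neq a$. Since $B_{i-2}$ is a clique, $ua\in E(G)$. Because $\{a,b\}$ is a special pair, $s$ is adjacent to both $a$ and $b$, while $ab\notin E(G)$. If $ub\in E(G)$, then $u-a-s-b-u$ is an induced $C_4$. So $ub\notin E(G)$, and we consider the path
\[u-a-s-b-N_{B_{i-1}}(b)-q_i-q_{i+1}.\]
By Lemma \ref{lem:basic properties of A_2(i)}, $a$ and $b$ have disjoint neighborhoods in $B_{i-1}$. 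The point to check is that some $c\in N_{B_{i-1}}(b)$ is non-adjacent to $u$. If $u$ were complete to $N_{B_{i-1}}(b)$, take $c\in N_{B_{i-1}}(b)$; then $u-a-s-b-c-u$ is a $5$-cycle, which is allowed. So instead we use Lemma \ref{lem:generalized P4-structure} or the $P_4$-structure condition with $a-u-c-b$: here $a,u\in A'_3(i-2)$, $c\in B_{i-1}$ and $b\in A_3(i-2)$, which is not directly the forbidden configuration.

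A cleaner route is to use that $b$ is complete to $B_{i+2}$ (Lemma \ref{lem:basic properties of A_2(i)}). Then $u$ and $b$ have a common neighbor in $B_{i+2}$, namely a neighbor of $u$ there. By Lemma \ref{lem:nonedge in A_3(i)} they therefore have disjoint neighborhoods in $B_{i-1}$, so $u\notin N(N_{B_{i-1}}(b))$. Now the path above is induced:
\begin{itemize}
\item $s$ is anticomplete to $B_{i-1}$ and to $B_i,B_{i+1}$;
\item $a$ is non-adjacent to $N_{B_{i-1}}(b)$;
\item $q_i$ and $q_{i+1}$ miss $u,a,s,b$ by the definition of the sets.
\end{itemize}
This gives an induced $P_7$, a contradiction.

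\emph{Case $s\in B_{i+2}$.} By the previous case, $u$ is complete to $A_2(i+2)$, and $s$ is complete to $A_2(i+2)$ since $A_2(i+2)\cup B_{i+2}$ is a clique. Taking $v\in A_2(i+2)$, the vertices $u$ and $s$ have the common neighbor $v$. Pick $t\in N_{B_{i+2}}(u)$, which exists by the nice-blowup definition. Then $ts\in E(G)$ since $B_{i+2}$ is a clique, and $tv\in E(G)$. So $u-t-s-v-u$ is a $4$-cycle with chords $tv$ and $uv$, which is not a contradiction by itself.

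Instead, $u$ complete to $A_2(i+2)$ means $\{u,b\}$ is a pair in $A'_3(i-2)$ complete to $A_2(i+2)$. If $ub\notin E(G)$, then $\{u,b\}$ is a special pair and Lemma \ref{lem:basic properties of A_2(i)} makes $u$ complete to $B_{i+2}$, so $us\in E(G)$, a contradiction. If $ub\in E(G)$, then $u\in B_{i-2}$ is adjacent to $b\in A_3(i-2)$; we compare with $a$, which is non-adjacent to $b$. We argue that $a$ is complete to $B_{i+2}$ (again Lemma \ref{lem:basic properties of A_2(i)}), and then apply Lemma \ref{lem:nbd containment for an edge}/$P_4$-structure to $s-a-u-c$ with $c\in N_{B_{i-1}}(u)\setminus N(a)$, or to $s-a-u-b$. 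This last subcase is the main obstacle; I expect to resolve it via Lemma \ref{lem:single vertex in A_3(i)} (3), using that $N_{B_{i+2}}$ is ordered by inclusion in $B_{i-2}$.
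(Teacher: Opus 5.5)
Your case $s\in A_2(i+2)$ is correct. The induced $P_7$ $u-a-s-b-c-q_i-q_{i+1}$ with $c\in N_{B_{i-1}}(b)$ is exactly the argument of Lemma~\ref{clm:neighbor complete}, which the paper simply cites, since every vertex of $B_{i-2}\setminus\{a\}$ lies in $N(a)\cap A'_3(i-2)$. Your subcase $s\in B_{i+2}$, $ub\notin E(G)$ is also fine, because the proof of Lemma~\ref{lem:basic properties of A_2(i)} only uses that the two vertices are non-adjacent, lie in $A'_3(i-2)$ and are complete to $A_2(i+2)$. The gap is the remaining subcase $s\in B_{i+2}$, $ub\in E(G)$: you do not prove it, you only announce a route, so the proposal is incomplete as written.

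This subcase is in fact immediate from $C_4$-freeness. By Lemma~\ref{lem:basic properties of A_2(i)}, both $a$ and $b$ are complete to $B_{i+2}$; also $ab\notin E(G)$, and $ua\in E(G)$ because $B_{i-2}$ is a clique. So $ub\in E(G)$ would make $u-b-s-a-u$ an induced $C_4$.

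More simply, the split on $ub$ is unnecessary. Once $u$ is complete to $A_2(i+2)$, take $v\in A_2(i+2)$; $s$ is adjacent to $v$ because $A_2(i+2)\cup B_{i+2}$ is a clique. If $us\notin E(G)$, then $u-v-s-x-q_i-y-u$ with $x\in N_{B_{i+1}}(s)$ and $y\in N_{B_{i-1}}(u)$ is an induced $C_6$. Your attempted cycle $u-t-s-v-u$ stayed on the wrong side of the $C_5$. Going around through $B_{i+1},B_i,B_{i-1}$ is exactly Lemma~\ref{lem:blowup-twoneighbor}, which is how the paper concludes.

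The route you sketch via Lemma~\ref{lem:single vertex in A_3(i)}~(3) also works. Lemma~\ref{lem:blowup-threeneighbor}~(3) together with $N_{B_{i+2}}(b)=B_{i+2}$ forces $B_{i-2}\setminus N(b)$ to be anticomplete to $N_{B_{i-1}}(b)$, and then $B_{i+2}=N_{B_{i+2}}(a)\subseteq N_{B_{i+2}}(u)$. It is, however, much heavier than needed.
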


\begin{proof}
    Since $a\in B_{i-2}$, $B_{i-2}$ is complete to $A_2(i+2)$ by Lemma \ref{clm:neighbor complete}. By Lemma \ref{lem:blowup-twoneighbor}, $B_{i-2}$ is complete to $B_{i+2}$.
\end{proof}

\begin{lemma}
    $A_3(i-1)=\emptyset$. 
\end{lemma}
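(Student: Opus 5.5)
We argue by contradiction and aim for a clique of size larger than $\omega$, following the pattern of the preceding lemmas. Suppose $t\in A_3(i-1)$. The setting is: $A_1=\emptyset$, $A_2=A_2(i+2)$ is a clique, $A_3(i)=A_3(i+2)=\emptyset$, $B_i$ is complete to $B_{i+1}$, $a\in B_{i-2}$ and $b\in A_3(i-2)$ form a special pair, and $B_{i-2}$ is complete to $B_{i+2}\cup A_2(i+2)$.

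\textbf{Step 1: locate the neighbourhood of $b$ in $B_{i-1}$.} By Lemma \ref{lem:basic properties of A_2(i)}, $a$ and $b$ have disjoint neighbourhoods in $B_{i-1}$. Lemma \ref{lem:B_i complete to two large cliques in B_{i-1}} then says that $A'_3(i)$, and in particular $B_i$, is complete to $N_{B_{i-1}}(a)\cup N_{B_{i-1}}(b)$. Since $b\in A_3(i-2)$, Lemma \ref{lem:compare nbd of different A_3(i) in B_j} (with indices shifted) gives that the neighbours of $A_3(i-2)$ in $B_{i-1}$ are complete to $A_3(i-1)$. So $t$ is complete to $N_{B_{i-1}}(b)$.

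\textbf{Step 2: build the large clique.}
\begin{itemize}
\item Lemma \ref{lem:smv on special vertices} gives $|N_{B_{i-1}}(a)|,|N_{B_{i-1}}(b)|>\ceil{\frac{\omega}{4}}$.
\item Lemma \ref{clm:w big in left} gives $|N_{B_{i-1}}(w)|>\ceil{\frac{\omega}{4}}$, where $w\in B_{i-2}\setminus N(A_3(i-2))$.
\item Lemma \ref{lem:B_i is large when A_1 is empty} gives $|B_i|>\ceil{\frac{\omega}{4}}$.
\end{itemize}
If the three sets $N_{B_{i-1}}(w)$, $N_{B_{i-1}}(a)$, $N_{B_{i-1}}(b)$ were pairwise disjoint, then $B_i$ together with them would be a clique of size $>\omega$, as in Lemma \ref{clm:a in B}. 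But $w$ and $a$ are adjacent (both lie in $B_{i-2}$), so their neighbourhoods can overlap. So instead I would use $t$ to produce a second large set.

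\textbf{Step 3: a second large set from $A_3(i-1)$.} Take a $(A_3(i-1),H)$-minimal simplicial vertex $v$. Since $A_1\cup A_2(i-1)\cup A_2(i)=\emptyset$, $v$ has no neighbours in $A_1\cup A_2$ apart from possibly $A_2(i+2)$, which is anticomplete to $A_3(i-1)$ by Lemma \ref{anti-A2A3}. Hence Lemma \ref{lem:small vertex argument for simplicial vertices in A_3(i-2)} gives $|N_{B_i}(v)|,|N_{B_{i-2}}(v)|>\ceil{\frac{\omega}{4}}$. Similarly, a minimal $w'\in B_{i-1}\setminus N(A_3(i-1))$ satisfies the hypotheses of Lemma \ref{lem:smv on w} (here $B_{i-1}$ is anticomplete to $A_1\cup A_2$), so $|N_{B_i}(w')|,|N_{B_{i-2}}(w')|>\ceil{\frac{\omega}{4}}$.

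By Lemma \ref{lem:nonedge in A_3(i)}, $v$ and $w'$ have disjoint neighbourhoods in one of $B_i$ and $B_{i-2}$.
\begin{itemize}
\item If they are disjoint in $B_{i-2}$: since $B_{i-2}$ is complete to $B_{i+2}\cup A_2(i+2)$, the set $N_{B_{i-2}}(v)\cup N_{B_{i-2}}(w')\cup B_{i+2}\cup A_2(i+2)$ is a clique. Here $B_{i+2}\cup A_2(i+2)$ is large: it contains $N_{B_{i+2}}(a)\cup N_{B_{i+2}}(b)$, which has size $>2\ceil{\frac{\omega}{4}}$ by Lemma \ref{lem:smv on special vertices}. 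This gives a clique of size $>\omega$.
\item If they are disjoint in $B_i$: use Lemma \ref{lem:cross four cliques} (or Lemma \ref{lem:B_i complete to two large cliques in B_{i-1}} applied from the $i-2$ side) to show that $N_{B_i}(v)\cup N_{B_i}(w')$ is complete to $N_{B_{i-1}}(a)\cup N_{B_{i-1}}(b)$. These four sets have sizes $>\ceil{\frac{\omega}{4}}$, giving a clique of size $>\omega$.
\end{itemize}

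\textbf{Main obstacle.} The hard part is verifying the completeness needed in the last case. The statement of Lemma \ref{lem:cross four cliques} is phrased for the index pair $(i-1,i-2)$, so I would need to check that, after shifting, the pairs $(v,w')$ in $A'_3(i-1)$ and $(a,b)$ in $A'_3(i-2)$ fit it with the roles of $B_i$ and $B_{i-1}$. If that fails, the fallback is a direct $C_4$/$P_7$ argument with paths through $q_{i+1}$ and $q_{i+2}$.
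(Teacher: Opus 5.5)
\textbf{Gap in the case where $v$ and $w'$ have disjoint neighbourhoods in $B_{i-2}$.} You claim that $B_{i+2}\cup A_2(i+2)$ contains $N_{B_{i+2}}(a)\cup N_{B_{i+2}}(b)$ of size $>2\ceil{\frac{\omega}{4}}$ by Lemma \ref{lem:smv on special vertices}. In the present configuration this is false. By Lemma \ref{lem:basic properties of A_2(i)}, both $a$ and $b$ are complete to $B_{i+2}$, so $N_{B_{i+2}}(a)=N_{B_{i+2}}(b)=B_{i+2}$. Lemma \ref{lem:smv on special vertices} is stated for the mirror-image configuration (an $A_2(i-2)$ whose special pair lies in $A'_3(i-2)$). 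In the orientation of this subsection it bounds $N_{B_{i-1}}(a)$ and $N_{B_{i-1}}(b)$. That is exactly how you used it, correctly, in Step 2, so Steps 2 and 3 read the same lemma in incompatible ways. Nothing established at this point gives $|B_{i+2}\cup A_2(i+2)|$ a lower bound of order $\omega/2$. Hence the clique $N_{B_{i-2}}(v)\cup N_{B_{i-2}}(w')\cup B_{i+2}\cup A_2(i+2)$ is not shown to exceed $\omega$.

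\textbf{How to repair it.} The fix is the lemma you reserved for the other case. Here, apply Lemma \ref{lem:cross four cliques} to two pairs:
\begin{itemize}
\item $w',v\in A'_3(i-1)$, which are non-adjacent with disjoint neighbourhoods in $B_{i-2}$;
\item $a,b\in A'_3(i-2)$, which are non-adjacent with disjoint neighbourhoods in $B_{i-1}$.
\end{itemize}
Then $N_{B_{i-2}}(v)\cup N_{B_{i-2}}(w')\cup N_{B_{i-1}}(a)\cup N_{B_{i-1}}(b)$ is a clique made of four disjoint parts, each of size $>\ceil{\frac{\omega}{4}}$. This is what the paper does, using $(w,b)$ in place of $(a,b)$.

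\textbf{The case you flagged as the main obstacle needs no extra work.} When $v$ and $w'$ are disjoint in $B_i$, your Step 1 already shows, via Lemma \ref{lem:B_i complete to two large cliques in B_{i-1}}, that $B_i$ is complete to $N_{B_{i-1}}(a)\cup N_{B_{i-1}}(b)$. So $N_{B_i}(v)\cup N_{B_i}(w')\cup N_{B_{i-1}}(a)\cup N_{B_{i-1}}(b)$ is immediately a clique of size $>\omega$. Lemma \ref{lem:cross four cliques} does not fit that case at all. Finally, the observation that $t$ is complete to $N_{B_{i-1}}(b)$, and the concern in Step 2 about $w$ and $a$, play no role in the argument.
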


\begin{proof}
    Suppose not. 
    Let $u$ be a $(A_3(i-1), H)$-minimal simplicial vertex.
    By Lemma \ref{lem:small vertex argument for simplicial vertices in A_3(i-2)}, $|N_{B_i}(u)|, |N_{B_{i-2}}(u)|>\ceil{\frac{\omega}{4}}$.

    Let $w'\in B_{i-1}\setminus N(A_3(i-1))$ be a vertex with minimal neighborhood in $H$. 
    By Lemma \ref{lem:smv on w}, $|N_{B_i}(w')|>\ceil{\frac{\omega}{4}}$ and $|N_{B_{i-2}}(w')|>\ceil{\frac{\omega}{4}}$.
    If $N_{B_i}(w')$ and $N_{B_i}(u)$ are disjoint, then $|B_i|>2\cdot \ceil{\frac{\omega}{4}}$ and so $N_{B_{i-1}}(w)\cup N_{B_{i-1}}(b)\cup B_i$ is a clique of size larger than $\omega$ by Lemmas \ref{clm:w big in left} and \ref{lem:smv on special vertices} (recall that $w$ is the vertex defined before Lemma \ref{clm:w big in left}).

    So we may assume that $N_{B_i}(w')$ and $N_{B_i}(u)$ have a common vertex and so $N_{B_{i-2}}(w')$ and $N_{B_{i-2}}(u)$ are disjoint. 
    By Lemma \ref{lem:cross four cliques}, $N_{B_{i-1}}(b)\cup N_{B_{i-1}}(w)\cup N_{B_{i-2}}(u)\cup N_{B_{i-2}}(w')$ is a clique of size larger than $\omega$, a contradiction.
\end{proof}

\begin{lemma}\label{lem:A_3(i+1) is a clique}
    $A_3(i+1)$ is a clique. 
\end{lemma}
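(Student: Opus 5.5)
Suppose for contradiction that $A_3(i+1)$ is not a clique. The aim is to produce several pairwise disjoint cliques, each of size larger than $\ceil{\frac{\omega}{4}}$, whose union is a clique; this contradicts $\omega(G)=\omega$, following the pattern of the preceding lemma.

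\textbf{Step 1: two large neighbourhoods in $B_i$.} Since $A_1=\emptyset$ and $A_2=A_2(i+2)$, no vertex of $A'_3(i+1)$ has a neighbour in $A_1\cup A_2$. Indeed, $A_3(i+1)$ is anticomplete to $A_2(i+2)$ by Lemma \ref{anti-A2A3} (taking the index $i+2$ there). As $A_3(i+1)$ is not a clique, Lemma \ref{lem:two non-adjacent simplicail verteices} gives two non-adjacent $(A_3(i+1),H)$-minimal simplicial vertices $u_1,u_2$. By Lemma \ref{lem:small vertex argument for simplicial vertices in A_3(i-2)}, for $j=1,2$ both $N_{B_i}(u_j)$ and $N_{B_{i+2}}(u_j)$ have size larger than $\ceil{\frac{\omega}{4}}$.

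By Lemma \ref{lem:nonedge in A_3(i)}, $u_1$ and $u_2$ have disjoint neighbourhoods in exactly one of $B_i$ and $B_{i+2}$.
\begin{itemize}
\item If they are disjoint in $B_{i+2}$, then $N_{B_{i+2}}(u_1)\cup N_{B_{i+2}}(u_2)$ is a clique of size larger than $2\ceil{\frac{\omega}{4}}$. It lies inside $B_{i+2}\cup B_{i-2}\cup A_2(i+2)$, which is a clique by Lemmas \ref{lem:A_3i-2 A_2i+2 Bi+2 is clique} and \ref{clm:v in A3i-2 complete}. We need $|B_{i-2}\cup A_2(i+2)|$ large enough to exceed $\omega$. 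I would get this from the vertex $w$ of Lemma \ref{clm:w big in left} together with a small-vertex argument at a vertex of $B_{i-1}$ (now $A_3(i-1)=\emptyset$), or more simply by using $a\in B_{i-2}$ and $A_2(i+2)\neq\emptyset$ together with a simplicial vertex of $A_2(i+2)\cup B_{i+2}$.
\item Otherwise $N_{B_i}(u_1)$ and $N_{B_i}(u_2)$ are disjoint. By Lemma \ref{lem:generalized P4-structure}, they are disjoint complete cliques of size larger than $\ceil{\frac{\omega}{4}}$ each.
\end{itemize}

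\textbf{Step 2: two large neighbourhoods in $B_{i-1}$.} Apply Lemma \ref{lem:B_i complete to two large cliques in B_{i-1}} (with index shifted) to $b$ and $w$. Both lie in $A'_3(i-2)$, are non-adjacent, and have disjoint neighbourhoods in $B_{i-1}$, each of size larger than $\ceil{\frac{\omega}{4}}$ by Lemmas \ref{clm:w big in left} and \ref{lem:smv on special vertices}. The lemma gives that $A'_3(i)\supseteq B_i$ is complete to $N_{B_{i-1}}(b)\cup N_{B_{i-1}}(w)$.

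\textbf{Conclusion.} Combining the two steps, $N_{B_i}(u_1)\cup N_{B_i}(u_2)\cup N_{B_{i-1}}(b)\cup N_{B_{i-1}}(w)$ is a union of four pairwise disjoint cliques, each of size larger than $\ceil{\frac{\omega}{4}}$, and the union is a clique. This is a clique of size larger than $\omega$, a contradiction.

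\textbf{Main obstacle.} The main difficulty is the first case of Step 1, where the neighbourhoods are disjoint in $B_{i+2}$. There I must carefully certify enough extra clique mass complete to $B_{i+2}$. If the direct bound fails, I would instead pick a vertex $w''\in B_{i+1}\setminus N(A_3(i+1))$ of minimal neighbourhood. Lemma \ref{lem:smv on w} gives it large neighbourhoods in $B_i$ and $B_{i+2}$. Among the three pairwise non-adjacent vertices $u_1,u_2,w''$, two must then have disjoint neighbourhoods in $B_i$, which reduces to the second case.
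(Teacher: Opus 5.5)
Your handling of the case where $N_{B_i}(u_1)$ and $N_{B_i}(u_2)$ are disjoint is fine. Using $b$ and $w$ in place of the paper's special pair $a,b$ works equally well: $w$ shares a $B_{i+2}$-neighbour with $b$, so they are disjoint in $B_{i-1}$. Your check that $A_3(i+1)$ is anticomplete to $A_2(i+2)$ via Lemma \ref{anti-A2A3} is also correct.

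The gap is the other case, where $N_{B_{i+2}}(u_1)$ and $N_{B_{i+2}}(u_2)$ are disjoint, which you leave open.

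\textbf{Your first suggestion is not quantified.} Together with $N_{B_{i+2}}(u_1)\cup N_{B_{i+2}}(u_2)$ you need two further disjoint cliques of size $>\ceil{\frac{\omega}{4}}$ complete to $B_{i+2}$. The arguments you sketch give at most one. For example, the small-vertex argument at a vertex of $B_{i-1}$ only yields $|A_3(i-2)\cup N_{B_{i-2}}(A_3(i-2))|>\ceil{\frac{\omega}{4}}$.

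\textbf{Your fallback is false in this setting.} By Lemma \ref{lem:basic properties of A_2(i)}, $B_i$ is complete to $B_{i+1}$. So $w''\in B_{i+1}$ is complete to $B_i$ and shares a $B_i$-neighbour with both $u_1$ and $u_2$. In this case $u_1$ and $u_2$ also share a $B_i$-neighbour. Hence no two of $u_1,u_2,w''$ have disjoint neighbourhoods in $B_i$, and there is no reduction to the other case.

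\textbf{The missing idea is to use that same observation the other way round.} Since $w''$ and each $u_j$ share a $B_i$-neighbour, Lemma \ref{lem:nonedge in A_3(i)} makes $N_{B_{i+2}}(w'')$, $N_{B_{i+2}}(u_1)$ and $N_{B_{i+2}}(u_2)$ pairwise disjoint. By Lemmas \ref{lem:smv on w} and \ref{lem:small vertex argument for simplicial vertices in A_3(i-2)}, each has size $>\ceil{\frac{\omega}{4}}$. That gives three large cliques inside $B_{i+2}$.

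The fourth clique is $A_3(i-2)\cup N_{B_{i-2}}(A_3(i-2))$, which is non-empty since $b\in A_3(i-2)$.
\begin{itemize}
\item It is a clique: $A_3(i-2)$ is a clique whose vertices all have the same neighbourhood in $B_{i-2}$.
\item It is complete to $B_{i+2}$, by Lemmas \ref{lem:A_3i-2 A_2i+2 Bi+2 is clique} and \ref{clm:v in A3i-2 complete}.
\item It has size $>\ceil{\frac{\omega}{4}}$ by a small-vertex argument at a vertex $v\in B_{i-1}$ with minimal neighbourhood in $B_{i-2}\cup B_i$. Because $A_1=\emptyset$, $A_2=A_2(i+2)$ and $A_3(i-1)=A_3(i)=\emptyset$, the part of $N(v)$ outside $A_3(i-2)\cup N_{B_{i-2}}(A_3(i-2))$ lies in $B_{i-1}\cup B_i$, and it is a clique by minimality of $v$.
\end{itemize}
The union of these four cliques has size greater than $\omega$, which is how the paper closes this case.
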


\begin{proof}
    Suppose not. 
    By Lemma \ref{lem:two non-adjacent simplicail verteices}, there are two non-adjacent $(A_3(i+1), H)$-minimal simplicial vertices $v_1,v_2$.
    By Lemma \ref{lem:small vertex argument for simplicial vertices in A_3(i-2)}, $|N_{B_i}(v_1)|, |N_{B_{i+2}}(v_1)|, |N_{B_i}(v_2)|, |N_{B_{i+2}}(v_2)|>\ceil{\frac{\omega}{4}}$.
    If $N_{B_i}(v_1)$ and $N_{B_i}(v_2)$ are disjoint, then $N_{B_i}(v_1)\cup N_{B_i}(v_2)\cup N_{B_{i-1}}(a)\cup N_{B_{i-1}}(b)$ is a clique of size larger than $\omega$ by Lemmas \ref{lem:smv on special vertices} and \ref{lem:B_i complete to two large cliques in B_{i-1}}. 
    So we may assume that $N_{B_i}(v_1)$ and $N_{B_i}(v_2)$ have a common vertex and so $N_{B_{i+2}}(v_1)$ and $N_{B_{i+2}}(v_2)$ are disjoint. 
    Let $w'$ be a vertex in $B_{i+1}\setminus N(A_3(i+1))$ that has minimal neighborhood in $H$. 
    By Lemma \ref{lem:smv on w}, $|N_{B_{i+2}}(w')|>\ceil{\frac{\omega}{4}}$. Since $B_{i}$ is complete to $B_{i+1}$, each of $w',v_1$ and $w',v_2$ have a common neighbor in $B_i$. So $N_{B_{i+2}}(w')$, $N_{B_{i+2}}(v_1)$ and $N_{B_{i+2}}(v_2)$ are pairwise disjoint cliques that are complete to $A_3(i-2)\cup N_{B_{i-2}}(A_3(i-2))$ by Lemma \ref{lem:B_i complete to two large cliques in B_{i-1}}.
    
    Let $v$ be a vertex in $B_{i-1}$ with minimal neighborhood in $B_i\cup B_{i-2}$. 
    Since $A_3(i-2)\cup B_{i+2}$ is a clique by Lemma \ref{lem:A_3i-2 A_2i+2 Bi+2 is clique}, $N_{B_{i-1}}(A_3(i-2))$ is anticomplete to $B_{i-2}\setminus N(A_3(i-2))$. 
    It follows that $N(v)\setminus (A_3(i-2)\cup N_{B_{i-2}}(A_3(i-2)))\subseteq B_{i}\cup B_{i-1}$. 
    By the minimality of $v$, $N(v)\setminus (A_3(i-2)\cup N_{B_{i-2}}(A_3(i-2)))$ is a clique. 
    So $|A_3(i-2)\cup N_{B_{i-2}}(A_3(i-2))|>\ceil{\frac{\omega}{4}}$. 
    Hence, $(A_3(i-2)\cup N_{B_{i-2}}(A_3(i-2)))\cup N_{B_{i+2}}(w')\cup N_{B_{i+2}}(v_1)\cup N_{B_{i+2}}(v_2)$ is a clique of size larger than $\omega$, a contradiction. 
\end{proof}

Next, we collect some information about the size of certain sets.
\begin{lemma}
    The following statements hold
    \begin{itemize}
        \item[$(1)$] $|A_3(i+1)\cup N_{B_{i+1}}(A_3(i+1))|,|A_3(i-2)\cup N_{B_{i-2}}(A_3(i-2))|,|N_{B_{i+2}}(A_3(i+1))|,|B_{i+2}\cap N(B_{i+1}\setminus N(A_3(i+1)))|>\ceil{\frac{\omega}{4}}$. 
        \item[$(2)$] $|B_{i-1}|,|B_{i+1}|,|A_3(i+1)\cup (B_{i+1}\cap N(A_3(i+1)))|<\frac{3}{4}\omega$. 
        \item[$(3)$] $|A_3(i-2)\cup (B_{i-2}\cap N(A_3(i-2)))|,|B_i|,|B_{i+1}\cap N(A_3(i+1))|< \frac{\omega}{2}$. 
    \end{itemize}
\end{lemma}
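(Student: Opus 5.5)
The plan is to prove the three items in order: (1) by small-vertex arguments, then (2) and (3) by combining the large cliques from (1) into explicit cliques of $G$ that cannot exceed $\omega$. The current state is: $A_1=A_5=\emptyset$, $A_2=A_2(i+2)$, $A_3=A_3(i-2)\cup A_3(i+1)$, $A_3(i+1)$ is a clique, and $A_3(i-2)\cup A_2(i+2)\cup B_{i+2}$ is a clique complete to $B_{i-2}\cup\{\text{special pair}\}$. I will implicitly assume $A_3(i+1)\neq\emptyset$ whenever a quantity refers to it; otherwise the statement degenerates and one replaces $N(A_3(i+1))$ by the appropriate $q$-vertex.

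\textbf{Item (1).} Each bound comes from a vertex whose neighbourhood, minus one target set, is a clique; since $G$ has no small vertices, that target set has more than $\ceil{\frac{\omega}{4}}$ vertices.
\begin{itemize}
\item For $|N_{B_{i+2}}(A_3(i+1))|$, I take $u$ to be an $(A_3(i+1),H)$-minimal simplicial vertex and apply Lemma \ref{lem:small vertex argument for simplicial vertices in A_3(i-2)}, which applies because $A_1=\emptyset$ and $A_3(i+1)$ is anticomplete to $A_2(i+2)$ by Lemma \ref{anti-A2A3}.
\item For $|B_{i+2}\cap N(B_{i+1}\setminus N(A_3(i+1)))|$, I take $w'\in B_{i+1}\setminus N(A_3(i+1))$ minimal and apply Lemma \ref{lem:smv on w}.
\item For $|A_3(i-2)\cup N_{B_{i-2}}(A_3(i-2))|$, I reuse the argument inside Lemma \ref{lem:A_3(i+1) is a clique}. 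A vertex $v\in B_{i-1}$ minimal in $B_i\cup B_{i-2}$ has all neighbours outside $A_3(i-2)\cup N_{B_{i-2}}(A_3(i-2))$ forming a clique.
\item For $|A_3(i+1)\cup N_{B_{i+1}}(A_3(i+1))|$, I take $q_{i+2}$ or a minimal vertex of $B_{i+2}$. Its neighbourhood outside $A'_3(i+1)$ lies in $B_{i+2}\cup B_{i-2}\cup A_3(i-2)\cup A_2(i+2)$, which is a clique by Lemmas \ref{lem:A_3i-2 A_2i+2 Bi+2 is clique} and \ref{clm:v in A3i-2 complete}. I need to verify via Lemma \ref{lem:compare nbd of different A_3(i) in B_j} that the chosen vertex sees no $A_3(i-2)$ vertex that breaks this clique.
\end{itemize}

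\textbf{Items (2) and (3).} Each upper bound comes from placing the set inside a clique together with one or two disjoint sets already known to exceed $\ceil{\frac{\omega}{4}}$.
\begin{itemize}
\item $|B_i|<\frac{\omega}{2}$: by Lemma \ref{lem:B_i complete to two large cliques in B_{i-1}}, $B_i$ is complete to $N_{B_{i-1}}(a)\cup N_{B_{i-1}}(b)$, which are disjoint and each larger than $\ceil{\frac{\omega}{4}}$ by Lemma \ref{lem:smv on special vertices}.
\item $|B_{i-1}|<\frac{3}{4}\omega$: $B_{i-1}\cup B_{i-2}$ contains a vertex, such as $q_{i-2}$, complete to a large piece. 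Alternatively, $B_{i-1}$ together with $B_i$, whose size exceeds $\ceil{\frac{\omega}{4}}$ by Lemma \ref{lem:B_i is large when A_1 is empty}, forms a clique through a vertex of $B_i$ complete to $B_{i-1}$.
\item $|B_{i+1}|<\frac{3}{4}\omega$: $B_i$ is complete to $B_{i+1}$ by Lemma \ref{lem:basic properties of A_2(i)}, and $|B_i|>\ceil{\frac{\omega}{4}}$.
\item $|A_3(i+1)\cup N_{B_{i+1}}(A_3(i+1))|<\frac{3}{4}\omega$: this set is a clique (Lemma \ref{lem:single vertex in A_3(i)}(2) together with Lemma \ref{lem:A_3(i+1) is a clique}), and it is complete to $N_{B_{i+2}}(u)$ for a suitable $u$, which is large by (1).
\item $|A_3(i-2)\cup(B_{i-2}\cap N(A_3(i-2)))|<\frac{\omega}{2}$: it is complete to $N_{B_{i+2}}(w')$ and $N_{B_{i+2}}(v_1)$, two disjoint large cliques, exactly as in Lemma \ref{lem:A_3(i+1) is a clique}.
\item $|B_{i+1}\cap N(A_3(i+1))|<\frac{\omega}{2}$: it is complete to $B_i$ and to $N_{B_{i+2}}(A_3(i+1))$, which are disjoint and each larger than $\ceil{\frac{\omega}{4}}$.
\end{itemize}

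The main obstacle is certifying completeness for the $B_{i+1}$ bounds. This needs Lemma \ref{lem:single vertex in A_3(i)} in the orientation where $B_{i+1}\setminus N(x)$ is anticomplete to $N_{B_{i+2}}(x)$, and I must check that this holds here, using that $B_i$ is complete to $B_{i+1}$ and Lemma \ref{lem:blowup-threeneighbor}(3). If it fails, I would instead pass to the disjoint neighbourhoods in $B_i$ of $A_3(i+1)$ and of $w'$.
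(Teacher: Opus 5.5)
The lower bound $|A_3(i+1)\cup N_{B_{i+1}}(A_3(i+1))|>\ceil{\frac{\omega}{4}}$ in (1) is not established by your plan, because its premise is false.

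\textbf{Why the premise fails.} Lemma~\ref{lem:A_3i-2 A_2i+2 Bi+2 is clique} makes $A_3(i-2)\cup A_2(i+2)\cup B_{i+2}$ a clique. But Lemma~\ref{clm:v in A3i-2 complete} only makes $B_{i+2}\cup A_2(i+2)$ complete to $B_{i-2}$; no vertex of $A_3(i-2)$ is complete to $B_{i-2}$ (Lemma~\ref{lem:blowup-threeneighbor}(1)). Concretely, the special pair has $a\in B_{i-2}$ (Lemma~\ref{clm:a in B}) and $b\in A_3(i-2)$ with $ab\notin E(G)$, and both are complete to $B_{i+2}$.

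\textbf{Consequence.} Put $S=A_3(i+1)\cup N_{B_{i+1}}(A_3(i+1))$. \emph{Every} vertex of $B_{i+2}$ has the non-edge $\{a,b\}$ in $N(x)\setminus S$. This holds whether you take a minimal vertex or $q_{i+2}$, which is moreover adjacent to $B_{i+1}\setminus N(A_3(i+1))$. So the check you postpone to Lemma~\ref{lem:compare nbd of different A_3(i) in B_j} cannot succeed. The remaining vertices with a neighbour in $S$ lie in $B_i$, $B_{i+1}$ or $A_3(i+1)$, and they do no better: for each, $N(x)\setminus S$ meets two anticomplete bags. A one-vertex small-vertex argument therefore cannot give this bound, and a genuinely different idea is needed.

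The paper's proof takes the same minimal $v\in B_{i+2}$ and asserts $N(v)\setminus S\subseteq B_i\cup B_{i-1}$, which is impossible for $v\in B_{i+2}$. So the paper does not close this gap either. As far as I can see, this particular bound is not used in the two colouring lemmas that follow, but as part of the statement it remains unproved.

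\textbf{The bound $|B_{i+1}\cap N(A_3(i+1))|<\frac{\omega}{2}$.} You pair $B_i$ with $N_{B_{i+2}}(A_3(i+1))$, but these are anticomplete. Together with $T=N_{B_{i+1}}(A_3(i+1))$ they do not form a clique, so you only get $|T|<\frac34\omega$. The correct partner of $N_{B_{i+2}}(A_3(i+1))$ is $N_{B_{i+2}}(w')$ from your second bullet:
\begin{itemize}
\item both lie in the clique $B_{i+2}$;
\item they are disjoint, since $B_{i+1}\setminus N(A_3(i+1))$ is anticomplete to $N_{B_{i+2}}(A_3(i+1))$;
\item $T$ is complete to both by Lemma~\ref{lem:single vertex in A_3(i)}(2),(3), in the orientation you identified.
\end{itemize}
This is the paper's argument.

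\textbf{The bound $|B_{i-1}|<\frac34\omega$.} You give no actual argument. One vertex ($q_{i-2}$, or a vertex of $B_i$) complete to $B_{i-1}$ does not suffice; you need more than $\ceil{\frac{\omega}{4}}$ vertices forming a clique with all of $B_{i-1}$. One way: take $u\in B_{i-1}$ minimal as in your third bullet. By minimality $N_{B_i}(u)$ is complete to $B_{i-1}$, and it is large:
\begin{itemize}
\item If $u$ has no neighbour in $A_3(i-2)$, then $N(u)\setminus N_{B_i}(u)\subseteq B_{i-1}\cup N_{B_{i-2}}(u)$ is a clique, so the small-vertex count applies.
\item If $u$ has a neighbour in $A_3(i-2)$, then $N_{B_i}(u)=B_i$. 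This follows from Lemma~\ref{lem:B_i complete to two large cliques in B_{i-1}} applied to $a$ and that neighbour: they are non-adjacent with common neighbour $q_{i+2}$, hence have disjoint neighbourhoods in $B_{i-1}$. Then $|B_i|>\ceil{\frac{\omega}{4}}$ by Lemma~\ref{lem:B_i is large when A_1 is empty}.
\end{itemize}

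\textbf{The rest.} The remaining items are fine and essentially follow the paper. Your orientation concern is settled exactly as you suggest by Lemma~\ref{lem:blowup-threeneighbor}(3), since $B_i$ is complete to $B_{i+1}$. Your bound on $|B_{i+1}|$ via the clique $B_i\cup B_{i+1}$ (Lemma~\ref{lem:basic properties of A_2(i)}) is simpler than the paper's and does not even need $A_3(i+1)\neq\emptyset$.
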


\begin{proof}
    By Lemmas \ref{lem:smv on w} and \ref{lem:small vertex argument for simplicial vertices in A_3(i-2)}, $|N_{B_{i+2}}(A_3(i+1))|$, $|B_{i+2}\cap N(B_{i+1}\setminus N(A_3(i+1)))|>\ceil{\frac{\omega}{4}}$. 
    
    Let $u\in B_{i-1}$ be a vertex with minimal neighborhood in $B_{i}\cup B_{i-2}$. 
    Since $N_{B_{i-1}}(A_3(i-2))$ is anticomplete to $B_{i-2}\setminus N(A_3(i-2))$, $u$ is anticomplete to $B_{i-2}\setminus N(A_3(i-2))$. 
    By the minimality of $u$, $N(u)\setminus (A_3(i-2)\cup N_{B_{i-2}}(A_3(i-2)))\subseteq B_{i}\cup B_{i-1}$ is a clique and so $|N_{B_{i-2}}(A_3(i-2))\cup A_3(i-2)|\geq |N(u)\cap (N_{B_{i-2}}(A_3(i-2))\cup A_3(i-2))|>\ceil{\frac{\omega}{4}}$. 
    Moreover, $N(u)\cap (N_{B_{i-2}}(A_3(i-2))\cup A_3(i-2))$ is complete to $B_{i-1}$ and so $|B_{i-1}|<\frac{3}{4}\omega$. 
    
    Let $v\in B_{i+2}$ be a vertex with minimal neighborhood in $B_{i+1}\cup B_{i-2}$. 
    Since $N_{B_{i+2}}(A_3(i+1))$ is anticomplete to $B_{i+1}\setminus N(A_3(i+1))$, $v$ is anticomplete to $B_{i+1}\setminus N(A_3(i+1))$. 
    By the minimality of $v$, $N(v)\setminus (A_3(i+1)\cup N_{B_{i+1}}(A_3(i+1)))\subseteq B_{i}\cup B_{i-1}$ is a clique and so $|A_3(i+1)\cup N_{B_{i+1}}(A_3(i+1))|>\ceil{\frac{\omega}{4}}$. 
    
    Let $w\in A_3(i+1)\cup N_{B_{i+1}}(A_3(i+1))$ be a vertex with minimal neighborhood in $B_{i}\cup B_{i+2}$. 
    Since $N_{B_{i+1}}(A_3(i+1))$ is complete to $N_{B_{i+2}}(B_{i+1}\setminus N(A_3(i+1)))$, $w\in A_3(i+1)$. 
    Since $A_3(i+1)$ is a clique by Lemma \ref{lem:A_3(i+1) is a clique}, $w$ is simplicial in $A_3(i+1)$ and so $|N_{B_{i+2}}(w)|>\ceil{\frac{\omega}{4}}$ by Lemma \ref{lem:small vertex argument for simplicial vertices in A_3(i-2)}. 
    By the minimality of $w$, $N_{B_{i+2}}(w)$ is complete to $A_3(i+1)\cup N_{B_{i+1}}(A_3(i+1))$ and so $|A_3(i+1)\cup N_{B_{i+1}}(A_3(i+1))|<\frac{3}{4}\omega$. 

    Let $x\in B_{i+1}$ be a vertex with minimal neighborhood in $B_{i}\cup B_{i+2}$. 
    Since $N_{B_{i+1}}(A_3(i+1))$ is complete to $N_{B_{i+2}}(A_3(i+1))$, $x\in B_{i+1}\setminus N(A_3(i+1))$. 
    By Lemma \ref{lem:smv on w}, $|N_{B_{i+2}}(x)|>\ceil{\frac{\omega}{4}}$ and so by the minimality of $x$, $N_{B_{i+2}}(x)$ is complete to $B_{i+1}$. 
    It follows that $|B_{i+1}|<\frac{3}{4}\omega$. 

    Since $|B_{i+2}|>2\cdot\ceil{\frac{\omega}{4}}$, and $A_3(i-2)\cup N_{B_{i-2}}(A_3(i-2))$ and $N_{B_{i+1}}(A_3(i+1))$ are complete to $B_{i+2}$, $|A_3(i-2)\cup N_{B_{i-2}}(A_3(i-2))|$ and $|N_{B_{i+1}}(A_3(i+1))|<\frac{\omega}{2}$. 
    Since $B_{i}$ is complete to two disjoint cliques of size larger than $\ceil{\frac{\omega}{4}}$ in $B_{i-1}$ by Lemma \ref{lem:B_i complete to two large cliques in B_{i-1}}, \ref{lem:smv on w} and \ref{lem:small vertex argument for simplicial vertices in A_3(i-2)}, $|B_i|<\frac{\omega}{2}$. 
\end{proof}

We are now ready to give a desired coloring of $G$ by considering whether $A_3(i+1)$ is empty or not.

\begin{lemma}
    If $A_3(i+1)\neq \emptyset$, $\chi(G)\le \ceil{\frac{5}{4}\omega}$.
\end{lemma}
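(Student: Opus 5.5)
We will follow the pre-coloring template used in the earlier coloring lemmas (e.g.\ the cases with one non-empty $A_1(i)$). At this point $A_1=A_5=\emptyset$, $A_2=A_2(i+2)$, and $A_3(i)=A_3(i+2)=A_3(i-1)=\emptyset$. Moreover $A_3(i+1)$ is a non-empty clique, $A_3(i-2)\cup A_2(i+2)\cup B_{i+2}$ is a clique, and $B_{i-2}$ is complete to $B_{i+2}\cup A_2(i+2)$. Consequently $Q:=A_3(i-2)\cup B_{i-2}\cup B_{i+2}\cup A_2(i+2)$ should be a clique: since $A_3(i-2)$ is complete to $N_{B_{i-2}}(A_3(i-2))$, one only has to handle non-neighbours of $A_3(i-2)$ in $B_{i-2}$. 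So $|Q|\le\omega$. Set $x=|A_3(i-2)\cup N_{B_{i-2}}(A_3(i-2))|$, $y=|B_{i+2}|$, $z=|A_2(i+2)|$, $p=x+y+z-3\ceil{\frac{\omega}{4}}$ and $f=\ceil{\frac54\omega}-p\ge\omega$. The previous lemma gives $x>\ceil{\frac{\omega}{4}}$ and $y>2\ceil{\frac{\omega}{4}}$; for $z$, Lemma~\ref{lem:smv on special vertices}-type arguments (a simplicial vertex of $A_2(i+2)$ has $N\setminus$ clique) should give $z\ge\ceil{\frac{\omega}{4}}$, or else we drop $z$ from $p$ and use $B_{i-2}\setminus N(A_3(i-2))$ instead.

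\textbf{Step 1 (precolouring).} Choose $x-\ceil{\frac{\omega}{4}}$ vertices of $A_3(i-2)\cup N_{B_{i-2}}(A_3(i-2))$, $y-\ceil{\frac{\omega}{4}}$ vertices of $B_{i+2}$ and $z-\ceil{\frac{\omega}{4}}$ vertices of $A_2(i+2)$, and colour them with the $p$ colours. Then colour a set $S_i\subseteq B_i$ of $p$ vertices with largest neighbourhoods in $H$ with the same $p$ colours. This is possible because $p\le\ceil{\frac{\omega}{4}}<|B_i|$ by Lemma~\ref{lem:B_i is large when A_1 is empty}, and $B_i$ is anticomplete to the other chosen sets. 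After this step every maximal clique meeting these four regions has lost roughly $\ceil{\frac{\omega}{4}}$ per region, as in the earlier arguments.

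\textbf{Step 2 (degeneracy).} Apply Lemma~\ref{lem:degeneracy} repeatedly, taking at each step the vertex of minimal neighbourhood. First use $(X_1,X_2,X_3,X^*)=(B_i,B_{i-1},(A_3(i-2)\cup N_{B_{i-2}}(A_3(i-2)))\setminus S,\emptyset)$ to delete $B_{i-1}$. Here the bounds $|B_{i-1}|<\frac34\omega$ and the anticompleteness of $N_{B_{i-1}}(A_3(i-2))$ to $B_{i-2}\setminus N(A_3(i-2))$ from the previous lemma make the hypotheses hold. Next use $(B_i,A_3(i+1)\cup N_{B_{i+1}}(A_3(i+1)),N_{B_{i+2}}(A_3(i+1))\setminus S,\emptyset)$ together with $(B_i,B_{i+1}\setminus N(A_3(i+1)),\dots)$ to delete $A'_3(i+1)$; the size bounds (2),(3) of the previous lemma are exactly what is needed. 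The comparability hypotheses of Lemma~\ref{lem:degeneracy} come from Lemma~\ref{lem:blowup-basicproperty}(1), Lemma~\ref{lem:single vertex in A_3(i)} and Lemma~\ref{lem:B_i complete to two large cliques in B_{i-1}}.

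\textbf{Step 3.} What remains is $B_i$ minus $S_i$, together with $Q$ and possibly part of $B_{i+1}$. After the deletions $B_i$ is isolated from $Q$, and the remaining graph is chordal, being a union of cliques attached along cliques. Hence it is $\omega\le f$ colourable.

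The main obstacle is verifying the clique hypothesis of Lemma~\ref{lem:degeneracy} for vertices of $B_{i+1}$ and $A_3(i+1)$. Their neighbourhoods touch three blocks ($B_i$, $B_{i+2}$, $A'_3(i+1)$), so choosing which part serves as $X_3$ requires care. If this fails directly, I would instead reduce $B_{i+1}\cup A_3(i+1)$ as a $5$-hyperhole-like piece and finish with Lemma~\ref{lem:hyperhole}, bounding $|V|/2$ via the estimates (1)–(3).
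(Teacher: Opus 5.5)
Your proposal has a genuine gap, in two linked places: the precolouring cannot be carried out, and the claimed chordal remainder does not exist.

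\textbf{The precolouring is infeasible.} From your own bounds $x>\ceil{\frac{\omega}{4}}$ and $y=|B_{i+2}|>2\ceil{\frac{\omega}{4}}$, together with $x+y+z\le\omega$, you get $z=|A_2(i+2)|<\ceil{\frac{\omega}{4}}$. (The inequality $x+y+z\le\omega$ holds because $A_3(i-2)\cup N_{B_{i-2}}(A_3(i-2))\cup B_{i+2}\cup A_2(i+2)$ is a clique. $Q$ itself is not a clique, since $A_3(i-2)$ has non-neighbours in $B_{i-2}$ by Lemma~\ref{lem:comparable vertices in A_3(i)}.) So your set of $z-\ceil{\frac{\omega}{4}}$ vertices of $A_2(i+2)$ has negative size. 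More fundamentally, cutting both $A_3(i-2)\cup N_{B_{i-2}}(A_3(i-2))$ and $B_{i+2}$ down to $\ceil{\frac{\omega}{4}}$ vertices already costs $x+y-2\ceil{\frac{\omega}{4}}>\ceil{\frac{\omega}{4}}$ colours. But $f\ge\omega$ and Lemma~\ref{lem:degeneracy} both require $p\le\ceil{\frac{\omega}{4}}$. Replacing $z$ by $|B_{i-2}\setminus N(A_3(i-2))|$ does not help: that set is anticomplete to $A_3(i-2)$, so the clique bound is lost.

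\textbf{Step 3 fails for any admissible precolouring.} Only $p\le\ceil{\frac{\omega}{4}}$ vertices outside $B_i$ are precoloured, so more than $\ceil{\frac{\omega}{4}}$ vertices of $B_{i+2}$ survive. The vertex $q_{i+1}$ is complete to them and to $B_i$. In all your instantiations $X_1=B_i$, so $N(q_{i+1})\cap(X_1\cup X_2\cup X^*)$ contains $B_i$ and must be a clique. Hence these $B_{i+2}$-neighbours would all have to lie in $X_3$, which has only $\ceil{\frac{\omega}{4}}$ vertices, and $q_{i+1}$ is never removed.

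Similarly, $q_{i-1}$ is adjacent to $B_{i-2}\setminus N(A_3(i-2))\neq\emptyset$, which lies outside your $X_3$, so $B_{i-1}$ cannot be deleted wholesale. The anticompleteness you cite concerns only $N_{B_{i-1}}(A_3(i-2))$. This is exactly why the paper removes only the vertices of $B_{i-1}$ anticomplete to $B_{i-2}\setminus N(A_3(i-2))$.

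Consequently the following five vertices survive and induce a $C_5$: $q_{i-1}$, any $u\in B_i\setminus S_i$, $q_{i+1}$, any surviving $s\in B_{i+2}$, and any $r\in B_{i-2}\setminus N(A_3(i-2))$ (recall $B_{i-2}$ is complete to $B_{i+2}$). So the remainder is not chordal.

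\textbf{What is missing.} The paper uses a specific precolouring followed by a ring-colouring step.
It sets $y=|N_{B_{i+2}}(B_{i+1}\setminus N(A_3(i+1)))|$ and $z=|N_{B_{i+2}}(A_3(i+1))|$. These are disjoint parts of $B_{i+2}$, each of size $>\ceil{\frac{\omega}{4}}$. So all of $x,y,z$ exceed $\ceil{\frac{\omega}{4}}$ while $x+y+z\le\omega$, which keeps $p\le\ceil{\frac{\omega}{4}}$.
Each part is precoloured down to $\ceil{\frac{\omega}{4}}$ vertices. Since $B_{i+1}\setminus N(A_3(i+1))$ and $A_3(i+1)$ each see only their own part, Lemma~\ref{lem:degeneracy} removes them separately. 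After that, $A_3(i-2)$ and $A_2(i+2)$ become simplicial and are removed.
What remains is an induced subgraph of the ring that still contains $N_{B_{i+1}}(A_3(i+1))$. By Theorem~2.1 of Penev, its chromatic number equals that of a hyperhole subgraph $H'$ of maximum chromatic number; only then does Lemma~\ref{lem:hyperhole} apply.
The bound $|V(H')|<2f$ then comes from four estimates: $|H'\cap(B_{i-1}\cup B_i)|\le\omega-p-\ceil{\frac{\omega}{4}}$, $|H'\cap(B_{i-2}\cup B_{i+2})|\le\omega-(y+z)+2\ceil{\frac{\omega}{4}}$, $|H'\cap B_{i+1}|<\frac{\omega}{2}$, and $x\le\frac{\omega}{2}$.

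Your fallback points in this direction. However, Lemma~\ref{lem:hyperhole} by itself does not cover a ring whose consecutive blocks are not complete, and the count only closes with this particular precolouring.
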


\begin{proof}
Let 
    \begin{align*} 
    x & = |A_3(i-2)\cup (B_{i-2}\cap N(A_3(i-2)))|, 
    \\[3pt]
    y & = |B_{i+2}\cap N(B_{i+1}\setminus N(A_3(i+1)))|,  
    \\[3pt]
    z & = |B_{i+2}\cap N(A_3(i+1))|. 
\end{align*}
By Lemma \ref{clm:v in A3i-2 complete}, $x+y+z\leq \omega$.
Our strategy is to use $p=x+y+z-3\ceil{\frac{\omega}{4}}$ colors to color some vertices of $G$ first and then argue that the remaining vertices can be colored with $f=\ceil{\frac{5}{4}\omega}-p$ colors. 
Note that
    \begin{align*} 
    f & = \ceil{\frac{5}{4}\omega}-(x+y+z-3\ceil{\frac{\omega}{4}}) 
    \\[3pt]
     & = 4\ceil{\frac{\omega}{4}}+(\omega-(x+y+z))  
    \\[3pt]
     & \ge \omega,
\end{align*}
since $x+y+z\leq \omega$.

\medskip
\noindent {\bf Step 1: Precolor a subgraph.}
Next we define vertices to be colored with $p$ colors. Let $S_i\subseteq B_i$ be the set of $p$ vertices with largest neighborhoods in $H$, $S_{i-2}\subseteq A_3(i-2)\cup (B_{i-2}\cap N(A_3(i-2)))$ be the set of $x-\ceil{\frac{\omega}{4}}$ vertices with largest neighborhoods in $H$, $S^y_{i+2}\subseteq B_{i+2}\cap N(B_{i+1}\setminus N(A_3(i+1)))$ be the set of $y-\ceil{\frac{\omega}{4}}$ vertices with largest neighborhoods in $H$, and
$S^z_{i+2}\subseteq B_{i+2}\cap N(A_3(i+1))$ be the set of $z-\ceil{\frac{\omega}{4}}$ vertices with largest neighborhoods in $H$.
Since $(x+y+z)-3\ceil{\frac{\omega}{4}}\leq \ceil{\frac{\omega}{4}}<|B_i|$, the choice of $S_i$ is possible. We color $S_i$ with all colors in $[p]$, and color all vertices in $S_{i-2}\cup S^y_{i+2}\cup S^z_{i+2}$ with all colors in $[p]$. Since $B_i$ is anticomplete to $A'_3(i-2)\cup B_{i+2}$, this coloring is proper. So it suffices to show that
$G_1=G-(S_i\cup S_{i-2}\cup S^y_{i+2}\cup S^z_{i+2})$ is $f$-colorable. 

\medskip
\noindent {\bf Step 2: Reduce by degeneracy.}
Next, we use Lemma \ref{lem:degeneracy} to further reduce the problem to color a subgraph of $G_1$. Let $R_{i-1}$ be the set of vertices in $B_{i-1}$ that is anticomplete to $B_{i-2}\setminus N(A_3(i-2))$. Let $v\in R_{i-1}$ with minimal neighborhood in $B_{i-2}\cup B_i$ among $R_{i-1}$. Since  $v$ is anticomplete to $B_{i-2}\setminus N(A_3(i-2))$, $v$ is minimal among $B_{i-1}$.
By Lemma \ref{lem:degeneracy}  with $(X_1,X_2,X_3,X^*)=\left (B_i,B_{i-1},\left (A_3(i-2)\cup N_{B_{i-2}}\left (A_3(i-2)\right)\right )\setminus S_{i-2},A_2(i-1)\right )$, $P=S_i$, and $F=G_1$, we have $d_{G_1}(v)<f$. It follows that $G_1$ is $f$-colorable if and only if $G_1-v$ is $f$-colorable. Repeatedly applying Lemma \ref{lem:degeneracy} we conclude that $G_1$ is $f$-colorable if only if $G_1-R_{i-1}$ is $f$-colorable.

Let $R^y_{i+1}=B_{i+1}\setminus N(A_3(i+1))$. Let $v\in R^y_{i+1}$ be a vertex with minimal neighborhood in $B_{i+2}\cup B_i$ among $R^y_{i+1}$. Since $v$ is anticomplete to $N_{B_{i+2}}(A_3(i+1))$, $v$ is minimal among $B_{i+1}$.
By Lemma \ref{lem:degeneracy} with $(X_1,X_2,X_3,X^*)=\left (B_i,B_{i+1},N_{B_{i+2}}(R^y_{i+1})\setminus S^y_{i+2},A_2(i)\right )$, $P=S_i$, and $F=G_1-R_{i-1}$, we have $d_{G_1-R_{i-1}}(v)<f$. It follows that $G_1-R_{i-1}$ is $f$-colorable if and only if $(G_1-R_{i-1})-v$ is $f$-colorable. Repeatedly applying Lemma \ref{lem:degeneracy} we conclude that $G_1-R_{i-1}$ is $f$-colorable if only if $(G_1-R_{i-1})-R^y_{i+1}$ is $f$-colorable. 

Let $R^z_{i+1}=A_3(i+1)$. Let $v\in R^z_{i+1}$ be a vertex with minimal neighborhood in $B_{i+2}\cup B_i$ among $R^z_{i+1}$. Since $v$ is anticomplete to $B_{i+2}\setminus N_{B_{i+2}}(R^z_{i+1})$, $v$ is minimal among $N_{B_{i+1}}(R^z_{i+1})\cup R^z_{i+1}$.
By Lemma \ref{lem:degeneracy} with $(X_1,X_2,X_3,X^*)=\left (B_i,R^z_{i+1}\cup N_{B_{i+1}}(R^z_{i+1}),N_{B_{i+2}}(R^z_{i+1})\setminus S^z_{i+2},A_2(i)\right)$, $P=S_i$, and $F=G_1-R_{i-1}-R^y_{i+1}$, we have $d_{G_1-R_{i-1}-R^y_{i+1}}(v)<f$. It follows that $G_1-R_{i-1}-R^y_{i+1}$ is $f$-colorable if and only if $(G_1-R_{i-1}-R^y_{i+1})-v$ is $f$-colorable. Repeatedly applying Lemma \ref{lem:degeneracy} we conclude that $G_1-R_{i-1}-R^y_{i+1}$ is $f$-colorable if only if $G_1-R_{i-1}-R^y_{i+1}-R^z_{i+1}$ is $f$-colorable. 

\medskip
\noindent {\bf Step 3: Reduce simplicial vertices.}
Now it suffices to show that $G_2=G_1-R_{i-1}-R^y_{i+1}-R^z_{i+1}$ is $f$-colorable. Note that every vertex in $A_3(i-2)$ is simplicial in $G_2$.
So $G_2$ is $f$-colorable if and only if $G_2-A_3(i-2)$ is $f$-colorable. 
Since every vertex in $A_2(i+2)$ is simplicial in $G_2-A_3(i-2)$, $G_2-A_3(i-2)$ is $f$-colorable if and only if $G_3=G_2-A_3(i-2)-A_2(i+2)$ is $f$-colorble.

\medskip
\noindent {\bf Step 4: Color $G_3$.}
So it suffices to show that $G_3$ is $f$-colorable. Let $H'$ be a subgraph of $G_3$ which is hyperhole with maximum chromatic number. It follows that $\omega(H')\leq \omega\leq f$. 
It is shown in \cite{Penev25} (Theorem 2.1) that $\chi(G_3)=\chi(H')$. 
If $H'$ is perfect, then $\chi(H')=\omega(H')\leq f$. 
So $H'\cap B_j$ is not empty for $j\in [5]$. 
Let $M_1=(H'\cap B_{i-1})\cup (H'\cap B_{i})$. 
By the definition of $S_i$, $M_1\cup S_i$ is a clique of $G$. 
By Lemma \ref{lem:B_i complete to two large cliques in B_{i-1}}, $N_{B_{i-1}}(A_3(i-2))$ is complete to $B_i$ and so $(M_1\cup S_i)\cup N_{B_{i-1}}(A_3(i-2))$ is also a clique of $G$.
Then $|M_1|\leq \omega-|S_i|-|N_{B_{i-1}}(A_3(i-2))|\leq \omega-p-\ceil{\frac{\omega}{4}}$. 
Let $M_2=(H'\cap B_{i-2})\cup (H'\cap B_{i+2})$. 
By Lemma \ref{lem:B_i complete to two large cliques in B_{i-1}}, $S_{i+2}^y$ and $S_{i+2}^z$ are complete to $B_{i-2}$ and so $M_2\cup S_{i+2}^y\cup S_{i+2}^z$ is a clique of $G$.
Then $|M_2|\leq \omega-|S_{i+2}^y|-|S_{i+2}^z|\leq \omega-(y+z)+2\ceil{\frac{\omega}{4}}$. 
Let $M_3=H'\cap B_{i+1}$. Note that $M_3\subseteq N_{B_{i+1}}(A_3(i+1))$. 
By Lemma \ref{lem:single vertex in A_3(i)} (3), $M_3$ is complete to $N_{B_{i+2}}(A_3(i+1))$ and $N_{B_{i+2}}(B_{i+1}\setminus N(A_3(i+1)))$
and so $|M_3|<\frac{\omega}{2}$. 
Then 
    \begin{align}
	    |V(H')|= & |M_1|+|M_2|+|M_3|\nonumber
            \\< & (\omega-\ceil{\frac{\omega}{4}}-(x+y+z)+3\ceil{\frac{\omega}{4}})+(\omega-(y+z)+2\ceil{\frac{\omega}{4}})+\frac{\omega}{2}\nonumber 
            \\=&2\omega+4\ceil{\frac{\omega}{4}}+(x+\frac{\omega}{2} )-2(x+y+z)\nonumber 
            \\\leq&2(\omega+4\ceil{\frac{\omega}{4}}-(x+y+z)) \nonumber
            \\= &2f, \nonumber
    \end{align}
where the last inequality is due to $x\le \frac{\omega}{2}$.
It follows that $\chi(G_3)= \chi(H')=\max \left \{\omega(H'),\ceil{\frac{|V(H')|}{2}} \right \}\leq f$. 
\end{proof}

\begin{lemma}
    If $A_3(i+1)=\emptyset$, then $\chi(G)\leq \ceil{\frac{5}{4}\omega}$.
\end{lemma}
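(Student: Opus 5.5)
The plan is to mirror the proof of the previous lemma (the case $A_3(i+1)\neq\emptyset$), which becomes simpler once $A_3(i+1)=\emptyset$. We are in the setting $A_1=\emptyset$, $A_2=A_2(i+2)$, $A_5=\emptyset$ and $A_3(i)=A_3(i+2)=A_3(i-1)=\emptyset$. So the only possibly non-empty part of $A_3$ is $A_3(i-2)$. The set $A_3(i-2)\cup A_2(i+2)\cup B_{i+2}$ is a clique, and $B_{i-2}$ is complete to $B_{i+2}\cup A_2(i+2)$ (Lemma \ref{clm:v in A3i-2 complete}).

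\textbf{Setting the parameters.} Since $A_3(i+1)=\emptyset$, every vertex of $B_{i+2}$ has neighbourhood contained in $B_{i+1}\cup B_{i+2}\cup B_{i-2}\cup A_3(i-2)\cup A_2(i+2)$. Taking $v\in B_{i+2}$ with minimal neighbourhood in $B_{i+1}$, the set $N(v)\setminus B_{i+1}$ is a clique, so $|N_{B_{i+1}}(v)|>\ceil{\frac{\omega}{4}}$ by the no-small-vertex assumption.

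Put $x=|A_3(i-2)\cup N_{B_{i-2}}(A_3(i-2))|$ and $y=|B_{i+2}|$. Both exceed $\ceil{\frac{\omega}{4}}$: the bound on $x$ is from part (1) of the previous size lemma, and the bound on $y$ follows because $q_{i+2}\ldots$ or $w$ forces it as in Lemma \ref{lem:A'_3(i-2) is large}. Since the corresponding sets are complete to each other, $x+y\le\omega$.

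\textbf{Colouring.} Let $p=x+y-2\ceil{\frac{\omega}{4}}$ and $f=\ceil{\frac54\omega}-p\ge\omega$. Precolour with colours $[p]$ the following:
\begin{itemize}
\item a set $S_i\subseteq B_i$ of $p$ vertices with largest neighbourhoods in $H$ (possible since $|B_i|>\ceil{\frac{\omega}{4}}\ge p$);
\item $x-\ceil{\frac{\omega}{4}}$ vertices of $A_3(i-2)\cup N_{B_{i-2}}(A_3(i-2))$;
\item $y-\ceil{\frac{\omega}{4}}$ vertices of $B_{i+2}$.
\end{itemize}
This is proper because $B_i$ is anticomplete to $A_3'(i-2)\cup B_{i+2}$.

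Then remove vertices of small degree using Lemma \ref{lem:degeneracy}, exactly as before:
\begin{itemize}
\item first $R_{i-1}$, with $(X_1,X_2,X_3,X^*)=(B_i,B_{i-1},(A_3(i-2)\cup N_{B_{i-2}}(A_3(i-2)))\setminus S_{i-2},\emptyset)$;
\item then all of $B_{i+1}$, with $(X_1,X_2,X_3,X^*)=(B_i,B_{i+1},B_{i+2}\setminus S_{i+2},\emptyset)$.
\end{itemize}
Here the hypothesis $|X_2|\le\frac34\omega$ comes from part (2) of the size lemma, and the comparability hypotheses come from Lemma \ref{lem:blowup-basicproperty}.

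After these removals, the vertices of $A_3(i-2)$ and then of $A_2(i+2)$ are simplicial, so they can be deleted. If $B_{i+1}$ has been emptied, what remains is chordal, and its chromatic number is at most $\omega\le f$.

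\textbf{Main obstacle.} The hardest step is checking the degree bound for $B_{i+1}$. Its neighbours lie in $B_i\cup B_{i+1}\cup B_{i+2}$, and Lemma \ref{lem:degeneracy} requires $N(v)\cap(B_i\cup B_{i+1})$ to be a clique. This holds because $B_i$ is complete to $B_{i+1}$ (Lemma \ref{lem:basic properties of A_2(i)}).

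If some vertices of $B_{i-1}$ survive the first removal (those with neighbours in $B_{i-2}\setminus N(A_3(i-2))$), the residual graph is a $5$-hyperhole. In that case we finish as in Step 4 of the previous lemma: bound $|V(H')|<2f$ using $|B_i|<\frac{\omega}{2}$ and the clique bounds on $M_1,M_2$, then apply Lemma \ref{lem:hyperhole}.
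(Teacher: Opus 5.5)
The counting does not close. With $p=x+y-2\ceil{\frac{\omega}{4}}$ you get $f=\ceil{\frac{5}{4}\omega}-p=\omega+3\ceil{\frac{\omega}{4}}-(x+y)$. So both $f\ge\omega$ and $p\le\ceil{\frac{\omega}{4}}$ require $x+y\le 3\ceil{\frac{\omega}{4}}$, and you need both:
\begin{enumerate}
\item $p\le\ceil{\frac{\omega}{4}}$ is needed to fit $S_i$ into $B_i$;
\item it is also a hypothesis of Lemma~\ref{lem:degeneracy}, whose last step $\ceil{\frac{\omega}{4}}+\frac{3}{4}\omega\le\ceil{\frac{5}{4}\omega}-p$ fails for larger $p$;
\item $f\ge\omega$ is needed to colour the chordal remainder.
\end{enumerate}
All you actually have is $x+y\le\omega-|A_2(i+2)|$, from the clique $A_3(i-2)\cup N_{B_{i-2}}(A_3(i-2))\cup B_{i+2}\cup A_2(i+2)$, and $A_2(i+2)$ may be a single vertex. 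If $x+y$ is near $\omega$, then $p$ is near $\omega-2\ceil{\frac{\omega}{4}}$, which can exceed $|B_i|$. Here $|B_i|<\omega-2\ceil{\frac{\omega}{4}}$, since $B_i$ is complete to the two disjoint cliques $N_{B_{i-1}}(a),N_{B_{i-1}}(b)$, each of size $>\ceil{\frac{\omega}{4}}$. At the same time $f$ is near $\frac{3}{4}\omega$, so the precolouring, both degeneracy rounds and the final bound $\chi\le\omega\le f$ all break down. In the previous lemma the $y$- and $z$-sets were two disjoint parts of $B_{i+2}$, each of size $>\ceil{\frac{\omega}{4}}$, and such a split exists only because $A_3(i+1)\neq\emptyset$. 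Merging them into $y=|B_{i+2}|$ loses a whole $\ceil{\frac{\omega}{4}}$ of slack, and nothing large is available to replace it.

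The paper instead precolours on the other side of the hole, taking $x=|B_i|$ and $y=|N_{B_{i-1}}(A_3(i-2))|$.
\begin{itemize}
\item The special vertex $a\in B_{i-2}$ (Lemma~\ref{clm:a in B}) is anticomplete to $A_3(i-2)$. It shares a neighbour in $B_{i+2}$ with every vertex of $A_3(i-2)$, hence has disjoint neighbourhoods in $B_{i-1}$ from each of them (Lemma~\ref{lem:nonedge in A_3(i)}).
\item Lemma~\ref{lem:B_i complete to two large cliques in B_{i-1}} then makes $B_i\cup N_{B_{i-1}}(A_3(i-2))\cup N_{B_{i-1}}(a)$ a clique. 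Since $|N_{B_{i-1}}(a)|>\ceil{\frac{\omega}{4}}$, this gives $x+y<\omega-\ceil{\frac{\omega}{4}}\le\frac{3}{4}\omega$, which is exactly what yields $p\le\ceil{\frac{\omega}{4}}$ and $f\ge\omega$.
\item The set $P$ is $S_{i+2}$, the $p$ vertices of $B_{i+2}$ with largest neighbourhoods.
\item Lemma~\ref{lem:degeneracy} is applied with $X_1=B_{i+2}$, first to delete $A_3(i-2)$ using $(X_2,X_3,X^*)=(A_3(i-2)\cup N_{B_{i-2}}(A_3(i-2)),\,N_{B_{i-1}}(A_3(i-2))\setminus S_{i-1},\,A_2(i+2))$. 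It is then applied to delete $B_{i+1}$ using $(X_2,X_3,X^*)=(B_{i+1},\,B_i\setminus S_i,\,\emptyset)$.
\item What remains is chordal.
\end{itemize}

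Two smaller slips. First, for $v\in B_{i+2}$ the set $N(v)\setminus B_{i+1}$ is not a clique: $v$ sees all of $A_3(i-2)$ and all of $B_{i-2}$, and each vertex of $A_3(i-2)$ has a non-neighbour in $B_{i-2}$. The bound $|B_{i+2}|>\ceil{\frac{\omega}{4}}$ still holds, because every $u\in B_{i+1}$ has $N(u)\setminus B_{i+2}$ inside the clique $B_i\cup B_{i+1}$. Second, your closing hyperhole case cannot arise, since once $B_{i+1}$ is deleted the residual graph is chordal.
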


\begin{proof}
    Let 
    \begin{align*}
        x & = |B_i|,
        \\
        y & = |N_{B_{i-1}}(A_3(i-2))|. 
    \end{align*}
    By Lemma \ref{lem:B_i complete to two large cliques in B_{i-1}}, $x+y< \omega-\ceil{\frac{\omega}{4}}\leq \frac{3}{4}\omega$. 
    Our strategy is to use $p=x+y-2\ceil{\frac{\omega}{4}}$ colors to color some vertices of $G$ first and then argue that the remaining vertices can be colored with $f=\ceil{\frac{5}{4}\omega}-p$ colors. 
    Note that 
    \begin{align*}
        f & =\ceil{\frac{5}{4}\omega}-(x+y-2\ceil{\frac{\omega}{4}})
        \\
        & = \omega+(3\ceil{\frac{\omega}{4}}-(x+y))
        \\
        &\geq \omega, 
    \end{align*}
    since $x+y\leq \frac{3}{4}\omega$.

    \medskip
    \noindent
    \textbf{Step 1: Precolor a subgraph} Next we define vertices to be colored with colors in $[p]$. 
    Let $S_i\subseteq B_i$ with $|S_i|=x-\ceil{\frac{\omega}{4}}$, $S_{i-1}\subseteq N_{B_{i-1}}(A_3(i-2))$ with $|S_{i-1}|=y-\ceil{\frac{\omega}{4}}$, and $S_{i+2}\subseteq B_{i+2}$ be the set of $p$ vertices with largest neighborhoods in $H$. 
    Since $p=x+y-2\ceil{\frac{\omega}{4}}<\ceil{\frac{\omega}{4}}\leq |B_{i+2}|$, the choice of $S_{i+2}$ is possible. 
    We color all vertices in $S_{i}\cup S_{i-1}$ with all colors in $[p]$ and color $S_{i+2}$ with all colors in $[p]$. 
    Since $B_{i+2}$ is anticomplete to $B_{i}\cup B_{i-1}$, this coloring is proper. 
    So it suffices to show that $G_1=G-(S_{i}\cup S_{i-1}\cup S_{i+2})$ is $f$-colorable. 

    \medskip
    \noindent
    \textbf{Step 2: Reduce by degeneracy} Next we use Lemma \ref{lem:degeneracy} to reduce the problem to color a subgraph of $G_1$. 
    
    Let $v\in A_3(i-2)$ be a vertex with minimal neighborhood in $B_{i-1}\cup B_{i+2}$. 
    By Lemma \ref{lem:degeneracy} with $(X_1,X_2,X_3,X^*)=\left (B_{i+2}, A_3(i-2)\cup N_{B_{i-2}}(A_3(i-2)),N_{B_{i-1}}(A_3(i-2))\setminus S_{i-1},A_2(i+2)\right )$, $P=S_{i+2}$, and $F=G_1$, we have $d_{G_1}(v)<f$. 
    It follows that $G_1$ is $f$-colorable if and only if $G_1-v$ is $f$-colorable. 
    Repeatedly applying Lemma \ref{lem:degeneracy}, we conclude that $G_1$ is $f$-colorable if and only if $G_1-A_3(i-2)$ is $f$-colorable.

    Let $v\in B_{i+1}$ be a vertex with minimal neighborhood in $B_{i}\cup B_{i+2}$. 
    By Lemma \ref{lem:degeneracy} with $(X_1,X_2,X_3,X^*)=\left (B_{i+2},B_{i+1},B_i\setminus S_{i},\emptyset \right )$, $P=S_{i+2}$, and $F=G_1-A_3(i-2)$, we have $d_{G_1-A_3(i-2)}(v)<f$. 
    It follows that $G_1-A_3(i-2)$ is $f$-colorable if and only if $(G_1-A_3(i-2))-v$ is $f$-colorable. 
    Repeatedly applying Lemma 11.19, we conclude that $G_1-A_3(i-2)$ is $f$-colorable if and only if $G_2=G_1-A_3(i-2)-B_{i+1}$ is $f$-colorable. 

    Since $G_2$ is chordal, $\chi(G_2)\leq \omega(G_2)\leq \omega\leq f$. 
\end{proof}

\section{$A_1$ and $A_2$ are empty}\label{sec:only A3}

Now we can assume that $A_1=A_2=\emptyset$. Since $A_5$ is universal, $A_5=\emptyset$. So $G=H\cup A_3$.

\begin{lemma}\label{lem:structure of only A3}
    One of the following holds.
    \begin{itemize}
        \item[$(1)$] $A_3=A_3(i-1)\cup A_3(i+1)$ for some $i\in [5]$ where $A_3(i-1)$ and $A_3(i+1)$ are non-empty cliques, and for every $v\in A_3(i-1)$ and $u\in A_3(i+1)$, $N_{B_i}(v)$ is anticomplete to $B_{i-1}\setminus N(v)$ and $N_{B_{i+2}}(u)$ is anticomplete to $B_{i+1}\setminus N(u)$.
        \item[$(2)$]  $A_3=A_3(i-2)\cup A_3(i+2)$ for some $i\in [5]$ where $A_3(i-2)$ and $A_3(i+2)$ are non-empty cliques, and for every $v\in A_3(i-2)$ and $u\in A_3(i+2)$, $N_{B_{i-1}}(v)$ is anticomplete to $B_{i-2}\setminus N(v)$ and $N_{B_{i+1}}(u)$ is anticomplete to $B_{i+2}\setminus N(u)$.
        \item[$(3)$] $A_3=A_3(i-2)$ for some $i\in [5]$ and $A_3(i-2)$ is a blowup of $P_3$. 
    \end{itemize}
\end{lemma}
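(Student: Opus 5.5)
We first classify which of the five sets $A_3(j)$ can be simultaneously non-empty. By Lemmas \ref{lem:anticompleteA3} and \ref{lem:anticompleteA3-1}, the sets $A_3(j)$ are pairwise anticomplete. Throughout, we use only the facts that $G=H\cup A_3$ is basic and contains no clique cutset, together with the small-vertex lemmas for $A_3$: Lemma \ref{lem:small vertex argument for simplicial vertices in A_3(i-2)} for $(A_3(j),H)$-minimal simplicial vertices, and Lemma \ref{lem:smv on w} for non-neighbours of $A_3(j)$ in $B_j$. Both apply because $A_1=A_2=\emptyset$.

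Each such vertex yields two cliques of size $>\ceil{\frac{\omega}{4}}$, one in each of $B_{j-1}$ and $B_{j+1}$. The plan is to show that every "bad" configuration produces four pairwise disjoint such cliques whose union is a clique, giving a clique of size $>\omega$. The tools for building the union are:
\begin{itemize}
\item Lemma \ref{lem:no two directed edge pointing to the same B_i};
\item Lemma \ref{lem:B_i complete to two large cliques in B_{i-1}};
\item Lemma \ref{lem:cross four cliques};
\item Lemma \ref{lem:nonedge in A_3(i)}, which says that a non-adjacent pair in $A'_3(j)$ has disjoint neighbourhoods on one side and comparable neighbourhoods on the other.
\end{itemize}

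\textbf{Step 1: three non-empty $A_3(j)$ are impossible.} Suppose, for instance, that $A_3(i-1)$, $A_3(i)$ and $A_3(i+1)$ are all non-empty. Take $v\in A_3(j)$ minimal simplicial and $w\in B_j\setminus N(A_3(j))$, chosen via Lemma \ref{lem:comparable vertices in A_3(i)}. By Lemma \ref{lem:nonedge in A_3(i)}, $v$ and $w$ are disjoint on one side, so each non-empty $A_3(j)$ "points" to $B_{j-1}$ or to $B_{j+1}$. Lemma \ref{lem:no two directed edge pointing to the same B_i} forbids two sets pointing into the same $B_i$ from opposite sides. In the remaining orientations, Lemmas \ref{lem:B_i complete to two large cliques in B_{i-1}} and \ref{lem:cross four cliques} assemble the two disjoint pairs of large cliques into a clique of size $>\omega$. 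Checking the rotations this way leaves only these possibilities: at most two non-empty $A_3(j)$, and if two, their indices are of the form $\{i-1,i+1\}$ or $\{i-2,i+2\}$.

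Adjacent indices $\{j,j+1\}$ need separate treatment. If the two sets point toward each other, Lemma \ref{lem:cross four cliques} gives four large cliques in $B_j\cup B_{j+1}$ forming a clique of size $>\omega$. Otherwise, each set points into a different outer bag; we then use $q$-vertex small arguments (as in Lemma \ref{lem:B_{i-1} is large when two non-empty A_1(i)}) to make a third bag large and again exceed $\omega$.

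\textbf{Step 2: the two-set cases.} Suppose $A_3(i-1)$ and $A_3(i+1)$ are both non-empty. Each must be a clique: if $A_3(i-1)$ had two non-adjacent minimal simplicial vertices (Lemma \ref{lem:two non-adjacent simplicail verteices}), then together with $w$ these would be three pairwise non-adjacent vertices with large neighbourhoods. These neighbourhoods are pairwise disjoint on some side, and Lemma \ref{lem:B_i complete to two large cliques in B_{i-1}} plus a large clique from the other set yields a clique of size $>\omega$. This is the same argument as in Lemma \ref{lem:A_3(i-1) is a clique when only one A_1}. The anticompleteness conditions say that every $a\in A_3(i-1)$ has a common neighbour with $w$ in $B_{i-2}$. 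We get this by choosing the minimal vertex and excluding disjointness, since disjointness would make $B_{i-2}$ and the opposite side too large. Case (2) is handled symmetrically, with $B_{i-1}$ and $B_{i+1}$ playing the roles of $B_{i-2}$ and $B_{i+2}$.

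\textbf{Step 3: the one-set case.} Suppose only $A_3(i-2)\ne\emptyset$. By Lemma \ref{lem:P4-freenessofA3}, $A_3(i-2)$ is $P_4$-free, so it is a join $X_1\cup X_2$ with $X_1$ a clique and $|X_1|$ maximal. It then suffices to show that $A_3(i-2)$ has at most two pairwise non-adjacent simplicial vertices. Three such vertices, together with $w$, would give three or four large disjoint cliques in $B_{i-1}\cup B_{i+2}$; combined with Lemma \ref{lem:B_i complete to two large cliques in B_{i-1}} this exceeds $\omega$. We also need $B_i$ (or $B_{i+1}$) to be large, which holds because $q_i$ is not small and $N(q_i)$ minus one bag is a clique. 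This step is the main obstacle, since the count must produce four disjoint large cliques; we would try first to place them all in $B_{i-1}\cup B_i$. If it succeeds, $X_2$ splits into exactly two clique components, so $A_3(i-2)$ is a blowup of $P_3$, which is case (3).
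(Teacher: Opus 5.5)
Your toolkit is the paper's: minimal simplicial vertices and non-neighbours $w$ give cliques of size $>\ceil{\frac{\omega}{4}}$, assembled via Lemmas \ref{lem:no two directed edge pointing to the same B_i}, \ref{lem:B_i complete to two large cliques in B_{i-1}} and \ref{lem:cross four cliques}. However, your treatment of adjacent indices is wrong, and that is the heart of the lemma.

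In $\mathbb{Z}_5$ the indices $i-2$ and $i+2$ are adjacent, so outcome (2) \emph{is} the adjacent-pair case. There each $v\in A_3(i-2)$ is disjoint from its non-neighbours in $B_{i-2}$ inside $B_{i-1}$, the bag it does not share with $A_3(i+2)$, and symmetrically for $A_3(i+2)$; both sets point outward.

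Yet Step 1, after listing $\{i-2,i+2\}$ among the survivors, claims to refute every adjacent pair. It dismisses exactly this both-outward configuration with an unspecified ``$q$-vertex small argument'' borrowed from the $A_1\neq\emptyset$ section. This is unsupported. The paper keeps the configuration as outcome (2) and disposes of it only later, by constructing a $(4,5)$-good subgraph (Lemma \ref{lem:two non-dis A3}).

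Two further problems. Your dichotomy omits the mixed case, where one set points toward the other and the other points outward. And Step 2's ``case (2) by symmetry'' is not a relabelling of case (1): the two sets share two bags. The required orientation is a common neighbour with $w$ in the shared bag $B_{i+2}$ and disjointness in $B_{i-1}$. That is the opposite of what your substitution $B_{i-2}\mapsto B_{i-1}$ produces.

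The missing ingredient is the paper's claim (i): if $v$ is simplicial in $A_3(i-2)$ and $N_{B_{i+2}}(v)$ is anticomplete to $B_{i-2}\setminus N(v)$, then $A_3(i+2)=\emptyset$. The argument runs as follows.
\begin{itemize}
\item $N_{B_{i+2}}(v)$ and $N_{B_{i+2}}(w)$ are disjoint large cliques.
\item Take $u$ simplicial in $A_3(i+2)$ and a minimal $w'\in B_{i+2}\setminus N(A_3(i+2))$.
\item If $u$ points to $B_{i+1}$, Lemma \ref{lem:B_i complete to two large cliques in B_{i-1}} makes $N_{B_{i+2}}(v)\cup N_{B_{i+2}}(w)\cup N_{B_{i+1}}(u)\cup N_{B_{i+1}}(w')$ a clique of size $>\omega$.
\item If $u$ points to $B_{i-2}$, Lemma \ref{lem:complete and anti on w} does the same with $N_{B_{i-2}}(u)\cup N_{B_{i-2}}(w')$.
\end{itemize}
So an adjacent pair survives exactly when both sets point outward, and you must keep that case. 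Claim (i) together with Lemma \ref{lem:no two directed edge pointing to the same B_i}, applied vertex by vertex (orientation belongs to vertices, not sets), excludes three non-empty sets and forces the orientations in (1) and (2).

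Finally, Step 3 needs a combinatorial step that the paper supplies. For pairwise non-adjacent $w,v_1,v_2,v_3\in A'_3(i-2)$, every pair is disjoint on one side and comparable on the other, and a neighbourhood comparable with two disjoint ones contains both. This forces three of them to be pairwise disjoint in $B_{i-1}$, giving a clique with $B_i$, or in $B_{i+2}$, giving a clique with $B_{i+1}$. The bounds $|B_i|,|B_{i+1}|>\ceil{\frac{\omega}{4}}$ come from minimal vertices of $B_{i+1}$ and $B_i$ respectively. They do not come from $q_i$, since $N(q_i)$ minus one bag need not be a clique.
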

\begin{proof}
    We start with the following property.
    
    \vspace{.2cm}
    \noindent (i) Let $v$ be a simplicial vertex in $A_3(i-2)$ such that $N_{B_{i+2}}(v)$ is anticomplete to $B_{i-2}\setminus N(v)$. 
    Then $A_3(i+2)=\emptyset$. 
    \vspace{.2cm} 

    Suppose to the contrary that $A_3(i+2)\neq \emptyset$. 
    Let $w\in B_{i-2}\setminus N(A_3(i-2))$ be a vertex with minimal neighborhood in $B_{i-1}\cup B_{i+2}$.
    By Lemma \ref{lem:smv on w} and Lemma \ref{lem:small vertex argument for simplicial vertices in A_3(i-2)}, we have $N_{B_{i+2}}(v)$ and $N_{B_{i+2}}(w)$ are disjoint cliques larger than $\ceil{\frac{\omega}{4}}$. 

    Let $u$ be a simplicial vertex in $A_3(i+2)$ and $w'\in B_{i+2}\setminus N(A_3(i+2))$ be a vertex with minimal neighborhood in $B_{i+1}\cup B_{i-2}$.
    By Lemma \ref{lem:blowup-threeneighbor}, $B_{i+2}\setminus N(u)$ is anticomplete to either $N_{B_{i+1}}(u)$ or $N_{B_{i-2}}(u)$. 
    Suppose first that $B_{i+2}\setminus N(u)$ is anticomplete to $N_{B_{i+1}}(u)$. 
    Then $N_{B_{i+2}}(v)\cup N_{B_{i+2}}(w)\cup N_{B_{i+1}}(u)\cup N_{B_{i+1}}(w')$ is a clique larger than $\omega$ by Lemma \ref{lem:B_i complete to two large cliques in B_{i-1}}, Lemma \ref{lem:smv on w} and Lemma \ref{lem:small vertex argument for simplicial vertices in A_3(i-2)}, a contradiction. 
    So we may assume that $B_{i+2}\setminus N(u)$ is anticomplete to $N_{B_{i-2}}(u)$. 
    Then $N_{B_{i+2}}(v)\cup N_{B_{i+2}}(w)\cup N_{B_{i-2}}(u)\cup N_{B_{i-2}}(w')$ is a clique larger than $\omega$ by Lemma \ref{lem:complete and anti on w}, Lemma \ref{lem:smv on w} and Lemma \ref{lem:small vertex argument for simplicial vertices in A_3(i-2)}, a contradiction.
    This completes the proof of (i). 
    
    \vspace{.2cm}

    Since $A_3(i)\neq \emptyset$, we may assume by symmetry that $A_3(i-2)\neq \emptyset$. 
    Let $v_{i-2}\in A_3(i-2)$ be a vertex simplicial in $A_3(i-2)$. By symmetry and Lemma \ref{lem:blowup-threeneighbor}, we may assume that $N_{B_{i+2}}(v_{i-2})$ is anticomplete to $B_{i-2}\setminus N(v_{i-2})$.  By (1), we have $A_3(i+2)=\emptyset$.

    \vspace{.2cm}
    \noindent (ii) If $A_3(i+1)\neq \emptyset$, then (1) holds. 
    \vspace{.2cm}

    By Lemma \ref{lem:no two directed edge pointing to the same B_i}, we have $N_{B_i}(u)$ is anticomplete to $B_{i+1}\setminus N(u)$ for every $u\in A_3(i+1)$. 
    By (i) and Lemma \ref{lem:no two directed edge pointing to the same B_i}, $A_3(i)=A_3(i-1)=\emptyset$ and so $A_3=A_3(i-2)\cup A_3(i+1)$. 

    Suppose first that $A_3(i+1)$ is not a clique. 
    Then $A_3(i+1)$ has two non-adjacent simplicial vertices $u_1',u_2'$.
    For $j \in [2]$, let $u_j\in N_{A_3(i+1)}[u'_j]$ be a vertex simplicial in $A_3(i+1)$ with minimal neighborhood in $B_i\cup B_{i+2}$.
    By Lemma \ref{lem:two non-adjacent simplicail verteices}, $u_1u_2\notin E(G)$.
    By Lemma \ref{lem:no two directed edge pointing to the same B_i}, we have $u_1,u_2$ have disjoint neighborhoods in $B_i$. 
    Let $w$ be a vertex in $B_{i-2}\setminus N(A_3(i-2))$ with minimal neighborhood in $B_{i-1}\cup B_{i+2}$ and $w'$ be a vertex in $B_{i+1}\setminus N(A_3(i+1))$ with minimal neighborhood in $B_{i}\cup B_{i+2}$. 
    By Lemmas \ref{lem:B_i complete to two large cliques in B_{i-1}}, \ref{lem:smv on w} and \ref{lem:small vertex argument for simplicial vertices in A_3(i-2)}, we have $N_{B_{i-1}}(w)\cup N_{B_i}(w')\cup N_{B_i}(u_1)\cup N_{B_i}(u_2)$ is a clique larger than $\omega$. 
    So $A_3(i+1)$ is a clique. Let $K = A_3(i+1)$.

    Suppose that $A_3(i-2)$ is not a clique. 
    Then $A_3(i-2)$ has two non-adjacent simplicial vertices $v_1',v_2'$.
    For $j \in [2]$, let $v_j\in N_{A_3(i-2)}[v'_j]$ be a vertex simplicial in $A_3(i-2)$ with minimal neighborhood in $B_{i-1}\cup B_{i+2}$.
    By Lemma \ref{lem:two non-adjacent simplicail verteices}, $v_1v_2\notin E(G)$.
    Let $w\in B_{i-2}\setminus N(A_3(i-2))$ with minimal neighborhood in $B_{i-1}\cup B_{i+2}$. 
    If any two of $w,v_1,v_2$ have disjoint neighborhoods in $B_{i-1}$, then $B_{i-1}\cup B_i$ has a clique larger than $\omega$ by Lemmas \ref{lem:B_i complete to two large cliques in B_{i-1}}, \ref{lem:smv on w} and \ref{lem:small vertex argument for simplicial vertices in A_3(i-2)}, a contradiction. 
    So we may assume that $w,v_1,v_2$ have disjoint neighborhoods in $B_{i+2}$. 
    Let $u\in B_{i}$ be a vertex with minimal neighborhood in $B_{i-1}\cup B_{i+1}$. 
    Note that $N(u)\setminus (K\cup N_{B_{i+1}}(K))\subseteq B_{i-1}\cup B_{i}$. 
    Then by the minimality of $u$, $N_{B_{i-1}}(u) \cup N_{B_i}(u)$ is a clique and so $K\cup N_{B_{i+1}}(K)$ is a clique of size larger than $\ceil{\frac{\omega}{4}}$ by Lemma \ref{lem:edge in $A_3(i)$}. 
    Then $N_{B_{i+2}}(w)\cup N_{B_{i+2}}(v_1)\cup N_{B_{i+2}}(v_2)\cup K\cup N_{B_{i+1}}(K)$ is a clique larger than $\omega$ by Lemmas  \ref{lem:B_i complete to two large cliques in B_{i-1}}, \ref{lem:smv on w} and \ref{lem:small vertex argument for simplicial vertices in A_3(i-2)}, a contradiction. 
 
    Since $A_3(i-2)$ is a clique, every $v\in A_3(i-2)$ and $v_{i-2}$ have a common neighbor in $B_{i+2}$.  
    If $N_{B_{i+2}}(v)$ is not anticomplete to $B_{i-1}\setminus N(v)$, then $v$ and $w$ have a common neighbor in $B_{i+2}$ and so $N_H(v_{i-2})\subsetneq N_H(v)$. 
    Since $v_{i-2}$ and $w$ have a common neighbor in $B_{i-1}$, $v$ and $w$ have a common neighbor in $B_{i-1}$, contradicts $G$ is $C_4$-free. 
    So $N_{B_{i+2}}(v)$ is anticomplete to $B_{i+2}\setminus N(v)$. 
    This completes the proof of (ii). 

    \vspace{.2cm}
    So we may assume that $A_3(i+1)=\emptyset$. 

    \vspace{.2cm}
    \noindent (iii) If $A_3(i)\neq \emptyset$, then (1) holds. 
    \vspace{.2cm}

    Let $v$ be a simplicial vertex of $A_3(i)$ with minimal neighborhood in $B_{i-1}\cup B_{i+1}$. 
    If $B_{i}\setminus N(v)$ is anticomplete to $N_{B_{i-1}}(v)$, then we are done by (2). 
    So we may assume that $B_i\setminus N(v)$ is anticomplete to $N_{B_{i+1}}(v)$. 
    By Lemmas \ref{lem:B_i complete to two large cliques in B_{i-1}}, \ref{lem:smv on w} and  \ref{lem:small vertex argument for simplicial vertices in A_3(i-2)}, we have $B_{i+1}\cup B_{i+2}$ has a clique larger than $\omega$, a contradiction. 
    This completes the proof of (iii). 

    \vspace{.2cm}
    So we may assume that $A_3(i)=\emptyset$. 

    \vspace{.2cm}
    \noindent (iv) If $A_3(i-1)\neq \emptyset$, then (2) holds. 
    \vspace{.2cm}

    Let $w$ be a vertex in $B_{i-2}\setminus N(A_3(i-2))$ with minimal neighborhood in $B_{i-1}\cup B_{i+2}$ and $w'$ be a vertex in $B_{i-1}\setminus N(A_3(i-1))$ with minimal neighborhood in $B_{i}\cup B_{i-2}$. 
    
    Suppose first that $A_3(i-1)$ is not a clique. 
    Then $A_3(i-1)$ has two non-adjacent simplicial vertices $v_1',v_2'$. 
    For $j \in [2]$, let $v_j\in N_{A_3(i-1)}[v'_j]$ be a vertex simplicial in $A_3(i-1)$ with minimal neighborhood in $B_i\cup B_{i-2}$.
    By Lemma \ref{lem:two non-adjacent simplicail verteices}, $v_1v_2\notin E(G)$.
    By (1), we have $v_1,w'$ and $v_2,w'$ have disjoint neighborhoods in $B_i$. 
    If $v_1,v_2$ have disjoint neighborhoods in $B_{i-2}$, then $N_{B_{i-2}}(v_1)\cup N_{B_{i-2}}(v_2)\cup N_{B_{i+2}}(v_{i-2})\cup N_{B_{i+2}}(w)$ is a clique larger than $\omega$ by Lemmas \ref{lem:B_i complete to two large cliques in B_{i-1}}, \ref{lem:smv on w} and \ref{lem:small vertex argument for simplicial vertices in A_3(i-2)}, a contradiction. 
    So we may assume that $v_1,v_2$ have disjoint neighborhoods in $B_{i}$. 
    Then $N_{B_i}(w')\cup N_{B_i}(v_1)\cup N_{B_i}(v_2)\cup N_{B_{i-1}}(v_{i-2})$ is a clique larger than $\omega$ by Lemmas \ref{lem:single vertex in A_3(i)} (3), \ref{lem:compare nbd of different A_3(i) in B_j}, \ref{lem:smv on w} and \ref{lem:small vertex argument for simplicial vertices in A_3(i-2)}, a contradiction. 
    So $A_3(i-1)$ is a clique. 
    
    Let $v_{i-1}$ be a simplicial vertex of $A_3(i-1)$ with minimal neighborhood in $B_{i}\cup B_{i-2}$. 
    By (1), we have $N_{B_i}(v_{i-1})$ is anticomplete to $B_{i-1}\setminus N(v_{i-1})$. 
    Since $A_3(i-1)$ is a clique, we have $u$ and $v_{i-1}$ have a common neighbor in $B_i$ for every $u\in A_3(i-1)$. Let $u \in A_3(i-1)$.
    If $N_{B_i}(u)$ is not anticomplete to $B_{i-1}\setminus N(u)$, then $u$ and $w'$ have a common neighbor in $B_i$ and so $N_H(v_{i-1})\subseteq N_H(u)$. 
    Since $v_{i-1}$ and $w'$ have a common neighbor in $B_{i-2}$, $u$ and $w'$ have a common neighbor in $B_{i-2}$, which contradicts $G$ is $C_4$-free.  
    By symmetry,  $A_3(i-2)$ is a clique and $N_{B_{i+2}}(v)$ is anticomplete to $B_{i-2}\setminus N(v)$ for every $v\in A_3(i-2)$.
    This completes the proof of (iv). 

    \vspace{.2cm}
    So we may assume that $A_3=A_3(i-2)$. 

    \vspace{.2cm}
    \noindent (v) If $A_3=A_3(i-2)$, then $(3)$ holds.  
    \vspace{.2cm}

    Let $u\in B_i$ with minimal neighborhood in $B_{i-1}\cup B_{i+1}$. 
    Since $N(u)\setminus B_{i+1}$ is a clique, we have $|N_{B_{i+1}}(u)|>\ceil{\frac{\omega}{4}}$ and so $|B_{i+1}|>\ceil{\frac{\omega}{4}}$. 
    By symmetry, we have $|B_{i}|>\ceil{\frac{\omega}{4}}$. 

    We first show that there are no three pairwise non-adjacent simplicial vertices of $A_3(i-2)$. Suppose for a contradiction that $v_1',v_2',v_3'$ are three pairwise non-adjacent simplicial vertices of $A_3(i-2)$.
    For $j \in [3]$, let $v_j\in N_{A_3(i-2)}[v'_j]$ be a vertex simplicial in $A_3(i-2)$ with minimal neighborhood in $B_{i-1}\cup B_{i+2}$.
    By Lemma \ref{lem:two non-adjacent simplicail verteices}, $v_1,v_2,v_3$ are pairwise non-adjacent.
    By Lemmas \ref{lem:B_i complete to two large cliques in B_{i-1}}, \ref{lem:smv on w} and  \ref{lem:small vertex argument for simplicial vertices in A_3(i-2)}, we have that no three of $w,v_1,v_2,v_3$ have pairwise disjoint neighborhoods in $B_{i+2}$ or in $B_{i-1}$. In other word, no three of $w,v_1,v_2,v_3$ have pairwise neighborhood containment in $B_{i+2}$ or in $B_{i-1}$. So, we may assume that $v_2$ and $v_3$ have disjoint neighborhoods in $B_{i+2}$. Then $v_1$ has neighborhood containment with either both $v_2$ and $v_3$ in $B_{i+2}$, or exactly one of $v_2$ and $v_3$ in $B_{i+2}$.

    Suppose first that $v_1$ has neighborhood containment with both $v_2$ and $v_3$ in $B_{i+2}$. As $w$ has neighborhood containment with at least one of $v_2$ and $v_3$ in $B_{i+2}$, $w$ and $v_1$ have neighborhood containment in $B_{i+2}$. Then $w$, $v_1$ and one of $v_2$ and $v_3$ have pairwise neighborhood containment in $B_{i+2}$, a contradiction. So, we may assume that $v_1$ has neighborhood containment with exactly one of $v_2$ and $v_3$ in $B_{i+2}$.

    By symmetry, we may assume that $v_1$ has neighborhood containment with $v_2$ in $B_{i+2}$. Then $v_1$ and $v_2$ have disjoint neighborhood in $B_{i-1}$ and $v_3$ have neighborhood containment with both $v_1$ and $v_2$ in $B_{i-1}$. By same argument,  as $w$ has neighborhood containment with at least one of $v_1$ and $v_2$ in $B_{i-1}$, $w$, $v_3$ and one of $v_1$ and $v_2$ have pairwise neighborhood containment in $B_{i-1}$, a contradiction. Therefore, there are no three pairwise non-adjacent simplicial vertices of $A_3(i-2)$.

    This immediately implies that $A_3(i-2)$ has at most two components and if $A_3(i-2)$ has two components, then $A_3(i-2)$ is a disjoint union of two cliques. So we assume that $A_3(i-2)$ is connected. By Lemma \ref{lem:P4-freenessofA3}, $A_3(i-2)$ can be partitioned into two complete subsets $X$ and $Y$. Since $G$ is $C_4$-free, we may assume that $X$ is a clique. Choose such a partition $(X,Y)$ such that $|X|$ is maximum. It follows that $Y$ is disconnected. Since any $v\in Y$ simplicial in $Y$ is also simplicial in $A_3(i-2)$, it follows that $Y$ has two component and each component is a clique. So $A_3(i)$ is a blowup of $P_3$.  This completes the proof of (v). 

    \vspace{.2cm}
    This completes the proof of Lemma \ref{lem:structure of only A3}. 
\end{proof}

Next we color $H\cup A_3$ in terms of the outcomes of Lemma \ref{lem:structure of only A3}. For a subset $S\subseteq B_i$ and $v\in S$,  $v$ is {\em minimal} if $N_H(v)$ is minimal over all vertices in $S$, and $v$ is {\em maximal} if $N_H(v)$ is maximal over all vertices in $S$.

\subsection{Distance 2}

In this subsection, we handle the first outcome of Lemma \ref{lem:structure of only A3}.

\begin{lemma}\label{lem:two disjoint A3}
    Let $G=H\cup K\cup J$ where
    $K$ is a non-empty component of $A_3(i-1)$ and $J$ is a non-empty component of $A_3(i+1)$.
    Then $\chi(G)\le \ceil{\frac{5}{4}\omega}$. 
\end{lemma}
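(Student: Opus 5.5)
We follow the pre-colouring scheme used in Sections~\ref{sec:A_1 is not empty} and~\ref{sec:A_2 is not emptyset}. By Lemma~\ref{lem:structure of only A3}(1), $K$ and $J$ are cliques, every $v\in K$ has $N_{B_i}(v)$ anticomplete to $B_{i-1}\setminus N(v)$, and every $u\in J$ has $N_{B_{i+2}}(u)$ anticomplete to $B_{i+1}\setminus N(u)$. Hence Lemma~\ref{lem:single vertex in A_3(i)} applies to every vertex of $K$ (oriented towards $B_i$) and of $J$ (oriented towards $B_{i+2}$).

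\textbf{Size bounds.} We first collect the relevant cliques and their sizes using the small-vertex arguments. Lemma~\ref{lem:small vertex argument for simplicial vertices in A_3(i-2)} applies to a vertex of $K$ (respectively $J$) with minimal $H$-neighbourhood, since $A_1=A_2=\emptyset$. Lemma~\ref{lem:smv on w} applies to a minimal vertex $w\in B_{i-1}\setminus N(K)$ and a minimal vertex $w'\in B_{i+1}\setminus N(J)$. Together they give cliques of size $>\ceil{\frac{\omega}{4}}$ in $N_{B_i}(K)$, $N_{B_{i-2}}(K)$, $N_{B_{i-2}}(w)$, $N_{B_{i+2}}(J)$, $N_{B_i}(J)$ and $N_{B_{i+2}}(w')$. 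By Lemma~\ref{lem:single vertex in A_3(i)}(3), $K$ and $w$ have disjoint neighbourhoods in $B_i$. Lemma~\ref{lem:no two directed edge pointing to the same B_i} and Lemma~\ref{lem:B_i complete to two large cliques in B_{i-1}} then show that $N_{B_{i-2}}(K)\cup N_{B_{i-2}}(w)$ is complete to $B_{i+2}$, and symmetrically for $J$. From this we get upper bounds of the form $|B_{i-2}|,|B_{i+2}|,|N_{B_{i-1}}(K)\cup K|<\frac{3}{4}\omega$ and $|B_i|<\frac{\omega}{2}$, in the same way as in the size lemma of the $A_2(i+2)$ subsection.

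\textbf{Colouring.} Set $x=|K\cup N_{B_{i-1}}(K)|$, $y=|B_{i-2}|$ and $z=|B_{i+2}|$; these sets are pairwise complete, so $x+y+z\le\omega$. Put $p=x+y+z-3\ceil{\frac{\omega}{4}}$ and $f=\ceil{\frac54\omega}-p\ge\omega$. Pre-colour with the colours $[p]$:
\begin{itemize}
\item $x-\ceil{\frac{\omega}{4}}$ vertices of $K\cup N_{B_{i-1}}(K)$, $y-\ceil{\frac{\omega}{4}}$ vertices of $B_{i-2}$, and $z-\ceil{\frac{\omega}{4}}$ vertices of $B_{i+2}$;
\item a set $S_i$ of $p$ maximal vertices of $B_i$, which is anticomplete to all of the above. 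This choice is possible because $|B_i|>\ceil{\frac{\omega}{4}}$.
\end{itemize}
Next apply Lemma~\ref{lem:degeneracy} repeatedly, with $X_1=B_i$ and $P=S_i$. This deletes the rest of $B_{i-1}$ (using $X_3$ the remaining part of $B_{i-2}\cup K$) and then $B_{i+1}\cup J$ (using $X_2=B_{i+1}\cup J$, $X_3$ the remaining part of $B_{i+2}$, and $X^*=\emptyset$). The comparability of neighbourhoods in $X_2$ required there comes from Lemmas~\ref{lem:single vertex in A_3(i)}(3) and~\ref{lem:neighborcontainA3i}. After these deletions the remaining graph is a blowup of a path, hence chordal, and so it is $\omega$-colourable, which is at most $f$.

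\textbf{Main obstacle.} The delicate step is verifying the hypotheses of Lemma~\ref{lem:degeneracy} when $J$ is deleted. We need $N(v)\cap(X_1\cup X_2\cup X^*)$ to be a clique for a minimal $v$. If this fails for $J$, the fallback is the hyperhole route, as in the $A_2$ case: delete degenerate vertices only on the $K$-side, let $H'$ be a hyperhole of maximum chromatic number, and bound $|V(H')|<2f$ by splitting $V(H')$ into three cliques, each complete to a pre-coloured set. Then Penev's reduction to hyperholes together with Lemma~\ref{lem:hyperhole} finishes the proof.
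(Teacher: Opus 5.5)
\textbf{There is a genuine gap: your pre-colouring does not respect the adjacencies of the blowup, so both the counting and the properness fail.}

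\emph{The three sets are not pairwise complete.} Every vertex of $K$ has support $\{i-2,i-1,i\}$, and $B_{i-1}$ is anticomplete to $B_{i+2}$ (they are at distance two on the $C_5$). So the clique $K\cup N_{B_{i-1}}(K)$ is \emph{anticomplete} to $B_{i+2}$. Nothing you prove makes $K$ complete to $B_{i-2}$, or $B_{i-2}$ complete to $B_{i+2}$. Hence $x+y+z\le\omega$ is unfounded, and with it $f\ge\omega$ and the bound $p\le\ceil{\frac{\omega}{4}}$ you need to choose $S_i$.

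\emph{The pre-colouring is not shown to be proper.} $S_i\subseteq B_i$ is not anticomplete to $K\cup N_{B_{i-1}}(K)$: $B_i$ and $B_{i-1}$ are consecutive, $q_i$ is complete to $B_{i-1}$, and every vertex of $K$ has a neighbour in $B_i$. Since $S_i$ and the other pre-coloured vertices each use every colour of $[p]$, nothing stops a vertex of $S_i$ from sharing a colour with an adjacent pre-coloured vertex.

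\emph{The degeneracy step fails for the same reason.} Lemma~\ref{lem:degeneracy} requires $X_3$ to be a clique anticomplete to $X_1=B_i$. The remaining part of $B_{i-2}\cup K$ is not anticomplete to $B_i$, and is not known to be a clique.

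\emph{The completeness claim misapplies Lemma~\ref{lem:B_i complete to two large cliques in B_{i-1}}.} You claim $N_{B_{i-2}}(K)\cup N_{B_{i-2}}(w)$ is complete to $B_{i+2}$. But $K$ and $w$ have disjoint neighbourhoods in $B_i$, so by Lemma~\ref{lem:nonedge in A_3(i)} they have a common neighbour in $B_{i-2}$. What the lemma actually gives is:
\begin{itemize}
\item $N_{B_i}(K)\cup N_{B_i}(w)$ is complete to $B_{i+1}\cup J$;
\item since $J$ is the rotation of $K$ by two, not its mirror image, $N_{B_{i+2}}(J)\cup N_{B_{i+2}}(w')$ is complete to $B_{i-2}$.
\end{itemize}

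\emph{The hyperhole fallback does not close the gap.} It is only named; you specify no sets and no inequalities.

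\textbf{What the paper does.} It does not pre-colour in this case at all. It exhibits an explicit $(4,5)$-good subgraph $F$, so the bound follows from Lemma~\ref{lem:goodsubgraph} (equivalently, $G$ is not basic). $F$ consists of:
\begin{itemize}
\item a vertex $q_{i-2}\in N_{B_{i-2}}(K)$ complete to $B_{i-1}\cup B_{i+2}$ (Lemma~\ref{lem:single vertex in A_3(i)}(4));
\item the vertices $q_{i-1},q_i,q_{i+1},q_{i+2}$ and a second top vertex of $B_i$ (and possibly of $B_{i-1}$);
\item maximal vertices of $B_{i-1}\setminus N(K)$ and of $B_{i+1}\setminus N(J)$;
\item a neighbour in $B_i$ of a minimal vertex of $K$, and two neighbours in $B_{i+2}$ of a minimal vertex of $J$;
\item one or two top vertices of $K$.
\end{itemize}
$F$ is $5$-coloured explicitly, and every possible location of a maximal clique is checked. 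The checks repeatedly use that $B_{i-2}\cup N_{B_{i+2}}(J)\cup N_{B_{i+2}}(w')$ and $B_{i+1}\cup N_{B_i}(K)\cup N_{B_i}(w)$ are cliques, each containing two disjoint parts of size $>\ceil{\frac{\omega}{4}}$.

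If you want to keep a pre-colouring argument, the three pre-coloured sets must be genuinely pairwise complete, for example $B_{i-2}$, $N_{B_{i+2}}(J)$ and $N_{B_{i+2}}(w')$. The set receiving the same $p$ colours must lie at distance two from all of them, i.e.\ inside $B_i$. The subsequent reduction to a chordal graph or a hyperhole would then have to be redone from scratch.
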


\begin{proof}
    By Lemma \ref{lem:structure of only A3}, both $K$ and $J$ are cliques. Moreover, $B_{i-1} \setminus N(K)$ is anticomplete to $N_{B_i}(K)$ and $B_{i+1} \setminus N(J)$ is anticomplete to $N_{B_{i+2}}(J)$. Let $z \in B_{i-1} \setminus N(K)$ be a minimal vertex in $B_{i-1} \setminus N(K)$ and let $w \in B_{i+1} \setminus N(J)$ be a minimal vertex in $B_{i+1} \setminus N(J)$. Then $N_{B_i}(K)$ and $N_{B_i}(z)$ are disjoint and $N_{B_{i+2}}(J)$ and $N_{B_{i+2}}(w)$ are disjoint.
    
    By Lemma \ref{lem:smv on w} and Lemma \ref{lem:small vertex argument for simplicial vertices in A_3(i-2)}, $|B_{i-2}|$, $|N_{B_i}(K)|$, $|N_{B_i}(z)|$, $ |N_{B_{i+2}}(J)|$, $|N_{B_{i+2}}(w)| > \ceil{\frac{\omega}{4}}$. By Lemma \ref{lem:single vertex in A_3(i)} (4), there is a vertex in $N_{B_{i-2}}(K)$ complete to $B_{i-1}\cup B_{i+2}$. We set this vertex to be $q_{i-2}$. Let $v_K \in K$ that has maximum neighborhood in $H$. Let $v_K'' \in K$ that has minimal neighborhood in $H$ and let $v_J'' \in J$ that has minimal neighborhood in $H$.

    We set $F = (F_1, F_2, F_3, F_4, F_5, O)$ as follows:
    \begin{itemize}
        \item $F_1 = \{q_{i-2}\}$,
        \item $F_2 = \begin{cases}
            \{q_{i-1}, q_{i-1}', a\} \quad & \text{if }\,\,|N_{B_{i-1}}(K)| \ge 2\\
            \{q_{i-1}, a, a'\} \quad &\text{if }\,\,|N_{B_{i-1}}(K)| = 1,
        \end{cases}$\\
        where $a, a' \in B_{i-1}\setminus N(K)$ are the first and second maximal in $B_{i-1}\setminus N(K)$, respectively,
        \item $F_3 = \{q_i, q_i', b\}$, where $b \in N_{B_i}(v_K'')$,
        \item $F_4 = \{q_{i+1}, c\}$, where $c \in B_{i+1}\setminus N(J)$ is a maximal vertex in $B_{i+1}\setminus N(J)$,
        \item $F_5 = \{q_{i+2}, d, d'\}$, where $d, d' \in N_{B_{i+2}}(v_J'')$,
        \item $O = \begin{cases}
            \{v_K\} \quad & \text{if }\,\,|N_{B_{i-1}}(K)| \ge 2\\
            \{v_K, v_K'\} \quad &\text{if }\,\,|N_{B_{i-1}}(K)| = 1,
        \end{cases}$\\
        where $v_K' \in K\setminus \{v_K\}$ that has maximal neighborhood in $H$.
    \end{itemize}

We can assign color $1$ to $a$, $b$ and $d$, color $2$ to $q_{i-1}$ and $q_{i+1}$, color $3$ to $v_K$, $q_i$ and $q_{i+2}$, color $4$ to $q_{i-2}$ and $q_i'$, and color $5$ to $v_K'$, $q_{i-1}'$, $a'$, $c$ and $d'$. So $F$ is 5-colorable (see Figure \ref{fig:good subgraph1}). 

    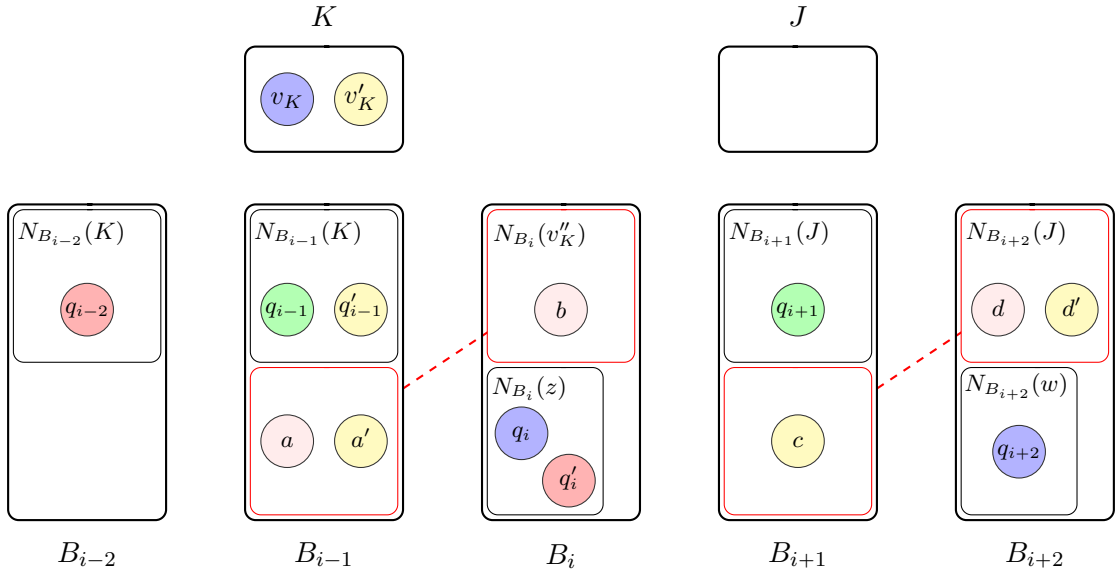
\begin{figure}[h]
        \centering
        \begin{tikzpicture}[scale=0.7]
        \tikzstyle{v}=[circle, draw, solid, fill=black, inner sep=0pt, minimum width=3pt]

        \draw[red, dashed, thick] (-4.5, -3.5)--(0, -0.5);
        \draw[red, dashed, thick] (4.5, -3.5)--(9, -0.5);
        
        \draw[rounded corners, thick] (-9, 1)--(-7.5, 1)--(-7.5, -5)--(-10.5, -5)--(-10.5, 1)--(-8.9, 1);
        \draw[rounded corners] (-9, 0.9)--(-7.6, 0.9)--(-7.6, -2)--(-10.4, -2)--(-10.4, 0.9)--(-8.9, 0.9);

        \node [label=$B_{i-2}$] (v) at (-9, -6.3){};
        \node [label={\footnotesize $N_{B_{i-2}}(K)$}] (v) at (-9.3, -0.2){};

        \draw[draw=black!80, fill =red!30] (-9, -1) circle (0.5);
        \node [label = {\footnotesize $q_{i-2}$}] (v) at (-9, -1.55){};

        \draw[rounded corners, thick] (-4.5, 1)--(-3, 1)--(-3, -5)--(-6, -5)--(-6, 1)--(-4.4, 1);
        \draw[rounded corners] (-4.5, 0.9)--(-3.1, 0.9)--(-3.1, -2)--(-5.9, -2)--(-5.9, 0.9)--(-4.4, 0.9);
        \draw[red, fill=white!, rounded corners] (-4.5, -2.1)--(-3.1, -2.1)--(-3.1, -4.9)--(-5.9, -4.9)--(-5.9, -2.1)--(-4.4, -2.1);

        \node [label=$B_{i-1}$] (v) at (-4.5, -6.3){};
        \node [label={\footnotesize $N_{B_{i-1}}(K)$}] (v) at (-4.8, -0.2){};

        \draw[draw=black!80, fill =green!30] (-5.2, -1) circle (0.5);
        \node [label = {\footnotesize $q_{i-1}$}] (v) at (-5.2, -1.55){};

        \draw[draw=black!80, fill =yellow!30] (-3.8, -1) circle (0.5);
        \node [label = {\footnotesize $q_{i-1}'$}] (v) at (-3.8, -1.55){};

        \draw[draw=black!80, fill =pink!30] (-5.2, -3.5) circle (0.5);
        \node [label = {\footnotesize $a$}] (v) at (-5.2, -4){};

        \draw[draw=black!80, fill =yellow!30] (-3.8, -3.5) circle (0.5);
        \node [label = {\footnotesize $a'$}] (v) at (-3.8, -4){};
        
        \draw[rounded corners, thick] (0, 1)--(1.5, 1)--(1.5, -5)--(-1.5, -5)--(-1.5, 1)--(0.1, 1);
        \draw[red, fill=white!, rounded corners] (0, 0.9)--(1.4, 0.9)--(1.4, -2)--(-1.4, -2)--(-1.4, 0.9)--(0.1, 0.9);
        \draw[rounded corners] (0, -2.1)--(0.8, -2.1)--(0.8, -4.9)--(-1.4, -4.9)--(-1.4, -2.1)--(0.1, -2.1);
        
        \node [label=$B_{i}$] (v) at (0, -6.3){};
        \node [label={\footnotesize $N_{B_i}(v_K'')$}] (v) at (-0.4, -0.2){};
        \node [label={\footnotesize $N_{B_i}(z)$}] (v) at (-0.6, -3.1){};

        \draw[draw=black!80, fill =pink!30] (0, -1) circle (0.5);
        \node [label = {\footnotesize $b$}] (v) at (0, -1.55){};

        \draw[draw=black!80, fill =blue!30] (-0.75, -3.35) circle (0.5);
        \node [label = {\footnotesize $q_i$}] (v) at (-0.75, -3.9){};

        \draw[draw=black!80, fill =red!30] (0.15, -4.25) circle (0.5);
        \node [label = {\footnotesize $q_i'$}] (v) at (0.15, -4.8){};
        
        \draw[rounded corners, thick] (4.5, 1)--(3, 1)--(3, -5)--(6, -5)--(6, 1)--(4.4, 1);
        \draw[rounded corners] (4.5, 0.9)--(3.1, 0.9)--(3.1, -2)--(5.9, -2)--(5.9, 0.9)--(4.4, 0.9);
        \draw[red, fill=white!, rounded corners] (4.5, -2.1)--(3.1, -2.1)--(3.1, -4.9)--(5.9, -4.9)--(5.9, -2.1)--(4.4, -2.1);

        \node [label=$B_{i+1}$] (v) at (4.5, -6.3){};
        \node [label={\footnotesize $N_{B_{i+1}}(J)$}] (v) at (4.15, -0.2){};

        \draw[draw=black!80, fill =green!30] (4.5, -1) circle (0.5);
        \node [label = {\footnotesize $q_{i+1}$}] (v) at (4.5, -1.55){};

        \draw[draw=black!80, fill =yellow!30] (4.5, -3.5) circle (0.5);
        \node [label = {\footnotesize $c$}] (v) at (4.5, -4){};

        \draw[rounded corners, thick] (9, 1)--(7.5, 1)--(7.5, -5)--(10.5, -5)--(10.5, 1)--(8.9, 1);
        \draw[red, fill=white!, rounded corners] (9, 0.9)--(7.6, 0.9)--(7.6, -2)--(10.4, -2)--(10.4, 0.9)--(8.9, 0.9);
        \draw[rounded corners] (9, -2.1)--(9.8, -2.1)--(9.8, -4.9)--(7.6, -4.9)--(7.6, -2.1)--(9.1, -2.1);

        \node [label=$B_{i+2}$] (v) at (9, -6.3){};
        \node [label={\footnotesize $N_{B_{i+2}}(J)$}] (v) at (8.65, -0.2){};
        \node [label={\footnotesize $N_{B_{i+2}}(w)$}] (v) at (8.65, -3.1){};

        \draw[draw=black!80, fill =pink!30] (8.3, -1) circle (0.5);
        \node [label = {\footnotesize $d$}] (v) at (8.3, -1.5){};

        \draw[draw=black!80, fill =yellow!30] (9.7, -1) circle (0.5);
        \node [label = {\footnotesize $d'$}] (v) at (9.7, -1.5){};

        \draw[draw=black!80, fill =blue!30] (8.7, -3.7) circle (0.5);
        \node [label = {\footnotesize $q_{i+2}$}] (v) at (8.7, -4.25){};

        \draw[rounded corners, thick] (-4.5, 4)--(-3, 4)--(-3, 2)--(-6, 2)--(-6, 4)--(-4.4, 4);
        
        \node [label=$K$] (v) at (-4.5, 4){};

        \draw[draw=black!80, fill =blue!30] (-5.2, 3) circle (0.5);
        \node [label = {\small $v_K$}] (v) at (-5.2, 2.45){};
        
        \draw[draw=black!80, fill =yellow!30] (-3.8, 3) circle (0.5);
        \node [label = {\small $v_K'$}] (v) at (-3.8, 2.4){};

        \draw[rounded corners, thick] (4.5, 4)--(3, 4)--(3, 2)--(6, 2)--(6, 4)--(4.4, 4);

        \node [label=$J$] (v) at (4.5, 4){};

    \end{tikzpicture}
        \caption{Illustration of $F$ and a 5-coloring of $F$. For convenience, all vertices of color 1 are shown in pink, all vertices of color 2 in green, all vertices of color 3 in blue, all vertices of color 4 in red and all vertices of color 5 in yellow. Note that $q_{i-1}',v_K'$ and $a'$ may not exist.}
        \label{fig:good subgraph1}
    \end{figure}

    Next, we show that every maximal clique of size $\omega-j$ with $j\in \{0,1,2,3\}$ of $G$ contains at least $4-j$ vertices from $F$. Let $M$ be a maximal clique of $G$. We consider the following cases depending on all possible locations of $M$.

    \medskip
    \noindent
    \textbf{(i) $M\subseteq K\cup N_H(K)$}

    We first assume that $|N_{B_{i-1}}(K)| \ge 2$. Then $q_{i-1}$, $q_{i-1}'$ and $v_K$ are universal in $K\cup N_H(K)$ by Lemmas \ref{lem:single vertex in A_3(i)} (2) and \ref{lem:edge in $A_3(i)$}. If $q_{i-2}$ has non-neighbor $x \in K$, then $x - v_K - q_{i-2} - q_{i+2} - w - q_i - J$ is an induced $P_7$. So, $q_{i-2}$ is complete to $K$. Note that $b$ is complete to $K$ because $b \in N_{B_i}(v_K'')$. Therefore, $M$ contains either $\{q_{i-2}, q_{i-1}, q_{i-1}', v_K\}$ or $\{q_{i-1}, q_{i-1}', v_K, b\}$.

    We now assume that $|N_{B_{i-1}}(K)| = 1$. Recall that $q_{i-1}$ and $v_K$ are universal in $K\cup N_H(K)$, and $q_{i-2}$ and $b$ is complete to $K$. If $v_K' \in M$, then we are done. So we may assume that $v_K' \notin M$. It follows that $|M \cap (K \cup N_{B_{i-1}}(K))| = 2$. As $|M \cap \{q_{i-2}, q_{i-1}, v_K, b\}| = 3$, we have $|M| = \omega$ and so either $|M \cap N_{B_{i-2}}(K)| = \omega - 2$ or $|M \cap N_{B_i}(K)| = \omega - 2$. If $|M \cap N_{B_{i-2}}(K)| = \omega - 2$, then $N_{B_{i-2}}(K)\cup N_{B_{i+2}}(J) \cup N_{B_{i+2}}(w)$ is a clique of size larger than $\omega$ by Lemma \ref{lem:B_i complete to two large cliques in B_{i-1}}. If $|M \cap N_{B_i}(K)| = \omega - 2$, then $N_{B_i}(K) \cup N_{B_i}(z) \cup B_{i+1}$ is a clique of size larger than $\omega$. Therefore, $v_K' \in M$.
    
    \medskip
    \noindent
    \textbf{(ii) $M\subseteq J\cup N_H(J)$}

    We first assume that $M \subseteq J \cup N_{B_i}(J) \cup N_{B_{i+1}}(J)$. By Lemma \ref{lem:B_i complete to two large cliques in B_{i-1}}, $q_i$, $q_i'$ and $b$ are universal in $J \cup N_{B_i}(J) \cup N_{B_{i+1}}(J)$. So, $q_i, q_i', q_{i+1}, b\in M$.

    We now assume that $M \subseteq J \cup N_{B_{i+1}}(J) \cup N_{B_{i+2}}(J)$. Note that $d$ is complete to $J$ because $d \in N_{B_{i+2}}(v_J'')$. So, $q_{i+1}, d \in M$. Hence, we may assume that $|M| \ge \omega - 1$ and it follows that either $|M \cap (J \cup N_{B_{i+1}}(J))| \ge \frac{\omega - 1}{2}$ or $|M \cap N_{B_{i+2}}(J)| \ge \frac{\omega - 1}{2}$. Then either $N_{B_i}(K) \cup N_{B_i}(z) \cup J \cup N_{B_{i+1}}(J)$ or $N_{B_{i+2}}(J) \cup N_{B_{i+2}}(w) \cup B_{i-2}$ is a clique of size larger than $\omega$ by Lemma \ref{lem:B_i complete to two large cliques in B_{i-1}}, a contraction. Therefore, $|M| \le \omega - 2$ and the claim holds.
    
    \medskip
    \noindent
    \textbf{(iii) $M\subseteq B_{i-2} \cup B_{i-1}$}

    Note that $q_{i-2}$ and $q_{i-1}$ are universal in $B_{i-2} \cup B_{i-1}$. Suppose first that $a \in M$. If $|N_{B_{i-1}}(K)| \ge 2$, then $q_{i-1}' \in M$ because $N_{B_{i-2}}(a) \subseteq N_{B_{i-2}}(q_{i-1}')$. So we may assume that $|N_{B_{i-1}}(K)| = 1$. This implies that $|M| = \omega$ and $a' \notin M$. Then $|M \cap B_{i-2}| = \omega - 2$ and it follows that $N_{B_{i+2}}(J) \cup N_{B_{i+2}}(w) \cup B_{i-2}$ is a clique of size larger than $\omega$, a contradiction. So $a' \in M$ and we are done. Hence, we may assume that $a \notin M$. As $q_{i-2}, q_{i-1} \in M$, we have $|M| \ge \omega - 1$. Then either $|M \cap B_{i-2}| \ge \frac{\omega - 1}{2}$ or $|M \cap N_{B_{i-1}}(K)| \ge \frac{\omega - 1}{2}$. If $|M \cap B_{i-2}| \ge \frac{\omega - 1}{2}$, then $N_{B_{i+2}}(J) \cup N_{B_{i+2}}(w) \cup B_{i-2}$ is a clique of size larger than $\omega$. If $|M \cap N_{B_{i-1}}(K)| \ge \frac{\omega - 1}{2}$, then $N_{B_{i-1}}(K) \cup N_{B_i}(K) \cup N_{B_i}(z)$ is a clique of size larger than $\omega$. Therefore, $|M| \le \omega - 2$ and the claim holds.

    \medskip
    \noindent
    \textbf{(iv) $M\subseteq B_{i-1} \cup B_i$}

    Note that $q_{i-1}$, $q_i$ and $q_i'$ are universal in $B_{i-1} \cup B_i$. So, $q_{i-1}, q_i, q_i' \in M$. Since $b$ is universal in $N_{B_{i-1}}(K) \cup B_i$, we have either $a \in M$ or $b \in M$. In both case, $M$ contains at least $4$ vertices from $F$. 
    
    \medskip
    \noindent
    \textbf{(v) $M\subseteq B_i \cup B_{i+1}$}

    By Lemma \ref{lem:B_i complete to two large cliques in B_{i-1}}, $q_i$, $q_i'$ and $b$ are universal in $B_i\cup B_{i+1}$. So $q_i, q_i', q_{i+1}, b\in M$. 

    \medskip
    \noindent
    \textbf{(vi) $M\subseteq B_{i+1} \cup B_{i+2}$}

    Note that $q_{i+1}$ and $q_{i+2}$ are universal in $B_{i+1} \cup B_{i+2}$. So, $q_{i+1}, q_{i+2} \in M$. Since $d$ and $d'$ are universal in $N_{B_{i+1}}(J) \cup B_{i+2}$, we have either $c \in M$ or $d, d' \in M$. If $d, d' \in M$, then we are done. So we may assume that $c \in M$. As $q_{i+1}, q_{i+2}, c \in M$, we have $|M| = \omega$. It follows that either $|M \cap B_{i+1}| \ge \frac{\omega}{2}$ or $|M \cap N_{B_{i+2}}(w)| \ge \frac{\omega}{2}$. Then either $N_{B_i}(K) \cup N_{B_i}(z) \cup B_{i+1}$ or $N_{B_{i+2}}(J) \cup N_{B_{i+2}}(w) \cup B_{i-2}$ is a clique of size larger than $\omega$ by Lemma \ref{lem:B_i complete to two large cliques in B_{i-1}}, a contradiction. Therefore, $|M| \le \omega - 1$ and the claim holds.
    
    \medskip
    \noindent
    \textbf{(vii) $M\subseteq B_{i-2} \cup B_{i+2}$}

    By Lemma \ref{lem:B_i complete to two large cliques in B_{i-1}}, $d$ and $d'$ are universal in $B_{i+2}\cup B_{i-2}$. So $q_{i+2},d,d',q_{i-2}\in M$. 

    \medskip
    Therefore, $F$ is a $(4, 5)$-good subgraph. 
\end{proof}

\subsection{Distance 1}

In this subsection, we handle the second outcome of Lemma \ref{lem:structure of only A3}.

\begin{lemma}\label{lem:two non-dis A3}
    Let $G = H \cup K \cup J$ where $K$ is a non-empty component of $A_3(i-2)$ and $J$ is a non-empty component of $A_3(i+2)$. Then $\chi(G) \le \ceil{\frac{5}{4}\omega}$.
\end{lemma}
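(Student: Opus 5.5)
We mirror the proof of Lemma \ref{lem:two disjoint A3}: we show that $G$ contains a $(4,5)$-good subgraph $F$, which contradicts that $G$ is basic (Lemma \ref{lem:basic graphs}).

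\textbf{Setup.} By outcome (2) of Lemma \ref{lem:structure of only A3}, $K$ and $J$ are cliques. Moreover, $N_{B_{i-1}}(K)$ is anticomplete to $B_{i-2}\setminus N(K)$, and $N_{B_{i+1}}(J)$ is anticomplete to $B_{i+2}\setminus N(J)$.

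Let $w\in B_{i-2}\setminus N(K)$ and $w'\in B_{i+2}\setminus N(J)$ be minimal vertices. By Lemma \ref{lem:smv on w} and Lemma \ref{lem:small vertex argument for simplicial vertices in A_3(i-2)} (applied to minimal vertices $v_K''\in K$ and $v_J''\in J$), the following four sets are cliques of size $>\ceil{\frac{\omega}{4}}$:
\[
N_{B_{i+2}}(K),\quad N_{B_{i+2}}(w),\quad N_{B_{i-2}}(J),\quad N_{B_{i-2}}(w').
\]
The first two are disjoint, and so are the last two. By Lemma \ref{lem:complete and anti on w} (2) and Lemma \ref{lem:cross four cliques}, $N_{B_{i+2}}(K)\cup N_{B_{i+2}}(w)$ is complete to $N_{B_{i-2}}(J)\cup N_{B_{i-2}}(w')$. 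Likewise, the $B_{i-1}$- and $B_{i+1}$-neighbourhoods are large. By Lemma \ref{lem:B_i complete to two large cliques in B_{i-1}}, the pairs of large cliques in $B_{i-1}$ and in $B_{i+1}$ are complete to $B_i$.

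These give us the tools to discard large cliques. Any candidate maximal clique $M$ that meets too much of one of these sets combines with two disjoint large cliques to form a clique of size $>\omega$.

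\textbf{Constructing $F$.} We pick $F$ by analogy with Figure \ref{fig:good subgraph1}. We take:
\begin{itemize}
\item the vertices $q_j$, for all $j$;
\item a maximal vertex $v_K\in K$ and a maximal vertex $v_J\in J$;
\item an extra $q_i'$;
\item in $B_{i-2}$, a vertex of $N_{B_{i-2}}(v_J'')$ (complete to $J$ and to the $B_{i+2}$-cliques);
\item in $B_{i+2}$, a vertex of $N_{B_{i+2}}(v_K'')$;
\item in $B_{i-1}$ and $B_{i+1}$, maximal vertices among the non-neighbours $B_{i-1}\setminus N(K)$ and $B_{i+1}\setminus N(J)$ (each paired with a second copy when the corresponding neighbourhood has size $1$);
\item if needed, a second vertex $v_K'$ or $v_J'$ of $K$ or $J$.
\end{itemize}
This is roughly a $C_5[2,2,2,2,2]$-type structure plus one or two vertices in $K\cup J$. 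We then exhibit an explicit $5$-colouring. Since $K$ attaches to $B_{i-2}\cup B_{i-1}\cup B_{i+2}$ and $J$ to $B_{i+2}\cup B_{i+1}\cup B_{i-2}$, the colours of $v_K$ and $v_J$ must avoid their chosen neighbours in three consecutive bags. We first fix a colouring of the ten hole vertices in the pattern $12,34,51,23,45$ and then adjust so that some colour is free for each of $v_K$ and $v_J$.

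\textbf{Verifying goodness.} We check that every maximal clique $M$ of size $\omega-j$ contains at least $4-j$ vertices of $F$, splitting by location:
\begin{itemize}
\item $M\subseteq K\cup N_H(K)$;
\item $M\subseteq J\cup N_H(J)$;
\item $M\subseteq B_j\cup B_{j+1}$ for each of the five consecutive pairs.
\end{itemize}
In each case, universality arguments (via Lemmas \ref{lem:single vertex in A_3(i)} (2), \ref{lem:edge in $A_3(i)$} and \ref{lem:B_i complete to two large cliques in B_{i-1}}) place three or four chosen vertices in $M$. When only two or three chosen vertices lie in $M$, a counting argument shows $|M|\le \omega-2$ or $\le\omega-1$: otherwise some side of $M$ has size $\ge\frac{\omega-1}{2}$ and merges with two disjoint large cliques into a clique exceeding $\omega$.

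\textbf{Main obstacle.} The hardest case is $M\subseteq B_{i-2}\cup B_{i+2}$, together with the cliques through $K$ or $J$ using both $B_{i-2}$ and $B_{i+2}$. Here both $K$ and $J$ touch the same edge $B_{i-2}B_{i+2}$, so the chosen vertices there must simultaneously cover cliques through $K$, cliques through $J$, and pure $B_{i-2}\cup B_{i+2}$ cliques, while $v_K$ and $v_J$ remain colourable. We first try choosing the $B_{i\pm2}$ representatives inside the four large cliques, so that each is universal in $B_{i-2}\cup B_{i+2}$. If a case fails, we add a second vertex from $K$ or $J$ exactly as $v_K'$ was added in Lemma \ref{lem:two disjoint A3}, recolouring with the spare colour $5$.
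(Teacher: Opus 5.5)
\textbf{The orientation of $K$ and $J$ is backwards, and the rest of the argument depends on it.} Outcome (2) of Lemma~\ref{lem:structure of only A3} says that $N_{B_{i-1}}(K)$ is anticomplete to $B_{i-2}\setminus N(K)$.

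\begin{itemize}
\item Take your minimal $w\in B_{i-2}\setminus N(K)$ and any $v\in K$. They are non-adjacent vertices of $A'_3(i-2)$ with \emph{disjoint} neighbourhoods in $B_{i-1}$.
\item By Lemma~\ref{lem:nonedge in A_3(i)}, their neighbourhoods in $B_{i+2}$ are therefore comparable, and in particular they intersect.
\item So $N_{B_{i+2}}(K)$ and $N_{B_{i+2}}(w)$ are not disjoint. Symmetrically, $N_{B_{i-2}}(J)$ and $N_{B_{i-2}}(w')$ meet.
\end{itemize}

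Lemmas~\ref{lem:complete and anti on w} and \ref{lem:cross four cliques} assume the opposite orientation: non-neighbours anticomplete to the neighbourhood in $B_{i+2}$, respectively disjointness in $B_{i+2}$ and $B_{i-2}$. So they do not apply here.

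In fact, if your claims were true, your four cliques would be pairwise disjoint, pairwise complete, and each of size $>\ceil{\frac{\omega}{4}}$. Their union would be a clique of size $>\omega$. That is exactly how step (i) of Lemma~\ref{lem:structure of only A3} excludes this orientation. Consequently:

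\begin{itemize}
\item your toolkit for discarding large cliques rests on a false premise;
\item your plan for the hard case $M\subseteq B_{i-2}\cup B_{i+2}$ (representatives universal in $B_{i-2}\cup B_{i+2}$) rests on the same false premise;
\item the analogy with Lemma~\ref{lem:two disjoint A3} puts the non-neighbour representatives in the wrong bags. Here the central bags of $K$ and $J$ are $B_{i-2}$ and $B_{i+2}$. In $B_{i-1}\setminus N(K)$ the maximal vertex is just $q_{i-1}$, so choosing there adds nothing.
\end{itemize}

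\textbf{What the situation actually provides.} Let $z_K$ and $z_J$ be minimal vertices of $B_{i-2}\setminus N(K)$ and $B_{i+2}\setminus N(J)$. The disjoint large pairs are $N_{B_{i-1}}(K), N_{B_{i-1}}(z_K)$ and $N_{B_{i+1}}(J), N_{B_{i+1}}(z_J)$.

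\begin{itemize}
\item Each of these four cliques has size $>\ceil{\frac{\omega}{4}}$.
\item Each pair is complete to $B_i$ (Lemma~\ref{lem:B_i complete to two large cliques in B_{i-1}}).
\item Each pair is also complete to $N_{B_{i-2}}(K)$, respectively $N_{B_{i+2}}(J)$ (Lemma~\ref{lem:single vertex in A_3(i)} (2),(3)). This caps $|N_{B_{i-2}}(K)|$ and $|N_{B_{i+2}}(J)|$.
\end{itemize}

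\textbf{Two facts you are missing.}

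\begin{itemize}
\item[(a)] $B_{i-2}\setminus N(K)$ is anticomplete to $B_{i+2}\setminus N(J)$. An edge $uv$ there yields the induced $P_7$
\[
K-N_{B_{i-1}}(K)-N_{B_{i-1}}(z_K)-u-v-N_{B_{i+1}}(z_J)-N_{B_{i+1}}(J).
\]
Hence every clique in $B_{i-2}\cup B_{i+2}$ lies in $N_{B_{i-2}}(K)\cup B_{i+2}$ or in $B_{i-2}\cup N_{B_{i+2}}(J)$. The size bounds above then finish this case.
\item[(b)] Some vertex of $N_{B_{i+2}}(K)$ is complete to $K\cup B_{i-2}\cup B_{i+1}$. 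Take it in the neighbourhood of a minimum vertex of $K$ and use a $P_4$-structure/$C_6$ argument. Symmetrically, some vertex $q_{i-2}$ is complete to $J\cup B_{i-1}\cup B_{i+2}$.
\end{itemize}

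\textbf{The good subgraph that works.} With (a) and (b), a working $F$ consists of:

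\begin{itemize}
\item the two maximal non-neighbours $w_K,w_K'$ of $K$ in $B_{i-2}$, and $w_J,w_J'$ of $J$ in $B_{i+2}$;
\item two vertices of $N_{B_{i-1}}(v_K'')$ and two vertices of $N_{B_{i+1}}(v_J'')$, where $v_K''$, $v_J''$ are minimum vertices of $K$, $J$;
\item the $q_j$;
\item $v_K,v_K',v_J,v_J'$, the top two vertices of $K$ and of $J$ by neighbourhood in $H$.
\end{itemize}

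This $F$ has an explicit $5$-colouring.

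Even setting the orientation error aside, your proposal never actually specifies $F$ or its colouring. Phrases like ``adjust so that some colour is free'' and ``if a case fails, add a second vertex'' do not amount to a proof.
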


\begin{proof}
    By Lemma \ref{lem:structure of only A3}, both $K$ and $J$ are cliques. Moreover, $B_{i-2}\setminus N(K)$ is anticomplete to $N_{B_{i-1}}(K)$ and $B_{i+2}\setminus N(J)$ is anticomplete to $N_{B_{i+1}}(J)$. Let $z_K \in B_{i-2} \setminus N(K)$ be a minimal vertex in $B_{i-2} \setminus N(K)$ and let $z_J \in B_{i+2} \setminus N(J)$ be a minimal vertex in $B_{i+2} \setminus N(J)$. Then $N_{B_{i-1}}(K)$ and $N_{B_{i-1}}(z_K)$ are disjoint and $N_{B_{i+1}}(J)$ and $N_{B_{i+1}}(z_J)$ are disjoint. .

    By Lemmas \ref{lem:smv on w} and \ref{lem:small vertex argument for simplicial vertices in A_3(i-2)},$|N_{B_{i-1}}(K)|$, $|N_{B_{i-1}}(z_K)|$, $ |N_{B_{i+1}}(J)|$, $|N_{B_{i+1}}(z_J)| > \ceil{\frac{\omega}{4}}$. By Lemmas \ref{lem:compare nbd of different A_3(i) in B_j} and \ref{lem:small vertex argument for simplicial vertices in A_3(i-2)}, we have $|N_{B_{i-2}}(K)| \ge |N_{B_{i-2}}(J)| > \ceil{\frac{\omega}{4}}$ and $|N_{B_{i+2}}(J)| \ge |N_{B_{i+2}}(K)| > \ceil{\frac{\omega}{4}}$. If there is an edge $uv$ with $u \in B_{i-2}\setminus N(K)$ and $v \in B_{i+2}\setminus N(J)$, then $K - N_{B_{i-1}}(K) - N_{B_{i-1}}(z_K) - u - v - N_{B_{i+1}}(z_J) - N_{B_{i+1}}(J)$ is an induced $P_7$. So $B_{i-2}\setminus N(K)$ is anticomplete to $B_{i+2}\setminus N(J)$.

    Let $u_K \in K$ be the vertex that has minimum neighborhood in $H$. Let $p \in N_{B_{i+2}}(u_K)$. We show that $p$ is complete to $B_{i-2} \cup B_{i+1}$. Suppose not. By Lemma \ref{lem:single vertex in A_3(i)} (4), there is a vertex $x$ in $N_{B_{i+2}}(K)$ that is complete to $B_{i-2}\cup B_{i+1}$. By assumption $x \notin N_{B_{i+2}}(u_K)$. If $p$ has non-neighbor $y \in B_{i+1}$, then $u_K - p - x - y$ is a $P_4$-structure. So, $p$ is complete to $B_{i+1}$. Hence, we may assume that $p$ has a non-neighbor $y' \in B_{i-2}$. Since $p$ is complete to $N_{B_{i-2}}(K)$, $y' \in B_{i-2}\setminus N(K)$. Then $u_K - p - x - y' - N_{B_{i-1}}(y') - N_{B_{i-1}}(u_K) - u_K$ is an induced $C_6$. So, $p$ is complete to $B_{i-2}$.
    Therefore, $p$ is complete to $K \cup B_{i-2} \cup B_{i+1}$. We set this vertex to be $q_{i+2}$. By symmetry, there is a vertex in $N_{B_{i-2}}(J)$ complete to $J \cup B_{i-1}\cup B_{i+2}$. We set this vertex to be $q_{i-2}$. 
    
    For $X \in \{K, J\}$, let $v_X, v_X' \in X$ be the vertices with the largest and second-largest neighborhoods in $H$, respectively. Let $w_K, w_K' \in B_{i-2}\setminus N(K)$ be the first and second maximal in $B_{i-2}\setminus N(K)$, respectively. Let $w_J, w_J' \in B_{i+2}\setminus N(J)$ be the first and second maximal in $B_{i+2}\setminus N(J)$, respectively. Note that $v_K', v_J', w_K'$ and $w_J'$ may not exist. Let $a, b \in N_{B_{i-1}}(v_K'')$ where $v_K'' \in K$ that has a minimum neighborhood in $H$ and let $c, d \in N_{B_{i+1}}(v_J'')$ where $v_J'' \in J$ that has a minimum neighborhood in $H$.
    
    We set $F = (F_1, F_2, F_3, F_4, F_5, O)$ as follows:
    \begin{itemize}
        \item $F_1 = \{q_{i-2}, w_K, w_K'\}$,
        \item $F_2 = \{q_{i-1}, a, b\}$,
        \item $F_3 = \{q_i\}$,
        \item $F_4 = \{q_{i+1}, c, d\}$,
        \item $F_5 = \{q_{i+2}, w_J, w_J'\}$,
        \item $O = \{v_K, v_K', v_J, v_J'\}$.
    \end{itemize}

     We can assign color $1$ to $q_{i-1}$, $q_{i+1}$, $v_K$ and $v_J$, color $2$ to $q_i$, $v_K'$, $v_J'$, $w_K$ and $w_J$, color $3$ to $q_{i-2}$ and $c$, color $4$ to $q_{i+2}$ and $a$, and color $5$ to $w_K'$, $w_J'$, $b$ and $d$. So $F$ is 5-colorable (see Figure \ref{fig:good subgraph2}).

    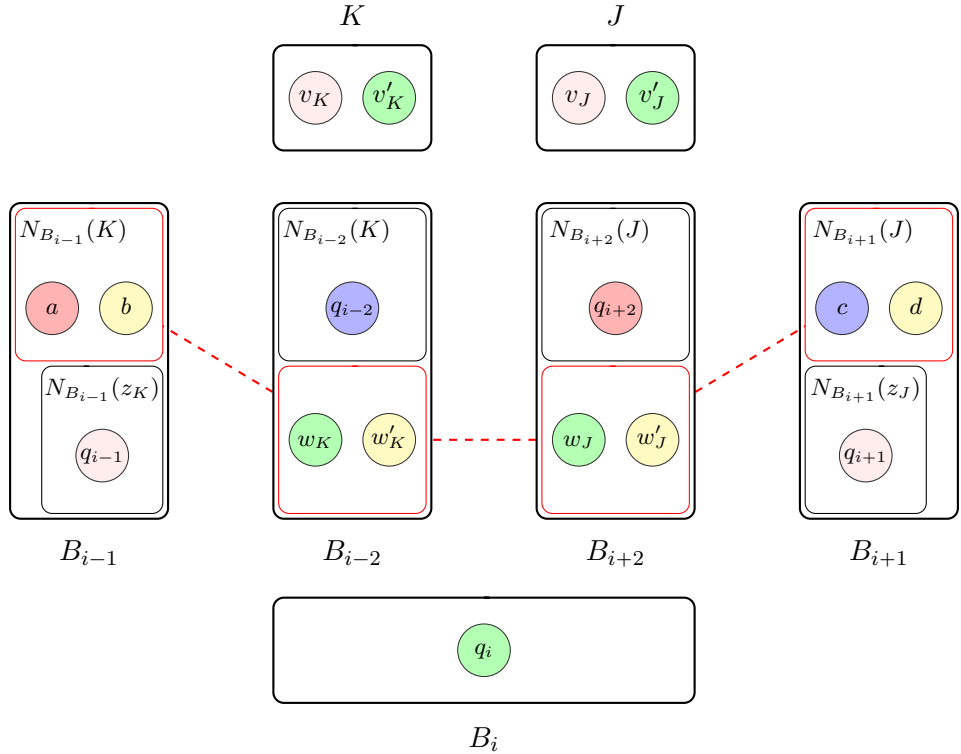
\begin{figure}[h]
        \centering
        \begin{tikzpicture}[scale=0.7]
        \tikzstyle{v}=[circle, draw, solid, fill=black, inner sep=0pt, minimum width=3pt]

        \draw[red, dashed, thick] (-7.5, -0.5)--(-2.5, -3.5);
        \draw[red, dashed, thick] (7.5, -0.5)--(2.5, -3.5);
        \draw[red, dashed, thick] (2.5, -3.5)--(-2.5, -3.5);
        
        \draw[rounded corners, thick] (-2.5, 1)--(-1, 1)--(-1, -5)--(-4, -5)--(-4, 1)--(-2.4, 1);
        \draw[rounded corners] (-2.5, 0.9)--(-1.1, 0.9)--(-1.1, -2)--(-3.9, -2)--(-3.9, 0.9)--(-2.4, 0.9);
        \draw[red, fill=white!, rounded corners] (-2.5, -2.1)--(-1.1, -2.1)--(-1.1, -4.9)--(-3.9, -4.9)--(-3.9, -2.1)--(-2.4, -2.1);

        \node [label=$B_{i-2}$] (v) at (-2.5, -6.3){};
        \node [label={\footnotesize $N_{B_{i-2}}(K)$}] (v) at (-2.8, -0.2){};
        \draw[draw=black!80, fill =blue!30] (-2.5, -1) circle (0.5);
        \node [label = {\footnotesize $q_{i-2}$}] (v) at (-2.5, -1.55){};
        \draw[draw=black!80, fill =green!30] (-3.2, -3.5) circle (0.5);
        \node [label = {\footnotesize $w_K$}] (v) at (-3.2, -4.05){};
        \draw[draw=black!80, fill =yellow!30] (-1.8, -3.5) circle (0.5);
        \node [label = {\footnotesize $w_K'$}] (v) at (-1.8, -4.05){};

        \draw[rounded corners, thick] (-7.5, 1)--(-6, 1)--(-6, -5)--(-9, -5)--(-9, 1)--(-7.4, 1);
        \draw[red, fill=white!, rounded corners] (-7.5, 0.9)--(-6.1, 0.9)--(-6.1, -2)--(-8.9, -2)--(-8.9, 0.9)--(-7.4, 0.9);
        \draw[rounded corners] (-7.5, -2.1)--(-8.4, -2.1)--(-8.4, -4.9)--(-6.1, -4.9)--(-6.1, -2.1)--(-7.6, -2.1);

        \node [label=$B_{i-1}$] (v) at (-7.5, -6.3){};
        \node [label={\footnotesize $N_{B_{i-1}}(K)$}] (v) at (-7.8, -0.2){};
        \node [label={\footnotesize $N_{B_{i-1}}(z_K)$}] (v) at (-7.25, -3.2){};
        \draw[draw=black!80, fill =red!30] (-8.2, -1) circle (0.5);
        \node [label = {\footnotesize $a$}] (v) at (-8.2, -1.5){};
        \draw[draw=black!80, fill =yellow!30] (-6.8, -1) circle (0.5);
        \node [label = {\footnotesize $b$}] (v) at (-6.8, -1.5){};
        \draw[draw=black!80, fill =pink!30] (-7.25, -3.8) circle (0.5);
        \node [label = {\footnotesize $q_{i-1}$}] (v) at (-7.25, -4.35){};

        \draw[rounded corners, thick] (0, -6.5)--(4, -6.5)--(4, -8.5)--(-4, -8.5)--(-4, -6.5)--(0.1, -6.5);

        \node [label=$B_{i}$] (v) at (0, -9.8){};
        \draw[draw=black!80, fill =green!30] (0, -7.5) circle (0.5);
        \node [label = {\footnotesize $q_i$}] (v) at (0, -8.05){};
        
        \draw[rounded corners, thick] (7.5, 1)--(6, 1)--(6, -5)--(9, -5)--(9, 1)--(7.4, 1);
        \draw[red, fill=white!, rounded corners] (7.5, 0.9)--(6.1, 0.9)--(6.1, -2)--(8.9, -2)--(8.9, 0.9)--(7.4, 0.9);
        \draw[rounded corners] (7.5, -2.1)--(8.4, -2.1)--(8.4, -4.9)--(6.1, -4.9)--(6.1, -2.1)--(7.6, -2.1);

        \node [label=$B_{i+1}$] (v) at (7.5, -6.3){};
        \node [label={\footnotesize $N_{B_{i+1}}(J)$}] (v) at (7.2, -0.2){};
        \node [label={\footnotesize $N_{B_{i+1}}(z_J)$}] (v) at (7.25, -3.2){};
        \draw[draw=black!80, fill =blue!30] (6.8, -1) circle (0.5);
        \node [label = {\footnotesize $c$}] (v) at (6.8, -1.5){};
        \draw[draw=black!80, fill =yellow!30] (8.2, -1) circle (0.5);
        \node [label = {\footnotesize $d$}] (v) at (8.2, -1.5){};
        \draw[draw=black!80, fill =pink!30] (7.25, -3.8) circle (0.5);
        \node [label = {\footnotesize $q_{i+1}$}] (v) at (7.25, -4.35){};

        \draw[rounded corners, thick] (2.5, 1)--(1, 1)--(1, -5)--(4, -5)--(4, 1)--(2.4, 1);
        \draw[rounded corners] (2.5, 0.9)--(1.1, 0.9)--(1.1, -2)--(3.9, -2)--(3.9, 0.9)--(2.4, 0.9);
        \draw[red, fill=white!, rounded corners] (2.5, -2.1)--(1.1, -2.1)--(1.1, -4.9)--(3.9, -4.9)--(3.9, -2.1)--(2.4, -2.1);

        \node [label=$B_{i+2}$] (v) at (2.5, -6.3){};
        \node [label={\footnotesize $N_{B_{i+2}}(J)$}] (v) at (2.2, -0.2){};
        \draw[draw=black!80, fill =red!30] (2.5, -1) circle (0.5);
        \node [label = {\footnotesize $q_{i+2}$}] (v) at (2.5, -1.55){};
        \draw[draw=black!80, fill =yellow!30] (3.2, -3.5) circle (0.5);
        \node [label = {\footnotesize $w_J'$}] (v) at (3.2, -4.05){};
        \draw[draw=black!80, fill =green!30] (1.8, -3.5) circle (0.5);
        \node [label = {\footnotesize $w_J$}] (v) at (1.8, -4.05){};

        \draw[rounded corners, thick] (-2.5, 4)--(-1, 4)--(-1, 2)--(-4, 2)--(-4, 4)--(-2.4, 4);
        
        \node [label=$K$] (v) at (-2.5, 4){};

        \draw[draw=black!80, fill =pink!30] (-3.2, 3) circle (0.5);
        \node [label = {\small $v_K$}] (v) at (-3.2, 2.45){};
        
        \draw[draw=black!80, fill =green!30] (-1.8, 3) circle (0.5);
        \node [label = {\small $v_K'$}] (v) at (-1.8, 2.4){};

        \draw[rounded corners, thick] (2.5, 4)--(1, 4)--(1, 2)--(4, 2)--(4, 4)--(2.4, 4);
        
        \node [label=$J$] (v) at (2.5, 4){};
        
        \draw[draw=black!80, fill =pink!30] (1.8, 3) circle (0.5);
        \node [label = {\small $v_J$}] (v) at (1.8, 2.45){};
        
        \draw[draw=black!80, fill =green!30] (3.2, 3) circle (0.5);
        \node [label = {\small $v_J'$}] (v) at (3.2, 2.4){};
        
    \end{tikzpicture}
        \caption{Illustration of $F$ and a 5-coloring of $F$. For convenience, all vertices of color 1 are shown in pink, all vertices of color 2 in green, all vertices of color 3 in blue, all vertices of color 4 in red and all vertices of color 5 in yellow. Note that $v_K'$, $v_J'$, $w_K'$ and $w_J'$ may not exist.}
        \label{fig:good subgraph2}
    \end{figure}

     Next, we show that every maximal clique of size $\omega-j$ with $j\in \{0,1,2,3\}$ of $G$ contains at least $4-j$ vertices from $F$. Let $M$ be a maximal clique of $G$. We consider the following cases depending on all possible locations of $M$. Since $K$ and $J$ play symmetric roles, it suffices to consider $K\cup N_H(K)$, $B_{i-2}\cup B_{i+2}$, $B_{i-2}\cup B_{i-1}$ and $B_{i-1}\cup B_i$.

    \medskip
    \noindent
    \textbf{(i) $M\subseteq K\cup N_H(K)$ (or $J\cup N_H(J)$)}

    Note that $q_{i-2}$ and $v_K$ are universal in $K\cup N_H(K)$. We first assume that $M \subseteq K \cup N_{B_{i-2}}(K) \cup N_{B_{i-1}}(K)$. Since $a, b \in N_{B_{i-1}}(v_K'')$, $a$ and $b$ are complete to $K$. By Lemma \ref{lem:single vertex in A_3(i)}, $a$ and $b$ are complete to $N_{B_{i-2}}(K)$. Hence, $q_{i-2}, v_K, a, b \in M$. 

    We now assume that $M \subseteq K \cup N_{B_{i-2}}(K) \cup N_{B_{i+2}}(K)$. Note that $q_{i+2}$ is universal in $K \cup N_{B_{i-2}}(K) \cup N_{B_{i+2}}(K)$. So, $q_{i-2}, q_{i+2}, v_K \in M$. Suppose that $v_K' \notin M$. This implies that $|M| = \omega$ and we have either $|M \cap N_{B_{i-2}}(K)| \ge \frac{\omega - 1}{2}$ or $|M \cap N_{B_{i+2}}(K)| \ge \frac{\omega - 1}{2}$. It follows that either $N_{B_{i-2}}(K) \cup N_{B_{i-1}}(K) \cup N_{B_{i-1}}(z_K)$ or $N_{B_{i+2}}(K) \cup N_{B_{i+1}}(J) \cup N_{B_{i+1}}(z_J)$ is a clique of size larger than $\omega$, a contradiction.

    \medskip
    \noindent
    \textbf{(ii) $M\subseteq B_{i-2} \cup B_{i-1}$ (or $B_{i+1} \cup B_{i+2}$)}

    Note that $q_{i-2}$ and $q_{i-1}$ are universal in $B_{i-2} \cup B_{i-1}$. So, $q_{i-2}, q_{i-1} \in M$. Since $a$ and $b$ are universal in $N_{B_{i-2}}(K) \cup B_{i-1}$, we have either $a, b \in M$ or $w_K \in M$. If $a, b \in M$, then we are done. So, we may assume that $w_K \in M$. Suppose that $w_K' \notin M$. This implies that $|M| = \omega$. Then $(M \setminus \{w_K\}) \cup \{a, b\}$ is a clique of size larger than $\omega$, a contradiction.

    \medskip
    \noindent
    \textbf{(iii) $M\subseteq B_{i-1} \cup B_i$ (or $B_i \cup B_{i+1}$)}

    Note that $q_{i-1}$ and $q_i$ are universal in $B_{i-1} \cup B_i$. By Lemma \ref{lem:B_i complete to two large cliques in B_{i-1}}, $a$ and $b$ are complete to $B_i$ and so $a$ and $b$ are universal in $B_{i-1} \cup B_i$. Therefore, $q_{i-1}, q_i, a, b \in M$.

    \medskip
    \noindent
    \textbf{(vii) $M\subseteq B_{i-2} \cup B_{i+2}$}

    Note that $q_{i-2}$ and $q_{i+2}$ are universal in $B_{i-2} \cup B_{i+2}$. So, $q_{i-2}, q_{i+2} \in M$. Since $B_{i-2}\setminus N(K)$ is anticomplete to $B_{i+2} \setminus N(J)$, either $M \subseteq N_{B_{i-2}}(K) \cup B_{i+2}$ or $M \subseteq N_{B_{i+2}}(J) \cup B_{i-2}$. By symmetric, we may assume that $M \subseteq N_{B_{i-2}}(K) \cup B_{i+2}$. If $w_J, w_J' \in M$, then we are done. Let $j = |M \cap \{w_J, w_J'\}$. Then, we have $|M| = \omega - 1 + j$ and it follows that either $|M \cap N_{B_{i-2}}(K)| \ge \frac{\omega - 1}{2}$ or $|M \cap N_{B_{i+2}}(J)| \ge \frac{\omega - 1}{2}$. Then either $N_{B_{i-2}}(K) \cup N_{B_{i-1}}(K) \cup N_{B_{i-1}}(z_K)$ or $N_{B_{i+2}}(J) \cup N_{B_{i+1}}(J) \cup N_{B_{i+1}}(z_J)$ is a clique of size larger than $\omega$, a contradiction.
    
    \medskip
    Therefore, $F$ is a $(4, 5)$-good subgraph. 
\end{proof}

\subsection{Distance 0}

In this subsection, we handle the third outcome of Lemma \ref{lem:structure of only A3}. We assume that $A_3=A_3(i)\neq \emptyset$ has at most two components.
We first present several simple lemmas.

\begin{lemma}\label{lem:smv on B_{i+2}+B_{i-2}}
    There exists a clique of size larger than $\ceil{\frac{\omega}{4}}$ in $B_{i-2}$ complete to $B_{i+2}$. 
    By symmetry, there exists a clique of size larger than $\ceil{\frac{\omega}{4}}$ in $B_{i+2}$ complete to $B_{i-2}$. 
\end{lemma}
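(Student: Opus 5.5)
We apply the small vertex argument to a suitably chosen vertex of $B_{i+2}$. In the current setting $G=H\cup A_3(i)$ with $A_1=A_2=A_5=\emptyset$, so every vertex of $B_{i+2}$ has all its neighbours in $B_{i+1}\cup B_{i+2}\cup B_{i-2}$. Here $A_3(i)$ is anticomplete to $B_{i+2}$, because $\supp(v)=\{i-1,i,i+1\}$ for $v\in A_3(i)$.

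The vertex to use is $v\in B_{i+2}$ with minimal neighbourhood in $B_{i+1}$. Such a minimal element exists by Lemma~\ref{lem:blowup-basicproperty}~(1), which makes the neighbourhoods of vertices of $B_{i+2}$ in $B_{i+1}\cup B_{i-2}$ a chain. The first step is to show that $N(v)\setminus B_{i-2}=N_{B_{i+2}}(v)\cup N_{B_{i+1}}(v)$ is a clique. For this we need $N_{B_{i+1}}(v)$ to be complete to $N_{B_{i+2}}(v)$.
\begin{itemize}
\item Take $s\in N_{B_{i+2}}(v)$. By minimality of $v$, $N_{B_{i+1}}(v)\subseteq N_{B_{i+1}}(s)$, and this inclusion is exactly what we need.
\item To justify it, note that the chain from Lemma~\ref{lem:blowup-basicproperty}~(1) is over $B_{i+1}\cup B_{i-2}$ jointly, so we should order by the joint neighbourhood. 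If we pick $v$ minimal in the joint order, every $s\in B_{i+2}$ satisfies $N_{B_{i+1}}(v)\subseteq N_{B_{i+1}}(s)$, which gives completeness.
\end{itemize}
Since $G$ has no small vertices, $d(v)\ge\ceil{\frac54\omega}$. Also $|N(v)\setminus B_{i-2}|\le\omega-1$, because $N(v)\setminus B_{i-2}$ together with $v$ is a clique. Hence
\[
|N_{B_{i-2}}(v)|>\ceil{\tfrac54\omega}-\omega\ \ge\ \ceil{\tfrac{\omega}{4}}-1,
\]
which gives $|N_{B_{i-2}}(v)|\ge\ceil{\frac{\omega}{4}}$. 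To get the strict inequality $>\ceil{\frac{\omega}{4}}$ we use the paper's convention that small means $d(v)\le\ceil{\frac54\omega}-1$. The clique $N[v]\setminus B_{i-2}$ has at most $\omega$ vertices, so $|N(v)\setminus B_{i-2}|\le\omega-1$. Then $|N_{B_{i-2}}(v)|\ge\ceil{\frac54\omega}-(\omega-1)=\ceil{\frac{\omega}{4}}+1$, which is the desired bound.

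The second step is to upgrade $N_{B_{i-2}}(v)$ to a clique complete to all of $B_{i+2}$. By minimality of $v$ in the joint chain, every $s\in B_{i+2}$ satisfies $N_{B_{i-2}}(v)\subseteq N_{B_{i-2}}(s)$. Hence $N_{B_{i-2}}(v)$ is a clique of size $>\ceil{\frac{\omega}{4}}$ complete to $B_{i+2}$. The symmetric statement follows by exchanging the roles of $i+2$ and $i-2$ and using $B_{i-1}$ in place of $B_{i+1}$.

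The main obstacle is confirming that the neighbourhood of $v$ really lies in $B_{i+1}\cup B_{i+2}\cup B_{i-2}$. This needs that no vertex of $A_3(i)$ touches $B_{i+2}$, which holds by definition of $A_3(i)$. It also needs that $A_1$, $A_2$ and $A_5$ are empty, which is the standing assumption of this section. Everything else is routine.
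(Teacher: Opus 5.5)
Your proof is correct and is essentially the paper's argument: the paper likewise takes $v\in B_{i+2}$ with minimal neighbourhood in $H$, which is the minimal element of the chain over $B_{i+1}\cup B_{i-2}$ from Lemma~\ref{lem:blowup-basicproperty}~(1). It then notes that $N(v)\setminus B_{i-2}$ is a clique and concludes, by the no-small-vertex condition and minimality, that $N_{B_{i-2}}(v)$ is a clique of size $>\ceil{\frac{\omega}{4}}$ complete to $B_{i+2}$.

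Your first choice of $v$, minimal only with respect to $B_{i+1}$, would not suffice for the final step. A vertex $s$ could have the same $B_{i+1}$-neighbourhood as $v$ but a strictly smaller $B_{i-2}$-neighbourhood, and then $N_{B_{i-2}}(v)$ would not be complete to $s$. So the jointly minimal choice you settle on is the one actually needed. The clean count is $|N_{B_{i-2}}(v)|\ge\ceil{\frac54\omega}-(\omega-1)=\ceil{\frac{\omega}{4}}+1$.
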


\begin{proof}
    Let $v$ be a vertex in $B_{i+2}$ with minimal neighborhood in $H$. 
    By the minimality of $v$, $N(v)\setminus B_{i-2}$ is a clique and so $N_{B_{i-2}}(v)$ is a clique of size larger than $\ceil{\frac{\omega}{4}}$ complete to $B_{i+2}$. 
\end{proof}

\begin{lemma}\label{lem:B_{i-2} is complete to B_{i+2}}
    If $B_{i-1}$ is complete to $B_{i-2}$ and $B_{i+1}$ is complete to $B_{i+2}$, then $B_{i-2}$ and $B_{i+2}$ are complete.
\end{lemma}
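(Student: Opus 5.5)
The plan is to argue by contradiction using a forbidden induced path. In this subsection $G=H\cup A_3$ with $A_3=A_3(i)\neq\emptyset$. Suppose $B_{i-1}$ is complete to $B_{i-2}$ and $B_{i+1}$ is complete to $B_{i+2}$, but some $u\in B_{i-2}$ and $v\in B_{i+2}$ are non-adjacent. I will locate a vertex $t\in A_3(i)$ together with suitable vertices of $B_{i-1},B_i,B_{i+1}$ so that $u$ and $v$ are the ends of an induced $P_7$, or lie on an induced $C_6$ or $C_7$.

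\textbf{Step 1: choose the vertex of $A_3(i)$ and its non-neighbour.} Fix $t\in A_3(i)$. By Lemma~\ref{lem:blowup-threeneighbor}~(1), $t$ has a non-neighbour $b\in B_i$. By Lemma~\ref{lem:nonedge in A_3(i)}, $t$ and $b$ have neighbourhood containment in one of $B_{i-1},B_{i+1}$ and disjoint neighbourhoods in the other. By symmetry, assume the neighbourhoods are disjoint in $B_{i+1}$, and pick $c\in N_{B_{i+1}}(t)$ and $d\in N_{B_{i+1}}(b)$. Then $cb\notin E(G)$ and $dt\notin E(G)$, while $cd\in E(G)$ since $B_{i+1}$ is a clique. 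Hence $t-c-d-b$ is an induced $P_4$.

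\textbf{Step 2: extend through $u$ and $v$.} By hypothesis, $v$ is adjacent to both $c$ and $d$, and $u$ is complete to $B_{i-1}$. Pick $a\in N_{B_{i-1}}(t)$. Then $a$ is adjacent to $u$ and to $t$, and $a$ is non-adjacent to $c,d,v$ because $B_{i-1}$ is anticomplete to $B_{i+1}\cup B_{i+2}$.

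The adjacency of $a$ to $b$ matters. If $ab\notin E(G)$, consider the cycle $a-t-c-v-u-a$. Its chords are controlled as follows. We have $tv,tu\notin E(G)$ because $A_3(i)$ has no neighbours in $B_{i\pm2}$. We have $cu\notin E(G)$ by the blowup structure. We have $uv\notin E(G)$ by assumption. Thus $a-t-c-v-u-a$ is an induced $C_5$, and I will not get a contradiction from that alone.

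Instead, use $b$. Take $e\in N_{B_{i-1}}(b)$. Note that $e$ is adjacent to $u$, and $e$ is non-adjacent to $d$ and $v$. Then consider $e-b-d-v$ together with $u$. If $e\neq a$ and $et\notin E(G)$, the walk $u-e-b-d-v$ closes to the 5-cycle $u-e-b-d-v-u$, which is again only a $C_5$.

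\textbf{Step 3: use both sides.} The real obstacle is that a single $C_5$ is allowed, so a longer structure is needed. The approach is to combine the two $5$-cycles through $t$ and $b$ to form $u-a-t-c-d-b-e-u$ or a similar cycle. Choose $a$ and $e$ so that $ae$ is appropriately (non)adjacent, using the containment in $B_{i-1}$ from Lemma~\ref{lem:nonedge in A_3(i)}: $t$ and $b$ have a common neighbour in $B_{i-1}$ and comparable neighbourhoods there. For instance, take $s$ to be a common neighbour of $t$ and $b$ in $B_{i-1}$. Then $s-t-c-d-b-s$ is a $5$-cycle, and $u$ is adjacent to $s$ while $v$ is adjacent to $c$ and $d$.

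The path $u-s-t-c-v$ is induced and avoids $b,d$. Together with $v-d-b$, and using the common vertex $s$, this gives configurations where $u\,v$ being a non-edge yields a $C_4$ of the form $u-s-b-?$ or a $P_7$ extended via $q_{i+2},q_{i-2}$ or via the other vertices of $B_{i\pm2}$.

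The main obstacle is pinning down which vertex completes the forbidden structure. My first attempt will be the path $v-d-b-s'-u'-\dots$, using the non-neighbour $u$ of $v$ together with a neighbour $u'\in B_{i-2}$ of $v$; such a $u'$ exists since $v$ has a neighbour in $B_{i-2}$. If $u'$ is complete to $B_{i-1}$ while $u$ is not adjacent to $v$, then $u-s-u'$ together with $v$ should give a $P_4$-structure violation (Lemma~\ref{lem:generalized P4-structure}) of the form $u-u'-v-c$ or similar. That violation is what rules out the non-edge $uv$.
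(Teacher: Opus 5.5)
\paragraph{Verdict: the approach cannot succeed.} You are trying to derive a forbidden induced subgraph from a non-edge $uv$ with $u\in B_{i-2}$, $v\in B_{i+2}$. The lemma, however, does not follow from $(P_7,C_4,C_6,C_7)$-freeness and the structure of $H\cup A_3(i)$ alone: the paper's proof uses that $G$ is \emph{basic}.

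\paragraph{A counterexample to the local approach.} Take $B_{i-2}=\{u,p\}$, $B_{i-1}=\{a\}$, $B_i=\{b,q_i\}$, $B_{i+1}=\{c,d\}$, $B_{i+2}=\{v,r\}$ and $A_3(i)=\{t\}$. Let each $B_j$ be a clique, and let consecutive blocks be complete with two exceptions: the only neighbour of $b$ in $B_{i+1}$ is $d$, and the only neighbour of $u$ in $B_{i+2}$ is $r$. Finally set $N(t)=\{a,q_i,c\}$.
\begin{itemize}
\item This is a nice blowup, and both hypotheses of the lemma hold.
\item $G$ is $(P_7,C_4,C_6,C_7)$-free: every induced cycle has length at most $5$, and the longest induced paths, such as $t-q_i-d-v-p-u$, have six vertices.
\item $H$ is maximal. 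The induced $5$-cycles $u-a-b-d-r-u$ and $a-b-d-c-t-a$ would force $t$ and $u$ into the same block of any nice blowup on all ten vertices, but $tu\notin E(G)$.
\end{itemize}
Yet $uv\notin E(G)$. So no choice of auxiliary vertices can close into a $P_7$, $C_4$, $C_6$, $C_7$ or a $P_4$-structure. The repeated $C_5$'s you ran into are not an accident.

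\paragraph{Two steps that are also wrong.} First, $a-t-c-v-u-a$ is not a cycle, since $uv\notin E(G)$; it is the induced path $u-a-t-c-v$. Second, $u-u'-v-c$ does not contradict Lemma~\ref{lem:generalized P4-structure}. That lemma needs its two middle vertices in the same $A'_3(j-1)$, whereas here $u'\in B_{i-2}$ and $v\in B_{i+2}$. Indeed, $u-p-v-c$ is an induced $P_4$ in the example.

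\paragraph{The missing idea: no good stable set.} Since $G$ is basic, it has no good stable set. Take $z\in B_{i-2}$ with minimal neighbourhood in $H$. Take $z'$ with maximal neighbourhood in $B_{i+2}\setminus N(z)$; this set is nonempty if the two blocks are not complete, because neighbourhoods within a block are nested. One shows that $\{z,z',q_i\}$ meets every maximum clique.
\begin{itemize}
\item Under the hypotheses, $B_{i-2}\cup B_{i-1}$ and $B_{i+1}\cup B_{i+2}$ are cliques containing $z$ and $z'$ respectively.
\item $q_i$ is complete to $B_{i-1}\cup B_i\cup B_{i+1}\cup A_3(i)$. A non-neighbour of $q_i$ in $A_3(i)$ would give an induced $C_4$ through $B_{i-1}$ and $B_{i+1}$.
\item So only a maximum clique $M\subseteq B_{i-2}\cup B_{i+2}$ needs checking. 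If $z,z'\notin M$, maximality of $M$ yields $a\in M\cap B_{i+2}$ with $az\notin E(G)$ and $b\in M\cap B_{i-2}$ with $bz'\notin E(G)$. But $ab\in E(G)$ and $N_H(a)\subseteq N_H(z')$ by the choice of $z'$, a contradiction.
\end{itemize}
Hence $\{z,z',q_i\}$ is a good stable set, which is impossible in a basic graph. In the example above, $\{u,v,q_i\}$ is exactly such a set: it meets the unique maximum clique $\{c,d,v,r\}$.
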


\begin{proof}
    Let $z \in B_{i-2}$ be a minimal vertex in $B_{i-2}$ and $z' \in B_{i+2}\setminus N(z)$ be a maximal vertex in $B_{i+2} \setminus N(z)$. Let $M\subseteq B_{i-2}\cup B_{i+2}$ be a maximum clique of $G$.
    We show that $M$ contains one of $z$ and $z'$. 
    Suppose for a contradiction that $z, z' \notin M$. If $M \cap B_{i+2} \subseteq N_{B_{i+2}}(z)$, then $M \cup \{z\}$ is also a clique, a contradiction. So $(M \cap B_{i+2}) \setminus N(z) \neq \emptyset$ and let $a \in (M \cap B_{i+2}) \setminus N(z)$. If $M \cap B_{i-2} \subseteq N_{B_{i-2}}(z')$, then $M \cup \{z'\}$ is also a clique, a contradiction. So $(M \cap B_{i-2}) \setminus N(z') \neq \emptyset$ and let $b \in (M \cap B_{i-2}) \setminus N(z')$. It follows that $ab \in E(G)$ and $z'b \notin E(G)$, which contradicts the choice of $z'$. Then $\{z, z', q_i\}$ is a good stable set of $G$.
\end{proof}

\begin{lemma}\label{lem: Y is empty}
    If $|N_{B_i}(A_3(i))| = 1$, then every vertex in $(B_{i-1} \cup B_{i+1})\setminus N(A_3(i))$ has a neighbor in $B_i \setminus N(A_3(i))$. 
\end{lemma}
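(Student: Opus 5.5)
By symmetry I treat a vertex $y\in B_{i-1}\setminus N(A_3(i))$; the case of $B_{i+1}$ is the same argument with the roles of $i-1$ and $i+1$ swapped. Write $N_{B_i}(A_3(i))=\{u\}$, so every vertex of $A_3(i)$ is adjacent to $u$ and to no other vertex of $B_i$. (By Lemma \ref{lem:edge in $A_3(i)$} all vertices in a component of $A_3(i)$ have the same neighborhood in $B_i$; so this is consistent.) Suppose for a contradiction that $N_{B_i}(y)\subseteq \{u\}$. Since $y$ has a neighbor in $B_i$ by the definition of a nice blowup, $N_{B_i}(y)=\{u\}$.

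Fix $t\in A_3(i)$. Then $ty\notin E(G)$ and $t,y$ have the common neighbor $u$ in $B_i$. By Lemma \ref{lem:nonedge in A_3(i)} (applied in $A'_3(i-1)$, with neighborhoods taken in $B_{i-2}$ and $B_i$), the neighborhoods of $t$ and $y$ in $B_{i-2}$ must be disjoint. Indeed, they cannot also share a vertex of $B_{i-2}$ without creating an induced $C_4$. Let $s\in N_{B_{i-1}}(t)$ and $y'\in N_{B_{i-2}}(y)$, so $y't\notin E(G)$.

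Next I locate a vertex $w\in B_i\setminus\{u\}$; this exists because $B_i\setminus N(t)\neq\emptyset$ by Lemma \ref{lem:blowup-threeneighbor} (1). Then $wy\notin E(G)$ and $wt\notin E(G)$. I build a forbidden path or cycle out of $w$, $u$, $t$, $y$, $y'$ and the $q$'s. The natural candidate uses the sequence $y-u-t-N_{B_{i+1}}(t)-\cdots$ combined with $y-y'-q_{i+2}-\cdots$. Concretely:
\begin{itemize}
\item[$\bullet$] If $ys\notin E(G)$ or we choose $s$ suitably, the sequence $y'-y-u-t-N_{B_{i+1}}(t)-q_{i+2}$ closes through $B_{i+2}$ to $y'$. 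If $N_{B_{i+1}}(t)$ and $y'$ have no common neighbor in $B_{i+2}$, this gives an induced $C_6$ or $C_7$ (via an induced cyclic sequence and Lemma \ref{lem:cyclicsequence}).
\item[$\bullet$] Otherwise, I use $w$ and its neighbors in $B_{i+1}\setminus N(t)$ to obtain an induced $P_7$ or a bad $P_7$ (Lemma \ref{lem:badP7}). By Lemma \ref{lem:blowup-threeneighbor} (3), $w$ is anticomplete to $N_{B_{i-1}}(t)$ or to $N_{B_{i+1}}(t)$.
\end{itemize}

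Alternatively, and more cleanly, I would use the basicness of $G$. Since $N_{B_i}(y)=\{u\}$, $y$'s neighborhood splits as $N_{B_{i-2}}(y)\cup N_{B_{i-1}}(y)\cup\{u\}$, because $A_3=A_3(i)$ and $A_1=A_2=A_5=\emptyset$. I would choose $y$ minimal among such vertices. The set $N_{B_{i-1}}(y)\cup\{u\}$ is a clique, since $u=q_i$-type vertices are complete to $B_{i-1}$ in this setting; to verify this, note that $u$ is the unique vertex seen by $A_3(i)$, so Lemma \ref{lem:single vertex in A_3(i)} (3) makes $u$ maximal in $B_i$. Hence $N(y)\setminus B_{i-2}$ is a clique, and $N_{B_{i-2}}(y)$ together with $N_{B_{i-1}}(y)$ is also a clique. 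So $d(y)\le \omega-1+1$, which makes $y$ small, a contradiction.

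The main obstacle is deciding which route closes. I expect the small-vertex route to be the intended one. The delicate point there is confirming that $N_{B_{i-2}}(y)$ is complete to $N_{B_{i-1}}(y)\cup\{u\}$ and that $u$ is complete to $N_{B_{i-1}}(y)$; the latter follows from the $P_4$-structure condition and maximality of $u$. If that fails, I fall back to the path construction above.
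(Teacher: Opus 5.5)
Your second route, the small-vertex argument on a minimal counterexample, is the paper's route, but the step that carries the argument is missing. Take $y$ minimal among vertices of $B_{i-1}\setminus N(A_3(i))$ with $N_{B_i}(y)=\{u\}$. The bound $d(y)\le\omega$ then requires $N_{B_{i-2}}(y)$ to be complete to all of $B_{i-1}$. In other words, $y$ must be minimal in the whole of $B_{i-1}$, not just among vertices outside $N(A_3(i))$.

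You assert that $N_{B_{i-2}}(y)\cup N_{B_{i-1}}(y)$ is a clique without proof. The ``delicate point'' you name instead is the wrong one. That $u$ is complete to $B_{i-1}$ is immediate since $u=q_i$. Also, $N_{B_{i-2}}(y)$ is anticomplete (not complete) to $u$, because $B_{i-2}$ is anticomplete to $B_i$.

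Here is where the gap is. By the chain order of Lemma~\ref{lem:blowup-basicproperty}(1), any $b\in B_{i-1}$ with $N_{B_{i-2}\cup B_i}(b)\subsetneq N_{B_{i-2}\cup B_i}(y)$ has $N_{B_i}(b)=\{u\}$. If $b\notin N(A_3(i))$, this contradicts the choice of $y$. But $b$ may lie in $N_{B_{i-1}}(A_3(i))$: for instance, if $B_i\setminus\{u\}$ is anticomplete to $N_{B_{i-1}}(t)$ for some $t\in A_3(i)$, then every vertex of $N_{B_{i-1}}(t)$ sees only $u$ in $B_i$. For such $b$, the $P_4$-structure condition of the nice blowup says nothing, since no vertex of $B_i$ distinguishes $b$ from $y$.

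The missing idea is to take the fourth vertex from $A_3(i)$. Suppose $a\in N_{B_{i-2}}(y)\setminus N(b)$ and $c\in A_3(i)\cap N(b)$. Then $a-y-b-c$ is an induced $P_4$: $yc\notin E(G)$ because $y\notin N(A_3(i))$, and $ac\notin E(G)$ because $A_3(i)$ is anticomplete to $B_{i-2}$. This contradicts Lemma~\ref{lem:generalized P4-structure}. Hence $y$ is minimal in $B_{i-1}$, $N_{B_{i-2}}(y)\cup B_{i-1}$ is a clique, and $d(y)\le\omega$ follows as you say. The same argument, reflected, handles $B_{i+1}$.

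Drop your first route. Lemma~\ref{lem:nonedge in A_3(i)} does not apply to $t\in A_3(i)$ and $y\in B_{i-1}$: they do not lie in a common $A'_3(j)$, and $t$ has no neighbours in $B_{i-2}$. That case analysis is also never closed.
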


\begin{proof}
    Let $K$ be a non-empty component of $A_3(i)$ and $J$ be the possible second component of $A_3(i)$.
    By Lemma \ref{lem:blowup-threeneighbor}, we may assume that $B_i \setminus N(K)$ is anticomplete to $N_{B_{i+1}}(K)$. 
    Suppose that this lemma is not true. 
    We first assume that there exists $x \in B_{i+1}\setminus N(A_3(i))$ anticomplete to $B_i \setminus \{q_{i}\}$. 
    Then we may assume that $x$ is minimal in $B_{i+1}\setminus N(A_3(i))$. 
    By Lemma \ref{lem:B_i complete to two large cliques in B_{i-1}}, $N_{B_{i+1}}(K)$ is complete to $B_{i+2}$. 
    If $B_i \setminus N(J)$ is anticomplete to $N_{B_{i+1}}(J)$, then $N_{B_{i+1}}(J)$ is complete to $B_{i+2}$. 
    If $B_i \setminus N(J)$ is anticomplete to $N_{B_{i-1}}(J)$, then by Lemma \ref{lem:single vertex in A_3(i)} (4), for any vertex $u\in N_{B_{i+1}}(J)$ and any vertex $v\in B_{i+1}\setminus N(J)$, $N(v)\cap H\subseteq N(u)\cap H$. 
    So $x$ is also minimal in $B_{i+1}$. Hence, $B_{i+1} \cup N_{B_{i+2}}(x)$ is a clique and so $d(x) \le \omega$. 
    We now assume that there exists $x \in B_{i-1}\setminus N(A_3(i))$ anticomplete to $B_i \setminus N(A_3(i))$. Let $y \in  B_{i-1} \setminus N(A_3(i))$ be a minimal vertex in $B_{i-1} \setminus N(A_3(i))$. It suffices to show that $y$ is minimal in $B_{i-1}$. By assumption, $y$ is anticomplete to $B_i \setminus N(A_3(i))$. Suppose that there exist $a \in B_{i-2}$ and $b \in N_{B_{i-1}}(A_3(i))$ such that $ab \notin E(G)$ and $ay \in E(G)$. Let $c\in A_3(i)\cap N(b)$. Then $a - y - b - c$ contradicts Lemma \ref{lem:generalized P4-structure}. This completes the proof.
\end{proof}

\begin{lemma}\label{lem:reduce to four complete pair}
    Let $J_1$ be a non-empty component of $A_3(i)$ and $J_2$ be the possible second component of $A_3(i)$.
    Suppose that $B_i \setminus N(J_s)$ is anticomplete to $B_{i+1} \cap N(J_s)$ and $B_{i-1} \setminus N(J_s) \neq \emptyset$ for $s=1,2$. 
    If every vertex in $(B_{i-1} \setminus N(A_3(i))) \cup (B_{i+1} \setminus N(A_3(i)))$ has a neighbor in $B_i \setminus N(A_3(i))$, then $B_j$ is complete to $B_{j+1}$ for $j\neq i$.
\end{lemma}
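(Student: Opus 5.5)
The plan is to prove completeness of each of the four pairs $(B_{i+1},B_{i+2})$, $(B_{i+2},B_{i-2})$, $(B_{i-2},B_{i-1})$ and $(B_{i-1},B_i)$ separately. Each time I would assume a vertex $c$ of one set mixes on an edge of the other, and then build a forbidden induced $P_7$, $C_6$, $C_7$, a bad $P_7$ (Lemma \ref{lem:badP7}) or an induced cyclic sequence (Lemma \ref{lem:cyclicsequence}). The long path will be routed through $J_1$ together with a vertex of $B_i\setminus N(A_3(i))$. The useful consequences of the hypotheses are the following.
\begin{itemize}
\item By Lemma \ref{lem:single vertex in A_3(i)}(1), $q_i\in N(J_s)$ and $q_{i+1}\notin N(J_s)$.
\item There is a vertex $u\in B_{i-1}\setminus N(J_1)$.
\item Every vertex of $(B_{i-1}\cup B_{i+1})\setminus N(A_3(i))$ has a neighbor in $B_i\setminus N(A_3(i))$.
\item By Lemma \ref{lem:single vertex in A_3(i)}(2), $N_{B_i}(J_s)$ is complete to $N_{B_{i\pm1}}(J_s)$.
\end{itemize}

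\textbf{The pair $(B_{i+1},B_{i+2})$.} Suppose $c\in B_{i+2}$ is adjacent to $b\in B_{i+1}$ but not to $b'\in B_{i+1}$. I split according to whether $b$ and $b'$ lie in $N(J_1)$.
\begin{itemize}
\item If $b'\notin N(A_3(i))$, take a neighbor $w\in B_i\setminus N(A_3(i))$ of $b'$. Then $w$ is anticomplete to $N_{B_{i+1}}(J_1)$, and $w$ has a neighbor in $B_{i-1}$.
\item I would then try the path $c-b-\cdots$ or $c-q_{i-2}-N_{B_{i-1}}(J_1)-J_1-\ldots$, closing it through $b'-w$ to obtain an induced $C_6$, $C_7$ or $P_7$.
\item When both $b,b'\in N(J_1)$, I use the $P_4$-structure of Lemma \ref{lem:generalized P4-structure} with $x\in J_1$. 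The $P_4$ $c-b-b'$ plus an $A_3(i)$-neighbor of $b'$ is forbidden, so $c$ must see that neighbor; but this contradicts $c\in B_{i+2}$.
\end{itemize}

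\textbf{The pair $(B_{i-1},B_i)$.} Use $u\in B_{i-1}\setminus N(J_1)$ together with its neighbor $w\in B_i\setminus N(A_3(i))$. If $c\in B_i$ mixes on an edge $aa'$ of $B_{i-1}$, I route
\[ a'-a-c-\cdots-J_1-N_{B_{i+1}}(J_1)-q_{i+2}-\cdots \]
and obtain an induced path on seven vertices. The case distinction is whether $c\in N(J_1)$. Lemma \ref{lem:blowup-threeneighbor}(3) handles the part of $B_i\setminus N(J_1)$ that is anticomplete to $N_{B_{i+1}}(J_1)$.

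\textbf{The pairs $(B_{i-2},B_{i-1})$ and $(B_{i+2},B_{i-2})$.} For the first, I argue as in Lemma \ref{lem:component of A_3(i)}: a vertex of $B_{i-1}$ mixing on an edge of $B_{i-2}$ extends through $B_i\setminus N(J_1)$ and $N_{B_{i+1}}(J_1)$ to $J_1$, giving an induced $P_7$. For the last pair, once the other three completeness statements hold, a vertex of $B_{i+2}$ mixing on an edge of $B_{i-2}$ gives the path
\[ a-b-c-q_{i+1}-w-u-\ldots, \]
or a $C_6$ through $J_1$.

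The main obstacle is the presence of the second component $J_2$ with possibly different attachment sides. I would handle it by applying the argument symmetrically with $J_1$ replaced by $J_2$. Where the two components interact, I would use Lemma \ref{lem:nonedge in A_3(i)}: since $J_1$ and $J_2$ are anticomplete, they have disjoint neighborhoods on one side, and Lemma \ref{lem:comparable vertices in A_3(i)} should restrict the possible configurations.
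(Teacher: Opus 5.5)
\textbf{Main gap: the pair $(B_{i+2},B_{i-2})$.} Your argument for this pair cannot work.

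Once $B_{i-1}$ is complete to $B_{i-2}\cup B_i$, your vertex $u\in B_{i-1}$ is adjacent to both $a$ and $b$. So $a-b-c-q_{i+1}-w-u$ is not induced; it closes into a $5$-cycle with chord $ub$.

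More fundamentally, this completeness does not follow from $(P_7,C_4,C_6,C_7)$-freeness plus the hypotheses. Here is a small counterexample.
\begin{itemize}
\item Take bags $B_{i-2}=\{z,q_{i-2}\}$, $B_{i-1}=\{p,p'\}$, $B_i=\{r,r'\}$, $B_{i+1}=\{s,s'\}$, $B_{i+2}=\{z',q_{i+2}\}$.
\item Make all consecutive bags complete except for the two non-edges $r's$ and $zz'$.
\item Add $A_3(i)=\{x\}$ with $N(x)=\{p,r,s\}$.
\end{itemize}
This is a maximal nice blowup plus one vertex, and it is $(P_7,C_4,C_6,C_7)$-free. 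All hypotheses hold: $B_i\setminus N(x)=\{r'\}$ misses $s$, and $p'$ and $s'$ are both adjacent to $r'$. Yet $zz'\notin E(G)$.

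The paper gets this pair from basicness, not from forbidden subgraphs. Lemma \ref{lem:B_{i-2} is complete to B_{i+2}} shows the following: if $B_{i-2}$ is not complete to $B_{i+2}$, then $\{z,z',q_i\}$ meets every maximum clique, where $z$ is minimal in $B_{i-2}$ and $z'$ is maximal in $B_{i+2}\setminus N(z)$. That is a good stable set, which a basic graph does not have; in the example it is $\{z,z',r\}$. No argument that only produces forbidden induced subgraphs can replace this step.

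\textbf{The pair $(B_{i-1},B_i)$.} Your route $a'-a-c-\cdots-J_1-N_{B_{i+1}}(J_1)-q_{i+2}$ is never induced.
\begin{itemize}
\item If $c\in N(J_1)$, then $c$ is complete to $N_{B_{i+1}}(J_1)$ by Lemma \ref{lem:single vertex in A_3(i)}(2).
\item If $c\notin N(J_1)$, any path from $c$ to $J_1$ passes through $B_{i-1}$, which is adjacent to $a,a'$, or through $B_{i+1}$, which is adjacent to $N_{B_{i+1}}(J_1)$.
\end{itemize}
Also, your $u\in B_{i-1}\setminus N(J_1)$ may lie in $N(J_2)$, and then it need not have a neighbor in $B_i\setminus N(A_3(i))$.

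The paper goes around the other side of the ring instead. Lemma \ref{lem:component of A_3(i)} uses paths like $x'''-a-x-q_{i-2}-q_{i+2}-N_{B_{i+1}}(J_1)-J_1$. With the hypothesis, this makes every vertex of $B_{i-1}\setminus N(A_3(i))$ complete to $B_{i-2}\cup B_i$. Lemma \ref{lem:single vertex in A_3(i)}(4) then propagates this to $N_{B_{i-1}}(J_s)$. If $B_{i-1}\subseteq N(A_3(i))$, the two components have disjoint neighbourhoods covering $B_{i-1}$, and one propagates from $q_{i-1}$. This settles $(B_{i-2},B_{i-1})$ and $(B_{i-1},B_i)$ together.

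\textbf{The pair $(B_{i+1},B_{i+2})$.} Your case $b,b'\in N(J_1)$ fails when every $J_1$-neighbour $x$ of $b'$ also sees $b$, because then $c-b-b'-x$ has the chord $bx$. The remaining cases are only sketched or missing.

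The clean tool is Lemma \ref{lem:B_i complete to two large cliques in B_{i-1}}, shifted by two. Apply it to the following non-adjacent pairs, each of which has disjoint neighbourhoods in $B_{i+1}$:
\begin{itemize}
\item $\{x,y\}$ with $x\in J_s$ and $y\in B_i\setminus N(J_s)$;
\item $\{w,y\}$ with $w\in B_i\setminus N(A_3(i))$ and $y\in J_1$.
\end{itemize}
By the hypotheses, these neighbourhoods cover $B_{i+1}$, so $B_{i+1}$ is complete to $B_{i+2}$.
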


\begin{proof}
    Note that $q_{i+1} \in B_{i+1} \setminus N(A_3(i))$. By assumption and Lemma 
    \ref{lem:B_i complete to two large cliques in B_{i-1}}, $B_{i+1}$ is complete to $B_{i+2}$. 

    Next we show that $B_{i-1}$ is complete to $B_{i-2} \cup B_i$. 
    Suppose first that $B_{i-1} \setminus N(A_3(i)) \neq \emptyset$. 
    By assumption and Lemma \ref{lem:component of A_3(i)}, $B_{i-1}\setminus N(K)$ is complete to $B_{i-2} \cup B_i$. 
    By Lemma \ref{lem:single vertex in A_3(i)} (4), $N_{B_{i-1}}(K)$ is complete to $B_{i-2}\cup B_i$. Hence, $B_{i-1}$ is complete to $B_{i-2}\cup B_i$. 
    Now suppose that $B_{i-1} \setminus N(A_3(i)) = \emptyset$. 
    It follows that $J_2\neq \emptyset$, $J_1$ and $J_2$ have disjoint neighborhoods in $B_{i-1}$ and $B_{i-1}=N_{B_{i-1}}(J_1)\cup N_{B_{i-1}}(J_2)$. 
    By assumption and Lemma \ref{lem:component of A_3(i)}, $B_{i-1}$ is complete to $B_{i-2} \cup B_i$. By Lemma \ref{lem:B_{i-2} is complete to B_{i+2}}, $B_{i-2}$ and $B_{i+2}$ are complete. 
\end{proof}

We first deal with the following special case.

\begin{lemma}[Coloring $H\cup A_3(i)$ with four complete pairs]\label{lem:color four complete pairs}
    Let $K$ and $J$ be two components of $A_3(i)$ \textup{(}It is also possible that $J = \emptyset$\textup{)} and $G = H \cup K \cup J$. If $B_j$ and $B_{j+1}$ complete for $j\neq i$, then $\chi(G) \le \ceil{\frac{5}{4}\omega}$.
\end{lemma}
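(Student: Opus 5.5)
Since $G$ is basic, the natural route is to exhibit a $(4,5)$-good subgraph $F$ of $G=H\cup K\cup J$, exactly as in Lemmas \ref{lem:two disjoint A3} and \ref{lem:two non-dis A3}. That contradicts basicness, so $\chi(G)\le\ceil{\frac{5}{4}\omega}$ then follows from Lemma \ref{lem:basic graphs}.

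First I would pin down the structure. Since $B_{i+1}B_{i+2}$, $B_{i+2}B_{i-2}$ and $B_{i-2}B_{i-1}$ are complete pairs, every maximal clique of $H$ lies in one of $B_{i-1}\cup B_i$, $B_i\cup B_{i+1}$, or in one of the cliques $B_{i-1}\cup B_{i-2}$, $B_{i-2}\cup B_{i+2}$, $B_{i+2}\cup B_{i+1}$. By Lemma \ref{lem:structure of only A3} and the assumption, $K$ and $J$ are cliques, or $A_3(i)$ is a blowup of $P_3$. By Lemma \ref{lem:single vertex covers A_3(i)}, each component has a vertex carrying its whole neighborhood in $H$. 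By Lemma \ref{lem:blowup-threeneighbor}(3) and symmetry, I assume $B_i\setminus N(K)$ is anticomplete to $N_{B_{i+1}}(K)$. Then Lemma \ref{lem:single vertex in A_3(i)} gives that $N_{B_i}(K)$ is complete to $N_{B_{i\pm1}}(K)$, that $q_i\in N(K)$, and that $q_{i+1}\notin N(K)$.

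Next I would collect the size bounds. Taking minimal simplicial vertices of $K$ (and $J$), Lemma \ref{lem:small vertex argument for simplicial vertices in A_3(i-2)} gives $|N_{B_{i\pm1}}(K)|>\ceil{\frac{\omega}{4}}$. A minimal $w\in B_i\setminus N(A_3(i))$ gives $|N_{B_{i\pm1}}(w)|>\ceil{\frac{\omega}{4}}$ by Lemma \ref{lem:smv on w}. Lemma \ref{lem:smv on B_{i+2}+B_{i-2}} shows $|B_{i\pm2}|>\ceil{\frac{\omega}{4}}$. Hence each of the three complete ``outer'' cliques above has size at most $\omega$. This forces strong upper bounds, for example $|B_{i-2}|+|B_{i+2}|\le\omega$ and $|B_{i+1}|+|B_{i+2}|\le\omega$. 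It also makes every $B_j$ with $j\ne i$ have size at least $2$, and often at least $3$.

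Then I would build $F$ as a small hyperhole-like set. Take two or three vertices from each $B_j$: near-universal ones such as $q_j$, the most-adjacent vertices in $N_{B_{i\pm1}}(K)$, and vertices of $B_i\setminus N(K)$ with maximal neighborhoods. Add one or two vertices of $K$ (and of $J$) with largest neighborhoods in $H$. I would give an explicit $5$-coloring modeled on Figures \ref{fig:good subgraph1} and \ref{fig:good subgraph2}, using Lemma \ref{lem:hyperhole} to justify $5$-colorability of the $H$-part.

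The verification would then run over maximal cliques $M$ in each of the possible locations: $K\cup N_H(K)$, $J\cup N_H(J)$, and the five pairs $B_j\cup B_{j+1}$. In each case I would show that $M$ contains $4-(\omega-|M|)$ vertices of $F$. When $M$ misses a designated vertex, I would argue that $|M|$ is large, and then use the size bounds to exhibit a clique larger than $\omega$. The main obstacle is the case analysis for $M\subseteq B_{i-1}\cup B_i$ and $M\subseteq B_i\cup B_{i+1}$. There the bipartite adjacency between $B_i$ and $B_{i\pm1}$ is not complete, so the chosen vertices of $B_i$ must be universal in these pairs. My first choice is $q_i$ together with a vertex of $N_{B_i}(K)$ of minimal $H$-neighborhood, as in Lemma \ref{lem:two disjoint A3}. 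If $J$ is nonempty, or the $P_3$-blowup case occurs, I would split on whether $J$ is oriented the same way as $K$ (using Lemma \ref{lem:no two directed edge pointing to the same B_i}) and adjust $F$ accordingly.
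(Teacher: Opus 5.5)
Your proposal never actually produces the $(4,5)$-good subgraph. The choice of $F$, its $5$-colouring and the clique-by-clique verification are all deferred (``modelled on Figures~\ref{fig:good subgraph1} and~\ref{fig:good subgraph2}''). That construction is the entire content of the lemma, so as it stands there is no proof.

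The set-up you would build it on is also off.
\begin{enumerate}
\item You misread the hypothesis. ``$B_j$ complete to $B_{j+1}$ for $j\neq i$'' gives \emph{four} complete pairs, including $B_{i-1}B_i$; the only non-complete pair is $B_iB_{i+1}$. Hence $B_i\setminus N(K)$ is complete to $N_{B_{i-1}}(K)$, and Lemma~\ref{lem:blowup-threeneighbor}(3) \emph{forces} it to be anticomplete to $N_{B_{i+1}}(K)$. The same holds for $J$. No symmetry is available, and your planned split on the orientation of $J$ is vacuous.
\item Your ``main obstacle'' in $B_{i-1}\cup B_i$ does not exist. The real difficulty is in $B_i\cup B_{i+1}$ and in the cliques through $K$ and $J$. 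There $K$ need not be a clique (for $J=\emptyset$ it is a blowup of $P_3$), and $\omega(K)$ can be anything below $\frac34\omega$. Every maximal clique inside $K\cup N_{B_i}(K)\cup N_{B_{i+1}}(K)$ (and likewise for $J$) of size $\omega-j$ must receive $4-j$ vertices of $F$.
\item Your sketch puts vertices of both $B_i\setminus N(K)$ and $N_{B_{i+1}}(K)$ into $F$. These sets are anticomplete, so $F\cap H$ is not a hyperhole and Lemma~\ref{lem:hyperhole} does not certify its $5$-colourability.
\item The step ``$M$ misses a designated vertex, so some clique exceeds $\omega$'' needs facts you do not establish:
\begin{itemize}
\item $B_{i-2}\cup B_{i-1}$, $B_{i-1}\cup B_i$ and $B_{i+1}\cup B_{i+2}$ are all \emph{maximum} cliques (otherwise a small stable set such as $\{q_i,q_{i+2}\}$ or $\{q_{i-2},q_i\}$ is good);
\item $N_{B_{i+1}}(K)$ and $N_{B_{i+1}}(J)$ are comparable, with $|B_{i+1}\setminus(N(K)\cup N(J))|>\ceil{\frac{\omega}{4}}$;
\item $|B_i\setminus N(K)|\ge\omega(K)$, by maximality of $H$ (Lemma~\ref{lem:maximalityofA3}).
\end{itemize}
\end{enumerate}

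The paper does not look for a good subgraph here at all. After the facts above, it writes $\omega(K)=\alpha\omega\ge\omega(J)$, $|B_{i-1}|=\beta\omega$, $|B_i\setminus N(K)|=(\alpha+\gamma)\omega$ and $|B_{i+2}|=(\beta-\delta)\omega$, and excludes $\omega(K)\ge\frac34\omega$. In the two regimes $\frac14\omega<\omega(K)<\frac34\omega$ and $\omega(K)\le\frac14\omega$ it gives an explicit precolouring. In it, $K$ and $J$ ($P_4$-free, hence $\omega$-colourable) share colours with suitable parts of $B_{i-2}$, $B_{i+2}$, $B_{i+1}$ and $B_i\setminus N(K)$; this is where $|B_i\setminus N(K)|\ge\omega(K)$ is used. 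The uncoloured remainder is a $5$-hyperhole whose clique number and $\ceil{|V|/2}$ fit within the remaining colours, and Lemma~\ref{lem:hyperhole} finishes. To turn your plan into a proof, you would need to exhibit one $F$ that works uniformly across all these parameter ranges and across the $P_3$-blowup and two-component cases. That is precisely what is missing.
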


\begin{proof}
     We assume that $\omega(J) \le \omega(K)$.
     By assumption and Lemma \ref{lem:blowup-threeneighbor}, we may assume that $B_i \setminus N(K)$ is anticomplete to $N_{B_{i+1}}(K)$, and $B_i \setminus N(J)$ is anticomplete to $N_{B_{i+1}}(J)$. By Lemmas \ref{lem:small vertex argument for simplicial vertices in A_3(i-2)} and \ref{lem:smv on B_{i+2}+B_{i-2}}, we have $|B_j| > \ceil{\frac{\omega}{4}}$ for $j \in \{i-2, i-1, i+2\}$. By Lemma \ref{lem:single vertex covers A_3(i)}, there exist vertices $w_K\in K$ universal in $K$ and $w_j\in J$ universal in $J$ such that $N_H(K)=N_H(w_K)$ and $N_H(J)=N_H(w_J)$.  
     It follows from Lemma \ref{lem:small vertex argument for simplicial vertices in A_3(i-2)} that $|N_{B_j}(K)| > \ceil{\frac{\omega}{4}}$ and $|N_{B_j}(J)| > \ceil{\frac{\omega}{4}}$ for $j \in \{i-1, i+1\}$.

     Let $z \in B_i \setminus (N(K) \cup N(J))$ be a minimal vertex in $B_i \setminus (N(K) \cup N(J))$. Then, $N_{B_{i+1}}(z) \subseteq B_{i+1}\setminus (N(K) \cup N(J))$. Since every vertex in $N_{B_i}(K) \cup N_{B_i}(J)$ has a neighbor in $N_{B_{i+1}}(K) \cup N_{B_{i+1}}(J)$, we obtain $z$ is also minimal in $B_i$. By applying a small vertex argument, $|B_{i+1} \setminus (N(K) \cup N(J))| > \ceil{\frac{\omega}{4}}$. If $N_{B_{i+1}}(K)$ and $N_{B_{i+1}}(J)$ are disjoint, then $B_{i+1} \cup B_{i+2}$ is a clique of size larger than $\omega$. So $N_{B_{i+1}}(K)$ and $N_{B_{i+1}}(J)$ are comparable.

     We now determine a maximum clique in $G$. By Lemma \ref{lem:single vertex in A_3(i)} (1), $q_i \in N_{B_i}(K) \cap N_{B_i}(J)$ and $q_{i+1} \in B_{i+1} \setminus (N(K) \cup N(J))$. Let $p \in B_i\setminus (N(K) \cup N(J))$ be a maximal vertex in $B_i\setminus (N(K) \cup N(J))$ and let $r \in N_{B_{i+1}}(K) \cap N_{B_{i+1}}(J)$. Then,
     \begin{itemize}
        \item If $B_{i-2} \cup B_{i-1}$ is not a maximum clique, then $\{q_i, q_{i+2}\}$ is a good stable set.
        \item If $B_{i-1} \cup B_i$ is not a maximum clique, then $\{q_{i-2}, q_{i+1}, w_K, w_J\}$ is a good stable set.
        \item If $B_{i+1} \cup B_{i+2}$ is not a maximum clique, then $\{q_{i-2}, q_i\}$ is a good stable set.
    \end{itemize}
    So $B_{i-2} \cup B_{i-1}$, $B_{i-1} \cup B_i$ and $B_{i+1} \cup B_{i+2}$ are maximum clique in $G$.

    We now develop the coloring strategy. By Lemma \ref{lem:P4-freenessofA3}, $K$ and $J$ are $P_4$-free and so we have $\chi(K) = \omega(K)$ and $\chi(J) = \omega(J)$. Let $\omega(K) = \alpha \omega$ and $|B_{i-1}| = \beta \omega$. By Lemma \ref{lem:maximalityofA3}, $|B_i \setminus N(K)| \ge \alpha\omega$, and so we can assume that $|B_i \setminus N(K)| = (\alpha + \gamma)\omega$ for some $\gamma \ge 0$. Observe that $\gamma \omega \in \mathbb{N}$ because $\gamma\omega = |B_i\setminus N(K)| - \omega(K)$. Since $B_{i-2} \cup B_{i-1}$ and $B_{i-1} \cup B_i$ are maximum clique, this implies that $|B_{i-2}| = (1 - \beta)\omega$ and $|N_{B_i}(K)| = (1 - \alpha - \beta - \gamma)\omega$. As $B_{i-2} \cup B_{i+2}$ may not be a maximum clique, let $|B_{i+2}| = (\beta - \delta)\omega$ with $\delta \ge 0$. Since  $B_{i+1} \cup B_{i+2}$ is a maximum clique, $|B_{i+1}| = (1 - \beta + \delta)\omega$ . If $\omega(K) \ge \frac{3}{4}\omega$, then $|B_i \setminus N(K)| \ge \frac{3}{4}\omega$ and this implies that $B_{i-1} \cup B_i$ is a clique of size larger than $\omega$. So we may assume that $\omega(K) < \frac{3}{4}\omega$.
    
    \vskip 0.3cm
    \noindent
    {\bf (i) $\frac{1}{4}\omega < \omega(K) < \frac{3}{4}\omega$}
    \vskip 0.2cm
    
    We now partition $K$ into three parts $(X, Y, Z)$ of color sizes $x\omega$, $y\omega$, and $\ceil{\frac{\omega}{4}}$ where $x\omega, y\omega \le \ceil{\frac{\omega}{4}}$. Since $x\omega + y\omega = \alpha \omega - \ceil{\frac{\omega}{4}} < \frac{1}{2}\omega$, this implies that we can choose $x\omega$ and $y\omega$ that satisfies $x\omega, y\omega \le \ceil{\frac{\omega}{4}}$. We then color some vertices in $G$ using $x\omega + y\omega + 2\cdot\ceil{\frac{\omega}{4}}$ colors as follows.

    \begin{itemize}
        \item Assign the colors $a_1, \ldots, a_{x\omega}$ to the vertices of $X$, and $x\omega$ vertices in $B_{i-2}$.
        \item Assign the colors $b_1, \ldots, b_{y\omega}$ to the vertices of $Y$, and $y\omega$ vertices in $B_{i+2}$.
        \item Assign the colors $c_1, \ldots, c_{\ceil{\frac{\omega}{4}}}$ to the vertices of $Z$, $\ceil{\frac{\omega}{4}}$ vertices in $B_{i+1} \setminus (N(K) \cup N(J))$, and $\ceil{\frac{\omega}{4}}$ vertices in $B_{i-2}$.
        \item Assign colors $d_1, \ldots, d_{\ceil{\frac{\omega}{4}}}$ to $\ceil{\frac{\omega}{4}}$ vertices in $N_{B_{i+1}}(K)$, and $\ceil{\frac{\omega}{4}}$ vertices in $B_{i-2}$.
        \item The vertices of $J$ are colored starting from $c_1$ and proceeding sequentially through\[c_1, \ldots, c_{\ceil{\frac{\omega}{4}}}, a_1, \ldots, a_{x\omega}, b_1, \ldots, b_{y\omega}\] using exactly $\omega(J)$ colors.
        \item Assign the colors $d_1, \ldots, d_{\ceil{\frac{\omega}{4}}}, a_1, \ldots, a_{x\omega}, b_1, \ldots, b_{y\omega}$ to $\alpha \omega$ vertices in $B_i \setminus N(K)$ in such a way that non-neighbors of $J$ in  $B_i \setminus N(K)$ are colored first, in order.
    \end{itemize}

    \begin{figure}
    \centering
    \begin{tikzpicture}[scale=0.7]
        \tikzstyle{v}=[circle, draw, solid, fill=black, inner sep=0pt, minimum width=3pt]
        \tikzstyle{w}=[circle, draw, solid, fill=black, inner sep=0pt, minimum width=3pt, font=\tiny]

        \draw[rounded corners, thick] (-9, 0)--(-7.5, 0)--(-7.5, -6)--(-10.5, -6)--(-10.5, 0)--(-8.9, 0);
        \draw[thin, blue] (-10.5, -1.2)--(-7.5, -1.2);
        \draw[thin, blue] (-10.5, -2.7)--(-7.5, -2.7);
        \draw[thin, blue] (-10.5, -4.2)--(-7.5, -4.2);
        
        \node [label={\small $|B_{i-2}|=(1-\beta)\omega$}] (v) at (-9, 0){};
        \node [label=$B_{i-2}$] (v) at (-9, 0.7){};
        \node [label={\small $a_1, \ldots, a_{x\omega}$}] (w) at (-9, -1.2){};
        \node [label={\small $c_1, \ldots, c_{\ceil{\frac{\omega}{4}}}$}] (w) at (-9, -2.6){};
        \node [label={\small $d_1, \ldots, d_{\ceil{\frac{\omega}{4}}}$}] (w) at (-9, -4.1){};

        \fill[red] (-9.2, -5.3) rectangle (-8.8, -4.9);
        
        \draw[rounded corners, thick] (-4.5, 0)--(-3, 0)--(-3, -6)--(-6, -6)--(-6, 0)--(-4.4, 0);
        
        \node [label={\small $|B_{i-1}| = \beta\omega$}] (v) at (-4.5, 0){};
        \node [label=$B_{i-1}$] (v) at (-4.5, 0.7){};

        \fill[red] (-4.7, -3.2) rectangle (-4.3, -2.8);
        
        \draw[rounded corners, thick] (0, 0)--(1.5, 0)--(1.5, -6)--(-1.5, -6)--(-1.5, 0)--(0.1, 0);
        \draw[very thick] (-1.5, -2)--(1.5, -2);
        \draw[thin, blue] (-1.5, -2.8)--(1.5, -2.8);
        \draw[thin, blue] (-1.5, -3.8)--(1.5, -3.8);
        \draw[thin, blue] (-1.5, -4.8)--(1.5, -4.8);
        
        \node [label={\small $|B_{i}|=(1- \beta)\omega$}] (v) at (0, 0){};
        \node [label=$B_{i}$] (v) at (0, 0.7){};
        \node [label={\small $b_1, \ldots, b_{y\omega}$}] (w) at (0, -3.85){};
        \node [label={\small $a_1, \ldots, a_{x\omega}$}] (w) at (0, -4.8){};
        \node [label={\small $d_1, \ldots, d_{\ceil{\frac{\omega}{4}}}$}] (w) at (0, -6.1){};

        \fill[red] (-0.2, -1.2) rectangle (0.2, -0.8);
        \fill[red] (-0.2, -2.6) rectangle (0.2, -2.2);

        \draw[rounded corners, thick] (4.5, 0)--(3, 0)--(3, -6)--(6, -6)--(6, 0)--(4.4, 0);
        \draw[thin, blue] (3, -1.5)--(6, -1.5);
        \draw[very thick] (3, -3)--(6, -3);
        \draw[thin, blue] (3, -4.5)--(6, -4.5);
        
        \node [label={\small $|B_{i+1}|=(1-\beta + \delta)\omega$}] (v) at (4.5, 0){};
        \node [label=$B_{i+1}$] (v) at (4.5, 0.7){};
        \node [label={\small $d_1, \ldots, d_{\ceil{\frac{\omega}{4}}}$}] (w) at (4.5, -1.5){};
        \node [label={\small $c_1, \ldots, c_{\ceil{\frac{\omega}{4}}}$}] (w) at (4.5, -4.5){};

        \fill[red] (4.3, -2.45) rectangle (4.7, -2.05);
        \fill[red] (4.3, -5.45) rectangle (4.7, -5.05);

        \draw[rounded corners, thick] (9, 0)--(7.5, 0)--(7.5, -6)--(10.5, -6)--(10.5, 0)--(8.9, 0);
        \draw[thin, blue] (10.5, -1.2)--(7.5, -1.2);
        
        \node [label={\small $|B_{i+2}| = (\beta-\delta)\omega$}] (v) at (9.2, 0){};
        \node [label=$B_{i+2}$] (v) at (9, 0.7){};
        \node [label={\small $b_1, \ldots, b_{y\omega}$}] (w) at (9, -1.2){};

        \fill[red] (8.8, -3.8) rectangle (9.2, -3.4);

        \draw[rounded corners, thick] (-2.5, 5)--(-0.5, 5)--(-0.5, 2)--(-4.5, 2)--(-4.5, 5)--(-2.4, 5);
        \draw[thin, blue] (-4.5, 4)--(-0.5, 4);
        \draw[thin, blue] (-4.5, 3)--(-0.5, 3);
        
        \node [label=$K$] (v) at (-2.5, 5){};
        \node [label={\small $a_1, \ldots, a_{x\omega}$}] (w) at (-2.5, 4){};
        \node [label={\small $b_1, \ldots, b_{y\omega}$}] (w) at (-2.5, 2.9){};
        \node [label={\small $c_1, \ldots, c_{\ceil{\frac{\omega}{4}}}$}] (w) at (-2.3, 1.8){};

        \draw[rounded corners, thick] (2.5, 5)--(0.5, 5)--(0.5, 2)--(4.5, 2)--(4.5, 5)--(2.4, 5);

        \node [label=$J$] (v) at (2.5, 5){};
        \node [label={\small $c_1, \ldots, c_{\ceil{\frac{\omega}{4}}},$}] (w) at (2.5, 3.6){};
        \node [label={\small $a_1, \ldots, a_{x\omega},$}] (w) at (2.45, 2.9){};
        \node [label={\small $b_1, \ldots$}] (w) at (1.8, 2){};

        \draw[very thick] (-7.5, -2.6)--(-6, -2.6);
        \draw[very thick] (-7.5, -3.4)--(-6, -3.4);

        \draw[very thick] (-3, -2.6)--(-1.5, -2.6);
        \draw[very thick] (-3, -3.4)--(-1.5, -3.4);

        \draw[very thick] (7.5, -2.6)--(6, -2.6);
        \draw[very thick] (7.5, -3.4)--(6, -3.4);

        \draw[rounded corners, very thick] (-9.3, -6)--(-9.3, -6.8)--(9.3, -6.8)--(9.3, -6);
        \draw[rounded corners, very thick] (-8.7, -6)--(-8.7, -6.4)--(8.7, -6.4)--(8.7, -6);
    \end{tikzpicture}
    \caption{The coloring strategy for Case (i). For each $j \in \{i, i+1\}$, the black line in $B_j$ distinguishes $N_{B_j}(K)$ from $B_j \setminus N(K)$. Two black line between $B_j$ and $B_{j+1}$ mean that the two sets are complete to each other. The remaining graph consists of the red-boxed parts.}
\end{figure}
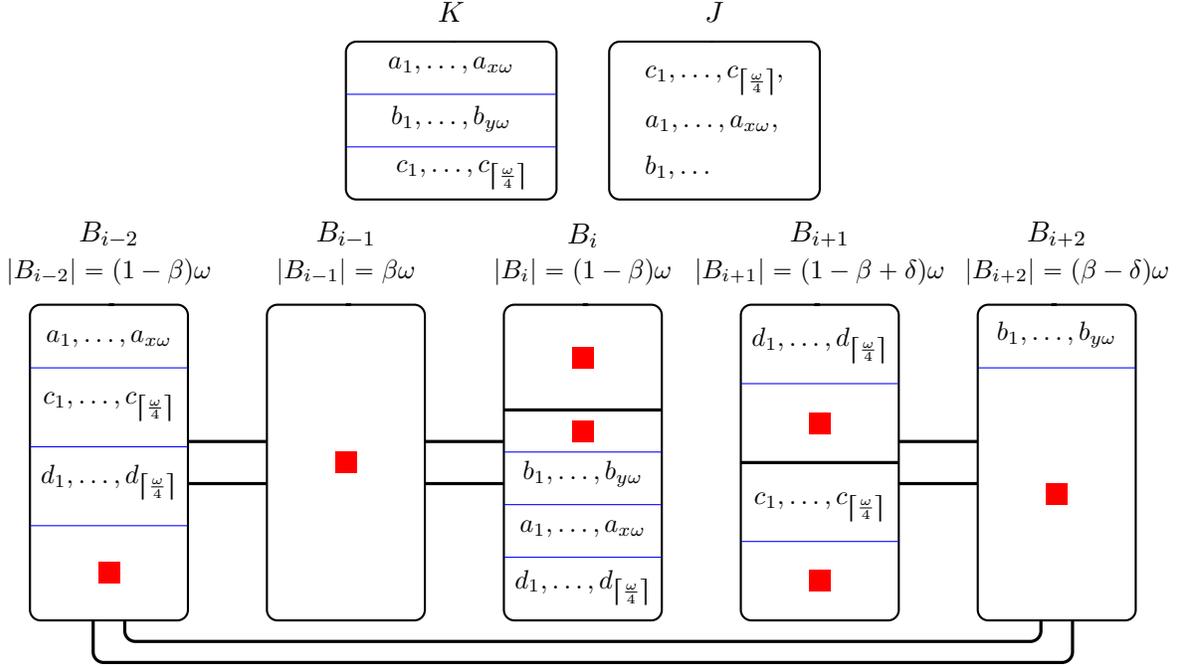

    The coloring strategy is illustrated in the above figure. By Lemma \ref{lem:comparable vertices in A_3(i)}, $N_{B_i}(K)\subseteq N_{B_i}(J)$ or $N_{B_i}(J)\subseteq N_{B_i}(K)$. By Lemma \ref{lem:maximalityofA3} and $\omega(K)\ge \omega(J)$, the number of non-neighbors of $J$ in $B_i\setminus N(K)$ is at least $\omega(J)$ and so the coloring used on $J \cup B_i$ is proper. The remaining graph consists of the red-boxed parts in the figure, and it suffices to show that the remaining graph $G'$ is $\left(\ceil{\frac{5}{4}\omega} - x\omega - y\omega - 2\cdot\ceil{\frac{\omega}{4}}\right)$-colorable. Since $|B_{i+1} \setminus (N(K) \cup N(J))|, |N_{B_{i+1}}(K) \cap N_{B_{i+1}}(J)| > \ceil{\frac{\omega}{4}}$, the each red-boxed parts in $B_{i+1}$ have non-negative size. Since $|B_{i+2}| > \ceil{\frac{\omega}{4}}$, the red-boxed part in $B_{i+2}$ also has non-negative size. Note that the red-boxed part in $B_{i-2}$ could be empty. Then we only need to consider the worst case that $B_i$ is complete to $B_{i+1}$ in $G'$ and so $G'$ is a hyperhole $C_5[s_1, s_2, s_3, s_4, s_5]$, where
    \begin{itemize}
        \item $s_1 = (1 - \beta)\omega - x\omega - 2\cdot\ceil{\frac{\omega}{4}}$,
        \item $s_2 = \beta\omega$,
        \item $s_3 = (1 - \beta)\omega  - x\omega - y\omega - \ceil{\frac{\omega}{4}}$,
        \item $s_4 = (1 - \beta + \delta)\omega - 2\cdot\ceil{\frac{\omega}{4}}$,
        \item $s_5 = (\beta - \delta)\omega - y\omega$.
    \end{itemize}

     We first show that $\omega(G') \le \ceil{\frac{5}{4}\omega} - x\omega - y\omega - 2\cdot\ceil{\frac{\omega}{4}}$. It is equivalent to show that for every $j \in [5]$, 
    \[s_j + s_{j+1} \le \ceil{\frac{5}{4}\omega} - x\omega - y\omega - 2\cdot\ceil{\frac{\omega}{4}}\]
    where the indices are taken modulo $5$ (so that $s_6 = s_1$). For $j = 1,2,4,5$, the above inequality is equivalent to the following respective inequality
    \[y\omega \le \ceil{\frac{\omega}{4}}\, , \, \omega \le \ceil{\frac{5}{4}\omega} - \ceil{\frac{\omega}{4}}\, ,\, x\omega \le \ceil{\frac{\omega}{4}}\, , \, \omega \le \ceil{\frac{5}{4}\omega} + \delta\omega,\]
    and in each case the inequality holds clearly. When $j = 3$, the inequality yields $2\omega \le 2\beta\omega + \ceil{\frac{\omega}{4}} + \ceil{\frac{5}{4}\omega} - \delta \omega$. Since $(\beta - \delta)\omega = |B_{i+2}| > \ceil{\frac{\omega}{4}}$ and $\beta \omega = |B_{i-1}| > \ceil{\frac{\omega}{4}}$, this inequality also holds for $j = 3$. Hence, the inequality holds for all $j\in[5]$.  Now we show that $\ceil{\frac{|V(G')|}{\alpha(G')}} \le \ceil{\frac{5}{4}\omega} - x\omega - y\omega - 2\cdot\ceil{\frac{\omega}{4}}$. Observing that $\alpha(G')=2$, it suffices to show
    \[|V(G')| \le 2\cdot\ceil{\frac{5}{4}\omega} - 2x\omega - 2y\omega - 4\cdot\ceil{\frac{\omega}{4}}.\]
    Since $|V(G')| = 3\omega - \beta\omega - 2x\omega - 2y\omega - 5\cdot\ceil{\frac{\omega}{4}}$, we obtain $3\omega  \le \beta\omega + \ceil{\frac{\omega}{4}} + 2\cdot\ceil{\frac{5}{4}\omega}$ and this inequality is satisfied because $\beta \omega \ge \ceil{\frac{\omega}{4}} + 1$. By Lemma \ref{lem:hyperhole}, $\chi(G') = \max\{\omega(G'), \ceil{\frac{|V(G')|}{\alpha(G')}}\} \le \ceil{\frac{5}{4}\omega} - x\omega - y\omega - 2\cdot\ceil{\frac{\omega}{4}}$.

    \vskip 0.3cm
    \noindent
    {\bf (ii) $\omega(K) \le \frac{1}{4}\omega$}
    \vskip 0.2cm

    Recall that $z \in B_i \setminus (N(K) \cup N(J))$ be a minimal vertex in $B_i \setminus (N(K) \cup N(J))$ and $|N_{B_{i+1}}(z)| > \ceil{\frac{\omega}{4}}$. As $z$ is minimal in $B_i$, $N_{B_{i+1}}(z)$ is complete to $B_i$. Since $N_{B_i}(K) \cup N_{B_{i+1}}(K) \cup N_{B_{i+1}}(z)$ is a clique, $|N_{B_i}(K)| < \frac{\omega}{2}$.
    Let $x \omega = \min\{|B_i \setminus N(K)|, \ceil{\frac{1}{4}\omega}\}$. We then color some vertices of $G$ using $\alpha \omega + x\omega + \ceil{\frac{1}{4}\omega}$ colors as follows. Recall that $|B_i\setminus N(K)| = (\alpha + \gamma)\omega$.

        \begin{itemize}
        \item Assign the colors $a_1, \ldots, a_{\alpha \omega}$ to the vertices of $K$, $\alpha \omega$ vertices in $B_{i+1} \setminus (N(K) \cup N(J))$, and $\alpha \omega$ vertices in $B_{i-2}$.
        \item Assign the colors $a_1, \ldots, a_{\omega(J)}$ to the vertices of $J$.
        \item Assign the colors $b_1, \ldots, b_{x\omega}$ to the vertices of $B_i \setminus N(K)$, $x \omega$ vertices in $N_{B_{i+1}}(K)$, and $x \omega$ vertices in $B_{i-2}$.
        \item Assign colors $c_1, \ldots, c_{\alpha \omega}$ to $\alpha \omega$ vertices in $B_i$ excluding the vertices that have already been colored with $b_1, \ldots, b_{x\omega}$, and $\alpha \omega$ vertices in $B_{i-2}$. 
        \item Assign colors $d_1, \ldots, d_{\ceil{\frac{\omega}{4}} - \alpha \omega}$ to $\ceil{\frac{\omega}{4}}-\alpha\omega$ vertices in $B_{i+1} \setminus (N(K) \cup N(J))$, and $\ceil{\frac{\omega}{4}}-\alpha \omega$ vertices in $B_{i-1}$.

    \end{itemize}

    \begin{figure}
    \centering
    \begin{tikzpicture}[scale=0.7]
        \tikzstyle{v}=[circle, draw, solid, fill=black, inner sep=0pt, minimum width=3pt]
        \tikzstyle{w}=[circle, draw, solid, fill=black, inner sep=0pt, minimum width=3pt, font=\tiny]

        \draw[rounded corners, thick] (-9, 0)--(-7.5, 0)--(-7.5, -6)--(-10.5, -6)--(-10.5, 0)--(-8.9, 0);
        \draw[thin, blue] (-10.5, -1.2)--(-7.5, -1.2);
        \draw[thin, blue] (-10.5, -2.7)--(-7.5, -2.7);
        \draw[thin, blue] (-10.5, -4.2)--(-7.5, -4.2);
        
        \node [label={\small $|B_{i-2}|=(1-\beta)\omega$}] (v) at (-9, 0){};
        \node [label=$B_{i-2}$] (v) at (-9, 0.7){};
        \node [label={\small $a_1, \ldots, a_{\alpha\omega}$}] (w) at (-9, -1.2){};
        \node [label={\small $b_1, \ldots, b_{x\omega}$}] (w) at (-9, -2.6){};
        \node [label={\small $c_1, \ldots, c_{\alpha\omega}$}] (w) at (-9, -4.1){};

        \fill[red] (-9.2, -5.3) rectangle (-8.8, -4.9);
        
        \draw[rounded corners, thick] (-4.5, 0)--(-2.7, 0)--(-2.7, -6)--(-6.3, -6)--(-6.3, 0)--(-4.4, 0);
        \draw[thin, blue] (-6.3, -1.5)--(-2.7, -1.5);
        
        \node [label={\small $|B_{i-1}| = \beta\omega$}] (v) at (-4.5, 0){};
        \node [label=$B_{i-1}$] (v) at (-4.5, 0.7){};
        \node [label={\small $d_1, \ldots, d_{\ceil{\frac{\omega}{4}}-\alpha\omega}$}] (w) at (-4.5, -1.5){};

        \fill[red] (-4.7, -3.95) rectangle (-4.3, -3.55);
        
        \draw[rounded corners, thick] (0, 0)--(1.5, 0)--(1.5, -6)--(-1.5, -6)--(-1.5, 0)--(0.1, 0);
        \draw[very thick] (-1.5, -3)--(1.5, -3);
        \draw[thin, blue] (-1.5, -2)--(1.5, -2);
        \draw[thin, blue] (-1.5, -5)--(1.5, -5);

        \node [label={\small $|B_{i}|=(1- \beta)\omega$}] (v) at (0, 0){};
        \node [label=$B_{i}$] (v) at (0, 0.7){};
        \node [label={\small $c_1, \ldots, c_{\alpha\omega}$}] (w) at (0, -1.5){};
        \node [label={\small $b_1, \ldots, b_{x\omega}$}] (w) at (0, -4.5){};

        \fill[red] (-0.2, -2.7) rectangle (0.2, -2.3);
        \fill[red] (-0.2, -5.7) rectangle (0.2, -5.3);

        \draw[rounded corners, thick] (4.5, 0)--(2.7, 0)--(2.7, -6)--(6.3, -6)--(6.3, 0)--(4.4, 0);
        \draw[thin, blue] (2.7, -1.5)--(6.3, -1.5);
        \draw[very thick] (2.7, -3)--(6.3, -3);
        \draw[thin, blue] (2.7, -4)--(6.3, -4);
        \draw[thin, blue] (2.7, -5.2)--(6.3, -5.2);
        
        \node [label={\small $|B_{i+1}|=(1-\beta + \delta)\omega$}] (v) at (4.5, 0){};
        \node [label=$B_{i+1}$] (v) at (4.5, 0.7){};
        \node [label={\small $b_1, \ldots, b_{x\omega}$}] (w) at (4.5, -1.3){};
        \node [label={\small $a_1, \ldots, a_{\alpha\omega}$}] (w) at (4.5, -4){};
        \node [label={\small $d_1, \ldots, d_{\ceil{\frac{\omega}{4}}-\alpha\omega}$}] (w) at (4.5, -5.3){};

        \fill[red] (4.3, -2.45) rectangle (4.7, -2.05);
        \fill[red] (4.3, -5.8) rectangle (4.7, -5.4);

        \draw[rounded corners, thick] (9, 0)--(7.5, 0)--(7.5, -6)--(10.5, -6)--(10.5, 0)--(8.9, 0);

        \node [label={\small $|B_{i+2}| = (\beta-\delta)\omega$}] (v) at (9.2, 0){};
        \node [label=$B_{i+2}$] (v) at (9, 0.7){};

        \fill[red] (8.8, -3.2) rectangle (9.2, -2.8);

        \draw[rounded corners, thick] (-2.5, 3.5)--(-0.5, 3.5)--(-0.5, 2)--(-4.5, 2)--(-4.5, 3.5)--(-2.4, 3.5);
        
        \node [label=$K$] (v) at (-2.5, 3.5){};
        \node [label={\small $a_1, \ldots, a_{\alpha\omega}$}] (w) at (-2.5, 2.25){};

        \draw[rounded corners, thick] (2.5, 3.5)--(0.5, 3.5)--(0.5, 2)--(4.5, 2)--(4.5, 3.5)--(2.4, 3.5);

        \node [label=$J$] (v) at (2.5, 3.5){};
        \node [label={\small $a_1, \ldots, a_{\omega(J)}$}] (w) at (2.5, 2.2){};

        \draw[very thick] (-7.5, -2.6)--(-6.3, -2.6);
        \draw[very thick] (-7.5, -3.4)--(-6.3, -3.4);

        \draw[very thick] (-2.7, -2.6)--(-1.5, -2.6);
        \draw[very thick] (-2.7, -3.4)--(-1.5, -3.4);

        \draw[very thick] (7.5, -2.6)--(6.3, -2.6);
        \draw[very thick] (7.5, -3.4)--(6.3, -3.4);

        \draw[rounded corners, very thick] (-9.3, -6)--(-9.3, -6.8)--(9.3, -6.8)--(9.3, -6);
        \draw[rounded corners, very thick] (-8.7, -6)--(-8.7, -6.4)--(8.7, -6.4)--(8.7, -6);
    \end{tikzpicture}
    \caption{The coloring strategy for Case (ii). For each $j \in \{i, i+1\}$, the black line in $B_j$ distinguishes $N_{B_j}(K)$ from $B_j \setminus N(K)$. Two black line between $B_j$ and $B_{j+1}$ mean that the two sets are complete to each other. The remaining graph consists of the red-boxed parts.}
\end{figure}

    The coloring strategy is illustrated in the above figure. The remaining graph $G'$ consists of the red-boxed parts in the figure, and it suffices to show that $G'$ is $\left(\ceil{\frac{5}{4}\omega} - \alpha \omega - z\omega - \ceil{\frac{1}{4}\omega}\right)$-colorable. Since $|B_{i+1} \setminus (N(K) \cup N(J))|> \ceil{\frac{\omega}{4}}$, $|N_{B_{i+1}}(K)| > \ceil{\frac{\omega}{4}}$ and $x\omega \le \ceil{\frac{\omega}{4}}$, the each red-boxed parts in $B_{i+1}$ have non-negative size. Since $|B_{i-1}| > \ceil{\frac{\omega}{4}}$, the red-boxed parts in $B_{i-1}$ has non-negative size. Note that the red-boxed part in $B_{i-2}$ and $B_i$ could be empty. Then we only need to consider the worst case that $B_i$ is complete to $B_{i+1}$ in $G'$ and so $G'$ is a hyperhole $C_5[s_1, s_2, s_3, s_4, s_5]$, where
    \begin{itemize}
        \item $s_1 = (1 - \beta)\omega - 2\alpha \omega - x\omega$,
        \item $s_2 = \beta\omega  - \ceil{\frac{1}{4}\omega} + \alpha \omega$,
        \item $s_3 = (1 - \beta)\omega - \alpha \omega - x\omega$,
        \item $s_4 = (1 - \beta + \delta)\omega - x\omega - \ceil{\frac{1}{4}\omega}$, 
        \item $s_5 = (\beta - \delta)\omega$.
    \end{itemize}

    We first show that $\omega(G') \le \ceil{\frac{5}{4}\omega} - \alpha \omega - x\omega - \ceil{\frac{1}{4}\omega}$. It suffices to show that for every $j \in [5]$, 
    \[s_j + s_{j+1} \le \ceil{\frac{5}{4}\omega} - \alpha \omega - x\omega - \ceil{\frac{1}{4}\omega}\]
    where the indices are taken modulo $5$ (so that $s_6 = s_1$). From the $j = 1,2,4,5$, the above inequality is equivalent to the following respective inequality
    \[\omega \le \ceil{\frac{5}{4}\omega}\, , \, \alpha \omega \le \ceil{\frac{1}{4}\omega}\, ,\, \alpha \omega \le \ceil{\frac{1}{4}\omega}\, , \, 0 \le \alpha \omega + \delta\omega,\]
    and in each case the inequality holds. When $j = 3$, the inequality yields $2\omega \le 2\beta\omega + x\omega + \ceil{\frac{5}{4}\omega} - \delta \omega$. Since $\beta\omega - \delta\omega = |B_{i+2}| > \ceil{\frac{1}{4}\omega}$ and $\beta\omega + x\omega = \omega - |N_{B_i}(K)| > \frac{\omega}{2}$, this inequality also holds for $j = 3$. Hence, the inequality holds for all $j\in[5]$.  Now we show that $\ceil{\frac{|V(G')|}{\alpha(G')}} \le \ceil{\frac{5}{4}\omega} - \alpha \omega - x\omega - \ceil{\frac{1}{4}\omega}$. Observing that $\alpha(G')=2$, it suffices to show
    \[|V(G')|  \le 2\cdot\ceil{\frac{5}{4}\omega} - 2\alpha \omega - 2x\omega - 2\cdot\ceil{\frac{1}{4}\omega}.\]
    Since $|V(G')| = 3\omega - \beta\omega - 2\alpha \omega - 3x\omega - 2\cdot\ceil{\frac{1}{4}\omega}$, the inequality is equivalent to $3\omega \le \beta\omega + x\omega + 2\cdot\ceil{\frac{5}{4}\omega}$, which holds as $\beta\omega + x\omega>\frac{1}{2}\omega$.
    By Lemma \ref{lem:hyperhole}, $\chi(G') = \max\{\omega(G'), \ceil{\frac{|V(G')|}{\alpha(G')}}\} \le \ceil{\frac{5}{4}\omega} - \alpha \omega - x\omega - \ceil{\frac{1}{4}\omega}$.
\end{proof}

We then consider cases depending on the number of components of $A_3(i)$ and the number of neighbors of each component in $B_i$.

\begin{lemma}[One component with one neighbor in $B_i$]
    If $A_3(i)$ is connected and $|N(A_3(i)) \cap B_i| =1$, then $\chi(G) \le \ceil{\frac{5}{4}\omega}$.
\end{lemma}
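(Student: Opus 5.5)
The plan is to reduce to Lemma \ref{lem:color four complete pairs} via Lemmas \ref{lem: Y is empty} and \ref{lem:reduce to four complete pairs}. Set $K=A_3(i)$, so $J=\emptyset$. Since $|N(A_3(i))\cap B_i|=1$ and $q_i$ is complete to both neighbouring blocks, Lemma \ref{lem:single vertex in A_3(i)} (1) forces $N_{B_i}(K)=\{q_i\}$.

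\textbf{Step 1 (reduction).} By Lemma \ref{lem:blowup-threeneighbor} (3) and symmetry, assume $B_i\setminus N(K)$ is anticomplete to $N_{B_{i+1}}(K)$. Lemma \ref{lem: Y is empty} applies directly: every vertex of $(B_{i-1}\cup B_{i+1})\setminus N(A_3(i))$ has a neighbour in $B_i\setminus N(A_3(i))$. If in addition $B_{i-1}\setminus N(K)\neq\emptyset$, then the hypotheses of Lemma \ref{lem:reduce to four complete pairs} hold with $J_1=K$ and $J_2=\emptyset$. That lemma gives that $B_j$ is complete to $B_{j+1}$ for all $j\neq i$, and Lemma \ref{lem:color four complete pairs} then yields $\chi(G)\le\ceil{\frac54\omega}$.

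\textbf{Step 2 (degenerate case).} It remains to treat $B_{i-1}\subseteq N(K)$, which is the case not covered by Lemma \ref{lem:reduce to four complete pairs}. Here I would argue directly. By Lemma \ref{lem:single vertex covers A_3(i)} there is $u\in K$ with $N_H(u)=N_H(K)\supseteq B_{i-1}$. Lemma \ref{lem:blowup-threeneighbor} (2) then gives a vertex of $B_{i+1}$ not adjacent to $u$. By Lemma \ref{lem:single vertex in A_3(i)} (2)--(4), the vertex $q_i$ is complete to $B_{i-1}$, and every vertex of $B_{i-1}$ is dominated by one that is complete to $B_{i-2}\cup B_i$. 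I then try to show $B_{i-1}$ is complete to $B_{i-2}\cup B_i$ via the $P_4$-structure condition and Lemma \ref{lem:generalized P4-structure}. The first thing to try is a vertex $b\in B_i\setminus\{q_i\}$ with a non-neighbour in $B_{i-1}$: since $u$ is complete to $B_{i-1}$, combining $b$'s neighbour in $B_{i-1}$, $u$, and a non-neighbour of $u$ in $B_{i+1}$ should produce an induced $C_4$ or a $P_4$-structure. For $B_{i+1}$, the conclusion of Lemma \ref{lem: Y is empty} together with Lemma \ref{lem:component of A_3(i)} gives completeness to $B_{i+2}$, as in the proof of Lemma \ref{lem:reduce to four complete pairs}. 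Lemma \ref{lem:B_{i-2} is complete to B_{i+2}} then makes $B_{i-2}$ complete to $B_{i+2}$, and we again land in Lemma \ref{lem:color four complete pairs}.

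\textbf{Fallback for Step 2.} If completeness of $B_{i-1}$ to $B_i$ fails in this degenerate case, I would exhibit a good stable set instead. The candidate is $\{q_i,q_{i+2}\}$ or $\{q_{i-2},q_{i+1},u\}$, checked against the possible maximum cliques $K\cup N_H(K)$ and $B_j\cup B_{j+1}$, in the same style as the clique analysis in the proof of Lemma \ref{lem:color four complete pairs}. The small vertex bounds from Lemma \ref{lem:small vertex argument for simplicial vertices in A_3(i-2)} are used to rule out cliques that avoid the stable set.

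I expect Step 2 to be the main obstacle: showing the vertex set of $B_{i-1}$ behaves well when $B_{i-1}$ is entirely covered by $N(K)$, since the generic lemmas do not apply there. Step 1 should be routine bookkeeping.
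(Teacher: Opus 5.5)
Your Step 1 is exactly the paper's opening reduction: with $N_{B_i}(K)=\{q_i\}$ and $B_i\setminus N(K)$ anticomplete to $N_{B_{i+1}}(K)$, Lemmas \ref{lem: Y is empty}, \ref{lem:reduce to four complete pairs} and \ref{lem:color four complete pairs} settle the case $B_{i-1}\setminus N(K)\neq\emptyset$.

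\textbf{The gap is Step 2}, the case $B_{i-1}\subseteq N(K)$, which is where the paper does all the real work. You plan to show $B_{i-1}$ is complete to $B_{i-2}\cup B_i$ using $C_4$-freeness, the $P_4$-structure condition and Lemma \ref{lem:generalized P4-structure}. The configuration you propose gives no contradiction. Take $b\in B_i\setminus\{q_i\}$, $a\in N_{B_{i-1}}(b)$, $e\in N_{B_{i+1}}(b)$ (so $e\notin N(u)$) and $d\in N_{B_{i+1}}(u)$. Then $a-u-d-e-b-a$ is an induced $C_5$, and a non-neighbour of $b$ in $B_{i-1}$ is adjacent to both $a$ and $u$. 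In fact the completeness claim follows neither from the local structure nor from the absence of small vertices and clique cutsets.

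\textbf{A concrete example.} Let each of $B_{i-2}$, $B_{i-1}=C_1\cup C_2$, $B_i=\{q_i\}\cup Z$, $K$, $B_{i+1}=D\cup E$, $B_{i+2}$ be a clique, with sizes $|B_{i-2}|=4$, $|C_1|=3$, $|C_2|=5$, $|Z|=6$, $|K|=3$, $|D|=|E|=|B_{i+2}|=4$. The edges between parts are:
\begin{itemize}
\item $B_{i-2}$ is complete to $B_{i-1}\cup B_{i+2}$;
\item $C_1$ is complete to $\{q_i\}\cup K$;
\item $C_2$ is complete to $B_i\cup K$;
\item $K$ is complete to $\{q_i\}\cup D$;
\item $D$ is complete to $\{q_i\}\cup B_{i+2}$;
\item $E$ is complete to $B_i\cup B_{i+2}$;
\item there are no other edges between parts.
\end{itemize}
Then:
\begin{itemize}
\item this is a nice blowup plus $K=A_3(i)$;
\item it is $(P_7,C_4,C_6,C_7)$-free: all holes have length $5$ and induced paths have at most six vertices;
\item $\omega=12$, every vertex has degree at least $15=\lceil\frac54\omega\rceil$, and there is no clique cutset;
\item $N_{B_i}(K)=\{q_i\}$, $B_{i-1}\subseteq N(K)$, and $Z$ is anticomplete to $N_{B_{i+1}}(K)=D$.
\end{itemize}
Yet $C_1$ is anticomplete to $Z$, so $B_{i-1}$ is not complete to $B_i$. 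The graph fails to be basic only because $\{x,y\}$ with $x\in C_2$, $y\in B_{i+2}$ is a good stable set.

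Neither of your fallback sets works here. $\{q_i,q_{i+2}\}$ misses $B_{i-2}\cup B_{i-1}$, and $\{q_{i-2},q_{i+1},u\}$ misses $C_2\cup\{q_i\}\cup Z$; both cliques have size $12$. So you cannot finish Step 2 by first proving completeness. You must use the absence of good subgraphs directly, and your fallback as stated is not such an argument.

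\textbf{What the paper does.} It shows the degenerate case cannot occur in a basic graph by exhibiting an explicit $(4,5)$-good subgraph. It splits on the shape of $K$, which is a blowup of $P_3$ by Lemma \ref{lem:structure of only A3}:
\begin{itemize}
\item If $K$ has two non-adjacent vertices, it takes two non-adjacent minimal simplicial vertices and representatives $a,b,a',b',c$.
\item If $K$ is a clique with $|K|=1$, the vertices of $N_{B_{i+1}}(K)$ are small.
\item If $K$ is a clique with $|K|\ge 2$, it uses $O=\{w,v\}$.
\end{itemize}
In each subcase it checks the clique condition family by family. The tools are the size bounds $|B_{i\pm 2}|,|N_{B_{i+1}}(z)|>\lceil\omega/4\rceil$ and $P_7$ arguments, for instance that a vertex of $B_{i-1}$ with a non-neighbour in $B_i\setminus\{q_i\}$ is complete to $B_{i-2}$.
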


\begin{proof}
    For convenience, let $K=A_3(i)$.
    By Lemma \ref{lem:single vertex covers A_3(i)}, let $w \in K$ be a universal vertex in $K$ such that $N_H(w)=N_H(K)$. By Lemma \ref{lem:blowup-threeneighbor}, we may assume that $B_i\setminus N(K)$ is anticomplete to $N_{B_{i+1}}(K)$. So $N_{B_{i+1}}(K) = \{q_i\}$. 
    By Lemma \ref{lem:B_i complete to two large cliques in B_{i-1}}, $N_{B_{i+1}}(K)$ is complete to $B_{i+2}$. Let $X_{i+1} = B_{i+1}\setminus N(K)$. By Lemma \ref{lem: Y is empty}, each vertex in $X_{i+1}$ has a neighbor in $B_i \setminus \{q_i\}$. By Lemma \ref{lem:B_i complete to two large cliques in B_{i-1}}, $X_{i+1}$ is complete to $B_{i+2}$. It follows that $B_{i+1}$ is complete to $B_{i+2}$.

    Suppose that $B_{i-1}\setminus N(K) \neq \emptyset$ and let $X_{i-1} = B_{i-1}\setminus N(K)$. By Lemma \ref{lem: Y is empty}, each vertex in $X_{i-1}$ has a neighbor in $B_i \setminus \{q_i\}$. By Lemmas \ref{lem:reduce to four complete pair} and \ref{lem:color four complete pairs}, we have $\chi(G) \le \ceil{\frac{5}{4}\omega}$. So we may assume that $X_{i-1} = \emptyset$.

    Let $z \in B_i \setminus \{q_i\}$ be a minimal vertex in $B_i \setminus \{q_i\}$.
    By Lemma \ref{lem:smv on w}, $|N_{B_{i+1}}(z)| > \ceil{\frac{\omega}{4}}$. 
    By Lemma \ref{lem:smv on B_{i+2}+B_{i-2}}, $|B_j|>\ceil{\frac{\omega}{4}}$ for $j\in \{i-2,i+2\}$.
    This implies that $\omega\ge 5$.
    By Lemma \ref{lem:structure of only A3} (3), $K$ is a blowup of $P_3$ and so $\alpha(K)\le 2$.
    
    \medskip
    \noindent
    \textbf{Case 1: $K$ contains two non-adjacent vertices.}

    \medskip
    Since $K$ is a $P_4$-free, let $u$ and $v$ be two simplicial vertices in $K$. Let $u' \in N_G[u] \cap K$ be a vertex minimizing $|N_H(u')|$ and $v' \in N_G[v] \cap K$ be a vertex minimizing $|N_H(v')|$. By Lemma \ref{lem:two non-adjacent simplicail verteices}, $u'v'\notin E(G)$.
    By Lemma \ref{lem:small vertex argument for simplicial vertices in A_3(i-2)}, we have $|N_{B_j}(u')|, |N_{B_j}(v')| > \ceil{\frac{\omega}{4}}$ for each $j \in \{i-1, i+1\}$. If $N_{B_{i+1}}(u')$ and $N_{B_{i+1}}(v')$ are disjoint, then $B_{i+1} \cup B_{i+2}$ is a clique of size larger than $\omega$. So $N_{B_{i+1}}(u')$ and $N_{B_{i+1}}(v')$ are comparable. By Lemma \ref{lem:nonedge in A_3(i)}, $N_{B_{i-1}}(u')$ and $N_{B_{i-1}}(v')$ are disjoint. Since $N_{B_{i+1}}(K)$ and $N_{B_{i+1}}(z)$ are disjoint, we have $N_{B_{i-1}}(u') \cup N_{B_{i-1}}(v') \subseteq N_{B_{i-1}}(z)$ and it follows from the choice of $z$ that $N_{B_{i-1}}(u') \cup N_{B_{i-1}}(v')$ is complete to $B_i$. By Lemma \ref{lem:B_i complete to two large cliques in B_{i-1}}, $N_{B_{i-1}}(u') \cup N_{B_{i-1}}(v')$ is complete to $B_{i-2}$.

    Note that $B_{i-1} \cup \{q_i\}$ and $B_{i+1} \cup \{q_i\}$ are not maximum clique, since $B_{i-1} \cup \{q_i,w\}$ and $B_{i+1}\cup B_{i+2}$ are cliques. By the definition of $z$, there is a vertex in $N_{B_{i+1}}(z)$ complete to $B_i\cup B_{i+2}$. We set this vertex to be $q_{i+1}$. Let $a \in N_{B_{i-1}}(u')$ and let $b \in N_{B_{i-1}}(v')$. Let $a' \in K$ be a vertex non-adjacent to $a$ maximizing $|N_H(a')|$ and let $b' \in K$ be a vertex non-adjacent to $b$ maximizing $|N_H(b')|$. 
    If $a'u'\in E(G)$, then $a'a\in E(G)$ by the choice of $u$, a contradiction. So $a'u'\notin E(G)$ and thus $a'v'\in E(G)$ by $\alpha(K)\le 2$. By symmetry, $b'u'\in E(G)$. Let $c \in N_{B_{i+1}}(u') \cap N_{B_{i+1}}(v')$. Then $c$ is complete to $K$. Let $q_j' \in B_j$ be the second largest vertex in $B_j$ for $j \in \{i-2, i+2\}$. By Lemma \ref{lem:smv on B_{i+2}+B_{i-2}}, we have $q'_{i+2}$ is complete to $B_{i-2}$ and $q'_{i-2}$ is complete to $B_{i+2}$. 
    Let $q'_i,q''_i$ be the second and third largest vertices in $B_i$, respectively.

    We set $F = (F_1, F_2, F_3, F_4, F_5, O)$ as follows:
    \begin{itemize}
        \item $F_1 = \{q_{i-2}, q_{i-2}'\}$,
        \item $F_2 = \{a, b\}$,
        \item $F_3 = \{q_i, q_i', q_i''\}$,
        \item $F_4 = \{c, q_{i+1}\}$,
        \item $F_5 = \{q_{i+2}, q_{i+2}'\}$,
        \item $O = \{w, a', b'\}$.
    \end{itemize}

    We can assign color $1$ to $q_{i-2}$ and $q_i$, color $2$ to $w$, $q_i'$ and $q_{i+2}$, color $3$ to $a$, $a'$ and $q_{i+2}'$, color $4$ to $b$, $b'$ and $q_{i+1}$, and color $5$ to $q_{i-2}'$, $q_i''$ and $c$. So $F$ is 5-colorable. Next, we show that every maximal clique of size $\omega-j$ with $j\in \{0,1,2,3\}$ of $G$ contains at least $4-j$ vertices from $F$. Let $M$ be a maximal clique of $G$. We consider the following cases depending on all possible locations of $M$. If $M$ is contained in $B_{i+1} \cup B_{i+2}$ or $B_{i-2} \cup B_{i+2}$, $M$ contains four vertices of $F$.

    \medskip
    \noindent
    \textbf{(i) $M\subseteq K \cup N_{B_i}(K) \cup N_{B_j}(K)$ for every $j \in \{i-1, i+1\}$}

    Note that $w$ and $q_i$ are universal vertex in $K\cup N_H(K)$.
    
    We first assume that $M$ is contained in $K \cup N_{B_{i-1}}(K) \cup \{q_i\}$. By definition of $u'$ and $v'$, $M$ contains at least one of $a$ and $b$.  If $a, b \in M$, then we are done. So we may assume that $M$ contains exactly one of $a$ and $b$. By symmetry, we may assume that $a \in M$ and this implies that $M \cap N_{B_{i-1}}(v') = \emptyset$. By definition of $b'$, we have $b' \in M$ and so $M$ contains $\{a, b', q_i, w\}$.
    
    So we may assume that $M \subseteq K \cup N_{B_{i+1}}(K) \cup \{q_i\}$. Note that $q_i, w, c$ are universal in $K \cup N_{B_{i+1}}(K)$. Suppose that $a', b' \notin M$. By definition of $a'$ and $b'$, $M \cap K$ is complete to $\{a, b\}$ and it follows that $M \cap K$ is complete to $N_{B_{i-1}}(u') \cup N_{B_{i-1}}(v')$. As $(M \cap K) \cup N_{B_{i-1}}(u') \cup N_{B_{i-1}}(v') \cup \{q_i\}$ is a clique and $|N_{B_{i-1}}(u') \cup N_{B_{i-1}}(v')| > 2 \cdot \ceil{\frac{\omega}{4}}$, we obtain $|M \cap K| < \omega - 2 \cdot \ceil{\frac{\omega}{4}} - 1$. So, $|M \cap N_{B_{i+1}}(K)| > 2 \cdot \ceil{\frac{\omega}{4}}$ and $B_{i+1} \cup B_{i+2}$ is a clique of size larger than $\omega$. Hence, $M \cap \{a', b'\} \neq \emptyset$ and we are done.

    \medskip
    \noindent
    \textbf{(ii) $M\subseteq B_{i-2} \cup B_{i-1}$}.

    Since $a,b,q_{i-2}$ are universal in $B_{i-2}\cup B_{i-1}$, $a,b,q_{i-2}\in M$. Suppose that $|M|=\omega$ and $q'_{i-2}\notin M$. Then $M\cap B_{i-2}=\{q_{i-2}\}$ and so $|M\cap B_{i-1}|=\omega-1$. Then $(M\cap B_{i-1})\cup \{q_i,w\}$ is a clique of size $\omega+1$, a contradiction.
    
    \medskip
    \noindent
    \textbf{(iii) $M\subseteq B_{i-1} \cup B_i$}

    Note that $a, b, q_i$ are universal in $B_{i-1} \cup B_i$. If $M = B_{i-1} \cup \{q_i\}$, then $M$ is not a maximum clique and $a, b, q_i \in M$. So we may assume that $M \cap (B_i \setminus \{q_i\}) \neq \emptyset$ and this implies that $q_i' \in M$. Hence, $M$ contains $\{a, b, q_i, q_i'\}$.

    \medskip
    \noindent
    \textbf{(iv) $M\subseteq B_i \cup B_{i+1}$}

    Note that $q_i$ and $q_{i+1}$ are universal in $B_i \cup B_{i+1}$. If $M = B_{i+1} \cup \{q_i\}$, then $M$ is not a maximum clique and $c, q_i, q_{i+1} \in M$. So we may assume that $M \cap (B_i \setminus \{q_i\}) \neq \emptyset$. Then $q_i' \in M$ and it follows that $M \cap N_{B_{i+1}}(K) = \emptyset$. Thus, we can assume that $|M| = \omega$. If $q_i'' \notin M$, then $|M \cap B_{i+1}| = \omega - 2$ and it follows that $B_{i+1} \cup B_{i+2}$ is a clique of size larger than $\omega$. So, $q_i'' \in M$ and the claim holds.

    \medskip
    Therefore, $F$ is a $(4,5)$-good subgraph. From now on, we may assume that $K$ is a clique. 

    \medskip
    \noindent
    \textbf{Case 2:} $K$ is a clique.
    
    \medskip
    Suppose that $|K| = 1$. For any $x \in N_{B_{i+1}}(K)$, we have $d_G(x) \le (\omega-1) + 2 \le \ceil{\frac{5}{4}\omega}-1$ since $\omega\ge 5$. So we may assume that $|K| \ge 2$. By Lemma \ref{lem:maximalityofA3}, we have $|B_i| \ge 3$. Let $w' \in K$ be a vertex minimizing $|N_H(w')|$. By Lemma \ref{lem:single vertex in A_3(i)} (4), there exists a vertex in $N_{B_{i-1}}(w')$ that is complete to $B_{i-2} \cup B_i$. 
    We set this vertex to be $q_{i-1}$. Moreover, $q_{i-1}$ is complete to $K$.
    
    Let $a \in N_{B_{i+1}}(w')$. Then $a$ is complete to $K$. Let $v \in K$ be the second largest vertex in $K$ (It is possible that $v = w'$). Note that $q_{i+1} \in N_{B_{i+1}}(z)$. Let $q_j' \in B_j$ be the second largest vertex in $B_j$ for $j \in \{i-2, i+2\}$. By Lemma \ref{lem:smv on B_{i+2}+B_{i-2}}, we have $q'_{i+2}$ is complete to $B_{i-2}$. Let $q'_i,q''_i$ be the second and third largest vertices in $B_i$, respectively. 
    Since $|B_{i-2}| \ge 3$, the third largest vertex $q_{i-2}''$ in $B_{i-2}$ exists. By Lemma \ref{lem:smv on B_{i+2}+B_{i-2}}, we have $q_{i-2}'$ and $q_{i-2}''$ are complete to $B_{i+2}$.

    We set $F = (F_1, F_2, F_3, F_4, F_5, O)$ as follows:
    \begin{itemize}
        \item $F_1 = \{q_{i-2}, q_{i-2}', q_{i-2}''\}$,
        \item $F_2 = \{q_{i-1}\}$,
        \item $F_3 = \{q_i, q_i', q_i''\}$,
        \item $F_4 = \{a, q_{i+1}\}$,
        \item $F_5 = \{q_{i+2}, q_{i+2}'\}$,
        \item $O = \{w, v\}$.
    \end{itemize}

    We can assign color $1$ to $q_{i-2}$ and $q_i$, color $2$ to $w$, $q_i'$ and $q_{i+2}$, color $3$ to $q_{i-2}'$, $v$ and $q_{i+1}$, color $4$ to $q_{i-1}$ and $q_{i+2}'$, and color $5$ to $q_{i-2}''$, $q_i''$ and $a$. So $F$ is 5-colorable. Next, we show that every maximal clique of size $\omega-j$ with $j\in \{0,1,2,3\}$ of $G$ contains at least $4-j$ vertices from $F$. Let $M$ be a maximal clique of $G$. We consider the following cases depending on all possible locations of $M$. If $M$ is one of $B_{i+1} \cup B_{i+2}$ and $B_{i-2} \cup B_{i+2}$, $M$ contains four vertices of $F$.

    \medskip
    \noindent
    \textbf{(i) $M\subseteq K \cup N_{B_i}(K) \cup N_{B_j}(K)$ for every $j \in \{i-1, i+1\}$}
    
    We first assume that $M$ is contained in $K \cup (N_{B_{i-1}}(K)) \cup \{q_i\}$. Note that $q_{i-1}$, $q_i$ and $w$ are universal in $K \cup (N_{B_{i-1}}(K)) \cup \{q_i\}$. So $q_{i-1}, q_i, w \in M$ and we can assume that $|M|=\omega$. Suppose that $v \notin M$. Then there exists a vertex $x \in B_{i-1} \cap M$ that is non-adjacent to $v$. If $x$ has a non-neighbor $x' \in B_i \setminus \{q_i\}$, then $v - w - x - q_{i-2} - q_{i+2} - q_{i+1} - x'$ is an induced $P_7$. By Lemma \ref{lem:single vertex in A_3(i)} (4),
    $B_{i-1} \cap M$ is complete to $B_i$. Since $|B_{i-1} \cap M| = \omega - 2$ and $|B_i|\ge 3$, $(B_{i-1} \cap M) \cup B_i$ is a clique of size larger than $\omega$. So $v \in M$ and $M$ contains $\{q_{i-1}, q_i, w, v\}$. 
    
    We then assume that $M \subseteq K \cup (N_{B_{i+1}}(K)) \cup \{q_i\}$. Note that $q_i, w, a$ are universal in $K \cup N_{B_{i+1}}(K)$. Suppose that $v \notin M$ and so we can assume that $|M| = \omega$. It follows that $|B_{i+1} \cap M| = \omega - 2$. Then $B_{i+1} \cup B_{i+2}$ is a clique of size larger than $\omega$. Therefore, $v \in M$ and we are done.

    \medskip
    \noindent
    \textbf{(ii) $M\subseteq B_{i-2} \cup B_{i-1}$}

    Note that $q_{i-2}$ and $q_{i-1}$ are universal in $B_{i-2} \cup B_{i-1}$. Define $q$ to be $q_{i-2}''$ if $|M|=\omega$ and to be $q_{i-2}'$ if $|M|=\omega-1$. We show that $q\in M$, which completes the proof. Suppose for a contradiction that $q\notin M$. Then there exists a vertex $x \in B_{i-1} \cap M$ such that $xq \notin E(G)$. If $x$ has non-neighbor $x' \in B_i \setminus \{q_i\}$, then $q - q_{i-2} - x - w - N_{B_{i+1}}(K) - q_{i+1} - x'$ is an induced $P_7$. So $x$ is complete to $B_i$. For any $y \in B_{i-1} \cap M$ with $yq \in E(G)$, we have $N_{B_{i-2}\cup B_i}(x) \subseteq N_{B_{i-2}\cup B_i}(y)$ and so $B_{i-1} \cap M$ is complete to $B_i$. This implies that $(B_{i-1} \cup M) \cup B_i$ is a clique of size larger than $\omega$. This completes the claim.

    \medskip
    \noindent
    \textbf{(iii) $M\subseteq B_{i-1} \cup B_i$}

    Note that $q_{i-1}$ and $q_i$ are universal in $B_{i-1} \cup B_i$. 
    Since $B_{i-1}\cup \{q_i,w\}$ is a clique,
    $M \cap (B_i \setminus \{q_i\}) \neq \emptyset$ and this implies that $q_i' \in M$. Suppose that $q_i'' \notin M$. Then there exists a vertex $x \in M \cap B_{i-1}$ such that $xq_i'' \notin E(G)$. If $x$ has a non-neighbor $x' \in B_{i-2}$, then $x' - q_{i-2} - x - w - N_{B_{i+1}}(K) - q_{i+1} - q_i''$ is an induced $P_7$. So, $x$ is complete to $B_{i-2}$. For any $y \in B_{i-1} \cap M$ with $yq_i'' \in E(G)$, we have $N_{B_{i-2}\cup B_i}(x) \subseteq N_{B_{i-2}\cup B_i}(y)$ and so $B_{i-1} \cap M$ is complete to $B_{i-2}$. This implies that $(B_{i-1} \cup M) \cup B_{i-2}$ is a clique of size larger than $\omega$. Therefore, $q_i'' \in M$ and so $M$ contains $\{q_{i-1}, q_i, q_i',q_i''\}$.

    \medskip
    \noindent
    \textbf{(iv) $M\subseteq B_i \cup B_{i+1}$}

    Note that $q_i$ and $q_{i+1}$ are universal in $B_i \cup B_{i+1}$. If $M = B_{i+1} \cup \{q_i\}$, then $M$ is not a maximum clique and $a, q_i, q_{i+1} \in M$. So we may assume that $M \cap (B_i \setminus \{q_i\}) \neq \emptyset$. Then $q_i' \in M$ and it follows that $M \cap N_{B_{i+1}}(K) = \emptyset$. We can assume that $|M| = \omega$. If $q_i'' \notin M$, then $|M \cap B_{i+1}| = \omega - 2$ and it follows that $B_{i+1} \cup B_{i+2}$ is a clique of size larger than $\omega$. So $q_i'' \in M$ and the claim holds.

    \medskip
    Therefore, $F$ is a $(4,5)$-good subgraph. 
\end{proof}

Next we deal with the case that $A_3(i)$ is connected with at least two neighbors in $B_i$. The lemma below also works for the case that $A_3(i)$ has two components with each component having at least two neighbors in $B_i$.

\begin{lemma}[One or two components with two neighbors in $B_i$]
        If each component of $A_3(i)$ has at least two neighbors in $B_i$, then $G$ has a $(4,5)$-good subgraph.
\end{lemma}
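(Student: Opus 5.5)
The plan is to reduce to the four-complete-pairs situation of Lemma~\ref{lem:color four complete pairs} whenever possible, and otherwise to build an explicit $(4,5)$-good subgraph $F=(F_1,\dots,F_5,O)$ in the style of Lemmas~\ref{lem:two disjoint A3} and \ref{lem:two non-dis A3}. Let $K$ (and possibly $J$) be the components of $A_3(i)$. By Lemma~\ref{lem:blowup-threeneighbor} (3) and symmetry, assume that $B_i\setminus N(K)$ is anticomplete to $N_{B_{i+1}}(K)$. Using Lemma~\ref{lem:no two directed edge pointing to the same B_i} and the clique-size arguments behind Lemma~\ref{lem:structure of only A3}, the same orientation should then hold for $J$. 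By Lemma~\ref{lem:single vertex covers A_3(i)}, pick universal vertices $w_K$ and $w_J$ with $N_H(K)=N_H(w_K)$. By Lemma~\ref{lem:edge in $A_3(i)$}, every vertex of $K$ has the same neighbourhood $N_{B_i}(K)$, and by hypothesis $|N_{B_i}(K)|\ge 2$, so $q_i$ and a second vertex $q_i'$ of $N_{B_i}(K)$ are available. Lemma~\ref{lem:single vertex in A_3(i)} (2) makes both of them complete to $N_{B_{i\pm1}}(K)$.

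The next step gathers the size bounds. Lemma~\ref{lem:small vertex argument for simplicial vertices in A_3(i-2)}, applied to a minimal simplicial vertex, gives $|N_{B_{i\pm1}}(K)|>\lceil\omega/4\rceil$. Lemma~\ref{lem:smv on w} applied to a minimal $z\in B_i\setminus N(A_3(i))$ gives $|N_{B_{i+1}}(z)|>\lceil\omega/4\rceil$, and Lemma~\ref{lem:smv on B_{i+2}+B_{i-2}} gives large cliques in $B_{i\pm2}$ complete across. Lemma~\ref{lem:B_i complete to two large cliques in B_{i-1}} makes $N_{B_{i+1}}(K)\cup N_{B_{i+1}}(z)$ complete to $B_{i+2}$. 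These bounds yield the usual contradictions ``this set is a clique of size $>\omega$''.

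Next I choose $F$: $F_1=\{q_{i-2},q'_{i-2}\}$; $F_2=\{q_{i-1},a\}$ with $a\in N_{B_{i-1}}(v_K'')$ for a minimal $v_K''\in K$; $F_3=\{q_i,q_i'\}$, plus a maximal vertex of $B_i\setminus N(K)$; $F_4=\{q_{i+1},c\}$ with $c\in N_{B_{i+1}}(v_K'')$; $F_5=\{q_{i+2},q'_{i+2}\}$; and $O=\{w_K,v_K'\}$ (together with $w_J$ if $J\ne\emptyset$). A 5-colouring is then found by hand, as in Figures~\ref{fig:good subgraph1} and \ref{fig:good subgraph2}, exploiting $\alpha(F)\le 2$-type structure. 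Finally I check, case by case over the locations $K\cup N_H(K)$ and $B_j\cup B_{j+1}$, that every maximal clique of size $\omega-j$ meets $F$ in at least $4-j$ vertices. In each failing case, the missing vertices force a clique larger than $\omega$, using the bounds above and Lemma~\ref{lem:maximalityofA3} (which gives $|B_i\setminus N(K)|\ge\omega(K)$).

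If instead both $B_{i\pm1}\setminus N(A_3(i))$ are dominated by $B_i\setminus N(A_3(i))$, I would apply Lemma~\ref{lem:reduce to four complete pair} and colour directly via Lemma~\ref{lem:color four complete pairs}. Strictly, the statement asks for a good subgraph, so I would aim to make the $F$-construction cover this case as well.

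I expect the main obstacle to be the clique-count verification for cliques inside $K\cup N_{B_{i-1}}(K)\cup N_{B_i}(K)$ and inside $B_{i-1}\cup B_i$ when $K$ is not a clique, since $K$ is only a blowup of $P_3$. I would first try adding a second ``far'' vertex $a'\in K$ non-adjacent to $a$, as in Case~1 of the preceding lemma. With two components, a further obstacle is that $N_{B_i}(K)$ and $N_{B_i}(J)$ are nested rather than equal. Here I would choose $q_i,q_i'$ inside the smaller of the two sets, so that they are universal for both components.
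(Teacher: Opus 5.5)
Your outline leaves one case unhandled, and the $F$ you propose does not work as stated.

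\textbf{The unjustified orientation claim.} You assert that $J$ has the same orientation as $K$, i.e.\ that $B_i\setminus N(J)$ is anticomplete to $N_{B_{i+1}}(J)$. Nothing you cite gives this.
\begin{itemize}
\item Lemma~\ref{lem:no two directed edge pointing to the same B_i} concerns non-adjacent pairs on the two sides $A'_3(j-1)$, $A'_3(j+1)$ of some $B_j$. Since $A_3=A_3(i)$, one of those sides is always just the clique $B_{j\pm1}$, so the lemma is vacuous here.
\item Clique counting gives only the two disjoint large cliques $N_{B_{i+1}}(K)$ and $N_{B_{i+1}}(J)$, both complete to $B_{i+2}$. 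That is more than $3\ceil{\frac{\omega}{4}}$ vertices, which is no contradiction.
\end{itemize}
The paper does not rule out the opposite orientation; it treats it as a separate Case~2. There it shows that $N_{B_{i+1}}(K)$ and $N_{B_{i+1}}(J)$ are disjoint, that $N_{B_{i-1}}(J)\subsetneq N_{B_{i-1}}(K)$, and that $N_{B_i}(K)\subseteq N_{B_i}(J)$. It also shows that no vertex of $B_{i-1}\setminus N(K)$ has a neighbour in $B_i\setminus N(K)$. It then builds a different $F$, with a second vertex of $J$ in $O$ when $N_{B_i}(K)=N_{B_i}(J)$.

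Your fallback to Lemmas~\ref{lem:reduce to four complete pair} and \ref{lem:color four complete pairs} does not close this gap. Those lemmas also presuppose the common orientation, and they yield a colouring rather than the good subgraph the statement asks for.

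\textbf{Your $F$ is not $5$-colourable in general.} By Lemma~\ref{lem:single vertex in A_3(i)}~(2), $N_{B_i}(K)$ is complete to $N_{B_{i-1}}(K)$.
\begin{itemize}
\item Suppose $K$ is a clique with $|K|\ge 2$, and $q_{i-1}\neq a$ is also complete to $K$. Then $\{q_{i-1},a,q_i,q_i',w_K,v_K'\}$ is a $K_6$ inside $F$.
\item If instead $a=q_{i-1}$, then $F$ has only three vertices in $B_{i-2}\cup B_{i-1}$. It cannot then meet a maximum clique there in four vertices.
\end{itemize}

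\textbf{How the paper avoids this.} It takes $a$ maximal in $N_{B_{i-1}}(w_0)$ for a minimal $w_0$. By Lemma~\ref{lem:single vertex in A_3(i)}~(4) this $a$ is complete to $B_{i-2}\cup B_i$, so $a$ itself plays the role of $q_{i-1}$. The freed budget goes to:
\begin{itemize}
\item a third vertex of $B_{i-2}$ ($q_{i-2}''$, or $e$);
\item a vertex $d\in Y$, where $Y$ is the part of $B_{i-1}\setminus N(K)$ anticomplete to $B_i\setminus N(K)$;
\item a second vertex of $K$ only when $K$ is not a clique, namely $v_k\in L_3$.
\end{itemize}
The vertex $v_k$ can be coloured because $a$ and $c$ are chosen as neighbours of $w_0\in L_1$. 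By Lemma~\ref{lem:nonedge in A_3(i)}, $v_k$ then misses one of $a,c$ and can reuse that vertex's colour. In the same-orientation case, the second vertex of $F_2$ is $a'\in N(J)\setminus N(K)$, which can share a colour with $w$.

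You also omit the small-vertex argument showing that $q_i'$ is complete to all of $B_{i+1}$. The clique checks for $B_i\cup B_{i+1}$ and $B_{i+1}\cup B_{i+2}$ depend on it.
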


\begin{proof}
    Let $K$ be a non-empty component of $A_3(i)$ and $J$ be a possible second component of $A_3(i)$. 
    By Lemma \ref{lem:structure of only A3}, $K$ can be partitioned into three disjoint cliques $L_1,L_2,L_3$ such that $L_2$ is complete to $L_1\cup L_3$ and $L_1$ is anticomplete to $L_3$.
    We may assume that $L_1$ is non-empty if $L_3$ is non-empty. 
    By Lemma \ref{lem:structure of only A3},  $L_3=\emptyset$ if $J\neq \emptyset$. 
    Let $w\in K$ be universal in $K$ with maximum neighborhood in $H$ (such a vertex exists by Lemma \ref{lem:single vertex covers A_3(i)}). 
    By Lemma \ref{lem:small vertex argument for simplicial vertices in A_3(i-2)}, $|N_{B_{i-1}}(K)|>\ceil{\frac{\omega}{4}}$ and so $|N_{B_{i-1}}(K)|\ge 2$. It follows that $\{w\}\cup N_{B_{i-1}}(K)\cup N_{B_i}(K)$ is a clique and so $\omega\ge 5$.  By Lemma \ref{lem:smv on B_{i+2}+B_{i-2}}, we have $|B_{i-2}|,|B_{i+2}|>\ceil{\frac{\omega}{4}}$ and so $|B_{i-2}|,|B_{i+2}|\ge 3$. 

    By Lemma \ref{lem:blowup-basicproperty}, we can order vertices in $B_j$ as $b_j^1, b_j^2, \ldots, b_j^{|B_i|} = q_j$ so that 
    $N_{B_{j-1}\cup B_{j+1}}(b_j^1)\subseteq N_{B_{j-1}\cup B_{j+1}}(b_j^2)\subseteq \cdots \subseteq N_{B_{j-1}\cup B_{j+1}}(b_j^{|B_j|}) = B_{j-1} \cup B_{j+1}$ for every $j \in [5]$. Let $q_j' = b_j^{|B_j|-1}$ and $q_j'' = b_j^{|B_j|-2}$ for each $j$. By Lemma \ref{lem:P4-freenessofA3}, $K$ and $J$ are $P_4$-free. By Lemma \ref{lem:blowup-threeneighbor}, we may assume that $B_i \setminus N(w)$ is anticomplete to $N_{B_{i+1}}(w)$ and $q_i \in N_{B_i}(w)$. By Lemma \ref{lem:blowup-threeneighbor}, $N_{B_{i-1}\cup B_{i+1}}(a)\subset N_{B_{i-1}\cup B_{i+1}}(b)$ for any $a\in B_i\setminus N(K)$ and any $b\in N_{B_i}(K)$. This implies that $q'_i\in N_{B_i}(K)$. By symmetry, $q_i,q'_i\in N_{B_i}(J)$ if $J\neq \emptyset$.
    Suppose that $q'_i$ has a non-neighbor in $B_{i+1}\setminus N(K)$. Let $a$ be a minimal vertex over all non-neighbors of $q'_i$ in $B_{i+1}\setminus N(K)$. By Lemma \ref{lem:single vertex in A_3(i)} (2), $a$ is minimal over $B_{i+1}$ and $a$ is anticomplete to $J$.
    It follows that 
    $N_{B_i}(a)=\{q_i\}$ and $N(a)\setminus B_i$ is a clique. 
    So $d(a)\le \omega$, a contradiction. Therefore, $q'_i$ is complete to $B_{i+1}\setminus N(K)$ and so $q'_i$ complete to $B_{i+1}$ by Lemma \ref{lem:single vertex in A_3(i)} (2).

    Let $z \in J$ be universal in $J$ with maximum neighborhood in $H$ (such a vertex exists by  Lemma \ref{lem:single vertex covers A_3(i)}) if $J\neq \emptyset$. By Lemma \ref{lem:small vertex argument for simplicial vertices in A_3(i-2)}, $|N_{B_{i+1}}(K)|, |N_{B_{i+1}}(J)|>\ceil{\frac{\omega}{4}}$. By Lemma \ref{lem:comparable vertices in A_3(i)}, $N_{B_i}(K)\subseteq N_{B_i}(J)$ or $N_{B_i}(J)\subseteq N_{B_i}(K)$. 
    Let $w_0\in L_1\cup L_2$ be a vertex with minimal neighborhood on $H$ and $z_0\in J$ be a vertex with minimal neighborhood in $H$. Let $a\in N_{B_{i-1}}(w_0)$ with maximum neighborhood in $H$ and let $a'\in N_{B_{i-1}}(z_0)$ with maximum neighborhood in $H$. 
    By Lemma \ref{lem:single vertex in A_3(i)} (4), $a$ and $a'$ are complete to $B_{i-2}\cup B_i$. If $J\neq \emptyset$, then $K$ and $J$ are cliques by Lemma \ref{lem:structure of only A3} and so $w_0$ and $z_0$ are minimum and $a$ is complete to $K$ and $a'$ is complete to $J$.

    \medskip
    Next we deduce some structural information about $J$ and its neighbors on $H$ when $J\neq \emptyset$. We partition $B_{i-1}\setminus N(K)$ into two subsets $X$ and $Y$ where every vertex in $X$ has a neighbor in $B_i \setminus N(K)$ and $Y$ is anticomplete to $B_i \setminus N(K)$. By Lemma \ref{lem:component of A_3(i)}, $X$ is complete to $B_{i-2}\cup B_i$. Let $x \in B_i \setminus (N(K) \cup N(J))$ be a minimal vertex in $B_i \setminus (N(K) \cup N(J))$.
    
    \vspace{0.3cm}
    \noindent
    \textbf{Case 1: $J\neq \emptyset$ and $B_i \setminus N(J)$ is anticomplete to $N_{B_{i+1}}(J)$}

    \medskip
    Then $N_{B_{i+1}}(x)$ is disjoint from both $N_{B_{i+1}}(J)$ and $N_{B_{i+1}}(K)$. By Lemma \ref{lem:smv on w}, $|N_{B_{i+1}}(x)|>\ceil{\frac{\omega}{4}}$. 
    Assume that $N_{B_{i+1}}(J)$ and $N_{B_{i+1}}(K)$ are disjoint. By Lemma \ref{lem:B_i complete to two large cliques in B_{i-1}}, $N_{B_{i+1}}(K) \cup N_{B_{i+1}}(J) \cup N_{B_{i+1}}(x)$ is complete to $B_{i+2}$ and so $N_{B_{i+1}}(K) \cup N_{B_{i+1}}(J) \cup N_{B_{i+1}}(x) \cup B_{i+2}$ is a clique of size larger than $\omega$. Hence, we may assume that $N_{B_{i+1}}(J)$ and $N_{B_{i+1}}(K)$ are comparable. 
    By symmetry, we may assume that $N_{B_{i+1}}(J) \subseteq N_{B_{i+1}}(K)$. 
    By Lemma \ref{lem:single vertex in A_3(i)} (2), $N_{B_i}(K)= N_{B_i}(J)$.

    \vspace{0.3cm}
    \noindent
    \textbf{Case 2: $J\neq \emptyset$  and $B_i \setminus N(J)$ is anticomplete to $N_{B_{i-1}}(J)$.}

    \medskip
    By symmetry, we may assume that $N_{B_{i+1}}(K)$ and $N_{B_{i+1}}(J)$ are disjoint. So, by Lemma \ref{lem:nonedge in A_3(i)}, $N_{B_{i-1}}(K)$ and $N_{B_{i-1}}(J)$ are comparable. By Lemma \ref{lem:single vertex in A_3(i)} (4), there exists a vertex in $N_{B_{i-1}}(K)$ complete to $B_{i-2}\cup B_i$. By assumption of $J$, $N_{B_{i-1}}(J) \subsetneq N_{B_{i-1}}(K)$. By Lemma \ref{lem:single vertex in A_3(i)} (2), $N_{B_i}(K) \subseteq N_{B_i}(J)$.

    Since $N_{B_{i-1}}(x)$ and $N_{B_{i-1}}(J)$ are disjoint, $N_{B_{i+1}}(x)$ and $N_{B_{i+1}}(J)$ are comparable and so $q_{i+1} \in N_{B_{i+1}}(J)$. Let $b \in B_i \setminus N(K)$ be a maximal vertex in $B_i \setminus N(K)$. If $N_{B_i}(K) = N_{B_i}(J)$, then $b \in B_i \setminus N(J)$. If $N_{B_i}(K) \neq N_{B_i}(J)$, then $b \in N_{B_i}(J) \setminus N_{B_i}(K)$ because $N_{B_i}(J) \setminus N_{B_i}(K)$ is complete to $N_{B_{i-1}}(J)$.

    We show that $X=\emptyset$. If not, let $x'$ be a vertex in $B_i\setminus N(J)$ with maximum neighborhood in $H$. Since $N_{B_{i+1}}(x')$ and $N_{B_{i+1}}(K)$ are disjoint, $N_{B_{i-1}}(x')$ and $N_{B_{i-1}}(K)$ are comparable. Since $X\subseteq N_{B_{i-1}}(x')$, $N_{B_{i-1}}(K)\subseteq N_{B_{i-1}}(x')$ and so $N_{B_{i-1}}(J)\subseteq N_{B_{i-1}}(x')$. This contradicts the assumption of this case. 
    
    Next, we will use the structure information in Cases 1 and 2 to define good subgraphs. 
    Since $|B_{i-2}|,|B_{i+2}|\ge 3$, $q'_j$ and $q''_j$ exist for $j\in \{i-2,i+2\}$. We set $F = (F_1, F_2, F_3, F_4, F_5, O)$ as follows:
    \begin{itemize}
        \item $F_5 = \{q_{i+2}, q_{i+2}'\}$,
        \item $F_4 = \{q_{i+1}, c\}$ where $c$ is a vertex in $N_{B_{i+1}}(z_0)$ if {\bf Case 1} and  $c\in N_{B_{i+1}}(w_0)$ is a vertex with maximum neighborhood in $H$ if {\bf Case 2} or $J=\emptyset$,
        \item $F_3 = \{q_i, q_i', b\}$ where $b \in B_i \setminus N(K)$ is a maximal vertex in $B_i \setminus N(K)$,
        \item $F_2$ is defined as follows. Let $d\in Y$ be a vertex with maximum neighborhood in $B_{i-2}\cup B_i$ if $Y\neq \emptyset$. 
        \begin{itemize}
            \item If $J=\emptyset$ or {\bf Case 2}, then 
                \[
                F_2=
                    \begin{cases}
                    \{a\}, & Y=\emptyset \\
                    \{a,d\}, & Y\neq \emptyset
                    \end{cases}
                \]
            \item If \textbf{Case 1}, then $F_2 = \{a, a'\}$. 
        \end{itemize}
        
        \item $F_1$ is defined as follows. Let $e \in B_{i-2}\setminus N(d)$ with maximal neighborhood in $B_{i-2}\setminus N(d)$ if $Y\neq \emptyset$. 
        \begin{itemize}
            \item If $J=\emptyset$ or {\bf Case 2}, then 
                \[
                F_1=
                    \begin{cases}
                    \{q_{i-2}, q_{i-2}', q_{i-2}''\}, & Y=\emptyset \\
                    \{q_{i-2}, q_{i-2}', e\}, & Y\neq \emptyset
                    \end{cases}
                \]
            \item If \textbf{Case 1}, then $F_1 = \{q_{i-2}, q_{i-2}'\}$. 
        \end{itemize}
        \item 
        \begin{itemize}
            \item $O = \{w,v_k\}$ if $J = \emptyset$, where $v_k$ is a vertex in $L_3$ with maximal neighborhood in $H$. Note that $v_k$ may not exist. 
            \item $O = \{w, z\}$ if \textbf{Case 1} or \textbf{Case 2} with $N_{B_i}(K) \neq N_{B_i}(J)$
            \item $O = \{w, z, z'\}$ if \textbf{Case 2} with $N_{B_i}(K) = N_{B_i}(J)$ where $z' \in J$ such that $z'$ is second maximal in $H$.
        \end{itemize}
    \end{itemize}

    We now show that $F$ is 5-colorable. We first consider \textbf{Case 1}. In this case, we can assign color $1$ to $q_{i-2},q_i$, color $2$ to $a',q_{i+1},w$, color $3$ to $q_{i-2}'$, $b$, $c$, color $4$ to $a$, $q_{i+2}$, $z$, and color $5$ to $q_i'$ and $q_{i+2}'$. 
    In \textbf{Case 2}, we assign color $1$ to $q_{i-2},q_i$, color $2$ to $a'$, $q_{i+1}$, $w$, $q_{i-2}''$, $d,e$ and color $3$ to $z'$, $q_{i-2}'$, $b,c$, color $4$ to $a$, $q_{i+2},z$, and color $5$ to $q_i'$ and $q_{i+2}'$. 
    If $J=\emptyset$, we assign color $1$ to $q_{i-2},q_i$, color $2$ to $q_{i-2}',q_{i+1},w$, color $3$ to $q_{i-2}'',b,c,d,e$, color $4$ to $a,q_{i+2}$, and color $5$ to $q_i',q_{i+2}'$. If $v_k$ does not exist, the coloring is proper. So assume that $v_k$ exists.
    By the definition of $L_1$ and $L_3$, $w_0\in L_1$ and so either $a$ or $c$ is not adjacent to $v_k$. 
    We assign to $v_k$ the color of a non-neighbor of $v_k$ in $\{a,c\}$.
    Therefore, $F$ is 5-colorable. 

    Next, we show that every maximal clique of size $\omega-j$ with $j\in \{0,1,2,3\}$ of $G$ contains at least $4-j$ vertices from $F$. Let $M$ be a maximal clique of $G$. We consider the following cases depending on all possible locations of $M$.

    \vskip 0.3cm
    \noindent
    {\bf (i) $M\subseteq K \cup N_{B_i}(K) \cup N_{B_j}(K)$ for every $j \in \{i-1, i+1\}$}
    \vskip 0.2cm

    Since $\{w\} \cup N_{B_i}(w)$ dominates $K \cup N_H(K)$, every maximal clique in $K \cup N_{B_i}(K) \cup N_{B_j}(K)$ contains $\{w, q_i, q_i'\}$. We first consider \textbf{Case 2} or $J = \emptyset$. 
    If $J\neq \emptyset$, every maximum clique in $K \cup N_{B_i}(K) \cup N_{B_j}(K)$ contains $\{a, c\} \cap B_j$ by choice of $a$ and $c$. 
    So we may assume that $J=\emptyset$. 
    
    We now consider \textbf{Case 1}. Suppose that $c$ has a neighbor $w_1\in K$ and a non-neighbor $w_2\in K$. Then $w_2-w_1-c-q_{i+1}-(B_{i}\setminus N(K))-a'-q_{i-2}$ is an induced $P_7$. So $c$ is complete to $K$. Since $a$ is complete to $L_1 \cup L_2$ by choice of $a$, every maximum clique in $K \cup N_{B_{i-1}}(K) \cup N_{B_i}(K)$ contains exactly one of $\{a, v_k\}$. This completes the proof of this case.

    \vskip 0.3cm
    \noindent
    {\bf (ii) $M\subseteq J \cup N_{B_i}(J) \cup N_{B_j}(J)$ for every $j \in \{i-1, i+1\}$}
    \vskip 0.2cm

    In this case, $J\neq \emptyset$. 
    We first consider the case that $N_{B_i}(K) \neq N_{B_i}(J)$. Then, we have $N_{B_i}(K) \subsetneq N_{B_i}(J)$ and since $N_{B_i}(J) \setminus N_{B_i}(K)$ is complete to $N_{B_{i-1}}(J)$, we have $b \in N_{B_i}(J) \setminus N_{B_i}(K)$. So, since $\{z\} \cup N_{B_i}(z)$ dominates $J \cup N_H(J)$, every maximal clique in $J \cup N_{B_i}(J) \cup N_{B_j}(J)$ contains $\{z, q_i, q_i', b\}$.

    We now consider the case that $N_{B_i}(K) = N_{B_i}(J)$. Note that $q_i, q_i', z \in M$. In \textbf{Case 1}, every maximum clique in $J \cup N_{B_i}(J) \cup N_{B_j}(J)$ contains $\{a', c\} \cap B_j$ by choice of $a'$ and $c$. In \textbf{Case 2}, suppose that $z'\notin M$. 
    Then we have $|M|=\omega$ and $|M\setminus J|=\omega-1$. 
    Hence, either $(M\setminus J)\cup N_{B_{i+1}}(K)$ or $(M\setminus J)\cup \{a,w\}$ is a clique of size larger than $\omega$ and so $z' \in M$. This completes the proof of this case.

    \vskip 0.3cm
    \noindent
    {\bf (iii) $B_{i-2}\cup B_{i-1}$}
    \vskip 0.2cm

    Since $q_{i-2}$ and $a$ are universal in $B_{i-2} \cup B_{i-1}$, $q_{i-2},a\in M$. 
    So we may assume that $|M|\ge \omega-1$. We first consider \textbf{Case 1}. Since $a'$ is universal in $B_{i-2} \cup B_{i-1}$, $a' \in M$. Next, we show that $q_{i-2}'\in M$, which will complete the proof for this case. Suppose for a contradiction that $q_{i-2}'\notin M$. 
    Suppose first that $B_{i-1}\setminus (N(K)\cup N(J))\neq \emptyset$. 
    Let $v\in B_{i-1}\setminus (N(K)\cup N(J))$ be a vertex with minimal neighborhood in $B_{i-2}\cup B_i$. 
    By Lemma \ref{lem:single vertex in A_3(i)} (4), $v$ is a vertex in $B_{i-1}$ with minimal neighborhood in $B_{i-2}\cup B_i$. 
    It follows that $N(v)\setminus B_{i-2}$ is a clique. 
    Thus, $q_{i-2}'$ is complete to $B_{i-1}$ and so $q_{i-2}'\in M$, a contradiction.
    So $B_{i-1}\setminus (N(K)\cup N(J))=\emptyset$. 
    By Lemma \ref{lem:single vertex in A_3(i)} (2), $q_i,q_i'$ are complete to $B_{i-1}$. 
    Then $|M\cap B_{i-2}|=1$ and $|M\cap B_{i-1}|=\omega-1$. 
    So $(M\setminus \{q_{i-2}\})\cup \{q_i,q_i'\}$ is a clique of size larger than $\omega$.  

    Next we consider {\bf Case 2} or $J=\emptyset$.
    We first consider the case $Y = \emptyset$. 
    Define $q$ to be $q_{i-2}''$ if $|M|=\omega$ and to be $q_{i-2}'$ if $|M|=\omega-1$. We show that $q\in M$, which completes the proof. Suppose for a contradiction that $q\notin M$. Then there exists a vertex $u \in M\cap B_{i-1}$ such that $uq \notin E(G)$.
    By Lemma \ref{lem:component of A_3(i)}, $u \in N_{B_{i-1}}(K)$. If $u$ has non-neighbor $v$ in $B_i \setminus N(K)$, then $q - q_{i-2} - u - w - N_{B_{i+1}}(K) - q_{i+1} - v$ is an induced $P_7$. So $u$ is complete to $B_i \setminus N(K)$. For any $x \in M \cap N_{B_{i-1}}(K)$ with $xq\in E(G)$, we have $N_{B_{i-2}\cup B_i}(u) \subseteq N_{B_{i-2}\cup B_i}(x)$ and so $M \cap N_{B_{i-1}}(K)$ is complete to $B_i$. By Lemma \ref{lem:component of A_3(i)} and $Y=\emptyset$, $M\cap (B_{i-1}\setminus N(K))$ is complete to $B_i$. 
    This implies that  $(M\cap B_{i-1})\cup \{q_i,q'_i,b\}$ is a clique of size $\omega+1$.

    We now consider the case $Y \neq \emptyset$. Let $y \in Y$ be minimal in $Y$. Since $Y$ is anticomplete to $B_i \setminus N(K)$, $y$ is also minimal in $B_{i-1}$. So $N(y)\setminus B_{i-2}$ is a clique and thus $|N_{B_{i-2}}(y)| > \ceil{\frac{1}{4}\omega}$. In particular, $|N_{B_{i-2}}(y)|\ge 2$ and so $q'_{i-2}\in N_{B_{i-2}}(y)$. Therefore, $q_{i-2}, q_{i-2}', a\in M$. If $M \cap Y \neq \emptyset$, then $d \in M$, since $d$ is maximal in $Y$. Assume  that $M \cap Y = \emptyset$. If $M \cap B_{i-2} \subseteq N(d)$, then $M \cup \{d\}$ is a clique, a contradiction. So $(M \cap B_{i-2})\setminus N(d) \neq \emptyset$. By the choice of $e$, we have $e\in M$. This completes the proof of this case.

    \vskip 0.3cm
    \noindent
    {\bf (iv) $B_{i-1}\cup B_i$}
    \vskip 0.2cm

    We first consider \textbf{Case 1}. Since $a, a', q_i$ are universal in $B_{i-1} \cup B_i$, $a, a', q_i\in M$. 
    Suppose first that $B_{i-1}\setminus (N(K)\cup N(J))\neq \emptyset$. 
    Let $v\in B_{i-1}\setminus (N(K)\cup N(J))$ be a vertex with minimal neighborhood in $B_{i-2}\cup B_{i}$. 
    By Lemma \ref{lem:single vertex in A_3(i)} (4), $v$ is a vertex in $B_{i-1}$ with minimal neighborhood in $B_{i-2}\cup B_{i}$. 
    It follows that $N(v)\setminus B_i$ is a clique. 
    Thus, $q_i'$ is complete to $B_{i-1}$ and so $q_i'\in M$. 
    So $B_{i-1}\setminus (N(K)\cup N(J))= \emptyset$. 
    By Lemma \ref{lem:single vertex in A_3(i)} (2), $q_i'$ is complete to $N_{B_{i-1}}(K)\cup N_{B_{i-1}}(J)=B_{i-1}$ and so $q_i'\in M$. 

    We then consider \textbf{Case 2} or $J=\emptyset$.
    We first consider the case $Y = \emptyset$.  Assume that $X \neq \emptyset$. By Lemma \ref{lem:single vertex in A_3(i)} (4) and Lemma \ref{lem:component of A_3(i)}, $M=B_{i-1} \cup B_i$ contains $\{a, q_i, q_i', b\}$. Next we assume that $X = \emptyset$. Note that $a, q_i, q_i'$ are universal in $B_{i-1}\cup B_i$ and so $a, q_i, q_i'\in M$. 
    If $M \cap (B_i \setminus N(K)) = \emptyset$, then $M \cup \{w\}$ is a clique, a contradiction. So $M \cap (B_i \setminus N(K)) \neq \emptyset$ and this implies that $b \in M$.  We now consider the case $Y \neq \emptyset$. Let $y \in Y$ be minimal in $Y$. By $Y$ is anticomplete to $B_i \setminus N(K)$ and $P_4$-structure, $y$ is also minimal in $B_{i-1}$. So $N(y)\setminus B_i$ is a clique and so  $|N(y) \cap N_{B_i}(K)| > \ceil{\frac{1}{4}\omega}$. 
    So $q_i, q_i'$ are universal in $B_{i-1} \cup B_i$. By the choice of $a$, we have $a,q_i,q'_i\in M$. We assume that $X \neq \emptyset$. Recall that $N_{B_{i-1}}(K)$ is complete to $B_i\setminus N(K)$ and so $X \cup N_{B_{i-1}}(K) \cup B_i$ is a clique. If $M \cap Y = \emptyset$, then $M$ contains $\{a, q_i, q_i', b\}$. If $M \cap Y \neq \emptyset$, then $d \in M$ by the choice of $d$. In this case, every maximal clique in $B_{i-1} \cup B_i$ contains one of $\{a, q_i, q_i', b\}$ and $\{a, q_i, q_i', d\}$. 
    So $X=\emptyset$. Since $\{w\}\cup N_{B_{i-1}}(w)\cup N_{B_i}(w)$ is a clique, $M\cap Y\neq \emptyset$ or $M\cap (B_i\setminus N(K))\neq \emptyset$.  If $M\cap (B_i\setminus N(K))\neq \emptyset$, then $b\in M$ by the choice of $b$. If $M\cap Y\neq \emptyset$, $d\in M$ by the choice of $d$.

    \vskip 0.3cm
    \noindent
    {\bf (v) $M\subseteq B_i\cup B_{i+1}$}
    \vskip 0.2cm

    Note that $q_i$ and $q_{i+1}$ are universal in $B_i \cup B_{i+1}$ and so $q_i,q_{i+1}\in M$. Since $B_i\setminus N(K)$ is anticomplete to $N_{B_{i+1}}(K)$, it follows that $M$ contains no vertex from $B_i\setminus N(K)$ or $N_{B_{i+1}}(K)$. 
    Since $q'_i$ is complete to $B_{i+1}$, $q'_i\in M$. If $M$ contains no vertices from $N_{B_{i+1}}(K)$, then $M$ contains a vertex in $B_{i}\setminus N(K)$ (for otherwise $\{c\}\cup M$ is a clique, which contradicts the maximality of $M$). By the choice of $b$, $b\in M$.  If $M$ contains no vertices from $B_i\setminus N(K)$, then $c\in M$ since $c$ is universal in $(B_i\cup B_{i+1})\setminus (B_{i}\setminus N(K))$. So $|M\cap F|=4$. 

    \vskip 0.3cm
    \noindent
    {\bf (vi) $M\subseteq B_{i+1}\cup B_{i+2}$}
    \vskip 0.2cm

    Since $q_{i+1},q_{i+2},c$ are universal in $B_{i+1}\cup B_{i+2}$, $q_{i+1},q_{i+2},c\in M$.  
    Suppose that $|M|=\omega$ and $q'_{i+2}\notin M$.
    It follows that  $M \cap B_{i+2} = \{q_{i+2}\}$ and $M = B_{i+1} \cup \{q_{i+2}\}$. So $|B_{i+1}|=\omega-1$. 
    Since $B_{i+1}$ is complete to $\{q_i, q_i'\}$, $|B_{i+1}\cup \{q_i,q'_i\}|\ge \omega+1$, a contradiction.

    \vskip 0.3cm
    \noindent
    {\bf (vii) $M\subseteq B_{i-2}\cup B_{i+2}$}
    \vskip 0.2cm 
    By Lemma \ref{lem:smv on B_{i+2}+B_{i-2}}, $M$ contains $\{q_{i-2}, q_{i-2}', q_{i+2}, q_{i+2}'\}$.

    \medskip
    Therefore, $F$ is a $(4, 5)$-good subgraph. 
\end{proof}

Finally, we deal with the case that $A_3(i)$ has two components and one of the components has only one neighbor in $B_i$.

\begin{theorem}[Two components with one having one neighbor in $B_i$]
    Let $K$ and $J$ be two non-empty clique components of $A_3(i)$. If $|N_{B_i}(K)| = 1$, then $\chi(G)\le \ceil{\frac{5}{4} \omega}$.
\end{theorem}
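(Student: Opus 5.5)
We are in the third outcome of Lemma~\ref{lem:structure of only A3}, with $G=H\cup K\cup J$, $K,J$ clique components of $A_3(i)$, and $N_{B_i}(K)=\{q_i\}$ (by Lemma~\ref{lem:single vertex in A_3(i)} (1), $q_i$ is a neighbor of every vertex of $A_3(i)$). The plan is to first reduce to the four-complete-pairs situation whenever possible, and otherwise build an explicit $(4,5)$-good subgraph, as in the preceding lemmas.

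\emph{Reduction step.} By Lemma~\ref{lem:blowup-threeneighbor} we may assume $B_i\setminus N(K)$ is anticomplete to $N_{B_{i+1}}(K)$. We then split according to the orientation of $J$. If $B_i\setminus N(J)$ is also anticomplete to $N_{B_{i+1}}(J)$, we aim at the hypotheses of Lemma~\ref{lem:reduce to four complete pair}. Since $|N_{B_i}(A_3(i))|$ may exceed $1$, Lemma~\ref{lem: Y is empty} is not directly available. So we rerun its argument: a vertex of $(B_{i-1}\cup B_{i+1})\setminus N(A_3(i))$ anticomplete to $B_i\setminus N(A_3(i))$ would, after taking it minimal, be minimal in its whole $B_j$ by Lemma~\ref{lem:single vertex in A_3(i)} (3),(4). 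Its degree would then be at most about $\omega$, giving a small vertex. If $B_{i-1}\setminus N(J)=\emptyset$, we instead get $B_{i-1}=N_{B_{i-1}}(K)\cup N_{B_{i-1}}(J)$ as in the proof of Lemma~\ref{lem:reduce to four complete pair}. Once all four pairs $B_jB_{j+1}$ ($j\ne i$) are complete, Lemma~\ref{lem:color four complete pairs} finishes.

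\emph{Remaining case.} This is the case where $J$ is oriented toward $B_{i-1}$, i.e.\ $B_i\setminus N(J)$ is anticomplete to $N_{B_{i-1}}(J)$. Here $N_{B_{i+1}}(K)$ and $N_{B_{i+1}}(J)$ must be disjoint and comparable in $B_{i-1}$ (Lemma~\ref{lem:nonedge in A_3(i)}). The small vertex arguments, Lemmas~\ref{lem:small vertex argument for simplicial vertices in A_3(i-2)}, \ref{lem:smv on w} and \ref{lem:smv on B_{i+2}+B_{i-2}}, give cliques of size $>\ceil{\frac{\omega}{4}}$ in $N_{B_{i\pm1}}(K)$, $N_{B_{i\pm1}}(J)$, $B_{i\pm2}$, and in the neighborhoods of a minimal vertex of $B_i\setminus N(A_3(i))$. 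Combining these with Lemma~\ref{lem:B_i complete to two large cliques in B_{i-1}} will rule out several configurations by exhibiting cliques larger than $\omega$. In what survives, I will define $F=(F_1,\dots,F_5,O)$ modelled on the previous lemma:
\begin{itemize}
\item $F_j\ni q_j,q_j'$ for $j=i\pm2$;
\item $F_{i}=\{q_i,q_i',b\}$, with $b$ maximal in $B_i\setminus N(J)$ (using $|N_{B_i}(K)|=1$, so $q_i'$ lies outside $N(K)$);
\item $F_{i\pm1}$ containing $q_{i\pm1}$ and a maximal neighbor of a minimal vertex of $K$, respectively $J$;
\item $O=\{w_K,w_J,w_J'\}$, built from top vertices of $K$ and $J$.
\end{itemize}
If $|K|=1$, the single vertex of $N_{B_{i+1}}(K)$ side may instead be handled by degree counting, as in Case 2 of the one-component lemma.

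The \textbf{main obstacle} is verifying the $(4,5)$-good condition, i.e.\ that every maximal clique of size $\omega-j$ meets $F$ in at least $4-j$ vertices. This requires a case analysis over $M\subseteq K\cup N_H(K)$, $J\cup N_H(J)$, $B_{j}\cup B_{j+1}$. I would handle each case by the standard exchange argument: if $M$ misses a designated vertex, swap to produce a clique of size $\omega+1$, or produce an induced $P_7$ through $q_{i+1},q_{i+2}$. A $5$-coloring of $F$ will then be given explicitly by hand.
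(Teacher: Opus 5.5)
\textbf{Gap: the case split in your remaining case is wrong.} Your reduction step (the case where $J$ has the same orientation as $K$) is essentially the paper's Case~2. The remaining case, however, rests on a false claim and misses a configuration that needs its own good subgraph.

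When $B_i\setminus N(J)$ is anticomplete to $N_{B_{i-1}}(J)$, it does \emph{not} follow that $K$ and $J$ have disjoint neighbourhoods in $B_{i+1}$. Lemma~\ref{lem:nonedge in A_3(i)} only says ``disjoint on one side, comparable on the other'', and both options occur. The paper splits via a minimal vertex $z$ of $B_i\setminus\{q_i\}$. This $z$ lies outside $N(J)$, and $N_{B_{i+1}}(z)$ is disjoint from $N_{B_{i+1}}(K)$ with size $>\ceil{\frac{\omega}{4}}$. Hence $N_{B_{i+1}}(J)$ must be comparable with $N_{B_{i+1}}(K)$ or with $N_{B_{i+1}}(z)$; otherwise $B_{i+1}\cup B_{i+2}$ contains a clique larger than $\omega$.

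The configuration $N_{B_{i+1}}(K)\cup N_{B_{i+1}}(z)\subseteq N_{B_{i+1}}(J)$ (the paper's Case~1) lies in your remaining case, because $z$ sees $N_{B_{i+1}}(J)$. Yet there $K$ and $J$ share neighbours in $B_{i+1}$, contrary to your claim. Moreover $N_{B_i}(J)=\{q_i\}$ is forced, since a vertex of $N_{B_i}(J)\setminus\{q_i\}$ would be complete to $N_{B_{i+1}}(J)\supseteq N_{B_{i+1}}(K)$. So your $b$, maximal in $B_i\setminus N(J)=B_i\setminus\{q_i\}$, is just $q_i'$, and $F_i$ collapses to two vertices. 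The same degeneration occurs in your disjoint subcase whenever $|N_{B_i}(J)|=1$, which the paper reduces to Case~1 by symmetry.

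The paper handles this with a different $F$:
\begin{itemize}
\item $F_3=\{q_i,q_i',q_i''\}$;
\item $F_2=\{q_{i-1}',a\}$, where $q_{i-1}'$ is a common neighbour of a minimal vertex of $K$ and $z$;
\item $O=\{w_K,v_K,w_J\}$.
\end{itemize}
This $F$ is good only after several facts are established:
\begin{itemize}
\item $N_{B_{i+1}}(J)=B_{i+1}$;
\item $B_{i-2}B_{i-1}$, $B_{i+1}B_{i+2}$ and $B_{i+2}B_{i-2}$ are complete;
\item $K$ is complete to $N_{B_{i-1}}(K)$;
\item $B_{i-1}\setminus N(A_3(i))=\emptyset$, since otherwise $\{x_{i-1},w_K,q_{i+1}\}$ is a good stable set;
\item no maximum clique lies in $J\cup N_{B_{i-1}}(J)\cup\{q_i\}$.
\end{itemize}

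\textbf{The subcase you do describe is also incomplete.} This is the subcase where the $B_{i+1}$-neighbourhoods are disjoint and $|N_{B_i}(J)|\ge 2$. Here the proposal omits exactly what makes the clique-by-clique check work. The paper first proves:
\begin{itemize}
\item $B_{i+1}$ is complete to $B_{i+2}$;
\item $N_{B_{i-1}}(K)$ is complete to $B_{i-2}$;
\item $K$ is complete to $N_{B_{i-1}}(K)$, and $J$ is complete to $N_{B_{i+1}}(J)$;
\item $q_{i-2}'$ is complete to $B_{i-1}$;
\item $K\cup N_{B_{i+1}}(K)\cup\{q_i\}$ contains no maximum clique.
\end{itemize}
Only then does it take $O=\{w_K,w_J\}$. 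Your extra $w_J'$ sees $q_i,q_i',w_J,a,q_{i+1}$, so $5$-colourability of your $F$ is not automatic and would need a new colouring.

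\textbf{The rerun in your reduction step is unsound but unnecessary.} Rerunning Lemma~\ref{lem: Y is empty} with $|N_{B_i}(A_3(i))|>1$ would not give a degree bound: for $x\in B_{i+1}$, the set $N_{B_i}(x)\cup B_{i+1}$ need not be a clique. You do not need it, however. If $J$ is oriented toward $B_{i+1}$, then $N_{B_{i+1}}(J)$ is disjoint from $N_{B_{i+1}}(z)$, hence comparable with $N_{B_{i+1}}(K)$. A vertex of $N_{B_i}(J)\setminus\{q_i\}$ would then be adjacent to a common $B_{i+1}$-neighbour of $K$ and $J$, contradicting the orientation of $K$. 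So $N_{B_i}(A_3(i))=\{q_i\}$, and Lemma~\ref{lem: Y is empty} applies directly before Lemma~\ref{lem:color four complete pairs}.
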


\begin{proof}
    Note that $N_{B_i}(K) = \{q_i\}$.
    By Lemma \ref{lem:blowup-threeneighbor}, we may assume that $B_i \setminus N(K)$ is anticomplete to $N_{B_{i+1}}(K)$. By Lemma \ref{lem:smv on B_{i+2}+B_{i-2}}, we have  $|B_{i-2}|,|B_{i+2}|>\ceil{ \frac{\omega}{4}}$. 
    Let $w_K$ and $w_J$ be a maximum vertex in $K$ and $J$, respectively.
    Let $w_K'$ and $w_J'$ be a minimum vertex in $K$ and $J$, respectively. It follows that $|N_{B_{j}}(w_K')|>\ceil{\frac{\omega}{4}}$ and $|N_{B_{j}}(w_J')|>\ceil{\frac{\omega}{4}}$ for $j\in \{i-1,i+1\}$. 
    
    Let $z \in B_i \setminus \{q_i\}$ be a minimal vertex in $B_i \setminus \{q_i\}$. Since $N_{B_i}(J)$ is complete to $N_{B_{i\pm 1}}(J)$ but $B_i \setminus N(J)$ is not, it follows that $z \in B_i \setminus N(J)$. By assumption, $N_{B_{i+1}}(K)$ and $N_{B_{i+1}}(z)$ are disjoint. As $z$ is minimal in $B_i$, we have $N_{B_{i-1}}(z) \cup B_i$ is a clique and so $|N_{B_{i+1}}(z)| > \ceil{\frac{\omega}{4}}$ by Lemma \ref{lem:smv on w}. 
    By Lemma \ref{lem:B_i complete to two large cliques in B_{i-1}}, $N_{B_{i+1}}(z)$ and $N_{B_{i+1}}(K)$ are complete to $B_{i+2}$. 
    This implies that $\omega\ge 6$. 
    If $N_{B_{i+1}}(J)$ and $N_{B_{i+1}}(K) \cup N_{B_{i+1}}(z)$ are disjoint, then $N_{B_{i+1}}(K) \cup N_{B_{i+1}}(J) \cup N_{B_{i+1}}(z) \cup B_{i+2}$ is a clique of size larger than $\omega$ by \ref{lem:B_i complete to two large cliques in B_{i-1}}. Therefore, $N_{B_{i+1}}(J)$ is comparable with $N_{B_{i+1}}(K)$ or $N_{B_{i+1}}(z)$.

    \medskip
    \noindent
    \textbf{Case 1:} $N_{B_{i+1}}(K) \cup N_{B_{i+1}}(z) \subseteq N_{B_{i+1}}(J)$.

    \medskip
    
    Then $N_{B_{i-1}}(J)$ and $N_{B_{i-1}}(K) \cup N_{B_{i-1}}(z)$ are disjoint.
    Since $z \in B_i \setminus N(J)$, $B_i \setminus N(J)$ is anticomplete to $N_{B_{i-1}}(J)$. If $|N_{B_i}(J)| \ge 2$, then there is a vertex $x \in N_{B_i}(J) \setminus N_{B_i}(K)$ that is complete to $N_{B_{i+1}}(K)$, a contradiction. So $N_{B_i}(J) = N_{B_i}(K)=\{q_i\}$.

    Suppose that $B_{i+1}\setminus N(J)\neq \emptyset$. By Lemma \ref{lem:nonedge in A_3(i)}, $B_i \setminus \{q_i\}$ is anticomplete to $B_{i+1}\setminus N(J)$. Let $x\in B_{i+1}\setminus N(J)$ be a minimal vertex in $B_{i+1}\setminus N(J)$. If $B_{i+1} \cup N_{B_{i+2}}(x)$ contains two non-adjacent vertices $u$ and $v$, then $u\in N_{B_{i+1}}(J)\setminus (N(K)\cup N(z))$ and $v\in B_{i+2}$, then $w_J-u-x-v$ contradicts Lemma \ref{lem:generalized P4-structure}.
    Since $N_{B_i}(x) = \{q_i\}$, $d(x)\le \omega$, a contradiction. This show that $N_{B_{i+1}}(J) = B_{i+1}$.

    By Lemma \ref{lem:B_i complete to two large cliques in B_{i-1}}, $N_{B_{i-1}}(K)$ and $N_{B_{i-1}}(J)$ are complete to $B_{i-2}$.
    Let $X_{i-1} = B_{i-1}\setminus (N(K)\cup N(J))$. By Lemma \ref{lem: Y is empty}, every vertex in $X_{i-1}$ has a neighbor in $B_i\setminus \{q_i\}$. 
    By Lemma \ref{lem:component of A_3(i)}, $X_{i-1}$ is complete to $B_{i-2}$ and it follows that $B_{i-1}$ is complete to $B_{i-2}$. 
    
    Recall that $N_{B_{i+1}}(K)$ and $N_{B_{i+1}}(z)$ are complete to $B_{i+2}$. If $y\in B_{i+1}=N_{B_{i+1}}(J)$ has a non-neighbor $y'\in B_{i+2}$, then $y' - q_{i+2} - y - J - N_{B_{i-1}}(J) - N_{B_{i-1}}(K) - K$ is an induced sequence. So $B_{i+1}$ is complete to $B_{i+2}$. By Lemma \ref{lem:B_{i-2} is complete to B_{i+2}}, $B_{i+2}$ is complete to $B_{i-2}$. Since $|B_{i-2}|, |B_{i+2}| < \frac{\omega}{2}$, $B_{i-2} \cup B_{i+2}$ is not a maximum clique.
    By Lemma \ref{lem:component of A_3(i)}, $X_{i-1}$ is complete to $B_i$.

    We now show that $K$ is complete to $N_{B_{i-1}}(K)$. If $x \in N_{B_{i-1}}(K)$ has a non-neighbor $x' \in K$, then $x' - w_K - x - q_{i-2} - q_{i+2} - N_{B_{i+1}}(z) - w_J$ is an induced $P_7$. So $K$ is complete to $N_{B_{i-1}}(K)$ and $K \cup N_{B_{i-1}}(K) \cup \{q_i\}$ is a clique. 

    \medskip

    Suppose that $J \cup N_{B_{i-1}}(J) \cup N_{B_i}(J)$ contains a maximum clique. It follows that $|N_{B_{i-1}}(J)| \ge \frac{\omega-1}{2}$ or $|J| \ge \frac{\omega-1}{2}$. If $|N_{B_{i-1}}(J)| \ge \frac{\omega-1}{2}$, then $B_{i-2} \cup B_{i-1}$ is a clique of size larger than $\omega$. So $|J| \ge \frac{\omega-1}{2}$. As $N_{B_{i+1}}(K) \cup N_{B_{i+1}}(z) \subseteq N_{B_{i+1}}(J)$, $J$ is complete to $N_{B_{i+1}}(K)$ and $N_{B_{i+1}}(z)$. Since $|N_{B_{i+1}}(K)|,|N_{B_{i+1}}(z)|>\ceil{\frac{\omega}{4}}$, we obtain $N_{B_{i+1}}(K)\cup N_{B_{i+1}}(z)\cup J$ is a clique of size larger than $\omega$. This proves that $J \cup N_{B_{i-1}}(J) \cup N_{B_i}(J)$ does not contain a maximum clique. 
    If $X_{i-1} \neq \emptyset$, then $\{x_{i-1}, w_K, q_{i+1}\}$ is a good stable set where $x_{i-1} \in X_{i-1}$ (rely on $J$ is complete to $N_{B_{i+1}}(K)$ and $N_{B_{i+1}}(z)$). 
    So $X_{i-1} = \emptyset$.

    \medskip

    Note that $B_{i-1} \cup \{q_i\}$ and $B_{i+1} \cup \{q_i\}$ are not maximum clique.
    Let $q'_{i-1}\in B_{i-1}$ be a common neighbor of $w_K'$ and $z$. It follows that $q'_{i-1}$ is complete to $B_i$ and $K$. Let $a \in N_{B_{i-1}}(w_J')$. Then $a$ is complete to $J$. Let $b \in N_{B_{i+1}}(w_K')$. Then $b$ is complete to $K$. Let $v_K \in K$ be the second largest vertex in $K$ if $|K|\ge 2$. Let $q'_{j}\in B_{j}$ be the second largest vertex in $B_j$ for $j\in \{i-2,i+2\}$. Let $q'_i,q''_i$ be the second and third largest vertices in $B_i$, respectively. 

    We set $F = (F_1, F_2, F_3, F_4, F_5, O)$ as follows:
    \begin{itemize}
        \item $F_1 = \{q_{i-2}, q_{i-2}'\}$,
        \item $F_2 = \{q'_{i-1}, a\}$,
        \item $F_3 = \{q_i, q_i', q_i''\}$,
        \item $F_4 = \{b, q_{i+1}\}$,
        \item $F_5 = \{q_{i+2}, q_{i+2}'\}$,
        \item $O = \{w_K, v_K, w_J\}$.
    \end{itemize}

    We can assign color $1$ to $q_{i-2}$ and $q_i$, color $2$ to $w_K$, $w_J$, $q_i'$ and $q_{i+2}$, color $3$ to $a$, $q_i''$ and $b$, color $4$ to $v_K$, $q_{i-2}'$ and $q_{i+1}$, and color $5$ to $q_{i-1}'$ and $q_{i+2}'$. So $F$ is 5-colorable. 
    
    Next, we show that every maximal clique of size $\omega-j$ with $j\in \{0,1,2,3\}$ of $G$ contains at least $4-j$ vertices from $F$. Let $M$ be a maximal clique of $G$. We consider the following cases depending on all possible locations of $M$. If $M$ is one of $B_{i-2} \cup B_{i-1}$, $B_{i+1} \cup B_{i+2}$ and $B_{i-2} \cup B_{i+2}$, $M$ contains four vertices of $F$.

    \medskip
    \noindent
    \textbf{(i) $M\subseteq K\cup N_H(K)$}
    
    We first assume that $M$ is contained in $K \cup N_{B_{i-1}}(K) \cup \{q_i\}$. As $K \cup N_{B_{i-1}}(K) \cup \{q_i\}$ is a clique, $M$ contains $\{w_K, q_{i-1}', q_i\}$.
    If $v_K\notin M$, then $|N_{B_{i-1}}(K)| \ge \omega- 2$ and so $B_{i-1}$ is a clique of size larger than $\omega$ as $|N_{B_{i-1}}(J)|\ge 3$. 
    So we may assume that $M \subseteq K \cup N_{B_{i+1}}(K) \cup \{q_i\}$. Note that $q_i, w_K, b$ are universal in $K \cup N_{B_{i+1}}(K)$. So we can assume that $|M|=\omega$.  If $v_K\notin M$, then $|N_{B_{i+1}}(K)| \ge \omega- 2$ and so $B_{i+1} \cup B_{i+2}$ is a clique of size larger than $\omega$ as $|N_{B_{i+1}}(z)|\ge 3$ and $|B_{i+2}|\ge 3$. Therefore, $M$ contains $\{w_K, v_K, q_i, b\}$.
    
    \medskip
    \noindent
    \textbf{(ii) $M\subseteq J\cup N_H(J)$}

    Suppose first that $M\cap B_{i+1}=\emptyset$.
    Since $w_J, q_i, a$ are universal in $J \cup N_{B_{i-1}}(J) \cup \{q_i\}$, $w_J, q_i, a\in M$. Since $J \cup N_{B_{i-1}}(J) \cup \{q_i\}$ does not contain a maximum clique of $G$, we are done in this case. So $M \subseteq J \cup N_{B_{i+1}}(J) \cup \{q_i\}$. As $N_{B_{i+1}}(K) \cup N_{B_{i+1}}(z)$ is complete to $J$, $M$ contains $\{w_J, b, q_i, q_{i+1}\}$.

    \medskip
    \noindent
    \textbf{(iii) $M\subseteq B_{i-1} \cup B_i$} ($M\subseteq B_{i}\cup B_{i+1}$ is symmetric)

    Since $B_{i-1}\cup \{q_i\}$ is not a maximum clique, $M\cap (B_i\setminus \{q_i\})\neq \emptyset$ and so $q_i' \in M$. Then, $M\cap N_{B_{i-1}}(J)=\emptyset$. Since $q_i, q'_{i-1}$ are universal in $B_{i-1}\cup B_i$, $q'_{i-1}, q_i\in M$. So we can assume that $|M|= \omega$.
    If $q''_i$ is not in $M$, then $|M\cap (B_{i-1}\setminus N(J))|\ge \omega-2$. Since $|B_{i-2}| \ge 3$, $B_{i-1}\cup B_{i-2}$ is a clique of size larger than $\omega$. So $M$ contains $\{q_{i-1},q_i,q'_i,q''_i\}$.

    \medskip
    Therefore, $F$ is a $(4,5)$-good subgraph. Hence, we may assume that $N_{B_{i+1}}(J)$ is comparable with exactly one of $N_{B_{i+1}}(K)$ and $N_{B_{i+1}}(z)$, and is disjoint from the other.

    \medskip
    \noindent
    \textbf{Case 2:} $N_{B_{i+1}}(J)$ and $N_{B_{i+1}}(K)$ are comparable.

    \medskip
    Then $N_{B_{i+1}}(J)$ and $N_{B_{i+1}}(z)$ are disjoint. If $|N_{B_i}(J)| \ge 2$, then there is a vertex $x \in N_{B_i}(J) \setminus N_{B_i}(K)$ that is complete to $N_{B_{i+1}}(J) \cap N_{B_{i+1}}(K)$, a contradiction. So we obtain $|N_{B_i}(J)| = 1$ and this implies that $N_{B_i}(J) = N_{B_i}(K) = \{q_i\}$. If $B_i \setminus N(J)$ is anticomplete to $N_{B_{i-1}}(J)$, then $z - q_{i-1} - N_{B_{i-1}}(J) - J - (N_{B_{i+1}}(K) \cap N_{B_{i+1}}(J)) - N_{B_{i+1}}(z) - z$ is an induced cyclic sequence. So $B_i \setminus N(J)$ is anticomplete to $N_{B_{i+1}}(J)$. By Lemmas \ref{lem: Y is empty}, \ref{lem:reduce to four complete pair} and \ref{lem:color four complete pairs}, $\chi(G) \le \ceil{\frac{5}{4}\omega}$. 

    \medskip
    \noindent
    \textbf{Case 3:} $N_{B_{i+1}}(J)$ and $N_{B_{i+1}}(z)$ are comparable.

    \medskip

    Then $N_{B_{i+1}}(J)$ and $N_{B_{i+1}}(K)$ are disjoint and it follows that $N_{B_{i-1}}(J) \cup N_{B_{i-1}}(z) \subseteq N_{B_{i-1}}(K)$. So $|N_{B_{i-1}}(K)| > 2 \cdot \ceil{\frac{\omega}{4}}$. Since $z \in B_i \setminus N(J)$, we have $B_i\setminus N(J)$ is anticomplete to $N_{B_{i-1}}(J)$. If $|N_{B_i}(J)|=1$, then we argue symmetrically to \textbf{Case 1}. So we may assume that $|N_{B_i}(J)| \ge 2$. 
    Let $Y\subseteq B_{i+1}\setminus N(K)$ be the set of vertices that are anticomplete to $B_i\setminus \{q_i\}$. We show that $Y$ is complete to $B_{i+2}$. Suppose not. Choose a vertex $y\in Y$ that has a non-neighbor in $B_{i+2}$ with $N_{B_{i}\cup B_{i+2}}(y)$ is minimum. By Lemma \ref{lem:B_i complete to two large cliques in B_{i-1}}, $N_{B_{i+1}}(K)$ and $N_{B_{i+1}}(J)$ are complete to $B_{i+2}$ and so $y$ is anticomplete to $K\cup J$. 
    By Lemmas \ref{lem:single vertex in A_3(i)} (2) and \ref{lem:B_i complete to two large cliques in B_{i-1}}, $y$ is minimum in $B_{i+1}$. It follows that $d(y)\le \omega$, a contradiction. So $Y$ is complete to $B_{i+2}$. By Lemma \ref{lem:B_i complete to two large cliques in B_{i-1}}, $B_{i+1}$ is complete to $B_{i+2}$. 

    We then show that $N_{B_{i-1}}(K)$ is complete to $B_{i-2}$. By Lemma \ref{lem:B_i complete to two large cliques in B_{i-1}}, $N_{B_{i-1}}(J)$ and $N_{B_{i-1}}(z)$ is complete to $B_{i-2}$. If $x \in N_{B_{i-1}}(K)$ has a non-neighbor $x' \in B_{i-2}$, then $x' - q_{i-2} - x - K - N_{B_{i+1}}(K) - N_{B_{i+1}}(J) - J$ is an induced sequence. So $N_{B_{i-1}}(K)$ is complete to $B_{i-2}$.

    We now show that $K$ is complete to $N_{B_{i-1}}(K)$. As $N_{B_{i-1}}(J) \cup N_{B_{i-1}}(z) \subseteq N_{B_{i-1}}(K)$, $N_{B_{i-1}}(J) \cup N_{B_{i-1}}(z)$ is complete to $K$. If $x \in N_{B_{i-1}}(K)$ has a non-neighbor $x' \in K$, then $x' - w_K - x - q_{i-2} - q_{i+2} - N_{B_{i+1}}(J) - J$ is an induced sequence. So $K$ is complete to $N_{B_{i-1}}(K)$. We then show that $J$ is complete to $N_{B_{i+1}}(J)$. If $y \in N_{B_{i+1}}(J)$ has a non-neighbor $y' \in J$, then $y' - w_J - y - N_{B_{i+1}}(K) - K - N_{B_{i-1}}(z) - q_{i-2}$ is an induced sequence. So $J$ is complete to $N_{B_{i+1}}(J)$. 

    \medskip

    We now show that $K \cup N_{B_{i+1}}(K) \cup \{q_i\}$ does not contain a maximum clique. Let $M$ be a maximum clique in $K \cup N_{B_{i+1}}(K) \cup \{q_i\}$. Since $K \cup N_{B_{i-1}}(K) \cup \{q_i\}$ is a clique and $|N_{B_{i-1}}(K)| > 2 \cdot \ceil{\frac{\omega}{4}}$, we have $|K| <\omega - 2 \cdot \ceil{\frac{\omega}{4}} - 1$. Hence, we obtain $|M \cap N_{B_{i+1}}(K)| > 2 \cdot \ceil{\frac{1}{4}\omega}$. Then, $N_{B_{i+1}}(K) \cup N_{B_{i+1}}(J) \cup B_{i+2}$ is a clique of size larger than $\omega$ and the claim holds.

    \medskip

    By the definition of $z$ and Lemma \ref{lem:B_i complete to two large cliques in B_{i-1}}, there is a vertex in $N_{B_{i-1}}(z)$ complete to $B_{i-2}\cup B_i$. We set this vertex to be $q_{i-1}$. Then $q_{i-1} \in N_{B_{i-1}}(K)$. Let $a \in N_{B_{i-1}}(w_J')$. Then $a$ is complete to $J$. Let $q_i' \in N_{B_i}(J) \setminus \{q_i\}$ be maximal over $N_{B_i}(J) \setminus \{q_i\}$.
    Let $b \in B_i \setminus N(J)$ be a maximal vertex over $B_i \setminus N(J)$. Let $c \in N_{B_{i+1}}(w_K')$. Then $c$ is complete to $K$. Let $q_j' \in B_j$ be the second largest vertex in $B_j$ for $j \in \{i-2, i+2\}$. By Lemma \ref{lem:smv on B_{i+2}+B_{i-2}}, we have $q'_{i+2}$ is complete to $B_{i-2}$ and $q'_{i-2}$ is complete to $B_{i+2}$. If $q'_{i-2}$ has a non-neighbor in $B_{i-1}\setminus N(K)$, then a minimal vertex $x$ in $B_{i-1}\setminus N(K)$ has $N(x)\setminus B_{i-2}$ is a clique and $N_{B_{i-2}}(x)=\{q_{i-2}\}$. So $d(x)\le \omega$, a contradiction. So $q'_{i-2}$ is complete to $B_{i-1}$.

    We set $F = (F_1, F_2, F_3, F_4, F_5, O)$ as follows:
    \begin{itemize}
        \item $F_1 = \{q_{i-2}, q_{i-2}'\}$,
        \item $F_2 = \{q_{i-1}, a\}$,
        \item $F_3 = \{q_i, q_i', b\}$,
        \item $F_4 = \{c, q_{i+1}\}$.
        \item $F_5 = \{q_{i+2}, q_{i+2}'\}$,
        \item $O = \{w_K, w_J\}$.
    \end{itemize}
    
    We can assign color $1$ to $q_i$ and $q_{i+2}$, color $2$ to $q_{i-2}$, $w_K$ and $w_J$, color $3$ to $q_{i-2}'$, $q_i'$ and $c$, color $4$ to $a$, $b$ and $q_{i+2}'$, and color $5$ to $q_{i-1}$ and $q_{i+1}$. So $F$ is $5$-colorable. 
    
   Next, we show that every maximal clique of size $\omega-j$ with $j\in \{0,1,2,3\}$ of $G$ contains at least $4-j$ vertices from $F$. Let $M$ be a maximal clique of $G$. We consider the following cases depending on all possible locations of $M$.  If $M$ is contained in one of $B_{i-2} \cup B_{i-1}$, $B_{i+1} \cup B_{i+2}$ and $B_{i-2} \cup B_{i+2}$, $M$ contains four vertices of $F$.
    
    \medskip
    \noindent
    \textbf{(i) $M\subseteq K \cup N_H(K)$}

    Since $M \subseteq K \cup N_{B_{i-1}}(K) \cup \{q_i\}$ is a clique, $M$ contains $\{w_K, a, q_{i-1}, q_i\}$. Hence, we can assume that $M\subseteq K \cup N_{B_{i+1}}(K) \cup \{q_i\}$.
    Since $K \cup N_{B_{i+1}}(K) \cup \{q_i\}$ does not contain a maximum clique and $w_K, q_i, c$ are universal in $K \cup N_{B_{i+1}}(K)$, we obtain $w_K, q_i, c \in M$.

    \medskip
    \noindent
    \textbf{(ii) $M\subseteq J \cup N_H(J)$}

    Note that $q_i, q_i', w_J$ are universal in $J \cup N_H(J)$. We first assume that $M \subseteq J \cup N_{B_i}(J) \cup N_{B_{i+1}}(J)$. Since $N_{B_{i+1}}(J)$ is complete to $J \cup N_{B_i}(J)$, $q_{i+1}$ is universal in $J \cup N_{B_i}(J) \cup N_{B_{i+1}}(J)$. So $w_J, q_i, q_i', q_{i+1} \in M$. Now, we may assume that $M \subseteq J \cup N_{B_{i-1}}(J) \cup N_{B_i}(J)$. Since $a$ is complete to $J$, we obtain $w_J, a, q_i, q_i' \in M$.

    \medskip
    \noindent
    \textbf{(iii) $M\subseteq B_{i-1} \cup B_i$}

    Note that $q_{i-1}$ and $q_i$ are universal in $B_{i-1} \cup B_i$. If $M = B_{i-1} \cup \{q_i\}$, then $M$ is not a maximum clique and $q_{i-1}, q_i, a \in M$. So we may assume that $M \cap (B_i \setminus \{q_i\}) \neq \emptyset$. We first assume that $M \cap (B_i \setminus N(J)) = \emptyset$ and this implies that $M \cap (N_{B_i}(J) \setminus \{q_i\}) \neq \emptyset$ and so $q_i' \in M$. Since $a$ is universal in $B_{i-1}\cup N_{B_i}(J)$, we have $a \in M$. Hence, we may assume that $M \cap (B_i \setminus N(J)) \neq \emptyset$ and this implies that $b \in M$. 
    Since $q'_i$ has a neighbor in $N_{B_{i-1}}(J)$, $N_{B_{i-1}}(b) \subseteq N_{B_{i-1}}(q'_i)$ and so $q'_i\in M$.

    \medskip
    \noindent
    \textbf{(iv) $M\subseteq B_i \cup B_{i+1}$}

    Note that $q_i$ and $q_{i+1}$ are universal in $B_i \cup B_{i+1}$. If $M = B_{i+1} \cup \{q_i\}$, then $M$ is not a maximum clique and $q_i, q_{i+1}, c \in M$. So we may assume that $M \cap (B_i\setminus \{q_i\}) \neq \emptyset$. We first assume that $M \cap (B_i \setminus N(J)) = \emptyset$ and this implies that $M \cap (N_{B_i}(J) \setminus \{q_i\}) \neq \emptyset$ and $q_i' \in M$. So $M \cap N_{B_{i+1}}(K) = \emptyset$ and we can assume that $|M|=\omega$.
    As $N_{B_i}(J) \cup N_{B_{i-1}}(J) \cup N_{B_{i-1}}(z)$ is a clique and $|N_{B_{i-1}}(J) \cup N_{B_{i-1}}(z)| > 2\cdot \ceil{\frac{\omega}{4}}$, we obtain $|M \cap (B_{i+1}\setminus N(K))| > 2\cdot \ceil{\frac{\omega}{4}}$. Then $B_{i+1} \cup B_{i+2}$ is a clique of size larger than $\omega$. Hence, $M$ is not a maximum clique and $q_i, q_i', q_{i+1} \in M$. Now we may assume that $M \cap (B_i \setminus N(J)) \neq \emptyset$ and so $b \in M$. Since $N_{B_{i-1}}(b) \subseteq N_{B_{i-1}}(q'_i)$, we have $q_i' \in M$. Hence, $M$ contains $\{q_i, q_i', q_{i+1}, b\}$.

    \medskip
    Therefore, $F$ is a $(4, 5)$-good subgraph.  
\end{proof}

\section{Concluding Remarks}\label{sec:conclue}

In this paper, we prove that $f(n)=\ceil{\frac{5}{4}n}$ is the optimal $\chi$-bounding function for the class of ($P_7$, even-hole)-free graphs, which generalizes the result of Karthick and Maffary \cite{KM19} on ($P_6$, even-hole)-free graphs. We feel that $f(n)=2n-1$ may not be the optimal $\chi$-bounding function for even-hole-free graphs, as this comes merely from the existence of $f$-small vertices. Equal-size blowups of $C_5$ suggest that 
$f(n)=\ceil{\frac{5}{4}n}$ may be the optimal $\chi$-bounding function for the class of even-hole-free graphs and we conjecture this should be the case.

\begin{conjecture}
    For every even-hole-free graph $G$, $\chi(G)\le \ceil{\frac{5}{4}\omega(G)}$.
\end{conjecture}

Chen, Xu and Xu \cite{cap} shows that the conjecture is true for (cap, even-hole)-free graphs. 
Theorem \ref{thm:main} shows that the conjecture is true for ($P_7$, even-hole)-free graphs. 
Recently, there has been a series of work on tree-width and tree-independence number of even-hole-free graphs (see \cite{CGHLS24,CT25} for instance). The techniques developed there may offer new insights and tools to attack this conjecture.


\end{document}